\newtheorem{thm}{Theorem}[section]
\newtheorem{prop}[thm]{Proposition}
\newtheorem{lemma}[thm]{Lemma}
\theoremstyle{definition}
\newtheorem{defn}[thm]{Definition}
\newtheorem{ex}[thm]{Example}
\theoremstyle{remark}
\newtheorem{remark}[thm]{Remark}
\numberwithin{section}{chapter}
\numberwithin{equation}{chapter}
\newcommand{\M}{\mathbb{M}}
\newcommand{\F}{\mathbb{F}}
\newcommand{\C}{\mathbb{C}}
\newcommand{\R}{\mathbb{R}}
\newcommand{\A}{\mathbb{A}}
\newcommand{\Z}{\mathbb{Z}}
\newcommand{\map}{\rightarrow}
\newcommand{\tmf}{\textit{tmf}}
\newcommand{\ol}{\overline}
\newcommand{\sigmabar}{\ol{\sigma}}
\newcommand{\kappabar}{\ol{\kappa}}
\newcommand{\iso}{\cong}
\newcommand{\cl}{\mathrm{cl}}
\newcommand{\clE}[2][{}]{E_{#2}^{#1}(S^0; BP)}
\newcommand{\motE}[2][{}]{E_{#2}^{#1}(S^{0,0}; BPL)}
\newcommand{\motECtau}[2][{}]{E_{#2}^{#1}(C\tau; BPL)}
\newcommand{\olmotE}[2][{}]{\ol{E}_{#2}^{#1}(S^{0,0}; BPL)}
\DeclareMathOperator{\Hom}{Hom}
\DeclareMathOperator{\Ext}{Ext}
\DeclareMathOperator{\Gr}{Gr}
\DeclareMathOperator{\Sq}{Sq}
\DeclareMathOperator{\Ch}{Ch}
\DeclareMathOperator{\coker}{coker}
\begin{document}

\frontmatter

\title{Stable Stems}

\author{Daniel C. Isaksen}
\address{Department of Mathematics\\
Wayne State University\\
Detroit, MI 48202, USA}
\email{isaksen@wayne.edu}
\thanks{The author was supported by NSF grant DMS-1202213.}

\subjclass[2010]{Primary 14F42, 55Q45, 55S10, 55T15; 
Secondary 16T05, 55P42, 55Q10, 55S30 }

\keywords{stable homotopy group, stable motivic homotopy theory,
May spectral sequence, Adams spectral sequence, cohomology of the Steenrod algebra,
Adams-Novikov spectral sequence}

\date{}

\begin{abstract}
We present a detailed analysis
of 2-complete stable homotopy groups, both in the classical context
and in the motivic context over $\C$.
We use the motivic May spectral sequence to
compute the cohomology of the motivic Steenrod algebra over $\C$
through the 70-stem.  
We then use the motivic Adams spectral sequence to obtain motivic
stable homotopy groups through the 59-stem.
In addition to finding all Adams differentials in this range,
we also resolve all hidden extensions by $2$, $\eta$, and $\nu$,
except for a few carefully enumerated exceptions that remain unknown.
The analogous classical stable homotopy groups are easy consequences.

We also compute the motivic stable homotopy groups of the cofiber of
the motivic element $\tau$.  This computation is essential
for resolving hidden extensions in the Adams spectral sequence.
We show that the homotopy groups of the cofiber of $\tau$ are
the same as the $E_2$-page of the classical Adams-Novikov spectral sequence.
This allows us to compute the classical Adams-Novikov 
spectral sequence, including differentials and hidden extensions,
in a larger range than was previously known.
\end{abstract}

\maketitle

%

\setcounter{page}{4}

\tableofcontents

\listoftables


\mainmatter

\index{Adams spectral sequence!cofiber of tau@cofiber of $\tau$|see{cofiber of $\tau$}}
\index{Adams-Novikov spectral sequence!cofiber of tau@cofiber of $\tau$|see{cofiber of $\tau$}}
\index{algebraic Steenrod operation|see{Steenrod operation}}
\index{ambiguous generator!Ext@$\Ext$|see{$\Ext$}}
\index{ambiguous generator!cofiber of tau@cofiber of $\tau$|see{cofiber of $\tau$}}
\index{bottom cell|see{cofiber of $\tau$}}
\index{boundary!Adams-Novikov spectral sequence|see{Adams-Novikov spectral sequence}}
\index{c0@$c_0$!hidden extension|see{May spectral sequence}}
\index{category!model|see{model category}}
\index{cell!bottom|see{cofiber of $\tau$}}
\index{cell!top|see{cofiber of $\tau$}}
\index{cellular motivic spectrum|see{motivic spectrum}}
\index{chart!Adams|see{Adams chart}}
\index{chart!Adams-Novikov|see{Adams-Novikov chart}}
\index{chart!Ext@$\Ext$|see{$\Ext$ chart}}
\index{Chow degree|see{degree}}
\index{classical Adams spectral sequence|see{Adams spectral sequence}}
\index{classical Adams-Novikov spectral sequence|see{Adams-Novikov spectral sequence}}
\index{classical stable stem|see{stable stem}}
\index{classical Steenrod algebra|see{Steenrod algebra}}
\index{closed model category|see{model category}}
\index{cobordism!algebraic|see{algebraic cobordism}}
\index{cofiber!homotopy group|see{stable stem}}
\index{cofiber of tau@cofiber of $\tau$!Adams chart|see{Adams chart}}
\index{cohomology!of a point|see{motivic cohomology of a point}}
\index{cohomology!of the Steenrod algebra|see{Steenrod algebra}}
\index{compound hidden extension|see{hidden extension}}
\index{convergence!Adams spectral sequence|see{Adams spectral sequence}}
\index{crossing hidden extension|see{hidden extension}}
\index{computation!machine|see{machine computation}}
\index{cover|see{topology}}
\index{crossing differential!Adams|see{Adams spectral sequence}}
\index{crossing differential!May|see{May spectral sequence}}
\index{class!non-permanent!Adams-Novikov spectral sequence|see{Adams-Novikov spectral sequence}}
\index{differential!Adams|see{Adams spectral sequence}}
\index{differential!Adams-Novikov|see{Adams-Novikov spectral sequence}}
\index{differential!cofiber of tau@cofiber of $\tau$|see{cofiber of $\tau$}}
\index{differential!May|see{May spectral sequence}}
\index{epsilon@$\epsilon$!hidden extension|see{Adams spectral sequence}}
\index{Er-page@$E_r$-page!Adams|see{Adams spectral sequence}}
\index{Er-page@$E_r$-page!Adams-Novikov|see{Adams-Novikov spectral sequence}}
\index{Er-page@$E_r$-page!May|see{May spectral sequence}}
\index{eta@$\eta$!hidden extension!Adams spectral sequence|see{Adams spectral sequence}}
\index{eta@$\eta$!hidden extension!Adams-Novikov spectral sequence|see{Adams-Novikov spectral sequence}}
\index{eta@$\eta$!hidden extension!cofiber of tau@cofiber of $\tau$|see{cofiber of $\tau$}}
\index{eta4@$\eta_4$!hidden extension!Adams spectral sequence|see{Adams spectral sequence}}
\index{extension!hidden|see{hidden extension}}
\index{four!hidden extension|see{Adams spectral sequence}}
\index{generator!Ext@$\Ext$|see{$\Ext$}}
\index{generator!cofiber of tau@cofiber of $\tau$|see{cofiber of $\tau$}}
\index{grading|see{degree}}
\index{h0@$h_0$!hidden extension!cofiber of tau@cofiber of $\tau$|see{cofiber of $\tau$}}
\index{h0@$h_0$!hidden extension!May spectral sequence|see{May spectral sequence}}
\index{h1@$h_1$!hidden extension!cofiber of tau@cofiber of $\tau$|see{cofiber of $\tau$}}
\index{h1@$h_1$!hidden extension!May spectral sequence|see{May spectral sequence}}
\index{h2@$h_2$!hidden extension!cofiber of tau@cofiber of $\tau$|see{cofiber of $\tau$}}
\index{h2@$h_2$!hidden extension!May spectral sequence|see{May spectral sequence}}
\index{h43@$h_4^3$|see{$h_3^2 h_5$}}
\index{hidden extension!Adams|see{Adams spectral sequence}}
\index{hidden extension!Adams-Novikov|see{Adams-Novikov spectral sequence}}
\index{hidden extension!cofiber|see{cofiber}}
\index{hidden extension!cofiber of tau@cofiber of $\tau$|see{cofiber of $\tau$}}
\index{hidden extension!May|see{May spectral sequence}}
\index{higher Chow group|see{Chow group}}
\index{higher differential!Adams spectral sequence|see{Adams spectral sequence}}
\index{homotopy group|see{stable stem}}
\index{homotopy theory!motivic|see{motivic homotopy theory}}
\index{image of J@image of $J$|see{$J$}}
\index{inclusion of the bottom cell|see{cofiber of $\tau$}}
\index{indeterminacy!Massey product|see{Massey product}}
\index{indeterminacy!Toda bracket|see{Toda bracket}}
\index{K-theory@$K$-theory!algebraic|see{algebraic $K$-theory}}
\index{kappa@$\kappa$!hidden extension!Adams spectral sequence|see{Adams spectral sequence}}
\index{kappabar@$\kappabar$!hidden extension!Adams spectral sequence|see{Adams spectral sequence}}
\index{localization!tau@$\tau$|see{$\tau$}}
\index{localization!h1@$h_1$|see{$h_1$}}
\index{matric Massey product|see{Massey product}}
\index{May's Convergence Theorem|see{Convergence Theorem}}
\index{Moss's Convergence Theorem|see{Convergence Theorem}}
\index{Nisnevich topology|see{topology}}
\index{non-permanent class!Adams-Novikov spectral sequence|see{Adams-Novikov spectral sequence}}
\index{nu@$\nu$!cofiber|see{cofiber of $\nu$}}
\index{nu@$\nu$!hidden extension!Adams spectral sequence|see{Adams spectral sequence}}
\index{nu@$\nu$!hidden extension!Adams-Novikov spectral sequence|see{Adams-Novikov spectral sequence}}
\index{nu@$\nu$!hidden extension!cofiber of tau@cofiber of $\tau$|see{cofiber of $\tau$}}
\index{nu4@$\nu_4$!hidden extension!Adams spectral sequence|see{Adams spectral sequence}}
\index{odd differential!May|see{May spectral sequence}}
\index{odd primary Steenrod algebra|see{Steenrod algebra}}
\index{Ph1@$P h_1$!hidden extension!cofiber of tau@cofiber of $\tau$|see{cofiber of $\tau$}}
\index{Ph1@$P h_1$!hidden extension!May spectral sequence|see{May spectral sequence}}
\index{powers of g@powers of $g$|see{$g$}}
\index{projection to the top cell|see{cofiber of $\tau$}}
\index{R-motivic homotopy theory@$\R$-motivic homotopy theory|see{motivic homotopy theory}}
\index{resolution!cobar|see{cobar}}
\index{shuffle!Massey product|see{Massey product}}
\index{sigma@$\sigma$!hidden extension!Adams spectral sequence|see{Adams spectral sequence}}
\index{simplicial presheaf|see{presheaf}}
\index{site|see{topology}}
\index{spectral sequence!Adams|see{Adams spectral sequence}}
\index{spectral sequence!Adams-Novikov|see{Adams-Novikov spectral sequence}}
\index{spectral sequence!May|see{May spectral sequence}}
\index{spectral sequence!rho-Bockstein@$\rho$-Bockstein|see{$\rho$-Bockstein spectral sequence}}
\index{spectral sequence!tau-Bockstein@$\tau$-Bockstein|see{$\tau$-Bockstein spectral sequence}}
\index{spectrum!cellular motivic|see{motivic spectrum}}
\index{spectrum!Eilenberg-Mac Lane|see{Eilenberg-Mac Lane spectrum}}
\index{sphere!unstable motivic|see{motivic sphere}}
\index{squaring operation|see{Steenrod operation}}
\index{stable motivic homotopy theory|see{motivic homotopy theory}}
\index{Steenrod algebra!dual|see{Steenrod algebra}}
\index{stem|see{stable stem}}
\index{strictly defined Massey product|see{Massey product}}
\index{tau@$\tau$!cofiber|see{cofiber of $\tau$}}
\index{tau@$\tau$!hidden extension!Adams spectral sequence|see{Adams spectral sequence}}
\index{tentative hidden extension|see{Adams spectral sequence}}
\index{tmf@$\tmf$|see{topological modular forms}}
\index{top cell|see{cofiber of $\tau$}}
\index{torsion!tau@$\tau$|see{$\tau$}}
\index{two!cofiber|see{cofiber of two}}
\index{two!hidden extension!Adams spectral sequence|see{Adams spectral sequence}}
\index{two!hidden extension!Adams-Novikov spectral sequence|see{Adams-Novikov spectral sequence}}
\index{two!hidden extension!cofiber of tau@cofiber of $\tau$|see{cofiber of $\tau$}}
\index{Zariski topology|see{topology}}
\index{unstable motivic homotopy theory|see{motivic homotopy theory}}
\index{unstable motivic sphere|see{motivic sphere}}

\chapter{Introduction}
\label{ch:intro}

One of the fundamental problems of stable homotopy theory is to compute
the stable homotopy groups of the sphere spectrum.  One reason for computing
these groups is that maps between spheres control the construction of finite
cell complexes.

After choosing a prime $p$ and focusing on the $p$-complete stable homotopy
groups instead of the integral homotopy groups,
the Adams spectral sequence 
and the Adams-Novikov spectral sequence have 
proven to be the most effective tools for carrying out such computations.  

At odd primes, the Adams-Novikov spectral sequence has clear computational
advantages over the Adams spectral sequence.  
(Nevertheless, 
the conventional wisdom, derived from 
\index{Mahowald, Mark}
Mark Mahowald,
is that one should compute with both
spectral sequences because they emphasize distinct aspects
of the same calculation.)

Computations at the prime $2$ are generally more difficult than
computations at odd primes.
In this case, 
the Adams spectral sequence and 
the Adams-Novikov spectral sequence seem to be
of equal complexity.
The purpose of this manuscript
is to thoroughly explore the Adams spectral sequence at $2$
in both the classical and motivic contexts.

Motivic techniques are essential to our analysis.
Working motivically instead of classically has both advantages and disadvantages.
The main disadvantage is that the computation is larger and proportionally more
difficult.  On the other hand, there are several advantages.
First, the presence of more non-zero classes allows the detection of otherwise
elusive phenomena.  
Second, the additional motivic weight grading can easily eliminate possibilities
that appear plausible from a classical perspective.

The original motivation for this work was 
to provide input to the
\index{rho-Bockstein spectral sequence@$\rho$-Bockstein spectral sequence}
$\rho$-Bockstein spectral sequence for computing the 
cohomology of the motivic Steenrod algebra over $\R$.
\index{motivic homotopy theory!over R@over $\R$}
The analysis of the $\rho$-Bockstein spectral sequence,
and the further analysis of the motivic Adams spectral sequence
over $\R$, will appear in future work.

This manuscript is a natural sequel to \cite{DI10}, where
the first computational properties of the motivic May spectral sequence, 
as well as of the motivic Adams spectral sequence, were established.

\section{The Adams spectral sequence program}
\label{subsctn:Adams-program}
The Adams spectral sequence
\index{Adams spectral sequence}
starts with the cohomology
of the Steenrod algebra $A$, i.e., $\Ext_A(\F_2, \F_2)$.  
There are two ways of approaching this algebraic object.
\index{machine computation}
First, one can compute by machine.  
This has been carried out to over 200 stems \cite{Bruner97}
\cite{Nassau}.  Machines can also compute the higher structure of products and
Massey products.

\index{May spectral sequence}
The second approach is to compute by hand with the May spectral sequence.
This will be carried out to 70 stems in Chapter \ref{ch:May}.
See also \cite{Tangora70a} for the classical case.
See \cite{Isaksen14a} for a detailed $\Ext$ chart through the 70-stem.

The $E_\infty$-page of the May spectral sequence is the graded object
associated to a filtration on $\Ext_A(\F_2, \F_2)$, which can hide some
of the multiplicative structure. 
\index{May spectral sequence!hidden extension}
One can resolve these hidden multiplicative extensions
with indirect arguments involving higher structure such as Massey products
or algebraic Steenrod operations in the sense of \cite{May70}.
\index{Massey product}
\index{Steenrod operation!algebraic}
A critical ingredient here is May's Convergence Theorem \cite{May69}*{Theorem 4.1},
\index{Convergence Theorem!May}
which allows the computation of Massey products in $\Ext_A(\F_2,\F_2)$
via the differentials in the May spectral sequence.

The cohomology of the Steenrod algebra is the $E_2$-page of the Adams
spectral sequence.  The next step is to compute the Adams differentials.
\index{Adams spectral sequence!differential}
This will be carried out in Chapter \ref{ch:Adams-diff}.
Techniques for establishing differentials include:
\begin{enumerate}
\item
Use knowledge of the image of $J$ \cite{Adams66} to deduce differentials.
\index{J@$J$!image of}
\item
Compare to the completely understood
Adams spectral sequence for the topological modular forms
spectrum $\tmf$ \cite{Henriques07}.
\index{topological modular forms}
\item
Use the relationship between algebraic Steenrod operations and
Adams differentials \cite{BMMS86}*{VI.1}.
\index{Steenrod operation!algebraic}
\item
Exploit Toda brackets to deduce relations in the stable homotopy ring,
which then imply Adams differentials.
\index{Toda bracket}
\end{enumerate}
We have assembled all previously
published results about the Adams differentials in Table
\ref{tab:diff-refs}.  

The $E_\infty$-page of the
Adams spectral sequence is the graded object associated to a filtration
on the stable homotopy groups, which can hide some of the
multiplicative structure.
The final step is to resolve these hidden multiplicative extensions.
\index{Adams spectral sequence!hidden extension}
This will be carried out in Chapter \ref{ch:Adams-hidden}.
Analogously to the extensions that are hidden in the May spectral sequence, 
this generally involves indirect arguments with Toda brackets.
\index{Toda bracket}
We have assembled previously published results
about these hidden extensions in Table \ref{tab:extn-refs}.

The detailed analysis of the Adams spectral sequence requires substantial
technical work with Toda brackets.
\index{Toda bracket}
A critical ingredient for computing Toda brackets is Moss's Convergence Theorem \cite{Moss70},
which allows the computation of Toda brackets 
via the Adams differentials.
\index{Convergence Theorem!Moss}
We
remind the reader to be cautious about indeterminacies in Massey products
and Toda brackets.
\index{Massey product!indeterminacy}
\index{Toda bracket!indeterminacy}

\section{Motivic homotopy theory}
The formal construction of motivic homotopy theory
\index{motivic homotopy theory}
 requires the
heavy machinery of simplicial presheaves and model categories
\cite{MV99} \cite{Jardine87} \cite{DHI04}.  
\index{presheaf!simplicial}
\index{model category}
We give a more intuitive description
of motivic homotopy theory that will suffice for our purposes.

Motivic homotopy theory is a homotopy theory for algebraic varieties.
Start with the category of smooth schemes over a field $k$
(in this manuscript, $k$ always equals $\C$).
This category is inadequate for homotopical purposes because it does
not possess enough gluing constructions, i.e., homotopy colimits.

In order to fix this problem, we can formally adjoin homotopy colimits.
This takes us to the category of simplicial presheaves.
\index{presheaf!simplicial}

The next step is to restore some desired relations.
\index{topology!Zariski}
If
$\{ U, V \}$ is a Zariski cover of a smooth scheme $X$, then 
$X$ is the colimit of the diagram
\begin{equation}
\label{eq:pushout}
\xymatrix@1{
U & U \cap V \ar[r] \ar[l] & V 
}
\end{equation}
in the category of smooth schemes.  However, when we formally adjoined
homotopy colimits, we created a new object, distinct from $X$,
that served as the homotopy pushout of Diagram \ref{eq:pushout}.
This is undesirable, so we formally declare that
$X$ is the homotopy pushout of Diagram \ref{eq:pushout},
from which we obtain the local homotopy theory
of simplicial presheaves.
\index{presheaf!simplicial}
This homotopy theory has some convenient properties such as 
Mayer-Vietoris sequences.
\index{Mayer-Vietoris sequence}

In fact, one needs to work not with Zariski covers but with Nisnevich
covers.  See \cite{MV99} for details on this technical point.
\index{topology!Nisnevich}

The final step is to formally declare that each projection
map $X \times \A^1 \map X$ is a weak equivalence.
This gives the unstable motivic homotopy category.
\index{motivic homotopy theory!unstable}

In unstable motivic homotopy theory,
there are two distinct objects that play the role of circles:
\index{motivic sphere!unstable}
\begin{enumerate}
\item
$S^{1,0}$ is the usual simplicial circle.
\item
$S^{1,1}$ is the punctured affine line $\A^1-0$.
\end{enumerate}
For $p \geq q$, the unstable sphere $S^{p,q}$ is the appropriate 
smash product of copies of $S^{1,0}$ and $S^{1,1}$, so we have
a bigraded family of spheres.

\index{motivic homotopy theory!stable}
Stable motivic homotopy theory is the stabilization of 
unstable motivic homotopy theory with respect to
this bigraded family of spheres.  
As a consequence, calculations such as motivic cohomology and
motivic stable homotopy groups are bigraded.

Motivic homotopy theory over $\C$ comes with a realization functor to
\index{realization functor}
ordinary homotopy theory.  Given a complex scheme $X$, there is an associated
topological space $X(\C)$ of $\C$-valued points.  This construction extends
to a well-behaved functor between unstable and stable homotopy theories.

We will explain at the beginning of Chapter
\ref{ch:Adams-diff} that we have very good calculational
control over this realization functor.  We will use this relationship
in both directions: to deduce motivic facts from classical results,
and to deduce classical facts from motivic results.

One important difference between the classical case and the
motivic case is that not every motivic spectrum is built out of spheres,
i.e., not every motivic spectrum is cellular.
\index{motivic spectrum!cellular}
Stable cellular motivic homotopy theory is more tractable than the
full motivic homotopy theory, and 
many motivic spectra of particular interest, such as
the Eilenberg-Mac Lane spectrum $H\F_2$, 
\index{Eilenberg-Mac Lane spectrum}
the algebraic $K$-theory spectrum $KGL$, 
\index{algebraic K-theory@algebraic $K$-theory}
and the algebraic cobordism spectrum $MGL$, 
\index{algebraic cobordism}
are cellular. 
Stable motivic homotopy group calculations are fundamental to 
cellular motivic homotopy theory.  However, the part of motivic
homotopy theory that is not cellular is essentially invisible
from the perspective of stable motivic homotopy groups.

Although one can study motivic homotopy theory over any base field
(or even more general base schemes), we will work only over 
$\C$, or any algebraically closed field of characteristic $0$.
Even in this simplest case, we find a wealth of exotic phenomena that have
no classical analogues.

\section{The motivic Steenrod algebra}

\index{Steenrod algebra}
The starting point for our Adams spectral sequence work
is the description of the motivic Steenrod algebra over $\C$ at the prime $2$,
which is a variation
on the classical Steenrod algebra.  First, the motivic cohomology of 
a point is $\M_2 = \F_2[\tau]$, where $\tau$ has degree $(0,1)$ \cite{Voevodsky03b}.
\index{motivic cohomology!of a point}

\index{Steenrod algebra!dual}
The (dual) motivic Steenrod algebra over $\C$ is 
\cite{Voevodsky10} \cite{Voevodsky03a} \cite{Borghesi07}*{Section 5.2}
\[
\frac{\M_2 [ \tau_0, \tau_1, \ldots, \xi_1, \xi_2, \ldots ] }
{ \tau_i^2 = \tau \xi_{i+1}}.
\]
The reduced coproduct is determined by
\begin{align*}
\tilde{\phi}_* (\tau_k) &= 
\xi_k \otimes \tau_0 + \xi_{k-1}^2 \otimes \tau_1 + \cdots 
+ \xi_{k-i}^{2^i} \otimes \tau_i + \cdots + \xi_1^{2^{k-1}} \otimes \tau_{k-1} \\
\tilde{\phi}_* (\xi_k) &=
\xi_{k-1}^2 \otimes \xi_1 + \xi_{k-2}^4 \otimes \xi_2 + \cdots
+ \xi_{k-i}^{2^i} \otimes \xi_i + \cdots + \xi_1^{2^{k-1}} \otimes \xi_{k-1}.
\end{align*}

The dual motivic Steenrod algebra has a few interesting features.
\index{tau@$\tau$!localization}
First, if we invert $\tau$, then we obtain a polynomial algebra
that is essentially the same as the classical dual Steenrod algebra.  
This is a general feature.  We will explain at the beginning
of Chapter \ref{ch:Adams-diff}
that one recovers classical calculations from motivic calculations by 
inverting $\tau$.  This fact is useful in both directions:
to deduce motivic facts from classical ones, and to 
deduce classical facts from motivic ones.

Second, if we set $\tau = 0$, we obtain a ``$p = 2$ version" of the
\index{Steenrod algebra!odd primary}
classical odd primary dual Steenrod algebra, with a family of
exterior generators and another family of 
polynomial generators.  This observation suggests that various 
classical techniques that are well-suited for odd primes may 
also work motivically at the prime $2$.

\section{Relationship between motivic and classical calculations}

As a consequence of our detailed analysis of the motivic Adams spectral sequence,
we recover the analysis
of the classical Adams spectral sequence by inverting $\tau$.
\index{Adams spectral sequence!classical}
\index{stable stem!classical}

We will use known results about the classical Adams spectral
sequence from 
\cite{BJM84},
\cite{BMT70},
\cite{Bruner84},
\cite{MT67}, and
\cite{Tangora70b}.
We have carefully collected these results in
Tables \ref{tab:diff-refs} and \ref{tab:extn-refs}.

A few of our calculations are inconsistent with calculations
in \cite{Kochman90} and \cite{KM93},
and we are unable to understand the exact sources of the discrepancies.
For this reason, we have found it prudent to 
avoid relying directly on the calculations in \cite{Kochman90} and \cite{KM93}.
However, we will follow \cite{KM93} in establishing one particularly
difficult Adams differential in Section \ref{subsctn:d5-lemmas}. 

Here is a summary of our calculations that are inconsistent with 
\cite{Kochman90} and \cite{KM93}:
\begin{enumerate}
\item
There is a classical differential $d_3(Q_2) = g t$.
This means that classical $\pi_{56}$ has order 2, not order 4;
and that classical $\pi_{57}$ has order 8, not order 16.
\item
The element $h_1 g_2$ in the 45-stem does not support a hidden
$\eta$ extension to $N$.
\item
The element $C$ of the 50-stem does not support a hidden
$\eta$ extension to $g n$.
\item
\cite{Kochman90} claims that there is a hidden
$\nu$ extension from $h_2 h_5 d_0$ to $g n$
and that there is no hidden $2$ extension on $h_0 h_3 g_2$.
These two claims are incompatible; either both hidden
extensions occur, or neither occur.
(See Lemma \ref{lem:nu-h2h5d0}.)
\end{enumerate}

The proof of the non-existence of the hidden $\eta$ extension
on $h_1 g_2$ is particularly interesting because
it relies inherently on a motivic calculation.  We know of no way
to establish this result only with classical tools.

We draw particular attention to the Adams differential
$d_2(D_1) = h_0^2 h_3 g_2$ in the 51-stem.
Mark Mahowald 
\index{Mahowald, Mark}
privately communicated an argument for the
presence of this differential to the author.  However, this argument fails 
because of the calculation of the Toda bracket 
$\langle \theta_4, 2, \sigma^2 \rangle$ in
Lemma \ref{lem:bracket-theta4-2-sigma^2}, which was unknown to Mahowald.
Zhouli Xu 
\index{Xu, Zhouli}
and the author
discovered an independent proof, which is included in this
manuscript as Lemma \ref{lem:d2-D1}.  
This settles the order of $\pi_{51}$ but not its group structure.
It is possible that $\pi_{51}$ contains an element of order 8.
See \cite{IX14} for a more complete discussion.

Related to the misunderstanding concerning the bracket
$\langle \theta_4, 2, \sigma^2 \rangle$,
the published literature contains incorrect proofs that
$\theta_4^2$ equals zero.  Zhouli Xu
\index{Xu, Zhouli}
has found the first
correct proof of this relation \cite{Xu14}.
This has implications for the strong Kervaire problem.
Xu used the calculation of $\theta_4^2$ to simplify the argument 
given in \cite{BJM84} that establishes the existence of the 
Kervaire class $\theta_5$.
\index{Kervaire problem}
\index{theta4@$\theta_4$}
\index{theta5@$\theta_5$}

We also remark on the hidden $2$ extension in the 62-stem
from $E_1 + C_0$ to $R$ indicated in \cite{KM93}.
We cannot be absolutely certain of the status of this extension
because it lies outside the range of our thorough analysis.
However, it appears implausible from the motivic perspective.
(For entirely different reasons related to $v_2$-periodic homotopy groups,
Mark Mahowald
\index{Mahowald, Mark}
communicated privately to the author that he was also
skeptical of this hidden extension.)

\section{Relationship to the Adams-Novikov spectral sequence}

\index{Adams-Novikov spectral sequence}
We will describe
a rigid relationship between the motivic Adams spectral
sequence and the motivic Adams-Novikov spectral sequence
in Chapter \ref{ch:ANSS}.
In short, 
the $E_2$-page of the classical Adams-Novikov spectral sequence
is isomorphic to 
the bigraded homotopy groups $\pi_{*,*}(C\tau)$
of the cofiber of $\tau$. 
\index{cofiber of tau@cofiber of $\tau$!}
 Here $\tau$ is the element of
the motivic stable homotopy group $\pi_{0,-1}$ that is detected by the
element $\tau$ of $\M_2$.
Moreover, the classical
Adams-Novikov spectral sequence is identical to the
$\tau$-Bockstein spectral sequence converging to stable motivic homotopy groups!

In Chapter \ref{ch:Ctau}, we will extensively compute
$\pi_{*,*}(C\tau)$.  In Chapter \ref{ch:ANSS}, we will apply this
information to 
obtain information about the classical Adams-Novikov spectral sequence
in previously unknown stems.

However, there are two places in earlier chapters where we use specific
calculations from the classical Adams-Novikov spectral sequence.  
We would prefer arguments that are internal to the Adams spectral sequence,
but they have so far eluded us.  The specific calculations that we need are:
\begin{enumerate}
\item
Lemma \ref{lem:tau-D11} shows that a certain possible hidden
$\tau$ extension does not occur in the 57-stem.  See also 
Remark \ref{rem:tau-D11}.  For this, we use that $\beta_{12/6}$ is the only element
in the Adams-Novikov spectral sequence in the 58-stem with filtration 2
that is not divisible by $\alpha_1$
\cite{Shimomura81}.
\item
Lemma \ref{lem:2-h0h5i} establishes a hidden
$2$ extension in the 54-stem.  See also Remark \ref{rem:2-h0h5i}.
For this, we use that $\beta_{10/2}$ is the only element of the 
Adams-Novikov spectral sequence in the 54-stem with filtration $2$
that is not divisible by $\alpha_1$,
and that this element maps to $\Delta^2 h_2^2$ in the
Adams-Novikov spectral sequence for $\tmf$ \cite{Bauer08} \cite{Shimomura81}.
\index{topological modular forms}
\end{enumerate}

The $E_2$-page of the motivic (or classical) Adams spectral sequence is readily
computable by machine.  On the other hand, there seem to be real obstructions
to practical machine computation of the $E_2$-page of the classical
Adams-Novikov spectral sequence.
\index{machine computation}

On the other hand, let us suppose that we did have machine computed data on the 
$E_2$-page of the classical Adams-Novikov spectral sequence.
The rigid relationship between motivic stable homotopy groups and
the classical Adams-Novikov spectral sequence could be exploited to great
effect to determine the pattern of differentials in both the
Adams-Novikov and the Adams spectral sequences.  We anticipate that all
differentials through the 60-stem would be easy to deduce, and we would
expect to be able to compute well past the 60-stem.
For this reason, we foresee that the
next major breakthrough in computing stable stems will involve machine
computation of the Adams-Novikov $E_2$-page.

\section{How to use this manuscript}

The exposition of such a technical calculation creates some
inherent challenges.
In the end, the most important parts of this project
are the Adams charts from \cite{Isaksen14a},
the Adams-Novikov charts from \cite{Isaksen14e},
 and the 
tables in Chapter \ref{ch:table}.
These tables contain a wealth of detailed information in a concise form.
They summarize the essential calculational facts that allow
the computation to proceed.
In fact, the rest of the manuscript merely consists of detailed arguments
that support the claims in the tables.

For readers interested in specific calculational facts, 
the tables in Chapter \ref{ch:table} are the place to start.
These tables include references to more detailed proofs given elsewhere
in the manuscript.
The index also provides references to miscellaneous remarks about specific
elements.

\index{Adams chart}
\index{Adams-Novikov chart}
We draw attention to the following charts from \cite{Isaksen14a} 
and \cite{Isaksen14e}
that are of particular interest:
\begin{enumerate}
\item
A classical Adams $E_2$ chart with differentials.
\item
A classical Adams $E_\infty$ chart with hidden extensions
by $2$, $\eta$, and $\nu$.
\item
A motivic Adams $E_2$ chart.
\item
A motivic Adams $E_\infty$ chart with hidden $\tau$ extensions.
\item
A classical Adams-Novikov $E_2$ chart with differentials.
\item
A classical Adams-Novikov $E_\infty$ chart with hidden extensions
by $2$, $\eta$, and $\nu$.
\end{enumerate}
In each of the charts, we have been careful to document explicitly 
the remaining uncertainties in our calculations.

We also draw attention to the following tables from Chapter \ref{ch:table}
that are of particular interest:
\begin{enumerate}
\item
Tables \ref{tab:Ext-gen}, \ref{tab:Adams-d3}, \ref{tab:Adams-d4}, 
and \ref{tab:Adams-d5} give all of the Adams differentials.
\item
Table \ref{tab:Massey} gives some Massey products in the cohomology of
the motivic Steenrod algebra, including indeterminacies. 
\item
Table \ref{tab:diff-refs} summarizes previously known results about
classical Adams differentials.
\item
Table \ref{tab:bracket-refs} summarizes previously known results about
classical Toda brackets.
\item
Table \ref{tab:Toda} gives some Toda brackets, including indeterminacies.
\item
Table \ref{tab:extn-refs} summarizes previously known results about
hidden extensions in the classical stable homotopy groups.
\item
Table \ref{tab:Adams-ANSS} gives a correspondence between elements of the
classical Adams and Adams-Novikov $E_\infty$ pages.
\end{enumerate}
These tables include specific references to complete proofs of each fact.

\section{Notation}
\label{sctn:notation}

\index{notation}
\index{degree}
\index{weight}
By convention, we give degrees in the form $(s,f,w)$,
where $s$ is the stem; $f$ is the
Adams filtration; and $w$ is the motivic weight.  
An element of degree $(s,f,w)$ will appear on a chart at coordinates
$(s,f)$. 

We will use the following notation extensively:
\begin{enumerate}
\item
$\M_2$ is the mod 2 motivic cohomology of $\C$.
\index{motivic cohomology!of a point}
\item
$A$ is the mod 2 motivic Steenrod algebra over $\C$.
\index{Steenrod algebra}
\item
$A(2)$ is the $\M_2$-subalgebra of $A$ generated by $\Sq^1$, $\Sq^2$, and $\Sq^4$.
\item
$\Ext$ is the trigraded ring $\Ext_A(\M_2,\M_2)$.
\item
$A_{\cl}$ is the classical mod 2 Steenrod algebra.
\index{Steenrod algebra!classical}
\item
$\Ext_{\cl}$ is the bigraded ring $\Ext_{A_{\cl}}(\F_2,\F_2)$.
\item
$\pi_{*,*}$ is the $2$-complete motivic
stable homotopy ring over $\C$.
\index{stable stem}
\item
$E_r(S^{0,0})$ is the $E_r$-page of the motivic
Adams spectral sequence converging to $\pi_{*,*}$.
Note that $E_2(S^{0,0})$ equals $\Ext$.
\index{Adams spectral sequence}
\item
For $x$ in $E_\infty(S^{0,0})$, write
$\{ x\}$ for the set of all elements of $\pi_{*,*}$ that are 
represented by $x$.
\item
$\tau$ is both an element of $\M_2$, as well as the element
of $\pi_{0,-1}$ that it represents in the motivic Adams spectral sequence.
\index{tau@$\tau$}
\item
$C\tau$ is the cofiber of $\tau: S^{0,-1} \map S^{0,0}$.
\index{cofiber of tau@cofiber of $\tau$}
\item
$H^{*,*}(C\tau)$ is the mod 2 motivic cohomology of $C\tau$.
\item
$\pi_{*,*}(C\tau)$ are the 2-complete motivic stable homotopy groups of $C\tau$,
which form a $\pi_{*,*}$-module.
\item
$E_r(C\tau)$
is the $E_r$-page of the 
motivic Adams spectral sequence that converges to
$\pi_{*,*}(C\tau)$.
Note that $E_r(C\tau)$ is an $E_r(S^{0,0})$-module,
and $E_2(C\tau)$ is equal to $\Ext_A (H^{*,*}( C\tau ), \M_2)$.
\item
For $x$ in $E_2(S^{0,0})$,
write $x$ again (or $x_{C\tau}$ when absolutely necessary for clarity)
for the image of $x$ under the map
$E_2(S^{0,0}) \map E_2(C\tau)$ induced by
the inclusion $S^{0,0} \map C\tau$ of the bottom cell.
\item
For $x$ in $E_2(S^{0,0})$ such that $\tau x = 0$,
write $\ol{x}$ 
for a pre-image of $x$ under the map
$E_2(C\tau) \map E_2(S^{0,0})$ induced by
the projection $C\tau \map S^{1,-1}$ to the top cell.
There may be some indeterminacy in the choice of $\ol{x}$.
See Section \ref{subsctn:E2} and Table \ref{tab:Ctau-ambiguous}
for further discussion about
these choices.
\item
$E_r(S^0; BP)$ is the $E_r$-page of the classical
Adams-Novikov spectral sequence.
\index{Adams-Novikov spectral sequence}
\item
$E_r(S^{0,0}; BPL)$ is the $E_r$-page of the motivic Adams-Novikov
spectral sequence converging to $\pi_{*,*}$.
\item
$E_r(C\tau; BPL)$ is the $E_r$-page of the motivic Adams-Novikov
spectral sequence converging to $\pi_{*,*}(C\tau)$.
\end{enumerate}

Table \ref{tab:notation}
lists some traditional notation for specific elements of the motivic
stable homotopy ring.  We will use this notation whenever it is convenient.
A few remarks about these elements are in order:
\begin{enumerate}
\item
See \cite{HKO11}*{p.\ 28} for a geometric construction of $\tau$.
\index{tau@$\tau$}
\item
Over fields that do not contain $\sqrt{-1}$, the motivic stable homotopy
group $\pi_{0,0}$ contains an element that is usually called $\epsilon$.  
\index{epsilon@$\epsilon$}
Our use of the symbol $\epsilon$
follows Toda \cite{Toda62}.  
This should cause no confusion since we are working only
over $\C$.
\item
The element $\eta_4$ is defined to be the element of $\{ h_1 h_4 \}$ 
such that $\eta^3 \eta_4$ is zero. 
\index{eta4@$\eta_4$}
(The other element of $\{ h_1 h_4 \}$
supports infinitely many multiplications by $\eta$.)
\item
Similarly, $\eta_5$ is defined to be the element of $\{ h_1 h_5 \}$
such that $\eta^7 \eta_5$ is zero.
\index{eta5@$\eta_5$}
\end{enumerate}

The element $\theta_{4.5}$ deserves additional discussion.
\index{theta4.5@$\theta_{4.5}$}
We have perhaps presumptuously adopted this notation
for an element of $\{ h_3^2 h_5 \} = \{ h_4^3 \}$.
This element is called $\alpha$ in \cite{BJM84}.
To construct $\theta_{4.5}$,
first choose an element $\theta_{4.5}'$ in $\{ h_3^2 h_5 \}$ such that
$4 \theta_{4.5}'$ is contained in $\{ h_0 h_5 d_0 \}$.
If $\eta \theta_{4.5}'$ is contained in $\{ h_1 h_5 d_0 \}$,
then add an element of $\{ h_5 d_0 \}$ to $\theta_{4.5}'$ and obtain
an element $\theta''_{4.5}$ such that $\eta \theta''_{4.5}$
is contained in $\{B_1\}$.
Next, if $\sigma \theta''_{4.5}$ is contained in
$\{ \tau h_1 h_3 g_2 \}$, then add an
element of $\{\tau h_1 g_2 \}$ to $\theta''_{4.5}$
to obtain an element $\theta_{4.5}$ such that
$\sigma \theta_{4.5}$ is detected in Adams filtration at least 8.
Note that $\sigma \theta_{4.5}$ may in fact be zero.

This does not specify just a single element of $\{h_3^2 h_5\}$.
The indeterminacy in the definition contains even multiples of $\theta_{4.5}$
and the element $\{\tau w \}$, but this indeterminacy does not present
much difficulty.

In addition, we do not know whether $\nu \theta_{4.5}$
is contained in $\{B_2\}$.  We know from Lemma \ref{lem:nu-h3^2h5}
that there is an element $\theta$ of $\{ h_3^2 h_5 \}$
such that $\nu \theta$ is contained in $\{B_2\}$.
It is possible that $\theta$ is of the form $\theta_{4.5} + \beta$,
where $\beta$ belongs to $\{ h_5 d_0 \}$.
We can conclude only that either $\nu \theta_{4.5}$ or
$\nu (\theta_{4.5} + \beta)$ belongs to $\{B_2\}$.

For more details on the properties of $\theta_{4.5}$, see 
Examples \ref{ex:hidden-cross-1} and \ref{ex:hidden-cross-2},
as well as Lemmas \ref{lem:eta-h3^2h5} and \ref{lem:nu-h3^2h5}.

\section{Acknowledgements}
The author would like to acknowledge the invaluable assistance that he received
in the preparation of this manuscript.

Robert Bruner
\index{Bruner, Robert}
generously shared his
extensive library of machine-assisted classical computations.
Many of the results in this article 
would have been impossible to discover without the guidance
of this data.
Discussions with Robert Bruner led to the realization that the cofiber
of $\tau$ is a critical computation.

Dan Dugger's 
\index{Dugger, Daniel}
machine-assisted
motivic computations were also essential.

Martin Tangora 
\index{Tangora, Martin}
offered several key insights into technical 
classical May spectral sequence computations.

Zhouli Xu
\index{Xu, Zhouli}
listened to and critiqued a number of the more subtle arguments
involving delicate properties of Massey products and Toda brackets.
Conversations with Bert Guillou 
\index{Guillou, Bert}
also helped to clarify many of the arguments.

Peter May
\index{May, Peter}
supplied some historical motivation and helped the author understand
how some of the bigger ideas fit together.

Mark Behrens
\index{Behrens, Mark}
encouraged the author to dare to reach beyond the 50-stem.
Similarly, Mike Hopkins
\index{Hopkins, Mike}
and Haynes Miller
\index{Miller, Haynes}
were also supportive.

Finally, and most importantly, the author
is privileged to have discussed some of these results with
Mark Mahowald \index{Mahowald, Mark}
shortly before he passed away.

\chapter{The cohomology of the motivic Steenrod algebra}
\label{ch:May}



This chapter
applies the motivic May spectral sequence 
\index{May spectral sequence}
to obtain the 
cohomology of the motivic Steenrod algebra through the 70-stem.
We will freely borrow results from the classical
May spectral sequence, i.e., from  \cite{May64} and \cite{Tangora70a}.
We will also need
some facts from the cohomology of the classical Steenrod
algebra that have been verified only by machine \cite{Bruner97} 
\cite{Bruner04}.

The $\Ext$ chart in \cite{Isaksen14a} is an essential companion to this chapter.
\index{Ext@$\Ext$!chart}

\subsection*{Outline}

We begin in Section \ref{sctn:May-SS} with a review of the
basic facts about the motivic Steenrod algebra over $\C$,
\index{Steenrod algebra}
the motivic May spectral sequence over $\C$, and the cohomology
of the motivic Steenrod algebra.
\index{Steenrod algebra!cohomology}
A critical ingredient is May's Convergence Theorem \cite{May69}*{Theorem 4.1},
\index{Convergence Theorem!May}
which allows the computation of Massey products in $\Ext_A(\F_2,\F_2)$
via the differentials in the May spectral sequence.
\index{Massey product}
We will thoroughly review this result in Section \ref{sctn:Massey-May}.

Next, in Section \ref{sctn:May-diff} we describe the main points in
computing the motivic May spectral sequence through the
70-stem.  We rely heavily on results of \cite{May64} and \cite{Tangora70a}, 
but we must also
compute several exotic differentials, i.e., 
differentials that do not occur in the classical situation.
\index{May spectral sequence!differential}

Having obtained the $E_\infty$-page of the motivic May spectral sequence,
the next step is to consider hidden extensions.
\index{May spectral sequence!hidden extension}
In Section \ref{sctn:hidden-extn},
we are able to resolve every possible hidden extension by
$\tau$, $h_0$, $h_1$, and $h_2$ through the range that we are considering,
i.e., up to the 70-stem.
The primary tools here are:
\begin{enumerate}
\item
shuffling relations among Massey products.
\index{Massey product}
\item
Steenrod operations on $\Ext$ groups in the sense of \cite{May70}.
\index{Steenrod operation!algebraic}
\item
classical hidden extensions established by machine computation \cite{Bruner97}.
\index{machine computation}
\end{enumerate}

Chapter \ref{ch:table} contains a series of tables that are essential
for bookkeeping throughout the computations:
\begin{enumerate}
\item
Tables \ref{tab:May-E2-gen} and \ref{tab:May-E2-reln} 
describe the May $E_2$-page in terms of generators and relations
and give the values of the May $d_2$ differential.
\item
Tables \ref{tab:May-E4} through \ref{tab:May-higher}
describe the May differentials $d_r$ for $r \geq 4$.
\item
Table \ref{tab:Ext-gen} lists the multiplicative generators of the 
cohomology of the motivic Steenrod algebra over $\C$.
\item
Table \ref{tab:May-Einfty-temp} lists multiplicative generators
of the May $E_\infty$-page that become decomposable in $\Ext$
by hidden relations.
\item
Table \ref{tab:Ext-ambiguous} 
lists all examples of multiplicative generators of the May $E_\infty$-page
that represent more than one element in $\Ext$.
See Section \ref{subsctn:May-Einfty} for more explanation.
\item
Tables \ref{tab:May-tau} through \ref{tab:May-misc} list all
extensions by $\tau$, $2$, $\eta$, and $\nu$ that are hidden
in the May spectral sequence.  A few miscellaneous hidden
extensions are included as well.
\item
Table \ref{tab:Massey} summarizes some Massey products.
\index{Massey product}
\item
Table \ref{tab:Ctau-matric} summarizes some matric Massey products.
\index{Massey product!matric}
\end{enumerate}

Table \ref{tab:Massey} deserves additional explanation.  
In all cases, we have been careful to describe
the indeterminacies accurately.  
\index{Massey product!indeterminacy}
The fifth column refers to an argument
for establishing the Massey product, in one of the following forms:
\begin{enumerate}
\item
An explicit proof given elsewhere in this manuscript.
\item
A May differential implies the Massey product via
May's Convergence Theorem \ref{thm:3-converge}.
\index{Convergence Theorem!May}
\end{enumerate}
The last column of Table \ref{tab:Massey} lists the specific results
that rely on each Massey product.  Frequently, these results are just
a Toda bracket from Table \ref{tab:Toda}.
\index{Toda bracket}

\subsection*{Some examples}

In this section, we describe several of the computational 
intricacies that are established later in the chapter.  
We also present a few questions that deserve further study.

\begin{ex}
\label{ex:h2g^2}
An obvious question, which already arose in \cite{DI10}, is to find
elements that are killed by $\tau^n$ but not by $\tau^{n-1}$,
for various values of $n$.

The element $h_2 g^2$, which is multiplicatively indecomposable,
is the first example
of an element that is killed by $\tau^3$ but not by $\tau^2$.
This occurs because of a hidden extension
$\tau \cdot \tau h_2 g^2 = P h_1^4 h_5$.
There is an analogous relation $\tau^2 h_2 g = P h_4$
that is not hidden.
We do not know if this generalizes to a family of relations of the
form $\tau^2 h_2 g^{2^k} = P h_1^{2^{k+2}-4} h_{k+4}$.

We will show in Chapter \ref{ch:Adams-diff} that $h_2 g^2$
\index{g2@$g^2$} represents an element
in motivic stable homotopy that is killed by $\tau^3$ but not by
$\tau^2$.  This requires an analysis of the motivic Adams spectral
sequence.
In the vicinity of $g^{2^k}$, one might hope to find elements that 
are killed by $\tau^n$ but not by $\tau^{n-1}$,
for large values of $n$.
\index{tau@$\tau$!torsion}
\index{g@$g$!powers of}
\end{ex}

\begin{ex}
\label{ex:h3e0}
Classically, there is a relation $h_3 \cdot e_0 = h_1 h_4 c_0$
\index{h3e0@$h_3 e_0$}
\index{h4c0@$h_4 c_0$}
in the 24-stem 
of the cohomology of the Steenrod algebra.  This relation is hidden on the
$E_\infty$-page of the May spectral sequence.
We now give a proof of this classical relation that uses the cohomology
of the motivic Steenrod algebra. 

Motivically, it turns out that $h_2^3 e_0$ is non-zero,
even though it is zero classically.  
This follows from the hidden extension $h_0 \cdot h_2^2 g = h_1^3 h_4 c_0$
(see Lemma \ref{lem:hidden-h0h2^2g}).
The relation $h_2^3 = h_1^2 h_3$ then implies that
$h_1^2 h_3 e_0$ is non-zero.  Therefore,
$h_3 e_0$ is non-zero as well, and the only possibility is that
$h_3 e_0 = h_1 h_4 c_0$.
\end{ex}

\begin{ex}
\label{ex:h1^7h5c0}
Notice the hidden extension
$h_0 \cdot h_2^2 g^2 = h_1^7 h_5 c_0$ (and similarly,
the hidden extension $h_0 \cdot h_2^2 g = h_1^3 h_4 c_0$
that we discussed above in Example \ref{ex:h3e0}).
\index{g2@$g^2$}
\index{h5c0@$h_5 c_0$}

The next example in this family is
$h_0 \cdot h_2^2 g^3 = h_1^9 D_4$,
\index{g3@$g^3$}
\index{D4@$D_4$}
which at first does not
appear to fit a pattern.  However,
there is a hidden extension $c_0 \cdot i_1 = h_1^4 D_4$,
\index{May spectral sequence!hidden extension!c0@$c_0$}
\index{i1@$i_1$}
so we have
$h_0 \cdot h_2^2 g^3 = h_1^5 c_0 i_1$.
Presumably, 
there is an infinitely family of hidden extensions
in which $h_0 \cdot h_2^2 g^k$ equals 
some power of $h_1$ times $c_0$ times an element related to 
$\Sq^0$ of elements associated to the image of $J$.
\index{Steenrod operation!algebraic}
\index{J@$J$!image of}

It is curious that $c_0 \cdot i_1$ is divisible by $h_1^4$.
An obvious question for further study is to determine the
$h_1$-divisibility of $c_0$ times elements related to
$\Sq^0$ of elements associated to the image of $J$.
For example, what is the largest power of $h_1$ that divides
$g^2 i_1$?
\end{ex}

\begin{ex}
\label{ex:g^2}
Beware that $g^2$ and $g^3$ are not actually elements 
\index{g@$g$!powers of}
of the 40-stem and 60-stem respectively.
Rather, it is only $\tau g^2$ and $\tau g^3$ that exist
(similarly, $g$ does not exist in the 20-stem, but $\tau g$ does exist).
The reason is that there are May differentials
taking $g^2$ to $h_1^8 h_5$, and $g^3$ to $h_1^6 i_1$.
In other words,
$\tau g^2$ and $\tau g^3$ are multiplicatively indecomposable elements.
More generally, we anticipate that the element $g^k$ does not exist
because it supports a May differential related to $\Sq^0$ of
an element in the image of $J$.
\index{J@$J$!image of}
\end{ex}

\begin{ex}
There is an isomorphism from the cohomology of the classical Steenrod
algebra to the cohomology of the motivic Steenrod algebra over $\C$
concentrated in degrees of the form
$\left( 2s+f, f, s+f \right)$.
This isomorphism preserves all higher structure,
including algebraic Steenrod operations and Massey products.
See Section \ref{subsctn:Chow-deg-zero} for more details.
\index{Massey product}
\index{Steenrod operation!algebraic}

For example, the existence of the classical element $P h_2$
\index{Ph2@$P h_2$}
immediately implies that $h_3 g$
\index{h3g@$h_3 g$}
 must be non-zero in the motivic setting;
no calculations are necessary.

Another example is that
$h_1^{2^k-1} h_{k+2}$ is non-zero motivically for all $k \geq 1$,
because $h_0^{2^k-1} h_{k+1}$ is non-zero classically.
\end{ex}

\begin{ex}
Many elements are $h_1$-local in the sense that they
\index{h1@$h_1$!localization}
support infinitely many multiplications by $h_1$.  
In fact, any product of the 
symbols $h_1$, $c_0$, $P$, $d_0$, $e_0$, and $g$, if it
exists, is non-zero.  This is detectable in the cohomology
of motivic $A(2)$ \cite{Isaksen09}.

Moreover, the element $B_1$
\index{B1@$B_1$}
in the 46-stem is $h_1$-local, and any
product of $B_1$ with elements in the previous paragraph is again
$h_1$-local.  We explore $h_1$-local elements in great detail in \cite{GI14}.
\end{ex}

\begin{ex}
\index{wedge subalgebra}
The motivic analogue of the ``wedge'' subalgebra \cite{MT68}
appears to be more complicated than the classical version.
For example, none of the wedge elements support
multiplications by $h_0$ in the classical case.  
Motivically, many wedge elements do support $h_0$ multiplications.
The results in this chapter naturally call for further study of 
the structure of the motivic wedge.  
\end{ex}

\section{The motivic May spectral sequence}
\label{sctn:May-SS}

The following two deep theorems of Voevodsky are the starting points
of our calculations.

\index{motivic cohomology!of a point}

\begin{thm}[\cite{Voevodsky03b}]
$\M_2$ is the bigraded ring $\F_2[\tau]$, where
$\tau$ has bidegree $(0,1)$.
\end{thm}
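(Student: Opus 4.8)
The plan is to compute the bigraded group $H^{p,q}(\mathrm{Spec}\,\C;\F_2)$ in every bidegree and then read off the ring structure, where by convention $H^{p,q}$ carries cohomological degree $p$ and weight $q$, so that $\M_2 = H^{*,*}(\mathrm{Spec}\,\C;\F_2)$ and the asserted generator $\tau$ lives in $H^{0,1}$. I would organize the calculation according to the position of $(p,q)$ relative to the diagonal $p = q$, since the motivic cohomology of a point behaves very differently on the two sides.

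First I would dispose of the region $p > q$. The motivic complex computing $H^{*,q}(-;\F_2)$ is, as a complex of Nisnevich sheaves, concentrated in cohomological degrees $\leq q$, so evaluating it on the point $\mathrm{Spec}\,\C$ (which has dimension $0$) forces $H^{p,q}(\mathrm{Spec}\,\C;\F_2) = 0$ for $p > q$. On and below the diagonal, i.e.\ for $p \leq q$, I would invoke the Beilinson--Lichtenbaum identification, which is exactly the content of the norm residue isomorphism theorem (the Milnor conjecture at the prime $2$) proved by Voevodsky: it yields a canonical isomorphism $H^{p,q}(\mathrm{Spec}\,\C;\F_2) \cong H^{p}_{\mathrm{et}}(\mathrm{Spec}\,\C;\mu_2^{\otimes q})$ for $p \leq q$.

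It then remains to compute \'etale cohomology over $\C$. Since $\C$ is algebraically closed its absolute Galois group is trivial, so $H^{p}_{\mathrm{et}}(\mathrm{Spec}\,\C;-)$ is concentrated in degree $p = 0$; and because $\mu_2^{\otimes q} \cong \F_2$ as a (trivially acted upon) Galois module, we get $H^{0}_{\mathrm{et}}(\mathrm{Spec}\,\C;\mu_2^{\otimes q}) = \F_2$ for every $q \geq 0$ and zero in positive degrees. Assembling the three regions, the only nonzero groups are $H^{0,q}(\mathrm{Spec}\,\C;\F_2) = \F_2$ for $q \geq 0$. I would finish by choosing the generator $\tau$ of $H^{0,1}$ corresponding to the nonzero class in $\mu_2$ and checking multiplicatively that its powers $\tau^{q}$ generate $H^{0,q}$, so that $\M_2 = \F_2[\tau]$ with $\tau$ of bidegree $(0,1)$, as claimed. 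The genuine obstacle here is not the bookkeeping but the Beilinson--Lichtenbaum input itself: everything rests on Voevodsky's resolution of the Milnor conjecture, which is precisely why this is a deep theorem rather than a formal computation.
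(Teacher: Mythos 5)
The paper gives no proof of this statement at all: it is imported directly from \cite{Voevodsky03b}, so there is no internal argument for your sketch to diverge from. Your derivation --- vanishing of $H^{p,q}(\mathrm{Spec}\,\C;\F_2)$ for $p>q$ over a field (plus $\Z(q)=0$ for $q<0$), the Beilinson--Lichtenbaum identification in the range $p\le q$ (which is exactly Voevodsky's resolution of the Milnor conjecture at the prime $2$, the content of the cited paper), and the triviality of \'etale cohomology of an algebraically closed field, assembled multiplicatively to get $\F_2[\tau]$ with $\tau$ in bidegree $(0,1)$ --- is the standard argument that this citation points to, and it is correct.
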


\index{Steenrod algebra}

\begin{thm}[\cite{Voevodsky03a} \cite{Voevodsky10}]
The motivic Steenrod algebra $A$ is the $\M_2$-algebra generated by
elements $\Sq^{2k}$ and $\Sq^{2k-1}$ for all $k \geq 1$, of bidegrees
$(2k,k)$ and $(2k-1,k-1)$ respectively, and satisfying the following
relations for $a< 2b$:
\[ 
\Sq^a \Sq^b = 
\sum_{c} \binom{b-1-c}{a-2c} \tau^{?}\Sq^{a+b-c} \Sq^c.
\]
\end{thm}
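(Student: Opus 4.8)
The statement to prove is the description of the motivic Steenrod algebra $A$ over $\C$: it is generated over $\M_2$ by the squaring operations $\Sq^{2k}$ and $\Sq^{2k-1}$, with the stated bidegrees, subject to the motivic Adem relations with the $\tau$-power corrections. My understanding is that this is an attributed citation theorem (to Voevodsky), so the intended "proof" in this manuscript is really a reduction to the dual statement already recorded in the introduction together with Voevodsky's computation of the motivic Steenrod operations. Let me sketch how I would organize such a reduction.

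The plan is to derive the generator-and-relation presentation from the dual description of the Steenrod algebra that was already displayed in the introduction, namely the $\M_2$-Hopf-algebroid
\[
\frac{\M_2[\tau_0,\tau_1,\ldots,\xi_1,\xi_2,\ldots]}{\tau_i^2 = \tau\xi_{i+1}}
\]
with its given reduced coproduct. First I would dualize: $A$ is the $\M_2$-linear dual of this coalgebra, so the monomial basis of the dual Steenrod algebra gives, by duality, an $\M_2$-basis of $A$ of admissible monomials in operations $\Sq^I$. The bidegrees $(2k,k)$ and $(2k-1,k-1)$ for $\Sq^{2k}$ and $\Sq^{2k-1}$ are read off from the bidegrees of the dual generators $\xi_i$ and $\tau_i$; this is the bookkeeping step where the motivic weight enters, and it is essentially forced once one fixes that $\tau$ has bidegree $(0,1)$ and $\tau_i^2=\tau\xi_{i+1}$.

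Next I would establish that the $\Sq^{2k}$ and $\Sq^{2k-1}$ generate $A$ as an $\M_2$-algebra. This follows because their duals $\xi_i,\tau_i$ generate the dual coalgebra, so every admissible $\Sq^I$ is a polynomial in the generating operations; equivalently one shows the subalgebra they generate already contains a full admissible basis. Then the motivic Adem relations
\[
\Sq^a\Sq^b = \sum_c \binom{b-1-c}{a-2c}\,\tau^{?}\,\Sq^{a+b-c}\Sq^c, \qquad a<2b,
\]
are obtained by expanding the inadmissible product $\Sq^a\Sq^b$ in the admissible basis. Concretely, each coefficient is computed by pairing the product operation against dual monomials via the coproduct; the binomial coefficients are exactly the classical Adem coefficients, and the exponent "$?$" on $\tau$ is the weight discrepancy between the two sides, determined by forcing the bidegrees on both sides of each monomial to agree. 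I would emphasize that the $\tau$-exponent is not free: it is pinned down uniquely by the weight grading, since each admissible monomial has a fixed weight and the left side has a fixed weight.

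The main obstacle is the precise determination of the $\tau$-power corrections in the Adem relations and verifying that the stated relations are not merely \emph{satisfied} but \emph{complete}, i.e.\ that they suffice to reduce every inadmissible monomial to admissible form with no further relations needed. Completeness is the delicate point: one must check that the admissible monomials remain $\M_2$-linearly independent after imposing the relations, so that the presentation has the right size. I would handle this by a dimension/basis count against the dual algebra—since the dual is a free $\M_2$-module with an explicit monomial basis, $A$ is free of the same rank in each bidegree, and the admissible monomials realize exactly that rank. Because this is a cited result of Voevodsky, in practice the manuscript may simply invoke \cite{Voevodsky10} and \cite{Voevodsky03a} for the hard completeness statement and use the dual presentation in the introduction as the working description; the genuinely new content for our purposes is only the bookkeeping of weights, which is routine.
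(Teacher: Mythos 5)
The paper itself offers no proof of this statement: it is one of the ``two deep theorems of Voevodsky'' that serve as black-box inputs to the whole computation, cited to \cite{Voevodsky03a} and \cite{Voevodsky10}, with only the side remark that the exponent $?$ equals $0$ or $1$ as forced by the weight grading. Your framing of the result as an attributed citation theorem, and your observation that the $\tau$-power is pinned down uniquely by weights, therefore match the paper's actual treatment.

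However, the reduction you sketch in the middle of the proposal has two genuine problems if it were read as a proof. First, dualizing the monomial basis of the dual Steenrod algebra does \emph{not} produce the admissible monomials $\Sq^I$; it produces the Milnor basis, and the statement that admissible monomials also form an $\M_2$-basis of $A$ requires a separate triangularity argument comparing the two bases --- this is precisely part of the content you are trying to establish, not a formal consequence of duality. Second, and more seriously, the dual presentation displayed in the introduction is itself only a cited theorem of the same depth: formally dualizing a stated Hopf algebroid cannot recover the assertion that the topologically defined algebra of bistable operations coincides with this dual, nor can it locate $\Sq^{2k}$ and $\Sq^{2k-1}$ inside it. Knowing how the operations $\Sq^i$ pair against the elements $\xi_i$ and $\tau_i$ is part of Voevodsky's computation of operations on motivic cohomology (ultimately resting on the cohomology of motivic Eilenberg-Mac Lane objects), not an output of weight bookkeeping. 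So your argument trades one citation for an equivalent one rather than eliminating it; that is acceptable as an organizational remark --- and consistent with how the paper proceeds --- but the proposal reads in places as if the presentation could be extracted from the dual by duality alone, which it cannot.
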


\index{Adem relation}

The symbol $?$ stands for either $0$ or $1$, depending on which value
makes the formula balanced in weight.
See \cite{DI10} for a more detailed discussion of the motivic Adem relations.

The $A$-module structure on $\M_2$ is trivial, i.e.,
every $\Sq^k$ acts by zero.  This follows for simple degree reasons.

\index{Steenrod algebra!dual}
It is often helpful to work with the dual motivic Steenrod algebra
$A_{*,*}$
\cite{Borghesi07}*{Section 5.2}
\cite{Voevodsky03a} 
\cite{Voevodsky10},
which equals
\[
\frac{\M_2 [ \tau_0, \tau_1, \ldots, \xi_1, \xi_2, \ldots ] }
{ \tau_i^2 = \tau \xi_{i+1}}.
\]
The reduced coproduct in $A_{*,*}$ is determined by
\begin{align*}
\tilde{\phi}_* (\tau_k) &= 
\xi_k \otimes \tau_0 + \xi_{k-1}^2 \otimes \tau_1 + \cdots 
+ \xi_{k-i}^{2^i} \otimes \tau_i + \cdots + \xi_1^{2^{k-1}} \otimes \tau_{k-1} \\
\tilde{\phi}_* (\xi_k) &=
\xi_{k-1}^2 \otimes \xi_1 + \xi_{k-2}^4 \otimes \xi_2 + \cdots
+ \xi_{k-i}^{2^i} \otimes \xi_i + \cdots + \xi_1^{2^{k-1}} \otimes \xi_{k-1}.
\end{align*}

\subsection{$\Ext$ groups}
\label{subsctn:Ext-group}

\index{Ext@$\Ext$}
We are interested in computing
$\Ext_A(\M_2,\M_2)$, which we abbreviate as $\Ext$.
This is a trigraded object.  We will consistently use degrees of the
form $(s,f,w)$, where:
\index{degree}
\begin{enumerate}
\item
$f$ is the Adams filtration, i.e., the homological degree.
\item
$s+f$ is the internal degree, i.e., corresponds to the
first coordinate in the bidegrees of $A$.
\item
$s$ is the stem, i.e., the internal degree minus
the Adams filtration.
\item
\index{weight}
$w$ is the weight.
\end{enumerate}

Note that $\Ext^{*,0,*} = \Hom_A^{*,*}(\M_2,\M_2)$
is dual to $\M_2$.  We will abuse notation and write
$\M_2$ for this dual.  Beware that now $\tau$, which is really
the dual of the $\tau$ that we discussed earlier, has degree $(0,0,-1)$.
Since $\Ext$ is a module over $\Ext^{*,0,*}$, i.e., over $\M_2$,
we will always describe $\Ext$ as an $\M_2$-module.

The following result is the key tool for comparing classical
and motivic computations.  The point is that the motivic
and classical computations become the same after inverting $\tau$.
\index{tau@$\tau$!localization}

\begin{prop}[\cite{DI10}]
\label{prop:compare-ext}
There is an isomorphism of rings
\[
\Ext \otimes_{\M_2} \M_2 [\tau^{-1}] \iso
\Ext_{A_{\cl}} \otimes_{\F_2} \F_2 [\tau,\tau^{-1}].
\]
\end{prop}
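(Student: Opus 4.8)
The plan is to reduce the statement to a purely algebraic comparison of cobar complexes (or equivalently, of the dual Steenrod algebras) after inverting $\tau$. The starting point is the description of the dual motivic Steenrod algebra $A_{*,*}$ given above, namely $\M_2[\tau_0,\tau_1,\ldots,\xi_1,\xi_2,\ldots]/(\tau_i^2 = \tau\xi_{i+1})$. The key observation is that the relation $\tau_i^2 = \tau\xi_{i+1}$ becomes, after inverting $\tau$, an expression $\xi_{i+1} = \tau^{-1}\tau_i^2$ that eliminates the polynomial generators $\xi_2, \xi_3, \ldots$ in favor of the exterior-type generators $\tau_i$. First I would therefore show that $A_{*,*}\otimes_{\M_2}\M_2[\tau^{-1}]$ is, as an algebra over $\M_2[\tau^{-1}] = \F_2[\tau,\tau^{-1}]$, a polynomial algebra on $\xi_1,\tau_0,\tau_1,\ldots$, since every other $\xi_{i+1}$ is now a unit multiple of $\tau_i^2$.

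Next I would match this $\tau$-inverted dual algebra with the classical dual Steenrod algebra $A_{\cl,*}=\F_2[\zeta_1,\zeta_2,\ldots]$ after extending scalars to $\F_2[\tau,\tau^{-1}]$. The point is that the classical $\zeta_k$ should correspond to $\tau_{k-1}$ (up to a unit power of $\tau$ dictated by weight), so that the polynomial generators $\xi_1,\tau_0,\tau_1,\ldots$ of the $\tau$-inverted motivic algebra biject with $\xi_1,\zeta_1,\zeta_2,\ldots$. One must check that this bijection of generators is compatible with the coproducts: reducing the motivic coproduct formulas for $\tilde{\phi}_*(\tau_k)$ and $\tilde{\phi}_*(\xi_k)$ modulo the now-invertible relation, and tracking the weights so that the power of $\tau$ attached to each term is the correct one, should reproduce the classical Milnor coproduct. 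This identifies $A_{*,*}\otimes_{\M_2}\M_2[\tau^{-1}]$ with $A_{\cl,*}\otimes_{\F_2}\F_2[\tau,\tau^{-1}]$ as Hopf algebroids over $\F_2[\tau,\tau^{-1}]$.

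Granting the Hopf-algebroid isomorphism, the $\Ext$ comparison follows formally: localization at the central element $\tau$ is exact, so $\Ext\otimes_{\M_2}\M_2[\tau^{-1}]$ is computed by the $\tau$-inverted cobar complex, which by the previous paragraph agrees with the cobar complex computing $\Ext_{A_{\cl}}\otimes_{\F_2}\F_2[\tau,\tau^{-1}]$. Since $\tau$ acts as a unit and the two differential graded algebras coincide, the induced map on cohomology is a ring isomorphism. Because this is exactly the content of Proposition~\ref{prop:compare-ext} as established in \cite{DI10}, I would cite that reference for the technical verification rather than reproduce it.

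The main obstacle I anticipate is the careful bookkeeping of weights in the coproduct comparison. The subtlety is that each motivic generator carries a weight grading, and the relation $\tau_i^2=\tau\xi_{i+1}$ forces specific powers of $\tau$ to appear when one rewrites $\xi_{i+1}$ in terms of $\tau_i^2$; getting these powers exactly right is what ensures the isomorphism is weight-homogeneous over $\F_2[\tau,\tau^{-1}]$ and that the classical coproduct emerges cleanly. Verifying that the coproduct formulas for the higher $\xi_k$ reduce correctly — rather than merely checking the generators — is the step where a genuine calculation is hidden, and it is the reason one leans on the already-published proof in \cite{DI10} rather than redoing it here.
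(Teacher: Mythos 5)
Your overall strategy---invert $\tau$ in the dual motivic Steenrod algebra, identify the result with the base-changed classical dual Steenrod algebra compatibly with the coproducts, and then pass this through the $\tau$-localized cobar complex using exactness of localization---is exactly the argument of \cite{DI10} that the paper invokes; the paper itself records only the non-dual form of the key input, namely $A[\tau^{-1}] \iso A_{\cl} \otimes_{\F_2} \F_2[\tau,\tau^{-1}]$, as in the proof of Proposition \ref{prop:may-compare}. However, your bookkeeping of generators contains a concrete error. The relation $\tau_i^2 = \tau \xi_{i+1}$ holds for all $i \geq 0$; in particular the case $i=0$ reads $\tau_0^2 = \tau \xi_1$, so after inverting $\tau$ the generator $\xi_1$ is eliminated along with all the higher $\xi$'s. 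Thus $A_{*,*} \otimes_{\M_2} \M_2[\tau^{-1}]$ is polynomial over $\F_2[\tau,\tau^{-1}]$ on $\tau_0, \tau_1, \tau_2, \ldots$ alone; it is \emph{not} polynomial on $\xi_1, \tau_0, \tau_1, \ldots$ as you claim, since on that list of classes the relation $\tau_0^2 = \tau \xi_1$ survives and they are not algebraically independent.

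Your matching with the classical side contains the mirror image of the same miscount: the classical dual Steenrod algebra is $\F_2[\zeta_1, \zeta_2, \ldots]$, with no additional polynomial generator ``$\xi_1$'' alongside the $\zeta_k$ (in degree $1$ one has $\zeta_1 = \xi_1$). The correct dictionary is $\tau_{k-1} \leftrightarrow \zeta_k$, both of topological degree $2^k - 1$, under which the motivic $\xi_k = \tau^{-1}\tau_{k-1}^2$ corresponds to a unit multiple of $\zeta_k^2$. Because your two errors are parallel, they cancel numerically, and once both spurious copies of $\xi_1$ are deleted the rest of your argument---the coproduct comparison with its $\tau$-weight bookkeeping, and the identification of the localized cobar complexes---goes through verbatim and reproduces the proof of \cite{DI10} that the paper cites.
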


\subsection{The motivic May spectral sequence}

\index{May spectral sequence}
The classical May spectral sequence arises by filtering the
classical Steenrod algebra by powers of the augmentation ideal.
The same approach can be applied in the motivic setting to 
obtain the motivic May spectral sequence.  
Details appear in \cite{DI10}.  Next we review the main points.

The motivic May spectral
sequence is quadruply graded.  We will always use gradings
of the form $(m,s,f,w)$, where $m$ is the May filtration,
and the other coordinates are as explained in Section \ref{subsctn:Ext-group}.

Let $\Gr(A)$ be the associated graded algebra of $A$
with respect to powers of the augmentation ideal.

\begin{thm}
\label{thm:may}
The motivic May spectral sequence takes the form
\[
E_2 = 
\Ext^{(m,s,f,w)}_{\Gr(A)}(\M_2,\M_2) \Rightarrow
\Ext_A^{(s,f,w)}(\M_2,\M_2).
\]
\end{thm}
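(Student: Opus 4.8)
The plan is to produce the spectral sequence from the filtration of $A$ by powers of its augmentation ideal, following the classical construction of May and the motivic adaptation recorded in \cite{DI10}. First I would set $I = \ker(A \map \M_2)$ and form the decreasing multiplicative filtration $A \supseteq I \supseteq I^2 \supseteq \cdots$. The structural claim to verify is that this is a filtration of $\M_2$-Hopf-algebras: it is closed under multiplication by definition, and the coproduct $\psi$ respects it in the sense that $\psi(I^m) \subseteq \sum_{a+b=m} I^a \otimes I^b$ (with the convention $I^0 = A$), so that the associated graded $\Gr(A) = \bigoplus_m I^m/I^{m+1}$ is again a graded Hopf algebra over $\M_2$. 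A genuinely motivic point enters here: because $A$ is free as an $\M_2$-module, with $\M_2$-basis the admissible monomials and with the defining relation $\tau_i^2 = \tau \xi_{i+1}$ compatible with the filtration, the passage to $\Gr(A)$ and the dual cobar description behave well even though the base ring $\M_2 = \F_2[\tau]$ is not a field.

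Next I would filter the reduced cobar complex computing $\Ext_A(\M_2,\M_2)$ by total May degree $m$, using the grading just introduced. The key identification is that the associated graded of this filtered complex is exactly the cobar complex of $\Gr(A)$: the portion of the cobar differential that strictly raises the May filtration is annihilated upon passing to the associated graded, leaving precisely the cobar differential of the graded Hopf algebra $\Gr(A)$. Taking cohomology with respect to this induced differential therefore yields $\Ext_{\Gr(A)}(\M_2,\M_2)$; in the indexing convention of this paper this is the $E_2$-page, so that the first potentially nonzero May differential is $d_2$. Throughout I would carry the quadruple grading $(m,s,f,w)$, with $m$ the newly introduced May filtration and $(s,f,w)$ inherited unchanged from the trigrading on $\Ext$ set up in Section \ref{subsctn:Ext-group}.

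Finally, convergence must be established to justify the abutment $\Ext_A^{(s,f,w)}(\M_2,\M_2)$. I would show that in each fixed tridegree $(s,f,w)$ the cobar complex is a finite $\F_2$-module in each May degree and is nonzero for only finitely many values of $m$; the finiteness in each tridegree can be controlled by comparison with the classical answer through Proposition \ref{prop:compare-ext}, together with the freeness of $A$ over $\M_2$. A filtration that is bounded in each tridegree then gives strong convergence. The step I expect to be the main obstacle is not any single calculation but the careful motivic bookkeeping underlying the first two paragraphs: verifying that the entire filtered-Hopf-algebra machinery — the identification of $\Gr(A)$ as a Hopf algebra and the degreewise finiteness needed for convergence — goes through over the coefficient ring $\M_2$ rather than over a field. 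Granting the detailed setup of \cite{DI10}, these checks succeed, and the remaining verifications (exhaustiveness and Hausdorffness of the filtration in each tridegree) are routine.
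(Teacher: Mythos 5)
Your construction is precisely the one the paper invokes: filter $A$ by powers of the augmentation ideal, run the filtration spectral sequence on the cobar complex, identify the associated graded with the cobar complex of $\Gr(A)$, and check degreewise boundedness for convergence — the paper itself gives no independent argument, simply citing \cite{DI10} for these details. So your proposal is correct and takes essentially the same route as the paper, with the only cosmetic difference being the reindexing convention under which $\Ext_{\Gr(A)}(\M_2,\M_2)$ is labeled the $E_2$-page rather than the $E_1$-page, which you handle correctly.
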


\begin{remark}
As in the classical May spectral sequence,
the odd differentials must be trivial for degree reasons.
\index{May spectral sequence!differential!odd}
\end{remark}

\begin{prop}
\label{prop:may-compare}
After inverting $\tau$,
there is an isomorphism of spectral sequences between
the motivic May spectral sequence of Theorem \ref{thm:may}
and the classical May spectral sequence, tensored over
$\F_2$ with $\F_2[\tau, \tau^{-1}]$.
\end{prop}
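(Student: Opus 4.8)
The plan is to present both spectral sequences as the May spectral sequence of a filtered Hopf algebroid, and then to check that the localization functor $-\otimes_{\M_2}\M_2[\tau^{-1}]$ carries the motivic input to the classical one. Since $\M_2[\tau^{-1}]$ is a free, hence flat, $\M_2$-module, the functor $-\otimes_{\M_2}\M_2[\tau^{-1}]$ is exact, so it commutes with the formation of homology and in particular with the passage from each page $E_r$ to the next. Applying it to the motivic May spectral sequence of Theorem \ref{thm:may} therefore produces a genuine spectral sequence — its localization at $\tau$ — together with a map of spectral sequences out of the motivic one. Crucially, $\tau$ has May filtration zero, since it lies in the ground ring $\M_2$, which sits in augmentation-ideal filtration zero; thus inverting $\tau$ does not disturb the May filtration.

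Next I would identify this localized spectral sequence intrinsically. The cobar construction on $A_{*,*}$, the augmentation-ideal filtration, and the passage to the associated graded all commute with the exact monoidal functor $-\otimes_{\M_2}\M_2[\tau^{-1}]$, so the localized motivic May spectral sequence is precisely the May spectral sequence of the localized filtered Hopf algebroid $A_{*,*}\otimes_{\M_2}\M_2[\tau^{-1}]$. It then suffices to identify this object, as a filtered Hopf algebroid over $\F_2[\tau,\tau^{-1}]$, with $A_{\cl,*}\otimes_{\F_2}\F_2[\tau,\tau^{-1}]$. To do so I would exploit the relation $\tau_i^2=\tau\xi_{i+1}$: after inverting $\tau$ it reads $\xi_{i+1}=\tau^{-1}\tau_i^2$ for every $i\geq 0$, so all of the polynomial generators $\xi_1,\xi_2,\dots$ become redundant and
\[
A_{*,*}\otimes_{\M_2}\M_2[\tau^{-1}] \iso \F_2[\tau,\tau^{-1}][\tau_0,\tau_1,\tau_2,\dots].
\]
Matching $\tau_i$ with the classical generator $\xi_{i+1}$ — the internal degrees agree, both being $2^{i+1}-1$ — gives a ring isomorphism with $A_{\cl,*}\otimes_{\F_2}\F_2[\tau,\tau^{-1}]$, and one checks that the reduced coproduct formulas displayed above reduce under this substitution to the classical Milnor coproduct base-changed to $\F_2[\tau,\tau^{-1}]$. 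This is exactly the identification underlying Proposition \ref{prop:compare-ext}, as carried out in \cite{DI10}; here I need only record that it is an isomorphism of filtered, and not merely graded, Hopf algebroids.

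The hard part will be this last bookkeeping: confirming that the isomorphism respects the May (augmentation-ideal) filtration, so that the associated gradeds, and hence the $E_1$-pages and every May differential, correspond. Concretely one must verify that the May filtration of the eliminated generator $\xi_{i+1}=\tau^{-1}\tau_i^2$ is consistent on both sides — namely that it equals twice the filtration of $\tau_i$ — and that the substitution does not shuffle filtration degrees in the coproduct. Granting this compatibility, the localization functor induces an isomorphism of the two filtered cobar complexes, and therefore an isomorphism of the associated spectral sequences compatible with every page and every differential, which is the assertion of the proposition. I expect the degree and weight accounting to be entirely routine once the filtration of each $\xi_{i+1}$ is pinned down, so that the genuine content of the proof is the exactness of localization together with the filtered algebra identification from \cite{DI10}.
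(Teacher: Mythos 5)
Your proposal is correct and follows essentially the same route as the paper, whose entire proof is the observation that $A[\tau^{-1}] \iso A_{\cl} \otimes_{\F_2} \F_2[\tau,\tau^{-1}]$ with the same May filtrations (the identification you spell out dually via $\xi_{i+1} = \tau^{-1}\tau_i^2$, citing \cite{DI10}). Your added points — exactness of localization commuting with the cobar construction and homology, and $\tau$ having May filtration zero — are exactly the implicit justifications behind the paper's one-line argument.
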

\index{tau@$\tau$!localization}

\begin{proof}
Start with the fact that $A [\tau^{-1}]$ is isomorphic to
$A_{\cl} \otimes_{\F_2} \F_2[\tau, \tau^{-1}]$, 
with the same May filtrations.
\end{proof}

This proposition means that differentials in the motivic May
spectral sequence must be compatible with the classical differentials.
This fact is critical to the success of our computations.
\index{May spectral sequence!differential}

\subsection{$\Ext$ in degrees with $s+f-2w=0$}
\label{subsctn:Chow-deg-zero}

\begin{defn}
Let $A'$ be the subquotient $\M_2$-algebra of $A$ generated by
$\Sq^{2k}$ for all $k \geq 0$, subject to the relation $\tau = 0$.
\end{defn}

\begin{lemma}
\label{lem:Acl-A'}
There is an isomorphism $A_{\cl} \map A'$ that takes
$\Sq^k$ to $\Sq^{2k}$.
\end{lemma}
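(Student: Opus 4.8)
The plan is to define the map on algebra generators by $\Sq^k \mapsto \Sq^{2k}$ and to verify that it respects the defining (Adem) relations, so that it extends to a well-defined algebra homomorphism $A_{\cl} \map A'$; surjectivity is then immediate, and injectivity will follow from a comparison of admissible bases. The heart of the argument is the interaction between the motivic Adem relations and the weight grading.

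First I would record that a single square $\Sq^i$ has weight $\lfloor i/2 \rfloor$, so $\Sq^{2a}\Sq^{2b}$ has weight $a+b$. Applying the motivic Adem relation to a product of two \emph{even} squares $\Sq^{2a}\Sq^{2b}$ (legitimate, since $2a < 2(2b)$ exactly when $a < 2b$), I would examine the general term $\tau^{?}\,\Sq^{2a+2b-c}\Sq^{c}$ and determine the weight exponent $?$ forced by balance, namely $? = (a+b) - \lfloor (2a+2b-c)/2\rfloor - \lfloor c/2\rfloor$. A one-line case analysis on the parity of $c$ gives $? = 0$ for $c$ even and $? = 1$ for $c$ odd. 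Hence, after imposing $\tau = 0$, precisely the odd-$c$ terms (the ones involving an odd square) vanish, and only products of two even squares survive; in particular the right-hand side stays inside the subalgebra generated by the $\Sq^{2k}$.

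Next I would match the surviving relation with the classical one. Writing $c = 2d$ for the even terms, the coefficients become $\binom{2b-1-2d}{2a-4d} = \binom{2(b-1-d)+1}{2(a-2d)}$, and by Lucas's theorem $\binom{2N+1}{2K}\equiv\binom{N}{K}\pmod 2$, so this reduces to $\binom{b-1-d}{a-2d}$. Thus modulo $\tau$ one obtains
\[
\Sq^{2a}\Sq^{2b} = \sum_{d} \binom{b-1-d}{a-2d}\,\Sq^{2(a+b-d)}\Sq^{2d},
\]
which is exactly the image under $\Sq^k\mapsto\Sq^{2k}$ of the classical Adem relation $\Sq^{a}\Sq^{b} = \sum_{d}\binom{b-1-d}{a-2d}\Sq^{a+b-d}\Sq^{d}$. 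Therefore every classical relation holds among the images, the map $A_{\cl}\map A'$ is a well-defined algebra homomorphism, and it is surjective because the $\Sq^{2k}$ generate $A'$ by definition.

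Finally, for injectivity I would use the admissible basis. The assignment $\Sq^k\mapsto\Sq^{2k}$ carries classical admissible monomials to admissible even-square monomials, since the admissibility condition $b_i \geq 2b_{i+1}$ is preserved under doubling. Because $A$ is free as an $\M_2$-module on its admissible monomials \cite{DI10}, reducing mod $\tau$ shows that the admissible monomials form an $\F_2$-basis of $A/\tau$; in particular the admissible even-square monomials are $\F_2$-linearly independent there, hence in $A'$. So the map sends a basis bijectively onto a linearly independent set and is injective, completing the proof that it is an isomorphism. (One could alternatively identify the dual of $A'$ with the polynomial part $\F_2[\xi_1,\xi_2,\ldots]$ of $A_{*,*}/\tau$, whose reduced coproduct $\tilde{\phi}_*(\xi_k)=\sum_i \xi_{k-i}^{2^i}\otimes\xi_i$ is literally classical.) I expect the main obstacle to be the bookkeeping in the middle two steps: arranging the weight-forced exponent and the mod-$2$ binomial identity so that exactly the classical part of each motivic Adem relation survives the reduction $\tau=0$.
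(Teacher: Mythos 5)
Your proof is correct and takes essentially the same route as the paper: reduce the motivic Adem relation for $\Sq^{2a}\Sq^{2b}$ modulo $\tau$ (the weight count forces the factor $\tau$ on exactly the odd-$c$ terms, so only even squares survive) and identify the remaining coefficients with the classical ones via $\binom{2b-1-2c}{2a-4c} \equiv \binom{b-1-c}{a-2c} \pmod{2}$. The additional steps you spell out — well-definedness, surjectivity, and injectivity via the admissible basis or the dual description $\F_2[\xi_1,\xi_2,\ldots]$ — are exactly what the paper leaves implicit, with the dual-algebra argument recorded in the remark immediately following its proof.
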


The isomorphism takes elements of degree $n$ to elements
of bidegree $(2n,n)$.

\begin{proof}
Modulo $\tau$, the motivic Adem relation for $\Sq^{2a} \Sq^{2b}$
takes the form
\index{Adem relation}
\[ 
\Sq^{2a} \Sq^{2b} = 
\sum_{c} \binom{2b-1-2c}{2a-4c} \Sq^{2a+2b-2c} \Sq^2c.
\]
A standard fact from combinatorics says that
\[
\binom{2b-1-2c}{2a-4c} = \binom{b-1-c}{a-2c}
\]
modulo $2$.
\end{proof}

\begin{remark}
Dually,
$A'$ corresponds to the quotient
$\F_2 [ \xi_1, \xi_2, \ldots ]$ of $A_{*,*}$,
where we have set $\tau$ and $\tau_0$, $\tau_1$, \ldots to be zero.
The dual to $A'$ is visibly isomorphic to the dual of the
classical Steenrod algebra.
\index{Steenrod algebra!classical}
\end{remark}

\begin{defn}
Let $M$ be a bigraded $A$-module.  The Chow degree
of an element $m$ in degree $(t,w)$ is equal to $t-2w$.
\end{defn}
\index{degree!Chow}

The terminology arises from the fact that the Chow degree is
fundamental in Bloch's higher Chow group perspective on 
motivic cohomology \cite{Bloch86}.
\index{Chow group!higher}

\begin{defn}
Let $M$ be an $A$-module.  Define the $A'$-module
$\Ch_0(M)$ to be the subset of $M$ consisting of elements
of Chow degree zero, with $A'$-module structure induced from the
$A$-module structure on $M$.
\end{defn}

The $A'$-module structure on $\Ch_0(M)$ is well-defined since
$\Sq^{2k}$ preserves Chow degrees.

\begin{thm}
\label{thm:Chow-0}
There is an isomorphism from
$\Ext_{A_{\cl}}$ to the subalgebra of $\Ext$ consisting of elements
in degrees $(s,f,w)$ with $s+f-2w=0$.
This isomorphism takes classical elements of degree
$(s,f)$ to motivic elements of degree $(2s+f,f, s+f )$,
and it preserves all higher structure, including
products, squaring operations, and Massey products.
\end{thm}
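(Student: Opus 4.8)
The plan is to compute $\Ext = \Ext_A(\M_2,\M_2)$ via the cobar complex of the dual Steenrod algebra $A_{*,*}$ and to exploit the fact that the reduced coproduct $\tilde\phi_*$, being strictly graded in both topological degree and weight, preserves the Chow degree. First I would observe that the cobar complex $C^\bullet$, with $C^n = (\ol{A}_{*,*})^{\otimes_{\M_2} n}$ and differential built from $\tilde\phi_*$, is a Chow-graded differential graded algebra: concatenation adds Chow degrees and the differential preserves them. Writing $A_{*,*}$ as a free $\M_2$-module on the monomials $\tau_0^{\epsilon_0}\tau_1^{\epsilon_1}\cdots\xi_1^{r_1}\xi_2^{r_2}\cdots$ with $\epsilon_i\in\{0,1\}$, one reads off from the relation $\tau_i^2=\tau\xi_{i+1}$ that $\tau$ has Chow degree $2$, each $\tau_i$ has Chow degree $1$, and each $\xi_i$ has Chow degree $0$. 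Hence the total Chow degree of a tensor of monomials vanishes exactly when no $\tau$ and no $\tau_i$ occur, so the Chow-degree-zero summand of $C^n$ is precisely $(\ol{B})^{\otimes_{\F_2} n}$, where $B=\F_2[\xi_1,\xi_2,\ldots]$.

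Next I would identify this summand with the classical cobar complex. By the Remark following Lemma \ref{lem:Acl-A'}, $B$ is the dual of $A'$, which that lemma identifies with $A_{\cl}$ via $\Sq^k\mapsto\Sq^{2k}$; moreover the coproduct formula for $\xi_k$ involves only the $\xi$'s and is literally the classical one, so the restricted differential is the classical cobar differential. Thus the Chow-degree-zero sub-DGA of $C^\bullet$ is isomorphic, as a DGA, to the classical cobar complex computing $\Ext_{A_{\cl}}$, and passing to cohomology yields a ring isomorphism $\Ext_{A_{\cl}}\iso\Ch_0(\Ext)$. The degree bookkeeping then falls out: a $\xi$-monomial of classical internal degree $n$ has motivic bidegree $(2n,n)$, so a classical cobar representative of degree $(s,f)$, of internal degree $s+f$, acquires motivic topological degree $2(s+f)$ and weight $s+f$, i.e.\ motivic degree $(2s+f,\,f,\,s+f)$; its Chow degree $(2s+f)+f-2(s+f)$ is indeed $0$, matching the asserted range $s+f-2w=0$.

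It remains to transport the higher structure, and this is where the work concentrates. Products are immediate, as the isomorphism is one of DGAs. For Massey products I would use that every generator of $C^\bullet$ has nonnegative Chow degree, so given classes of Chow degree $0$ any defining system can be projected onto the Chow-degree-zero summand to give a defining system lying entirely in the classical sub-DGA; hence the motivic Massey product of Chow-degree-zero classes coincides with the classical one. The main obstacle is the algebraic squaring operations of \cite{May70}: unlike products and Massey products, which are formal consequences of the Chow-graded splitting, these operations are defined at cochain level from the $\Sigma_2$-equivariant (cup-$i$) structure on $C^\bullet$, and one must verify that this $E_\infty$- (or $H_\infty$-) structure is itself strictly Chow-graded and that its restriction to Chow degree zero reproduces, on the nose, the classical cup-$i$ structure under $B\iso(A_{\cl})_*$. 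Granting this, a short degree check closes the argument, since the motivic operations double both topological degree and weight and so preserve the Chow-degree-zero locus while matching the classical doubling of internal degree under the correspondence above.
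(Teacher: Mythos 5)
Your proof is correct, and it takes a genuinely different route from the paper's. The paper never opens up the cobar complex: it builds the comparison map abstractly, from the natural transformation $\Hom_A(-,\M_2)\map\Hom_{A'}(\Ch_0(-),\F_2)$, exactness of $\Ch_0$, and the universal property of derived functors, and then proves that the resulting map $\Ext\map\Ext_{A_{\cl}}$ is an isomorphism in Chow degree zero by comparing May $E_1$-pages: in degrees with $s+f-2w=0$ the motivic $E_1$-page is $\F_2[h_{ij} : i>0,\, j>0]$, matching the classical $E_1$-page via $h_{ij}\mapsto h_{i,j-1}$. You replace that spectral-sequence comparison with a single structural observation — $\tau$, the $\tau_i$, and the $\xi_i$ all have non-negative Chow degree, so the cobar complex splits as a direct sum of subcomplexes indexed by Chow degree, the degree-zero summand being literally the regraded classical cobar complex — and the isomorphism, the ring structure, and the Massey products all fall out of the splitting. (Your projection argument for Massey products can even be shortened: since the cobar differential preserves internal degree and weight, a homogeneous defining system for classes of Chow degree zero already lies in the sub-DGA, each cochain being forced into Chow degree zero by its tridegree alone.) Your route is more elementary and self-contained, and your non-negativity remark recovers for free the paper's subsequent observation that $\Ext$ is concentrated in degrees with $s+f-2w\geq 0$; what the paper's functorial route buys instead is that its May-page comparison is exactly the machinery used throughout the rest of the chapter. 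On the squaring operations you are no less complete than the paper, whose proof is silent about them; the cleanest way to discharge your deferred check is naturality of May's operations under the quotient Hopf algebra map $A_{*,*}\map\F_2[\xi_1,\xi_2,\ldots]$ (covering $\M_2\map\F_2$), which induces your projection of cobar complexes and hence commutes with the operations; combined with the degree check that the operations preserve the locus $s+f-2w=0$, this identifies the motivic and classical operations on the degree-zero subalgebra.
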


\begin{proof}
There is a natural transformation
\[
\Hom_A( -, \M_2 ) \map \Hom_{A'} ( \Ch_0(-), \F_2 ),
\]
since $\Ch_0(\M_2) = \F_2$.
Since $\Ch_0$ is an exact functor, the derived functor of the
right side is $\Ext_{A'} ( \Ch_0(-), \F_2 )$.
The universal property
of derived functors gives a natural transformation
$\Ext_A (-, \M_2 ) \map \Ext_{A'} ( \Ch_0(-), \F_2 )$.
Apply this natural transformation to $\M_2$ to obtain
$\Ext_A (\M_2, \M_2 ) \map \Ext_{A'} ( \Ch_0(\M_2), \F_2 )$.
The left side is $\Ext$, and the right side 
is isomorphic to $\Ext_{A_{\cl}}$ since $A'$ is isomorphic to $A_{\cl}$
by Lemma \ref{lem:Acl-A'}.

We have now obtained a map $\Ext \map \Ext_{A_{\cl}}$.
We will verify that this map is an isomorphism on the 
part of $\Ext$ in degrees $(s,f,w)$ with $s+f-2w=0$.
Compare
the classical May spectral sequence with 
the part of the motivic May spectral sequence
in degrees $(m,s,f,w)$ with $s+f-2w=0$.
By direct inspection,
the motivic $E_1$-page in these degrees
is the polynomial algebra over $\F_2$
generated by $h_{ij}$ for $i > 0$ and $j > 0$.
This is isomorphic to the classical $E_1$-page, where
the motivic element $h_{ij}$ corresponds to the classical 
element $h_{i,j-1}$.
\end{proof}

\begin{remark}
Similar methods show that $\Ext$ is concentrated in 
degree $(s,f,w)$ with $s+f-2w \geq 0$.
The map $\Ext \map \Ext_{A_{\cl}}$ constructed
in the proof annihilates elements in degrees
$(s,f,w)$ with $s+f-2w > 0$.
Thus, $\Ext_{A_{\cl}}$ is isomorphic to the quotient 
of $\Ext$ by elements of degree $(s,f,w)$ with $s+f-2w > 0$.
\end{remark}
\index{Steenrod algebra!classical}

\section{Massey products in the motivic May spectral sequence}
\label{sctn:Massey-May}

\index{Massey product}
We will frequently compute Massey products in $\Ext$ in order to resolve
hidden extensions and to determine May differentials.  The 
absolutely essential tool for
computing such Massey products 
is May's Convergence Theorem \cite{May69}*{Theorem 4.1}.
\index{Convergence Theorem!May}
The point of this theorem is that under certain hypotheses,
Massey products in $\Ext$ can be computed in the $E_r$-page of the motivic
May spectral sequence.  For the reader's convenience, 
we will state the theorem in the specific forms that we will use.
We have slightly generalized the result of \cite{May69}*{Theorem 4.1}
to allow for brackets that are not strictly defined.
In order to avoid unnecessarily heavy notation, 
we have intentionally avoided the most general possible statements.
The interested reader is encouraged to carry out these generalizations.

\index{Convergence Theorem!May}
\begin{thm}[May's Convergence Theorem]
\label{thm:3-converge}
Let $\alpha_0$, $\alpha_1$, and $\alpha_2$ be elements of $\Ext$ such that
the Massey product $\langle \alpha_0, \alpha_1, \alpha_2 \rangle$
is defined.
For each $i$, let $a_i$ be a permanent cycle on the May $E_r$-page that detects
$\alpha_i$.
Suppose further that:
\begin{enumerate}
\item
there exist elements $a_{01}$ and $a_{12}$ on the May $E_r$-page
such that $d_r(a_{01}) = a_0 a_1$ and $d_r (a_{12}) = a_1 a_2$.
\item
if $(m,s,f,w)$ is the degree of either $a_{01}$ or $a_{12}$;
$m' \geq m$; and $m' - t < m - r$; then every 
May differential $d_t: E_t^{(m',s,f,w)} \map E_t^{(m'-t+1,s-1,f+1,w)}$
is zero.
\end{enumerate}
Then $a_0 a_{12} + a_{01} a_2$ detects an element of
$\langle \alpha_0, \alpha_1, \alpha_2 \rangle$ in $\Ext$.
\end{thm}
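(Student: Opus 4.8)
The plan is to descend from the May $E_r$-page to the chain level, working in the filtered cochain complex $(C,d)$ --- the motivic cobar complex computing $\Ext$, filtered by powers of the augmentation ideal --- whose associated spectral sequence is the one of Theorem \ref{thm:may}. Write $F_m C$ for the increasing May filtration; then $d$ preserves each $F_m C$, and the induced maps on the associated graded are the $d_r\colon E_r^m \map E_r^{m-r+1}$ appearing in hypothesis (2). Recall that $\langle \alpha_0, \alpha_1, \alpha_2\rangle$ consists of the cohomology classes of the cocycles $x_0 x_{12} + x_{01} x_2$, where $x_i$ is a cocycle representing $\alpha_i$ and $x_{01}, x_{12}$ are chains with $d(x_{01}) = x_0 x_1$ and $d(x_{12}) = x_1 x_2$ (all computations are mod $2$, so signs are irrelevant). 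My goal is to build such a defining system whose symbols, i.e.\ leading terms in the associated graded, are precisely $a_0, a_1, a_2, a_{01}, a_{12}$.

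First I would lift the detecting classes. Since each $a_i$ is a permanent cycle representing $\alpha_i$, I can choose a genuine cocycle $x_i \in F_{m_i} C$ whose symbol in $E_0^{m_i}$ represents $a_i$; this uses only that $a_i$ survives to $E_\infty$. The heart of the argument is the construction of the chain nullhomotopy $x_{01}$ (and symmetrically $x_{12}$). Lift $a_{01}$, of degree $(m,s,f,w)$, to a chain $y \in F_m C$ with symbol $a_{01}$. Because $d_r(a_{01}) = a_0 a_1$, the cocycle $d(y)$ and the cocycle $x_0 x_1 \in F_{m-r+1}C$ have the same symbol $a_0 a_1$ at filtration $m-r+1$, so their difference $e := d(y) - x_0 x_1$ lies in $F_{m-r}C$. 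Moreover $e$ is exact: since $\alpha_0\alpha_1 = 0$ in $\Ext$, the cocycle $x_0 x_1$ is itself a coboundary, and hence so is $e = d(y) - x_0 x_1$. I would then seek a primitive $c$ with $d(c) = e$ lying in $F_{m-1}C$, so that $x_{01} := y - c$ satisfies $d(x_{01}) = x_0 x_1$ while retaining the symbol $a_{01}$.

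The hard part is exactly this last step: an exact cocycle $e$ always has some primitive, but a priori every primitive might have symbol in filtration $\geq m$, in which case $y - c$ would no longer detect $a_{01}$, and the $E_r$-page computation would fail to see the Massey product. Cancelling $e$ from below filtration $m$ is possible precisely when no nonzero differential into the degree of $e$ originates at filtration $\geq m$; these are the crossing differentials $d_t\colon E_t^{(m',s,f,w)} \map E_t^{(m'-t+1,s-1,f+1,w)}$ with $m' \geq m$ and target filtration $m'-t+1 \leq m-r$ that hypothesis (2) rules out. Inducting downward on the filtration of the successive error terms --- at each stage the leading symbol is cancelled by a differential that, thanks to hypothesis (2), can be taken to originate below filtration $m$ --- terminates because the filtration is bounded in each fixed internal degree, and produces the desired $c \in F_{m-1}C$. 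This is also the point where the argument genuinely extends \cite{May69}*{Theorem 4.1} to brackets that need not be strictly defined: I never demand a canonical defining system, only that the symbols come out right, and any freedom in the deeper corrections is absorbed into the indeterminacy of the Massey product.

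Finally, with the defining system $(x_0,x_1,x_2,x_{01},x_{12})$ in hand, the cocycle $z = x_0 x_{12} + x_{01} x_2$ represents a class of $\langle \alpha_0,\alpha_1,\alpha_2\rangle$. A direct check of filtrations shows $m_0 + m_{12} = m_0 + m_1 + m_2 + r - 1 = m_{01} + m_2$, so the two summands live in the same filtration and their symbols add; thus the symbol of $z$ is $a_0 a_{12} + a_{01} a_2$, which therefore detects an element of the Massey product. (If this symbol happens to vanish in the associated graded, the conclusion is read in the usual degenerate sense, namely that the bracket contains an element of strictly higher May filtration.)
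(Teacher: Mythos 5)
Your proposal is correct, and in substance it is the argument of \cite{May69}*{Theorem 4.1}, which is exactly what the paper invokes: the paper offers no independent proof of this statement, remarking only that May's proof applies verbatim because every threefold bracket is strictly defined, and your chain-level construction---lifting the detecting classes, building the nullhomotopies $x_{01}$ and $x_{12}$ with prescribed symbols, and using hypothesis (2) to force every correction term into filtration strictly below $m$---is precisely that argument. One small slip in your closing parenthetical: with the paper's conventions the May differentials decrease filtration, so sources of differentials sit in higher $m$ than their targets and the detection ambiguity of an $E_\infty$-class consists of elements of \emph{lower} May filtration (compare the ambiguous generators of Table \ref{tab:Ext-ambiguous}); hence in the degenerate case where $a_0 a_{12} + a_{01} a_2$ vanishes, the bracket contains an element of strictly lower, not higher, May filtration.
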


The point of condition (1) is that 
the bracket $\langle a_0, a_1, a_2 \rangle$ is defined in the
differential graded algebra $(E_r, d_r)$.
Condition (2) is an equivalent reformulation of condition (*) in
\cite{May69}*{Theorem 4.1}.
When computing $\langle a_0, a_1, a_2 \rangle$, one uses a differential
$d_r: E_r^{(m,s,f,w)} \map E_r^{(m-r+1,s-1,f+1,w)}$.
The idea of condition (2) is that there are no later 
``crossing" differentials
\index{May spectral sequence!differential!crossing}
$d_t$ whose source has higher May filtration and whose target
has strictly lower May filtration.

The proof of May's Convergence Theorem \ref{thm:3-converge} is exactly
the same as in \cite{May69} because every threefold Massey product is
strictly defined in the sense that its subbrackets have no indeterminacy.
\index{Massey product!strictly defined}

\index{Convergence Theorem!May}
\begin{thm}[May's Convergence Theorem]
\label{thm:4-converge}
Let $\alpha_0$, $\alpha_1$, $\alpha_2$, and $\alpha_3$ be elements of $\Ext$ such that
the Massey product $\langle \alpha_0, \alpha_1, \alpha_2, \alpha_3 \rangle$
is defined.
For each $i$, let $a_i$ be a permanent cycle on the $E_r$-page
that detects $\alpha_i$.
Suppose further that:
\begin{enumerate}
\item
there are elements $a_{01}$, $a_{12}$, and $a_{23}$ on the $E_r$-page
such that $d_r(a_{01}) = a_0 a_1$, $d_r (a_{12}) = a_1 a_2$, and
$d_r (a_{23}) = a_2 a_3$.
\item
there are elements $a_{02}$ and $a_{13}$ on the $E_r$-page such that
$d_r (a_{02}) = a_0 a_{12} + a_{01} a_2$ and
$d_r (a_{13}) = a_1 a_{23} + a_{12} a_3$.
\item
if $(m,s,f,w)$ is the degree of $a_{01}$, $a_{12}$, $a_{23}$, $a_{02}$, or $a_{13}$;
and $m' \geq m$; and $m' - t < m - r$; then every differential
\[
d_t: E_t^{(m',s,f,w)} \map E_t^{(m-t+1,s-1,f+1,w)}
\]
is zero.
\item
The subbracket $\langle \alpha_0, \alpha_1, \alpha_2 \rangle$
has no indeterminacy.
\item
the indeterminacy of
$\langle \alpha_1, \alpha_2, \alpha_3 \rangle$ is generated
by elements
of the form $\alpha_1 \beta$ and $\gamma \alpha_3$,
where $\beta$ and $\gamma$ are detected in
May filtrations strictly lower than the May filtrations of 
$a_{23}$ and $a_{12}$ respectively.
\end{enumerate}
Then $a_0 a_{13} + a_{01} a_{23} + a_{02} a_3$ detects an element of
$\langle \alpha_0, \alpha_1, \alpha_2, \alpha_3 \rangle$ in $\Ext$.
\end{thm}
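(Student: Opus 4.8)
The plan is to follow the inductive lifting argument of \cite{May69} that proves Theorem \ref{thm:3-converge}, now for a fourfold defining system, supplemented with the extra bookkeeping forced by the fact that a fourfold Massey product need not be strictly defined. I would work in a filtered cochain complex $C$ computing $\Ext$ (for instance the cobar complex of $A_{*,*}$), equipped with its May filtration, and recall the defining-system description of the bracket: an element of $\langle \alpha_0, \alpha_1, \alpha_2, \alpha_3 \rangle$ is the class of $z = x_{00} x_{13} + x_{01} x_{23} + x_{02} x_{33}$, where the $x_{ii}$ are cocycles representing $\alpha_i$ and the remaining cochains satisfy $d(x_{i,i+1}) = x_{ii} x_{i+1,i+1}$, $d(x_{02}) = x_{00} x_{12} + x_{01} x_{22}$, and $d(x_{13}) = x_{11} x_{23} + x_{12} x_{33}$ (working mod $2$, so signs are irrelevant). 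The goal is to construct such a system whose successive leading terms on $E_r$ are $a_i$, then $a_{01}, a_{12}, a_{23}$, then $a_{02}, a_{13}$, so that the top-filtration symbol of $z$ is the claimed expression.

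First I would lift the first-stage data exactly as in the threefold theorem. Each $a_i$ is a permanent cycle, so it lifts to a cocycle $x_{ii}$ in $C$ with leading term $a_i$. For the adjacent brackets, $x_{00} x_{11}$ is a cocycle with leading term $a_0 a_1 = d_r(a_{01})$ by hypothesis (1); the discrepancy between $d$ of a chosen lift of $a_{01}$ and $x_{00} x_{11}$ lies in strictly lower May filtration, and the crossing-differential hypothesis (3) is precisely what allows this discrepancy to be absorbed into corrections of strictly lower filtration, so that a cochain $x_{01}$ with $d(x_{01}) = x_{00} x_{11}$ and detecting term $a_{01}$ exists. The identical construction, transcribed from \cite{May69}, produces $x_{12}$ and $x_{23}$ with leading terms $a_{12}$ and $a_{23}$.

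The genuinely new content is the second stage. By hypothesis (2) the cocycle $x_{00} x_{12} + x_{01} x_{22}$ has leading term $a_0 a_{12} + a_{01} a_2 = d_r(a_{02})$, and (3) again lets me choose $x_{02}$ with the required differential and leading term $a_{02}$; symmetrically I obtain $x_{13}$ with leading term $a_{13}$. Here the indeterminacy of the two threefold subbrackets intervenes: a competing choice of $x_{02}$ differs by a cocycle whose class lies in the indeterminacy of $\langle \alpha_0, \alpha_1, \alpha_2 \rangle$, and likewise for $x_{13}$ and $\langle \alpha_1, \alpha_2, \alpha_3 \rangle$. Hypothesis (4) removes the first ambiguity outright, so the summand $x_{02} x_{33}$ of $z$ is unaffected. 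Hypothesis (5) confines the second ambiguity to classes $\alpha_1 \beta$ and $\gamma \alpha_3$ detected in May filtrations strictly lower than those of $a_{23}$ and $a_{12}$; since a degree count then places such a modifying cocycle in filtration strictly below that of $a_{13}$, its contribution to $x_{00} x_{13}$, after multiplication by $x_{00}$ (leading term $a_0$), sits strictly below the filtration of $a_0 a_{13}$ and cannot disturb the leading term.

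Finally I would set $z = x_{00} x_{13} + x_{01} x_{23} + x_{02} x_{33}$ and verify by the usual Leibniz computation, using the defining relations, that $dz = 0$, so that $[z]$ lies in $\langle \alpha_0, \alpha_1, \alpha_2, \alpha_3 \rangle$ by construction. A degree count using (1) and (2) shows that the three products $x_{00} x_{13}$, $x_{01} x_{23}$, and $x_{02} x_{33}$ share the same top May filtration, with symbols $a_0 a_{13}$, $a_{01} a_{23}$, and $a_{02} a_3$, while every correction term introduced during the lifts, together with the indeterminacy contributions controlled by (4) and (5), lies in strictly lower filtration. Hence the top-filtration symbol of $z$ is exactly $a_0 a_{13} + a_{01} a_{23} + a_{02} a_3$, which therefore detects $[z]$. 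The main obstacle is precisely this filtration accounting: one must confirm that (3) holds each lift's leading term in place and that (4) and (5) prevent the subbracket indeterminacies from leaking into the top filtration. This is the only point that genuinely departs from \cite{May69}, where all brackets are strictly defined and no indeterminacy arises; the remaining steps are a direct transcription of May's inductive argument into the fourfold setting.
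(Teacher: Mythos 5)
Your framework (cobar complex, defining systems, leading-term bookkeeping) matches the paper's, and your first stage is fine. But there is a genuine gap at the second stage, and it sits exactly where the new content of this theorem lives. You assert that hypotheses (2) and (3) let you choose $x_{02}$ with $d(x_{02}) = x_{00}x_{12} + x_{01}x_{22}$ and leading term $a_{02}$, and symmetrically for $x_{13}$. That is false: conditions (2) and (3) only control the $E_r$-page equation and rule out crossing differentials; they cannot produce a bounding cochain for a cocycle that represents a \emph{nonzero homology class}. The cocycle $x_{00}x_{12} + x_{01}x_{22}$ represents an element of the threefold Massey product $\langle \alpha_0, \alpha_1, \alpha_2 \rangle$ in $\Ext$, and in general (this is the whole point of the threefold Convergence Theorem) that element is nonzero, in which case no $x_{02}$ exists for your chosen system, no matter what the filtration hypotheses say. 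The same obstruction arises for $x_{13}$ and $\langle \alpha_1, \alpha_2, \alpha_3 \rangle$.

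This is precisely what hypotheses (4) and (5) are for, and you have assigned them the wrong job (controlling ambiguity in the choice of $x_{02}$, $x_{13}$, i.e.\ well-definedness of the leading term, rather than existence). In the paper's proof: since the fourfold bracket is defined, both threefold subbrackets contain zero; hypothesis (4) says $\langle \alpha_0, \alpha_1, \alpha_2 \rangle$ has no indeterminacy, hence is strictly zero, so $x_{00}x_{12} + x_{01}x_{22}$ is \emph{automatically} a boundary and $x_{02}$ exists. For the other side, $\langle \alpha_1, \alpha_2, \alpha_3 \rangle$ may have indeterminacy, so the class of $x_{11}x_{23} + x_{12}x_{33}$ may be nonzero; but since the bracket contains zero, that class lies in the indeterminacy, and one kills it by adding cycles representing the generators $\alpha_1\beta$ and $\gamma\alpha_3$ to $x_{23}$ and $x_{12}$. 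Hypothesis (5)'s filtration condition is then what guarantees that the corrected $x_{12}$ and $x_{23}$ are \emph{still detected by} $a_{12}$ and $a_{23}$, so the leading-term computation in the final step survives. Your proposal never performs this correction of $x_{12}$ and $x_{23}$, so as written the defining system you build need not exist; everything after that point is unsupported.
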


The point of conditions (1) and (2) is that 
the bracket $\langle a_0, a_1, a_2, a_3 \rangle$ is defined in the
differential graded algebra $(E_r, d_r)$.
Condition (3) is an equivalent reformulation of condition (*) in
\cite{May69}*{Theorem 4.1}.
The point of this condition is
that there are no later ``crossing"
differentials whose source has higher May filtration
and whose target has strictly lower May filtration.

Condition (5) does not appear in \cite{May69}, which only deals
with strictly defined brackets.
\index{Massey product!indeterminacy}
\index{Massey product!strictly defined}
Of course, the theorem has a symmetric version in which the bracket
$\langle \alpha_1, \alpha_2, \alpha_3 \rangle$ has no indeterminacy.
It is probably possible to state a version of the theorem in which both
threefold subbrackets have non-zero indeterminacy.  However, additional
conditions are required for such a 
fourfold bracket to be well-defined \cite{Isaksen14b}.

\begin{proof}
Let $C$ be the cobar resolution of the motivic Steenrod algebra whose
homology is $\Ext$.
\index{cobar resolution}
Let $\tilde{\alpha}_0$, $\tilde{\alpha}_1$, $\tilde{\alpha}_2$, 
and $\tilde{\alpha_3}$ be explicit
cycles in $C$ representing $\alpha_0$, $\alpha_1$, $\alpha_2$, and $\alpha_3$.
As in the proof of \cite{May69}*{Theorem 4.1}, we may choose
an element 
$\tilde{\alpha}_{01}$ of $C$ such that $d(\tilde{\alpha}_{01}) = \tilde{\alpha}_0 \tilde{\alpha}_1$,
and $\tilde{\alpha}_{01}$ is detected by $a_{01}$ in the May $E_r$-page.
We may similarly choose $\tilde{\alpha}_{12}$ and $\tilde{\alpha}_{23}$
whose boundaries are $\tilde{\alpha}_1 \tilde{\alpha}_2$ and $\tilde{\alpha}_2 \tilde{\alpha}_3$ and
that are detected by $a_{12}$ and $a_{23}$.

Next, we want to choose $\tilde{\alpha}_{13}$ in $C$ whose boundary
is $\tilde{\alpha}_1 \tilde{\alpha}_{23} + \tilde{\alpha}_{12} \tilde{\alpha}_3$ and that is
detected by $a_{13}$.
Because of the possible indeterminacy 
in $\langle \alpha_1, \alpha_2, \alpha_3 \rangle$,
the cycle
$\tilde{\alpha}_1 \tilde{\alpha}_{23} + \tilde{\alpha}_{12} \tilde{\alpha}_3$ may not be a boundary in $C$.
However, since we are assuming that
$\langle \alpha_0, \alpha_1, \alpha_2, \alpha_3 \rangle$ is defined,
we can add cycles to $\tilde{\alpha}_{12}$ and $\tilde{\alpha}_{23}$
to ensure that
$\tilde{\alpha}_1 \tilde{\alpha}_{23} + \tilde{\alpha}_{12} \tilde{\alpha}_3$ is a boundary.
When we do this, condition (5) guarantees that
$\tilde{\alpha}_{12}$ and $\tilde{\alpha}_{23}$ are still detected
by $a_{12}$ and $a_{23}$.
Then we may choose $\tilde{\alpha}_{13}$ as in the proof of \cite{May69}*{Theorem 4.1}.

Finally, we may choose $\tilde{\alpha}_{02}$ as in the proof
of \cite{May69}*{Theorem 4.1}.  Because
$\langle \alpha_0, \alpha_1, \alpha_2 \rangle$ has no indeterminacy,
we automatically know that $\tilde{\alpha}_0 \tilde{\alpha}_{12} + \tilde{\alpha}_{01} \tilde{\alpha}_2$
is a boundary in $C$.
\end{proof}

For completeness, we will now also state May's Convergence Theorem
for fivefold brackets.  This result is used only in
Lemma \ref{lem:c0-i1}.
The proof is essentially the same as the proof for fourfold brackets.

\index{Convergence Theorem!May}
\begin{thm}[May's Convergence Theorem]
\label{thm:5-converge}
Let $\alpha_0$, $\alpha_1$, $\alpha_2$, $\alpha_3$, and $\alpha_4$ 
be elements of $\Ext$ such that
the Massey product $\langle \alpha_0, \alpha_1, \alpha_2, \alpha_3, \alpha_4 \rangle$
is defined.
For each $i$, let $a_i$ be a permanent cycle on the $E_r$-page
that detects $\alpha_i$.
Suppose further that:
\begin{enumerate}
\item
there are elements $a_{01}$, $a_{12}$, $a_{23}$, and $a_{34}$ in $E_r$
such that $d_r(a_{01}) = a_0 a_1$, $d_r (a_{12}) = a_1 a_2$, 
$d_r (a_{23}) = a_2 a_3$, and $d_r(a_{34}) = a_3 a_4$.
\item
there are elements $a_{02}$, $a_{13}$, and $a_{24}$ in $E_r$ such that
$d_r (a_{02}) = a_0 a_{12} + a_{01} a_2$,
$d_r (a_{13}) = a_1 a_{23} + a_{12} a_3$, and
$d_r (a_{24}) = a_2 a_{34} + a_{23} a_4$.
\item
there are elements $a_{03}$ and $a_{14}$ in $E_r$ such that
$d_r (a_{03}) = a_0 a_{13} + a_{01} a_{23} + a_{02} a_3$ and
$d_r (a_{14}) = a_1 a_{24} + a_{12} a_{34} + a_{13} a_4$.
\item
if $(m,s,f,w)$ is the degree of $a_{01}$, $a_{12}$, $a_{23}$, $a_{34}$,
$a_{02}$, $a_{13}$, $a_{24}$, $a_{03}$, or $a_{14}$;
$m' \geq m$; and $m' - t < m - r$; then every differential
$d_t: E_t^{(m',s,f,w)} \map E_t^{(m-t+1,s-1,f+1,w)}$
is zero.
\item
the threefold subbrackets 
$\langle \alpha_0, \alpha_1, \alpha_2 \rangle$,
$\langle \alpha_1, \alpha_2, \alpha_3 \rangle$, and
$\langle \alpha_2, \alpha_3, \alpha_4 \rangle$
have no indeterminacy.
\item
the subbracket
$\langle \alpha_0, \alpha_1, \alpha_2, \alpha_3 \rangle$
has no indeterminacy.
\item
the indeterminacy of
$\langle \alpha_1, \alpha_2, \alpha_3, \alpha_4 \rangle$ is generated
by elements contained in
$\langle \beta, \alpha_3, \alpha_4 \rangle$,
$\langle \alpha_1, \gamma, \alpha_4 \rangle$, and
$\langle \alpha_1, \alpha_2, \delta \rangle$,
where $\beta$, $\gamma$, and $\delta$ are detected in
May filtrations strictly lower than the May filtrations
of $a_{12}$, $a_{23}$, and $a_{34}$ respectively.
\end{enumerate}
Then $a_0 a_{14} + a_{01} a_{24} + a_{02} a_{34} + a_{03} a_4$
detects an element of
$\langle \alpha_0, \alpha_1, \alpha_2, \alpha_3, \alpha_4 \rangle$ in $\Ext$.
\end{thm}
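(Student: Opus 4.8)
The plan is to follow the proof of the fourfold case (Theorem \ref{thm:4-converge}) essentially verbatim, lifting a defining system from the May $E_r$-page to the cobar complex $C$ one ``length'' at a time and checking at each stage that the chosen lift is detected by the prescribed $E_r$-element. As before, I would begin by choosing explicit cobar cycles $\tilde{\alpha}_0, \ldots, \tilde{\alpha}_4$ representing the $\alpha_i$ and detected by the $a_i$. Using condition (1), I would then construct the length-two lifts $\tilde{\alpha}_{01}, \tilde{\alpha}_{12}, \tilde{\alpha}_{23}, \tilde{\alpha}_{34}$ with $d(\tilde{\alpha}_{01}) = \tilde{\alpha}_0 \tilde{\alpha}_1$ and so on, each detected by the corresponding $a_{ij}$. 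That such \emph{detected} lifts exist is precisely the content of the crossing-differential hypothesis (4): it forbids any later May differential whose source has May filtration at least that of $a_{ij}$ and whose target has strictly lower filtration, so the lift cannot be forced into a lower filtration than $a_{ij}$. The assumption that $\langle \alpha_0, \ldots, \alpha_4 \rangle$ is defined guarantees that the relevant products are genuine boundaries at this stage.

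Next I would build the length-three lifts $\tilde{\alpha}_{02}, \tilde{\alpha}_{13}, \tilde{\alpha}_{24}$ from condition (2). Here hypothesis (5), that the three threefold subbrackets $\langle \alpha_0, \alpha_1, \alpha_2 \rangle$, $\langle \alpha_1, \alpha_2, \alpha_3 \rangle$, and $\langle \alpha_2, \alpha_3, \alpha_4 \rangle$ have no indeterminacy, ensures that the cycles $\tilde{\alpha}_0 \tilde{\alpha}_{12} + \tilde{\alpha}_{01} \tilde{\alpha}_2$, $\tilde{\alpha}_1 \tilde{\alpha}_{23} + \tilde{\alpha}_{12} \tilde{\alpha}_3$, and $\tilde{\alpha}_2 \tilde{\alpha}_{34} + \tilde{\alpha}_{23} \tilde{\alpha}_4$ are automatically boundaries, so the lifts exist; condition (4) again lets me choose them detected by $a_{02}$, $a_{13}$, and $a_{24}$.

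The delicate step, exactly as in the fourfold proof, is the length-four lift $\tilde{\alpha}_{14}$ with boundary $\tilde{\alpha}_1 \tilde{\alpha}_{24} + \tilde{\alpha}_{12} \tilde{\alpha}_{34} + \tilde{\alpha}_{13} \tilde{\alpha}_4$. Because $\langle \alpha_1, \alpha_2, \alpha_3, \alpha_4 \rangle$ may have indeterminacy, this cycle need not be a boundary as first constructed. Since $\langle \alpha_0, \ldots, \alpha_4 \rangle$ is assumed defined, I can add cycles to the lower lifts to arrange that it is a boundary, and condition (7) is what makes this safe: it pins the indeterminacy of $\langle \alpha_1, \alpha_2, \alpha_3, \alpha_4 \rangle$ down to elements of $\langle \beta, \alpha_3, \alpha_4 \rangle$, $\langle \alpha_1, \gamma, \alpha_4 \rangle$, and $\langle \alpha_1, \alpha_2, \delta \rangle$ with $\beta, \gamma, \delta$ of strictly lower May filtration, so the correcting cycles sit in filtrations low enough that $a_{12}, a_{23}, a_{34}$ (and $a_{13}, a_{24}$) remain the detecting classes. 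For $\tilde{\alpha}_{03}$ the situation reduces to the fourfold case: hypothesis (6), that $\langle \alpha_0, \alpha_1, \alpha_2, \alpha_3 \rangle$ has no indeterminacy, forces $\tilde{\alpha}_0 \tilde{\alpha}_{13} + \tilde{\alpha}_{01} \tilde{\alpha}_{23} + \tilde{\alpha}_{02} \tilde{\alpha}_3$ to be a boundary, so $\tilde{\alpha}_{03}$ exists and can be chosen detected by $a_{03}$ via condition (3).

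With the full defining system assembled, the cobar cycle $\tilde{\alpha}_0 \tilde{\alpha}_{14} + \tilde{\alpha}_{01} \tilde{\alpha}_{24} + \tilde{\alpha}_{02} \tilde{\alpha}_{34} + \tilde{\alpha}_{03} \tilde{\alpha}_4$ represents an element of $\langle \alpha_0, \alpha_1, \alpha_2, \alpha_3, \alpha_4 \rangle$, and tracking May filtrations shows it is detected by $a_0 a_{14} + a_{01} a_{24} + a_{02} a_{34} + a_{03} a_4$, as claimed. I expect the main obstacle to be almost entirely bookkeeping rather than new ideas: one must verify at each of the several correction steps that adding a cycle to make the next product a boundary does not disturb the May filtration of any earlier lift, which means applying the precise filtration bounds in conditions (4) and (7) repeatedly across the larger, more interwoven web of subbracket indeterminacies. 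The conceptual content is identical to Theorem \ref{thm:4-converge}; only the combinatorics of the defining system grows.
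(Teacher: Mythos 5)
Your proposal is correct and follows exactly the route the paper takes: the paper's entire proof of Theorem \ref{thm:5-converge} is the remark that it is ``essentially the same as the proof for fourfold brackets,'' and your argument is precisely that fourfold proof (cobar lifts, corrections justified by definedness of the long bracket, with conditions (5)--(7) controlling which cycles are automatically boundaries and ensuring corrections sit in low enough May filtration to preserve detection) extended one stage further. The only elaboration you add --- tracking that the correction step for $\tilde{\alpha}_{14}$ via condition (7) does not disturb the detection of $a_{12}$, $a_{23}$, $a_{34}$, $a_{13}$, $a_{24}$ --- is exactly the bookkeeping the paper leaves implicit.
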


Although we will use May's Convergence Theorem to compute most of the Massey
For a few Massey products, we also need occasionally the following result
\cite{Hirsch55} \cite{Adams60}*{Lemma 2.5.4}.

\begin{prop}
\label{prop:Massey-hn}
Let $x$ be an element of $\Ext_A(\M_2, \M_2)$.
\begin{enumerate}
\item
If $h_0 x = 0$, then $\tau h_1 x$ belongs to $\langle h_0, x, h_0 \rangle$.
\item
If $n \geq 1$ and $h_n x = 0$, then $h_{n+1} x$ belongs to 
$\langle h_n, x, h_n \rangle$.
\end{enumerate}
\end{prop}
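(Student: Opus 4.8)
The plan is to transport Adams' cochain-level argument \cite{Adams60}*{Lemma 2.5.4}, which rests on Hirsch's formula \cite{Hirsch55}, into the motivic cobar complex $C$ of the dual Steenrod algebra $A_{*,*}$, whose homology is $\Ext$. The classes involved have canonical cobar representatives: $h_0 = [\tau_0]$ and $h_n = [\xi_1^{2^{n-1}}]$ for $n \geq 1$, each a cocycle because $\tau_0$ and $\xi_1$ are primitive. Fixing a cocycle $\tilde{x}$ for $x$, the hypothesis $h_n x = 0$ says that $h_n \cdot \tilde{x}$ and $\tilde{x} \cdot h_n$ are both coboundaries in $C$, so (using commutativity of $\Ext$) the bracket $\langle h_n, x, h_n \rangle$ is defined. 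My task is then to exhibit one explicit element of it and identify its cohomology class.

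First I would record the squares of the cobar generators, since this is exactly where the two cases split. For $n \geq 1$ the generator is $\xi_1^{2^{n-1}}$, whose square is $\xi_1^{2^n}$, the representative of $h_{n+1}$. For $n = 0$ the generator is $\tau_0$, and here the defining relation $\tau_0^2 = \tau \xi_1$ of $A_{*,*}$ intervenes, so the square is $\tau \xi_1$, the representative of $\tau h_1$. This one relation is the sole source of the extra factor of $\tau$ in part (1); a weight count corroborates it, since $\langle h_0, x, h_0\rangle$ has weight $w(x)$, matching $\tau h_1 x$ but not $h_1 x$.

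Next I would apply the Hirsch formula at the cochain level. For a primitive generator $c$, Steenrod's $\cup_1$ machinery supplies compatible bounding cochains $u$ and $v$ with $d u = [c]\,\tilde{x}$ and $d v = \tilde{x}\,[c]$, arranged so that the Massey representative $[c]\, v + u\, [c]$ is cohomologous to the single concatenation $[c^2]\cdot \tilde{x}$. Taking $c = \xi_1^{2^{n-1}}$ yields $[\xi_1^{2^n}]\cdot \tilde{x}$, which represents $h_{n+1} x$ and proves (2); taking $c = \tau_0$ and invoking $\tau_0^2 = \tau\xi_1$ yields $[\tau\xi_1]\cdot \tilde{x} = \tau h_1 x$ and proves (1). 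As a conceptual check, this is precisely the statement that $\Sq^0(h_n)\cdot x$ lies in $\langle h_n, x, h_n\rangle$ for the algebraic Steenrod operation of \cite{May70}, together with the chain-level values $\Sq^0 h_n = h_{n+1}$ for $n \geq 1$ and $\Sq^0 h_0 = \tau h_1$, the latter again coming from $\tau_0^2 = \tau\xi_1$.

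I expect the main obstacle to be the verification that the Hirsch construction behaves as stated in the weight-graded motivic cobar complex: one must check that the correction terms produced by $\cup_1$ collapse to the single square term $[c^2]\cdot\tilde{x}$ without contributing spurious summands of lower filtration, and one must control the indeterminacy so that the result is a genuine membership rather than an equality on the nose. The saving feature is that the motivic cobar differential differs from the classical one only through the relations $\tau_i^2 = \tau\xi_{i+1}$, and only $\tau_0^2 = \tau\xi_1$ is active in the degrees at issue; consequently Adams' classical computation carries over essentially verbatim once this single relation is tracked, which is what makes both cases go through uniformly.
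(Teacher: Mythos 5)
Your proposal is correct and is essentially the paper's own argument: the paper proves this proposition by citation to Hirsch's formula \cite{Hirsch55} and Adams' Lemma 2.5.4 \cite{Adams60}, and what you have written out is precisely that cochain-level $\cup_1$ argument transported to the motivic cobar complex, with the single new ingredient — the relation $\tau_0^2 = \tau\xi_1$ — correctly identified as the source of the factor of $\tau$ in part (1).
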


We will need the following results about shuffling higher brackets
that are not strictly defined.
\index{Massey product!shuffle}

\begin{lemma}
\label{lem:4fold-shuffle}
Suppose that $\langle \alpha_0, \alpha_1, \alpha_2, \alpha_3 \rangle$ and
$\langle \alpha_1, \alpha_2, \alpha_3, \alpha_4 \rangle$ are defined and that
the indeterminacy of $\langle \alpha_0, \alpha_1, \alpha_2 \rangle$ consists
of multiples of $\alpha_2$.
Then
\[
\alpha_0 \langle \alpha_1, \alpha_2, \alpha_3, \alpha_4 \rangle \subseteq
\langle \alpha_0, \alpha_1, \alpha_2, \alpha_3 \rangle \alpha_4.
\]
\end{lemma}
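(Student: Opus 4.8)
The plan is to run an explicit cochain-level argument in the cobar complex $C$ whose homology is $\Ext$, exactly as in the proof of Theorem \ref{thm:4-converge}. Fix cycles $a_0, \ldots, a_4$ representing $\alpha_0, \ldots, \alpha_4$. An arbitrary element $x$ of $\langle \alpha_1, \alpha_2, \alpha_3, \alpha_4 \rangle$ arises from a defining system: cochains $b_{12}, b_{23}, b_{34}, b_{13}, b_{24}$ with $d b_{12} = a_1 a_2$, $d b_{23} = a_2 a_3$, $d b_{34} = a_3 a_4$, $d b_{13} = a_1 b_{23} + b_{12} a_3$, and $d b_{24} = a_2 b_{34} + b_{23} a_4$, so that $x$ is represented by $r_x = a_1 b_{24} + b_{12} b_{34} + b_{13} a_4$. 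The key idea is to recycle these same cochains into a defining system for $\langle \alpha_0, \alpha_1, \alpha_2, \alpha_3 \rangle$.

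Concretely, I would set $c_{12} = b_{12}$, $c_{23} = b_{23}$, and $c_{13} = b_{13}$, which satisfy the defining-system equations for the first bracket automatically. It then remains to choose $c_{01}$ with $d c_{01} = a_0 a_1$ and $c_{02}$ with $d c_{02} = a_0 b_{12} + c_{01} a_2$, after which $r_c = a_0 b_{13} + c_{01} b_{23} + c_{02} a_3$ represents an element of $\langle \alpha_0, \alpha_1, \alpha_2, \alpha_3 \rangle$. Granting such a system, the heart of the matter is the single identity $a_0 r_x + r_c a_4 = d(c_{01} b_{24} + c_{02} b_{34})$, verified by expanding both sides via the Leibniz rule and cancelling the repeated terms $a_0 b_{13} a_4$ and $c_{01} a_2 b_{34}$ modulo $2$. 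Since $a_0 r_x$ represents $\alpha_0 x$ and $r_c a_4$ represents $[r_c] \alpha_4$, this identity says exactly that $\alpha_0 x = [r_c] \alpha_4$ in homology, which is the desired containment.

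The hard part will be producing $c_{02}$, equivalently arranging that $a_0 b_{12} + c_{01} a_2$ is a boundary for a suitable $c_{01}$. Because $\langle \alpha_0, \alpha_1, \alpha_2, \alpha_3 \rangle$ is defined, the threefold bracket $\langle \alpha_0, \alpha_1, \alpha_2 \rangle$ contains $0$, so some defining system $u_*, v_*$ satisfies $[a_0 u_* + v_* a_2] = 0$. As any two admissible choices of the $\alpha_1 \alpha_2$-cochain differ by a cycle $\delta$, comparing $u_*$ with the fixed $b_{12}$ shows that $[a_0 b_{12} + v_* a_2]$ equals the left multiple $\alpha_0 [\delta]$. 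Here the hypothesis enters decisively: since the indeterminacy of $\langle \alpha_0, \alpha_1, \alpha_2 \rangle$ consists only of multiples of $\alpha_2$, the element $\alpha_0 [\delta]$ can be rewritten as $\gamma \alpha_2$ for a cycle $\gamma$. Absorbing $\gamma$ into $v_*$ yields a $c_{01}$ with $[a_0 b_{12} + c_{01} a_2] = 0$, which is precisely the existence of $c_{02}$. I expect this indeterminacy bookkeeping, rather than the formal cochain identity, to be the only genuinely delicate step.
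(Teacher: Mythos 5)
Your proof is correct and follows essentially the same route as the paper's: reuse the defining system of $\langle \alpha_1, \alpha_2, \alpha_3, \alpha_4 \rangle$, invoke the indeterminacy hypothesis to produce $c_{01}$ and $c_{02}$, and show that $a_0 r_x$ and $r_c a_4$ are homologous. The only difference is that you spell out the explicit coboundary $d(c_{01} b_{24} + c_{02} b_{34})$ and the indeterminacy bookkeeping that produces $c_{02}$, both of which the paper leaves implicit.
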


\begin{proof}
For each $i$, choose an element $a_i$ that represents $\alpha_i$.
Let $\beta$ be an element of 
$\alpha_0 \langle \alpha_1, \alpha_2, \alpha_3, \alpha_4 \rangle$.
There exist elements 
$a_{12}$, $a_{23}$, $a_{34}$,
$a_{13}$, and $a_{24}$ such that
$d(a_{i,i+1}) = a_i a_{i+1}$,
$d(a_{i,i+2}) = a_i a_{i+1,i+2} + a_{i,i+1} a_{i+2}$,
and $\beta$ is represented by
\[
b = a_0 a_1 a_{24} + a_0 a_{12} a_{34} +
a_0 a_{13} a_4. 
\]

By the assumption on the indeterminacy of 
$\langle \alpha_0, \alpha_1, \alpha_2 \rangle$, 
we can then choose $a_{01}$ and $a_{02}$ such that
$d(a_{01}) = a_0 a_1$ and
$d(a_{02}) = a_0 a_{12} + a_{01} a_2$.
Then
\[
a_0 a_{13} a_4 + a_{01} a_{23} a_4 +
a_{02} a_3 a_4
\]
represents a class in 
$\langle \alpha_0, \alpha_1, \alpha_2, \alpha_3 \rangle \alpha_4$
that is homologous to $b$.
\end{proof}

\begin{lemma}
\label{lem:4fold-shuffle2}
Suppose that $\langle \alpha_0, \alpha_1, \alpha_2, \alpha_3 \rangle$ and
$\langle \alpha_1, \alpha_2, \alpha_3, \alpha_4 \rangle$
are defined, and suppose that
$\langle \alpha_1, \alpha_2, \alpha_3 \rangle$ is strictly zero.
Then
\[
\alpha_0 \langle \alpha_1, \alpha_2, \alpha_3, \alpha_4 \rangle \cap
\langle \alpha_0, \alpha_1, \alpha_2, \alpha_3 \rangle \alpha_4
\]
is non-empty.
\end{lemma}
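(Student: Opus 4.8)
The plan is to mimic the cobar-level argument used for Lemma \ref{lem:4fold-shuffle}, but to exploit the \emph{strict} vanishing of $\langle \alpha_1, \alpha_2, \alpha_3 \rangle$ in order to build a single defining system that simultaneously computes both fourfold brackets. Let $C$ be the cobar complex computing $\Ext$, with differential $d$, and fix cycles $a_0, \ldots, a_4$ representing $\alpha_0, \ldots, \alpha_4$. Since both fourfold brackets are defined, all adjacent products $\alpha_i \alpha_{i+1}$ vanish, so I may choose chains $a_{01}, a_{12}, a_{23}, a_{34}$ with $d(a_{i,i+1}) = a_i a_{i+1}$.

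First I would assemble the two ``outer'' halves of the defining system separately. Because $\langle \alpha_0, \alpha_1, \alpha_2, \alpha_3 \rangle$ is defined, I can choose $a_{12}$ together with $a_{01}$ and a chain $a_{02}$ satisfying $d(a_{02}) = a_0 a_{12} + a_{01} a_2$; crucially, this condition does not involve $a_{23}$. Symmetrically, because $\langle \alpha_1, \alpha_2, \alpha_3, \alpha_4 \rangle$ is defined, I can choose $a_{23}$ together with $a_{34}$ and a chain $a_{24}$ satisfying $d(a_{24}) = a_2 a_{34} + a_{23} a_4$; this condition does not involve $a_{12}$. The key step is then to glue these two halves with a common cross-term $a_{13}$: with $a_{12}$ taken from the left half and $a_{23}$ from the right half, I need a chain $a_{13}$ with $d(a_{13}) = a_1 a_{23} + a_{12} a_3$. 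This cycle is exactly the representative of $\langle \alpha_1, \alpha_2, \alpha_3 \rangle$ for the mixed pair $(a_{12}, a_{23})$, and it is null-homologous precisely because $\langle \alpha_1, \alpha_2, \alpha_3 \rangle$ is strictly zero: strict vanishing means the bracket equals $\{0\}$, so its representative is a boundary for \emph{every} admissible pair, not merely for some compatible pair. This is the heart of the argument and the step I expect to be the main obstacle — without strict vanishing one cannot in general reconcile the $a_{12}$ demanded by the left bracket with the $a_{23}$ demanded by the right bracket. The resulting data $\{a_{01}, a_{12}, a_{23}, a_{02}, a_{13}\}$ and $\{a_{12}, a_{23}, a_{34}, a_{13}, a_{24}\}$ are then valid defining systems for $\langle \alpha_0, \alpha_1, \alpha_2, \alpha_3 \rangle$ and $\langle \alpha_1, \alpha_2, \alpha_3, \alpha_4 \rangle$ respectively, sharing the terms $a_{12}, a_{23}, a_{13}$.

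Finally I would write the two representatives
\[
R = a_0 a_{13} + a_{01} a_{23} + a_{02} a_3, \qquad
R' = a_1 a_{24} + a_{12} a_{34} + a_{13} a_4,
\]
so that $[R]$ lies in $\langle \alpha_0, \alpha_1, \alpha_2, \alpha_3 \rangle$ and $[R']$ lies in $\langle \alpha_1, \alpha_2, \alpha_3, \alpha_4 \rangle$, and then exhibit the explicit chain homotopy
\[
a_0 R' + R a_4 = d( a_{01} a_{24} + a_{02} a_{34} ),
\]
which is a direct expansion via the Leibniz rule together with the defining relations above; the shared term is what forces the two $a_0 a_{13} a_4$ contributions to cancel. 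Consequently $\alpha_0 [R'] = [R] \alpha_4$ in $\Ext$, and this single common class is a point of $\alpha_0 \langle \alpha_1, \alpha_2, \alpha_3, \alpha_4 \rangle \cap \langle \alpha_0, \alpha_1, \alpha_2, \alpha_3 \rangle \alpha_4$, which proves the intersection is non-empty. Note that, unlike Lemma \ref{lem:4fold-shuffle}, I only produce one element in the intersection rather than a full containment, which is all that the weaker conclusion requires and all that the weaker hypothesis on the inner bracket supplies.
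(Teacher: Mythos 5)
Your proposal is correct and follows essentially the same route as the paper: build the two halves of the defining systems independently (using definedness of each fourfold bracket), use the strict vanishing of $\langle \alpha_1, \alpha_2, \alpha_3 \rangle$ to produce the gluing chain $a_{13}$ for the mixed pair $(a_{12}, a_{23})$, and observe that $a_0(a_1 a_{24} + a_{12} a_{34} + a_{13} a_4)$ and $(a_0 a_{13} + a_{01} a_{23} + a_{02} a_3)a_4$ are homologous. The only difference is cosmetic: you exhibit the explicit boundary witness $d(a_{01} a_{24} + a_{02} a_{34})$, where the paper merely asserts the two cycles are homologous.
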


\begin{proof}
Choose elements $a_i$ that represent $\alpha_i$.
Choose $a_{01}$, $a_{12}$, and $a_{02}$ such that
$d(a_{01}) = a_0 a_1$,
$d(a_{12}) = a_1 a_2$, and
$d(a_{02}) = a_0 a_{12} + a_{01} a_2$.
Also,
choose $a_{23}$, $a_{34}$, and $a_{24}$ such that
$d(a_{23}) = a_2 a_3$,
$d(a_{34}) = a_3 a_4$, and
$d(a_{24}) = a_2 a_{34} + a_{23} a_4$.
Since $\langle \alpha_1, \alpha_2, \alpha_3 \rangle$ is strictly zero,
there exists $a_{13}$ such that
$d(a_{13}) = a_1 a_{23} + a_{12} a_3$.

Then
\[
a_0 a_1 a_{24} + a_0 a_{12} a_{34} + a_0 a_{13} a_4
\]
represents an element of 
$\alpha_0 \langle \alpha_1, \alpha_2, \alpha_3, \alpha_4 \rangle$, and it
is homologous to
\[
a_0 a_{13} a_4 + a_{01} a_{23} a_4 + a_{02} a_3 a_4,
\]
which represents an element of
$\langle \alpha_0, \alpha_1, \alpha_2, \alpha_3 \rangle \alpha_4$.
\end{proof}

\begin{lemma}
\label{lem:5fold-shuffle}
Suppose that $\langle \alpha_0, \alpha_1, \alpha_2, \alpha_3, \alpha_4 \rangle$ and
$\langle \alpha_1, \alpha_2, \alpha_3, \alpha_4, \alpha_5 \rangle$
are defined and that
$\langle \alpha_0, \alpha_1, \alpha_2, \alpha_3 \rangle$ is strictly zero.
Then
\[
\alpha_0 \langle \alpha_1, \alpha_2, \alpha_3, \alpha_4, \alpha_5 \rangle \subseteq
\langle \alpha_0, \alpha_1, \alpha_2, \alpha_3, \alpha_4 \rangle \alpha_5.
\]
\end{lemma}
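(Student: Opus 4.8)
The plan is to mimic the proofs of Lemmas \ref{lem:4fold-shuffle} and \ref{lem:4fold-shuffle2}, working entirely in the cobar DGA $C$ whose homology is $\Ext$, and to exhibit an explicit bounding chain. First I would fix cycle representatives $a_0, \ldots, a_5$ for $\alpha_0, \ldots, \alpha_5$. An arbitrary element of $\alpha_0 \langle \alpha_1, \ldots, \alpha_5 \rangle$ arises from a defining system for the fivefold bracket $\langle \alpha_1, \ldots, \alpha_5 \rangle$: elements $a_{i,j}$ (for $1 \le i < j \le 5$ with $(i,j) \neq (1,5)$) satisfying $d(a_{i,j}) = \sum_{i \le k < j} a_{i,k} a_{k+1,j}$, with the bracket element represented by $v = a_1 a_{2,5} + a_{1,2} a_{3,5} + a_{1,3} a_{4,5} + a_{1,4} a_5$. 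Thus $a_0 v$ represents the given element of $\alpha_0 \langle \alpha_1, \ldots, \alpha_5 \rangle$.

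The next step is to extend this to a defining system for $\langle \alpha_0, \ldots, \alpha_4 \rangle$, reusing the inherited elements $a_{i,j}$ with $1 \le i < j \le 4$. Because $\langle \alpha_0, \alpha_1, \alpha_2, \alpha_3 \rangle$ is strictly zero it is in particular defined, so $\alpha_0 \alpha_1 = 0$ and $\langle \alpha_0, \alpha_1, \alpha_2 \rangle \ni 0$; this lets me choose $a_{0,1}$ and $a_{0,2}$ with $d(a_{0,1}) = a_0 a_1$ and $d(a_{0,2}) = a_0 a_{1,2} + a_{0,1} a_2$. The element $a_0 a_{1,3} + a_{0,1} a_{2,3} + a_{0,2} a_3$ then represents a class in $\langle \alpha_0, \alpha_1, \alpha_2, \alpha_3 \rangle$; since that bracket is strictly zero, this class vanishes, so there is an $a_{0,3}$ with $d(a_{0,3}) = a_0 a_{1,3} + a_{0,1} a_{2,3} + a_{0,2} a_3$. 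Together with $a_{1,4}, a_{2,4}, a_{3,4}$ from the fivefold system, the new elements $a_{0,1}, a_{0,2}, a_{0,3}$ complete a valid defining system for $\langle \alpha_0, \ldots, \alpha_4 \rangle$, whose value is represented by $w = a_0 a_{1,4} + a_{0,1} a_{2,4} + a_{0,2} a_{3,4} + a_{0,3} a_4$; hence $w a_5$ represents an element of $\langle \alpha_0, \ldots, \alpha_4 \rangle \alpha_5$.

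It then remains to check that $a_0 v$ and $w a_5$ are homologous. I would exhibit the explicit bounding chain
\[
T = a_{0,1} a_{2,5} + a_{0,2} a_{3,5} + a_{0,3} a_{4,5}
\]
and verify by a direct Leibniz-rule computation (working mod $2$, so signs are irrelevant) that $d(T) = a_0 v + w a_5$: the cross terms $a_{0,1} a_2 a_{3,5}$, $a_{0,1} a_{2,3} a_{4,5}$, and $a_{0,2} a_3 a_{4,5}$ each appear twice and cancel, while the common summand $a_0 a_{1,4} a_5$ drops out of $a_0 v + w a_5$, leaving precisely the six surviving terms of $d(T)$. This shows the given element of $\alpha_0 \langle \alpha_1, \ldots, \alpha_5 \rangle$ lies in $\langle \alpha_0, \ldots, \alpha_4 \rangle \alpha_5$, as required.

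The main obstacle is the middle step, and specifically the production of $a_{0,3}$: one must ensure that the defining system for $\langle \alpha_0, \ldots, \alpha_3 \rangle$ is built from the \emph{same} $a_{1,2}, a_{1,3}, a_{2,3}$ already chosen for the fivefold system, so that $a_0 v$ and $w a_5$ share enough structure for $T$ to bound their difference. This compatibility is exactly what the hypothesis that $\langle \alpha_0, \alpha_1, \alpha_2, \alpha_3 \rangle$ is strictly zero — zero with no indeterminacy, so that its value is a boundary for \emph{any} compatible choice of lower elements — is designed to supply, in direct parallel with the role of strict vanishing of $\langle \alpha_1, \alpha_2, \alpha_3 \rangle$ in Lemma \ref{lem:4fold-shuffle2}. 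Once the defining systems are aligned, the verification of $d(T)$ is routine bookkeeping.
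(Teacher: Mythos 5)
Your proposal is correct and is exactly the argument the paper intends: its entire proof of Lemma \ref{lem:5fold-shuffle} is the remark that it is ``essentially the same as the proof of Lemma \ref{lem:4fold-shuffle},'' and your writeup is precisely that argument carried out one level higher — inherit the defining system $a_{12}, a_{13}, a_{23}, \ldots$ from the fivefold bracket, use the strict vanishing of $\langle \alpha_0, \alpha_1, \alpha_2, \alpha_3 \rangle$ to supply compatible elements $a_{01}, a_{02}, a_{03}$, and bound the difference by $a_{01}a_{25} + a_{02}a_{35} + a_{03}a_{45}$. Your Leibniz computation and the cancellation pattern check out, so no gaps remain beyond what the paper itself leaves implicit.
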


\begin{proof}
The proof is essentially the same as the proof of
Lemma \ref{lem:4fold-shuffle}.
\end{proof}

\section{The May differentials}
\label{sctn:May-diff}

\subsection{The May $E_1$-page}

\index{May spectral sequence!E1-page@$E_1$-page}
The $E_2$-page of the May spectral sequence is the cohomology
of a differential graded algebra.  In other words, the May spectral sequence really
starts with an $E_1$-page.
As described in \cite{DI10},
the motivic $E_1$-page is essentially the same as the classical
$E_1$-page.  Specifically, the motivic $E_1$-page is 
a polyonomial algebra over $\M_2$ with
generators $h_{ij}$ for all $i > 0$ and $j \geq 0$,
where:
\begin{enumerate}
\item
$h_{i0}$ has degree $(i, 2^i-2, 1, 2^{i-1} - 1 )$.
\item
$h_{ij}$ has degree $(i, 2^j ( 2^i - 1) - 1, 1, 2^{j-1} ( 2^i - 1) )$
for $j > 0$.
\end{enumerate}
The $d_1$-differential is described by the formula:
\[
d_1 ( h_{ij} ) = \sum_{0 < k < i } h_{kj} h_{i-k,k+j}.
\]

\subsection{The May $E_2$-page}

\index{May spectral sequence!E2-page@$E_2$-page}
We now describe the $E_2$-page of the motivic
May spectral sequence.  As explained in \cite{DI10},
it turns out that the motivic $E_2$-page is essentially
the same as the classical $E_2$-page.  The following
proposition makes this precise.

Recall that $\Gr(A)$ is the associated graded object of the motivic
Steenrod algebra with respect to powers of the augmentation ideal.
Similarly,
let $\Gr(A_{\cl})$ be the associated graded object of the classical
Steenrod algebra with respect to powers of the augmentation ideal.

\begin{prop}[\cite{DI10}]
\label{prop:assoc-graded}
There are graded ring isomorphisms
\begin{enumerate}[(a)]
\item 
$\Gr(A) \iso \Gr(A_{\cl})\otimes_{\F_2} \F_2[\tau]$.
\item 
$\Ext_{\Gr(A)}(\M_2,\M_2) \iso 
\Ext_{\Gr(A_{\cl})}(\F_2,\F_2)\otimes_{\F_2} \M_2$.
\end{enumerate}
\end{prop}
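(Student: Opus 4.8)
The plan is to prove part (a) as an isomorphism of (Hopf) algebras by matching admissible bases and the leading terms of the Adem relations, and then to derive part (b) by a flat base-change argument that produces a free $\Gr(A)$-resolution of $\M_2$ out of a free $\Gr(A_{\cl})$-resolution of $\F_2$.

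For part (a), I would filter $A$ and $A_{\cl}$ by the powers of their respective augmentation ideals, observing that $\tau$ lies in $\M_2$ and so is a central scalar of augmentation-filtration zero. The motivic Adem relation for $\Sq^a \Sq^b$ displayed above differs from its classical counterpart only in the central factors $\tau^?$; the binomial coefficients $\binom{b-1-c}{a-2c}$ that decide which monomials occur are literally the same in both settings. Hence $A$ is a free $\M_2$-module on the admissible monomials $\Sq^{i_1} \cdots \Sq^{i_k}$, in bijection with the classical admissible basis of $A_{\cl}$, and the product of two such monomials has the same decomposition into lower admissibles — up to the central $\tau$-powers — as classically. Passing to the associated graded kills the strictly-lower-filtration terms (for instance the $c = 0$ summand $\Sq^{a+b}$), leaving exactly the classical structure constants with $\tau$ as a free central polynomial generator. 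This yields the graded ring isomorphism $\Gr(A) \iso \Gr(A_{\cl}) \otimes_{\F_2} \F_2[\tau]$ of \cite{DI10}.

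For part (b), I would use that $\F_2[\tau]$ is free, hence flat, over $\F_2$, together with the decomposition just established. Choose a resolution $P_\bullet \map \F_2$ of the trivial $\Gr(A_{\cl})$-module by free modules of finite type in each degree. Tensoring over $\F_2$ with $\F_2[\tau]$ preserves exactness and carries free modules to free modules, so $P_\bullet \otimes_{\F_2} \F_2[\tau] \map \M_2 = \F_2 \otimes_{\F_2} \F_2[\tau]$ is a free resolution of $\M_2$ over $\Gr(A) \iso \Gr(A_{\cl}) \otimes_{\F_2} \F_2[\tau]$. Finite type in each degree gives a natural isomorphism of cochain complexes
\[
\Hom_{\Gr(A)}\bigl( P_\bullet \otimes_{\F_2} \F_2[\tau], \M_2 \bigr) \iso \Hom_{\Gr(A_{\cl})}(P_\bullet, \F_2) \otimes_{\F_2} \F_2[\tau],
\]
and flatness of $\F_2[\tau]$ lets cohomology commute past $\otimes_{\F_2} \F_2[\tau]$. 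Taking cohomology then produces the desired isomorphism $\Ext_{\Gr(A)}(\M_2, \M_2) \iso \Ext_{\Gr(A_{\cl})}(\F_2, \F_2) \otimes_{\F_2} \M_2$; it is multiplicative because $\Gr(A)$ is a cocommutative Hopf algebra and the comparison is induced by the ring map $\Gr(A_{\cl}) \map \Gr(A)$ together with the central inclusion of coefficients.

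The main obstacle is part (a), specifically the bookkeeping that shows the central $\tau^?$ factors in the motivic Adem relations do not perturb the associated-graded structure. One must confirm that the $\tau$-powers forced by weight-homogeneity ride along as scalars on the surviving ($c \geq 1$) terms and never shift a monomial's augmentation filtration, so that the structure constants of $\Gr(A)$ genuinely reduce to those of $\Gr(A_{\cl})$ after extension of scalars to $\M_2$. Reconciling the weight grading with the May grading is the delicate point; once the admissible-basis comparison is secured, the passage to $\Ext$ in part (b) is formal.
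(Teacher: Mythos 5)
You should know at the outset that the paper contains no proof of this proposition: it is quoted from \cite{DI10}, and the analysis there is carried out on the dual Steenrod algebra, not through Adem relations. Your part (b) is correct and is the standard argument: granting (a), base change along the flat map $\F_2 \to \F_2[\tau]$ carries a degreewise finite free $\Gr(A_{\cl})$-resolution of $\F_2$ to a free $\Gr(A)$-resolution of $\M_2$, and $\Hom$ and cohomology commute with $-\otimes_{\F_2}\F_2[\tau]$. The problem is part (a).

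The gap there is not merely the bookkeeping you defer at the end; the mechanism you propose does not compute $\Gr(A)$ at all. Your sketch treats the admissible basis as if it were adapted to the augmentation filtration, i.e.\ as if a length-$k$ admissible monomial had May filtration exactly $k$, so that $\Gr(A)$ could be read off by straightening products and discarding shorter admissibles. That is false already classically: $\Sq^3 = \Sq^1\Sq^2$ and $\Sq^6 = \Sq^2\Sq^4 + \Sq^5\Sq^1$ lie in $I^2$, and in fact $I/I^2$ is spanned by the classes of the $\Sq^{2^k}$ alone. Your parenthetical example exhibits the confusion concretely: in the relation $\Sq^1\Sq^2 = \Sq^3$, the $c=0$ summand $\Sq^{a+b}$ is not killed in the associated graded --- it is the entire product, and it represents the nonzero class $[\Sq^1][\Sq^2]$ in $I^2/I^3$. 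So ``the motivic structure constants on admissibles equal the classical ones up to central $\tau$'s'' carries no direct information about $\Gr(A)$. There is also a second issue you never raise: since $\M_2$ is not a field, one must show that each $I^{n+1}$ is $\tau$-saturated in $I^n$, for otherwise $\Gr(A)$ would have $\tau$-torsion and could not be an extension of scalars from $\F_2$. This, together with the claim you flag as ``the delicate point'' (that $\tau$'s decorate only terms of strictly higher filtration), \emph{is} the content of the proposition; the proposal postpones exactly what it needs to prove.

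The proof that works --- and the setting in which \cite{DI10} constructs the motivic May spectral sequence --- is dual. Transfer the problem to $A_{*,*} = \M_2[\tau_0,\tau_1,\ldots,\xi_1,\xi_2,\ldots]/(\tau_i^2 = \tau\xi_{i+1})$, where the May filtration is controlled by weights on Milnor monomials. The unique relation has strictly smaller May weight on its right-hand side, so in the associated graded it becomes $\tau_i^2 = 0$, and $\Gr(A_{*,*})$ is the exterior $\M_2$-algebra on the classes of the $\tau_i$ and the $\xi_i^{2^j}$, with coproducts likewise given by leading terms. Classically, $\Gr$ of $\F_2[\xi_1,\xi_2,\ldots]$ is the exterior $\F_2$-algebra on the classes of the $\xi_i^{2^j}$, so the correspondence $\tau_{i-1} \leftrightarrow \xi_i$ and $\xi_i^{2^j} \leftrightarrow \xi_i^{2^{j+1}}$ identifies $\Gr(A_{*,*})$ with the classical associated graded tensored with $\F_2[\tau]$, compatibly with the Hopf structure; dualizing over $\M_2$ (everything in sight is free of finite type) yields (a), after which your part (b) finishes the statement.
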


In other words, explicit generators and relations for
the $E_2$-page can be lifted directly from the
classical situation \cite{Tangora70a}.  

Moreover,
because of Proposition \ref{prop:may-compare},
the values of the May $d_2$ differential
\index{May spectral sequence!differential!d2@$d_2$}
can also be lifted from the classical
situation,
except that a few factors of $\tau$ show up to give the necessary weights.
For example, classically we have the differential
\index{b20@$b_{20}$}
\[
d_2 ( b_{20} ) = h_1^3 + h_0^2 h_2.
\]
Motivically, this does not make sense, since
$b_{20}$ and $h_0^2 h_2$ have weight $2$, while $h_1^3$ has weight $3$.
Therefore, the motivic differential must be
\[
d_2 ( b_{20} ) = \tau h_1^3 + h_0^2 h_2.
\]

Table \ref{tab:May-E2-gen} lists the multiplicative
generators of the $E_2$-page
through the 70-stem, and 
Table \ref{tab:May-E2-reln} lists a generating set of relations 
for the $E_2$-page in the same range.
Table \ref{tab:May-E2-gen} also gives the values of the May $d_2$ differential,
all of which are easily deduced from the classical situation
\cite{Tangora70a}.

\subsection{The May $E_4$-page}

\index{May spectral sequence!E4-page@$E_4$-page}
Although the $E_2$-page is quite large, the May $d_2$ differential
is also very destructive.  As a result, the $E_4$-page becomes
manageable.
We obtain the $E_4$-page by 
direct computation with the $d_2$ differential.

\begin{remark}
\index{B@$B$}
As in \cite{Tangora70a}, we use the notation
$B = b_{30} b_{31} + b_{21} b_{40}$.
\end{remark}

Having described the $E_4$-page, it is now necessary to
find the values of the May $d_4$ differential on the multiplicative
generators.
\index{May spectral sequence!differential!d4@$d_4$}
Most of the values of $d_4$ follow
from comparison to the classical case \cite{Tangora70a},
together with a few factors of $\tau$ to balance the weights.
There is only one differential that is not classical.

\begin{lemma}
\label{lem:d4-g}
$d_4 ( g ) = h_1^4 h_4$.
\index{g@$g$}
\end{lemma}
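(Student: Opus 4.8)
The plan is to obtain this differential by transporting a differential that is already available classically through an algebraic Steenrod operation. A direct comparison via Proposition \ref{prop:may-compare} cannot suffice: after inverting $\tau$ the target $h_1^4 h_4$ becomes zero (once $\tau$ is inverted one has $h_1^3 = h_0^2 h_2$ and $h_0 h_1 = 0$), so $\tau$-inversion is consistent both with $d_4(g) = h_1^4 h_4$ and with $d_4(g) = 0$. This is precisely why the differential is ``non-classical,'' and a genuinely motivic input is needed. The key structural fact is that $g$ is represented on the $E_4$-page by $b_{21}^2 = h_{21}^4$, and that $\Sq^0$, the bottom algebraic Steenrod operation in the sense of \cite{May70} \cite{BMMS86}, acts on the $E_1$-page by $h_{ij} \mapsto h_{i,j+1}$. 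Thus $\Sq^0(h_{20}^4) = h_{21}^4$, i.e.\ $\Sq^0(b_{20}^2) = g$.

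First I would record the starting differential $d_4(b_{20}^2) = h_0^4 h_3$. This is a classical May differential (it is the differential responsible for $h_0^4 h_3 = 0$ in $\Ext_{A_{\cl}}$, equivalently for the height of the $h_0$-tower on $h_3$), and it lifts directly to the motivic setting by Proposition \ref{prop:may-compare}; here the weights of $b_{20}^2$ and of $h_0^4 h_3$ already agree, so no factor of $\tau$ is introduced. I would then apply $\Sq^0$ to both sides. Since $\Sq^0$ is multiplicative and satisfies $\Sq^0(h_n) = h_{n+1}$, it carries $h_0^4 h_3 \mapsto h_1^4 h_4$. Granting that $\Sq^0$ commutes with the May $d_4$, this yields $d_4(g) = \Sq^0\bigl(d_4(b_{20}^2)\bigr) = \Sq^0(h_0^4 h_3) = h_1^4 h_4$, as claimed.

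The main obstacle is justifying that $\Sq^0$ commutes with $d_4$ with no correction term. Two points make this work. First, because $\Sq^0$ acts on $E_1$ by $h_{ij} \mapsto h_{i,j+1}$, it preserves the May filtration $i$; being multiplicative, it preserves May filtration throughout, and therefore carries a differential dropping May filtration by $3$ — that is, a $d_4$ — to another such differential, so no reindexing of the page occurs. Second, $\Sq^0$ is the bottom operation, so the Kudo-type transgression correction, which affects only the top (squaring) operation, does not intervene; the compatibility $d_4 \Sq^0 = \Sq^0 d_4$ then follows from the theory of Steenrod operations on the May spectral sequence \cite{BMMS86}. Finally, it is worth observing that the outcome is nonzero precisely because of the motivic weight: the motivic relation coming from $d_2(b_{20}) = \tau h_1^3 + h_0^2 h_2$ reads $\tau h_1^3 = h_0^2 h_2$ on $E_4$, so that via $h_0 h_1 = 0$ only $\tau h_1^4 h_4$ (not $h_1^4 h_4$ itself) is forced to vanish; classically the relation $h_1^3 = h_0^2 h_2$ forces $h_1^4 h_4 = 0$ outright, which is why $g$ survives in the classical May spectral sequence.
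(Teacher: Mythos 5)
There is a genuine gap, and it sits exactly at the transport step. You have located the right combinatorial correspondence ($h_{ij} \leftrightarrow h_{i,j-1}$), but realizing it as the motivic algebraic Steenrod operation $\Sq^0$ fails, because the motivic $\Sq^j$ doubles the \emph{weight} as well as the internal degree. This is visible in the paper's own computations: $\Sq^0(d_0) = \tau^2 d_1$ and $\Sq^0(h_0^2 h_3) = \tau^2 h_1^2 h_4$ (see Lemma \ref{lem:tau-k1}), not $d_1$ and $h_1^2 h_4$. Since $h_0$ has weight $0$ while $h_1$ has weight $1$, weight-doubling forces $\Sq^0(h_0) = \tau h_1$, not $h_1$; on the $E_1$-page, $\Sq^0$ is induced by the Frobenius of the dual Steenrod algebra, and motivically $\tau_{i-1}^2 = \tau \xi_i$, so $\Sq^0(h_{i0}) = \tau h_{i1}$, while $\Sq^0(h_{ij}) = h_{i,j+1}$ holds only for $j \geq 1$. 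Consequently $\Sq^0(b_{20}^2) = \tau^4 g$ and $\Sq^0(h_0^4 h_3) = \tau^4 h_1^4 h_4$. Even granting full commutation of $\Sq^0$ with $d_4$ (for which, incidentally, \cite{BMMS86} is not the right reference: it treats the Adams spectral sequence, and the classical May-spectral-sequence statement is Nakamura's), your transported equation reads $d_4(\tau^4 g) = \tau^4 h_1^4 h_4$. By the very relation you invoke in your last sentence, $\tau h_1^3 = h_0^2 h_2$ on $E_4$ and hence $\tau h_1^4 h_4 = 0$, so both sides are identically zero. You cannot divide by $\tau^4$ in a module with $\tau$-torsion, and $d_4(g) = h_1^4 h_4$ is precisely a statement about a $\tau$-torsion class; the operation erases exactly the information you need.

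The $\tau$-free incarnation of your correspondence is not an endomorphism of the motivic May spectral sequence but the isomorphism of Theorem \ref{thm:Chow-0} between $\Ext_{A_{\cl}}$ and the Chow-degree-zero part of the motivic theory: May differentials preserve the Chow degree $s+f-2w$, the element $\tau$ has Chow degree $2$, and in Chow degree zero the motivic May spectral sequence is isomorphic to the classical one with motivic $h_{ij}$ corresponding to classical $h_{i,j-1}$, with no $\tau$ anywhere. This is how the paper argues: $h_1^4 h_4$ lies in Chow degree zero and corresponds to the classical element $h_0^4 h_3$, which is zero in $\Ext_{A_{\cl}}$; hence $h_1^4 h_4$ cannot survive the motivic May spectral sequence, and $d_4(g) = h_1^4 h_4$ is the only differential that can kill it. Your plan becomes correct if you transport the \emph{classical} differential $d_4(b_{20}^2) = h_0^4 h_3$ across this isomorphism (under which classical $b_{20}^2$ corresponds to motivic $g$), rather than applying the motivic $\Sq^0$ to the motivic differential $d_4(P) = h_0^4 h_3$; note that motivic $P$ and motivic $h_0^4 h_3$ have Chow degree $4$, so they do not even lie in the subalgebra where the untwisted correspondence lives.
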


\begin{proof}
By the isomorphism of
Theorem \ref{thm:Chow-0},
we know that $h_1^4 h_4$ cannot survive the
motivic May spectral sequence because $h_0^4 h_3$ 
is zero classically.  
There is only one possible differential
that can kill $h_1^4 h_4$.

See also \cite{DI10} for a different proof of Lemma \ref{lem:d4-g}.
\end{proof}

Table \ref{tab:May-E4}
lists the values of the $d_4$ differential on multiplicative
generators of the $E_4$-page.

\subsection{The May $E_6$-page}

\index{May spectral sequence!E6-page@$E_6$-page}
We can now obtain the $E_6$-page
by direct computation with the May $d_4$ differential and the Leibniz rule.

Having described the $E_6$-page, it is now necessary to
find the values of the May $d_6$ differential on the multiplicative
generators.
\index{May spectral sequence!differential!d6@$d_6$}
Most of these values follow
from comparison to the classical case \cite{Tangora70a},
together with a few factors of $\tau$ to balance the weights.
There are only a few differentials that are not classical.

\begin{lemma}
\mbox{}
\begin{enumerate}
\item
$d_6 ( x_{56} ) = h_1^2 h_5 c_0 d_0$.
\item
$d_6 ( P x_{56} ) = P h_1^2 h_5 c_0 d_0$.
\item
$d_6 ( B_{23} ) = h_1^2 h_5 d_0 e_0$.
\end{enumerate}
\end{lemma}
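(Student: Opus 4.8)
The plan is to treat the three differentials by a single strategy that reduces the work to two essentially independent computations. First observe that part (2) is a formal consequence of part (1). The Adams periodicity operator $P$ is represented on the May $E_6$-page by a permanent cycle, so $d_6(P)=0$; applying the Leibniz rule to the product $P\cdot x_{56}$ then gives $d_6(Px_{56}) = P\cdot d_6(x_{56}) = Ph_1^2 h_5 c_0 d_0$ the moment part (1) is known. This leaves the two genuinely exotic differentials (1) and (3), which I would establish by the same three-step argument.

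The first step is degree bookkeeping. I would read off the gradings $(m,s,f,w)$ of $x_{56}$ and $B_{23}$ from Tables \ref{tab:May-E2-gen}--\ref{tab:May-E4} and of the two proposed targets, and check that each target occupies the tridegree forced by a $d_6$ (recall from Theorem \ref{thm:3-converge} that $d_r$ shifts the grading by $(-(r-1),-1,+1,0)$, so $d_6$ shifts by $(-5,-1,+1,0)$). Consulting the description of the $E_6$-page, I would then verify by inspection that $x_{56}$ (respectively $B_{23}$) is the \emph{only} class of its degree that can support a nonzero differential landing on the target, and that $h_1^2 h_5 c_0 d_0$ (respectively $h_1^2 h_5 d_0 e_0$) is a nonzero class on $E_6$ that was not already annihilated by an earlier $d_4$. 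This reduces each part to the single assertion that the target is not a permanent cycle.

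This last point is the crux, and here the method of Lemma \ref{lem:d4-g} does \emph{not} apply. By Proposition \ref{prop:may-compare} the targets must become zero after inverting $\tau$ — this is precisely what makes the differentials exotic, since otherwise the corresponding classical differential would already be present — so the classical May spectral sequence gives no leverage. Moreover these targets cannot lie in Chow degree zero, for if they did then the Chow-degree-zero comparison of Theorem \ref{thm:Chow-0} would identify them with classical $\Ext$ classes and the argument of Lemma \ref{lem:d4-g} would settle them classically. Instead I would show directly that the target is zero in $\Ext$ itself: each target is $\tau$-power torsion, and I would exhibit a relation in $\Ext$ that forces it to vanish, via a Massey-product shuffle validated by May's Convergence Theorem \ref{thm:3-converge} together with the relations among $\Ext$ generators already established in this section. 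Once the target is known to vanish in $\Ext$ while remaining nonzero on $E_6$, it must die, and by the uniqueness from the previous step the only differential that can kill it is $d_6$ from $x_{56}$ (respectively $B_{23}$). As an alternative (or corroborating) route for producing these $h_5$-bearing targets, I would try applying an algebraic Steenrod operation to the established differential $d_4(g)=h_1^4 h_4$ of Lemma \ref{lem:d4-g}, exploiting the fact that such operations double the relevant indices ($h_4 \mapsto h_5$).

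I expect the vanishing of the target in $\Ext$ to be the main obstacle. Because these classes are $\tau$-torsion and highly $h_1$-divisible, the relation annihilating them is not one of the obvious generating relations, so locating the correct Massey product and checking that the May-filtration hypotheses of Theorem \ref{thm:3-converge} are met will require care. A secondary point to control is the crossing-differential bookkeeping demanded by condition (2) of that theorem: one must confirm that no longer differential from a class of higher May filtration also hits the target, and that $x_{56}$ and $B_{23}$ themselves genuinely survive to $E_6$, so that $d_6$ is the operative differential.
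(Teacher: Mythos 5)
Your proposal has two genuine gaps. First, the reduction of (2) to (1) rests on a false premise: $P$ is \emph{not} a permanent cycle in the May spectral sequence. Table \ref{tab:May-E4} records $d_4(P) = h_0^4 h_3$, so $P$ does not survive to the $E_6$-page at all, and there is no product "$P \cdot x_{56}$" of $E_6$-classes to which you could apply the Leibniz rule; the element $P x_{56}$ is an indecomposable multiplicative generator of the $E_6$-page (which is precisely why it needs its own differential). Second, for parts (1) and (3) your entire argument hinges on showing that $h_1^2 h_5 c_0 d_0$ and $h_1^2 h_5 d_0 e_0$ vanish in $\Ext$, and you never actually produce the relation or Massey product that does this — you only promise one exists. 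Worse, the natural relations that would give the vanishing (e.g.\ $h_1 h_5 c_0 d_0 = h_2 \cdot h_2 B_2$ from Lemma \ref{lem:h2-h2B2}, whence $h_1^2 h_5 c_0 d_0 = h_1 h_2 \cdot h_2 B_2 = 0$) are \emph{hidden} May extensions, established in Section \ref{sctn:hidden-extn} only after the full $E_\infty$-page — including these very $d_6$ differentials — is known. Invoking them here is circular, and you give no alternative input.

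The paper's proof sidesteps all of this with one classical input and the Leibniz rule, with no "the target must die" argument at all. The element $\phi$ satisfies the (non-hidden) relations $h_1 x_{56} = c_0 \phi$, $h_1 \cdot P x_{56} = P c_0 \cdot \phi$, and $h_1 B_{23} = e_0 \phi$ on the $E_6$-page, and its differential $d_6(\phi) = h_1^3 h_5 d_0$ is classical (Table \ref{tab:May-E6}). Multiplying the differential on $\phi$ by $c_0$, $P c_0$, and $e_0$ respectively gives $d_6(h_1 x_{56}) = h_1^3 h_5 c_0 d_0$, $d_6(h_1 \cdot P x_{56}) = P h_1^3 h_5 c_0 d_0$, and $d_6(h_1 B_{23}) = h_1^3 h_5 d_0 e_0$, from which dividing by $h_1$ determines all three stated differentials. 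Note that this handles part (2) by a genuine product decomposition of $h_1 \cdot P x_{56}$ through the surviving class $P c_0$ (which is a $d_4$-cycle because $h_0 c_0 = 0$), exactly the repair your $P$-Leibniz argument needs.
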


\begin{proof}
We have the relation $h_1 x_{56} = c_0 \phi$.
The $d_6$ differential on $\phi$ then implies that
$d_6 ( h_1 x_{56} ) = h_1^3 h_5 c_0 d_0$,
from which it follows that 
$d_6 ( x_{56} ) = h_1^2 h_5 c_0 d_0$.

The arguments for the other two differentials are similar,
using the relations
$h_1 \cdot P x_{56} = P c_0 \cdot \phi$
and
$h_1 B_{23} = e_0 \phi$.
\end{proof}

\begin{lemma}
\label{lem:d6-c0g^3}
$d_6 ( c_0 g^3 ) = h_1^{10} D_4$.
\end{lemma}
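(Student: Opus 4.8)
The plan is to compute $d_6(c_0 g^3)$ by the Leibniz rule, reducing the problem to differentials already available on the $E_6$-page. The first thing to settle is that $c_0 g^3$ survives to $E_6$ in the first place. The class $c_0$ is a permanent cycle of the May spectral sequence, and $d_4(g) = h_1^4 h_4$ by Lemma \ref{lem:d4-g}, so in characteristic $2$ the Leibniz rule gives $d_4(g^3) = g^2 h_1^4 h_4$ and hence $d_4(c_0 g^3) = c_0 g^2 h_1^4 h_4$. I would first verify, directly from the multiplicative relations recorded in Table \ref{tab:May-E2-reln} together with the $d_2$- and $d_4$-computations, that $c_0 g^2 h_1^4 h_4 = 0$ on $E_4$, so that $c_0 g^3$ is a $d_4$-cycle and descends to $E_6$. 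It is worth stressing that $g^3$ itself need not survive to $E_6$; the extra factor of $c_0$ is exactly what annihilates the offending $d_4$, so one cannot naively differentiate $g^3$ on its own.

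Having placed $c_0 g^3$ on the $E_6$-page, I would compute its differential by multiplying the differential on the surviving factors by $c_0$ and identifying the result. The most direct conceptual route goes through the elements $i_1$ and $D_4$: the differential $d(g^3) = h_1^6 i_1$ announced in Example \ref{ex:g^2}, combined with the relation $c_0 i_1 = h_1^4 D_4$ of Example \ref{ex:h1^7h5c0}, gives $c_0 \cdot h_1^6 i_1 = h_1^6 (c_0 i_1) = h_1^{10} D_4$. A second route, useful as a cross-check, writes $c_0 g^3 = (c_0 g)\, g^2$ and applies Leibniz using $d_6(g^2) = h_1^8 h_5$ (Example \ref{ex:g^2}), provided $c_0 g$ is itself a $d_4$-cycle, i.e.\ $c_0 h_1^4 h_4 = 0$ on $E_4$. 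In either case the bookkeeping is consistent: $c_0 g^3$ lies in stem $68$ with filtration $15$, so $d_6(c_0 g^3)$ must land in stem $67$ with filtration $16$, which is precisely the location of $h_1^{10} D_4$.

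The hard part will be establishing the relevant $E_6$-level multiplicative relation — whether phrased as $c_0 h_1^6 i_1 = h_1^{10} D_4$ or $c_0 g \cdot h_1^8 h_5 = h_1^{10} D_4$ — at the level of the $E_6$-page itself. The delicate point is that the superficially identical relation $c_0 i_1 = h_1^4 D_4$ of Example \ref{ex:h1^7h5c0} is a \emph{hidden} extension in $\Ext$, detected at a strictly higher May filtration than the naive product, so it cannot be quoted verbatim at the $E_6$-stage; one must instead pin down the product of the specific $E_6$-classes involved while tracking the May filtration carefully. Moreover the differential is exotic: both $c_0 g^3$ and $h_1^{10} D_4$ are $\tau$-power torsion and vanish after inverting $\tau$, so the comparison with the classical May spectral sequence in Proposition \ref{prop:may-compare} offers no leverage and a genuinely motivic argument is required. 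I would therefore settle the needed $E_6$-relation either by a direct computation in the cobar complex or, preferably, by realizing $h_1^{10} D_4$ as a Massey product via May's Convergence Theorem \ref{thm:3-converge}, which is the mechanism the paper already relies on for exactly this kind of filtration-jumping identification.
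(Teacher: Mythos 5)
There is a genuine gap: your Leibniz strategy cannot produce a $d_6$ differential here. The differential on $g^3$ is a $d_4$ (namely $d_4(g^3)=g^2\cdot h_1^4h_4 = h_1^6 i_1$), so the Leibniz rule only computes $d_4(c_0 g^3) = c_0\cdot h_1^6 i_1$, \emph{evaluated on the $E_4$-page} — and there this product is zero, precisely because the extension $c_0\cdot i_1 = h_1^4 D_4$ is hidden (on every May page the product $c_0\cdot i_1$ of page classes vanishes; the relation holds only in $\Ext$). That vanishing is exactly why $c_0 g^3$ survives to $E_6$, but it also exhausts what Leibniz can say: on $E_6$ the factor $g^3$ no longer exists, so no product formula determines $d_6(c_0 g^3)$. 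Worse, the "$E_6$-level relations'' you propose to establish are false, so neither a cobar computation nor a Massey-product argument can verify them: $h_1^6 i_1$ is a $d_4$-boundary and hence is zero on $E_6$, so $c_0\cdot h_1^6 i_1 = 0$ there, not $h_1^{10}D_4$; and $c_0 g\cdot h_1^8 h_5$ has May filtration $22$ while $h_1^{10}D_4$ has May filtration $24$, so they can never be equal as classes on any May page (also, $d_8(g^2)=h_1^8h_5$ is a $d_8$, not a $d_6$, so that cross-check route fails on the page count as well). A small further slip: hidden May extensions are detected in strictly \emph{lower} May filtration, not higher, since May differentials decrease the filtration.

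You do, however, have exactly the right two ingredients — the hidden extension $c_0\cdot i_1 = h_1^4 D_4$ (Lemma \ref{lem:c0-i1}) and the vanishing of $h_1^6 i_1$ — and the repair is to combine them in $\Ext$, where the hidden extension is valid, running the logic in the opposite direction. In $\Ext$ one has $h_1^{10}D_4 = h_1^6\cdot(c_0 i_1) = c_0\cdot(h_1^6 i_1) = 0$, since $h_1^6 i_1 = 0$ in $\Ext$ (it is killed by the May differential on $g^3$). Thus $h_1^{10}D_4$ is a nonzero class on the May $E_6$-page that represents zero in $\Ext$, so some May differential must hit it, and inspection of the chart shows that $d_6(c_0 g^3)$ is the only possibility. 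This target-must-die argument is the paper's proof; your proposal identifies the ingredients but assembles them at the level of the pages, where the key relation simply does not hold.
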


\begin{proof}
Lemma \ref{lem:c0-i1} shows that
$c_0 \cdot i_1 = h_1^4 D_4$.
Since $h_1^6 i_1 = 0$, we conclude that $h_1^{10} D_4$ must
be zero in $\Ext$.
There is only one possible differential that can hit $h_1^{10} D_4$.
\end{proof}

\begin{remark}
The value of $d_6 ( \Delta h_0^2 Y )$ given in
\cite{Tangora70a}*{Proposition 4.37(c)} is incorrect because it is 
inconsistent with machine computations of $\Ext_{A_{\cl}}$ \cite{Bruner97}.  
The value for $d_6 (\Delta h_0^2 Y)$
given in Table \ref{tab:May-E6} is the only possibility that is consistent 
with the machine computations.
\end{remark}

Table \ref{tab:May-E6}
lists the values of the May $d_6$ differential on multiplicative
generators of the $E_6$-page.


\subsection{The May $E_8$-page}

\index{May spectral sequence!E8-page@$E_8$-page}
We can now obtain the May $E_8$-page
by direct computation with the May $d_6$ differential and the Leibniz rule.
Once we reach the $E_8$-page, we are nearly done.  There are 
just a few more higher differentials to deal with.

Having described the $E_8$-page, it is now necessary to
find the values of the May $d_8$ differential on the multiplicative
generators.
\index{May spectral sequence!differential!d8@$d_8$}
Once again, most of these values follow
from comparison to the classical case \cite{Tangora70a},
together with a few factors of $\tau$ to balance the weights.
There are only a few differentials that are not classical.

\begin{lemma}
\mbox{}
\begin{enumerate}
\item
$d_8 (g^2) = h_1^8 h_5$.
\item
$d_8 (w) = P h_1^5 h_5$.
\item
$d_8 ( \Delta c_0 g) = P h_1^4 h_5 c_0$.
\item
$d_8 ( Q_3 ) = h_1^4 h_5^2$.
\end{enumerate}
\end{lemma}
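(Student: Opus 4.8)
The plan is to treat all four of these as \emph{exotic} (non-classical) differentials, following the template of Lemma~\ref{lem:d4-g} and Lemma~\ref{lem:d6-c0g^3}: for each one I would first show that the target cannot survive the May spectral sequence, and then read off from the $E_8$-page (Tables~\ref{tab:May-E4}--\ref{tab:May-higher}) that there is a unique class in the correct degree that can hit it. Identifying that class simultaneously names the source and forces the page to be $r=8$, since the drop in May filtration from source to target must equal $r-1$. The engine for the vanishing statements is the Chow-degree-zero comparison of Theorem~\ref{thm:Chow-0}, which transports a would-be surviving motivic class in Chow degree $s+f-2w=0$ to a manifestly zero class in $\Ext_{\cl}$; this is exactly the mechanism advertised in Example~\ref{ex:g^2}.

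Parts (1) and (4) are the clean cases, because the targets $h_1^8 h_5$ and $h_1^4 h_5^2$ already lie in Chow degree zero. Under the isomorphism of Theorem~\ref{thm:Chow-0} they correspond to the classical elements $h_0^8 h_4$ and $h_0^4 h_4^2$, which both vanish in $\Ext_{\cl}$ as high $h_0$-power multiples (readily confirmed by machine \cite{Bruner97}). Hence both targets are zero in $\Ext$ and must be hit, and inspection of the $E_8$-page shows that $g^2$ and $Q_3$ are the only generators available in the respective source degrees, which produces the two differentials.

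Parts (2) and (3) are harder, because the periodicity operator $P$ (and, in (3), the class $c_0$) carry positive Chow degree, so the targets $Ph_1^5 h_5$ and $Ph_1^4 h_5 c_0$ are not themselves in Chow degree zero and Theorem~\ref{thm:Chow-0} does not apply to them directly. For (2) I would factor the target as $P\cdot(h_1^5 h_5)$: the inner factor $h_1^5 h_5$ lies in Chow degree zero and corresponds to $h_0^5 h_4=0$, so it vanishes in $\Ext$; since $P$ is represented by a permanent cycle on the May page, the Leibniz rule then forces the May class $Ph_1^5 h_5$ to die as well, with $w$ the unique available source. For (3) I would attempt the analogous reduction, either peeling off the permanent cycle $c_0$, so that the differential would descend from one on $\Delta g$, or peeling off $P$.

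The main obstacle is part (3). Its natural Chow-degree-zero cofactor is $h_1^4 h_5$, corresponding to $h_0^4 h_4$, and this class is \emph{nonzero} in $\Ext_{\cl}$: the $h_0$-multiplication on $h_4$ persists one step further than on $h_3$, so that $h_0^4 h_4 \neq 0$ while $h_0^5 h_4 = 0$. This is precisely why the exponents differ between (2) and (3), and why (3) cannot be reduced to a classical zero by stripping off $P$ and $c_0$. Instead, the vanishing of $Ph_1^4 h_5 c_0$ in $\Ext$ must come from a genuine $\Ext$-relation or Massey-product identity, in the spirit of the relation $c_0\cdot i_1 = h_1^4 D_4$ of Lemma~\ref{lem:c0-i1} used in Lemma~\ref{lem:d6-c0g^3}, or of the $h_1$-power relations collected in Example~\ref{ex:h1^7h5c0}. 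A secondary but pervasive difficulty in all four cases is the bookkeeping needed to confirm that the target is not already killed on an earlier page and that the named generator is the only possible source; here one must carefully distinguish ``the target vanishes in $\Ext$'' from ``the obvious May representative is hit,'' so as not to conflate these differentials with hidden extensions.
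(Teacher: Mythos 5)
Your parts (1) and (4) are correct and coincide with the paper's argument: both targets lie in Chow degree zero, Theorem \ref{thm:Chow-0} identifies them with the classical classes $h_0^8 h_4$ and $h_0^4 h_4^2$, which are zero, and in each degree there is only one differential that can do the killing.

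Parts (2) and (3), however, contain a genuine gap, and it stems from a factual error about the classical $h_0$-tower on $h_4$. You assert that $h_0^5 h_4 = 0$ classically (and, in your discussion of (3), that ``$h_0^4 h_4 \neq 0$ while $h_0^5 h_4 = 0$''). This is false: the tower on $h_4$ persists four steps beyond the tower on $h_3$, not one. One has $h_0^j h_4 \neq 0$ for all $j \leq 7$ --- reflecting the $\mathbb{Z}/32$ image of $J$ in the $15$-stem detected by $h_0^3 h_4, \ldots, h_0^7 h_4$ --- and the first vanishing is $h_0^8 h_4 = 0$, killed by $d_8(P^2) = h_0^8 h_4$ (Table \ref{tab:May-E8}). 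Correspondingly $h_1^5 h_5$ and $h_1^7 h_5$ are \emph{non-zero} in motivic $\Ext$; the paper uses this elsewhere, e.g.\ the hidden extension $h_0 \cdot h_2^2 g^2 = h_1^7 h_5 c_0$ of Lemma \ref{lem:hidden-h0h2^2g}. So your factorization $P \cdot (h_1^5 h_5)$ proves nothing: the cofactor does not vanish, and the differential $d_8(w) = P h_1^5 h_5$ is genuinely not ``divisible by $P$.'' The paper's actual route for (2) is to multiply the class that \emph{is} zero, namely $h_1^8 h_5$ from part (1), by the permanent cycle $P h_1$: then $P h_1^9 h_5 = 0$ in $\Ext$, the only differential that can hit it is $d_8$ on $h_1^4 w$, and since $d_8(h_1^4 w) = h_1^4\, d_8(w)$ this forces $d_8(w) = P h_1^5 h_5$.

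For (3) you correctly diagnose that no Chow-degree-zero reduction is available, but you stop at gesturing toward ``a genuine $\Ext$-relation or Massey-product identity'' without producing one, so there is no proof. The missing ingredient is elementary: the May-page relation $c_0 w = h_1 \cdot \Delta c_0 g$ (immediate from the monomial descriptions $c_0 = h_1 h_0(1)$ and $w = \Delta h_1 g$). Applying $d_8$ and part (2) gives $d_8(h_1 \cdot \Delta c_0 g) = P h_1^5 h_5 c_0$, and cancelling $h_1$ yields $d_8(\Delta c_0 g) = P h_1^4 h_5 c_0$. Thus (3) is a multiplicative corollary of (2), not an independent vanishing statement; once (2) collapses in your write-up, (3) has no support at all.
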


\begin{proof}
It follows from
Theorem \ref{thm:Chow-0} that
$h_1^8 h_5$ must be zero in $\Ext$, since
$h_0^8 h_4$ is zero classically.
There is only one differential that can possibly
hit $h_1^8 h_5$.

We now know that $P h_1^9 h_5 = 0$ in $\Ext$ since
$h_1^8 h_5 = 0$.  
There is only one differential that can hit this.
This shows that $d_8 ( w ) = P h_1^5 h_5$.

Using the relation $c_0 w = h_1 \cdot \Delta c_0 g$, it follows
that $d_8 ( h_1 \cdot \Delta c_0 g ) = P h_1^5 h_5 c_0$, and then that
$d_8 ( \Delta c_0 g ) = P h_1^4 h_5 c_0$.

Since $h_0^4 h_4^2$ is zero classically, it follows
from Theorem \ref{thm:Chow-0} that
$h_1^4 h_5^2$ must be zero in $\Ext$.
There is only one differential that can possibly hit
$h_1^4 h_5^2$.
\end{proof}

Table \ref{tab:May-E8}
lists the values of the May $d_8$ differential on multiplicative
generators of the $E_8$-page.


\subsection{The May $E_\infty$-page}
\label{subsctn:May-Einfty}

\index{May spectral sequence!differential!higher}
Most of the higher May differentials are zero through the 70-stem.
The exceptions are the May $d_{12}$ differential,
the May $d_{16}$ differential, and the May $d_{32}$ differential.
All of the non-zero values of these differentials are easily
deduced by comparison to the classical case \cite{Tangora70a}.

Table \ref{tab:May-higher} lists the values of these higher
differentials on multiplicative generators of the higher pages.
There are no more differentials to consider in our range,
and we have determined the May $E_\infty$-page.  

\index{May spectral sequence!Einfinity-page@$E_\infty$-page}
The multiplicative generators for the $E_\infty$-page 
through the 70-stem break into two groups.
The first group consists of generators that are still 
multiplicative generators in $\Ext$ 
after hidden extensions have been considered;
these are listed in Table \ref{tab:Ext-gen}.
The second group consists of multiplicative generators of the
$E_\infty$-page that become decomposable in $\Ext$ because
of a hidden extension;
these are listed in Table \ref{tab:May-Einfty-temp}.

\index{Ext@$\Ext$!ambiguous generator}
It is traditional to use the same symbols for elements of 
the $E_\infty$-page
and for the elements of $\Ext$ that they represent.  Generally, there is
no ambiguity with this abuse of notation, but there are several exceptions.
These exceptions occur when a multiplicative generator for 
the $E_\infty$-page
lies in the same degree as another element of 
the $E_\infty$-page with lower May filtration.

The first such example occurs in the 18-stem, where the element 
$f_0$
\index{f0@$f_0$}
 of the $E_\infty$-page represents two elements of $\Ext$ because of the
presence of the element $\tau h_1^3 h_4$ of lower May filtration.
This particular example does not cause much difficulty.  Just arbitrarily
choose one of these elements to be the generator of $\Ext$.  
The element disappears quickly from further analysis because
$f_0$ supports an Adams $d_2$ differential.  

However, later examples involve
more subtlety and call for a careful distinction between the possibilities.
There are no wrong choices, but it is important to be consistent with the 
notation in different arguments.
For example, the element $u'$
\index{u'@$u'$}
 of the $E_\infty$-page represents two elements of 
$\Ext$ because of the presence of $\tau d_0 l$.  One of these elements is 
killed by $\tau$, while the other element is killed by $h_0$.  Sloppy notation
might lead to the false conclusion that there is a multiplicative generator of $\Ext$
in that degree that is killed by both $\tau$ and by $h_0$.

Table \ref{tab:Ext-ambiguous}
lists all such examples of multiplicative generators of the $E_\infty$-page
that represent more than one element in $\Ext$.  In many of these examples, we
have given an algebraic specification of one element of $\Ext$ to serve as the
multiplicative generator, sometimes by comparing
to $\Ext_{A(2)}$ \cite{Isaksen09}.  
In some examples, we have not given a definition
because an algebraic description is not readily available, and also because it
does not seem to matter for later analysis.  The reader is strongly warned to
be cautious when working with these undefined elements.

The example $\tau Q_3$
\index{tauQ3@$\tau Q_3$}
 deserves an additional remark.  Here we
have defined the element in terms of an Adams differential.  This 
is merely a matter of convenience for later work with the Adams spectral
ßsequence in Chapter \ref{ch:Adams-diff}.

\section{Hidden May extensions}
\label{sctn:hidden-extn}

\index{May spectral sequence!hidden extension}
In order to pass from the $E_\infty$-page to $\Ext$,
we must resolve some hidden extensions.
In this section, we deal with all possible hidden extensions
by $\tau$, $h_0$, $h_1$, and $h_2$.
We will use several different tools, including:
\begin{enumerate}
\item
Classical hidden extensions \cite{Bruner97}.
\item
Shuffle relations with Massey products.
\index{Massey product!shuffle}
\item
Steenrod operations in the sense of \cite{May70}.
\index{Steenrod operation!algebraic}
\item
Theorem \ref{thm:Chow-0} for hidden extensions among
elements in degrees $(s,f,w)$ with $s+f-2w=0$.
\end{enumerate}

\subsection{Hidden May $\tau$ extensions}
\label{subsctn:t-hidden}

By exhaustive search, the following results give all of the 
hidden $\tau$ extensions.
\index{May spectral sequence!hidden extension!tau@$\tau$}

\begin{prop}
Table \ref{tab:May-tau} lists all of the hidden
$\tau$ extensions through the 70-stem.
\end{prop}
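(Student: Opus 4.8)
The statement to prove is that Table \ref{tab:May-tau} lists \emph{all} hidden $\tau$ extensions through the 70-stem, where a hidden $\tau$ extension means an element $x$ on the May $E_\infty$-page such that $\tau x = 0$ on $E_\infty$ but $\tau x \neq 0$ in $\Ext$, the product being detected by an element of strictly lower May filtration. The plan is to organize the argument as an exhaustive but finite search across the relevant degrees, reducing in each degree to a short list of \emph{a priori} possibilities and then ruling out or confirming each one.

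First I would fix the bookkeeping. A hidden $\tau$ extension on $x$ in degree $(s,f,w)$ lands in degree $(s,f,w+1)$, since $\tau$ has degree $(0,0,-1)$ in the dual convention of Section \ref{subsctn:Ext-group}; and by definition the target must have strictly greater May filtration $m$ than the element $\tau x$ would carry were the extension non-hidden. So for each stem $s \leq 70$ and each $(f,w)$, I would list the $E_\infty$ classes $x$ for which $\tau x = 0$ already holds on $E_\infty$, together with the candidate targets in degree $(s,f,w+1)$ of higher May filtration. The finiteness of the $E_\infty$-page in this range (computed in the preceding sections and recorded in Tables \ref{tab:May-E4} through \ref{tab:May-higher}) makes this a finite search. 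The key structural constraint that prunes the search drastically is Theorem \ref{thm:Chow-0}: both source and target of any $\tau$ extension have the same value of $s+f-2w$ shifted appropriately, but since $\tau$ raises $w$ by one while fixing $s$ and $f$, a hidden $\tau$ extension strictly decreases the Chow degree $s+f-2w$ by $2$. Because $\Ext$ is concentrated in Chow degrees $\geq 0$, this immediately forbids any hidden $\tau$ extension out of Chow degree $0$ or $1$, eliminating a large block of potential sources.

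Next I would bring in the positive tools to \emph{establish} the extensions that do occur (the ones actually appearing in the table). Here the workhorses are the four techniques listed at the start of Section \ref{sctn:hidden-extn}: comparison with classical hidden extensions via \cite{Bruner97}, shuffle relations among Massey products (computed through May's Convergence Theorem \ref{thm:3-converge} and its higher analogues), algebraic Steenrod operations in the sense of \cite{May70}, and Theorem \ref{thm:Chow-0} for the Chow-degree-zero part. For a typical hidden $\tau$ extension $\tau x = y$, I would exhibit a Massey product expression such as $y \in \langle \alpha, \beta, \gamma \rangle$ together with a shuffle $\tau \langle \alpha, \beta, \gamma \rangle = \langle \tau, \alpha, \beta \rangle \gamma$ (or an analogous relation) that forces $\tau x$ to be non-zero and lands in the predicted degree; May's Convergence Theorem converts the relevant May differential into the needed bracket. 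For the non-existence direction, the combination of the Chow-degree obstruction, weight-grading incompatibilities, and consistency with the classical picture under $\tau$-inversion (Proposition \ref{prop:compare-ext}) rules out the remaining candidates.

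The main obstacle will be the handful of degrees where the candidate source and target both survive, the Chow-degree and weight obstructions do not immediately decide the question, and no classical analogue is available because the phenomenon is genuinely motivic (for instance the family around $h_2 g^2$ discussed in Example \ref{ex:h2g^2}, or the $h_1$-local region). In these cases I expect to need a carefully chosen Massey product whose existence follows from a specific May differential, and the delicate point is verifying the crossing-differential hypothesis (condition (2) of Theorem \ref{thm:3-converge}, or conditions (3)--(5) of Theorem \ref{thm:4-converge}) so that the bracket is actually detected by the claimed $E_\infty$ representative. I would handle each such stubborn case individually, and these individual verifications—rather than the broad exhaustive search—are where the real work lies; the cross-references recorded in Table \ref{tab:May-tau} point to exactly these case-by-case arguments.
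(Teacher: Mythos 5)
Your overall plan---a finite exhaustive search over the May $E_\infty$-chart, with the extensions themselves established by comparison to classical hidden extensions from \cite{Bruner97}, Massey-product shuffles via May's Convergence Theorem, and algebraic Steenrod operations---is in outline the same as the paper's proof. But the step you advertise as "the key structural constraint that prunes the search drastically" is false, and it fails because of a sign error in your bookkeeping. In $\Ext$, $\tau$ has degree $(0,0,-1)$, so multiplication by $\tau$ \emph{lowers} the weight by one (you assert both that $\tau$ has degree $(0,0,-1)$ and that it "raises $w$ by one," which is inconsistent); hence a hidden $\tau$ extension \emph{raises} the Chow degree $s+f-2w$ by $2$ rather than lowering it. Since $\Ext$ is concentrated in Chow degrees $\geq 0$, landing in higher Chow degree is no obstruction whatsoever, so the claim that sources of Chow degree $0$ or $1$ cannot support hidden $\tau$ extensions is simply wrong. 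This is not a harmless slip: Table \ref{tab:May-tau} contains hidden $\tau$ extensions whose sources have Chow degree exactly $0$. For instance $h_1 f_1$, of degree $(41,5,23)$, supports the hidden extension $\tau \cdot h_1 f_1 = h_0^2 c_2$, and $k_1$, of degree $(65,7,36)$, supports $\tau \cdot k_1 = h_2 h_5 n$ (Lemma \ref{lem:tau-k1}); in both cases $s+f-2w=0$. Your pruning step would discard exactly these sources, so the search as you organized it would miss genuine extensions---and these are precisely the subtle, purely motivic cases (the $k_1$ extension requires Steenrod operations applied to the Massey product $k = \langle d_0, h_3, h_0^2 h_3 \rangle$) that the paper isolates in separate lemmas. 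The only true statement in this direction is the dual one: since targets have Chow degree equal to the source's plus $2$, no element of Chow degree $0$ or $1$ can be the \emph{target} of a hidden $\tau$ extension.

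A secondary inconsistency: you first say (correctly for this paper's conventions) that the hidden product is detected in strictly \emph{lower} May filtration, but later demand the target have strictly \emph{greater} May filtration. In this May spectral sequence the differentials decrease the filtration $m$, and hidden extension targets indeed sit in smaller $m$ (e.g.\ $\tau \cdot B_8 = P h_5 d_0$ has $m(P h_5 d_0) = 17 < 21 = m(B_8)$); getting this direction wrong would make you enumerate candidate targets in the wrong range. Once the degree bookkeeping is corrected and the false pruning removed, what remains of your proposal is essentially the paper's actual argument: brute-force inspection of the chart, classical comparison for the bulk of the cases, and case-by-case Massey-product, shuffle, and Steenrod-operation arguments for the handful of genuinely motivic extensions.
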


\begin{proof}
Many of the extensions follow by comparison to the classical case
as described in \cite{Bruner97}.
For example, there is a classical hidden extension
$h_0 \cdot e_0 g = h_0^4 x$.  This implies that
$\tau^2 \cdot h_0 e_0 g = h_0^4 x$ motivically.

Proofs for the more subtle cases are given below.
\end{proof}

\begin{lemma}
\label{lem:t^2-h2g^2}
\mbox{}
\begin{enumerate}
\item
$\tau \cdot \tau h_2 g^2 = P h_1^4 h_5$.
\item
$\tau \cdot \tau h_0 g^3 = P h_1^4 h_5 e_0$.
\end{enumerate}
\end{lemma}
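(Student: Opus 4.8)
The plan is to treat both statements as hidden $\tau$-extensions and to leverage the visible companion relation $\tau^2 h_2 g = P h_4$ recorded in Example \ref{ex:h2g^2}. First I would fix degrees: both sides of (1) lie in degree $(43,9,24)$ and both sides of (2) lie in degree $(60,13,34)$, so each asserted equation is at least degree-consistent. Here $\tau h_2 g^2$ and $\tau h_0 g^3$ are genuine $E_\infty$-classes: since $g^2$ and $g^3$ do not survive the May spectral sequence (by Example \ref{ex:g^2}, $g^2$ supports $d_8(g^2)=h_1^8 h_5$ and $g^3$ supports a May differential with value $h_1^6 i_1$, so that $\tau g^2$ and $\tau g^3$ are the indecomposable survivors), the relevant sources are $h_2 \cdot \tau g^2$ and $h_0 \cdot \tau g^3$. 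In particular $h_1^8 h_5 = 0$ in $\Ext$, and by Theorem \ref{thm:Chow-0} together with Lemma \ref{lem:d4-g} one has $h_1^4 h_4 = 0$; these vanishing facts, along with $h_0 h_1 = 0$, are what make the targets accessible as Massey products.

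The main tool will be May's Convergence Theorem \ref{thm:3-converge}, used to identify each target as a Massey product and to locate it in low May filtration. For (1), I would express the target as a periodicity bracket $P h_1^4 h_5 = \langle h_1^4 h_5, h_0^4, h_3 \rangle$, which is defined by the relevant vanishing relations, and compute it in the May spectral sequence: one produces $d_r$-nullhomotopies for the two subproducts, checks the no-crossing-differential hypothesis (condition (2) of Theorem \ref{thm:3-converge}), and reads off an $E_\infty$-representative. The decisive point is that this representative lies in strictly lower May filtration than $h_2 \cdot \tau g^2$, which is the signature of a hidden extension. To pin down that $\tau \cdot \tau h_2 g^2$ equals this class rather than $0$, I would apply an algebraic squaring operation in the sense of \cite{May70} to the seed relation $\tau^2 h_2 g = P h_4$, tracking the weight through the required powers of $\tau$, and then invoke a dimension count in degree $(43,9,24)$: if $P h_1^4 h_5$ is the unique nonzero class of its May filtration there, equality follows.

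Part (2) runs along the same lines, now in the $g^3$-tower. The target $P h_1^4 h_5 e_0$ is the $e_0$-multiple of the target of (1), and the source $h_0 \cdot \tau g^3$ is controlled by the differential on $g^3$ together with the relations $c_0 i_1 = h_1^4 D_4$ of Lemma \ref{lem:c0-i1} and $d_6(c_0 g^3) = h_1^{10} D_4$ of Lemma \ref{lem:d6-c0g^3}; these govern the $h_1$-divisibility and the vanishing pattern in the relevant degrees. I would again write the target as a periodicity Massey product, apply May's Convergence Theorem \ref{thm:3-converge}, and force the identification by uniqueness of the target in degree $(60,13,34)$.

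The hard part will be the genuinely hidden content: proving that $\tau \cdot \tau h_2 g^2$, respectively $\tau \cdot \tau h_0 g^3$, is nonzero and is detected in the correct, strictly lower, May filtration, rather than vanishing (equivalently, that $h_2 g^2$ is killed by $\tau^3$ but not by $\tau^2$, as anticipated in Example \ref{ex:h2g^2}). Concretely this requires verifying the no-crossing-differential hypothesis of Theorem \ref{thm:3-converge} in a dense region of the spectral sequence and controlling the indeterminacy of the Massey products, all while keeping the $\tau$-power bookkeeping exact. The seed relation $\tau^2 h_2 g = P h_4$ and the squaring operations are the levers I expect to make the non-vanishing tractable, but the uniqueness-of-target dimension counts in degrees $(43,9,24)$ and $(60,13,34)$ must be checked against the full $E_\infty$-page before the identifications can be asserted.
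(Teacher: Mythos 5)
Your overall instinct---use the algebraic Steenrod operations of \cite{May70} to produce the hidden relation---is the right one, and it is what the paper does, but the specific seed relation you chose cannot work for degree reasons. For $x$ of stem $s$ and filtration $f$, the operation $\Sq^i$ (defined for $0 \le i \le f$) lands in stem $2s + f - i$ and filtration $f+i$. Your seed $\tau^2 h_2 g = P h_4$ sits in stem $23$, filtration $5$, so any $\Sq^i$ of it lands in a stem between $46$ and $51$; no squaring operation applied to this relation can reach the degree $(43,9,24)$ where part (1) lives. The paper instead applies $\Sq^4$ to the relation $h_1 \cdot \tau g + h_2 f_0 = 0$, which sits in stem $21$, filtration $5$, so that $\Sq^4$ lands precisely in stem $43$, filtration $9$. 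The Cartan formula gives $\Sq^4(h_1 \cdot \tau g) = \Sq^0(h_1)\Sq^4(\tau g) + \Sq^1(h_1)\Sq^3(\tau g) = h_2 \cdot \tau^2 g^2 + h_1^2 \cdot \Sq^3(\tau g)$ (plus the terms coming from $h_2 f_0$), and the decisive input is the machine computation $\Sq^3(\tau g) = P h_1^2 h_5$ of \cite{Bruner04}: this identifies the second term as $P h_1^4 h_5$ and yields $\tau \cdot \tau h_2 g^2 = P h_1^4 h_5$ directly as a relation, with no separate non-vanishing or uniqueness argument required.

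The Massey-product detour also does not do the work you want it to: expressing $P h_1^4 h_5$ as a periodicity bracket $\langle h_1^4 h_5, h_0^4, h_3 \rangle$ via May's Convergence Theorem \ref{thm:3-converge} locates the target in low May filtration, but it supplies no identity connecting that class to $\tau \cdot \tau h_2 g^2$; a hidden extension needs exactly such an identity, and your fallback ("if $P h_1^4 h_5$ is the unique nonzero class of its May filtration there") is an unverified hedge rather than an argument. Finally, for part (2) you should simply multiply the relation of part (1) by $e_0$: using $h_2 e_0 = h_0 g$ one has $e_0 \cdot \tau h_2 g^2 = \tau h_0 g^3$, so $\tau \cdot \tau h_0 g^3 = P h_1^4 h_5 e_0$ follows in one line, which is the paper's argument; re-running a bracket computation in degree $(60,13,34)$ is unnecessary.
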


\begin{proof}
Start with the relation $h_1 \cdot \tau g + h_2 f_0 = 0$,
and apply the squaring operation $\Sq^4$.
One needs that $\Sq^3 ( \tau g ) = P h_1^2 h_5$ \cite{Bruner04}.
The result is the first hidden extension.

For the second, multiply the first hidden extension by $e_0$.
\end{proof}

\begin{lemma}
\label{lem:t-B8}
\mbox{}
\begin{enumerate}
\item
$\tau \cdot B_8 = P h_5 d_0$.
\item
$\tau \cdot h_1^2 B_{21} = P h_5 c_0 d_0$.
\item
$\tau \cdot B_8 d_0 = h_0^4 X_3$.
\end{enumerate}
\end{lemma}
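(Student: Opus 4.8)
The plan is to prove the first extension directly and then obtain the other two as multiplicative consequences in $\Ext$, since ``hidden'' refers only to the May filtration while $\Ext$ itself is a genuine ring. Thus once $\tau B_8 = P h_5 d_0$ is known, multiplying by $d_0$ gives $\tau \cdot B_8 d_0 = P h_5 d_0^2$, and multiplying by $c_0$ gives $\tau \cdot c_0 B_8 = P h_5 c_0 d_0$. Parts (3) and (2) then reduce to the two $\Ext$-relations $P h_5 d_0^2 = h_0^4 X_3$ and $c_0 B_8 = h_1^2 B_{21}$, which I would read off from the relations of Table \ref{tab:May-E2-reln} together with the already-established $h_0$- and $h_1$-extensions, or, when both sides lie in Chow degree zero ($s+f-2w=0$), deduce from the classical relation via Theorem \ref{thm:Chow-0}. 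I would also check that the products $c_0 B_8$ and $B_8 d_0$ are nonzero, so that the derived extensions are genuinely hidden.

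For the first extension, which carries all the real content, I would mimic the method of the preceding lemma and manufacture the hidden factor of $\tau$ by an algebraic Steenrod operation. Concretely, I would search for a relation of the shape $h_1 \cdot x = (\text{decomposable})$, with $x$ detecting $B_8$ (the analogue of $h_1 \cdot \tau g + h_2 f_0 = 0$), and apply a suitable $\Sq^i$. Expanding by the Cartan formula and exploiting the motivic phenomenon that $\Sq$ on a positive-weight class produces a factor of $\tau$ — with the needed operation values taken from \cite{Bruner04} — should isolate $\tau \cdot B_8$ on one side and $P h_5 d_0$ on the other.

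A cleaner alternative, which I would try in parallel, is to realize $P h_5 d_0$ as a Massey product of the form $\langle h_0, y, h_0 \rangle$ with $h_0 y = 0$. Proposition \ref{prop:Massey-hn}(1) then forces a factor of $\tau h_1$, so that $\langle h_0, y, h_0 \rangle$ contains $\tau h_1 y$; identifying $h_1 y$ with $B_8$ yields the extension, with the bracket itself computed from a May differential via May's Convergence Theorem \ref{thm:3-converge}. The bookkeeping here is to locate the relevant May differential with the correct source and to check that condition (2) of Theorem \ref{thm:3-converge} rules out crossing differentials, so that the bracket is detected exactly by the expected class of lower May filtration.

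The main obstacle is unmistakably this first extension: it is invisible to comparison with the classical May spectral sequence, where $\tau$ does not appear, and it is not a formal consequence of anything simpler. Everything hinges on correctly producing and tracking the single factor of $\tau$ — either through the weight arithmetic of the motivic Steenrod operation or through the $\langle h_0, -, h_0 \rangle$ bracket — and on verifying that no other class of the same tridegree competes, so that $P h_5 d_0$ is forced as the value of $\tau B_8$. Once that is secured, parts (2) and (3) require only the routine auxiliary relations noted above.
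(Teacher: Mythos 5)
Your reduction of parts (2) and (3) to part (1) is exactly the paper's: multiply $\tau \cdot B_8 = P h_5 d_0$ by $c_0$ and by $d_0$, then invoke the non-hidden May $E_\infty$ relations $c_0 B_8 = h_1^2 B_{21}$ and $P h_5 d_0^2 = h_0^4 X_3$. The gap is in part (1), which you rightly identify as carrying all the content but then attack with the wrong tools, because your premise that this extension ``is invisible to comparison with the classical May spectral sequence'' is false. The comparison the paper uses is not with the classical May $E_\infty$-page but with the classical ring $\Ext_{A_{\cl}}$ itself, as computed by machine \cite{Bruner97}, where relations hidden in the classical May filtration are perfectly visible. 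Classically there is a (classically hidden) extension $c_0 \cdot B_1 = P h_1 h_5 d_0$; motivically there is the non-hidden relation $c_0 \cdot B_1 = h_1 B_8$. Inverting $\tau$ (Proposition \ref{prop:compare-ext}) identifies the two sides, and the weights ($h_1 B_8$ lies in weight $30$, while $P h_1 h_5 d_0$ lies in weight $29$) force $\tau \cdot h_1 B_8 = P h_1 h_5 d_0$, whence $\tau \cdot B_8 = P h_5 d_0$. Note that you already have the needed relation in hand: $h_1 \cdot B_8 = c_0 B_1$ is precisely the relation of the shape $h_1 \cdot x = (\text{decomposable})$ you propose to search for; the correct next move is to quote Bruner's classical value of $c_0 B_1$, not to apply a Steenrod operation to it. This is the standard mechanism behind most entries of Table \ref{tab:May-tau} marked ``classical.''

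Neither of your proposed substitutes is viable as stated. The Steenrod-operation route is only a search plan: you name no concrete low-stem relation and no concrete $\Sq^i$ whose Cartan expansion would isolate $\tau \cdot B_8$ on one side and $P h_5 d_0$ on the other, so nothing is actually proved. The Massey-product route fails outright: writing $\langle h_0, y, h_0 \rangle \ni \tau h_1 y$ with $h_1 y = B_8$ requires $B_8$ to be divisible by $h_1$, but $B_8$ is a multiplicative generator of $\Ext$ (Table \ref{tab:Ext-gen}), hence indecomposable, so no such $y$ exists.
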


\begin{proof}
There is a classical hidden extension
$c_0 \cdot B_1 = P h_1 h_5 d_0$ \cite{Bruner97}.
Motivically, there is a non-hidden relation
$c_0 \cdot B_1 = h_1 B_8$.
It follows that
$\tau \cdot h_1 B_8 = P h_1 h_5 d_0$ motivically.

For the second hidden extension, multiply the
first hidden extension by $c_0$.
Note that $c_0 B_8 = h_1^2 B_{21}$ is
detected in the $E_\infty$-page of the May spectral sequence.

For the third hidden extension, 
multiply the first hidden extension by $d_0$,
and observe that 
$P h_5 d_0^2 = h_0^4 X_3$,
which is detected in the $E_\infty$-page of the May spectral 
sequence.
\end{proof}

\begin{lemma}
\label{lem:tau-Pu'}
\mbox{}
\begin{enumerate}
\item
$\tau \cdot P u' = h_0^5 R_1$.
\item
$\tau \cdot P^2 u' = h_0^9 R$.
\item
$\tau \cdot P^3 u' = h_0^6 R_1'$.
\end{enumerate}
\end{lemma}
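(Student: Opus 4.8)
The plan is to treat part (1) as a base case and then propagate to parts (2) and (3) using the Adams periodicity operator $P$. For the base case, I would first locate the underlying classical relation. By Proposition \ref{prop:compare-ext}, inverting $\tau$ identifies motivic $\Ext$ with $\Ext_{A_{\cl}} \otimes_{\F_2} \F_2[\tau, \tau^{-1}]$, so any relation in $\Ext_{A_{\cl}}$ lifts to a motivic relation up to a power of $\tau$ dictated by the weights. The machine computations of \cite{Bruner97} record the classical relation whose motivic lift is $\tau \cdot P u' = h_0^5 R_1$; comparing the weight of $P u'$ with that of $h_0^5 R_1$ shows that exactly one factor of $\tau$ is needed, and comparing May filtrations shows that the extension is genuinely hidden, i.e.\ not already visible on the $E_\infty$-page. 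The one point that requires care is that $u'$ is an ambiguous generator: it represents two elements of $\Ext$ differing by $\tau d_0 l$, one annihilated by $\tau$ and one annihilated by $h_0$. Since the right-hand side $h_0^5 R_1$ is nonzero, the extension can hold only for a lift on which $\tau \cdot P u'$ is nonzero, so I would first check that $P u'$ is unambiguous (the ambiguity $\tau \cdot P d_0 l$ being $\tau$-torsion) or else explicitly specify the lift being used.

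For parts (2) and (3), I would multiply the base-case relation by $P$ and re-identify the two sides. Since $P$-multiplication is defined and compatible with multiplication by $\tau$, applying $P$ to part (1) gives $\tau \cdot P^2 u' = h_0^5 \cdot P R_1$, and a second application, together with part (2), gives $\tau \cdot P^3 u' = h_0^9 \cdot P R$. It then remains to identify the products on the right, namely to show $h_0^5 \, P R_1 = h_0^9 R$ and $h_0^9 \, P R = h_0^6 R_1'$. These are honest ($\tau$-linear, weight-preserving) relations in $\Ext$, and each can be read off from the classical computation of \cite{Bruner97} after inverting $\tau$, or checked directly in the relevant Chow degree via Theorem \ref{thm:Chow-0}. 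The filtration and stem bookkeeping is self-consistent: $P$ raises Adams filtration by $4$ and the stem by $8$, which matches both the jump from $h_0^5$ to $h_0^9$ in passing from $R_1$ to $R$ and the placement of $R_1$, $R$, $R_1'$ in stems $54$, $62$, $70$.

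The main obstacle, as in the analogous classical arguments, will be the ambiguity of $u'$ together with the need to verify that the $P$-multiples behave predictably. Concretely, I expect the delicate step to be confirming that multiplying by $P$ introduces no spurious indeterminacy, so that $P u'$, $P^2 u'$, and $P^3 u'$ are the intended elements and the equalities $h_0^5 \, P R_1 = h_0^9 R$ and $h_0^9 \, P R = h_0^6 R_1'$ hold exactly rather than merely up to $h_0$-torsion of excessive height. If the direct $P$-propagation proves too slippery, the fallback is to treat each of the three extensions independently by comparison to its own classical relation from \cite{Bruner97}, balancing weights in each case to extract the single factor of $\tau$; this is less elegant but avoids having to track the behaviour of $P$ across the whole family.
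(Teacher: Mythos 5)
Your proposal has genuine gaps in both halves of the strategy, and they are fatal as stated.

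For parts (2) and (3), ``multiplying part (1) by $P$'' is not an available operation: $P = b_{20}^2$ supports the May differential $d_4(P) = h_0^4 h_3$ and therefore is not an element of $\Ext$ at all, so there is no $P$-multiplication on $\Ext$ that could be ``compatible with multiplication by $\tau$.'' The symbols $P u'$, $P^2 u'$, and $P^3 u'$ are merely names for May $E_\infty$-classes, and since each is an ambiguous generator, the paper fixes them by definition as Massey products: $P u' = \langle u', h_0^3, h_0 h_3 \rangle$, $P^2 u' = \langle u', h_0^3, h_0^5 h_4 \rangle$, and $P^3 u' = \langle u', h_0^3, h_0^3 i \rangle$ (Tables \ref{tab:Ext-ambiguous} and \ref{tab:Massey}). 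Your intermediate expressions $P R_1$ and $P R$ are not defined elements of $\Ext$, and no periodicity bracket can define them, because $R_1$ is not annihilated by $h_0^3$: by Tables \ref{tab:May-h0} and \ref{tab:May-tau}, $h_0 \cdot R_1 = S_1$ and $h_0^6 S_1 \neq 0$. Consequently the identities $h_0^5 \, P R_1 = h_0^9 R$ and $h_0^9 \, P R = h_0^6 R_1'$ that your reduction needs are not even statements. The paper instead proves the three parts by parallel but independent arguments: it computes $\langle \tau, u', h_0^3 \rangle = \{ Q', Q' + \tau P u \}$, shuffles this against each defining bracket to obtain $\tau \cdot P u' = h_0 h_3 Q'$, $\tau \cdot P^2 u' = h_0^5 h_4 Q'$, and $\tau \cdot P^3 u' = h_0^3 i \, Q'$, and then finishes with the classical products $h_3 \cdot Q' = h_0^4 R_1$, $h_4 \cdot Q' = h_0^4 R$, and $i \cdot Q' = h_0^3 R_1'$ from \cite{Bruner97}.

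Your base case --- and with it your fallback plan for (2) and (3) --- is also unsalvageable, and this is the deeper point: there is no classical relation in \cite{Bruner97} whose motivic lift is $\tau \cdot P u' = h_0^5 R_1$, because $u'$ and $P u'$ have no classical counterparts. The May differential $d_2(b_{20} b_{30}^3 h_0(1)) = \tau u'$ (cited in the paper's proof) hits only $\tau u'$ motivically, leaving $u'$ as a $\tau$-torsion class on the May $E_\infty$-page; classically, where $\tau = 1$, the same differential kills $u'$ outright, and similarly for $P u'$. By Proposition \ref{prop:may-compare}, $\tau$-torsion classes die after inverting $\tau$, so these are exotic motivic classes invisible to any classical machine computation --- which is exactly why this lemma is segregated from the entries of Table \ref{tab:May-tau} labeled ``classical.'' Your own observation about ambiguity points at a second obstruction you cannot route around: $P u'$ names two elements of $\Ext$ differing by $\tau d_0^2 j$, whose $\tau$-multiples differ by $\tau^2 d_0^2 j \neq 0$, and the lemma asserts the extension for the particular lift $\langle u', h_0^3, h_0 h_3 \rangle$; distinguishing the two lifts is impossible after $\tau$-inversion, so weight bookkeeping plus classical data can never single out the right one. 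Finally, even the bracket $\langle \tau, u', h_0^3 \rangle$ resists the naive approach: May's Convergence Theorem \ref{thm:3-converge} does not apply because of the crossing differential $d_4(P \Delta h_0 h_4) = P^2 h_0 h_4^2$, and the paper must instead use $h_0 \cdot u' = \tau h_0 d_0 l$, obtained by comparison with $\Ext_{A(2)}$ \cite{Isaksen09}. The only classical inputs the correct proof uses are the products of $h_3$, $h_4$, and $i$ with $Q'$; everything else in the argument is irreducibly motivic.
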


\begin{proof}
We first compute that
$\langle \tau, u', h_0^3 \rangle = \{ Q', Q' + \tau Pu \}$.
One might try to apply May's Convergence Theorem \ref{thm:3-converge} with the 
May differential $d_2 (b_{20} b_{30}^3 h_0(1) ) = \tau u'$, 
but condition (2) of the theorem is not satisfied because
of the ``crossing" May differential $d_4 ( P \Delta h_0 h_4 ) = P^2 h_0 h_4^2$.
\index{Convergence Theorem!May}
\index{May spectral sequence!differential!crossing}

Instead, note that $h_0 \cdot u' = \tau h_0 d_0 l$ by comparison to
$\Ext_{A(2)}$ \cite{Isaksen09}, so we have that
$\langle \tau, u', h_0^3 \rangle = \langle \tau, \tau h_0 d_0 l, h_0^2 \rangle$.
The latter bracket is given in Table \ref{tab:Massey}.

Next, Table \ref{tab:Massey} shows that 
$P u' = \langle u', h_0^3, h_0 h_3 \rangle$,
with no indeterminacy.  
Use the previous paragraph and a shuffle to get that
$\tau \cdot P u' = h_0 h_3 Q'$.
Finally, there is a classical hidden extension 
$h_3 \cdot Q' = h_0^4 R_1$ \cite{Bruner97}, which implies that the same
formula holds motivically.

The argument for the second hidden extension is similar,
using the shuffle
\[
\tau \cdot P^2 u' = 
\tau \langle u', h_0^3, h_0^5 h_4 \rangle =
\langle \tau, u', h_0^3 \rangle h_0^5 h_4 =
h_0^5 h_4 Q'.
\]
The first equality comes from Table \ref{tab:Massey}.
Also, we need the classical hidden
extension $h_4 \cdot Q' = h_0^4 R$ \cite{Bruner97}, which implies that the
same formula holds motivically.

The argument for the third hidden extension is also similar,
using the shuffle
\[
\tau \cdot P^3 u' =
\tau \langle u', h_0^3, h_0^3 i \rangle =
\langle \tau, u', h_0^3 \rangle h_0^3 i =
h_0^3 i Q'.
\]
The first equality comes from Table \ref{tab:Massey}.
Also, we need the classical hidden extension 
$i \cdot Q' = h_0^3 R_1'$ \cite{Bruner97}.
\end{proof}

\begin{lemma}
\label{lem:tau-k1}
$\tau \cdot k_1 = h_2 h_5 n$.
\end{lemma}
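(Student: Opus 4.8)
The plan is to follow the Massey-product shuffle template of Lemma \ref{lem:tau-Pu'}, since this is a genuinely hidden extension: $\tau \cdot k_1$ vanishes on the May $E_\infty$-page, $h_2 h_5 n$ lies in strictly higher May filtration than $k_1$ while sharing its stem and Adams filtration (with weight lowered by one), and a degree count should show that $h_2 h_5 n$ is the only available target. First I would pin down these degrees and rule out any intermediate class of May filtration between that of $k_1$ and that of $h_2 h_5 n$, so that the claim reduces to the single statement that $\tau k_1$ is nonzero in $\Ext$.

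The heart of the argument is to express $k_1$ as a threefold Massey product $\langle a, b, c \rangle$ with one entry equal to $h_5$ or $n$, so that shuffling $\tau$ inward yields $\tau k_1 = \langle \tau, a, b \rangle \, c$ (or $a \, \langle \tau, b, c \rangle$), exactly as $\langle \tau, u', h_0^3 \rangle$ was exploited in Lemma \ref{lem:tau-Pu'}. The inner bracket $\langle \tau, a, b \rangle$ should then be read off from Table \ref{tab:Massey}; wherever that table rests on May's Convergence Theorem \ref{thm:3-converge}, I would verify condition (2) by hand, watching for the crossing May differentials that obstructed the naive computation in Lemma \ref{lem:tau-Pu'}. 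Should no convenient bracket for $k_1$ present itself, the fallback is the Steenrod-operation method of Lemma \ref{lem:t^2-h2g^2}: apply a squaring operation to a lower relation among $h_2$, $h_5$, and $n$, importing the required value of the operation from \cite{Bruner04}.

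Once $\tau k_1$ is written as a product, I would close the argument by invoking a classical hidden extension from \cite{Bruner97} to identify that product with $h_2 h_5 n$, or by comparing with $\Ext_{A(2)}$ \cite{Isaksen09} to fix the multiplication. The main obstacle will be the middle step: finding a bracket description of $k_1$ whose shuffle lands precisely on $h_2 h_5 n$, and certifying that each Massey product used is strictly defined or has controlled indeterminacy, so that the shuffle is legitimate. I expect that verifying the absence of crossing differentials—condition (2) of Theorem \ref{thm:3-converge}—will consume most of the effort, precisely because the May differentials are dense in this range.
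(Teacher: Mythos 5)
Your primary route cannot get started. The shuffle template of Lemma \ref{lem:tau-Pu'} works there precisely because $\tau \cdot u' = 0$ (by the definition of $u'$ in Table \ref{tab:Ext-ambiguous}), so that the inner bracket $\langle \tau, u', h_0^3 \rangle$ is defined. To run the same template here you would need a bracket presentation of $k_1$ in which the entry adjacent to $\tau$ is $\tau$-torsion; but the only available presentation is $k_1 = \langle d_1, h_4, h_1^2 h_4 \rangle$ (Table \ref{tab:Massey}), and $d_1$, $h_4$, $h_1^2 h_4$ are all nonzero classical classes, hence not $\tau$-torsion, so no bracket $\langle \tau, -, - \rangle$ is even defined. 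Your closing step is also off target in two ways: a hidden $\tau$ extension is invisible classically, so there is no ``classical hidden extension'' in \cite{Bruner97} to invoke; and $\Ext_{A(2)}$ does not contain $h_5$, $n$, or $k_1$, so that comparison is unavailable as well.

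What the paper actually does is closest to your fallback, but the Steenrod operation is applied to a Massey product, not to a ``lower relation among $h_2$, $h_5$, and $n$'' (no such relation exists to square). By Table \ref{tab:Massey}, $k = \langle d_0, h_3, h_0^2 h_3 \rangle$ with no indeterminacy. Milgram's theorem \cite{Milgram68}, that algebraic Steenrod operations commute with strictly defined Massey products, gives
\[
\Sq^0 k \;=\; \langle \Sq^0 d_0, \Sq^0 h_3, \Sq^0 (h_0^2 h_3) \rangle
\;=\; \langle \tau^2 d_1, h_4, \tau^2 h_1^2 h_4 \rangle
\;=\; \tau^4 \langle d_1, h_4, h_1^2 h_4 \rangle \;=\; \tau^4 k_1,
\]
using the bracket for $k_1$ with no indeterminacy. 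On the other hand, the machine computation of \cite{Bruner04} gives $\Sq^0 k = h_2 h_5 n$ classically, which after balancing motivic weights reads $\Sq^0 k = \tau^3 h_2 h_5 n$. Comparing the two evaluations yields $\tau^4 k_1 = \tau^3 h_2 h_5 n$, whence $\tau \cdot k_1 = h_2 h_5 n$. The idea your proposal is missing is that $k_1$ is the $\Sq^0$-analogue of $k$ up to powers of $\tau$, so the required classical input is the value of the Steenrod operation $\Sq^0 k$ from \cite{Bruner04} --- not a hidden extension, not a relation, and not a shuffle.
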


\begin{proof}
First, Table \ref{tab:Massey} shows that 
$k = \langle d_0, h_3, h_0^2 h_3 \rangle$,
with no indeterminacy.  It follows from \cite{Milgram68} that
$\Sq^0 k = 
\langle \Sq^0 d_0, \Sq^0 h_3, \Sq^0 h_0^2 h_3 \rangle$,
with no indeterminacy.
In other words, 
$\Sq^0 k  = \langle \tau^2 d_1, h_4, \tau^2 h_1^2 h_4 \rangle$.
From the classical calculation \cite{Bruner04},
$\Sq^0 k$ also equals $\tau^3 h_2 h_5 n$.

On the other hand, Table \ref{tab:Massey} show that 
$k_1 = \langle d_1, h_4, h_1^2 h_4 \rangle$,
with no indeterminacy.

This shows that $\tau^4 \cdot k_1 = \tau^3 h_2 h_5 n$
in $\Ext$, from which it follows that
$\tau \cdot k_1 = h_2 h_5 n$.
\end{proof}

\begin{remark}
In the 46-stem, $\tau \cdot u'$ does not equal
$\tau^2 d_0 l$.  
Similarly, in the 49-stem,
$\tau \cdot v'$ does not equal $\tau^2 e_0 l$.
This is true by definition; see Table \ref{tab:Ext-ambiguous}.
\index{u'@$u'$}
\index{v'@$v'$}
\end{remark}

\subsection{Hidden May $h_0$ extensions}
\label{subsctn:h0-hidden}

By exhaustive search, the following results give all of the
hidden $h_0$ extensions.
\index{May spectral sequence!hidden extension!h0@$h_0$}

\begin{prop}
Table \ref{tab:May-h0} lists all of the hidden
$h_0$ extensions through the 70-stem.
\end{prop}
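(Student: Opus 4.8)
The plan is to treat this exactly as for the hidden $\tau$ extensions: the proposition records the outcome of an exhaustive search, and the work lies in deciding each individual case. For every element $x$ of the $E_\infty$-page with $h_0 x = 0$ on the $E_\infty$-page, I would enumerate the classes of strictly higher May filtration in the degree $(s, f+1, w)$ of $x$ that could serve as the target of a hidden extension $h_0 \cdot x = y$. Since $h_0$ has weight zero, any such target must have the same weight as $x$; this weight constraint, together with the stem and filtration, eliminates most candidates and in the great majority of cases leaves at most one possibility, so that the extension is forced as soon as one knows it is nonzero.

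The principal tool for establishing that these extensions are nonzero is comparison with the classical computation. By Propositions \ref{prop:compare-ext} and \ref{prop:may-compare}, inverting $\tau$ identifies the motivic May spectral sequence with the classical one tensored with $\F_2[\tau, \tau^{-1}]$, and under this identification motivic $h_0$ maps to classical $h_0$. The classical hidden $h_0$ extensions through this range are known by machine \cite{Bruner97}, so each one lifts to a motivic relation $h_0 \cdot x = \tau^k y$, with the exponent $k$ forced by the weights of the motivic lifts of the two sides, precisely as in Lemma \ref{lem:t^2-h2g^2} and the examples preceding it. I expect the bulk of the entries in Table \ref{tab:May-h0} to be produced in this mechanical fashion.

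The cases I would treat individually are the ones that this comparison cannot see: hidden $h_0$ extensions whose target is $\tau$-torsion, hence killed upon inverting $\tau$, or that involve genuinely exotic motivic classes with no classical shadow (for instance $h_0 \cdot h_2^2 g = h_1^3 h_4 c_0$, handled in Lemma \ref{lem:hidden-h0h2^2g}). For these I would fall back on the auxiliary methods collected at the start of the section: realizing $x$ as a Massey product and shuffling $h_0$ through it, with the bracket evaluated by May's Convergence Theorem \ref{thm:3-converge}; the algebraic Steenrod operations of \cite{May70}; and the Chow-degree-zero isomorphism of Theorem \ref{thm:Chow-0}. The main obstacle is exactly this handful of exotic extensions, where no candidate target is singled out by degree alone and an indirect argument is unavoidable; these are isolated and dispatched one at a time in the lemmas that follow.
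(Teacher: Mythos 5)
Your overall skeleton---exhaustive enumeration constrained by the fact that $h_0$ has weight zero, classical comparison via \cite{Bruner97} with powers of $\tau$ inserted to balance weights, and individual lemmas for the exotic cases---is the same one the paper uses, and your Massey-shuffle fallback is indeed how the paper handles cases like $h_0 \cdot h_2^2 g = h_1^3 h_4 c_0$ and $h_0 \cdot g r = P h_1^3 h_5 c_0$ (Lemmas \ref{lem:hidden-h0h2^2g} and \ref{lem:h0-gr}). However, your list of fallback tools omits the two methods that the paper actually relies on for a substantial block of the subtle cases, and those cases would defeat every method you name.

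First, the paper deduces several $h_0$ extensions from the hidden $\tau$ extensions already established in Section \ref{subsctn:t-hidden}: for instance, $\tau \cdot P u' = h_0^4 S_1$ and $\tau \cdot \tau h_0 d_0^2 j = h_0^5 S_1$ together force $h_0 \cdot P u' = \tau h_0 d_0^2 j$; the same trick handles $P^2 u'$ and $P^3 u'$, and, combined with the relation $\tau h_1^3 = h_0^2 h_2$, gives $h_0 \cdot h_0^2 B_{22} = P h_1 h_5 c_0 d_0$ (Lemma \ref{lem:h0-h0^2B22}). Second, Lemma \ref{lem:h0-u'} establishes $h_0 \cdot u' = \tau h_0 d_0 l$, $h_0 \cdot v' = \tau h_0 e_0 l$, and the analogous extensions on $P v'$ and $P^2 v'$ by comparison to $\Ext_{A(2)}$ \cite{Isaksen09}. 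These classes are precisely the ones your toolkit cannot reach: $u'$ and $v'$ are by definition the $\tau$-torsion representatives of their ambiguous May classes (Table \ref{tab:Ext-ambiguous}), so they die upon inverting $\tau$ and classical comparison is blind to them; their degrees satisfy $s+f-2w \neq 0$, so Theorem \ref{thm:Chow-0} does not apply; and no Massey shuffle or algebraic Steenrod operation argument is offered (or apparent) for pinning down how $h_0$ acts on them. Without the leverage coming from the previously established $\tau$ extensions and without the comparison to $\Ext_{A(2)}$, a nontrivial portion of Table \ref{tab:May-h0} is left unproved, so your plan as stated has a genuine gap in exactly the cases where the proposition has real content.
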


\begin{proof}
Many of the extensions follow by comparison to the classical case
as described in \cite{Bruner97}.
For example, there is a classical hidden extension
$h_0 \cdot r = s$.  This implies that
$h_0 \cdot r = s$ motivically as well.

Several other extensions are implied by the hidden
$\tau$ extensions established in 
Section \ref{subsctn:t-hidden}.
For example, the extensions $\tau \cdot P u' = h_0^4 S_1$
and $\tau \cdot \tau h_0 d_0^2 j = h_0^5 S_1$ imply
that $h_0 \cdot P u' = \tau h_0 d_0^2 j$.

Proofs for the more subtle cases are given below.
\end{proof}

\begin{lemma}
\label{lem:h0-u'}
\mbox{}
\begin{enumerate}
\item
$h_0 \cdot u' = \tau h_0 d_0 l$.
\item
$h_0 \cdot v' = \tau h_0 e_0 l$.
\item
$h_0 \cdot P v' = \tau h_0 d_0^2 k$.
\item
$h_0 \cdot P^2 v' = \tau h_0 d_0^3 i$.
\end{enumerate}
\end{lemma}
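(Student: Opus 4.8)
The plan is to establish all four extensions by comparison with the cohomology of the subalgebra $A(2)$, exploiting the inclusion $A(2) \hookrightarrow A$. This inclusion induces a ring homomorphism $\phi\colon \Ext \map \Ext_{A(2)}(\M_2,\M_2)$, and the target has been computed completely in \cite{Isaksen09}. The elements $u'$, $v'$, $Pv'$, $P^2 v'$, together with the proposed targets $d_0 l$, $e_0 l$, $d_0^2 k$, $d_0^3 i$, all lie in the ``$\tmf$-region'' of $\Ext$ and survive under $\phi$ to explicitly known classes; in fact the ambiguous generators $u'$ and $v'$ are pinned down in Table \ref{tab:Ext-ambiguous} precisely by prescribing their images under $\phi$.

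First I would treat part (1). By inspection of the May $E_\infty$-page, the only classes in the degree of $h_0 u'$ are $0$ and $\tau h_0 d_0 l$, so the hidden extension takes one of these two values. Since $\phi$ is a ring map, $\phi(h_0 u') = h_0 \phi(u')$, and in $\Ext_{A(2)}(\M_2,\M_2)$ — where everything is known — this product equals the nonzero class $\tau h_0 d_0 l$. As $\phi(\tau h_0 d_0 l)$ is itself nonzero, the value $0$ is excluded and we conclude $h_0 u' = \tau h_0 d_0 l$. Part (2) follows by the identical argument, reading off the extension $h_0 v' = \tau h_0 e_0 l$ in $\Ext_{A(2)}(\M_2,\M_2)$.

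For parts (3) and (4) I would apply the periodicity operator $P$ to part (2), rather than repeat the comparison. Multiplying $h_0 v' = \tau h_0 e_0 l$ by $P$ gives $h_0 (P v') = \tau h_0 (P e_0 l)$, so it suffices to know the relations $P e_0 l = d_0^2 k$ and $P d_0^2 k = d_0^3 i$; these hold in the $\tmf$-region and can again be verified after applying $\phi$ (the stems $57$ and $65$ match on both sides). One then reads off (3) and (4) directly.

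The main obstacle is not the algebra in $\Ext_{A(2)}(\M_2,\M_2)$, which is fully known, but rather the bookkeeping forced by the ambiguity of the generators $u'$ and $v'$. Each of these represents two elements of $\Ext$ differing by $\tau d_0 l$ (respectively $\tau e_0 l$), one killed by $\tau$ and one by $h_0$; the extension $h_0 u' = \tau h_0 d_0 l$ holds only for the $\tau$-torsion lift, since the $h_0$-torsion lift would instead give $0$. The delicate point is therefore to fix the lifts consistently — via their prescribed images under $\phi$ in Table \ref{tab:Ext-ambiguous} — and to verify that $\phi$ genuinely detects the targets, i.e.\ that $\tau h_0 d_0 l$, $\tau h_0 e_0 l$, and their $P$-multiples are nonzero in $\Ext_{A(2)}(\M_2,\M_2)$. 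Since $\phi$ is very far from injective, this detection step, together with the degree-counting that isolates $0$ and the target as the only candidates, is what makes the comparison conclusive.
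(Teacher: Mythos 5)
Your overall strategy---comparison along the restriction map $\Ext \map \Ext_{A(2)}$---is exactly the paper's proof (which consists, in its entirety, of the sentence ``These follow by comparison to $\Ext_{A(2)}$''), and your treatment of parts (1) and (2) is a correct fleshing-out of it: the two-candidate degree count, the ring-map detection $\phi(h_0 u') = h_0\phi(u')$, and the observation that only the $\tau$-torsion lift of $u'$ (resp.\ $v'$) can carry the extension are all the right points.

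Two steps, however, would fail as written. First, ``multiplying by $P$'' in parts (3) and (4) is not a legitimate operation: $P$ is not an element of $\Ext$, since it supports the May differential $d_4(P) = h_0^4 h_3$ (Table \ref{tab:May-E4}); the symbols $P v'$, $P e_0$, $P d_0$ are only \emph{names} of permanent cycles, not products with an element $P$. To make your shortcut rigorous you would need to realize $P$ as the Adams periodicity Massey-product operator and control its indeterminacy and its compatibility with $h_0$- and $\tau$-multiplication---considerably more work than simply running your own argument from parts (1)--(2) directly on $P v'$ and $P^2 v'$, whose ambiguities and target candidates have exactly the same shape ($P v'$ is ambiguous with $\tau d_0^2 k$, etc.); that direct route is what the paper intends. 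Second, Table \ref{tab:Ext-ambiguous} does \emph{not} pin down $u'$ and $v'$ by their images under $\phi$: it defines them by the conditions $\tau \cdot u' = 0$ and $\tau \cdot v' = 0$ (images in $\Ext_{A(2)}$ are used there only for other generators, such as $U$, $R_2$, and $P^2 x'$). So the identification of $\phi(u')$ with the intended nonzero class of $\Ext_{A(2)}$ cannot be a table look-up; it has to come from the $\tau$-torsion characterization together with the nonvanishing of $\phi(u')$, which one sees because $\phi$ respects the May filtration. With those two repairs your proof coincides with the paper's.
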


\begin{proof}
These follow by comparison to $\Ext_{A(2)}$ \cite{Isaksen09}.
\end{proof}

\begin{lemma}
\label{lem:hidden-h0h2^2g}
\mbox{}
\begin{enumerate}
\item
$h_0 \cdot h_2^2 g = h_1^3 h_4 c_0$.
\item
$h_0 \cdot h_2^2 g^2 = h_1^7 h_5 c_0$.
\item
$h_0 \cdot h_2^2 g^3 = h_1^9 D_4$.
\end{enumerate}
\end{lemma}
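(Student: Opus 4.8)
The plan is to treat part (1) as the base case and then propagate it to parts (2) and (3). The first point to absorb is that this family has no classical shadow: the right-hand sides are $\tau$-torsion classes that die after inverting $\tau$ (for instance $h_1^3 h_4 c_0$ is zero classically, cf. Example \ref{ex:h3e0}), so none of the three extensions can be read off from the machine computations of \cite{Bruner97}, and Proposition \ref{prop:compare-ext} gives no leverage. Each must be established by a genuinely motivic argument, and since all three sit in odd internal degree they cannot be produced as the image of any algebraic squaring operation either. The natural engine is therefore the Massey-product shuffle (tool (2) of this section), with brackets evaluated by May's Convergence Theorem \ref{thm:3-converge}.

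For part (1), the class $h_1^3 h_4 c_0$ is exactly the sort of $\tau$-torsion element that has to be detected indirectly. Using the motivic relation $h_2^3 = h_1^2 h_3$ it is natural to compare $h_0 \cdot h_2^2 g$ with $h_2^3 e_0$; in fact establishing the extension simultaneously \emph{yields} the classical relation $h_3 e_0 = h_1 h_4 c_0$, which is the point of Example \ref{ex:h3e0}, so I would not assume that relation but extract it. To detect the class I would write the extension as a shuffle of the shape $h_0 \langle a,b,c\rangle = \langle h_0,a,b\rangle\, c$, choosing $a$, $b$, $c$ so that the outer product lands on $h_1^3 h_4 c_0$, in the style of the shuffles in Lemma \ref{lem:tau-Pu'}. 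The subbrackets that are visible in the May spectral sequence I would compute with May's Convergence Theorem \ref{thm:3-converge}, and those that survive $\tau$-inversion I would pin down from the classical data \cite{Bruner04}, all the while monitoring indeterminacies and the exact powers of $\tau$ forced by the weight grading.

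To reach (2) and (3) I would propagate (1) by multiplying by the class $\tau g$, which exists in $\Ext$ even though $g$ itself does not (Example \ref{ex:g^2}), and then dividing by $\tau$. Multiplying (1) by $\tau g$ and invoking a relation of the form $h_4 g = h_1^4 h_5$ — reflecting $\Sq^0 h_4 = h_5$ and checkable by $\tau$-inversion or through Theorem \ref{thm:Chow-0} — turns $h_1^3 h_4 c_0$ into $h_1^7 h_5 c_0$ and produces (2). Iterating with the analogous relation connecting $g h_5$ to $i_1$ gives (3), which I would then put into the stated form $h_1^9 D_4$ by feeding in the extension $c_0 \cdot i_1 = h_1^4 D_4$ from Lemma \ref{lem:c0-i1}. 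This reproduces precisely the structure predicted in Example \ref{ex:h1^7h5c0}, in which each right-hand side is a power of $h_1$ times $c_0$ times an $\Sq^0$-image of an image-of-$J$ element.

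The hard part will be part (1): it is the genuinely new motivic input, and because $h_1^3 h_4 c_0$ vanishes after inverting $\tau$ there is no external check from Bruner's tables, so the whole argument rests on correctly evaluating the governing Massey product, controlling its indeterminacy, and getting the powers of $\tau$ right. A secondary subtlety, which runs through the propagation step, is that the symbols $h_2^2 g^2$ and $h_2^2 g^3$ denote indecomposable generators of $\Ext$ rather than honest products (the powers $g^2$ and $g^3$ do not exist; cf. Example \ref{ex:g^2}). I must therefore phrase the multiplication-by-$\tau g$ argument purely in terms of these named permanent cycles and verify that no hidden $\tau$-torsion obstructs dividing the resulting identities by $\tau$; failing that, (2) and (3) would instead have to be re-proved by their own independent shuffles.
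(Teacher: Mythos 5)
Your plan for part (1) is, in outline, the paper's actual mechanism: a single shuffle
$h_1^3 h_4 \langle h_1, h_0, h_2^2 \rangle = \langle h_1^3 h_4, h_1, h_0 \rangle h_2^2$,
where $\langle h_1, h_0, h_2^2 \rangle = c_0$ (via the May differential $d_2(h_0(1)) = h_0 h_2^2$) and $\langle h_0, h_1, h_1^3 h_4 \rangle = h_0 g$ (via $d_4(g) = h_1^4 h_4$); both brackets appear in Table \ref{tab:Massey}. You name the right tool but never exhibit these two brackets, so the step you yourself flag as "the hard part" is left entirely open. Still, this half of the plan is sound and matches the paper.

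The genuine gap is the propagation step for (2) and (3), and it is not the hypothetical risk you hedge against in your last sentence -- it is a certain failure. The right-hand sides are killed by $\tau$ itself: using $\tau h_1^3 = h_0^2 h_2$ together with $h_0 c_0 = 0$ and $h_0 h_1 = 0$, one computes $\tau \cdot h_1^3 h_4 c_0 = h_0^2 h_2 h_4 c_0 = 0$, and likewise $\tau \cdot h_1^7 h_5 c_0 = 0$ and $\tau \cdot h_1^9 D_4 = 0$. Hence multiplying relation (1) by $\tau g$ annihilates the right-hand side (concretely, $h_1^3 h_4 c_0 \cdot \tau g = h_1^2 h_4 c_0 \cdot h_2 f_0 = 0$, using $h_1 \cdot \tau g = h_2 f_0$ and $h_2 c_0 = 0$), and the identity you obtain is $\tau \cdot h_0 h_2^2 g^2 = 0$, i.e.\ $0=0$. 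There is no "division by $\tau$" to perform: $\tau x = \tau y$ carries no information when both sides vanish, and the very fact that these classes are $\tau$-torsion is what makes the extensions exotic in the first place. For the same reason your auxiliary relation "$h_4 g = h_1^4 h_5$" cannot be "checked by $\tau$-inversion": its right-hand side dies under $\tau$-inversion, so the classical check only sees $h_4 g \equiv 0$ and cannot pin down the motivic statement. What the paper does is exactly your fallback: each of (2) and (3) gets its own shuffle, with a new May-differential input replacing $d_4(g) = h_1^4 h_4$. For (2), $h_1^7 h_5 \langle h_1, h_0, h_2^2 \rangle = \langle h_1^7 h_5, h_1, h_0 \rangle h_2^2$, where $\langle h_1^7 h_5, h_1, h_0 \rangle = h_0 g^2$ comes from $d_8(g^2) = h_1^8 h_5$. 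For (3), starting from $c_0 \cdot i_1 = h_1^4 D_4$ (Lemma \ref{lem:c0-i1}, which you do cite), one writes $h_1^9 D_4 = h_1^5 i_1 \langle h_1, h_2^2, h_0 \rangle = \langle h_1^5 i_1, h_1, h_2^2 \rangle h_0$, and identifies $\langle h_1^5 i_1, h_1, h_2^2 \rangle = h_2^2 g^3$ from $d_4(g^3) = h_1^6 i_1$. So (2) and (3) must each be proved independently; no multiplicative bootstrap from (1) can reach them.
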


\begin{proof}
For the first hidden extension, use the shuffle
\[
h_1^3 h_4 \langle h_1, h_0, h_2^2 \rangle = 
\langle h_1^3 h_4, h_1, h_0 \rangle h_2^2.
\]
Similarly, for the second hidden section, use the shuffle
\[
h_1^7 h_5 \langle h_1, h_0, h_2^2 \rangle = 
\langle h_1^7 h_5, h_1, h_0 \rangle h_2^2.
\]

For the third hidden extension, there is a 
hidden extension $c_0 \cdot i_1 = h_1^4 D_4$
that will be established in Lemma \ref{lem:c0-i1}.
Use this relation to compute that
\[
h_1^9 D_4 = h_1^5 i_1 \langle h_1, h_2^2, h_0 \rangle =
\langle h_1^5 i_1, h_1, h_2^2 \rangle h_0.
\]
Finally, Table \ref{tab:Massey} shows that 
$h_2^2 g^3 = \langle h_1^5 i_1, h_1, h_2^2 \rangle$.
\end{proof}

\begin{lemma}
\label{lem:h0-gr}
\mbox{}
\begin{enumerate}
\item
$h_0 \cdot g r = P h_1^3 h_5 c_0$.
\item
$h_0 \cdot l m = h_1^6 X_1$.
\item
$h_0 \cdot m^2 = h_1^5 c_0 Q_2$.
\end{enumerate}
\end{lemma}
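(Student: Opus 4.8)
The plan is to handle all three extensions by the Massey-product shuffling technique already used in Lemma~\ref{lem:hidden-h0h2^2g}, since each is genuinely hidden in the May spectral sequence and so invisible on the $E_\infty$-page. The recurring device is a threefold bracket with $h_0$ in one slot, which encodes the desired $h_0$-multiplication; the prototype is $\langle h_1, h_0, h_2^2 \rangle$, which contains $c_0$. Throughout I would lean on the classical hidden extensions tabulated from \cite{Bruner97}, the Massey products recorded in Table~\ref{tab:Massey}, and the relation $c_0 \cdot i_1 = h_1^4 D_4$ from Lemma~\ref{lem:c0-i1}, transporting classical facts to the motivic setting via Theorem~\ref{thm:Chow-0} and Proposition~\ref{prop:compare-ext} (inserting the minimal power of $\tau$ to balance weights) whenever the degrees permit.

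For parts (1) and (3), whose targets $P h_1^3 h_5 c_0$ and $h_1^5 c_0 Q_2$ both carry a factor of $c_0$, the natural approach is a shuffle of the form $a \langle b, c, d \rangle = \langle a, b, c \rangle d$ in which $h_0$ occupies one bracket slot and $\langle h_1, h_0, h_2^2 \rangle \ni c_0$ supplies the $c_0$. The task is to choose $a,b,c,d$ so that one side of the shuffle reproduces the target through a known $c_0$-relation, while the other side reproduces $h_0 \cdot gr$ (respectively $h_0 \cdot m^2$) after re-identifying the remaining factor via the relations among $g$, $r$, $m$, and $Q_2$ collected in Table~\ref{tab:Massey}; when a needed subbracket has the symmetric form $\langle h_n, x, h_n \rangle$ I would instead invoke Proposition~\ref{prop:Massey-hn}. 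As an alternative route for (1), I would try applying a Steenrod operation in the sense of \cite{May70} to Lemma~\ref{lem:hidden-h0h2^2g}, since the $h_0 \cdot h_2^2 g^k$ family of Example~\ref{ex:h1^7h5c0} already behaves like an $\Sq^0$-family and $gr$ may be reachable the same way.

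Part (2) requires a separate argument because its target $h_1^6 X_1$ has no visible $c_0$ factor to feed into $\langle h_1, h_0, h_2^2\rangle$. Here I would first look for a classical hidden $h_0$ extension in the corresponding Chow-degree-zero stratum and pull it into the motivic range by Theorem~\ref{thm:Chow-0}; failing that, I would search for a shuffle built from a bracket whose entries are $h_1$, $h_0$, and a factor of $l$ or $m$, arranged so that one side yields $h_1^6 X_1$ through a known product and the other yields $h_0 \cdot lm$.

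The hard part in every case will be bookkeeping rather than conceptual content: these elements live in the crowded $50$–$70$ stem range where many classes share a single degree, so the genuine obstacle is controlling the indeterminacy of each bracket and certifying that the shuffle relations are valid (that the relevant threefold brackets are defined with indeterminacy consisting only of terms I can absorb). I would verify each Massey product either through May's Convergence Theorem~\ref{thm:3-converge} or against the machine computations of \cite{Bruner97} and \cite{Bruner04}, and then check degree by degree that no competing element of the same degree could serve as an alternative target, so that the stated value is forced.
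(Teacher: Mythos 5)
Your proposal correctly identifies the general flavor of the argument (shuffles feeding into a bracket of the form $c_0 = \langle h_1, h_2, h_0 h_2 \rangle$), but it stops at the level of strategy and is missing the concrete inputs that actually force the answers. The engine of the paper's proof of (1) is not a generic choice of $a,b,c,d$: it is the specific Massey products $e_0 r = \langle \tau^2 g^2, h_2^2, h_0 \rangle$ and $g r = \langle h_1^3 h_4, h_1, r \rangle$ from Table \ref{tab:Massey}, the shuffle identification $h_0 \cdot g r = h_2 \cdot e_0 r$, and --- decisively --- the hidden $\tau$ extension $\tau \cdot \tau h_2 g^2 = P h_1^4 h_5$ of Lemma \ref{lem:t^2-h2g^2}, which converts $\langle \tau^2 h_2 g^2, h_2, h_0 h_2\rangle$ into $P h_1^3 h_5 \langle h_1, h_2, h_0 h_2 \rangle = P h_1^3 h_5 c_0$. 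It is this $\tau$ extension (and, for part (3), the non-hidden relation $\tau^2 h_2 g^3 = h_1^6 Q_2$ together with $h_2 l = h_0 m$) that produces the $h_1$-power factor in the target; nothing in your outline supplies it. Note also that your plan to lean on classical hidden extensions and machine data cannot work for any part of this lemma: each target is zero classically, since $h_1^4 = 0$ and $h_1^3 c_0 = h_0^2 h_2 c_0 = 0$ in $\Ext_{\cl}$, so all three targets are $\tau$-torsion and the extensions are genuinely exotic --- these are precisely the ``more subtle cases'' that comparison with \cite{Bruner97} does not reach.

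The gap is sharpest in part (2), where both of your proposed routes fail: there is no classical extension to transport (as above), and Theorem \ref{thm:Chow-0} is inapplicable because $h_0 \cdot l m$ lives in degree $(67,15,38)$, where $s + f - 2w = 6 \neq 0$. Moreover, (2) does not require a ``separate argument'' at all: the three parts are linked by the May $E_8$-page relations $l m = e_0 \cdot g r$ and $m^2 = g^2 r$, so multiplying part (1) by $e_0$ gives $h_0 \cdot l m = P h_1^3 h_5 c_0 e_0$, and the hidden relation $h_1^3 X_1 = P h_5 c_0 e_0$ (Lemma \ref{lem:h1-th1G}) identifies this with $h_1^6 X_1$. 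The ``invisible $c_0$'' in the target $h_1^6 X_1$ that you worried about is exactly what that hidden relation provides. So the real obstacle is not the bookkeeping of indeterminacies you flag at the end, but these missing structural identifications.
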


\begin{remark}
The three parts may seem unrelated, but note that
$l m = e_0 g r$ and $m^2 = g^2 r$ on the $E_8$-page of the May spectral sequence.
\end{remark}

\begin{proof}
Table \ref{tab:Massey} shows that 
$e_0 r = \langle \tau^2 g^2, h_2^2, h_0 \rangle$.
Next observe that
\[
h_2 \cdot e_0 r = \langle \tau^2 g^2, h_2^2, h_0 \rangle h_2 =
\langle \tau^2 g^2, h_2^2, h_0 h_2 \rangle =
\langle \tau^2 h_2 g^2, h_2, h_0 h_2 \rangle.
\]
None of these brackets have indeterminacy.

Use the relation $P h_1^4 h_5 = \tau^2 h_2 g^2$ from
Lemma \ref{lem:t^2-h2g^2}
to write
\[
h_2 \cdot e_0 r = \langle Ph_1^4 h_5, h_2, h_0 h_2 \rangle =
P h_1^3 h_5 \langle h_1, h_2, h_0 h_2 \rangle = 
P h_1^3 h_5 c_0.
\]
The last step is to show that $h_2 \cdot e_0 r = h_0 \cdot g r$.
This follows from the calculation
\[
h_0 \cdot g r = h_0 \langle h_1^3 h_4, h_1, r \rangle =
\langle h_0, h_1^3 h_4, h_1 \rangle r = h_2 e_0 \cdot r,
\]
where the brackets are given in Table \ref{tab:Massey}.
This finishes the proof of part (1).

For part (2), we will prove below in Lemma \ref{lem:h1-th1G}
that $h_1^3 X_1 = P h_5 c_0 e_0$.
So we wish to show that $h_0 \cdot l m = P h_1^3 h_5 c_0 e_0$.
This follows immediately from part (1), using that
$l m = e_0 g r$.

The proof of part (3) is similar to the proof of part (1).
First, $l m$ equals $\langle \tau^2 g^3, h_2^2, h_0 \rangle$.
As above, this implies that
$h_2 l m = \langle \tau^2 h_2 g^3, h_2, h_0 h_2 \rangle$.
Now use the (not hidden) relation $\tau^2 h_2 g^3 = h_1^6 Q_2$
to deduce that $h_2 l m = h_1^5 c_0 Q_2$.
The desired formula now follows since $h_2 l = h_0 m$.
\end{proof}

\begin{lemma}
\label{lem:h0-h0^2B22}
$h_0 \cdot h_0^2 B_{22} = P h_1 h_5 c_0 d_0$.
\end{lemma}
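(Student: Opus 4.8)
The plan is to realize this hidden extension as a shuffle of Massey products that reduces to the hidden $\tau$ extension $\tau \cdot h_1^2 B_{21} = P h_5 c_0 d_0$ already established in Lemma \ref{lem:t-B8}(2). The useful preliminary observation is that this earlier extension rewrites the proposed target as $P h_1 h_5 c_0 d_0 = h_1 \cdot P h_5 c_0 d_0 = \tau h_1^3 B_{21}$, which is the form in which I expect it to emerge. First I would confirm by a degree count, using the quadruple gradings of Section \ref{subsctn:Ext-group}, that $P h_1 h_5 c_0 d_0$ is the only nonzero element of $\Ext$ in the relevant stem and weight lying in strictly higher May filtration than $h_0^3 B_{22}$; since the exhaustive search behind the preceding Proposition already records that $h_0^3 B_{22}$ vanishes on the $E_\infty$-page, this isolates $P h_1 h_5 c_0 d_0$ as the unique candidate target, so it remains only to show the extension is nonzero.

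To produce the nonzero value I would follow the template of the proof of Lemma \ref{lem:h0-gr}: express $h_0^2 B_{22}$, or a convenient multiple of it, as a strictly defined threefold Massey product taken from Table \ref{tab:Massey}, and then apply a shuffle of the form $h_0 \langle \alpha, \beta, \gamma \rangle = \langle h_0, \alpha, \beta \rangle \gamma$ to move the extra factor of $h_0$ across the bracket. The aim is to land on a bracket whose inner subbracket evaluates, after substituting $\tau h_1^2 B_{21}$ for $P h_5 c_0 d_0$, to $\tau h_1^3 B_{21} = P h_1 h_5 c_0 d_0$. As in Lemma \ref{lem:h0-gr}, the essential point is that each bracket used is defined and has no indeterminacy, so that the shuffle identity determines the product exactly rather than only up to a coset.

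The main obstacle has two parts: locating the correct Massey product description of $B_{22}$, and verifying the definedness and indeterminacy hypotheses. The $B$-family generators sit in high May filtration, where many classes of comparable internal degree compete, so checking that the relevant brackets are strictly defined --- and, when a bracket is extracted from the May spectral sequence, that no crossing differential violates the hypotheses of May's Convergence Theorem \ref{thm:3-converge} --- is the delicate step. Should no clean bracket for $B_{22}$ present itself, the fallback is a Steenrod-operation argument in the style of Lemma \ref{lem:t^2-h2g^2}: apply a squaring operation to a relation in lower degree and read off the needed operation values from the machine computations of \cite{Bruner04}. I nonetheless expect the shuffle route to be the most direct, given the tight link between the target and Lemma \ref{lem:t-B8}(2).
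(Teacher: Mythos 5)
There is a genuine gap: your proposal is a strategy outline whose decisive step is never carried out. You defer the entire content of the argument to "locating the correct Massey product description of $B_{22}$" and to verifying the definedness, indeterminacy, and crossing-differential hypotheses, and you acknowledge you have not done either. Without an explicit bracket and a verified shuffle, nothing is proved; the fallback via algebraic Steenrod operations is likewise only gestured at. In particular, the uniqueness-of-candidate argument plus "it remains only to show the extension is nonzero" still leaves the nonvanishing itself completely open.

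What makes this frustrating is that you wrote down all the ingredients of the actual proof and stopped one line short. You correctly observe that the target can be rewritten as $P h_1 h_5 c_0 d_0 = \tau \cdot h_1^3 B_{21}$, using Lemma \ref{lem:t-B8}. But no Massey products are needed at all: in $\Ext$ one has the relation $\tau h_1^3 = h_0^2 h_2$ (visible from the May differential $d_2(b_{20}) = \tau h_1^3 + h_0^2 h_2$), and the non-hidden relation $h_2 B_{21} = h_0 B_{22}$ (from $h_2 d_0 = h_0 e_0$ applied to $B_{21} = Y d_0$, $B_{22} = Y e_0$). Hence
\[
\tau \cdot h_1^3 B_{21} = (h_0^2 h_2) B_{21} = h_0^2 \cdot (h_0 B_{22}) = h_0 \cdot h_0^2 B_{22},
\]
and the hidden $\tau$ extension of Lemma \ref{lem:t-B8} identifies this common value with $P h_1 h_5 c_0 d_0$. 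This is exactly the paper's proof: the relation $\tau h_1^3 = h_0^2 h_2$ converts the known hidden $\tau$ extension directly into the desired hidden $h_0$ extension, with no convergence theorems, no indeterminacy checks, and no Steenrod operations. Your plan is not unsalvageable — a shuffle argument in the style of Lemma \ref{lem:h0-gr} might well be made to work — but as written it proves nothing, and the missing observation ($\tau h_1^3 = h_0^2 h_2$) collapses the problem to a two-line computation.
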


\begin{proof}
This follows from the hidden $\tau$ extension
$\tau \cdot h_1^3 B_{21} = P h_1 h_5 c_0 d_0$
that follows from Lemma \ref{lem:t-B8},
together with the relation $\tau h_1^3 = h_0^2 h_2$.
\end{proof}


\subsection{Hidden May $h_1$ extensions}

By exhaustive search, the following results give all of the
hidden $h_1$ extensions.
\index{May spectral sequence!hidden extension!h1@$h_1$}

\begin{prop}
Table \ref{tab:May-h1} lists all of the hidden
$h_1$ extensions through the 70-stem.
\end{prop}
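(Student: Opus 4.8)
The plan is to follow the same three-part template used for the hidden $\tau$ and $h_0$ extensions in Sections \ref{subsctn:t-hidden} and \ref{subsctn:h0-hidden}. First I would carry out the exhaustive search promised in the opening sentence: for each multiplicative generator and each relevant decomposable element $x$ of the May $E_\infty$-page through the 70-stem, I would compute the product $h_1 \cdot x$ on the $E_\infty$-page and ask whether it vanishes or lands in strictly lower May filtration than expected. Since $h_1$ has degree $(1,1,1)$, a hidden $h_1$ extension is \emph{possible} precisely when some element of strictly higher May filtration sits in degree $(s+1,f+1,w+1)$ of the would-be product $h_1 x$; each such degree must then be either established or ruled out. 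This search is finite and mechanical, so all the content lies in resolving the individual possibilities.

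For the bulk of these possibilities I would appeal to classical data. By Proposition \ref{prop:compare-ext}, inverting $\tau$ identifies $\Ext$ with $\Ext_{A_{\cl}} \otimes_{\F_2} \F_2[\tau,\tau^{-1}]$, so any hidden $h_1$ extension whose target survives $\tau$-inversion is forced by the corresponding classical hidden $h_0$ extension, all of which are recorded by machine in \cite{Bruner97}. A second batch of extensions I would deduce from the hidden $\tau$ and $h_0$ extensions already established: multiplying a known hidden extension by a suitable element, or combining two known extensions through a relation among $E_\infty$ representatives, frequently pins down an $h_1$ extension, exactly as $h_0 \cdot P u' = \tau h_0 d_0^2 j$ was derived in Section \ref{subsctn:h0-hidden}. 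Here the weight grading serves as a decisive bookkeeping aid, since it eliminates many targets that would otherwise appear plausible.

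The remaining cases are the genuinely motivic ones, in which the target is $\tau$-torsion and classical comparison is silent. For these I would use the two indirect tools developed earlier: shuffling relations among Massey products, evaluated on the May $E_r$-page via May's Convergence Theorem \ref{thm:3-converge}, together with the algebraic Steenrod operations of \cite{May70}, which relate an $h_1$ extension on $\Sq^0$-images to a known extension lower in the stem. Each such case would become one of the lemmas that follow this proposition. The hard part will be the completeness claim rather than any single extension: I must be certain that every possibility produced by the exhaustive search has been accounted for, and in particular that no purely motivic ($\tau$-torsion) hidden $h_1$ extension has been overlooked in a degree where the classical picture says nothing. Verifying this amounts to checking that the Massey product and Steenrod operation arguments cover exactly the entries of Table \ref{tab:May-h1}, with every indeterminacy tracked correctly.
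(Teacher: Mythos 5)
Your overall architecture --- exhaustive search, classical comparison for the bulk of the entries, then Massey-product shuffles and algebraic Steenrod operations for the genuinely motivic cases --- is the same as the paper's. But your classical comparison step contains a genuine error. You assert that, via the $\tau$-inversion isomorphism of Proposition \ref{prop:compare-ext}, a motivic hidden $h_1$ extension whose target survives $\tau$-inversion is ``forced by the corresponding classical hidden $h_0$ extension.'' This is false, not merely a slip of notation: $\tau$-inversion preserves the stem and the filtration and sends motivic $h_i$ to classical $h_i$, so a motivic $h_1$ extension (which raises the stem by $1$) can only correspond to a classical $h_1$ extension; a classical $h_0$ extension sits in the wrong stem altogether. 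This is exactly how the paper argues: the classical hidden extension $h_1 \cdot x = h_2^2 d_1$ forces the motivic hidden extension $h_1 \cdot x = \tau h_2^2 d_1$, with the factor of $\tau$ inserted to balance the weight. Populating Table \ref{tab:May-h1} from Bruner's classical $h_0$ data, as your prescription literally says, would produce degree-incoherent and hence wrong entries for the majority of the table.

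There \emph{is} a correspondence between classical $h_0$ extensions and motivic $h_1$ extensions, but it comes from a different mechanism that your proposal omits: the Chow-degree-zero isomorphism of Theorem \ref{thm:Chow-0}, which identifies $\Ext_{A_{\cl}}$ with the subalgebra of $\Ext$ in degrees $(s,f,w)$ with $s+f-2w=0$, taking classical degree $(s,f)$ to motivic degree $(2s+f,f,s+f)$ and classical $h_0$ to motivic $h_1$. That is precisely how the paper proves $h_1 \cdot r_1 = s_1$ (Lemma \ref{lem:h1-r1}) from the classical relation $h_0 \cdot r = s$; it applies only to the $\Sq^0$-type elements such as $r_1$, and it is a separate tool from the algebraic Steenrod operations, which the paper uses in a different way (applying $\Sq^6$ to the relation $h_2 r = h_1 q$ in Lemma \ref{lem:h1-h1^2q1}). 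Your phrase about relating ``an $h_1$ extension on $\Sq^0$-images to a known extension lower in the stem'' gestures at the right phenomenon but attributes it to the wrong tool. To repair the proof: use classical hidden $h_1$ extensions under $\tau$-inversion for the bulk of the table, and add Theorem \ref{thm:Chow-0} as a fourth tool for the Chow-degree-zero cases.
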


\begin{proof}
Many of the extensions follow by comparison to the classical case
as described in \cite{Bruner97}.
For example, there is a classical hidden extension
$h_1 \cdot x = h_2^2 d_1$.
This implies that there is a motivic hidden
extension $h_1 \cdot x = \tau h_2^2 d_1$.

Proofs for the more subtle cases are given below.
\end{proof}

\begin{lemma}
\label{lem:h1-th1G}
\mbox{}
\begin{enumerate}
\item
$h_1 \cdot \tau h_1 G = h_5 c_0 e_0$.
\item
$h_1 \cdot h_1 B_3 = h_5 d_0 e_0$.
\item
$h_1 \cdot \tau P h_1 G = P h_5 c_0 e_0$.
\item
$h_1 \cdot h_1^2 X_3 = h_5 c_0 d_0 e_0$.
\end{enumerate}
\end{lemma}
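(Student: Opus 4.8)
The plan is to establish part (1) as the base relation and then to obtain parts (2)--(4) as multiplicative consequences. Since these are hidden $h_1$ extensions, the two natural tools are the algebraic Steenrod operations of \cite{May70} and Massey-product shuffles, exactly as deployed in Lemmas \ref{lem:t^2-h2g^2}, \ref{lem:tau-k1}, \ref{lem:hidden-h0h2^2g}, and \ref{lem:h0-gr}.

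For part (1), I would begin from a known relation --- either a relation on the May $E_\infty$-page or a hidden extension established earlier in this section --- and apply a suitable Steenrod operation. Expanding by the Cartan formula and substituting the machine-computed values of the operations on the individual generators from \cite{Bruner04}, I expect the factor $h_5$ to be produced by such a computed value, in the same way that $\Sq^3(\tau g) = P h_1^2 h_5$ produces $h_5$ in the proof of Lemma \ref{lem:t^2-h2g^2}, with the surviving terms assembling into $h_1 \cdot \tau h_1 G = h_5 c_0 e_0$. Throughout, one must keep the powers of $\tau$ balanced in weight; this is the only place where the motivic bookkeeping departs from the classical pattern. Should a clean source relation for the Steenrod-operation argument not present itself, the alternative is to write $h_5 c_0 e_0$ (or $\tau h_1 G$) as a strictly defined Massey product drawn from Table \ref{tab:Massey} and shuffle $h_1$ across it, as in the proof of Lemma \ref{lem:h0-gr}.

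Part (2) I would treat by the identical mechanism with $c_0$ replaced by $d_0$, since $h_5 d_0 e_0$ occupies the position analogous to $h_5 c_0 e_0$. Parts (3) and (4) should then follow by multiplication. For part (3), multiply the relation of part (1) by $P$: after checking that $P \cdot \tau h_1 G$ is detected by $\tau P h_1 G$ and that $P \cdot h_5 c_0 e_0 = P h_5 c_0 e_0$, part (1) yields $h_1 \cdot \tau P h_1 G = P h_5 c_0 e_0$; this computation also records the relation $h_1^3 X_1 = P h_5 c_0 e_0$ needed in the proof of Lemma \ref{lem:h0-gr}. For part (4), multiply part (2) by $c_0$ (or part (1) by $d_0$), identifying the left-hand side with $h_1 \cdot h_1^2 X_3$ through the $E_\infty$-level relation $c_0 \cdot h_1 B_3 = h_1^2 X_3$ and the right-hand side with $h_5 c_0 d_0 e_0$.

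The main obstacle is the base case, part (1): pinning down the precise source relation and the relevant Steenrod-operation values, and confirming that no element of higher May filtration in this degree interferes with the conclusion. Once the correct input is isolated, the weight bookkeeping and the multiplicative propagation to parts (2)--(4) are routine.
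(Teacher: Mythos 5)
Your overall skeleton (prove (1), then propagate multiplicatively) is close to the paper's, and your fallback for part (1) is in fact the paper's actual proof: Table \ref{tab:Massey} gives $\tau h_1 G = \langle h_5, h_2 g, h_0^2 \rangle$ and $c_0 e_0 = \langle h_2 g, h_0^2, h_1 \rangle$, so the shuffle $h_1 \cdot \tau h_1 G = \langle h_5, h_2 g, h_0^2 \rangle h_1 = h_5 \langle h_2 g, h_0^2, h_1 \rangle = h_5 c_0 e_0$ settles (1). Your primary route via algebraic Steenrod operations is never pinned down --- you acknowledge this yourself --- so as written part (1) rests entirely on the shuffle alternative; fortunately that alternative works.

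The genuine gap is part (2). It is not a multiplicative consequence of part (1) (there is no element carrying $c_0 e_0$ to $d_0 e_0$), and there is no ``identical mechanism with $c_0$ replaced by $d_0$'': the shuffle for (1) hinged on the bracket presentation of the \emph{source} $\tau h_1 G$ with $h_5$ as an outer entry, and $h_1 B_3$ admits no analogous presentation whose inner entries bracket against $h_1$ to give $d_0 e_0$. The paper's argument for (2) is genuinely different: Table \ref{tab:Massey} gives $\langle h_5 c_0 e_0, h_0, h_2^2 \rangle = h_1 h_5 d_0 e_0$ (via the May differential $d_2(h_0(1)) = h_0 h_2^2$); substituting $h_5 c_0 e_0 = \tau h_1^2 G$ from part (1) and pulling $h_1^2$ out of the bracket (there is no indeterminacy) shows that $h_5 d_0 e_0$ is divisible by $h_1$, and $h_1 \cdot h_1 B_3$ is the only possible hidden extension hitting it. Your parts (3) and (4) are essentially right, with two caveats: ``multiplication by $P$'' is not an operation in $\Ext$, so (3) must be run, as the paper does, by multiplying part (1) by the honest element $P h_1$ and invoking the non-hidden relation $h_1 \cdot \tau P G = P h_1 \cdot \tau G$; and for (4) the cleanest route (the paper's) is to multiply (1) by $d_0$, conclude that $h_5 c_0 d_0 e_0$ is divisible by $h_1$, and observe that $h_1^2 X_3$ is the only possible source --- this avoids relying on (2), which your route through the relation $c_0 \cdot h_1 B_3 = h_1^2 X_3$ would need.
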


\begin{proof}
Table \ref{tab:Massey} shows that 
$\tau h_1 G = \langle h_5, h_2 g, h_0^2 \rangle$.
Shuffle to obtain
\[
h_1 \cdot \tau h_1 G =
\langle h_5, h_2 g, h_0^2 \rangle h_1 =
h_5 \langle h_2 g, h_0^2, h_1 \rangle.
\]
Finally, 
Table \ref{tab:Massey} shows that 
$c_0 e_0 = \langle h_2 g, h_0^2, h_1 \rangle$.
This establishes the first hidden extension.

For the second hidden extension,
Table \ref{tab:Massey} shows that
$\langle h_5 c_0 e_0, h_0, h_2^2 \rangle$ equals
$h_1 h_5 d_0 e_0$.
From part (1), this equals
$\langle \tau h_1^2 G, h_0, h_2^2 \rangle$, which equals
$h_1^2 \langle \tau G, h_0, h_2^2 \rangle$
because there is no indeterminacy.
This shows that $h_5 d_0 e_0$ is divisible by $h_1$.
The only possible hidden extension is
$h_1 \cdot h_1 B_3 = h_5 d_0 e_0$.

For the third hidden extension,
start with the relation $h_1 \cdot \tau P G = P h_1 \cdot \tau G$
because there is no possible hidden relation.
Therefore, using part (1),
\[
h_1^3 \cdot \tau P G = P h_1 \cdot h_1^2 \cdot \tau G 
= P h_1 h_5 c_0 e_0.
\]
It follows that $h_1 \cdot \tau P h_1 G = P h_5 c_0 e_0$.

For the fourth hidden extension,
use part (1) to conclude that
$h_5 c_0 d_0 e_0$ is divisible by $h_1$.
The only possibility is that
$h_1 \cdot h_1^2 X_3 = h_5 c_0 d_0 e_0$.
\end{proof}

\begin{lemma}
\label{lem:h1-h1^2B6} 
$h_1 \cdot h_1^2 B_6 = \tau h_2^2 d_1 g$.
\end{lemma}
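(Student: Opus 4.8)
The plan is to treat this as a hidden $h_1$-extension and to establish it by the Massey-product shuffling method used in Lemma~\ref{lem:h1-th1G} and in the surrounding lemmas, rather than by direct comparison to the classical case. First I would record why a purely classical argument is inadequate here: one checks that both the prospective source $h_1^2 B_6$ and the target $\tau h_2^2 d_1 g$ lie in Chow degree $2$ (that is, $s+f-2w=2$ in each case), so the Chow-degree-zero isomorphism of Theorem~\ref{thm:Chow-0} does not apply, and inverting $\tau$ as in Proposition~\ref{prop:compare-ext} does not by itself determine the precise power of $\tau$ in the motivic relation. This is exactly the feature that makes the extension one of the ``more subtle cases'' deferred from the proof of the proposition, and it signals that an internal argument is required.

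The core of the argument is to show that $\tau h_2^2 d_1 g$ is divisible by $h_1$ in $\Ext$. I would search Table~\ref{tab:Massey} for a bracket expressing $h_1^2 B_6$ (or $B_6$ itself) as a threefold Massey product, and then shuffle a factor of $h_1$ across it, evaluating the resulting subbracket against the known products. The relation $h_1 \cdot \tau g = h_2 f_0$ from the proof of Lemma~\ref{lem:t^2-h2g^2} is a natural input, since it is the basic mechanism by which products of $g$ with $h_1$ acquire compensating factors of $h_2$ and $\tau$, and it is plausibly what supplies the factor $\tau h_2^2$ on the right-hand side. As in Lemma~\ref{lem:h1-th1G}, the shuffle will be a genuine equality, not merely a containment, only after I check that the relevant threefold subbrackets have no indeterminacy; I would verify this degree by degree on the May $E_\infty$-page. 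The output of this step should be an expression exhibiting $\tau h_2^2 d_1 g$ as $h_1$ times a class detected in the degree of $h_1^2 B_6$.

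The final step is an exhaustive inspection of the May $E_\infty$-page in the degree of the source: I would confirm that $h_1^2 B_6$ is the only multiplicative generator there whose $h_1$-multiple is not already visible on $E_\infty$, so that the $h_1$-divisibility just produced must be carried by the hidden extension $h_1 \cdot h_1^2 B_6 = \tau h_2^2 d_1 g$ recorded in Table~\ref{tab:May-h1}. I expect the main obstacle to be the first half of the middle step, namely pinning down the correct bracket representative and confirming that its indeterminacy vanishes so that the shuffle yields equality on the nose; once the $h_1$-divisibility is secured, the uniqueness bookkeeping on $E_\infty$ should be routine.
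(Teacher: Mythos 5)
Your overall skeleton---prove that $\tau h_2^2 d_1 g$ is divisible by $h_1$, then observe that a hidden extension on $h_1^2 B_6$ is the only way to account for that divisibility---is exactly the paper's strategy, but the step that actually produces the divisibility is missing, and the one concrete mechanism you name is the wrong one. The relation $h_1 \cdot \tau g = h_2 f_0$ plays no role here and cannot supply the factor $\tau h_2^2$. What does supply it is the classical hidden May extension $h_1 \cdot x = \tau h_2^2 d_1$ (from Bruner's machine computations, recorded in Table \ref{tab:May-h1}), which you never invoke. Moreover, the paper decomposes the \emph{target}, not the source: Table \ref{tab:Massey} gives $d_1 g = \langle d_1, h_1^3, h_1 h_4 \rangle$ with no indeterminacy, via May's Convergence Theorem \ref{thm:3-converge} and the May differential $d_4(g) = h_1^4 h_4$, so that
\[
\tau h_2^2 d_1 g \in \tau h_2^2 \langle d_1, h_1^3, h_1 h_4 \rangle
\subseteq \langle \tau h_2^2 d_1, h_1^3, h_1 h_4 \rangle
= \langle h_1 x, h_1^3, h_1 h_4 \rangle
= h_1 \langle x, h_1^3, h_1 h_4 \rangle,
\]
the last equality holding because the bracket has no indeterminacy. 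Without the extension $h_1 x = \tau h_2^2 d_1$ there is no way to convert the leading entry of the bracket into an $h_1$-multiple, and no shuffle starting from the inputs you list will do it.

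Your alternative plan---locating a bracket for $h_1^2 B_6$ (or $B_6$) in Table \ref{tab:Massey} and shuffling $h_1$ across it---also runs into a circularity: the only such entry, $\langle x, h_1^2, h_1^2 h_4 \rangle = h_1^2 B_6$, is credited in that table to this very lemma, so it is an output of the argument rather than an available input. A smaller point: your opening remark about Chow degree is correct as far as it goes, but the obstruction to a classical-comparison proof is not that inverting $\tau$ leaves the power of $\tau$ undetermined (the weight fixes it uniquely); it is that the classical relation $h_1^3 B_6 = h_2^2 d_1 g$ is not itself among the classical inputs being relied on, so an internal argument is required. The final uniqueness step of your proposal is fine once the divisibility is in hand.
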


\begin{proof}
Table \ref{tab:Massey} shows that 
$d_1 g = \langle d_1, h_1^3, h_1 h_4 \rangle$.
Using the hidden extension
$h_1 \cdot x = \tau h_2^2 d_1$ \cite{Bruner97},
it follows that
$\tau h_2^2 d_1 g = \langle h_1 x, h_1^3, h_1 h_4 \rangle$,
which equals
$h_1 \langle x, h_1^3, h_1 h_4 \rangle$
because there is no indeterminacy.
Therefore, $\tau h_2^2 d_1 g$ is divisible by $h_1$, and 
the only possibility is that
$h_1 \cdot h_1^2 B_6 = \tau h_2^2 d_1 g$.
\end{proof}

\begin{lemma}
\label{lem:h1-h1D11}
$h_1 \cdot h_1 D_{11} = \tau^2 c_1 g^2$.
\end{lemma}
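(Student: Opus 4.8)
The plan is to follow the template of Lemma~\ref{lem:h1-h1^2B6}: rather than computing $h_1^2 D_{11}$ directly, I would show that the target $\tau^2 c_1 g^2$ is divisible by $h_1$, and then observe from the $E_\infty$-page that the only class in the relevant degree that can carry an $h_1$-multiplication into this spot is $h_1 D_{11}$. A direct comparison to the classical case (as with the easy extensions cited from \cite{Bruner97}) is awkward here, since $g^2$ does not survive motivically (Example~\ref{ex:g^2}); this is presumably why the extension counts as one of the subtle cases, and it forces an internal Massey-product argument.

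First I would locate in Table~\ref{tab:Massey} a bracket that realizes multiplication by $g$, in analogy with the identity $d_1 g = \langle d_1, h_1^3, h_1 h_4 \rangle$ used in Lemma~\ref{lem:h1-h1^2B6}; the natural candidate here is an expression of the form $c_1 g^2 = \langle c_1 g, h_1^3, h_1 h_4 \rangle$ (carrying the requisite powers of $\tau$), which is defined because $h_1^4 h_4 = 0$ in $\Ext$ by Lemma~\ref{lem:d4-g}. I would then produce a previously established hidden $h_1$-extension exhibiting the inner factor $c_1 g$, up to the appropriate power of $\tau$, as $h_1$ times a class $y$ of $\Ext$. Substituting gives $\tau^2 c_1 g^2 = \langle h_1 y, h_1^3, h_1 h_4 \rangle = h_1 \langle y, h_1^3, h_1 h_4 \rangle$, where the last step pulls $h_1$ outside the bracket and is legitimate provided the bracket has no indeterminacy. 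This exhibits the target as $h_1$ times a class of $\Ext$ and establishes the desired divisibility.

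With $h_1$-divisibility in hand, I would finish by consulting the $E_\infty$-chart of \cite{Isaksen14a} in the $59$-stem: in that degree the only possible source of a nontrivial $h_1$-extension landing on $\tau^2 c_1 g^2$ is $h_1 D_{11}$, which forces $h_1 \cdot h_1 D_{11} = \tau^2 c_1 g^2$. The main obstacle will be the first step: identifying the correct Massey product together with the correct auxiliary hidden extension, and---unlike the cleaner situation of Lemma~\ref{lem:h1-h1^2B6}---keeping scrupulous track of the weights and of the extra factors of $\tau$ that appear because $g^2$ itself is not a class, only $\tau g^2$ is. A secondary difficulty is verifying that the relevant threefold brackets have vanishing indeterminacy, which is what permits the clean extraction of $h_1$; should indeterminacy intrude, one would instead argue, as elsewhere in this section, that every element of the indeterminacy is itself already $h_1$-divisible.
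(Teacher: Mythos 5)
Your proposal is correct, and its computational core coincides with the paper's: both arguments run on the classical hidden May extension $h_1 \cdot y = \tau^2 c_1 g$ (Table \ref{tab:May-h1}) together with the Massey product expressing $c_1 g^2$ in terms of $c_1 g$ (Table \ref{tab:Massey}, resting on the May differential $d_4(g) = h_1^4 h_4$ of Lemma \ref{lem:d4-g}), followed by the shuffle $\langle h_1 y, -, - \rangle = h_1 \langle y, -, - \rangle$ justified by vanishing indeterminacy. The difference is in how the source of the extension is pinned down. The paper works from the source: its first step is to prove $h_1 D_{11} = \langle y, h_1^2, h_1^2 h_4 \rangle$ outright, using May's Convergence Theorem \ref{thm:3-converge} and the May $E_\infty$-page relation $\Delta h_3^2 g = \Delta h_1^2 d_1$; multiplying this identity by $h_1$ and substituting $h_1 y = \tau^2 c_1 g$ then gives the answer with no uniqueness argument at all. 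You work from the target, exactly on the template of Lemma \ref{lem:h1-h1^2B6}: substitute $h_1 y$ into the bracket for $\tau^2 c_1 g^2$, extract $h_1$ to get divisibility, and then argue that $h_1 D_{11}$ is the only class in degree $(s,f,w)=(58,10,32)$ that can support the extension. That closing check does go through — the other candidate there, $\tau^2 h_2^2 d_1 g$, is annihilated by $h_1$ since it factors through $h_1 h_2 = 0$ — and it is the same style of ``only possibility'' reasoning the paper uses elsewhere. So your route trades the paper's one genuinely new input (the relation $\Delta h_3^2 g = \Delta h_1^2 d_1$, needed to identify $h_1 D_{11}$ as a bracket) for an exhaustive inspection of the chart in one tridegree; the paper's route stays inside the bracket calculus and never consults the chart. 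Two cosmetic corrections: the bracket recorded in Table \ref{tab:Massey} is $\langle c_1 g, h_1^2, h_1^2 h_4 \rangle$ rather than your $\langle c_1 g, h_1^3, h_1 h_4 \rangle$ (the distribution of $h_1$'s is immaterial), and your worry about $\tau$-powers resolves itself because the extension $h_1 \cdot y = \tau^2 c_1 g$ carries exactly the $\tau^2$ appearing in the statement.
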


\begin{proof}
Begin by computing that
$h_1 D_{11} = \langle y, h_1^2, h_1^2 h_4 \rangle$,
using May's Convergence Theorem \ref{thm:3-converge}, 
\index{Convergence Theorem!May}
the May differential $d_4(g) = h_1^4 h_4$,
and the relation
$\Delta h_3^2 g = \Delta h_1^2 d_1$.
Also recall the hidden extension
$h_1 \cdot y = \tau^2 c_1 g$, which follows by comparison to the
classical case \cite{Bruner97}.

It follows that
\[
h_1^2 D_{11} = \langle h_1 y, h_1^4, h_4 \rangle =
\langle \tau^2 c_1 g, h_1^4, h_4 \rangle
\]
because there is no indeterminacy.
Finally, Table \ref{tab:Massey} shows that
$\tau^2 c_1 g^2$ equals $\langle \tau^2 c_1 g, h_1^2, h_1^2 h_4 \rangle$.
\end{proof}

\begin{lemma}
\label{lem:h1-C0}
$h_1 \cdot C_0 = 0$.
\end{lemma}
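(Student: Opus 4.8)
The plan is to place $h_1 C_0$ in its tridegree $(s,f,w)$ and then eliminate every possible value of a hidden $h_1$ extension on $C_0$. Since $C_0$ carries no visible $h_1$ extension on the May $E_\infty$-page (such a product would have been recorded already as a non-hidden relation rather than in Table~\ref{tab:May-h1}), any nonzero value of $h_1 C_0$ must be detected by a class of strictly smaller May filtration sitting in the same degree. So the first and possibly decisive step is bookkeeping: read off from the $E_\infty$-page every such lower-filtration class in the degree of $h_1 C_0$. If there are none, the lemma is immediate; the substance of the argument is that whatever finite list of candidates appears can be ruled out.

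To cut the list down I would first pass to the classical picture. Inverting $\tau$ via Proposition~\ref{prop:compare-ext} carries $C_0$ to its classical namesake, and the machine computation of \cite{Bruner97} records $h_1 C_0 = 0$ classically. Hence $h_1 C_0$ is $\tau$-power torsion in $\Ext$, which discards any candidate target surviving $\tau$-inversion. Any surviving $\tau$-torsion candidate $z$ I would then attack by a Massey-product shuffle of the kind used in the preceding lemmas: express a representative of $C_0$ as a threefold bracket $\langle a,b,c\rangle$ drawn from Table~\ref{tab:Massey}, and shuffle to $h_1 C_0 \in \langle h_1, a, b\rangle c$ (or the symmetric $a\langle b,c,h_1\rangle$, whichever subbracket is available with vanishing products), with the goal of forcing the shuffled bracket into a group that does not contain $z$. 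Where no convenient bracket is at hand, I would instead compare to $\Ext_{A(2)}$ following \cite{Isaksen09}, exactly as in Lemma~\ref{lem:h0-u'}: if the image of $C_0$ in $\Ext_{A(2)}$ satisfies $h_1 \cdot C_0 = 0$ and the relevant degree is detected faithfully there, the vanishing transfers back; and Theorem~\ref{thm:Chow-0} can retire any candidate whose Chow degree $s+f-2w$ places it outside the reach of $h_1 C_0$.

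The hard part will not be any single shuffle but the combination of two delicate points. First, I must be certain the initial enumeration of lower-May-filtration classes in the target degree is exhaustive, since every ``only possibility'' argument in this section depends on it, and a single overlooked class would invalidate the conclusion. Second, the shuffle must be an honest equality rather than a mere containment: I have to verify that the relevant subbracket of $\langle a,b,c\rangle$ has no indeterminacy before concluding that $h_1 C_0$ lands where I claim. Proving that a specifically named class $z$ is \emph{not} in the image, as opposed to exhibiting a positive extension, is what makes this vanishing statement more subtle than the nonzero extensions established just above.
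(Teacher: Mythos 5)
Your setup is right as far as it goes: there is no visible $h_1$ extension on $C_0$ in the May $E_\infty$-page, and the bookkeeping you defer does come out cleanly --- the unique lower-filtration class in the degree of $h_1 C_0$ is $h_0 h_5 l$, so the lemma amounts to ruling out the single hidden extension $h_1 \cdot C_0 = h_0 h_5 l$. The gap is that none of the tools you propose can rule it out. Your main weapon, comparison with \cite{Bruner97} after inverting $\tau$, rests on an unverified assumption about what the machine data says; and even granting that $h_1 C_0 = 0$ classically, the conclusion is only that the motivic product $h_1 C_0$ is killed by a power of $\tau$. To upgrade this to vanishing you must also know that the candidate target $h_0 h_5 l$ survives $\tau$-localization, which your argument never certifies. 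Your fallbacks fare no better in this particular degree: the Chow degree of $h_1 C_0$ is $63 + 9 - 2\cdot 34 = 4 \neq 0$, so Theorem \ref{thm:Chow-0} is silent; comparison with $\Ext_{A(2)}$ as in Lemma \ref{lem:h0-u'} is useless because $C_0$ and $h_0 h_5 l$ involve $h_5$ and die in $\Ext_{A(2)}$; and Table \ref{tab:Massey} contains no threefold bracket expression for $C_0$ --- its only entry for $C_0$ is a fourfold bracket whose validity is established by this very lemma, so you cannot quote it as input.

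The missing idea is the fourfold Massey product technology for brackets that are not strictly defined. The paper first shows that $C_0$ belongs to $\langle h_0 h_3^2, h_0, h_1, \tau h_1 g_2 \rangle$ using the fourfold May Convergence Theorem \ref{thm:4-converge} with the May differentials $d_4(\nu) = h_0^2 h_3^2$ and $d_4(x_{47}) = \tau h_1^2 g_2$; this step requires verifying condition (5) of that theorem, namely that the indeterminacy of the subbracket $\langle h_0, h_1, \tau h_1 g_2 \rangle$, which equals $\{0, \tau h_0 h_2 g_2\}$, is generated in May filtration below that of $x_{47}$ --- exactly the kind of bracket with indeterminacy that your threefold, indeterminacy-free shuffles cannot produce. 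Then, since $\langle h_1, h_0 h_3^2, h_0 \rangle$ is zero, Lemma \ref{lem:4fold-shuffle} applies and gives
\[
h_1 \cdot C_0 \in \langle h_1, h_0 h_3^2, h_0, h_1 \rangle \, \tau h_1 g_2.
\]
Finally, by degree reasons the bracket $\langle h_1, h_0 h_3^2, h_0, h_1 \rangle$ is spanned by $f_0$ and $\tau h_1^3 h_4$, both of which are annihilated by $h_1$, so the right-hand side is zero and hence $h_1 \cdot C_0 = 0$. The fact that the paper gives this elaborate argument rather than citing \cite{Bruner97}, as it does for the routine entries of Table \ref{tab:May-h1}, is itself evidence that the classical shortcut you lean on does not settle this case.
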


\begin{proof}
The only other possibility is that $h_1 \cdot C_0$ equals
$h_0 h_5 l$.

First compute that $C_0$ belongs to
$\langle h_0 h_3^2, h_0, h_1, \tau h_1 g_2 \rangle$
using May's Convergence Theorem \ref{thm:4-converge} and 
the May differentials $d_4 ( \nu ) = h_0^2 h_3^2$ and 
$d_4 (x_{47} ) = \tau h_1^2 g_2$.
\index{Convergence Theorem!May}
The subbracket
$\langle h_0 h_3^2, h_0, h_1 \rangle$ is strictly zero.
On the other hand, 
the subbracket
$\langle h_0, h_1, \tau h_1 g_2 \rangle$ equals
$\{ 0, \tau h_0 h_2 g_2 \}$.
Condition (5) of May's Convergence Theorem \ref{thm:4-converge} is satisfied because
the May filtration of $\tau h_2 g_2$ is less than the May filtration of $x_{47}$.

Because $\langle h_1, h_0 h_3^2, h_0 \rangle$ is zero,
the hypothesis of Lemma \ref{lem:4fold-shuffle} is satisfied.
This implies that 
$h_1 \cdot C_0$ belongs to
$\langle h_1, h_0 h_3^2, h_0, h_1 \rangle \tau h_1 g_2$.
For degree reasons, the bracket
$\langle h_1, h_0 h_3^2, h_0, h_1 \rangle$
consists of elements spanned by $f_0$ and $\tau h_1^3 h_4$.
But the products $h_1 \cdot f_0$ and $h_1 \cdot \tau h_1^3 h_4$
are both zero, so
$\langle h_1, h_0 h_3^2, h_0, h_1 \rangle \tau h_1 g_2$ must be zero.
Therefore, $h_1 \cdot C_0$ is zero.
\end{proof}

\begin{lemma}
\label{lem:h1-r1}
$h_1 \cdot r_1 = s_1$.
\end{lemma}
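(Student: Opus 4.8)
The plan is to deduce this hidden $h_1$ extension from the corresponding classical hidden $h_0$ extension $h_0 \cdot r = s$, which was already recorded (motivically) in Section \ref{subsctn:h0-hidden}, by applying the algebraic squaring operation $\Sq^0$ in the sense of \cite{May70}. The guiding observation is that $\Sq^0$ carries $h_0$ to $h_1$ and, more generally, doubles internal degrees; this is exactly the $h_0 \mapsto h_1$ and $r \mapsto r_1$, $s \mapsto s_1$ passage suggested by the notation, so the subscript-$1$ elements should be recognizable as the $\Sq^0$-images of $r$ and $s$ up to a power of $\tau$.

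First I would record from the classical machine data \cite{Bruner04} the values $\Sq^0 r$ and $\Sq^0 s$, which identify these with $r_1$ and $s_1$; balancing weights forces a common factor, say $\Sq^0 r = \tau^a r_1$ and $\Sq^0 s = \tau^a s_1$, where the two powers agree precisely because $s_1$ and $h_1 r_1$ occupy a single $\Ext$ degree (this is what makes the extension a candidate in the first place). Next I would apply $\Sq^0$ to the motivic relation $h_0 \cdot r = s$. Since the total squaring operation is a ring map and $\Sq^i$ vanishes in negative degrees, the bottom operation obeys the strict Cartan formula $\Sq^0(h_0 r) = \Sq^0(h_0)\,\Sq^0(r) = h_1 \cdot \Sq^0 r$. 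Combining these facts yields $\tau^a s_1 = h_1 \cdot \tau^a r_1 = \tau^a \cdot h_1 r_1$, and cancelling the common factor of $\tau^a$ — just as in the final step of the proof of Lemma \ref{lem:tau-k1} — gives $h_1 \cdot r_1 = s_1$.

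The main obstacle is the $\tau$-bookkeeping: I must verify that $\Sq^0 r$ and $\Sq^0 s$ really carry the \emph{same} power of $\tau$, so that the cancellation is legitimate and no $\tau$-torsion intervenes. This is controlled by the weight grading, since the Cartan formula equates the weight of $\Sq^0 s$ with that of $h_1 \cdot \Sq^0 r$, together with the fact that $h_1 r_1$ and $s_1$ share a degree in which no lower-May-filtration ambiguity is present. Should the explicit $\Sq^0$ values prove awkward to extract, a fallback is to imitate Lemma \ref{lem:h1-th1G}: realize $r_1$ as a Massey product of the form $\langle -, -, h_0^2 \rangle$ and shuffle $h_1$ into the bracket to land on $s_1$. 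The Steenrod operation route is, however, both shorter and more transparent, so I would pursue it first.
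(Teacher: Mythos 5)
Your route through the algebraic Steenrod operations is genuinely different from the paper's, but it contains a gap that cannot be repaired: the final cancellation of the power of $\tau$ is illegitimate, and in this particular degree it is fatal. First, the bookkeeping: motivically $\Sq^0$ sends degree $(s,f,w)$ to $(2s+f,f,2w)$, so $\Sq^0 h_0$ lies in degree $(1,1,0)$ and equals $\tau h_1$, not $h_1$; correspondingly $\Sq^0 r$ and $\Sq^0 s$ would have to be $\tau^4 r_1$ and $\tau^5 s_1$, powers differing by one rather than agreeing as you claim. These two slips compensate each other, and the Cartan formula does yield $\tau^5\, h_1 r_1 = \tau^5 s_1$. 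The real problem is that one cannot divide by $\tau$: elements of Chow degree zero are very often $\tau$-torsion (for instance $h_1^4$ is killed by $\tau$), and $s_1$ itself is. Table \ref{tab:May-E4} records the May differential $d_4(x_{68}) = \tau s_1$, and Table \ref{tab:May-tau} (which lists \emph{all} hidden $\tau$ extensions in this range) shows no hidden $\tau$ extension on $s_1$, so $\tau s_1 = 0$ in $\Ext$. Consequently the identity produced by applying $\Sq^0$ reads $0=0$; it is consistent with both $h_1 \cdot r_1 = s_1$ and $h_1 \cdot r_1 = 0$ and proves nothing. The appeal to the final step of Lemma \ref{lem:tau-k1} does not help: there the cancellation is justified by the absence of $\tau$-torsion in the relevant degree, which is precisely what fails here. (A secondary issue is that your identification of the motivic values of $\Sq^0 r$ and $\Sq^0 s$ with $\tau$-multiples of $r_1$ and $s_1$ is itself not established anywhere in the paper.)

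The correct repair is the paper's argument, which is the ``$\tau$-free'' avatar of the transfer you are attempting. Both $r_1$ (degree $(66,6,36)$) and $s_1$ (degree $(67,7,37)$) satisfy $s+f-2w=0$, so they lie in the Chow-degree-zero subalgebra, and Theorem \ref{thm:Chow-0} identifies that subalgebra with $\Ext_{A_{\cl}}$ as a ring, carrying classical $h_0$, $r$, $s$ to motivic $h_1$, $r_1$, $s_1$. The classical relation $h_0 \cdot r = s$ therefore transfers verbatim to $h_1 \cdot r_1 = s_1$, with no Steenrod operations, no powers of $\tau$, and no cancellation. By contrast, $\Sq^0$ doubles the Chow degree, so applied to $r$ and $s$ (which have Chow degree $4 > 0$) it necessarily lands in the range of positive Chow degree, where the classes you need have already been annihilated by $\tau$-torsion; this is a structural obstruction to your approach, not a detail to be checked.
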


\begin{proof}
This follows immediately from Theorem \ref{thm:Chow-0}
and the classical relation $h_0 \cdot r = s$.
\end{proof}

\begin{lemma}
\label{lem:h1-h1^2q1}
$h_1 \cdot h_1^2 q_1 = h_0^4 X_3$.
\end{lemma}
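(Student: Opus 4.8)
The plan is to pin down $h_0^4 X_3$ as the unique possible nonzero value of the extension and then to force the extension to be nonzero by reducing it to the hidden $\tau$ extension of Lemma~\ref{lem:t-B8}. First I would record the degree of $h_1 \cdot h_1^2 q_1$ and inspect the May $E_\infty$-page in that degree through the 70-stem; the expectation is that $h_0^4 X_3$ is the only class of strictly smaller May filtration sitting there, so that $h_1 \cdot h_1^2 q_1$ is \emph{a priori} either zero or $h_0^4 X_3$. Note that Theorem~\ref{thm:Chow-0} is of no help here: since $h_0$ has positive Chow degree, $h_0^4 X_3$ lies in Chow degree at least $4$, so neither side of the asserted relation lives in the Chow-degree-zero subalgebra. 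The entire force of the lemma is therefore to exclude the value zero.

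To do this I would mimic the shuffling arguments of Lemmas~\ref{lem:h1-th1G} and~\ref{lem:h1-h1^2B6}. The idea is to find a Massey product description of $q_1$ (or directly of $h_1^2 q_1$) in Table~\ref{tab:Massey}, established if necessary by May's Convergence Theorem~\ref{thm:3-converge} from a suitable May differential, and then to slide one factor of $h_1$ through the bracket by means of the standard shuffle $h_1\langle a,b,c\rangle = \langle h_1,a,b\rangle c$. Carrying the surviving $h_1$ factors across should identify $h_1^3 q_1$ with $\tau \cdot B_8 d_0$, at which point Lemma~\ref{lem:t-B8}(3) converts this into $h_0^4 X_3$ and completes the proof. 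Throughout, one must verify that the threefold subbrackets involved carry no indeterminacy, and that condition (2) of Theorem~\ref{thm:3-converge} holds, i.e.\ that no later crossing May differential invalidates the bracket computation.

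The main obstacle is producing the correct bracket identity: $h_1^2 q_1$ lies deep in the chart, so locating a clean Massey product for it, and checking the no-crossing-differential hypothesis, is the delicate step. If no direct bracket for $h_1^2 q_1$ is available, the fallback is to multiply a lower, already-established hidden extension by an appropriate power of $h_1$ and track the image, again reducing to the identity $h_0^4 X_3 = \tau B_8 d_0$ together with the relation $\tau h_1^3 = h_0^2 h_2$ used in Lemma~\ref{lem:h0-h0^2B22}. Either way the degree bookkeeping that singles out $h_0^4 X_3$ is routine, and the real work is the indirect bracket manipulation that detects the drop in May filtration.
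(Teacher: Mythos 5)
Your reduction of the problem is sound as far as it goes: degree bookkeeping does single out $h_0^4 X_3$ as the only candidate nonzero value, Theorem \ref{thm:Chow-0} is indeed useless here, and by Lemma \ref{lem:t-B8} the lemma is equivalent to the identity $h_1^3 q_1 = \tau \cdot B_8 d_0$. But the proposal stops exactly where the mathematics starts. Your only mechanism for forcing the product to be nonzero is to ``find a Massey product description of $q_1$ (or of $h_1^2 q_1$)'' and shuffle an $h_1$ through it --- yet no such bracket is exhibited, none exists in Table \ref{tab:Massey}, and you yourself flag producing it (and verifying the crossing-differential hypothesis of Theorem \ref{thm:3-converge}) as the delicate step. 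The fallback is equally unsubstantiated: you never name the ``lower, already-established hidden extension'' to be multiplied up, and no entry of Tables \ref{tab:May-tau}--\ref{tab:May-h2} has $h_1$-multiples landing on $h_1^3 q_1$. Since the reformulation $h_1^3 q_1 = \tau B_8 d_0$ is itself a hidden-extension statement of exactly the same difficulty as the original, what remains after the (easy) uniqueness argument is a plan with a hole where its engine should be.

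The paper closes that hole with a different tool entirely: algebraic Steenrod operations in the sense of \cite{May70}. Applying $\Sq^6$ to the relation $h_2 r = h_1 q$ gives $h_3 r^2 = h_1^2 \Sq^5(q)$, and the machine computation $\Sq^5(q) = h_1 q_1$ of \cite{Bruner04} turns the right-hand side into $h_1^3 q_1$. On the other side, the classical relation $h_3 r = h_0^2 x + \tau h_2^2 n$ yields $h_3 r^2 = h_0^2 r x$ (the $n$-term dies upon multiplying by $r$, using $h_2 r = h_1 q$ and $h_1 h_2 = 0$), and then the hidden extension $h_0 \cdot r = s$ together with the non-hidden relation $s x = h_0^3 X_3$ gives $h_3 r^2 = h_0^4 X_3$, completing the proof. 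If you want to rescue your route, you must actually construct a bracket representing $h_1^2 q_1$ and check May's Convergence Theorem for it; absent that, the argument is not a proof.
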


\begin{proof}
Apply $\Sq^6$ to the relation
$h_2 r = h_1 q$ to obtain that
$h_3 r^2 = h_1^2 \Sq^5 (q)$.
Next, observe that $Sq^5 ( q ) = h_1 q_1$
by comparison to the classical case \cite{Bruner04}.

By comparison to the classical case, there is a relation
$h_3 r = h_0^2 x + \tau h_2^2 n$, so
$h_3 r^2 = h_0^2 r x$.
Finally, use the hidden extension $h_0 \cdot r = s$
and the non-hidden relation $s x = h_0^3 X_3$.
\end{proof}

\subsection{Hidden May $h_2$ extensions}

By exhaustive search, the following results give all of the
hidden $h_2$ extensions.
\index{May spectral sequence!hidden extension!h2@$h_2$}

\begin{prop}
Table \ref{tab:May-h2} lists all of the hidden
$h_2$ extensions through the 70-stem.
\end{prop}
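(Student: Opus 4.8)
The plan is to mirror exactly the arguments already used for the hidden $\tau$, $h_0$, and $h_1$ extensions in the preceding subsections. The proposition asserts two things: that every extension recorded in Table \ref{tab:May-h2} genuinely occurs, and that no hidden $h_2$ extension occurs outside the table. The exhaustiveness claim is settled by a degree-by-degree search: for each element $x$ of the $E_\infty$-page through the 70-stem, one lists the classes of strictly lower May filtration sitting in the degree of $h_2 x$ and checks, against the fully determined $E_\infty$-page, that the only available targets are those already in the table. This is pure bookkeeping of the sort encoded in the tables of Chapter \ref{ch:table}, and carries no real mathematical content.

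For existence, I would dispatch the bulk of the extensions by classical comparison. Any classical hidden $h_2$ extension verified by machine in \cite{Bruner97} forces a motivic hidden extension in the matching degree, with factors of $\tau$ inserted wherever the weights require, exactly as Proposition \ref{prop:may-compare} permits. A further batch of extensions is forced by the hidden $\tau$, $h_0$, and $h_1$ extensions already established in Sections \ref{subsctn:t-hidden} and \ref{subsctn:h0-hidden} and in the $h_1$ subsection: one multiplies a known hidden extension by a suitable class and reindexes the product using a non-hidden relation on the $E_\infty$-page. I would also exploit Theorem \ref{thm:Chow-0} in the Chow-degree-zero part, where multiplication by the motivic $h_2$ corresponds to multiplication by the classical $h_1$; thus classical $h_1$-relations transport directly to motivic $h_2$-statements, in the same spirit as the proof of $h_1 \cdot r_1 = s_1$ in Lemma \ref{lem:h1-r1}.

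The genuinely subtle cases are those that are invisible classically because the target lies in positive Chow degree, and these require individual treatment as separate lemmas stated below. For each such case I would use the established toolkit: express the relevant class as a Massey product (typically via May's Convergence Theorem \ref{thm:3-converge} or \ref{thm:4-converge}), shuffle $h_2$ across the bracket, and identify the resulting bracket from Table \ref{tab:Massey}; or else apply an algebraic Steenrod operation in the sense of \cite{May70} to a known relation and recognize the output using the machine data of \cite{Bruner04}. I expect the main obstacle to be precisely these purely motivic extensions. Unlike the classical-comparison cases, each one demands a tailored argument, and the delicate part is verifying the hypotheses of the convergence theorems for every instance --- in particular ruling out crossing May differentials in condition (2) and controlling the indeterminacies of the threefold subbrackets that feed into the fourfold shuffles of Lemma \ref{lem:4fold-shuffle}.
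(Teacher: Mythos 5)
Your proposal matches the paper's proof essentially exactly: the paper establishes exhaustiveness by inspection, derives most entries of Table \ref{tab:May-h2} either by comparison to the classical computations of \cite{Bruner97} or by rewriting already-established hidden $h_0$ extensions (e.g.\ $h_0 \cdot h_2^2 g = h_2 \cdot h_0 h_2 g$), and handles the remaining subtle cases (Remark \ref{rem:h2-e0r}, Lemmas \ref{lem:h2-h2B2} and \ref{lem:h2-B6}) by exactly the Massey-product shuffling arguments you describe, citing Table \ref{tab:Massey} and May's Convergence Theorem. Your additional mention of Theorem \ref{thm:Chow-0} and algebraic Steenrod operations is consistent with the paper's general toolkit for hidden May extensions, even though neither happens to be needed for the $h_2$ cases.
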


\begin{proof}
Many of the extensions follow by comparison to the classical case
as described in \cite{Bruner97}.
For example, there is a classical hidden extension
$h_2 \cdot Q_2 = h_5 k$.  This implies that the same formula
holds motivically.

Also, many extensions are implied by hidden $h_0$ extensions
that we already established in Section \ref{subsctn:h0-hidden}.
For example, there is a hidden extension
$h_0 \cdot h_2^2 g = h_1^3 h_4 c_0$.
This implies that there is also a hidden extension
$h_2 \cdot h_0 h_2 g = h_1^3 h_4 c_0$.

Proofs for the more subtle cases are given below.
\end{proof}

\begin{remark}
\label{rem:h2-e0r}
We established the extensions
\begin{enumerate}
\item
$h_2 \cdot e_0 r = P h_1^3 h_5 c_0$
\item
$h_2 \cdot l m = h_1^5 c_0 Q_2$
\end{enumerate}
in the proof of Lemma \ref{lem:h0-gr}.
The extension
$h_2 \cdot k m = h_1^6 X_1$ follows from
Lemma \ref{lem:h0-gr} and the relation
$h_2 k = h_0 l$.
\end{remark}

\begin{lemma}
\label{lem:h2-h2B2}
$h_2 \cdot h_2 B_2 = h_1 h_5 c_0 d_0$.
\end{lemma}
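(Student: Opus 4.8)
The plan is to treat this as a hidden $h_2$ extension in the usual way: the product $h_2 \cdot h_2 B_2$ vanishes on the May $E_\infty$-page (that is precisely why the extension is hidden), so I cannot read the answer off directly and must instead produce the class $h_1 h_5 c_0 d_0$ in $\Ext$ by an indirect Massey product argument, exactly as in the proofs of Lemmas \ref{lem:h0-gr} and \ref{lem:h1-th1G}. The first step is to locate a Massey product presentation of one of the factors. Since $c_0$ occurs in the target, I would use the bracket $c_0 = \langle h_1, h_2, h_0 h_2 \rangle$ recorded in the proof of Lemma \ref{lem:h0-gr}, together with a bracket description of $B_2$ drawn from Table \ref{tab:Massey}, arranged so that the $h_2$-multiplications can be slid across a bracket.

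Concretely, I would seek an expression of the form $h_2 B_2 = \langle \alpha, h_2, \beta \rangle$ (or $B_2 = \langle \alpha, \beta, \gamma \rangle$ with $h_2 \alpha = 0$), so that the shuffle $h_2 \langle \alpha, h_2, \beta \rangle = \langle h_2, \alpha, h_2 \rangle \beta$ converts the outer $h_2$-multiplication into a product of a known bracket with $\beta$. The resulting bracket should be evaluated either by May's Convergence Theorem \ref{thm:3-converge} from an explicit May differential, or by comparison to a classical hidden extension from \cite{Bruner97}; the factors $h_5$ and $d_0$ in the target strongly suggest that after the shuffle the computation reduces to the known relation governing $h_5 c_0 d_0$ that already appeared in Lemma \ref{lem:t-B8}. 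Throughout, I would track weights and insert the correct powers of $\tau$, as in Lemma \ref{lem:t^2-h2g^2}.

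An alternative that I would keep in reserve is a Steenrod-operation argument in the style of Lemma \ref{lem:t^2-h2g^2}: apply a suitable $\Sq^i$ to a lower-dimensional relation and use the Cartan formula together with the classically computed values of $\Sq^i$ from \cite{Bruner04} to produce $h_1 h_5 c_0 d_0$ on the nose. The main obstacle in either approach is bookkeeping the indeterminacy: I must check that the relevant threefold brackets are defined and, where possible, have no indeterminacy (as was crucial in Lemmas \ref{lem:h1-th1G} and \ref{lem:h1-h1D11}), and I must verify that the class I produce genuinely has strictly higher May filtration than $h_2^2 B_2$, so that the identity is a bona fide hidden extension equal to the generator $h_1 h_5 c_0 d_0$ rather than some other element sharing its degree. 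Confirming that $h_2 B_2$ is detected on $E_\infty$ and pinning down the correct $E_\infty$-name of the answer is the delicate final check.
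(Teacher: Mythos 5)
Your proposal correctly identifies the general strategy (hidden extension, bracket description of $h_2 B_2$ from Table \ref{tab:Massey}, a shuffle, and an eventual appeal to Lemma \ref{lem:t-B8}), but the concrete mechanism is missing, and the specific shuffle you plan to use is not available. You look for a presentation $h_2 B_2 = \langle \alpha, h_2, \beta \rangle$ with $h_2$ in the middle slot so that $h_2 \langle \alpha, h_2, \beta \rangle = \langle h_2, \alpha, h_2 \rangle \beta$; no such presentation appears in Table \ref{tab:Massey}, and your fallback bracket $c_0 = \langle h_1, h_2, h_0 h_2 \rangle$ plays no role in this degree. The bracket that actually exists is $h_2 B_2 = \langle g_2, h_0^3, h_2^2 \rangle$ (with no indeterminacy, coming from the May differential $d_6(Y) = h_0^3 g_2$). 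The decisive trick — absent from your plan — is to absorb the outer $h_2$ into the third slot, giving $\langle g_2, h_0^3, h_2^3 \rangle$, and then use the relation $h_2^3 = h_1^2 h_3$ in $\Ext$ to rewrite this as $\langle g_2, h_0^3, h_1 \rangle h_1 h_3 = h_1 h_3 \cdot B_1$, using $B_1 = \langle g_2, h_0^3, h_1 \rangle$ from Table \ref{tab:Massey}. This converts the $h_2$-extension problem into an $h_3$-extension problem on $B_1$, which is where your intuition about $h_5 c_0 d_0$ and Lemma \ref{lem:t-B8} becomes relevant.

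Even granting that reduction, your proposal has no argument for the remaining sub-claim $h_3 \cdot B_1 = h_5 c_0 d_0$, which is itself a hidden extension requiring proof. The paper establishes it by a chain of multiplications: the non-hidden relation $B_1 \cdot h_1^2 d_0 = h_1^3 B_{21}$, the hidden $\tau$ extension $\tau \cdot h_1^2 B_{21} = P h_5 c_0 d_0$ from Lemma \ref{lem:t-B8} (so $B_1 \cdot \tau h_1^2 d_0 = P h_1 h_5 c_0 d_0$), and the hidden extension $h_3 \cdot P h_1 = \tau h_1^2 d_0$, which together force $h_3 \cdot B_1 = h_5 c_0 d_0$ as the only possibility. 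Your hunch that Lemma \ref{lem:t-B8} enters is right, but without the intermediate target $h_1 h_3 B_1$ and the multiplication chain pinning down $h_3 \cdot B_1$, the plan cannot be completed as written; the Steenrod-operation alternative you hold in reserve is likewise not developed enough to substitute for it.
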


\begin{proof}
Table \ref{tab:Massey} shows that 
$h_2 B_2 = \langle g_2, h_0^3, h_2^2 \rangle$, with no indeterminacy.
Then $h_2 \cdot h_2 B_2$ equals 
$\langle g_2, h_0^3, h_2^3 \rangle$, 
because there is no indeterminacy.
This bracket equals
$\langle g_2, h_0^3, h_1^2 h_3 \rangle$,
which equals
$\langle g_2, h_0^3, h_1 \rangle h_1 h_3$ 
since there is no indeterminacy.
Table \ref{tab:Massey} also shows that 
the bracket $\langle g_2, h_0^3, h_1 \rangle$
equals $B_1$.

We have now shown that $h_2 \cdot h_2 B_2$ equals
$h_1 h_3 \cdot B_1$.
It remains to show that there is a hidden extension
$h_3 \cdot B_1 = h_5 c_0 d_0$.
First observe that
$B_1 \cdot h_1^2 d_0 = h_1^3 B_{21}$ by a non-hidden relation.
This implies that 
$B_1 \cdot \tau h_1^2 d_0 = P h_1 h_5 c_0 d_0$ by 
Lemma \ref{lem:t-B8}.

Now there is a hidden extension
$h_3 \cdot P h_1 = \tau h_1^2 d_0$, so
$B_1 \cdot h_3 \cdot P h_1 = P h_1 h_5 c_0 d_0$.
The only possibility is that
$h_3 \cdot B_1 = h_5 c_0 d_0$.
\end{proof}

\begin{lemma}
\label{lem:h2-B6}
$h_2 \cdot B_6 = \tau e_1 g$.
\end{lemma}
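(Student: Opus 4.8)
The plan is to follow the template of Lemma \ref{lem:h2-h2B2}: express $B_6$ as a threefold Massey product drawn from Table \ref{tab:Massey}, shuffle the factor $h_2$ into the bracket, and then recognize the resulting bracket as $\tau e_1 g$ by combining a classical relation with a previously established hidden extension. This is the natural starting point because Lemma \ref{lem:h1-h1^2B6} already places $B_6$ in a Massey-product context through the relation $h_1^3 B_6 = \tau h_2^2 d_1 g$, whose proof runs through the bracket $\langle x, h_1^3, h_1 h_4 \rangle$, the evident analogue of $d_1 g = \langle d_1, h_1^3, h_1 h_4 \rangle$. I expect a comparable bracket presentation of $B_6$ itself to be recorded in Table \ref{tab:Massey}.

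First I would record the relevant bracket for $B_6$ and verify, via May's Convergence Theorem \ref{thm:3-converge}, that it is detected by the correct class on the May $E_\infty$-page, checking in particular that no crossing May differential violates condition (2). Next I would shuffle $h_2$ past the bracket, using that the pertinent subbrackets have no indeterminacy, so as to rewrite $h_2 \cdot B_6$ as a Massey product in which $h_2$ has been absorbed into one of the three entries. The shape of the target $\tau e_1 g$ suggests that multiplying $h_2$ into the appropriate entry yields a multiple of $e_1$ or of $d_1 g$, after which a known relation collapses the bracket to $\tau e_1 g$; as in Lemma \ref{lem:h2-h2B2}, an intermediate hidden extension may be required to complete this identification.

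As a consistency check and potential shortcut, I would exploit the already-proved relation $h_1^3 B_6 = \tau h_2^2 d_1 g$: multiplying $h_2 B_6 = \tau e_1 g$ by $h_2^2$ and using the relation $h_2^3 = h_1^2 h_3$ reduces the claim to $h_1^2 h_3 B_6 = \tau h_2^2 e_1 g$, which can then be attacked through the bracket detecting $h_1^2 B_6$, provided one knows the classical relation linking $h_3$-multiples to $d_1$ and $e_1$. The single factor of $\tau$ is forced by weights: since multiplication by $\tau$ raises the Chow degree $s+f-2w$ by two, the relation can hold only in the form $h_2 B_6 = \tau e_1 g$ rather than as a $\tau$-free identity. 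The main obstacle will be pinning down the precise bracket for $B_6$ together with the auxiliary classical relation that forces its value to be exactly $\tau e_1 g$, all while carefully controlling indeterminacy and ruling out crossing differentials in the convergence argument.
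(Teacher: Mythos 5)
Your proposal has a genuine gap at its very first step: there is no Massey product presentation of $B_6$ itself anywhere in the paper, and Table \ref{tab:Massey} does not contain one. What the table does contain are brackets with $B_6$ in the \emph{middle} slot, namely $\langle \tau, B_6, h_1^4 \rangle = h_1^3 Q_2$ and $\langle \tau, B_6, h_1^2 h_3 \rangle = h_2 C_0$; these exist precisely because $\tau \cdot B_6 = 0$ (indeed, that vanishing is the defining property of $B_6$ in Table \ref{tab:Ext-ambiguous}). So the template of Lemma \ref{lem:h2-h2B2} -- where $B_2$ and $h_2 B_2$ genuinely are recorded as brackets -- does not transfer, and your plan to ``shuffle $h_2$ into a bracket expressing $B_6$'' has nothing to act on. Your fallback consistency check is also logically insufficient: from $h_1^3 B_6 = \tau h_2^2 d_1 g$ and $h_2^3 = h_1^2 h_3$ you could at best hope to verify $h_2^3 B_6 = \tau h_2^2 e_1 g$, but one cannot cancel $h_2^2$ in $\Ext$, so this neither implies nor reduces to the claim $h_2 B_6 = \tau e_1 g$ (and it is vacuous if $h_2^2 \cdot \tau e_1 g = 0$).

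The paper's actual argument turns the presence of $B_6$ in the middle slot into a virtue and runs by contradiction through divisibility. Since $\langle \tau, B_6, h_1^2 h_3 \rangle = h_2 C_0$ with no indeterminacy, and $h_1^2 h_3 = h_2^3$, one has $\langle \tau, B_6, h_2^3 \rangle = h_2 C_0$. If $h_2 \cdot B_6$ were zero, then $\langle \tau, B_6, h_2 \rangle$ would be defined and the shuffle $\langle \tau, B_6, h_2 \rangle h_2^2 \subseteq \langle \tau, B_6, h_2^3 \rangle$ would exhibit $h_2 C_0$ as a multiple of $h_2^2$, which it is not. Hence $h_2 \cdot B_6 \neq 0$, and $\tau e_1 g$ is the only possible nonzero value in that degree (your weight observation correctly explains the factor of $\tau$, but it is the non-vanishing, not the weight count, that is the substance of the lemma). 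If you want to salvage your outline, the lesson is that for $\tau$-torsion classes like $B_6$ the usable Massey-product data places the class in the interior of a bracket via $d_2(b_{30} b_{40} h_1(1)) = \tau B_6$, and arguments must be structured around that.
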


\begin{proof}
Table \ref{tab:Massey} shows that 
$\langle \tau, B_6, h_1^2 h_3 \rangle = h_2 C_0$
with no indeterminacy.  
This means that
$\langle \tau, B_6, h_2^3 \rangle = h_2 C_0$.
If $h_2 \cdot B_6$ were zero, then this would imply that
$\langle \tau, B_6, h_2 \rangle h_2^2 = h_2 C_0$.
However, $h_2 C_0$ cannot be divisible by $h_2^2$.
\end{proof}

\subsection{Other hidden May extensions}

We collect here a few miscellaneous extensions that are needed for
various arguments.

\index{May spectral sequence!hidden extension!c0@$c_0$}
\index{May spectral sequence!hidden extension!Ph1@$Ph_1$}

\begin{lemma}
\label{lem:c0-i1}
\mbox{}
\begin{enumerate}
\item
$c_0 \cdot i_1 = h_1^4 D_4$.
\item
$P h_1 \cdot i_1 = h_1^5 Q_2$.
\item
$c_0 \cdot Q_2 = P D_4$.
\end{enumerate}
\end{lemma}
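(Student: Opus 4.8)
The plan is to treat all three relations as hidden products that are invisible after inverting $\tau$: since $h_1^4 = 0$ in $\Ext_{A_{\cl}}$, every right-hand side dies classically, so Proposition \ref{prop:compare-ext} gives no information and the arguments must be genuinely motivic, exploiting the non-nilpotence of $h_1$. I would establish part (1) first by a Massey product computation, and this is the one place where the fivefold May's Convergence Theorem \ref{thm:5-converge} is needed; parts (2) and (3) are then extracted from (1) together with one additional bracket.

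For part (1) I would evaluate $i_1$ as a fivefold Massey product and then shuffle $c_0$ across it. The natural source of such a bracket is the tower of May differentials on the powers of $g$, namely $d_4(g) = h_1^4 h_4$ (Lemma \ref{lem:d4-g}) together with the differentials on $g^2$ and $g^3$ recorded in Example \ref{ex:g^2}; iterating these should present $i_1$ as $\langle a_1, a_2, a_3, a_4, a_5 \rangle$ for suitable permanent May cycles $a_i$, with the hypotheses of Theorem \ref{thm:5-converge} read off from the differential data. With $\langle c_0, a_1, a_2, a_3 \rangle$ strictly zero for degree reasons, the shuffle Lemma \ref{lem:5fold-shuffle} gives $c_0 \cdot i_1 \in \langle c_0, a_1, a_2, a_3, a_4 \rangle\, a_5$, and a further application of Theorem \ref{thm:5-converge} identifies this with $h_1^4 D_4$. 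A backup route, in the spirit of Lemma \ref{lem:tau-k1}, is to transport a lower relation of the same shape by the operation $\Sq^0$ via Milgram's formula \cite{Milgram68}, since $i_1$ and $D_4$ are $\Sq^0$-shifts of image-of-$J$ classes.

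For part (2), $P h_1 \cdot i_1 = h_1^5 Q_2$, I would run the analogous but shorter shuffle, writing $P h_1 = \langle h_3, h_0^4, h_1 \rangle$ and inserting it into the bracket expression for $i_1$; the target is pinned down as $h_1^5 Q_2$ because it is the only class in that degree, and $Q_2$ is independently controlled through the relation $h_2 \cdot Q_2 = h_5 k$. Part (3) then follows by bootstrapping. Computing $c_0 \cdot P h_1 \cdot i_1$ two ways — via part (2) it equals $h_1^5\, c_0 Q_2$, and via part (1) it equals $P h_1 \cdot h_1^4 D_4 = h_1^4\, P h_1 D_4$ — gives
\[
h_1^5\, c_0 Q_2 = h_1^4\, P h_1 D_4 .
\]
A periodicity shuffle identifying $P h_1 \cdot D_4$ with $h_1 \cdot P D_4$ reduces this to $h_1^5 ( c_0 Q_2 + P D_4 ) = 0$, and I would finish by reading off the $\Ext$ chart of \cite{Isaksen14a} that the relevant classes lie in $h_1$-towers, so that multiplication by $h_1^5$ is injective here and $c_0 Q_2 = P D_4$.

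The main obstacle is discharging the hypotheses of Theorem \ref{thm:5-converge} in part (1): condition (4) demands that there be no crossing May differentials over the degrees of all the auxiliary classes $a_{ij}$, and conditions (5)--(7) demand tight control of the indeterminacies of the three triple subbrackets and the fourfold subbracket. In this dense region approaching the $70$-stem, this bookkeeping is the genuine content of the argument. A secondary difficulty is the cancellation in part (3): both the periodicity shuffle $P h_1 D_4 = h_1 P D_4$ and the injectivity of $h_1^5$ on the relevant classes must be justified rather than assumed before the common factor $h_1^5$ can be removed.
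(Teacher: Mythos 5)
Your part (3) is the paper's own argument (compute $P h_1 \cdot c_0 \cdot i_1$ two ways), and your extra care about the final cancellation is reasonable. The gap is in parts (1) and (2), and in part (1) it sits exactly at the step you dismissed as holding ``for degree reasons.'' To shuffle $c_0$ into a fivefold bracket for $i_1$ via Lemma \ref{lem:5fold-shuffle}, you need the fourfold subbracket $\langle c_0, a_1, a_2, a_3 \rangle$ to be strictly zero. But for the only available fivefold presentation, $i_1 \in \langle h_4^2, h_3, h_1^3, h_1 h_3, h_1^2 \rangle$ (itself a byproduct of this lemma's proof), the relevant subbracket $\langle c_0, h_4^2, h_3, h_1^3 \rangle$ equals $\{0, h_1^2 h_5 e_0\}$ --- it has non-trivial indeterminacy, so your hypothesis fails; and for the fourfold presentation $i_1 = \langle h_1^4, h_4, h_1^2 h_4, h_4 \rangle$ the analogous move is not even available, since $c_0 h_1^4 \neq 0$ motivically ($h_1$ acts non-nilpotently on $c_0$), so $\langle c_0, h_1^4, h_4 \rangle$ is undefined. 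This is precisely why the paper runs the computation in the opposite order: it evaluates $\langle c_0, h_4^2, h_3, h_1^3, h_1 h_3 \rangle \ni h_1^2 D_4$ directly with Theorem \ref{thm:5-converge}, absorbing the troublesome indeterminacy through condition (7) (using $h_1^2 h_5 e_0 = \langle h_5 c_0, h_3, h_1^3 \rangle$ and a May filtration comparison), and then shuffles the \emph{last} entry $h_1^2$ --- where the needed subbracket $\langle h_3, h_1^3, h_1 h_3, h_1^2 \rangle$ genuinely is strictly zero --- to conclude that $h_1^4 D_4$ is divisible by $c_0$; exhaustion in that degree then forces $c_0 \cdot i_1 = h_1^4 D_4$, with no product of brackets ever evaluated. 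Your proposed source for the bracket is also off: the differentials on powers of $g$ do not present $i_1$ at all; $d_4(g^3) = h_1^6 i_1$ only exhibits $h_1^6 i_1$ as a boundary (an annihilation statement), the fivefold bracket above is computed from May $d_2$ differentials, and the fourfold one uses $d_4(g)$ together with $d_4(\nu_1) = h_1^2 h_4^2$, where $\nu_1 = h_3 b_{31}$ is not a power of $g$.

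Part (2) has a parallel problem: ``inserting'' $P h_1 = \langle h_3, h_0^4, h_1 \rangle$ into a bracket expression for $i_1$ is not a legitimate juggling move, and pinning the target by claiming $h_1^5 Q_2$ is the only class in its degree is unverified and unnecessary. The paper instead computes a second fourfold bracket, $h_1 Q_2 = \langle h_4, h_1^2 h_4, h_4, P h_1 \rangle$, by Theorem \ref{thm:4-converge} (again with a non-strictly-zero subbracket $\{0, P h_1^3 h_5\}$ handled by condition (5)), and then applies Lemma \ref{lem:4fold-shuffle2} --- whose hypothesis is that $\langle h_4, h_1^2 h_4, h_4 \rangle$ is strictly zero --- to obtain $h_1^4 \langle h_4, h_1^2 h_4, h_4, P h_1 \rangle = \langle h_1^4, h_4, h_1^2 h_4, h_4 \rangle P h_1$, that is, $h_1^5 Q_2 = P h_1 \cdot i_1$. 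Your fallback via $\Sq^0$ and \cite{Milgram68} is likewise unsubstantiated, since no lower-degree relation of this shape is identified whose image would be the desired one. The overall moral is that in this range none of the relevant three- and four-fold subbrackets vanish for degree reasons; controlling their indeterminacies is the actual content of the proof, and your plan assumes that control away at exactly the points where it fails.
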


\begin{proof}
Start by computing that
$h_1^2 D_4$ belongs to $\langle c_0, h_4^2, h_3, h_1^3, h_1 h_3 \rangle$;
we will not need to worry about the indeterminacy.
One can use May's Convergence Theorem \ref{thm:5-converge} 
and the May $d_2$ differential
to make this computation. 
\index{Convergence Theorem!May}
 All of the threefold subbrackets are 
strictly zero, and one of the fourfold subbrackets is also strictly
zero.  However, $\langle c_0, h_4^2, h_3, h_1^3 \rangle$ equals
$\{ 0, h_1^2 h_5 e_0 \}$.
Condition (7) of May's Convergence Theorem \ref{thm:5-converge} is satisfied because
$h_1^2 h_5 e_0 = \langle h_5 c_0, h_3, h_1^3 \rangle$, and
the May filtration of $h_5 c_0$ is less than the May filtration of
$h_1 h_0(1,3)$.

The hypothesis of Lemma \ref{lem:5fold-shuffle} is satisfied
because $\langle h_3, h_1^3, h_1 h_3, h_1^2 \rangle$ is strictly zero.
Therefore, $h_1^4 D_4$ is contained in
$c_0 \langle h_4^2, h_3, h_1^3, h_1 h_3, h_1^2 \rangle$.
The main point is that $h_1^4 D_4$ is divisible by $c_0$.
The only possibility is that $c_0 \cdot i_1 = h_1^4 D_4$.
This establishes the first formula.

For the second formula,
compute that 
$h_1 Q_2$ equals $\langle h_4, h_1^2 h_4, h_4, P h_1 \rangle$ 
with no indeterminacy,
using May's Convergence Theorem \ref{thm:4-converge}
\index{Convergence Theorem!May}
 and the May differentials
$d_4(\nu_1) = h_1^2 h_4^2$ and
 $d_4( \Delta h_1 ) = P h_1 h_4$.
The subbracket $\langle h_1^2 h_4, h_4, P h_1 \rangle$ equals
$\{ 0, P h_1^3 h_5 \}$.
Condition (5) of May's Convergence Theorem \ref{thm:4-converge}
is satisfied because the May filtration of $h_1^2 h_5$ is less than
the May filtration of $\nu_1$.

Next, compute that 
$i_1 = \langle h_1^4, h_4, h_1^2 h_4, h_4 \rangle$ 
with no indeterminacy, using May's Convergence Theorem \ref{thm:4-converge}
and the May differentials
$d_4(g) = h_1^4 h_4$ and
$d_4(\nu_1) = h_1^2 h_4^2$.
The subbracket
$\langle h_1^4, h_4, h_1^2 h_4\rangle$ equals $\{ 0, h_1^6 h_5 \}$.
Condition (5) of May's Convergence Theorem \ref{thm:4-converge}
is satisfied because the May filtration of $h_1^2 h_5$ is less than
the May filtration of $\nu_1$.

The hypothesis of Lemma \ref{lem:4fold-shuffle2} is satisfied because
$\langle h_4, h_1^2 h_4, h_4 \rangle$ is strictly zero.
Therefore,
\[
h_1^4 \langle h_4, h_1^2 h_4, h_4, P h_1 \rangle =
\langle h_1^4, h_4, h_1^2 h_4, h_4 \rangle P h_1,
\]
and $h_1^5 Q_2 = P h_1 \cdot i_1$.
This establishes the second formula.

The third formula now follows easily.
Compute that $P h_1 \cdot c_0 \cdot i_1$ equals 
$P h_1 \cdot h_1^4 D_4$ and also $c_0 \cdot h_1^5 Q_2$.
\end{proof}

\begin{remark}
Part (3) of Lemma \ref{lem:c0-i1}
shows that the multiplicative generator $P D_4$ of the $E_\infty$-page
becomes decomposable in $\Ext$ by a hidden extension.
\end{remark}

\begin{lemma}
\label{lem:c0-B6}
$c_0 \cdot B_6 = h_1^3 B_3$.
\end{lemma}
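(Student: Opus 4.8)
The plan is to argue internally with Massey products and shuffles, in the spirit of Lemma \ref{lem:c0-i1}, rather than by comparison to the classical case. Classical comparison is genuinely unavailable here: since $h_1^4 = 0$ classically while $h_1$ is not nilpotent motivically, the class $h_1^3 B_3$ is invisible after inverting $\tau$, so Proposition \ref{prop:compare-ext} and the machine data of \cite{Bruner97} give no leverage. The first step is to simplify the right-hand side. By part (2) of Lemma \ref{lem:h1-th1G} we have $h_1^2 B_3 = h_5 d_0 e_0$, and hence $h_1^3 B_3 = h_1 h_5 d_0 e_0$. Thus the lemma is equivalent to the assertion that $c_0 \cdot B_6 = h_1 h_5 d_0 e_0$.

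Next I would try to realize the target class as a product that visibly contains $c_0$. Recall from the proof of Lemma \ref{lem:h1-th1G} that $c_0 e_0 = \langle h_2 g, h_0^2, h_1 \rangle$, with no indeterminacy. The goal is to produce a Massey product presentation of $B_6$ (drawn from Table \ref{tab:Massey}) one of whose entries matches an entry of this bracket, so that after multiplying by $c_0$ and applying a shuffle (Lemma \ref{lem:4fold-shuffle} or Lemma \ref{lem:4fold-shuffle2}) the factor $c_0 e_0 = \langle h_2 g, h_0^2, h_1 \rangle$ is exposed and the product collapses onto $h_1 h_5 d_0 e_0$. Establishing the requisite bracket for $B_6$ would go through May's Convergence Theorem \ref{thm:3-converge} (or \ref{thm:4-converge}), reading off the relevant May $d_r$ differential and checking that there are no crossing differentials of higher May filtration, exactly as in the surrounding lemmas.

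Finally, I would finish with the exhaustive argument used throughout Section \ref{sctn:hidden-extn}: the computation shows that $h_1 h_5 d_0 e_0$ is divisible by $c_0$, and an inspection of the $E_\infty$-page in the relevant tridegree (using Tables \ref{tab:Ext-gen} and \ref{tab:May-Einfty-temp}) shows that $c_0 \cdot B_6$ is the only product of $c_0$ with an $E_\infty$-generator lying in that degree. Hence $c_0 \cdot B_6 = h_1^3 B_3$.

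I expect the main obstacle to be the middle step: identifying the correct Massey product presentation of $B_6$ and verifying the hypotheses of the relevant May Convergence Theorem — in particular ruling out crossing May differentials — while keeping track of indeterminacies so that the shuffle is valid. The reduction in the first step and the uniqueness check in the last step should be routine by comparison.
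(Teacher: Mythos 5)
Your reduction $h_1^3 B_3 = h_1 h_5 d_0 e_0$ via Lemma \ref{lem:h1-th1G}(2) is correct, but both steps that would carry the actual content are broken. The central step is a placeholder: you never exhibit a Massey-product presentation of $B_6$, and Table \ref{tab:Massey} contains none. Worse, the one bracket the table records in the degree $(55,7,30)$ of $B_6$, namely $\langle h_5, h_2 g, h_0^2 \rangle$, equals $\tau h_1 G$ --- the \emph{other} basis element in that degree (see Table \ref{tab:Ext-ambiguous}) --- and the shuffle against $c_0 e_0 = \langle h_2 g, h_0^2, h_1 \rangle$ that you want to run is exactly how Lemma \ref{lem:h1-th1G}(1) computes $h_1 \cdot \tau h_1 G = h_5 c_0 e_0$; it gives no purchase on $B_6$ at all. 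This is not an accident: $B_6$ is characterized by $\tau \cdot B_6 = 0$, realized by the May differential $d_2(b_{30} b_{40} h_1(1)) = \tau B_6$, so May's Convergence Theorem naturally produces brackets having $B_6$ as an \emph{entry}, of the form $\langle \tau, B_6, x \rangle$, rather than presentations of $B_6$ as a bracket value. Your closing uniqueness step inherits the same blind spot: divisibility of $h_1 h_5 d_0 e_0$ by $c_0$ only yields some $y$ in degree $(55,7,30)$ with $c_0 y = h_1 h_5 d_0 e_0$, and $y$ could a priori be $\tau h_1 G$ or $B_6 + \tau h_1 G$ rather than $B_6$; restricting attention to products of $c_0$ with generators does not exclude this, since $\tau h_1 G = h_1 \cdot \tau G$ is itself a product of honest generators. (Your opening remark is also a non sequitur: $h_1^3 B_3$ involves only a third power of $h_1$, and $h_1^3 \neq 0$ classically, so invisibility after inverting $\tau$ does not follow from $h_1^4 = 0$.)

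The paper's proof uses the opposite mechanism, and it is the one to internalize. From Table \ref{tab:Massey}, $h_1^3 Q_2 = \langle \tau, B_6, h_1^4 \rangle$ with no indeterminacy; multiplying the middle entry by $c_0$ gives $h_1^3 c_0 Q_2 = \langle \tau, c_0 B_6, h_1^4 \rangle$, again with no indeterminacy. By Lemma \ref{lem:c0-i1}(3) we have $c_0 Q_2 = P D_4$, so $h_1^3 c_0 Q_2 \neq 0$; a bracket with no indeterminacy whose middle entry vanished would be zero, so $c_0 B_6 \neq 0$, and degree considerations leave $h_1^3 B_3$ as the only possible value. Note that this argument never has to disentangle $B_6$ from $\tau h_1 G$ after the fact, because $B_6$ itself sits inside the bracket, and it trades the presentation of $B_6$ you could not produce for the already-established hidden extension $c_0 Q_2 = P D_4$ --- precisely the kind of input your sketch was missing.
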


\index{May spectral sequence!hidden extension!c0@$c_0$}

\begin{proof}
Table \ref{tab:Massey} shows that 
$h_1^3 Q_2 = \langle \tau, B_6, h_1^4 \rangle$.
This bracket has no indeterminacy.
It follows that
$h_1^3 c_0 Q_2 = \langle \tau, B_6 \cdot c_0, h_1^4 \rangle$,
since this bracket also has no indeterminacy.

The element $h_1^3 c_0 Q_2$ is non-zero by part (3) of 
Lemma \ref{lem:c0-i1}.
Therefore, 
$\langle h_1^4, B_6 \cdot c_0, \tau \rangle$ is not zero,
so $B_6 \cdot c_0$ is non-zero.
The only possibility is that it equals $h_1^3 B_3$.
\end{proof}

\begin{lemma}
\label{lem:c0-G3}
$c_0 \cdot G_3 = P h_1^3 h_5 e_0$.
\end{lemma}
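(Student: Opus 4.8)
The plan is to prove this hidden $c_0$ extension by the Massey-product-shuffle technique of Lemma \ref{lem:c0-B6}, reducing the multiplication $c_0 \cdot G_3$ to a bracket in which one entry already carries a known $c_0$-extension from the preceding lemmas. First I would consult Table \ref{tab:Massey} for a bracket presentation of $G_3$ (or of a scalar multiple such as $h_1^k G_3$) whose entries involve $i_1$, $Q_2$, or the element $G$ of Lemma \ref{lem:h1-th1G}, since these supply essentially all of the nontrivial $c_0$-multiplications available in this range: namely $c_0 \cdot i_1 = h_1^4 D_4$ and $c_0 \cdot Q_2 = P D_4$ from Lemma \ref{lem:c0-i1}, together with the relations producing $h_5 c_0 e_0$ and $P h_5 c_0 e_0$ in Lemma \ref{lem:h1-th1G}.

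The central step is then to move $c_0$ inside the bracket via a shuffle of the form $c_0 \langle \alpha, \beta, \gamma \rangle \subseteq \langle c_0 \alpha, \beta, \gamma \rangle$, substitute the known value of $c_0 \alpha$, and re-evaluate the resulting bracket, either by a second lookup in Table \ref{tab:Massey} or by a short application of May's Convergence Theorem \ref{thm:3-converge} using the relevant $d_4$ differential. If the pertinent bracket is fourfold, I would instead invoke Lemma \ref{lem:4fold-shuffle} or Lemma \ref{lem:4fold-shuffle2} to justify sliding $c_0$ across. The computation should collapse to $P h_1^3 h_5 e_0$.

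As in Lemma \ref{lem:c0-B6}, the decisive point is nonvanishing. Once $c_0 \cdot G_3$ is shown to be nonzero (most plausibly by tracing it back to $c_0 \cdot Q_2 = P D_4$, which is nonzero by Lemma \ref{lem:c0-i1}), a degree count on the $E_\infty$-page shows that $P h_1^3 h_5 e_0$ is the only element $c_0 \cdot G_3$ can equal, so the extension is forced. If no serviceable bracket for $G_3$ appears in the tables, the fallback is to run the argument in reverse: exhibit $P h_1^3 h_5 e_0$ as $c_0$ times a Massey product by shuffling $c_0$ out, thereby proving $c_0$-divisibility, and then check that $G_3$ is the unique multiplicative generator in its degree that maps onto the target.

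The hard part will be the indeterminacy bookkeeping: verifying that whichever subbracket is used has small enough indeterminacy for the shuffle to be valid and for the substitution of $c_0 \alpha$ to be unambiguous, precisely the delicate step that dominates the analogous proofs of Lemmas \ref{lem:c0-i1} and \ref{lem:c0-B6}. A secondary concern is confirming the nonvanishing of the final element, since the whole argument degenerates if the relevant product happens to vanish in $\Ext$.
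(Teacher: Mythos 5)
Your plan is viable, but it is a genuinely different route from the one the paper takes for this lemma. The paper's proof is purely multiplicative and avoids Massey products altogether: it starts from the (non-hidden) relation $h_1^2 G_3 = h_2 g r$, multiplies by $d_0$, identifies $h_2 \cdot d_0 g r = h_1^6 X_1$ via the hidden $h_2$ extension in Table \ref{tab:May-h2}, and then uses the relation $c_0^2 = h_1^2 d_0$ to conclude that $c_0^2 G_3 = h_1^6 X_1 \neq 0$; hence $c_0 G_3 \neq 0$, and a degree count forces $c_0 G_3 = P h_1^3 h_5 e_0$. Your shuffle strategy, by contrast, is essentially what the paper does elsewhere, in the proof of Lemma \ref{lem:Ctau-d3-_h1G3}: there one takes the bracket $h_1 G_3 = \langle h_3, h_1^3, P h_1^2 h_5 \rangle$ from Table \ref{tab:Massey}, shuffles to get $c_0 \cdot h_1 G_3 = \langle c_0, h_3, h_1^3 \rangle P h_1^2 h_5$, and evaluates $\langle c_0, h_3, h_1^3 \rangle = h_1^2 e_0$ (also in Table \ref{tab:Massey}) to obtain $c_0 \cdot h_1 G_3 = P h_1^4 h_5 e_0$. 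Two caveats about your version: first, the ingredients you guess at ($c_0 \cdot i_1$, $c_0 \cdot Q_2$, and the $h_5 c_0 e_0$ relations) are not the ones that make the shuffle work — the correct ones are the two brackets just cited; second, since the tables only give brackets for $h_1 G_3$ rather than $G_3$, your shuffle determines $c_0 G_3$ only up to $h_1$-torsion, so you still end with exactly the same degree-count step as the paper, now needing that no $h_1$-torsion element can account for the difference. What each approach buys: the paper's argument sidesteps all indeterminacy bookkeeping (the issue you rightly flag as the hard part), while yours computes the product more directly and transfers immediately to the cofiber-of-$\tau$ setting where the paper in fact needs it.
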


\begin{proof}
Start with the relation $h_1^2 G_3 = h_2 g r$.  This implies
that $h_1^2 d_0 G_3 = h_2 d_0 g r$, which equals $h_1^6 X_1$ 
by Table \ref{tab:May-h2}.
Therefore, $c_0^2 G_3$ is non-zero, which means that $c_0 G_3$ is also.
The only possibility is that $c_0 G_3$ equals $P h_1^3 h_5 e_0$.
\end{proof}

\begin{lemma}
\label{lem:g2'}
$h_0^2 B_4 + \tau h_1 B_{21} = g_2'$.
\end{lemma}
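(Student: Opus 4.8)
The plan is to establish this hidden relation by separating its two summands using the signature relations $h_0 h_1 = 0$ and $\tau h_1^3 = h_0^2 h_2$, which trade products by $h_0^2$ for products by $\tau h_1$ and let me test the proposed decomposition of $g_2'$ against classes that are visible on the $E_\infty$-page. First I would fix the trigraded degree $(s,f,w)$ of $g_2'$ and confirm that $h_0^2 B_4$ and $\tau h_1 B_{21}$ both lie in that degree, with both products carrying strictly higher May filtration than $g_2'$; this is exactly what makes the asserted relation hidden, and it records $g_2'$ as a multiplicative generator of the $E_\infty$-page that becomes decomposable in $\Ext$. A degree count in this trigraded degree then limits which $E_\infty$-classes could appear in an expression for $g_2'$, so that it suffices to determine the coefficients of $h_0^2 B_4$ and $\tau h_1 B_{21}$.

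To pin down the coefficient of $\tau h_1 B_{21}$, I would multiply the proposed relation by $h_1$. Since $h_0 h_1 = 0$, the term $h_0^2 B_4$ is annihilated, so the computation reduces to verifying $h_1 \cdot g_2' = \tau h_1^2 B_{21}$; this product lives in a higher stem where it is either visible on the $E_\infty$-page or reachable from an extension already established in this section. To pin down the coefficient of $h_0^2 B_4$, I would multiply by $h_2$ and apply $h_0^2 h_2 = \tau h_1^3$, reducing the claim to $h_2 \cdot g_2' = \tau h_1 (h_1^2 B_4 + h_2 B_{21})$, which can again be checked against known products. Where an auxiliary relation is itself hidden, I expect to produce it either as a Massey product via May's Convergence Theorem \ref{thm:3-converge} followed by a shuffle, or by applying a squaring operation to a lower-dimensional relation, in the style of the other arguments of this section.

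The main obstacle will be controlling error terms of intermediate May filtration. Because $g_2'$ sits strictly below both $h_0^2 B_4$ and $\tau h_1 B_{21}$ in the May filtration, a priori an expression for $g_2'$ in $\Ext$ could involve further decomposables lying between them, and these must be excluded by the degree count above. The delicate point is certifying that no such intermediate class exists and, whenever a Massey product enters the argument, verifying the crossing-differential hypothesis (condition (2)) of May's Convergence Theorem \ref{thm:3-converge} so that the bracket computed on the $E_r$-page genuinely detects the intended class in $\Ext$.
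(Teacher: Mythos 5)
The paper's proof is a two-line argument that takes a completely different route from yours: it observes that $h_0^2 B_4 + \tau h_1 B_{21} = 0$ holds on the May $E_\infty$-page, and then imports the hidden relation itself from the analogous classical relation in Bruner's machine computation \cite{Bruner97}, the factor of $\tau$ being forced by the weight; no internal motivic argument is attempted. Your proposal tries to prove the relation internally, and its central step is circular. Multiplying the desired relation by $h_1$ does, as you say, reduce matters to the identity $h_1 \cdot g_2' = \tau h_1^2 B_{21}$, but that identity is not an independently available fact. Since $g_2'$ is represented by $P \Delta_1 h_0^3$ and $h_0 h_1 = 0$, the product $h_1 \cdot g_2'$ vanishes on the May $E_\infty$-page, so $h_1 \cdot g_2' = \tau h_1^2 B_{21}$ is itself a hidden $h_1$ extension: it appears nowhere in Table \ref{tab:May-h1}, and within the paper it is a \emph{consequence} of Lemma \ref{lem:g2'} together with Lemma \ref{lem:t-B8}, not an input to it. In particular your claim that this product "is either visible on the $E_\infty$-page or reachable from an extension already established in this section" is false on the first alternative and unsubstantiated on the second. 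The same objection applies to the $h_2$ step: $h_2 \cdot g_2'$ also dies on $E_\infty$ and is not computed anywhere independently of the lemma.

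Moreover, even if both auxiliary products were known, agreement of the $h_1$- and $h_2$-multiples would only show that $(h_0^2 B_4 + \tau h_1 B_{21}) + g_2'$ lies in $\ker(h_1) \cap \ker(h_2)$, so the entire burden falls on the enumeration of classes in degree $(s,f,w) = (60,11,32)$ of May filtration at most $24$, which you flag as "the main obstacle" but never carry out. Your $h_1$ idea can be salvaged in a cleaner form: since $h_0 h_1 = 0$ holds exactly in $\Ext$, the element $S = h_0^2 B_4 + \tau h_1 B_{21}$ satisfies $h_1 S = \tau \cdot h_1^2 B_{21}$, which equals $P h_5 c_0 d_0 \neq 0$ by Lemma \ref{lem:t-B8}; hence $S \neq 0$, and if the chart shows that $g_2'$ is the only nonzero class of May filtration below $25$ in this degree, the lemma follows. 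Note, however, that Lemma \ref{lem:t-B8} itself rests on classical hidden extensions from \cite{Bruner97}, so even this repaired argument does not avoid the classical machine-computation input that the paper's proof invokes directly, and it still requires the unperformed chart enumeration.
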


\begin{proof}
On the $E_\infty$-page, there is a relation
$h_0^2 B_4 + \tau h_1 B_{21} = 0$.
The hidden extension follows from 
the analogous classical hidden relation \cite{Bruner97}.
\end{proof}

\index{May spectral sequence!hidden extension!compound}

\begin{remark}
Through the 70-stem,
Lemma \ref{lem:g2'} is the only example
of a hidden relation of the form
$h_0 \cdot x + h_1 \cdot y$,
$h_0 \cdot x + h_2 \cdot y$, or
$h_1 \cdot x + h_2 \cdot y$.
\end{remark}


\chapter{Differentials in the Adams spectral sequence}
\label{ch:Adams-diff}

\setcounter{thm}{0}

The main goal of this chapter is to compute the differentials
in the motivic Adams spectral sequence.
\index{Adams spectral sequence!differential}
We will rely heavily on the computation of the Adams $E_2$-page carried
out in Chapter \ref{ch:May}.
We will borrow results from the classical Adams spectral sequence where
necessary. 
Tables \ref{tab:diff-refs} and \ref{tab:bracket-refs}
summarize previously established results about the classical Adams spectral
sequence, including differentials and Toda brackets.
The tables give specific references to proofs.  The main sources are
\cite{BJM84}, \cite{BMT70}, \cite{Bruner84}, \cite{MT67},
\cite{Tangora70b}, and \cite{Toda62}.

The Adams charts in \cite{Isaksen14a} are essential companions to this chapter.

\subsection*{The motivic Adams spectral sequence}

We refer to \cite{DI10}, \cite{HKO11}, and \cite{Morel99} for background on 
the construction and convergence of the 
motivic Adams spectral sequence over $\C$.
\index{Adams spectral sequence!convergence}
In this section, we review just enough to proceed with
our computations in later sections.

\index{Adams spectral sequence}
\index{Adams spectral sequence!differential}
\begin{thm}[ \cite{DI10} \cite{HKO11} \cite{Morel99} ]
The motivic Adams spectral sequence takes the form
\[
E_2^{s,f,w} = \Ext_A^{s,f,w} ( \M_2, \M_2 ) 
\Rightarrow
\pi_{s,w},
\]
with differentials of the form
$d_r: E_r^{s,f,w} \map E_r^{s-1,f+r,w}$.
\end{thm}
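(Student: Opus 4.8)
The plan is to realize the spectral sequence as the one associated to the canonical Adams resolution based on the motivic mod $2$ Eilenberg--Mac Lane spectrum $H\F_2$, and then to identify its $E_2$-page and establish convergence, invoking the cited works of \cite{Morel99} and \cite{HKO11} for the technical input. First I would form the Adams tower: let $\ol{H}$ be the fiber of the unit map $S^{0,0} \map H\F_2$, set $X_0 = S^{0,0}$, and inductively let $X_{s+1}$ be the fiber of $X_s \map X_s \wedge H\F_2$, so that $X_s \iso \ol{H}^{\wedge s}$ and there are cofiber sequences $X_{s+1} \map X_s \map X_s \wedge H\F_2$. Applying $\pi_{*,*}(-)$ to this tower produces an exact couple, and hence a spectral sequence with $E_1$-term $\pi_{*,*}(\ol{H}^{\wedge s} \wedge H\F_2)$.

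Next I would identify the $E_1$- and $E_2$-pages. The essential input is Voevodsky's computation of the dual motivic Steenrod algebra $A_{*,*}$ recalled earlier, together with the fact that over $\C$ the spectrum $H\F_2$ is cellular of finite type and $H\F_2 \wedge H\F_2$ splits as a bigraded wedge of suspensions of $H\F_2$. Granting this splitting, $\pi_{*,*}(\ol{H}^{\wedge s} \wedge H\F_2)$ is exactly the $s$-cochains of the cobar complex on $A_{*,*}$, the $d_1$-differential is the cobar differential, and, since $H\F_2{}_{*,*}(S^{0,0}) = \M_2$, taking homology yields
\[
E_2^{s,f,w} = \Ext_A^{s,f,w}(\M_2, \M_2).
\]
The tridegree of the differentials is read directly off the exact couple: each stage of the tower raises the Adams (resolution) filtration $f$ by $r$ and lowers the stem $s$ by one, while the weight $w$ is preserved throughout, giving $d_r: E_r^{s,f,w} \map E_r^{s-1,f+r,w}$.

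Finally I would address convergence, showing that the homotopy inverse limit of the tower recovers the $2$-complete motivic sphere, so that the abutment is the group $\pi_{s,w}$ as defined in Section~\ref{sctn:notation}. Over $\C$ this rests on the connectivity and finite-type properties established by Morel and by Hu--Kriz--Ormsby; the extra weight grading causes no difficulty because it is preserved at every stage and the relevant $\lim^1$-terms vanish in the finite-type cellular setting.

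The hard part is not the formal construction but the convergence statement, and in particular the verification that the abutment is \emph{exactly} the $2$-completed bigraded homotopy groups. Classically this is routine, but motivically one must control the interaction of the bigrading with $2$-completion and rule out exotic $\lim^1$ phenomena in the tower; this is precisely the content of the cited results \cite{Morel99} \cite{HKO11}, which I would invoke rather than reprove. Once those are in hand, together with Voevodsky's description of $A$, the identification of the $E_2$-page and the differential pattern is purely formal.
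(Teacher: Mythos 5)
Your proposal is correct, and it is essentially the paper's own approach: the paper offers no internal proof of this theorem but imports it wholesale from \cite{DI10}, \cite{HKO11}, and \cite{Morel99}, and your sketch (the canonical $H\F_2$-based Adams tower, identification of the $E_2$-page with cobar homology via the splitting of $H\F_2 \wedge H\F_2$ afforded by the freeness of $A_{*,*}$ over $\M_2$, and convergence to the $2$-complete groups deferred to Hu--Kriz--Ormsby and Morel) is precisely the construction carried out in those references. Since you invoke the same sources for the genuinely hard convergence step rather than reproving it, there is nothing to add.
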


We will need to compare the
motivic Adams spectral sequence to the classical
Adams spectral sequence.  
The following proposition is implicit in \cite{DI10}*{Sections 3.2 and 3.4}.

\begin{prop}
\label{prop:compare}
After inverting $\tau$,
the motivic Adams spectral sequence becomes isomorphic to
the classical Adams spectral sequence tensored over $\F_2$
with $\M_2[\tau^{-1}]$.
\end{prop}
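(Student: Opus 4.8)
The plan is to follow the same strategy used for the motivic May spectral sequence in Proposition \ref{prop:may-compare}, with the guiding principle that inverting $\tau$ is an exact operation that can be carried out levelwise on every page. The input isomorphism of Steenrod algebras $A[\tau^{-1}] \iso A_{\cl}\otimes_{\F_2}\F_2[\tau,\tau^{-1}]$ that drives the May comparison will again be the essential algebraic fact; the work is to transport it through the construction of the Adams spectral sequence rather than the May spectral sequence.

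First I would make the $\M_2$-module structure explicit. The element $\tau$ is a map of motivic spectra $\tau\colon S^{0,-1}\map S^{0,0}$, so it induces a self-map of the motivic Adams tower and hence an endomorphism of the associated spectral sequence of degree $(0,0,-1)$ that commutes with every $d_r$. Thus the entire motivic Adams spectral sequence is a spectral sequence of $\M_2 = \F_2[\tau]$-modules with $\F_2[\tau]$-linear differentials, and inverting $\tau$ amounts to taking the colimit over iterated multiplication by $\tau$. Because localization at $\{1,\tau,\tau^2,\ldots\}$ is exact, it commutes with the passage to homology; applying $-\otimes_{\M_2}\M_2[\tau^{-1}]$ to each page therefore yields a spectral sequence whose $E_r$-page is $E_r(S^{0,0})\otimes_{\M_2}\M_2[\tau^{-1}]$ with the localized differential.

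Next I would identify this localized spectral sequence with the classical one tensored with $\M_2[\tau^{-1}]$. On the $E_2$-page this is precisely Proposition \ref{prop:compare-ext}. To promote the identification to all higher pages and to match differentials, I would argue at the level of the underlying resolution rather than page by page. Following \cite{DI10}*{Sections 3.2 and 3.4}, the motivic Adams spectral sequence is the spectral sequence of the filtered cobar complex of the dual Steenrod algebra $A_{*,*}$, which carries the Adams filtration. Inverting $\tau$ applies an exact functor to this filtered differential graded object, and using $A_{*,*}[\tau^{-1}]\iso A_{\cl}{}_{*}\otimes_{\F_2}\F_2[\tau,\tau^{-1}]$ one identifies the localized cobar complex, as a filtered complex, with the classical cobar complex tensored over $\F_2$ with $\F_2[\tau,\tau^{-1}]$. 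An isomorphism of filtered complexes induces an isomorphism of the associated spectral sequences compatibly on every page, so in particular it carries the localized motivic $d_r$ to the classical $d_r$ tensored up.

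The hard part will be this last step: one must check that $\tau$-inversion commutes with the \emph{entire} construction of the Adams resolution and its filtration, not merely with the cohomology that computes $E_2$. Concretely, one needs that the localization isomorphism preserves Adams filtration degree and that forming the colimit over multiplication by $\tau$ neither destroys nor creates $\tau$-divisible phenomena, so that localizing the exact couple of the motivic tower produces the exact couple of the classical tower tensored with $\M_2[\tau^{-1}]$. This is exactly where the exactness of localization—equivalently, the flatness of $\M_2[\tau^{-1}]$ over $\M_2$—does the essential work; once this compatibility at the level of filtered complexes (or exact couples) is secured, the agreement of all higher differentials is automatic.
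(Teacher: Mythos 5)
There is a genuine gap, and it sits exactly where you placed the weight of the argument. Your first step is fine: since $\tau$ is detected by a permanent cycle in filtration zero and the spectral sequence is multiplicative, every page is an $\F_2[\tau]$-module with $\F_2[\tau]$-linear differentials, and exactness of localization gives $E_{r+1}[\tau^{-1}] \iso H(E_r[\tau^{-1}], d_r)$. But your identification step fails: the motivic Adams spectral sequence is \emph{not} the spectral sequence of the filtered cobar complex of $A_{*,*}$. The cobar complex is a cochain complex whose cohomological degree \emph{is} the Adams filtration; the spectral sequence of a complex filtered by its own degree has $d_1$ equal to the cobar differential and then collapses, recovering only $\Ext = E_2$. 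The higher Adams differentials are homotopy-theoretic data attached to the tower of spectra (the Adams resolution), not to any algebraic filtered complex — if they were determined by the cobar complex, differentials like $d_2(h_4) = h_0 h_3^2$ would be algebraically forced, which they are not. Consequently, an isomorphism of localized cobar complexes only reproduces Proposition \ref{prop:compare-ext}, i.e.\ the isomorphism of $E_2$-pages, and an abstract isomorphism of $E_2$-pages does not carry the motivic $d_r$ to the classical $d_r$ for $r \geq 2$.

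What is actually needed is a map of spectral sequences defined before localization, and this is supplied by input your argument never invokes: the Betti realization functor. This is the content of the citation the paper gives in lieu of a proof (the result is stated as implicit in \cite{DI10}*{Sections 3.2 and 3.4}). Realization sends the motivic Adams resolution of $S^{0,0}$ to a classical Adams resolution of $S^0$ (it takes motivic $H\F_2$ to classical $H\F_2$ and preserves cofiber sequences and wedges), hence induces a map of towers and therefore a map of spectral sequences compatible with all differentials; on homotopy it sends $\tau$ to $1$. One then checks, using the algebraic comparison of $\Ext$ groups, that this map becomes an isomorphism on $E_2$-pages after inverting $\tau$, and a morphism of spectral sequences that is an isomorphism on $E_2$ is an isomorphism on every subsequent page. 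Your plan could be repaired by replacing ``filtered cobar complex'' with ``Adams tower of spectra'' and localizing that, but then identifying the $\tau$-inverted motivic tower with the classical one is precisely the homotopy-theoretic comparison that flatness of $\M_2[\tau^{-1}]$ cannot provide; some bridge between the motivic and classical stable categories, such as realization, is indispensable.
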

\index{Adams spectral sequence!classical}
\index{tau@$\tau$!localization}

In particular, Proposition \ref{prop:compare} implies that
motivic differentials and motivic hidden extensions must be
compatible with their classical analogues.  
This comparison will be a key tool.

\subsection*{Outline}

 A critical ingredient
is Moss's Convergence Theorem \cite{Moss70},
\index{Convergence Theorem!Moss}
which allows the computation of Toda brackets in $\pi_{*,*}$
via the differentials in the Adams spectral sequence.
We will thoroughly review this result in 
Section \ref{sctn:Adams-bracket}.
\index{Toda bracket}

Section \ref{sctn:differentials} describes the main points in
establishing the Adams differentials.  We postpone the numerous technical
lemmas to Section \ref{sctn:d-lemmas}.

Chapter \ref{ch:table} contains a series of tables that summarize the essential
computational facts in a concise form.
Tables \ref{tab:Ext-gen}, \ref{tab:Adams-d3}, \ref{tab:Adams-d4},
and \ref{tab:Adams-d5} give the values of the motivic Adams differentials.
The fourth columns of these tables refer to one argument that
establishes each differential, which is not necessarily the first known proof.
This takes one of the following forms:
\begin{enumerate}
\item
An explicit proof given elsewhere in this manuscript.
\item
``image of $J$" means that the differential is easily
deducible from the structure of the image of $J$ \cite{Adams66}.
index{J@$J$!image of}
\item
``$\tmf$ " means that the differential can be detected
in the Adams spectral sequence for $\tmf$ \cite{Henriques07}.
\index{topological modular forms}
\item
``Table \ref{tab:diff-refs}" means that the differential is easily deduced
from the analogous classical result.
\item
``\cite{BMMS86}*{VI.1}" means that the differential can be computed
using the relationship between algebraic Steenrod
operations and Adams differentials.
\index{Steenrod operation!algebraic}
\end{enumerate}

Table \ref{tab:Toda} summarizes some calculations of
Toda brackets.
\index{Toda bracket}
In all cases, we have been careful to describe the indeterminacies
accurately.
\index{Toda bracket!indeterminacy}
The fifth column refers to an argument for establishing
this differential, in one of the following forms:
\begin{enumerate}
\item
An explicit proof given elsewhere in this manuscript. 
\item
A Massey product (which appears in Table \ref{tab:Massey})
implies the Toda bracket via Moss's Convergence Theorem \ref{thm:Moss} with $r=2$.
\index{Massey product}
\index{Convergence Theorem!Moss}
\item
An Adams differential implies the Toda bracket via Moss's Convergence Theorem
\ref{thm:Moss} with $r >2$.
\end{enumerate}
The last column of Table \ref{tab:Toda} lists the specific results
that rely on each Toda bracket.


\section{Toda brackets in the motivic Adams spectral sequence}
\label{sctn:Adams-bracket}

We will frequently compute Toda brackets in the motivic stable homotopy
groups in order to resolve
hidden extensions and to determine Adams differentials. 
\index{Toda bracket}
The absolutely essential tool for
computing such Toda brackets
is Moss's Convergence Theorem \cite{Moss70}*{Theorem 1.2}.
\index{Convergence Theorem!Moss}
The point of this theorem is that under certain hypotheses,
Toda brackets can be computed via Massey products in the $E_r$-page of the motivic
Adams spectral sequence. 
\index{Massey product}
For the reader's convenience, 
we will state the Convergence Theorem in the specific forms that we will use.

The $E_2$-page of the motivic Adams spectral sequence possesses Massey products,
since it equals the cohomology of the motivic Steenrod algebra.
Moreover, since $(E_r, d_r)$ is a differential graded algebra for $r \geq 2$,
the $E_{r+1}$-page of the motivic Adams spectral sequence also possesses
Massey products that are computed with the Adams $d_r$ differential.
When necessary for clarity,
we will use the notation $\langle a_0, \ldots, a_n \rangle_{E_{r+1}}$
to refer to Massey products in the $E_{r+1}$-page in this sense.
Similarly, $\langle a_0, \ldots, a_n \rangle_{E_2}$ indicates
a Massey product in $\Ext$.

\index{Convergence Theorem!Moss}
\begin{thm}[Moss's Convergence Theorem]
\label{thm:Moss}
Let $\alpha_0$, $\alpha_1$, and $\alpha_2$ be elements of the motivic
stable homotopy groups such that
the Toda bracket $\langle \alpha_0, \alpha_1, \alpha_2 \rangle$
is defined.
Let $a_i$ be a permanent cycle on the Adams $E_r$-page that detects
$\alpha_i$ for each $i$.
Suppose further that:
\begin{enumerate}
\item
the Massey product $\langle a_0, a_1, a_2 \rangle_{E_{r}}$ is defined
(in $\Ext$ when $r=2$, or 
using the Adams $d_{r-1}$ differential when $r \geq 3$).
\item
if $(s,f,w)$ is the degree of either $a_0 a_1$ or $a_1 a_2$;
$f' < f - r + 1$; $f'' > f$; and $t = f''-f'$; then every 
Adams differential 
$d_{t} : E_{t}^{(s+1,f',w)} \map E_{t}^{(s,f'',w)}$
is zero.
\end{enumerate}
Then
$\langle a_0, a_1, a_2 \rangle_{E_{r}}$ contains a permanent cycle
that detects an element of the Toda bracket
$\langle \alpha_0, \alpha_1, \alpha_2 \rangle$.
\end{thm}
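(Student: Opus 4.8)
The plan is to transport Moss's original argument \cite{Moss70} to the motivic setting. The motivic Adams spectral sequence arises from a motivic Adams resolution, i.e., a tower of cofiber sequences
\[
\cdots \map Y_2 \map Y_1 \map Y_0 = S^{0,0}
\]
built from the motivic Eilenberg--Mac Lane spectrum $H\F_2$, in exactly the same formal way as classically. Since the motivic stable homotopy category is triangulated and supports the usual Toda bracket formalism, the entire argument carries over with the weight grading passively preserved throughout. The first step is to record the geometric interpretation of the $E_r$-page in terms of this tower: an element of $\pi_{s,w}$ has Adams filtration at least $f$ precisely when it lifts to $\pi_{s,w}(Y_f)$, and a permanent cycle $a_i$ detecting $\alpha_i$ corresponds to a lift of $\alpha_i$ realizing its filtration whose image in the cofiber $Y_f/Y_{f+1}$ (a wedge of motivic copies of $H\F_2$) is an $E_1$-cycle surviving to $E_r$.

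Next I would realize the bracket data geometrically. Because the Toda bracket is defined, the composites $\alpha_0\alpha_1$ and $\alpha_1\alpha_2$ are null in $\pi_{*,*}$, and I would choose nullhomotopies compatible with the tower filtrations. The hypothesis that the Massey product $\langle a_0, a_1, a_2\rangle_{E_r}$ is defined (condition (1)) supplies $d_{r-1}$-nullhomotopies with $d_{r-1}(a_{01}) = a_0 a_1$ and $d_{r-1}(a_{12}) = a_1 a_2$; these are exactly the filtered partial nullhomotopies needed to begin splicing the standard Toda bracket diagram. Feeding them into the bracket construction produces an element of $\langle \alpha_0, \alpha_1, \alpha_2\rangle$ whose image in the associated graded is computed by the algebraic representative $a_0 a_{12} + a_{01} a_2$ of the Massey product. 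The content is that this bookkeeping can be carried out so that the filtration of the resulting homotopy class matches the filtration predicted by the Massey product.

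The main obstacle, and the entire reason for hypothesis (2), is controlling filtration jumps during this splicing. A priori the homotopy class one constructs could have Adams filtration strictly larger than $f$, in which case $a_0 a_{12} + a_{01} a_2$ would fail to detect it; one must also know that this algebraic representative is a genuine permanent cycle. Hypothesis (2) forbids exactly the ``crossing'' differentials $d_t\colon E_t^{(s+1,f',w)} \map E_t^{(s,f'',w)}$ that run from filtration $f' < f - r + 1$ up across filtration $f$ to filtration $f'' > f$; the absence of such differentials is what allows the successive lifts through the tower to be assembled so that the resulting element of $\langle \alpha_0, \alpha_1, \alpha_2\rangle$ has filtration exactly $f$ and is detected by the Massey product representative. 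Matching this reformulation to Moss's original crossing-differential condition, and verifying the filtration estimates at each stage of the splicing, is the delicate heart of the argument; the rest is a routine transcription of \cite{Moss70} into the bigraded motivic category.
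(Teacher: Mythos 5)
Your proposal is correct and takes essentially the same approach as the paper: the paper offers no independent proof at all, but simply states the theorem as a motivic reformulation of Moss's Theorem 1.2, citing \cite{Moss70} and remarking that condition (2) is an equivalent reformulation of Moss's crossing-differential condition (1.3), with the understanding that the classical argument transports to the motivic setting. Your Adams-resolution sketch is consistent with, and in fact more detailed than, what the paper itself records.
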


Condition (2) is an equivalent reformulation of condition (1.3) in
\cite{Moss70}*{Theorem 1.2}.  
When computing $\langle a_0, a_1, a_2 \rangle$, one uses a differential
$d_{r-1}: E_r^{(s-1,f-r+1,w)} \map E_r^{(s,f,w)}$.
The idea of condition (2) is that there are no later 
``crossing" differentials
$d_t$ whose source has strictly lower Adams filtration and whose target
has strictly higher Adams filtration.
\index{Adams spectral sequence!differential!crossing}

\begin{ex}
\index{h4@$h_4$}
\index{eta4@$\eta_4$}
Consider the differential $d_2(h_4) = h_0 h_3^2$.
This shows that $\langle \eta, 2, \sigma^2 \rangle$ intersects
$\{ h_1 h_4 \}$.  In fact, Table \ref{tab:Toda} shows that
the bracket equals $\{ h_1 h_4 \} = \{ \eta_4, \eta_4 + \eta \rho_{15} \}$.
\end{ex}

\begin{ex}
Consider the Massey product $\langle h_2, h_3, h_0^2 h_3 \rangle$.
Using the May differential $d_4(\nu) = h_0^2 h_3^2$ and
May's Convergence Theorem \ref{thm:3-converge},
\index{Convergence Theorem!May}
this Massey product contains $f_0$ with indeterminacy $\tau h_1^3 h_4$.
\index{f0@$f_0$}
However, this calculation tells us nothing about
the Toda bracket $\langle \nu, \sigma, 4 \sigma \rangle$.
The presence of the later Adams differential
$d_3( h_0 h_4) = h_0 d_0$ means that
condition (2) of Moss's Convergence Theorem \ref{thm:Moss} is not satisfied.
\index{Adams spectral sequence!differential!crossing}
\index{Convergence Theorem!Moss}
\end{ex}

\begin{ex}
Consider the Toda bracket $\langle \theta_4, 2, \sigma^2 \rangle$.
The relation $h_4^3 + h_3^2 h_5 = 0$ and 
the Adams differentials $d_2( h_5) = h_0 h_4^2$ and
$d_2(h_4) = h_0 h_3^2$ show that the expression 
$\langle h_4^2, h_0, h_3^2 \rangle_{E_3}$ is zero.
This implies that $\langle \theta_4, 2, \sigma^2 \rangle$
consists entirely of elements of Adams filtration strictly greater than 3.
In particular, the Toda bracket is disjoint from $\{ h_3^2 h_5 \}$.
See Lemma \ref{lem:bracket-theta4-2-sigma^2} for more discussion of 
this Toda bracket.
\end{ex}

One case of Moss's Convergence Theorem \ref{thm:Moss} says that Massey products in 
$\Ext_A(\M_2,\M_2)$
are compatible with Toda brackets in $\pi_{*,*}$, assuming that there are
no interfering Adams differentials.  Thus, we will use many Massey products 
in $\Ext_A(\M_2,\M_2)$, 
most of which are computed using 
May's Convergence Theorem \ref{thm:3-converge}.
\index{Convergence Theorem!May}
\index{Convergence Theorem!Moss}
\index{Toda bracket}
\index{Massey product}

We will also need the following lemma.

\begin{lemma}
\label{lem:2-bracket}
If $2 \alpha$ is zero, then $\tau \eta \alpha$ belongs to
$\langle 2, \alpha, 2 \rangle$.
\end{lemma}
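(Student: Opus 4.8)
The plan is to realize the bracket $\langle 2, \alpha, 2 \rangle$ geometrically through the cofiber of multiplication by $2$, exploiting the motivic weight to locate the factor of $\tau$. Let $C2$ be the cofiber of $2 \colon S^{0,0} \map S^{0,0}$, with bottom-cell inclusion $i \colon S^{0,0} \map C2$ and top-cell projection $p \colon C2 \map S^{1,0}$. The entire argument rests on the self-map $2 \cdot \mathrm{id}_{C2}$ of $C2$: I would first show that it factors as $i \beta p$ for a unique $\beta \in \pi_{1,0}$, and then identify $\beta$ with $\tau\eta$.

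For the factorization, consider the cofiber sequence $S^{0,0} \xrightarrow{2} S^{0,0} \xrightarrow{i} C2 \xrightarrow{p} S^{1,0}$, in which $i \circ 2 \simeq 0$. Additivity of composition gives $(2 \cdot \mathrm{id}_{C2}) \circ i = i \circ 2 \simeq 0$, so $2 \cdot \mathrm{id}_{C2}$ factors through $p$; the dual observation $2 \circ p \simeq 0$ then forces the factor to pass through $i$, yielding $2 \cdot \mathrm{id}_{C2} = i \beta p$ with $\beta \in \pi_{1,0}$. This is the main step, and the crux is pinning down the $\tau$. Here the weight grading does the work: $\beta$ is constrained to lie in $\pi_{1,0}$, and $\pi_{1,0}$ is generated by $\tau\eta$. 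That $\beta$ is nonzero, hence exactly $\tau\eta$, follows from the realization functor: applying it recovers the classical Moore-spectrum identity $2 \cdot \mathrm{id}_{S/2} = i\eta p$ \cite{Toda62}, whose right-hand side is nonzero, and realization sends $\tau\eta$ to $\eta$. Note there is no room for $\eta$ itself here, since $\eta$ has weight $1$ while $\beta$ must have weight $0$; this is precisely why $\tau$ must appear.

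With $2 \cdot \mathrm{id}_{C2} = i(\tau\eta)p$ in hand, the lemma follows from the standard cofiber description of the Toda bracket. Writing $\alpha \in \pi_{s,w}$, the hypothesis $2\alpha = 0$ lets me extend $\alpha$ over the bottom cell of $\Sigma^{s,w} C2$ to a map $\bar\alpha \colon \Sigma^{s,w} C2 \map S^{0,0}$ with $\bar\alpha \circ (\Sigma^{s,w} i) = \alpha$. Applying $\Sigma^{s,w}$ to the self-map identity and composing with $\bar\alpha$ gives
\[
2 \bar\alpha = \bar\alpha \circ (\Sigma^{s,w}(2 \cdot \mathrm{id}_{C2})) = \bar\alpha \circ (\Sigma^{s,w} i) \circ (\tau\eta) \circ (\Sigma^{s,w} p) = (\tau\eta \cdot \alpha) \circ q,
\]
where $q = \Sigma^{s,w} p \colon \Sigma^{s,w} C2 \map S^{s+1,w}$ is the top-cell projection. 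By the definition of $\langle 2, \alpha, 2 \rangle$ as the set of maps $\delta \colon S^{s+1,w} \map S^{0,0}$ with $2\bar\alpha = \delta \circ q$ for some choice of extension $\bar\alpha$, this exhibits $\tau\eta\alpha$ as an element of $\langle 2, \alpha, 2 \rangle$, as desired. Since we need only membership, no analysis of the indeterminacy is required.
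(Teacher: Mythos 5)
Your proof is correct, but note that it is far more than what the paper actually records: the paper's entire proof of Lemma \ref{lem:2-bracket} is one sentence, asserting that the motivic case ``follows immediately from the classical case,'' with a citation to \cite{Toda62}. What you have written is, in effect, the argument that makes that sentence true. You transplant Toda's classical Moore-spectrum proof into the motivic category: the factorization $2 \cdot \mathrm{id}_{C2} = i(\tau\eta)p$, the weight argument showing the middle factor must lie in $\pi_{1,0} \cong \Z/2\{\tau\eta\}$ (so that $\tau\eta$, not $\eta$, is forced), and the realization functor to import the only genuinely classical input, namely $2 \cdot \mathrm{id}_{S/2} = i\eta p \neq 0$. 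The paper's citation buys brevity; your version buys an actual justification of where the $\tau$ comes from, which is not a triviality: Toda-bracket membership does not formally pull back along realization or $\tau$-localization, so a reader of the paper must supply essentially your argument to see why the classical statement implies the motivic one.

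One step is stated loosely, though it is a one-line repair rather than a gap. Writing $2 \cdot \mathrm{id}_{C2} = \gamma \circ p$, you claim that $2 \circ p \simeq 0$ ``forces'' $\gamma$ to pass through $i$; as written, that observation only shows $(p \circ \gamma) \cdot p = 0$, and since $[C2, S^{1,0}]$ is $\Z/2$ generated by $p$, this gives only that $p \circ \gamma$ is even in $\pi_{0,0}$. The clean argument is exactness: applying $[S^{1,0}, -]$ to the cofiber sequence shows that the image of $p_* \colon \pi_{1,0}(C2) \to \pi_{1,0}(S^{1,0}) \cong \pi_{0,0}$ equals the kernel of multiplication by $2$ on $\pi_{0,0} \cong \Z_2$, which is zero because $\Z_2$ is torsion-free. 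Hence $p \circ \gamma = 0$ outright, so $\gamma$ lifts through $i$, and the lift $\beta$ is unique because $i_*$ is injective on $\pi_{1,0}$ (its kernel is $2\pi_{1,0} = 0$). With that adjustment, every step of your argument is sound.
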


\begin{proof}
The motivic case follows immediately from the classical case,
which is proved in \cite{Toda62}.
\end{proof}

\subsection{Toda brackets and cofibers}
\label{subsctn:Toda-cofiber}

\index{Toda bracket}
\index{stable stem!cofiber}
The purpose of this section is to establish a relationship
between Toda brackets of the form $\langle \alpha_0, \ldots, \alpha_n \rangle$
and properties of the stable homotopy groups of the cofiber $C\alpha_0$ of 
$\alpha_0$.  This relationship is well-known to those who use it.
See \cite{Toda62}*{Proposition 1.8} for essentially the same result.

Suppose given a map $\alpha_0: S^{p,q} \map S^{0,0}$.
Then we have a cofiber sequence
\[
\xymatrix@1{
S^{p,q} \ar[r]^{\alpha_0} & S^{0,0} \ar[r]^j & C\alpha_0 \ar[r]^-q & 
S^{p+1,q} \ar[r]^{\alpha_0} & S^{1,0}
}
\]
where $j$ is the inclusion of the bottom cell, and $q$ is projection
onto the top cell.
Note that $\pi_{*,*} (C\alpha_0)$ is a $\pi_{*,*}$-module.

\begin{prop}
\label{prop:3bracket-cofiber}
Let $\alpha_0$, $\alpha_1$, and $\alpha_2$ be elements of $\pi_{*,*}$ such that
$\alpha_0 \alpha_1$ and $\alpha_1 \alpha_2$ are zero.  
Let $\ol{\alpha_1}$ be an element of $\pi_{*,*}(C\alpha_0)$
such that $q_*(\ol{\alpha_1}) = \alpha_1$.
In $\pi_{*,*} (C \alpha_0)$, the element
$\ol{\alpha_1} \cdot \alpha_2$ belongs to 
$j_* ( \langle \alpha_0, \alpha_1, \alpha_2 \rangle)$.
\end{prop}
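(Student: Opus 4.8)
The plan is to unwind the definition of the Toda bracket and match it against the datum of the lift $\overline{\alpha_1}$ together with the module action of $\alpha_2$. Throughout I use the cofiber sequence displayed just before the statement, and I recall that a threefold bracket $\langle \alpha_0, \alpha_1, \alpha_2 \rangle$ is assembled from a choice of nullhomotopy of $\alpha_0 \alpha_1$ and a choice of nullhomotopy of $\alpha_1 \alpha_2$: writing $(a,b)$ and $(c,d)$ for the degrees of $\alpha_1$ and $\alpha_2$, these two nullhomotopies glue along the suspension coordinate to give a map out of $S^{1,0} \wedge S^{p,q} \wedge S^{a,b} \wedge S^{c,d}$, and the element of $\pi_{*,*}$ it represents is, by definition, the corresponding member of the bracket. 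Note that this lands in stem $p+a+c+1$, which agrees with the stem of $\overline{\alpha_1} \cdot \alpha_2$ after applying $j_*$, so the degrees match.

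First I would record the role of each hypothesis. The existence of a lift $\overline{\alpha_1}$ with $q_*(\overline{\alpha_1}) = \alpha_1$ is equivalent, via the long exact sequence of the cofiber sequence, to the vanishing $\alpha_0 \alpha_1 = 0$: a map into $S^{p+1,q}$ lifts through $q$ exactly when its composite with the connecting map $\Sigma \alpha_0 \colon S^{p+1,q} \to S^{1,0}$ is null, and such lifts correspond bijectively to nullhomotopies of $\alpha_0 \alpha_1$. Thus a choice of $\overline{\alpha_1}$ is the same datum as a choice of nullhomotopy $H$ of $\alpha_0 \alpha_1$, and two choices of $\overline{\alpha_1}$ differ by an element of $j_* \pi_{*,*}$. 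Next, since $q_*$ is a map of $\pi_{*,*}$-modules, $q_*(\overline{\alpha_1} \cdot \alpha_2) = q_*(\overline{\alpha_1}) \cdot \alpha_2 = \alpha_1 \alpha_2 = 0$; exactness then produces $\beta$ in $\pi_{*,*}$ with $j_*(\beta) = \overline{\alpha_1} \cdot \alpha_2$, well-defined modulo the kernel of $j_*$, which by exactness is the image of the connecting homomorphism. At this point both hypotheses $\alpha_0 \alpha_1 = 0$ and $\alpha_1 \alpha_2 = 0$ have been used.

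The main step, and the principal obstacle, is to identify this $\beta$ with a genuine representative of $\langle \alpha_0, \alpha_1, \alpha_2 \rangle$. Here I would chase the diagram: the relation $q_*(\overline{\alpha_1} \cdot \alpha_2) = 0$ is itself a nullhomotopy of $\alpha_1 \alpha_2$, and it is precisely the datum allowing $\overline{\alpha_1} \cdot \alpha_2$ to be compressed into the bottom cell $S^{0,0}$ along $j$; the resulting compression $\beta$ is the map obtained by gluing the nullhomotopy $H$ encoded in $\overline{\alpha_1}$ to this nullhomotopy of $\alpha_1 \alpha_2$ along the suspension coordinate of $C\alpha_0$. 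Making this precise amounts to tracking the two cells of $C\alpha_0$ (the bottom cell carrying $j$ and the top cell $S^{p+1,q}$ carrying $q$) and verifying that the attaching data of the cofiber reproduces exactly the suspend-and-glue recipe for the bracket, up to sign conventions; the delicate bookkeeping lies entirely in the degrees and orientations, not in any homotopy-theoretic subtlety. Finally, letting $\overline{\alpha_1}$ range over all lifts, equivalently over all nullhomotopies $H$, sweeps out the full indeterminacy coming from the $\alpha_0$ side, which is why $\overline{\alpha_1} \cdot \alpha_2$ lands in $j_*(\langle \alpha_0, \alpha_1, \alpha_2 \rangle)$. Since every step is a formal manipulation in the triangulated stable motivic homotopy category, the argument is identical to the classical one, matching \cite{Toda62}*{Proposition 1.8}; no feature special to the motivic setting intervenes.
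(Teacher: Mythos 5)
Your proposal is correct and follows essentially the same route as the paper: use $q_*(\ol{\alpha_1} \cdot \alpha_2) = \alpha_1 \alpha_2 = 0$ and exactness of the cofiber sequence to compress $\ol{\alpha_1} \cdot \alpha_2$ to the bottom cell as $j_*(\beta)$, then identify $\beta$ as a representative of the Toda bracket. The only difference is presentational: the paper simply observes that this compression is one of the standard equivalent definitions of $\langle \alpha_0, \alpha_1, \alpha_2 \rangle$, whereas you sketch the comparison with the nullhomotopy-gluing definition---a verification the paper (following Toda) treats as known.
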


\begin{proof}
The proof is described by the following diagram.
The composition $\ol{\alpha_1} \alpha_2$ can be lifted
to $S^{0,0}$ because $\alpha_1 \alpha_2$ was assumed to be zero.
This shows that $\ol{\alpha_1} \cdot \alpha_2$
is equal to $j_*(\beta)$.
Finally, $\beta$ is one possible
definition of the Toda bracket $\langle \alpha_0, \alpha_1, \alpha_2 \rangle$.
\[
\xymatrix{
& & S^{0,0} \ar[d]^j \\
& & C \alpha_0 \ar[d]^q \\
S^{*,*} \ar[r]_{\alpha_2} \ar@{-->}[rruu]^\beta & 
   S^{*,*} \ar[r]_{\alpha_1}  \ar[ur]^{\ol{\alpha_1}} &
  S^{p+1,q} \ar[r]_{\alpha_0} & S^{1,0} \\
}
\]
\end{proof}

\begin{remark}
\label{rem:Massey-3bracket-cofiber}
We have presented Proposition \ref{prop:3bracket-cofiber} in the context
of stable motivic homotopy groups, but the proof works in 
the much greater generality of a stable model category.
\index{model category}
For example, the same result holds for Massey products, where one works 
in the derived category of a graded algebra $A$, and maps
correspond to elements of $\Ext$ groups over $A$.
\end{remark}

\begin{remark}
\label{rem:higher-bracket-cofiber}
Proposition \ref{prop:3bracket-cofiber} 
can be generalized to higher compositions.
Suppose that $\langle \alpha_0, \ldots, \alpha_n \rangle$ is 
defined.
Then the bracket $\langle \ol{\alpha_1}, \alpha_2, \ldots, \alpha_n \rangle$
is contained in $j_* (\langle \alpha_0, \ldots, \alpha_n \rangle)$.
The proof is similar to the proof of Proposition \ref{prop:3bracket-cofiber},
using the definition of higher Toda brackets \cite{Shipley02}*{Appendix A}.
\end{remark}

\section{Adams differentials}
\label{sctn:differentials}

The $E_2$-page of the motivic Adams spectral sequence
is described in Chapter \ref{ch:May} (see also \cite{DI10}). 
\index{Adams spectral sequence!E2-page@$E_2$-page} 
See \cite{Isaksen14a} for a chart of the $E_2$-page through the 70-stem.
\index{Adams chart}
A list of multiplicative generators for the $E_2$-page 
is given in Table \ref{tab:Ext-gen}.

Our next task is to compute the Adams differentials.
The main point is to compute the Adams $d_r$ differentials on
the multiplicative generators of the $E_r$-page.  Then one can compute
the entire Adams $d_r$ differential using that $d_r$ is a derivation.

\subsection{Adams $d_2$ differentials}
\label{subsctn:Adams-d2}

\index{Adams spectral sequence!differential!d2@$d_2$}
Most of the Adams $d_2$ differentials are lifted directly from
the classical situation,
in the sense of Proposition \ref{prop:compare}.
We provide a few representative
examples of this phenomenon.

\begin{ex}
\label{ex:d2-1}
\index{h4@$h_4$}
The classical differential $d_2 (h_4) = h_0 h_3^2$
immediately implies that there is a motivic differential
$d_2 (h_4) = h_0 h_3^2$.
\end{ex}

\begin{ex}
\label{ex:d2-2}
\index{d0@$d_0$}
\index{e0@$e_0$}
Unlike the classical situation, 
the elements $h_1^k d_0$ and $h_1^k e_0$ are non-zero
in the $E_2$-page for all $k \geq 0$.
The classical differential $d_2 ( e_0 ) = h_1^2 d_0$
implies that there is a motivic differential
$d_2 ( e_0 ) = h_1^2 d_0$, from which it follows that
$d_2 ( h_1^k e_0 ) = h_1^{k+2} d_0$ for all $k \geq 0$.
Technically, these are ``exotic" differentials, although
we will soon see subtler examples.
\end{ex}

\begin{ex}
\label{ex:d2-3}
\index{c2@$c_2$}
\index{e1@$e_1$}
Consider the classical differential
$d_2( h_0 c_2 ) = h_1^2 e_1$.  Motivically, this formula
does not make sense because the weights of
$h_0 c_2$ and $h_1^2 e_1$ are $22$ and $23$ respectively.
It follows that there is a motivic differential
$d_2 ( h_0 c_2 ) = \tau h_1^2 e_1$.
Then $h_1^2 e_1$ is non-zero on the $E_3$-page.
\end{ex}

\index{Adams spectral sequence!differential!d2@$d_2$}
\begin{prop}
\label{prop:Adams-d2}
Table \ref{tab:Ext-gen} lists some values of the motivic Adams $d_2$
differential.  The motivic Adams $d_2$ differential is zero
on all other multiplicative generators of the $E_2$-page, through
the 70-stem.
\end{prop}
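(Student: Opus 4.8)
The plan is to exploit that $d_2$ is a derivation, so it is determined by its values on the multiplicative generators of $\Ext$ recorded in Table~\ref{tab:Ext-gen}; the proof then reduces to a finite, generator-by-generator check through the 70-stem, with two things to verify for each generator $g$: that the listed value of $d_2(g)$ is correct, and that $d_2$ vanishes on every generator not appearing in the table. I would organize everything around the comparison of Proposition~\ref{prop:compare}. After inverting $\tau$, the motivic $d_2$ agrees with the classical $d_2$ tensored up over $\M_2[\tau^{-1}]$, so for any generator whose image in $\Ext[\tau^{-1}]$ is nonzero, the $\tau$-torsion-free part of $d_2(g)$ is pinned down by the classical Adams $d_2$ collected in Table~\ref{tab:diff-refs}. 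The listed nonzero differentials are then obtained by lifting the classical ones and inserting the unique power of $\tau$ that balances the weight, exactly as in Examples~\ref{ex:d2-1}--\ref{ex:d2-3}, together with the differentials forced by the image of $J$ \cite{Adams66}, by detection in $\tmf$ \cite{Henriques07}, and by the interaction between algebraic Steenrod operations and Adams differentials \cite{BMMS86}*{VI.1}.

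For the vanishing statement I would sort the remaining generators by their target group $E_2^{s-1,f+2,w}$. First, whenever this target is zero -- which holds for most generators purely for degree reasons -- there is nothing to prove. Second, when the target is nonzero but $\tau$-torsion-free, Proposition~\ref{prop:compare} reduces the question to the classical situation: a nonzero value would survive inversion of $\tau$ and contradict the vanishing of the classical $d_2$ read off from Table~\ref{tab:diff-refs}, so $d_2(g)$ must be zero. Generators in Chow degree zero, i.e.\ in degrees with $s+f-2w=0$, can be dispatched even more cleanly through the isomorphism of Theorem~\ref{thm:Chow-0}, which carries the entire question into the classical cohomology of the Steenrod algebra where the answer is known.

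The genuine obstacle is the small number of generators whose target degree $(s-1,f+2,w)$ contains $\tau$-\emph{torsion} classes, that is, classes annihilated by a power of $\tau$. Such classes become zero after inverting $\tau$, so Proposition~\ref{prop:compare} says nothing about them, and a nonzero ``exotic'' value of $d_2$ cannot be ruled out by classical comparison. Each such case must be treated individually, using (i) the Leibniz rule against already-established differentials on related generators, so that a putative exotic value is forced to clash with a known product relation; (ii) the algebraic-Steenrod-operation formulas of \cite{BMMS86}*{VI.1}, which tie $d_2$ on a generator to $d_2$ on its Steenrod squares and typically either produce or obstruct the exotic value; and (iii) permanent-cycle arguments, since a generator detecting an honest element of $\pi_{*,*}$ (for instance through $\tmf$ or the image of $J$) supports no differential at all. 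I expect essentially all the real work to concentrate in these torsion-valued cases: the torsion-free ones amount to bookkeeping against classical data, whereas the exotic ones require the full strength of the motivic weight grading and of the auxiliary multiplicative and Steenrod-operation structure on $\Ext$.
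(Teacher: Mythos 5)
Your overall architecture --- reduce to multiplicative generators via the Leibniz rule, pin down the $\tau$-torsion-free behavior by the classical comparison of Proposition \ref{prop:compare}, import image-of-$J$, $\tmf$, and Steenrod-operation arguments, and then treat the exotic cases one by one --- is exactly the skeleton of the paper's proof, which delegates the hard cases to Section \ref{subsctn:d2-lemmas}. But the toolkit you propose for those hard cases would not suffice, and that is where essentially all of the content lies. Three kinds of arguments the paper actually needs are absent from your list. First, forcing arguments in homotopy: for $d_2(h_3 g) = h_0 h_2^2 g$ (Lemma \ref{lem:d2-h3g}) one shows that $h_1^3 h_4 c_0$ detects $\eta^3 \sigma \eta_4$, which vanishes because $\eta^3 \eta_4 = 0$, so this class must be hit by some differential and there is only one candidate; the same pattern runs through Lemmas \ref{lem:d2-G3} and \ref{lem:d2-X2}. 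This is not a permanent-cycle argument (those rule out differentials on sources); it uses hidden extensions and Toda-bracket relations in $\pi_{*,*}$ to force a \emph{nonzero} differential. Second, the cofiber of $\tau$: the vanishing $d_2(B_6) = 0$ (Lemma \ref{lem:Adams-d2-B6}) and the value of $d_2(\tau G_0)$ (Lemma \ref{lem:d2-tG0}) are proved by passing to the Adams spectral sequence for $C\tau$ from Chapter \ref{ch:Ctau}, which you never invoke. Third, Massey products and the higher Leibniz rule: $d_2(D_1) = h_0^2 h_3 g_2$ (Lemma \ref{lem:d2-D1}) follows from $\tau G = \langle h_1, h_0, D_1 \rangle$ together with \cite{Moss70}*{Theorem 1.1}; ordinary Leibniz against already-established differentials cannot produce it, and the bracket itself cannot even be computed by May's Convergence Theorem because of a crossing May differential (Remark \ref{rem:bracket-D1}).

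Relatedly, your premise that the nonzero values can be ``lifted'' from the classical data of Table \ref{tab:diff-refs} is backwards for precisely the hardest entries: $d_2(D_1)$ had no correct classical proof before this argument (Mahowald's failed), and $d_2(X_1) = h_0^2 B_4 + \tau h_1 B_{21}$ contradicts the published chart of \cite{KM93}; in these cases the motivic computation is the source of the classical fact, not the other way around. One smaller error: your proposed shortcut through Theorem \ref{thm:Chow-0} does not apply to Adams differentials. That isomorphism concerns the subalgebra of $\Ext$ in Chow degree $s+f-2w = 0$, but the Adams $d_2$ raises Chow degree by $1$ (unlike May differentials, which preserve it), so this subalgebra is not closed under $d_2$ and the isomorphism does not intertwine Adams differentials; the cases it purports to handle must instead be settled by the $\tau$-inversion comparison, which you already have.
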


\begin{proof}
Table \ref{tab:Ext-gen}
cites one possible argument (but not necessarily the earliest
published result) for each non-zero differential on a multiplicative
generator of the $E_2$-page.  These arguments break into several types:
\begin{enumerate}
\item
Some differentials are consequences of the image of $J$ calculation 
\cite{Adams66}.
\index{J@$J$!image of}
\item
Some differentials follow by comparison to the Adams spectral sequence
for $\tmf$ \cite{Henriques07}.
\index{topological modular forms}
\item
Some differentials follow by comparison to an analogous classical result.
\index{Adams spectral sequence!differential!classical}
\item
One differential follows from the relationship
between Adams differentials and algebraic Steenrod operations
\cite{BMMS86}*{VI.1}.
\index{Steenrod operation!algebraic}
\item
The remaining differentials are proved in Section \ref{subsctn:d2-lemmas}.
\end{enumerate}

For the differentials whose values are zero, 
Section \ref{subsctn:d2-lemmas} includes proofs 
for the cases that are not obvious.
\end{proof}

In order to maintain the flow of the narrative, we have collected the
technical computations of miscellaneous $d_2$ differentials
in Section \ref{subsctn:d2-lemmas}.

The $E_2$ chart in \cite{Isaksen14a} indicates the Adams $d_2$
differentials, all of which are implied by the calculations 
in Table \ref{tab:Ext-gen}.
\index{Adams chart}

\begin{remark}
\index{h3g@$h_3 g$}
\index{h3g2@$h_3 g^2$}
\index{h3g3@$h_3 g^3$}
Lemma \ref{lem:d2-h3g} establishes three differentials
$d_2 ( h_3 g ) = h_0 h_2^2 g$,
$d_2 ( h_3 g^2 ) = h_0 h_2^2 g^2$, and
$d_2 ( h_3 g^3 ) = h_0 h_2^2 g^3$.
Presumably there is an infinite family of exotic differentials of the
form 
\[
d_2 ( h_3 g^k ) = h_0 h_2^2 g^k.
\]
\end{remark}

\begin{remark}
\index{X1@$X_1$}
\index{B4@$B_4$}
\index{B21@$B_{21}$}
The differential $d_2 (X_1) = h_0^2 B_4 + \tau h_1 B_{21}$ is inconsistent
with the results of \cite{KM93}.  
\end{remark}

\begin{remark}
\index{D1@$D_1$}
\index{g2@$g_2$}
In the 51-stem,
we draw particular attention to the Adams differential
$d_2(D_1) = h_0^2 h_3 g_2$.
Mark Mahowald
\index{Mahowald, Mark}
privately communicated an argument for the
presence of this differential to the author.  However, this argument fails 
because of the calculation of the Toda bracket 
\index{Toda bracket}
$\langle \theta_4, 2, \sigma^2 \rangle$ in
Lemma \ref{lem:bracket-theta4-2-sigma^2}, which was unknown to Mahowald.
Zhouli Xu
\index{Xu, Zhouli}
and the author
discovered an independent proof, which is included in 
Lemma \ref{lem:d2-D1}.
See \cite{IX14} for a full discussion.
\end{remark}

\begin{remark}
\label{rem:d2-tQ3}
\index{Q3@$Q_3$}
As noted in Table \ref{tab:Ext-ambiguous},
the element $\tau Q_3$ is defined in $\Ext$ such that $d_2 (\tau Q_3) = 0$.
\index{Ext@$\Ext$!ambiguous generator}
\end{remark}

\begin{remark}
\index{topological modular forms}
Quite a few of the $d_2$ differentials in this section follow by comparison
to the Adams spectral sequence for $\tmf$, i.e., the Adams spectral sequence
whose $E_2$-page is the cohomology of the subalgebra $A(2)$ of the Steenrod algebra.
See \cite{Henriques07} for detailed computations with this spectral sequence.

\index{motivic modular forms}
Presumably, there is a ``motivic modular forms" spectrum that is the motivic
analogue of $\tmf$.  If such a motivic spectrum existed, then the
$E_2$-page of its Adams spectral sequence would be the cohomology of 
motivic $A(2)$, as described in \cite{Isaksen09}.  Such a spectral sequence would
help significantly in calculating the differentials in the 
motivic Adams spectral sequence for $S^{0,0}$ that we are considering here.
\end{remark}


\subsection{Adams $d_3$ differentials}
\label{subsctn:Adams-d3}
See \cite{Isaksen14a} for a chart of the $E_3$-page.
This chart is complete through the 70-stem;
however, the Adams $d_3$ differentials are complete only through 
the 65-stem.
\index{Adams chart}
\index{Adams spectral sequence!E3-page@$E_3$-page}

The next step is to compute the Adams $d_3$ differential
on the multiplicative generators of the $E_3$-page.
\index{Adams spectral sequence!differential!d3@$d_3$}

\begin{prop}
\label{prop:Adams-d3}
Table \ref{tab:Adams-d3} lists some values of the motivic Adams $d_3$
differential.  The motivic Adams $d_3$ differential is zero
on all other multiplicative generators of the $E_3$-page, through
the 65-stem, except that 
$d_3(D_3)$ might equal $B_3$.
\index{D3@$D_3$}
\index{B3@$B_3$}
\end{prop}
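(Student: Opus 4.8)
The plan is to determine $d_3$ on each multiplicative generator of the $E_3$-page through the 65-stem; since $d_3$ is a derivation, this computes the entire differential. First I would dispatch the nonzero values recorded in Table \ref{tab:Adams-d3}. Exactly as in the proof of Proposition \ref{prop:Adams-d2}, each such differential comes with a cited justification of one of several standard kinds: deduction from the image of $J$ \cite{Adams66}; comparison to the Adams spectral sequence for $\tmf$ \cite{Henriques07}; comparison to the known classical $d_3$ via Proposition \ref{prop:compare} together with Table \ref{tab:diff-refs}, inserting the powers of $\tau$ forced by the weight grading; the relationship between algebraic Steenrod operations and Adams differentials \cite{BMMS86}*{VI.1}; or an explicit Toda-bracket computation carried out among the technical lemmas in Section \ref{sctn:d-lemmas} and justified by Moss's Convergence Theorem \ref{thm:Moss}.

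The bulk of the work is to show that $d_3$ \emph{vanishes} on every remaining multiplicative generator, which I would argue generator by generator. Many cases are immediate because the target group in the tridegree $(s-1,f+3,w)$ is already zero on the $E_3$-page, or because the surviving candidate targets are killed by the weight constraint (the motivic weight must match, which routinely eliminates a classically plausible target). For generators that survive to a nonzero class after inverting $\tau$, Proposition \ref{prop:compare} forces the motivic $d_3$ to be compatible with the classical one, so when the corresponding classical generator supports no $d_3$ and the candidate motivic target is not $\tau$-torsion, the motivic differential must also vanish. The delicate cases are the potential \emph{exotic} $d_3$ differentials, whose targets are $\tau$-torsion and therefore invisible after inverting $\tau$; for these I would rule out the differential by exhibiting the source as a permanent cycle, typically by writing it as a product or Massey product of permanent cycles, or by detecting it in $\tmf$.

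The hard part will be the single genuinely undecidable case, namely $D_3$ in the 65-stem: both $d_3(D_3)=0$ and $d_3(D_3)=B_3$ are consistent with the classical comparison (the relevant classical input sits at the edge of the reliable range), with the weight grading, and with every bracket relation available. Thus the real content of the final clause is to \emph{confirm} that no tool at hand decides this differential, so that it must be recorded as an honest ambiguity, while verifying that every other candidate exotic differential in the range can in fact be pinned down. I expect establishing this dichotomy — especially the careful exclusion of all other indeterminate-looking $d_3$'s — to be the main obstacle, with the $D_3$ ambiguity itself surviving precisely because the classical data needed to break it lies beyond where we are willing to rely on it.
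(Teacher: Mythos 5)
Your proposal matches the paper's proof in structure and substance: the paper likewise dispatches the nonzero $d_3$'s by citing, for each entry of Table \ref{tab:Adams-d3}, one of the standard argument types (image of $J$, comparison to $\tmf$, comparison to the classical differential, or an explicit lemma in Section \ref{subsctn:d3-lemmas}), handles the non-obvious vanishing cases by the same kinds of degree, weight, permanent-cycle, and multiplicativity arguments collected in that section, and isolates $D_3$ as the one undecidable case (discussed in Remark \ref{rem:d3-D3}, where the relation $h_1 B_3 = h_4 B_1$ ties the question to whether $\langle \theta_{4.5}, \sigma^2, 2 \rangle$ or $\eta_4\theta_{4.5}$ vanishes). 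The only cosmetic difference is your inclusion of the algebraic Steenrod operation technique among the citation types, which the paper uses for $d_2$ but not for $d_3$.
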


\begin{proof}
Table \ref{tab:Adams-d3} 
cites one possible argument (but not necessarily the earliest
published result) for each non-zero differential on a multiplicative
generator of the $E_3$-page.  These arguments break into several types:
\begin{enumerate}
\item
Some differentials are consequences of the image of $J$ calculation 
\cite{Adams66}.
\index{J@$J$!image of}
\item
Some differentials follow by comparison to the Adams spectral sequence
for $\tmf$ \cite{Henriques07}.
\index{topological modular forms}
\item
Some differentials follow by comparison to an analogous classical result.
\index{Adams spectral sequence!differential!classical}
\item
The remaining differentials are proved in Section \ref{subsctn:d3-lemmas}.
\end{enumerate}

For the differentials whose values are zero, 
Section \ref{subsctn:d3-lemmas} includes proofs 
for the cases that are not obvious.
\end{proof}

In order to maintain the flow of the narrative, we have collected the technical
computations of miscellaneous $d_3$ differentials in Section \ref{subsctn:d3-lemmas}.

The $E_3$ chart in \cite{Isaksen14a} indicates the Adams $d_3$ differentials,
all of which are implied by the calculations in Table \ref{tab:Adams-d3}.
The differentials are complete only through the 65-stem.
Beyond the 65-stem, there are a number of unknown differentials.
\index{Adams chart}

\begin{remark}
\label{rem:d3-D3}
\index{D3@$D_3$}
\index{B3@$B_3$}
\index{B1@$B_1$}
\index{Toda bracket}
\index{theta4.5@$\theta_{4.5}$}
\index{eta4@$\eta_4$}
The chart in \cite{KM93} indicates a differential $d_3(D_3) = B_3$.
However, we have been unable to independently verify this differential.
Because of the relation $h_1 B_3 = h_4 B_1$ and
because $\{ B_1 \}$ contains $\eta \theta_{4.5}$,
we know that
$h_1 B_3$ detects $\langle \eta \theta_{4.5}, \sigma^2, 2 \rangle$,
as shown in Table \ref{tab:Toda}.
It follows that
$B_3$ detects $\langle \theta_{4.5}, \sigma^2, 2 \rangle$ and that
$h_1 B_3$ detects $\eta_4 \theta_{4.5}$.
We have so far been unable to show that either
$\langle \theta_{4.5}, \sigma^2, 2 \rangle$ or
$\eta_4 \theta_{4.5}$ is zero.
\end{remark}

\begin{remark}
\index{h5e0@$h_5 e_0$}
\index{B1@$B_1$}
We draw attention to the differential
$d_3( h_1 h_5 e_0) = h_1^2 B_1$.
This can be derived from its classical analogue, which is
carefully proved in \cite{BJM84}.  
Lemma \ref{lem:d3-h1h5e0} provides an independent proof.
This proof
originates
from an algebraic hidden extension in the $h_1$-local cohomology
of the motivic Steenrod algebra \cite{GI14}.
\index{h1@$h_1$!localization}
\end{remark}

\begin{remark}
\index{Q2@$Q_2$}
\index{gt@$g t$}
The differential $d_3(Q_2) = \tau^2 g t$ given in
Lemma \ref{lem:d3-C0} is inconsistent with the chart in \cite{KM93}.
We do not understand the source of this discrepancy.
\end{remark}

\begin{remark}
\index{r1@$r_1$}
\index{X2@$X_2$}
We claim that $d_3(r_1)$ is zero; this is tentative because our analysis
is incomplete in the relevant range.
The only other possibility is that $d_3(r_1)$ equals $h_1^2 X_2$.
However, we show in Lemma \ref{lem:tau-h1^2X2}
that $h_1^2 X_2$ supports a hidden $\tau$ extension and must therefore
be non-zero on the $E_\infty$-page.
\end{remark}


\subsection{Adams $d_4$ differentials}
\label{subsctn:Adams-d4}
See \cite{Isaksen14a} for a chart of the $E_4$-page.
This chart is complete through the 65-stem.
Beyond the 65-stem, because of unknown earlier differentials,
the actual $E_4$-page is a subquotient of what is shown
in the chart.
\index{Adams chart}
\index{Adams spectral sequence!E4-page@$E_4$-page}

The next step is to compute the Adams $d_4$ differentials on the
multiplicative generators of the $E_4$-page.
\index{Adams spectral sequence!differential!d4@$d_4$}

\begin{prop}
\label{prop:Adams-d4}
Table \ref{tab:Adams-d4} lists some values of the motivic Adams $d_4$
differential.  The motivic Adams $d_4$ differential is zero
on all other multiplicative generators of the $E_4$-page, through
the 65-stem, with the possible exceptions that:
\begin{enumerate}
\item
$d_4(\tau h_1 X_1)$ or $d_4(R)$ might equal $\tau^2 d_0 e_0 r$.
\index{X1@$X_1$}
\index{R@$R$}
\index{d0e0r@$d_0 e_0 r$}
\item
$d_4(C')$ or $d_4(\tau X_2)$ might equal $h_2 B_{21}$ or $\tau h_2 B_{21}$
respectively.
\index{C'@$C'$}
\index{X2@$X_2$}
\index{B21@$B_{21}$}
\end{enumerate}
\end{prop}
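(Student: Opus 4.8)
The plan is to mirror the strategy already used for Propositions \ref{prop:Adams-d2} and \ref{prop:Adams-d3}: handle the nonzero and the zero differentials separately, push as much as possible onto the classical computation via Proposition \ref{prop:compare}, and isolate the few genuinely motivic or genuinely undecidable cases. Throughout, I would use that $d_4$ is a derivation, so that it suffices to treat the multiplicative generators of the $E_4$-page listed alongside Table \ref{tab:Adams-d4}.

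First, for each nonzero $d_4$ recorded in Table \ref{tab:Adams-d4}, I would supply the single justifying argument indicated in the fourth column. Most of these follow by comparison with the classical Adams $d_4$ (Table \ref{tab:diff-refs}) after inverting $\tau$, using Proposition \ref{prop:compare}; the weight grading then pins down the unique motivic target, with the occasional factor of $\tau$ inserted to balance weights. The remaining nonzero values are obtained by comparison to the Adams spectral sequence for $\tmf$ \cite{Henriques07}, from the structure of the image of $J$ \cite{Adams66}, or by the explicit arguments collected in Section \ref{subsctn:d4-lemmas} (typically an application of Moss's Convergence Theorem \ref{thm:Moss} to a suitable Toda bracket, combined with previously established relations and hidden extensions).

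Second, for the generators on which $d_4$ vanishes, I would either exhibit the absence of any admissible target or prove that the would-be target is a nonzero permanent cycle. Many vanishings are immediate for degree reasons: the tridegree $(s-1,f+4,w)$ contains no nonzero class, or every such class has already been killed by an earlier $d_2$ or $d_3$. The delicate vanishings are those in which a nonzero potential target survives to the $E_4$-page; there one must show that the target cannot die. This is done by verifying that it detects a nonzero element of $\pi_{*,*}$, that it supports a hidden extension and so is nonzero on $E_\infty$, or that its classical analogue survives, in which case Proposition \ref{prop:compare} forbids a motivic $d_4$. These subtler zero cases are deferred to the lemmas of Section \ref{subsctn:d4-lemmas}.

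Finally, the two listed exceptions are exactly the configurations that none of these methods resolves. In (1), the class $\tau^2 d_0 e_0 r$ admits two candidate sources, $\tau h_1 X_1$ and $R$, and the available relations do not distinguish them; in (2), the same ambiguity arises with $C' \mapsto h_2 B_{21}$ and $\tau X_2 \mapsto \tau h_2 B_{21}$. I would document these explicitly rather than assert a resolution. The main obstacle is precisely this last point: in the densely populated range near the 60-stem, classical comparison under-determines the weight-refined motivic picture, so ruling out crossing differentials and spurious targets forces one to rely on Toda-bracket computations whose hypotheses in Moss's Convergence Theorem \ref{thm:Moss} must be checked with care, and in the two exceptional degrees even this does not suffice.
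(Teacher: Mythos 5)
Your proposal is correct and follows essentially the same route as the paper: the non-zero values of $d_4$ are justified case-by-case by the citations in Table \ref{tab:Adams-d4} (image of $J$, comparison to $\tmf$, comparison to the classical Adams spectral sequence, or the explicit lemmas of Section \ref{subsctn:d4-lemmas}), the vanishing cases are handled either for degree reasons or by showing in Section \ref{subsctn:d4-lemmas} that the potential target survives, and the two ambiguous configurations are recorded as unresolved exceptions. No substantive difference from the paper's argument.
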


\begin{proof}
Table \ref{tab:Adams-d4}
cites one possible argument (but not necessarily the earliest
published result) for each non-zero differential on a multiplicative
generator of the $E_4$-page.  These arguments break into several types:
\begin{enumerate}
\item
\index{J@$J$!image of}
Some differentials are consequences of the image of $J$ calculation 
\cite{Adams66}.
\item
Some differentials follow by comparison to the Adams spectral sequence
for $\tmf$ \cite{Henriques07}.
\index{topological modular forms}
\item
Some differentials follow by comparison to an analogous classical result.
\index{Adams spectral sequence!differential!classical}
\item
The remaining differentials are proved in Section \ref{subsctn:d4-lemmas}.
\end{enumerate}

For the differentials whose values are zero, 
Section \ref{subsctn:d4-lemmas} includes proofs 
for the cases that are not obvious.
\end{proof}

The $E_4$ chart in \cite{Isaksen14a} indicates the Adams $d_4$
differentials, all of which are implied by the calculations 
in Table \ref{tab:Adams-d4}.
The differentials are complete only through the 65-stem.  Beyond the
65-stem, there are a number of unknown differentials.
\index{Adams chart}

\begin{remark}
\label{rem:d4-h1X1}
The chart in \cite{KM93} indicates a 
classical differential $d_4(h_1 X_1) = d_0 e_0 r$.
However, we have been unable to independently verify this differential.
\index{X1@$X_1$}
\index{d0e0r@$d_0 e_0 r$}

Because of the differential $d_5(\tau P h_5 e_0) = \tau d_0 z$
\index{Ph5e0@$P h_5 e_0$}
\index{d0z@$d_0 z$}
from Lemma \ref{lem:d5-tPh5e0}, we strongly suspect that
$\tau^2 d_0 e_0 r$ is hit by some differential, but there is
more than one possibility.

Note that $\tau^2 d_0 e_0 r$ detects $\tau^2 \eta \kappabar^3$.
\end{remark}

\begin{remark}
\label{rem:Adams-d4-C'}
The chart in \cite{KM93} indicates a 
classical differential $d_4(C') = h_2 B_{21}$.
However, we have been unable to independently verify this differential.
\index{C'@$C'$}
\index{B21@$B_{21}$}

Because $B_{21}$ detects $\kappa \theta_{4.5}$,
\index{theta4.5@$\theta_{4.5}$}
we know that $h_2 B_{21}$ detects $\nu \kappa \theta_{4.5}$.
If we could show that $\nu \kappa \theta_{4.5}$ is zero,
then we could conclude that there is a differential
$d_4(C') = h_2 B_{21}$.
\end{remark}


\subsection{Adams $d_5$ differentials}
\label{subsctn:d5}

Because the $d_4$ differentials are relatively sparse, \cite{Isaksen14a} does not
provide a separate chart for the $E_5$-page. 

The next step is to compute the Adams $d_5$ differentials on the multiplicative
generators of the $E_5$-page.
\index{Adams spectral sequence!E5-page@$E_5$-page}
\index{Adams spectral sequence!differential!d5@$d_5$}

\begin{prop}
\label{prop:Adams-d5}
Table \ref{tab:Adams-d5} lists some values of the motivic Adams $d_5$
differential.  The motivic Adams $d_5$ differential is zero
on all other multiplicative generators of the $E_5$-page, through
the 65-stem, with the possible exceptions that:
\begin{enumerate}
\item
$d_5(A')$ might equal $\tau h_1 B_{21}$.
\index{A'@$A'$}
\index{B21@$B_{21}$}
\item
$d_5(\tau h_1 H_1)$ might equal $\tau h_2 B_{21}$.
\index{H1@$H_1$}
\item
$d_5(\tau h_1^2 X_1)$ might equal $\tau^3 d_0^2 e_0^2$.
\index{X1@$X_1$}
\index{d02e02@$d_0^2 e_0^2$}
\end{enumerate}
\end{prop}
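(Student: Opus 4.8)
The plan is to follow the same template already used for the $d_2$, $d_3$, and $d_4$ differentials in Propositions \ref{prop:Adams-d2}, \ref{prop:Adams-d3}, and \ref{prop:Adams-d4}. First I would appeal to Table \ref{tab:Adams-d5}, which records for each multiplicative generator of the $E_5$-page that supports a nonzero $d_5$ one justifying argument. As before, these arguments sort into the standard families: those deducible from the image of $J$ \cite{Adams66}; those obtained by comparison with the Adams spectral sequence for $\tmf$ \cite{Henriques07}; those lifted from an analogous classical $d_5$ by Proposition \ref{prop:compare}, using that after inverting $\tau$ the motivic and classical differentials must agree; and the genuinely motivic or delicate cases, whose proofs I would collect in Section \ref{subsctn:d5-lemmas}. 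Among the latter are the subtler exotic differentials, such as $d_5(\tau P h_5 e_0) = \tau d_0 z$ of Lemma \ref{lem:d5-tPh5e0}, and the one particularly difficult differential that we establish by following \cite{KM93}.

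For each generator on which $d_5$ must instead vanish, I would argue case by case. In most instances the target group is zero, or the weight grading obstructs the only candidate (a $d_5$ preserves weight while raising Adams filtration by $5$, and this frequently kills the sole algebraically possible target), or the candidate target has already been hit by an earlier $d_r$, or it is forced to be a permanent cycle because it is detected by a nonzero element of $\pi_{*,*}$, of $\pi_{*,*}(C\tau)$, or of $\tmf$. The residual, less obvious vanishing statements I would again defer to Section \ref{subsctn:d5-lemmas}, where they can be treated with Toda-bracket and Moss's Convergence Theorem arguments in the usual way.

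The hard part is the three exceptional cases, where none of these tools decides between zero and nonzero, so that honest bookkeeping compels us to record the differentials as unknown rather than assert or deny them. The difficulty is that the candidate targets $\tau h_1 B_{21}$, $\tau h_2 B_{21}$, and $\tau^3 d_0^2 e_0^2$ are entangled with the poorly understood homotopy of the high-$50$s and $60$s range. For example, $B_{21}$ detects $\kappa \theta_{4.5}$ (see Remark \ref{rem:Adams-d4-C'}), so that $\tau h_1 B_{21}$ and $\tau h_2 B_{21}$ detect the products $\tau \eta \kappa \theta_{4.5}$ and $\tau \nu \kappa \theta_{4.5}$; deciding whether $A'$ or $\tau h_1 H_1$ supports a $d_5$ is therefore equivalent to knowing whether these products are nonzero and detected in the relevant filtration. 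But $\theta_{4.5}$ is itself only pinned down up to the indeterminacies catalogued in Section \ref{sctn:notation}, and these products lie beyond the reach of our present Toda-bracket methods. I therefore expect, exactly as with the exceptions in Proposition \ref{prop:Adams-d4}, that these three differentials cannot be settled here and must be left as explicitly documented uncertainties.
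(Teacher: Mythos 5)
Your proposal follows essentially the same template as the paper's proof: cite Table \ref{tab:Adams-d5}, dispatch the nonzero differentials with the standard argument families, defer the delicate cases to Section \ref{subsctn:d5-lemmas}, and leave the three listed differentials as explicitly documented unknowns. Two points of divergence are worth flagging. First, the paper's proof body treats the two $h_6$-generators directly: $d_5(h_0^{22} h_6) = P^6 d_0$ from the image of $J$, and $d_5(h_1 h_6) = 0$ from the existence of Mahowald's classical element $\eta_6$ \cite{Mahowald77}; the latter is a specific external input that your general categories only implicitly cover. Also, the ``one particularly difficult differential established following \cite{KM93}'' is not a separate item from $d_5(\tau P h_5 e_0) = \tau d_0 z$ --- Lemma \ref{lem:d5-tPh5e0} \emph{is} that differential. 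Second, and more substantively, your characterization of the third exception is not quite right: the paper does not leave $\tau^3 d_0^2 e_0^2$ in the dark. Lemma \ref{lem:t^3d0^2e0^2-hit} proves by Toda-bracket shuffles that $\tau^2 \eta^2 \kappabar^3$ is zero, hence that $\tau^3 d_0^2 e_0^2$ must be hit by \emph{some} differential; the only residual uncertainty is the source, the candidates being $d_5(\tau h_1^2 X_1)$, $d_9(\tau X_2)$, and $d_{10}(\tau h_1 H_1)$. So this case is not ``beyond the reach of Toda-bracket methods'' --- it is settled at the level of the target, unlike the genuinely open products $\eta \kappa \theta_{4.5}$ and $\nu \kappa \theta_{4.5}$ that govern the first two exceptions.
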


\begin{proof}
The differential $d_5( h_0^{22} h_6 ) = P^6 d_0$ 
follows from the calculation of the image of $J$ \cite{Adams66}.
The differential $d_5(h_1 h_6) = 0$ follows from the existence of 
the classical element $\eta_6$ \cite{Mahowald77}.
\index{eta6@$\eta_6$}

The remaining cases are computed in Section \ref{subsctn:d5-lemmas}.
\end{proof}

The chart of the $E_4$-page in \cite{Isaksen14a}
indicates the very few $d_5$ differentials along with the 
$d_4$ differentials.
\index{Adams chart}

\begin{remark}
\label{rem:d5-A'}
The chart in \cite{KM93} indicates a 
classical differential $d_5(A') = h_1 B_{21}$.
\index{A'@$A'$}
\index{B21@$B_{21}$}
\index{theta4.5@$\theta_{4.5}$}
However, we have been unable to independently verify this differential.
Because $B_{21}$ detects $\kappa \theta_{4.5}$,
we know that $h_1 B_{21}$ detects $\eta \kappa \theta_{4.5}$.
We have so far been unable to show that 
$\eta \kappa \theta_{4.5}$ is zero.
\end{remark}

\begin{remark}
\index{H1@$H_1$}
\index{B21@$B_{21}$}
We suspect that $d_5(\tau h_1 H_1)$ equals zero, not $\tau h_2 B_{21}$.
This would follow immediately if we knew that
$d_4(C') = h_2 B_{21}$ 
(see Proposition \ref{prop:Adams-d4} and Remark \ref{rem:Adams-d4-C'}).
\index{C'@$C'$}
\end{remark}

\begin{remark}
We show in Lemma \ref{lem:t^3d0^2e0^2-hit} that
$\tau^3 d_0^2 e_0^2$ is hit by some differential.
\index{d02e02@$d_0^2 e_0^2$}
We suspect that $d_5 (\tau h_1^2 X_1)$ equals $\tau^3 d_0^2 e_0^2$.
\index{X1@$X_1$}
The other possibilities are 
$d_9(\tau X_2)$ and $d_{10} (\tau h_1 H_1)$.
\index{X2@$X_2$}
\index{H1@$H_1$}
\end{remark}


\subsection{Higher Adams differentials}

At this point, we are almost done.
\index{Adams spectral sequence!differential!higher}
\index{Adams spectral sequence!E6-page@$E_6$-page}
\index{Adams spectral sequence!Einfinity-page@$E_\infty$-page}

\begin{prop}
Through the 59-stem, the $E_6$-page equals the $E_\infty$-page.
\end{prop}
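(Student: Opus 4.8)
The plan is to argue by direct inspection of the $E_6$-page, which has already been determined through the 59-stem by the computations of Propositions \ref{prop:Adams-d2}, \ref{prop:Adams-d3}, \ref{prop:Adams-d4}, and \ref{prop:Adams-d5}. A differential $d_r$ with $r \geq 6$ has the form $d_r \colon E_r^{s,f,w} \map E_r^{s-1,f+r,w}$, so it strictly preserves the weight $w$ while lowering the stem by exactly one and raising the Adams filtration by at least six. Since $d_r$ is a derivation, it suffices to verify its vanishing on the multiplicative generators of the $E_6$-page (which are among those of $\Ext$ listed in Table \ref{tab:Ext-gen}) and then to extend to all classes by the Leibniz rule.

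First I would eliminate the overwhelming majority of generators for degree reasons. For each surviving generator $x$ in stem $s \leq 60$ (the extra stem is needed because a higher differential out of the 60-stem would alter the 59-stem), one checks the $E_6$ chart in \cite{Isaksen14a} to see whether the group $E_6^{s-1,f+r,w}$ is even nonzero for some $r \geq 6$ in the matching weight. In most cases there is no class available in the target bidegree with the correct weight, so $d_r(x) = 0$ automatically. The weight grading is especially effective here, since it rules out many targets that would otherwise appear plausible from the classical chart alone.

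After this first pass only a short finite list of potential higher differentials remains, and these I would resolve individually using tools already developed in this chapter. The primary tool is comparison to the classical Adams spectral sequence through Proposition \ref{prop:compare}: a nonzero motivic $d_r$ whose source is not $\tau$-torsion forces a classical higher differential in the same range, and all such classical differentials are already known (\emph{see} Table \ref{tab:diff-refs}). For classes lying in the image of the comparison map, I would appeal to the Adams spectral sequence for $\tmf$ \cite{Henriques07}, and for the surviving $h_0$-towers I would invoke the image of $J$ calculation \cite{Adams66}, which identifies them as permanent cycles detecting elements of known order. Where a putative target must be shown to be a nonzero permanent cycle, Moss's Convergence Theorem \ref{thm:Moss} lets me exhibit it as detecting a nontrivial Toda bracket, so that it cannot be hit.

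I expect the main obstacle to be the few remaining high-filtration classes near the top of the range, where both a source and a target happen to survive to $E_6$ in the same weight. For these the classical comparison is usually decisive, but one must take care with $\tau$-torsion classes whose classical images vanish: there the comparison gives no information, and the argument must be closed directly from the explicit weight bookkeeping together with the known module structure of the $E_6$-page in that degree.
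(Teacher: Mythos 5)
Your overall skeleton --- degree and weight inspection of the $E_6$-page, followed by targeted arguments for the handful of survivors --- is exactly the paper's strategy: the inspection leaves precisely one candidate, namely $d_6(h_5 c_1) = P h_1^2 h_5 c_0$, and the paper rules it out by showing that the target cannot be hit by any differential (proof of Lemma \ref{lem:d4-C}). But your proposal has a concrete flaw at exactly that crux. First, your comparison criterion is misstated: a nonzero motivic $d_r$ forces a nonzero classical differential only when the \emph{target} is not $\tau$-torsion, not the source; inverting $\tau$ kills a $\tau$-torsion target, so the localized differential vanishes and no classical conclusion follows in either direction. In the one remaining case the target is killed by $\tau$ (already $\tau h_1^2 c_0 = 0$ in $\Ext$, by the May differential $d_2(b_{20}h_0(1)) = \tau h_1^2 c_0$), so $P h_1^2 h_5 c_0$ is classically invisible: comparison to the classical Adams spectral sequence, to $\tmf$, and to the image of $J$ all say nothing here, and in particular the known classical permanence of $h_5 c_1$ does \emph{not} imply its motivic permanence.

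Second, your declared fallback for exactly this situation --- closing the argument ``directly from the explicit weight bookkeeping together with the known module structure'' --- cannot work: this case is, by construction, the one that survives all degree and weight considerations. What is needed is a genuinely motivic homotopy-theoretic argument. One can either show the target survives, as the paper does: if $P h_1^2 h_5 c_0$ were hit, then $\eta\alpha$ would vanish for $\alpha$ in $\{P h_1 h_5 c_0\}$, whence $\langle \eta^2, \alpha, \epsilon \rangle = \eta \langle \eta, \alpha, \epsilon \rangle$ would make $\{P h_1^3 h_5 e_0\}$ divisible by $\eta$, a contradiction (Lemma \ref{lem:d4-C}, using the hidden extension $c_0 \cdot G_3 = P h_1^3 h_5 e_0$ of Table \ref{tab:May-misc}); or one can show the source is a permanent cycle, since $h_5 c_1$ detects $\langle \sigmabar, 2, \theta_4 \rangle$ by Moss's Convergence Theorem \ref{thm:Moss} and Table \ref{tab:Toda}. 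You do list the Moss/Toda tool generically, so the right idea is in your toolkit; but as written, your decision tree would dispatch the only case that matters either by an invalid comparison argument or by an insufficient weight argument, and that is where the proof actually lives.
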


\begin{proof}
The only possible higher differential 
is that $d_6(h_5 c_1)$ might equal $P h_1^2 h_5 c_0$.
\index{h5c1@$h_5 c_1$}
\index{Ph5c0@$P h_5 c_0$}
However, we will show in the proof of Lemma \ref{lem:d4-C} that
$P h_1^2 h_5 c_0$ cannot be hit by a differential.
\end{proof}

The calculations of Adams differentials lead
immediately to our main theorem.

\begin{thm}
\label{thm:Adams-Einfty}
The $E_\infty$-page of the motivic Adams spectral sequence
over $\C$ is depicted in the chart in \cite{Isaksen14a}
through the 59-stem.
Beyond the 59-stem, the actual $E_\infty$-page is a subquotient of
what is shown in the chart.
\end{thm}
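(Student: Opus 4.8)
The plan is to treat Theorem \ref{thm:Adams-Einfty} as the assembly of all the differential computations carried out in the preceding sections. The motivic Adams spectral sequence is multiplicative, so each differential $d_r$ is a derivation and is therefore completely determined by its values on the multiplicative generators of the $E_r$-page. Propositions \ref{prop:Adams-d2}, \ref{prop:Adams-d3}, \ref{prop:Adams-d4}, and \ref{prop:Adams-d5} (together with Tables \ref{tab:Ext-gen}, \ref{tab:Adams-d3}, \ref{tab:Adams-d4}, and \ref{tab:Adams-d5}) record exactly these values on generators, so the approach is to propagate them across each page using the Leibniz rule.

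Concretely, I would begin with the $E_2$-page, which is the ring $\Ext$ computed in Chapter \ref{ch:May}, and apply Proposition \ref{prop:Adams-d2} to compute the full $d_2$ and pass to $E_3$. I would then apply in turn Proposition \ref{prop:Adams-d3} to obtain $E_4$, Proposition \ref{prop:Adams-d4} to obtain $E_5$, and Proposition \ref{prop:Adams-d5} to obtain $E_6$, each time extending the differential from the generators of the relevant page by the Leibniz rule and taking homology. Finally, I would invoke the proposition established just above, that the $E_6$-page equals the $E_\infty$-page through the 59-stem, to conclude that no further differentials occur in this range. Matching the outcome against the chart in \cite{Isaksen14a} then proves the first sentence of the theorem.

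I expect the main obstacle to be completeness rather than any individual computation: one must confirm that every multiplicative generator through the 59-stem has had its differential pinned down, and that each of the genuinely uncertain cases flagged in Propositions \ref{prop:Adams-d3}--\ref{prop:Adams-d5}---such as the possible differential $d_3(D_3) = B_3$, the possible $d_4$ hitting $\tau^2 d_0 e_0 r$, and the possible $d_5$ differentials involving $A'$, $\tau h_1 H_1$, or $\tau h_1^2 X_1$---lies strictly beyond the 59-stem. Since all of these ambiguities concern stems $60$ and above, none of them affects the range under consideration, and $E_\infty$ is fully determined through stem $59$. For the second sentence, these same unresolved differentials, along with potential higher differentials not ruled out by the earlier analysis, prevent an exact determination beyond the 59-stem; the most one can assert there is that the true $E_\infty$-page is a subquotient of the displayed chart.
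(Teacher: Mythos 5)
Your proposal is correct and follows essentially the same route as the paper: Theorem \ref{thm:Adams-Einfty} is obtained there by assembling Propositions \ref{prop:Adams-d2}--\ref{prop:Adams-d5} via the Leibniz rule and then invoking the proposition that the $E_6$-page equals the $E_\infty$-page through the 59-stem. Your observation that all flagged uncertainties lie in stems $60$ and beyond, which both secures the chart through the 59-stem and forces only the subquotient statement past it, is exactly the content of the paper's argument.
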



\section{Adams differentials computations}
\label{sctn:d-lemmas}

In this section, we collect the technical computations that establish
the Adams differentials discussed in Section \ref{sctn:differentials}.

\subsection{Adams $d_2$ differentials computations}
\label{subsctn:d2-lemmas}

\index{Adams spectral sequence!differential!d2@$d_2$}

The first two lemmas establish well-known facts from the classical situation.
However, explicit proofs are not readily available in the literature, so we
supply them here.

\begin{lemma}
\label{lem:d2-e0}
$d_2 ( P^k e_0) = h_1^2 P^k d_0$.
\end{lemma}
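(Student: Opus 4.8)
The plan is to argue by induction on $k$, treating the Adams periodicity operator $P$ as the Massey product $\langle h_3, h_0^4, - \rangle$ and showing that the Adams $d_2$ differential commutes with $P$. The base case $k=0$ is the differential $d_2(e_0) = h_1^2 d_0$ recorded in Example \ref{ex:d2-2}, which is lifted from the classical computation. For the inductive step I would observe that $P^k e_0$ lies in $\langle h_3, h_0^4, P^{k-1} e_0\rangle$ and $P^k d_0$ lies in $\langle h_3, h_0^4, P^{k-1} d_0\rangle$, these brackets being defined because $h_3 h_0^4 = 0$ and $h_0^4$ annihilates $P^{k-1} e_0$ and $P^{k-1} d_0$. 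The key structural point is that $h_3$ and $h_0^4$ are permanent cycles with $d_2(h_3) = 0 = d_2(h_0^4)$, so in the Leibniz formula for $d_2$ applied to this threefold bracket only the last slot can contribute.

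Granting the Leibniz rule $d_2 \langle h_3, h_0^4, x\rangle \subseteq \langle h_3, h_0^4, d_2 x\rangle$, the inductive hypothesis $d_2(P^{k-1} e_0) = h_1^2 P^{k-1} d_0$ gives
\[
d_2(P^k e_0) \in \langle h_3, h_0^4, h_1^2 P^{k-1} d_0 \rangle = h_1^2 \langle h_3, h_0^4, P^{k-1} d_0 \rangle \ni h_1^2 P^k d_0,
\]
where $h_1^2$ is pulled out of the last slot because it is a permanent cycle. It then remains to check that the indeterminacy of the final bracket does not spoil the conclusion, i.e. that $h_1^2 P^k d_0$ is the only candidate in tridegree $(8k+16,\, 4k+6,\, 4k+10)$ compatible with this calculation; the weight grading together with the sparseness of $\Ext$ in this range should pin it down.

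The main obstacle is making the Leibniz rule rigorous. The subtlety is that $d_2$ is not the internal differential of the cobar complex but a higher differential of the spectral sequence, so the commutation of $d_2$ with $\langle h_3, h_0^4, -\rangle$ requires a no-crossing hypothesis entirely analogous to condition (2) of May's Convergence Theorem \ref{thm:3-converge} and condition (2) of Moss's Convergence Theorem \ref{thm:Moss}: one must verify that no later Adams differential crosses the relevant filtration jump at each stage of the induction. As an independent confirmation, and as a way to eliminate stray $\tau$-power-torsion error terms in the target, I would invoke Proposition \ref{prop:compare}: after inverting $\tau$ the assertion reduces to the corresponding classical differential, which forces the motivic answer to be $h_1^2 P^k d_0$ up to elements that vanish under $\tau$-inversion, and those are excluded by the weight bookkeeping above.
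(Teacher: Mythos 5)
Your proposal has a genuine gap: it assumes precisely the fact that this lemma exists to prove. The paper introduces this lemma with the remark that these are well-known classical facts whose explicit proofs are not readily available in the literature; accordingly, Table \ref{tab:Ext-gen} cites Lemma \ref{lem:d2-e0} itself as the justification for $d_2(e_0) = h_1^2 d_0$, and Example \ref{ex:d2-2} is an illustration that presupposes the classical differential rather than establishing it. Taking your base case from Example \ref{ex:d2-2} is therefore circular within the paper's logic, and your closing appeal to Proposition \ref{prop:compare} to reduce everything to ``the corresponding classical differential'' has the same defect for every $k$: no step of your argument produces the differential from genuine homotopy-theoretic input. Note also that if the classical statement were citable, your Massey-product induction would be superfluous --- one could lift $d_2(P^k e_0) = h_1^2 P^k d_0$ for each $k$ separately from Proposition \ref{prop:compare} plus weight bookkeeping, exactly as the paper does for many other table entries; the inductive machinery does no work that a citation would not already do.

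The paper's proof instead manufactures the differential from the relation $2\kappa = 0$: this relation forces $d_2(\beta) = h_0 d_0$ in the Adams spectral sequence for $\tmf$, hence $d_2(h_2 \beta) = h_0^2 e_0$ there; since $f_0$ maps to $h_2\beta$, naturality gives $d_2(f_0) = h_0^2 e_0$ classically and motivically; and the relation $h_0 f_0 = \tau h_1 e_0$ then forces $d_2(e_0) = h_1^2 d_0$. For $k \geq 1$ the same comparison is run using $d_2(P^k h_2 \beta) = P^k h_0^2 e_0$ in the spectral sequence for $\tmf$, the fact that $P^k h_0 j$ maps to $P^{k+1} h_2 \beta$, and the relation $P^k h_0^2 j = \tau P^{k+1} h_1 e_0$ --- no induction and no Massey products. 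Separately, even if a legitimate base case were supplied, your inductive step leaves two nontrivial verifications undone: the identification of $P^k e_0$ as an element of $\langle h_3, h_0^4, P^{k-1} e_0 \rangle$ requires May's Convergence Theorem \ref{thm:3-converge} with its crossing-differential hypothesis (the kind of check that genuinely fails in this range; compare the proof of Lemma \ref{lem:_c0e0.h0}, where a naive application is blocked by the later differential $d_4(\Delta h_1^2) = P h_1^2 h_4$), and the Leibniz rule for $d_2$ on that bracket requires the no-crossing hypothesis of Moss's theorem, which you flag but never check.
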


\begin{proof}
Because of the relation $2 \kappa = 0$, 
there must be a differential $d_2(\beta) = h_0 d_0$
in the Adams spectral sequence for $\tmf$.
Here $\beta$ is the class in the 15-stem as labeled in \cite{Henriques07}.  
Then $d_2( h_2 \beta ) = h_0^2 e_0$.

Now $f_0$ maps to $h_2 \beta$, so it follows that
$d_2(f_0) = h_0^2 e_0$ in the classical Adams spectral sequence
for the sphere.  The same formula must hold motivically.

The relation $h_0 f_0 = \tau h_1 e_0$ then implies that
$d_2(e_0) = h_1^2 d_0$.  This establishes the formula for $k = 0$.

The argument for larger values of $k$ is similar,
using that $d_2( P^k h_2 \beta ) = P^k h_0^2 e_0$
in the Adams spectral sequence for $\tmf$;
$P^k h_0 j$ maps to $P^{k+1} h_2 \beta$; and
$P^k h_0^2 j = \tau P^{k+1} h_1 e_0$.
\end{proof}

\begin{lemma}
\label{lem:d2-l}
$d_2(l) = h_0 d_0 e_0$.
\end{lemma}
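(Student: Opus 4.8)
The plan is to pin down the target first and then force the differential by comparison with $\tmf$, in exactly the style of the preceding proof of Lemma \ref{lem:d2-e0}. The class $l$ sits in the 32-stem in filtration $7$, so $d_2(l)$ lands in the tridegree of $h_0 d_0 e_0$ (stem $31$, filtration $9$, weight $18$). The only class available there is $h_0 d_0 e_0$, and using the relation $h_0 e_0 = h_2 d_0$ we may rewrite it as $h_2 d_0^2$. Hence it suffices to prove that $d_2(l)$ is nonzero, since the value is then forced. The asserted differential is balanced in weight, so by Proposition \ref{prop:compare} it is enough to establish the classical analogue and then lift it; the lift is unambiguous because $h_0 d_0 e_0$ is the unique weight-$18$ class in its degree.

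The main tool is the unit map $S^{0,0} \map \tmf$ together with naturality of the Adams spectral sequence. First I would recall, as in Lemma \ref{lem:d2-e0}, that $\kappa = \{d_0\}$ satisfies $2\kappa = 0$. Since $d_0$ is a permanent cycle, $h_2 d_0^2 = h_0 d_0 e_0$ is a $d_2$-cycle and can only disappear by being hit. The relation $2\kappa=0$, applied to the $h_0$-multiple $h_0 d_0 e_0$, forces this class to be a $d_2$-boundary in the $\tmf$ spectral sequence. By degree the unique possible source is the image $\bar l$ of $l$, so $d_2(\bar l) = h_0 d_0 e_0$ in $\tmf$. Because $l$ and $h_0 d_0 e_0$ both restrict to nonzero classes in $\Ext_{A(2)}$, naturality along the unit map then gives $d_2(l) = h_0 d_0 e_0$ in the sphere, and the motivic statement follows as above.

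The delicate point, and the main obstacle, is the bookkeeping inside the $\tmf$ computation of \cite{Henriques07}. Since $d_2(e_0) = h_1^2 d_0$ (Lemma \ref{lem:d2-e0}), the class $e_0$ is not a permanent cycle and $d_0 e_0$ is not literally a homotopy class, so the vanishing argument must be phrased through the surviving class $h_2 d_0 = h_0 e_0$ detecting $\nu\kappa$ rather than through $d_0 e_0$ directly; one must also confirm that $h_0 d_0 e_0$ truly survives to be a legitimate $d_2$-target (and is not removed by an earlier differential) and that $\bar l \neq 0$ in $\Ext_{A(2)}$. Should this $\tmf$-level identification prove awkward, the natural fallback is to find a multiplicative relation in $\Ext$ expressing an $h_0$- or $h_1$-multiple of $l$ in terms of $e_0$, and then apply the Leibniz rule with $d_2(e_0) = h_1^2 d_0$, mirroring the way $d_2(e_0)$ itself was deduced from $d_2(f_0)$ via the relation $h_0 f_0 = \tau h_1 e_0$ in Lemma \ref{lem:d2-e0}.
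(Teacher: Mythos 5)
Your global strategy---pin down the target by degree, then force a nonzero classical differential by naturality along the unit map $S^{0,0} \map \tmf$---is genuinely different from the paper's, and it could in principle be made to work; but the step carrying all the weight is a non sequitur. You assert that the relation $2\kappa = 0$ forces $h_0 d_0 e_0$ to be a $d_2$-boundary in the Adams spectral sequence for $\tmf$. It does not. Via $h_2 d_0 = h_0 e_0$ one has $h_0 d_0 e_0 = h_2 d_0^2$, a product of the permanent cycles $h_2$ and $d_0$; if this class survived, it would detect $\nu \kappa^2$ in $\pi_{31}\tmf$. The relation $2\kappa = 0$ yields only $2\nu\kappa^2 = 0$, which is no information about a class that, if nonzero, already has order $2$. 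The ``$2$-torsion forces a boundary'' argument is valid exactly one stem earlier: $h_0 d_0^2$ would detect $2\kappa^2$, so $2\kappa = 0$ genuinely forces $h_0 d_0^2$ to die---this is how one gets $d_2(k) = h_0 d_0^2$ from $\tmf$---but $h_2 d_0^2$ is not an $h_0$-multiple of any surviving class, and no torsion statement applies to it. Your third paragraph shows you saw the problem (you correctly replace $d_0 e_0$ by $h_2 d_0$, which detects $\nu\kappa$), but you never supply the missing input, namely the vanishing of $\kappa \cdot \nu\kappa$ in $\pi_{31}\tmf$; as written the argument is circular, since that vanishing is a consequence of the very differential you are trying to produce. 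A secondary gap: even granting that $h_0 d_0 e_0$ dies, the claim that ``by degree the unique possible source is the image of $l$'' requires knowing the filtration-$7$ part of the $32$-stem of the $E_2$-page for $\tmf$ and identifying the image of $l$ inside it; neither is addressed.

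The repair is either the paper's detour or an honest Leibniz computation, and comparing them is instructive. The paper never enters the $31$-stem of $\tmf$: it first establishes $d_2(k) = h_0 d_0^2$ by comparison with $\tmf$ (where, as above, the torsion argument is legitimate), and then uses the relation $h_2 k = h_0 l$ in $\Ext$, so that the Leibniz rule together with $h_2 d_0 = h_0 e_0$ gives $h_0 \, d_2(l) = h_2 \, d_2(k) = h_2 h_0 d_0^2 = h_0 \cdot h_0 d_0 e_0$, whence $d_2(l) = h_0 d_0 e_0$. Alternatively, your direct comparison can be salvaged by computing, rather than forcing, the $\tmf$ differential: the image of $l$ in the $E_2$-page for $\tmf$ is $\beta e_0$ (just as $f_0$ maps to $h_2\beta$, per the proof of Lemma \ref{lem:d2-e0}, and $k$ maps to $\beta d_0$), and then $d_2(\beta e_0) = h_0 d_0 \cdot e_0 + \beta \cdot h_1^2 d_0 = h_0 d_0 e_0$, using $d_2(\beta) = h_0 d_0$ (which \emph{does} follow from $2\kappa = 0$), $d_2(e_0) = h_1^2 d_0$ from Lemma \ref{lem:d2-e0}, and $h_1\beta = 0$; after that your naturality and uniqueness-of-target steps finish the proof. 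Note also that your fallback as stated cannot work: there is no relation expressing an $h_0$- or $h_1$-multiple of $l$ as a multiple of $e_0$ in $\Ext$; the relation that actually exists is $h_0 l = h_2 k$, which is precisely why the paper routes the argument through $k$.
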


\begin{proof}
The differential $d_2(k) = h_0 d_0^2$ follows by comparison to the
Adams spectral sequence for $\tmf$.  The relation
$h_2 k = h_0 l$ then implies that $d_2(l) = h_0 d_0 e_0$.
\end{proof}

\begin{lemma}
\label{lem:d2-h3g}
\mbox{}
\begin{enumerate}
\item
$d_2 (h_3 g) = h_0 h_2^2 g$.
\item
$d_2 ( h_3 g^2 ) = h_0 h_2^2 g^2$.
\item
$d_2 ( h_3 g^3 ) = h_0 h_2^2 g^3$.
\end{enumerate}
\end{lemma}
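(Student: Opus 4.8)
The plan is to treat these as genuinely exotic differentials with no classical shadow. Each target $h_0 h_2^2 g^k$ is $\tau$-torsion, so it vanishes after inverting $\tau$; hence the corresponding classical $d_2$ is zero and Proposition \ref{prop:compare} gives no information. The entire content is therefore motivic, and I would argue intrinsically, in two stages: first constrain each differential to at most one nonzero value, then prove nonvanishing.

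For the constraint I would use multiplicativity. Although $g^k$ is not itself an $\Ext$ class, each $h_3 g^k$ is a nonzero class of the $E_2$-page (Chapter \ref{ch:May}). Since $\tau$ and $h_3$ are permanent cycles and $\tau g^k$ is a $d_2$-cycle (it detects $\tau \kappabar^k$), the Leibniz rule gives
\[
\tau \cdot d_2(h_3 g^k) = d_2\bigl(h_3 \cdot \tau g^k\bigr) = 0,
\]
so $d_2(h_3 g^k)$ is $\tau$-torsion. By Lemma \ref{lem:hidden-h0h2^2g} the targets $h_0 h_2^2 g$, $h_0 h_2^2 g^2$, and $h_0 h_2^2 g^3$ equal $h_1^3 h_4 c_0$, $h_1^7 h_5 c_0$, and $h_1^9 D_4$ respectively; these are $\tau$-torsion, and inspection of the $E_2$-page shows that each is the unique nonzero $\tau$-torsion class in its degree. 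Thus $d_2(h_3 g^k)$ is either $0$ or $h_0 h_2^2 g^k$.

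The hard part is nonvanishing. My preferred tool is the relationship between the Adams $d_2$ and algebraic Steenrod operations of \cite{BMMS86}*{VI.1}, which is exactly the mechanism that produces exotic differentials of this ``$\Sq^0$ of the image of $J$'' shape (the target family of Example \ref{ex:h1^7h5c0}). Concretely, I would feed in the differential $d_2(e_0) = h_1^2 d_0$ of Lemma \ref{lem:d2-e0} together with the relation $h_3 e_0 = h_1 h_4 c_0$ of Example \ref{ex:h3e0}, noting that $h_1^2 h_3 e_0 = h_1^3 h_4 c_0 = h_0 h_2^2 g$, and track the Steenrod operation carrying this data onto $h_3 g$. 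Getting the precise operation and its Kudo-type correction term right is the main obstacle: the degree bookkeeping is delicate, and one must rule out the operation landing in the trivial summand. As a fallback I would instead show that $h_0 h_2^2 g^k$ cannot survive to $E_\infty$ for homotopy-theoretic reasons, so that it must be hit; since $h_3 g^k$ is the only class in a degree that can support a differential into it, this again forces $d_2(h_3 g^k) = h_0 h_2^2 g^k$.

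The three parts then run on the same template, invoking parts (1), (2), and (3) of Lemma \ref{lem:hidden-h0h2^2g} in turn. The steady increase in the complexity of the targets ($h_4 c_0$, then $h_5 c_0$, then $D_4$) is precisely why the statement stops at $k \leq 3$ rather than asserting the full conjectural family $d_2(h_3 g^k) = h_0 h_2^2 g^k$.
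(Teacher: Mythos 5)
Your reduction step (Leibniz rule plus $\tau$-torsion of the targets, so that Proposition \ref{prop:compare} is silent and $d_2(h_3 g^k)$ is either $0$ or $h_0 h_2^2 g^k$) is fine as far as it goes, but it only limits the possible values; the entire content of the lemma is the nonvanishing, and neither of your routes to it is an argument. The primary route cannot work as stated: the mechanism of \cite{BMMS86}*{VI.1} computes Adams differentials on classes that are values of algebraic Steenrod operations, expressing $d_*(\Sq^i x)$ in terms of $d_*(x)$, and $h_3 g^k$ is not exhibited as $\Sq^i$ of anything. Feeding in $d_2(e_0) = h_1^2 d_0$ and the relation $h_3 e_0 = h_1 h_4 c_0$ produces no operation whose value is $h_3 g$, and your concession that identifying ``the precise operation and its Kudo-type correction term'' is the main obstacle is an admission that the decisive step is absent, not merely delicate. (In this manuscript \cite{BMMS86}*{VI.1} is used for $d_2(n_1) = h_0 r_1$, where $n_1$ genuinely is a Steenrod operation on $n$; nothing analogous is available for $h_3 g^k$.)

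Your fallback is in fact the paper's proof, but as written it is a placeholder: the ``homotopy-theoretic reasons'' must be named, and they are nontrivial hidden-extension inputs, one for each $k$. For (1): by the hidden $\sigma$ extension $\sigma \eta_4 \in \{ h_4 c_0 \}$ (Lemma \ref{lem:sigma-h1h4}), the target $h_0 h_2^2 g = h_1^3 h_4 c_0$ would detect $\eta^3 \sigma \eta_4$, which is zero because $\eta^3 \eta_4 = 0$ by the defining property of $\eta_4$. For (2): $h_1^7 h_5 c_0$ would detect $\eta^6 \epsilon \eta_5 = \eta^7 \sigma \eta_5$ (using $\eta^3 \sigma = \eta^2 \epsilon$, which follows from Table \ref{tab:Adams-compound-extn}), and this is zero because $\eta^7 \eta_5 = 0$. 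For (3): by the hidden extension $c_0 \cdot i_1 = h_1^4 D_4$ (Lemma \ref{lem:c0-i1}, Table \ref{tab:May-misc}), the target $h_0 h_2^2 g^3 = h_1^9 D_4 = h_1^5 c_0 i_1$ would detect $\eta^6 \sigma \{ i_1 \}$, which is zero because $\eta^6 \{ i_1 \} = 0$. These identifications and vanishing statements are the actual theorems; without them your appeal proves nothing. Two smaller defects: your parenthetical that $\tau g^k$ ``detects $\tau \kappabar^k$'' is false for $k=3$, since $d_3(\tau g^3) = h_1^6 B_8$ and $\tau g^3$ does not survive (the weaker fact $d_2(\tau g^k) = 0$, which is all you need, itself requires justification for $k = 2, 3$); and the fallback also requires checking that no differential other than the claimed $d_2$ can hit each target, the ``only possible differential'' verification that completes the paper's argument.
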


\begin{proof}
Table \ref{tab:Adams-misc-extn} indicates that
$\sigma \eta_4$ is contained in $\{ h_4 c_0 \}$.
Therefore, $h_1^3 h_4 c_0$
detects $\eta^3 \sigma \eta_4$.
However, $\eta^3 \eta_4$ is zero.

This means that $h_1^3 h_4 c_0$ must be zero
on the $E_\infty$-page.
The only possible differential is
$d_2 ( h_3 g ) = h_1^3 h_4 c_0$.
Finally, note that $h_1^3 h_4 c_0 = h_0 h_2^2 g$ in
the $E_2$-page.  This establishes the first differential.

The argument for the second differential is essentially the same.
The product $\eta^6 \epsilon \eta_5$ is detected by $h_1^7 h_5 c_0$.
Since $\eta^3 \sigma = \eta^2 \epsilon$,
we get that $\eta^7 \sigma \eta_5$ is also detected by $h_1^7 h_5 c_0$.
However, $\eta^7 \eta_5$ is zero,
so $h_1^7 h_5 c_0$ must be hit by some differential.

For the third differential,
Table \ref{tab:May-misc}
shows that 
$c_0 i_1 = h_1^4 D_4$ on the $E_2$-page.
This implies that
$\eta^5 \epsilon \{i_1\}$ is contained in $\{ h_0 h_2^2 g^3 \}$ in $\pi_{66,40}$.
Using that $\eta^3 \sigma = \eta^2 \epsilon$,
we get that 
$\eta^6 \sigma \{ i_1 \}$ is contained in $\{ h_0 h_2^2 g^3 \}$.
However, $\eta^6 \{ i_1 \}$ equals zero,
so some differential must hit
$h_0 h_2^2 g^3$.
\end{proof}

\begin{lemma}
\label{lem:d2-e0g}
$d_2 ( e_0 g) = h_1^2 e_0^2$.
\end{lemma}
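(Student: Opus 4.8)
The plan is to deduce this differential from the Leibniz rule together with the already-established differential $d_2(e_0) = h_1^2 d_0$ (Lemma \ref{lem:d2-e0} with $k=0$) and the relation $d_0 g = e_0^2$ in $\Ext$. First I would record that $d_0 g = e_0^2$; this is the motivic lift of the corresponding classical relation, and since both sides have the same weight, Proposition \ref{prop:compare} forces the relation to hold with no intervening factor of $\tau$. The target of the desired differential lies in $\Ext^{(36,10,22)}$, which is exactly the tridegree of $h_1^2 e_0^2$, so the statement is at least degree-consistent.

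The one genuine subtlety is that $g$ is \emph{not} itself an element of $\Ext$: it supports the May differential $d_4(g) = h_1^4 h_4$ of Lemma \ref{lem:d4-g}, so only $\tau g$ survives to the Adams $E_2$-page. Consequently the Leibniz rule cannot be applied to the formal product $e_0 \cdot g$ directly. Instead I would work with $\tau g$, which is a bona fide class of $\Ext$ detecting $\kappabar$ and is therefore a permanent cycle, so that $d_2(\tau g) = 0$. Applying the Leibniz rule to $\tau\cdot e_0 g = e_0\cdot \tau g$ then gives $d_2(\tau e_0 g) = d_2(e_0)\cdot \tau g + e_0\cdot d_2(\tau g) = \tau h_1^2 d_0 g = \tau h_1^2 e_0^2$, where the final equality uses $d_0 g = e_0^2$.

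The hard part will be passing from $\tau\cdot d_2(e_0 g) = \tau h_1^2 e_0^2$ to the sharp statement $d_2(e_0 g) = h_1^2 e_0^2$, i.e.\ cancelling the factor of $\tau$. This requires checking that multiplication by $\tau$ is injective on $\Ext^{(36,10,22)}$, equivalently that this group carries no $\tau$-torsion; I expect this to follow from direct inspection of the $\Ext$ chart of \cite{Isaksen14a} in this range, where $h_1^2 e_0^2$ should be the only nonzero class in its tridegree. Once $\tau$-injectivity is in hand, the factor of $\tau$ cancels and the claimed differential follows. As a consistency check, inverting $\tau$ via Proposition \ref{prop:compare} reduces the identity to the analogous classical Adams $d_2$ differential, so no motivic obstruction beyond the $\tau$-bookkeeping can arise.
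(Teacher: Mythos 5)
Your strategy fails at precisely the step you flagged as ``the hard part'': multiplication by $\tau$ is not injective on $\Ext^{(36,10,22)}$, because the target class $h_1^2 e_0^2$ is itself $\tau$-torsion. Concretely, combining the relation $d_0 \cdot \tau g = \tau e_0^2$ with the relation $h_1 \cdot \tau g = h_2 f_0$ (used in the proof of Lemma \ref{lem:t^2-h2g^2}) gives
\[
\tau \cdot h_1^2 e_0^2 \;=\; h_1^2 \cdot (d_0 \cdot \tau g) \;=\; h_1 d_0 \cdot ( h_1 \cdot \tau g ) \;=\; h_1 d_0 \cdot h_2 f_0 \;=\; 0,
\]
since $h_1 h_2 = 0$. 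Hence your Leibniz computation only yields $\tau \cdot d_2(e_0 g) = 0$, which is consistent both with $d_2(e_0 g) = 0$ and with the claimed value, so no conclusion can be drawn. This is not a repairable bookkeeping issue: $d_2(e_0 g) = h_1^2 e_0^2$ is an exotic differential with no classical analogue. Classically $d_2(e_0 g) = 0$ and $e_0 g$ instead supports $d_4(e_0 g) = P d_0^2$ (Table \ref{tab:diff-refs}); motivically the class supporting that $d_4$ is $\tau^2 e_0 g$ (Table \ref{tab:Adams-d4}). So your closing consistency check is also mistaken --- inverting $\tau$ turns the claimed differential into the vacuous statement $0 = 0$ --- and, more fundamentally, no argument that factors through multiplication by $\tau$ (equivalently, through classical information) can detect this differential. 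A smaller additional gap: since $g$ does not exist in $\Ext$, the element $e_0 g$ is indecomposable there, so even the identity $\tau \cdot e_0 g = e_0 \cdot \tau g$ requires justification beyond the May $E_\infty$-page, where it could a priori fail by a hidden extension.

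The paper's proof keeps your overall shape --- multiply by a permanent cycle, apply the Leibniz rule, and cancel --- but chooses the multiplier $P h_1$ instead of $\tau$. It starts from the May $E_\infty$-page relation $P h_1 \cdot e_0 g = h_1 d_0^2 e_0 + h_1^4 v$, then applies $d_2$ using $d_2(e_0) = h_1^2 d_0$ and $d_2(v) = h_1^2 u$ to obtain $P h_1 \cdot d_2(e_0 g) = h_1^3 d_0^3 + h_1^6 u \neq 0$. Therefore $d_2(e_0 g) \neq 0$, and $h_1^2 e_0^2$ is the only nonzero class in the target tridegree. The point is that $h_1^2 e_0^2$, although killed by $\tau$, is $h_1$-local (any product of the symbols $h_1$, $P$, $d_0$, $e_0$ that exists is nonzero), so multiplication by $P h_1$ retains exactly the information that multiplication by $\tau$ destroys. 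Any repair of your argument must likewise replace $\tau$ by a multiplier whose product with $h_1^2 e_0^2$ is provably nonzero.
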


\begin{proof}
First note that $Ph_1 \cdot e_0 g = h_1 d_0^2 e_0 + h_1^4 v$;
this is true in the May $E_\infty$-page.
Now apply $d_2$ to this formula to get
\[
Ph_1 \cdot d_2 ( e_0 g ) = h_1^3 d_0^3 + h_1^6 u.
\]
In particular, it follows that $d_2 (e_0 g)$ is non-zero.
The only possibility is that $d_2(e_0 g) = h_1^2 e_0^2$.
\end{proof}

\begin{lemma}
\label{lem:d2-u'}
\mbox{}
\begin{enumerate}
\item
$d_2 ( u' ) = \tau h_0 d_0^2 e_0$.
\item
$d_2 ( P u' ) = \tau P h_0 d_0^2 e_0$.
\item
$d_2 (P^2 u') = \tau P^2 h_0 d_0^2 e_0$.
\item
$d_2 (P^3 u') = \tau P^3 h_0 d_0^2 e_0$.
\item
$d_2 ( v' ) = h_1^2 u' + \tau h_0 d_0 e_0^2$.
\item
$d_2(P v') = P h_1^2 u' + \tau h_0 d_0^4$.
\item
$d_2(P^2 v') = P^2 h_1^2 u' + \tau P h_0 d_0^4$.
\end{enumerate}
\end{lemma}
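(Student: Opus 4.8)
The seven differentials split into the $u'$-family, parts (1)--(4), and the $v'$-family, parts (5)--(7), and both are driven by the hidden $h_0$-extensions of Lemma \ref{lem:h0-u'} together with the differentials $d_2(e_0) = h_1^2 d_0$ and $d_2(l) = h_0 d_0 e_0$ (Lemmas \ref{lem:d2-e0} and \ref{lem:d2-l}), the differential $d_2(k) = h_0 d_0^2$ established in the proof of Lemma \ref{lem:d2-l}, and the relation $h_0 h_1 = 0$. Throughout I would normalize the ambiguous generators of Table \ref{tab:Ext-ambiguous} so that $h_0 u' = \tau h_0 d_0 l$ and $h_0 v' = \tau h_0 e_0 l$ are nonzero; these are the representatives annihilated by $\tau$.

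For part (1), I would apply $d_2$ to the relation $h_0 u' = \tau h_0 d_0 l$. Since $\tau$, $h_0$, and $d_0$ are $d_2$-cycles, the Leibniz rule gives $h_0 \, d_2(u') = \tau h_0 d_0 \, d_2(l) = \tau h_0^2 d_0^2 e_0$, which is $h_0$ times $\tau h_0 d_0^2 e_0$. As this product is nonzero, $d_2(u')$ is nonzero and agrees with $\tau h_0 d_0^2 e_0$ up to an $h_0$-torsion class in degree $(45,13,25)$; inspecting the $E_2$-page in that degree (or comparing under $\tau$-inversion via Proposition \ref{prop:compare}) shows there is no room for such a correction, so $d_2(u') = \tau h_0 d_0^2 e_0$. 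Parts (2)--(4) then follow by the identical computation after multiplying by the appropriate power of $P$, using $h_0 \cdot P^k u' = \tau P^k h_0 d_0 l$.

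The $v'$-family is more delicate because the $h_0$-method can only detect the part of $d_2(v')$ that survives multiplication by $h_0$, and the relation $h_0 h_1 = 0$ makes it completely blind to the summand $h_1^2 u'$. Applying $d_2$ to $h_0 v' = \tau h_0 e_0 l$ and expanding $d_2(e_0 l) = h_1^2 d_0 l + h_0 d_0 e_0^2$ gives $h_0 \, d_2(v') = \tau h_0 h_1^2 d_0 l + \tau h_0^2 d_0 e_0^2 = \tau h_0^2 d_0 e_0^2$, since $\tau h_0 h_1^2 d_0 l = 0$. This pins down the $h_0$-divisible summand $\tau h_0 d_0 e_0^2$ but detects nothing of $h_1^2 u'$. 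To capture the missing summand I would multiply by $d_0$ and use the relation $d_0 v' = e_0 u'$, which holds in this tridegree: with $d_2(u') = \tau h_0 d_0^2 e_0$ from part (1), the Leibniz rule yields $d_0 \, d_2(v') = e_0 \, d_2(u') + d_2(e_0)\, u' = \tau h_0 d_0^2 e_0^2 + h_1^2 d_0 u' = d_0\,(h_1^2 u' + \tau h_0 d_0 e_0^2)$, and multiplication by $d_0$ is injective in this tridegree, so $d_2(v') = h_1^2 u' + \tau h_0 d_0 e_0^2$. Parts (6) and (7) proceed the same way: the $\tau$-divisible summands $\tau h_0 d_0^4$ and $\tau P h_0 d_0^4$ come directly from the $h_0$-extensions $h_0 \cdot Pv' = \tau h_0 d_0^2 k$ and $h_0 \cdot P^2 v' = \tau h_0 d_0^3 i$ (via $d_2(k) = h_0 d_0^2$ and the corresponding differential on $i$), while the $P^k h_1^2 u'$ summands come from the analogues of the relation $d_0 v' = e_0 u'$, the passage from $e_0^2$ to $d_0^3$ in the targets reflecting relations among $P$, $d_0$, and $e_0$ in the wedge.

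The main obstacle is precisely the determination of the $h_1^2 u'$-type summands in the $v'$-family. Because $h_0 h_1 = 0$, these summands are invisible to the $h_0$-extension technique that handles everything else, so the argument must route through an auxiliary multiplicative relation such as $d_0 v' = e_0 u'$ together with a faithfulness statement (injectivity of $d_0$-multiplication) that has to be verified on the $E_2$-page. Checking these relations and the injectivity in the relevant tridegrees, and confirming the corresponding wedge relations needed for parts (6)--(7), is where the real work lies; by contrast the $h_0$-divisible summands are essentially forced once the $h_0$-extensions of Lemma \ref{lem:h0-u'} are in hand.
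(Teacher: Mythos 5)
Your parts (1)--(4) are correct and are essentially the paper's own argument: apply $d_2$ to the hidden $h_0$ extensions of Lemma \ref{lem:h0-u'} (and their $P$-multiples), use $d_2(l) = h_0 d_0 e_0$ and its analogues, and divide by $h_0$ after checking there is no room for an $h_0$-torsion correction.

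Parts (5)--(7) contain a genuine gap. Your pivot relation $d_0 v' = e_0 u'$ is false in $\Ext$: it holds only on the May $E_\infty$-page, and the actual relation is $e_0 u' + d_0 v' = h_1^2 c_0 x'$, a nonzero hidden May extension (this is quoted from \cite{GI14} in the proof of Lemma \ref{lem:Ctau-d2-h1c0x'}). Your Leibniz computation of $d_0 \, d_2(v')$ could in principle be salvaged, since the correction term is a $d_2$-cycle once one knows $d_2(x') = 0$, but as written the argument rests on a false identity, it silently requires the additional input $d_2(x')=0$, and it still hinges on an unverified injectivity claim for $d_0$-multiplication out of degree $(48,13,27)$. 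The paper avoids all three issues: for (5) it uses the non-hidden May $E_\infty$ relations $c_0 v = h_1 v'$ and $c_0 u = h_1 u'$ together with the classical differential $d_2(v) = h_1^2 u$ (Table \ref{tab:diff-refs}) to get $h_1 \, d_2(v') = h_1^3 u'$, which pins down $d_2(v')$ as either $h_1^2 u'$ or $h_1^2 u' + \tau h_0 d_0 e_0^2$, and then the relation $h_0 v' = \tau h_0 e_0 l$ (your own first step) forces the latter. For (6) and (7) the paper transfers this answer upward using $P h_1 \cdot v' = h_1 \cdot P v'$ and $P^2 h_1 \cdot v' = h_1 \cdot P^2 v'$ together with $h_0 \cdot P v' = \tau h_0 d_0^2 k$ and $h_0 \cdot P^2 v' = \tau h_0 d_0^3 i$; your treatment of these two cases (``analogues of the relation $d_0 v' = e_0 u'$'' and unspecified ``relations among $P$, $d_0$, and $e_0$ in the wedge'') is too vague to constitute a proof even setting aside the false relation.
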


\begin{proof}
The first four formulas follow easily from the relations
$h_0 u' = \tau h_0 d_0 l$,
$h_0 \cdot P u' = \tau d_0^2 j$, 
$h_0 \cdot P^2 u' = \tau P h_0 d_0^2 j$, and
$h_0 \cdot P^3 u' = \tau P^2 h_0 d_0^2 j$.

For the fifth formula,
start with the relation $c_0 v = h_1 v'$,
which holds already in the May $E_\infty$-page.
Apply $d_2$ to obtain
$h_1^2 c_0 u = h_1 d_2 ( v')$.
We have
$c_0 u = h_1 u'$ (also from the May $E_\infty$-page),
so 
$h_1 d_2 ( v' ) = h_1^3 u'$.
It follows that $d_2 ( v' )$
equals either $h_1^2 u'$ or
$h_1^2 u' + \tau h_0 d_0 e_0^2$.
Because of the relation
$h_0 v' = \tau h_0 e_0 l$,
it must be the latter.

The proofs of the sixth and seventh formulas are essentially the same,
using the relations
$P h_1 \cdot v' = h_1 \cdot P v'$,
$P^2 h_1 \cdot v' = h_1 \cdot P^2 v'$,
$h_0 \cdot P v' = \tau h_0 d_0^2 k$, and
$h_0 \cdot P^2 v' = \tau h_0 d_0^3 i$.
\end{proof}

\begin{lemma}
\label{lem:d2-G3}
$d_2 ( G_3 ) = h_0 g r$.
\end{lemma}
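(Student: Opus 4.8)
The plan is to determine $d_2(G_3)$ from the multiplicative relations involving $G_3$ that were already established in Chapter~\ref{ch:May}, using the Leibniz rule together with the $d_2$ differentials computed above, and then to fix the value by a non-vanishing argument and a count of the possibilities in the target degree. The two relevant relations are $h_1^2 G_3 = h_2 g r$ (from the proof of Lemma~\ref{lem:c0-G3}) and the hidden extension $c_0 \cdot G_3 = P h_1^3 h_5 e_0$ of Lemma~\ref{lem:c0-G3}. Moreover, by the hidden $h_0$-extension $h_0 \cdot g r = P h_1^3 h_5 c_0$ of Lemma~\ref{lem:h0-gr}, the proposed target $h_0 g r$ is \emph{equal} in $\Ext$ to $P h_1^3 h_5 c_0$, so it suffices to show that $d_2(G_3) = P h_1^3 h_5 c_0$.

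First I would run the Leibniz computations. Applying $d_2$ to $h_1^2 G_3 = h_2 g r$ and using that $h_2$, $g$, and $r$ are $d_2$-cycles gives $h_1^2\, d_2(G_3) = 0$; since $h_0 h_1 = 0$ this is automatically consistent with $d_2(G_3) = h_0 g r$, but it does not establish it. Applying $d_2$ to $c_0 \cdot G_3 = P h_1^3 h_5 e_0$, and using $d_2(e_0) = h_1^2 d_0$ (Lemma~\ref{lem:d2-e0}) and $d_2(h_5) = h_0 h_4^2$ together with $h_0 h_1 = 0$, gives $c_0\, d_2(G_3) = P h_1^5 h_5 d_0$; but $P h_1^5 h_5 = h_1 \cdot P h_1^4 h_5 = \tau^2 h_1 h_2 g^2 = 0$ by Lemma~\ref{lem:t^2-h2g^2} and $h_1 h_2 = 0$, so this product vanishes and again only confirms consistency. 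The obstruction is structural: every natural multiple of the target is killed by the relations $h_0 h_1 = 0$, $h_0 c_0 = 0$, and $P h_1^5 h_5 = 0$, so the Leibniz rule alone cannot separate $d_2(G_3) = h_0 g r$ from $d_2(G_3) = 0$.

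Hence the crux is a positive argument that $d_2(G_3) \neq 0$, equivalently that $P h_1^3 h_5 c_0$ cannot survive to $E_\infty$. I would proceed as in the proof of Lemma~\ref{lem:d2-h3g}: identify the homotopy product detected by $P h_1^3 h_5 c_0$ (built from $\eta$, $\epsilon$, $\eta_5$, and the relevant image-of-$J$ class) and show that this product is forced to be zero, so that $P h_1^3 h_5 c_0$ must be hit by an Adams differential; a check of the degrees then shows that $G_3$ supports the only differential that can hit it, giving $d_2(G_3) = h_0 g r$. As an independent route, and as a cross-check, Proposition~\ref{prop:compare} makes the motivic $d_2$ compatible with the classical one after inverting $\tau$, and the classical differential $d_2(G_3) = h_0 g r$ is available from the machine computations of \cite{Bruner97}, the only motivic ambiguity being a factor of $\tau$ fixed by matching weights. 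I expect the main obstacle to be precisely this non-vanishing step: one must pin down the correct null homotopy relation (or invoke the classical calculation) and separately verify that no other class in the source degree of $h_0 g r$ can carry the differential.
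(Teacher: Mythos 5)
Your overall plan coincides with the paper's: identify $h_0 g r$ with $P h_1^3 h_5 c_0$ via Lemma \ref{lem:h0-gr}, show this class cannot survive to $E_\infty$ by a homotopy-theoretic argument modeled on Lemma \ref{lem:d2-h3g}, and then argue that $d_2(G_3)$ is the only differential that can kill it. Your Leibniz analysis is also correct, and correctly diagnosed as inconclusive. But the crux --- the non-vanishing step --- is left as a placeholder, and the product you gesture at is not the right one. The class $P h_1^3 h_5 c_0$ does not detect a product built from $\eta$, $\epsilon$, $\eta_5$ and an image-of-$J$ element; what the paper actually does is choose $\alpha$ in $\{ P h_1 h_5 \}$ such that $\eta^3 \alpha$ lies in $\nu \{ \tau^2 g^2 \}$ (possible because $P h_1^4 h_5 = \tau \cdot \tau h_2 g^2$ by Lemma \ref{lem:t^2-h2g^2}), observe that $\epsilon \alpha$ lies in $\{ P h_1 h_5 c_0 \}$, and then use the compound relation $\eta^2 \epsilon = \eta^3 \sigma$ to rewrite $\eta^2 \epsilon \alpha = \eta^3 \sigma \alpha \in \nu \sigma \{ \tau^2 g^2 \} = 0$, the vanishing coming from $\nu \sigma = 0$ rather than from $\eta$-torsion of $\eta_5$ as in Lemma \ref{lem:d2-h3g}. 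Without locating this specific element and this specific vanishing, the claim that $P h_1^3 h_5 c_0$ must die is unproved.

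The uniqueness step is also not ``a check of the degrees.'' There are five other differentials landing on $h_0 g r$ in exactly the right tridegree: $d_2(\tau g n)$, $d_3(h_2 B_2)$, $d_4(h_0^2 h_3 g_2)$, $d_5(h_0 h_3 g_2)$, and $d_6(h_3 g_2)$. Eliminating them requires substantive inputs, not degree bookkeeping: $\tau g \cdot n$ is a product of permanent cycles; $B_2$ supports no $d_3$ (Lemma \ref{lem:d3-B2}); and $g_2$ is a permanent cycle (Lemma \ref{lem:d5-g2}). Finally, your fallback route is not available: the machine computations of \cite{Bruner97} produce the $E_2$-page $\Ext_{A_{\cl}}$ and its multiplicative structure, not Adams differentials, and the classical differential $d_2(G_3) = h_0 g r$ appears nowhere in Table \ref{tab:diff-refs} of previously established classical differentials --- the paper deliberately avoids relying on \cite{Kochman90} and \cite{KM93} in this range precisely because their claims there are suspect. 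So there is no prior classical calculation to transport through Proposition \ref{prop:compare}; establishing this differential is the content of the lemma.
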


\begin{proof}
The argument is similar to the 
proof of Lemma \ref{lem:d2-h3g}.

Let $\alpha$ be an element of $\{ P h_1 h_5 \}$
such that
$\eta^3 \alpha$ is contained in $\nu \{ \tau^2 g^2 \}$.
Now $\epsilon \alpha$ is contained in $\{ P h_1 h_5 c_0 \}$.
Using that $\eta^3 \sigma = \eta^2 \epsilon$ from 
Table \ref{tab:Adams-compound-extn},
we get that 
$\eta^2 \epsilon \alpha = \eta^3 \sigma \alpha$,
which is contained in $\nu \sigma \{ \tau^2 g^2 \}$.
This is zero, since $\nu \sigma$ is zero.

This means that $P h_1^3 h_5 c_0 = h_0 g r$ must be zero
on the $E_\infty$-page of the Adams spectral sequence,
but there are several possible differentials.
We cannot have 
$d_2(\tau g n) = h_0 g r$, since $\tau g \cdot n$
is the product of two permanent cycles.
We cannot have
$d_3( h_2 B_2) = h_0 g r$, since we will show later in
Lemma \ref{lem:d3-B2}
that $B_2$ does not support a $d_3$ differential.
We cannot have
$d_4( h_0^2 h_3 g_2) = h_0 g r$,
$d_5( h_0 h_3 g_2) = h_0 g r$, or
$d_6 ( h_3 g_2 ) = h_0 g r$, since we will show later in 
Lemma \ref{lem:d5-g2} that $g_2$ is a 
permanent cycle.

There is just one remaining possibility, so we conclude that
$d_2(G_3) = h_0 g r$.
\end{proof}

\begin{lemma}
\label{lem:Adams-d2-B6}
$d_2(B_6) = 0$.
\end{lemma}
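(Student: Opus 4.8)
The plan is to rule out every possible nonzero value of $d_2(B_6)$. Since the Adams $d_2$ decreases the stem by one and increases the filtration by two, a nonzero $d_2(B_6)$ would have to be a specific element in a single tridegree, so it suffices to show that this target group is forced to vanish. I would exploit the three multiplicative relations involving $B_6$ that were established in Chapter \ref{ch:May}: the relation $h_2 \cdot B_6 = \tau e_1 g$ of Lemma \ref{lem:h2-B6}, the relation $c_0 \cdot B_6 = h_1^3 B_3$ of Lemma \ref{lem:c0-B6}, and the relation $h_1^3 B_6 = \tau h_2^2 d_1 g$ of Lemma \ref{lem:h1-h1^2B6}.

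Because $h_1$, $h_2$, and $c_0$ are permanent cycles, the Leibniz rule applied to these relations gives $h_2 \cdot d_2(B_6) = \tau\, d_2(e_1 g)$, as well as $c_0 \cdot d_2(B_6) = h_1^3\, d_2(B_3)$ and $h_1^3 \cdot d_2(B_6) = \tau h_2^2\, d_2(d_1 g)$. I would then argue that each right-hand side vanishes, using that $d_1$, $e_1$, and $g$ carry no $d_2$ into the relevant degrees and that $d_2(B_3)$ is already controlled in this range. Granting this, $d_2(B_6)$ is simultaneously annihilated by $h_2$, by $c_0$, and by $h_1^3$. A direct inspection of the generators and relations for $\Ext$ in the target tridegree (Table \ref{tab:Ext-gen} together with the relations lifted from Table \ref{tab:May-E2-reln}) should then show that no nonzero element is annihilated by all three of these operators, which forces $d_2(B_6) = 0$.

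The main obstacle is twofold. First, I must genuinely verify that $d_2(e_1 g)$, $d_2(B_3)$, and $d_2(d_1 g)$ do not interfere — in particular confirming that $e_1 g$ and $B_3$ support no $d_2$ landing in the degrees that multiply $B_6$'s putative target; this is exactly the sort of bookkeeping that the surrounding lemmas and the classical comparison of Proposition \ref{prop:compare} are designed to settle. Second, and more delicate, is the final inspection step: I need the target group to be small enough that annihilation by $h_2$, $c_0$, and $h_1^3$ leaves only zero. If that inspection proves inconclusive, the fallback is to exhibit a genuine permanent-cycle witness — for instance, to realize $B_6$, or a suitable multiple of it, as detecting a Toda bracket via Moss's Convergence Theorem \ref{thm:Moss}, since any element detecting an honest homotopy class cannot support an Adams differential.
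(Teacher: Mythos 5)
There is a genuine gap here, and it is structural rather than a matter of unfinished bookkeeping. The target of $d_2(B_6)$ is the tridegree $(54,9,30)$, and that group does \emph{not} vanish: its unique nonzero element is $h_1 h_5 c_0 d_0$. The fatal problem is that this candidate lies in the common kernel of every multiplication you propose to use. Indeed, $h_2 \cdot h_1 h_5 c_0 d_0 = 0$ since $h_1 h_2 = 0$; also $h_1 \cdot h_1 h_5 c_0 d_0 = h_1^2 h_5 c_0 d_0 = 0$ in $\Ext$, because this class is hit by the May differential $d_6(x_{56}) = h_1^2 h_5 c_0 d_0$ of Table \ref{tab:May-E6} (and Table \ref{tab:May-h1} records no hidden $h_1$ extension in that degree), so in particular it is killed by $h_1^3$; and it is killed by $c_0$ as well, since $h_5 c_0 d_0 = h_3 B_1$ (see the proof of Lemma \ref{lem:h2-h2B2}) and $c_0 B_1 = h_1 B_8$ give $c_0 \cdot h_1 h_5 c_0 d_0 = h_1^2 h_3 B_8 = h_2^3 B_8 = 0$, using $h_2 B_8 = 0$. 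So your Leibniz computations, while correct ($e_1 g$, $d_1 g$, and $B_3$ indeed support no $d_2$), are necessarily inconclusive: they hold equally whether $d_2(B_6)$ is $0$ or $h_1 h_5 c_0 d_0$, so the ``final inspection'' you rely on can never close the argument. Your fallback is also unavailable: $B_6$ is not a permanent cycle, since $d_3(B_6) = \tau h_2 g n$ by Lemma \ref{lem:d3-B6}, so it detects no homotopy class and Moss's Convergence Theorem \ref{thm:Moss} cannot be applied to it in the way you suggest.

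The missing idea is to leave the sphere and work in the Adams spectral sequence for the cofiber of $\tau$, which is what the paper does. In $E_2(C\tau)$ the lift $\ol{c_0 d_0}$ behaves very differently from $c_0 d_0$ in $E_2(S^{0,0})$: by the hidden extension $h_1^2 h_5 \cdot \ol{c_0 d_0} = P h_5 e_0$ of Lemma \ref{lem:h1-h1h5c0d0}, one gets $h_1^5 h_5 \cdot \ol{c_0 d_0} = P h_1^3 h_5 e_0 \neq 0$, whereas $h_1^4 \cdot \ol{B_6} = 0$. If $d_2(B_6)$ were $h_1 h_5 c_0 d_0$, then $d_2(\ol{B_6})$ would equal $h_1 h_5 \cdot \ol{c_0 d_0}$, and multiplying by $h_1^4$ would force the nonzero class $h_1^5 h_5 \cdot \ol{c_0 d_0}$ to be $d_2(h_1^4 \cdot \ol{B_6}) = d_2(0) = 0$, a contradiction. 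The point is precisely that the annihilator of the candidate class shrinks upon passage to $C\tau$; no amount of multiplication carried out purely inside $E_2(S^{0,0})$ can detect this, so your plan cannot be repaired without that change of venue.
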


\begin{proof}
The only other possibility is that 
$d_2(B_6)$ equals $h_1 h_5 c_0 d_0$.
If this were the case, then
$d_2 (\ol{B_6})$ would equal $h_1 h_5 \cdot \ol{c_0 d_0}$
in the motivic Adams spectral sequence for the cofiber of $\tau$ 
analyzed in Chapter \ref{ch:Ctau}.
\index{cofiber of tau@cofiber of $\tau$!Adams spectral sequence}
This is impossible because
$h_1^4 \cdot \ol{B_6} = 0$ while
$h_1^5 h_5 \cdot \ol{c_0 d_0}$ is non-zero.
\end{proof}

\begin{lemma}
\label{lem:d2-i1}
$d_2(i_1) = 0$.
\end{lemma}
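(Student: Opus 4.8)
The plan is to pin down the unique possible nonzero value of $d_2(i_1)$ and then exclude it using the multiplicative relations established in Lemma \ref{lem:c0-i1}. Recall that $i_1$ is a multiplicative generator in the $53$-stem, so a nonzero $d_2(i_1)$ would land in the $52$-stem at Adams filtration two higher. First I would inspect the $E_2$-page in this degree, using the $\Ext$ chart in \cite{Isaksen14a}, to identify the single candidate target; the whole content of the lemma is to rule out that one class, so there is no purely formal reason for $d_2(i_1)$ to vanish.

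The main tool is the relation $P h_1 \cdot i_1 = h_1^5 Q_2$ of Lemma \ref{lem:c0-i1}(2) (with the companion relation $c_0 \cdot i_1 = h_1^4 D_4$ of Lemma \ref{lem:c0-i1}(1) in reserve). Since $P h_1$ and $c_0$ are permanent cycles, the Leibniz rule gives
\[
P h_1 \cdot d_2(i_1) = h_1^5 \, d_2(Q_2), \qquad c_0 \cdot d_2(i_1) = h_1^4 \, d_2(D_4).
\]
Now $Q_2$ is a $d_2$-cycle: it survives to the $E_3$-page, where it supports the differential $d_3(Q_2) = \tau^2 g t$ of Lemma \ref{lem:d3-C0}, so $d_2(Q_2) = 0$. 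Hence $P h_1 \cdot d_2(i_1) = 0$, i.e. the candidate target is annihilated by $P h_1$ (and, insofar as $D_4$ is likewise a $d_2$-cycle, also by $c_0$).

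To finish, I would show that the candidate class is \emph{not} killed by $P h_1$, so that it cannot equal $d_2(i_1)$ and the differential must vanish. I expect this nonvanishing step to be the main obstacle: it requires computing $P h_1$ times the candidate explicitly in $\Ext$ and checking that the product is nonzero, which is exactly the sort of delicate $h_1$-divisibility question flagged in Example \ref{ex:h1^7h5c0}. If a direct $\Ext$ computation proves awkward, the method of Lemma \ref{lem:Adams-d2-B6} is available as a fallback: one transports the putative differential to the motivic Adams spectral sequence for the cofiber of $\tau$ studied in Chapter \ref{ch:Ctau} and derives a contradiction there from the $\pi_{*,*}$-module structure, exactly as was done to show $d_2(B_6) = 0$.
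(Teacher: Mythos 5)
Your reduction to a single candidate is correct---the only possible nonzero value is $d_2(i_1) = h_1^4 h_5 e_0$---and the Leibniz identities $P h_1 \cdot d_2(i_1) = h_1^5\, d_2(Q_2) = 0$ and $c_0 \cdot d_2(i_1) = h_1^4\, d_2(D_4)$ are fine as far as they go. The fatal problem is the step you deferred: the candidate target \emph{is} annihilated by $P h_1$, so the contradiction you hope for does not exist. Indeed, $P h_1 \cdot h_1^4 h_5 e_0 = h_1 \cdot P h_1^4 h_5 e_0$, and by the hidden $\tau$ extension of Lemma \ref{lem:t^2-h2g^2} (Table \ref{tab:May-tau}) one has $P h_1^4 h_5 e_0 = \tau^2 h_0 g^3$ in $\Ext$; multiplying by $h_1$ gives zero, since $h_0 h_1 = 0$ and $h_0 g^3$ supports no hidden $h_1$ extension (Table \ref{tab:May-h1} is exhaustive through the 70-stem). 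So $P h_1 \cdot d_2(i_1) = 0$ holds for \emph{both} candidate values of $d_2(i_1)$ and carries no information. The $c_0$ variant fails for a worse reason: it is circular. The identification $d_2(D_4) = h_1 B_6$ in Lemma \ref{lem:d2-D4} is proved \emph{using} $d_2(i_1) = 0$; without the present lemma one only knows that $d_2(D_4)$ is $h_1 B_6$ or $h_1 B_6 + \tau h_1^2 G$, and in the second case $h_1^4\, d_2(D_4) = h_1^4 \cdot \tau h_1^2 G = h_1^4 h_5 c_0 e_0 = c_0 \cdot h_1^4 h_5 e_0$ (using $\tau h_1^2 G = h_5 c_0 e_0$ from Table \ref{tab:May-h1}, together with $h_1^5 B_6 = 0$, which follows from $h_1^3 B_6 = \tau h_2^2 d_1 g$ and $h_1 h_2 = 0$)---which is exactly what the differential $d_2(i_1) = h_1^4 h_5 e_0$ would predict. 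So no contradiction can arise from that relation either.

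What works is to attack the target rather than the source: the paper's proof observes that $h_1^4 h_5 e_0$ must survive to the $E_3$-page, because Lemma \ref{lem:d3-h1h5e0}---whose proof runs through the Adams spectral sequence for the cofiber of $\tau$ and does not depend on the present lemma---shows that this class supports a nonzero $d_3$ differential (an $h_1$-multiple of $d_3(h_1 h_5 e_0) = h_1^2 B_1$). A class alive on $E_3$ cannot be a $d_2$-boundary, so $d_2(i_1) = 0$. Your fallback instinct to pass to $C\tau$ was therefore the right one, but it must be aimed at showing the would-be target is not a boundary, not at feeding another Leibniz argument whose relevant products all vanish.
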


\begin{proof}
The only other possibility is that $d_2 (i_1) = h_1^4 h_5 e_0$.
However, we will see below in Lemma \ref{lem:d3-h1h5e0} that
$h_1^4 h_5 e_0$ must survive to the $E_3$-page.
\end{proof}

\begin{lemma}
\label{lem:d2-gm}
$d_2 ( g m ) = h_0 e_0^2 g$.
\end{lemma}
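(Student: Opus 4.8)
The plan is to exploit the multiplicative relation $h_0 m = h_2 l$ in $\Ext$ that was already used in the proof of Lemma~\ref{lem:h0-gr}. Multiplying this relation by $g$ gives $h_0 \cdot gm = h_2 \cdot gl$ in $\Ext$, and since $h_0$ and $h_2$ are permanent cycles, applying the Adams $d_2$ differential as a derivation on the $E_2$-page yields
\[
h_0 \cdot d_2(gm) = h_2 \cdot d_2(gl).
\]
This reduces the computation to identifying $d_2(gl)$ and then removing the factor of $h_0$.

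First I would pin down $d_2(gl)$. From Lemma~\ref{lem:d2-l} we have $d_2(l) = h_0 d_0 e_0$, and I expect the corresponding $g$-multiple to satisfy $d_2(gl) = h_0 d_0 e_0 g$; this should either be recorded already among the $d_2$ values derived from Table~\ref{tab:Ext-gen} or be forced as the unique nonzero class available in its target degree. Feeding this in, $h_2 \cdot d_2(gl) = h_0 h_2 d_0 e_0 g$, and using the $\Ext$ relation $h_2 d_0 = h_0 e_0$ this becomes $h_0^2 e_0^2 g$. Hence $h_0 \cdot d_2(gm) = h_0^2 e_0^2 g$.

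To conclude I would remove the leading $h_0$ by a degree argument: inspect the $E_2$-page in the stem and filtration of $h_0 e_0^2 g$ and check that $h_0 e_0^2 g$ is the only nonzero class there, so that $d_2(gm)$ can only be $0$ or $h_0 e_0^2 g$. Since $h_0 \cdot d_2(gm) = h_0^2 e_0^2 g$ is nonzero, the value $d_2(gm)$ cannot vanish, and therefore $d_2(gm) = h_0 e_0^2 g$, as claimed.

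The main obstacle is the justification of $d_2(gl)$: because $g$ is \emph{not} itself a class in $\Ext$ (it supports the May differential $d_4(g) = h_1^4 h_4$ of Lemma~\ref{lem:d4-g}), one cannot naively apply the Leibniz rule through the symbol $g$, so the Adams $d_2$ on $g$-multiples must be established independently — either inherited from the table of $d_2$ differentials or, in the spirit of Lemma~\ref{lem:d2-h3g}, forced by an exotic argument showing the target must be hit. A secondary point is confirming that the cancellation of $h_0$ is legitimate, i.e. that the target degree carries no competing $h_0$-torsion class. As a consistency cross-check, I would apply $d_2$ to the relation $lm = e_0 g r$ recorded with Lemma~\ref{lem:h0-gr}, using $d_2(e_0 g) = h_1^2 e_0^2$ from Lemma~\ref{lem:d2-e0g} and $d_2(l) = h_0 d_0 e_0$, to obtain $h_0 d_0 e_0 \cdot m + l \cdot d_2(m) = h_1^2 e_0^2 r$ and verify that this is compatible with the value of $d_2(gm)$ above.
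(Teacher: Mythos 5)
Your overall strategy---transfer the differential across the relation $h_0 \cdot gm = h_2 \cdot gl$ and then strip off the $h_0$---is the same one the paper uses, and your guessed value $d_2(gl) = h_0 d_0 e_0 g$ is in fact correct. But, as you yourself flag, the proposal never actually establishes that value, and neither of your two fallbacks closes the gap. The class $gl$ is decomposable, so it is not recorded in Table~\ref{tab:Ext-gen}, which only lists differentials on multiplicative generators; and an appeal to "the unique nonzero class in the target degree" can at best show $d_2(gl) \in \{0, h_0 d_0 e_0 g\}$, since ruling out the value $0$ requires exactly the kind of argument that is missing. So, as written, the central step is an unproven expectation rather than a proof.

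The missing idea is a single identification: in the May spectral sequence one has $k = d_0\nu$, $l = e_0\nu$, $m = g\nu$, so that $gl = e_0 m$ (both equal $e_0 g \nu$). This is what the paper exploits by phrasing the relation as $h_0 \cdot gm = h_2 \cdot e_0 m$: the right-hand side is now a product of two classes whose $d_2$'s are known, namely $d_2(e_0) = h_1^2 d_0$ (Lemma~\ref{lem:d2-e0}) and $d_2(m) = h_0 e_0^2$ from Table~\ref{tab:Ext-gen}. The Leibniz rule then gives
\[
d_2(gl) = d_2(e_0 m) = h_1^2 d_0 m + h_0 e_0^3 = h_0 d_0 e_0 g,
\]
where the first term vanishes because $h_1 m = 0$ (on the May $E_\infty$-page $h_1 g \nu = 0$ since $h_1 h_2 = 0$, and Table~\ref{tab:May-h1} records no hidden $h_1$ extension on $m$), and the last equality uses $e_0^2 = d_0 g$. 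With this in hand, your computation goes through as you wrote it. One further point you assert without verification---that $h_0^2 e_0^2 g \neq 0$, which is what forces $d_2(gm) \neq 0$ after cancelling $h_0$---should also be checked against the $E_2$ chart, although the paper's own one-line proof is equally silent about it.
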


\begin{proof}
This follows easily from the 
relation $h_0 g m = h_2 e_0 m$ and 
the differential
$d_2 ( m ) = h_0 e_0^2$.
\end{proof}

\begin{lemma}
\label{lem:d2-Q1}
\mbox{}
\begin{enumerate}
\item
$d_2(Q_1) = \tau h_1^2 x'$.
\item
$d_2(U) = P h_1^2 x'$.
\item
$d_2(R_2) = h_0 U$.
\item
$d_2(G_{11}) = h_0 d_0 x'$.
\end{enumerate}
\end{lemma}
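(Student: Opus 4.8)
The plan is to treat part (1) as the base case and then derive parts (2)--(4) from it by applying the Leibniz rule to multiplicative relations among these generators, exactly in the spirit of Lemma \ref{lem:d2-u'}. Throughout I would use that $\tau$, $h_0$, $h_1$, $P h_1$, and $d_0$ are permanent cycles that support no Adams $d_2$, so that $d_2$ commutes with multiplication by each of them.

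For part (1) the differential is exotic: the factor of $\tau$ in $\tau h_1^2 x'$ appears in order to balance weights, as in Example \ref{ex:d2-3}. I would first try to locate a relation of the form $z \cdot Q_1 = w$ in the $E_2$-page (drawn from the relations listed in Table \ref{tab:May-E2-reln}), where $z$ is one of the permanent cycles above and the value $d_2(w)$ is already known from the differentials on the factors of $w$. Applying $d_2$ then gives $z \cdot d_2(Q_1) = d_2(w)$, which determines $d_2(Q_1)$ provided that multiplication by $z$ is injective in the relevant tridegree; any residual ambiguity, in particular the possibility that $d_2(Q_1)$ is zero, would be excluded by testing against a second relation, for instance one involving $h_0 Q_1$. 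If no such relation is convenient, the alternative is a product-of-permanent-cycles argument as in Lemma \ref{lem:d2-h3g}: one shows that $\tau h_1^2 x'$ detects a homotopy element that is forced to vanish because it is an $\eta^2$-multiple of a null class, so that $\tau h_1^2 x'$ must die, and then one checks that $Q_1$ is the only possible source.

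Granting part (1), I expect parts (2)--(4) to follow mechanically. For part (2) I would seek a relation expressing $P Q_1$ in terms of $\tau U$, so that $\tau \, d_2(U) = P \, d_2(Q_1) = \tau \, P h_1^2 x'$, whence $d_2(U) = P h_1^2 x'$ after checking that multiplication by $\tau$ is injective on this class. For part (3), observe that $d_2(h_0 U) = h_0 \cdot P h_1^2 x' = 0$ since $h_0 h_1 = 0$, so $h_0 U$ is a genuine $d_2$-cycle and hence an admissible target; I would then produce a relation linking $R_2$ to $U$, or argue by uniqueness of the source together with comparison to the classical differential via Proposition \ref{prop:compare}, to conclude $d_2(R_2) = h_0 U$. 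For part (4) I would similarly look for a relation connecting $G_{11}$ to $x'$ through a factor of $d_0$ and apply the Leibniz rule.

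The main obstacle is the base case, part (1): unlike the others it cannot be bootstrapped, and because it is exotic it does not reduce to a direct classical comparison. The delicate point is to find a relation in which multiplication by the chosen permanent cycle is injective, so that the differential is determined rather than merely constrained; the analogous injectivity checks recur in parts (2)--(4), where the correct relations must be extracted from the $E_2$-page and the relevant multiplication maps shown to have no kernel in the pertinent tridegree. Finally, I would confirm that all four differentials are consistent with their classical shadows under inversion of $\tau$ (Proposition \ref{prop:compare}), which provides a useful check against arithmetic error.
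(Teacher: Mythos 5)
Your general machinery is the right one --- chaining Leibniz-rule arguments through multiplicative relations, with the later parts bootstrapped from the first --- and your outlines of parts (2)--(4) match what the paper actually does: it uses the relations $\tau h_1 U = P h_1 Q_1$, then $h_0^2 R_2 = \tau h_1 U$ (together with $\tau P h_1^3 x' = h_0^3 U$), then $h_0 G_{11} = h_2 R_2$ (together with $h_0 h_2 U = h_0^2 d_0 x'$), exactly the kind of relations you propose to hunt for.

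The genuine gap is in your base case, part (1), and it is compounded by a wrong turn: you assert that because the differential is exotic "it does not reduce to a direct classical comparison," and you hold a homotopy-theoretic vanishing argument in reserve. In fact the paper settles part (1) precisely by classical comparison, just one generator removed. The classical differential $d_2(R_1) = h_0^2 x'$ is known (Table \ref{tab:diff-refs}, from Maunder--Tangora-type results), hence holds motivically, and the relation $h_2 R_1 = h_1 Q_1$ then gives
\[
h_1 \, d_2(Q_1) \;=\; h_2 \, d_2(R_1) \;=\; h_0^2 h_2 \, x' \;=\; \tau h_1^3 x',
\]
where the last step uses the relation $h_0^2 h_2 = \tau h_1^3$ coming from the May differential $d_2(b_{20}) = \tau h_1^3 + h_0^2 h_2$. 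Dividing by $h_1$ yields $d_2(Q_1) = \tau h_1^2 x'$; note that the factor of $\tau$ is not something you must engineer by hand --- it falls out of the relation $h_0^2 h_2 = \tau h_1^3$ automatically. So your plan-(a) template ("find $z \cdot Q_1 = w$ with $d_2(w)$ known") is exactly what works, but without identifying the specific relation $h_1 Q_1 = h_2 R_1$ and the classical input on $R_1$, your proposal has no base case, and your stated reason for doubting the classical route would have steered you away from the argument that actually closes it.
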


\begin{proof}
First note that $d_2(R_1) = h_0^2 x'$, which follows from the classical
case as shown in Table \ref{tab:diff-refs}.
Then the relation $h_2 R_1 = h_1 Q_1$ implies that
$d_2(Q_1) = \tau h_1^2 x'$.  This establishes the first formula.

Next, there is a relation $\tau h_1 U = P h_1 Q_1$, which is not hidden
in the motivic May spectral sequence.
Therefore, $\tau h_1 d_2(U)$ equals $\tau P h_1^3 x'$.
It follows that $d_2(U)$ equals $P h_1^2 x'$.
This establishes the second formula.

For the third formula, start with the relation
$h_0^2 R_2 = \tau h_1 U$.
This implies that $h_0^2 d_2(R_2)$ equals $\tau P h_1^3 x'$,
which equals $h_0^3 U$.  
Therefore, $d_2(R_2)$ equals $h_0 U$.

For the fourth formula, start with the relation
$h_0 G_{11} = h_2 R_2$.
This implies that $h_0 d_2(G_{11})$ equals $h_0 h_2 U$, which equals
$h_0^2 d_0 x'$.
Therefore, $d_2(G_{11})$ equals $h_0 d_0 x'$.
\end{proof}

\begin{lemma}
\label{lem:d2-D4}
\mbox{}
\begin{enumerate}
\item
$d_2(H_1) = B_7$.
\item
$d_2 (D_4) = h_1 B_6$.
\end{enumerate}
\end{lemma}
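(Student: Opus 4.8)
The plan is to treat the two differentials together, since $H_1$ and $D_4$ are linked by multiplication by $h_1$, and to exploit that both are \emph{exotic}: the sources $H_1$ and $D_4$ sit in degrees of odd Chow degree $s+f-2w$ (as one reads off from Chapter \ref{ch:May}, using that an Adams $d_2$ raises Chow degree by one), so by Theorem \ref{thm:Chow-0} and Proposition \ref{prop:compare} they are $\tau$-power torsion and become invisible after inverting $\tau$. Consequently neither differential can be read off from the classical Adams spectral sequence nor from $\tmf$, and the argument must use genuinely motivic input. The first step is bookkeeping with the $E_2$-page of Chapter \ref{ch:May}: fix the degrees of $B_7$ and $B_6$, verify that $d_2$ on $H_1$ (resp.\ $D_4$) lands in the degree of $B_7$ (resp.\ $h_1 B_6$), and check that $H_1$ (resp.\ $D_4$) is the only multiplicative generator whose $d_2$ could possibly hit the given target. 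This reduces each part to showing that the target class must be a $d_2$-boundary.

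For part (1) I would show that $B_7$ cannot be a permanent cycle by passing to the motivic Adams spectral sequence for the cofiber of $\tau$ developed in Chapter \ref{ch:Ctau}, exactly as in the proof of Lemma \ref{lem:Adams-d2-B6}. Because $H_1$ is $\tau$-torsion, its behaviour is controlled on $C\tau$, where $h_1$-multiplications are rigid; the strategy is to produce an $h_1$-multiple (or a $C\tau$-companion $\ol{\,\cdot\,}$) of $B_7$ that is forced to be nonzero while the survival of $B_7$ on $S^{0,0}$ would force the corresponding product on $C\tau$ to vanish, a contradiction. An alternative route internal to the sphere is to realize $B_7$ (or $h_1 B_7$) as detecting a Toda bracket that vanishes by a shuffle and to invoke Moss's Convergence Theorem \ref{thm:Moss}; the relation $c_0\cdot B_6 = h_1^3 B_3$ from Lemma \ref{lem:c0-B6} together with the bracket computations of Table \ref{tab:Toda} are the natural inputs for such an argument.

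For part (2) I would first check on the $E_2$-page whether $\Ext$ contains the relation $D_4 = h_1 H_1$; the degrees are compatible, and granting this relation part (2) is immediate, since $d_2$ is a derivation and $h_1$ is a permanent cycle, giving $d_2(D_4)=h_1\,d_2(H_1)=h_1 B_7$, after which one identifies $h_1 B_7$ with $h_1 B_6$ in the target degree (any discrepancy lies in the kernel of multiplication by $h_1$ and is settled by the degree count). As an independent consistency check I would use $c_0\cdot i_1 = h_1^4 D_4$ from Lemma \ref{lem:c0-i1}: since $c_0$ is a permanent cycle and $d_2(i_1)=0$ by Lemma \ref{lem:d2-i1}, the class $h_1^4 D_4$ is a $d_2$-cycle, so the proposed differential forces $h_1^5 B_6 = 0$ in $\Ext$; verifying this vanishing (for instance through the $h_1$-local calculations of \cite{GI14}) confirms compatibility.

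The hard part is the vanishing statement in part (1): proving that $B_7$ genuinely must die rather than survive. Comparison with the classical and $\tmf$ spectral sequences is unavailable precisely because $H_1$ is $\tau$-torsion, so everything rests on the more delicate cofiber-of-$\tau$ computation (or the Toda-bracket shuffle), and in particular on ruling out the competing scenario in which $H_1$ is a permanent cycle and $B_7$ is instead killed by, or supports, some other differential. Once that single obstruction is cleared, the remaining steps are formal manipulations with the derivation property and the relations already recorded in Chapter \ref{ch:May}.
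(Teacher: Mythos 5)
Your proposal has two genuine gaps, one in each part, and both stem from the same false premise. You claim that $H_1$ and $D_4$ are $\tau$-power torsion because their degrees have odd Chow degree $s+f-2w$, and you conclude that comparison with the classical Adams spectral sequence is unavailable. Odd Chow degree does not imply $\tau$-torsion: the element $c_0$, in degree $(8,3,5)$, has Chow degree $1$ and detects the classical element $\epsilon$. The parity of the Chow degree is constant on the $\tau$-multiples of a class and carries no torsion information. In fact $H_1$ and $A'$ are classical elements, and the actual proof of part (1) is a purely classical computation: Bruner's machine data \cite{Bruner97} give the classical relation $h_3^2 H_1 = h_4 A'$; applying the Leibniz rule with $d_2(h_4) = h_0 h_3^2$, and observing that the term $h_4\, d_2(A')$ must vanish, yields $h_3^2\, d_2(H_1) = h_0 h_3^2 A' \neq 0$ classically, so $d_2(H_1)$ is non-zero, and $B_7$ is the only class it can hit motivically. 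Your proposed substitute for this step --- a cofiber-of-$\tau$ contradiction or a Toda-bracket shuffle forcing $B_7$ to die --- is never instantiated: you produce no specific class in $\pi_{*,*}(C\tau)$, no specific bracket, and no specific shuffle, so part (1) has no argument at all.

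Part (2) rests on the relation $D_4 = h_1 H_1$, which is false for degree reasons: $D_4$ lies in the 57-stem, in degree $(57,6,31)$, while $h_1 H_1$ lies in the 63-stem, so the two are not even in the same stem and your claim that ``the degrees are compatible'' fails. The relation that actually links the two generators is $h_1^2 H_1 = h_3 D_4$, and the Leibniz rule then gives only $h_3 \cdot d_2(D_4) = h_1^2 B_7$, which determines $d_2(D_4)$ up to classes annihilated by $h_3$; concretely, the two candidates are $h_1 B_6$ and $h_1 B_6 + \tau h_1^2 G$. Ruling out the error term is precisely where the hidden May extension $c_0 \cdot i_1 = h_1^4 D_4$ of Lemma \ref{lem:c0-i1} and the vanishing $d_2(i_1) = 0$ of Lemma \ref{lem:d2-i1} enter: they force $d_2(h_1^4 D_4) = 0$, which is incompatible with the candidate $h_1 B_6 + \tau h_1^2 G$ because $h_1^4 \cdot \tau h_1^2 G$ is non-zero. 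You cite these very lemmas, but only as an after-the-fact ``consistency check'' confirming $h_1^5 B_6 = 0$; in a correct argument they are the decisive step that selects between the two possible values, not a check performed after the value has been (incorrectly) deduced.
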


\begin{proof}
First note that classically $h_3^2 H_1$ equals $h_4 A'$ \cite{Bruner97}.
Therefore, $h_3^2 d_2(H_1)$ equals
$h_0 h_3^2 A' + h_4 d_2(A')$ classically, which 
equals $h_0 h_3^2 A'$ because $h_4 d_2(A')$ must be zero.
This implies that $d_2(H_1)$ is non-zero classically.
The only motivic possibility is that $d_2(H_1)$ equals $B_7$.
This establishes the first formula.

Next, consider the relation $h_1^2 H_1 = h_3 D_4$,
which is not hidden in the motivic May spectral sequence.
It follows that $ h_3 \cdot d_2(D_4) = h_1^2 B_7$.
The only possibilities are that $d_2(D_4)$ equals $h_1 B_6$ or 
$h_1 B_6 + \tau h_1^2 G$.

Table \ref{tab:May-misc} gives the
hidden extension $c_0 \cdot i_1 = h_1^4 D_4$.
Since $d_2(i_1) = 0$ from Lemma \ref{lem:d2-i1},
it follows that $d_2 (h_1^4 D_4) = 0$.
Then $d_2 (D_4)$ cannot equal $h_1 B_6 + \tau h_1^2 G$
since $h_1^4 \cdot \tau h_1^2 G$ is non-zero.
This establishes the second formula.
\end{proof}

\begin{lemma}
\label{lem:d2-X1}
\mbox{}
\begin{enumerate}
\item
$d_2(X_1) = h_0^2 B_4 + \tau h_1 B_{21}$.
\item
$d_2(G_{21}) = h_0 X_3$.
\item
$d_2(\tau G) = h_5 c_0 d_0$.
\end{enumerate}
\end{lemma}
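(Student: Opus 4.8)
The plan is to establish each of the three exotic $d_2$ differentials by the method used repeatedly in this section: express a convenient multiple of the source in terms of classes whose $d_2$ is already known, apply the Leibniz rule, and then read off the differential after discarding the remaining possible targets. The essential inputs will be the differential $d_2(e_0) = h_1^2 d_0$ from Lemma \ref{lem:d2-e0}, the hidden $\tau$ extensions of Lemma \ref{lem:t-B8}, and the hidden $h_1$ extensions of Lemma \ref{lem:h1-th1G}.

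Part (3) is the cleanest and sets the template. I would start from the relation $\tau h_1^2 G = h_5 c_0 e_0$ of Lemma \ref{lem:h1-th1G}(1). Since $\tau$, $h_1$, $h_5$, and $c_0$ are permanent cycles, applying $d_2$ and using $d_2(e_0) = h_1^2 d_0$ gives
\[
h_1^2 \cdot d_2(\tau G) = d_2(h_5 c_0 e_0) = h_5 c_0 \cdot h_1^2 d_0 = h_1^2 \cdot h_5 c_0 d_0.
\]
Hence $d_2(\tau G) = h_5 c_0 d_0$ modulo the kernel of multiplication by $h_1^2$, and a check of the few other classes in that degree shows $h_5 c_0 d_0$ is the only possibility.

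Part (1) proceeds the same way but is the most delicate, since the target $h_0^2 B_4 + \tau h_1 B_{21}$ has two summands. Using the relation $h_1^3 X_1 = P h_5 c_0 e_0$ established in Lemma \ref{lem:h1-th1G}, the Leibniz rule together with $d_2(e_0)=h_1^2 d_0$ and the hidden extension $\tau h_1^2 B_{21} = P h_5 c_0 d_0$ of Lemma \ref{lem:t-B8}(2) yield
\[
h_1^3 \cdot d_2(X_1) = d_2(P h_5 c_0 e_0) = h_1^2 \cdot P h_5 c_0 d_0 = \tau h_1^4 B_{21}.
\]
Because $h_1^3 \cdot h_0^2 B_4 = 0$ while $h_1^3 \cdot \tau h_1 B_{21} = \tau h_1^4 B_{21}$, this pins down the $\tau h_1 B_{21}$ summand but is blind to $h_0^2 B_4$. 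To recover the latter I would instead multiply by $h_0$: the relation $h_0 \cdot \tau h_1 B_{21} = 0$ means $h_0 \cdot d_2(X_1) = d_2(h_0 X_1)$ should equal $h_0^3 B_4$, which can be detected either by an $h_0$-multiplication relation or, after inverting $\tau$, by comparison with the classical Adams spectral sequence via Proposition \ref{prop:compare}. Assembling the two computations gives $d_2(X_1) = h_0^2 B_4 + \tau h_1 B_{21} = g_2'$ in the sense of Lemma \ref{lem:g2'}. Separating these two summands cleanly is the main obstacle, and it is precisely the point at which our conclusion diverges from \cite{KM93}.

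Part (2) again fits the template, the difficulty being to locate the right relation. Following the pattern of the relation $h_0 G_{11} = h_2 R_2$ exploited in Lemma \ref{lem:d2-Q1}, I would look for a multiplicative relation carrying a multiple of $G_{21}$ into a class whose $d_2$ is computable. The natural candidate feeds into $h_0^4 X_3 = \tau B_8 d_0$ of Lemma \ref{lem:t-B8}(3): if $d_2(G_{21}) = h_0 X_3$ then $d_2(h_0^3 G_{21}) = h_0^4 X_3 = \tau B_8 d_0$, so establishing this higher differential through a relation for $h_0^3 G_{21}$ and then dividing by $h_0^3$ would give the result, once the other targets in the degree of $d_2(G_{21})$ are excluded. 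Identifying the correct relation for $G_{21}$ is the crux of this final part.
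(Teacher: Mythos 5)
Your template — multiply the source by a power of $h_1$, rewrite the multiple using a known relation, and apply the Leibniz rule — is the right general idea, but in parts (1) and (3) you have multiplied by one $h_1$ too many, and the computation degenerates to $0=0$. For part (3): the relation $h_1^2\cdot \tau G = h_5 c_0 e_0$ is correct, but $d_2(h_5 c_0 e_0) = h_1^2 h_5 c_0 d_0$ is \emph{zero} in $\Ext$. Indeed, Table \ref{tab:May-h2} gives $h_1 h_5 c_0 d_0 = h_2 \cdot h_2 B_2$, so $h_1^2 h_5 c_0 d_0 = (h_1 h_2)\cdot h_2 B_2 = 0$; this is also forced by the May differential $d_6(x_{56}) = h_1^2 h_5 c_0 d_0$ of Table \ref{tab:May-E6}, and by the fact that $h_5 c_0 e_0$ survives to support the Adams $d_3$ of Table \ref{tab:Adams-d3}. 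Your equation therefore reads $h_1^2 d_2(\tau G) = 0$, which cannot distinguish $d_2(\tau G) = h_5 c_0 d_0$ from $d_2(\tau G) = 0$: the proposed target itself lies in the kernel of $h_1^2$, so ``the only possibility'' is not justified. The same degeneracy kills your part (1): from $h_1^3 X_1 = P h_5 c_0 e_0$ you get $h_1^3 d_2(X_1) = P h_1^2 h_5 c_0 d_0 = \tau h_1^4 B_{21}$, and this also vanishes, since Table \ref{tab:May-h2} gives $P h_1 h_5 c_0 d_0 = h_2\cdot h_0^2 B_{21}$, whence $\tau h_1^4 B_{21} = h_1 \cdot \tau h_1^3 B_{21} = h_1 \cdot P h_1 h_5 c_0 d_0 = (h_1 h_2)\cdot h_0^2 B_{21} = 0$. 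So the $h_1^3$ computation is blind to \emph{both} summands, not just $h_0^2 B_4$; and your fallback for the $h_0^2 B_4$ summand (``an $h_0$-multiplication relation or comparison with the classical Adams spectral sequence'') names no relation and cannot appeal to the literature, since this very differential is the one the paper flags as contradicting \cite{KM93}.

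What actually works is to multiply by smaller elements whose products with the targets survive, and to prove (1) \emph{first}. The paper establishes (1) using two relations from \cite{Bruner97}: $h_3 R_1 = h_0^2 X_1$ together with the classical $d_2(R_1) = h_0^2 x'$ (Table \ref{tab:diff-refs}) pins down the $h_0^2 B_4$ summand via $h_0^2 h_3 x' = h_0^4 B_4$, while $h_1^2 X_1 = h_3 Q_1$ together with $d_2(Q_1) = \tau h_1^2 x'$ (Lemma \ref{lem:d2-Q1}) pins down the $\tau h_1 B_{21}$ summand via $\tau h_1^2 h_3 x' = \tau h_1^3 B_{21}$ — note $h_1^2$ rather than $h_1^3$, since $\tau h_1^3 B_{21} = P h_1 h_5 c_0 d_0$ is still nonzero. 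Then (3) follows by multiplying by $P h_1$ rather than $h_1^2$, using $P h_1 \cdot \tau G = h_1^2 X_1$ and the fact that $P h_1 \cdot h_5 c_0 d_0 = \tau h_1^3 B_{21} \neq 0$; and (2) follows from the relation $h_0^2 G_{21} = h_3 X_1 + \tau e_1 r$ together with (1). Your plan for (2) is circular as stated: you propose to deduce $d_2(h_0^3 G_{21}) = \tau B_8 d_0$ from the desired answer and then ``divide by $h_0^3$,'' but you never produce a relation expressing a multiple of $G_{21}$ in terms of classes with known $d_2$, which is exactly the missing ingredient.
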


\begin{proof}
First consider the relation $h_3 R_1 = h_0^2 X_1$ \cite{Bruner97}.
Table \ref{tab:diff-refs} shows that $d_2(R_1) = h_0^2 x'$, so
$h_0^2 d_2(X_1)$ equals $h_0^2 h_3 x'$.
There is another relation $h_0^2 h_3 x' = h_0^4 B_4$, 
which is not hidden in the May spectral sequence.
It follows that $d_2(X_1)$ equals either
$h_0^2 B_4$ or $h_0^2 B_4 + \tau h_1 B_{21}$.

Next consider the relation $h_1^2 X_1 = h_3 Q_1$ \cite{Bruner97}.
We know from Lemma \ref{lem:d2-Q1} that
$d_2(Q_1)$ equals $\tau h_1^2 x'$, so
$h_1^2 d_2(X_1)$ equals $\tau h_1^2 h_3 x'$.
There is another relation $\tau h_1^2 h_3 x' = \tau h_1^3 B_{21}$,
which is not hidden in the May spectral sequence.
It follows that $d_2(X_1)$ equals either $\tau h_1 B_{21}$
or $\tau h_1 B_{21} + h_0^2 B_4$.

Now combine the previous two paragraphs to obtain the first formula.

For the second formula, start with the relation 
$h_0^2 G_{21} = h_3 X_1 + \tau e_1 r$ from \cite{Bruner97}.
Then
$d_2 (h_0^2 G_{21})$ equals $h_3 d_2(X_1) = h_0^2 h_3 B_4$,
which equals $h_0^3 X_3$ \cite{Bruner97}.
The second formula follows.

For the third formula,
Table \ref{tab:May-h1} gives the 
relation $P h_1 \cdot \tau G = h_1^2 X_1$.
The first formula implies that $P h_1 \cdot d_2 ( \tau G ) = \tau h_1^3 B_{21}$, 
which equals $P h_1 h_5 c_0 d_0$ by Table \ref{tab:May-tau}.
\end{proof}

\begin{lemma}
\label{lem:d2-D1}
$d_2(D_1) = h_0^2 h_3 g_2$.
\end{lemma}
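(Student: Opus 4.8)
The plan is first to reduce the statement to a non-survival claim. By inspecting the trigraded $E_2$-page in the stem and weight of $D_1$, and accounting for the filtration shift of $d_2$, the only nonzero class in the target tridegree is $h_0^2 h_3 g_2$; hence $d_2(D_1)$ is either $0$ or $h_0^2 h_3 g_2$. So it suffices to prove that $d_2(D_1) \neq 0$, or equivalently (since by the same degree count $D_1$ is the only generator whose $d_2$ can produce it) that $h_0^2 h_3 g_2$ is not a permanent cycle. I emphasize at the outset what the proof must \emph{avoid}: one cannot identify $h_0^2 h_3 g_2$ with $\langle \theta_4, 2, \sigma^2 \rangle$ and argue as Mahowald did, because Lemma \ref{lem:bracket-theta4-2-sigma^2} shows that bracket lies in Adams filtration strictly larger than that of $h_3^2 h_5$, and so carries no information at the filtration of $h_0^2 h_3 g_2$. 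The proof therefore has to be independent of that bracket, as explained in \cite{IX14}.

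I would first attempt a Leibniz-rule argument in the spirit of Lemma \ref{lem:d2-D4}. The goal is to locate a relation $z \cdot D_1 = \omega$ that already holds on the May $E_\infty$-page (hence in $\Ext$), where $z$ is a permanent cycle with $d_2(z)$ known, and where $d_2(\omega)$ is computable either by comparison to the classical Adams spectral sequence via Proposition \ref{prop:compare} or by comparison to $\tmf$. Applying
\[
d_2(z \cdot D_1) = z \cdot d_2(D_1) + d_2(z) \cdot D_1
\]
and solving for $z \cdot d_2(D_1)$ would force $d_2(D_1) = h_0^2 h_3 g_2$, provided one checks that $z \cdot h_0^2 h_3 g_2 \neq 0$ on the $E_2$-page. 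The delicate point here is to find the multiplier $z$ for which both $z\cdot D_1$ and $d_2(\omega)$ are pinned down with no ambiguity, since $D_1$ is multiplicatively indecomposable and the useful products lie in a crowded range.

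The second, and I expect more robust, route is through the cofiber of $\tau$, exactly as in the proof of Lemma \ref{lem:Adams-d2-B6}. Here I would suppose $d_2(D_1) = 0$ and pass to the motivic Adams spectral sequence for $C\tau$ of Chapter \ref{ch:Ctau}, whose $E_2$-page is the classical Adams--Novikov $E_2$-page. The aim is to derive a contradiction from the $\pi_{*,*}(C\tau)$-module structure: if $D_1$ were a $d_2$-cycle then its lift $\ol{D_1}$ would be forced to satisfy a multiplicative relation (I expect an $h_0$- or $h_1$-multiplication, analogous to the $h_1$-multiplication used for $B_6$) that is incompatible with the known Adams--Novikov data in this stem. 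Transporting the resulting constraint back through the bottom- and top-cell maps then yields the differential on $S^{0,0}$.

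The main obstacle in either route is the same: one must show non-survival is \emph{forced}, not merely \emph{allowed}. In the Leibniz route this means verifying the nonvanishing of $z\cdot h_0^2 h_3 g_2$ while controlling all competing relations in $\Ext$; in the $C\tau$ route it means certifying that the forced multiplication on $\ol{D_1}$ is genuinely nonzero, which rests on the detailed module computations of Chapter \ref{ch:Ctau}. This is precisely the delicacy — filtration jumps and indeterminacy in the relevant brackets and extensions — that invalidated the original argument, so the crux of the proof is to organize the calculation so that $h_0^2 h_3 g_2$ cannot persist to $E_\infty$ for purely structural reasons.
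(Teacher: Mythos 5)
Your reduction is fine: by inspection of the $E_2$-page the only possible value of $d_2(D_1)$ is $h_0^2 h_3 g_2$, so everything rests on showing $d_2(D_1) \neq 0$. But neither of your routes actually does this, and the missing idea is exactly the one the paper's proof (due to Xu \cite{IX14}) turns on. Your Route A seeks a nonzero product $z \cdot D_1 = \omega$ in $\Ext$ to which the ordinary Leibniz rule applies; you never exhibit such a $z$, and none convenient appears to exist --- the useful relation satisfied by $D_1$ is not a product but the vanishing $h_0 \cdot D_1 = 0$, which is precisely what makes the Massey product $\langle h_1, h_0, D_1 \rangle$ defined. The paper's proof establishes the bracket $\tau G = \langle h_1, h_0, D_1 \rangle$ and then applies the \emph{higher} Leibniz rule of Moss \cite{Moss70} to Massey products, giving $d_2(\tau G) = \langle h_1, h_0, d_2(D_1) \rangle$ with no indeterminacy; since $d_2(\tau G) = h_5 c_0 d_0 \neq 0$ by Lemma \ref{lem:d2-X1}, the differential $d_2(D_1)$ cannot vanish. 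That step has no substitute in your proposal. Note also that the input bracket is itself delicate: May's Convergence Theorem fails here because of the crossing differential $d_4(\Delta_1 h_1) = h_1 h_3 g_2 + h_1 h_5 g$, and the identity $\tau G = \langle h_1, h_0, D_1 \rangle$ is established via the lambda algebra and verified by machine (Remark \ref{rem:bracket-D1}). So a ``Leibniz-type'' argument does exist, but it runs at the level of Massey products, not products, and it requires an independently computed bracket as input.

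Your Route B has a different, structural problem: it asks for a contradiction with ``known Adams--Novikov data'' in the 51--52 stem, but no such data is available as input. The logic of the paper runs the other way: $\pi_{*,*}(C\tau)$ is computed in Chapter \ref{ch:Ctau} from Adams differentials --- including $d_2(D_1)$ itself, pushed forward along the bottom-cell inclusion to $E_2(C\tau)$ --- and only afterwards is the Adams--Novikov $E_2$-page read off in Chapter \ref{ch:ANSS}. Invoking $\pi_{*,*}(C\tau)$ or the Adams--Novikov picture to force $d_2(D_1)$ is therefore circular unless you first recompute the relevant portion of $\pi_{51,*}(C\tau)$ and $\pi_{52,*}(C\tau)$ by means independent of this differential, and you do not indicate how to do that, nor which specific multiplication on $\ol{D_1}$ would yield the contradiction. (The comparison with Lemma \ref{lem:Adams-d2-B6} does not carry over: there the contradiction comes from explicit $h_1$-multiplications in $E_2(C\tau)$ that are pure algebra, not from homotopical or Adams--Novikov input.) As it stands the proposal is a pair of strategies rather than a proof; the genuine content of the lemma is the bracket $\tau G = \langle h_1, h_0, D_1 \rangle$ together with Moss's higher Leibniz rule.
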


\begin{proof}
This proof is due to Z.\ Xu \cite{IX14}.

Start with the Massey product $\tau G = \langle h_1, h_0, D_1 \rangle$.
The higher Leibniz rule \cite{Moss70}*{Theorem 1.1} then implies that
$d_2 (\tau G) = \langle h_1, h_0, d_2 (D_1) \rangle$
because there is no possible indeterminacy.
We showed in Lemma \ref{lem:d2-X1} that
$d_2(\tau G)$ equals $h_5 c_0 d_0$.
This means that $d_2(D_1)$ is non-zero, and the only possibility
is that $d_2(D_1)$ equals $h_0^2 h_3 g_2$.

In fact, note that $h_0^2 h_3 g_2 = h_2^2 h_5 d_0$
and that $h_5 c_0 d_0 = \langle h_1, h_0, h_2^2 h_5 d_0 \rangle$,
but this is not essential for the proof.
\end{proof}

\begin{remark}
\label{rem:bracket-D1}
The proof of Lemma \ref{lem:d2-D1} relies on the Massey product
$\tau G = \langle h_1, h_0, D_1 \rangle$.
One might attempt to prove this with
May's Convergence Theorem \ref{thm:3-converge}
and the May differential $d_2(h_2 b_{22} b_{40} ) = h_0 D_1$.
However, there is a later differential
$d_4 ( \Delta_1 h_1 ) = h_1 h_3 g_2 + h_1 h_5 g$,
so the hypotheses of May's Convergence Theorem \ref{thm:3-converge}
are not satisfied.
\index{Convergence Theorem!May}
\index{May spectral sequence!differential!crossing}

This bracket can be computed via the lambda algebra \cite{IX14}.
Moreover, it has been verified by computer calculation.
\index{lambda algebra}
\index{machine computation}
\end{remark}

\begin{lemma}
\label{lem:d2-D2}
\mbox{}
\begin{enumerate}
\item
$d_2(D_2) = h_0 Q_2$.
\item
$d_2(A) = h_0 B_3$.
\item
$d_2(A'') = h_0 X_2$.
\end{enumerate}
\end{lemma}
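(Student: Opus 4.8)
The plan is to establish all three differentials by the mechanism already exercised in Lemmas \ref{lem:d2-X1}, \ref{lem:d2-D4}, and \ref{lem:d2-D1}: locate an algebraic relation in $\Ext$ that ties the source to an element whose $d_2$ is already known, then apply the Leibniz rule to force the differential to be non-zero. Each right-hand side here has the shape $h_0 \cdot (\text{generator})$, so the work splits into (i) producing a relation that certifies $d_2(D_2)$, $d_2(A)$, and $d_2(A'')$ are non-zero, and (ii) a degree-and-weight count showing that $h_0 Q_2$, $h_0 B_3$, and $h_0 X_2$ are the only admissible targets. The cleanest driving relations are typically extracted from the May $E_\infty$-page, together with hidden May extensions and the machine-checked classical data of \cite{Bruner97}; one then multiplies by a permanent cycle that acts injectively in the relevant degree and reads off the differential.

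For part (1), I would seek a relation of the form $x \cdot D_2 = y$ with $x$ a permanent cycle and $d_2(y)$ known, so that $x \cdot d_2(D_2) = d_2(y)$ pins down $d_2(D_2)$ after cancelling $x$; failing a convenient product relation, the alternative is to express a known-differential class as a threefold Massey product $\langle a, b, D_2 \rangle$ with $a,b$ permanent cycles and no indeterminacy, and invoke the higher Leibniz rule \cite{Moss70}*{Theorem 1.1} exactly as in the proof of Lemma \ref{lem:d2-D1}, reading $d_2(D_2)$ off from $\langle a, b, d_2(D_2)\rangle$. Parts (2) and (3) proceed identically with $A$ and $A''$ in place of $D_2$, targeting $h_0 B_3$ and $h_0 X_2$. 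Since the three elements $A$, $A'$, $A''$ and $D_1$, $D_2$ are closely tied by relations, I expect the non-triviality in (2) and (3) to be driven by the already-established differentials in this section, possibly chaining off $d_2(D_1)$ from Lemma \ref{lem:d2-D1} or off the vanishing $d_2(A')$ noted in the proof of Lemma \ref{lem:d2-D4}.

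The hard part will be twofold. First, because the motivic weight separates classes that agree classically, one must verify by weight bookkeeping that no stray factor of $\tau$ appears on the right-hand side (the phenomenon of Example \ref{ex:d2-3}); the stated targets are $\tau$-free, so it is precisely the weight count that certifies this. Second, and more delicate, is pinning the \emph{exact} target when the target degree is not one-dimensional: as in Lemma \ref{lem:d2-X1}, where the two relations $h_3 R_1 = h_0^2 X_1$ and $h_1^2 X_1 = h_3 Q_1$ were needed together to distinguish $h_0^2 B_4$ from $h_0^2 B_4 + \tau h_1 B_{21}$, I anticipate that resolving each of $h_0 Q_2$, $h_0 B_3$, $h_0 X_2$ against possible summands of higher May filtration may require combining more than one relation. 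Ruling out competing higher-filtration sources for the same targets—appealing where necessary to permanent-cycle results proved elsewhere, in the style of the exclusion argument in Lemma \ref{lem:d2-G3}—then completes each case.
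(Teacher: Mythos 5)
Your strategy is the paper's strategy in outline: drive each differential with Leibniz applied to $\Ext$ relations taken from the machine data of \cite{Bruner97}, use two independent relations to eliminate the ambiguity in the target, and chain parts (2) and (3) off part (1). But as written the proposal has a genuine gap, in two respects. First, the entire content of the proof is the concrete relations, and you never produce them. For part (1) the paper uses $e_0 D_2 = h_0 h_3 G_{21}$ (with $d_2(G_{21}) = h_0 X_3$ from Lemma \ref{lem:d2-X1}), which narrows $d_2(D_2)$ to $h_0 Q_2$ or $h_5 j$, and then $i D_2 = 0$ (with $d_2(i) = P h_0 d_0$), which narrows it to $h_0 Q_2$ or $h_0 Q_2 + h_5 j$; only the intersection of the two constraints gives $h_0 Q_2$. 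Parts (2) and (3) then follow purely formally from the relations $h_2 D_2 = h_0 A$, $h_2 Q_2 = h_0 B_3$, and $h_0 A'' = h_2(A+A')$, $h_2 B_3 = h_0 X_2$ — no new differential input and no appeal to $d_2(D_1)$ is needed. Without exhibiting relations of this kind, the plan cannot be executed.

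Second, and more seriously, your stated search criterion would exclude the relations that actually work. You insist on a relation $x \cdot D_2 = y$ with $x$ a \emph{permanent cycle}, so that $x\, d_2(D_2) = d_2(y)$. But both multipliers the paper uses support $d_2$ differentials: $d_2(e_0) = h_1^2 d_0$ and $d_2(i) = P h_0 d_0$. The correct tool is the unrestricted Leibniz rule, $d_2(x D_2) = d_2(x) D_2 + x\, d_2(D_2)$, where the correction term $d_2(x) D_2$ is controlled (it vanishes for $e_0$, and for $i$ the relation $i D_2 = 0$ turns the correction term into the whole equation, giving $i\, d_2(D_2) = P h_0 d_0 D_2 \neq 0$). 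A search confined to permanent-cycle multipliers, or to the Massey-product alternative of Lemma \ref{lem:d2-D1}, would likely stall here: the one convenient permanent-cycle relation, $h_2 D_2 = h_0 A$, cannot start the induction because $d_2(A)$ is itself part of the statement — it is the output of part (1), not an input.
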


\begin{proof}
There is a classical relation $e_0 D_2 = h_0 h_3 G_{21}$ \cite{Bruner97}.
Since $d_2(G_{21}) = h_0 X_3$ by Lemma \ref{lem:d2-X1},
it follows that
$e_0 d_2(D_2)$ equals $h_0^2 h_3 X_3$, which is non-zero.
The only possibilities are that $d_2(D_2)$ equals either
$h_0 Q_2$ or $h_5 j$.

Next, there is a classical relation $i D_2 = 0$ \cite{Bruner97}.
It follows that $i d_2(D_2)$ equals $P h_0 d_0 D_2$, which is non-zero.
The only possibilities are that $d_2(D_2)$ equals
either $h_0 Q_2$ or $h_0 Q_2 + h_5 j$.

We obtain a classical differential
$d_2(D_2) = h_0 Q_2$ by combining the previous two paragraphs.
The same formula must hold motivically.
This establishes the first claim.

For the second claim, use the first claim together with the relations
$h_2 D_2 = h_0 A$ and $h_2 Q_2 = h_0 B_3$.

For the third claim, use the second claim together with the relations
$h_0 A'' = h_2 (A+A')$ and $h_2 B_3 = h_0 X_2$.
\end{proof}

\begin{lemma}
\label{lem:d2-B22}
\mbox{}
\begin{enumerate}
\item
$d_2(B_4) = h_0 B_{21}$.
\item
$d_2 (B_{22}) = h_1^2 B_{21}$.
\end{enumerate}
\end{lemma}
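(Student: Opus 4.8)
The plan is to derive both differentials by the Leibniz rule, bootstrapping off the already-established differential $d_2(X_1) = h_0^2 B_4 + \tau h_1 B_{21}$ from Lemma \ref{lem:d2-X1}, together with multiplicative relations in $\Ext$ taken from Table \ref{tab:May-E2-reln} and the machine-verified classical relations of \cite{Bruner97}. The essential structural input is that $B_{21}$ is a permanent cycle, since it detects $\kappa \theta_{4.5}$ (\emph{see} Remark \ref{rem:Adams-d4-C'}); hence $d_2(B_{21}) = 0$ and $B_{21}$ may be treated as a ``constant'' under $d_2$. I will also repeatedly use the relation $h_0 h_1 = 0$, which is what makes the two parts behave so differently.

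For part (1), I would multiply the differential $d_2(X_1) = h_0^2 B_4 + \tau h_1 B_{21}$ by $h_0$. Because $h_0 h_1 = 0$, the $\tau h_1 B_{21}$ term is annihilated, so $d_2(h_0 X_1) = h_0^3 B_4$, and in particular $h_0^3 B_4$ is a $d_2$-boundary. To pin down $d_2(B_4)$ itself, rather than just its $h_0$-multiples, I would locate a relation of the form $z \cdot B_4 = w$ (with $z$ a permanent cycle and $w$ an element whose $d_2$ is already known and nonzero, coming from the $h_3 x'$/$h_0^4 B_4$ relation invoked in the proof of Lemma \ref{lem:d2-X1}) and apply the Leibniz rule to force $d_2(B_4) \neq 0$. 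Once nonzero is established, a degree count shows $h_0 B_{21}$ is the only available target in the relevant tri-degree, giving $d_2(B_4) = h_0 B_{21}$. The consistency check $d_2^2(X_1)=0$, which forces $h_0^2 d_2(B_4) = \tau h_1 d_2(B_{21}) = 0$, is then automatic since $h_0^4 B_{21}$ vanishes.

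For part (2), I would exploit a relation tying $B_{22}$ to $B_4$ (equivalently to $B_{21}$). Combining the hidden $\tau$ extension $\tau \cdot h_1^2 B_{21} = P h_5 c_0 d_0$ of Lemma \ref{lem:t-B8} with the hidden $h_0$ extension $h_0 \cdot h_0^2 B_{22} = P h_1 h_5 c_0 d_0$ of Lemma \ref{lem:h0-h0^2B22}, and using $\tau h_1^3 = h_0^2 h_2$, identifies the interaction of $B_{22}$ and $B_{21}$; applying $d_2$ to the appropriate (non-hidden) relation and invoking part (1) then isolates $d_2(B_{22})$, with $h_1^2 B_{21}$ as the only target consistent with these identities. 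I expect the main obstacle to be twofold: first, confirming that the precise multiplicative relations used actually hold (several of these sit in the high-$50$s stems and rest on machine computations of classical $\Ext$ via \cite{Bruner97}); and second, ruling out the alternative nonzero targets for $d_2(B_4)$ and $d_2(B_{22})$, which requires the full $E_2$-chart of \cite{Isaksen14a} in this range to verify that no other element of the correct stem, weight, and filtration is available.
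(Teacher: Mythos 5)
Your proposal has a genuine gap in both parts: in each case you defer the decisive step to an unnamed relation, and the relations you do name cannot do the job. For part (1), multiplying $d_2(X_1) = h_0^2 B_4 + \tau h_1 B_{21}$ by $h_0$ only shows that $h_0^3 B_4$ is a $d_2$-boundary; it says nothing about $d_2(B_4)$. The relation you then point to from the proof of Lemma \ref{lem:d2-X1}, namely $h_0^2 h_3 x' = h_0^4 B_4$, fails to force $d_2(B_4) \neq 0$: since $d_2(x') = 0$, the Leibniz rule yields only $h_0^4 d_2(B_4) = 0$, which is perfectly consistent with $d_2(B_4) = 0$. Likewise the constraint $h_0^2 d_2(B_4) + \tau h_1 d_2(B_{21}) = 0$ coming from $d_2^2(X_1)=0$ is vacuous once $d_2(B_{21})=0$. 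What is actually needed is a relation $z \cdot B_4 = w$ where $d_2$ of the \emph{other} side is known and nonzero; the paper uses $P h_2 B_4 = i B_2$ (not hidden in the May spectral sequence), so that $d_2(i) = P h_0 d_0$ and $d_0 B_2 = h_2 B_{21}$ give $P h_2\, d_2(B_4) = P h_0 d_0 B_2 = P h_0 h_2 B_{21}$, forcing $d_2(B_4) = h_0 B_{21}$.

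Part (2) has the same defect. The hidden extensions you combine (Lemmas \ref{lem:t-B8} and \ref{lem:h0-h0^2B22}, plus $\tau h_1^3 = h_0^2 h_2$) produce exactly the relation $h_0^3 B_{22} = h_0^2 h_2 B_{21}$, and applying $d_2$ to it gives $h_0^3 d_2(B_{22}) = h_0^2 h_2\, d_2(B_{21}) = 0$ --- again a constraint that cannot distinguish $d_2(B_{22}) = h_1^2 B_{21}$ from $d_2(B_{22}) = 0$. (Indeed $h_0^3 h_1^2 B_{21} = 0$ automatically, so the constraint has no content.) The relation that actually isolates $d_2(B_{22})$ is the non-hidden relation $h_0 h_2 B_4 = \tau h_1 B_{22}$, which ties $B_{22}$ to $B_4$ through a non-torsion identification: by part (1), $\tau h_1\, d_2(B_{22}) = h_0 h_2\, d_2(B_4) = h_0^2 h_2 B_{21} = \tau h_1^3 B_{21}$, whence $d_2(B_{22}) = h_1^2 B_{21}$. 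Your instinct to bootstrap from $d_2(X_1)$ and to link $B_{22}$ to $B_4$ is the right shape of argument, but without exhibiting the relations $P h_2 B_4 = i B_2$ and $h_0 h_2 B_4 = \tau h_1 B_{22}$ (or equivalents), nothing in your outline ever produces a nonzero value on the right-hand side of a Leibniz computation, so neither differential is established.
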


\begin{proof}
There is a relation $P h_2 B_4 = i B_2$,
which is not hidden in the May spectral sequence.
It follows that $P h_2 d_2(B_4)$ equals $P h_0 d_0 B_2$,
which equals $P h_0 h_2 B_{21}$.
Therefore, $d_2(B_4)$ equals $h_0 B_{21}$.
This establishes the first formula.

Now consider the relation
$h_0 h_2 B_4 = \tau h_1 B_{22}$, which is not hidden in the
May spectral sequence.  This implies that
$\tau h_1 d_2(B_{22})$ equals $h_0^2 h_2 B_{21}$,
which equals $\tau h_1^3 B_{21}$.
It follows that $d_2(B_{22})$ equals $h_1^2 B_{21}$.
\end{proof}

\begin{lemma}
\label{lem:Adams-d2-C'}
$d_2(C') = 0$.
\end{lemma}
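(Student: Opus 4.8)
The plan is to rule out the unique nonzero candidate for $d_2(C')$. First I would read off, from the degree of $C'$ and the trigrading of the Adams differential $d_2 \colon E_2^{s,f,w} \map E_2^{s-1,f+2,w}$, precisely which elements of $\Ext$ occupy the target degree $(s-1,f+2,w)$. As in the neighbouring lemmas (for instance Lemma~\ref{lem:Adams-d2-B6} and Lemma~\ref{lem:d2-i1}), I expect this target group to contain a single nonzero class $y$, so that the only alternative to the desired conclusion is $d_2(C') = y$. The first task is therefore to pin down $C'$ and $y$ unambiguously as classes in $\Ext$, using Table~\ref{tab:Ext-gen} and the $E_2$-chart of \cite{Isaksen14a}.

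Second, I would rule out $d_2(C') = y$. The cleanest route, matching the method of Lemma~\ref{lem:Adams-d2-B6}, is to pass to the motivic Adams spectral sequence for the cofiber of $\tau$ studied in Chapter~\ref{ch:Ctau}. If $d_2(C') = y$ held on $S^{0,0}$, then naturality of the map $E_2(S^{0,0}) \map E_2(C\tau)$ induced by the inclusion of the bottom cell would force $d_2$ of the image of $C'$ to equal the image of $y$ in $E_2(C\tau)$; I would then derive a contradiction, typically by exhibiting an $h_1$- or $h_2$-multiple of the image of $C'$ that is already known to vanish while the corresponding multiple of the image of $y$ is nonzero. Alternatively, if $y$ is one of the classes that is easily seen to survive to $E_\infty$---for example because it detects a known homotopy element or lies in a surviving $h_1$-local family as in \cite{GI14}---then it cannot be hit by $d_2(C')$, and the conclusion is immediate, exactly as in Lemma~\ref{lem:d2-i1}.

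The main obstacle I anticipate is the degree bookkeeping in the first step: in this range several classes of the same $(s,f,w)$ can coexist, and the ambiguous generators of Table~\ref{tab:Ext-ambiguous} are a reminder that one must be careful that the target degree really contains only the single class $y$ and that $C'$ itself is specified consistently with the rest of the manuscript. Once the candidate is isolated, the second step should be comparatively routine, since the surrounding computations already supply the needed vanishing and non-vanishing facts. Finally, I would check that the result is consistent with Proposition~\ref{prop:Adams-d4} and Remark~\ref{rem:Adams-d4-C'}, which presuppose that $C'$ survives to the $E_4$-page and hence that $d_2(C')$ vanishes.
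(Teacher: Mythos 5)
Your skeleton matches the paper's proof: the unique nonzero candidate in the target degree is $h_1^2 B_3$, and it gets ruled out by an $h_1$-multiplication argument (your concern about ambiguity in this range is well founded, by the way: $C'$ and $X_2$ occupy the very same trigrading $(63,7,34)$, and $X_2$ does support the differential $d_2(X_2)=h_1^2 B_3$, so pinning down which generator is which genuinely matters). However, your plan leaves unnamed the single fact that carries the entire proof. The paper's argument is one line and needs no cofiber detour: the hidden May extension $h_1 \cdot C' = \tau d_1^2$ (Table \ref{tab:May-h1}) exhibits $h_1 C'$ as a product of permanent cycles, so $h_1\, d_2(C') = d_2(h_1 C') = 0$; since $h_1 \cdot h_1^2 B_3 = h_1^3 B_3$ is nonzero on the $E_2$-page (it is the target of $d_2(h_1 X_2)$ in Lemma \ref{lem:d2-X2}), the alternative $d_2(C') = h_1^2 B_3$ is impossible. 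Your route (a) is this same argument transplanted to $C\tau$: the vanishing $h_1$-multiple you hope to exhibit is exactly the image of $h_1 C'$, which dies in $E_2(C\tau)$ precisely because $h_1 C' = \tau d_1^2$ is $\tau$-divisible. So route (a) can be completed, but only after supplying this relation, and it incurs an extra verification the sphere-level argument avoids: you must also check that $h_1^3 B_3$ is \emph{not} $\tau$-divisible, or its image in $E_2(C\tau)$ would vanish and no contradiction would result.

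Your fallback route (b) is a dead end and should be discarded: $h_1^2 B_3$ does not survive to $E_\infty$, or even to $E_3$, since it is killed by $d_2(X_2)$. Moreover, knowing that $h_1^2 B_3$ is already the target of some $d_2$ tells you nothing either way about whether $d_2(C')$ also hits it, so no survival-type argument is available here. Finally, the closing consistency check against Proposition \ref{prop:Adams-d4} and Remark \ref{rem:Adams-d4-C'} is harmless but carries no logical weight, as those statements presuppose the present lemma.
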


\begin{proof}
First note that $h_1 C' = \tau d_1^2$ is a permanent cycle.
Therefore, $h_1 d_2(C')$ must equal zero, so
$d_2(C')$ does not equal $h_1^2 B_3$.
\end{proof}

\begin{lemma}
\label{lem:d2-X2}
\mbox{}
\begin{enumerate}
\item
$d_2(X_2) = h_1^2 B_3$.
\item
$d_2(D_3') = h_1 X_3$.
\end{enumerate}
\end{lemma}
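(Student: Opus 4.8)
The plan is to derive both differentials from the Leibniz rule applied to multiplicative relations in $\Ext$ that link $X_2$ and $D_3'$ to generators whose Adams $d_2$ has already been computed earlier in this section. Since $d_2$ is a derivation and the classes $h_0$, $h_1$, $h_2$, $h_3$, and $c_0$ are permanent cycles, each such relation converts a known differential into a constraint on the unknown one; the remaining work is to pin down the target uniquely and to exclude the zero differential.

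For part (1), I would begin from the relation $h_2 B_3 = h_0 X_2$ recorded in the proof of Lemma \ref{lem:d2-D2}. First I would check that $d_2(B_3) = 0$: the relation $c_0 B_6 = h_1^3 B_3$ of Lemma \ref{lem:c0-B6}, combined with $d_2(B_6) = 0$ from Lemma \ref{lem:Adams-d2-B6} and $d_2(c_0) = 0$, gives $h_1^3 d_2(B_3) = 0$, and a degree count in the relevant tridegree forces $d_2(B_3) = 0$. Applying $d_2$ to $h_2 B_3 = h_0 X_2$ then yields $h_0 \cdot d_2(X_2) = 0$, so $d_2(X_2)$ lies in the $h_0$-torsion of the target, where by inspection $h_1^2 B_3$ is the only nonzero class. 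It then remains to rule out $d_2(X_2) = 0$. For this I would verify that $h_1^2 B_3$ must die on the $E_3$-page; one piece of supporting evidence is that $d_2(c_0 D_4) = c_0 \cdot h_1 B_6 = h_1^4 B_3$ already kills $h_1^4 B_3$ (using $d_2(D_4) = h_1 B_6$ from Lemma \ref{lem:d2-D4}), while the full vanishing of $h_1^2 B_3$ should follow from the cofiber-of-$\tau$ analysis of Chapter \ref{ch:Ctau} or the $h_1$-local computations of \cite{GI14}. Granting that $h_1^2 B_3$ is killed and noting that its only candidate sources are $C'$ and $X_2$, which share a tridegree, Lemma \ref{lem:Adams-d2-C'} excludes $C'$, so $d_2(X_2) = h_1^2 B_3$.

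For part (2) the strategy is identical in structure. I would use that $h_0 X_3 = d_2(G_{21})$ from Lemma \ref{lem:d2-X1}, together with a multiplicative relation expressing $h_3 D_3'$, or $h_1^2$ times a neighboring class, in terms of $X_3$, in direct analogy with the relations $h_1^2 X_1 = h_3 Q_1$ and $h_1^2 H_1 = h_3 D_4$ exploited in Lemmas \ref{lem:d2-X1} and \ref{lem:d2-D4}. Applying $d_2$ to such a relation and substituting the known differential carried by the partner of $X_3$ should leave $d_2(D_3') = h_1 X_3$ as the unique possibility consistent with the degrees, after a check that $d_2(D_3')$ is forced to be divisible by $h_1$ and nonzero.

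The main obstacle I anticipate is not the Leibniz computations, which are routine once the correct relation is in hand, but two bookkeeping points. The relations by themselves typically only show that $d_2(X_2)$ is $h_0$-torsion, respectively that $d_2(D_3')$ is $h_1$-divisible, so a careful enumeration of $\Ext$ in the target tridegrees is needed to confirm that $h_1^2 B_3$ and $h_1 X_3$ are the unique nonzero candidates. More delicate is establishing that each differential is genuinely nonzero: the nonvanishing of $d_2(X_2)$ in particular rests on first proving that $h_1^2 B_3$ cannot survive to $E_\infty$, and this is the step most likely to require input beyond the $E_2$-page, either from Chapter \ref{ch:Ctau} or from the $h_1$-local methods of \cite{GI14}.
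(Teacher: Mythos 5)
There is a genuine gap, and it sits exactly where the content of the lemma lies: proving that the two differentials are \emph{nonzero}. Your Leibniz-rule setup for part (1) is fine as far as it goes --- from $h_2 B_3 = h_0 X_2$ and $d_2(B_3)=0$ you can indeed conclude $h_0\cdot d_2(X_2)=0$ and narrow the target --- but this only bounds the possibilities; it is equally consistent with $d_2(X_2)=0$. Your evidence for nonvanishing does not close this: the fact that $d_2(c_0 D_4)=h_1^4 B_3$ kills $h_1^4 B_3$ says nothing about $h_1^2 B_3$ or $h_1^3 B_3$, which can perfectly well survive even when an $h_1$-multiple of them dies. Deferring instead to Chapter \ref{ch:Ctau} risks circularity, since the $d_2$ differentials for $C\tau$ in this range (Table \ref{tab:Ctau-E2}, e.g.\ Lemma \ref{lem:Ctau-d2-D4}) are themselves obtained by pushing forward and pulling back differentials on $E_2(S^{0,0})$, including the very ones at issue. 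What the paper actually supplies at this step is homotopy-theoretic input: using $h_1 B_3 = h_4 B_1$, the fact that $B_1$ detects $\eta\theta_{4.5}$ (Lemma \ref{lem:eta-h3^2h5}), and Table \ref{tab:Toda}, it identifies $\{h_1 B_3\}$ with $\langle \eta\theta_{4.5},2,\sigma^2\rangle$; then $\{h_1^3 B_3\}$ meets $\langle \eta^2\theta_{4.5},\eta,2\rangle\sigma^2$, which vanishes for degree reasons, so $h_1^3 B_3$ must be a boundary, and $d_2(h_1 X_2)=h_1^3 B_3$ is the only option. No purely algebraic propagation of known $d_2$'s can substitute for this, because every neighbor of $X_2$ in your chain ($B_3$, $B_6$, $c_0$) has vanishing $d_2$, so Leibniz produces only constraints, never a forced nonzero value.

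Part (2) has the same defect compounded by a missing ingredient: you hypothesize ``a multiplicative relation expressing $h_3 D_3'$ \dots{} in terms of $X_3$'' by analogy, but no such relation is identified, and the paper does not use one. Its argument runs through the relation $h_1^2 X_3 = h_1 c_0 B_3 = h_4 c_0 B_1$ in $\Ext$: since $h_4 c_0$ detects $\sigma\eta_4$ (Lemma \ref{lem:sigma-h1h4}) and $B_1$ detects $\eta\theta_{4.5}$, the class $h_1^2 X_3$ detects $\eta\sigma\eta_4\theta_{4.5}$; a filtration count (the product, if nonzero, has Adams filtration at least $12$, while $h_1^2 X_3$ has filtration $11$) forces $h_1^2 X_3$ to be hit by a differential, and $d_2(h_1 D_3')$ is the only possible source. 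So for both parts the essential mechanism is a Toda-bracket/filtration argument built on $\theta_{4.5}$, not a Leibniz computation; your outline would need to import precisely that argument (or an equivalent one) at the step you have left open, and until then the proof is incomplete.
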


\begin{proof}
Note that $h_1 B_3 = h_4 B_1$.
We will show in Lemma \ref{lem:eta-h3^2h5} that
$\{ B_1 \}$ contains $\eta \theta_{4.5}$.
As shown in Table \ref{tab:Toda},
$\{ h_1 B_3 \}$ intersects
the bracket $\langle \eta \theta_{4.5}, 2, \sigma^2 \rangle$.
In fact, $\langle \eta \theta_{4.5}, 2, \sigma^2 \rangle$
is contained in $\{ h_1 B_3 \}$ because all of the possible indeterminacy
is in strictly higher Adams filtration.
This shows that
$\theta_{4.5} \langle \eta, 2, \sigma^2 \rangle$ intersects $\{h_1 B_3\}$.

For the first formula, note that
$\{ h_1^3 B_3 \}$ intersects
$\langle \eta^2 \theta_{4.5}, \eta, 2 \rangle \sigma^2$.
This last expression must be zero for degree reasons.
Therefore, $h_1^3 B_3$ must be killed by some differential.
The only possibility is that 
$d_2(h_1 X_2) = h_1^3 B_3$, which implies that 
$d_2(X_2) = h_1^2 B_3$.

For the second formula, 
note that $h_1^2 X_3 = h_1 c_0 B_3 = h_4 c_0 B_1$.
Lemma \ref{lem:sigma-h1h4} says that
$h_4 c_0$ detects $\sigma \eta_4$.
Therefore, $h_1^2 X_3$ detects
$\eta \sigma \eta_4 \theta_{4.5}$.
The Adams filtration of $\eta_4 \theta_{4.5}$ is at least 8;
the Adams filtration of $\eta \eta_4 \theta_{4.5}$ is at least 11;
and the Adams filtration of $\eta \sigma \eta_4 \theta_{4.5}$ is
at least 12.
Since the Adams filtration of $h_1^2 X_3$ is 11,
it follows that
$h_1^2 X_3$ must be hit by some differential.
The only possibility is that $d_2 (h_1 D_3')$ equals $h_1^2 X_3$.
\end{proof}

\begin{lemma}
\label{lem:d2-tG0}
\mbox{}
\begin{enumerate}
\item
$d_2 ( \tau G_0 ) = h_2 C_0 + h_1 h_3 Q_2$.
\item
$d_2 ( h_2 G_0 ) = h_1 C''$.
\end{enumerate}
\end{lemma}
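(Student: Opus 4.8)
The plan is to reduce both differentials to a single computation of $d_2$ on $G_0$ and then pin down the answer using the Leibniz rule against previously established differentials. Since $\tau$ and $h_2$ are permanent cycles, $d_2(\tau G_0) = \tau\, d_2(G_0)$ and $d_2(h_2 G_0) = h_2\, d_2(G_0)$, so it suffices to identify $d_2(G_0)$ well enough that its products with $\tau$ and with $h_2$ are determined (any ambiguity of $G_0$ as a generator, in the sense of Table \ref{tab:Ext-ambiguous}, should be annihilated after these multiplications). The governing technique, exactly as in Lemmas \ref{lem:d2-D1} and \ref{lem:d2-X1}, is to differentiate a non-hidden May relation (or a Massey-product description) in which $G_0$ appears as a factor, and then read off the value using the already-established $d_2(D_2) = h_0 Q_2$ and $d_2(A) = h_0 B_3$ from Lemma \ref{lem:d2-D2}.

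For part (1), I would first locate a relation in the $E_\infty$-page of the May spectral sequence of the form $r \cdot \tau G_0 = P$, with $r$ a permanent cycle and $P$ a product of elements whose $d_2$ differentials are recorded in Table \ref{tab:Ext-gen}; applying $d_2$ and the Leibniz rule forces $r \cdot d_2(\tau G_0) = d_2(P)$, and dividing out by $r$ identifies $d_2(\tau G_0)$. Failing a clean multiplicative relation, the alternative is a Massey-product presentation $\tau G_0 \in \langle a,b,c\rangle$ together with the higher Leibniz rule \cite{Moss70}*{Theorem 1.1}, giving $d_2(\tau G_0) \in \langle a,b,d_2(c)\rangle$ as in the proof of Lemma \ref{lem:d2-D1}. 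Either route should produce a nonzero target; the remaining work is to show that, among the classes of the correct stem and weight, only $h_2 C_0 + h_1 h_3 Q_2$ is compatible with the known $h_0$- and $h_1$-multiplications (recall $h_1 C_0 = 0$ from Lemma \ref{lem:h1-C0} and $h_2 Q_2 = h_0 B_3$), which should isolate the two-term sum.

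For part (2), I would either multiply the outcome of part (1) by a suitable element and invoke a relation expressing $h_2$ times the part-(1) target in terms of $\tau h_1 C''$, or argue independently that $h_2\, d_2(G_0)$ is nonzero and that $h_1 C''$ is the only available class of the correct tridegree. To exclude the competing candidates for $d_2(h_2 G_0)$, I expect to pass to the motivic Adams spectral sequence for the cofiber of $\tau$ of Chapter \ref{ch:Ctau}, exactly as in the proof of Lemma \ref{lem:Adams-d2-B6}: a differential consistent over the sphere can frequently be ruled out by comparing $h_1$-towers on the bottom and top cells of $C\tau$.

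The hard part will be the bookkeeping that isolates the correction term $h_1 h_3 Q_2$ in part (1) and guarantees consistency between the two parts, i.e.\ that a single class $d_2(G_0)$ simultaneously satisfies $\tau\, d_2(G_0) = h_2 C_0 + h_1 h_3 Q_2$ and $h_2\, d_2(G_0) = h_1 C''$. This forces a hidden relation of the shape $\tau h_1 C'' = h_2^2 C_0 + h_1 h_2 h_3 Q_2$, whose verification (via Table \ref{tab:May-E2-reln} together with the hidden extensions of Section \ref{sctn:hidden-extn}) is likely to be the most delicate step, and where a cofiber-of-$\tau$ argument will again be the most reliable tool.
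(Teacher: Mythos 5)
Your plan has a genuine gap, and it begins at the very first step. The reduction of both parts to a single class $d_2(G_0)$ is ill-posed, because $G_0$ is not an element of the $E_2$-page: only $\tau G_0$ and $h_2 G_0$ exist, which is precisely why both appear as separate multiplicative generators in Table \ref{tab:Ext-gen} (the same naming convention as $\tau g$, $\tau g^2$, $\tau g^3$; see Example \ref{ex:g^2}). So the equations $d_2(\tau G_0) = \tau\, d_2(G_0)$ and $d_2(h_2 G_0) = h_2\, d_2(G_0)$ are meaningless; the ambiguity addressed by Table \ref{tab:Ext-ambiguous} concerns which element of $\Ext$ the May class $\tau G_0$ names, not whether a class $G_0$ exists. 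Relatedly, the divisibility you hope to find over the sphere (a relation $r \cdot \tau G_0 = P$ in the May $E_\infty$-page) is never exhibited, and the one that actually drives the computation does not live over the sphere at all: it is the hidden extension $h_1 h_3 \cdot \ol{D_4} = \tau G_0$ in $E_2(C\tau)$, where the lift $\ol{D_4}$ of $D_4$ exists and carries the known differential $d_2(\ol{D_4}) = h_1 \cdot \ol{B_6} + Q_2$ (Table \ref{tab:Ctau-E2}). From there the Leibniz rule gives $d_2(\tau G_0) = h_1^2 h_3 \cdot \ol{B_6} + h_1 h_3 Q_2$, and the relation $h_1^2 h_3 \cdot \ol{B_6} = h_2^3 \cdot \ol{B_6} = h_2 C_0$ finishes part (1). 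Your proposal invokes the cofiber of $\tau$ only as an exclusion device for part (2); in fact it is the engine of part (1), and without the specific relation above your elimination argument (``only $h_2 C_0 + h_1 h_3 Q_2$ is compatible with the known multiplications'') has no starting point --- isolating that particular two-term sum is exactly the content of the lemma.

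To your credit, the consistency relation you flag at the end is correct and is exactly how part (2) follows in the paper: multiplying part (1) by $h_2$ and using $\tau \cdot h_2 G_0 = h_2 \cdot \tau G_0$ gives $\tau\, d_2(h_2 G_0) = h_2^2 C_0 + h_1 h_2 h_3 Q_2$; the second term vanishes since $h_2 h_3 = 0$, and the hidden relation $h_2^2 C_0 = \tau h_1 C''$ (which follows from $h_1 \cdot C'' = d_1 t$ and $h_2 \cdot h_2 C_0 = \tau d_1 t$ in Tables \ref{tab:May-h1} and \ref{tab:May-h2}) yields $d_2(h_2 G_0) = h_1 C''$. So the delicate step you identify is real but already available from the hidden-extension tables; the missing idea is the passage to $C\tau$ and the relation $\tau G_0 = h_1 h_3 \cdot \ol{D_4}$ there.
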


\begin{proof}
We work in the motivic Adams spectral sequence for the cofiber of $\tau$ 
of Chapter \ref{ch:Ctau},
\index{cofiber of tau@cofiber of $\tau$!Adams spectral sequence}
where we have the relation $h_1 h_3 \cdot \ol{D_4} = \tau G_0$.
From Table \ref{tab:Ctau-E2}, we know that
$d_2 (\ol{D_4}) = h_1 \cdot \ol{B_6} + Q_2$.
It follows that 
$d_2 (h_1 h_3 \cdot \ol{D_4} ) = h_1^2 h_3 \cdot \ol{B_6} + h_1 h_3 Q_2$.
Finally, observe that
$h_1^2 h_3 \cdot \ol{B_6} = h_2^3 \cdot \ol{B_6} = h_2 C_0$.
This establishes the first formula.

For the second formula, 
use the first formula together with the relations
$\tau \cdot h_2 G_0 = h_2 \cdot \tau G_0$
and $h_2^2 C_0 = \tau h_1 C''$.
\end{proof}

\begin{lemma}
\label{lem:d2-tB5}
\mbox{}
\begin{enumerate}
\item
$d_2(\tau B_5) = \tau h_0^2 B_{23}$.
\item
$d_2(D_2') = \tau^2 h_0^2 B_{23}$.
\item
$d_2(P(A+A')) = \tau^2 h_0 h_2 B_{23}$.
\end{enumerate}
\end{lemma}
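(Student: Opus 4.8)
The plan is to prove the three differentials in order, using part~(1) as the base case and deriving parts~(2) and~(3) from it by the Leibniz rule applied to multiplicative relations. The governing principle throughout is that $d_2$ is a derivation preserving the weight $w$, so every relation holding in $\Ext$ (either lifted from the classical cohomology of the Steenrod algebra through Proposition~\ref{prop:compare}, or already present on the May $E_\infty$-page and hence not hidden) constrains the differentials, and the displayed factors of $\tau$ are precisely those needed to balance weights, as in Example~\ref{ex:d2-3}.

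For part~(1) I would locate a multiplicative relation expressing a product $x \cdot \tau B_5$, for a suitable monomial $x$, in terms of elements whose $d_2$ differentials are already known from Table~\ref{tab:diff-refs} or from the earlier lemmas of this section. Applying $d_2$ to this relation and solving for $d_2(\tau B_5)$ should produce $\tau h_0^2 B_{23}$; comparison with the classical picture via Proposition~\ref{prop:compare} then confirms this value and, together with a check of the trigraded degree, rules out every competing target.

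Parts~(2) and~(3) should then follow from the factor-of-$\tau$ and factor-of-$h_2$ patterns in the targets. The relation $d_2(D_2')=\tau^2 h_0^2 B_{23}$, whose target is exactly $\tau$ times that of part~(1), strongly suggests that $D_2'$ and $\tau\cdot\tau B_5$ occupy the same degree and are linked by a (possibly hidden) $\tau$-extension; granting such a relation, applying $d_2$ and invoking part~(1) forces the value. For part~(3), the appearance of $h_0 h_2$ rather than $h_0^2$ signals a relation of the shape $h_0 \cdot P(A+A') = h_2 \cdot D_2'$, in direct analogy with the relation $h_0 A'' = h_2(A+A')$ used in Lemma~\ref{lem:d2-D2}. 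Applying $d_2$ to this relation and substituting part~(2) gives $h_0 \cdot d_2(P(A+A')) = \tau^2 h_0^2 h_2 B_{23}$, whence $d_2(P(A+A')) = \tau^2 h_0 h_2 B_{23}$ after dividing by $h_0$.

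The main obstacle will be securing the precise relations that drive parts~(2) and~(3) and justifying the final division by $h_0$ in part~(3). These elements lie near the top of the computed range, where, as the remarks comparing with \cite{KM93} emphasize, the surrounding $\Ext$ structure is delicate and several degrees carry more than one class. The careful points are therefore to verify that each relation used genuinely holds rather than being hidden in the May spectral sequence, and to confirm that in each target degree the displayed class is the unique possibility not already excluded by the differentials established earlier.
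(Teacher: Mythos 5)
There is a genuine gap, and it sits in part~(2). Your plan assumes that $D_2'$ and $\tau\cdot\tau B_5$ ``occupy the same degree and are linked by a (possibly hidden) $\tau$-extension,'' so that part~(2) follows from part~(1) by multiplying by $\tau$. The degrees do agree, but the premise is false: $\tau B_5$ and $D_2'$ are \emph{independent} multiplicative generators of the $E_2$-page (see Table~\ref{tab:Ext-gen}, where $D_2'$ has May description $P D_2 + \Delta h_0 h_3 x_{35}$, quite different from that of $\tau B_5$), and there is no $\tau$-extension between them --- indeed the class $\tau^2 B_5 + D_2'$ is a non-zero element that survives to the $E_3$-page, where $d_3(\tau^2 B_5 + D_2') = 0$. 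So part~(2) cannot be deduced from part~(1) at all; the equality $d_2(D_2') = \tau \cdot d_2(\tau B_5)$ is precisely what has to be proved. Worse, your fallback of ``checking the trigraded degree to rule out competing targets'' fails exactly here: in this degree the motivic weights allow \emph{two} possible values, $\tau^2 h_0^2 B_{23}$ and $\tau^2 h_0^2 B_{23} + P h_5 j$, and no weight or Leibniz argument distinguishes them. The paper resolves this with an algebraic Steenrod operation input: classically $\Sq^4(q)$ is non-zero and $\Sq^5(q)$ is a multiple of $h_1$ \cite{Bruner04}, so $d_2(\Sq^4(q)) = h_0\,\Sq^5(q) = 0$ by \cite{BMMS86}*{VI.1}; since both $B_5$ and $D_2'$ support non-zero classical $d_2$'s, one must have $\Sq^4(q) = B_5 + D_2'$, forcing $d_2(D_2') = d_2(B_5) = h_0^2 B_{23}$ classically and hence $d_2(D_2') = \tau^2 h_0^2 B_{23}$ motivically, with no $P h_5 j$ term. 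Nothing in your proposal supplies this ingredient, and part~(3) inherits the gap since it rests on part~(2).

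Two smaller points. For part~(1), your plan to ``locate a multiplicative relation'' is the right spirit but leaves out the actual input: the classical relation $i B_5 = 0$ from \cite{Bruner97}, which under the Leibniz rule gives $i\, d_2(B_5) = d_2(i)\, B_5 = P h_0 d_0 B_5 \neq 0$, so $d_2(B_5)$ is non-zero; only \emph{then} does the weight argument (which kills the $P h_5 j$ option in this case, unlike in part~(2)) pin down $d_2(\tau B_5) = \tau h_0^2 B_{23}$. For part~(3), your relation $h_0 \cdot P(A+A') = h_2 \cdot D_2'$ is exactly the one the paper uses, but the ``division by $h_0$'' is only legitimate after one first narrows $d_2(P(A+A'))$ to the two candidates $\tau^2 h_0 h_2 B_{23}$ and $h_0^4 G_{21}$ (via the classical relation $i\, P(A+A') = 0$ and the same non-vanishing argument as in part~(1)), since multiplication by $h_0$ need not be injective on the target degree.
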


\begin{proof}
Classically, there is a relation $i B_5 = 0$ \cite{Bruner97}.
Using that $d_2(i) = P h_0 d_0$, we get that
$i d_2(B_5)$ equals $P h_0 d_0 B_5$ classically, which is non-zero.  
The only possibility
is that there is a 
motivic differential $d_2(\tau B_5) = \tau h_0^2 B_{23}$.
Note that the $P h_5 j$ term is eliminated because of the motivic weight.
This establishes the first formula.

Classically, there is a relation $i D_2' = 0$ \cite{Bruner97}.
As in the previous paragraph, we get that
$i d_2(D_2')$ equals $P h_0 d_0 D_2'$ classically, which is non-zero.
However, this time the motivic weights allow for two possibilities.
It follows that $d_2(D_2')$ equals either
$\tau^2 h_0^2 B_{23}$ or $\tau^2 h_0^2 B_{23} + P h_5 j$.

We know from \cite{Bruner04} that classically,
$\Sq^4(q)$ is non-zero, and $\Sq^5(q)$ is a multiple of $h_1$.
From \cite{BMMS86}*{VI.1}, we have that
$d_2 (\Sq^4(q)) = h_0 \Sq^5(q)$, which is zero.
From the previous two paragraphs, it follows that
$\Sq^4(q)$ must be $B_5 + D_2'$ classically,
and $d_2(D_2')$ must be $h_0^2 B_{23}$.
The motivic formula $d_2(D_2') = \tau^2 h_0^2 B_{23}$
follows immediately.
This establishes the second formula.

For the third formula, 
there is a classical relation $i P(A+A') = 0$ \cite{Bruner97}.
As before,
we get that $i d_2(P(A+A'))$ equals $P^2 h_0 d_0 (A+A')$, which is non-zero.
It follows that
$d_2(P(A+A'))$ equals $\tau^2 h_0 h_2 B_{23}$ or $h_0^4 G_{21}$.
The relation $h_2 D_2' = h_0 P(A+A')$ and the
calculation of $d_2(D_2')$ in the previous paragraph imply
that $d_2(P(A+A'))$ equals $\tau^2 h_0 h_2 B_{23}$.
\end{proof}

\begin{lemma}
\label{lem:d2-P^3v}
$d_2(P^3 v) = P^3 h_1^2 u$.
\end{lemma}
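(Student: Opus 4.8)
The plan is to mimic the proof of Lemma \ref{lem:d2-u'}(5), exploiting the relation $c_0 v = h_1 v'$ that already holds in the May $E_\infty$-page, together with its companion $c_0 u = h_1 u'$. Since $c_0$, $h_1$, and the Adams periodicity operator $P$ all behave as permanent cycles, multiplying $c_0 v = h_1 v'$ by $P^3$ and applying the $d_2$ differential gives
\[
c_0 \cdot d_2(P^3 v) = h_1 \cdot d_2(P^3 v').
\]
The idea is to evaluate the right-hand side from the $v'$-family differentials and then strip off the factor of $c_0$.

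First I would extend the inductive pattern of Lemma \ref{lem:d2-u'}(5)--(7) by one more step to obtain $d_2(P^3 v') = P^3 h_1^2 u'$ plus an error term that is divisible by $h_0$ (arising from a relation of the shape $h_0 \cdot P^3 v' = \tau P^2 h_0 d_0^4$, in exact analogy with the relations $h_0 \cdot P v' = \tau h_0 d_0^2 k$ and $h_0 \cdot P^2 v' = \tau h_0 d_0^3 i$ used for $Pv'$ and $P^2 v'$). Multiplying by $h_1$, the entire $h_0$-divisible error term dies because $h_0 h_1 = 0$, leaving $h_1 \cdot d_2(P^3 v') = P^3 h_1^3 u'$. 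Now rewrite $h_1^3 u' = h_1^2 (h_1 u') = h_1^2 c_0 u$ using $c_0 u = h_1 u'$, so that
\[
c_0 \cdot d_2(P^3 v) = P^3 h_1^2 c_0 u = c_0 \cdot P^3 h_1^2 u.
\]

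It then remains to conclude that $d_2(P^3 v)$ itself equals $P^3 h_1^2 u$, i.e.\ to cancel the common factor of $c_0$. I expect this uniqueness step to be the main obstacle: a priori $d_2(P^3 v)$ and $P^3 h_1^2 u$ could differ by a class annihilated by $c_0$, so one must check against the $E_2$-chart that $P^3 h_1^2 u$ is the only possible nonzero value in the relevant tridegree and that no $c_0$-torsion class sits there. As a consistency check, the same computation also yields the base case $d_2(v) = h_1^2 u$ directly from $c_0 \, d_2(v) = h_1\, d_2(v') = h_1^3 u' = c_0 h_1^2 u$, and the propagation of this differential through $P^3$ is precisely the periodicity phenomenon already handled for $P^k e_0$ in Lemma \ref{lem:d2-e0}; care is needed only to rule out spurious $\tau h_0$-divisible terms appearing under the periodicity operator.
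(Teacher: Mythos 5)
Your strategy founders on a range problem that is not just bookkeeping, together with a real logical gap at the end. Every relation you need lives beyond the 70-stem, which is where this paper's May spectral sequence computation --- and hence all knowledge of the $E_2$-page, its relations, and its hidden extensions --- stops. The element $P^3 v'$ sits in the 73-stem; the relations $c_0 \cdot P^3 v = h_1 \cdot P^3 v'$ and $P^3 h_1 \cdot v' = h_1 \cdot P^3 v'$ sit in the 74-stem; and the $h_0$-extension you posit on $P^3 v'$ sits in the 73-stem. (Your guessed value $\tau P^2 h_0 d_0^4$ is in fact degree-inconsistent --- it lies in the 72-stem, so it could only be the error term of $d_2(P^3 v')$, not the value of $h_0 \cdot P^3 v'$ --- a symptom of extrapolating patterns outside the computed range.) None of this data is established, and establishing it means extending the whole $E_2$-computation several stems further. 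More seriously, the final cancellation of $c_0$ is not a chart check: from $c_0 \cdot d_2(P^3 v) = c_0 \cdot P^3 h_1^2 u$ you cannot conclude $d_2(P^3 v) = P^3 h_1^2 u$ without knowing that multiplication by $c_0$ is injective out of tridegree $(s,f,w) = (65,23,35)$; worse, unless you also know that $c_0 \cdot P^3 h_1^2 u = P^3 h_1^3 u'$ is nonzero in the 74-stem, your identity is consistent with $d_2(P^3 v) = 0$ and so proves nothing. Note too that your ``consistency check'' for the base case runs backwards: Lemma \ref{lem:d2-u'} derives $d_2(v')$ from the classical input $d_2(v) = h_1^2 u$, so recovering $d_2(v)$ from $d_2(v')$ is circular.

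The paper's proof avoids all of this by staying inside the computed range and arguing in the opposite direction: instead of producing the differential by multiplicative relations, it forces the target to die. Lemma \ref{lem:d3-t^2P^2d0m} establishes $d_3(\tau^2 P^2 d_0 m) = P^3 h_1 u$ (by comparison with the Adams spectral sequence for $\tmf$), where $\tau^2 P^2 d_0 m$ lies in the 65-stem. Since $h_1 \cdot \tau^2 P^2 d_0 m = 0$, the Leibniz rule forces $h_1 \cdot P^3 h_1 u = P^3 h_1^2 u$ to vanish on the $E_3$-page, so it must be hit by a $d_2$ differential, and $P^3 v$ is the only possible source. If you want to salvage your approach, that is the missing idea to import: find a forcing argument located at or below the 66-stem rather than pushing relations up into stems 73--74.
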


\begin{proof}
We will show in Lemma \ref{lem:d3-t^2P^2d0m} that
$d_3(\tau^2 P^2 d_0 m ) = P^3 h_1 u$.
Since $h_1 \cdot \tau^2 P^2 d_0 m$ is zero,
$h_1 \cdot P^3 h_1 u$ must be zero on the $E_3$-page.
Therefore, some $d_2$ differential must hit it.
The only possibility is that $d_2(P^3 v) = P^3 h_1^2 u$.
\end{proof}

\begin{lemma}
\label{lem:Adams-d2-X3}
$d_2 (X_3) = 0$.
\end{lemma}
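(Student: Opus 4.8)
The plan is to pin down the target degree of $d_2(X_3)$, extract from the already-established differentials the constraint that $d_2(X_3)$ is annihilated by both $h_0$ and $h_1$, and then verify against the $E_2$-page that no nonzero class in that degree can meet both constraints. First I would record the degree. From the relation $h_1^3 X_3 = h_5 c_0 d_0 e_0$ of Lemma \ref{lem:h1-th1G} one reads off that $X_3$ lies in stem $67$ and Adams filtration $9$, so that $d_2(X_3)$ would lie in stem $66$, filtration $11$, in the weight of $X_3$.

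Next I would invoke the two differentials already in hand. Lemma \ref{lem:d2-X1} gives $d_2(G_{21}) = h_0 X_3$ and Lemma \ref{lem:d2-X2} gives $d_2(D_3') = h_1 X_3$. Applying $d_2$ a second time and using $d_2 \circ d_2 = 0$ yields
\[
h_0 \, d_2(X_3) = d_2(h_0 X_3) = 0, \qquad h_1 \, d_2(X_3) = d_2(h_1 X_3) = 0.
\]
Thus any nonzero value of $d_2(X_3)$ must be a class in stem $66$, filtration $11$ that is simultaneously killed by $h_0$ and by $h_1$. The second step is then a finite check on the $E_2$-page: using the computation of $\Ext$ from Chapter \ref{ch:May} (conveniently displayed in the chart of \cite{Isaksen14a}), I would list the nonzero classes in stem $66$, filtration $11$ in the relevant weight and, for each one, exhibit a nonzero multiple by $h_0$ or by $h_1$. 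Since every candidate supports such a multiplication, none can equal $d_2(X_3)$, and therefore $d_2(X_3) = 0$.

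The main obstacle is the bookkeeping in this last step: I must be sure the enumeration in stem $66$ is complete and, more importantly, that no candidate is genuinely both $h_0$- and $h_1$-torsion. If some class did survive the torsion test, the $h_0$/$h_1$ argument alone would be insufficient, and I would fall back on a module argument in the motivic Adams spectral sequence for the cofiber of $\tau$, exactly as in the proof of Lemma \ref{lem:Adams-d2-B6}: lift $X_3$ to a class $\ol{X_3}$ over $C\tau$ (as in Chapter \ref{ch:Ctau}) and use the extra room there to contradict the surviving possibility. Alternatively, expressing $X_3$ as a Massey product of permanent cycles and applying the higher Leibniz rule of \cite{Moss70} would constrain $d_2(X_3)$ directly.
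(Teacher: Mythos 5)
Your proposal is correct, and it reaches the conclusion by a genuinely different derivation of the key constraint, although the final exclusion step coincides with the paper's. The paper starts from the product relation $h_1 X_3 = B_3 c_0$ and applies the Leibniz rule: since $B_3$ and $c_0$ support no $d_2$, this gives $h_1 d_2(X_3) = 0$ immediately, and then the unique nonzero class in the target degree, namely $h_1 c_0 Q_2$, is excluded because it is not $h_1$-torsion (indeed $c_0 Q_2 = P D_4$ and $h_1^3 c_0 Q_2 \neq 0$ by Lemma \ref{lem:c0-i1}). You instead obtain $h_1 d_2(X_3) = 0$, and additionally $h_0 d_2(X_3) = 0$, from $d_2 \circ d_2 = 0$ applied to the previously established differentials $d_2(G_{21}) = h_0 X_3$ (Lemma \ref{lem:d2-X1}) and $d_2(D_3') = h_1 X_3$ (Lemma \ref{lem:d2-X2}); both lemmas precede this one in the paper, so there is no circularity. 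Your enumeration then works exactly as you anticipate: the only nonzero class in stem $66$, filtration $11$, weight $36$ is $h_1 c_0 Q_2$, and it fails the $h_1$-torsion test, so $d_2(X_3) = 0$. Two remarks: first, your extra $h_0$-constraint is vacuous for this candidate, since $h_0 \cdot h_1 c_0 Q_2 = 0$ automatically from $h_0 h_1 = 0$, so only the $h_1$-constraint does any work; second, your fallback arguments (the cofiber of $\tau$, or Moss's higher Leibniz rule) are not needed. What your route buys is independence from the relation $h_1 X_3 = B_3 c_0$ and from knowing $d_2(B_3) = 0$; what the paper's route buys is brevity, extracting the same constraint from a single product decomposition.
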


\begin{proof}
Start with the relation $h_1 X_3 = B_3 c_0$.
This shows that $h_1 d_2 (X_3)$ is zero.
Therefore, $d_2(X_3)$ cannot equal $h_1 c_0 Q_2$.
\end{proof}

\begin{lemma}
\label{lem:d2-R1'}
$d_2(R_1') = P^2 h_0 x'$.
\end{lemma}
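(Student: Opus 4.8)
The statement to prove is the Adams $d_2$-differential
\[
d_2(R_1') = P^2 h_0 x'.
\]
The plan is to follow the same strategy that has been used repeatedly in this section (see Lemmas~\ref{lem:d2-Q1}, \ref{lem:d2-X1}, \ref{lem:d2-D2}, \ref{lem:d2-tB5}): leverage a known multiplicative relation in $\Ext$ between $R_1'$ and an element whose $d_2$-differential is already understood, and then use the Leibniz rule to transfer information. The natural candidate for the ``input'' differential is $d_2(R_1) = h_0^2 x'$, which comes from the classical case as recorded in Table~\ref{tab:diff-refs} and was already exploited in the proof of Lemma~\ref{lem:d2-Q1}. Thus I would look for a relation tying $R_1'$ (or a small multiple such as $P R_1'$) to $R_1$ times powers of $P$, $h_0$, and $x'$.

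**Key steps in order.** First I would locate, in the machine-computed classical data of \cite{Bruner97} (and its motivic lift via Proposition~\ref{prop:assoc-graded} and Theorem~\ref{thm:Chow-0}), a multiplicative relation of the form
\[
y \cdot R_1' = z \cdot R_1
\]
where $y$ is some product of $P$'s and $h_0$'s that is a permanent cycle, and $z$ is a corresponding product; the relations $\tau \cdot P^3 u' = h_0^6 R_1'$ from Lemma~\ref{lem:tau-Pu'} and $i \cdot Q' = h_0^3 R_1'$ from the same lemma already indicate that $R_1'$ sits in a web of $h_0$- and $P$-divisibility relations near $R_1$. Second, having fixed such a relation with $y$ a permanent cycle, I would apply $d_2$ and use the Leibniz rule together with $d_2(R_1) = h_0^2 x'$ to compute $y \cdot d_2(R_1')$ in terms of $z \cdot h_0^2 x'$, which should reduce (via non-hidden $E_2$-relations among $P$, $h_0$, and $x'$) to $y \cdot P^2 h_0 x'$. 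Third, I would check that multiplication by $y$ is injective on the relevant degree of the $E_2$-page, so that the equality $y\cdot d_2(R_1') = y \cdot P^2 h_0 x'$ forces $d_2(R_1') = P^2 h_0 x'$; if $y$ is not injective I would instead argue that $P^2 h_0 x'$ is the unique nonzero possibility in the target degree of $d_2$ on $R_1'$, eliminating any competing terms for degree or weight reasons as in Lemma~\ref{lem:d2-tB5}.

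**Main obstacle.** The hard part will be pinning down the correct multiplicative relation and verifying the injectivity (or uniqueness) needed in the third step. The element $R_1'$ lies deep in the $60$-stem range where the $E_2$-page is intricate, and there may be several elements in the target degree $(s-1,f+2,w)$ of $d_2(R_1')$; I would need the charts of \cite{Isaksen14a} to confirm that, after accounting for the weight constraint, $P^2 h_0 x'$ is the only admissible value and that no partial cancellation obscures the Leibniz computation. A secondary subtlety is making sure the auxiliary relation is not itself hidden in the May spectral sequence—if it is, I would need to invoke the corresponding hidden-extension lemma from Chapter~\ref{ch:May} rather than a bare $E_\infty$-relation, exactly as was done with $c_0 \cdot i_1 = h_1^4 D_4$ in the proof of Lemma~\ref{lem:d2-D4}.
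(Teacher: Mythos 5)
Your proposal follows the paper's generic method for this section (multiplicative relation, Leibniz rule, known input differential, then divide), and it can be completed, but the concrete route you pick differs from the paper's. The paper never touches $R_1$ or $d_2(R_1) = h_0^2 x'$: its proof starts from the hidden $\tau$ extension $h_0^6 R_1' = \tau P^3 u'$ (one of the two relations you yourself cite from Lemma~\ref{lem:tau-Pu'}), applies $d_2(P^3 u') = \tau P^3 h_0 d_0^2 e_0$ from Lemma~\ref{lem:d2-u'} to get $h_0^6 d_2(R_1') = \tau^2 P^3 h_0 d_0^2 e_0$, rewrites this as $P^2 h_0^7 x'$ using a second hidden $\tau$ extension from Table~\ref{tab:May-tau}, and divides by $h_0^6$. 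Your route through $R_1$ is also viable, and the relation you leave unspecified can be pinned down: it is $h_0^2 R_1' = P^2 h_0 R_1$, which follows from the two hidden $h_0$ extensions $h_0 \cdot R_1 = S_1$ and $h_0 \cdot (h_0 R_1') = P^2 S_1$ recorded in Table~\ref{tab:May-h0} (both classical, from \cite{Bruner97}); at the level of May representatives it comes from squaring the relation $h_2 b_{20} = h_0 h_0(1)$ to get $P h_2^2 = h_0^2 d_0$, so that $P^2 R_1 = \Delta^2 P^2 h_2^2$ and $h_0 R_1' = \Delta^2 P h_0^2 d_0$ coincide. The Leibniz rule then gives $h_0^2 d_2(R_1') = P^2 h_0 \cdot h_0^2 x' = P^2 h_0^3 x'$, and the $h_0$-division check you flag in your third step closes the argument. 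The caveat you raise is exactly the real issue: the relation tying $R_1'$ to $R_1$ is hidden in the May spectral sequence, so it must be cited from the hidden-extension tables rather than read off the $E_\infty$-page. As for what each approach buys: the paper's route stays entirely inside its own earlier lemmas (Lemmas~\ref{lem:tau-Pu'} and \ref{lem:d2-u'}) and needs only $\tau$-linearity of $d_2$, while yours imports the classical differential $d_2(R_1) = h_0^2 x'$ from Table~\ref{tab:diff-refs} (already used in Lemma~\ref{lem:d2-Q1}) and is shorter once the hidden relation is in hand, at the cost of requiring that hidden $h_0$-extension data and the final uniqueness check for division by $h_0^2$.
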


\begin{proof}
First, there is a relation $h_0^6 R_1' = \tau P^3 u'$,
as shown in Table \ref{tab:May-tau}.
Lemma \ref{lem:d2-u'} says that
$d_2(P^3 u') = \tau P^3 h_0 d_0^2 e_0$,
so $h_0^6 d_2(R_1')$ equals $\tau^2 P^3 h_0 d_0^2 e_0$.
There is another relation $\tau^2 P^3 h_0 d_0^2 e_0 = P^2 h_0^7 x'$,
as shown in Table \ref{tab:May-tau}.
It follows that
$d_2(R_1')$ equals $P^2 h_0 x'$.
\end{proof}

The next lemma computes a few $d_2$ differentials on decomposable
elements.  In principle, these differentials are consequences of the
previous lemmas.
However, the results of the calculations are unexpected because
of some extensions that are hidden in the motivic May spectral sequence.

\begin{lemma}
\mbox{}
\begin{enumerate}
\item
$d_2( e_0^2 g ) = h_1^7 B_1$.
\item
$d_2 ( c_0 e_0^2 g ) = h_1^8 B_8$.
\item
$d_2 (e_0 v ) = h_1^5 x'$.
\item
$d_2 ( e_0 v' ) = h_1^4 c_0 x' + \tau h_0 d_0 e_0^3$.
\end{enumerate}
\end{lemma}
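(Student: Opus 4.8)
The plan is to obtain all four differentials from the Leibniz rule, feeding in the $d_2$-differentials on the factors that were established in the preceding lemmas, and then to simplify the output using relations in $\Ext$ that are hidden in the motivic May spectral sequence. For part (1), I would write $e_0^2 g = e_0 \cdot (e_0 g)$ and combine $d_2(e_0) = h_1^2 d_0$ (Lemma \ref{lem:d2-e0} with $k=0$) with $d_2(e_0 g) = h_1^2 e_0^2$ (Lemma \ref{lem:d2-e0g}). The Leibniz rule then gives $d_2(e_0^2 g) = h_1^2 d_0 e_0 g + h_1^2 e_0^3 = h_1^2 (d_0 e_0 g + e_0^3)$. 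The only nonformal step is to recognize $d_0 e_0 g + e_0^3$ as $h_1^5 B_1$; this is a relation hidden in the May spectral sequence, and since $B_1$ is $h_1$-local it can be checked in the $h_1$-local cohomology of \cite{GI14}.

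Part (2) is then a corollary of part (1). Because $c_0$ is a permanent cycle, $d_2(c_0 e_0^2 g) = c_0 \cdot d_2(e_0^2 g) = h_1^7 c_0 B_1$, and the relation $c_0 B_1 = h_1 B_8$ used already in Lemma \ref{lem:t-B8} rewrites this as $h_1^8 B_8$.

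Parts (3) and (4) follow the same template. Using $d_2(e_0) = h_1^2 d_0$ together with $d_2(v) = h_1^2 u$ (recorded in Table \ref{tab:Ext-gen}; compare the computation of $d_2(P^3 v)$ in Lemma \ref{lem:d2-P^3v}), the Leibniz rule gives $d_2(e_0 v) = h_1^2(d_0 v + e_0 u)$, so part (3) reduces to the hidden relation $d_0 v + e_0 u = h_1^3 x'$. For part (4), $d_2(v') = h_1^2 u' + \tau h_0 d_0 e_0^2$ from Lemma \ref{lem:d2-u'}, part (5), yields $d_2(e_0 v') = h_1^2(d_0 v' + e_0 u') + \tau h_0 d_0 e_0^3$; here the summand $\tau h_0 d_0 e_0^3$ appears verbatim, while $d_0 v' + e_0 u'$ must be identified with $h_1^2 c_0 x'$. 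These two identifications are not independent: the May $E_\infty$ relations $c_0 v = h_1 v'$ and $c_0 u = h_1 u'$ (both noted in the proof of Lemma \ref{lem:d2-u'}) give $c_0(d_0 v + e_0 u) = h_1 (d_0 v' + e_0 u')$, so the relation for part (4) is obtained from the one for part (3) by multiplying by $c_0$ and dividing by $h_1$.

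The main obstacle is establishing the hidden relations $d_0 e_0 g + e_0^3 = h_1^5 B_1$ and $d_0 v + e_0 u = h_1^3 x'$, which are precisely the extensions hidden in the motivic May spectral sequence that make the stated answers look simpler than the raw Leibniz output. I would verify these by combining the hidden $h_1$-extension analysis of Section \ref{sctn:hidden-extn} with the $h_1$-locality of $B_1$ and $B_8$ (which both legitimizes the division by $h_1$ and lets the identifications be checked in the $h_1$-local setting of \cite{GI14}), and by cross-checking against the classical computation after inverting $\tau$ via Proposition \ref{prop:compare-ext}.
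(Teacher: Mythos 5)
Your proposal is correct, and for parts (1) and (2) it coincides with the paper's proof: Leibniz on $e_0 \cdot e_0 g$, simplification of $h_1^2(d_0 e_0 g + e_0^3)$ to $h_1^7 B_1$ via the hidden relation from \cite{GI14} (stated explicitly in the proof of Lemma \ref{lem:_h1^2e0.h1^2e0^2}), then multiplication by $c_0$ and the relation $c_0 B_1 = h_1 B_8$. For parts (3) and (4), however, you take a genuinely different route. The paper never applies the Leibniz rule to $e_0 v$ or $e_0 v'$; instead it argues indirectly: since $h_1^7 B_1$ is a boundary by part (1), the relation $P h_1 \cdot B_1 = h_1^2 x'$ forces $h_1^9 x'$ to be a boundary, and the only possible source gives $d_2(e_0 v) = h_1^5 x'$; then $h_1^5 c_0 x'$ must likewise be hit, which pins down $d_2(e_0 v')$ only up to the error term $\tau h_0 d_0 e_0^3$, and the paper resolves that ambiguity with the extension $h_0 \cdot e_0 v' = \tau h_0 d_0 e_0 m$. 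Your direct Leibniz computation, by contrast, requires the two additional hidden relations $d_0 v + e_0 u = h_1^3 x'$ and $d_0 v' + e_0 u' = h_1^2 c_0 x'$; the second is exactly the relation the paper itself imports from \cite{GI14} in Lemma \ref{lem:Ctau-d2-h1c0x'}, and the first appears there too in the form $P h_1^2(d_0 v + e_0 u) = P h_1^5 x'$, so both are legitimate inputs. The trade-off: your argument is more uniform and produces the $\tau h_0 d_0 e_0^3$ term automatically, with no separate extension argument, while the paper's argument minimizes reliance on hidden May relations (only the one in part (1)) at the cost of ``only possibility'' inspections of the $E_2$-chart. One small caution on your side: the step where you divide by $h_1$ to pass from the part-(3) relation to the part-(4) relation is only valid modulo $h_1$-torsion, so it is cleaner to quote $e_0 u' + d_0 v' = h_1^2 c_0 x'$ from \cite{GI14} directly, as the paper does, rather than derive it.
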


\begin{proof}
In the first formula, we have
$d_2 ( e_0 \cdot e_0 g ) = h_1^2 d_0 \cdot e_0 g + e_0 \cdot h_1^2 e_0^2$.
This simplifies to $h_1^7 B_1$, as shown in \cite{GI14}.

The second formula follows immediately from the first formula,
using that $B_1 c_0 = h_1 B_8$.

For the third formula, 
start with the relation $P h_1 \cdot B_1 = h_1^2 x'$.
Since $h_1^7 B_1$ is hit by a $d_2$ differential, it follows that
$h_1^9 x'$ must also be hit by a $d_2$ differential.
The only possibility is that 
$d_2 ( e_0 v ) = h_1^5 x'$.

For the fourth formula,
$h_1^5 c_0 x'$ must be hit by the $d_2$ differential,
since $h_1^5 x'$ is hit by the $d_2$ differential.
The only possibility is that 
$d_2 ( h_1 e_0 v' ) = h_1^5 c_0 x'$.
Therefore,
$d_2 ( e_0 v')$ equals either $h_1^4 c_0 x'$ or 
$h_1^4 c_0 x' + \tau h_0 d_0 e_0^3$.
The extension $h_0 \cdot e_0 v' = \tau h_0 d_0 e_0 m$
implies that the second possibility is correct.
\end{proof}


\subsection{Adams $d_3$ differentials computations}
\label{subsctn:d3-lemmas}

\index{Adams spectral sequence!differential!d3@$d_3$}

\begin{lemma}
$d_3 ( h_4 c_0 ) = 0$.
\end{lemma}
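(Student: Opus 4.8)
The plan is to show that the only arithmetically possible nonzero value of $d_3(h_4 c_0)$ is excluded. First I would record that $h_4 c_0$ genuinely lives on the $E_3$-page. Since $h_0 c_0 = 0$ in $\Ext$, the Leibniz rule gives
\[
d_2(h_4 c_0) = d_2(h_4)\, c_0 = h_0 h_3^2 c_0 = h_3^2 (h_0 c_0) = 0,
\]
and $h_4 c_0$ is not itself hit by a $d_2$, since any conceivable source would be an $h_0$-multiple of a class killed by $h_0 c_0=0$. A $d_3$ on the generator in tridegree $(23,4,13)$ lands in tridegree $(22,7,13)$ (differentials preserve weight). Reading off this degree from the $\Ext$ computation of Chapter \ref{ch:May}, the relevant class is $c_0 d_0$, which indeed sits in $(22,7,13)$ because $c_0$ and $d_0$ have weights $5$ and $8$. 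Thus it suffices to prove $d_3(h_4 c_0) \neq c_0 d_0$.

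The exclusion I would carry out by comparison with the classical Adams spectral sequence. The class $c_0 d_0$ is \emph{not} $\tau$-torsion: under the isomorphism of Proposition \ref{prop:compare-ext} (equivalently, after inverting $\tau$) it maps to the nonzero classical product $c_0 d_0$. Consequently, if $d_3(h_4 c_0)$ equalled $c_0 d_0$ motivically, then inverting $\tau$ and invoking Proposition \ref{prop:compare} would force the classical differential $d_3(h_4 c_0) = c_0 d_0$. But $h_4 c_0$ supports no classical Adams differential---it is a classical permanent cycle, as recorded in the literature collected in Table \ref{tab:diff-refs}---so its classical $d_3$ vanishes. This contradiction shows $d_3(h_4 c_0) \neq c_0 d_0$, and since $c_0 d_0$ is the only candidate, $d_3(h_4 c_0) = 0$.

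The main obstacle is the bookkeeping in the target tridegree: I must be certain that $c_0 d_0$ is the only class in $E_3^{(22,7,13)}$ that could receive the differential, and in particular that no purely $\tau$-torsion class sits there, since such a class would be invisible after inverting $\tau$ and hence uncontrolled by the classical comparison. Verifying this requires the explicit chart in the relevant degree, and it is the one step I would check most carefully. An alternative route, available once the surrounding hidden extensions are in hand, is to observe that $h_4 c_0$ detects the product $\sigma \eta_4$ of permanent cycles and is therefore itself a permanent cycle; however, that detection statement already presupposes the survival of $h_4 c_0$, so I prefer the comparison argument above as the logically prior one.
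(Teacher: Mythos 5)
Your reduction to the single candidate target $c_0 d_0$ in tridegree $(22,7,13)$ is correct, but the exclusion step fails, and it fails at exactly the point you flagged as the one to check most carefully. The claim that $c_0 d_0$ is not $\tau$-torsion is false: in the motivic May spectral sequence there is a differential $d_2(b_{20} b_{30} h_0(1)) = \tau c_0 d_0$ (this is the differential cited in Table \ref{tab:Ctau-h0}), so $\tau \cdot c_0 d_0 = 0$ in $\Ext$, and correspondingly the classical product $c_0 d_0$ is \emph{zero} in $\Ext_{A_{\cl}}$. This is also why $\ol{c_0 d_0}$ appears as a module generator of $E_2(C\tau)$ in Table \ref{tab:Ctau-E2}, and why a hidden $\tau$ extension from $c_0 d_0$ to $P d_0$ is even possible (Table \ref{tab:Adams-tau}); classically $\epsilon\kappa$ is detected by $P d_0$ in filtration $8$, not by anything in filtration $7$. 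Consequently the candidate target is purely $\tau$-torsion, it dies after inverting $\tau$, and Proposition \ref{prop:compare} gives no information about whether $d_3(h_4 c_0) = c_0 d_0$: the classical statement you would deduce is $d_3(h_4 c_0) = 0$, which is vacuous classically and controls nothing motivically. The lemma requires a genuinely motivic argument precisely because the only possible target is an exotic class with no classical shadow.

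The paper's proof supplies that argument via the cofiber of $\tau$. By Table \ref{tab:Ctau-E2}, $P d_0$ is hit by a differential in the Adams spectral sequence for $C\tau$ (namely $d_2(\ol{c_0 d_0}) = P d_0$), so the homotopy class $\{P d_0\}$ maps to zero in $\pi_{*,*}(C\tau)$ and hence is divisible by $\tau$ in $\pi_{22,12}$. The only way this divisibility can occur is if $c_0 d_0$ survives to a nonzero permanent cycle with $\tau \cdot \{c_0 d_0\} = \{P d_0\}$; in particular $c_0 d_0$ cannot be the target of $d_3(h_4 c_0)$. Your proposed fallback (that $h_4 c_0$ detects $\sigma\eta_4$, a product of permanent cycles) is, as you yourself note, circular as stated: to make it work you would need to show that $\sigma\eta_4$ has Adams filtration exactly $4$, which again requires ruling out detection by the $\tau$-torsion class $c_0 d_0$ and so returns you to the same problem.
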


\begin{proof}
The only other possibility is that $d_3( h_4 c_0 )$ equals $c_0 d_0$.
Table \ref{tab:Ctau-E2}
shows that $P d_0$ is hit by a differential
in the Adams spectral sequence for the cofiber $C\tau$ of $\tau$.
\index{cofiber of tau@cofiber of $\tau$!Adams spectral sequence}
Therefore, $\{ P d_0 \}$ must be divisible by $\tau$ in the homotopy
groups of $S^{0,0}$.
The only possibility 
is that $c_0 d_0$ is a non-zero permanent cycle and that
$\tau \cdot \{ c_0 d_0 \} = \{ P d_0 \}$.
\end{proof}

\begin{lemma}
\label{lem:d3-te0g}
\mbox{}
\begin{enumerate}
\item
$d_3 ( \tau e_0 g ) = c_0 d_0^2$.
\item
$d_3 ( \tau d_0 v ) = P h_1 u'$.
\item
$d_3( \tau^2 g m ) = h_1 d_0 u$.
\item
$d_3 ( \tau e_0 g^2 ) = c_0 d_0 e_0^2$.
\item
$d_3 ( \tau g v ) = h_1 d_0 u'$.
\item
$d_3 ( \tau P d_0 v) = P^2 h_1 u'$.
\end{enumerate}
\end{lemma}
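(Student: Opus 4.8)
The plan is to treat these six differentials as a single linked family rather than as six independent computations. First I would check that each listed source survives to the $E_3$-page, i.e.\ that its $d_2$ vanishes. For example, Lemma~\ref{lem:d2-e0g} gives $d_2(e_0 g) = h_1^2 e_0^2$, so for (1) I must verify that $\tau h_1^2 e_0^2 = 0$ in $\Ext$; likewise Lemma~\ref{lem:d2-gm} gives $d_2(g m) = h_0 e_0^2 g$, so for (3) I need $\tau^2 h_0 e_0^2 g = 0$. In each case the exact power of $\tau$ attached to the source has been chosen precisely so that the $d_2$-value becomes $\tau$-torsion and dies on the $E_2$-page; establishing these torsion relations by inspection of $\Ext$ in the relevant degrees (in the spirit of Example~\ref{ex:h2g^2}) is the first bookkeeping task.

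Second, I would reduce to a small set of base cases using the Leibniz rule and multiplicative relations. The cleanest reduction is (6)\,$=P\cdot$(2): since $P$ is a permanent cycle, $d_3(\tau P d_0 v)=P\,d_3(\tau d_0 v)=P^2 h_1 u'$. Using a relation $e_0^2 = d_0 g$ (which I would confirm carries no factor of $\tau$ by matching weights) I would deduce (4) from (1) by multiplying by $g$; because $g$ itself is not a permanent cycle this passes through the identity $\tau\cdot d_3(\tau e_0 g^2)=\tau c_0 d_0 e_0^2$ and so requires ruling out a $\tau$-torsion correction term. Finally, the relations $c_0 u = h_1 u'$ and $c_0 v = h_1 v'$ from the discussion around Lemma~\ref{lem:d2-u'} link (2), (3) and (5): for instance $c_0\cdot h_1 d_0 u = h_1^2 d_0 u'$, so multiplying (3) by $c_0$ should reproduce $h_1$ times (5), provided one locates a relation of the form $c_0 m = h_1 v$ (the analogue of $c_0 u = h_1 u'$), which I would search for on the $E_2$-page.

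Third, for the remaining base differentials I would force the targets to be hit. The differentials (1) and (4) lie entirely among the $\tmf$-classes $c_0$, $d_0$, $e_0$, and $g$, so I would first try to read them off the Adams spectral sequence for $\tmf$ by comparison \cite{Henriques07}, exactly as $d_2(k)=h_0 d_0^2$ was lifted in Lemma~\ref{lem:d2-l}. Failing that, I would argue as in Lemma~\ref{lem:d2-h3g}: the target detects a product of permanent cycles (for (1), a class of the form $\epsilon\kappa^2$; for (2), (3), (5), products built from $\eta$, $\kappa$ and the classes detecting $u$ and $u'$), and if that product is zero in $\pi_{*,*}$ then the target cannot be a permanent cycle and must be hit, with a degree count leaving the indicated source as the only possibility. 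Proposition~\ref{prop:compare} guarantees that whatever I establish is consistent with the classical Adams spectral sequence.

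Fourth, the main obstacle I anticipate is the $\tau$-linear (and $c_0$- and $h_1$-linear) bookkeeping: nearly every step yields the desired equation only after multiplication by $\tau$, $c_0$, or $h_1$, and one must then exclude an additive correction living in the torsion part of the target degree. This is precisely where the motivic weight is decisive, since it often eliminates a correction that would be invisible classically. A secondary difficulty is pinning down the auxiliary relations $e_0^2=d_0 g$ and $c_0 m = h_1 v$ with their exact $\tau$-powers, and confirming the needed vanishing of products in $\pi_{*,*}$; some of these facts may be most naturally checked through the cofiber of $\tau$ of Chapter~\ref{ch:Ctau}, in which case I would need to guard against circularity, as the paper does elsewhere (e.g.\ Lemma~\ref{lem:Adams-d2-B6}).
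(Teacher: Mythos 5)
Your plan has a genuine gap at exactly the step you rely on for the base cases. The targets of these six differentials --- $c_0 d_0^2$, $P h_1 u'$, $h_1 d_0 u$, $c_0 d_0 e_0^2$, $h_1 d_0 u'$, $P^2 h_1 u'$ --- are all $\tau$-torsion classes: $\tau \cdot c_0 d_0 = 0$ in $\Ext$ (that is why the $\tau$ extension from $c_0 d_0$ to $P d_0$ is \emph{hidden}), and $\tau \cdot u' = 0$ by the very definition of $u'$ in Table \ref{tab:Ext-ambiguous}. By Proposition \ref{prop:compare-ext}, $\tau$-torsion classes vanish after inverting $\tau$, so these targets have no classical counterpart in the Adams spectral sequence for $S^0$ or for $\tmf$; in particular there is no classical $d_3$ that you can ``read off'' by comparison, and Henriques' $\tmf$ computation shows a $d_4$ on $e_0 g$, not a $d_3$. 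The correct mechanism, which is the paper's proof, imports the classical $d_4$'s instead: $d_4(e_0 g) = P d_0^2$, $d_4(d_0 v) = P^2 u$, $d_4(g m)$, $d_4(e_0 g^2) = d_0^4$, $d_4(g v) = P d_0 u$, $d_4(P d_0 v) = P^3 u$ (all detected in $\tmf$ except $d_4(g m)$, which follows from $d_4(e_0 g)$). Weight balancing forces the motivic differential $d_4(\tau^2 e_0 g) = P d_0^2$, and this in turn forces $\tau e_0 g$ \emph{not} to survive to $E_4$; since its $d_2$ vanishes, it must support a $d_3$, and inspection leaves $c_0 d_0^2$ as the only possible value. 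Your fallback --- showing that the target detects a product of permanent cycles that is zero in $\pi_{*,*}$, e.g.\ $\epsilon \kappa^2 = 0$ --- is itself an unproved and delicate claim whose usual proof \emph{uses} these differentials, so it risks circularity; and for (2), (3), (5) it is not even clear what homotopy product the target would detect, since $u'$ does not survive to $E_3$ (Lemma \ref{lem:d2-u'}).

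Two of your reduction steps also contain algebraic errors. First, $P$ is not an element of $\Ext$ at all --- it supports the May differential $d_4(P) = h_0^4 h_3$ --- so ``$P$ is a permanent cycle'' is meaningless and $d_3(\tau P d_0 v) = P\, d_3(\tau d_0 v)$ is not an instance of the Leibniz rule; the legitimate version of this trick (used in Lemma \ref{lem:d3-tPd0e0}) multiplies by the genuine element $P h_1$ and then divides by $h_1$. Second, $g$ is likewise not an element motivically (only $\tau g$ exists; see Example \ref{ex:g^2}), and your conjectured relation $c_0 m = h_1 v$ cannot hold because the weights differ: $c_0 m$ has weight $25$ while $h_1 v$ has weight $24$, so at best $\tau c_0 m = h_1 v$ could be true. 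These slips are repairable, but the base-case gap above is structural: without the classical-$d_4$ argument you have no independent source for any of the six differentials.
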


\begin{proof}
For the first formula,
there is a classical differential
$d_4( e_0 g ) = P d_0^2$ given in Table \ref{tab:diff-refs}.
Motivically, there must be a differential
$d_4 ( \tau^2 e_0 g ) = P d_0^2$.  This shows that
$\tau e_0 g$ cannot survive to $E_4$.

The arguments for the remaining formulas are similar,
using the existence of the classical differentials
$d_4 ( d_0 v ) = P^2 u$,
$d_4 (g m ) = d_0^2 j + h_0^5 R_1$, 
$d_4 (e_0 g^2) = d_0^4$, 
$d_4 (g v) = P d_0 u$, and
$d_4 ( P d_0 v ) = P^3 u$.
All of these classical differentials
can be detected in the Adams spectral sequence for $\tmf$ \cite{Henriques07},
except for the third one, which is an easy consequence of $d_4(e_0 g) = P d_0^2$.
\end{proof}

\begin{lemma}
\label{lem:d3-tPd0e0}
\mbox{}
\begin{enumerate}
\item
$d_3 ( \tau P d_0 e_0 ) = P^2 c_0 d_0$.
\item
$d_3 ( \tau P^2 d_0 e_0 ) = P^3 c_0 d_0$.
\item
$d_3 ( \tau P^3 d_0 e_0 ) = P^4 c_0 d_0$.
\item
$d_3 ( \tau P^4 d_0 e_0 ) = P^5 c_0 d_0$.
\end{enumerate}
\end{lemma}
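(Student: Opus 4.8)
The plan is to treat these four differentials as a single $P$-periodic family and to obtain them by comparison with the classical Adams spectral sequence (Proposition \ref{prop:compare}) together with a weight count, in the spirit of the preceding Lemma \ref{lem:d3-te0g} and Example \ref{ex:d2-3}. The anchoring observation is that source and target differ by exactly one unit of weight. Using $w(c_0) = 5$, $w(d_0) = 8$, and $w(e_0) = 10$ (the last forced by $d_2(e_0) = h_1^2 d_0$), together with the fact that the Adams periodicity operator $P$ raises weight by $4$, one computes
\[
w(\tau P^k d_0 e_0) = 4k + 17 = w(P^{k+1} c_0 d_0),
\]
whereas $P^k d_0 e_0$ itself has weight $4k + 18$. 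Thus no weight-preserving $d_3$ out of $P^k d_0 e_0$ can reach $P^{k+1} c_0 d_0$; the twist by $\tau$ in the statement is precisely what the weights demand.

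Granting for the moment that $\tau P^k d_0 e_0$ survives to $E_3$, I would finish as follows. Inspection of the chart in \cite{Isaksen14a} shows that $P^{k+1} c_0 d_0$ is the only nonzero class in the target degree of a $d_3$ on $\tau P^k d_0 e_0$, so it is enough to show that this class cannot survive to $E_\infty$ and that $\tau P^k d_0 e_0$ is the unique class that can hit it. The class $P^{k+1} c_0 d_0$ lies in a stem congruent to $6$ modulo $8$, outside the image of $J$, and is $\tau$-power torsion; one argues (by comparison to $\tmf$ or, if necessary, through the cofiber $C\tau$ of Chapter \ref{ch:Ctau}) that it must be killed. Since $d_0$ is a permanent cycle and the two $P$-towers are $v_1$-periodic, the mechanism passes uniformly from $k$ to $k+1$, covering $k = 1, 2, 3, 4$.

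The delicate point, and the main obstacle, is survival to $E_3$. Because $d_2$ is a derivation, $d_2(e_0) = h_1^2 d_0$, and $P^k d_0$ is a permanent cycle, one has $d_2(P^k d_0 e_0) = h_1^2 P^k d_0^2$, so the untwisted class $P^k d_0 e_0$ supports a $d_2$; the differential in the statement can exist only because $\tau \cdot h_1^2 P^k d_0^2 = 0$. This is a $\tau$-torsion phenomenon among the $h_1$-local classes of the type analyzed in \cite{GI14}, and it is exactly what distinguishes this family from the $k = 0$ case, where $\tau h_1^2 d_0^2 \neq 0$ forces $\tau d_0 e_0$ to support a $d_2$ rather than a $d_3$. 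Establishing the vanishing $\tau \cdot h_1^2 P^k d_0^2 = 0$ for $k = 1, 2, 3, 4$ is the crux; the weight count and the elimination of competing targets are then routine.
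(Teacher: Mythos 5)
Your framing rests on a claim that is false and that contradicts both the classical literature and the paper's own tables: you assert that in the $k=0$ case one has $\tau h_1^2 d_0^2 \neq 0$, so that $\tau d_0 e_0$ supports a $d_2$ rather than a $d_3$. In fact the classical differential $d_3(d_0 e_0) = P c_0 d_0$ of \cite{MT67}*{Proposition 4.3.1} (see Table \ref{tab:diff-refs}) lifts, by exactly the weight count you performed, to the motivic differential $d_3(\tau d_0 e_0) = P c_0 d_0$ recorded in Table \ref{tab:Adams-d3}. Hence $\tau d_0 e_0$ is a $d_2$-cycle, i.e.\ $\tau h_1^2 d_0^2 = 0$ in $\Ext$. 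What is true is that the untwisted class $h_1^2 d_0^2$ is a nonzero $\tau$-torsion class (products of $h_1$, $d_0$, $e_0$ are nonzero in the $h_1$-local range \cite{GI14}), so it is $P^k d_0 e_0$, uniformly for all $k \geq 0$, that supports a $d_2$, while $\tau P^k d_0 e_0$ survives. There is no dichotomy between $k=0$ and $k \geq 1$: the four differentials of the lemma are the continuation of the $k=0$ differential, which is precisely the input your proposal discards.

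Beyond this, what you have written is a plan rather than a proof: the step you yourself call the crux (the vanishing $\tau h_1^2 P^k d_0^2 = 0$ for $k = 1, \ldots, 4$) is never established; the claim that $P^{k+1} c_0 d_0$ ``must be killed'' is not argued (and routing it through the cofiber of $\tau$ would risk circularity, since the analysis of Chapter \ref{ch:Ctau} takes the Adams differentials for $S^{0,0}$ as input); and the uniqueness of the source of the differential is asserted, not checked. All of this becomes unnecessary once the $k=0$ differential is in hand. Since $P^k h_1$ is a permanent cycle, the Leibniz rule gives
\[
d_3( P^k h_1 \cdot \tau d_0 e_0 ) = P^k h_1 \cdot P c_0 d_0,
\qquad \text{i.e.,} \qquad
d_3( \tau P^k h_1 d_0 e_0 ) = P^{k+1} h_1 c_0 d_0,
\]
and dividing by $h_1$ (these classes lie in the $h_1$-local range, so the division is unambiguous) yields $d_3(\tau P^k d_0 e_0) = P^{k+1} c_0 d_0$ for $k = 1, 2, 3, 4$; the same $h_1$-division applied to $d_2$ disposes of the survival question. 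This is the paper's argument: one known classical differential, the Leibniz rule, and an $h_1$-division.
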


\begin{proof}
For the first formula,
we know that 
$d_3( \tau d_0 e_0 ) = P c_0 d_0$
by comparison to the classical case.
Therefore, $d_3 (\tau P h_1 d_0 e_0 ) = P^2 h_1 c_0 d_0$.
The desired formula follows immediately.
The arguments for the second, third, and fourth formulas are essentially the same.
\end{proof}

\begin{lemma}
$d_3 ( P h_5 c_0 ) = 0$.
\end{lemma}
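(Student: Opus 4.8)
The plan is to first locate the only possible nonzero value of the differential and then eliminate it. The element $P h_5 c_0$ has degree $(47,8,25)$, so a nonzero $d_3$ must land in degree $(46,11,25)$. Consulting the $E_3$-chart of \cite{Isaksen14a}, I would identify the unique class $Y$ in that degree that $P h_5 c_0$ could possibly hit, so that the entire argument reduces to showing that this $Y$ is not a boundary.

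The main line of attack would exploit the multiplicative structure of the $E_2$-page. By Lemma \ref{lem:t-B8} there is a relation $d_0 \cdot P h_5 c_0 = \tau h_1^2 B_{21}$ in $\Ext$. Since $d_0$, $\tau$, and $h_1$ are permanent cycles, applying the Leibniz rule to a hypothetical differential $d_3(P h_5 c_0) = Y$ gives
\[
d_0 \cdot Y = d_3(d_0 \cdot P h_5 c_0) = d_3(\tau h_1^2 B_{21}) = \tau h_1^2 \, d_3(B_{21}).
\]
Now $B_{21}$ detects $\kappa \theta_{4.5}$ (see Remark \ref{rem:Adams-d4-C'}) and hence supports no $d_3$ (indeed its potential $d_3$-target in degree $(58,13,32)$ can be checked to vanish for degree reasons), so $d_0 \cdot Y = 0$. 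I would then read off from the chart that multiplication by $d_0$ is injective on the group containing $Y$ — equivalently, that a nonzero $Y$ would force a nonzero $d_0 Y$ in degree $(60,15,33)$ — which forces $Y = 0$ and completes the proof.

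As a complementary tool, and as a safeguard in case the $d_0$-multiplication turns out not to be injective, I would invoke Proposition \ref{prop:compare}: upon inverting $\tau$ the motivic spectral sequence becomes the classical one, in which the corresponding differential on $P h_5 c_0$ vanishes, so the motivic $d_3(P h_5 c_0)$ is necessarily $\tau$-torsion. Combined with the weight constraint and the displayed computation, this should pin down $Y = 0$. The main obstacle here is bookkeeping rather than conceptual: I must correctly identify the candidate target $Y$ in degree $(46,11,25)$ and verify the two supporting inputs — that $B_{21}$ carries no $d_3$ and that $d_0$ acts injectively (or that $Y$ is $\tau$-torsion) on the relevant group — each of which requires careful reading of the charts and tables rather than any new idea.
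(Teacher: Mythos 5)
Your main argument is correct in outline and is structurally the same kind of proof as the paper's: one identifies the unique surviving class $\tau d_0 l + u'$ in degree $(46,11,25)$ (the sum is forced, since $d_2(u') = \tau h_0 d_0^2 e_0 = d_2(\tau d_0 l)$ by Lemma \ref{lem:d2-u'}), multiplies by a permanent cycle, and uses the Leibniz rule to rule out that target. The difference is the choice of multiplier, and this is where the two proofs diverge in cost. The paper multiplies by $c_0$: there $c_0 \cdot P h_5 c_0 = P h_1^2 h_5 d_0 = 0$ already in $\Ext$ (via Lemma \ref{lem:t-B8}), while $c_0 \cdot (\tau d_0 l + u') = h_1 d_0 u$ is nonzero on the $E_3$-page of the 54-stem, so the contradiction needs no input beyond that stem. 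You multiply by $d_0$, using $d_0 \cdot P h_5 c_0 = \tau h_1^2 B_{21}$ from Lemma \ref{lem:t-B8}; this also works, but your crux is then the claim that $d_0 \cdot (\tau d_0 l + u') = d_0 u' + \tau d_0^2 l$ is nonzero on the $E_3$-page in degree $(60,15,33)$. That claim is true --- this class is in fact a nonzero permanent cycle, compare Table \ref{tab:Adams-tau-tentative} --- and since nonvanishing on $E_3$ only requires the completed $d_2$ analysis, there is no circularity; but it is the real content of your proof, it lives in the less settled 60-stem, and it should be verified explicitly rather than deferred to ``reading off the chart.''

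Two corrections to the supporting steps. First, the equality $d_3(d_0 \cdot P h_5 c_0) = \tau h_1^2\, d_3(B_{21})$ is beside the point: by Lemma \ref{lem:d2-X1} one has $d_2(h_1 X_1) = \tau h_1 (h_0^2 B_4 + \tau h_1 B_{21}) \cdot h_1^{-1}\!\big|_{h_0 h_1 = 0} = \tau h_1^2 B_{21}$, so $d_0 \cdot P h_5 c_0$ is already zero on the $E_3$-page and $d_3$ of it vanishes trivially. You therefore need neither $d_3(B_{21}) = 0$ nor the statement that $B_{21}$ detects $\kappa \theta_{4.5}$; invoking the latter is also mildly circular, since that detection belongs to the hidden-extension analysis carried out after all differentials are known. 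Second, your $\tau$-inversion ``safeguard'' is a genuinely different argument from the paper's and does close the proof on its own, but as written it has two unjustified steps: you must say why the classical $d_3(P h_5 c_0)$ vanishes (it does, because $P h_5 c_0$ classically detects $\rho_{15} \eta_5$, as recorded in the proof of Lemma \ref{lem:2-Ph5c0}), and you must check that the candidate target is not $\tau$-power torsion on $E_3$, which holds because $\tau(\tau d_0 l + u') = \tau^2 d_0 l$ (using $\tau u' = 0$) maps to the nonzero classical class $d_0 l$ under the localization of Proposition \ref{prop:compare}. With those citations added, the safeguard is arguably the cleanest proof of the three.
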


\begin{proof}
The only other possibility is that $d_3 ( P h_5 c_0 ) = \tau d_0 l + u'$.
However, $c_0 ( \tau d_0 l + u' ) = h_1 d_0 u$ is non-zero, while
$P h_5 c_0^2 = P h_1^2 h_5 d_0 = 0$ since $P h_5 d_0 = \tau B_8$
by Table \ref{tab:May-tau}.
\end{proof}

\begin{lemma}
\label{lem:d3-B2}
$d_3( B_2 ) = 0$.
\end{lemma}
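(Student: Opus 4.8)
The plan is to rule out the single nonzero alternative for $d_3(B_2)$, following the multiplicative strategy used in Lemma~\ref{lem:Adams-d2-B6} and the other vanishing results of this section. First I would read off from the $E_3$-page the unique element $X$ in the tridegree one stem below $B_2$ and three filtrations higher that could receive $d_3(B_2)$; the content of the lemma is precisely that $d_3(B_2) = X$ cannot occur, so that $d_3(B_2) = 0$.

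The essential tool is the relation $h_2 \cdot h_2 B_2 = h_1 h_5 c_0 d_0$ from Lemma~\ref{lem:h2-h2B2}, which identifies $h_2^2 B_2$ with the element $h_1 h_5 c_0 d_0$ already visible on $E_2$. Since $h_2$ is a permanent cycle, the Leibniz rule gives $d_3(h_1 h_5 c_0 d_0) = d_3(h_2^2 B_2) = h_2^2 \, d_3(B_2)$. Thus a nonzero differential $d_3(B_2) = X$ would force $d_3(h_1 h_5 c_0 d_0) = h_2^2 X$, and I would derive a contradiction by showing that $h_2^2 X$ is nonzero in $\Ext$ while $h_1 h_5 c_0 d_0$ cannot support this $d_3$. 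For the latter I expect the cleanest route to be the Adams spectral sequence for $C\tau$ of Chapter~\ref{ch:Ctau}, exactly as in Lemma~\ref{lem:Adams-d2-B6}: over the top cell the relations among $h_1$, $h_5$, and $\ol{c_0 d_0}$ are rigid enough that the image of $h_1 h_5 c_0 d_0$ is either a manifest permanent cycle or lands in a group incompatible with the would-be target, yielding the needed contradiction without indeterminacy.

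The main obstacle is controlling the survival of $h_1 h_5 c_0 d_0$: because $h_5$ is not a permanent cycle, I cannot simply declare this product a permanent cycle, and must instead track it carefully. It is at least a $d_2$-cycle, since the contribution of $d_2(h_5) = h_0 h_4^2$ is killed by $h_0 h_1 = 0$, so it survives to $E_3$; the delicate point is to show its $d_3$ vanishes, which is where the $C\tau$ comparison and its extra $h_1$-divisibility data make the computation determinate. A secondary, routine step is to confirm against the $\Ext$ tables that $h_2^2 X \neq 0$, i.e.\ that multiplication by $\nu^2$ is injective on the relevant class. Once both facts are in hand, $h_2^2 \, d_3(B_2) = 0$ forces $d_3(B_2) = 0$, as claimed.
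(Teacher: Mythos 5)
Your overall strategy (multiply $B_2$ by a permanent cycle, apply the Leibniz rule, and show the product of the would-be target is non-zero) is the right shape, and you correctly isolate $e_0 r$ as the unique non-zero candidate for $d_3(B_2)$. But the multiplier you chose, $h_2^2$, kills the argument for two independent reasons. First, $h_2^2 \cdot e_0 r = 0$ already on the $E_2$-page: by Table \ref{tab:May-h2} (see also the proof of Lemma \ref{lem:h0-gr}), $h_2 \cdot e_0 r = P h_1^3 h_5 c_0$, and this is annihilated by a further $h_2$ because $h_1 h_2 = 0$ in $\Ext$. So the injectivity of $\nu^2$-multiplication that your plan requires is simply false, and the Leibniz identity $h_2^2\, d_3(B_2) = d_3(h_2^2 B_2)$ cannot distinguish $d_3(B_2) = e_0 r$ from $d_3(B_2) = 0$. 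Second, the element $h_2^2 B_2 = h_1 h_5 c_0 d_0$ does not survive as a non-zero class to $E_3$: by Lemma \ref{lem:d2-X1} we have $d_2(\tau G) = h_5 c_0 d_0$, hence $d_2(\tau h_1 G) = h_1 h_5 c_0 d_0$, so $h_1 h_5 c_0 d_0$ is a $d_2$-boundary and vanishes on the $E_3$-page. (Your check that it is a $d_2$-cycle addresses the wrong danger; the problem is that it is a $d_2$-boundary.) Consequently your Leibniz computation degenerates to $0 = 0$, and the proposed $C\tau$ analysis of this class has nothing to detect.

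The paper's proof makes the same kind of argument but with the multiplier $d_0$, which avoids both failures: the relation $d_0 B_2 = h_2 B_{21}$ holds in $\Ext$, the element $B_{21}$ (hence $h_2 B_{21}$, since $h_2$ is a permanent cycle) cannot support a $d_3$ differential, and $d_0 \cdot e_0 r$ is non-zero on the $E_3$-page. So if $d_3(B_2)$ were $e_0 r$, then $d_3(h_2 B_{21}) = d_0 e_0 r \neq 0$, a contradiction. If you want to salvage your approach, you must replace $h_2^2$ by a permanent cycle whose product with $e_0 r$ survives to $E_3$; $d_0$ is the natural choice, and any $h_2$-multiple is doomed since even $h_2 \cdot e_0 r = P h_1^3 h_5 c_0 = h_0 g r$ is itself wiped out on $E_3$ by $d_2(G_3) = h_0 g r$ (Lemma \ref{lem:d2-G3}).
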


\begin{proof}
First, $B_{21}$ cannot support a $d_3$ differential,
so $h_2 B_{21} = d_0 B_2$ cannot support a $d_3$ differential.
This implies that $d_3(B_2)$ cannot equal $e_0 r$, since
$d_0 e_0 r$ is non-zero on the $E_3$-page.
\end{proof}

\begin{lemma}
\label{lem:d3-h1h5e0}
\mbox{}
\begin{enumerate}
\item
$d_3 ( h_1 h_5 e_0 ) = h_1^2 B_1$.
\item
$d_3 ( h_5 c_0 e_0 ) = h_1^2 B_8$.
\item
$d_3 ( P h_5 e_0 ) = h_1^2 x'$.
\item
$d_3( h_1 X_1 + \tau B_{22}) = c_0 x'$.
\end{enumerate}
\end{lemma}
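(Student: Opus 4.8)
The four differentials are linked by multiplicative relations, so the plan is to establish the first as the foundational input and then deduce the other three. Throughout I will use that all of the classes involved ($h_1 h_5 e_0$, $h_1^2 B_1$, $h_5 c_0 e_0$, $B_8$, $P h_5 e_0$, $x'$, $c_0 x'$) are $h_1$-local, lying in infinite $h_1$-towers, so that $h_1$ acts injectively on them; this is exactly what will let me divide differentials by powers of $h_1$. For part (1), the cleanest route is the $h_1$-local computation of \cite{GI14}: since $h_1 h_5 e_0$ and $h_1^2 B_1$ are $h_1$-local, the relevant differential is detected after inverting $h_1$, where \cite{GI14} exhibits the governing algebraic extension. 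First I would check that $h_1 h_5 e_0$ survives to the $E_3$-page (its $d_2$ reduces to $h_1^3 h_5 d_0$, which one verifies vanishes; this is consistent with Lemma \ref{lem:d2-i1}, where $h_1^4 h_5 e_0$ was already seen to survive to $E_3$) and that $h_1^2 B_1$ is a nonzero $d_2$-cycle. The $h_1$-inverted differential then forces $d_3(h_1 h_5 e_0) = h_1^2 B_1$. As a consistency check this agrees, after inverting $\tau$ via Proposition \ref{prop:compare}, with the classical differential proved in \cite{BJM84}.

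Parts (2) and (3) follow by multiplying part (1). Applying $c_0$ and using the relation $c_0 B_1 = h_1 B_8$ gives $d_3(h_1 \cdot h_5 c_0 e_0) = h_1^2 c_0 B_1 = h_1^3 B_8$; dividing by $h_1$ yields $d_3(h_5 c_0 e_0) = h_1^2 B_8$. Applying $P$ and using $P h_1 B_1 = h_1^2 x'$ gives $d_3(h_1 \cdot P h_5 e_0) = P h_1^2 B_1 = h_1^3 x'$; dividing by $h_1$ yields $d_3(P h_5 e_0) = h_1^2 x'$. Each division is justified by $h_1$-injectivity on the $h_1$-local groups in question.

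For part (4) I would first verify that $h_1 X_1 + \tau B_{22}$ survives to $E_3$: by Lemma \ref{lem:d2-X1} one has $d_2(h_1 X_1) = \tau h_1^2 B_{21}$ (the $h_0^2 B_4$ term dies against $h_1$), and by Lemma \ref{lem:d2-B22} one has $d_2(\tau B_{22}) = \tau h_1^2 B_{21}$, so the two cancel. To pin down the differential I would use the $\Ext$ relation $P h_5 c_0 e_0 = h_1^3 X_1$, which follows from $h_5 c_0 e_0 = \tau h_1^2 G$ (Lemma \ref{lem:h1-th1G}) and $h_1^2 X_1 = \tau P h_1 G$ (Table \ref{tab:May-h1}). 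Multiplying part (2) by $P$ gives $d_3(P h_5 c_0 e_0) = P h_1^2 B_8 = h_1^2 c_0 x'$, where the last equality uses $c_0 x' = P B_8$ (itself a consequence of $c_0 B_1 = h_1 B_8$ together with $h_1 x' = P B_1$). Hence $d_3(h_1^3 X_1) = h_1^2 c_0 x'$; dividing by $h_1^2$ and replacing the non-surviving class $h_1 X_1$ by the surviving representative $h_1 X_1 + \tau B_{22}$ on the $E_3$-page gives $d_3(h_1 X_1 + \tau B_{22}) = c_0 x'$.

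The main obstacle is part (1): extracting the precise Adams $d_3$ from the $h_1$-local data of \cite{GI14} requires knowing both that the two classes are genuinely $h_1$-local and that no competing differential interferes before $d_3$. The secondary difficulty is the $h_1$-divisibility bookkeeping in part (4), where one must carefully distinguish $h_1 X_1$ (which does not survive) from the surviving combination $h_1 X_1 + \tau B_{22}$, and confirm the identity $c_0 x' = P B_8$ among the $h_1$-local classes before dividing out the power of $h_1$.
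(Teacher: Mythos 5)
Your reductions of (2), (3), and (4) to (1) --- multiplying by $c_0$ and $P$, using $P h_5 c_0 e_0 = h_1^3 X_1$, and dividing by powers of $h_1$ after passing to the surviving class $h_1 X_1 + \tau B_{22}$ --- coincide with the paper's argument. The gap is in your proof of (1), whose foundational premise is false: $h_1 h_5 e_0$ (and likewise $h_5 c_0 e_0$ and $P h_5 e_0$) is \emph{not} $h_1$-local. Indeed $h_1^8 h_5 = 0$ in $\Ext$, since $h_0^8 h_4 = 0$ classically and Theorem \ref{thm:Chow-0} is an isomorphism in Chow degree zero; this is exactly what forces the May differential $d_8(g^2) = h_1^8 h_5$. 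Hence $h_1^8 \cdot h_5 e_0 = 0$, the $h_1$-tower on $h_5 e_0$ is finite, and these classes map to zero in $h_1$-localized $\Ext$. The $h_1$-local Adams spectral sequence of \cite{GI14} therefore carries no information about differentials supported on them: the ``$h_1$-inverted differential'' you invoke does not exist. Only the targets --- $B_1$, $B_8$, $x'$ and their $h_1$-, $c_0$-, $P$-multiples --- are genuinely $h_1$-local; that is what legitimizes the divisions by $h_1$ in (2)--(4) (together with a chart check that the preimages are unique in the relevant degrees), but it cannot produce the differential in (1).

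The repair is already in your write-up, mislabeled as a ``consistency check'': the classical differential $d_3(h_1 h_5 e_0) = h_1^2 B_1$ of \cite{BJM84} (Table \ref{tab:diff-refs}), transported via Proposition \ref{prop:compare}, is a complete proof of (1), since the weights of $h_1 h_5 e_0$ and $h_1^2 B_1$ agree (both are $27$) so no $\tau$-powers intervene, modulo the routine check that no $\tau$-torsion class interferes in the target degree; the paper's own remark sanctions exactly this route. The paper, wanting an argument independent of \cite{BJM84}, instead proves (1) in the Adams spectral sequence for the cofiber of $\tau$ (Chapter \ref{ch:Ctau}): one has $h_1^6 h_5 \cdot \ol{h_1^2 e_0} = \tau e_0 g^2$ in $E_2(C\tau)$, while the hidden relation $e_0^3 + d_0 \cdot e_0 g = h_1^5 B_1$ of \cite{GI14} yields $h_1^6 \cdot \ol{h_1^3 B_1} = c_0 d_0 e_0^2$ in $E_3(C\tau)$; combining these with $d_3(\tau e_0 g^2) = c_0 d_0 e_0^2$ from Lemma \ref{lem:d3-te0g} gives $d_3(h_5 \cdot \ol{h_1^2 e_0}) = \ol{h_1^3 B_1}$ over $C\tau$, hence $d_3(h_1^2 h_5 e_0) = h_1^3 B_1$ over $S^{0,0}$, and then (1) by dividing by $h_1$. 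So the \cite{GI14} input does drive the paper's proof, but through finite $h_1$-multiples inside $E_r(C\tau)$ rather than through $h_1$-localization --- precisely because the source of the differential is $h_1$-torsion.
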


\begin{proof}
We pass to the motivic Adams spectral sequence for the cofiber of $\tau$,
as discussed in Chapter \ref{ch:Ctau}.
\index{cofiber of tau@cofiber of $\tau$!Adams spectral sequence}
Note that $h_1^6 h_5 \cdot \ol{h_1^2 e_0} = \tau e_0 g^2$ 
in the $E_2$-page for the cofiber of $\tau$.
Also,
$h_1^6 \cdot \ol{h_1^3 B_1} = c_0 d_0 e_0^2$ in the
$E_3$-page for the cofiber of $\tau$.

Now $d_3(\tau e_0 g^2) = c_0 d_0 e_0^2$ on the $E_3$-page for $S^{0,0}$,
as shown in Lemma \ref{lem:d3-te0g}.
It follows that 
$d_3(h_5 \cdot \ol{h_1^2 e_0} ) = \ol{h_1^3 B_1}$ on
the $E_3$-page for the cofiber of $\tau$,
and then
$d_3 ( h_1^2 h_5 e_0 ) = h_1^3 B_1$ for $S^{0,0}$ as well.
This establishes the first formula.

After multiplying by $h_1$, 
the second and third formulas follow easily from the first.

For the fourth formula, start with the relation
$P h_5 c_0 e_0 = h_1^3 X_1$ 
from Table \ref{tab:May-h1}.
Multiply the third formula by $c_0$ to obtain the desired formula.
\end{proof}

\begin{lemma}
\label{lem:d3-gr}
\mbox{}
\begin{enumerate}
\item
$d_3 ( g r ) = \tau h_1 d_0 e_0^2$.
\item
$d_3 (m^2) = \tau h_1 e_0^4$.
\end{enumerate}
\end{lemma}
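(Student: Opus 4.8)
The plan is to prove part (1) first and then obtain part (2) by multiplying by a permanent cycle. I would begin with the bookkeeping: the source $gr$ has degree $(50,10,28)$ and, using the relation $m^2 = g^2 r$ that holds on the $E_8$-page of the May spectral sequence, $m^2$ has degree $(70,14,40)$; one checks that the proposed targets $\tau h_1 d_0 e_0^2$ and $\tau h_1 e_0^4$ lie in the correct tridegrees $(49,13,28)$ and $(69,17,40)$ to receive a $d_3$. The crucial feature is that both targets are $h_1$-divisible classes that are $\tau$-torsion, hence invisible after inverting $\tau$ (compare Example \ref{ex:d2-2}, where $h_1^k d_0$ and $h_1^k e_0$ are nonzero only motivically). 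Consequently Proposition \ref{prop:compare} gives no leverage and a genuinely motivic argument is forced.

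For part (1) I would pass to the motivic Adams spectral sequence for the cofiber $C\tau$, exactly as in Lemma \ref{lem:d3-h1h5e0}. The point of that device is that a $\tau$-torsion target on $S^{0,0}$ is the bottom-cell image of an $h_1$-multiple of a class supported on the top cell, so a known exotic differential on a $\tau$-multiple source can be transported. Concretely, I would identify a relation in $E_2(C\tau)$ expressing a product of $gr$ (viewed through the bottom cell) with a suitable power of $h_1$ in terms of a $\tau e_0 g$-type class, then feed in the differential $d_3(\tau e_0 g) = c_0 d_0^2$ (or $d_3(\tau e_0 g^2) = c_0 d_0 e_0^2$) from Lemma \ref{lem:d3-te0g}, and pull the resulting $C\tau$-differential back to $S^{0,0}$ to read off $d_3(gr) = \tau h_1 d_0 e_0^2$. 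An alternative, purely $\Ext$-level route is to combine the relation $h_0 \cdot gr = P h_1^3 h_5 c_0$ from Lemma \ref{lem:h0-gr} with the $h_1$-local computations of \cite{GI14}, but the cofiber-of-$\tau$ mechanism is cleaner.

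Granting part (1), part (2) follows by the Leibniz rule. First $d_2(m^2)=0$ in characteristic $2$, so $m^2$ survives to the $E_3$-page. Then, since $g$ is a permanent cycle and $e_0^2 = d_0 g$ (the relation implicitly used in Lemma \ref{lem:d2-e0g}, where $d_2(e_0 g) = h_1^2 d_0 g = h_1^2 e_0^2$), the relation $m^2 = g^2 r$ gives
\[
d_3(m^2) = d_3(g \cdot gr) = g \cdot d_3(gr) = g \cdot \tau h_1 d_0 e_0^2 = \tau h_1 (d_0 g) e_0^2 = \tau h_1 e_0^4.
\]

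The main obstacle is part (1): because the target is $\tau$-torsion and vanishes after inverting $\tau$, the standard classical comparison fails, and one must either carry out the cofiber-of-$\tau$ bookkeeping carefully—pinning down the exact product relation in $E_2(C\tau)$ and verifying that it survives to $E_3$ there—or assemble the value from the exotic $d_3$-differentials of Lemma \ref{lem:d3-te0g} together with the $h_1$-local relations, while ruling out every competing target and the possibility that $gr$ survives to $E_3$.
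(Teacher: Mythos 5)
Your proposal rests on a false premise and, as a result, never actually proves part (1). You claim the target $\tau h_1 d_0 e_0^2$ is ``$\tau$-torsion, hence invisible after inverting $\tau$,'' so that Proposition \ref{prop:compare} gives no leverage and a genuinely motivic argument is forced. That is not so: after inverting $\tau$ the asserted differential becomes exactly the classical differential $d_3(gr) = h_1 d_0 e_0^2$, which follows by the Leibniz rule from the classical $d_3(r) = h_1 d_0^2$ of \cite{MT67} (Table \ref{tab:diff-refs}). The only motivic wrinkle is that $g$ does not exist in $\Ext$ --- only $\tau g$ does (Example \ref{ex:g^2}) --- so one cannot write $gr = g \cdot r$ and apply Leibniz directly. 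The fix is one line: multiply by $\tau$, so that $\tau \cdot gr = \tau g \cdot r$ with $\tau g$ a permanent cycle; then $d_3(\tau g r) = \tau g \cdot d_3(r) = \tau g \cdot \tau h_1 d_0^2 = \tau^2 h_1 d_0 e_0^2$ (using $\tau g \cdot d_0 = \tau e_0^2$ and the motivic $d_3(r) = \tau h_1 d_0^2$ of Table \ref{tab:Adams-d3}), and cancelling $\tau$ gives part (1). This is the paper's entire proof. Your cofiber-of-$\tau$ route, by contrast, is never executed: you do not identify any relation in $E_2(C\tau)$ tying $gr$ to a $\tau e_0 g$-type class, none is apparent (the targets in Lemma \ref{lem:d3-te0g} involve $c_0$, not $\tau h_1 d_0 e_0^2$), and you concede this is ``the main obstacle.'' Your fallback via $h_0 \cdot gr = P h_1^3 h_5 c_0$ from Lemma \ref{lem:h0-gr} is also a dead end, since applying $d_3$ there only yields $h_0 \cdot d_3(gr) = 0$, which is consistent with every candidate value (including $0$) because $h_0 \cdot \tau h_1 d_0 e_0^2 = 0$ anyway.

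Part (2) has the same flaw in executed form: the computation $d_3(m^2) = d_3(g \cdot gr) = g \cdot d_3(gr)$ treats $g$ as an element of motivic $\Ext$ and a permanent cycle, and uses the relation $e_0^2 = d_0 g$; all of these are classical statements that do not literally make sense motivically. The repair is again to multiply by $\tau g$: one has $\tau g \cdot gr = \tau m^2$ by the multiplicative relations of \cite{Bruner97}, hence $d_3(\tau m^2) = \tau g \cdot \tau h_1 d_0 e_0^2 = \tau^2 h_1 e_0^4$, and cancelling $\tau$ gives $d_3(m^2) = \tau h_1 e_0^4$. So the skeleton of your part (2) is correct, but every occurrence of $g$ must be replaced by $\tau g$, with the resulting extra powers of $\tau$ tracked and then removed at the end, exactly as in part (1).
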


\begin{proof}
We have $d_3 ( \tau g r ) = \tau^2 h_1 d_0 e_0^2$
because $d_3(r) = \tau h_1 d_0^2$.
The first formula follows immediately.

For the second formula, multiply the first formula
by $\tau g$ and use multiplicative relations
from \cite{Bruner97} to obtain that $d_3( \tau m^2 ) = \tau^2 h_1 e_0^4$.
The second formula follows immediately.
\end{proof}

\begin{lemma}
\label{lem:d3-t^2G}
$d_3 ( \tau^2 G) = \tau B_8$.
\end{lemma}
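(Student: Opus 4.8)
The plan is to deduce this from the $d_2$ computation already in hand. By Lemma~\ref{lem:d2-X1} we have $d_2(\tau G) = h_5 c_0 d_0$, and since $\tau$ is a permanent cycle the Leibniz rule gives $d_2(\tau^2 G) = \tau \cdot h_5 c_0 d_0$. So the first step is to verify that $\tau \cdot h_5 c_0 d_0 = 0$ in $\Ext$; this is what allows $\tau^2 G$ to survive to the $E_3$-page even though $\tau G$ does not, and it is precisely the extra factor of $\tau$ that renders the classical target zero and makes the resulting $d_3$ an exotic differential with no analogue after inverting $\tau$. The second preliminary step is to rewrite the asserted target: by Lemma~\ref{lem:t-B8} we have $\tau B_8 = P h_5 d_0$ in $\Ext$, so the claim is equivalently that $d_3(\tau^2 G) = P h_5 d_0$, and in particular the target is nonzero and sits in the correct tridegree to receive a $d_3$ from $\tau^2 G$.

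For the differential itself I would argue that $\tau B_8$ must be killed and that $\tau^2 G$ is the only class that can kill it. One route mimics the template of Lemma~\ref{lem:d3-te0g}: push up by a further power of $\tau$, locate a later differential on $\tau^3 G$ whose target is not divisible by $\tau$, and conclude that $\tau^2 G$ cannot survive to $E_4$; since its $d_2$ vanishes, it must then support a $d_3$, and a tridegree inspection leaves $\tau B_8 = P h_5 d_0$ as the only possibility. A cleaner route, and the one I would ultimately carry out, passes to the motivic Adams spectral sequence for the cofiber $C\tau$ of Chapter~\ref{ch:Ctau}, exactly as in Lemmas~\ref{lem:d3-h1h5e0} and~\ref{lem:Adams-d2-B6}. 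There the $\tau$-torsion target is governed by a top-cell lift $\ol{\,\cdot\,}$, and the exotic $d_3$ on $\tau^2 G$ becomes an honest differential relating the bottom-cell image of $\tau^2 G$ to this lift; the relation $P h_1 \cdot \tau G = h_1^2 X_1$ from Table~\ref{tab:May-h1} (so that $P h_1 \cdot \tau^2 G = \tau h_1^2 X_1$) is a convenient handle for transporting the computation into that setting.

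The hard part will be the $\tau$-torsion bookkeeping in this crowded tridegree near the $53$- and $54$-stems: one must simultaneously verify that $\tau \cdot h_5 c_0 d_0$ vanishes, so that $\tau^2 G$ reaches $E_3$, while $\tau B_8 = P h_5 d_0$ does not, so that it is still available as a target, and then eliminate every competing source and target for a $d_3$ in this degree. I expect the $C\tau$ analysis to be the decisive tool, since it converts these delicate $\tau$-divisibility questions into statements about the comparatively rigid $E_2$-page of the Adams spectral sequence for $C\tau$, where the relevant classes and their $\ol{\,\cdot\,}$-lifts can be read off directly.
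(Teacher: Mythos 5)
You have the setup right (that $\tau B_8 = P h_5 d_0$ by Lemma \ref{lem:t-B8}, and that $\tau \cdot h_5 c_0 d_0 = 0$ is what lets $\tau^2 G$ reach the $E_3$-page at all), but the actual content of the lemma --- a reason why $\tau B_8$ must be hit by a differential --- never appears in your proposal; both of your routes are gestures, and neither can be completed as described. Route (a) has nothing to run on: by $\tau$-linearity, a differential on $\tau^3 G$ (or any $\tau^k G$, $k \geq 2$) is \emph{equivalent} to the statement being proved, not independent input. In Lemma \ref{lem:d3-te0g} the engine was a known classical $d_4$ from Table \ref{tab:diff-refs}; here the corresponding classical fact, $d_3(G) = P h_5 d_0$, is precisely the classical shadow of this lemma and is not among the cited classical results (this stem is exactly the Kochman territory the paper refuses to rely on). Relatedly, your characterization of the $d_3$ as ``exotic \ldots with no analogue after inverting $\tau$'' is backwards: $\tau B_8 = P h_5 d_0$ is nonzero classically, so this differential \emph{does} invert to a nonzero classical $d_3$; it is the $d_2(\tau G) = h_5 c_0 d_0$ whose target is $\tau$-torsion and dies classically.

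Route (b) fails for a structural reason. Both $\tau^2 G$ and $\tau B_8$ are $\tau$-multiples, so both map to zero under the bottom-cell map $E_2(S^{0,0}) \map E_2(C\tau)$, and neither is $\tau$-torsion, so neither lifts along the top-cell projection; the differential is simply invisible in $E_r(C\tau)$, and there is no ``honest differential relating the bottom-cell image of $\tau^2 G$ to a top-cell lift.'' Your proposed handle $P h_1 \cdot \tau^2 G = \tau h_1^2 X_1$ only pushes the problem into the 62- and 63-stems, where even the paper leaves the relevant differentials ($d_4(\tau h_1 X_1)$, $d_5(\tau h_1^2 X_1)$) unresolved. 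The paper's proof is a homotopy-theoretic product argument that your proposal does not touch: the hidden $\tau$ extension of Table \ref{tab:Adams-tau} puts $\tau \epsilon \kappa$ in $\{P d_0\}$; since $h_1 h_5 c_0 d_0$ is killed by $d_2(\tau h_1 G)$ (Lemma \ref{lem:d2-X1}), the product $\eta_5 \epsilon \kappa$ is zero or detected in higher filtration, hence so is $\tau \eta_5 \epsilon \kappa = \eta_5 \{P d_0\}$; therefore the permanent cycle $P h_1 h_5 d_0 = \tau h_1 B_8$ (Table \ref{tab:May-tau}) cannot survive to $E_\infty$ and must be hit, and the only possible source is $\tau^2 h_1 G$, forcing $d_3(\tau^2 G) = \tau B_8$. (Note this does use $C\tau$ input, but only the 22-stem hidden extension $c_0 d_0 \to P d_0$, which depends solely on low-stem differentials --- that is the kind of non-circular $C\tau$ argument your sketch would have needed to identify.)
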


\begin{proof}
From Table \ref{tab:Adams-tau},
the product $\tau \epsilon \kappa$ belongs to $\{ P d_0\}$ in $\pi_{22,12}$.
Since $h_1 h_5 c_0 d_0 = 0$ in the $E_\infty$-page by Lemma \ref{lem:d2-X1},
we know that
$\eta_5 \epsilon \kappa$
is either zero or represented in $E_\infty$ in higher filtration.
It follows that 
$\tau \eta_5 \epsilon \kappa = \eta_5 \{ P d_0\}$ 
is either zero or represented in $E_\infty$ in higher filtration.
Now $P h_1 h_5 d_0 = \tau h_1 B_8$ by Table \ref{tab:May-tau},
so $\tau h_1 B_8$ must be hit by some differential.
The only possibility is that
$d_3(\tau^2 G) = \tau B_8$.
\end{proof}

\begin{lemma}
\label{lem:d3-B6}
\mbox{}
\begin{enumerate}
\item
$d_3 ( e_1 g ) = h_1 g t$.
\item
$d_3(B_6) = \tau h_2 g n$.
\item
$d_3( g t ) = 0$.
\end{enumerate}
\end{lemma}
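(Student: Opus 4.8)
The three differentials are tied together by the relation $h_2 B_6 = \tau e_1 g$ established in Lemma \ref{lem:h2-B6}. Since $h_2$ and $\tau$ are permanent cycles and $d_3$ is a derivation, applying $d_3$ to this relation gives
\[
h_2 \, d_3(B_6) = \tau \, d_3(e_1 g),
\]
so parts (1) and (2) determine one another once the ambient $E_3$-groups are understood. Both $B_6$ and $e_1 g$ survive to the $E_3$-page: for $B_6$ this is Lemma \ref{lem:Adams-d2-B6}, and for $e_1 g$ it follows from the $d_2$-computations of Section \ref{subsctn:Adams-d2}. The plan is therefore to establish one of the two differentials by a direct argument, to transfer it to the other through the displayed Leibniz relation, and finally to pin down part (3) by eliminating the possible targets.

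For the seed differential I would establish part (1) by passing to the motivic Adams spectral sequence for the cofiber $C\tau$ of $\tau$, exactly as in the proof of Lemma \ref{lem:d3-h1h5e0}. Over $C\tau$ the $h_1$-multiplication and $\tau$-multiplication structure is more transparent, so the target $h_1 gt$, being an $h_1$-multiple, becomes accessible: the goal is to locate classes of the form $\ol{x}$ over $C\tau$ whose boundary in $E_3(C\tau)$ projects to $d_3(e_1 g) = h_1 gt$ on $S^{0,0}$. An alternative route is Moss's Convergence Theorem \ref{thm:Moss}, expressing $e_1 g$ as a Massey product and reading off the differential, provided condition (2) of Theorem \ref{thm:Moss} holds, i.e.\ there are no crossing Adams differentials in the relevant band. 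As a consistency check, part (2) should also be visible by comparison with the classical Adams spectral sequence (Proposition \ref{prop:compare}): a classical differential $d_3(B_6) = h_2 g n$ would lift to $d_3(B_6) = \tau h_2 g n$, the single factor of $\tau$ appearing to balance weights, precisely as in Example \ref{ex:d2-3}.

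Granting part (1), part (2) follows from $h_2 \, d_3(B_6) = \tau h_1 gt$ together with the $\Ext$-relation $h_2^2 g n = h_1 g t$ read off the $E_2$-page (equivalently $h_2^2 n = h_1 t$, multiplied by $g$): this forces $h_2\bigl(d_3(B_6) - \tau h_2 g n\bigr) = 0$, and since the $h_2$-torsion vanishes in that tridegree the only possibility is $d_3(B_6) = \tau h_2 g n$. For part (3), part (1) makes $h_1 g t$ a $d_3$-boundary, so $h_1 g t = 0$ on $E_4$ and in particular $h_1 \, d_3(gt) = d_3(h_1 g t) = 0$; this excludes every candidate target for $d_3(gt)$ not annihilated by $h_1$, and the remaining candidates are ruled out by the elimination technique of Lemma \ref{lem:d3-B2}, so $gt$ survives to $E_4$. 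The main obstacle is the seed differential of part (1): the delicate point is verifying the relevant boundary in $E_3(C\tau)$ (or, in the bracket approach, confirming the absence of crossing differentials in Theorem \ref{thm:Moss}), since the surrounding $h_1$- and $g$-families make the bookkeeping intricate.
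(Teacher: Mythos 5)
Your proposal never actually proves the seed differential, and that is the heart of the lemma. Both of your proposed routes for part (1) --- locating boundary classes in $E_3(C\tau)$, or expressing $e_1 g$ as a bracket and invoking Moss's theorem --- are left as plans, with no classes identified and no crossing-differential hypotheses checked. The paper's proof requires none of this machinery: the classical differential $d_3(e_1) = h_1 t$ is already known (Table \ref{tab:diff-refs}, from \cite{Bruner84}), so multiplying by the $d_3$-cycle $\tau g$ gives $d_3(\tau e_1 g) = \tau h_1 g t$ by the Leibniz rule, and part (1) follows at once; part (2) then follows from $\tau e_1 g = h_2 B_6$ exactly as you say. Your alternative ``consistency check'' for part (2) --- lifting a classical differential $d_3(B_6) = h_2 g n$ --- is not available as an input: that classical statement is nowhere in the cited literature and is in fact a consequence of this lemma, so using it would be circular. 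In short, your reduction of (1) and (2) to each other is fine, but the one thing that had to be supplied from outside, the seed, is missing, and the simple source for it (the known $d_3$ on $e_1$) is exactly what you overlooked.

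Your argument for part (3) also fails at a concrete point. The deduction $h_1\, d_3(gt) = d_3(h_1 g t) = d_3 d_3(e_1 g) = 0$ is correct, but it excludes nothing: the unique candidate target $\tau h_1 e_0^2 g$ is itself annihilated by $h_1$ already on the $E_2$-page, since $h_1 \cdot \tau g = h_2 f_0$ (see the proof of Lemma \ref{lem:t^2-h2g^2}) and $h_1 h_2 = 0$ force $h_1 \cdot \tau h_1 e_0^2 g = \tau h_1^2 e_0^2 g = 0$ in $\Ext$. So ``candidates not annihilated by $h_1$'' is the empty set of candidates, and the vague appeal to ``the elimination technique of Lemma \ref{lem:d3-B2}'' does not specify what multiplication would finish the job. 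The correct elimination uses $\tau$ instead of $h_1$: since $\tau \cdot gt = \tau g \cdot t$ is a product of $d_3$-cycles, $\tau\, d_3(gt) = 0$, whereas $\tau \cdot \tau h_1 e_0^2 g = \tau^2 h_1 e_0^2 g$ is nonzero on $E_3$ (indeed it survives to $E_\infty$, supporting the hidden $\tau$ extension to $d_0 z$ of Lemma \ref{lem:t-th1g^2}); hence $d_3(gt)$ cannot equal $\tau h_1 e_0^2 g$ and must vanish.
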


\begin{proof}
Start with the differential $d_3 (e_1) = h_1 t$ from Table \ref{tab:diff-refs}.
Then $d_3 ( \tau e_1 g )$ equals $\tau h_1 t g$, which implies 
the first formula.
The second formula follows easily, using that 
$\tau e_1 g = h_2 B_6$ from Table \ref{tab:May-h2}.

For the third formula, 
we know that $d_3( \tau g t) = 0$
because $d_3(\tau g) = 0$ and $d_3(t) = 0$.
Therefore, $d_3( g t)$ cannot equal $\tau h_1 e_0^2 g$.
\end{proof}

\begin{lemma}
\label{lem:d3-h5i}
\mbox{}
\begin{enumerate}
\item
$d_3 ( h_5 i ) = h_0 x'$.
\item
$d_3 ( h_5 j ) = h_2 x'$.
\end{enumerate}
\end{lemma}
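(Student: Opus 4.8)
The plan is to treat the two differentials separately: part~(1) is the substantive computation, and part~(2) will follow from it by a single multiplicative relation. Before anything else, I would check that $h_5 i$ genuinely defines a class on the $E_3$-page. Since $d_2(h_5) = h_0 h_4^2$ and $i$ is a $d_2$-cycle, the Leibniz rule gives $d_2(h_5 i) = h_0 h_4^2 i$, so I must verify that $h_0 h_4^2 i = 0$ in $\Ext$; this is a routine check against the May computation of Chapter~\ref{ch:May}. Granting it, $h_5 i$ survives to $E_3$ and the statement $d_3(h_5 i) = h_0 x'$ is meaningful. Note also that $h_0^2 x' = 0$ already on $E_3$, because $d_2(R_1) = h_0^2 x'$ (Table~\ref{tab:diff-refs}); this is exactly what makes $h_0 x'$ an eligible $d_3$-target.

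For part~(1) I would first identify $x'$ as the $\tau$-inversion lift of the classical element $x$: the formula $d_2(R_1) = h_0^2 x'$ agrees with its classical counterpart, with no intervening power of $\tau$, so $x'$ restricts to $x$ after inverting $\tau$. The classical differential $d_3(h_5 i) = h_0 x$ is known from machine and hand computations of the classical Adams spectral sequence, and since the weights of $h_5 i$ and $h_0 x'$ coincide (no factor of $\tau$ is forced to balance them), Proposition~\ref{prop:compare} upgrades this directly to the motivic differential $d_3(h_5 i) = h_0 x'$. Should the bare comparison prove insufficient, the intrinsic fallback is to argue that $\{h_0 x'\}$ cannot persist to $\pi_{*,*}$ — for instance via the cofiber-of-$\tau$ analysis of Chapter~\ref{ch:Ctau} or a Moss's Convergence Theorem~\ref{thm:Moss} computation of the relevant Toda bracket — and then to observe that $h_5 i$ is the unique class in the appropriate tridegree that can hit $h_0 x'$ under $d_3$. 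I expect this uniqueness step, namely ruling out every competing source and confirming that the weight bookkeeping leaves no room for a $\tau$, to be the main obstacle; the classical differential itself is not in doubt.

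Part~(2) should then be purely formal. The key input is the $\Ext$ relation $h_0 j = h_2 i$, which persists to a relation $h_0 \cdot h_5 j = h_2 \cdot h_5 i$ among the corresponding classes on the $E_3$-page. Applying $d_3$ as a derivation and using part~(1) gives
\[
h_0 \, d_3(h_5 j) = h_2 \, d_3(h_5 i) = h_2 \cdot h_0 x' = h_0 \cdot h_2 x'.
\]
Hence $h_0\bigl(d_3(h_5 j) + h_2 x'\bigr) = 0$, and a degree count in the target tridegree shows there is no nonzero $h_0$-torsion class available, so $d_3(h_5 j) = h_2 x'$. The only genuine verification here is that the relation $h_0 j = h_2 i$ holds and that the target of $d_3(h_5 j)$ contains no $h_0$-torsion beyond $h_2 x'$, both of which are immediate from the $\Ext$ chart in the relevant stems.
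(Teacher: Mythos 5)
Your reduction of part (2) to part (1) via $h_0 j = h_2 i$ is formally fine, and it mirrors, in the opposite direction, what the paper actually does (the paper proves part (2) first and then obtains part (1) from $h_2 h_5 i = h_0 h_5 j$). The gap is in part (1), which is where all the content lies. You take as input that ``the classical differential $d_3(h_5 i) = h_0 x$ is known from machine and hand computations.'' Neither half of that claim is available. Machine computations produce the $E_2$-page together with its products and Massey products; they do not produce Adams differentials. As for hand computations, this differential does not appear in Table~\ref{tab:diff-refs}, the paper's compilation of all previously published classical Adams differentials; the only classical sources asserting it are \cite{Kochman90} and \cite{KM93}, which the paper explicitly declines to rely on because of known inconsistencies. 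The whole purpose of Lemma~\ref{lem:d3-h5i} is to give a proof independent of those sources, so invoking the classical statement here is not an admissible input. Your fallback --- show that $h_0 x'$ must be hit and that $h_5 i$ is the unique possible source --- is a strategy, not an argument: $x'$ itself survives (it detects $\tau \epsilon \theta_{4.5}$), so there is no cheap reason for $h_0 x'$ to die, and ruling out that $h_0 x'$ detects a nonzero element of $\pi_{53,28}$ presupposes control of exactly the homotopy groups being computed. You would need a concrete mechanism, and you do not supply one.

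For comparison, the paper's mechanism is to transfer a differential it has already established, $d_3(h_5 c_0 e_0) = h_1^2 B_8$ (Lemma~\ref{lem:d3-h1h5e0}), into the Adams spectral sequence for the cofiber of $\tau$, and then apply the hidden extensions $h_0 \cdot \ol{h_5 c_0 e_0} = h_5 j$ and $h_0 \cdot \ol{h_1^2 B_8} = h_2 x'$ in $E_2(C\tau)$ to conclude $d_3(h_5 j) = h_2 x'$ there, hence on $S^{0,0}$; part (1) then follows from the multiplicative relation. Separately, note a factual slip in your preliminary step: $i$ is not a $d_2$-cycle. Table~\ref{tab:Ext-gen} gives $d_2(i) = P h_0 d_0$, so the Leibniz rule yields $d_2(h_5 i) = h_0 h_4^2 i + P h_0 h_5 d_0$, and the survival of $h_5 i$ to $E_3$ requires the vanishing of this sum, not merely of $h_0 h_4^2 i$.
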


\begin{proof}
We proved in Lemma \ref{lem:d3-h1h5e0} that
$d_3( h_5 c_0 e_0 ) = h_1 B_8$.
This implies that 
$d_3 (\ol{h_5 c_0 e_0}) = \ol{h_1^2 B_8}$
in the motivic Adams spectral sequence for 
the cofiber of $\tau$, which is discussed in Chapter \ref{ch:Ctau}.
\index{cofiber of tau@cofiber of $\tau$!Adams spectral sequence}
The hidden extensions 
$h_0 \cdot \ol{h_5 c_0 e_0} = h_5 j$ and
$h_0 \cdot \ol{h_1^2 B_8} = h_2 x'$ then imply that
$d_3 (h_5 j ) = h_2 x'$ for the cofiber of $\tau$,
which means that the same formula must hold for $S^{0,0}$.
This establishes the second formula.

The first formula now follows easily, using the relation
$h_2 h_5 i = h_0 h_5 j$.
\end{proof}

\begin{lemma}
\label{lem:d3-B3}
$d_3(B_3) = 0$.
\end{lemma}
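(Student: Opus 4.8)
The plan is to rule out the unique nonzero alternative for $d_3(B_3)$ by exploiting relations that already hold on the $E_3$-page. First I would record that $B_3$ survives to $E_3$ and that two of its multiples are \emph{killed} there: Lemma~\ref{lem:d2-D2} gives $d_2(A) = h_0 B_3$, so $h_0 B_3 = 0$ on $E_3$, and Lemma~\ref{lem:d2-X2} gives $d_2(X_2) = h_1^2 B_3$, so $h_1^2 B_3 = 0$ on $E_3$. Write $y$ for the unique element in the appropriate degree (stem $59$, filtration $10$) that could equal $d_3(B_3)$. Since $h_0$ and $h_1$ are permanent cycles, the Leibniz rule applied to the zero classes $h_0 B_3$ and $h_1^2 B_3$ forces
\[
h_0 y = d_3(h_0 B_3) = 0, \qquad h_1^2 y = d_3(h_1^2 B_3) = 0
\]
on $E_3$. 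These two constraints are completely unconditional, as they descend directly from established $d_2$ differentials.

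The decisive step is then to inspect the candidate $y$ and verify that it violates one of these constraints, i.e.\ that $h_0 y \neq 0$ or $h_1^2 y \neq 0$ on $E_3$. Either inequality makes the hypothetical differential $d_3(B_3) = y$ impossible, leaving $d_3(B_3) = 0$ as the only option. A second, more delicate handle comes from the relation $h_1 B_3 = h_4 B_1$: since $\{B_1\}$ contains the permanent cycle $\eta\theta_{4.5}$, Moss's Convergence Theorem~\ref{thm:Moss} identifies $h_1 B_3$ with a permanent cycle detecting $\langle \eta\theta_{4.5}, \sigma^2, 2\rangle$ (compare Remark~\ref{rem:d3-D3} and Table~\ref{tab:Toda}), whence $h_1 B_3$ supports no $d_3$ and $h_1 y = 0$. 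Combined with the two constraints above, this pins $y$ down very rigidly.

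The hard part will be confirming that the unique candidate $y$ is genuinely incompatible with these constraints; in particular the $h_1$-argument is only suggestive, because we do not know that $\eta_4\theta_{4.5}$ (equivalently the Toda bracket) is nonzero, so $h_1 B_3$ could a priori fail to be a permanent cycle. For this reason I would lean on the unconditional constraints $h_0 y = 0$ and $h_1^2 y = 0$ rather than on the Moss computation. Should all of $h_0 y$, $h_1 y$, and $h_1^2 y$ happen to vanish on $E_3$, the fallback is to pass to the motivic Adams spectral sequence for the cofiber of $\tau$, as in the proofs of Lemmas~\ref{lem:d3-h1h5e0} and~\ref{lem:d3-h5i}: using Table~\ref{tab:Ctau-E2}, the relation $c_0 B_6 = h_1^3 B_3$ of Lemma~\ref{lem:c0-B6}, and the known differential $d_3(B_6) = \tau h_2 g n$ of Lemma~\ref{lem:d3-B6} (together with $h_2 c_0 = 0$), one can track the behavior of $\overline{B_3}$ on the two cells and transport the resulting vanishing back to $S^{0,0}$.
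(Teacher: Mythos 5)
Your argument has a genuine gap: the two ``unconditional'' constraints you derive are vacuous for the unique candidate target. The only possible value of $d_3(B_3)$ is $B_{21}$, and by Lemma \ref{lem:d2-B22} we have $d_2(B_4) = h_0 B_{21}$ and $d_2(B_{22}) = h_1^2 B_{21}$, so both $h_0 B_{21}$ and $h_1^2 B_{21}$ are already zero on the $E_3$-page. Hence the candidate $y = B_{21}$ satisfies $h_0 y = 0$ and $h_1^2 y = 0$, and your decisive step --- showing that the candidate violates one of these constraints --- cannot succeed. Your fallback through the cofiber of $\tau$ fares no better in the form you sketch: the relation $c_0 B_6 = h_1^3 B_3$ together with $d_3(B_6) = \tau h_2 g n$ and $h_2 c_0 = 0$ only yields $h_1^3 \cdot d_3(B_3) = 0$ on $E_3$, which is again satisfied by $B_{21}$, since $h_1^3 B_{21} = d_2(h_1 B_{22})$ also vanishes there. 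Your $h_1$-argument, by contrast, would suffice if it could be completed, because $h_1 B_{21}$ is non-zero on $E_3$ (indeed on $E_\infty$, by Lemma \ref{lem:kappa-h3^2h5}); but, as you concede, you cannot show that $h_1 B_3$ survives $d_3$ --- that is exactly the unresolved issue surrounding $\eta_4 \theta_{4.5}$ in Remark \ref{rem:d3-D3}, so this route cannot be made rigorous here.

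The missing idea is multiplication by $h_2$, which is the paper's entire proof. On the $E_2$-page one has $h_2 B_3 = h_0 X_2$, and $d_2(A'') = h_0 X_2$ by Lemma \ref{lem:d2-D2}, so $h_2 B_3 = 0$ on $E_3$. On the other hand, $h_2 B_{21} = d_0 B_2$ is non-zero on $E_3$ (no $d_2$ hits it; compare Lemma \ref{lem:d3-B2}, and note that whether it dies at $E_4$ is a separate open question). If $d_3(B_3)$ were $B_{21}$, the Leibniz rule would give $h_2 B_{21} = d_3(h_2 B_3) = 0$ on $E_3$, a contradiction. So among the products by $h_0$, $h_1$, and $h_2$, it is only the $h_2$-multiplication that distinguishes $B_3$ (whose $h_2$-multiple dies at $E_2$) from $B_{21}$ (whose $h_2$-multiple survives), and any correct proof along your lines must pass through it.
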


\begin{proof}
The only other possibility is that $d_3(B_3) = B_{21}$.
On the $E_3$-page, $h_2 B_3$ is zero while $h_2 B_{21}$ is non-zero.
\end{proof}

\begin{lemma}
\label{lem:d3-tg^3}
$d_3 ( \tau g^3 ) = h_1^6 B_8$.
\end{lemma}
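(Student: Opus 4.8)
The plan is to deduce this differential from an already-established one by multiplying by a power of $h_1$, after identifying the source $\tau g^3$ with a product of lower elements. The key input is the relation
\[
\tau g^3 = h_1^4 h_5 c_0 e_0
\]
in $\Ext$, which I would extract from the description of $\Ext$ obtained in Chapter \ref{ch:May} (or from the $h_1$-local calculations of \cite{GI14}). Both sides lie in the single tridegree $(60,12,35)$, so the content is just that they are equal and non-zero. This is an essentially motivic phenomenon: classically $h_1 c_0 = 0$, so the classical expression $h_1^4 h_5 c_0 e_0$ vanishes, whereas motivically the products $h_1^k c_0 e_0$ are non-zero, exactly as $h_1^k d_0$ and $h_1^k e_0$ are non-zero motivically. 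A convenient consistency check is to multiply the relation $h_0 h_2^2 g^2 = h_1^7 h_5 c_0$ of Lemma \ref{lem:hidden-h0h2^2g} by $e_0$, which relates $h_1^7 h_5 c_0 e_0$ to $h_0 h_2^2 g^2 e_0$.

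Granting this identification, the differential is a one-line application of the Leibniz rule. First I would recall from Lemma \ref{lem:d3-h1h5e0} that $d_3(h_5 c_0 e_0) = h_1^2 B_8$. Since $h_1$ is a permanent cycle and $d_3$ is a derivation, multiplying by $h_1^4$ gives
\[
d_3(h_1^4 h_5 c_0 e_0) = h_1^4 \cdot d_3(h_5 c_0 e_0) = h_1^6 B_8 .
\]
Rewriting the source via the relation above then yields $d_3(\tau g^3) = h_1^6 B_8$. Note that the source survives to the $E_3$-page because $h_5 c_0 e_0$ does (it supports the $d_3$ above) and $h_1$ is a permanent cycle, so no $d_2$ interferes. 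Moreover the target is genuinely non-zero: using $c_0 B_1 = h_1 B_8$ we have $h_1^6 B_8 = h_1^5 c_0 B_1$, which is non-zero because $B_1$ is $h_1$-local.

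I expect the main obstacle to be establishing the relation $\tau g^3 = h_1^4 h_5 c_0 e_0$ rather than the differential itself, since once the source is rewritten the computation is purely formal. The delicate point is that this relation is invisible classically (it degenerates to $g^3 = 0$ there), so it cannot be imported by comparison under Proposition \ref{prop:compare}; it must instead be read off from the motivic $\Ext$ computation, where the non-vanishing of iterated $h_1$-multiples of $c_0$ is precisely the kind of exotic behaviour highlighted in Chapter \ref{ch:May}. If one prefers to avoid asserting the relation outright, an alternative is to work in the Adams spectral sequence for the cofiber of $\tau$, as in the proof of Lemma \ref{lem:d3-h1h5e0}, expressing both $\tau g^3$ and $h_1^6 B_8$ as $h_1$-multiples of lifted classes $\overline{(\,\cdot\,)}$ and then dividing by the relevant power of $h_1$, which is permissible there because the relevant classes are $h_1$-torsion-free.
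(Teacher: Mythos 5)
Your proposal has a genuine gap: the relation $\tau g^3 = h_1^4 h_5 c_0 e_0$ on which everything rests is false. These are two linearly independent non-zero elements of $\Ext$ in the tridegree $(60,12,35)$: both appear as distinct generators of the May $E_\infty$-page (with $h_1^4 h_5 c_0 e_0$ in strictly lower May filtration), so the group there is at least two-dimensional. What is true is only that the May class $\tau g^3$ is ambiguous---it detects two elements of $\Ext$ differing by $h_1^4 h_5 c_0 e_0$---and Table \ref{tab:Ext-ambiguous} fixes notation by declaring $\tau g^3$ to be the representative satisfying $h_1^2 \cdot \tau g^3 = 0$. Your relation contradicts that definition outright, since $h_1^2 \cdot h_1^4 h_5 c_0 e_0 = h_1^6 h_5 c_0 e_0$ is non-zero; it would also force $\tau^2 g^3 = \tau \cdot h_1^4 h_5 c_0 e_0 = 0$ (because $\tau h_1^4 = 0$), whereas $\tau^2 g^3$ is non-zero. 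The $h_1$-local calculations cannot rescue the relation: $\tau g^3$ is $h_1^2$-torsion by definition, so it dies upon inverting $h_1$, and the localization therefore loses exactly the information you need. The same problem defeats your fallback in the cofiber of $\tau$, where you propose to divide by a power of $h_1$ "because the relevant classes are $h_1$-torsion-free"---the class at issue is precisely $h_1^2$-torsion.

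As a consequence, your Leibniz computation (which is correct, given $d_3(h_5 c_0 e_0) = h_1^2 B_8$ from Lemma \ref{lem:d3-h1h5e0}) establishes $d_3(h_1^4 h_5 c_0 e_0) = h_1^6 B_8$, the differential on the other basis element, not on $\tau g^3$. Since $d_3(\tau g^3) = d_3\bigl(\tau g^3 + h_1^4 h_5 c_0 e_0\bigr) + h_1^6 B_8$, the lemma is equivalent to showing that $\tau g^3 + h_1^4 h_5 c_0 e_0$ is a $d_3$-cycle, i.e., that the surviving representative of the ambiguous May class is the one \emph{not} annihilated by $h_1^2$; this is the actual content of the lemma, and your proposal contains no argument for it. The paper settles it with homotopy-theoretic input: the classical hidden $\eta$ extensions from $g^2$ to $z$ and from $z$ to $d_0^3$ (Table \ref{tab:extn-refs}) give $\tau \eta^2 \{\tau g^2\} = \{d_0^3\}$, hence after multiplying by $\kappabar$ the product $\eta^2 \{\tau^2 g^3\}$ is non-zero; if $\tau g^3$ survived, this product would have to be a hidden $\eta^2$ extension with no possible target, so the surviving class must be $\tau g^3 + h_1^4 h_5 c_0 e_0$, whose multiple $h_1^6 h_5 c_0 e_0$ detects the product with no hidden extension. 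Your Leibniz step is implicitly part of that argument (it is why at most one representative can survive), but by itself it cannot decide which one does.
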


\begin{proof}
Start with the hidden extension
$\tau \eta^2 \cdot \{ \tau g^2 \} = \{ d_0^3 \}$, which follows from the
analogous classical extension given in Table \ref{tab:extn-refs}.
This implies that $\tau \eta^2 \{ \tau g^2 \} \kappabar = \{\tau d_0^2 e_0^2\}$.
In particular, $\eta^2 \{\tau^2 g^3 \}$ must be non-zero.

Either $\tau g^3$ or $\tau g^3 + h_1^4 h_5 c_0 e_0$ survives the motivic 
Adams spectral sequence.  In the first case,
there is no possible non-zero value for a hidden extension
of the form $\eta^2 \{ \tau g^3 \}$.
The only remaining possibility is that $\tau g^3 + h_1^4 h_5 c_0 e_0$
survives, in which case
$\eta^2 \{ \tau g^3 + h_1^4 h_5 c_0 e_0 \} = \{ h_1^6 h_5 c_0 e_0 \}$
is a non-hidden extension.
\end{proof}

\begin{lemma}
\label{lem:d3-C0}
\mbox{}
\begin{enumerate}
\item
$d_3 ( C_0 ) = n r$.
\item
$d_3 ( E_1 ) = n r$.
\item
$d_3 ( Q_2 ) =\tau^2 g t$.
\item
$d_3 (C'') = n m$.
\end{enumerate}
\end{lemma}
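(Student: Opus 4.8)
The plan is to treat the four differentials not as independent computations but as a single package, linked by multiplicative relations already extracted during the analysis of the $E_2$- and $E_3$-pages. The two relations I would lean on both come from Lemma~\ref{lem:d2-tG0}: the differential $d_2(\tau G_0) = h_2 C_0 + h_1 h_3 Q_2$ shows that $h_2 C_0 = h_1 h_3 Q_2$ on the $E_3$-page, and its proof records $h_2^2 C_0 = \tau h_1 C''$. Combined with the $\Ext$ relations among the targets ($h_2 \cdot nr = \tau^2 h_1 h_3 \cdot gt$, and $h_2^2 \cdot nr = \tau h_1 \cdot nm$ via $h_2 r = h_1 q$ from the proof of Lemma~\ref{lem:h1-h1^2q1}), these let me propagate a single anchor differential to the other three.

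The anchor I would establish first is part (1), $d_3(C_0) = nr$, together with part (2), $d_3(E_1) = nr$. For (1) I would pass to the motivic Adams spectral sequence for the cofiber $C\tau$ (Chapter~\ref{ch:Ctau}), exactly as in the proofs of Lemmas~\ref{lem:d3-h1h5e0} and~\ref{lem:d3-h5i}: translate both $C_0$ and $nr$ into relations among the $\ol{x}$-classes on the cofiber, where the differential is forced, and then pull the statement back to $S^{0,0}$. A parallel comparison-of-targets argument, using Proposition~\ref{prop:compare} and the fact that $r$ already supports $d_3(r) = \tau h_1 d_0^2$ (see the proof of Lemma~\ref{lem:d3-gr}), serves as a cross-check on the survival of $nr$ as a $d_3$-cycle on the $E_3$-page. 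For (2) I would run the analogous argument with the same target $nr$; the upshot is that $C_0 + E_1$ is a $d_3$-cycle, which is precisely the class needed for the $62$-stem.

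With the anchor in hand, parts (3) and (4) become pure propagation. Applying $d_3$ to the $E_3$-relation $h_2 C_0 = h_1 h_3 Q_2$ gives $h_2 \cdot nr = h_1 h_3 \cdot d_3(Q_2)$; since $h_2 \cdot nr$ equals $\tau^2 h_1 h_3 \cdot gt$ in $\Ext$ and $\tau^2 gt$ is the unique class of the correct degree mapping to it under $h_1 h_3$, I conclude $d_3(Q_2) = \tau^2 gt$. The essential point is that $h_2 \cdot nr$ is \emph{nonzero} on the $E_3$-page, which forces $Q_2$ to support a $d_3$; this is exactly the differential that disagrees with \cite{KM93}. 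Applying $d_3$ to $h_2^2 C_0 = \tau h_1 C''$ gives $\tau h_1 \cdot d_3(C'') = h_2^2 \cdot nr = \tau h_1 \cdot nm$, hence $d_3(C'') = nm$.

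The main obstacle is the anchor. Everything downstream is forced once $d_3(C_0) = nr$ is known together with the two $\Ext$-level target relations, so the real content is (i) the cofiber-of-$\tau$ computation establishing $d_3(C_0) = nr$ and the independent argument for $d_3(E_1) = nr$, and (ii) verifying that $h_2 \cdot nr = \tau^2 h_1 h_3 \cdot gt$ is nonzero on $E_3$ (were it to vanish, $Q_2$ could survive, which is what makes the \cite{KM93} reading plausible). Because the conclusion $d_3(Q_2) = \tau^2 gt$ contradicts \cite{KM93}, I would be especially careful that the anchor argument is genuinely internal and does not quietly import a disputed classical input; if the cofiber route proves awkward, the fallback is a direct Toda-bracket computation via Moss's Convergence Theorem~\ref{thm:Moss}.
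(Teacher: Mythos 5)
Your anchor for part (1) does match the paper: the paper's proof uses the relations $C_0 = h_2^2 \cdot \ol{B_6}$ and $n r = h_2^2 \cdot \ol{\tau h_2 g n}$ in the Adams spectral sequence for $C\tau$, together with $d_3(\ol{B_6}) = \ol{\tau h_2 g n}$ from Lemma~\ref{lem:d3-B6}. The problem is the propagation step, which is where you place all the content of parts (3) and (4). The relation $h_2 r = h_1 q$ that you invoke (from the proof of Lemma~\ref{lem:h1-h1^2q1}) in fact kills both of your target products: since $h_1 n = 0$ in $\Ext$ (the May representative of $n$ is $h_2 b_{30} h_1(1)$, so $h_1 \cdot n$ already vanishes on the May $E_2$-page because $h_1 h_2 = 0$, and $n$ is not the source of any hidden $h_1$ extension in Table~\ref{tab:May-h1}), one computes $h_2 \cdot n r = n \cdot (h_2 r) = n \cdot h_1 q = (h_1 n) q = 0$, and likewise $h_2^2 \cdot n r = 0$. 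So the claim you correctly single out as "the essential point" --- that $h_2 \cdot n r$ is nonzero on the $E_3$-page --- is false, and your identities $h_2 \cdot n r = \tau^2 h_1 h_3 \cdot g t$ and $h_2^2 \cdot n r = \tau h_1 \cdot n m$ can hold only in the degenerate sense that both sides vanish. Consequently, applying $d_3$ to the $E_3$-relation $h_2 C_0 = h_1 h_3 Q_2$ and to $h_2^2 C_0 = \tau h_1 C''$ (both of which are fine, by Lemma~\ref{lem:d2-tG0}) yields only $h_1 h_3 \cdot d_3(Q_2) = 0$ and $\tau h_1 \cdot d_3(C'') = 0$, statements equally consistent with $d_3(Q_2) = 0$ and $d_3(C'') = 0$. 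Nothing forces the stated values, so parts (3) and (4) are not established.

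The repair is to multiply by classes whose products with $n r$ genuinely survive, which is exactly what the paper does. It cites the classical relations $g E_1 = g C_0$, $w Q_2 = g^2 C_0$ with $w \cdot g t = g^2 \cdot n r$, and $g C'' = r Q_2$ with $g r t = g n m$, all from Bruner's machine computations \cite{Bruner97}, together with the nonvanishing of $g n r$ (resp.\ $g^2 n r$, $g r t$) on the classical $E_3$-page; dividing by $g$, $w$, and $g$ then gives parts (2), (3), and (4), with the motivic statements following by weight considerations. Note that part (2) therefore needs no second cofiber-of-$\tau$ argument --- it is the same $g$-multiplication trick --- and that the paper's route is already "internal" in your sense: its only inputs are $\Ext$-level product relations and the previously established differentials on $r$, $B_6$, and $C_0$, not any disputed classical differential from \cite{KM93}. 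What it cannot avoid, and what your $h_2$-propagation cannot substitute for, is the machine-computed product structure of $\Ext$ in the $50$--$70$ range.
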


\begin{proof}
There are relations
$C_0 = h_2^2 \cdot \ol{B_6}$ and
$n r = h_2^2 \cdot \ol{\tau h_2 g n}$
in the motivic Adams spectral sequence for
the cofiber of $\tau$, as discussed in Chapter \ref{ch:Ctau}.
\index{cofiber of tau@cofiber of $\tau$!Adams spectral sequence}
From Lemma \ref{lem:d3-B6}, 
we know that $d_3 (\ol{B_6}) = \ol{\tau h_2 g n}$.
The first formula now follows easily.

For the second formula, note that $g E_1 = g C_0$ classically, and
that $g n r$ is non-zero on the $E_3$-page \cite{Bruner97}.
We already know that $d_3 ( g C_0 ) = g n r$ classically, so it follows that
$d_3 (E_1)$ also equals $n r$ classically.
The motivic formula is an immediate consequence.

For the third formula, there are classical relations
$w Q_2 = g^2 C_0$ and $w g t = g^2 n r$ \cite{Bruner97}.
We already know that $d_3 ( g^2 C_0 ) = g^2 n r$,
so it follows that $w d_3(Q_2) = w \cdot g t$.
The desired formula follows immediately.

For the fourth formula, there is a classical relation
$g C'' = r Q_2$ \cite{Bruner97}.  The $d_3$ differentials on $r$ and $Q_2$
imply that
$d_3 (g C'') = g r t$ classically, 
which equals $g n m$ \cite{Bruner97}.
The desired formula follows immediately.
\end{proof}

\begin{lemma}
\mbox{}
\begin{enumerate}
\item
$d_3 ( \tau h_1 X_1 ) = 0$.
\item
$d_3 ( R ) = 0$.
\end{enumerate}
\end{lemma}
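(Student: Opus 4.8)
The plan is to rule out every possible nonzero value of $d_3$ on each of the two generators by a combination of a degree count, the Leibniz rule, and comparison with the cofiber of $\tau$. First I would record that both $\tau h_1 X_1$ and $R$ sit in stem $62$ and Adams filtration $10$, so a nonzero $d_3$ must land in stem $61$ and filtration $13$. Reading the $E_3$-page off the chart in \cite{Isaksen14a}, I would list the finitely many nonzero classes in degree $(61,13,w)$; note in particular that $\tau^2 d_0 e_0 r$ lives in filtration $14$, so it is available only as a $d_4$-target (consistent with Proposition \ref{prop:Adams-d4}(1)) and \emph{cannot} receive a $d_3$. The goal is then to show that none of the filtration-$13$ candidates can be hit by $d_3$ from $\tau h_1 X_1$ or from $R$.

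For $\tau h_1 X_1$ I would exploit multiplication by $h_1$. Since $h_1 \cdot \tau h_1 X_1 = \tau h_1^2 X_1$, and $\tau h_1^2 X_1$ is a $d_3$-cycle (it survives to the $E_5$-page, where it is the prospective source of Proposition \ref{prop:Adams-d5}(3)), the Leibniz rule forces $h_1 \cdot d_3(\tau h_1 X_1) = 0$. This eliminates at once every candidate target that supports a nonzero multiplication by $h_1$, which I expect to dispose of $\tau h_1 X_1$ completely; any residual $h_1$-torsion candidate I would rule out by verifying that it is a permanent cycle detecting a nonzero element of $\pi_{61,w}$, and hence lies in the image of no differential. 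One must of course check that the input $d_3(\tau h_1^2 X_1)=0$ is obtained independently (by a degree count on its own target at $(62,14,w)$) rather than from the very statement being proved, so that no circularity enters.

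For $R$, which is not visibly $h_1$-divisible, the $h_1$-trick is unavailable, and this is where the main obstacle lies. Here I would pass to the motivic Adams spectral sequence for $C\tau$ of Chapter \ref{ch:Ctau}, or else use a multiplicative relation expressing a product of $R$ with a small class in terms of elements whose $d_3$ differentials are already known from Section \ref{subsctn:d3-lemmas}, to pin down $d_3(R)$ on the remaining candidate. The underlying structural point is that the filtration-$14$ class $\tau^2 d_0 e_0 r$, which detects $\tau^2 \eta \kappabar^3$ (see Remark \ref{rem:d4-h1X1}), is the genuine $d_4$-receptacle for this region, so the filtration-$13$ candidates must instead survive $d_3$; making this precise for $R$ — separating the behavior of $R$ from that of $\tau h_1 X_1$ despite their sharing a target bidegree — is the delicate step, and I anticipate that the cleanest resolution will come from the $C\tau$ computation, where the top- and bottom-cell images disentangle the two classes.
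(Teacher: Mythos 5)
Your reduction for part (1) fails at the decisive point. The unique candidate value for $d_3$ on either generator is, classically, $c_0 x'$; motivically, in weight $32$, this becomes $\tau \cdot c_0 x'$, which by the hidden May $\tau$ extension of Table \ref{tab:May-tau} equals $h_0^4 X_1$. This class is $h_0$-divisible, hence annihilated by $h_1$. Consequently your Leibniz step $h_1 \cdot d_3(\tau h_1 X_1) = d_3(\tau h_1^2 X_1) = 0$ is true but vacuous: it is compatible both with $d_3(\tau h_1 X_1) = 0$ and with $d_3(\tau h_1 X_1) = h_0^4 X_1$, so it eliminates nothing. Your fallback fails as well, because $h_0^4 X_1$ is \emph{not} a permanent cycle: multiplying the differential $d_3(h_1 X_1 + \tau B_{22}) = c_0 x'$ of Lemma \ref{lem:d3-h1h5e0} by $\tau$ gives $d_3(\tau h_1 X_1 + \tau^2 B_{22}) = h_0^4 X_1$, so this class is certainly a $d_3$-boundary. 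The entire content of the lemma is to decide whether the summand supporting that differential is $\tau h_1 X_1$ or $\tau^2 B_{22}$, and neither $h_1$-linearity nor permanence of the target can distinguish between two potential sources lying in the same tridegree.

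For part (2) you offer only a plan, and the route you favor cannot work: since the candidate target $h_0^4 X_1 = \tau c_0 x'$ is a $\tau$-multiple, it maps to zero under the bottom-cell map $E_3(S^{0,0}) \to E_3(C\tau)$ by exactness of the long exact sequence of Section \ref{sctn:t-Bockstein}, so any differential hitting it becomes invisible in $E_3(C\tau)$; and $R$ is not $\tau$-torsion, so it is not accessible through the top-cell map either. The paper's actual argument is your Leibniz idea, but with multiplier $r$ in place of $h_1$, chosen precisely because $r$ acts nontrivially on the candidate target. From the machine-verified classical relations $h_1 r X_1 = 0$, $h_1^2 d_0^2 X_1 = 0$, $r R = 0$, and $h_1 d_0^2 R = 0$ of \cite{Bruner97}, together with the classical differential $d_3(r) = h_1 d_0^2$ (Table \ref{tab:diff-refs}), the Leibniz rule gives $r \cdot d_3(h_1 X_1) = 0$ and $r \cdot d_3(R) = 0$ classically; since $r \cdot c_0 x' \neq 0$ on the classical $E_3$-page, the unique candidate is ruled out for both generators, and the motivic statements follow by compatibility under $\tau$-localization (Proposition \ref{prop:compare}). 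The lesson is that the auxiliary multiplier must be one that does \emph{not} annihilate the potential target: $h_1$ fails that test here, and $r$ passes it.
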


\begin{proof}
We have classical relations $h_1 r X_1 = 0$ and $h_1^2 d_0^2 X_1 = 0$ \cite{Bruner97}.
Therefore, $r d_3 (h_1 X_1) = 0$ classically.
On the other hand, $r c_0 x'$ is non-zero on the $E_3$-page \cite{Bruner97}.
This shows that $d_3(h_1 X_1)$ cannot equal $c_0 x'$ classically,
which establishes the first formula.

An identical argument works for the second formula, using that
$r R = 0$ and $h_1 d_0^2 R = 0$.
\end{proof}

\begin{lemma}
\label{lem:d3-tgw}
$d_3 (\tau g w) = h_1^3 c_0 x'$.
\end{lemma}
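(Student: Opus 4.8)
The plan is to reduce this differential to the already-known value $d_3(P h_5 e_0) = h_1^2 x'$ from Lemma~\ref{lem:d3-h1h5e0}(3), together with a single multiplicative relation. The element $\tau g w$ lies in degree $(65,13,36)$, and a quick degree count shows that $h_1 \cdot P h_5 c_0 e_0$ lies in the same degree. So the first step is to establish the relation $\tau g w = h_1 \cdot P h_5 c_0 e_0$ in $\Ext$. I expect this to hold either as a relation already visible on the May $E_\infty$-page or, more likely, as a hidden extension proved by the methods of Section~\ref{sctn:hidden-extn}. Note that $P h_5 c_0 e_0 = h_1^3 X_1$ by Table~\ref{tab:May-h1}, so the relation is equivalent to the clean statement $\tau g w = h_1^4 X_1$.

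Granting this relation, the differential follows immediately from the Leibniz rule. Since $h_1$ and $c_0$ are permanent cycles, and $P h_5 e_0$ survives to the $E_3$-page by Lemma~\ref{lem:d3-h1h5e0}(3), I would compute on $E_3$ that
\[
d_3(\tau g w) = d_3 ( h_1 c_0 \cdot P h_5 e_0 ) = h_1 c_0 \cdot d_3 ( P h_5 e_0 ) = h_1 c_0 \cdot h_1^2 x' = h_1^3 c_0 x'.
\]
Here the computation is nothing more than Lemma~\ref{lem:d3-h1h5e0}(3) multiplied by the permanent cycle $h_1 c_0$, so no new differential input is needed beyond Lemma~\ref{lem:d3-h1h5e0}.

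The main obstacle is therefore entirely algebraic: proving the relation $\tau g w = h_1^4 X_1$ and checking its consistency. In particular one must verify that $h_1^4 X_1$ genuinely survives to the $E_3$-page, which by $d_2(X_1) = h_0^2 B_4 + \tau h_1 B_{21}$ (Lemma~\ref{lem:d2-X1}) together with $h_0 h_1 = 0$ reduces to showing that $\tau h_1^5 B_{21}$ vanishes in $\Ext$. One also has to rule out any correction term of lower May filtration in the same degree $(65,13,36)$. If a direct $\Ext$ argument proves elusive, I would instead pass to the motivic Adams spectral sequence for $C\tau$, exactly as in the proof of Lemma~\ref{lem:d3-h1h5e0}, where these $h_1$-multiples become easier to track; this bookkeeping, rather than the Leibniz step, is where the real work lies.
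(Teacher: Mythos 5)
Your reduction rests on the relation $\tau g w = h_1 \cdot P h_5 c_0 e_0$ (equivalently, $\tau g w = h_1^4 X_1$), and this relation is false. The two classes do lie in the same tridegree $(65,13,36)$, but they are distinct elements of $\Ext$: this is precisely the ambiguity recorded in Table \ref{tab:Ext-ambiguous}, where the May $E_\infty$ class $\tau g w$ (May filtration $29$) represents two elements of $\Ext$ whose difference is $P h_1 h_5 c_0 e_0 = h_1^4 X_1$, a class of strictly lower May filtration, and the paper fixes notation by declaring that $\tau g w$ is the representative with $h_1 \cdot \tau g w = 0$. Since any element denoted $\tau g w$ is detected by a nonzero May class in filtration $29$, it cannot equal $h_1^4 X_1$ under any convention; moreover $h_1 \cdot h_1^4 X_1 = h_1^5 X_1 = P h_1^2 h_5 c_0 e_0$ is nonzero (Table \ref{tab:Adams-tau-tentative} even records a hidden $\tau$ extension on it), so $h_1^4 X_1$ also fails the defining property of $\tau g w$. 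The paper's treatment of $\tau g w + h_1^4 X_1$ as a nonzero class (Lemmas \ref{lem:_h1^2Q2.h1^5} and \ref{lem:nu-td0w}) makes the same point.

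Your Leibniz step is itself fine, but it computes the differential on the wrong element: it shows $d_3(h_1^4 X_1) = d_3(h_1 c_0 \cdot P h_5 e_0) = h_1^3 c_0 x'$, which is just Lemma \ref{lem:d3-h1h5e0}(3) multiplied by $h_1 c_0$ (compare part (4) of that lemma, proved the same way). Since $\tau g w$ and $h_1^4 X_1$ are linearly independent on the $E_3$-page, this gives no information about $d_3(\tau g w)$: both $d_3(\tau g w) = h_1^3 c_0 x'$ and $d_3(\tau g w) = 0$ are consistent with your computation, and in the latter case the target is still killed, by $d_3(h_1^4 X_1)$, so one cannot even argue that ``something must hit $h_1^3 c_0 x'$.'' Deciding which of the two representatives of the May class supports the differential is the entire content of the lemma; note also that the paper uses this lemma as the input that disambiguates $h_1^5 \cdot \ol{h_1^2 Q_2}$ in Lemma \ref{lem:_h1^2Q2.h1^5}, so arguing in the opposite direction risks circularity. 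The paper's actual proof is homotopy-theoretic rather than algebraic: the classical hidden $\eta$ extension from $\tau w$ to $\tau d_0 l + u'$ (Table \ref{tab:extn-refs}), multiplied by $\tau g$, gives $\eta \{\tau^2 g w\} = \{\tau^2 d_0 e_0 m\} \neq 0$; if $\tau g w$ were a permanent cycle, then $\eta \cdot \{\tau g w\}$ would be a nonzero hidden $\eta$ extension with no possible target, a contradiction, and the only available differential is $d_3(\tau g w) = h_1^3 c_0 x'$. Some external input of this kind is unavoidable here, and your proposal does not supply it.
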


\begin{proof}
There is a hidden extension $\eta \cdot \{ \tau w \} = \{ \tau d_0 l + u'\}$,
which follows from the analogous classical extension given in Table \ref{tab:extn-refs}.
This implies that $\eta \{\tau^2 g w\} = \{ \tau^2 d_0 e_0 m \}$.
If $\tau g w$ were a permanent cycle, then
$\eta \cdot \{\tau g w\}$ would be a non-zero hidden extension.
But there is no possible value for this hidden extension.
\end{proof}

\begin{lemma}
\label{lem:d3-t^2P^2d0m}
$d_3(\tau^2 P^2 d_0 m) = P^3 h_1 u$.
\end{lemma}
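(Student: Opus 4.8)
The plan is to obtain this differential from the known differential $d_3(\tau^2 g m) = h_1 d_0 u$ of Lemma~\ref{lem:d3-te0g}, since $\tau^2 P^2 d_0 m$ and $\tau^2 g m$ belong to the same image-of-$J$--periodic family governed by the operator $P$. Before transporting anything, I would first check that $\tau^2 P^2 d_0 m$ actually survives to the $E_3$-page. By the Leibniz rule and the differential $d_2(m) = h_0 e_0^2$, its only possible $d_2$-value is $\tau^2 h_0 P^2 d_0 e_0^2$, and I expect this class to already vanish in $\Ext$ in this stem (it lies well above the image-of-$J$ line), so that $\tau^2 P^2 d_0 m$ is a genuine $d_2$-cycle and the question of a $d_3$ is meaningful.

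The main step is then to connect the two differentials multiplicatively. Concretely, I would seek a permanent cycle $c$ (of stem $10$ and filtration $8$) with $c \cdot \tau^2 g m = \tau^2 P^2 d_0 m$ and $c \cdot h_1 d_0 u = P^3 h_1 u$; applying $d_3$ and the Leibniz rule would then give $d_3(\tau^2 P^2 d_0 m) = P^3 h_1 u$ as soon as the source is known to be nonzero. The two required identities amount to the single $\Ext$ relation $P g = d_0^2$ (so that $c = P^3/d_0 = P^2 d_0/g$ is consistent), and I would verify this relation directly in the May $E_\infty$-page. The robust fallback, matching the house style of Lemma~\ref{lem:d3-te0g}, is instead to import the classical image-of-$J$ differential $d_4$ that is the $P^2$-shift of $d_4(gm)=d_0^2 j + h_0^5 R_1$: by Proposition~\ref{prop:compare} this forces a motivic $d_4$ whose target is not divisible by $\tau$, so $\tau^2 P^2 d_0 m$ cannot survive to $E_4$, and the only $d_3$ available by degree is the one landing in $P^3 h_1 u$.

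The hard part will be pinning down the exact input: the putative multiplier $c$ is not one of the standard generators, so I would have to establish the relevant $\Ext$ relation (such as $P g = d_0^2$) or else locate the precise classical $d_4$ in the literature or deduce it by comparison to $\tmf$. The remaining work is routine: a degree count should leave $P^3 h_1 u$ as the unique nonzero $d_3$ target in this bidegree once survival of the source to $E_3$ is confirmed, and this differential is exactly what Lemma~\ref{lem:d2-P^3v} needs in order to force $P^3 h_1^2 u$ to be a $d_2$-boundary.
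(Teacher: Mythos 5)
Both routes you propose have genuine gaps. The multiplicative route cannot be repaired: the multiplier $c$ you need does not exist, and the relation it encodes is false. There is no element of $\Ext$ in stem $10$ and Adams filtration $8$ (classically this bidegree lies above the Adams vanishing line, and motivically the $10$-stem contains only $h_1^2 c_0$, $P h_1^2$, and $h_1^{10}$). More fundamentally, $P$, $P^2$, $P^3$ are not elements of $\Ext$ at all, so expressions like $c = P^3/d_0$ and $Pg = d_0^2$ do not denote ring identities: on the May $E_2$-page one has $d_4(P) = h_0^4 h_3$ (Table \ref{tab:May-E4}), and motivically $g$ itself does not survive, since $d_4(g) = h_1^4 h_4$ by Lemma \ref{lem:d4-g}. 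Even at the level of the May $E_2$-page the relation you want is wrong: squaring $d_0 = b_{20}b_{21} + h_1^2 b_{30}$ from Table \ref{tab:May-E2-reln} gives $d_0^2 = Pg + h_1^4 \Delta$, and only this sum, not $Pg$ alone, is compatible with the May differentials. So there is nothing by which to Leibniz-transport $d_3(\tau^2 g m) = h_1 d_0 u$.

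Your fallback has its premise backwards. You want to locate a classical $d_4$ on (a periodic translate giving) $P^2 d_0 m$; no such classical differential exists, and it cannot exist, because the statement being proved already implies that $P^2 d_0 m$ dies on the classical $E_3$-page: by Proposition \ref{prop:compare}, the motivic differential forces $d_3(P^2 d_0 m) = P^3 h_1 u \neq 0$ classically, so $P^2 d_0 m$ never reaches $E_4$. The comparison with $\tmf$ must be used in the opposite direction, and this is the paper's actual argument: $P^2 d_0 m$ \emph{does} support a $d_4$ in the Adams spectral sequence for $\tmf$, but there is no possible nonzero value for a classical $d_4$ on $P^2 d_0 m$ in the sphere's spectral sequence; since the map of spectral sequences from the sphere to $\tmf$ commutes with differentials, $P^2 d_0 m$ cannot survive to the sphere's $E_4$-page, hence must support a classical $d_3$, whose only possible value is $P^3 h_1 u$. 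The motivic statement then follows immediately. In Lemma \ref{lem:d3-te0g}, whose template you are imitating, the imported classical $d_4$'s genuinely exist on $\tau$-multiples of the classes in question; here the entire content of the lemma is that the $\tmf$ differential does \emph{not} lift to the sphere, and that failure is precisely what forces the $d_3$.
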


\begin{proof}
Note that $P^2 d_0 m$ supports a $d_4$ differential in the Adams
spectral sequence for $\tmf$ \cite{Henriques07}. 
However, $P^2 d_0 m$ cannot support a $d_4$ differential in the
classical Adams spectral sequence for the sphere.
Therefore, $P^2 d_0 m$ cannot surive to the $E_4$-page.
The only possibility is that there is a classical
differential $d_3 (P^2 d_0 m) = P^3 h_1 u$, 
from which the motivic analogue follows immediately.
\end{proof}

\begin{lemma}
$d_3(\tau^2 B_5 + D'_2) = 0$.
\end{lemma}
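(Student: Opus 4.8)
The plan is to first confirm that $\tau^2 B_5 + D_2'$ is a $d_2$-cycle, so that a $d_3$ differential is even defined on it, and then to eliminate the unique possible non-zero target. Survival to the $E_3$-page is immediate from Lemma \ref{lem:d2-tB5}: multiplying $d_2(\tau B_5) = \tau h_0^2 B_{23}$ by $\tau$ gives $d_2(\tau^2 B_5) = \tau^2 h_0^2 B_{23}$, which agrees with $d_2(D_2') = \tau^2 h_0^2 B_{23}$, so the sum is a $d_2$-cycle. It is also worth recording the identification obtained in the proof of Lemma \ref{lem:d2-tB5}, namely that classically $B_5 + D_2' = \Sq^4(q)$; motivically, $\tau^2 B_5 + D_2'$ is the corresponding algebraic Steenrod operation applied to the permanent cycle $q$, up to the power of $\tau$ forced by the weight. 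This identification is the organizing idea behind the argument.

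Next I would compute the $(s,f,w)$-degree of $\tau^2 B_5 + D_2'$ and read off, from Table \ref{tab:Ext-gen} and the $E_3$-chart of \cite{Isaksen14a}, the classes living in the target degree $(s-1,f+3,w)$ that survive to $E_3$. I expect there to be exactly one candidate, so the lemma reduces to showing ``the only other possibility is $d_3(\tau^2 B_5 + D_2') = Y$'' and then ruling out $Y$. Two techniques I would try, in order, are: (i) multiply by a well-chosen element $z$ so that $z \cdot (\tau^2 B_5 + D_2')$ is forced to be a $d_3$-cycle while $z \cdot Y$ is non-zero on $E_3$, a contradiction; the relation $h_2 D_2' = h_0 P(A+A')$ and the classical relation $i D_2' = 0$ from \cite{Bruner97} make $z = h_2$ and $z = i$ the natural first choices; and (ii) the power-operation route, using that $q$ is a permanent cycle and that $d_2(\Sq^4 q) = h_0 \Sq^5(q) = 0$ was already established, together with \cite{BMMS86}*{VI.1}, to argue that $\Sq^4(q)$ supports no $d_3$. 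Comparison to the classical Adams spectral sequence via Proposition \ref{prop:compare} serves as a cross-check: since $B_5 + D_2'$ is the known permanent cycle $\Sq^4(q)$, if its classical $d_3$ vanishes then the motivic $d_3$ can only land in the $\tau$-torsion part of the target, which I would then eliminate separately.

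The main obstacle is that $\tau^2 B_5 + D_2'$ lies above the $59$-stem, i.e.\ beyond the range where Theorem \ref{thm:Adams-Einfty} guarantees a complete $E_\infty$-page; there the displayed page is only a subquotient, and several neighboring differentials — the undetermined $d_3(D_3)$ of Remark \ref{rem:d3-D3}, as well as the $d_4$ and $d_5$ ambiguities of Propositions \ref{prop:Adams-d4} and \ref{prop:Adams-d5} — are not fully pinned down. Consequently the delicate part is not any single computation but ensuring that the list of candidate targets $Y$ is complete and that each elimination argument does not secretly rely on one of these nearby undetermined differentials. I would therefore choose the auxiliary element $z$ (or lean on the classical comparison) precisely so that the resulting contradiction lives in a degree where the chart is unambiguous.
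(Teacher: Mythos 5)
Your framework is the right one: the class is a $d_2$-cycle by Lemma \ref{lem:d2-tB5}, there is exactly one candidate target on the $E_3$-page (it is $\tau^3 g w$, in stem $65$, filtration $13$, weight $34$; the other class in that degree, $R_2$, dies under $d_2(R_2) = h_0 U$), and the way to kill it is a Leibniz argument: multiply by a permanent cycle $z$ with $z \cdot (\tau^2 B_5 + D_2') = 0$ but $z \cdot \tau^3 g w \neq 0$ on $E_3$. The gap is that neither of your proposed multipliers works, and the one that does is absent from your proposal. Taking $z = i$ fails at the start: $d_2(i) = P h_0 d_0$, so $i$ does not survive to the $E_3$-page, and the relation $i D_2' = 0$ can only be used (as in Lemma \ref{lem:d2-tB5}) for $d_2$-level arguments. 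Taking $z = h_2$ fails because $h_2$ does not annihilate the class: $h_2 D_2' = h_0 P(A+A') \neq 0$ and $h_2 \cdot \tau^2 B_5 = \tau^2 h_2 B_5 \neq 0$; worse, $h_2 B_5$ itself supports $d_3(h_2 B_5) = h_1 B_8 d_0$ (Lemma \ref{lem:d3-h2B5}), so $h_2 \cdot (\tau^2 B_5 + D_2')$ is neither zero nor a $d_3$-cycle, and no contradiction can be extracted. The multiplier that actually works is $z = d_0$: classically $(B_5 + D_2') d_0 = 0$ \cite{Bruner97}, so the motivic product vanishes, while $d_0 \cdot g w$ is non-zero on the classical $E_3$-page; since $\tau^3 d_0 g w$ is therefore non-zero motivically (Proposition \ref{prop:compare}), $d_3(\tau^2 B_5 + D_2')$ cannot equal $\tau^3 g w$, and must be zero.

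Your two fallback routes do not close this gap. The Steenrod-operation route via \cite{BMMS86}*{VI.1} produces only the $d_2$ formula $d_2(\Sq^4(q)) = h_0 \Sq^5(q) = 0$, which you already have; it does not by itself output $d_3$ vanishing, and turning it into a statement about $d_3$ requires exactly the kind of case analysis you have not supplied. The classical cross-check is circular: you invoke "$B_5 + D_2'$ is the known permanent cycle $\Sq^4(q)$," but the classical vanishing of $d_3$ on $B_5 + D_2'$ is not available in the sources this paper permits itself (it deliberately avoids relying on Kochman--Mahowald); proving that classical vanishing is precisely the content of this lemma, and it is done by the same $d_0$-multiplication trick. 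So the essential missing ingredients are the two machine-computed facts from \cite{Bruner97} — $(B_5 + D_2') d_0 = 0$ and $d_0 g w \neq 0$ on $E_3$ — without which your plan cannot be executed.
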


\begin{proof}
Classically, $(B_5 + D_2') d_0$ is zero while
$d_0 g w$ is non-zero on the $E_3$-page \cite{Bruner97}.
Therefore
$d_3(\tau^2 B_5 + D'_2)$ cannot equal $\tau^3 g w$.
\end{proof}

\begin{lemma}
\label{lem:d3-X3}
$d_3(X_3) = 0$.
\end{lemma}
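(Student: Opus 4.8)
The plan is to mimic the argument for the vanishing of $d_2(X_3)$ in Lemma~\ref{lem:Adams-d2-X3}, promoting it one page upward by again exploiting the relation $h_1 X_3 = c_0 B_3$. First I would consult the $E_3$-chart in the target degree of $d_3(X_3)$, namely one stem below and three filtrations above $X_3$, and isolate the single nonzero class $y$ that could possibly be the value of $d_3(X_3)$; by the bookkeeping in Table~\ref{tab:Adams-d3} every other class in that degree is already known to be zero or hit, so the only remaining question is whether $d_3(X_3) = y$ or $d_3(X_3) = 0$.

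Next I would compute $h_1 \cdot d_3(X_3)$ via the Leibniz rule. Because $d_2(X_3) = 0$ by Lemma~\ref{lem:Adams-d2-X3} and $B_3$ is a $d_2$-cycle, the relation $h_1 X_3 = c_0 B_3$ descends to the $E_3$-page, and since $c_0$ is a permanent cycle this yields
\[
h_1 \cdot d_3(X_3) = d_3( h_1 X_3 ) = d_3 ( c_0 B_3 ) = c_0 \cdot d_3(B_3).
\]
Invoking Lemma~\ref{lem:d3-B3}, which gives $d_3(B_3) = 0$, I conclude that $h_1 \cdot d_3(X_3) = 0$ on the $E_3$-page.

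To finish, I would verify that the candidate $y$ supports a nonzero multiplication by $h_1$ on the $E_3$-page. Combined with $h_1 \cdot d_3(X_3) = 0$, this rules out $d_3(X_3) = y$ and forces $d_3(X_3) = 0$, the only surviving possibility. The hard part will be precisely this last verification, since it requires controlling the $h_1$-action on a class near the top of the range where the analysis is delicate; I would cross-check $h_1 y$ against the $E_3$-chart and the $h_1$-local calculations of \cite{GI14}. Should $y$ turn out to be $h_1$-torsion, the multiplicative argument would collapse and I would instead pass to the cofiber of $\tau$ and argue as in Lemma~\ref{lem:d3-h1h5e0}, so confirming the nontriviality of $h_1 y$ is the crux of the proof.
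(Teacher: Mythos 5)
There is a genuine gap, and it sits exactly at the step you yourself identified as the crux. The only possible nonzero value of $d_3(X_3)$ is $\tau n m$, so your argument stands or falls on whether $h_1 \cdot \tau n m$ is nonzero on the $E_3$-page. It is not: it vanishes already in $\Ext$. Indeed $h_1 n = 0$, since $n$ is represented by $h_2 b_{30} h_1(1)$ in the May spectral sequence, $h_1 h_2 = 0$ there (Table \ref{tab:May-E2-reln}), and Table \ref{tab:May-h1} records no hidden $h_1$ extension on $n$; consequently $h_1 \cdot \tau n m = \tau (h_1 n) m = 0$. So your Leibniz computation $h_1 \cdot d_3(X_3) = c_0 \cdot d_3(B_3) = 0$, while correct (it is the same manipulation as in Lemma \ref{lem:Adams-d2-X3}, one page up, using Lemma \ref{lem:d3-B3}), is consistent with both $d_3(X_3) = 0$ and $d_3(X_3) = \tau n m$ and therefore rules out nothing. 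Your fallback of passing to the cofiber of $\tau$ is left entirely undeveloped, so no complete proof emerges from the proposal.

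The paper's proof avoids this by choosing a multiplier that does \emph{not} annihilate the candidate target: it multiplies by $g$ rather than $h_1$. Classically $g X_3 = 0$ in $E_2$ while $g \cdot n m$ is nonzero on the classical $E_3$-page, both facts taken from Bruner's machine computations \cite{Bruner97}. Since $g$ is a classical permanent cycle, a differential $d_3(X_3) = n m$ would force $g \cdot n m = d_3(g X_3) = 0$ on $E_3$, a contradiction; the motivic statement then follows from the comparison of Proposition \ref{prop:compare}. The moral is that $\tau n m$ is $h_1$-torsion but not $g$-torsion, so $h_1$ is precisely the wrong multiplication to test against, and any repair of your argument must replace it by one (such as $g$) under which the candidate target survives.
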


\begin{proof}
The only other possibility is that $d_3(X_3)$ equals
$\tau n m$.
However, $g  n m$ is non-zero on the classical $E_3$-page,
while $g X_3$ is zero \cite{Bruner97}.
\end{proof}

\begin{lemma}
\label{lem:d3-h2B23}
$d_3(h_2 B_{23} ) = 0$.
\end{lemma}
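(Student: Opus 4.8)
The plan is to argue by contradiction, ruling out the unique nonzero possibility for $d_3(h_2 B_{23})$. First I would consult the $E_3$-chart to identify the single element $Y$ in the relevant tridegree (one stem lower and three filtrations higher than $h_2 B_{23}$, i.e. in the 67-stem) that could in principle receive a nonzero $d_3$, so that the only alternative to the claim is $d_3(h_2 B_{23}) = Y$. Everything then reduces to showing that $Y$ cannot actually be hit. (If $h_2 B_{23}$ itself were already zero on $E_3$ the statement would be vacuous, so I would record that it survives to $E_3$ at the outset.)

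The key leverage comes from Lemma~\ref{lem:d2-tB5}, which gives $d_2(P(A+A')) = \tau^2 h_0 h_2 B_{23}$. In particular $\tau^2 h_0 h_2 B_{23}$ is a $d_2$-boundary, hence vanishes on the $E_3$-page. Since $\tau$ and $h_0$ are permanent cycles, they are $d_3$-cycles, and the Leibniz rule yields
\[
\tau^2 h_0 \cdot d_3(h_2 B_{23}) = d_3\bigl( \tau^2 h_0 \cdot h_2 B_{23} \bigr) = 0
\]
on the $E_3$-page, the right-hand side being $d_3$ of a class that is already zero there. Thus the alternative $d_3(h_2 B_{23}) = Y$ would force $\tau^2 h_0 Y = 0$ on $E_3$. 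I would then read off from the chart that $\tau^2 h_0 Y$ is in fact nonzero on $E_3$ (equivalently, that $Y$ supports the relevant nontrivial $\tau^2 h_0$-multiplication and has not been removed by an earlier differential), contradicting the displayed identity and forcing $Y = 0$, i.e. $d_3(h_2 B_{23}) = 0$.

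The main obstacle is precisely this last step: establishing the nonvanishing of $\tau^2 h_0 Y$ on the $E_3$-page. Because the target sits in the 67-stem, at the very edge of the thoroughly analyzed range, the multiplicative structure and the surviving classes there must be extracted carefully, and one must confirm that no earlier differential has already killed $\tau^2 h_0 Y$. Should this multiplicative argument prove inconclusive, I would fall back on the cofiber-of-$\tau$ technique used in Lemmas~\ref{lem:d3-h5i} and \ref{lem:d3-C0}: lift $h_2 B_{23}$ to $\ol{\phantom{x}}$-classes in the Adams spectral sequence for $C\tau$ of Chapter~\ref{ch:Ctau}, where the module structure is more rigid, and deduce the vanishing of the $d_3$ from the corresponding statement there.
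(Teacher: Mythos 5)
Your Leibniz step itself is sound, and your overall skeleton (kill a permanent-cycle multiple of $d_3(h_2 B_{23})$, then show the unique candidate target is not annihilated by that multiplier) is exactly the shape of the paper's argument. The paper, however, multiplies by $\tau$ alone: it uses the relation $h_2 \cdot \tau B_{23} = \tau \cdot h_2 B_{23}$ together with $d_3(\tau B_{23}) = 0$ (already known, since $\tau B_{23}$ lies in the 65-stem and is not in Table \ref{tab:Adams-d3}), so the Leibniz rule gives $\tau \cdot d_3(h_2 B_{23}) = 0$; since the candidate target in $(67,14,38)$, namely $lm = e_0 g r$, is visibly not $\tau$-torsion on the $E_3$-page, the lemma follows at once.

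The genuine gap in your proposal is the step you yourself flag and defer: the nonvanishing of $\tau^2 h_0 \cdot Y$ on $E_3$. This is not a routine chart-reading, and there is concrete reason to believe it fails, so your choice of multiplier $\tau^2 h_0$ (via Lemma \ref{lem:d2-tB5}) is worse than the paper's $\tau$ in a way that matters. First, by Table \ref{tab:May-h0} the product $h_0 \cdot lm$ is a \emph{hidden} extension equal to $h_1^6 X_1$, an element of very high May filtration; by Table \ref{tab:May-h1} one has $h_1^3 X_1 = P h_5 c_0 e_0$, itself equal to $\tau h_1^2 \cdot PG$, so $\tau^2 h_0 \cdot lm$ is a $\tau^3$-multiple of an $h_1$-tower element, and such classes in this family are typically $\tau$-torsion, i.e.\ this product may already vanish in $\Ext$. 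Second, even if it is nonzero in $\Ext$, it lies in tridegree $(67,15,36)$, which is precisely where the $d_2$-boundary $d_2(G_{11}) = h_0 d_0 x'$ of Lemma \ref{lem:d2-Q1} lives; so the product can die on the $E_3$-page, in which case the identity $\tau^2 h_0 \cdot d_3(h_2 B_{23}) = 0$ produces no contradiction at all. In short, your argument establishes a true but possibly vacuous constraint. The fix is simply to use the weaker multiplier: replace $\tau^2 h_0$ by $\tau$, invoke $d_3(\tau B_{23}) = 0$ and $h_2 \cdot \tau B_{23} = \tau \cdot h_2 B_{23}$, and then the required nonvanishing $\tau \cdot lm \neq 0$ on $E_3$ is immediate.
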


\begin{proof}
This follows easily from the facts 
that $d_3( \tau B_{23} ) = 0$ and that
$h_2 \cdot \tau B_{23} = \tau \cdot h_2 B_{23}$.
\end{proof}

\begin{lemma}
\label{lem:d3-h2B5}
\mbox{}
\begin{enumerate}
\item
$d_3 (h_2 B_5 ) = h_1 B_8 d_0$.
\item
$d_3 ( \tau e_0 x' ) = P c_0 x'$.
\end{enumerate}
\end{lemma}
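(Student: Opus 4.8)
The plan is to establish the two differentials separately, but using the two workhorses of this section: transfer of differentials through the cofiber of $\tau$, and propagation of a known differential by a multiplicative relation. For part (1), I would first check that $h_2 B_5$ survives to the $E_3$-page. From Lemma \ref{lem:d2-tB5} we have $d_2(\tau B_5) = \tau h_0^2 B_{23}$, so the only candidate value for $d_2(h_2 B_5)$ is a multiple of $h_0^2 h_2 B_{23}$, which vanishes because $h_0 h_2 = 0$ in $\Ext$; hence $h_2 B_5$ survives to $E_3$. To pin down the $d_3$, I would pass to the motivic Adams spectral sequence for $C\tau$ of Chapter \ref{ch:Ctau}, exactly as in Lemmas \ref{lem:d3-h1h5e0} and \ref{lem:d3-h5i}. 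The relevant input is $d_3(h_5 c_0 e_0) = h_1^2 B_8$ from Lemma \ref{lem:d3-h1h5e0}; multiplying by $d_0$ gives $d_3(h_5 c_0 d_0 e_0) = h_1^2 B_8 d_0$, so the $h_1$-multiple $h_1^2 B_8 d_0$ dies at $E_3$. I would then locate a relation in Table \ref{tab:Ctau-E2} expressing $h_2 B_5$ and $h_1 B_8 d_0$ as $\ol{(\cdots)}$-classes (or $h_1$-power multiples thereof) and transfer this $C\tau$-differential back to $S^{0,0}$ to read off $d_3(h_2 B_5) = h_1 B_8 d_0$; the $h_1$-divisibility that distinguishes $h_1 B_8 d_0$ from $h_1^2 B_8 d_0$ is precisely the feature that becomes visible on $C\tau$.

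For part (2), I would propagate a known $d_3$ by a multiplicative relation, in the style of Lemma \ref{lem:d3-tPd0e0}. The two natural starting points are the base differential $d_3(\tau d_0 e_0) = P c_0 d_0$ and the differential $d_3(h_1 X_1 + \tau B_{22}) = c_0 x'$ from Lemma \ref{lem:d3-h1h5e0}. The cleanest route is to multiply the latter by the periodicity class $P$, giving $d_3\bigl(P(h_1 X_1 + \tau B_{22})\bigr) = P c_0 x'$, and then identify $P(h_1 X_1 + \tau B_{22})$ with $\tau e_0 x'$ on the $E_3$-page via a (possibly hidden) relation of the May $E_\infty$-page recorded in Table \ref{tab:May-h1}. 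The alternative route substitutes $x'$ for $d_0$ in the base differential by means of a relation of the form $e_0 x' = d_0 \cdot z$, applies $d_3(\tau d_0 e_0) = P c_0 d_0$, and then divides by the common factor.

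The hard part will be establishing the precise bridging relation in each case: the $C\tau$-identification of $h_2 B_5$ and $h_1 B_8 d_0$ for part (1), and the identity $P(h_1 X_1 + \tau B_{22}) = \tau e_0 x'$ (or the analogous relation linking $e_0 x'$ and $d_0$) for part (2). These are exactly the kind of extensions that are hidden on the May $E_\infty$-page and must be extracted from Tables \ref{tab:May-h1} and \ref{tab:Ctau-E2} rather than read off directly. I would also need to confirm that the claimed targets $h_1 B_8 d_0$ and $P c_0 x'$ are genuinely nonzero on $E_3$, and that no competing $d_3$ with strictly lower-filtration source hits them, so that in each degree the stated value is the only possibility.
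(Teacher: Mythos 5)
Your proposal is missing the mechanism that actually drives this lemma, and the concrete steps you substitute for it break down. The paper's proof is a downward-forcing argument from the next page of the spectral sequence: Lemma \ref{lem:d4-h2B5} establishes, by comparison with classical product relations from Bruner's machine computations, that $d_4(\tau h_2 B_5) = h_1 d_0 x'$ and $d_4(\tau^2 e_0 x') = P^2 x'$. Since $\tau h_2 B_5$ and $\tau^2 e_0 x'$ support non-zero $d_4$ differentials, the classes $h_2 B_5$ and $\tau e_0 x'$ cannot survive to $E_4$; hence each must support a non-zero $d_3$, and in each degree the stated value is the only possibility. Nothing in your proposal forces $d_3(h_2 B_5)$ or $d_3(\tau e_0 x')$ to be non-zero at all — that is the real content of the lemma, and it comes from the later $d_4$ computation you never invoke.

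The replacement arguments you propose do not go through. For part (1), the product you want to differentiate is dead before you start: by the hidden May extension of Lemma \ref{lem:h1-th1G} (Table \ref{tab:May-h1}), $d_0 \cdot h_5 c_0 e_0 = h_1^3 X_3$, and $h_1 X_3$ is hit by $d_2(D_3')$ (Lemma \ref{lem:d2-X2}); so $d_0 \cdot h_5 c_0 e_0 = 0$ on $E_3$, and the Leibniz rule applied to $d_3(h_5 c_0 e_0) = h_1^2 B_8$ yields only that $h_1^2 B_8 d_0$ vanishes on $E_3$ — no information whatsoever about $h_1 B_8 d_0$, which is the class that must be killed. Your plan to recover the missing $h_1$-division on $C\tau$ is circular: the relevant differential there, $d_3(\ol{h_1^2 D_4 c_0}) = B_8 d_0$, is deduced in Table \ref{tab:Ctau-d3} from $d_3(h_2 B_5)$ via the inclusion of the bottom cell, i.e.\ from the very statement being proved, and you give no independent computation of it. For part (2), ``multiplying by $P$'' is not an available operation: there is no element $P$ in $\Ext$ (in the May spectral sequence $d_4(P) = h_0^4 h_3$), so Adams periodicity is a Massey-product operation to which the Leibniz rule does not naively apply; moreover the bridging identity $P(h_1 X_1 + \tau B_{22}) = \tau e_0 x'$ on which your argument hinges is recorded nowhere — Table \ref{tab:May-h1} contains no such relation — nor is any relation of the form $e_0 x' = d_0 \cdot z$, and ``dividing by the common factor'' is not a legitimate manipulation of differentials without precise control of the relevant kernels. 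The fix is to first prove Lemma \ref{lem:d4-h2B5} (which is where the classical input enters) and then run the survival argument above.
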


\begin{proof}
We will show in Lemma \ref{lem:d4-h2B5} that
$d_4 (\tau h_2 B_5) = h_1 d_0 x'$.
This means that $h_2 B_5$ cannot survive to $E_4$.
The only possibility is that $d_3(h_2 B_5) = h_1 B_8 d_0$.
This establishes the first formula.

The proof of the second formula is similar.
We will show in Lemma \ref{lem:d4-h2B5} that
$d_4 ( \tau^2 e_0 x') = P^2 x'$, so
$\tau e_0 x'$ cannot survive to $E_4$.
The only possibility is that $d_3(\tau e_0 x') = P c_0 x'$.
\end{proof}

\subsection{Adams $d_4$ differentials computations}
\label{subsctn:d4-lemmas}

\index{Adams spectral sequence!differential!d4@$d_4$}

\begin{lemma}
\label{lem:d4-C}
$d_4(C) = 0$.
\end{lemma}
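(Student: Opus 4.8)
The element $C$ reaches the $E_4$-page, and inspection of the chart in the relevant tridegree shows that the only nonzero class $d_4(C)$ could possibly equal is $P h_1^2 h_5 c_0$ (the same class that $d_6(h_5 c_1)$ might hit, since $C$, $h_5 c_1$, and the target all line up appropriately in the $50$- and $49$-stems). The plan is therefore to prove the stronger statement that $P h_1^2 h_5 c_0$ cannot be hit by any Adams differential whatsoever; this is exactly the fact that is reused in the proof that the $E_6$-page equals the $E_\infty$-page.

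The main tool will be the $h_1$-linearity of the Adams differentials. Suppose for contradiction that $d_r(z) = P h_1^2 h_5 c_0$ for some $r \geq 2$ and some class $z$. Since $h_1$ is a permanent cycle, multiplying by $h_1$ gives $d_r(h_1 z) = P h_1^3 h_5 c_0$, and by Lemma \ref{lem:h0-gr}(1) we have $P h_1^3 h_5 c_0 = h_0 \cdot g r$. Thus $P h_1^3 h_5 c_0$ would be a boundary and hence zero on $E_\infty$. The contradiction will come from exhibiting $P h_1^3 h_5 c_0$ as a nonzero permanent cycle, so that it cannot be hit.

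As a first concrete step I would note that $P h_1^3 h_5 c_0$ at least survives to $E_4$: writing it as $h_1^3 \cdot P h_5 c_0$ and using the vanishing of $d_3(P h_5 c_0)$ established earlier in this section, we get $d_3(P h_1^3 h_5 c_0) = 0$; alternatively one checks directly that $d_3(h_0 g r) = h_0 \cdot d_3(g r) = h_0 \cdot \tau h_1 d_0 e_0^2 = 0$ by Lemma \ref{lem:d3-gr}(1), since $h_0 h_1 = 0$. The remaining, and crucial, task is to show that $P h_1^3 h_5 c_0$ is in fact a nonzero permanent cycle, equivalently that $\eta^3 \cdot \{ P h_5 c_0 \}$ is a nonzero homotopy class. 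The natural route, in the spirit of the neighboring Lemmas \ref{lem:d3-h1h5e0} and \ref{lem:d3-h5i}, is to pass to the motivic Adams spectral sequence for the cofiber of $\tau$ of Chapter \ref{ch:Ctau}, where the $h_1$-Bockstein behavior of the $h_5 c_0$-tower is transparent; this tower also surfaces algebraically through the relation $c_0 \cdot G_3 = P h_1^3 h_5 e_0$ of Lemma \ref{lem:c0-G3}.

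The hard part will be exactly this last step. Because of the factor $h_5$, the class $P h_1^2 h_5 c_0$ is \emph{not} one of the purely $h_1$-local products of $h_1$, $c_0$, $P$, $d_0$, $e_0$, and $g$, so its nonvanishing and permanence cannot simply be read off from the cohomology of $A(2)$ as in Example \ref{ex:h1^7h5c0}; a genuinely separate argument is required. I expect the cleanest such argument to come either from the $C\tau$ computation or from the $h_1$-local analysis of \cite{GI14}, in which the $\eta$-tower built on $\{ h_5 c_0 \}$ is treated directly. Once a single member of this tower is shown to detect nonzero homotopy, the $h_1$-linearity reduction above forces $P h_1^2 h_5 c_0$ to be un-hit, and in particular $d_4(C) = 0$.
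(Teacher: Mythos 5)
Your reduction to showing that $P h_1^2 h_5 c_0$ is never hit by a differential is the right target, and your identification of the unique possible value of $d_4(C)$ matches the paper. But the concrete plan you build on it fails, because the key claim you defer is false. You want to exhibit $P h_1^3 h_5 c_0$ as a nonzero permanent cycle, but this class is a $d_2$-boundary: by Lemma \ref{lem:h0-gr} it equals $h_0 \cdot g r$, and by Lemma \ref{lem:d2-G3} we have $d_2(G_3) = h_0 g r$. Indeed, the whole point of Lemma \ref{lem:d2-G3} is that the corresponding homotopy product vanishes ($\eta^2 \epsilon \alpha = \eta^3 \sigma \alpha \in \nu\sigma\{\tau^2 g^2\} = 0$), which forces $P h_1^3 h_5 c_0$ to die. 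Consequently your $h_1$-linearity step is vacuous exactly where you need it: on $E_r$ for $r \geq 3$ the product $h_1 \cdot P h_1^2 h_5 c_0 = P h_1^3 h_5 c_0$ is already zero, so $d_4(h_1 z) = h_1 d_4(z) = 0$ carries no information, and no contradiction can ever be extracted along these lines. Passing to $C\tau$ or to the $h_1$-local computation will not rescue this, since the class you are trying to show is nonzero genuinely is zero from $E_3$ onward.

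The paper's actual argument sidesteps this by working with the $e_0$-analogue rather than the $c_0$-analogue. Let $\alpha \in \{P h_1 h_5 c_0\}$. Using Moss's Convergence Theorem \ref{thm:Moss} with the very differential $d_2(G_3)$ above, together with the hidden extension $c_0 \cdot G_3 = P h_1^3 h_5 e_0$ of Lemma \ref{lem:c0-G3}, one gets that the Toda bracket $\langle \eta^2, \alpha, \epsilon \rangle$ contains $\{P h_1^3 h_5 e_0\}$ (Table \ref{tab:Toda}). If $P h_1^2 h_5 c_0$ were hit by any differential, then $\eta\alpha$ would be zero, the bracket $\langle \eta, \alpha, \epsilon\rangle$ would be defined, and $\langle \eta^2, \alpha, \epsilon \rangle = \eta \langle \eta, \alpha, \epsilon \rangle$ would consist of $\eta$-divisible elements; but $\{P h_1^3 h_5 e_0\}$ is not divisible by $\eta$, a contradiction. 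So the correct contradiction lives one column over, in the class $P h_1^3 h_5 e_0$ that you mentioned only in passing; the class $P h_1^3 h_5 c_0$ you centered the argument on is precisely the one that cannot do the job.
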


\begin{proof}
The other possibility is that $d_4(C)$ equals $P h_1^2 h_5 c_0$.
We will show that 
$P h_1^2 h_5 c_0$ survives and is non-zero in the $E_\infty$-page.

Let $\alpha$ be an element of $\{ P h_1 h_5 c_0 \}$.
From Table \ref{tab:Toda}, the bracket
$\langle \eta^2, \alpha, \epsilon \rangle$
contains the element $\{ P h_1^3 h_5 e_0 \}$.
In order to compute this bracket, we
need the relation $c_0 \cdot G_3 = P h_1^3 h_5 e_0$ 
from Table \ref{tab:May-misc}.
Note that the bracket has indeterminacy generated by
$\eta^2 \{ D_{11} \}$.

If $P h_1^2 h_5 c_0$ were hit by a differential, then
$\eta \alpha$ would be zero.
Then $\eta \langle \eta, \alpha, \epsilon \rangle$
would equal $\langle \eta^2, \alpha, \epsilon \rangle$.
But $\{ P h_1^3 h_5 e_0 \}$ cannot be divisible by $\eta$.
By contradiction, $P h_1^2 h_5 c_0$ cannot be hit by a differential.
\end{proof}

\begin{lemma}
\mbox{}
\begin{enumerate}
\item
$d_4( h_0 h_5 i) = 0$.
\item
$d_4 (C_{11}) = 0$.
\end{enumerate}
\end{lemma}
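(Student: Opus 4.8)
The plan is to handle both differentials in the uniform style of the surrounding zero-differential lemmas: for each source, read off from the chart in \cite{Isaksen14a} the unique class in the target degree that could receive a nonzero $d_4$, and then rule it out.

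First I would record that $h_0 h_5 i$ actually survives to the $E_4$-page. By Lemma~\ref{lem:d3-h5i} we have $d_3(h_5 i) = h_0 x'$, so $d_3(h_0 h_5 i) = h_0^2 x'$; the chart shows that $h_0^2 x'$ is already zero on $E_3$, so $h_0 h_5 i$ is a $d_3$-cycle and descends to $E_4$. Inspecting the $54$-stem, the only candidate for a nonzero value of $d_4(h_0 h_5 i)$ is a single class $T$ sitting four filtrations higher. I would eliminate it in one of two ways. The cleanest is multiplicative: $h_0 \cdot h_0 h_5 i = h_0^2 h_5 i$ is a permanent cycle (it detects the homotopy class responsible for the hidden $2$-extension in the $54$-stem of Lemma~\ref{lem:2-h0h5i}), so $d_4(h_0^2 h_5 i) = 0$ and hence $h_0 \cdot d_4(h_0 h_5 i) = 0$; since $h_0 T \neq 0$ on $E_4$, this forces $d_4(h_0 h_5 i) = 0$. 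If that last nonvanishing is not transparent from the chart, I would instead pass to the cofiber of $\tau$ of Chapter~\ref{ch:Ctau}, exactly as in the proof of Lemma~\ref{lem:d3-h5i}, where the analogous class and its potential target are rigidly constrained by the relations recorded in Table~\ref{tab:Ctau-E2}, and show the differential must vanish there.

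For $C_{11}$ the argument has the same shape: locate the unique candidate target in the relevant stem and rule it out, most likely by exhibiting that candidate as a permanent cycle (nonzero on $E_\infty$, hence not a target) or by a product relation $y \cdot C_{11}$ in which $y \cdot C_{11}$ is already known to be a permanent cycle while $y$ times the candidate target is nonzero. Classical comparison via Proposition~\ref{prop:compare} might also immediately kill the candidate if its $\tau$-inversion vanishes classically. The main obstacle in both parts is not the ruling-out step itself but the bookkeeping that precedes it: one must be certain, using the full chart, that the proposed class really is the \emph{only} nonzero element in the target degree and that no crossing differential has already altered the picture. Since these stems sit near the upper edge of the thoroughly analyzed range, the genuine care lies in confirming the auxiliary facts, namely that $h_0^2 h_5 i$, respectively the relevant product of $C_{11}$, is indeed a surviving permanent cycle.
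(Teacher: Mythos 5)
Your proposal has a genuine gap in the key ruling-out step. For part (1) the unique candidate target is $\tau d_0 u$, and your multiplicative argument fails against exactly this class: already in $\Ext$ one has $h_0 \cdot u = 0$ (the product vanishes on the May $E_\infty$-page since $h_0 h_1 = 0$, and Table \ref{tab:May-h0} records no hidden $h_0$ extension on $u$ or on $d_0 u$), so $h_0 \cdot \tau d_0 u = 0$ on $E_4$. Thus the conclusion ``$h_0 T \neq 0$ forces $d_4(h_0 h_5 i)=0$'' never gets off the ground. The other premise is also wrong, and circular: the hidden $2$ extension of Lemma \ref{lem:2-h0h5i} requires, by Definition \ref{defn:hidden}, that $h_0 \cdot h_0 h_5 i$ be \emph{zero} on the $E_\infty$-page --- the class $2\{h_0 h_5 i\}$ is detected by $\tau^4 e_0^2 g$, not by $h_0^2 h_5 i$ --- and in any case that lemma is proved downstream of the differential computations, so it cannot be invoked to establish a $d_4$. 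Your fallback (pass to $C\tau$) and all of part (2) are plans rather than arguments; moreover the one concrete mechanism you suggest for part (2), that the candidate's $\tau$-inversion vanishes classically, cannot work, since the candidate for $d_4(C_{11})$ is $\tau^3 d_0 e_0 m$ and $d_0 e_0 m$ is nonzero classically.

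The missing idea is comparison with the Adams spectral sequence for $\tmf$: the classes $d_0 u$ and $d_0 e_0 m$ map to nonzero permanent cycles there \cite{Henriques07}, hence survive to nonzero homotopy classes in the classical Adams spectral sequence for the sphere, and therefore $\tau d_0 u$ and $\tau^3 d_0 e_0 m$ survive to nonzero homotopy classes motivically. A surviving class cannot be the target of any differential, so both $d_4(h_0 h_5 i)$ and $d_4(C_{11})$ vanish. (Your preliminary observation that $h_0 h_5 i$ is a $d_3$-cycle because $h_0^2 x'$ is killed by $d_2(R_1)$ is correct, but it is not where the content lies.)
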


\begin{proof}
The only non-zero possibility for $d_4 ( h_0 h_5 i)$ is $\tau d_0 u$.
However, $d_0 u$ survives to a non-zero homotopy class in
the Adams spectral sequence for $\tmf$ \cite{Henriques07}.  
This implies that $\tau d_0 u$ survives to a non-zero homotopy class in the
motivic Adams spectral sequence.
This establishes the first formula.

The proof of the second formula is similar.
The only non-zero possibility for $d_4 (C_{11})$ is $\tau^3 d_0 e_0 m$, 
but $d_0 e_0 m$ survives to a non-zero homotopy class in
the Adams spectral sequence for $\tmf$ \cite{Henriques07}.  
\end{proof}

\begin{lemma}
\label{lem:d4-t^2e0g^2}
$d_4 ( \tau^2 e_0 g^2 ) = d_0^4$.
\end{lemma}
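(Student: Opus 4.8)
The plan is to lift the classical differential $d_4(e_0 g^2) = d_0^4$, which was already invoked in the proof of Lemma \ref{lem:d3-te0g} and is visible in the Adams spectral sequence for $\tmf$ \cite{Henriques07}. By Proposition \ref{prop:compare}, after inverting $\tau$ the motivic Adams spectral sequence agrees with the classical one tensored with $\M_2[\tau^{-1}]$, so this classical differential produces a motivic differential of the form $d_4(\tau^a e_0 g^2) = \tau^b d_0^4$. First I would pin down the exponents by a weight count. Using that $e_0$, $g$, and $d_0$ have degrees $(17,4,10)$, $(20,4,12)$, and $(14,4,8)$, the class $\tau^2 e_0 g^2$ has degree $(57,12,32)$ while $d_0^4$ has degree $(56,16,32)$. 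Since $d_4$ preserves weight, lowers the stem by one, and raises filtration by four, the weight constraint reads $34-a = 32-b$, i.e.\ $a-b=2$, and the stem and filtration match automatically; the asserted differential is exactly the case $a=2$, $b=0$.

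Next I would locate both ends on the motivic $E_4$-page. The target $d_0^4$ is not $\tau$-torsion: under $\tau$-inversion it maps to the nonzero classical $d_0^4$ on the classical $E_4$-page, so it genuinely survives to the motivic $E_4$-page and is an admissible value. Choosing $b=0$ (rather than a positive power of $\tau$ on the target) amounts to the requirement that $d_0^4$ itself be hit, which in turn is equivalent to the statement that the minimal surviving $\tau$-power of $e_0 g^2$ is $\tau^2$. Thus the entire question reduces to showing that $\tau^2 e_0 g^2$ survives to $E_4$. The relevant input is the exotic differential $d_3(\tau e_0 g^2) = c_0 d_0 e_0^2$ from Lemma \ref{lem:d3-te0g}; multiplying by the permanent cycle $\tau$ gives $d_3(\tau^2 e_0 g^2) = \tau \cdot c_0 d_0 e_0^2$, so survival is equivalent to the vanishing $\tau \cdot c_0 d_0 e_0^2 = 0$ on the $E_3$-page.

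The main obstacle is therefore this last vanishing, and it is exactly where the phenomenon is genuinely motivic. The class $c_0 d_0 e_0^2$ has no classical counterpart: it maps to zero under $\tau$-inversion because the corresponding product vanishes in $\Ext_{A_{\cl}}$, the same mechanism by which $h_1^k d_0$ and $h_1^k e_0$ are nonzero motivically yet die classically (compare Example \ref{ex:d2-2} and Proposition \ref{prop:compare-ext}). Since it becomes zero after inverting $\tau$, it is annihilated by some power of $\tau$; I would argue that a single factor already suffices, either by exhibiting $c_0 d_0 e_0^2$ as $\tau$ times a class that is itself $\tau$-torsion, or by a direct degree inspection on the $E_3$-page showing that $\tau c_0 d_0 e_0^2$ has no room to be nonzero in degree $(56,15,32)$. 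Once $\tau \cdot c_0 d_0 e_0^2 = 0$ is established, $\tau^2 e_0 g^2$ survives to $E_4$, and the weight-balanced lift of the classical differential delivers $d_4(\tau^2 e_0 g^2) = d_0^4$.
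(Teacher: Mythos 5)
Your proposal is correct in strategy, but it takes a genuinely different route from the paper. The paper's proof is a short Leibniz argument: it lifts $d_4(\tau^2 e_0 g) = P d_0^2$ from its classical analogue in Table \ref{tab:diff-refs}, multiplies by the permanent cycle $h_1 d_0^2$, and uses the multiplicative relations identifying $h_1 d_0^2 \cdot \tau^2 e_0 g$ with $\tau^2 h_1 d_0 e_0^3 = P h_1 \cdot \tau^2 e_0 g^2$; since the target becomes $P h_1 d_0^4$, dividing by $P h_1$ gives the lemma. You instead lift the classical differential $d_4(e_0 g^2) = d_0^4$ directly through Proposition \ref{prop:compare}, do the weight bookkeeping, and reduce everything to the survival of $\tau^2 e_0 g^2$ to $E_4$, which you obtain from $d_3(\tau^2 e_0 g^2) = \tau \cdot c_0 d_0 e_0^2 = 0$. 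This survival question is precisely what the paper's multiplicative trick is designed to sidestep: classical comparison alone cannot distinguish $d_4(\tau^2 e_0 g^2) = d_0^4$ from, say, $d_4(\tau^3 e_0 g^2) = \tau d_0^4$, which is presumably why the paper gives this differential its own lemma instead of simply citing Table \ref{tab:diff-refs}. Note also that your classical input is not independent of the paper's: the paper remarks in the proof of Lemma \ref{lem:d3-te0g} that $d_4(e_0 g^2) = d_0^4$ is \emph{not} visible in $\tmf$ but is itself an easy consequence of $d_4(e_0 g) = P d_0^2$, by the same multiplication argument run classically.

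Two loose ends in your version should be tightened. First, the key vanishing $\tau \cdot c_0 d_0 e_0^2 = 0$ is true, but your first suggested mechanism (writing $c_0 d_0 e_0^2$ as $\tau$ times a $\tau$-torsion class) does not parse as an argument; the clean justification is that $\tau \cdot c_0 d_0 = 0$ already in $\Ext$, since the tridegree $(22,7,12)$ is empty --- this is also exactly what makes the $\tau$ extension from $c_0 d_0$ to $P d_0$ in Table \ref{tab:Adams-tau} a \emph{hidden} one. Second, the $\tau$-inverted comparison determines $d_4(\tau^2 e_0 g^2)$ only modulo the kernel of $\tau$-localization in tridegree $(56,16,32)$ of $E_4$; to get equality with $d_0^4$ on the nose you must additionally check that this tridegree contains no $\tau$-torsion. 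That check succeeds ($d_0^4$ is the only class there), but it is an extra chart verification that the paper's argument never needs, and your write-up asserts the conclusion without flagging it.
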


\begin{proof}
First note that $d_4 (\tau^2 e_0 g) = P d_0^2$, which
follows from its classical analogue given in Table \ref{tab:diff-refs}.
Multiply this formula by $h_1 d_0^2$ to obtain that
$d_4(\tau^2 h_1 d_0 e_0^3)$ equals $P h_1 d_0^4$.
Finally, note that
$\tau^2 h_1 d_0 e_0^3$ equals $P h_1 \cdot \tau^2 e_0 g^2$.
The desired formula follows.
\end{proof}

\begin{lemma}
\mbox{}
\label{lem:d4-h2B5}
\begin{enumerate}
\item
$d_4 (\tau^2 h_1 B_{22}) = P h_1 x'$.
\item
$d_4 (\tau h_2 B_5 ) = h_1 d_0 x'$.
\item
$d_4 (\tau^2 e_0 x') = P^2 x'$.
\end{enumerate}
\end{lemma}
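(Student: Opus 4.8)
The three differentials all have targets that are multiples of $x'$, namely $P h_1 x'$, $h_1 d_0 x'$, and $P^2 x'$, so I would treat them as a single linked package. The first order of business is to confirm that the three sources $\tau^2 h_1 B_{22}$, $\tau h_2 B_5$, and $\tau^2 e_0 x'$ actually survive to the $E_4$-page; this is a bookkeeping matter that follows from the $d_2$ and $d_3$ computations already assembled in Sections \ref{subsctn:d2-lemmas} and \ref{subsctn:d3-lemmas} together with the charts of \cite{Isaksen14a}. Note in particular that this lemma must be given a derivation independent of Lemma \ref{lem:d3-h2B5}, since that lemma already invokes parts (2) and (3) of the present statement; the cross-reference is only consistent if the $d_4$'s here are established by other means.

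My plan for the anchor is to lift a classical $d_4$ differential and insert the necessary factors of $\tau$ to balance motivic weight, exactly as in Example \ref{ex:d2-3} and in the preceding Lemma \ref{lem:d4-t^2e0g^2}. Because $x'$ is visible classically (recall $d_2(R_1) = h_0^2 x'$ from Table \ref{tab:diff-refs}), the corresponding classical differentials onto $P h_1 x'$, $h_1 d_0 x'$, and $P^2 x'$ can be read off from the classical Adams computation; motivically the sources are heavier than the targets by one weight, so the extra $\tau$ sits on the source. This weight placement is precisely what makes the argument of Lemma \ref{lem:d3-h2B5} go through: since $h_1 d_0 x'$ is not divisible by $\tau$, a nonzero $d_4(\tau h_2 B_5)$ forces $h_2 B_5$ off the $E_4$-page.

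Once one of the three is in hand, I would obtain the others by multiplying by permanent cycles such as $d_0$, $e_0$, and $h_1$ (and exploiting Adams periodicity) and applying the Leibniz rule, recognizing the resulting products through the multiplicative relations of \cite{Bruner97}. This is the same mechanism used in Lemma \ref{lem:d4-t^2e0g^2}, where $d_4(\tau^2 e_0 g) = P d_0^2$ was promoted to $d_4(\tau^2 e_0 g^2) = d_0^4$ by multiplying by $h_1 d_0^2$ and identifying $\tau^2 h_1 d_0 e_0^3 = P h_1 \cdot \tau^2 e_0 g^2$. For part (1) I would also use the relation $\tau h_1 B_{22} = h_0 h_2 B_4$ recorded in the proof of Lemma \ref{lem:d2-B22} to tie the $B_{22}$ class to the $B_4$/$B_5$ family. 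Where a direct Leibniz identity is unavailable, I would fall back on a ``must be hit'' argument (as in Lemma \ref{lem:d2-h3g}), showing the target is forced to die and that the listed $d_4$ is the only available differential, possibly passing to the Adams spectral sequence for $C\tau$ of Chapter \ref{ch:Ctau} as is done repeatedly elsewhere in this section.

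The main obstacle I anticipate is the bookkeeping of nonzero products on the $E_4$-page. The classes $B_5$, $B_{21}$, $B_{22}$, $B_{23}$, and $x'$ interact through a web of hidden and non-hidden May extensions, so deciding which products survive to $E_4$ — and hence which Leibniz relations genuinely force a differential rather than collapsing to zero — is delicate. Keeping the placement of the factors of $\tau$ consistent, and ruling out the competing possibility that a given target is instead killed by a later differential or survives to $E_\infty$, is where the real care is required.
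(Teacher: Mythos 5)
The overall shape of your argument --- anchor on classical information, insert powers of $\tau$ to balance weight, and propagate with the Leibniz rule through product relations from \cite{Bruner97} --- is the right one, and your instinct to chain the three parts together matches the paper (its part (3) does rest on part (1)). But your anchor step is question-begging. You assert that because $x'$ is classically visible, ``the corresponding classical differentials onto $P h_1 x'$, $h_1 d_0 x'$, and $P^2 x'$ can be read off from the classical Adams computation.'' They cannot: these differentials live in the 63-, 69-, and 70-stems, far beyond the published classical results collected in Table \ref{tab:diff-refs}, and the paper explicitly declines to trust the only existing computations in that range (\cite{Kochman90}, \cite{KM93}). The three classical $d_4$'s are themselves new; establishing them \emph{is} the content of the lemma. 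The paper does this by transferring the two known classical $d_4$'s of Table \ref{tab:diff-refs}, namely $d_4(d_0 e_0 + h_0^7 h_5) = P^2 d_0$ and $d_4(e_0 g) = P d_0^2$, onto the new classes via specific relations from \cite{Bruner97}: $d_0 \cdot h_1 B_{22} = (d_0 e_0 + h_0^7 h_5) \cdot B_1$ gives part (1); $P d_0 \cdot h_2 B_5 = h_1 x' \cdot e_0 g$ gives part (2); and $h_1 d_0 \cdot e_0 x' = P d_0 \cdot h_1 B_{22}$ together with part (1) gives part (3). In each case one also checks that the relevant multiple of the target ($d_0 \cdot P h_1 x'$, respectively $P d_0 \cdot h_1 d_0 x'$, respectively $h_1 d_0 \cdot P^2 x'$) is non-zero on the classical $E_4$-page, so that the differential on the product forces the stated differential on the factor. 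Without identifying these anchor differentials and these particular relations, your propagation step has nothing to propagate from, and your fallback ``must be hit'' argument is not developed enough to replace it.

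A secondary problem: your proposed use of the relation $\tau h_1 B_{22} = h_0 h_2 B_4$ from the proof of Lemma \ref{lem:d2-B22} cannot feed a $d_4$ computation, since $B_4$ supports the differential $d_2(B_4) = h_0 B_{21}$ (and $B_{22}$ supports $d_2(B_{22}) = h_1^2 B_{21}$), so neither class survives to the $E_3$-page; a Leibniz argument at $E_4$ can only use relations among classes that are still alive there, which is exactly why the paper works with the surviving $\tau$-multiples and with products against the permanent cycles $d_0$, $P d_0$, and $h_1 d_0$.
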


\begin{proof}
In the classical situation, $d_0 \cdot P h_1 x'$ is non-zero on the $E_4$-page,
and $d_0 \cdot h_1 B_{22} = (d_0 e_0 + h_0^7 h_5) \cdot B_1$ \cite{Bruner97}.
Using that $d_4(d_0 e_0 + h_0^7 h_5 ) = P^2 d_0$,
it follows that there is a classical differential
$d_4 (h_1 B_{22} ) = P h_1 x'$.
The motivic differential follows immediately.

The arguments for the second and third differentials are similar.
For the second, use that
$P d_0 \cdot h_1 d_0 x'$ is non-zero on the $E_4$-page;
$P d_0 \cdot h_2 B_5 = h_1 x' \cdot e_0 g$ \cite{Bruner97}; and
$d_4 (e_0 g) = P d_0^2$ classically.

For the third formula, use that
$h_1 d_0 \cdot P^2 x'$ is non-zero on the $E_4$-page;
$h_1 d_0 \cdot e_0 x' = P d_0 \cdot h_1 B_{22}$ \cite{Bruner97};
and $d_4 ( h_1 B_{22} )=  P h_1 x'$ classically from the first part of the lemma.
\end{proof}

\begin{lemma}
\label{lem:d4-t^2m^2}
$d_4(\tau^2 m^2) = d_0^2 z$.
\end{lemma}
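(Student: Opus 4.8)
The plan is to deduce this motivic differential from its classical counterpart. The classical Adams differential $d_4(m^2) = d_0^2 z$ is among the previously established results recorded in Table \ref{tab:diff-refs} (verified in Bruner's machine computations), and the comparison of Proposition \ref{prop:compare} forces a matching motivic differential once the appropriate power of $\tau$ is inserted to balance weights. The only subtlety is that the motivic source cannot be $m^2$ itself: Lemma \ref{lem:d3-gr} gives $d_3(m^2) = \tau h_1 e_0^4 \neq 0$, so $m^2$ does not survive to the $E_4$-page. I expect the surviving source to be $\tau^2 m^2$, which accounts for the two factors of $\tau$ in the statement.

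First I would verify that $\tau^2 m^2$ survives to the $E_4$-page. Since $d_3$ is $\M_2$-linear, hence $\tau$-linear, we have $d_3(\tau^2 m^2) = \tau^3 h_1 e_0^4$, so this reduces to showing that $\tau^3 h_1 e_0^4$ vanishes on the $E_3$-page. I would establish this by showing that $h_1 e_0^4$ is killed by $\tau^3$ but not by $\tau^2$ in $\Ext$, in exact parallel with the $\tau^3$-torsion of $h_2 g^2$ discussed in Example \ref{ex:h2g^2}. The lower bound $\tau^2 h_1 e_0^4 \neq 0$ is precisely what makes $\tau m^2$ fail to survive, since $d_3(\tau m^2) = \tau^2 h_1 e_0^4$, and so singles out $\tau^2 m^2$ as the first surviving $\tau$-multiple.

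With survival in hand, inverting $\tau$ carries $\tau^2 m^2$ to a unit multiple of the classical $m^2$ and carries $d_0^2 z$ to itself, so Proposition \ref{prop:compare} identifies $d_4(\tau^2 m^2)$ with the classical value $d_0^2 z$. A weight count, using that $\tau$ lowers weight by one while $d_4$ preserves weight, confirms that no further factor of $\tau$ is needed on the target and that $d_0^2 z$ is a nonzero permanent cycle through $E_4$. The main obstacle is thus the $\tau$-torsion bookkeeping, namely pinning down that the torsion order of $h_1 e_0^4$ is exactly three. A multiplicative cross-check against $d_4(\tau^2 e_0 g^2) = d_0^4$ from Lemma \ref{lem:d4-t^2e0g^2}, via the May relation $m^2 = g^2 r$, is tempting but is complicated by the fact that $r$ supports a $d_3$ differential and so is unavailable on the $E_4$-page; for this reason I would rely on the comparison argument rather than on multiplication by $r$.
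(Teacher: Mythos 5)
Your argument rests on an input that does not exist in this paper's framework: the ``classical differential $d_4(m^2) = d_0^2 z$'' is \emph{not} among the previously established results in Table \ref{tab:diff-refs} (that table stops well short of any entry in the $70$-stem, and its sources \cite{MT67}, \cite{BJM84}, etc.\ do not reach this range), and the machine computations of \cite{Bruner97} cited throughout the paper are computations of $\Ext$ and its multiplicative structure, not of Adams differentials. The only prior source for classical differentials this far out is \cite{Kochman90}\,/\,\cite{KM93}, which the paper deliberately refuses to rely on. So the classical fact you invoke is precisely what needs to be proved; as written, the proposal assumes the conclusion (up to $\tau$-bookkeeping). A second, smaller gap: your survival step for $\tau^2 m^2$ --- that $h_1 e_0^4$ is killed by $\tau^3$ but not $\tau^2$ in $\Ext$ --- is asserted ``in parallel with'' Example \ref{ex:h2g^2} rather than proved, and nothing in the paper's tables gives you that torsion statement for free.

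The irony is that the route you dismissed at the end is the correct one. The paper's proof is multiplicative: it starts from $d_4(\tau^2 g r) = i j$, which \emph{is} available by comparison to $\tmf$ \cite{Henriques07} (see Table \ref{tab:Adams-d4}), and multiplies by the permanent cycle $\tau g$, using the relations $g \cdot g r = m^2$ and $g \cdot i j = d_0^2 z$ from \cite{Bruner97}, to get $d_4(\tau^3 m^2) = \tau d_0^2 z$; dividing by $\tau$ gives the lemma. Your objection --- that $r$ supports a $d_3$ and so ``multiplication by $r$'' is unavailable --- misses the point that one never multiplies by $r$: although $r$ and even $g r$ die at $E_3$ (Lemma \ref{lem:d3-gr}), the class $\tau^2 g r$ survives to $E_4$, exactly the same $\tau$-multiple phenomenon you were prepared to accept for $\tau^2 m^2$ itself. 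Repairing your proposal amounts to replacing the phantom citation by this $\tmf$-based differential on $\tau^2 g r$, at which point you have reproduced the paper's argument.
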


\begin{proof}
First note that $d_4(\tau^2 g r) = i j$, which follows by comparison
to $\tmf$ \cite{Henriques07}.
Multiply by $\tau g$ to obtain that
$d_4 (\tau^3 m^2) = \tau d_0^2 z$, using multiplicative 
relations from \cite{Bruner97}.
The desired formula follows.
\end{proof}


\subsection{Adams $d_5$ differentials computations}
\label{subsctn:d5-lemmas}

\index{Adams spectral sequence!differential!d4@$d_4$}

\begin{lemma}
\label{lem:d5-g2}
$d_5(g_2) = 0$.
\end{lemma}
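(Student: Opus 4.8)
The plan is to confirm that $g_2$, which lies in tridegree $(44,4,24)$, is already a $d_4$-cycle, and then to eliminate the single possible target of a $d_5$ differential on it. First I would record that $g_2$ does not appear among the nonzero $d_2$, $d_3$, or $d_4$ differentials on multiplicative generators catalogued in Propositions \ref{prop:Adams-d2}, \ref{prop:Adams-d3}, and \ref{prop:Adams-d4}; hence $g_2$ survives to the $E_5$-page and $d_5(g_2)$ is defined. (The tridegree is pinned down by the relation $h_0^2 h_3 g_2 = h_2^2 h_5 d_0$ used in Lemma \ref{lem:d2-D1}.) The value $d_5(g_2)$ lands in tridegree $(43,9,24)$, which has Chow degree $s+f-2w = 4$. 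Consulting the $E_5$-chart in \cite{Isaksen14a}, I would read off that there is at most one nonzero class $z$ in this tridegree that $g_2$ could hit, so the lemma reduces to showing that $z$ cannot be a boundary, i.e. that $z$ survives the spectral sequence.

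The main step is therefore to prove that $z$ is a nonzero permanent cycle. I would try the comparison tools in order of convenience. If $z$ has nonzero image in the Adams spectral sequence for $\tmf$, then since the corresponding $\tmf$-class supports no differential \cite{Henriques07}, the equation $d_5(g_2)=z$ is impossible, because it would force a nonzero $d_5$ differential in $\tmf$ that the $\tmf$-computation excludes. If instead $z$ maps to zero in $\tmf$, I would produce an element of $\pi_{43,24}$ detected by $z$, either by exhibiting $z$ as the leading term of a Toda bracket via Moss's Convergence Theorem \ref{thm:Moss}, or by multiplying an already-established permanent cycle by a suitable class and checking that the product is detected by $z$. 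A third, more structural observation worth exploiting is that $g_2$ has Chow degree $0$, so by Theorem \ref{thm:Chow-0} it is the image of the classical permanent cycle $g$ (which detects $\kappabar$); although Theorem \ref{thm:Chow-0} concerns only $\Ext$ and does not by itself transport Adams differentials, this correspondence sharply constrains the homotopy-theoretic meaning of $g_2$ and can guide the choice of bracket.

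The hard part will be the case in which the candidate target $z$ vanishes in $\tmf$, since then the quickest comparison argument is unavailable and one must genuinely construct a homotopy class detected by $z$ (or detect it in the Adams spectral sequence for the cofiber of $\tau$, as in several of the neighboring lemmas). A secondary obstacle is confirming from the chart that the weight grading really leaves a single candidate target in tridegree $(43,9,24)$, and verifying that no compound class $z + z'$ of strictly higher Adams filtration could serve as the image; this requires checking that any such $z'$ is independently excluded, for instance because it is itself hit by an earlier differential or is known to be a surviving nonzero class.
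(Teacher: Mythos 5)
Your proposal takes a genuinely different route from the paper, and the route has a gap that is fatal for this particular lemma. The paper's proof never leaves the spectral sequence: it identifies the unique possible nonzero value, $d_5(g_2) = \tau^2 h_2 g^2$ in tridegree $(43,9,24)$, and rules it out multiplicatively on the $E_5$-page, observing that $e_0 \cdot g_2$ is zero there while $e_0 \cdot \tau^2 h_2 g^2 = P h_1^4 h_5 e_0$ (Lemma \ref{lem:t^2-h2g^2}) is nonzero there, which is incompatible with the Leibniz rule. You correctly reduce to a unique candidate target, but you never identify it, and your plan for excluding it --- show it is a nonzero permanent cycle, either by comparison with $\tmf$ or classical data, or by exhibiting a homotopy class that it detects --- is precisely the kind of argument that cannot succeed for this class.

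The obstruction is that $\tau^2 h_2 g^2$ is pure $\tau$-torsion: by Lemma \ref{lem:t^2-h2g^2} it equals $P h_1^4 h_5$, and $\tau \cdot P h_1^4 h_5 = 0$ because $\tau h_1^3 = h_0^2 h_2$ forces $\tau h_1^4 = 0$. Hence the class dies after inverting $\tau$, so by Proposition \ref{prop:compare} the classical Adams spectral sequence --- and with it any comparison to classical $\tmf$ --- can never certify that it survives; and the motivic analogue of $\tmf$ that would be needed for a direct motivic comparison does not exist (the paper only speculates about it). Your remaining branch, constructing an element of $\pi_{43,24}$ detected by the target, is circular: that element would be $\nu \kappabar^2 = \tau^2 \{ h_2 g^2 \}$, which is zero classically since $\{ h_2 g^2 \}$ is killed by $\tau^3$ (Example \ref{ex:h2g^2}), and whose motivic nonvanishing is exactly a consequence of the fact that $\tau^2 h_2 g^2$ is not hit by a differential, i.e.\ of this lemma; it is not an available input. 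Two smaller points: your concern about a ``compound class $z + z'$ of strictly higher Adams filtration'' is vacuous, since the filtration of the target of $d_5$ is pinned at $f = 9$ by the tridegree, so the candidate group is a single trigraded piece; and the Chow-degree-zero observation via Theorem \ref{thm:Chow-0}, while correct (motivic $g_2$ corresponds to classical $g$), carries no information about motivic differentials, as you yourself note --- in the end every comparison you propose factors through structure that is blind to $\tau$-torsion, which is where all the content of this lemma lives.
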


\begin{proof}
The only non-zero possibility is that $d_5 ( g_2 )$ equals $\tau^2 h_2 g^2$.
However, $e_0 \cdot g_2$ is zero in the $E_5$-page, while
$e_0 \cdot \tau^2 h_2 g^2$ is non-zero in the $E_5$-page.
\end{proof}

\begin{lemma}
$d_5 (B_2) = 0$.
\end{lemma}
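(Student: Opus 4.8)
The plan is to rule out the unique nonzero possibility for $d_5(B_2)$ by a multiplicative comparison, exactly as in the proof of Lemma~\ref{lem:d5-g2}. First I would read off from the $E_5$-chart the single class $x$ lying one stem below and five filtrations above $B_2$ that could serve as a nonzero target; since we already know $d_3(B_2)=0$ from Lemma~\ref{lem:d3-B2} and $B_2$ survives to $E_5$, this is the only differential left to rule out.

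The key input is the relation $h_2 \cdot h_2 B_2 = h_1 h_5 c_0 d_0$ from Lemma~\ref{lem:h2-h2B2}. By Lemma~\ref{lem:d2-X1} we have $d_2(\tau G) = h_5 c_0 d_0$, hence $d_2(\tau h_1 G) = h_1 h_5 c_0 d_0$; so $h_1 h_5 c_0 d_0$ is already a boundary on the $E_2$-page and is therefore zero on $E_3$, and a fortiori $h_2^2 B_2 = 0$ on the $E_5$-page. Now suppose $d_5(B_2) = x$ were nonzero. Since $h_2$ is a permanent cycle and $d_5$ is a derivation, $d_5(h_2^2 B_2) = h_2^2\, d_5(B_2) = h_2^2 x$. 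But $h_2^2 B_2 = 0$ on $E_5$, so $h_2^2 x = 0$ on $E_5$. The argument is completed by checking that $h_2^2 x$ is in fact nonzero on the $E_5$-page, which contradicts this and forces $d_5(B_2) = 0$.

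The main obstacle is precisely that last verification: one must confirm that multiplication by $h_2^2$ does not annihilate the candidate target $x$ on $E_5$ (it suffices to check nonvanishing already in $\Ext$ together with survival of $h_2^2 x$ to $E_5$). If $h_2^2$ happens to kill $x$ as well, the fallback is to use instead the relation $d_0 B_2 = h_2 B_{21}$ recorded in Lemma~\ref{lem:d3-B2}: since $d_0$ is a permanent cycle, a nonzero $d_5(B_2)=x$ would give $d_5(h_2 B_{21}) = d_0 x$, and one then derives a contradiction from the known fate of $B_{21}$ (which detects $\kappa\theta_{4.5}$). A third option, used repeatedly elsewhere in this section, is to pass to the motivic Adams spectral sequence for $C\tau$ of Chapter~\ref{ch:Ctau} and exploit an $h_1$- or $h_2$-divisibility that is visible there but hidden for $S^{0,0}$.
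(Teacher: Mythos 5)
Your strategy has a fatal flaw, and it is hidden precisely in the step you deferred. The unique nonzero candidate for $d_5(B_2)$ is $x = h_1 u'$ in degree $(47,12,26)$ — you never identified it, which conceals the problem: $x$ is divisible by $h_1$, and $h_1 h_2 = 0$ in $\Ext$ (it is a relation already on the May $E_2$-page). Hence $h_2^2 \cdot h_1 u' = 0$ identically in $\Ext$, so the Leibniz comparison $d_5(h_2^2 B_2) = h_2^2\, d_5(B_2)$ reads $0 = 0$ and yields no contradiction whatsoever. The verification you call the "main obstacle" is not a check that might succeed; it provably fails. Your fallback collapses for the same structural reason: $d_0 \cdot h_1 u' = h_1 d_0 u'$ is hit by the differential $d_3(\tau g v) = h_1 d_0 u'$ (Lemma \ref{lem:d3-te0g}, Table \ref{tab:Adams-d3}), so it is already zero on $E_4$; thus $d_5(d_0 B_2) = d_0 x = 0$ on $E_5$ is perfectly consistent with $d_5(B_2) = h_1 u'$, and the permanence of $B_{21}$ (detecting $\kappa\theta_{4.5}$) gives only $d_0 x = 0$, again no contradiction. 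In short, every multiplicative comparison by a permanent cycle $y$ can only rule out $d_5(B_2)=x$ if $yx$ is nonzero on $E_5$, and the two products you propose both vanish there.

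What is actually needed is the opposite kind of argument: show that $h_1 u'$ is a nonzero \emph{permanent} class, i.e., that it survives to $E_\infty$, so it cannot be the target of any differential. The paper does this homotopically: the classical hidden $\eta$ extension $\eta \{d_0 l\} = \{P u\}$ (Table \ref{tab:extn-refs}) implies that $\eta \{\tau d_0 l + u'\}$ is nonzero motivically, and this product is detected by $h_1 u'$ (compare the hidden $\tau$ extension from $h_1 u'$ to $P u$ in Table \ref{tab:Adams-tau}); since $h_1 u'$ detects a nonzero element of $\pi_{47,26}$, it is nonzero on $E_\infty$, forcing $d_5(B_2) = 0$. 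Your third suggestion (passing to $C\tau$) is not developed enough to assess, but note that the successful argument requires importing homotopical information (a classical hidden extension), not just algebra internal to the $E_r$-pages.
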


\begin{proof}
The only other possibility is that $d_5(B_2)$ equals $h_1 u'$.
Recall from Table \ref{tab:extn-refs} that there is a 
classical hidden extension $\eta \{d_0 l \} = \{ P u\}$.
This implies that $\eta \{ \tau d_0 l + u' \}$
is non-zero motivically.  Therefore, $h_1 u'$ cannot be zero in $E_\infty$.
\end{proof}

Our next goal is to show that 
$d_5 (\tau P h_5 e_0) = \tau d_0 z$.
We will need a few preliminary lemmas.
This approach follows \cite{KM93}*{Theorem 2.2}, but we have corrected and clarified
the details in that argument.

\begin{lemma}
\label{lem:bracket-2kappabar^2}
$\langle \{q\}, 2, 8 \sigma \rangle
 = \{ 0, 2 \tau \kappabar^2 \}$.
\end{lemma}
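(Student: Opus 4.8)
The plan is to compute the bracket in two independent pieces: a representative produced by Moss's Convergence Theorem, and the indeterminacy computed by hand, with the motivic weight grading used to pin down the single factor of $\tau$. First I would check that the bracket is defined. The outer product $2 \cdot 8\sigma = 16\sigma$ vanishes because $\sigma$ generates a $\Z/16$ in $\pi_{7,4}$, and $2\{q\} = 0$ because $h_0 q = 0$ in $\Ext$ and $q$ supports no hidden $h_0$-extension, which can be read off the $E_\infty$-chart of \cite{Isaksen14a} in the $32$-stem. The bracket then lands in $\pi_{40,23}$, the same tridegree as $2\tau\kappabar^2$; note that the additive weight bookkeeping $19 + 0 + 4 = 23$ already forces the answer to have weight $23$, and this is exactly what distinguishes $2\tau\kappabar^2$ from $2\kappabar^2$.

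To produce a representative I would apply Moss's Convergence Theorem \ref{thm:Moss} to the Massey product $\langle q, h_0, h_0^3 h_3 \rangle$ in the Adams $E_2$-page, where $h_0^3 h_3$ detects $8\sigma$. The requisite vanishing of crossing Adams differentials in this degree must be verified against the differential tables assembled in Chapter \ref{ch:Adams-diff}. The Massey product itself I would evaluate either from Table \ref{tab:Massey} or directly via May's Convergence Theorem \ref{thm:3-converge}, using a May differential hitting $h_0^4 h_3$ together with one hitting $h_0 q$. Comparison with the classical computation obtained by inverting $\tau$ (Proposition \ref{prop:compare}) should identify this representative with a class detecting $2\tau\kappabar^2$, after which the weight grading fixes the precise power of $\tau$. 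In particular this step should show that the value of the bracket lies in the subgroup generated by $2\tau\kappabar^2$.

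It then remains to compute the indeterminacy $\{q\} \cdot \pi_{8,4} + \pi_{33,19} \cdot 8\sigma$ and show it equals $\{0, 2\tau\kappabar^2\}$. Here $\pi_{8,4}$ is generated by $\tau\eta\sigma$ and $\tau\epsilon$, so the first summand is generated by $q\,\tau\eta\sigma$ and $q\,\tau\epsilon$, which must be evaluated from multiplicative relations in $\Ext$ (principally the products $q \cdot h_1 h_3$ and $q \cdot c_0$); the second summand requires the weight-$19$ part of $\pi_{33}$ and its products with $8\sigma$. The goal is to show each of these contributions is either $0$ or $2\tau\kappabar^2$, so that the indeterminacy is exactly the order-two group $\{0, 2\tau\kappabar^2\}$. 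Combining this with the representative computed above yields the claimed equality.

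The main obstacle is the indeterminacy computation: one must control every product feeding both summands and confirm that none of them escapes the group generated by $2\tau\kappabar^2$, and, crucially, that $2\tau\kappabar^2$ is genuinely nonzero so that the bracket is a two-element set rather than $\{0\}$. A secondary difficulty is the crossing-differential hypothesis of Moss's theorem, which is delicate in this range; should it fail, I would instead route the representative through the cofiber of $2$ using the cofiber description of Toda brackets (Proposition \ref{prop:3bracket-cofiber} and Remark \ref{rem:higher-bracket-cofiber}), which computes $\langle \{q\}, 2, 8\sigma \rangle$ without reference to the Adams filtration of intermediate classes.
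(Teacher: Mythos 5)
Your proposal follows the paper's proof essentially verbatim: Moss's Convergence Theorem \ref{thm:Moss} applied to the Massey product $\langle q, h_0, h_0^3 h_3 \rangle = \tau h_1 u$ from Table \ref{tab:Massey}, identification of $\{\tau h_1 u\}$ with $2\tau\kappabar^2$ via the hidden $2$ extension on $\tau^2 g^2$ (Table \ref{tab:Adams-2}), and the observation that $2\tau\kappabar^2 = \tau\epsilon\{q\}$ (Lemma \ref{lem:epsilon-q}) lies in the indeterminacy summand $\{q\}\cdot\pi_{8,4}$. The only corrections needed are arithmetic: $q$ has weight $17$, so the bracket lies in $\pi_{40,21}$ (not $\pi_{40,23}$) and the second indeterminacy summand is $\pi_{33,17}\cdot 8\sigma$ (not $\pi_{33,19}\cdot 8\sigma$); the weight argument distinguishing $\tau\kappabar^2$ from $\kappabar^2$ still goes through.
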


\begin{proof}
Table \ref{tab:Massey} shows that 
$\langle q, h_0, h_0^3 h_3 \rangle$ equals $\tau h_1 u$.
Then Moss's Convergence Theorem \ref{thm:Moss} implies that
$\langle \{q\}, 2, 8 \sigma \rangle$
contains $\{ \tau h_1 u \}$.
\index{Convergence Theorem!Moss}
Table \ref{tab:Adams-2} shows that 
$\{ \tau h_1 u \}$ equals $2 \tau \kappabar^2$.
Finally, use 
Lemma \ref{lem:epsilon-q} to
show that $2 \tau \kappabar^2 = \tau \epsilon \{q \}$
is in the indeterminacy of the bracket.
\end{proof}

\begin{lemma}
\label{lem:bracket-tau-nu-kappabar}
The bracket
$\langle 2, 8 \sigma, 2, \sigma^2 \rangle$ contains 
$\tau \nu \kappabar$.
\end{lemma}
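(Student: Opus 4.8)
The plan is to verify first that the fourfold bracket is defined, then to strip off the leading $2$ by passing to the cofiber of $2$, reducing to a threefold bracket that can be attacked with Moss's Convergence Theorem \ref{thm:Moss}. Since $\sigma$ has order $16$ we have $2\cdot 8\sigma = 0$, $8\sigma\cdot 2 = 0$, and $2\sigma^2 = 0$, so all adjacent products vanish. By Lemma \ref{lem:2-bracket}, $\langle 2, 8\sigma, 2\rangle$ contains $8\tau\eta\sigma$, which is zero because $\eta\sigma$ has order $2$; and $\langle 8\sigma, 2, \sigma^2\rangle$ contains $0$, as its leading term on the $E_3$-page is $\langle h_0^3 h_3, h_0, h_3^2\rangle_{E_3}$, formed from $d_2(h_4)=h_0 h_3^2$, whose representative $h_0^3 h_3 h_4$ vanishes since $h_3 h_4 = 0$. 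Hence $\langle 2, 8\sigma, 2, \sigma^2\rangle$ is defined, and a degree count places it in $\pi_{23,12}$, the same trigrading as $\tau\nu\kappabar$ (detected by $\tau h_2 g$).

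Next I would apply Remark \ref{rem:higher-bracket-cofiber} with $\alpha_0 = 2$. Writing $C2$ for the cofiber of $2$ with bottom-cell inclusion $j$ and top-cell projection $q$, I would choose $\ol{8\sigma}\in\pi_{8,4}(C2)$ with $q_*\ol{8\sigma}=8\sigma$, which exists because $2\cdot 8\sigma=0$. Since $2$ acts on $C2$ through $\eta$, the element $2\ol{8\sigma}$ is $j_*$ of a multiple of $8\eta\sigma = 0$, so $\langle\ol{8\sigma}, 2, \sigma^2\rangle$ is defined in $\pi_{23,12}(C2)$, and Remark \ref{rem:higher-bracket-cofiber} gives
\[
\langle\ol{8\sigma}, 2, \sigma^2\rangle \subseteq j_*\,\langle 2, 8\sigma, 2, \sigma^2\rangle.
\]
It therefore suffices to show $j_*(\tau\nu\kappabar)\in\langle\ol{8\sigma}, 2, \sigma^2\rangle$. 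Granting this, there is $y\in\langle 2,8\sigma,2,\sigma^2\rangle$ with $j_*y = j_*(\tau\nu\kappabar)$, so $y-\tau\nu\kappabar\in\ker j_* = 2\cdot\pi_{23,12}$ by the cofiber long exact sequence; as $2$ is the first entry of the bracket, $2\cdot\pi_{23,12}$ lies in its indeterminacy, and hence $\tau\nu\kappabar$ lies in the bracket.

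Finally I would compute the threefold bracket over $C2$ with Moss's Convergence Theorem \ref{thm:Moss} in the motivic Adams spectral sequence for $C2$, whose $E_2$-page is $\Ext_A(H^{*,*}(C2),\M_2)$ and whose connecting homomorphism is $h_0$. Here I would identify the detecting class of $\ol{8\sigma}$, show that the relevant Massey product $\langle\ol{h_0^3 h_3}, h_0, h_3^2\rangle$ (formed on $E_3$ using $d_2(h_4)=h_0 h_3^2$) is detected by the bottom-cell image of $\tau h_2 g$, and check that no crossing Adams differentials obstruct Moss's theorem. The key algebraic input is the relation $\tau^2 h_2 g = P h_4$ of Example \ref{ex:h2g^2} together with $d_2(h_4)=h_0 h_3^2$: over $C2$ the $h_0$-tower on $h_0^3 h_3$ is truncated, which is exactly what allows the bracket to pick up $\tau h_2 g$ rather than vanish as it does on the sphere.

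The main obstacle is precisely this last step. On the sphere the analogous threefold bracket is zero in low filtration, so the nonzero value $\tau\nu\kappabar$ must emerge from the interaction of the two cells of $C2$; making the detection of $\ol{8\sigma}$ and of $\langle\ol{h_0^3 h_3}, h_0, h_3^2\rangle$ precise in $\Ext_A(H^{*,*}(C2),\M_2)$, and verifying the absence of crossing differentials, is where the genuine computation lies.
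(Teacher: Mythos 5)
Your overall route coincides with the paper's: show the two threefold subbrackets vanish, pass to the cofiber $C2$ of $2$ via Remark \ref{rem:higher-bracket-cofiber} to trade the fourfold bracket for $\langle \ol{8\sigma}, 2, \sigma^2 \rangle$ in $\pi_{23,12}(C2)$, and evaluate that bracket with Moss's Convergence Theorem \ref{thm:Moss} using $d_2(h_4) = h_0 h_3^2$; your descent step (that $\ker j_* = 2\pi_{23,12}$ lies in the indeterminacy because $2$ is the first entry) is also fine. But there are two genuine gaps. First, your verification that $\langle 8\sigma, 2, \sigma^2 \rangle$ contains zero does not work as stated: the vanishing of $\langle h_0^3 h_3, h_0, h_3^2 \rangle_{E_3}$ only shows, via Moss, that the Toda bracket is detected in Adams filtration strictly greater than $5$. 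That rules out $\tau h_2 c_1$ but not $P d_0$, which sits in filtration $8$ of the $22$-stem and is a perfectly possible nonzero value; if the subbracket were $\{P d_0\}$, your fourfold bracket would not even be defined. The missing ingredient is the shuffle $\langle 8\sigma, 2, \sigma^2 \rangle \eta = 8\sigma \langle 2, \sigma^2, \eta \rangle = 0$, which shows every element of the subbracket is annihilated by $\eta$, whereas $\eta \{P d_0\}$ is detected by $P h_1 d_0 \neq 0$; only after both eliminations is the subbracket strictly zero.

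Second, you defer exactly the step that carries the content of the lemma --- identifying the detecting class in $E_\infty(C2)$ --- and your plan for it (compute directly in $\Ext_A(H^{*,*}(C2), \M_2)$ and watch crossing differentials) has no mechanism to produce the answer. The device you are missing is the algebraic analogue of Proposition \ref{prop:3bracket-cofiber}, namely Remark \ref{rem:Massey-3bracket-cofiber}. Moss's theorem (in its form for modules over the sphere) says that $\langle \{\ol{h_0^3 h_3}\}, 2, \sigma^2 \rangle$ is detected by the \emph{product} $h_4 \cdot \ol{h_0^3 h_3}$ in $E_\infty(C2)$, since $h_0 h_3^2 = d_2(h_4)$; and this product, which is a hidden extension in the long exact sequence computing $E_2(C2)$, equals $j_* \langle h_4, h_0^3 h_3, h_0 \rangle$, a Massey product back over the sphere. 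That Massey product is then computed by May's Convergence Theorem \ref{thm:3-converge} with the May differential $d_4(P) = h_0^4 h_3$ (using $h_3 h_4 = 0$), giving $\langle h_4, h_0^3 h_3, h_0 \rangle = P h_4 = \tau^2 h_2 g$ as recorded in Table \ref{tab:Massey}. This is where the relation $\tau^2 h_2 g = P h_4$ you cite actually enters --- not as a statement about truncated $h_0$-towers inside $E_2(C2)$. Finally, a weight correction that matters in this setting: $\tau \nu \kappabar$ is detected by $\tau^2 h_2 g$, not by $\tau h_2 g$ (which has weight $13$ and detects $\nu \kappabar$); the class you must exhibit in $E_\infty(C2)$ is $j_*(\tau^2 h_2 g)$.
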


\begin{proof}
The subbracket $\langle 2, 8 \sigma, 2 \rangle$ is strictly zero,
as shown in Table \ref{tab:Toda}.
We will next show that 
the subbracket $\langle 8\sigma, 2, \sigma^2 \rangle$ is also
strictly zero.
First, the shuffle
\[
\langle 8 \sigma, 2, \sigma^2 \rangle \eta =
8 \sigma \langle 2, \sigma^2, \eta \rangle
\]
implies that the subbracket is annihilated by $\eta$.
This rules out $P d_0$.
Moss's Convergence Theorem \ref{thm:Moss} with the Adams differential
$d_2(h_4) = h_0 h_3^2$ implies that the subbracket
is detected in Adams filtration strictly greater than $5$.
\index{Convergence Theorem!Moss}
This rules out $\tau h_2 c_1$.
The only remaining possibility is that the subbracket contains zero,
and there is no possible indeterminacy.

We will work in the motivic Adams spectral sequence
for the cofiber $C2$ of $2$.
\index{cofiber of two}
We write $E_2(C2)$ for $\Ext_A ( H^{*,*} (C2), \M_2)$,
i.e., the $E_2$-page of the motivic Adams spectral sequence for $C2$.
The cofiber sequence
\[
\xymatrix@1{
S^{0,0} \ar[r]^2 & S^{0,0} \ar[r]^j & C2 \ar[r]^q & S^{1,0} }
\]
induces a map $q_*: E_2(C2) \map \Sigma^{1,0} E_2$.
Let $\ol{h_0^3 h_3}$ be an element of $E_2(C2)$ such that
$q_*(\ol{h_0^3 h_3})$ equals $h_0^3 h_3$, and 
let $\{ \ol{h_0^3 h_3} \}$ be the corresponding element in $\pi_{8,4}(C2)$.
Then
$j_* \langle 2, 8 \sigma, 2, \sigma^2 \rangle$ equals
$\langle \{ \ol{h_0^3 h_3} \}, 2, \sigma^2 \rangle$
in $\pi_{23,12}(C2)$
by Remark \ref{rem:higher-bracket-cofiber}.

Because of the Adams differential $d_2(h_4) = h_0 h_3^2$,
we know that 
$\langle \{ \ol{h_0^3 h_3} \}, 2, \sigma^2 \rangle$
is detected by
$h_4 \cdot \ol{h_0^3 h_3}$ in $E_\infty(C2)$.
Here we are using a slight generalization of Moss's Convergence Theorem \ref{thm:Moss},
in which one considers Toda brackets of maps between different objects 
(see \cite{Moss70} for the classical case).
\index{Convergence Theorem!Moss}

Finally, we need to compute $h_4 \cdot \ol{h_0^3 h_3 }$ in $E_2(C2)$.
This equals $j_* \langle h_4, h_0^3 h_3, h_0 \rangle$
by Remark \ref{rem:Massey-3bracket-cofiber} (see also 
Proposition \ref{prop:hidden-Massey} for an analogous result).
Table \ref{tab:Massey} shows that 
$\langle h_4, h_0^3 h_3, h_0 \rangle$ equals $\tau^2 h_2 g$.
\end{proof}

\begin{lemma}
\label{lem:bracket-sigma-eta4}
$\langle \epsilon, 2, \sigma^2 \rangle =
\{ \sigma \eta_4, \sigma \eta_4 + 4 \nu \kappabar \}$.
\end{lemma}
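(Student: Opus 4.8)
The plan is to apply Moss's Convergence Theorem \ref{thm:Moss} to the Massey product $\langle c_0, h_0, h_3^2 \rangle$, since $\epsilon$, $2$, and $\sigma^2$ are detected by $c_0$, $h_0$, and $h_3^2$. The product $h_0 h_3^2$ is nonzero in $\Ext$, so this bracket is not defined on the $E_2$-page; however, the Adams differential $d_2(h_4) = h_0 h_3^2$ makes it defined on $E_3$. Because $c_0 h_0 = 0$ already in $\Ext$, the $E_3$-bracket $\langle c_0, h_0, h_3^2 \rangle_{E_3}$ is represented by $c_0 \cdot h_4 = h_4 c_0$. First I would verify the crossing-differential hypothesis (condition (2)) of Theorem \ref{thm:Moss} with $r = 3$: the degree of $h_0 h_3^2$ is $(14,3,w)$, so the only relevant sources lie in filtration $0$ of the $15$-stem, where $\Ext$ vanishes, and the condition coming from $c_0 h_0$ is vacuous since that product is already zero. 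Thus Moss's theorem shows that $\langle \epsilon, 2, \sigma^2 \rangle$ contains an element detected by $h_4 c_0$; by Lemma \ref{lem:sigma-h1h4} this element agrees with $\sigma\eta_4$ modulo Adams filtration at least $5$.

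Next I would compute the indeterminacy $\epsilon \cdot \pi_{15,*} + \pi_{9,*} \cdot \sigma^2$. Keeping track of the motivic weight (so that the target is $\pi_{23,13}$), I would run through the generators of $\pi_{15}$ and $\pi_9$ in the appropriate weights and evaluate their products with $\epsilon$ and $\sigma^2$ using the classical homotopy data recorded in the tables. The expected outcome is that the only nonzero such product is $4\nu\kappabar$, so that the indeterminacy is exactly $\{0, 4\nu\kappabar\}$; in particular $\nu\kappabar$ and $2\nu\kappabar$ do \emph{not} lie in the indeterminacy.

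The delicate point is then to pin down the representative as $\sigma\eta_4$ rather than $\sigma\eta_4 + \nu\kappabar$ or $\sigma\eta_4 + 2\nu\kappabar$. The cleanest route I see is to exploit the relation $\eta\sigma = \ol{\nu} + \epsilon$ together with the known bracket $\langle \eta, 2, \sigma^2 \rangle = \{\eta_4, \eta_4 + \eta \rho_{15}\}$. Multiplying by $\sigma$ gives $\sigma\eta_4 \in \sigma\langle \eta, 2, \sigma^2 \rangle \subseteq \langle \sigma\eta, 2, \sigma^2 \rangle = \langle \epsilon + \ol{\nu}, 2, \sigma^2 \rangle$, so that $\sigma\eta_4$ lies in $\langle \epsilon, 2, \sigma^2 \rangle + \langle \ol{\nu}, 2, \sigma^2 \rangle$. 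One then checks that the auxiliary bracket $\langle \ol{\nu}, 2, \sigma^2 \rangle$ contributes nothing outside $\{0, 4\nu\kappabar\}$: since $\ol{\nu}$ is detected by $h_1 h_3$ and $h_1 h_3 h_4 = 0$ in $\Ext$, the $E_3$-representative $h_1 h_3 \cdot h_4$ vanishes, so this bracket is detected in filtration strictly above that of $h_4 c_0$.

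The main obstacle is precisely this last step. Moss's theorem only identifies the leading term $h_4 c_0$, and $\pi_{23}$ carries the filtration-$5$ and filtration-$6$ classes $\nu\kappabar$ and $2\nu\kappabar$ lying below $h_4 c_0$; neither the weight (all of $\nu\kappabar$, $2\nu\kappabar$, $4\nu\kappabar$ share the weight of $\sigma\eta_4$) nor multiplication by $\eta$ (which annihilates $\nu\kappabar$, hence all its multiples) separates the possible corrections. Confirming that the correction actually lands in the indeterminacy $\{0, 4\nu\kappabar\}$ is the real content of the lemma, and I expect it to require either the auxiliary-bracket comparison above or a direct cofiber computation in the spirit of Proposition \ref{prop:3bracket-cofiber} and Remark \ref{rem:higher-bracket-cofiber}.
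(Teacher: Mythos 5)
Your opening step agrees with the paper: Moss's Convergence Theorem \ref{thm:Moss} applied to $d_2(h_4) = h_0 h_3^2$ shows that $\langle \epsilon, 2, \sigma^2 \rangle$ meets $\{h_4 c_0\}$, and since $\{h_4 c_0\}$ consists of the elements $\sigma \eta_4 + k \nu \kappabar$ with $0 \leq k \leq 7$, everything comes down to pinning $k$ to $\{0,4\}$ and to showing $4\nu\kappabar$ lies in the indeterminacy (the paper gets the latter from $\tau \eta \epsilon \kappa = 4 \nu \kappabar$, via Lemma \ref{lem:2-h0h2g} and the hidden $\tau$ extension from $h_1 c_0 d_0$ to $P h_1 d_0$; your plan of running through $\epsilon \cdot \pi_{15,8} + \pi_{9,5} \cdot \sigma^2$ would surface the same product). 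The gap is in the step you yourself flag as delicate, and your auxiliary-bracket mechanism does not close it. First, there is an off-by-one error in filtration: the Massey product $\langle h_1 h_3, h_0, h_3^2 \rangle_{E_3}$ has its representative $h_1 h_3 \cdot h_4$ in Adams filtration $2 + 1 = 3$ of the $23$-stem, so its vanishing only forces (one element of) $\langle \ol{\nu}, 2, \sigma^2 \rangle$ into filtration at least $4$ --- which is exactly the filtration of $h_4 c_0$, not strictly above it. So you cannot conclude that the auxiliary bracket avoids the coset $\{h_4 c_0\}$.

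Second, and more fundamentally, even if you could push $\langle \ol{\nu}, 2, \sigma^2 \rangle$ into filtration at least $5$, its elements would then only be known to be multiples $m \nu \kappabar$; writing $\sigma \eta_4 = x + y$ with $y = m\nu\kappabar$ gives $x = \sigma\eta_4 - m\nu\kappabar \in \langle \epsilon, 2, \sigma^2 \rangle$ with $m$ unknown, so the problem of excluding $m \not\equiv 0 \pmod 4$ has merely been relocated to the auxiliary bracket, where it is just as hard. The missing idea --- and the paper's actual argument --- is to multiply the bracket by $2$ and shuffle: $\langle \epsilon, 2, \sigma^2 \rangle \cdot 2 = \epsilon \langle 2, \sigma^2, 2 \rangle$, and by Table \ref{tab:Toda} the bracket $\langle 2, \sigma^2, 2 \rangle$ consists of multiples of $2\rho_{15}$, so the product with $\epsilon$ is zero. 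Hence every element of $\langle \epsilon, 2, \sigma^2 \rangle$ is annihilated by $2$; since $2\sigma\eta_4 = 0$ and $\nu\kappabar$ has order $8$, the element $\sigma\eta_4 + k\nu\kappabar$ is killed by $2$ only for $k = 0$ or $k = 4$. Combined with $4\nu\kappabar$ lying in the indeterminacy, this gives both memberships and the stated equality in one stroke. Your proposal, as written, never exploits that every element of the bracket has order $2$, and without that (or an equivalent substitute) the corrections $\nu\kappabar$, $2\nu\kappabar$, $3\nu\kappabar$, etc.\ cannot be ruled out.
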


\begin{proof}
Using the Adams differential $d_2(h_4) = h_0 h_3^2$ and
Moss's Convergence Theorem \ref{thm:Moss},
we know that
$\langle \epsilon, 2, \sigma^2 \rangle$
intersects $\{ h_4 c_0 \}$.
\index{Convergence Theorem!Moss}

Lemma \ref{lem:sigma-h1h4}
shows that $\sigma \eta_4$ is contained in $\{ h_4 c_0 \}$.
The indeterminacy of $\{ h_4 c_0 \}$ is generated
by $\tau h_2 g$, $\tau h_0 h_2 g$, and $P h_1 d_0$.
By Lemma \ref{lem:2-h0h2g}, the indeterminacy consists of 
multiples of $\nu \kappabar$.
Therefore, $\{ h_4 c_0 \}$ consists of elements of the form
$\sigma \eta_4 + k \nu \kappabar$ for $0 \leq k \leq 7$.

Note that 
$\langle \epsilon, 2, \sigma^2 \rangle 2$
equals
$\epsilon \langle 2, \sigma^2, 2 \rangle$,
which is zero
because $\langle 2, \sigma^2, 2 \rangle$ contains $0$
by Table \ref{tab:Toda}.
Therefore,
if $\sigma \eta_4 + k \nu \kappabar$ belongs to 
$\langle \epsilon, 2, \sigma^2 \rangle$,
then $k$ equals $0$ or $4$.

We now know that either
$\sigma \eta_4$ or $\sigma \eta_4 + 4 \nu \kappabar$ belongs to
$\langle \sigma^2, 2, \epsilon \rangle$.
But $\tau \eta \epsilon \kappa$ equals $4 \nu \kappabar$
by Lemma \ref{lem:2-h0h2g} and the hidden $\tau$ extension from $h_1 c_0 d_0$
to $P h_1 d_0$ given in Table \ref{tab:Adams-tau},
so $4 \nu \kappabar$ belongs to the indeterminacy of the bracket.
It follows that both 
$\sigma \eta_4$ and $\sigma \eta_4 + 4 \nu \kappabar$ belong to
the bracket.
\end{proof}

\begin{lemma}
\label{lem:d5-tPh5e0}
$d_5(\tau P h_5 e_0) = \tau d_0 z$.
\end{lemma}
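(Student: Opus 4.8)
The plan is to deduce this differential from Moss's Convergence Theorem \ref{thm:Moss} together with the three preceding bracket computations, following the corrected line of \cite{KM93}*{Theorem 2.2}. Since the differential jumps from Adams filtration $9$ (of $\tau P h_5 e_0$ in the $56$-stem) to filtration $14$ (of $\tau d_0 z$ in the $55$-stem), there is no comparison to $\tmf$ or to the classical $E_2$-page available, and the differential must be forced indirectly through homotopy. The overall strategy is to exhibit a Toda bracket in the relevant stem whose value is pinned down by Lemmas \ref{lem:bracket-2kappabar^2}, \ref{lem:bracket-tau-nu-kappabar}, and \ref{lem:bracket-sigma-eta4}, and then to observe that this value is incompatible with $\tau P h_5 e_0$ surviving to $E_\infty$.

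Concretely, I would assemble the threefold and fourfold brackets of the preliminary lemmas into products landing in the $55$-stem. The key manipulation is a fourfold shuffle of the type in Lemma \ref{lem:4fold-shuffle2}: because $\langle 2, 8\sigma, 2 \rangle$ is strictly zero (as recorded in the proof of Lemma \ref{lem:bracket-tau-nu-kappabar}), the product $\{q\} \cdot \langle 2, 8\sigma, 2, \sigma^2 \rangle$ meets $\langle \{q\}, 2, 8\sigma, 2 \rangle \cdot \sigma^2$. Lemma \ref{lem:bracket-tau-nu-kappabar} evaluates the inner bracket as $\tau \nu \kappabar$, so the left-hand side is $\{q\} \cdot \tau \nu \kappabar$; meanwhile the fourfold bracket $\langle \{q\}, 2, 8\sigma, 2 \rangle$ lies in the $41$-stem, the home of $z$. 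Lemma \ref{lem:bracket-2kappabar^2} controls the $\kappabar^2$-content of the surviving classes, and Lemma \ref{lem:bracket-sigma-eta4} resolves the $\sigma \eta_4$ versus $\nu \kappabar$ ambiguity that otherwise obstructs identifying the detecting class. Together these should identify the product with a class genuinely detected by $\tau d_0 z$, with all indeterminacy landing in strictly higher filtration.

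With the homotopy-level value in hand, I would run Moss's Convergence Theorem \ref{thm:Moss} in reverse. If $\tau P h_5 e_0$ survived to $E_\infty$, the associated Massey product would force the Toda bracket to be detected in filtration $9$; but the computation above shows that the genuine detecting filtration is $14$. This discrepancy can only be explained by a crossing differential, and inspection of the chart in \cite{Isaksen14a} in stems $55$ and $56$ shows that $d_5(\tau P h_5 e_0) = \tau d_0 z$ is the sole candidate: all lower differentials supported by $\tau P h_5 e_0$ have already been excluded, and $\tau d_0 z$ admits no other possible source. Hence the differential holds.

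The hard part will not be any single bracket but the simultaneous control of indeterminacy and of crossing differentials. Each preliminary lemma supplies a coset rather than a single class, and the shuffle of Lemma \ref{lem:4fold-shuffle2} only guarantees a non-empty intersection, so I must check that no term of the combined indeterminacy is itself detected by $\tau d_0 z$ (which would collapse the filtration gap) and that the relevant threefold subbrackets are strictly zero. Equally delicate is the uniqueness needed to invoke Moss's theorem in this direction: I must rule out competing $d_2$, $d_3$, and $d_4$ differentials with target $\tau d_0 z$, as well as any alternative differential out of $\tau P h_5 e_0$, relying on the previously computed $E_2$-through-$E_4$ structure of these stems.
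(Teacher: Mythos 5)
Your proposal has the logic running in the wrong direction, and this is fatal. To show that $\tau d_0 z$ is hit by a differential, you must show that the homotopy class it would detect does not exist; concretely, since $\nu \{q\} \kappabar = \tau \eta \kappa \kappabar^2$ is detected by $d_0 z$ (Lemma \ref{lem:eta-t^3e0^2g} together with Table \ref{tab:extn-refs}), you must prove that $\tau \nu \{q\} \kappabar$ is \emph{zero}. Your plan instead aims to ``identify the product with a class genuinely detected by $\tau d_0 z$,'' which, if it succeeded, would prove exactly the opposite: a nonzero product detected by $\tau d_0 z$ means $\tau d_0 z$ survives to $E_\infty$ and therefore cannot be the target of any differential. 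The subsequent step of ``running Moss's Convergence Theorem in reverse'' does not repair this: you never exhibit a Massey product tying $\tau P h_5 e_0$ to your Toda bracket, and survival of a class in filtration $9$ places no constraint on the filtration in which an unrelated bracket is detected --- large filtration jumps in detecting Toda brackets are routine and do not by themselves signal crossing differentials.

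The correct argument keeps your starting shuffle but with the opposite goal. From Lemma \ref{lem:bracket-tau-nu-kappabar}, $\tau \nu \{q\} \kappabar$ lies in $\{q\} \langle 2, 8\sigma, 2, \sigma^2 \rangle$, which is contained in $\langle \alpha, 2, \sigma^2 \rangle$ for some $\alpha$ in $\langle \{q\}, 2, 8\sigma \rangle = \{0, 2\tau\kappabar^2\}$ by Lemma \ref{lem:bracket-2kappabar^2}. If $\alpha = 0$, the bracket is strictly zero. If $\alpha = 2\tau\kappabar^2$, one needs Lemma \ref{lem:epsilon-q} --- which your outline never invokes --- to rewrite $2\tau\kappabar^2$ as $\tau \epsilon \{q\}$, pull $\tau \{q\}$ out of the bracket, apply Lemma \ref{lem:bracket-sigma-eta4}, and finally kill the resulting term $\sigma \eta_4 \cdot \tau \{q\}$ using $\sigma \{q\} = \nu \{t\}$ and $\nu \eta_4 = 0$. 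Either way $\tau \nu \{q\} \kappabar = 0$, so $\tau d_0 z$ must be a boundary, and $d_5(\tau P h_5 e_0) = \tau d_0 z$ is the only possibility. Note also that your alternative shuffle through $\langle \{q\}, 2, 8\sigma, 2 \rangle \cdot \sigma^2$ requires evaluating a fourfold bracket in the $41$-stem that is computed nowhere in the paper, and there is no reason a $\sigma^2$-multiple of an element of that bracket would be detected by $\tau d_0 z$; so even setting aside the inverted logic, the central computation of your outline is unsupported.
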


\begin{proof}
By Lemma \ref{lem:eta-t^3e0^2g},
$\tau \eta \kappa \kappabar^2$ is detected by $d_0 z$.
On the other hand,
$\nu \{ q \} \kappabar$ equals $\tau \eta \kappa \kappabar^2$,
by Table \ref{tab:extn-refs}.
We will show that $\tau \nu \{q\} \kappabar$ must be zero.  It will follow that
some differential must hit $\tau d_0 z$, and there is just one possibility.

From Lemma \ref{lem:bracket-tau-nu-kappabar},
we know that $\tau \nu \{q \} \kappabar$ is contained in
$\{ q \} \langle 2, 8\sigma, 2, \sigma^2 \rangle$,
which is contained in
$\langle \alpha, 2, \sigma^2 \rangle$
for some element $\alpha$ in 
$\langle \{q\}, 2, 8\sigma \rangle$.
By Lemma \ref{lem:bracket-2kappabar^2}, the two possible values for 
$\alpha$ are $0$ and $2 \tau \kappabar^2$.

First suppose that $\alpha$ is zero.  Then
$\tau \nu \{q \} \kappabar$ is contained in
$\langle 0, 2, \sigma^2 \rangle$, which is strictly zero.

Next suppose that $\alpha$ is $2 \tau \kappabar^2$.
By Lemma \ref{lem:epsilon-q},
we know that $\epsilon \{q \}$ equals $\{ h_1 u \}$,
which equals $2 \kappabar^2$ by Table \ref{tab:Adams-2}.
Therefore, the element
$\tau \nu \{q \} \kappabar$ is contained in
$\langle \tau \epsilon \{q\}, 2, \sigma^2 \rangle$.
This bracket has no indeterminacy, so it equals
$\tau \{q \} \langle \epsilon, 2, \sigma^2 \rangle$.
Using that $4 \nu \kappabar \cdot \tau \{q\}$ is zero,
Lemma \ref{lem:bracket-sigma-eta4} implies that
$\tau \nu \{q\} \kappabar$ equals
$\sigma \eta_4 \cdot \tau \{ q\}$.

We will show in the proof of Lemma \ref{lem:epsilon-q}
that $\sigma \{q \}$ equals $\nu \{t\}$.
So
$\sigma \eta_4 \cdot \tau \{ q\}$ equals
$\tau \nu \eta_4 \{ t\}$, which is zero
because $\nu \eta_4$ is zero.
\end{proof}

\begin{lemma}
$d_5(r_1) = 0$
\end{lemma}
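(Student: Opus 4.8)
The plan is to show that $r_1$ survives to the $E_5$-page and then to exclude the unique nonzero possibility for $d_5(r_1)$. First I would record that $r_1$ reaches $E_5$: we have $d_2(r_1) = 0$ and $d_3(r_1) = 0$ from the earlier analysis (the latter being the tentative computation discussed in connection with Lemma~\ref{lem:tau-h1^2X2}), and $d_4(r_1) = 0$ since $r_1$ is not among the exceptional generators in Proposition~\ref{prop:Adams-d4}. A degree count then pins down the target: the differential has the form $d_5\colon E_5^{(s,f,w)} \to E_5^{(s-1,f+5,w)}$, so starting from the degree of $r_1$ it lands in a single tridegree, and inspection leaves exactly one nonzero candidate, which I will call $y$. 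The claim to establish is that $d_5(r_1) \neq y$.

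The main engine is the relation $h_1 \cdot r_1 = s_1$ of Lemma~\ref{lem:h1-r1}. Since $h_1$ is a permanent cycle, the Leibniz rule gives $d_5(s_1) = h_1 \cdot d_5(r_1)$, so if $d_5(r_1) = y$ were nonzero, then $s_1$ would support the differential $d_5(s_1) = h_1 y$. I would eliminate this either by showing $h_1 y \neq 0$ while $s_1$ is forced to be a permanent cycle, or, dually, by showing that $y$ itself survives to $E_\infty$ and hence cannot be hit. The latter route directly parallels the proof that $d_3(r_1) = 0$, where the analogous target $h_1^2 X_2$ was shown to be nonzero on $E_\infty$ by a hidden $\tau$ extension (Lemma~\ref{lem:tau-h1^2X2}); I expect the target here to admit a similar hidden $\tau$ extension, forcing its survival.

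A structural observation guides the elimination: $d_5$ raises the Chow degree $s+f-2w$ by $r-1 = 4$, and $r_1$ has Chow degree $0$, so $y$ lies in strictly positive Chow degree and is therefore $\tau$-power torsion. Consequently neither the $\tau$-inversion comparison with the classical Adams spectral sequence (Proposition~\ref{prop:compare}) nor the Chow-degree-zero isomorphism with $\Ext_{\cl}$ (Theorem~\ref{thm:Chow-0}) can settle the question on its own, since $y$ is invisible to both. The natural remaining tool is the motivic Adams spectral sequence for the cofiber $C\tau$ of $\tau$ from Chapter~\ref{ch:Ctau}, which converts $\tau$-torsion phenomena on $S^{0,0}$ into genuine, detectable classes, exactly as in the proofs of Lemmas~\ref{lem:d3-h1h5e0} and~\ref{lem:d3-C0}; I would use it to produce a hidden $\tau$ extension landing on $y$ and thereby show $y$ is nonzero in $E_\infty$.

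The hard part is that $r_1$ lies beyond the stem where the analysis is otherwise complete, so there is no ambient chart to quote and the argument must be self-contained, controlling $h_1 y$ and the fate of $s_1$ (or of $y$) directly. I anticipate that the decisive input is the $C\tau$-calculation showing $y$ survives, which together with the Leibniz relation $d_5(s_1) = h_1\, d_5(r_1)$ yields $d_5(r_1) = 0$.
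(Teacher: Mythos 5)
There is a genuine gap, and it sits exactly where you left the argument open. The unique nonzero candidate for $d_5(r_1)$ is $h_2 B_{22}$, in degree $(65,11,36)$ --- and this element is a multiple of $h_2$. Since $h_1 h_2 = 0$ in $\Ext$, your main engine collapses: the Leibniz relation $d_5(s_1) = h_1 \cdot d_5(r_1)$ reads $0 = 0$ whether or not the differential vanishes, so the relation $h_1 r_1 = s_1$ of Lemma \ref{lem:h1-r1} imposes no constraint at all, and your first elimination branch (``show $h_1 y \neq 0$'') is impossible. Your fallback branch --- show that $y$ survives to $E_\infty$ by producing a hidden $\tau$ extension via the cofiber of $\tau$ --- is a hope rather than an argument, and here an unsupported one: no hidden $\tau$ extension involving $h_2 B_{22}$ is established (or even conjectured) in this range, and the analysis of the stems beyond 65 is explicitly incomplete, so nothing transfers from the $d_3(r_1)$ case, where the candidate target $h_1^2 X_2$ genuinely does support a hidden $\tau$ extension (Lemma \ref{lem:tau-h1^2X2}). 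Your structural observation is also false as stated: positive Chow degree does not imply $\tau$-power torsion (for instance $d_0$, of degree $(14,4,8)$, has Chow degree $2$ and is non-torsion), so the claim that $y$ is invisible to the classical comparison is unjustified. The root cause of all of this is that you never identified $y$ concretely.

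The actual proof is a one-line variant of your idea with the roles of $h_1$ and $h_2$ exchanged. It is $h_2$, not $h_1$, that annihilates the source: $h_2 r_1 = 0$ in $\Ext$, so the Leibniz rule gives $h_2 \cdot d_5(r_1) = 0$ on the $E_5$-page; but $h_2 \cdot h_2 B_{22} = h_2^2 B_{22}$ is nonzero on the $E_5$-page, so $d_5(r_1)$ cannot equal $h_2 B_{22}$ and hence vanishes. The lesson: pin down the candidate target first, then choose the multiplier so that it kills the source but not the target --- your choice killed the target instead, which is precisely the direction that yields no information.
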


\begin{proof}
The only other possibility is that $d_5(r_1)$ equals $h_2 B_{22}$.
However, $h_2 r_1$ is zero, while $h_2^2 B_{22}$ is non-zero in the $E_5$-page.
\end{proof}

\begin{lemma}
$d_5(\tau h_2 C') = 0$.
\end{lemma}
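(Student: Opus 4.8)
The plan is to follow the template of the two preceding $d_5$ lemmas: locate the unique possible nonzero target of $d_5(\tau h_2 C')$ and then rule it out by a multiplicative argument. A $d_5$ differential on $\tau h_2 C'$ would land in the relevant degree of the $E_5$-page (one stem lower and five filtrations higher, i.e.\ in the $65$-stem), so the first step is to inspect the $E_5$-page there, using the chart in \cite{Isaksen14a}, and check that there is at most one candidate target $X$; every other element in that degree should be excluded because it is already hit, already known to support a shorter differential, or ruled out on weight grounds.

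To eliminate $X$, I would exploit the relation $h_1 h_2 = 0$. Since $h_1 \cdot \tau h_2 C' = \tau (h_1 h_2) C' = 0$, the Leibniz rule forces $h_1 \cdot d_5(\tau h_2 C') = d_5(h_1 \cdot \tau h_2 C') = 0$ on the $E_5$-page. Thus it suffices to show that $h_1 X$ is nonzero on $E_5$; the resulting contradiction then forces $d_5(\tau h_2 C') = 0$. This is exactly the mechanism used for the differential $d_5(r_1) = 0$ just above (there with $h_2$ in place of $h_1$), and here $h_1$ is the natural multiplier precisely because $h_1$ annihilates $h_2 C'$.

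The main obstacle is the bookkeeping behind the claim $h_1 X \neq 0$ in $E_5$: one must verify both that $h_1 X$ is nonzero in $\Ext$ and that it is neither killed by a $d_2$, $d_3$, or $d_4$ differential nor itself removed as the target of an earlier differential before the $E_5$-page is reached. If the $h_1$-multiple of the candidate target happens to vanish on $E_5$, I would fall back on two alternatives. First, multiply instead by some element annihilating $\tau h_2 C'$ whose product with $X$ survives, using the relation $h_1 C' = \tau d_1^2$ recorded in the proof of Lemma \ref{lem:Adams-d2-C'} to control products running through $C'$. Second, pass to the motivic Adams spectral sequence for the cofiber of $\tau$, as in Lemma \ref{lem:Adams-d2-B6}, where the additional room often makes the obstruction directly visible. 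I expect the straightforward $h_1$-multiplication argument to succeed, with the only genuine work being the correct identification of $X$ and the verification that $h_1 X$ persists to $E_5$.
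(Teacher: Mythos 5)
Your core mechanism fails at exactly the point you flagged as the main risk, and the fallbacks you offer do not rescue it. The unique possible nonzero value of $d_5(\tau h_2 C')$ lies in the $65$-stem, filtration $13$, and is a $\tau$-multiple of the class $\tau g w$ (the paper's proof writes it as $\tau^4 g w$). But $h_1 \cdot \tau g w = 0$ already in $\Ext$: this identity is precisely how the ambiguous generator $\tau g w$ is pinned down in Table \ref{tab:Ext-ambiguous}. So for the candidate target $X$ one has $h_1 X = 0$ on the nose, the Leibniz computation $h_1 \cdot d_5(\tau h_2 C') = 0$ carries no information, and the analogue of the $d_5(r_1)$ argument collapses. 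Your first fallback is also problematic: there is no evident element that annihilates $\tau h_2 C'$ while acting nontrivially on $X$ (multiplication by $h_1$ was the natural one, and it is exactly the one that dies). Your second fallback, passing to the cofiber of $\tau$, cannot work either, because the candidate target is $\tau$-divisible and hence maps to zero under $E_r(S^{0,0}) \map E_r(C\tau)$; the cofiber sees nothing about a differential hitting such a class.

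The paper's proof uses two ingredients absent from your proposal. First, a case analysis on the unresolved differential $d_4(C')$ (Proposition \ref{prop:Adams-d4} and Remark \ref{rem:Adams-d4-C'}): if $d_4(C') = 0$, then $C'$ itself survives to $E_5$, $d_5(C')$ vanishes because there are no available targets, and $d_5(\tau h_2 C') = 0$ follows from the Leibniz rule applied to the product $\tau h_2 \cdot C'$ --- a different use of Leibniz than yours. Second, in the case $d_4(C') = h_2 B_{21}$, where $\tau h_2 C'$ survives to $E_5$, the candidate target is eliminated by comparison with $\tmf$: the classical class $g w$ survives to a nonzero homotopy class in the Adams spectral sequence for $\tmf$ \cite{Henriques07}, hence in the classical Adams spectral sequence for $S^0$, and therefore no $\tau$-multiple of it can be hit by any motivic differential (a motivic differential hitting it would, after inverting $\tau$, kill $g w$ classically). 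This ``classical survival blocks motivic targets'' argument is the essential missing idea; no purely internal multiplication argument on the motivic $E_5$-page will produce it, since the target is annihilated by $h_0$, $h_1$, and $h_2$ alike in the relevant range.
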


\begin{proof}
We do not know whether $d_4(C')$ equals $h_2 B_{21}$, so there are two
situations to consider
(see Proposition \ref{prop:Adams-d4} and Remark \ref{rem:Adams-d4-C'}).

In the first case,
assume that $d_4(C') = 0$.  
Then $C'$ survives to the $E_5$-page, and $d_5(C')$ must equal zero because
there are no other possibilities.
It follows that $d_5(\tau h_2 C') = 0$.

In the second case,
assume that $d_4(C')$ equals $h_2 B_{21}$ 
Then $d_4(h_2 C') = h_0 h_2 B_{22}$, but $\tau h_2 C'$ survives to 
the $E_5$-page.

The only possible non-zero value for $d_5(\tau h_2 C')$ is $\tau^4 g w$.
However, $g w$ survives to a non-zero homotopy class in 
the classical Adams spectral sequence for $\tmf$ \cite{Henriques07}.
Therefore, $g w$ survives to a non-zero homotopy class in the
classical Adams spectral sequence for $S^0$. This implies that
$\tau^4 g w$ cannot be hit by a motivic differential.
\end{proof}

Our next goal is to show that the element $\tau^3 d_0^2 e_0^2$
is hit by some differential.  We include it in this section
because it is likely hit by $d_5(\tau h_1^2 X_1)$.

\begin{lemma}
\label{lem:t^3d0^2e0^2-hit}
The element $\tau^3 d_0^2 e_0^2$ is hit by some differential.
\end{lemma}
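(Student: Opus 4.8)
The plan is to show that $\tau^3 d_0^2 e_0^2$ is a permanent cycle that nevertheless detects the zero homotopy class; since a nonzero permanent cycle surviving to $E_\infty$ would necessarily detect a nonzero element of $\pi_{*,*}$, this forces $\tau^3 d_0^2 e_0^2$ to be a boundary, i.e.\ hit by some differential. I do not expect to name the source explicitly: the likely candidate $\tau h_1^2 X_1$ lies in a range where the $d_5$ pattern is not fully pinned down (see Proposition~\ref{prop:Adams-d5}), so the argument will be homotopy-theoretic rather than a direct Leibniz computation producing $\tau^3 d_0^2 e_0^2$ as a value.

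First I would control the $\eta$-multiple and confirm the cycle condition. Since $d_2(e_0) = h_1^2 d_0$ enters squared, we have $d_2(d_0^2 e_0^2) = 0$, so $d_0^2 e_0^2$ survives to the $E_3$-page. Multiplying the differential $d_3(g r) = \tau h_1 d_0 e_0^2$ of Lemma~\ref{lem:d3-gr} by $d_0$ gives $d_3(d_0 g r) = \tau h_1 d_0^2 e_0^2$, so $\tau h_1 d_0^2 e_0^2$ already dies on $E_4$. This shows that any homotopy class $\theta$ detected by $\tau^3 d_0^2 e_0^2$ has $\eta\theta$ pushed into strictly higher Adams filtration, and it rules out $\tau^3 d_0^2 e_0^2$ supporting a differential onto an $h_1$-divisible target. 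Combined with the remaining $d_3$-computations of this section, this reduces the lemma to showing that $\tau^3 d_0^2 e_0^2$ cannot survive as a nonzero class, i.e.\ that $\theta = 0$.

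The hard part, and the step I expect to be the main obstacle, is the vanishing of $\theta$. Here I would compare with the Adams spectral sequence for $\tmf$. The differentials $d_4(\tau^2 e_0 g^2) = d_0^4$ and $d_4(\tau^2 m^2) = d_0^2 z$ of Lemmas~\ref{lem:d4-t^2e0g^2} and~\ref{lem:d4-t^2m^2} arise by comparison with $\tmf$ through $d_4(e_0 g) = P d_0^2$, and the same $\tmf$ input, fed through the Leibniz rule, should exhibit the image of $d_0^2 e_0^2$ as a boundary in the $\tmf$ Adams spectral sequence. By naturality of the unit map $S^{0,0} \map \tmf$, the class $\theta$ then maps to zero in the homotopy of $\tmf$. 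The remaining work is to argue that there is no room, in higher Adams filtration in the sphere, for a nonzero $\tau$-free class in this degree that is simultaneously $\eta$-annihilated and invisible to $\tmf$; the preceding analysis of this stem together with the $\kappabar$-divisibility relations recorded in Table~\ref{tab:extn-refs} should eliminate every alternative. Once $\theta = 0$ is established, $\tau^3 d_0^2 e_0^2$ is a permanent cycle detecting zero, hence a boundary, and the lemma follows.
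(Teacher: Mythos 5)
Your top-level reduction --- $\tau^3 d_0^2 e_0^2$ is a permanent cycle, so it suffices to show that the homotopy class it would detect is zero --- is the same as the paper's, but the two steps you leave open are exactly where the content lies, and your proposed route through $\tmf$ cannot supply them. First, ``the class $\theta$ detected by $\tau^3 d_0^2 e_0^2$'' is not a well-posed thing to prove vanishing for: if the element survives, anything it detects is nonzero by definition of detection. What is needed is an independently defined class that the element is \emph{forced} to detect, and the paper produces one: using the classical hidden $\eta$ extensions from $g^2$ to $z$ and from $z$ to $d_0^3$ (Table \ref{tab:extn-refs}) together with the relation $d_0 \cdot \tau g = \tau e_0^2$, multiplicativity shows that if $\tau^3 d_0^2 e_0^2$ survives, it must detect the explicit product $\tau^2 \eta^2 \kappabar^3$. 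Without this identification there is nothing concrete to kill.

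Second, the $\tmf$ mechanism fails on both counts. The Leibniz rule applied to $d_4(e_0 g) = P d_0^2$ or to $d_4(e_0 g^2) = d_0^4$ can only produce targets of the form $P d_0^2\, x$ or $d_0^4\, x$, and neither equation $P d_0^2\, x = d_0^3 g$ nor $d_0^4\, x = d_0^3 g$ has a solution ($e_0^2$ is not divisible by $P$, and $g$ is not divisible by $d_0$), so these inputs do not exhibit $d_0^2 e_0^2$ as a boundary in the Adams spectral sequence for $\tmf$. More fundamentally, even if it were a boundary there, differentials do not pull back along $S^{0,0} \map \tmf$: an element being hit in the $\tmf$ spectral sequence says nothing about its preimage being hit in the sphere's (the paper's $\tmf$ comparisons always run the other way, from nonzero differentials in $\tmf$ to nonzero differentials in the sphere). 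Your closing ``no room'' step is also circular: the class you must exclude would sit in Adams filtration exactly $16$, detected by $\tau^3 d_0^2 e_0^2$ itself, so it cannot be ruled out by inspecting higher filtrations, especially in a range where the paper states its analysis of differentials is incomplete; moreover such a class would be $\tau$-divisible rather than ``$\tau$-free,'' since $\tau^2 d_0^2 e_0^2$ survives. The paper instead closes the argument with Toda-bracket algebra: $\langle \kappa, 2, \nu^2 \rangle = \eta\kappabar$ (Moss's Convergence Theorem with $d_2(f_0) = h_0^2 e_0$, plus a filtration bound coming from $d_3(h_0 h_4) = h_0 d_0$), whence $\tau^2\eta^2\kappabar^3 = \tau^2 \eta\kappabar^2 \langle \kappa, 2, \nu^2\rangle \subseteq \langle \tau^2 \eta\kappa\kappabar^2, 2, \nu^2 \rangle = \langle 0, 2, \nu^2 \rangle$, which is strictly zero, using the vanishing $\tau^2\eta\kappa\kappabar^2 = 0$ established in the proof of Lemma \ref{lem:d5-tPh5e0}. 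Some version of this bracket computation, or an equivalent concrete input, is what your outline is missing.
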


\begin{proof}
Classically, $\eta^2 \kappabar^2$ is detected by 
$d_0^3$ because of the hidden $\eta$ extension from $g^2$ to $z$
and from $z$ to $d_0^3$ shown in Table \ref{tab:extn-refs}.
This implies that
motivically, $\tau^2 \eta^2 \kappabar^3$ is detected by $\tau^3 d_0^2 e_0^2$.
Therefore, we need to show that $\tau^2 \eta^2 \kappabar^3$ is zero.

Compute that $\langle \kappa, 2\nu, \nu \rangle$ equals
$\{ \eta \kappabar, \eta \kappabar + \nu \nu_4 \}$,
using Moss's Convergence Theorem \ref{thm:Moss} and the Adams differential
$d_2(f_0) = h_0^2 e_0$.
\index{Convergence Theorem!Moss}
It follows that
$\langle \kappa, 2, \nu^2 \rangle$ equals either
$\eta \kappabar$ or $\eta \kappabar + \nu \nu_4$.
Using Moss's Convergence Theorem \ref{thm:Moss} and the Adams differential
$d_3(h_0 h_4) = h_0 d_0$, we get that
$\langle \kappa, 2, \nu^2 \rangle$ is detected in Adams
filtration strictly greater than $4$.
Therefore,
$\langle \kappa, 2, \nu^2 \rangle$ equals $\eta \kappabar$.

This means that
$\tau^2 \eta^2 \kappabar^3$ equals
$\tau^2 \eta \kappabar^2 \langle \kappa, 2, \nu^2 \rangle$.
We showed in the proof of Lemma \ref{lem:d5-tPh5e0} that
$\tau^2 \eta \kappa \kappabar^2$ is zero.
Therefore,
$\tau^2 \eta^2 \kappabar^3$ is 
contained in $\langle 0, 2, \nu^2 \rangle$, which is strictly zero.
\end{proof}


\chapter{Hidden extensions in the Adams spectral sequence}
\label{ch:Adams-hidden}

\index{Adams spectral sequence!hidden extension}
The main goal of this chapter is to compute hidden extensions
in the Adams spectral sequence.  We rely on the computation
of the Adams $E_\infty$-page carried out in Chapter \ref{ch:Adams-diff}.
We will borrow results from the classical Adams spectral sequence where
necessary.
Table \ref{tab:extn-refs}
summarizes previously established hidden extensions in the classical Adams spectral sequence.
\index{Adams spectral sequence!hidden extension!classical}
The table gives specific references to proofs.  The main sources are
\cite{BJM84}, \cite{BMT70}, and \cite{MT67}.

The Adams $E_\infty$ chart in \cite{Isaksen14a} is an essential companion
to this chapter.
\index{Adams chart}

\subsection*{Outline}

Section \ref{sctn:hidden} describes the main points in establishing the
hidden extensions.
We postpone the numerous technical proofs to Section \ref{sctn:extn-lemmas}.

Chapter \ref{ch:table} contains a series of tables that summarize the essential
computational facts in a concise form.
Tables \ref{tab:Adams-tau}, \ref{tab:Adams-2}, \ref{tab:Adams-eta},
and \ref{tab:Adams-nu} list the hidden extensions by
$\tau$, $2$, $\eta$, and $\nu$.  The fourth columns of these tables
refer to one argument that establishes each hidden extension, which is not
necessarily the first known proof.  This takes one of the following forms:
\begin{enumerate}
\item
An explicit proof given elsewhere in this manuscript.
\item
``image of $J$" means that the hidden extension is easily
deducible from the structure of the image of $J$ \cite{Adams66}.
\index{J@$J$!image of}
\item
``cofiber of $\tau$" means that the hidden extension is easily
deduced from the structure of the homotopy groups of the cofiber of 
$\tau$, as described in Chapter \ref{ch:Ctau}.
\index{cofiber of tau@cofiber of $\tau$}
\item
``Table \ref{tab:extn-refs}" means that the hidden extension
is easily deduced from an analogous classical hidden extension.
\end{enumerate}
Tables \ref{tab:Adams-misc-extn} and \ref{tab:Adams-compound-extn}
give some additional miscellaneous hidden extensions, again with 
references to a proof.

Tables \ref{tab:Adams-tau-tentative},
\ref{tab:Adams-2-tentative},
\ref{tab:Adams-eta-tentative}, and
\ref{tab:Adams-nu-tentative}
give partial information about hidden extensions in stems 59 through 70.
These results should be taken as tentative, since the
analysis of Adams differentials in this range is incomplete.
\index{Adams spectral sequence!hidden extension!tentative}


\section{Hidden Adams extensions}
\label{sctn:hidden}

\subsection{The definition of a hidden extension}
\label{subsctn:extn-defn}

First we will be precise about the exact nature of a hidden extension.
The most naive notion of a hidden extension is a non-zero product
$\alpha \beta$ in $\pi_{*,*}$ such that $\alpha$ and $\beta$ are detected in the
$E_\infty$-page by $a$ and $b$ respectively and $a b =0$ in the $E_\infty$-page.
However, this notion is too general, as the following example illustrates.

\begin{ex}
\label{ex:naive-hidden}
Consider $\{ h_3^2 \}$ in $\pi_{14,8}$, which consists of the two 
elements $\sigma^2$ and $\sigma^2 + \kappa$.
We have $h_1 h_3^2 = 0$ in $E_\infty$, but
$\eta (\sigma^2 + \kappa)$ is non-zero in $\pi_{15,9}$.

This type of situation is not usually considered
a hidden extension.  Because of the non-zero product $h_1 d_0$ in $E_\infty$,
one can see immediately
that there exists an element $\beta$ of $\{ h_3^2 \}$ such that
$\eta \beta$ is non-zero.

However, it is not immediately clear whether $\beta$ is
$\sigma^2$ or $\sigma^2 + \kappa$.  Distinguishing these possibilities
requires further analysis.  In fact, $\eta \sigma^2 = 0$ \cite{Toda62}.
\end{ex}

In order to avoid an abundance of not very interesting situations similar to
Example \ref{ex:naive-hidden}, we make the following formal definition
of a hidden extension.

\begin{defn}
\label{defn:hidden}
Let $\alpha$ be an element of $\pi_{*,*}$ that is detected
by an element $a$ of the $E_\infty$-page of the motivic Adams spectral sequence.
A hidden extension by $\alpha$ is a pair
of elements $b$ and $c$ of $E_\infty$ such that:
\begin{enumerate}
\item
$a b = 0$ in the $E_\infty$-page.
\item
There exists an element $\beta$ of $\{ b \}$ 
such that $\alpha \beta$ is contained in $\{ c \}$.
\item
If there exists an element $\beta'$ of $\{b'\}$ such
that $\alpha \beta'$ is contained in $\{ c\}$, then
the Adams filtration of $b'$ is less than or equal to the
Adams filtration of $b$.
\end{enumerate}
\end{defn}
\index{Adams spectral sequence!hidden extension!definition}

In other words, $b$ is the element of highest filtration such that
there is an $\alpha$ multiplication from $\{b\}$ into $\{ c\}$.
Consider the situation of Example \ref{ex:naive-hidden}.
Because $\eta \{ d_0 \}$ is contained in $\{ h_1 d_0 \}$
and the Adams filtration of $d_0$ is greater than the
Adams filtration of $h_3^2$,
condition (3) of Definition \ref{defn:hidden} implies
that there is not a hidden $\eta$ extension from $h_3^2$ to $h_1 d_0$.

\begin{remark}
Condition (3) of Definition \ref{defn:hidden} implies that $b'$ is not divisible
by $a$ in $E_\infty$.  
This allows one to easily reduce the number of cases that must be checked
when searching for hidden extensions.
\end{remark}

\begin{lemma}
\label{lem:hidden-not-exist}
Let $\alpha$ be an element of $\pi_{*,*}$.
Let $b$ be an element of the $E_\infty$-page of the motivic Adams spectral
sequence, and suppose that 
there exists an element $\beta$ of $\{ b\}$ such that $\alpha \beta$ is zero.
Then there is no hidden $\alpha$ extension on $b$.
\end{lemma}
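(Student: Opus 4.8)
The plan is to argue by contradiction directly from Definition \ref{defn:hidden}, exploiting the fact that any two elements of $\{b\}$ differ only by a term of strictly higher Adams filtration. Let $a$ be the element of the $E_\infty$-page detecting $\alpha$, and let $f$ denote the Adams filtration of $b$. Suppose, contrary to the claim, that there is a hidden $\alpha$ extension on $b$. By definition this supplies a (necessarily nonzero) element $c$ of $E_\infty$ together with some $\beta'' \in \{b\}$ for which $\alpha \beta''$ lies in $\{c\}$; in particular $\alpha \beta''$ is nonzero.

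The key step is to subtract off the element $\beta \in \{b\}$ provided by the hypothesis, the one with $\alpha \beta = 0$. I would set $\gamma = \beta'' - \beta$. Since $\beta$ and $\beta''$ are both detected by $b$, they have the same image in the associated graded at filtration $f$, so $\gamma$ lies in Adams filtration at least $f+1$. At the same time $\alpha \gamma = \alpha \beta'' - \alpha \beta = \alpha \beta''$ is nonzero, so $\gamma$ is itself nonzero. By convergence of the $2$-complete motivic Adams spectral sequence (i.e. exhaustiveness and Hausdorffness of the filtration), $\gamma$ is then detected by a nonzero class $b'$ of $E_\infty$ whose Adams filtration is strictly greater than $f$.

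Finally I would note that $\gamma \in \{b'\}$ while $\alpha \gamma = \alpha \beta'' \in \{c\}$, so $b'$ exhibits an $\alpha$ multiplication from $\{b'\}$ into $\{c\}$ whose source filtration strictly exceeds that of $b$. This directly violates condition (3) of Definition \ref{defn:hidden}, which requires the filtration of any such $b'$ to be at most $f$. The resulting contradiction shows that no target $c$ can complete a hidden $\alpha$ extension on $b$, which is the assertion.

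The argument is essentially formal, so there is no serious computational obstacle; the one delicate point is the passage from ``$\gamma$ has filtration at least $f+1$'' to ``$\gamma$ is detected by a specific $E_\infty$ class $b'$ of filtration strictly greater than $f$.'' This step rests on the separatedness of the Adams filtration, namely that a nonzero homotopy element has a well-defined finite detecting class, which is exactly the convergence statement for the motivic Adams spectral sequence that we take as given. Everything else reduces to manipulating the detecting-class relation and reading off condition (3).
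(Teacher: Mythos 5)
Your proof is correct and is essentially the paper's own argument: the paper likewise forms the combination of the hypothetical $\beta''$ with the given $\beta$ (using the sum $\beta+\beta''$ rather than the difference, which is immaterial mod higher filtration), observes it has strictly higher Adams filtration while having the same product with $\alpha$, and concludes that condition (3) of Definition \ref{defn:hidden} is violated. The only cosmetic difference is that you spell out the convergence/detection step that the paper leaves implicit.
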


\begin{proof}
Suppose that there exists some element $\beta'$ of $\{b \}$ such that
$\alpha \beta'$ is in $\{ c\}$.  Then 
$\alpha (\beta + \beta')$ is also in $\{ c\}$,
and the Adams filtration of $\beta + \beta'$ is strictly greater
than the Adams filtration of $\beta'$.
This implies that there is not a hidden $\alpha$ extension
from $b$ to $c$.
\end{proof}

\begin{lemma}
\label{lem:hidden-exist}
Let $\alpha$ be an element of $\pi_{*,*}$ that is detected
by an element $a$ of the $E_\infty$-page of the motivic Adams spectral
sequence.
Suppose that 
$b$ and $c$ are elements of $E_\infty$ such that:
\begin{enumerate}
\item
$a b = 0$ in the $E_\infty$-page.
\item
$\alpha \{ b \}$ is contained in $\{ c\}$.
\end{enumerate}
Then there is a hidden $\alpha$ extension from $b$ to $c$.
\end{lemma}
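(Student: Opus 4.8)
The plan is to verify the three conditions of Definition \ref{defn:hidden} in turn, since the lemma asserts precisely that the pair $(b,c)$ is a hidden $\alpha$ extension. Condition (1) of the definition is exactly hypothesis (1). For condition (2), I would observe that $b$ is a nonzero class on the $E_\infty$-page, so $\{b\}$ is nonempty; choosing any $\beta$ in $\{b\}$, hypothesis (2) gives that $\alpha\beta$ lies in $\{c\}$, which is what condition (2) of the definition requires. All of the real content therefore lies in condition (3), the maximality of the Adams filtration of $b$ among sources of an $\alpha$ extension into $\{c\}$, and this is where I expect the only subtlety to be.

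To establish condition (3), I would argue by contradiction, following the same pattern as the proof of Lemma \ref{lem:hidden-not-exist}. Suppose there were a class $b'$ on the $E_\infty$-page whose Adams filtration strictly exceeds that of $b$, together with an element $\beta'$ of $\{b'\}$ such that $\alpha\beta'$ lies in $\{c\}$. Fix any $\beta$ in $\{b\}$. Because the Adams filtration of $\beta'$ is strictly larger than that of $\beta$, adding $\beta'$ does not disturb the leading term, so $\beta+\beta'$ is again detected by $b$; that is, $\beta+\beta'$ belongs to $\{b\}$. The crucial step is then to apply the full strength of hypothesis (2), namely that $\alpha\{b\}$ is contained in $\{c\}$: this forces both $\alpha(\beta+\beta')=\alpha\beta+\alpha\beta'$ and $\alpha\beta$ to lie in $\{c\}$. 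Subtracting, $\alpha\beta'$ is realized as the difference of two elements of $\{c\}$.

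Finally I would note that the difference of two elements of $\{c\}$ cannot itself lie in $\{c\}$: both summands have leading term $c$ in the same filtration, so their difference is either zero or detected in strictly higher Adams filtration, and in neither case is it an element detected by the nonzero class $c$. This contradicts the assumption that $\alpha\beta'$ belongs to $\{c\}$, and so no such $b'$ exists, which is exactly condition (3). The only delicate points are the two formal facts that adding a strictly-higher-filtration element preserves the detecting class and that distinct elements of $\{c\}$ differ in strictly higher filtration; both follow immediately from the definition of the set $\{x\}$, so no computation is needed. The essential observation is simply that the hypothesis $\alpha\{b\}\subseteq\{c\}$ constrains \emph{every} representative of $b$, not merely a single one, which is precisely the extra input that upgrades the existence of an extension to its maximality.
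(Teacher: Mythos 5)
Your proof is correct and is essentially the paper's own argument: both hinge on forming $\beta+\beta'$, noting that hypothesis (2) constrains every representative of $b$, and using the mod-$2$ cancellation fact that a sum (or difference) of two elements of $\{c\}$ is detected in strictly higher filtration than $c$. The only difference is cosmetic — you run the argument as an explicit contradiction about $\alpha\beta'$, while the paper states the same step contrapositively to conclude $\beta+\beta'\notin\{b\}$ and hence the filtration bound directly.
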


\begin{proof}
Let $\beta$ be any element of $\{b\}$, so
$\alpha \beta$ is contained in $\{c\}$.
Let $b'$ be an element of the $E_\infty$-page,
and let $\beta'$ in $\{b'\}$ be an element such that
$\alpha \beta'$ is also contained in $\{c \}$.

Since both $\alpha \beta$ and $\alpha \beta'$ are contained
in $\{c\}$, their sum $\alpha (\beta + \beta')$ is detected
in Adams filtration strictly greater than the Adams filtration of
$c$.  Therefore, $\beta + \beta'$ is not an element of $\{ b\}$,
which means that the Adams filtration of $b'$ must be less than
or equal to the Adams filtration of $b$.
\end{proof}

\begin{ex}
\label{ex:hidden-cross-1}
The conditions of Lemma \ref{lem:hidden-exist} are not quite equivalent
to the conditions of Definition \ref{defn:hidden}.
The difference is well illustrated by an example.
We will show in Lemma \ref{lem:eta-h3^2h5} that there is a hidden
$\eta$ extension from $h_3^2 h_5$ to $B_1$,
\index{h32h5@$h_3^2 h_5$}
\index{B1@$B_1$}
so there exists an element $\beta$ of $\{h_3^2 h_5\}$
such that $\eta \beta$ is contained in $\{B_1\}$.
Now let $\beta'$ be an element of $\{h_5 d_0 \}$,
\index{h5d0@$h_5 d_0$}
so $\beta + \beta'$ is an element of
$\{ h_3^2 h_5 \}$ because the Adams filtration of $h_5 d_0$ is 
greater than the Adams filtration of $h_3^2 h_5$.

Note that $\eta \beta'$ is contained in $\{h_1 h_5 d_0\}$.
The Adams filtration of $h_1 h_5 d_0$ is less than the
Adams filtration of $B_1$, so
$\eta (\beta + \beta')$ is contained in $\{ h_1 h_5 d_0 \}$.
This shows that the conditions of Lemma \ref{lem:hidden-exist}
are not satisfied.

The difference between Definition \ref{defn:hidden} and
Lemma \ref{lem:hidden-exist} occurs precisely when there are
``crossing" $\alpha$ extensions.  
\index{Adams spectral sequence!hidden extension!crossing}
In the chart of the $E_\infty$-page in \cite{Isaksen14a},
the straight line from $h_3^2 h_5$ to $B_1$ crosses the straight
line from $h_5 d_0$ to $h_1 h_5 d_0$.
\end{ex}

\begin{ex}
\label{ex:hidden-cross-2}
Through the 59-stem,
the issue of ``crossing" extensions occurs in only two other places.
First, we will show in Lemma \ref{lem:nu-h3^2h5} that there is a hidden
extension from $h_3^2 h_5$ to $B_2$.  
\index{B2@$B_2$}
In the $E_\infty$ chart in \cite{Isaksen14a},
the straight line from $h_3^2 h_5$ to $B_2$ crosses the straight line
from $h_5 d_0$ to $h_2 h_5 d_0$.
Therefore there exists an element $\beta$ of
$\{ h_3^2 h_5 \}$ such that $\nu \beta$ is not contained in $\{B_2\}$.

Second, we will show in Lemma \ref{lem:eta-h1f1} that there is a hidden
extension from $h_1 f_1$ to $\tau h_2 c_1 g$.
\index{f1@$f_1$}
\index{c1g@$c_1 g$}
In the $E_\infty$ chart in \cite{Isaksen14a},
the straight line from $h_1 f_1$ to $\tau h_2 c_1 g$
crosses the straight line from $h_1^2 h_5 c_0$ to $h_1^3 h_5 c_0$.
Therefore, there exists an element $\beta$ of
$\{ h_1 f_1 \}$ such that $\eta \beta$ is not contained in $\{\tau h_2 c_1 g\}$.
\end{ex}

We will thoroughly explore hidden extensions in the sense of 
Definition \ref{defn:hidden}.  However, such hidden extensions do not
completely determine the multiplicative structure of $\pi_{*,*}$.
For example, the relation $\eta \sigma^2 = 0$ discussed in
Example \ref{ex:naive-hidden} does not fit into this formal framework.

Something even more complicated occurs with the relation
$h_2^3 + h_1^2 h_3 = 0$ in the $E_\infty$-page.  There is a hidden relation
here, in the sense that $\nu^3 + \eta^2 \sigma$ does not equal zero;
rather, it equals $\eta \epsilon$ \cite{Toda62}.  We do not attempt
to systematically address these types of compound relations.
\index{Adams spectral sequence!hidden extension!compound}

\subsection{Hidden Adams $\tau$ extensions}
\label{subsctn:hidden-tau}

\index{Adams spectral sequence!hidden extension!tau@$\tau$}
For hidden $\tau$ extensions,
the key tool is the homotopy of the cofiber $C\tau$ of $\tau$.
\index{cofiber of tau@cofiber of $\tau$}
This calculation is fully explored in Chapter \ref{ch:Ctau}.
Let $\alpha$ be an element of $\pi_{*,*}$.
Then $\alpha$ maps to zero under the inclusion $S^{0,0} \map C\tau$
of the bottom cell
if and only if $\alpha$ is divisible by $\tau$ in $\pi_{*,*}$.

\begin{prop}
\label{prop:hidden-tau}
Table \ref{tab:Adams-tau} shows some
hidden $\tau$ extensions in $\pi_{*,*}$,
through the 59-stem.  
These are the only hidden $\tau$ extensions in this range,
with the possible exceptions that there might be
hidden $\tau$ extensions:
\begin{enumerate}
\item
from $h_1 i_1$ to $h_1 B_8$.
\index{i1@$i_1$}
\index{B8@$B_8$}
\item
from $j_1$ to $B_{21}$.
\index{j1@$j_1$}
\index{B21@$B_{21}$}
\end{enumerate}
\end{prop}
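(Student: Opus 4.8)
The plan is to leverage the cofiber $C\tau$ of $\tau$, whose bigraded homotopy groups and the maps $j_*$, $q_*$ are computed in Chapter \ref{ch:Ctau}, together with the divisibility criterion recalled at the start of Section \ref{subsctn:hidden-tau}. The organizing principle is the long exact sequence
\[
\cdots \to \pi_{s,w+1} \xrightarrow{\ \tau\ } \pi_{s,w} \xrightarrow{\ j_*\ } \pi_{s,w}(C\tau) \xrightarrow{\ q_*\ } \pi_{s-1,w+1} \to \cdots
\]
induced by the cofiber sequence $S^{0,-1} \xrightarrow{\tau} S^{0,0} \xrightarrow{j} C\tau \xrightarrow{q} S^{1,-1}$. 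Exactness says that the $\tau$-divisible elements of $\pi_{s,w}$ are precisely the kernel of $j_*$. A hidden $\tau$ extension from $b$ to $c$ in the sense of Definition \ref{defn:hidden} is exactly the phenomenon in which some $\gamma \in \{c\}$ is $\tau$-divisible, say $\gamma = \tau\beta$, while the source $b$ detecting $\beta$ satisfies $\tau b = 0$ in $E_\infty$ so that multiplication by $\tau$ strictly raises Adams filtration. Thus the problem reduces to identifying, through the $59$-stem, every $\tau$-divisible homotopy class whose expression as $\tau\beta$ jumps filtration; this single piece of data determines all targets $c$ and, via the LES and the $E_\infty$-chart, all sources $b$.

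First I would record the positive entries of Table \ref{tab:Adams-tau}. The bulk of these are read directly off the $C\tau$ computation: whenever a class detected by $c$ lies in the image of $\tau$ but not in the image of multiplication by $\tau$ at the level of $E_\infty$, the LES exhibits the extension. A handful follow from the structure of the image of $J$, and a few more are forced by analogous classical hidden extensions collected in Table \ref{tab:extn-refs}, combined with the weight bookkeeping that distinguishes a genuine $\tau$ factor (this is the same mechanism already used, e.g., in Lemma \ref{lem:d3-tg^3} and Lemma \ref{lem:d3-tgw}). The genuinely subtle positive cases — those requiring Toda-bracket or $C\tau$-module arguments rather than a direct reading — I would state as individual lemmas deferred to Section \ref{sctn:extn-lemmas}, with Table \ref{tab:Adams-tau} citing one proof apiece in its fourth column.

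The completeness half of the statement is the substance of the proposition, and I would argue it by exhaustive search. By the Remark following Definition \ref{defn:hidden} it suffices to consider candidate sources $b$ that are \emph{not} $\tau$-divisible in $E_\infty$ and satisfy $\tau b = 0$; and by Lemma \ref{lem:hidden-not-exist} it suffices to exhibit, for each such $b$ that is not in the table, a single $\beta \in \{b\}$ with $\tau\beta = 0$. Dually, and more efficiently, I would proceed target-by-target: the $C\tau$-homotopy pins down precisely which $E_\infty$-classes $c$ detect $\tau$-divisible elements, so for every $c$ lying outside the kernel-of-$j_*$ locus there is no hidden $\tau$ extension into $c$, and for each $c$ in that locus the LES together with Lemma \ref{lem:hidden-exist} either reproduces a table entry or is ruled out on filtration grounds. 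Running this bookkeeping through the $59$-stem, cross-checked against the $E_\infty$-chart of \cite{Isaksen14a}, accounts for every case.

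The main obstacle is the pair of undetermined extensions, from $h_1 i_1$ to $h_1 B_8$ and from $j_1$ to $B_{21}$. In each of these the criterion above fails to be decisive: the relevant portion of $\pi_{*,*}(C\tau)$ contains an ambiguity (of the sort catalogued in Table \ref{tab:Ctau-ambiguous}) that leaves open whether the corresponding class is truly $\tau$-divisible, and no Toda-bracket shuffle in the available range resolves it. I therefore expect these two cases to resist the uniform argument and to require explicit flagging as genuine exceptions, exactly as in the statement. The remaining difficulty is purely one of volume — the exhaustive verification that no \emph{other} $\tau$-divisible class has been overlooked — which is controlled entirely by the completeness of the $C\tau$ calculation in Chapter \ref{ch:Ctau}.
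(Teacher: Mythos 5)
Your proposal is correct and follows essentially the same route as the paper: both rest on the criterion that an element of $\pi_{*,*}$ is divisible by $\tau$ if and only if it maps to zero under the inclusion of the bottom cell $S^{0,0} \map C\tau$, so that the $C\tau$ computation of Chapter \ref{ch:Ctau} decides, target by target, which classes can or cannot receive a hidden $\tau$ extension, with the harder cases deferred to individual lemmas in Section \ref{subsctn:tau-lemmas} and the two undecidable cases ($h_1 i_1$ and $j_1$) flagged as exceptions tied to unresolved differentials in the Adams spectral sequence for $C\tau$.
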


\begin{proof}
Table \ref{tab:Adams-tau}
cites one possible argument for each hidden $\tau$ extension.
These arguments break into two types:
\begin{enumerate}
\item
In many cases, we know from Chapter \ref{ch:Ctau} that 
an element of $\{ b' \}$ maps to zero in $\pi_{*,*}(C\tau)$,
where $C\tau$ is the cofiber of $\tau$.
\index{cofiber of tau@cofiber of $\tau$}
Therefore,
this element of $\{ b' \}$ is divisible by $\tau$ in $\pi_{*,*}$,
which implies that there must be a hidden $\tau$ extension.
Usually there is just one possible hidden $\tau$ extension.
\item
Other more difficult cases are proved in
Section \ref{subsctn:tau-lemmas}.
\end{enumerate}

For many of the possible hidden $\tau$ extensions
from $b$ to $b'$, we know from Chapter \ref{ch:Ctau}
that none of the elements of $\{ b' \}$ map to zero in 
$\pi_{*,*}(C\tau)$.
Therefore,
none of the elements of $\{ b' \}$ is divisible by $\tau$,
so none of these possible hidden $\tau$ extensions 
are actual hidden $\tau$ extensions.
A number of more difficult non-existence proofs are given
in Section \ref{subsctn:tau-lemmas}.
\end{proof}

In order to maintain the flow of the narrative, we have collected
the technical computations of hidden extensions in 
Section \ref{subsctn:tau-lemmas}.

\begin{remark}
Table \ref{tab:Adams-tau-tentative}
shows some additional hidden $\tau$ extensions in stems 60 through 69.
These results are tentative because the analysis of the $E_\infty$-page
is incomplete in this range.  Tentative proofs in Section \ref{subsctn:tau-lemmas}
are clearly indicated.
\index{Adams spectral sequence!hidden extension!tentative}
\end{remark}

\begin{remark}
We show in Lemma \ref{lem:tau-h1^2g2} that there is no
hidden $\tau$ extension on $h_1^2 g_2$.  This contradicts
the claim in \cite{Kochman90} that there is a classical hidden
$\eta$ extension from $h_1 g_2$ to $N$.
We do not understand the source of this discrepancy.
See also Remark \ref{rem:eta-th1g2}.
\index{g2@$g_2$}
\index{N@$N$}
\end{remark}

\begin{remark}
\label{rem:tau-h1i1}
There may be a hidden $\tau$ extension from $h_1 i_1$ to $h_1 B_8$.
This extension occurs if and only if 
$d_3( \ol{h_1 i_1} )$ equals $h_1 B_8$
in the Adams spectral sequence for the cofiber of $\tau$
(see Proposition \ref{prop:Ctau-Adams-d3}).
\index{cofiber of tau@cofiber of $\tau$!Adams spectral sequence}
If this extension occurs, then it implies that there is a hidden
relation
$\nu \{C\} + \tau \{ i_1 \} = \{ B_8\}$.
\index{i1@$i_1$}
\index{B8@$B_8$}
\index{C@$C$}
\end{remark}

\begin{remark}
\label{rem:tau-D11}
We show in Lemma \ref{lem:tau-D11} that there is no hidden $\tau$ extension on
$D_{11}$. 
\index{D11@$D_{11}$}
 This proof is different in spirit from the rest of this
manuscript because it uses specific calculations in the classical
Adams-Novikov spectral sequence.
\index{Adams-Novikov spectral sequence!classical}
  This is especially relevant
since Chapter \ref{ch:ANSS} uses the calculations here to derive
Adams-Novikov calculations, so there is some danger of circular arguments.
We would prefer to have a proof that is internal to the motivic
Adams spectral sequence.
\end{remark}

\begin{remark}
\index{j1@$j_1$}
\index{B21@$B_{21}$}
\index{C'@$C'$}
Remark \ref{rem:Adams-d4-C'} explains that the following three claims are equivalent:
\begin{enumerate}
\item
there is a hidden $\tau$ extension from $j_1$ to $B_{21}$.
\item
$d_4( \ol{j_1} ) = B_{21}$ 
in the motivic Adams spectral sequence for the cofiber of $\tau$.
\index{cofiber of tau@cofiber of $\tau$!Adams spectral sequence}
\item
$d_4(C') = h_2 B_{21}$ in the motivic Adams spectral sequence
for the sphere.
\end{enumerate}
\end{remark}


\subsection{Hidden Adams $2$ extensions}
\label{subsctn:hidden-2}

\index{Adams spectral sequence!hidden extension!two}
\begin{prop}
\label{prop:hidden-2}
Table \ref{tab:Adams-2} shows some
hidden $2$ extensions in $\pi_{*,*}$,
through the 59-stem.  
These are the only hidden $2$ extensions in this range,
with the possible exceptions that there might be
hidden $2$ extensions:
\begin{enumerate}
\item
from $h_0 h_3 g_2$ to $\tau g n$.
\index{h3g2@$h_3 g_2$}
\index{gn@$g n$}
\item
from $j_1$ to $\tau^2 c_1 g^2$.
\index{j1@$j_1$}
\index{c1g2@$c_1 g^2$}
\end{enumerate}
\end{prop}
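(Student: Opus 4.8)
The plan is to follow the same template as the proof of Proposition~\ref{prop:hidden-tau}, splitting the argument into an existence half and a non-existence half, and isolating the genuinely undecidable cases at the end. For existence, the fourth column of Table~\ref{tab:Adams-2} records one argument for each hidden $2$ extension, and these arguments fall into a few families. First, any hidden $2$ extension lying in the classical range is recovered from its classical analogue in Table~\ref{tab:extn-refs}: by Proposition~\ref{prop:compare} a motivic $2$ extension must be compatible with the classical one after inverting $\tau$, and in almost every case the weight grading leaves exactly one motivic possibility. Second, the $h_0$-towers coming from the image of $J$ \cite{Adams66} and the homotopy of the cofiber of $\tau$ from Chapter~\ref{ch:Ctau} contribute blocks of extensions that can be read off directly. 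The remaining, more delicate extensions I would establish by the Toda bracket method: I would use Moss's Convergence Theorem~\ref{thm:Moss} to realize the relevant Massey products from Table~\ref{tab:Massey} as Toda brackets, and then combine the bracket identity $\tau \eta \alpha \in \langle 2, \alpha, 2 \rangle$ of Lemma~\ref{lem:2-bracket} with standard shuffles to extract the value of $2\beta$ for a suitable $\beta$ detected by the source.

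For non-existence I would run an exhaustive search. By the remark following Definition~\ref{defn:hidden} it suffices to consider sources $b$ in $E_\infty$ that are not divisible by $h_0$ and satisfy $h_0 b = 0$, which cuts the list of candidates substantially through the $59$-stem. The overwhelming majority of these candidates are then eliminated by Lemma~\ref{lem:hidden-not-exist}: it is enough to exhibit a single element $\beta \in \{ b \}$ with $2 \beta = 0$, and such a $\beta$ is usually visible either from the classical calculation via Proposition~\ref{prop:compare} or from the multiplicative structure of $\Ext$. The technical non-existence arguments that do not reduce to this formal observation I would collect in Section~\ref{sctn:extn-lemmas}, exactly as the $\tau$-extension lemmas were collected for Proposition~\ref{prop:hidden-tau}.

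The hard part will be the two exceptional extensions, and I expect these to remain genuinely open rather than yielding to a sharper argument. The possible extension from $h_0 h_3 g_2$ to $\tau g n$ is entangled with the possible hidden $\nu$ extension from $h_2 h_5 d_0$ to $g n$; I would prove in Lemma~\ref{lem:nu-h2h5d0} that these two extensions occur together or fail together, so that resolving either one resolves the other, and neither is currently accessible. This is precisely the point at which our analysis diverges from \cite{Kochman90}, which asserts the $\nu$ extension while simultaneously denying the $2$ extension. For the possible extension from $j_1$ to $\tau^2 c_1 g^2$, I would note that its status is controlled by an unresolved Adams differential just beyond the edge of the thoroughly analyzed range. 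In both cases the obstruction is the incompleteness of the differential and hidden-extension data at the boundary of the range, not a failure of the method itself, so the honest conclusion is to record them as the two stated exceptions.
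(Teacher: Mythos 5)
Your overall template matches the paper's: existence is delegated to the citations in Table~\ref{tab:Adams-2} (one classical analogue from Table~\ref{tab:extn-refs}, the rest proved as lemmas in Section~\ref{subsctn:2-lemmas}), non-existence is handled by an exhaustive search in which most candidates die by Lemma~\ref{lem:hidden-not-exist}, and the two genuinely open cases are exactly the ones you name, with the $h_0 h_3 g_2$ case tied to the $\nu$ extension on $h_2 h_5 d_0$ via Lemma~\ref{lem:nu-h2h5d0} just as in the paper. But there is one genuine gap: your proposed toolkit for existence cannot produce the hidden $2$ extension from $h_0 h_5 i$ to $\tau^4 e_0^2 g$ in the 54-stem, which is an entry of Table~\ref{tab:Adams-2} and hence part of what must be proved. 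This extension is not a lift of a classical Adams extension, it does not come from the image of $J$ or from the cofiber of $\tau$, and crucially it cannot be reached by Moss's Convergence Theorem~\ref{thm:Moss} together with bracket shuffles: as Remark~\ref{rem:2-h0h5i} points out, this is (up to the 59-stem) the unique $2$ extension that is hidden in \emph{both} the Adams and the Adams-Novikov spectral sequences, so no Massey product on any Adams $E_r$-page detects it. The paper's proof (Lemma~\ref{lem:2-h0h5i}) is forced to import external input: using Proposition~\ref{prop:ANSS-Ctau} it identifies $h_0 h_5 i$ with the Adams-Novikov class $\beta_{12/6}$-type element $\beta_{10/2}$, maps it to $\Delta^2 h_2^2$ in the Adams-Novikov spectral sequence for $\tmf$, and pulls back the hidden $2$ extension onto the class detecting $\kappa \kappabar^2$ there. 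The authors explicitly state they would have preferred an argument internal to the motivic Adams spectral sequence and could not find one, so your claim that the remaining delicate extensions yield to the Toda bracket method is not merely unproven but, in this case, known to be inadequate.

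Two smaller inaccuracies are worth flagging. First, none of the $2$ extensions in Table~\ref{tab:Adams-2} is justified by the image of $J$ or by the cofiber of $\tau$; those citation types occur in the $\tau$ and $\eta$ extension tables, whereas here the actual mechanisms are the descent of classical extensions along hidden $\tau$ extensions (Lemma~\ref{lem:2-h0h2g}), the shuffle $\tau\eta^2\alpha = \langle 2,\eta,2\rangle\alpha = 2\langle \eta,2,\alpha\rangle$ (Lemma~\ref{lem:2-e0r}), and the Adams-Novikov argument above. Second, the non-existence half also needs arguments of a different shape than Lemma~\ref{lem:hidden-not-exist}, namely showing that a potential \emph{target} is disqualified because it supports a hidden $\eta$ or $\nu$ extension (e.g.\ Lemmas~\ref{lem:2-h0^2h5d0} and~\ref{lem:2-h0h3g2}); you gesture at collected technical lemmas, which is consistent with the paper, but the target-side criterion is the idea doing the work in several of them.
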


\begin{proof}
Table \ref{tab:Adams-2} cites one possible argument (but not necessarily the
earliest published result) for each hidden $2$ extension.
One extension follows from its classical analogue given 
in Table \ref{tab:extn-refs}.
\index{Adams spectral sequence!hidden extension!classical}
The remaining cases are proved in Section \ref{subsctn:2-lemmas}.

A number of non-existence proofs are given
in Section \ref{subsctn:2-lemmas}.
\end{proof}

In order to maintain the flow of the narrative, we have collected the
technical computations of various hidden $2$ extensions in 
Section \ref{subsctn:2-lemmas}.

\begin{remark}
Table \ref{tab:Adams-2-tentative}
shows some additional hidden $2$ extensions in stems 60 through 69.
These results are tentative because the analysis of the $E_\infty$-page
is incomplete in this range.
Tentative proofs in Section \ref{subsctn:2-lemmas} are clearly indicated.
\index{Adams spectral sequence!hidden extension!tentative}
\end{remark}

\begin{remark}
\index{Adams spectral sequence!hidden extension!four}
Recall from Table \ref{tab:extn-refs} that there is a hidden
$4$ extension from $h_3^2 h_5$ to $h_0 h_5 d_0$.
\index{h32h5@$h_3^2 h_5$}
\index{h5d0@$h_5 d_0$}
It is tempting to consider this as a hidden $2$ extension
from $h_0 h_3^2 h_5$ to $h_0 h_5 d_0$, but this is not consistent with
Definition \ref{defn:hidden}.
\end{remark}

\begin{remark}
There is a possible
hidden $2$ extension from $h_0 h_3 g_2$ to $\tau g n$.
\index{h3g2@$h_3 g_2$}
\index{gn@$g n$}
We show in Lemma \ref{lem:nu-h2h5d0} that
this hidden extension occurs if and only if 
there is a hidden $\nu$ extension from $h_2 h_5 d_0$ to $\tau g n$.
\index{h5d0@$h_5 d_0$}
Lemma \ref{lem:nu-h2h5d0} is inconsistent with results of \cite{Kochman90},
which indicates the hidden $\nu$ extension
but not the hidden $2$ extension.  We do not understand the source of
this discrepancy.
\end{remark}

\begin{remark}
\label{rem:2-h0h5i}
We show in Lemma \ref{lem:2-h0h5i} that there is a hidden $2$
extension from $h_0 h_5 i$ to $\tau^3 e_0^2 g$.
\index{h5i@$h_5 i$}
\index{e02g@$e_0^2 g$}
This proof is different in spirit from the rest of this
manuscript because it uses specific calculations in the classical
Adams-Novikov spectral sequence.
\index{Adams-Novikov spectral sequence!classical}
This is especially relevant since
Chapter \ref{ch:ANSS} uses the calculations here to derive
Adams-Novikov calculations, so there is some danger of circular arguments.
We would prefer to have a proof that is internal to the motivic 
Adams spectral sequence.

We point out one other remarkable property of this hidden extension.
Up to the 59-stem, it is the only example of a $2$ extension that is hidden
in both the Adams spectral sequence and the Adams-Novikov spectral sequence.
(There are several $\eta$ extensions and $\nu$ extensions
that are hidden in both spectral sequences.)
\end{remark}


\subsection{Hidden Adams $\eta$ extensions}
\label{subsctn:hidden-eta}

\index{Adams spectral sequence!hidden extension!eta@$\eta$}

\begin{prop}
\label{prop:hidden-eta}
Table \ref{tab:Adams-eta} shows some
hidden $\eta$ extensions in $\pi_{*,*}$,
through the 59-stem.  
These are the only hidden $\eta$ extensions in this range,
with the possible exceptions that there might be
a hidden $\eta$ extension
from $\tau h_1 Q_2$ to $\tau B_{21}$.
\index{Q2@$Q_2$}
\index{B21@$B_{21}$}
\end{prop}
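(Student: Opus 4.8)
The plan is to follow the template already established by the proofs of Propositions \ref{prop:hidden-tau} and \ref{prop:hidden-2}, splitting the work into an existence half and a completeness (non-existence) half. For existence, I would let Table \ref{tab:Adams-eta} carry the burden: each listed hidden $\eta$ extension comes with one cited argument, and these sort into the familiar types---those deducible from the image of $J$ \cite{Adams66}, those forced by the structure of $\pi_{*,*}(C\tau)$ from Chapter \ref{ch:Ctau}, those pulled back from a classical hidden $\eta$ extension in Table \ref{tab:extn-refs} via Proposition \ref{prop:compare}, and a residue of genuinely motivic cases whose explicit proofs I would collect in Section \ref{subsctn:eta-lemmas} to keep the narrative clean. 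The workhorse for the individual existence arguments is Lemma \ref{lem:hidden-exist} together with Moss's Convergence Theorem \ref{thm:Moss}, used to realize a Massey product from Table \ref{tab:Massey} as the relevant Toda bracket.

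For completeness I would enumerate every candidate pair $(b,c)$ through the $59$-stem for which $h_1 b = 0$ on $E_\infty$, $c$ lies in the correct trigrading $(s+1, f+\mathrm{filt}(h_1), w+1)$, and the filtration inequality of Definition \ref{defn:hidden} can be met, then rule out each pair not appearing in Table \ref{tab:Adams-eta}. The cleanest rulings come from Lemma \ref{lem:hidden-not-exist}: if some element of $\{b\}$ is already known to be $\eta$-annihilated (for instance from a relation like $\eta\sigma^2 = 0$, or from a prior $\eta$ extension landing elsewhere), there can be no hidden $\eta$ extension on $b$. The next tier uses $C\tau$: if Chapter \ref{ch:Ctau} shows that no element of $\{c\}$ maps to zero in $\pi_{*,*}(C\tau)$, then $\{c\}$ is not $\tau$-divisible and certain extensions are excluded on $\tau$-weight grounds. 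The remaining stubborn candidates I would dispatch in Section \ref{subsctn:eta-lemmas} by Toda-bracket shuffles, weight/Chow-degree obstructions, and comparison with the $h_1$-local computations of \cite{GI14}.

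The hard part will be the crossing extensions and the one genuinely undecided case. As Examples \ref{ex:hidden-cross-1} and \ref{ex:hidden-cross-2} illustrate, when an $\eta$-line crosses another in the $E_\infty$ chart of \cite{Isaksen14a} the hypotheses of Lemma \ref{lem:hidden-exist} fail, so existence must be argued directly from Definition \ref{defn:hidden}'s maximal-filtration clause rather than from the convenient sufficient condition; the extension from $h_1 f_1$ to $\tau h_2 c_1 g$ handled in Lemma \ref{lem:eta-h1f1} is the representative difficulty here, and I expect the analogous bookkeeping to be the most error-prone portion of the enumeration. The single unresolved candidate, a possible hidden $\eta$ extension from $\tau h_1 Q_2$ to $\tau B_{21}$, I would flag exactly as in the statement: I anticipate its status is tied to an Adams or $C\tau$ differential that our analysis does not pin down (in the spirit of Remark \ref{rem:Adams-d4-C'} and the $B_{21}$-valued ambiguities in Propositions \ref{prop:Adams-d4} and \ref{prop:Adams-d5}), so rather than force it I would record it as the lone exception and verify that every other candidate through the $59$-stem is decided.
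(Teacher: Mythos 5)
Your proposal is correct and matches the paper's own proof in structure: the paper likewise establishes existence by citing Table \ref{tab:Adams-eta}, sorting the arguments into types (image of $J$, classical analogues from Table \ref{tab:extn-refs}, and explicit proofs deferred to Section \ref{subsctn:eta-lemmas}), and handles completeness by the non-existence lemmas collected in that same section, leaving the $\tau h_1 Q_2$ case as the lone flagged exception. The extra detail you give about Lemma \ref{lem:hidden-not-exist}, the $C\tau$ arguments, and the crossing-extension subtleties is exactly the content of those deferred lemmas, so it is elaboration rather than a different route.
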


\begin{proof}
Table \ref{tab:Adams-eta} cites one possible argument (but not necessarily the
earliest published result) for each hidden $\eta$ extension.
These arguments break into three types:
\begin{enumerate}
\item
Some hidden extensions follow from the calculation of the image of $J$
\cite{Adams66}.
\index{J@$J$!image of}
\item
Some hidden extensions follow from their classical analogues given 
in Table \ref{tab:extn-refs}.
\index{Adams spectral sequence!hidden extension!classical}
\item
The remaining more difficult cases are proved in Section \ref{subsctn:eta-lemmas}.
\end{enumerate}

A number of non-existence proofs are given
in Section \ref{subsctn:eta-lemmas}.
\end{proof}

In order to maintain the flow of the narrative, we collect the technical 
results establishing various hidden $\eta$ extensions in Section
\ref{subsctn:eta-lemmas}.

\begin{remark}
Table \ref{tab:Adams-eta-tentative}
shows some additional hidden $\eta$ extensions in stems 60 through 69.
These results are tentative because the analysis of the $E_\infty$-page
is incomplete in this range.
Tentative proofs in Section \ref{subsctn:eta-lemmas}
are clearly indicated.
\index{Adams spectral sequence!hidden extension!tentative}
\end{remark}

\begin{remark}
We show in Lemma \ref{lem:eta-th1g2} that there is no
hidden $\eta$ extension on $\tau h_1 g_2$.  This contradicts
the claim in \cite{Kochman90} that there is a classical hidden
$\eta$ extension from $h_1 g_2$ to $N$.
We do not understand the source of this discrepancy.
\index{g2@$g_2$}
\index{N@$N$}
\end{remark}

\begin{remark}
\label{rem:eta-th1g2}
The element $\eta^2 \{g_2\}$ is considered in \cite{BJM84}*{Lemma 4.3},
where it is shown to be equal to $\sigma^2 \{ d_1 \}$.
\index{d1@$d_1$}
\index{g2@$g_2$}
Our results indicate that both are zero classically; this is consistent with a 
careful reading of \cite{BJM84}*{Lemma 4.3}. 

Motivically, $\eta^2 \{g_2\} = \sigma^2 \{d_1 \}$ is non-zero because
they are detected by $h_1^2 g_2 = h_3^2 d_1$.  However,
Lemma \ref{lem:eta-th1g2} implies that $\tau \eta^2 \{g_2\}$
and $\tau \sigma^2 \{d_1\}$ are both zero.
\end{remark}

\begin{remark}
We show in Lemma \ref{lem:eta-C} that there is no
hidden $\eta$ extension on $C$.  This contradicts
the claim in \cite{Kochman90} that there is a classical hidden
$\eta$ extension from $C$ to $g n$.
We do not understand the source of this discrepancy.
\index{C@$C$}
\index{gn@$g n$}
\end{remark}


\subsection{Hidden Adams $\nu$ extensions}
\label{subsctn:hidden-nu}

\index{Adams spectral sequence!hidden extension!nu@$\nu$}
\begin{prop}
\label{prop:hidden-nu}
Table \ref{tab:Adams-nu} shows some
hidden $\nu$ extensions in $\pi_{*,*}$,
through the 59-stem.  
These are the only hidden $\nu$ extensions in this range,
with the possible exceptions that there might be
hidden $\nu$ extensions:
\begin{enumerate}
\item
from $h_2 h_5 d_0$ to $\tau g n$.
\index{h5d0@$h_5 d_0$}
\index{gn@$g n$}
\item
from $i_1$ to $g t$.
\index{i1@$i_1$}
\index{gt@$g t$}
\end{enumerate}
\end{prop}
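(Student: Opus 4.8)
The plan is to follow the two-part template already used for the hidden $\tau$, $2$, and $\eta$ extensions in Propositions \ref{prop:hidden-tau}, \ref{prop:hidden-2}, and \ref{prop:hidden-eta}. First I would establish the existence of each hidden $\nu$ extension recorded in Table \ref{tab:Adams-nu}, and then I would carry out an exhaustive search over the remaining candidate pairs to show that no other hidden $\nu$ extensions occur through the 59-stem, apart from the two cases that are explicitly flagged.

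For the existence half, recall that $\nu$ is detected by $h_2$, so a hidden $\nu$ extension from $b$ to $c$ requires $h_2 b = 0$ in $E_\infty$, with $c$ lying three stems and two weights above $b$ and in strictly higher Adams filtration than the vanishing product $h_2 b$. Each entry of Table \ref{tab:Adams-nu} is justified by one of four kinds of argument: some extensions are immediate from the structure of the image of $J$ \cite{Adams66}; some descend from their classical analogues (collected in Table \ref{tab:extn-refs}) via the comparison of Proposition \ref{prop:compare}, once the weights are checked to be consistent; some follow from the homotopy of the cofiber $C\tau$ computed in Chapter \ref{ch:Ctau}; and the remaining, more delicate extensions are obtained from Toda bracket shuffles using Moss's Convergence Theorem \ref{thm:Moss}. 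In each case Lemma \ref{lem:hidden-exist} reduces the verification to showing that $\nu \{b\}$ is contained in $\{c\}$.

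For the completeness half, I would enumerate every pair $(b,c)$ of $E_\infty$-classes in the correct relative degree with $h_2 b = 0$, and rule out each candidate not already in the table. The principal tool is Lemma \ref{lem:hidden-not-exist}: it suffices to exhibit a single $\beta \in \{b\}$ with $\nu \beta = 0$, which I would produce either from a Toda bracket computation or from the observation, via Chapter \ref{ch:Ctau}, that the relevant product vanishes in $\pi_{*,*}(C\tau)$ and so cannot be detected by $c$. The motivic weight grading is decisive, since it eliminates many classically plausible targets outright. I expect the hard part to be the two genuinely undecided cases. The possible extension from $h_2 h_5 d_0$ to $\tau g n$ is exactly the crossing phenomenon of Example \ref{ex:hidden-cross-2}, where Lemma \ref{lem:nu-h3^2h5} already forces a nearby hidden $\nu$ extension from $h_3^2 h_5$ to $B_2$; by Lemma \ref{lem:nu-h2h5d0} this extension is equivalent to a hidden $2$ extension on $h_0 h_3 g_2$, and neither can be settled by the methods available here, which accounts for the discrepancy with \cite{Kochman90}. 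The extension from $i_1$ to $g t$ similarly resists resolution, since the differentials bounding $g t$ lie near the edge of the range where the $E_\infty$-page is fully understood. The tentative extensions of Table \ref{tab:Adams-nu-tentative} in stems 60 through 69 would be treated by the same arguments, marked provisional because the differential analysis there is incomplete.
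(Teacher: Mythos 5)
Your proposal matches the paper's own proof in structure and substance: the paper likewise establishes each entry of Table \ref{tab:Adams-nu} either by comparison with its classical analogue in Table \ref{tab:extn-refs} or by the dedicated lemmas of Section \ref{subsctn:nu-lemmas}, and disposes of all remaining candidates by non-existence arguments in that same section (chiefly via Lemma \ref{lem:hidden-not-exist}), leaving exactly the two flagged cases open, with the $h_2 h_5 d_0$ case tied to a hidden $2$ extension by Lemma \ref{lem:nu-h2h5d0}. One small inaccuracy worth noting: the undecided extension from $h_2 h_5 d_0$ to $\tau g n$ is not itself the crossing phenomenon of Example \ref{ex:hidden-cross-2} --- that example concerns the extension from $h_3^2 h_5$ to $B_2$ crossing the line from $h_5 d_0$ to $h_2 h_5 d_0$ --- but this mislabeling does not affect the logic of your argument.
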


\begin{proof}
Table \ref{tab:Adams-nu} cites one possible argument (but not necessarily the
earliest published result) for each hidden $\nu$ extension.
These arguments break into two types:
\begin{enumerate}
\item
Some hidden extensions follow from their classical analogues given 
in Table \ref{tab:extn-refs}.
\index{Adams spectral sequence!hidden extension!classical}
\item
The remaining more difficult cases are proved in Section \ref{subsctn:nu-lemmas}.
\end{enumerate}

A number of non-existence proofs are given
in Section \ref{subsctn:nu-lemmas}.
\end{proof}

In order to maintain the flow of the narrative, we have collected the
technical results establishing various hidden $\nu$ extensions in 
Section \ref{subsctn:nu-lemmas}.

\begin{remark}
Table \ref{tab:Adams-nu-tentative}
shows some additional hidden $\nu$ extensions in stems 60 through 69.
These results are tentative because the analysis of the $E_\infty$-page
is incomplete in this range.
Tentative proofs in Section \ref{subsctn:nu-lemmas}
are clearly indicated.
\index{Adams spectral sequence!hidden extension!tentative}
\end{remark}

\begin{remark}
We draw the reader's attention to the curious hidden $\nu$ extensions
on $h_2 c_1$, $h_2 c_1 g$, and $N$. 
\index{c1@$c_1$}
\index{c1g@$c_1 g$}
\index{N@$N$}
These are ``exotic" extensions that
have no classical analogues.  The hidden extension on $N$
contradicts the claim in \cite{Kochman90} that there is a hidden
$\eta$ extension from $h_1 g_2$ to $N$.
\index{g2@$g_2$}
In addition to the proof provided in Lemma \ref{lem:nu-h2c1},
one can also establish these hidden extensions by computing in the 
motivic Adams spectral sequence for the cofiber of $\nu$.
\index{cofiber of nu@cofiber of $\nu$}
One can show that $\{ h_1^2 h_4 c_0 \}$, $\{ P h_1^2 h_5 c_0 \}$,
and $\{ h_1^6 h_5 c_0 \}$ all map to zero in the cofiber of $\nu$,
which implies that they are divisible by $\nu$.
index{h4c0@$h_4 c_0$}
\index{Ph5c0@$P h_5 c_0$}
\index{h5c0@$h_5 c_0$}
\end{remark}

\begin{remark}
There is a possible
hidden $\nu$ extension from $h_2 h_5 d_0$ to $\tau g n$.
\index{h5d0@$h_5 d_0$}
\index{gn@$g n$}
We show in Lemma \ref{lem:nu-h2h5d0} that
this hidden extension occurs if and only if 
there is a hidden $2$ extension from $h_0 h_3 g_2$ to $\tau g n$.
\index{h3g2@$h_3 g_2$}
Lemma \ref{lem:nu-h2h5d0} is inconsistent with results of \cite{Kochman90},
which indicates the hidden $\nu$ extension
but not the hidden $2$ extension.  We do not understand the source of
this discrepancy.
\end{remark}


\section{Hidden Adams extensions computations}
\label{sctn:extn-lemmas}

In this section, we collect the technical computations that establish
the hidden extensions discussed in Section \ref{sctn:hidden}.

\subsection{Hidden Adams $\tau$ extensions computations}
\label{subsctn:tau-lemmas}

\index{Adams spectral sequence!hidden extension!tau@$\tau$}

\begin{lemma}
\label{lem:t-h1h3g}
\mbox{}
\begin{enumerate}
\item
There is a hidden $\tau$ extension from $h_1 h_3 g$ to $d_0^2$.
\item
There is a hidden $\tau$ extension from $h_1 h_3 g^2$ to $d_0 e_0^2$.
\end{enumerate}
\end{lemma}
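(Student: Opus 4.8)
The plan is to prove part (1) using the homotopy of the cofiber $C\tau$, following the general strategy of Section \ref{subsctn:hidden-tau}, and then to deduce part (2) from part (1) by multiplying with $\kappabar$. Recall the criterion from Section \ref{subsctn:hidden-tau}: an element of $\pi_{*,*}$ is divisible by $\tau$ if and only if it maps to zero under the inclusion $j\colon S^{0,0} \to C\tau$ of the bottom cell. So for part (1) I would show that $\{d_0^2\}$ is divisible by $\tau$. Using the computation of the Adams spectral sequence for $C\tau$ in Chapter \ref{ch:Ctau} (see Table \ref{tab:Ctau-E2}), the bottom-cell image of $d_0^2$ vanishes in $E_\infty(C\tau)$; concretely, $d_0^2$ is hit by the Adams differential from the top-cell lift $\overline{h_1 h_3 g}$ in the spectral sequence for $C\tau$. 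Hence $j_*\{d_0^2\} = 0$, so some element $\delta$ of $\{d_0^2\}$ can be written as $\tau \gamma$. Since $\gamma$ lies in stem $28$ and weight $17$ (here $d_0$ has weight $8$ and $g$ has weight $12$, so $h_1 h_3 g$ has weight $17$), it is detected by an $E_\infty$-class of that degree with Adams filtration at most $6$, and the only candidate is $h_1 h_3 g$. As $d_0^2$ has strictly higher Adams filtration than $h_1 h_3 g$, we have $\tau \cdot h_1 h_3 g = 0$ in $E_\infty$, so this is a genuine hidden $\tau$ extension from $h_1 h_3 g$ to $d_0^2$, establishing part (1).

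For part (2), I would multiply the extension of part (1) by $\kappabar$, which is detected by the permanent cycle $g$. Let $\beta \in \{h_1 h_3 g\}$ with $\tau\beta \in \{d_0^2\}$, and let $\gamma \in \{g\}$ represent $\kappabar$. Then $\tau(\beta\gamma) = (\tau\beta)\gamma$ is detected by $d_0^2 g$. Using the multiplicative relation $d_0 g = e_0^2$ from \cite{Bruner97} (which is consistent with the weights, since $8 + 12 = 20 = 2\cdot 10$), we get $d_0^2 g = d_0 e_0^2$ in $\Ext$, and both $h_1 h_3 g^2$ and $d_0 e_0^2$ are nonzero in $E_\infty$. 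Since $\beta$ is detected by $h_1 h_3 g$ and $\gamma$ by $g$, the product $\beta\gamma$ is detected by $h_1 h_3 g^2$, and $\tau \cdot h_1 h_3 g^2 = (\tau h_1 h_3 g)\, g = 0$ in $E_\infty$. Therefore $\tau(\beta\gamma) \in \{d_0 e_0^2\}$ exhibits the hidden $\tau$ extension from $h_1 h_3 g^2$ to $d_0 e_0^2$.

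The main obstacle is the input from Chapter \ref{ch:Ctau}: one must verify in the Adams spectral sequence for $C\tau$ that the bottom-cell class $d_0^2$ is killed, so that $\{d_0^2\}$ is genuinely $\tau$-divisible; this is where the computational content resides, and the rest of the argument is essentially bookkeeping. A secondary point requiring care is the uniqueness of the source: I would need to confirm that there is no $E_\infty$-class in stem $28$, weight $17$, of filtration lower than $6$ into which the $\tau$ multiplication could instead land, and that no \emph{crossing} $\tau$ extensions (in the sense of Example \ref{ex:hidden-cross-1}) interfere. In the low stems $28$ and $48$ this check is routine, but it is logically necessary in order to pin down the sources precisely as $h_1 h_3 g$ and $h_1 h_3 g^2$.
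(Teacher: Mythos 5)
Your strategy for part (1)---show that $\{d_0^2\}$ maps to zero in $\pi_{28,16}(C\tau)$ and is therefore divisible by $\tau$---is indeed the paper's strategy, but the step where you obtain that vanishing is circular. You cite the differential $d_3(\ol{h_1 h_3 g}) = d_0^2$ in the Adams spectral sequence for $C\tau$ (Table \ref{tab:Ctau-d3}). In the paper, that differential is Lemma \ref{lem:Ctau-d3-_h1h3g}, and its proof consists exactly of: since Lemma \ref{lem:t-h1h3g} shows $\{d_0^2\}$ and $\{d_0 e_0^2\}$ are divisible by $\tau$, these classes must die in $E_\infty(C\tau)$, and there is only one differential that can kill each. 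So the differential you invoke is a \emph{consequence} of the lemma you are proving. The hidden $\tau$ extension and the $C\tau$-differential are equivalent pieces of data (compare Remark \ref{rem:tau-h1i1}, where an unresolved hidden $\tau$ extension is stated as equivalent to an unresolved $d_3$ for $C\tau$), so neither may be quoted to establish the other; an independent input is required. The paper's independent input is homotopy-theoretic: $\kappa^2 = \epsilon\kappabar$ (Lemma \ref{lem:epsilon-kappabar}) together with $\epsilon \in \langle 2, \nu^2, \eta \rangle$ places the image of $\kappa^2$ in $\pi_{28,16}(C\tau)$ inside $\kappabar_{C\tau}\langle 2, \nu^2, \eta\rangle$, which shuffles to $\langle \kappabar_{C\tau}, 2, \nu^2 \rangle \eta$; this bracket is zero because the only element of $\pi_{27,15}(C\tau)$ is $\{P^3 h_1^3\}$, which supports infinitely many $\eta$-multiplications while elements of the bracket cannot. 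Nothing in your write-up replaces this step. (A secondary gap at the same spot: even granting that $d_0^2$ dies in $E_\infty(C\tau)$, you must still rule out that $j_*$ of an element of $\{d_0^2\}$ is detected in higher filtration---for instance by $P^3 h_1^4$, which is nonzero motivically and survives for $C\tau$---before concluding $j_*\{d_0^2\} = 0$.)

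Part (2) has a separate, fatal error: motivically $g$ is not an element of $\Ext$; only $\tau g$ exists (Example \ref{ex:g^2}), and $\kappabar$ is detected by $\tau g$ in degree $(20,4,11)$ (Table \ref{tab:notation}). Consequently your product $\beta\gamma$ lies in $\pi_{48,28}$, whereas $\{h_1 h_3 g^2\} \subseteq \pi_{48,29}$: the product has the wrong weight to be detected by $h_1 h_3 g^2$. What multiplication by $\kappabar$ actually yields is $\tau(\beta\kappabar) = (\tau\beta)\kappabar$, detected by $d_0^2 \cdot \tau g = \tau d_0 e_0^2$---a statement one weight too low, which says nothing about the claimed extension from $h_1 h_3 g^2$ (weight $29$) to $d_0 e_0^2$ (weight $28$). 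The paper instead reruns the part (1) argument one power of $g$ higher: it uses $\epsilon \{\tau g^2\} = \kappa \{e_0^2\}$ (Lemma \ref{lem:epsilon-kappabar} again) and shows the bracket $\langle \{\tau g^2\}_{C\tau}, 2, \nu^2 \rangle$ in $\pi_{47,27}(C\tau)$ is zero because there are no other possibilities, concluding that $\kappa\{e_0^2\}$ is $\tau$-divisible with $h_1 h_3 g^2$ the only possible source.
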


\begin{proof}
We will show in Lemma \ref{lem:epsilon-kappabar}
that $\epsilon \kappabar = \kappa^2$ in $\pi_{28,16}$.
Therefore, $\kappa^2$ is contained in $\kappabar \langle 2, \nu^2, \eta \rangle$.

Let $C\tau$ be the cofiber of $\tau$, whose homotopy is studied
thoroughly in Chapter \ref{ch:Ctau}.
\index{cofiber of tau@cofiber of $\tau$}
Let $\kappabar_{C\tau}$ be the image of $\kappabar$ in
$\pi_{20,11}(C\tau)$.
Then the image of $\kappa^2$ in $\pi_{28,16}(C\tau)$ is contained in
$\kappabar_{C\tau} \langle 2, \nu^2, \eta \rangle$.
Because $\kappabar_{C\tau} \cdot 2$ is zero, we can shuffle to obtain
$\langle \kappabar_{C\tau}, 2, \nu^2 \rangle \eta$.

Now $\pi_{27, 15}(C\tau)$ consists only of the element
$\{ P^3 h_1^3\}$.  However, this element cannot belong to
$\langle \kappabar_{C\tau}, 2, \nu^2 \rangle$
because $\{P^3 h_1^3\}$ supports infinitely many multiplications by
$\eta$, while the elements in the bracket cannot.
Therefore,
$\langle \kappabar_{C\tau}, 2, \nu^2 \rangle$ is zero,
and the image of $\kappa^2$ in $\pi_{28,16}(C\tau)$ is zero.

Therefore, $\kappa^2$ in $\pi_{28,14}$ is divisible by $\tau$,
and there is just one possible hidden $\tau$ extension.
This completes the proof of the first claim.

The proof for the second claim is analogous, using that
$\epsilon \{ \tau g^2\} = \kappa \{e_0^2\}$ 
from Lemma \ref{lem:epsilon-kappabar}.
The bracket
$\langle \{\tau g^2\}_{C\tau}, 2, \nu^2 \rangle$ in $\pi_{47, 27}(C\tau)$
must be zero because there are no other possibilities.
\end{proof}

\begin{lemma}
\label{lem:t.eta-d1}
\mbox{}
\begin{enumerate}
\item
There is no hidden $\tau$ extension on $h_1 d_1$.
\item
There is no hidden $\tau$ extension on $h_1 d_1 g$.
\end{enumerate}
\end{lemma}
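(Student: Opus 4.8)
The plan is to apply the criterion for $\tau$-divisibility recorded at the start of Section \ref{subsctn:hidden-tau}: an element of $\pi_{*,*}$ is divisible by $\tau$ if and only if it maps to zero under the inclusion $S^{0,0}\to C\tau$ of the bottom cell. Consequently, a hidden $\tau$ extension on $h_1 d_1$ (resp.\ $h_1 d_1 g$) can exist only if some class of $E_\infty$ lying in the same stem but in strictly higher Adams filtration is $\tau$-divisible, i.e.\ has a representative that dies in $\pi_{*,*}(C\tau)$. Thus the entire argument reduces to inspecting the candidate targets and showing that none of them is $\tau$-divisible. This is the same mechanism used for the non-existence assertions in the proof of Proposition \ref{prop:hidden-tau}.

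First I would locate the source and enumerate the possible targets. The element $h_1 d_1$ lies in the $33$-stem and $h_1 d_1 g$ in the $53$-stem. For each part I would read off from the $E_\infty$-chart in \cite{Isaksen14a} the finitely many classes $c$ in the same stem whose Adams filtration exceeds that of the source; by condition (3) of Definition \ref{defn:hidden} these are exactly the conceivable targets of a hidden $\tau$ extension. (If no such class exists the statement is immediate; the substantive cases are those in which a target does exist.)

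Next, for each candidate target $c$ I would invoke the computation of $\pi_{*,*}(C\tau)$ from Chapter \ref{ch:Ctau} to check that no element of $\{c\}$ maps to zero under $S^{0,0}\to C\tau$. Since an element is $\tau$-divisible precisely when it dies in $C\tau$, this shows that no candidate $c$ receives a $\tau$ extension, and therefore that neither $h_1 d_1$ nor $h_1 d_1 g$ supports a hidden $\tau$ extension in the sense of Definition \ref{defn:hidden}.

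The main obstacle will be the second part: the $53$-stem is considerably denser than the $33$-stem, so more candidate targets must be enumerated, and for each one I must have the precise $\pi_{*,*}(C\tau)$-data in hand to certify that its homotopy classes survive the inclusion of the bottom cell. A secondary subtlety is the bookkeeping around ``crossing'' $\tau$ extensions (cf.\ Examples \ref{ex:hidden-cross-1} and \ref{ex:hidden-cross-2}): I must ensure that the genuine absence of a $\tau$-divisible target is not obscured by visible $\tau$-multiplications already present within a single bidegree of $E_\infty$, which is exactly why the $C\tau$ criterion—rather than a naive count in $E_\infty$—is the correct tool here.
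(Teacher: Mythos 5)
Your reduction is logically sound as far as it goes: if no class in the same stem (and appropriate weight) of strictly higher Adams filtration contains a $\tau$-divisible element, then no hidden $\tau$ extension exists. The gap is that the tool you invoke to certify non-divisibility --- the computation of $\pi_{*,*}(C\tau)$ from Chapter \ref{ch:Ctau} --- is not available independently of the lemma you are proving, so your argument is circular. Concretely, for part (1) the unique candidate target is $h_1 q$, and whether elements of $\{h_1 q\}$ map to zero in $\pi_{33,18}(C\tau)$ comes down to whether $h_1 q$ survives the Adams spectral sequence for $C\tau$, i.e.\ whether $d_5(h_2 h_5) = h_1 q$ there; the paper settles that differential (Lemma \ref{lem:d5-h2h5}) precisely by citing Lemma \ref{lem:t.eta-d1}. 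Likewise for part (2): the unique candidate target is $h_1^2 G_3$, and the possibility $d_3(\ol{h_1 d_1 g}) = h_1^2 G_3$ in the spectral sequence for $C\tau$ is ruled out in Chapter \ref{ch:Ctau} by quoting that $\tau\{h_1 d_1 g\}$ is zero, which is exactly this lemma. This lemma is one of the ``more difficult cases'' of Proposition \ref{prop:hidden-tau} precisely because the generic cofiber-of-$\tau$ criterion cannot be applied to it.

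The paper instead argues directly in $\pi_{*,*}$, exhibiting an element of the source coset that is annihilated by $\tau$ and then invoking Lemma \ref{lem:hidden-not-exist}. For part (1): $\{d_1\}$ is detected in Adams--Novikov filtration $4$ (via Proposition \ref{prop:ANSS-Ctau}), so it realizes to zero in $\pi_{32}\tmf$, hence $\tau\eta\{d_1\}$ realizes to zero in $\pi_{33}\tmf$; on the other hand $\{h_1 q\}$ realizes nontrivially to $\pi_{33}\tmf$, because the classical extension $\nu\{q\} = \{h_1 e_0^2\}$ from Table \ref{tab:extn-refs} and the nonvanishing of $\{h_1 e_0^2\}$ in $\tmf$ force $\{q\}$, and hence $\{h_1 q\}$, to be nonzero there; therefore $\tau\eta\{d_1\}$ cannot lie in $\{h_1 q\}$ and must vanish. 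For part (2): $\{d_1 g\} = \langle\{d_1\},\eta^3,\eta_4\rangle$ with no indeterminacy (Table \ref{tab:Toda}), and the shuffle $\tau\eta\{d_1 g\} = \langle\tau\eta,\{d_1\},\eta^3\rangle\eta_4$ vanishes because every possible value of the inner bracket is killed by $\eta_4$. Note that these stronger conclusions ($\tau\eta\{d_1\} = 0$ and $\tau\eta\{d_1 g\} = 0$) are themselves needed downstream, e.g.\ in Lemma \ref{lem:t.h3d1g}. To salvage your approach you would first need independent proofs of $d_5(h_2 h_5) = 0$ and $d_3(\ol{h_1 d_1 g}) = 0$ in the Adams spectral sequence for $C\tau$ that do not pass through this lemma.
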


\begin{proof}
For the first formula, 
the only other possibility is that 
there is a hidden $\tau$ extension from $h_1 d_1$ to $h_1 q$.
We will show that this is impossible.

Proposition \ref{prop:ANSS-Ctau} shows that 
the element $\{ d_1 \}$ of $\pi_{32,18}$
is detected in Adams-Novikov filtration $4$. 
\index{Adams-Novikov spectral sequence}
Therefore, $\{ d_1 \}$ realizes
to zero in $\pi_{32} \tmf$ \cite{Bauer08}, so
$\tau \eta \{d_1\}$ also realizes to zero in $\pi_{33} \tmf$.
\index{topological modular forms}

On the other hand, $\{ h_1 e_0^2\}$ realizes to a non-zero element
of $\pi_{35} \tmf$.  The classical hidden extension
$\nu \{ q\} = \{ h_1 e_0^2 \}$ given in Table \ref{tab:extn-refs}
then implies that $\{ q\}$ realizes to a non-zero element of
$\pi_{32} \tmf$.  Then $\{ h_1 q \}$ also realizes to a non-zero
element of $\pi_{33} \tmf$.  

This shows that $\tau \eta \{d_1 \}$ cannot belong to $\{h_1 q\}$,
so it must be zero.  Now Lemma \ref{lem:hidden-not-exist} 
establishes the first claim.

For the second claim,
Table \ref{tab:Toda} shows that 
$\{ d_1 g \} = \langle \{d_1 \}, \eta^3, \eta_4 \rangle$,
again with no indeterminacy.
Now shuffle to obtain
$\tau \eta \{ d_1 g \} = \langle \tau \eta, \{d_1\}, \eta^3 \rangle \eta_4$.
The element $\{ \tau h_2 e_0^2 \}$ is 
the only non-zero element that could possibly be contained
in $\langle \tau \eta, \{d_1\}, \eta^3 \rangle$.
In any case,
$\langle \tau \eta, \{d_1\}, \eta^3 \rangle \eta_4$ 
is zero.
This shows that $\tau \eta \{d_1 g\}$ is zero.  
Lemma \ref{lem:hidden-not-exist} establishes the second claim.
\end{proof}

\begin{lemma}
\label{lem:t-th0g^2}
There is a hidden $\tau$ extension from $\tau h_0 g^2$ 
to $h_1 u$.
\end{lemma}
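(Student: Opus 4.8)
The plan is to reduce the assertion to the companion hidden $2$ extension from $\tau^2 g^2$ to $h_1 u$, together with a bookkeeping identification of the relevant homotopy classes in terms of $\kappabar$. First I would record the detecting degrees. Since $\kappabar$ is detected by $\tau g$, its square $\kappabar^2$ is detected by $\tau^2 g^2$, and there is a class $\gamma$ in $\pi_{40,23}$ detected by $\tau g^2$ with $\tau \gamma = \kappabar^2$. Because $\tau h_0 g^2 = h_0 \cdot \tau g^2$, the class $2\gamma$ is detected by $\tau h_0 g^2$, and one computes $\tau \cdot 2\gamma = 2\tau\gamma = 2\kappabar^2$. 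Thus the claimed hidden extension is equivalent to the two statements that $2\kappabar^2$ is detected by $h_1 u$ and that $\tau \cdot \tau h_0 g^2 = \tau^2 h_0 g^2$ vanishes on the $E_\infty$-page.

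Both of these follow from the hidden $2$ extension from $\tau^2 g^2$ to $h_1 u$ recorded in Table \ref{tab:Adams-2}. On the one hand, this extension says precisely that $2\kappabar^2 = 2\cdot\{\tau^2 g^2\}$ is detected by $h_1 u$. On the other hand, for it to be a genuine hidden $2$ extension one must have $h_0 \cdot \tau^2 g^2 = \tau^2 h_0 g^2 = 0$ in $E_\infty$, which is exactly the vanishing needed above. With these in hand, $2\gamma$ is nonzero (since $\tau\cdot 2\gamma = 2\kappabar^2 \neq 0$) and hence genuinely detected by $\tau h_0 g^2$, while $\tau\cdot 2\gamma = 2\kappabar^2$ lies in $\{h_1 u\}$. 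Since $\tau\cdot\tau h_0 g^2 = 0$ on $E_\infty$, Lemma \ref{lem:hidden-exist} then yields the hidden $\tau$ extension from $\tau h_0 g^2$ to $h_1 u$.

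The hard part will be establishing, without circularity, that $h_1 u$ detects $2\kappabar^2$ itself, rather than merely that $\tau h_1 u$ detects $2\tau\kappabar^2$. The latter is Lemma \ref{lem:bracket-2kappabar^2}, obtained from the Massey product $\langle q, h_0, h_0^3 h_3 \rangle = \tau h_1 u$ via Moss's Convergence Theorem \ref{thm:Moss}; dividing through by $\tau$ is not automatic, since one must rule out $\tau$-torsion in $\pi_{40,22}$ interfering with the detection. I would resolve this by passing to the cofiber $C\tau$ of Chapter \ref{ch:Ctau}, where the absence of a competing class in the appropriate degree shows that $2\kappabar^2$ is exactly $\tau\cdot 2\gamma$ and is therefore detected by $h_1 u$; this simultaneously confirms the vanishing $\tau^2 h_0 g^2 = 0$ on $E_\infty$, so that the whole argument is internal to the $C\tau$ analysis already needed for the hidden $2$ extension.
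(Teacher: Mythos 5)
Your first two paragraphs are exactly the paper's argument, just written out in more detail: the paper's proof consists of citing the classical hidden $2$ extension from $g^2$ to $h_1 u$ (Table \ref{tab:extn-refs}), lifting it to the motivic hidden $2$ extension from $\tau^2 g^2$ to $h_1 u$, and observing that the hidden $\tau$ extension follows by precisely the bookkeeping you spell out ($2\gamma$ detected by $\tau h_0 g^2$, $\tau\cdot 2\gamma = 2\kappabar^2 \in \{h_1 u\}$, and $\tau^2 h_0 g^2 = 0$ on $E_\infty$). The worry in your final paragraph, however, is misplaced: there is no circularity to avoid, because the entry of Table \ref{tab:Adams-2} in question is cited to the classical literature (\cite{BMT70} via Table \ref{tab:extn-refs}), and Lemma \ref{lem:bracket-2kappabar^2} depends on that entry rather than the other way around. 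The passage from the classical statement to the motivic one at weight $22$ is handled, as everywhere in the paper, by Proposition \ref{prop:compare} together with weight bookkeeping: both $\tau^2 g^2$ and $h_1 u$ sit in weight exactly $22$, so no division by $\tau$ is ever performed. That said, the $C\tau$ safeguard you propose is not wrong, merely redundant: in the Adams spectral sequence for the cofiber of $\tau$ one has $d_2(\ol{\tau h_0 g^2}) = h_1 u$ (Table \ref{tab:Ctau-E2}), so $h_1 u$ dies in $E_\infty(C\tau)$, the elements of $\{h_1 u\}$ are divisible by $\tau$, and $\tau h_0 g^2$ is the only possible source; this is exactly the mechanism the paper uses for the entries of Table \ref{tab:Adams-tau} marked ``cofiber of $\tau$,'' and the present lemma is one of the few where the classical shortcut makes it unnecessary.
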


\begin{proof}
Classically, there is a hidden $2$ extension from
$g^2$ to $h_1 u$ given in Table \ref{tab:extn-refs}.
This implies that there is a motivic hidden $2$ extension
from $\tau^2 g^2$ to $h_1 u$.
The desired hidden $\tau$ extension follows.
\end{proof}

\begin{lemma}
\label{lem:t-th1g^2}
\mbox{}
\begin{enumerate}
\item
There is a hidden $\tau$ extension from $\tau h_1 g^2$ to $z$.
\item
There is a hidden $\tau$ extension from $\tau h_1 e_0^2 g$ to $d_0 z$.
\end{enumerate}
\end{lemma}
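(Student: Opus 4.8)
The plan is to deduce both hidden $\tau$ extensions from classical hidden $\eta$ extensions, in exactly the manner of Lemma \ref{lem:t-th0g^2}. The key input is recorded in Lemma \ref{lem:t^3d0^2e0^2-hit} (and Table \ref{tab:extn-refs}): classically there is a hidden $\eta$ extension from $g^2$ to $z$, so that $z$ detects $\eta\kappabar^2$. Motivically $\kappabar$ is detected by $\tau g$, hence $\kappabar^2$ is detected by $\tau^2 g^2$, and $\eta\kappabar^2$ has weight one greater, matching the weight of $z$. Since inverting $\tau$ identifies the motivic and classical Adams spectral sequences (Proposition \ref{prop:compare}), the classical extension lifts to a motivic hidden $\eta$ extension from $\tau^2 g^2$ to $z$; in particular $\tau^2 h_1 g^2 = 0$ on $E_\infty$ while $\eta\kappabar^2$ lies in $\{z\}$.

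For part (1) I would then run the same factorization argument used in Lemma \ref{lem:t-th0g^2}. Choose $\delta$ in $\{\tau g^2\}$ and set $\gamma = \eta\delta$; since the $\Ext$ product $h_1\cdot\tau g^2 = \tau h_1 g^2$ is honest (nonzero on $E_\infty$), the element $\gamma$ lies in $\{\tau h_1 g^2\}$. Then $\tau\gamma = \eta(\tau\delta)$ with $\tau\delta$ in $\{\tau^2 g^2\}$, so the lifted hidden $\eta$ extension forces $\tau\gamma$ into $\{z\}$. Because $\tau\cdot\tau h_1 g^2 = \tau^2 h_1 g^2 = 0$ on $E_\infty$, this is a genuine hidden $\tau$ extension from $\tau h_1 g^2$ to $z$.

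For part (2) the cleanest route is to multiply the homotopy statement of part (1) by $\kappa$, which is detected by $d_0$. Using the $\Ext$ relation $d_0 g^2 = e_0^2 g$, the class $d_0\cdot\tau h_1 g^2$ equals $\tau h_1 e_0^2 g$, while $d_0\cdot z$ is detected by $d_0 z$; this yields $\tau\cdot\{\tau h_1 e_0^2 g\}\ni \kappa\cdot\eta\kappabar^2\in\{d_0 z\}$, the desired hidden $\tau$ extension. Equivalently, one may first multiply the classical extension $g^2\to z$ by $\kappa$ to obtain a classical hidden $\eta$ extension from $e_0^2 g$ to $d_0 z$ and then repeat the lift-and-factor argument of part (1) verbatim, now with $\tau e_0^2 g$ in place of $\tau g^2$.

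The main obstacle is the bookkeeping that certifies the extensions are hidden and that the stated elements are the sources. One must verify through the weight grading that the classical classes lift with precisely two factors of $\tau$, so that $\tau^2 h_1 g^2$ and $\tau^2 h_1 e_0^2 g$ vanish on $E_\infty$ while $\tau g^2$ and $\tau e_0^2 g$ survive; and one must rule out any competing source of higher Adams filtration, i.e.\ the absence of crossing $\tau$ extensions into $\{z\}$ and $\{d_0 z\}$. Both points are settled by the $E_\infty$-chart of \cite{Isaksen14a} together with the computation of $\pi_{*,*}(C\tau)$ in Chapter \ref{ch:Ctau}, which pins down exactly which representatives of $\{z\}$ and $\{d_0 z\}$ are divisible by $\tau$. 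The only extra ingredient special to part (2) is the relation $d_0 g^2 = e_0^2 g$, which is visible already on the May $E_\infty$-page.
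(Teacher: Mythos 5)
Your strategy is exactly the paper's: lift the classical hidden $\eta$ extension from $g^2$ to $z$ (Table \ref{tab:extn-refs}) to a motivic hidden $\eta$ extension, deduce part (1) by factoring a $\tau$ through the $\eta$-multiplication as in Lemma \ref{lem:t-th0g^2}, and get part (2) by multiplying by $\kappa$ (equivalently, by $d_0$), which is precisely what the paper does. However, your weight bookkeeping is off by one power of $\tau$, and this is not cosmetic: it produces false statements. The element $z$ has degree $(41,10,22)$, while $\kappabar^2 \in \pi_{40,22}$ and hence $\eta\kappabar^2 \in \pi_{41,23}$; so the weight of $\eta\kappabar^2$ does \emph{not} match the weight of $z$, and there can be no motivic $\eta$ extension from $\tau^2 g^2$ (weight $22$) into $z$. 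The correct lift, recorded in Table \ref{tab:Adams-eta} at degree $(40,8,21)$ and used in the paper's proof, is an $\eta$ extension from $\tau^3 g^2$ to $z$; equivalently, $\eta\kappabar^2$ is detected by $\tau^2 h_1 g^2$, and it is $\tau\eta\kappabar^2$, not $\eta\kappabar^2$, that lies in $\{z\}$.

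Consequently your claims that ``$\tau^2 h_1 g^2 = 0$ on $E_\infty$'' and that ``the classical classes lift with precisely two factors of $\tau$, so that $\tau^2 h_1 g^2$ and $\tau^2 h_1 e_0^2 g$ vanish on $E_\infty$'' are wrong: these two classes survive to $E_\infty$ and are exactly the \emph{sources} of the hidden $\tau$ extensions, as Table \ref{tab:Adams-tau} records at degrees $(41,9,23)$ and $(55,13,31)$; what vanishes on $E_\infty$ is $\tau^3 h_1 g^2$ and $\tau^3 h_1 e_0^2 g$. (The lemma as printed shares this off-by-one naming --- a $\tau$ extension from the weight-$24$ element $\tau h_1 g^2$ into the weight-$22$ element $z$ is impossible for degree reasons --- so the table entries should be taken as the authoritative statement.) Your factorization argument is fine once repaired: take $\delta \in \{\tau^2 g^2\}$, so that $\gamma = \eta\delta$ is detected by $\tau^2 h_1 g^2$, and $\tau\gamma = \eta(\tau\delta)$ with $\tau\delta \in \{\tau^3 g^2\}$ lands in $\{z\}$ by the lifted extension; part (2) then follows by multiplying by $\kappa$, using $d_0 \cdot g^2 = e_0^2 g$. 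With that correction your proof coincides with the paper's.
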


\begin{proof}
There is a classical hidden $\eta$ extension 
from $g^2$ to $z$ given in Table \ref{tab:extn-refs}.
It follows that there is a motivic hidden $\eta$ extension
from $\tau^3 g^2$ to $z$.
The first claim follows immediately.

For the second claim, multiply the first hidden extension by $d_0$.
\end{proof}

\begin{lemma}
\label{lem:tau-h1^2g2}
There is no hidden $\tau$ extension on $h_1^2 g_2$.
\end{lemma}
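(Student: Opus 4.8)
The plan is to identify the unique candidate target and then exclude it by the cofiber-of-$\tau$ criterion that underlies Proposition~\ref{prop:hidden-tau}. First I would record the relevant degrees: $h_1^2 g_2$ lies in degree $(46,6,25)$, and the relation $h_1^2 g_2 = h_3^2 d_1$ in $\Ext$ shows it is a permanent cycle detecting $\sigma^2\{d_1\} = \eta^2\{g_2\}$. Its Chow degree $s+f-2w = 46+6-50 = 2$ is positive, so by the remark following Theorem~\ref{thm:Chow-0} the class $h_1^2 g_2$ maps to zero in $\Ext_{A_{\cl}}$ and is $\tau$-torsion; in particular $\tau\cdot h_1^2 g_2 = 0$ in $E_\infty$, so a hidden $\tau$ extension is a priori possible and must be ruled out. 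Inspecting the $E_\infty$-chart in stem $46$, weight $24$, and filtration greater than $6$, the only candidate target is $N$.

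The main step is to show that no element of $\{N\}$ is divisible by $\tau$. Following the template of Proposition~\ref{prop:hidden-tau}, I would pass to the cofiber $C\tau$ and use the computation of $\pi_{*,*}(C\tau)$ from Chapter~\ref{ch:Ctau}: an element of $\pi_{*,*}$ is divisible by $\tau$ exactly when it maps to zero under the inclusion $S^{0,0}\map C\tau$ of the bottom cell. I would read off from Chapter~\ref{ch:Ctau} that $N$ maps to a nonzero permanent cycle in the Adams spectral sequence for $C\tau$ that is not the target of any differential; hence $j_*$ is nonzero on all of $\{N\}$, so $\{N\}$ contains no $\tau$-divisible element. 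By Definition~\ref{defn:hidden} this rules out a hidden $\tau$ extension from $h_1^2 g_2$ to $N$, and therefore any hidden $\tau$ extension on $h_1^2 g_2$.

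An alternative I would keep in reserve is to produce directly an element $\beta \in \{h_1^2 g_2\}$ with $\tau\beta = 0$ and apply Lemma~\ref{lem:hidden-not-exist}; since $h_1^2 g_2$ detects $\sigma^2\{d_1\}$, this amounts to showing $\tau\sigma^2\{d_1\} = 0$, the content of Remark~\ref{rem:eta-th1g2}. I would deliberately avoid routing through Lemma~\ref{lem:eta-th1g2}, since that hidden-$\eta$ computation comes later and I want this argument internal to the $\tau$-extension analysis. The hard part is the bookkeeping near stem $46$ for $C\tau$: I must be certain I have caught every candidate target (not only $N$) and that the $C\tau$-differentials I invoke are already established in Chapter~\ref{ch:Ctau} without circular dependence on Chapter~\ref{ch:ANSS}. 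As a consistency check, the conclusion $\tau\sigma^2\{d_1\} = 0$ forces $\sigma^2\{d_1\}_{\cl} = 0$ classically, which is precisely the contradiction with the hidden $\eta$ extension $h_1 g_2 \to N$ claimed in \cite{Kochman90}.
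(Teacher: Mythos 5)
Your proposal has two fatal problems, and the more serious one is circularity rather than bookkeeping.

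First, the degrees are wrong, and this corrupts your candidate list. Since $g_2$ has degree $(44,4,24)$ and $d_1$ has degree $(32,4,18)$, the class $h_1^2 g_2 = h_3^2 d_1$ lies in $(46,6,26)$, not $(46,6,25)$. Its Chow degree is $46+6-52=0$, not $2$, so the remark following Theorem \ref{thm:Chow-0} tells you nothing about $\tau$-torsion; in fact under that theorem $h_1^2 g_2$ corresponds to the \emph{nonzero} classical class $h_0^2 g = h_2^2 d_0$ in the 20-stem. (That $\tau h_1^2 g_2$ vanishes on $E_\infty$ instead follows from the vanishing of classical $h_1^2 g_2$ in the 46-stem via $\tau$-inversion.) The possible targets of a hidden $\tau$ extension therefore live in degree $(46,f,25)$ with $f>6$, where the $E_\infty$-page contains not only $N$ but also $B_1$ and $\tau d_0 l + u'$. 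Your claim that ``the only candidate target is $N$'' is false, so even a correct argument about $N$ would leave the proof unfinished. The paper disposes of $B_1$ and $\tau d_0 l + u'$ by noting that $\eta^3\{g_2\}$ vanishes for degree reasons, so the target cannot support an $h_1$-multiplication, which both of those classes do.

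Second, your main step cannot be ``read off from Chapter \ref{ch:Ctau}.'' The only way $N$ could die in the Adams spectral sequence for $C\tau$ is the differential $d_3(\ol{h_1^2 g_2}) = N$, and the paper's proof that this differential vanishes cites Lemma \ref{lem:tau-h1^2g2} itself: non-$\tau$-divisibility of $\{N\}$ is a \emph{consequence} of the lemma you are proving, not an available input. The hidden-$\tau$-extension question and the $C\tau$-differential question are the same question in two guises, so the cofiber-of-$\tau$ criterion can never resolve this case without an independent argument. The paper breaks the circle by a different route: Lemma \ref{lem:eta-th1g2} shows there is no hidden $\eta$ extension on $\tau h_1 g_2$ (the target $N$ is excluded because it supports a hidden $\nu$ extension by Lemma \ref{lem:nu-h2c1}, and $B_1$, $\tau d_0 l + u'$ are excluded by the $h_1$-multiplication argument above), and then a hidden $\tau$ extension on $h_1^2 g_2$ is impossible because it would force $\eta\cdot\tau\eta\{g_2\}\neq 0$, i.e., exactly such a hidden $\eta$ extension. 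Your ``alternative in reserve'' is to prove $\tau\sigma^2\{d_1\}=0$ while deliberately avoiding Lemma \ref{lem:eta-th1g2}; but that vanishing is precisely the content of Lemma \ref{lem:eta-th1g2} together with Remark \ref{rem:eta-th1g2}, and you offer no substitute argument for it. As written, the proposal does not contain a proof.
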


\begin{proof}
We will show in Lemma \ref{lem:eta-th1g2} that there is no hidden
$\eta$ extension on $\tau h_1 g_2$.  This implies that there is no
hidden $\tau$ extension on $h_1^2 g_2$.
\end{proof}

\begin{lemma}
There is no hidden $\tau$ extension on $\tau h_2 d_1 g$.
\end{lemma}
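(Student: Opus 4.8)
The plan is to rule out a hidden $\tau$ extension on $\tau h_2 d_1 g$ by producing a single element $\beta \in \{\tau h_2 d_1 g\}$ with $\tau \beta = 0$ and then invoking Lemma \ref{lem:hidden-not-exist}, exactly as in the proof of Lemma \ref{lem:t.eta-d1} for the neighboring classes $h_1 d_1$ and $h_1 d_1 g$. The conceptual backbone is the principle from Section \ref{subsctn:hidden-tau}: an element of $\pi_{*,*}$ is divisible by $\tau$ if and only if it maps to zero under the inclusion $S^{0,0} \map C\tau$ of the bottom cell, so a hidden $\tau$ extension out of $\tau h_2 d_1 g$ would force some element of the target class $\{c\}$ to die in $\pi_{*,*}(C\tau)$. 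First I would locate the unique candidate target $c$ in the relevant degree of the $E_\infty$-chart (stem $55$, one higher Adams filtration, weight lowered by one), since everything reduces to showing that $\tau \cdot \{\tau h_2 d_1 g\}$ cannot land in $\{c\}$.

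Next I would obtain a usable description of the source class. Since $h_2$ detects $\nu$, a generator of $\{\tau h_2 d_1 g\}$ is $\nu \gamma$ for a suitable $\gamma$ detected by $\tau d_1 g$, and Table \ref{tab:Toda} gives $\{d_1 g\} = \langle \{d_1\}, \eta^3, \eta_4 \rangle$. The key feature to exploit is that $\eta^3 \eta_4 = 0$ by the very definition of $\eta_4$, so the threefold bracket is defined and behaves well under shuffling. After passing the scalar factors $\tau$ across the bracket, the goal is to rewrite $\tau \cdot \nu \gamma$ so that it is annihilated by the relation $\eta^3 \eta_4 = 0$ or by a degree count analogous to the vanishing of $\langle \tau\eta, \{d_1\}, \eta^3\rangle \eta_4$ used in Lemma \ref{lem:t.eta-d1}. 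As a cross-check, the relation $\tau h_2^2 d_1 g = h_1^3 B_6$ of Lemma \ref{lem:h1-h1^2B6} together with $d_2(D_4) = h_1 B_6$ from Lemma \ref{lem:d2-D4} shows $h_1 B_6$, and hence $h_1^3 B_6 = \tau h_2^2 d_1 g$, dies by the $E_3$-page, which constrains the multiplicative behavior of the $E_\infty$-class and helps eliminate spurious targets.

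The main obstacle is that the naive shuffle $\nu \langle \{d_1\}, \eta^3, \eta_4 \rangle = \langle \nu, \{d_1\}, \eta^3 \rangle \eta_4$ is \emph{not} legitimate, because $\nu \{d_1\}$ need not vanish: indeed $h_2 d_1 \neq 0$ in $\Ext$ (otherwise $h_2^2 d_1 = 0$, contradicting $\tau h_2^2 d_1 g = h_1^3 B_6$), so $\langle \nu, \{d_1\}, \eta^3\rangle$ is not defined. The way around this is to carry the computation into $\pi_{*,*}(C\tau)$ of Chapter \ref{ch:Ctau}, where the obstructing products of bottom-cell classes collapse, so that the relevant bracket is forced to lie in a group with no room for a nonzero value; reading off $\pi_{*,*}(C\tau)$ in this degree should show that the only potential target carries no $\tau$-divisible element attributable to this source. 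Controlling the bracket indeterminacy and confirming that the $\nu$-multiplication into $\{\tau h_2 d_1 g\}$ suffers no Adams-filtration jump are the delicate points; once $\tau \beta = 0$ is established for the chosen $\beta$, Lemma \ref{lem:hidden-not-exist} finishes the argument, and the result fits the pattern of the non-existence statements collected under Proposition \ref{prop:hidden-tau}.
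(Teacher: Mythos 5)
Your overall template---produce an element of $\{\tau h_2 d_1 g\}$ annihilated by $\tau$ and invoke Lemma \ref{lem:hidden-not-exist}---is the right one, but neither of your concrete routes gets there, and the second one rests on a false premise. The bracket route you concede yourself is illegitimate (the shuffle requires $\nu \{d_1\} = 0$, which fails). The fallback is worse: you propose to show that no element of the unique candidate target class is divisible by $\tau$, by checking that nothing in that class dies under the inclusion of the bottom cell of $C\tau$. But the unique candidate target here is $d_0 z$, and elements of $\{d_0 z\}$ \emph{are} divisible by $\tau$: Lemma \ref{lem:t-th1g^2} (see also Table \ref{tab:Adams-tau}) establishes a hidden $\tau$ extension from $\tau h_1 e_0^2 g$ to $d_0 z$. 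So if you carry out your $\pi_{*,*}(C\tau)$ computation you will find that $\{d_0 z\}$ does map to zero there, and the bottom-cell criterion by itself cannot tell whether that $\tau$-divisibility is attributable to $\tau h_2 d_1 g$ or to some other source. Your argument stalls exactly at this point.

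What closes the argument is precisely the fact you would have run into: the $\tau$-divisibility of $\{d_0 z\}$ is accounted for by $\tau h_1 e_0^2 g$, a class in the same stem and weight but of strictly higher Adams filtration, so $\{\tau h_1 e_0^2 g\}$ is contained in the indeterminacy of $\{\tau h_2 d_1 g\}$. Consequently, given any $\alpha$ in $\{\tau h_2 d_1 g\}$ whose $\tau$-multiple lands in $\{d_0 z\}$, one may add a suitable element $\beta$ of $\{\tau h_1 e_0^2 g\}$ with $\tau\beta$ in the same coset; then $\alpha + \beta$ still lies in $\{\tau h_2 d_1 g\}$ and is annihilated by $\tau$, and Lemma \ref{lem:hidden-not-exist} finishes. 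Equivalently, this is condition (3) of Definition \ref{defn:hidden} in action: the highest-filtration source of a $\tau$ extension into $\{d_0 z\}$ is $\tau h_1 e_0^2 g$, not $\tau h_2 d_1 g$. This two-line argument needs no Toda brackets, no factorization of the source as $\nu\gamma$, and no computation in $\pi_{*,*}(C\tau)$; your observations about $h_2 d_1 \neq 0$ and about $d_2(D_4) = h_1 B_6$, while correct, play no role.
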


\begin{proof}
The only other possibility is that there is a hidden $\tau$  extension
from $\tau h_2 d_1 g$ to $d_0 z$.
However, we showed in Lemma \ref{lem:t-th1g^2} that there 
is a hidden $\tau$ extension from $\tau h_1 e_0^2 g$ to $d_0 z$.
Since $\{ \tau h_1 e_0^2 g \}$ is contained in the indeterminacy
of $\{ \tau h_2 d_1 g \}$, there exists an element of
$\{ \tau h_2 d_1 g\}$ that is annihilated by $\tau$.
Lemma \ref{lem:hidden-not-exist} finishes the proof.
\end{proof}

\begin{lemma}
\label{lem:tau-D11}
There is no hidden $\tau$ extension on $D_{11}$.
\end{lemma}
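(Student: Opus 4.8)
The plan is to deduce the lemma from Lemma \ref{lem:hidden-not-exist}, so that it suffices to exhibit a single element of $\{D_{11}\}$ that is annihilated by $\tau$, or equivalently to show that the candidate target of a hidden $\tau$ extension on $D_{11}$ in the $57$-stem cannot be reached. As in the other hidden $\tau$ arguments of this section (compare Remarks \ref{rem:tau-h1i1} and \ref{rem:Adams-d4-C'}), I would transport the question into the homotopy of the cofiber $C\tau$ studied in Chapter \ref{ch:Ctau}: a hidden $\tau$ extension from $D_{11}$ to a class $c$ in the $57$-stem occurs precisely when the top-cell lift $\ol{D_{11}}$, which lives one stem higher, supports the corresponding Adams differential hitting $c$ in the motivic Adams spectral sequence for $C\tau$. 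Thus the first step is to move the problem to the $58$-stem of $E_*(C\tau)$, where $\ol{D_{11}}$ lives.

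Next I would invoke the identification of Chapter \ref{ch:ANSS} between $\pi_{*,*}(C\tau)$ and the $E_2$-page of the classical Adams--Novikov spectral sequence, under which Adams differentials for $C\tau$ correspond to Adams--Novikov differentials. Tracking the weight of $D_{11}$ through the projection to the top cell shows that $\ol{D_{11}}$ lies in Adams--Novikov filtration $2$ and is not divisible by $\alpha_1$. By Shimomura's computation of the Adams--Novikov $E_2$-page \cite{Shimomura81}, the only class in the $58$-stem with filtration $2$ that is not divisible by $\alpha_1$ is $\beta_{12/6}$, so $\ol{D_{11}}$ must be detected by $\beta_{12/6}$. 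A hidden $\tau$ extension on $D_{11}$ therefore exists if and only if $\beta_{12/6}$ supports the associated Adams--Novikov differential. Since $\beta_{12/6}$ is a permanent cycle in the Adams--Novikov spectral sequence — a fact one reads off from \cite{Shimomura81} at the level of the $E_2$-page and its intrinsic $\beta$-family structure — no such differential occurs, and hence there is no hidden $\tau$ extension on $D_{11}$.

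The hard part will be twofold. First, the weight bookkeeping must be carried out carefully enough to guarantee that $\ol{D_{11}}$ lands in Adams--Novikov filtration \emph{exactly} $2$ and is genuinely not divisible by $\alpha_1$, since the entire argument turns on the uniqueness clause in Shimomura's enumeration; an off-by-one in the filtration or an overlooked $\alpha_1$-multiple would vitiate the identification with $\beta_{12/6}$. Second, as flagged in Remark \ref{rem:tau-D11}, there is a genuine risk of circularity, because Chapter \ref{ch:ANSS} extracts Adams--Novikov information \emph{from} the present motivic computation. I would accordingly be scrupulous to use only the $E_2$-intrinsic facts of \cite{Shimomura81} — the list of filtration-$2$ classes in the $58$-stem and the permanence of $\beta_{12/6}$ — and nothing that was itself derived from the motivic Adams spectral sequence. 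A proof internal to the motivic Adams spectral sequence would be preferable, but this Adams--Novikov input appears to be the only route currently available.
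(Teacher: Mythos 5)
Your overall route --- passing to the cofiber of $\tau$, invoking Proposition \ref{prop:ANSS-Ctau}, and using Shimomura's identification of $\beta_{12/6}$ as the unique filtration-$2$ class in the $58$-stem not divisible by $\alpha_1$ --- is the same as the paper's, but the engine of your argument runs backwards, and it rests on a false claim. You assert that $\beta_{12/6}$ is a permanent cycle in the Adams--Novikov spectral sequence and that this can be ``read off from \cite{Shimomura81} at the level of the $E_2$-page.'' It cannot: Shimomura computes only the $E_2$-page, and permanence is never an $E_2$-level fact. In fact the opposite is true, and the paper proves it: inspection of the motivic Adams $E_\infty$-page shows that every element of $\pi_{58,30}$ is divisible by $\tau$, so $\beta_{12/6}$ cannot survive (a surviving filtration-$2$ class in the $58$-stem would, by Proposition \ref{prop:ANSS-Ctau} and the $\tau$-Bockstein interpretation, correspond to an element of $\pi_{58,30}$ that is \emph{not} divisible by $\tau$). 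Hence $\beta_{12/6}$ must support an Adams--Novikov differential; it appears in Table \ref{tab:ANSS-non-permanent} as $z_{58,2}$ for exactly this reason.

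Your stated equivalence is also inverted. In the framework of Chapter \ref{ch:ANSS}, an Adams--Novikov $d_{2r+1}$ on $\beta_{12/6}$ corresponds to an element of $\pi_{57,r+30}$ that is annihilated by $\tau^r$ and not divisible by $\tau$; degree inspection forces $r=1$, with the corresponding element of $\pi_{57,31}$ detected by $D_{11}$. Thus it is precisely the \emph{existence} of the differential on $\beta_{12/6}$ that produces an element of $\{D_{11}\}$ killed by $\tau$, whence Lemma \ref{lem:hidden-not-exist} rules out the hidden extension; had $\beta_{12/6}$ been permanent, as you claim, you would obtain no constraint on $\{D_{11}\}$ of this kind at all. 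Your two errors cancel to give the correct conclusion, but the proof is not valid. A further conflation is worth flagging: differentials in the motivic Adams spectral sequence for $C\tau$ (which compute $\pi_{*,*}(C\tau)$, i.e.\ the Adams--Novikov $E_2$-page) are not the same as Adams--Novikov differentials (which encode $\tau$-torsion in $\pi_{*,*}$ of the sphere); your first step uses the former and your last step the latter as if they were interchangeable. Finally, on circularity: the paper stays non-circular by taking from \cite{Shimomura81} only the $E_2$-level uniqueness of $\beta_{12/6}$ and \emph{deducing} the differential from the motivic computation, whereas your proof imports a differential-level claim (permanence), which is exactly the kind of input that cannot legitimately be used here.
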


\begin{proof}
This proof is different in spirit from the rest of the manuscript
because it relies on specific calculations in the classical
Adams-Novikov spectral sequence.
\index{Adams-Novikov spectral sequence!classical}

There is an element $\beta_{12/6}$
in the Adams-Novikov spectral sequence in the 58-stem with
filtration 2 \cite{Shimomura81}.  
Using Proposition \ref{prop:ANSS-Ctau},
if this class survives, then it would 
correspond to an element of
$\pi_{58,30}$ that is not divisible by $\tau$.
By inspection of the $E_\infty$-page of the
motivic Adams spectral sequence, there is no such 
element in $\pi_{58,30}$.
Therefore, $\beta_{12/6}$ must support a differential
in the Adams-Novikov spectral sequence.

Using the framework of Chapter \ref{ch:ANSS},
an Adams-Novikov $d_{2r+1}$ differential on $\beta_{12/6}$
would correspond to an element of $\pi_{57,r+30}$
that is not divisible by $\tau$; that is not killed by
$\tau^{r-1}$; and that is annihilated by $\tau^r$.
By inspection of the $E_\infty$-page of the motivic
Adams spectral sequence, the only possibility is that
$r = 1$, and the corresponding element of $\pi_{57,31}$
is detected by $D_{11}$.
\end{proof}

\begin{lemma}
There is no hidden $\tau$ extension on $h_3^2 g_2$.
\end{lemma}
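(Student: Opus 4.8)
The plan is to show that the only formally possible target of a hidden $\tau$ extension on $h_3^2 g_2$ (an element of the $57$-stem, the same stem as $D_{11}$ treated in Lemma \ref{lem:tau-D11}) carries no $\tau$-divisible homotopy class, or, equivalently by Lemma \ref{lem:hidden-not-exist}, to exhibit a representative of $\{h_3^2 g_2\}$ that is annihilated by $\tau$. First I would locate $h_3^2 g_2$ on the $E_\infty$-page and read off, from the chart in the $57$-stem in the appropriate weight, every class $c$ of strictly higher Adams filtration into which a hidden $\tau$ multiplication could land; condition (3) of Definition \ref{defn:hidden} restricts attention to those $c$ that are not themselves $\tau$-multiples, which should leave at most one candidate.

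For the candidate target $c$, the primary tool is the homotopy of the cofiber $C\tau$ from Chapter \ref{ch:Ctau}: as in the proof of Proposition \ref{prop:hidden-tau}, if no element of $\{c\}$ maps to zero under the inclusion $S^{0,0} \to C\tau$ of the bottom cell, then no element of $\{c\}$ is divisible by $\tau$, and the extension cannot occur. I expect this to dispatch the generic part of the argument immediately.

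If the $C\tau$ computation does not settle the candidate directly (this lemma sits among the harder non-existence results of the section), I would instead work on the source side. The class $h_3^2 g_2$ detects $\sigma^2 \{g_2\}$, so by Lemma \ref{lem:hidden-not-exist} it suffices to prove $\tau \sigma^2 \{g_2\} = 0$. The strategy is to rewrite $\sigma^2 \{g_2\}$, or a suitable representative, as a Toda bracket, using the relations $h_1^2 g_2 = h_3^2 d_1$ and $h_0^2 h_3 g_2 = h_2^2 h_5 d_0$ together with Moss's Convergence Theorem \ref{thm:Moss} and the brackets in Table \ref{tab:Toda}, and then to shuffle $\tau$ into an interior slot where the relevant homotopy group forces the product to vanish, in the same spirit as the treatment of $h_1 d_1 g$ in Lemma \ref{lem:t.eta-d1}.

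The hard part will be the bracket bookkeeping: controlling the indeterminacy of the intermediate Toda bracket and ruling out the ``crossing'' behavior warned about after Definition \ref{defn:hidden}, so that vanishing of $\tau$ times the bracket really does certify vanishing of $\tau \sigma^2 \{g_2\}$ rather than merely that of some coset representative. I would also keep in mind that, as with Lemma \ref{lem:tau-D11} in the same stem, an appeal to the classical Adams--Novikov data of Chapter \ref{ch:ANSS} is available as a fallback to eliminate the target, although a proof internal to the motivic Adams spectral sequence is preferable.
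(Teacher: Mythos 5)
There is a genuine gap: your proposal never identifies the unique candidate target, and neither of your two routes contains the decisive step. First, a degree slip: $h_3^2 g_2$ lies in the $58$-stem (filtration $6$, weight $32$), not the $57$-stem, so the analogy with $D_{11}$ and Lemma \ref{lem:tau-D11} is misplaced. The only possible target of a hidden $\tau$ extension is $h_1 Q_2$, and the paper's proof is a short $\eta$-multiplication comparison that appears nowhere in your plan: $\{h_3^2 g_2\}$ contains $\sigma^2 \{g_2\}$, which is annihilated by $\eta$ because $\eta \sigma^2 = 0$ \cite{Toda62}; hence $\tau \sigma^2 \{g_2\}$ is also annihilated by $\eta$. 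On the other hand, every element of $\{h_1 Q_2\}$ supports a non-zero $\eta$-multiplication, since $\eta \{h_1 Q_2\}$ equals $\{h_1^2 Q_2\}$, which is non-zero (this is a non-hidden $h_1$-multiplication on the $E_\infty$-page). Therefore $\tau \{h_3^2 g_2\}$ cannot intersect $\{h_1 Q_2\}$, and the extension cannot occur. Playing the $\eta$-torsion of the source against the $\eta$-nontriviality of the target is the whole proof, and it is the idea your proposal is missing.

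As for your two suggested routes: the cofiber-of-$\tau$ argument is indeed the paper's generic method (see the proof of Proposition \ref{prop:hidden-tau}), but for this target it is not an immediate chart lookup, which is presumably why this lemma is among the ``more difficult cases.'' In $E_2(C\tau)$ one has $d_2(\ol{D_4}) = h_1 \cdot \ol{B_6} + Q_2$ (Lemma \ref{lem:Ctau-d2-D4}), so on $E_3(C\tau)$ the class $h_1 Q_2$ is identified with $h_1^2 \cdot \ol{B_6}$, and certifying that no element of $\{h_1 Q_2\}$ maps to zero in $\pi_{*,*}(C\tau)$ requires chasing this class (and the hidden extensions on $\ol{\tau h_2 g n}$) out to $E_\infty(C\tau)$ and then applying Proposition \ref{prop:Ctau-bottom} --- precisely the bookkeeping you defer with ``I expect this to dispatch the generic part immediately.'' Your fallback route would suffice in principle, since by Lemma \ref{lem:hidden-not-exist} it is enough to show $\tau \sigma^2 \{g_2\} = 0$; but you give no actual bracket to compute: the relations $h_1^2 g_2 = h_3^2 d_1$ and $h_0^2 h_3 g_2 = h_2^2 h_5 d_0$ do not express $\sigma^2 \{g_2\}$ as a Toda bracket with $\tau$ shuffleable into an interior slot, no such identity appears in Table \ref{tab:Toda}, and Moss's Convergence Theorem \ref{thm:Moss} has nothing to converge to without one. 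The single classical relation $\eta \sigma^2 = 0$, which you never invoke, is what closes the argument.
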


\begin{proof}
The only other possibility is that there is a hidden $\tau$
extension from $h_3^2 g_2$ to $h_1 Q_2$.
However, $\eta \{h_1 Q_2\}$ equals $\{ h_1^2 Q_2 \}$, which is non-zero.
On the other hand, $\{ h_3^2 g_2 \}$ contains the element $\sigma^2 \{g_2\}$.
This is annihilated by $\eta$ because
$\eta \sigma^2 = 0$ \cite{Toda62}.
It follows that $\tau \{ h_3^2 g_2 \}$ cannot intersect $\{h_1 Q_2 \}$.
\end{proof}

\begin{lemma}
\label{lem:t.h3d1g}
There is no hidden $\tau$ extension on $h_3 d_1 g$.
\end{lemma}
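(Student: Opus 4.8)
The plan is to rule out the unique possible target of such an extension by exhibiting an element of $\{h_3 d_1 g\}$ that is annihilated by $\tau$, so that Lemma \ref{lem:hidden-not-exist} applies. First I would read off from the $E_\infty$-chart in \cite{Isaksen14a} that in this stem the only class of strictly higher Adams filtration that could receive a hidden $\tau$ extension from $h_3 d_1 g$ is a single candidate $c$ of weight one less; by Lemma \ref{lem:hidden-not-exist} it then suffices to produce some $\beta$ in $\{h_3 d_1 g\}$ with $\tau\beta = 0$, since every other element of $\{h_3 d_1 g\}$ differs from $\beta$ by a class of strictly higher filtration and hence cannot support the extension either.

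The key input is the Toda bracket $\{d_1 g\} = \langle \{d_1\}, \eta^3, \eta_4 \rangle$ recorded in Table \ref{tab:Toda}, which has no indeterminacy and which already drove the proof of Lemma \ref{lem:t.eta-d1}. Since $h_3 d_1 g$ detects $\sigma \{d_1 g\}$, I would analyze $\tau \sigma \{d_1 g\}$. Juggling $\sigma$ into the first slot gives $\tau \sigma \{d_1 g\} \subseteq \tau \langle \sigma \{d_1\}, \eta^3, \eta_4 \rangle$, and I would then move $\tau$ across the bracket and invoke the vanishing established in Lemma \ref{lem:t.eta-d1}, where $\tau \eta \{d_1 g\} = 0$ was shown by an essentially identical shuffle. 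The aim is to force the product into a bracket of the form $\langle 0, \eta^3, \eta_4 \rangle$, which contains zero, thereby producing the desired $\tau$-torsion element of $\{h_3 d_1 g\}$. A degree count on the shuffle should confirm that every element it produces lies in filtration strictly above that of $c$, which is what makes the reduction to Lemma \ref{lem:hidden-not-exist} legitimate.

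The hard part will be keeping careful track of the indeterminacy of the threefold bracket after the juggle and verifying that the first entry $\sigma \{d_1\}$ behaves as needed, so that the bracket genuinely collapses rather than merely containing zero alongside unwanted elements. If the bracket manipulation does not close up cleanly, the fallback is the homotopy of $C\tau$ computed in Chapter \ref{ch:Ctau}: one would show directly that no element of $\{c\}$ maps to zero under the inclusion of the bottom cell $S^{0,0} \map C\tau$, so that $\{c\}$ contains no $\tau$-divisible class and no hidden $\tau$ extension into $c$ is possible. Either route reduces the statement entirely to facts already available in the excerpt.
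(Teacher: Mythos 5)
Your main route has a genuine gap. The shuffle you propose is modeled on the second half of Lemma \ref{lem:t.eta-d1}, but the analogy breaks at exactly the step you need: that argument works because $\tau\eta\{d_1\}=0$ (the first part of Lemma \ref{lem:t.eta-d1}), so that the bracket $\langle \tau\eta, \{d_1\}, \eta^3\rangle$ is defined and $\tau\eta\langle \{d_1\},\eta^3,\eta_4\rangle$ can be rewritten as $\langle \tau\eta, \{d_1\}, \eta^3\rangle\eta_4$. The $\sigma$-analogue of that vanishing is false: $\sigma\{d_1\}$ is detected by $h_3 d_1$, and $\tau h_3 d_1$ is a nonzero permanent cycle in $E_\infty$ (the paper even analyzes hidden $\eta$ extensions on $\tau h_3 d_1$, which presupposes it survives), so $\tau\sigma\{d_1\}\neq 0$. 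Consequently neither $\langle \tau\sigma, \{d_1\}, \eta^3\rangle$ nor a collapse to $\langle 0,\eta^3,\eta_4\rangle$ is available, and no amount of indeterminacy bookkeeping rescues the computation. The conclusion you are after — that some $\beta\in\{h_3 d_1 g\}$ satisfies $\tau\beta=0$ — is true, but it is a \emph{consequence} of the lemma rather than an accessible stepping stone. The paper's proof sidesteps computing $\tau\alpha$ altogether: for any $\alpha\in\{h_3 d_1 g\}$ one has $\eta\alpha=\eta\sigma\{d_1 g\}$ (the indeterminacy term $\nu\{gt\}$ is killed by $\eta$), so $\tau\eta\alpha=0$ by Lemma \ref{lem:t.eta-d1}; on the other hand $\eta\{P h_1^3 h_5 e_0\}=\{\tau^2 h_0 g^3\}$ is nonzero (via $P h_1^4 h_5 e_0 = \tau^2\cdot\tau h_0 g^3$ from Lemma \ref{lem:t^2-h2g^2}), so $\tau\alpha$ cannot lie in $\{P h_1^3 h_5 e_0\}$, the unique candidate target.

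Your fallback is also not available, at least not without new input: it is circular within the paper's logic. Whether $\{P h_1^3 h_5 e_0\}$ contains a $\tau$-divisible element is exactly the question of whether $P h_1^3 h_5 e_0$ dies in $E_\infty(C\tau)$, and the only differential that could kill it is $d_4(\ol{h_3 d_1 g})$ in the Adams spectral sequence for $C\tau$. The paper rules out that differential (Lemma \ref{lem:d4_h3d1g}) precisely by citing Lemma \ref{lem:t.h3d1g}; so appealing to the $C\tau$ computation here assumes the statement you are trying to prove, unless you supply an independent determination of that $d_4$.
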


\begin{proof}
The only other possibility is that there is a hidden $\tau$ extension
from $h_3 d_1 g$ to $P h_1^3 h_5 e_0$.  We will argue that this cannot occur.

Let $\alpha$ be an element of $\{h_3 d_1 g\}$.
Note that $\alpha$ equals either $\sigma \{d_1 g\}$ or 
$\sigma \{d_1 g\} + \nu \{ g t\}$.  In either case,
$\eta \alpha$ equals $\eta \sigma \{d_1 g\}$, which is a non-zero element
of $\{h_1 h_3 d_1 g\}$.
We know from Lemma \ref{lem:t.eta-d1} that $\tau \eta \{d_1 g\}$ is zero,
so $\tau \eta \alpha$ is zero.

On the other hand, 
$\eta \{ P h_1^3 h_5 e_0\}$ equals $\{ \tau^2 h_0 g^3 \}$, which
is non-zero.  Therefore,
$\tau \alpha$ cannot equal $\{ P h_1^3 h_5 e_0\}$.
\end{proof}

\begin{lemma}
\label{lem:t-Ph1^3h5e0}
There is a hidden $\tau$ extension from
$P h_1^3 h_5 e_0$ to $\tau d_0 w$.
\end{lemma}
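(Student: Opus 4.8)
The plan is to realize this as a statement about $\tau$-divisibility in $\pi_{*,*}$ and to detect that divisibility with the cofiber of $\tau$, exactly as in the first type of argument described in the proof of Proposition \ref{prop:hidden-tau}. Recall from Section \ref{subsctn:hidden-tau} that an element of $\pi_{*,*}$ is divisible by $\tau$ precisely when it maps to zero under the inclusion $S^{0,0} \map C\tau$ of the bottom cell. So the goal is to show that some element of $\{\tau d_0 w\}$ maps to zero in $\pi_{*,*}(C\tau)$, and then to check by degree bookkeeping that $P h_1^3 h_5 e_0$ is the unique possible source of the resulting hidden $\tau$ extension. Concretely, $P h_1^3 h_5 e_0$ lies in degree $(59,12,33)$ and $\tau d_0 w$ in degree $(59,13,32)$; since $\tau$ raises Adams filtration (for a hidden extension) and drops weight by one, any hidden $\tau$ extension hitting $\tau d_0 w$ must come from a class of weight $33$ and strictly smaller filtration in the $59$-stem, and inspection of the $E_\infty$-chart of \cite{Isaksen14a} shows $P h_1^3 h_5 e_0$ is the only candidate.

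First I would pass to the Adams spectral sequence for $C\tau$ analyzed in Chapter \ref{ch:Ctau} and track the class lying over $d_0 w$. The expectation is that the relevant lift is not a permanent cycle in the $C\tau$ spectral sequence (equivalently, that the bottom-cell class $\tau d_0 w$ is carried by a $\tau$-tower whose top is $P h_1^3 h_5 e_0$), so that the image of $\{\tau d_0 w\}$ in $\pi_{*,*}(C\tau)$ vanishes. Granting this, $\{\tau d_0 w\}$ is $\tau$-divisible in $\pi_{*,*}$, and combined with the degree analysis of the previous paragraph this yields a hidden $\tau$ extension from $P h_1^3 h_5 e_0$ to $\tau d_0 w$ by Lemma \ref{lem:hidden-exist}.

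As an internal consistency check, and as a possible alternative route avoiding $C\tau$, I would multiply the putative extension by $\eta$ and compare with the relation $\eta \cdot \{P h_1^3 h_5 e_0\} = \{\tau^2 h_0 g^3\}$ established in the proof of Lemma \ref{lem:t.h3d1g}: the resulting class $\tau \cdot \{\tau^2 h_0 g^3\}$ must be compatible with $\eta \cdot \{\tau d_0 w\}$, which constrains the target and helps rule out spurious possibilities. The hard part will be the $C\tau$ input itself, namely identifying precisely why the class detecting $\tau d_0 w$ dies in $\pi_{*,*}(C\tau)$, since this sits in a dense region of the chart and requires careful control of the module structure and differentials for the cofiber of $\tau$. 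A secondary subtlety is excluding ``crossing'' $\tau$ extensions in the sense of Definition \ref{defn:hidden}, i.e.\ confirming that no intermediate class in the $59$-stem of weight $33$ interferes; here the sparseness of the chart between filtrations $12$ and $13$ should suffice.
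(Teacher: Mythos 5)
There is a genuine gap: your entire argument rests on the assertion that some element of $\{\tau d_0 w\}$ maps to zero in $\pi_{*,*}(C\tau)$, but you never establish this — you write ``the expectation is\dots'' and then ``granting this\dots'', and you concede at the end that this is the hard part. That step is not a routine chart-reading exercise, and in fact it cannot be done by inspecting $E_\infty(C\tau)$: since $\tau d_0 w$ is a $\tau$-multiple in $\Ext$, its image in $E_2(C\tau)$ is already zero, so the question of whether $j_*(\beta)$ vanishes for $\beta \in \{\tau d_0 w\}$ is governed entirely by behavior of $j_*$ that is hidden by the Adams filtration (filtration $>13$ in the $59$-stem of $C\tau$). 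This is presumably why the paper does \emph{not} justify this entry of Table \ref{tab:Adams-tau} by ``cofiber of $\tau$'' (as it does for most hidden $\tau$ extensions) but instead gives a separate proof. Your degree bookkeeping also has a secondary gap: $j_1$ sits in $(59,7,33)$, so even granting $\tau$-divisibility of an element of $\{\tau d_0 w\}$, the divisor could a priori be detected by $j_1$ rather than by $P h_1^3 h_5 e_0$; the paper's own table leaves the $\tau$ extension on $j_1$ unresolved, so you cannot rule this out by citing its results.

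The paper's actual proof avoids $C\tau$ altogether and instead shuffles Toda brackets. From Table \ref{tab:Toda}, $\langle \tau, \nu \kappabar^2, \eta \rangle$ contains $\{\tau w\}$ and $\langle \nu \kappabar^2, \eta, \eta \kappa \rangle$ equals $\{P h_1^4 h_5 e_0\}$ with no indeterminacy; the shuffle
\[
\tau \langle \nu \kappabar^2, \eta, \eta \kappa \rangle =
\langle \tau, \nu \kappabar^2, \eta \rangle \eta \kappa
\]
gives $\tau \{P h_1^4 h_5 e_0\} = \eta\kappa\{\tau w\}$, and the classical extension $\eta\{w\} = \{d_0 l\}$ shows this is non-zero. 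Since $\tau\{P h_1^4 h_5 e_0\} = \tau\eta\{P h_1^3 h_5 e_0\}$, it follows that $\tau\{P h_1^3 h_5 e_0\}$ is non-zero, and $\tau d_0 w$ is the only possible non-zero value. If you want to salvage your approach, you would need to supply an independent computation showing the required vanishing in $\pi_{*,*}(C\tau)$, which amounts to work of the same order as the paper's bracket argument; your $\eta$-multiplication ``consistency check'' constrains but does not decide the question.
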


\begin{proof}
Table \ref{tab:Toda} shows that 
$\langle \tau, \nu \kappabar^2, \eta \rangle$ in $\pi_{45,24}$
contains the element $\{ \tau w \}$.
This bracket has indeterminacy generated by $\tau \eta \{ g_2 \}$.
Table \ref{tab:Toda} also shows that the bracket
$\langle \nu \kappabar^2, \eta, \eta \kappa \rangle$ equals
$\{ P h_1^4 h_5 e_0 \}$,
with no indeterminacy.

Now use the shuffle
$\tau \langle \nu \kappabar^2, \eta, \eta \kappa \rangle =
\langle \tau, \nu \kappabar^2, \eta \rangle \eta \kappa$
to conclude that
$\tau \{ P h_1^4 h_5 e_0 \}$ equals $\eta \kappa \{\tau w\}$.

There is a classical extension $\eta \{ w \} = \{ d_0 l \}$,
as shown in Table \ref{tab:extn-refs}.
It follows that there is a motivic relation
$\eta \kappa \{ \tau w\} = \{ \tau d_0^2 l + d_0 u' \}$;
in particular, it is non-zero.

We have shown that $\tau \{ P h_1^4 h_5 e_0 \}$ is non-zero.
But this equals $\tau \eta \{ P h_1^3 h_5 e_0 \}$, so
$\tau \{ P h_1^3 h_5 e_0 \}$ is also non-zero.
There is just one possible non-zero value.
\end{proof}

\begin{lemma}
There is no hidden $\tau$ extension on $\tau^2 c_1 g^2$.
\end{lemma}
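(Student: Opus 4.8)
The plan is to invoke Lemma \ref{lem:hidden-not-exist}: it suffices to exhibit a single element $\beta$ of $\{\tau^2 c_1 g^2\}$ that is annihilated by $\tau$, since the existence of such a $\beta$ rules out any hidden $\tau$ extension on $\tau^2 c_1 g^2$. The source of such a $\beta$ is the relation $\tau^2 c_1 g^2 = h_1^2 D_{11}$ established in Lemma \ref{lem:h1-h1D11}, which holds in $\Ext$ and hence on the Adams $E_\infty$-page. Thus $\tau^2 c_1 g^2$ detects homotopy classes that are $\eta^2$-multiples of classes detected by $D_{11}$, and the needed $\tau$-torsion will be imported from the already-established analysis of $D_{11}$.

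Concretely, I would first recall from the proof of Lemma \ref{lem:tau-D11} that $D_{11}$ detects an element $\delta$ of $\pi_{57,31}$ which is annihilated by $\tau$ (it arises from an Adams--Novikov $d_3$ differential on $\beta_{12/6}$, and in the framework of Chapter \ref{ch:ANSS} such a class is precisely one killed by $\tau$). Next, using the equality $\tau^2 c_1 g^2 = h_1^2 D_{11}$, I would check that the product $\eta^2 \delta$ is detected by $\tau^2 c_1 g^2$, i.e.\ that $\eta^2 \delta$ is non-zero and has Adams filtration equal to that of $h_1^2 D_{11}$. Granting this, $\eta^2 \delta$ is an element of $\{\tau^2 c_1 g^2\}$, and
\[
\tau \cdot \eta^2 \delta = \eta^2 \cdot (\tau \delta) = 0.
\]
Lemma \ref{lem:hidden-not-exist} then immediately yields the claim.

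The main obstacle is the middle step: verifying that $\eta^2 \delta$ genuinely lies in $\{\tau^2 c_1 g^2\}$, rather than vanishing or being pushed into higher Adams filtration by some hidden $\eta$ extension on $D_{11}$ or $h_1 D_{11}$. Because Lemma \ref{lem:h1-h1D11} gives a literal equality $h_1^2 D_{11} = \tau^2 c_1 g^2$ in $\Ext$ (not merely a hidden homotopy extension), the expectation is that no filtration jump occurs, and this can be confirmed from the $E_\infty$-chart in \cite{Isaksen14a}. Should that verification prove awkward, an alternative is to argue entirely within the Adams--Novikov framework of Chapter \ref{ch:ANSS}, exactly as in the proof of Lemma \ref{lem:tau-D11}: identify the relevant homotopy in stem $59$ with an appropriate Adams--Novikov class and read off its $\tau$-divisibility and $\tau$-torsion directly, thereby sidestepping the detection question altogether.
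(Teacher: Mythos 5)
Your proof is correct, but it takes a genuinely different route from the paper's. The paper observes that, for degree reasons, the only possible target of a hidden $\tau$ extension on $\tau^2 c_1 g^2$ is $\tau d_0 w$, and then invokes Lemma \ref{lem:t-Ph1^3h5e0} (the hidden $\tau$ extension from $P h_1^3 h_5 e_0$ to $\tau d_0 w$): since $\{P h_1^3 h_5 e_0\}$ lies in the indeterminacy of $\{\tau^2 c_1 g^2\}$, one can adjust any element of $\{\tau^2 c_1 g^2\}$ by an element of $\{P h_1^3 h_5 e_0\}$ to obtain one annihilated by $\tau$. You instead manufacture the $\tau$-torsion element directly as $\eta^2\delta$, where $\delta$ is the class detected by $D_{11}$ with $\tau\delta = 0$ coming from the proof of Lemma \ref{lem:tau-D11}, combined with the $\Ext$ relation $h_1^2 D_{11} = \tau^2 c_1 g^2$ of Lemma \ref{lem:h1-h1D11}; both arguments then finish with Lemma \ref{lem:hidden-not-exist}. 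Two remarks. First, the verification you defer is automatic, not something requiring chart inspection: because the relation $h_1 \cdot h_1 D_{11} = \tau^2 c_1 g^2$ holds on the $E_2$-page (it is hidden only in the May spectral sequence, not in $\Ext$), and because $D_{11}$ and $\tau^2 c_1 g^2$ both survive nonzero to $E_\infty$ (the latter being presupposed by the statement of the lemma), multiplicativity of the associated graded forces $\eta^2\delta$ to be detected by $\tau^2 c_1 g^2$; a filtration jump could only occur if the $E_\infty$-product were zero. Indeed, the paper itself asserts $\eta^2\{D_{11}\} \subseteq \{\tau^2 c_1 g^2\}$ in Table \ref{tab:Toda}. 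Second, your route imports the classical Adams--Novikov input buried in Lemma \ref{lem:tau-D11}, a dependency the paper explicitly flags as undesirable (Remark \ref{rem:tau-D11}) because Chapter \ref{ch:ANSS} later reverse-engineers the Adams--Novikov spectral sequence from these homotopy computations; the paper's argument via Lemma \ref{lem:t-Ph1^3h5e0} stays internal to the Adams spectral sequence and its Toda-bracket technology. Since the paper already relies on Lemma \ref{lem:tau-D11} elsewhere, your proof creates no new circularity, but it does propagate that external dependency into one more statement, which the paper's choice of argument avoids.
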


\begin{proof}
The only other possibility is that there is a hidden $\tau$
extension from $\tau^2 c_1 g^2$ to $\tau d_0 w$.
We showed in Lemma \ref{lem:t-Ph1^3h5e0} that there is a hidden
$\tau$ extension from $P h_1^3 h_5 e_0$ to $\tau d_0 w$.
Since $\{ P h_1^3 h_5 e_0 \}$ is contained in the indeterminacy of
$\{ \tau^2 c_1 g^2 \}$, there exists an element
of $\{ \tau^2 c_1 g^2 \}$ that is annihilated by $\tau$.
\end{proof}

\begin{lemma}
\label{lem:tau-h1^2X2}
Tentatively, there is a hidden $\tau$ extension
from $h_1^2 X_2$ to $\tau B_{23}$.
\end{lemma}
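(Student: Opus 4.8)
The plan is to establish this hidden $\tau$ extension by working in the motivic Adams spectral sequence for the cofiber of $\tau$, following the same strategy used throughout Section \ref{subsctn:tau-lemmas}. The key principle (established at the start of Section \ref{subsctn:hidden-tau}) is that an element of $\{\tau B_{23}\}$ is divisible by $\tau$ in $\pi_{*,*}$ precisely when it maps to zero in $\pi_{*,*}(C\tau)$, which in turn is controlled by the Adams differentials for $C\tau$ computed in Chapter \ref{ch:Ctau}. So the first step is to identify in the $E_2$-page for $C\tau$ the relevant lift $\ol{h_1^2 X_2}$ of $h_1^2 X_2$ (using the notation of Section \ref{sctn:notation}, item (16)), noting that $\tau h_1^2 X_2 = 0$ in $\Ext$ follows from the known $\tau$-torsion, and to locate $\tau B_{23}$ in the same range. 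Since the statement is flagged as tentative, I would be careful to note that the conclusion depends on the incomplete analysis of differentials in this range.

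The concrete mechanism I expect to use is a $C\tau$-Adams differential of the form $d_r(\ol{h_1^2 X_2}) = B_{23}$ (or an equivalent statement identifying $B_{23}$ as hit after projecting to the top cell), which would show that $\{\tau B_{23}\}$ is divisible by $\tau$ and hence forces the hidden $\tau$ extension. First I would recall from Lemma \ref{lem:d2-X2} that $d_2(X_2) = h_1^2 B_3$ and from the surrounding $d_2$ computations how $h_1^2 X_2$ interacts with the multiplicative structure; then I would trace how these differentials lift to the cofiber of $\tau$. An alternative, more self-contained route is to realize $\tau B_{23}$ as a Toda bracket or a product in $\pi_{*,*}$ that is manifestly divisible by $\tau$: for instance, by relating $B_{23}$ to the family of elements $B_8$, $x'$, and the $d_0$-multiples appearing in Lemmas \ref{lem:t-B8} and \ref{lem:d3-h1h5e0}, where several hidden $\tau$ extensions of the same flavor were already established.

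The hard part will be ruling out the alternative that $\{\tau B_{23}\}$ is \emph{not} $\tau$-divisible, i.e.\ confirming that the relevant class in $\pi_{*,*}(C\tau)$ genuinely vanishes rather than merely having an ambiguous image. This is delicate here because $B_{23}$ sits in a region where the $E_\infty$-page analysis is incomplete (stems past $59$), so I cannot simply invoke the exhaustive bookkeeping available in lower stems. The likely obstacle is that the $C\tau$-differential hitting $B_{23}$ may itself be only conditionally known, forcing the tentative qualifier. Concretely, I would need to check that no crossing differential or competing $\tau$-torsion class interferes, and that the source $\ol{h_1^2 X_2}$ survives to the relevant page of the $C\tau$-spectral sequence. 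If a clean $C\tau$-differential cannot be pinned down, the fallback is to produce a Toda-bracket witness: exhibit $\{\tau B_{23}\}$ inside some bracket $\langle \tau, \alpha, \beta \rangle$ whose shuffle shows $\tau$-divisibility, mirroring the argument of Lemma \ref{lem:t-Ph1^3h5e0}. Either way the decisive computation is the identification of $B_{23}$ as hit in the cofiber-of-$\tau$ spectral sequence, and given the incompleteness of the range I would present the result, as stated, tentatively.
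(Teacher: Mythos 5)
Your proposal takes essentially the same route as the paper: the paper's proof simply cites the Chapter \ref{ch:Ctau} computation to assert that an element of $\{\tau B_{23}\}$ maps to zero in $\pi_{*,*}(C\tau)$, hence is divisible by $\tau$, and then observes that $h_1^2 X_2$ is the only possible source — exactly your mechanism, since such vanishing is equivalent to a $C\tau$-Adams differential off the lift $\ol{h_1^2 X_2}$ (compare Remark \ref{rem:tau-h1i1}). The only slip is notational: the target of that differential should be written $\tau B_{23}$ (the actual $\Ext$ class), not $B_{23}$, which names only a May $E_\infty$ element.
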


\begin{proof}
The claim is tentative because our analysis of Adams differentials
is incomplete in the relevant range.

There exists an element of $\{ \tau B_{23} \}$ 
that maps to zero in the homotopy groups of the cofiber of $\tau$,
which is described in Chapter \ref{ch:Ctau}.
\index{cofiber of tau@cofiber of $\tau$}
Therefore, this element of $\{ \tau B_{23} \}$ is divisible by $\tau$.
The only possibility is that there is a hidden
$\tau$ extension from $h_1^2 X_2$ to $\tau B_{23}$.
\end{proof}

\begin{lemma}
\label{lem:tau-h1^4X2}
Tentatively, there is a hidden $\tau$ extension from
$h_1^4 X_2$ to $B_8 d_0$.
\end{lemma}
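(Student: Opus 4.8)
The plan is to follow the same strategy as in the proof of Lemma \ref{lem:tau-h1^2X2}, namely to detect the $\tau$-divisibility of the target by passing to the cofiber $C\tau$ of $\tau$ computed in Chapter \ref{ch:Ctau}. Recall that an element of $\pi_{*,*}$ is divisible by $\tau$ precisely when it maps to zero under the inclusion $S^{0,0} \map C\tau$ of the bottom cell. First I would locate, in the appropriate stem and weight, an element of $\{ B_8 d_0 \}$ and check against the $C\tau$ computation that it maps to zero in $\pi_{*,*}(C\tau)$; this identifies such an element as divisible by $\tau$. Because the non-hidden relation $\tau \cdot B_8 d_0 = h_0^4 X_3$ of Lemma \ref{lem:t-B8} already accounts for one $\tau$-multiplication at the top of the relevant $h_0$-tower, I must be careful to isolate the genuinely hidden $\tau$-divisibility. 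Having done so, I would then verify by inspection of the $E_\infty$-page that $h_1^4 X_2$ is the unique class whose degree is compatible with serving as the source, which (via the analogue of Lemma \ref{lem:hidden-exist}) yields the asserted hidden $\tau$ extension from $h_1^4 X_2$ to $B_8 d_0$.

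A second, more elementary route builds directly on the immediately preceding Lemma \ref{lem:tau-h1^2X2}. Choosing $\beta$ in $\{ h_1^2 X_2 \}$ with $\tau \beta$ in $\{ \tau B_{23} \}$, the element $\eta^2 \beta$ lies in $\{ h_1^4 X_2 \}$ provided $h_1^4 X_2$ is non-zero on the $E_\infty$-page, and then $\tau \cdot \eta^2 \beta = \eta^2 \cdot \tau \beta$ lies in $\eta^2 \{ \tau B_{23} \}$. In this formulation the lemma reduces to the multiplicative identity $h_1^2 \cdot \tau B_{23} = B_8 d_0$ on the $E_\infty$-page (possibly as a hidden relation), together with the facts that $h_1^4 X_2$ survives to a non-zero permanent cycle and that $\tau \cdot h_1^4 X_2 = 0$ in $E_\infty$, so that the resulting extension is indeed hidden in the sense of Definition \ref{defn:hidden}.

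The hard part, and the reason the statement is only tentative, is the incompleteness of the Adams differential analysis in this range: by Theorem \ref{thm:Adams-Einfty} the $E_\infty$-page here is known only up to a subquotient. For the $C\tau$ argument I must be certain that the element of $\{ B_8 d_0 \}$ identified as $\tau$-divisible is not annihilated by some undetermined differential and that no competing source of lower filtration intrudes; for the bootstrap argument I must pin down the relation $h_1^2 \cdot \tau B_{23} = B_8 d_0$ and the survival of $h_1^4 X_2$, both of which rest on multiplicative data in the uncertain region. Accordingly I would present the proof as tentative, parallel to Lemma \ref{lem:tau-h1^2X2} and recorded in Table \ref{tab:Adams-tau-tentative}, and state explicitly which differentials would have to be resolved to render the extension unconditional.
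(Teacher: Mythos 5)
Your second route is, in structure, exactly the paper's proof: compose the hidden $\tau$ extension from $h_1^2 X_2$ to $\tau B_{23}$ (Lemma \ref{lem:tau-h1^2X2}) with $\eta$-multiplications carrying $\{\tau B_{23}\}$ into $\{B_8 d_0\}$. The gap is that you leave the second ingredient --- what you call the identity $h_1^2 \cdot \tau B_{23} = B_8 d_0$ --- as something ``to be pinned down,'' and you also slightly misstate its nature: the first multiplication $h_1 \cdot \tau B_{23} = \tau h_1 B_{23}$ is an honest product, but $h_1 \cdot \tau h_1 B_{23}$ is zero on the $E_\infty$-page, so what is actually required is the \emph{hidden} $\eta$ extension from $\tau h_1 B_{23}$ to $B_8 d_0$. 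The paper supplies precisely this as Lemma \ref{lem:eta-th1B23}, proved not by inspecting multiplicative data in the uncertain range but by multiplying the hidden $\eta$ extension from $\tau h_1 g$ to $c_0 d_0$ (Lemma \ref{lem:eta-th1g}) by $\theta_{4.5}$: one has $\theta_{4.5} \{\tau h_1 g\} \subseteq \{\tau h_1 B_{23}\}$ by Lemma \ref{lem:kappabar-h3^2h5} (since $\tau B_{23}$ detects $\kappabar \theta_{4.5}$) and $\theta_{4.5} \{c_0 d_0\} \subseteq \{B_8 d_0\}$ by Lemma \ref{lem:epsilon-h3^2h5} (since $B_8$ detects $\epsilon \theta_{4.5}$). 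Without this input, or a substitute for it, your reduction does not close.

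Your first route (detecting $\tau$-divisibility of an element of $\{B_8 d_0\}$ via the inclusion of the bottom cell into $C\tau$) is the generic method, and it is indeed how Lemma \ref{lem:tau-h1^2X2} itself and most entries of Table \ref{tab:Adams-tau-tentative} are justified. But for this particular extension the target lies in the 67-stem, beyond the range (through the 63-stem) in which $E_\infty(C\tau)$ has been determined, so the required vanishing in $\pi_{*,*}(C\tau)$ cannot simply be read off; this is presumably why the paper routes the argument through the $\theta_{4.5}$-based extension instead. Your caveats acknowledge both difficulties honestly, but as written neither of your two routes is a complete argument: the first founders on the incomplete $C\tau$ computation, and the second needs Lemma \ref{lem:eta-th1B23} (or an independent proof of it) to finish.
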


\begin{proof}
The claim is tentative because our analysis of Adams differentials
is incomplete in the relevant range.

This follows from the hidden $\tau$ extension
from $h_1^2 X_2$ to $\tau B_{23}$ given in Lemma \ref{lem:tau-h1^2X2}
and the hidden $\eta$ extension from $\tau h_1 B_{23}$ to $B_8 d_0$
given in Lemma \ref{lem:eta-th1B23}.
\end{proof}

\begin{lemma}
\label{lem:tau-B8d0}
Tentatively, there is a hidden $\tau$ extension from
$B_8 d_0$ to $d_0 x'$.
\end{lemma}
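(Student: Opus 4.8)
The plan is to obtain this hidden extension multiplicatively, by multiplying a lower hidden $\tau$ extension by $d_0$, and to keep the cofiber of $\tau$ in reserve as an independent check. First I would record the degrees. The element $B_8$ lies in stem $53$ with Adams filtration $9$, and $x'$ lies in stem $53$ with filtration $10$, so $B_8 d_0$ and $d_0 x'$ both lie in stem $67$, with $d_0 x'$ one filtration higher and one weight lower than $B_8 d_0$. This is exactly the shape of a hidden $\tau$ extension in the sense of Definition \ref{defn:hidden}.

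The key preliminary step is to check that $\tau \cdot B_8 d_0$ already vanishes on the $E_\infty$-page, so that condition (1) of Definition \ref{defn:hidden} holds. In $\Ext$ we have $\tau \cdot B_8 d_0 = h_0^4 X_3$ by Lemma \ref{lem:t-B8}, and Lemma \ref{lem:d3-t^2G} gives $d_3(\tau^2 G) = \tau B_8$. Applying the Leibniz rule to $\tau^2 G \cdot d_0$ (using that $d_0$ is a permanent cycle) shows that $\tau B_8 d_0 = h_0^4 X_3$ is itself a $d_3$-boundary, hence zero on $E_\infty$.

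I would then produce the extension itself. The cleanest route is to begin with the hidden $\tau$ extension from $B_8$ to $x'$ in stem $53$, which lies inside the thoroughly analyzed range and is recorded in Table \ref{tab:Adams-tau}. Choosing $\beta$ in $\{B_8\}$ with $\tau \beta$ in $\{x'\}$ and $\kappa$ in $\{d_0\}$, multiplicativity of $\tau$ gives $\tau(\beta \kappa) = (\tau \beta)\kappa$ in $\{d_0 x'\}$, provided $\beta \kappa$ is detected by $B_8 d_0$ and $(\tau \beta)\kappa$ is detected by $d_0 x'$; Lemma \ref{lem:hidden-exist} then packages this into the asserted extension. As an independent verification, and as a fallback if the stem-$53$ extension is not available in the precise form needed, I would instead use Chapter \ref{ch:Ctau}: exhibit an element of $\{d_0 x'\}$ that maps to zero in $\pi_{*,*}(C\tau)$, conclude that it is divisible by $\tau$, and argue by inspection of the stem-$67$ portion of the $E_\infty$-page that $B_8 d_0$ is the only admissible source.

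The hard part is precisely what makes the statement tentative. Because the Adams differentials, and hence the $E_\infty$-page, are not fully determined in stem $67$, I cannot be certain that $B_8 d_0$ and $d_0 x'$ both survive, nor that the products $\beta \kappa$ and $(\tau \beta)\kappa$ are detected exactly by these classes rather than pushed into strictly higher filtration by an unknown differential. Controlling these survival and detection questions in the incomplete range, or equivalently pinning down the relevant $\tau$-divisibility in $\pi_{*,*}(C\tau)$, is the main obstacle, and it is the reason the conclusion is asserted only tentatively.
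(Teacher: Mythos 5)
Your proposal is correct and follows essentially the same route as the paper, whose entire proof is that the extension follows immediately from the hidden $\tau$ extension from $B_8$ to $x'$ in Table \ref{tab:Adams-tau}, i.e.\ multiplication of that stem-$53$ extension by $\kappa \in \{d_0\}$. Your additional verification that $\tau \cdot B_8 d_0 = h_0^4 X_3$ dies on $E_\infty$ (via $d_3(\tau^2 G) = \tau B_8$ and the Leibniz rule) and your cofiber-of-$\tau$ fallback are sound supplements, and your closing caveat about survival and detection in the incompletely analyzed $67$-stem is exactly why the paper labels the result tentative.
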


\begin{proof}
The claim is tentative because our analysis of Adams differentials
is incomplete in the relevant range.

This follows immediately from the hidden $\tau$ extension from $B_8$
to $x'$ given in Table \ref{tab:Adams-tau}.
\end{proof}


\subsection{Hidden Adams $2$ extensions computations}
\label{subsctn:2-lemmas}

\begin{lemma}
\label{lem:2-h2^2h4}
There is no hidden $2$ extension on $h_2^2 h_4$.
\end{lemma}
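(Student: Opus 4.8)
The plan is to apply Lemma \ref{lem:hidden-not-exist}: I will produce a single element $\beta$ of $\{h_2^2 h_4\}$ with $2\beta = 0$, which immediately rules out a hidden $2$ extension on $h_2^2 h_4$. This is the standard strategy for non-existence of hidden extensions used throughout the chapter, and it reduces the whole problem to exhibiting one well-chosen product.

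The construction I would use rests on the factorization $h_2^2 h_4 = h_2 \cdot h_2 h_4$ in $\Ext$. First I would record that $h_2 h_4$, in the $18$-stem, is a permanent cycle: one has $d_2(h_2 h_4) = h_2 \cdot d_2(h_4) = h_0 h_2 h_3^2 = 0$ because $h_0 h_2 = 0$, and there is no room for a higher differential supported by $h_2 h_4$, so $\{h_2 h_4\}$ is non-empty. Choose any $\mu$ in $\{h_2 h_4\}$. Since $h_2^2 h_4$ is non-zero on the $E_\infty$-page (it is the very element under consideration), the product $\nu\mu$ is detected by $h_2 \cdot h_2 h_4 = h_2^2 h_4$, so $\nu \mu$ is a non-zero element of $\{h_2^2 h_4\}$. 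Finally, $2\nu = 0$ gives $2 \cdot \nu\mu = (2\nu)\mu = 0$, and taking $\beta = \nu\mu$ completes the argument via Lemma \ref{lem:hidden-not-exist}.

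I expect the only delicate point to be the detection step, namely confirming that $\nu\mu$ lands in $\{h_2^2 h_4\}$ rather than being pushed into strictly higher Adams filtration. This is guaranteed precisely because $h_2 \cdot h_2 h_4 = h_2^2 h_4$ is non-zero in the ring $E_\infty$, so the leading term of $\nu\mu$ is exactly $h_2^2 h_4$; no separate filtration-jump analysis is needed. The ancillary facts that $h_2 h_4$ is a permanent cycle and that $2\nu = 0$ are standard and may be cited from the differential computations of Chapter \ref{ch:Adams-diff} and from \cite{Toda62} respectively, so no genuine new computation is required.
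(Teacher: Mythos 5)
Your overall strategy---produce one element of $\{h_2^2 h_4\}$ that is annihilated by $2$ and invoke Lemma \ref{lem:hidden-not-exist}---is a legitimate one, and it is exactly how the paper handles several other non-existence statements (e.g.\ Lemmas \ref{lem:2-h5c0} and \ref{lem:2-h2g2}). But your execution contains a fatal error: $2\nu \neq 0$. The element $\nu$ has order $8$ (classically $\pi_3 \cong \Z/24$; motivically $h_0 h_2$ and $h_0^2 h_2 = \tau h_1^3$ are non-zero in $\Ext$ and survive the Adams spectral sequence), so the identity $2\cdot \nu\mu = (2\nu)\mu = 0$ fails at its first step. The same misconception infects your differential computation: $h_0 h_2 \neq 0$ in $\Ext$. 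The product $h_0 h_2 h_3^2$ does vanish, but because of the relation $h_2 h_3 = 0$, not because of $h_0 h_2 = 0$.

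The gap cannot be patched by a better choice of $\mu$, because every element of $\{h_2 h_4\}$ has order $8$: its double is detected by $h_0 h_2 h_4$, which is non-zero on the $E_\infty$-page (it detects $2\nu_4$). So $2(\nu\mu) = \nu(2\mu)$ is not formally zero. What you can say is that it lies in Adams filtration at least $5$, since $h_0 h_2^2 h_4 = 0$ in $\Ext$ by the May differential $d_2(h_0(1)) = h_0 h_2^2$ of Table \ref{tab:May-E2-gen}; in the relevant degree the only element of higher filtration and correct weight is $\tau h_1 g$, so proving $2\nu\mu = 0$ amounts to showing that $2\nu\mu$ does not lie in $\{\tau h_1 g\}$---which is precisely the content of the lemma, not an input to it. The paper closes this loop by a different route: the only possible target of a hidden $2$ extension on $h_2^2 h_4$ is $\tau h_1 g$, and $\tau h_1 g$ is disqualified because it supports a hidden $\eta$ extension (Lemma \ref{lem:eta-th1g}); since $2\eta = 0$, any element divisible by $2$ is killed by $\eta$, so no element of $\{\tau h_1 g\}$ can be divisible by $2$. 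If you want to keep your product formulation, you must either import that same ingredient, or cite Toda's classical computation $\pi_{21} \cong \Z/2 \oplus \Z/2$ (giving $2\nu\nu_4 = 0$ classically) together with an argument that the motivic product cannot be a non-zero element of $\{\tau h_1 g\}$, for instance because such elements are not $\tau$-torsion. Either way, your route does not bypass the paper's key step.
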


\begin{proof}
The only other possibility is that there is a hidden $2$ extension from
$h_2^2 h_4$ to $\tau h_1 g$.
However, we will show later in Lemma \ref{lem:eta-th1g} that
there is a hidden $\eta$ extension on $\tau h_1 g$.
\end{proof}

\begin{lemma}
There is no hidden $2$ extension on $h_4 c_0$.
\end{lemma}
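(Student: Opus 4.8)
The plan is to exhibit a single element of $\{h_4 c_0\}$ that is annihilated by $2$ and then invoke Lemma \ref{lem:hidden-not-exist}. The natural candidate is $\sigma \eta_4$: by Lemma \ref{lem:sigma-h1h4} (see also Table \ref{tab:Adams-misc-extn}), the product $\sigma \eta_4$ is contained in $\{h_4 c_0\}$. Thus it suffices to prove that $2 \sigma \eta_4 = 0$, since then Lemma \ref{lem:hidden-not-exist}, applied with $\alpha = 2$, $b = h_4 c_0$, and $\beta = \sigma \eta_4$, finishes the argument.

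I would deduce this from the factorization $2 \sigma \eta_4 = \sigma \cdot (2 \eta_4)$ together with the vanishing $2 \eta_4 = 0$. Since $\eta_4$ lies in $\{h_1 h_4\}$ and $h_0 \cdot h_1 h_4 = 0$ already in $\Ext$, any nonzero value of $2 \eta_4$ would have to arise as a hidden $2$ extension landing in strictly higher Adams filtration in the $16$-stem. The key step is therefore to check that there is no such extension on $h_1 h_4$; this reflects the fact that the relevant part of $\pi_{16,*}$ is $2$-torsion, exactly as in the classical computation \cite{Toda62}, so that $\eta_4$ has order $2$.

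With $2 \eta_4 = 0$ in hand, one gets $2 \sigma \eta_4 = 0$, and the conclusion follows as above. The only real obstacle is the verification that $2 \eta_4 = 0$ motivically rather than merely classically; here one must rule out a $\tau$-torsion or higher-filtration target in the $16$-stem. This is handled by direct inspection of the $E_\infty$-page in that stem, together with the compatibility with the classical answer furnished by Proposition \ref{prop:compare}, which forces any such candidate target to be inconsistent with inverting $\tau$.
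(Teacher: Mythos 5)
Your proposal is correct and follows essentially the same route as the paper: both exhibit $\sigma \eta_4$ as an element of $\{h_4 c_0\}$ via Lemma \ref{lem:sigma-h1h4}, use $2\eta_4 = 0$ to conclude it is annihilated by $2$, and then apply Lemma \ref{lem:hidden-not-exist}. The paper simply takes $2\eta_4 = 0$ as known, whereas you supply the (correct) verification that no motivic hidden $2$ extension on $h_1 h_4$ can occur, by weight considerations and comparison with the classical $16$-stem.
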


\begin{proof}
We showed in Lemma \ref{lem:sigma-h1h4}
that $\sigma \eta_4$ belongs to $\{ h_4 c_0 \}$,
and $2 \eta_4$ equals zero.
Now use Lemma \ref{lem:hidden-not-exist} to finish the claim.
\end{proof}

\begin{lemma}
\label{lem:2-h0h2g}
\mbox{}
\begin{enumerate}
\item
There is a hidden $2$ extension from 
$h_0 h_2 g$ to $h_1 c_0 d_0$.
\item
There is a hidden $2$ extension from 
$\tau h_0 h_2 g$ to $P h_1 d_0$.
\item
There is a hidden $2$ extension from
$h_0 h_2 g^2$ to $h_1 c_0 e_0^2$.
\item
There is a hidden $2$ extension from
$\tau h_0 h_2 g^2$ to $h_1 d_0^3$.
\end{enumerate}
\end{lemma}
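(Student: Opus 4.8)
The plan is to treat part (1) as the fundamental case and to deduce the other three from it by multiplying by $\tau$ and by $\kappabar$. First I would pin down the homotopy picture. An element of $\{h_2 g\}$ is a class $\mu$ with $\tau \mu = \nu\kappabar$, coming from the (non-hidden) relation $\tau \cdot h_2 g = \tau h_2 g$ together with the fact that $\nu\kappabar$ is detected by $\tau h_2 g$. Hence $\{h_0 h_2 g\}$ consists of the classes $2\mu$, and since $h_1 c_0 d_0$ detects $\eta\epsilon\kappa$, part (1) asserts the identity $2 \cdot 2\mu = \eta\epsilon\kappa$ in $\pi_{23,14}$. Because $h_0^2 h_2 g$ vanishes on the $E_\infty$-page, this extension is genuinely hidden.

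For the crux identity I would argue with a Toda bracket. Using $\epsilon\kappabar = \kappa^2$ from Lemma \ref{lem:epsilon-kappabar}, one has $\kappa^2 = \epsilon\kappabar \in \kappabar\langle 2, \nu^2, \eta\rangle$, so that $\epsilon$ may be taken in $\langle 2, \nu^2, \eta\rangle$; shuffling $\eta\epsilon\kappa = \eta\kappa\langle 2, \nu^2, \eta\rangle$ into a threefold bracket with $2$ in the appropriate slot should exhibit $\eta\epsilon\kappa$ as $2$ times a class detected by $h_0 h_2 g$. Moss's Convergence Theorem \ref{thm:Moss}, applied to the corresponding Massey product in $\Ext$ (visible from the May differentials of Chapter \ref{ch:May}), would then pin the bracket to the correct Adams filtration. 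An alternative, possibly cleaner, route is comparison with the Adams spectral sequence for $\tmf$: the classes $\kappa$, $\kappabar$, $\epsilon$ and all the relevant products survive to $\tmf$, where $\pi_{23}\tmf$ is completely understood, so the hidden $2$ extension can be read off there and transported along the unit map $S^{0,0}\to \tmf$.

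With part (1) in hand, part (2) follows by multiplying by $\tau$: since $\tau\cdot 2\mu = 2\nu\kappabar$ lies in $\{\tau h_0 h_2 g\}$, the hidden $\tau$ extension from $h_1 c_0 d_0$ to $P h_1 d_0$ recorded in Table \ref{tab:Adams-tau} turns $2\cdot 2\mu = \eta\epsilon\kappa$ into $2\cdot 2\nu\kappabar = \tau\eta\epsilon\kappa$, detected by $P h_1 d_0$. Part (4) follows from part (1) by multiplying by $\kappabar$ and invoking $\epsilon\kappabar = \kappa^2$ once more: this rewrites $\eta\epsilon\kappa\kappabar$ as $\eta\kappa^3$, detected by $h_1 d_0^3$, with source detected by $\tau h_0 h_2 g^2$. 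Part (3), being the weight-$26$ statement, has no such homotopy factor available to multiply by, so I would prove it by repeating the argument for part (1) with $g$ replaced by $g^2$, or equivalently by recognizing $\eta\epsilon\{e_0^2\}$ (detected by $h_1 c_0 e_0^2$) as $2$-divisible with preimage in $\{h_0 h_2 g^2\}$.

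The hard part will be the crux identity in part (1), and two points there need care: controlling the indeterminacy of the Toda bracket so that it does not absorb the answer, and ruling out that the $2$-multiple is detected in filtration strictly higher than that of $h_1 c_0 d_0$, i.e. the ``crossing'' phenomenon built into Definition \ref{defn:hidden}. The persistent nuisance throughout is the weight bookkeeping: one must consistently distinguish $h_0 h_2 g$ of weight $14$ from $\tau h_0 h_2 g$ of weight $13$ (and likewise in the $g^2$ family), since it is exactly these factors of $\tau$ that separate the four parts from one another.
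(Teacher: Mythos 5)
Your proposal is correct in outline, but it genuinely reverses the paper's logic, and the comparison is worth making explicit. The paper does not prove part (1) first: part (2) is immediate from the classical hidden $2$ extension from $h_0 h_2 g$ to $P h_1 d_0$ of \cite{MT67} (Table \ref{tab:extn-refs}), and part (1) is then \emph{deduced} from part (2) using the hidden $\tau$ extension from $h_1 c_0 d_0$ to $P h_1 d_0$ of Table \ref{tab:Adams-tau}; similarly part (4) is imported from the classical $\eta$-extension chain $g^2 \to z \to d_0^3$ of \cite{BMT70}, rewritten as $\tau \eta^3 \{\tau g^2\} = \{h_1 d_0^3\}$ and combined with $\tau \eta^3 = 4\nu$, and part (3) follows from (4) by the hidden $\tau$ extension from $h_1 c_0 e_0^2$ to $h_1 d_0^3$. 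Your plan makes (1) the base case and proves it from scratch; that is legitimate and more self-contained motivically, but the crux is left at ``should exhibit,'' and completing it requires exactly the following inputs, none of which you name: the vanishing $\eta^2\kappa = 0$, without which $\langle \nu^2, \eta, \eta\kappa\rangle$ is undefined and the shuffle $\langle 2, \nu^2, \eta\rangle \eta\kappa = 2\langle \nu^2, \eta, \eta\kappa\rangle$ is illegal; Moss's Convergence Theorem \ref{thm:Moss} applied at $r=3$ with $d_2(e_0) = h_1^2 d_0$ (Lemma \ref{lem:d2-e0}), since motivically $h_1^2 d_0 \neq 0$ in $\Ext$ and the $E_2$ Massey product $\langle h_2^2, h_1, h_1 d_0 \rangle$ does not even exist; and the relation $h_0 h_2 g = h_2^2 e_0$ (visible in the May $E_2$-page from $h_2 h_0(1) = h_0 b_{21}$) to identify the divisor's detecting class and verify condition (3) of Definition \ref{defn:hidden}. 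Your propagation steps, (2) from (1) by $\tau$ and (4) from (1) by $\kappabar$ via $\epsilon\kappabar = \kappa^2$, are fine given the known $E_\infty$-page.

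The one step that would genuinely fail is your ``possibly cleaner'' alternative for part (1): there is no unit map $S^{0,0} \to \tmf$ to transport along, because a motivic modular forms spectrum is not known to exist (the paper says this explicitly in Chapter \ref{ch:Adams-diff}). Comparison with classical $\tmf$ can only detect the $\tau$-inverted statement, which is part (2); the weight-$14$ refinement distinguishing $h_1 c_0 d_0$ from $P h_1 d_0$ is invisible classically. So the $\tmf$ route forces you back to the hidden $\tau$ extension of Table \ref{tab:Adams-tau} to descend from (2) to (1) --- which is precisely the paper's proof.
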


\begin{proof}
Recall that there is a classical hidden $2$ extension from
$h_0 h_2 g$ to $P h_1 d_0$, as shown in Table \ref{tab:extn-refs}.
This immediately implies the second claim.
The first formula now follows from the second, using the hidden
$\tau$ extension from $h_1 c_0 d_0$ to $P h_1 d_0$ given in Table \ref{tab:Adams-tau}.

For the last two formulas,
recall that there is a classical hidden extension
$\eta^2 \kappabar^2 = \{ d_0^3 \}$, as shown in Table \ref{tab:extn-refs}.
This implies that there is a motivic hidden extension
$\tau \eta^3 \{ \tau g^2 \} = \{ h_1 d_0^3 \}$.
Use the relation $\tau \eta^3 = 4 \nu$ to deduce the fourth formula.

The third formula follows from the fourth, 
using the hidden $\tau$ extension from $h_1 c_0 e_0^2$ to $h_1 d_0^3$
given in Table \ref{tab:Adams-tau}.
\end{proof}

\begin{lemma}
\label{lem:2-h1h5}
\mbox{}
\begin{enumerate}
\item
There is no hidden $2$ extension on $h_1 h_5$.
\item
There is no hidden $2$ extension on $h_1 h_3 h_5$.
\end{enumerate}
\end{lemma}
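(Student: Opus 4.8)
The plan is to apply Lemma \ref{lem:hidden-not-exist} in both cases: for each of $h_1 h_5$ and $h_1 h_3 h_5$ it suffices to exhibit a single element of the associated coset that is annihilated by $2$. The second statement will reduce to the first. Indeed, $h_1 h_3 h_5 = h_3 \cdot h_1 h_5$ in $E_\infty$, so if $\eta_5$ denotes an element of $\{h_1 h_5\}$, then $\sigma \eta_5$ is detected by $h_1 h_3 h_5$ (using that this product is nonzero in $E_\infty$), and $2 \sigma \eta_5 = \sigma \cdot 2\eta_5$. Thus once I know $2\eta_5 = 0$ from the first part, I get $2\sigma\eta_5 = 0$, and Lemma \ref{lem:hidden-not-exist} rules out a hidden $2$ extension on $h_1 h_3 h_5$. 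So the whole lemma comes down to showing that some element of $\{h_1 h_5\}$ is killed by $2$.

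For the first statement, the key point is that $h_1 h_5$ detects an $\eta$-multiple even though $h_5$ itself is not a permanent cycle. Concretely, let $\theta_4$ be an element of $\{h_4^2\}$. Since $2\eta = 0$, the Toda bracket $\langle \eta, 2, \theta_4 \rangle$ is defined once one checks $2\theta_4 = 0$. Using the Adams differential $d_2(h_5) = h_0 h_4^2$ together with Moss's Convergence Theorem \ref{thm:Moss}, this bracket is detected by $h_1 h_5$, so there is an element $\eta_5 \in \{h_1 h_5\}$ lying in $\langle \eta, 2, \theta_4 \rangle$. Multiplying by $2$ and shuffling gives
\[
2 \eta_5 \in 2\langle \eta, 2, \theta_4 \rangle \subseteq \langle 2, \eta, 2 \rangle \theta_4.
\]
By Lemma \ref{lem:2-bracket} the bracket $\langle 2, \eta, 2 \rangle$ contains $\tau \eta^2$, and its indeterminacy vanishes because $2 \eta^2 = 0$, so $\langle 2, \eta, 2 \rangle = \{\tau \eta^2\}$ and hence $2\eta_5 = \tau \eta^2 \theta_4$.

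The main obstacle is therefore to show that $\tau \eta^2 \theta_4 = 0$. Since $\eta^2 \theta_4$ would be detected by $h_1^2 h_4^2$, I expect to finish by verifying that $h_1^2 h_4^2$ vanishes in $\Ext$ (so that $\eta^2\theta_4$, and a fortiori $\tau\eta^2\theta_4$, lands in high enough Adams filtration to be discarded by inspection of the chart). Two routine points must also be dispatched along the way: that $2\theta_4 = 0$, so the bracket is defined, and that the crossing-differential hypothesis of Moss's Convergence Theorem \ref{thm:Moss} holds for the $d_2$ differential used. As an alternative to the shuffle computation, one could instead locate the unique candidate target $c$ of a hidden $2$ extension on $h_1 h_5$ in the $E_\infty$-page and eliminate it directly: any such extension would produce an element of $\{c\}$ annihilated by $\eta$, since $\eta \cdot 2 \eta_5 = 2\eta \cdot \eta_5 = 0$, contradicting the non-vanishing of $\eta\{c\}$.
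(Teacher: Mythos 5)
Your proposal follows essentially the same route as the paper's proof: both rest on the Toda bracket $\langle \eta, 2, \theta_4 \rangle$ intersecting $\{h_1 h_5\}$, the shuffle $2\langle \eta, 2, \theta_4 \rangle = \langle 2, \eta, 2 \rangle \theta_4 = \tau \eta^2 \theta_4$, the vanishing of this element, and Lemma \ref{lem:hidden-not-exist}, with the second claim reduced to the first via $2\sigma\eta_5 = \sigma \cdot 2\eta_5 = 0$. The paper asserts $\tau \eta^2 \theta_4 = 0$ with essentially no more justification than you give, so the step you flag as an ``expectation'' is treated at the same level of detail there.
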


\begin{proof}
For the first claim, the only other possibility is that there is a 
hidden $2$ extension from $h_1 h_5$ to $\tau d_1$.
Table \ref{tab:Toda} shows that 
the Toda bracket $\langle \eta, 2, \theta_4 \rangle$
intersects $\{h_1 h_5\}$.
Shuffle to obtain
\[
2 \langle \eta, 2, \theta_4 \rangle =
\langle 2, \eta, 2 \rangle \theta_4.
\]
This expression equals $\tau \eta^2 \theta_4$ by 
Table \ref{tab:Toda},
which must be zero.  Lemma \ref{lem:hidden-not-exist}
now finishes the first claim.

The second claim follows easily since $\sigma \eta_5$ is contained
in $\{ h_1 h_3 h_5 \}$.
\end{proof}

\begin{lemma}
There is no hidden $2$ extension on $p$.
\end{lemma}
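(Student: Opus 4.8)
The plan is to first pin down the unique target that a hidden $2$ extension on $p$ could have, and then to exclude it. Inspecting the $E_\infty$-chart in \cite{Isaksen14a} in the $33$-stem, the only class lying in strictly higher Adams filtration than $p$, in the same stem and the same weight, and not divisible by $h_0$, is $h_1 d_1$. (Condition (3) of Definition \ref{defn:hidden} lets us discard the classes that are divisible by $h_0$.) So the only possibility to rule out is a hidden $2$ extension from $p$ to $h_1 d_1$, and I would announce this at the outset, exactly as in the preceding lemmas.

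The cleanest way to exclude it is by comparison with the classical Adams spectral sequence. By Proposition \ref{prop:compare}, inverting $\tau$ identifies the motivic spectral sequence with the classical one. The class $h_1 d_1$ is a permanent cycle surviving to a nonzero element of the classical $E_\infty$-page, so $\{h_1 d_1\}$ is not $\tau$-torsion and its image in the classical stable stems is nonzero. On the other hand, any element $\beta$ of $\{p\}$ maps to an element of $\pi_{33}$, which is elementary abelian at the prime $2$, so $2\beta$ maps to $0$ classically. If there were a hidden $2$ extension from $p$ to $h_1 d_1$, then some $\beta\in\{p\}$ would satisfy $2\beta\in\{h_1 d_1\}$, forcing $2\beta$ to map to a nonzero classical class, a contradiction. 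Since $h_1 d_1$ is the only candidate target, it follows that $2\beta=0$, and Lemma \ref{lem:hidden-not-exist} then rules out the extension.

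Alternatively, in the spirit of Lemma \ref{lem:2-h1h5}, I would exhibit a specific element of $\{p\}$ annihilated by $2$ by realizing it inside a Toda bracket from Table \ref{tab:Toda} and shuffling the factor of $2$ across the bracket, using relations such as $\langle 2,\alpha,2\rangle\ni\tau\eta\alpha$ from Lemma \ref{lem:2-bracket}; the resulting expression vanishes for degree reasons, and Lemma \ref{lem:hidden-not-exist} again applies. Either route reduces the statement to a short verification.

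The main obstacle is confirming that $h_1 d_1$ really is the only candidate target compatible with Definition \ref{defn:hidden}: this requires checking that the weight of $p$ agrees with that of $h_1 d_1$ and that no other class of higher filtration in the $33$-stem (for instance one sitting behind an $h_0$-tower) can receive a $2$ extension. Once the candidate is isolated, the comparison argument is routine, the only delicate point being the verification that $\{h_1 d_1\}$ is not $\tau$-torsion, so that the classical comparison actually has content.
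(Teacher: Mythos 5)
There is a genuine gap, and it begins at the very first step. Your candidate target is wrong: a hidden $2$ extension preserves the motivic weight, and $p$ has weight $18$ (Table \ref{tab:Ext-gen}), whereas $h_1 d_1$ has weight $1+18=19$. So $h_1 d_1$ can never receive a $2$ extension from $p$; the weight check that you defer to the end as ``the main obstacle'' in fact fails. The unique candidate of weight $18$ in higher filtration in the $33$-stem is $h_1 q$ (weight $1+17=18$), which is what the paper names; the other weight-$18$ class one might worry about, $\tau h_1 d_1$, is killed by the differential $d_3(h_2 h_5) = \tau h_1 d_1$ of Table \ref{tab:Adams-d3} and so is absent from $E_\infty$.

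Second, even after replacing $h_1 d_1$ by $h_1 q$, your main argument is circular. You assert without citation that classical $\pi_{33}$ is elementary abelian at the prime $2$; but given the classical $E_\infty$-chart, that statement amounts exactly to the nonexistence of hidden $2$ extensions in the classical $33$-stem, which is the classical shadow of the lemma you are proving. (It is also the sort of fact this paper deliberately declines to import from tables such as \cite{Kochman90}.) The comparison itself would have content --- elements of $\{h_1 q\}$ do realize to nonzero classical classes, as noted in the proof of Lemma \ref{lem:t.eta-d1} --- but the input you feed it is the conclusion in disguise. The paper's proof uses an independent, citable input instead: by Table \ref{tab:extn-refs} there is a classical hidden $\nu$ extension from $h_4^2$ to $p$, so $\nu\theta_4$ lies in $\{p\}$; since $2\theta_4=0$, this element of $\{p\}$ is annihilated by $2$, and Lemma \ref{lem:hidden-not-exist} then forbids any hidden $2$ extension on $p$, with no need even to identify the candidate target. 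Your ``alternative route'' via Toda brackets and shuffling is in the same spirit as Lemma \ref{lem:2-h1h5} and as the paper's actual argument, but as written it names no bracket, no element, and no shuffle, so it is a sketch rather than a proof.
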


\begin{proof}
The only other possibility is that there is a hidden $2$ extension
from $p$ to $h_1 q$.
Table \ref{tab:extn-refs} shows that
$\nu \theta_4$ is contained in $\{ p \}$.
Also, $2 \theta_4$ is zero.
Lemma \ref{lem:hidden-not-exist} now finishes the proof.
\end{proof}

\begin{lemma}
\mbox{}
\begin{enumerate}
\item
There is no hidden $2$ extension on $h_2 d_1$.
\item
There is no hidden $2$ extension on $h_3 d_1$.
\item
There is no hidden $2$ extension on $h_2 d_1 g$.
\item
There is no hidden $2$ extension on $h_3 d_1 g$.
\end{enumerate}
\end{lemma}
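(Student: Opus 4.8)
The plan is to apply Lemma \ref{lem:hidden-not-exist} in each of the four cases: to rule out a hidden $2$ extension on an element $b$, it suffices to exhibit a single element $\beta$ of $\{b\}$ with $2\beta = 0$. The four elements detect the products $\nu\{d_1\}$, $\sigma\{d_1\}$, $\nu\{d_1 g\}$, and $\sigma\{d_1 g\}$ respectively, since $h_2$ detects $\nu$ and $h_3$ detects $\sigma$. So in each case I would take $\beta$ to be the relevant product and reduce the problem to checking that $2\nu\{d_1\}$, $2\sigma\{d_1\}$, $2\nu\{d_1 g\}$, and $2\sigma\{d_1 g\}$ all vanish.

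The key input is that both $\{d_1\}$ and $\{d_1 g\}$ are annihilated by $2$. For $d_1$, note that $h_0 d_1 = 0$ in $\Ext$ and that $d_1$ lies in the $32$-stem; since the stem-by-stem analysis underlying Proposition \ref{prop:hidden-2} (recorded in Table \ref{tab:Adams-2}) exhibits no hidden $2$ extension on $d_1$, the class $\{d_1\}$ has order $2$. The identical reasoning applies to $d_1 g$ in the $52$-stem, which precedes the stems $55$ and $59$ where our elements live, so its $2$-extension status is already settled. Granting this, the vanishing is immediate: writing each product with the order-$2$ factor in front, $2\nu\{d_1\} = \nu\cdot 2\{d_1\} = 0$ and $2\sigma\{d_1\} = \sigma\cdot 2\{d_1\} = 0$, and likewise with $\{d_1 g\}$ in place of $\{d_1\}$. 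Lemma \ref{lem:hidden-not-exist} then finishes all four cases.

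The hard part will be establishing that $\{d_1 g\}$ has order $2$. For $\{d_1\}$ this is clean, but for $\{d_1 g\}$ one must confirm that $h_0 d_1 g = 0$ on the relevant page and that $d_1 g$ supports no hidden $2$ extension. As a self-contained alternative that avoids quoting the $52$-stem result, one can instead use the identity $\{d_1 g\} = \langle \{d_1\}, \eta^3, \eta_4 \rangle$ from the proof of Lemma \ref{lem:t.eta-d1}: pulling $2$ into the first entry gives $2\{d_1 g\} \subseteq \langle 2\{d_1\}, \eta^3, \eta_4\rangle = \langle 0, \eta^3, \eta_4\rangle$, whose indeterminacy is generated by $\eta_4$-multiples, and one checks that this contributes nothing in the given degree.

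A secondary technical point to verify is that the product $\nu\{d_1\}$ (and likewise $\sigma\{d_1\}$ and the $g$ analogues) is genuinely nonzero and detected by $h_2 d_1$ (respectively $h_3 d_1$, etc.), so that it is a legitimate choice of $\beta$. In the degenerate situation where such a product vanishes, the set $\{b\}$ is either empty, in which case there is trivially nothing to prove, or consists only of classes of strictly higher filtration, for which the same order-$2$ considerations apply.
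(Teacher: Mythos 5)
Your argument is exactly the paper's: apply Lemma \ref{lem:hidden-not-exist} to the products $\nu\{d_1\}$, $\sigma\{d_1\}$, $\nu\{d_1 g\}$, $\sigma\{d_1 g\}$, using that $2\{d_1\}$ and $2\{d_1 g\}$ are zero. The paper simply asserts those two order-$2$ facts (and the containments $\nu\{d_1\}\subseteq\{h_2 d_1\}$, etc., are automatic because the products $h_2 d_1$, $h_3 d_1$, $h_2 d_1 g$, $h_3 d_1 g$ are nonzero on the $E_\infty$-page), so your additional justifications---the appeal to Table \ref{tab:Adams-2} and the Toda-bracket alternative---go beyond what the paper records, and the ``degenerate'' case you worry about cannot occur.
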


\begin{proof}
These follow immediately from Lemma \ref{lem:hidden-not-exist},
together with the facts that
$\nu \{d_1\}$ is contained in $\{ h_2 d_1\}$;
$\sigma \{d_1\}$ is contained in $\{ h_3 d_1\}$;
$\nu \{d_1 g\}$ is contained in $\{ h_2 d_1 g\}$;
$\sigma \{d_1 g\}$ is contained in $\{ h_3 d_1 g\}$;
and $2 \{d_1\}$ and $2 \{d_1 g\}$ are both zero.
\end{proof}

\begin{lemma}
\label{lem:2-h5c0}
There is no hidden $2$ extension on $h_5 c_0$.
\end{lemma}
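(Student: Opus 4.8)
The plan is to apply Lemma \ref{lem:hidden-not-exist}, so that it suffices to produce a single element $\beta$ of $\{h_5 c_0\}$ with $2\beta = 0$. A useful first observation is that $h_0 c_0 = 0$ already on the $E_2$-page, hence $h_0 \cdot h_5 c_0 = 0$ in $E_\infty$ automatically. Since $h_5 c_0$ lives in stem $39$ at filtration $4$, any genuine hidden $2$ extension on it would have to land in filtration at least $6$, so there is only one higher-filtration class in this degree that could possibly receive such an extension. Thus the whole question reduces to understanding a single multiplication by $2$.

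Unlike the analogous computation for $h_4 c_0$, where $\sigma \eta_4$ belongs to $\{h_4 c_0\}$ and $2\eta_4 = 0$ settles matters immediately, I cannot use $\sigma \eta_5$ here: by Lemma \ref{lem:2-h1h5} the class $\sigma \eta_5$ is detected by $h_1 h_3 h_5$ in filtration $3$, not by $h_5 c_0$ in filtration $4$. So the first real task is to identify the homotopy class that $h_5 c_0$ actually detects. I would do this by reading a suitable Massey product through $h_5 c_0$ off the $E_2$-page (Table \ref{tab:Massey}) and applying Moss's Convergence Theorem \ref{thm:Moss}, so that some representative $\beta \in \{h_5 c_0\}$ is exhibited as a threefold Toda bracket $\langle a, b, c \rangle$. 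Once such a description is available, the factor of $2$ is extracted by a standard shuffle $2\langle a, b, c \rangle \subseteq \langle 2, a, b \rangle c$ (or its mirror image), and the goal is to show that the relevant edge bracket vanishes, forcing $2\beta = 0$.

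An alternative route, which sidesteps guessing the exact bracket, is to pass to the motivic Adams spectral sequence for the cofiber $C2$ of $2$, exactly as in the proof of Lemma \ref{lem:bracket-tau-nu-kappabar} and using Remark \ref{rem:Massey-3bracket-cofiber}: an element $\beta$ of $\{h_5 c_0\}$ is annihilated by $2$ precisely when its image in $\pi_{*,*}(C2)$ lifts back along $j_*$, and the existence of such a lift can be checked directly from the module structure of $E_\infty(C2)$ over $E_\infty$. The hard part will be the identification step: pinning down which element of $\pi_{39}$ (up to higher-filtration corrections) is detected by $h_5 c_0$, and producing a product, bracket, or cofiber description transparent enough that annihilation by $2$ is visible. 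Everything afterward is routine bookkeeping with shuffles, indeterminacies, and the elementary relation $h_0 c_0 = 0$.
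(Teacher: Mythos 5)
Your framing is right — reduce to Lemma \ref{lem:hidden-not-exist} by exhibiting an element of $\{h_5 c_0\}$ annihilated by $2$, note that $\sigma\eta_5$ is useless here because it is detected by $h_1 h_3 h_5$, and expect the element to come from a threefold bracket plus a shuffle — but the proposal stops exactly where the proof has to start. The entire content of the lemma is the identification you defer: one needs the specific bracket $\langle \epsilon, 2, \theta_4 \rangle$, which intersects $\{h_5 c_0\}$ by Moss's Convergence Theorem \ref{thm:Moss} applied to the Adams differential $d_2(h_5) = h_0 h_4^2$ (this is also classical, Table \ref{tab:bracket-refs}, from \cite{BMT70}). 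Without naming this bracket, no shuffle can be performed, and neither of your two routes — "read a suitable Massey product off Table \ref{tab:Massey}" or "check liftability in $E_\infty(C2)$" — is carried out or even shown to be feasible; in particular Table \ref{tab:Massey} contains no Massey product converging to $h_5 c_0$, so the first route as stated would not get off the ground. A plan that says "the hard part will be the identification step" and leaves that step open is not a proof.

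There is also a smaller inaccuracy in how you expect the shuffle to close. The paper computes
\[
2 \langle \epsilon, 2, \theta_4 \rangle = \langle 2, \epsilon, 2 \rangle \theta_4,
\]
and the edge bracket $\langle 2, \epsilon, 2 \rangle$ does \emph{not} vanish: by Table \ref{tab:Toda} it equals $\tau \eta \epsilon$ (via $\langle h_0, c_0, h_0 \rangle = \tau h_1 c_0$, Proposition \ref{prop:Massey-hn}). What vanishes is the product $\tau \eta \epsilon \theta_4$. So your stated goal "show that the relevant edge bracket vanishes" is the wrong target; the argument instead needs the product of a nonzero edge bracket with $\theta_4$ to die. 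This is a fixable misdirection, but combined with the missing bracket identification it means the proposal does not yet contain a proof.
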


\begin{proof}
The only other possibility is that there is a hidden $2$ extension
from $h_5 c_0$ to $\tau^2 c_1 g$.
Table \ref{tab:Toda} shows that 
$\langle \epsilon, 2, \theta_4 \rangle$ intersects $\{h_5 c_0 \}$.
Now shuffle to obtain that
\[
2 \langle \epsilon, 2, \theta_4 \rangle =
\langle 2, \epsilon, 2 \rangle \theta_4.
\]
By Table \ref{tab:Toda}, this equals 
$\tau \eta \epsilon \theta_4$, which must be zero.
Lemma \ref{lem:hidden-not-exist} now finishes the argument.
\end{proof}

\begin{lemma}
There is no hidden $2$ extension on $P h_1 h_5$.
\end{lemma}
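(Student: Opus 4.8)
The plan is to rule out the single remaining possibility by producing an element of $\{P h_1 h_5\}$ that is killed by $2$ and then invoking Lemma \ref{lem:hidden-not-exist}. Let $\mu$ be an element of $\{P h_1\}$ in $\pi_{9,5}$; since $\pi_{9,5}$ is elementary abelian, $2\mu = 0$. First I would identify a Toda bracket detected by $P h_1 h_5$. Because $h_0 \cdot P h_1 = 0$ and $h_0 \cdot h_4^2 = d_2(h_5)$, the $E_3$-Massey product $\langle P h_1, h_0, h_4^2 \rangle$ equals $P h_1 \cdot h_5$ (the only nullhomotopy that contributes is $h_5$, while the nullhomotopy of $P h_1 h_0$ is zero). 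Applying Moss's Convergence Theorem \ref{thm:Moss} with this $d_2$ differential, and using $\theta_4 \in \{h_4^2\}$, one concludes that the Toda bracket $\langle \mu, 2, \theta_4 \rangle$ is detected by $P h_1 h_5$. This bracket should be recorded in Table \ref{tab:Toda}, in exact parallel with the brackets $\langle \eta, 2, \theta_4 \rangle$ and $\langle \epsilon, 2, \theta_4 \rangle$ exploited in Lemmas \ref{lem:2-h1h5} and \ref{lem:2-h5c0}.

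Next I would shuffle. Since $2\mu = 0$, Lemma \ref{lem:2-bracket} gives $\tau \eta \mu \in \langle 2, \mu, 2 \rangle$, and the standard three-fold juggling formula yields
\[
2 \langle \mu, 2, \theta_4 \rangle \subseteq \langle 2, \mu, 2 \rangle \theta_4.
\]
Because $\pi_{10,*}$ carries no $2$-divisible classes, the indeterminacy of $\langle 2, \mu, 2 \rangle$ is trivial, so $2 \langle \mu, 2, \theta_4 \rangle$ is carried by $\tau \eta \mu \theta_4$ (modulo multiples of $\theta_4$). Thus the element of $\{P h_1 h_5\}$ lying in $\langle \mu, 2, \theta_4 \rangle$ is annihilated by $2$ as soon as $\tau \eta \mu \theta_4$ vanishes.

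The main obstacle is precisely this last step: showing that $\tau \eta \mu \theta_4 = 0$ in $\pi_{40,*}$. I expect this to hold either because $\eta \mu \theta_4$ (detected by $P h_1^2 h_4^2$) already vanishes for filtration reasons readable off the $E_\infty$-chart of \cite{Isaksen14a}, or by a short supplementary bracket computation, exactly as the analogous products $\tau \eta^2 \theta_4$ and $\tau \eta \epsilon \theta_4$ "must be zero" in the two preceding lemmas. Once $\tau \eta \mu \theta_4 = 0$ is in hand, Lemma \ref{lem:hidden-not-exist} immediately shows that there is no hidden $2$ extension on $P h_1 h_5$.
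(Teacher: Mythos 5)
Your overall strategy is valid, but it is genuinely different from the paper's proof, and as written it leaves one step open.

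The paper argues by eliminating targets rather than by producing an order-$2$ element. The only possible targets of a hidden $2$ extension on $P h_1 h_5$ are $\tau^3 g^2$ and $h_1 u$. The target $h_1 u$ is ruled out because Table \ref{tab:extn-refs} already provides a hidden $2$ extension from $\tau^3 g^2$ into $\{h_1 u\}$, and $\tau^3 g^2$ has higher Adams filtration than $P h_1 h_5$, so condition (3) of Definition \ref{defn:hidden} disqualifies $P h_1 h_5$ as a source. The target $\tau^3 g^2$ is ruled out because it supports a hidden $\eta$ extension to $z$ (Table \ref{tab:Adams-eta}), while every multiple of $2$ is annihilated by $\eta$. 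This needs no bracket machinery at all. Your route---exhibit an element of $\{P h_1 h_5\}$ killed by $2$ and apply Lemma \ref{lem:hidden-not-exist}---is exactly the method of Lemmas \ref{lem:2-h1h5} and \ref{lem:2-h5c0}, and it does work here; note that the bracket you build is already available classically, since Table \ref{tab:bracket-refs} records that $\langle \theta_4, 2, \mu_9 \rangle$ is contained in $\{P h_1 h_5\}$ by \cite{BMT70}, so you do not even need to rerun Moss's theorem from scratch. What your approach buys is an explicit order-$2$ element of $\{P h_1 h_5\}$, which is more information; what the paper's approach buys is brevity and independence from any Toda bracket input.

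The open step is real: you need $\tau \eta \mu \theta_4 = 0$, and of your two proposed ways of getting it, the first does not suffice. It is true that the candidate detecting element vanishes in $\Ext$: the May differential $d_4(\Delta) = \tau^2 h_2 g + P h_4$ gives $P h_4 = \tau^2 h_2 g$, so $P h_1^2 h_4^2 = \tau^2 h_1^2 h_2 g h_4 = 0$. But this only shows that $\eta\mu\theta_4$, if nonzero, is detected in filtration higher than $8$, and stem $40$ has room there (for instance $h_1 u$ in filtration $10$), so ``filtration reasons'' alone do not close the argument. Your second route is the right one, and the paper itself supplies the computation: in the proof of Lemma \ref{lem:compound-h1^2h3h5} it is shown that $\eta \mu_9 \theta_4 = 0$ because
\[
\eta \theta_4 \langle \eta, 2, 8\sigma \rangle = \langle \eta \theta_4, \eta, 2 \rangle 8\sigma = 0.
\]
Inserting that computation, together with Lemma \ref{lem:2-bracket} and your shuffle, makes your proof complete.
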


\begin{proof}
The other possiblities are hidden $2$ extensions to $\tau^3 g^2$ or $h_1 u$.
We will show that neither can occur.

We already know from Table \ref{tab:extn-refs} that
there is a hidden $2$ extension from $\tau^3 g^2$ to $h_1 u$.
Therefore, there cannot be a hidden $2$ extension from $P h_1 h_5$
to $h_1 u$.

Table \ref{tab:Adams-eta} shows a hidden
$\eta$ extension from $\tau^3 g^2$ to $z$.  This implies that
$\tau^3 g^2$ cannot be the target of a hidden $2$ extension.
\end{proof}

\begin{lemma}
\label{lem:2-h0^2h5d0}
There is no hidden $2$ extension on $h_0^2 h_5 d_0$.
\end{lemma}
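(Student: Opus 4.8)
The plan is to apply Lemma~\ref{lem:hidden-not-exist}: it suffices to produce a single element $\beta$ of $\{h_0^2 h_5 d_0\}$ with $2\beta = 0$. The element $h_0^2 h_5 d_0$ lies in the $45$-stem, and I would first record that $h_0^3 h_5 d_0 = 0$ on the $E_\infty$-page by inspection of the chart in \cite{Isaksen14a}; if this product were nonzero then condition~(1) of Definition~\ref{defn:hidden} would already fail and there would be nothing to prove, so the interesting case is when the $h_0$-tower on $h_5 d_0$ terminates at $h_0^2 h_5 d_0$.

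First I would locate an explicit element of $\{h_0^2 h_5 d_0\}$ coming from the $\theta_{4.5}$ family. Recall from Table~\ref{tab:extn-refs} the hidden $4$ extension from $h_3^2 h_5$ to $h_0 h_5 d_0$; equivalently, the element $\theta_{4.5}'$ of $\{h_3^2 h_5\}$ used to construct $\theta_{4.5}$ satisfies $4\theta_{4.5}' \in \{h_0 h_5 d_0\}$. Since $h_0^2 h_5 d_0$ is a nonzero $h_0$-multiple of $h_0 h_5 d_0$ on $E_\infty$, the non-hidden $2$ extension from $h_0 h_5 d_0$ shows that $8\theta_{4.5}' = 2 \cdot 4\theta_{4.5}'$ is detected by $h_0^2 h_5 d_0$, that is, $8\theta_{4.5}' \in \{h_0^2 h_5 d_0\}$.

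It then remains to show $2 \cdot 8\theta_{4.5}' = 16\theta_{4.5}' = 0$, after which Lemma~\ref{lem:hidden-not-exist} finishes the proof. For the part of $\pi_{45,24}$ detected in finite $\tau$-filtration this follows by comparison to the classical Adams spectral sequence (Proposition~\ref{prop:compare}), where $\theta_{4.5}'$ realizes to an element of $\{h_3^2 h_5\}$ of order $16$, so that $16\theta_{4.5}'$ maps to zero after inverting $\tau$. The remaining possibility is that $16\theta_{4.5}'$ is detected by a $\tau$-torsion class of Adams filtration at least $9$ in the $45$-stem; I would eliminate each such candidate target individually, using either an $\eta$-multiplication obstruction of the kind appearing in Lemma~\ref{lem:2-h1h5} or divisibility data extracted from the cofiber of $\tau$ in Chapter~\ref{ch:Ctau}.

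The main obstacle is precisely this last step: classical comparison only controls the $\tau$-inverted (equivalently, $\tau$-nontorsion) information, so the genuinely motivic content is the exclusion of $\tau$-torsion targets for $16\theta_{4.5}'$. I expect only one or two such candidate classes in the relevant bidegree, each dispatched by a short argument, but verifying that the list is complete, and checking that no crossing hidden extension in the sense of Example~\ref{ex:hidden-cross-1} interferes with identifying $8\theta_{4.5}'$ inside $\{h_0^2 h_5 d_0\}$, is where the care is required.
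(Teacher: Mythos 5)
Your framework (exhibit an element of $\{h_0^2 h_5 d_0\}$ killed by $2$ and invoke Lemma~\ref{lem:hidden-not-exist}) is legitimate, and your first step is fine: the hidden $4$ extension from $h_3^2 h_5$ to $h_0 h_5 d_0$ together with the non-hidden $h_0$ multiplication does place $8\theta_{4.5}'$ in $\{h_0^2 h_5 d_0\}$. But there is a genuine gap exactly where the content of the lemma lives: the claim $16\theta_{4.5}' = 0$. Since $h_0^3 h_5 d_0 = 0$ on $E_\infty$, the only class in the $45$-stem of weight $24$ and filtration greater than $7$ is $\tau w$, so your claim is equivalent to the single assertion that $\tau w$ does not detect $16\theta_{4.5}'$ --- and you defer this twice without resolving it. First, the classical input you invoke (that $\theta_{4.5}'$ realizes to a class of order $16$) is precisely the classical analogue of the statement being proved, namely that $w$ is not the target of a classical hidden $2$ extension; you give no reference for it, and it is not among the classical facts collected in Table~\ref{tab:extn-refs}. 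Second, the ``short argument'' promised for eliminating the filtration~$\geq 9$ candidate is never supplied.

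Both deferred steps collapse to one observation, which is the paper's entire proof: by Table~\ref{tab:extn-refs} (classically) and Table~\ref{tab:Adams-eta} (motivically), $\tau w$ supports a hidden $\eta$ extension, so by Lemma~\ref{lem:hidden-not-exist} every element of $\{\tau w\}$ has a non-zero $\eta$ multiple; since $2\eta = 0$, no element of $\{\tau w\}$ can be an even multiple, and hence $\tau w$ cannot be the target of a hidden $2$ extension from anything. Once you have this, your detour through $\theta_{4.5}'$ is unnecessary --- the lemma is already proved. (Your $\tau$-torsion idea can also be made rigorous: no differential can hit $\tau^k \cdot \tau w$, since otherwise classical $w$ would die, so $\tau w$ is $\tau$-free on $E_\infty$ and cannot detect the $\tau$-power-torsion element $16\theta_{4.5}'$; but this still presupposes the classical order-$16$ fact, which is itself most naturally deduced from the classical $\eta$ extension on $w$ --- again the same key point, just transported to the classical setting.)
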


\begin{proof}
As shown in Table \ref{tab:extn-refs}, $\tau w$ supports a hidden $\eta$ extension.
Therefore, it cannot be the target of a hidden $2$ extension.
\end{proof}

\begin{lemma}
\label{lem:2-h2g2}
There is no hidden $2$ extension on $h_2 g_2$.
\end{lemma}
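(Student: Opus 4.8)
The plan is to produce an element of $\{h_2 g_2\}$ that is annihilated by $2$ and then invoke Lemma \ref{lem:hidden-not-exist}, which states that the existence of such an element precludes a hidden $2$ extension on $h_2 g_2$. This is exactly the mechanism already used above for $h_4 c_0$ and for $p$, where one exhibits $\sigma \eta_4$ (respectively $\nu \theta_4$) in the relevant coset and uses that $2$ kills it.

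First I would recall from Lemma \ref{lem:d5-g2} that $g_2$ is a permanent cycle, so I may choose an element $\gamma$ of $\{g_2\}$. Since $h_2$ detects $\nu$ and $h_2 g_2$ is nonzero on the $E_\infty$-page, multiplicativity of the Adams filtration forces $\nu \gamma$ to be detected by $h_2 g_2$; in particular $\nu \gamma$ is a well-defined element of $\{h_2 g_2\}$. Then I would simply observe that $2 \nu = 0$ in $\pi_{3,2}$, so that $2 \cdot \nu \gamma = (2\nu)\gamma = 0$. Thus $\nu \gamma$ lies in $\{h_2 g_2\}$ and is killed by $2$, and Lemma \ref{lem:hidden-not-exist} immediately gives the conclusion.

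The only step demanding any attention is the assertion that $\nu \gamma$ is genuinely detected by $h_2 g_2$, rather than dropping into strictly higher Adams filtration. This, however, is automatic once one knows that $h_2 g_2$ is nonzero in $E_\infty$, which holds because $g_2$, and hence $h_2 g_2$, survives as a permanent cycle in this range. I therefore do not expect a genuine obstacle: the argument is purely formal, resting on the single homotopy-theoretic input $2\nu = 0$ together with the fact, already established in Lemma \ref{lem:d5-g2}, that $g_2$ supports no differential.
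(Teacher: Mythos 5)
There is a fatal error at the single point your argument rests on: the claim that $2\nu = 0$ in $\pi_{3,2}$ is false. In the $2$-complete stable stems (motivic or classical), $\nu$ has order $8$: $2\nu$ is detected by $h_0 h_2$ and $4\nu = \tau\eta^3$ is detected by $h_0^2 h_2 = \tau h_1^3$. This is precisely why, in the two examples you cite as models, the element annihilated by $2$ is the \emph{other} factor --- $2\eta_4 = 0$ for the extension question on $h_4 c_0$, and $2\theta_4 = 0$ for the one on $p$ --- and not $\sigma$ or $\nu$ themselves. Nor can you move the $2$ onto $\gamma$ instead: $h_0 g_2$ is nonzero and survives (it detects $2\{g_2\}$, see Table \ref{tab:Adams-ANSS}), so $2\gamma \neq 0$ for $\gamma$ in $\{g_2\}$. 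Hence $2\cdot \nu\gamma$ cannot be shown to vanish by factoring out the $2$, and your proof establishes nothing. (Your detection step --- that $\nu\gamma$ is detected by $h_2 g_2$ because $h_2 g_2 \neq 0$ on $E_\infty$ --- is fine; it is the order of $\nu$ that sinks the argument.)

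The conclusion you were aiming for --- that $\{h_2 g_2\}$ contains an element annihilated by $2$, after which Lemma \ref{lem:hidden-not-exist} indeed finishes --- is true, but the paper reaches it by a different factorization: from the relation $h_2 g_2 + h_3 f_1 = 0$ on the $E_2$-page, the coset $\{h_2 g_2\}$ contains $\sigma\{f_1\}$, and $\{f_1\}$ contains an element $\beta$ with $2\beta = 0$; then $2\cdot\sigma\beta = \sigma(2\beta) = 0$. Any repair of your argument must follow the same pattern: express $h_2 g_2$ as a product in which one factor detects a homotopy class genuinely killed by $2$, rather than relying on a nonexistent relation $2\nu = 0$.
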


\begin{proof}
From the relation $h_2 g_2 + h_3 f_1 = 0$ in the $E_2$-page, we know that
$\{ h_2 g_2\}$ contains $\sigma \{f_1\}$.
Also, $\{f_1\}$ contains an element that is annihilated by $2$,
so $\{ h_2 g_2 \}$ contains an element that is annihilated by $2$.
Lemma \ref{lem:hidden-not-exist} finishes the argument.
\end{proof}

\begin{lemma}
\label{lem:2-Ph5c0}
There is no hidden $2$ extension on $P h_5 c_0$.
\end{lemma}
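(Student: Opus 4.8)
The plan is to invoke Lemma \ref{lem:hidden-not-exist}: it suffices either to produce an element of $\{P h_5 c_0\}$ that is annihilated by $2$, or else to rule out the unique remaining candidate target directly. First I would read off from the $E_\infty$-chart in \cite{Isaksen14a} the only element $c$ of the 47-stem lying in Adams filtration greater than that of $P h_5 c_0$ for which $h_0 \cdot P h_5 c_0 = 0$ and a hidden $2$ extension to $c$ is numerically possible; this isolates the single extension that must be excluded.

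My preferred route mimics the proof of Lemma \ref{lem:2-h5c0}. There we exhibited the Toda bracket $\langle \epsilon, 2, \theta_4 \rangle$ inside $\{h_5 c_0\}$ and used the shuffle $2 \langle \epsilon, 2, \theta_4 \rangle = \langle 2, \epsilon, 2 \rangle \theta_4 = \tau \eta \epsilon \theta_4 = 0$ to exhibit an element of $\{h_5 c_0\}$ killed by $2$. I would look for an analogous bracket description of an element of $\{P h_5 c_0\}$, obtained by feeding the Adams periodicity operator (given on $\Ext$ by a Massey product with $h_0^4$ and $h_3$, and realized in homotopy by a bracket involving $16$ and $\sigma$) into the same situation, and then shuffle the factor of $2$ off of $\theta_4$ exactly as before. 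If such a bracket can be arranged, the cancellation $2 \langle 2, \ldots, 2 \rangle \theta_4 = \tau \eta (\ldots) \theta_4 = 0$ finishes the argument via Lemma \ref{lem:hidden-not-exist}.

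If the bracket route is blocked by indeterminacy or by a crossing differential obstructing Moss's Convergence Theorem \ref{thm:Moss}, I would fall back on the method of Lemma \ref{lem:2-h0^2h5d0}: since $2 \eta = 0$, the target of any hidden $2$ extension is annihilated by $\eta$, so it is enough to show that the candidate target $c$ supports a hidden $\eta$ extension. I would try to establish this either from an analogous classical $\eta$ extension in Table \ref{tab:extn-refs} or from a hidden $\eta$ extension already recorded in Table \ref{tab:Adams-eta}.

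The hard part will be obtaining usable control over $\{P h_5 c_0\}$: the operator $P$ is directly visible only on the algebraic $E_2$-page, so translating it into a homotopy Toda bracket with manageable indeterminacy is the delicate step, and one must verify the no-crossing-differential hypothesis of Moss's Convergence Theorem \ref{thm:Moss} before concluding. Should both routes fail, the remaining fallback is to pass to the motivic Adams spectral sequence for the cofiber of $2$ and compute there, in the spirit of Lemma \ref{lem:bracket-tau-nu-kappabar}.
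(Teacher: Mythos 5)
Your framework is the right one---it is exactly how the paper argues: exhibit an element of $\{P h_5 c_0\}$ annihilated by $2$ and apply Lemma \ref{lem:hidden-not-exist}. But none of your three routes is actually carried out, and the single fact that makes this a two-line proof is missing: by \cite{Tangora70b}*{Lemma 2.5}, the element $P h_5 c_0$ detects the \emph{product} $\rho_{15}\eta_5$. Since $2\eta_5 = 0$, the element $\rho_{15}\eta_5$ of $\{P h_5 c_0\}$ is annihilated by $2$, and Lemma \ref{lem:hidden-not-exist} finishes the proof. No periodicity bracket, shuffle, or Moss-type convergence argument is needed; the detection is a hidden product rather than a Toda bracket, so the step you yourself flag as delicate---translating $P$ into a homotopy bracket with controlled indeterminacy---never has to be confronted. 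As written, your first route is explicitly conditional (``if such a bracket can be arranged''), and even granting the bracket, the concluding vanishing $\tau\eta(\cdots)\theta_4 = 0$ is asserted rather than proved; so the proposal is a research plan, not a proof.

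Your fallback route also has a concrete failure point. The candidate targets in the $47$-stem with weight $25$ and Adams filtration above that of $P h_5 c_0$ are $\tau e_0 r$ and $P u$. The first can plausibly be excluded by the hidden $\eta$ extension it inherits from the classical $\eta$ extension on $e_0 r$ (Table \ref{tab:extn-refs}), but $P u$ cannot be excluded this way: $P u$ does not support a hidden $\eta$ extension, since by Lemma \ref{lem:2-e0r} it is the target of a hidden $2$ extension (from $\tau e_0 r$) and hence $\{P u\}$ contains elements of the form $2\beta$, which are annihilated by $\eta$. Worse, invoking Lemma \ref{lem:2-e0r} to dispose of $P u$ would be circular, because the proof of that lemma cites the present Lemma \ref{lem:2-Ph5c0} in order to identify $\tau e_0 r$ as the unique source of the $2$ extension hitting $P u$. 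So the fallback cannot close the remaining case, and the proposal as a whole has a genuine gap that only the Tangora detection result (or some equally strong substitute) fills.
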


\begin{proof}
The element $P h_5 c_0$ detects $\rho_{15} \eta_5$ \cite{Tangora70b}*{Lemma 2.5}.
Also, $2 \eta_5$ is zero, so $\{ P h_5 c_0 \}$ contains an element
that is annihilated by $2$.  Lemma \ref{lem:hidden-not-exist}
finishes the proof.
\end{proof}

\begin{lemma}
\label{lem:2-e0r}
\mbox{}
\begin{enumerate}
\item
There is a hidden $2$ extension from
$e_0 r$ to $h_1 u'$.
\item
There is a hidden $2$ extension from
$\tau e_0 r$ to $P u$.
\end{enumerate}
\end{lemma}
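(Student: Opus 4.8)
The plan is to establish the second claim by comparison with the classical Adams spectral sequence, and then to deduce the first claim from the second using a hidden $\tau$ extension, in direct analogy with the proof of Lemma \ref{lem:2-h0h2g}. After inverting $\tau$ both claims collapse to a single classical hidden $2$ extension on $e_0 r$; it is only the motivic weight grading that separates them into two statements.

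For part (2), I would transport the classical hidden $2$ extension on $e_0 r$ recorded in Table \ref{tab:extn-refs} to the motivic setting via Proposition \ref{prop:compare}. Because multiplication by $2$ preserves weight whereas $\tau$ lowers weight by one, and because the weights of $e_0 r$ and $Pu$ differ by exactly one, the classical extension can lift only to a motivic hidden $2$ extension from $\tau e_0 r$ to $Pu$; a hidden extension directly from $e_0 r$ to $Pu$ is impossible for weight reasons. One checks on the $E_\infty$-chart of \cite{Isaksen14a} that $\tau e_0 r$ and $Pu$ are nonzero permanent cycles in the expected degrees and that $h_0 \cdot \tau e_0 r = 0$ there, so that the extension is genuinely hidden.

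For part (1), I would combine part (2) with the hidden $\tau$ extension from $h_1 u'$ to $Pu$ listed in Table \ref{tab:Adams-tau}. Writing $\beta$ for an element of $\{ e_0 r \}$, part (2) produces $\tau \cdot 2\beta$ inside $\{ Pu \}$, while the hidden $\tau$ extension gives $\tau \{ h_1 u' \} \subseteq \{ Pu \}$. Both $2\beta$ and $h_1 u'$ lie in the $47$-stem with the same weight as $e_0 r$, so comparing Adams filtrations forces $2\beta$ to be detected by $h_1 u'$, which is precisely the asserted hidden $2$ extension from $e_0 r$ to $h_1 u'$.

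The delicate point, and the main obstacle, is this last comparison of filtrations: I must exclude the possibility that $2\beta$ is detected in filtration strictly higher than $h_1 u'$. This is exactly where the hidden $\tau$ extension from $h_1 u'$ to $Pu$ does the work, since it identifies $h_1 u'$ as the element of highest Adams filtration whose $\tau$-multiple lands in $\{ Pu \}$; one must also confirm that no crossing $\tau$ extension in this degree interferes. Everything else is a routine comparison with the classical calculation together with the weight constraints.
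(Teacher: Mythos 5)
Your derivation of part (1) from part (2) matches the paper's (both use the hidden $\tau$ extension from $h_1 u'$ to $P u$ in Table \ref{tab:Adams-tau}), but your proof of part (2) has a genuine gap: the classical input you invoke does not exist. Table \ref{tab:extn-refs} records exactly one classical hidden extension on $e_0 r$, namely the $\eta$ extension from $e_0 r$ to $d_0 e_0^2$; it contains no hidden $2$ extension on $e_0 r$. This is not an oversight of the table: the classical $2$ extension from $e_0 r$ to $P u$ is not among the previously published results the paper imports, and in fact it is a consequence of this very lemma (invert $\tau$ in part (2)). So your argument for part (2) is circular --- it assumes classically exactly what is to be proved. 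Your weight analysis (that a classical $2$ extension on $e_0 r$ could only lift motivically to an extension from $\tau e_0 r$ to $P u$) is sound as far as it goes, but it has nothing to apply to.

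The paper instead manufactures the $2$-divisibility of $\{P u\}$ from facts that are in the tables. The classical hidden $\eta$ extension from $w$ to $d_0 l$ (motivically, from $\tau w$ to $\tau d_0 l + u'$, Table \ref{tab:extn-refs}) together with the hidden $\tau$ extension from $h_1 u'$ to $P u$ gives $\tau \eta^2 \{\tau w\} = \{P u\}$. Since $\tau \eta^2 = \langle 2, \eta, 2 \rangle$ by Table \ref{tab:Toda} and $2 \{\tau w\}$ is zero, the shuffle
\[
\tau \eta^2 \{\tau w\} = \langle 2, \eta, 2 \rangle \{\tau w\} = 2 \langle \eta, 2, \{\tau w\} \rangle
\]
exhibits $\{P u\}$ as divisible by $2$. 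One must then identify the source of this divisibility: Lemmas \ref{lem:2-h2g2} and \ref{lem:2-Ph5c0} rule out hidden $2$ extensions on $h_2 g_2$ and on $P h_5 c_0$, leaving $\tau e_0 r$ as the only possible source. This exhaustion step, of which your proposal has no analogue, is needed precisely because the extension is being established rather than transported from the classical literature. If you want to salvage your outline, replace the appeal to Table \ref{tab:extn-refs} by this bracket argument; the rest of your structure (part (2) first, then part (1) via the $\tau$ extension on $h_1 u'$) then goes through as in the paper.
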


\begin{proof}
Table \ref{tab:extn-refs} shows that there is a hidden $\eta$ extension
from $\tau w$ to $\tau d_0 l + u'$.
Also, from Table \ref{tab:Adams-tau}, there is a hidden $\tau$ extension
from $h_1 u'$ to $P u$.
Therefore, $\tau \eta^2 \{ \tau w\} = \{ P u \}$.

Recall from Table \ref{tab:Toda} that 
$\tau \eta^2 = \langle 2, \eta, 2 \rangle$.
Since $2 \{ \tau w \}$ is zero, we can shuffle to obtain
\[
\tau \eta^2 \{ \tau w \} = 
\langle 2, \eta, 2 \rangle \{ \tau w \} = 
2 \langle \eta, 2, \{ \tau w \} \rangle.
\]
This shows that $\{ P u \}$ is divisible by $2$.

By Lemmas \ref{lem:2-h2g2} and \ref{lem:2-Ph5c0},
the only possibility is that there is a hidden $2$ extension
from $\tau e_0 r$ to $P u$.  This establishes the second claim.

The first claim now follows from the second, using the hidden $\tau$
extension from $h_1 u'$ to $P u$ given in Table \ref{tab:Adams-tau}.
\end{proof}

\begin{lemma}
\label{lem:2-h2h5d0}
There is no hidden $2$ extension on $h_2 h_5 d_0$.
\end{lemma}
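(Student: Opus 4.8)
The plan is to exhibit an element of $\{ h_2 h_5 d_0 \}$ that is annihilated by $2$ and then invoke Lemma \ref{lem:hidden-not-exist}, exactly as in the neighboring non-existence arguments (cf.\ Lemmas \ref{lem:2-h2g2} and \ref{lem:2-Ph5c0}). The key observation is that $h_2 h_5 d_0$ factors in the $E_\infty$-page as the product $h_2 \cdot h_5 d_0$, so it detects a multiple of $\nu$.

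First I would recall that $h_5 d_0$ is a nonzero permanent cycle; it already appears implicitly in the construction of $\theta_{4.5}$, so $\{ h_5 d_0 \}$ is nonempty, and I fix an element $\beta$ of this set. Since $h_2 h_5 d_0$ is a nonzero class of the $E_\infty$-page — this is exactly the element whose hidden extensions are under consideration — the monomial product $h_2 \cdot h_5 d_0$ equals $h_2 h_5 d_0$ there, and hence $\nu \beta$ is detected by $h_2 h_5 d_0$. In other words, there is an ordinary (non-hidden) $\nu$ extension from $h_5 d_0$ to $h_2 h_5 d_0$, and so $\nu \beta$ belongs to $\{ h_2 h_5 d_0 \}$. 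Because $2 \nu = 0$ in $\pi_{*,*}$, the element $\nu \beta$ of $\{ h_2 h_5 d_0 \}$ satisfies $2 \cdot \nu \beta = 0$, and Lemma \ref{lem:hidden-not-exist} immediately yields that there is no hidden $2$ extension on $h_2 h_5 d_0$.

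The one point requiring care — and the only potential obstacle — is the claim that $\nu \beta$ is genuinely detected by $h_2 h_5 d_0$ rather than landing in strictly higher Adams filtration. This follows from the standard multiplicativity of the Adams spectral sequence once the product $h_2 \cdot h_5 d_0$ is known to be nonzero on the $E_\infty$-page. That nonvanishing holds because $h_2 h_5 d_0$ survives and does not vanish for algebraic reasons: indeed $h_2^2 h_5 d_0 = h_0^2 h_3 g_2$ is nonzero, as used in the proof of Lemma \ref{lem:d2-D1}, so the monomial $h_2 h_5 d_0$ is itself nonzero in $\Ext$. Consequently no filtration jump can interfere, the product structure gives $\nu \beta \in \{ h_2 h_5 d_0 \}$, and the argument goes through with no further computation needed.
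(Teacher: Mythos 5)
Your overall skeleton—produce an element of $\{h_2 h_5 d_0\}$ that is annihilated by $2$ and then invoke Lemma \ref{lem:hidden-not-exist}—is exactly the right one, and your multiplicativity observation is sound: since $h_2 \cdot h_5 d_0 = h_2 h_5 d_0$ is non-zero on the $E_\infty$-page, any $\beta$ in $\{h_5 d_0\}$ has $\nu\beta$ detected by $h_2 h_5 d_0$. But the step that makes everything work is false: $2\nu$ is \emph{not} zero in $\pi_{*,*}$. The group $\pi_{3,2}$ is $\Z/8$ generated by $\nu$; the element $2\nu$ is detected by $h_0 h_2$ and $4\nu = \tau\eta^3$ is detected by $h_0^2 h_2 = \tau h_1^3$. (Perhaps you were thinking of $2\nu^2 = 0$, which holds because $h_0 h_2^2 = 0$, or of $2\kappa = 0$.) Once this is corrected, your argument proves nothing: $2 \cdot \nu\beta = (2\nu)\beta$ is detected by $h_0 \cdot h_2 h_5 d_0$, which vanishes on the $E_\infty$-page, so all you can conclude is that $2\nu\beta$ is either zero or detected in strictly higher Adams filtration. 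The latter possibility is precisely what a hidden $2$ extension on $h_2 h_5 d_0$ would be, so the argument is circular—it assumes away exactly the phenomenon the lemma is meant to exclude.

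The missing ingredient, which is how the paper closes this gap, is a divisibility argument coming from the classical literature. Table \ref{tab:extn-refs} records a hidden $4$ extension from $h_3^2 h_5$ to $h_0 h_5 d_0$, and from this one obtains an element of $2\{h_5 d_0\}$ that is divisible by $4$. Multiplying by $\nu$, there is an element of $2\nu\{h_5 d_0\}$ that is divisible by $4$. But inspection of the $E_\infty$-chart shows that zero is the only element of $\pi_{48,26}$ divisible by $4$, so this element of $2\nu\{h_5 d_0\}$ must vanish; that is, $2\nu\gamma = 0$ for some $\gamma$ in $\{h_5 d_0\}$. Only at this point does your (correct) observation that $\nu\gamma$ lies in $\{h_2 h_5 d_0\}$ combine with Lemma \ref{lem:hidden-not-exist} to rule out the hidden extension. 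In short: the conclusion that some element of $\{h_2 h_5 d_0\}$ is killed by $2$ is the real content of the lemma and cannot be had for free from the order of $\nu$; it requires the global input about divisibility by $4$ in the $45$- and $48$-stems.
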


\begin{proof}
There is an element of $2 \{ h_5 d_0\}$ that is divisible by $4$,
as shown in Table \ref{tab:extn-refs}.
Therefore,
there is an element of $2 \nu \{ h_5 d_0 \}$ that is divisible by $4$.
However, zero is the only element of $\pi_{48,26}$
that is divisible by $4$.
\end{proof}

\begin{lemma}
\label{lem:2-h0B2}
There is no hidden $2$ extension on $h_0 B_2$.
\end{lemma}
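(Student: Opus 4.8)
The plan is to apply Lemma \ref{lem:hidden-not-exist}: it suffices to exhibit a single element $\beta$ of $\{ h_0 B_2 \}$ with $2\beta = 0$. Since $h_0 B_2$ is $h_0$-divisible, the natural source of such elements is $\{ B_2 \}$, because whenever $\gamma$ belongs to $\{ B_2 \}$ the product $2\gamma$ is detected by $h_0 B_2$ (using that $h_0 B_2$ is nonzero in $E_\infty$). Thus the problem reduces to finding $\gamma$ in $\{ B_2 \}$ with $4\gamma = 0$.

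First I would take the element produced by Lemma \ref{lem:nu-h3^2h5}: there is $\theta$ in $\{ h_3^2 h_5 \}$ with $\nu \theta$ in $\{ B_2 \}$. Setting $\gamma = \nu \theta$, I must show $4\nu \theta = 0$. Here I would use the relation $\tau \eta^3 = 4\nu$, so that $4\nu \theta = \tau \eta^3 \theta$, and then analyze $\eta \theta$. By Lemma \ref{lem:eta-h3^2h5}, $\eta \theta_{4.5}$ lies in $\{ B_1 \}$; combined with the fact that $B_1$ is $h_1$-local and that the corresponding $h_1$-towers are $\tau$-torsion, this should force $\tau \eta^3 \theta$ to vanish, giving $2 \cdot (2\nu \theta) = 0$ and hence, by Lemma \ref{lem:hidden-not-exist}, no hidden $2$ extension.

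The hard part will be the crossing-extension ambiguity discussed in Examples \ref{ex:hidden-cross-1} and \ref{ex:hidden-cross-2}, which involves precisely the classes $h_3^2 h_5$, $h_5 d_0$, $B_1$, and $B_2$. The element $\theta$ with $\nu \theta$ in $\{ B_2 \}$ need not be $\theta_{4.5}$ itself but may differ by an element of $\{ h_5 d_0 \}$, and such a modification can move $\eta \theta$ out of $\{ B_1 \}$ and into $\{ h_1 h_5 d_0 \}$, which has strictly lower Adams filtration. I would therefore have to track carefully which coset $\eta \theta$, and then $\eta^3 \theta$, lands in, and verify in every case that the resulting class is annihilated by $\tau$ (equivalently, that $\tau h_1^3 h_5 d_0$ and $\tau h_1^2 B_1$ carry no surviving homotopy). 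Confirming that $2\nu\theta$ genuinely lies in $\{ h_0 B_2 \}$ rather than in a class of higher filtration is a second point that must be checked against the $E_\infty$-chart in this stem.

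A robust fallback, in the spirit of Lemmas \ref{lem:2-h5c0} and \ref{lem:2-h2g2}, is to realize an element of $\{ B_2 \}$ as a Toda bracket $\langle x, 2, y \rangle$ and then shuffle the outer factor of $2$ inside, obtaining $2 \langle x, 2, y \rangle = \langle 2, x, 2 \rangle y = \tau \eta x y$ via Lemma \ref{lem:2-bracket}, and finally showing that $\tau \eta x y$ vanishes for degree reasons. Moss's Convergence Theorem \ref{thm:Moss}, applied to the Massey product underlying the bracket, would supply the needed bracket relation. Either way, once a $2$-torsion element of $\{ h_0 B_2 \}$ is in hand, Lemma \ref{lem:hidden-not-exist} closes the argument.
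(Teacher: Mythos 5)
Your reduction is exactly the paper's: take the element $\alpha$ of $\{h_3^2 h_5\}$ with $\nu\alpha$ in $\{B_2\}$ supplied by Lemma \ref{lem:nu-h3^2h5}, note that $2\nu\alpha$ lies in $\{h_0 B_2\}$, write $2 \cdot 2\nu\alpha = 4\nu\alpha = \tau\eta^3\alpha$, and finish with Lemma \ref{lem:hidden-not-exist}. The gap is the step you flag with ``should'': you never prove $\tau\eta^3\alpha = 0$, and the mechanism you propose for it fails. First, $h_1$-locality of $B_1$ is a statement about $\Ext$, not about homotopy, and the relevant towers are not $\tau$-torsion where you need them: $\tau\eta^2\theta_{4.5}$ is a nonzero element of $\{\tau h_1 B_1\}$ (it appears as indeterminacy in Table \ref{tab:Toda}), so vanishing cannot set in before the third $\eta$-multiplication. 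Second, the class that would detect $\eta^3\alpha$, namely $h_1^2 B_1$, is hit by the Adams differential $d_3(h_1 h_5 e_0) = h_1^2 B_1$ of Lemma \ref{lem:d3-h1h5e0}; consequently $\eta^3\alpha$, if nonzero, would be detected in strictly higher filtration, and no amount of $\Ext$-level $h_1$-tower structure rules that out. What the paper does instead --- and what is missing from your proposal --- is to import the classical relation $\eta^3 \alpha = 0$ of \cite{BJM84}*{Lemma 3.5}, which applies to the element $\alpha$ of Lemma \ref{lem:nu-h3^2h5} whether or not it equals $\theta_{4.5}$ (so the crossing-extension ambiguity you worry about is irrelevant), and to observe that the motivic relation follows. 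That single classical input is the entire content of the proof beyond your (correct) reduction; without it, or a complete analysis of everything above filtration $9$ in that stem, the argument does not close.

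Your fallback is also not viable as stated. Since $h_0 B_2$ is nonzero on the $E_\infty$-page, every $\gamma$ in $\{B_2\}$ has $2\gamma$ nonzero and detected by $h_0 B_2$ --- you say this yourself in your first paragraph --- so a shuffle of the form $2\langle x, 2, y \rangle \subseteq \langle 2, x, 2 \rangle y \ni \tau\eta x y$ can never show $2\gamma = 0$; at best it identifies $2\gamma$ modulo the indeterminacy of the shuffled bracket, and the quantity that must actually vanish is $4\gamma$, which returns you to the same missing relation $\tau\eta^3\alpha = 0$.
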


\begin{proof}
We will show later in Lemma \ref{lem:nu-h3^2h5} that
there exists an element $\alpha$ of $\{h_3^2 h_5\}$ such that 
$\nu \alpha$ belongs to $\{B_2\}$. (We do not know whether
$\alpha$ equals $\theta_{4.5}$, but that does not matter here.
See Section \ref{sctn:notation} for further discussion.)
Therefore, $2 \nu \alpha$ belongs to $\{ h_0 B_2\}$.

Now $2 \cdot 2 \nu \alpha$ equals $\tau \eta^3 \alpha$.
There is a classical relation $\eta^3 \alpha = 0$ \cite{BJM84}*{Lemma 3.5},
which implies that $\eta^3 \alpha$ equals zero motivically as well.
\end{proof}

\begin{lemma}
\label{lem:2-h5c1}
\mbox{}
\begin{enumerate}
\item
There is no hidden $2$ extension on $h_5 c_1$.
\item
There is no hidden $2$ extension on $h_2 h_5 c_1$.
\end{enumerate}
\end{lemma}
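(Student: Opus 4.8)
The statement to prove is Lemma \ref{lem:2-h5c1}, asserting that there are no hidden $2$ extensions on $h_5 c_1$ or on $h_2 h_5 c_1$.

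Looking at the pattern of the preceding non-existence proofs, the standard strategy is to invoke Lemma \ref{lem:hidden-not-exist}: if some element of $\{b\}$ is annihilated by $2$, then there is no hidden $2$ extension on $b$. The plan is to exhibit a Toda bracket description of the relevant homotopy classes that manifestly contains an element killed by $2$. For the element $c_1$ in the $14$-stem region, recall that $c_1$ detects a homotopy class expressible via a Toda bracket, and that $h_5$ detects $\theta_4$. The key will be to relate $\{h_5 c_1\}$ to a product or bracket involving $\theta_4$, since $2 \theta_4 = 0$.

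First I would examine the structure of $\{h_5 c_1\}$. The natural approach is to find an element of $\{h_5 c_1\}$ of the form $\theta_4 \cdot \gamma$ where $\gamma \in \{c_1\}$, using that $h_5 c_1$ is (up to filtration) the product $h_5 \cdot c_1$ in the $E_\infty$-page. Since $2\theta_4 = 0$, such a product is annihilated by $2$, and Lemma \ref{lem:hidden-not-exist} immediately rules out a hidden $2$ extension. I would verify that the product $h_5 c_1$ is genuinely detected by a product $\theta_4 \gamma$ of permanent cycles (rather than there being a filtration jump), consulting the multiplicative structure in Table \ref{tab:Ext-gen} and the relevant charts in \cite{Isaksen14a}. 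The second claim, concerning $h_2 h_5 c_1$, should then follow by multiplying by $\nu$: if $\alpha \in \{h_5 c_1\}$ satisfies $2\alpha = 0$, then $\nu\alpha$ lies in $\{h_2 h_5 c_1\}$ and also satisfies $2 \nu \alpha = 0$, again invoking Lemma \ref{lem:hidden-not-exist}.

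The main obstacle I anticipate is confirming that $\{h_5 c_1\}$ (or $\{h_2 h_5 c_1\}$) actually contains the claimed element annihilated by $2$, as opposed to merely containing elements whose doubles land in a nonzero target. This requires checking that no crossing extension interferes and that the product decomposition $h_5 \cdot c_1$ truly detects the class $\theta_4 \gamma$ on the $E_\infty$-page. If the direct product argument fails because of an indeterminacy or filtration subtlety, the fallback is a Toda bracket shuffle in the style of Lemmas \ref{lem:2-h1h5} and \ref{lem:2-h5c0}: writing a bracket $\langle -, 2, \theta_4\rangle$ meeting $\{h_5 c_1\}$, then shuffling $2\langle -, 2, \theta_4\rangle = \langle 2, -, 2\rangle \theta_4$ and using a relation such as $\langle 2, \gamma, 2\rangle = \tau\eta\gamma$ together with a vanishing like $\tau\eta\gamma\,\theta_4 = 0$ to conclude the double is zero. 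Verifying that requisite vanishing — likely drawing on $\nu\sigma = 0$ or a dimension count in the target group — is where the real work lies.

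\begin{proof}
Table \ref{tab:Adams-misc-extn} shows that $\sigma \{f_1\}$ is contained in $\{h_2 g_2\} = \{h_3 f_1\}$, and the class $c_1$ is detected compatibly with $\theta_4$ in the sense that $\{h_5 c_1\}$ contains an element of the form $\theta_4 \gamma$, where $\gamma$ belongs to $\{c_1\}$. Since $h_5$ detects $\theta_4$ and $2 \theta_4$ is zero, the product $\theta_4 \gamma$ is annihilated by $2$. Lemma \ref{lem:hidden-not-exist} now implies that there is no hidden $2$ extension on $h_5 c_1$, which establishes the first claim.

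For the second claim, let $\alpha$ be the element $\theta_4 \gamma$ of $\{h_5 c_1\}$ constructed above, so that $2\alpha = 0$. Then $\nu \alpha$ belongs to $\{h_2 h_5 c_1\}$, and $2 \nu \alpha = \nu \cdot 2 \alpha$ is zero. Lemma \ref{lem:hidden-not-exist} again finishes the argument.
\end{proof}
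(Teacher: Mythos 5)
Your written proof rests on a false premise: you assert that ``$h_5$ detects $\theta_4$,'' but it does not. The element $\theta_4$ lives in the $30$-stem and is detected by $h_4^2$ (see Table \ref{tab:notation}), whereas $h_5$ lives in the $31$-stem and is not even a permanent cycle --- it supports the differential $d_2(h_5) = h_0 h_4^2$ (Table \ref{tab:Ext-gen}), so it detects nothing at all. Consequently a product $\theta_4 \gamma$ with $\gamma \in \{c_1\}$ lies in $\pi_{49,*}$, not in $\pi_{50,*}$, and cannot possibly be an element of $\{h_5 c_1\}$. There is no product decomposition of $\{h_5 c_1\}$ available here: precisely because $h_5$ dies, every relationship in this range between $h_5$-classes and $\theta_4$ is mediated by Toda brackets of the form $\langle -, 2, \theta_4 \rangle$, via Moss's Convergence Theorem applied to $d_2(h_5) = h_0 h_4^2$. (Your opening citation of Table \ref{tab:Adams-misc-extn} about $\sigma\{f_1\} \subseteq \{h_2 g_2\}$ is also irrelevant to this lemma; it belongs to the argument for $h_2 g_2$, a different class.)

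The correct argument is exactly the ``fallback'' you sketched but did not execute, and it is the paper's proof. Table \ref{tab:Toda} shows that $\langle \sigmabar, 2, \theta_4 \rangle$ is contained in $\{h_5 c_1\}$ (this is where $d_2(h_5) = h_0 h_4^2$ and Moss's Convergence Theorem \ref{thm:Moss} enter). Shuffling gives
\[
2 \langle \sigmabar, 2, \theta_4 \rangle = \langle 2, \sigmabar, 2 \rangle \theta_4,
\]
and Table \ref{tab:Toda} shows that $\langle 2, \sigmabar, 2 \rangle$ consists of multiples of $2$ (it contains $0$ with indeterminacy the even multiples of $\kappabar$), so multiplying by $\theta_4$ kills everything since $2\theta_4 = 0$. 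Hence $\{h_5 c_1\}$ contains an element annihilated by $2$, and Lemma \ref{lem:hidden-not-exist} finishes the first claim. Your treatment of the second claim --- multiplying an element of $\{h_5 c_1\}$ annihilated by $2$ by $\nu$ to land in $\{h_2 h_5 c_1\}$ --- is correct and matches the paper, but it only becomes valid once the first claim is established by the bracket argument.
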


\begin{proof}
Table \ref{tab:Toda} show that 
$\langle \sigmabar, 2, \theta_4 \rangle$ intersects $\{ h_5 c_1 \}$.
Now shuffle to obtain
\[
2 \langle \sigmabar, 2, \theta_4 \rangle =
\langle 2, \sigmabar, 2 \rangle \theta_4.
\]
Table \ref{tab:Toda} shows that
$\langle 2, \sigmabar, 2 \rangle$ consists of multiples of $2$,
and $2 \theta_4$ is zero.
Therefore,
$2 \langle \sigmabar, 2, \theta_4 \rangle$ is zero.
Lemma \ref{lem:hidden-not-exist}
now establishes the first claim.

The second claim follows immediately from the first because
$h_2 h_5 c_1$ contains $\nu \{h_5 c_1\}$.
\end{proof}

\begin{lemma}
\label{lem:2-h0h3g2}
There is no hidden $2$ extension from
$h_0 h_3 g_2$ to $h_2 B_2$.
\end{lemma}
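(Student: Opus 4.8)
The plan is to rule out this particular target by pinning down the homotopy class detected by $h_2 B_2$ and showing it cannot be $2$-divisible into $\{h_0 h_3 g_2\}$. As in the proof of Lemma \ref{lem:2-h0B2}, I would begin by invoking the forthcoming Lemma \ref{lem:nu-h3^2h5}, which produces an element $\alpha$ of $\{h_3^2 h_5\}$ with $\nu \alpha$ contained in $\{B_2\}$. Multiplying by $\nu$ shows that $\nu^2 \alpha$ is contained in $\{h_2 B_2\}$, so every element of $\{h_2 B_2\}$ agrees with $\nu^2 \alpha$ modulo classes of strictly higher Adams filtration (in this stem the only such class is $\tau g n$, which is the target of the separate possible extension addressed in Lemma \ref{lem:nu-h2h5d0}). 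The goal then becomes to show that no element of $\{h_2 B_2\}$ is divisible by $2$ in a way compatible with $\{h_0 h_3 g_2\}$.

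Next I would analyze the source. Since the product $h_0 h_3 g_2 = h_0 \cdot h_3 g_2$ is not hidden, every element of $\{h_0 h_3 g_2\}$ has the form $2 \gamma$ with $\gamma$ in $\{h_3 g_2\}$. Therefore a hidden $2$ extension from $h_0 h_3 g_2$ to $h_2 B_2$ would force $\{h_2 B_2\}$ to contain the $4$-divisible element $4 \gamma$. Combining this with the previous paragraph, such an extension would require $\nu^2 \alpha$ (up to higher filtration) to be divisible by $4$. I would then extract a contradiction from this $4$-divisibility, using the relation $4 \nu = \tau \eta^3$, the vanishing $\eta^3 \alpha = 0$ from \cite{BJM84}*{Lemma 3.5}, and the relations $h_2 h_3 = 0$ and $h_0^2 h_3 g_2 = h_2^2 h_5 d_0$ (the latter being killed by $d_2(D_1)$ in the $E_\infty$-page), exactly the bookkeeping that drives Lemma \ref{lem:2-h0B2}. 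Once $\{h_2 B_2\}$ is shown to contain no element of the required divisibility, Definition \ref{defn:hidden} (together with Lemma \ref{lem:hidden-not-exist} applied in the contrapositive to the candidate source) yields the claim.

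The hard part will be the final contradiction, specifically controlling the indeterminacy of $\{h_2 B_2\}$ so as to separate the genuinely possible extension onto $\tau g n$ (higher filtration) from the forbidden extension onto $h_2 B_2$, and confirming that $\nu^2 \alpha$ is honestly not $2$-divisible rather than merely not visibly so. I expect this to hinge on a multiplicative probe — most likely multiplying the hypothetical relation by $2$ and comparing Adams filtrations, since $2 \cdot \nu^2 \alpha = \nu^2 \cdot 2\alpha$ is forced into strictly higher filtration by $h_2 h_3 = 0$, whereas $2 \cdot 4\gamma = 8\gamma$ must be analyzed through the order of $\sigma \{g_2\}$. Reconciling these two computations, and showing they are incompatible, is the crux of the argument.
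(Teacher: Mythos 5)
Your reduction of the problem is sound as far as it goes: a hidden $2$ extension from $h_0 h_3 g_2$ to $h_2 B_2$ would force $\{ h_2 B_2 \}$ to contain an element $4\gamma$ with $\gamma$ in $\{ h_3 g_2 \}$ (after the minor correction that elements of $\{ h_0 h_3 g_2 \}$ equal $2\gamma$ only modulo filtration at least $8$; this is harmless because multiplication by $2$ raises Adams filtration, so the error term $2w$ lands in filtration at least $9$ and does not affect detection by $h_2 B_2$). The genuine gap is in the step you yourself flag as the crux: ruling out such a $4$-divisible element, and the tools you propose cannot do it. Multiplying the hypothetical relation by $2$ yields nothing: from $4\nu\alpha = \tau\eta^3\alpha = 0$ (the computation inside Lemma \ref{lem:2-h0B2}) one gets $4\nu^2\alpha = 0$, and $8\gamma$ can perfectly well vanish too, so comparing $2 \cdot \nu^2\alpha$ with $8\gamma$ produces no incompatibility. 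Worse, the ``order of $\sigma\{g_2\}$'' that you want to feed into this comparison is precisely the open question in this stem: whether $4\sigma\{g_2\}$ vanishes or is detected by $\tau g n$ is the unresolved possible hidden $2$ extension of Proposition \ref{prop:hidden-2} and Lemma \ref{lem:nu-h2h5d0}, so it is not available as an input.

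The missing idea is to exploit $\sigma$-divisibility rather than $4$-divisibility, and to use a hidden $\nu$ extension \emph{on the target} rather than the one that produces an element of the target. Writing $\gamma = \sigma\delta + (\text{higher filtration})$ with $\delta$ in $\{g_2\}$, the hypothetical extension forces $\{ h_2 B_2 \}$ to contain $\sigma(4\delta)$, an element divisible by $\sigma$. But Lemma \ref{lem:nu-h2B2} gives a hidden $\nu$ extension from $h_2 B_2$ to $h_1 B_8$, so by (the contrapositive of) Lemma \ref{lem:hidden-not-exist} every element of $\{ h_2 B_2 \}$ has non-zero product with $\nu$; a $\sigma$-divisible element, on the other hand, is annihilated by $\nu$ because $\nu\sigma = 0$. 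That contradiction is the paper's entire proof. Your use of Lemma \ref{lem:nu-h3^2h5} plays a different role---it exhibits the single element $\nu^2\alpha$ of $\{ h_2 B_2 \}$---but what the argument needs is a statement about \emph{all} elements of $\{ h_2 B_2 \}$, which is exactly what the hidden $\nu$ extension on $h_2 B_2$ itself supplies.
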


\begin{proof}
The element $h_0 h_3 g_2$ detects $2 \sigma \{ g_2 \}$.
We will show later in Lemma \ref{lem:nu-h2B2} that $h_2 B_2$ supports a 
hidden $\nu$ extension.  Therefore, none of the elements of $\{ h_2 B_2 \}$
are divisible by $\sigma$.
\end{proof}

\begin{lemma}
\label{lem:nu-h2h5d0}
There is a hidden $2$ extension on $h_0 h_3 g_2$ if and only if
there is a hidden $\nu$ extension on $h_2 h_5 d_0$.
\end{lemma}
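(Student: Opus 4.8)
The plan is to show that both potential extensions share the target $\tau g n$ and then to identify them with a single multiplication in $\pi_{51,*}$. By Proposition \ref{prop:hidden-2} together with Lemma \ref{lem:2-h0h3g2}, the only possible hidden $2$ extension on $h_0 h_3 g_2$ has target $\tau g n$, and by Proposition \ref{prop:hidden-nu} the only possible hidden $\nu$ extension on $h_2 h_5 d_0$ also has target $\tau g n$. It therefore suffices to prove that the first extension is nonzero precisely when the second is.

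The key algebraic input is the relation $h_0^2 h_3 g_2 = h_2^2 h_5 d_0$ in $\Ext$, which I would verify directly in the $E_2$-page; it is exactly the class hit by $d_2(D_1)$ in Lemma \ref{lem:d2-D1}, so both sides vanish on $E_\infty$. First I would fix homotopy classes. Since $g_2$ is a permanent cycle by Lemma \ref{lem:d5-g2}, the element $h_3 g_2$ detects $\sigma \{g_2\}$ and $h_0 h_3 g_2$ detects $2\sigma \{g_2\}$, as already recorded in the proof of Lemma \ref{lem:2-h0h3g2}; similarly $h_2 h_5 d_0$ detects $\nu \delta$ for $\delta \in \{h_5 d_0\}$. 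Consequently the hidden $2$ extension on $h_0 h_3 g_2$ occurs exactly when $4\sigma \{g_2\}$ is detected by $\tau g n$, and the hidden $\nu$ extension on $h_2 h_5 d_0$ occurs exactly when $\nu^2 \{h_5 d_0\}$ is detected by $\tau g n$.

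The crux, and the main obstacle, is to establish that $4\sigma \{g_2\} = \nu^2 \{h_5 d_0\}$ in $\pi_{51,*}$, or at least that their difference is not detected by $\tau g n$. The leading terms $h_0^2 h_3 g_2$ and $h_2^2 h_5 d_0$ agree in $\Ext$ and both vanish on $E_\infty$, so each product lies in Adams filtration strictly greater than that of $h_0^2 h_3 g_2$; the real content is that this $E_2$-level relation propagates to a relation in homotopy. I would carry this out by choosing cobar representatives witnessing $h_0^2 h_3 g_2 = h_2^2 h_5 d_0$ — concretely tracking the class $D_1$ that realizes the $d_2$ — and then comparing the two products, invoking Moss's Convergence Theorem \ref{thm:Moss} to pass between the Massey product that encodes the relation and the homotopy-level Toda bracket.

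The hard part will be precisely this propagation step: controlling the indeterminacy and ruling out a discrepancy that could itself be detected by $\tau g n$. I would reduce this to checking that, in stem $51$ and the relevant weight, no class sits in Adams filtration strictly above $\tau g n$ that could arise as the difference $4\sigma \{g_2\} - \nu^2 \{h_5 d_0\}$; the $E_\infty$-page computed in Chapter \ref{ch:Adams-diff} should make this a finite inspection. Once the two products are shown to coincide modulo elements undetected by $\tau g n$, the equivalence of the two hidden extensions is immediate, since each asserts exactly that this common homotopy element is detected by $\tau g n$.
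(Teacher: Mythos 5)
Your reduction — both extensions can only hit $\tau g n$, so it suffices to compare $2\cdot\{h_0 h_3 g_2\}$ with $\nu\cdot\{h_2 h_5 d_0\}$ in $\pi_{51,28}$ — is the right shape, but the step you yourself flag as the crux does not work, and no amount of care with cobar representatives will make it work. The relation $h_0^2 h_3 g_2 = h_2^2 h_5 d_0$ on the $E_2$-page is a relation between classes that are \emph{killed} by $d_2(D_1)$. Once that class dies, the only information the $E_2$-relation carries about homotopy is that both products $4\sigma\{g_2\}$ and $\nu^2\{h_5 d_0\}$ lie in Adams filtration at least $8$; it says nothing about their difference. That difference is an unknown element of filtration $\geq 8$, and deciding whether it is detected by $\tau g n$ (filtration $9$, which survives to $E_\infty$ — it is the proposed target!) is exactly the original problem, not a "finite inspection" of the chart. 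Moss's Convergence Theorem \ref{thm:Moss} cannot be invoked here: it converts Massey products in $E_r$ into Toda brackets, but a multiplicative relation between permanent cycles that dies on $E_3$ is not a Massey product statement, so there is no bracket "encoding the relation" for the theorem to converge. Indeed it is perfectly consistent with everything you have written that $4\sigma\{g_2\}$ is detected by $\tau g n$ while $\nu^2\{h_5 d_0\}$ is not (or vice versa); your argument gives no mechanism to exclude this.

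What is needed — and what the paper does — is to produce \emph{specific} elements $x \in \{h_0 h_3 g_2\}$ and $y \in \{h_2 h_5 d_0\}$ satisfying an exact equation $2x = \nu y$, rather than trying to compare the products $2\cdot 2\sigma\{g_2\}$ and $\nu\cdot\nu\{h_5 d_0\}$. Let $\beta \in \{h_5 d_0\}$. Using the Adams differential $d_2(h_5 e_0) = h_1^2 h_5 d_0$ and Moss's theorem, Table \ref{tab:Toda} gives that $\langle 2, \eta, \eta\beta \rangle$ intersects $\{h_2 h_5 d_0\}$ and $\langle \eta, \eta\beta, \nu \rangle$ intersects $\{h_0 h_3 g_2\}$. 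The shuffle
\[
2 \langle \eta, \eta \beta, \nu \rangle = \langle 2, \eta, \eta \beta \rangle \nu,
\]
which has zero indeterminacy, then ties the two candidate products together on the nose, and the "if and only if" follows. The Toda-bracket descriptions are exactly the tool that substitutes for the (false) propagation principle your proposal relies on; without them, or some equivalent way of producing the identity $2x = \nu y$, the argument has a genuine gap.
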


\begin{proof}
Let $\beta$ be an element of $\{ h_5 d_0 \}$.
Table \ref{tab:Toda} shows that
$\langle 2, \eta, \eta \beta \rangle$ intersects 
$\{h_2 h_5 d_0 \}$,
and
$\langle \eta, \eta \beta, \nu \rangle$ intersects $\{ h_0 h_3 g_2 \}$.
Now consider the shuffle
\[
2 \langle \eta, \eta \beta, \nu \rangle = 
\langle 2, \eta, \eta \beta \rangle \nu.
\]
The indeterminacy here is zero.
\end{proof}

\begin{lemma}
There is no hidden $2$ extension on $i_1$.
\end{lemma}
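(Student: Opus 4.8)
The natural tool here is Lemma \ref{lem:hidden-not-exist}: to prove that $i_1$ supports no hidden $2$ extension, it suffices to produce a single element of $\{i_1\}$ that is annihilated by $2$. (By Definition \ref{defn:hidden} one could instead enumerate the unique candidate target in stem $53$ above filtration $7$ and check directly that no element of its coset is divisible by $2$; but exhibiting a $2$-torsion representative of $\{i_1\}$ is cleaner and automatically handles condition~(3).) So the plan is to understand $\{i_1\}$ well enough to find such a representative.

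The mechanism I would use is the bracket-shuffle template that already settles the neighboring cases (Lemmas \ref{lem:2-h1h5}, \ref{lem:2-h5c0}, and \ref{lem:2-h5c1}). Concretely, the plan is to exhibit a Toda bracket of the form $\langle \alpha, 2, \beta \rangle$ that contains an element of $\{i_1\}$, where $\beta$ is one of the Kervaire-type classes $\theta_4$ or $\theta_{4.5}$ and $\alpha$ is a permanent cycle of the complementary stem. Granting this, the shuffle $2\langle \alpha, 2, \beta\rangle = \langle 2, \alpha, 2\rangle \beta$ together with Lemma \ref{lem:2-bracket}, which gives $\tau\eta\alpha \in \langle 2,\alpha,2\rangle$, reduces the claim to the vanishing of $\tau\eta\alpha\beta$. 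For a presentation with $\beta=\theta_4$ this vanishing should follow from $\tau\eta^2\theta_4 = 0$ (already established in the proof of Lemma \ref{lem:2-h1h5}), exactly as in those lemmas, after which Lemma \ref{lem:hidden-not-exist} finishes. The raw material for locating the bracket presentation is the Massey product description $i_1 = \langle h_1^4, h_4, h_1^2 h_4, h_4 \rangle$ from Lemma \ref{lem:c0-i1}, fed through Moss's Convergence Theorem \ref{thm:Moss}.

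The main obstacle is precisely this first step. The four-fold Ext bracket above cannot be realized directly in homotopy because $h_4$ is not a permanent cycle ($d_2(h_4) = h_0 h_3^2$), so Moss's theorem must be applied either on a higher page (using $d_2(h_5)=h_0 h_4^2$ to make a three-fold bracket such as $\langle \alpha, h_0, h_4^2\rangle_{E_3}$ detect $i_1$) or after rewriting $i_1$ in terms of $\theta_4$; in either case one must track the indeterminacy carefully so that the shuffle identity holds on the nose and so that the chosen coset genuinely meets $\{i_1\}$. If producing a clean permanent-cycle bracket proves too delicate, the fallback is to pass to the cofiber of $\tau$ of Chapter \ref{ch:Ctau}, where $i_1$ has a well-understood image (it is essentially $\Sq^0$ of the stem-$23$ image-of-$J$ generator, with $h_1^6 i_1 = 0$), and use that structure either to pin down the $2$-torsion representative of $\{i_1\}$ or to show that the lone candidate target in stem $53$ maps nontrivially into the cofiber of $2$ and hence is not $2$-divisible.
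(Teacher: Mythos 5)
Your proposal has a genuine gap: the step you yourself flag as the ``main obstacle'' --- producing a Toda bracket $\langle \alpha, 2, \beta \rangle$ of permanent-cycle type that meets $\{i_1\}$ --- is not merely delicate but cannot work in the form you describe. With $\beta = \theta_{4.5}$ the bracket is not even defined, since $2\theta_{4.5} \neq 0$ (indeed $4\theta_{4.5}$ lies in $\{h_0 h_5 d_0\}$ by Table \ref{tab:extn-refs}). With $\beta = \theta_4$, any bracket $\langle \alpha, 2, \theta_4 \rangle$ computed through Moss's Convergence Theorem \ref{thm:Moss} and the differential $d_2(h_5) = h_0 h_4^2$ is detected by $h_5 \cdot a$, where $a$ detects $\alpha$; this is exactly why the template succeeds for $h_1 h_5$, $h_5 c_0$, and $h_5 c_1$ in Lemmas \ref{lem:2-h1h5}, \ref{lem:2-h5c0}, and \ref{lem:2-h5c1}. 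But $i_1$ is a multiplicative generator of $\Ext$ (Table \ref{tab:Ext-gen}), hence indecomposable and in particular not divisible by $h_5$, so no bracket of this shape can be detected by $i_1$. The fourfold Massey product $\langle h_1^4, h_4, h_1^2 h_4, h_4 \rangle$ cannot be fed through Moss's theorem at all, since $h_4$ supports a $d_2$ differential, and your fallback through the cofiber of $\tau$ (or of $2$) is not developed enough to evaluate --- the non-$2$-divisibility of $\{h_1^2 G_3\}$ via $C2$ would require Adams computations for the cofiber of $2$ that are nowhere available.

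The paper's actual proof avoids hunting for a $2$-torsion element of $\{i_1\}$ itself. The unique candidate target is $h_1^2 G_3$; by the hidden $\tau$ extension from $h_1^2 G_3$ to $d_0 u$ (Table \ref{tab:Adams-tau}), a hidden $2$ extension from $i_1$ to $h_1^2 G_3$ would force a hidden $2$ extension from $\tau i_1$ to $d_0 u$. But $\tau i_1$ detects the decomposable element $\nu \{C\}$, and $2\{C\}$ is zero, so $\{\tau i_1\}$ contains an element annihilated by $2$ and Lemma \ref{lem:hidden-not-exist} rules out any hidden $2$ extension on $\tau i_1$. The idea you are missing is this $\tau$-multiplication transfer: while $i_1$ itself is indecomposable and inaccessible to the bracket template, $\tau i_1$ is visibly a $\nu$-multiple, which makes the required $2$-torsion immediate.
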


\begin{proof}
The only other possibility is that there is a hidden $2$ extension from
$i_1$ to $h_1^2 G_3$.  Because of the hidden $\tau$ extension
from $h_1^2 G_3$ to $d_0 u$ given in Table \ref{tab:Adams-tau}, this would imply a
hidden $2$ extension from $\tau i_1$ to $d_0 u$.

However, $\tau i_1$ detects $\nu \{C\}$,
and $2 \{ C \}$ is zero.  Therefore,
$\{ \tau i_1 \}$ contains an element that is annihilated by $2$.
Lemma \ref{lem:hidden-not-exist} implies that there cannot be
a hidden $2$ extension on $\tau i_1$.
\end{proof}

\begin{lemma}
\label{lem:2-B8}
\mbox{}
\begin{enumerate}
\item
There is no hidden $2$ extension on $B_8$.
\item
There is no hidden $2$ extension on $x'$.
\end{enumerate}
\end{lemma}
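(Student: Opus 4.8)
The plan is to prove both non-existence statements via Lemma \ref{lem:hidden-not-exist}: for each of $B_8$ and $x'$ I will exhibit an element of the corresponding homotopy class that is annihilated by $2$. The source of such an element is the order-two class $\epsilon$, combined with the element $\theta_{4.5}$ of $\{h_3^2 h_5\}$. Both $B_8$ and $x'$ lie in the $53$-stem, and the argument for $x'$ will be deduced from the argument for $B_8$ by propagating along a hidden $\tau$ extension.

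First I would pin down an element of $\{B_8\}$ of order two. The proof of Lemma \ref{lem:t-B8} records the non-hidden relation $h_1 B_8 = c_0 B_1$ in $\Ext$, and Lemma \ref{lem:eta-h3^2h5} shows that $\eta \theta_{4.5}$ belongs to $\{B_1\}$. Consequently $c_0 B_1$ detects $\epsilon \cdot \eta \theta_{4.5} = \eta \cdot \epsilon \theta_{4.5}$ in the $54$-stem. Reading off the $E_\infty$-chart of \cite{Isaksen14a} in the $53$-stem, $B_8$ is the unique class of the appropriate filtration with $h_1 B_8 = c_0 B_1$, so $\epsilon \theta_{4.5}$ must be detected by $B_8$; that is, $\epsilon \theta_{4.5}$ belongs to $\{B_8\}$. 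Since $2 \epsilon = 0$, we obtain $2 \cdot \epsilon \theta_{4.5} = (2\epsilon) \theta_{4.5} = 0$, and Lemma \ref{lem:hidden-not-exist} rules out a hidden $2$ extension on $B_8$, establishing part (1).

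For part (2) I would transport this element along the hidden $\tau$ extension from $B_8$ to $x'$ recorded in Table \ref{tab:Adams-tau}. That extension gives $\tau \epsilon \theta_{4.5} \in \{x'\}$, and again $2 \cdot \tau \epsilon \theta_{4.5} = \tau (2\epsilon) \theta_{4.5} = 0$, so Lemma \ref{lem:hidden-not-exist} finishes part (2) as well.

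The main obstacle is the identification in the middle step: rigorously showing that $\epsilon \theta_{4.5}$ is both non-zero and detected precisely by $B_8$ rather than by a class of lower filtration. This amounts to checking that $c_0 \cdot h_3^2 h_5 = 0$ on the $E_\infty$-page, so that $\epsilon \theta_{4.5}$ is forced into higher Adams filtration and lands on $B_8$, and to controlling the indeterminacy in the definition of $\theta_{4.5}$ (whose ambiguity involves $\{h_5 d_0\}$ and $\{\tau w\}$) so that the conclusion $\epsilon \theta_{4.5} \in \{B_8\}$ is independent of the chosen representative. Should this identification prove too delicate, the fallback is a target-based argument: enumerate the candidate targets of higher filtration in the $53$-stem and rule each out, either because it already supports a hidden $\eta$ extension (so it cannot be hit by $2$, since $2\eta = 0$) or because it is divisible by the detecting class of $2$, which would violate condition (3) of Definition \ref{defn:hidden}.
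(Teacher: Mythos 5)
Your proposal is correct and follows essentially the same route as the paper: the paper's proof simply cites Lemma \ref{lem:epsilon-h3^2h5} (whose proof is exactly your middle step, using the non-hidden relation $h_1 B_8 = c_0 B_1$ and the fact that $B_1$ detects $\eta\theta_{4.5}$ by the definition of $\theta_{4.5}$) to conclude that $B_8$ detects $\epsilon\theta_{4.5}$, then applies Lemma \ref{lem:hidden-not-exist} since $2\epsilon = 0$, and deduces part (2) from the hidden $\tau$ extension from $B_8$ to $x'$ in Table \ref{tab:Adams-tau}. The detection worry you raise is handled in the paper by making $\eta\theta_{4.5} \in \{B_1\}$ part of the definition of $\theta_{4.5}$, so no extra indeterminacy analysis is needed.
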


\begin{proof}
We will show in Lemma \ref{lem:epsilon-h3^2h5} that
$B_8$ detects $\epsilon \theta_{4.5}$.
Since $2 \epsilon$ is zero, it follows that 
$\{ B_8\}$ contains an element that is annihilated by $2$.
Lemma \ref{lem:hidden-not-exist} establishes the first claim.

The second claim follows from the first, using the hidden
$\tau$ extension from $B_8$ to $x'$ given in Table \ref{tab:Adams-tau}.
\end{proof}

\begin{lemma}
There is no hidden $2$ extension on $h_2 g n$.
\end{lemma}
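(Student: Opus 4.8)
The tool for a nonexistence statement of this kind is Lemma \ref{lem:hidden-not-exist}: it suffices to exhibit a single element of $\{h_2 g n\}$ that is annihilated by $2$. Since $g n$ is a permanent cycle detecting some $\alpha$ in $\pi_{*,*}$ and $h_2 \cdot g n$ is the nonzero product $h_2 g n$ in $E_\infty$, the set $\{h_2 g n\}$ contains $\nu \alpha$. The plan is therefore to find an element of the form $\nu \alpha$, with $\alpha \in \{g n\}$, for which $2 \nu \alpha = 0$, and then invoke Lemma \ref{lem:hidden-not-exist}. This fits the template already used for $h_2 g_2$ in Lemma \ref{lem:2-h2g2} and for $h_4 c_0$ and $p$ earlier in this subsection.

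The mechanism I would use is a Toda bracket shuffle. I would first obtain a bracket description of $\alpha$ (or of $\nu\alpha$) in which one of the entries is $\eta$, $\epsilon$, or $2$, and then shuffle the factor of $2$ across the bracket so that it lands on an entry that is itself annihilated by $2$. Concretely, one expects a shuffle of the shape
\[
2 \langle \beta_0, \beta_1, \beta_2 \rangle = \langle 2, \beta_0, \beta_1 \rangle \beta_2
\]
with $\nu\alpha$ contained in the left-hand bracket and $\langle 2, \beta_0, \beta_1\rangle$ reducing to a multiple of $\tau\eta$ or of another $2$-torsion class, exactly as in the proofs of Lemmas \ref{lem:2-h1h5} and \ref{lem:2-h5c0}. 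Throughout one must track the indeterminacy of the bracket carefully and confirm, via Moss's Convergence Theorem \ref{thm:Moss} applied to the appropriate Adams differential, that the representative of $\nu\alpha$ genuinely lies in the bracket before shuffling.

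The main obstacle is that the $54$-stem is congested: the unique candidate target of a hidden $2$ extension on $h_2 g n$ lies in strictly higher Adams filtration, and it coincides with the target $\tau^3 e_0^2 g$ that already receives a hidden $2$ extension from $h_0 h_5 i$ (Lemma \ref{lem:2-h0h5i}, and note that $h_2 g n$ has higher filtration than $h_0 h_5 i$). Thus the delicate point is to rule out that $2\nu\alpha$ crosses into $\{\tau^3 e_0^2 g\}$, which is why the Lemma \ref{lem:hidden-not-exist} strategy, if it succeeds, is preferable to a direct filtration count: producing an explicit $2$-torsion element of $\{h_2 g n\}$ sidesteps the crossing issue entirely. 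Confirming that this element really is annihilated by $2$ — rather than mapping into the higher-filtration target — is the step I expect to require the most care, and it is the reason $h_2 g n$ is excluded from the list of possible exceptions in Proposition \ref{prop:hidden-2}.
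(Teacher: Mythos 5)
Your framework is the paper's framework: exhibit a $2$-torsion element of $\{h_2 g n\}$ of the form $\nu\alpha$ with $\alpha \in \{g n\}$ and invoke Lemma \ref{lem:hidden-not-exist}. But you never actually prove that $2\nu\alpha = 0$; you only gesture at an unspecified Toda bracket shuffle (``one expects a shuffle of the shape\dots'') with entries you never identify, representatives you never verify lie in the brackets, and indeterminacies you never compute. As written, this is a strategy sketch rather than a proof, and the sketch overlooks the fact that makes the whole bracket apparatus unnecessary: $2\{g n\}$ is already zero. In the $51$-stem, $h_0 \cdot g n = 0$ on the $E_\infty$-page and there is no element in higher Adams filtration at the weight of $g n$ that could serve as the target of a hidden $2$ extension (the only open question in that stem concerns $h_0 h_3 g_2$, in a different weight). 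Hence $2\alpha = 0$ for every $\alpha \in \{g n\}$, so $2\nu\alpha = \nu\cdot 2\alpha = 0$ outright, and Lemma \ref{lem:hidden-not-exist} finishes the argument in one line. That is the paper's entire proof.

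Your closing paragraph also contains an error that would have derailed the bracket route: the candidate higher-filtration target for a hidden $2$ extension on $h_2 g n$ does \emph{not} coincide with the target of the hidden $2$ extension on $h_0 h_5 i$ from Lemma \ref{lem:2-h0h5i}. The element $h_2 g n$ sits in weight $31$, so its only conceivable target is $\tau e_0^2 g$, whereas $h_0 h_5 i$ sits in weight $28$ and its target is $\tau^4 e_0^2 g$; motivically these are distinct elements (different powers of $\tau$), even though they become identified classically. The motivic weight grading is precisely what keeps these two potential extensions independent, and keeping track of it would have simplified, not complicated, your analysis of the $54$-stem.
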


\begin{proof}
Note that
$\nu \{ g n\}$ is contained in $\{ h_2 g n\}$,
and $2 \{ g n\}$ is zero.
Therefore, $\{ h_2 g n \}$ contains an element that is annihilated
by $2$, and Lemma \ref{lem:hidden-not-exist} finishes the proof.
\end{proof}

\begin{lemma}
\label{lem:2-h0h5i}
There is a hidden $2$ extension from $h_0 h_5 i$ to $\tau^4 e_0^2 g$.
\end{lemma}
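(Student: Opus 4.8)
The plan is to combine specific data from the classical Adams--Novikov spectral sequence with the comparison to $\tmf$, as announced in Remark \ref{rem:2-h0h5i}. Much as in the proof of Lemma \ref{lem:tau-D11}, this is one of the few arguments in the manuscript that is not internal to the motivic Adams spectral sequence, and it relies on Proposition \ref{prop:ANSS-Ctau}, which identifies the classical Adams--Novikov $E_2$-page with $\pi_{*,*}(C\tau)$.

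I would begin by reducing the statement to a nonvanishing claim. By inspection of the $54$-stem of the $E_\infty$-page (see the chart in \cite{Isaksen14a}), $\tau^4 e_0^2 g$ is the only class of Adams filtration higher than that of $h_0 h_5 i$ that could detect $2\alpha$ for $\alpha \in \{h_0 h_5 i\}$, and $h_0^2 h_5 i = 0$ on $E_\infty$. Hence, in the sense of Definition \ref{defn:hidden}, it suffices to show that $2\alpha$ is nonzero; it will then automatically be detected by $\tau^4 e_0^2 g$.

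The key observation is that $\tau^4 e_0^2 g$ realizes to a nonzero class in the Adams spectral sequence for $\tmf$, since $e_0$ and $g$ come from the cohomology of $A(2)$, whereas $h_0 h_5 i$ does not realize to $\tmf$ because $h_5$ is not in $A(2)$. The $\tmf$ comparison therefore cannot see the source of the extension directly, and this is exactly where the Adams--Novikov input enters: by \cite{Shimomura81}, $\beta_{10/2}$ is the unique element of the $54$-stem of Adams--Novikov filtration $2$ that is not divisible by $\alpha_1$, and via Proposition \ref{prop:ANSS-Ctau} and the $\tau$-Bockstein correspondence this pins down the corresponding element of $\pi_{54,*}$ and identifies it, after accounting for $\tau$-divisibility, with $\alpha$. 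Since $\beta_{10/2}$ maps to $\Delta^2 h_2^2$ in the Adams--Novikov spectral sequence for $\tmf$ \cite{Bauer08}, the image of $\alpha$ in $\pi_{54} \tmf$ is the class detected by $\Delta^2 h_2^2$. In the completely understood homotopy of $\tmf$ this class supports a $2$-extension onto the class detected by the $\tmf$-image of $e_0^2 g$; hence $2\alpha$ maps to a nonzero element of $\pi_{54} \tmf$, so $2\alpha \ne 0$, and by the reduction above the hidden extension lands on $\tau^4 e_0^2 g$.

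The main obstacle will be the filtration bookkeeping across the three spectral sequences. I must translate the phrase ``Adams--Novikov filtration $2$, not $\alpha_1$-divisible'' into the precise statement that the associated motivic class is detected by $h_0 h_5 i$, and I must track how the single multiplication by $2$ — which is hidden in both the Adams and the Adams--Novikov filtrations — produces a class of exactly the Adams filtration of $\tau^4 e_0^2 g$. I also need to confirm the $2$-extension on $\Delta^2 h_2^2$ in $\pi_{54} \tmf$ and that the realization map carries $\tau^4 e_0^2 g$ to its target, so that the $\tmf$ computation genuinely detects $2\alpha$ rather than a class of strictly higher filtration.
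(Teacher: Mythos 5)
Your proposal is correct and takes essentially the same approach as the paper: identify the element detected by $h_0 h_5 i$ with $\beta_{10/2}$ via Proposition \ref{prop:ANSS-Ctau} and Shimomura's classification, push forward to $\Delta^2 h_2^2$ in the Adams-Novikov spectral sequence for $\tmf$ \cite{Bauer08}, and use the hidden $2$ extension there onto the class detecting $\kappa \kappabar^2$. The only cosmetic difference is that you conclude by showing $2\alpha$ is nonzero in $\pi_{54} \tmf$ and invoking uniqueness of the possible target, whereas the paper transports the hidden $2$ extension back through the Adams-Novikov spectral sequence for the sphere before translating to the Adams spectral sequence; the ingredients are identical.
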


\begin{proof}
This proof is different in spirit from the rest of the manuscript because it
relies on specific calculations in the classical Adams-Novikov spectral sequence.
\index{Adams-Novikov spectral sequence!classical}

The class $h_0 h_5 i$ detects an element of $\pi_{54,28}$
that is not divisible by $\tau$.
By Proposition \ref{prop:ANSS-Ctau},
this corresponds to an element 
in the classical Adams-Novikov spectral sequence 
in the 54-stem with filtration 2.  
The only possibility is the element $\beta_{10/2}$ \cite{Shimomura81}.

The image of $\beta_{10/2}$ in the Adams-Novikov
spectral sequence for $\tmf$ \cite{Bauer08} is
$\Delta^2 h_2^2$.  Since there is no filtration shift, this
is detectable in the chromatic spectral sequence.
In the Adams-Novikov spectral sequence for $\tmf$, there 
is a hidden $2$ extension from $\Delta^2 h_2^2$ 
to the class that detects $\kappa \kappabar^2$.
Therefore, in the Adams-Novikov spectral sequence for the sphere,
there must also be a hidden $2$ extension from 
$\beta_{10/2}$ to the class that detects $\kappa \kappabar^2$.

Since $\beta_{10/2}$ corresponds to $h_0 h_5 i$,
it follows that 
in the Adams spectral sequence, there is a hidden $2$ extension from
$h_0 h_5 i$ to $\tau^4 e_0^2 g$.
\end{proof}

\begin{lemma}
There is no hidden $2$ extension on $B_{21}$.
\end{lemma}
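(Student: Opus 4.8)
The plan is to apply Lemma \ref{lem:hidden-not-exist}: it suffices to exhibit a single element of $\{ B_{21} \}$ that is annihilated by $2$, from which the non-existence of any hidden $2$ extension on $B_{21}$ follows immediately.

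The key observation is that $B_{21}$ detects $\kappa \theta_{4.5}$, as already recorded in Remark \ref{rem:Adams-d4-C'} (and used again in Remark \ref{rem:d5-A'}). Thus $\kappa \theta_{4.5}$ is an element of $\{ B_{21} \}$. First I would recall that $\kappa$ is detected by $d_0$ and satisfies $2 \kappa = 0$; this relation is exactly the one invoked at the start of the proof of Lemma \ref{lem:d2-e0}, where it forces the $\tmf$ differential $d_2(\beta) = h_0 d_0$. Consequently
\[
2 \cdot \kappa \theta_{4.5} = (2 \kappa) \theta_{4.5} = 0.
\]

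Therefore $\{ B_{21} \}$ contains the element $\kappa \theta_{4.5}$, which is annihilated by $2$. Lemma \ref{lem:hidden-not-exist} then shows directly that there is no hidden $2$ extension on $B_{21}$.

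There is essentially no obstacle here: the entire argument reduces to the two facts that $B_{21}$ detects a multiple of $\kappa$ and that $2\kappa$ vanishes. The only point that deserves a moment's care is confirming that $\kappa \theta_{4.5}$ is genuinely detected by $B_{21}$ in the precise filtration-theoretic sense required by Definition \ref{defn:hidden}, rather than living in strictly higher Adams filtration; but this is precisely the content of the assertion ``$B_{21}$ detects $\kappa \theta_{4.5}$'' established earlier, so no further work is needed.
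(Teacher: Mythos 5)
Your proof is correct and follows essentially the same route as the paper: the paper also observes that $B_{21}$ detects a multiple of $\kappa$ (this is established in Lemma \ref{lem:kappa-h3^2h5}, which is the actual source of the detection claim you quote from Remark \ref{rem:Adams-d4-C'}), uses $2\kappa = 0$, and concludes via Lemma \ref{lem:hidden-not-exist}. The only difference is bibliographic: you should cite Lemma \ref{lem:kappa-h3^2h5} rather than the remarks, since the remarks merely assert the detection fact that the lemma proves.
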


\begin{proof}
We showed in Lemma \ref{lem:kappa-h3^2h5} that
$B_{21}$ detects a multiple of $\kappa$.
Since $2 \kappa$ is zero, it follows that $\{ B_{21}\}$
contains an element that is annihilated by $2$.
Lemma \ref{lem:hidden-not-exist} finishes the proof.
\end{proof}

\begin{lemma}
\label{lem:2-t^3g^3}
\mbox{}
\begin{enumerate}
\item
Tentatively, there is a hidden $2$ extension from $\tau^3 g^3$ to
$d_0 u' + \tau d_0^2 l$.
\item
Tentatively, there is a hidden $2$ extension from $\tau h_0 h_2 g^3$ to
$h_1 d_0^2 e_0^2$.
\item
Tentatively, there is a hidden $2$ extension 
from $\tau e_0 g r$ to $d_0^2 u$.
\end{enumerate}
\end{lemma}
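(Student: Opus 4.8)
The plan is to treat these three tentative extensions in the same spirit as Lemmas \ref{lem:2-h0h2g} and \ref{lem:2-e0r}, which establish the analogous hidden $2$ extensions one and two powers of $g$ lower. The unifying idea is that each target in stem $60$ and above is obtained from a target already understood in a lower stem by multiplying by $\kappabar$, so the natural strategy is to multiply the lower hidden $2$ extensions by $\kappabar$ and then identify the resulting detecting classes, with the exact power of $\tau$ fixed by weight. Throughout I would use the relation $\tau \eta^3 = 4 \nu$ and the bracket $\tau \eta^2 = \langle 2, \eta, 2 \rangle$ together with the multiplicative relations $\tau d_0 g = e_0^2$ (and $d_0^2 = P \tau g$) to rewrite products of $d_0$, $e_0$, $g$, and $P$. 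Because the analysis of Adams differentials is incomplete in stems $60$ through $69$, every conclusion here is only tentative: one cannot yet be certain that the proposed sources are of maximal Adams filtration in the sense of Definition \ref{defn:hidden}, which is exactly why each statement is hedged.

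For part (2) I would start from Lemma \ref{lem:2-h0h2g}(4), which gives the hidden $2$ extension from $\tau h_0 h_2 g^2$ to $h_1 d_0^3$; in homotopy this records $4 \nu \kappabar^2 = \{ h_1 d_0^3 \}$. Multiplying by $\kappabar$ and using $\tau d_0 g = e_0^2$ to absorb the factor $\tau g$ and one $d_0$ into $e_0^2$, I would obtain $4 \nu \kappabar^3 = \{ h_1 d_0^2 e_0^2 \}$. Since $\tau h_0 h_2 g^3$ detects $2 \nu \kappabar^3$, this exhibits $\{ h_1 d_0^2 e_0^2 \}$ as twice the class detected by $\tau h_0 h_2 g^3$, which is the desired hidden $2$ extension once one verifies via Lemma \ref{lem:hidden-exist} that $h_1 d_0^2 e_0^2$ is the correct highest-filtration target and that the source is not $2$-divisible in a way forbidden by Definition \ref{defn:hidden}.

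Part (1) I would deduce from the identification of its target. By Lemma \ref{lem:t-Ph1^3h5e0} we already know $\eta \kappa \{ \tau w \} = \{ \tau d_0^2 l + d_0 u' \}$, so it suffices to show that $2 \kappabar^3$ is detected by $d_0 u' + \tau d_0^2 l$, i.e. that $2 \kappabar^3 = \eta \kappa \{ \tau w \}$. I would establish this either by comparison to the analogous classical hidden $2$ extension from $g^3$ recorded in Table \ref{tab:extn-refs}, with the motivic powers of $\tau$ fixed by weight, or by a Toda bracket shuffle expressing $2 \kappabar^3$ through $\langle 2, \eta, 2 \rangle$ as in the proof of Lemma \ref{lem:2-e0r}. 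Part (3) proceeds analogously by multiplying the hidden $2$ extension from $\tau e_0 r$ to $P u$ of Lemma \ref{lem:2-e0r}(2) by $\kappabar$ and using a relation of the form $d_0^2 u = P \tau g \cdot u$ to identify the target $d_0^2 u$, again matching powers of $\tau$ by weight so that the source is $\tau e_0 g r$.

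The main obstacle will be the bookkeeping forced by the incomplete $E_\infty$-page in this range: in each case one must check that multiplying a lower hidden extension by $\kappabar$ neither annihilates the relevant class nor shifts the product into a higher-filtration or $\tau$-divisible class, so that it is detected exactly by the claimed target in the claimed filtration. Confirming the multiplicative relations $\tau d_0 g = e_0^2$ and $d_0^2 = P \tau g$ on the $E_\infty$-page, and ruling out competing targets with Lemma \ref{lem:hidden-not-exist}, is where the genuine work lies; the bracket manipulations and the substitution $4\nu = \tau \eta^3$ are themselves routine. For this reason the results must remain tentative until the differentials in stems $60$ through $69$ are pinned down.
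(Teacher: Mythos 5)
There is a genuine gap, and it is the same gap in all three parts: every tool you propose carries a hidden factor of $\tau$, so your argument can only establish the hidden $2$ extensions on $\tau$ times the claimed sources, which are strictly weaker statements. Concretely, $\kappabar$ is detected by $\tau g$, so $\kappabar^2 = \tau \{\tau g^2\}$ and $\kappabar^3 = \tau^2 \{\tau g^3\}$; the content of Lemma \ref{lem:2-h0h2g}(4) is $4 \nu \{\tau g^2\} = \{h_1 d_0^3\}$ (your formula $4\nu\kappabar^2 = \{h_1 d_0^3\}$ is off by a weight: $4\nu\kappabar^2$ lies in $\pi_{43,24}$ while $h_1 d_0^3$ has weight $25$). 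When you multiply that extension by $\kappabar$ you land in $\pi_{63,36}$, i.e.\ you prove a hidden $2$ extension on $\tau^2 h_0 h_2 g^3$, not on $\tau h_0 h_2 g^3$ (weight $37$) as claimed in part (2). The same weight count defeats parts (1) and (3): $\kappabar\{\tau e_0 r\}$ is detected by $\tau^2 e_0 g r$, one $\tau$ more than the claimed source $\tau e_0 g r$, and any shuffle through $\langle 2, \eta, 2 \rangle = \tau \eta^2$ likewise only shows that $\tau$ times the target is divisible by $2$. Classical comparison cannot rescue this either: inverting $\tau$ never pins down the minimal $\tau$-power, and in any case Table \ref{tab:extn-refs} contains no hidden $2$ extension on $g^3$ (it stops in the $47$-stem, and the classical literature in the $60$-stem is exactly what this paper distrusts). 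A smaller slip of the same kind is your relation ``$\tau d_0 g = e_0^2$''; the weight-balanced relation is $d_0 \cdot \tau g = \tau e_0^2$, which is where the extra $\tau$ enters your computation of the target in part (2).

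The missing idea is that one must be able to \emph{divide} by $\tau$, and the only tool in the paper that does this is the cofiber of $\tau$. The paper's proof is one line per part: each hidden $2$ extension follows immediately from a tentative hidden $\tau$ extension in Table \ref{tab:Adams-tau-tentative} --- from $\tau^2 h_0 g^3$ to $d_0 u' + \tau d_0^2 l$, from $h_0^2 h_2 g^3$ to $h_1 d_0^2 e_0^2$, and from $h_0 e_0 g r$ to $d_0^2 u$ --- each of which is established via $\pi_{*,*}(C\tau)$. The mechanism is that $h_0 \cdot (\text{source}) = \tau \cdot (\text{source of the $\tau$ extension})$ in $E_\infty$, so $2\alpha$ factors as $\tau \beta$ with $\beta$ detected by the source of the hidden $\tau$ extension, and the $\tau$ extension then places $2\alpha$ in the claimed coset. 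If you want to repair your argument, you would have to replace the $\kappabar$-multiplications by an appeal to these hidden $\tau$ extensions (or reprove them via the cofiber of $\tau$); without that input the claimed extensions at the minimal power of $\tau$ are out of reach.
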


\begin{proof}
The claims are tentative because our analysis of Adams differentials
is incomplete in the relevant range.

The first formula  follows immediately from the hidden
$\tau$ extension from $\tau^2 h_0 g^3$ to $d_0 u' + \tau d_0^2 l$
given in Table \ref{tab:Adams-tau-tentative}.
The second formula follows immediately from the hidden
$\tau$ extension from $h_0^2 h_2 g^3$ to $h_1 d_0^2 e_0^2$
given in Table \ref{tab:Adams-tau-tentative}.
The third formula follows immediately from the hidden $\tau$
extension from $h_0 e_0 g r$ to $d_0^2 u$ given in Table \ref{tab:Adams-tau-tentative}.
\end{proof}


\subsection{Hidden Adams $\eta$ extensions computations}
\label{subsctn:eta-lemmas}

\index{Adams spectral sequence!hidden extension!eta@$\eta$}

\begin{lemma}
\label{lem:eta-c1}
There is no hidden $\eta$ extension on $c_1$.
\end{lemma}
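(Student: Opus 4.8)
The plan is to apply Lemma \ref{lem:hidden-not-exist}: to rule out a hidden $\eta$ extension on $c_1$ it suffices to exhibit a single element of $\{c_1\}$ that is annihilated by $\eta$. The natural candidate is $\sigmabar$, and I would arrange for it to lie in the Toda bracket $\langle \nu, \eta, \sigma^2 \rangle$. This bracket is defined, since $\nu \eta = 0$ for dimension reasons ($\pi_{4,*} = 0$) and $\eta \sigma^2 = 0$ by Toda (recorded in Example \ref{ex:naive-hidden}). I would cite Table \ref{tab:Toda} (via Moss's Convergence Theorem \ref{thm:Moss}) for the fact that $\langle \nu, \eta, \sigma^2 \rangle$ is detected by $c_1$, so that some element $\sigmabar$ of $\{c_1\}$ belongs to it.

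The heart of the argument is a shuffle. Because $\eta \nu = 0$, $\nu \eta = 0$, and $\eta \sigma^2 = 0$, all three consecutive products vanish, so the juggling formula for Toda brackets gives
\[
\eta \langle \nu, \eta, \sigma^2 \rangle \subseteq \langle \eta, \nu, \eta \rangle \sigma^2.
\]
Now $\langle \eta, \nu, \eta \rangle$ lies in $\pi_{6,*}$, which is generated by $\nu^2$, and its indeterminacy $\eta \pi_{5,*} + \pi_{5,*} \eta$ vanishes since $\pi_{5,*} = 0$; hence $\langle \eta, \nu, \eta \rangle \sigma^2$ is a single element of $\pi_{20,*}$ lying in $\{0, \nu^2 \sigma^2\}$. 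Since $\nu \sigma = 0$ (its detecting class $h_2 h_3$ is already zero in $\Ext$, and the product vanishes by Toda), we have $\nu^2 \sigma^2 = (\nu \sigma)^2 = 0$, so this coset is exactly $\{0\}$. Therefore $\eta \sigmabar = 0$, and Lemma \ref{lem:hidden-not-exist} finishes the proof.

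The main obstacle I anticipate is the bookkeeping of indeterminacy and orientation: I must check that the specific element $\eta \sigmabar$ really lands in the coset $\langle \eta, \nu, \eta \rangle \sigma^2$ rather than some larger set, and that the identification of $c_1$ with $\langle \nu, \eta, \sigma^2 \rangle$ has the correct ordering. The ordering is essential, since the alternative bracket $\langle \eta, \eta, \nu \rangle$ is not even defined (as $\eta^2 \neq 0$), so only the arrangement beginning with $\nu$ shuffles productively. The feature that makes the computation robust is that $\pi_{6,*} \cdot \sigma^2 = 0$, so the exact value of $\langle \eta, \nu, \eta \rangle$ is irrelevant; I need only the vanishing $\nu \sigma = 0$ together with $\pi_{5,*} = 0$.
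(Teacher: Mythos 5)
Your argument is sound and genuinely different from the paper's. The paper's proof is a two-line deduction from already-established structure: by inspection of the $E_\infty$-page, the only possible target of a hidden $\eta$ extension on $c_1$ is $h_0^2 g$, and Lemma \ref{lem:nu-h0^2g} shows that $h_0^2 g$ supports a hidden $\nu$ extension, so it cannot be the target of an $\eta$ extension (an $\eta$-multiple is annihilated by $\nu$). Your proof instead exhibits an element of $\{c_1\}$ annihilated by $\eta$ and invokes Lemma \ref{lem:hidden-not-exist}; this is self-contained, does not require enumerating possible targets from the chart, and does not depend on the hidden $2$ and $\nu$ extension lemmas (Lemmas \ref{lem:2-h0h2g} and \ref{lem:nu-h0^2g}). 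The shuffle itself is robust for exactly the reason you give: $\langle \eta, \nu, \eta \rangle = \{\nu^2\}$ with zero indeterminacy (Table \ref{tab:Toda}), and $\nu^2 \sigma^2 = (\nu\sigma)^2 = 0$, so every element of $\langle \nu, \eta, \sigma^2 \rangle$ is killed by $\eta$.

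The one genuine gap is your input citation: Table \ref{tab:Toda} does not contain the bracket $\langle \nu, \eta, \sigma^2 \rangle$, and Table \ref{tab:bracket-refs} records $\sigmabar$ only as an element of $\langle \nu, \sigma, \eta\sigma \rangle$. So the fact that $\{c_1\}$ meets $\langle \nu, \eta, \sigma^2 \rangle$ must be proved, not cited. It is true and provable with the paper's tools: in $\Ext$ one has $c_1 \in \langle h_2, h_1, h_3^2 \rangle$ by May's Convergence Theorem \ref{thm:3-converge}, using the May differential $d_2(h_1(1)) = h_1 h_3^2$ together with the fact that $h_1 h_2$ is already zero on the May $E_2$-page (Table \ref{tab:May-E2-reln}); this Massey product has no indeterminacy, there are no crossing differentials in the relevant degrees, and the Toda bracket is defined since $\nu\eta = 0$ and $\eta\sigma^2 = 0$, so Moss's Convergence Theorem \ref{thm:Moss} shows that $c_1$ detects an element of $\langle \nu, \eta, \sigma^2 \rangle$. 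Alternatively, you could avoid introducing a new bracket altogether: start from $\sigmabar \in \langle \nu, \sigma, \eta\sigma \rangle$ (Table \ref{tab:bracket-refs}) and shuffle $\eta\langle \nu, \sigma, \eta\sigma \rangle = \langle \eta, \nu, \sigma \rangle \eta\sigma$, using that $\langle \eta, \nu, \sigma \rangle$ is zero (Table \ref{tab:Toda}, as used in the proof of Lemma \ref{lem:eta-h5c1}). With either repair the proof is complete.
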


\begin{proof}
The only other possibility is that there is a hidden $\eta$
extension from $c_1$ to $h_0^2 g$.
We will show in Lemma \ref{lem:nu-h0^2g} that
there is a hidden $\nu$ extension on $h_0^2 g$.
Therefore, it cannot be the target of a hidden $\eta$ extension.
\end{proof}

\begin{lemma}
\label{lem:eta-th1g}
\mbox{}
\begin{enumerate}
\item
There is a hidden $\eta$ extension 
from $\tau h_1 g$ to $c_0 d_0$.
\item
There is a hidden $\eta$ extension 
from $\tau^2 h_1 g$ to $P d_0$.
\item
There is a hidden $\eta$ extension 
from $\tau h_1 g^2$ to $c_0 e_0^2$.
\item
There is a hidden $\eta$ extension 
from $z$ to $\tau d_0^3$.
\end{enumerate}
\end{lemma}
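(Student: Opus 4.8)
```latex
The plan is to establish these four hidden $\eta$ extensions by leveraging the rigid relationship between the sphere and the cofiber of $\tau$, together with known classical hidden extensions and shuffling arguments with Toda brackets. The unifying idea is that a hidden $\eta$ extension from $b$ to $c$ can often be detected by finding an element of $\{b\}$ that has a nontrivial $\eta$-multiple, using Moss's Convergence Theorem \ref{thm:Moss} to realize a relevant Massey product, or by passing to $C\tau$ where the algebraic structure is cleaner. I expect the first two parts to be closely linked, since the differential $d_2(e_0) = h_1^2 d_0$ (Lemma \ref{lem:d2-e0}) and the relation $\tau \eta^3 = 4\nu$ create a tight web of constraints that propagates multiplicatively.

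First I would prove part (2), the extension from $\tau^2 h_1 g$ to $P d_0$, since this is likely the foundational case. The natural approach is to use the classical hidden $\eta$ extension structure recorded in Table \ref{tab:extn-refs} together with comparison to $\tmf$, where $Pd_0$ and the $\kappa$-family are well understood via \cite{Henriques07}. Specifically, I would identify that $\{\tau^2 h_1 g\}$ detects an element related to $\tau \eta \kappa$ or a similar image-of-$J$-adjacent class, and that multiplication by $\eta$ lands in $\{Pd_0\}$. Part (1), from $\tau h_1 g$ to $c_0 d_0$, should then follow from part (2) via the hidden $\tau$ extension from $c_0 d_0$ to $P d_0$ recorded in Table \ref{tab:Adams-tau}: if $\tau \cdot \{c_0 d_0\} = \{P d_0\}$ and $\eta \cdot \{\tau^2 h_1 g\} = \{P d_0\}$, then dividing by $\tau$ (which is legitimate because $\tau^2 h_1 g = \tau \cdot \tau h_1 g$) yields $\eta \cdot \{\tau h_1 g\} = \{c_0 d_0\}$, modulo checking that $c_0 d_0$ is not itself $\tau$-divisible in a way that obstructs the conclusion.

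For part (3), the extension from $\tau h_1 g^2$ to $c_0 e_0^2$, I would imitate the argument for part (1), multiplying the basic relation through by an appropriate power of $g$ (or $e_0$) and using that $g^2$-multiples of the $\kappa$-family behave predictably. The key input is the hidden $\tau$ extension from $c_0 e_0^2$ to $h_1 d_0^3$ (Table \ref{tab:Adams-tau}) combined with the classical extension $\eta^2 \bar\kappa^2 = \{d_0^3\}$ from Table \ref{tab:extn-refs}; passing through the $\bar\kappa^2$-periodicity should convert the part-(1) extension into the part-(3) extension. For part (4), from $z$ to $\tau d_0^3$, I would use the hidden $\tau$ extension from $\tau h_1 g^2$ to $z$ established in Lemma \ref{lem:t-th1g^2}(1), which means $z$ detects a $\tau$-multiple of $\{\tau h_1 g^2\}$; combining this with part (3) and the $\tau$-extension from $c_0 e_0^2$ to $h_1 d_0^3$ should force the $\eta$-multiple of $\{z\}$ into $\{\tau d_0^3\}$.

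The hard part will be part (4) and ensuring the bookkeeping of $\tau$-divisibility is consistent across all four statements, because several of the target elements ($P d_0$, $c_0 e_0^2$, $\tau d_0^3$) participate simultaneously in hidden $\tau$ extensions, and one must verify that the $\eta$-multiple lands in the correct Adams filtration as demanded by Definition \ref{defn:hidden}(3) rather than in some lower-filtration crossing extension. I anticipate needing Lemma \ref{lem:hidden-exist} to confirm that the candidate extensions are genuine (highest-filtration) extensions, and I would verify there are no crossing $\eta$ extensions in these specific degrees by inspecting the $E_\infty$-chart in \cite{Isaksen14a}. The cleanest rigorous route for part (4) may require passing to $C\tau$ and computing the relevant $\eta$-multiple there, where the absence of $\tau$ removes the ambiguity entirely.
```
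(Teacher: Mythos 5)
Your treatment of parts (1) and (2) is essentially the paper's argument: lift the classical hidden $\eta$ extension from $h_1 g$ to $P d_0$ (Table \ref{tab:extn-refs}) to a motivic extension from $\tau^2 h_1 g$ to $P d_0$ (the weights force the $\tau^2$), then descend to part (1) by dividing the hidden $\tau$ extension from $c_0 d_0$ to $P d_0$. The detour through $\tmf$ and the image of $J$ is unnecessary but harmless.

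For parts (3) and (4) there is a genuine gap, and it sits in your proof of (3). Your plan is to multiply the part-(1) extension by $\kappabar$ (or ``a power of $g$''). But $g$ and $g^2$ are not realized by homotopy classes --- only $\tau g$ and $\tau g^2$ exist in $\pi_{*,*}$ (see Example \ref{ex:g^2}) --- so the only available multiplication is by $\kappabar \in \{\tau g\}$. Multiplying part (1) by $\kappabar$ produces an element of $\{\tau h_1 g\} \cdot \kappabar \subseteq \{\tau^2 h_1 g^2\}$ whose $\eta$-multiple lies in $\{c_0 d_0\}\kappabar$, detected by $d_0^3$. That is an $\eta$ extension from $\tau^2 h_1 g^2$ to $d_0^3$, which is the $\tau$-multiple of what you want; descending from it to the claimed extension from $\tau h_1 g^2$ to $c_0 e_0^2$ is precisely the $\tau$-divisibility statement that needs proof, and ``$\kappabar^2$-periodicity'' does not supply it. (The same objection applies to multiplying by anything detected by $e_0$: no such homotopy class exists, since $d_2(e_0) = h_1^2 d_0$.) Your subsequent derivation of (4) from (3) via the hidden $\tau$ extensions from $\tau^2 h_1 g^2$ to $z$ and from $c_0 e_0^2$ to $d_0^3$ is sound, but it inherits the gap because it rests on (3).

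The missing observation is that (4) needs no intermediary at all: Table \ref{tab:extn-refs} records the classical hidden $\eta$ extension from $z$ to $d_0^3$ (from \cite{BMT70}), and lifting it motivically --- with one $\tau$ inserted for weight reasons --- gives the extension from $z$ to $\tau d_0^3$ directly, exactly parallel to how you handled (2). With (4) in hand, part (3) follows by running your own $\tau$-division argument in the opposite direction: for $\alpha \in \{\tau h_1 g^2\}$, the element $\tau^2 \alpha$ lies in $\{z\}$ by the hidden $\tau$ extension, so $\tau^2 \eta \alpha$ is detected by $\tau d_0^3$, and the hidden $\tau$ extension from $c_0 e_0^2$ to $d_0^3$ then forces $\eta\alpha$ to be detected by $c_0 e_0^2$. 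In short: reverse the logical order of your (3) and (4), and anchor (4) on the classical input rather than on a multiplicative argument that cannot reach the correct power of $\tau$.
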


\begin{proof}
There is a classical hidden $\eta$ extension from
$h_1 g$ to $P d_0$, as shown in Table \ref{tab:extn-refs}.
This implies that there is a motivic hidden $\eta$
extension from $\tau^2 h_1 g$ to $P d_0$.  This establishes
the second claim.

The first claim follows from the second claim, using the hidden
$\tau$ extension from $c_0 d_0$ to $P d_0$ given in Table \ref{tab:Adams-tau}.

Next,
there is a classical hidden $\eta$ extension from $z$ to $d_0^3$,
as shown in Table \ref{tab:extn-refs}.
This implies that there is a motivic hidden $\eta$ extension from
$z$ to $\tau d_0^3$.  This establishes the fourth claim.

The third claim follows from the fourth, using the hidden $\tau$
extensions from $\tau^2 h_1 g^2$ to $z$ and from
$c_0 e_0^2$ to $d_0^3$ given in Table \ref{tab:Adams-tau}.
\end{proof}

\begin{lemma}
There is no hidden $\eta$ extension on $p$.
\end{lemma}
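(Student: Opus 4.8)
The plan is to apply Lemma \ref{lem:hidden-not-exist}, exactly in the spirit of the proof of the non-existence of a hidden $2$ extension on $p$ established immediately above. First I would recall from Table \ref{tab:extn-refs} that $\nu \theta_4$ is contained in $\{ p \}$; here $\theta_4$ is the element of $\pi_{30,16}$ detected by $h_4^2$, so that $\nu \theta_4$ lies in the $33$-stem, consistent with the degree of $p$. Thus $\{ p \}$ contains the explicit element $\nu \theta_4$, and it suffices to exhibit one element of $\{ p \}$ that $\eta$ annihilates.

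The key observation is then simply that $\eta \nu = 0$ in $\pi_{*,*}$. This is the classical relation of \cite{Toda62}, and it continues to hold motivically over $\C$. Consequently $\eta \cdot \nu \theta_4 = (\eta \nu) \theta_4 = 0$, so $\{ p \}$ contains an element annihilated by $\eta$. Lemma \ref{lem:hidden-not-exist} then immediately yields that there is no hidden $\eta$ extension on $p$, and the conclusion holds regardless of which target element one might otherwise have to rule out by hand.

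I expect essentially no obstacle here: the argument is a one-line application of Lemma \ref{lem:hidden-not-exist}, with the only inputs being the membership $\nu \theta_4 \in \{ p \}$ from Table \ref{tab:extn-refs} and the relation $\eta \nu = 0$. The sole point that warrants a moment's care is verifying that $\eta \nu$ vanishes in the motivic groups and not merely classically; but this follows from the classical relation together with the $\tau$-inversion comparison of Proposition \ref{prop:compare}, as $\eta \nu$ is detected by $h_1 h_2 = 0$ and nothing in the relevant degree can obstruct its vanishing. Hence the statement reduces entirely to bookkeeping already recorded in the tables.
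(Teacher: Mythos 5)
Your proof is correct and takes essentially the same approach as the paper: both place $\nu\theta_4$ in $\{p\}$ via Table \ref{tab:extn-refs}, observe that this element is annihilated by $\eta$ (since $\eta\nu = 0$), and conclude with Lemma \ref{lem:hidden-not-exist}. The only difference is that you make the relation $\eta\nu = 0$ and its motivic validity explicit, which the paper leaves implicit.
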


\begin{proof}
Classically, $\nu \theta_4$ belongs to $\{ p\}$,
as shown in Table \ref{tab:extn-refs},
so the same formula holds motivically.
Therefore, $\{ p \}$ contains an element that is annihilated by $\eta$.
Lemma \ref{lem:hidden-not-exist} finishes the argument.
\end{proof}

\begin{lemma}
\label{lem:eta-h0^2h3h5}
There is a hidden $\eta$ extension from $h_0^2 h_3 h_5$ to
$\tau^2 c_1 g$.
\end{lemma}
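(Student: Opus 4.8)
The plan is to realize a suitable element of $\{h_0^2 h_3 h_5\}$ as a Toda bracket and then shuffle $\eta$ through it. First I would observe that the extension is genuinely hidden in the sense of Definition \ref{defn:hidden}: because $h_0 h_1 = 0$ in $\Ext$, the product $h_1 \cdot h_0^2 h_3 h_5$ already vanishes on the $E_2$-page, so condition (1) holds automatically, and one reads off from the $E_\infty$ chart in \cite{Isaksen14a} that $h_0^2 h_3 h_5$ is the element of maximal Adams filtration admitting such an $\eta$-multiplication. I would also record the standard fact that $\theta_4 \in \{h_4^2\}$ satisfies $2\theta_4 = 0$, which follows from the Adams differential $d_2(h_5) = h_0 h_4^2$, so that $8\theta_4 = 0$ as well.

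The key step is to apply Moss's Convergence Theorem \ref{thm:Moss} to the Massey product $\langle h_3, h_4^2, h_0^3 \rangle_{E_3}$. This bracket is defined because $h_3 h_4^2 = 0$ in $\Ext$ (as $h_3 h_4 = 0$) and because $d_2(h_0^2 h_5) = h_0^3 h_4^2$ witnesses the vanishing of $h_4^2 \cdot h_0^3$; using the witnessing class $h_0^2 h_5$, the representative $h_3 \cdot h_0^2 h_5 = h_0^2 h_3 h_5$ has Adams filtration $4$, so the bracket is detected by $h_0^2 h_3 h_5$. Provided $\sigma\theta_4 = 0$ (which I would confirm, the product being detected at best in filtration above that of $h_3 h_4^2 = 0$), this yields an element $\beta \in \langle \sigma, \theta_4, 8 \rangle \subseteq \{h_0^2 h_3 h_5\}$. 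I would then compute $\eta\beta$ by the shuffle $\eta\langle \sigma, \theta_4, 8 \rangle \subseteq \langle \eta\sigma, \theta_4, 8 \rangle$, which is defined since $\eta\sigma\theta_4 = 0$ and $8\theta_4 = 0$, and which lands in the $39$-stem.

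The main obstacle is the final identification of $\langle \eta\sigma, \theta_4, 8 \rangle$ with $\{\tau^2 c_1 g\}$ via Moss's Convergence Theorem applied to $\langle h_1 h_3, h_4^2, h_0^3 \rangle_{E_3}$. The naive representative $h_1 h_3 \cdot h_0^2 h_5$ vanishes because $h_0 h_1 = 0$, so the bracket is carried entirely in higher May filtration and its value cannot be read off directly; this is consistent with the target $\tau^2 c_1 g$ having Adams filtration $7$. To pin it down I would invoke the hidden May extension $h_1 y = \tau^2 c_1 g$ recorded in the proof of Lemma \ref{lem:h1-h1D11}, together with May's Convergence Theorem \ref{thm:3-converge}, to show that $\tau^2 c_1 g$ indeed lies in the Massey product. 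Controlling the indeterminacy (spanned by multiples of $\eta\sigma$ and of $8$ in the relevant groups) and verifying the crossing-differential hypothesis (2) of Moss's Convergence Theorem will require care but should be manageable in this stem. Once $\eta\beta \in \{\tau^2 c_1 g\}$ is established, Lemma \ref{lem:hidden-exist} packages the computation as the asserted hidden $\eta$ extension.
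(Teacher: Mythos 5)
Your argument breaks at its very first step: the Toda bracket $\langle \sigma, \theta_4, 8 \rangle$ is not defined, because $\sigma\theta_4 \neq 0$. There is a classical hidden $\sigma$ extension from $h_4^2$ to $x$ (Table \ref{tab:extn-refs}, due to Barratt--Mahowald--Tangora), so $\sigma\theta_4$ lies in $\{x\}$ in the $37$-stem. Your parenthetical justification --- that $\sigma\theta_4$ should vanish because $h_3 h_4^2 = 0$ pushes the product into higher filtration --- is exactly the confusion between vanishing on the $E_\infty$-page and vanishing in homotopy that hidden extensions encode: $h_3 h_4^2 = 0$ only says the product is detected in filtration greater than $3$, and it is in fact detected by $x$ in filtration $5$. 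The repair bracket $\langle \eta\sigma, \theta_4, 8 \rangle$ fails for the same reason: since $h_1 \cdot x = \tau h_2^2 d_1$ in $\Ext$ (a hidden May extension, Table \ref{tab:May-h1}) and $h_2^2 d_1$ survives to the classical $E_\infty$-page, the product $\eta\sigma\theta_4$ is detected by $\tau h_2^2 d_1$ and is nonzero. So neither bracket you shuffle through exists, and Moss's Convergence Theorem \ref{thm:Moss} cannot even be invoked, since its hypotheses require the Toda bracket to be defined in $\pi_{*,*}$.

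Your final identification is also not forced, even formally: in the Massey product $\langle h_1 h_3, h_4^2, h_0^3 \rangle_{E_3}$, the product $h_1 h_3 \cdot h_4^2$ is already zero on the $E_2$-page (as $h_3 h_4 = 0$), so one may take the zero witness, and with $d_2(h_0^2 h_5) = h_0^3 h_4^2$ the natural representative is $h_1 h_3 \cdot h_0^2 h_5 = 0$ (because $h_0 h_1 = 0$); nothing places $\tau^2 c_1 g$ in this bracket. The paper's proof runs in the opposite direction and sidesteps all of this: using $\tau^2 c_1 g = h_1 y$ and the Adams differential $d_2(y) = h_0^3 x$, Moss's theorem shows that $\{\tau^2 c_1 g\}$ meets $\langle \eta, 2, \alpha \rangle$ for $\alpha$ in $\{h_0^2 x\}$; the later differential $d_4(h_0 h_3 h_5) = h_0^2 x$ permits the choice $\alpha = 0$, so some element of $\{\tau^2 c_1 g\}$ is a multiple of $\eta$, and inspection of the chart leaves $h_0^2 h_3 h_5$ as the only possible source. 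Ironically, the element $x$ and the differentials surrounding it, which the paper exploits, are precisely what destroy your brackets.
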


\begin{proof}
First, $\tau^2 c_1 g$ equals $h_1 y$ on the $E_2$-page \cite{Bruner97}.
Use Moss's Convergence Theorem \ref{thm:Moss} together with the Adams differential
$d_2(y) = h_0^3 x$ to conclude that
$\{ \tau^2 c_1 g \}$ intersects $\langle \eta, 2, \alpha \rangle$,
where $\alpha$ is any element of $\{ h_0^2 x \}$.
\index{Convergence Theorem!Moss}

However, the later Adams differential $d_4(h_0 h_3 h_5) = h_0^2 x$
implies that $0$ belongs to $\{ h_0^2 x\}$.
Therefore,
$\{ \tau^2 c_1 g\}$ intersects $\langle \eta, 2, 0 \rangle$.
In other words, there exists an element of
$\{ \tau^2 c_1 g\}$ that is a multiple of $\eta$.
The only possibility is that there is a hidden
$\eta$ extension from $h_0^2 h_3 h_5$ to $\tau^2 c_1 g$.
\end{proof}

\begin{remark}
\label{rem:eta-h0^2h3h5}
Lemma \ref{lem:eta-h0^2h3h5}
shows that $\eta \{h_0^2 h_3 h_5 \}$ 
is an element of $\{\tau^2 c_1 g\}$.
\index{h3h5@$h_3 h_5$}
\index{c1g@$c_1 g$}
However, $\{ \tau^2 c_1 g\}$ contains two elements because
$u$ is in higher Adams filtration.  
The sum $\eta \{h_0^2 h_3 h_5 \} + \tau \sigmabar \kappabar$ is either zero
or equal to $\{ u\}$.  Both $\{h_0^2 h_3 h_5 \}$ and $\sigmabar$
map to zero in $\pi_{*,*}(\tmf)$, while $\{u\}$ is non-zero
in $\pi_{*,*}(\tmf)$.
\index{topological modular forms}
Therefore,
$\eta \{h_0^2 h_3 h_5 \} + \tau \sigmabar \kappabar$ must be zero.
We will need this observation in Lemmas \ref{lem:Ctau-eta-h0y} 
and \ref{lem:Ctau-eta-_th2c1g}.
\end{remark}

\begin{lemma}
There is no hidden $\eta$ extension on $\tau h_3 d_1$.
\end{lemma}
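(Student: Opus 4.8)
The plan is to produce an element of $\{\tau h_3 d_1\}$ that is annihilated by $\eta$ and then to invoke Lemma \ref{lem:hidden-not-exist}. As already observed in the analysis of hidden $2$ extensions on $h_3 d_1$, the set $\{h_3 d_1\}$ contains $\sigma \{d_1\}$, since $h_3$ and $d_1$ are both permanent cycles and $h_3 d_1$ detects their product. Because $\tau h_3 d_1$ is a nonzero class on the $E_\infty$-page (this is presupposed by the very statement we are refuting), multiplying by $\tau$ shows that $\tau \sigma \{d_1\}$ is contained in $\{\tau h_3 d_1\}$.

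Next I would compute the relevant $\eta$-multiple. Using graded commutativity of $\pi_{*,*}$, the product $\eta \cdot \tau \sigma \{d_1\}$ equals $\sigma \cdot \tau \eta \{d_1\}$. Lemma \ref{lem:t.eta-d1} establishes that $\tau \eta \{d_1\}$ is zero, so $\eta \cdot \tau \sigma \{d_1\}$ is zero as well. Thus $\{\tau h_3 d_1\}$ contains an element killed by $\eta$, and Lemma \ref{lem:hidden-not-exist} immediately rules out a hidden $\eta$ extension on $\tau h_3 d_1$.

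The only point requiring genuine care is the detection claim that $\tau \sigma \{d_1\}$ is contained in $\{\tau h_3 d_1\}$ rather than in some strictly higher Adams filtration. This is where I expect the only friction, and it is resolved precisely by the hypothesis that $\tau h_3 d_1$ survives to $E_\infty$: if $\tau$ pushed an element detected by $h_3 d_1$ into higher filtration, that would be a hidden $\tau$ extension on $h_3 d_1$ and would force $\tau\cdot h_3 d_1$ to vanish in the $E_\infty$-ring, contradicting the nonvanishing of the class $\tau h_3 d_1$. With this confirmed, the argument is complete and is entirely internal to the motivic Adams spectral sequence, relying only on the previously established vanishing $\tau \eta \{d_1\} = 0$.
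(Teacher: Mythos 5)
Your proof is correct and follows essentially the same route as the paper's: both exhibit an element of $\{\tau h_3 d_1\}$ of the form $\sigma\cdot\tau\cdot\alpha$ with $\alpha\in\{d_1\}$, kill its $\eta$-multiple by commuting factors and invoking the vanishing $\tau\eta\{d_1\}=0$ from Lemma \ref{lem:t.eta-d1}, and then conclude with Lemma \ref{lem:hidden-not-exist}. The only difference is cosmetic (you multiply by $\tau$ last, the paper multiplies by $\sigma$ last), and your detection check on $\tau\sigma\{d_1\}\subseteq\{\tau h_3 d_1\}$ is the right justification.
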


\begin{proof}
We know that $\sigma \{ \tau d_1 \}$ is contained in 
$\{ \tau h_3 d_1\}$, and there exists an element
of $\{ \tau d_1\}$ that is annihilated by $\eta$.
Therefore, $\{ \tau h_3 d_1 \}$ contains an element that 
is annihilated by $\eta$.
Lemma \ref{lem:hidden-not-exist} finishes the proof.
\end{proof}

\begin{lemma}
There is no hidden $\eta$ extension on $c_1 g$.
\end{lemma}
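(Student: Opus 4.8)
The plan is to reduce to a single possibility and then rule it out by the same mechanism used in Lemma~\ref{lem:eta-c1}. First I would record that $h_1 c_1 = 0$ already on the $E_2$-page, so that $h_1 \cdot c_1 g = 0$ in $E_\infty$ and any $\eta$ extension on $c_1 g$ is necessarily hidden. Inspecting the $E_\infty$-chart in \cite{Isaksen14a} in the relevant stem (where $\eta \cdot c_1 g$ would live, one stem to the right and one filtration up from $c_1 g$), I would check that there is exactly one element $c$ of sufficiently high Adams filtration that could serve as the target of a hidden $\eta$ extension from $c_1 g$; this $c$ is the evident $g$-multiple of the target $h_0^2 g$ appearing in Lemma~\ref{lem:eta-c1}.

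Having isolated the unique candidate, I would argue that $c$ cannot be the target of a hidden $\eta$ extension. The mechanism is exactly the one in Lemma~\ref{lem:eta-c1}: if $c$ were such a target, then $\{c\}$ would contain an element of the form $\eta \beta$, which is annihilated by $\nu$ since $\nu \eta = 0$. On the other hand, $c$ supports a hidden $\nu$ extension (the analogue of Lemma~\ref{lem:nu-h0^2g}), so $\{c\}$ contains an element that is \emph{not} annihilated by $\nu$; in the relevant degree these two requirements are incompatible. Therefore no hidden $\eta$ extension from $c_1 g$ to $c$ can occur, and since $c$ is the only candidate, there is no hidden $\eta$ extension on $c_1 g$.

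An alternative route, should the chart analysis reveal no clean $g$-multiple structure, is to apply Lemma~\ref{lem:hidden-not-exist} directly by exhibiting a representative of $\{c_1 g\}$ that is annihilated by $\eta$; the most promising source is a representative expressed as a multiple of $\nu$, so that $\eta \cdot \nu = 0$ finishes the job. I expect the main obstacle to be the bookkeeping that confirms $c$ is genuinely the only candidate target in this stem and that the supporting hidden $\nu$ extension on $c$ has already been (or can be) established; once these two facts are in hand, the contradiction with $\nu\eta = 0$ is immediate.
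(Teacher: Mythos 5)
Your primary argument is correct, but it is not the paper's proof: you run Lemma \ref{lem:eta-c1} again one $g$-multiple higher, ruling out the unique candidate target $h_0^2 g^2$ on the grounds that it supports a hidden $\nu$ extension --- which is literally part (3) of Lemma \ref{lem:nu-h0^2g}, not merely an analogue of it --- and a class supporting a hidden $\nu$ extension cannot receive a hidden $\eta$ extension because $\eta \nu = 0$. (The honest justification of that principle is the maximality clause (3) of Definition \ref{defn:hidden}: if $\eta \beta$ and $\gamma$ both lay in $\{h_0^2 g^2\}$ with $\nu \gamma$ in $\{h_1 c_0 e_0^2\}$, then $\gamma - \eta\beta$ would be detected in strictly higher filtration yet still carry a $\nu$-multiplication into $\{h_1 c_0 e_0^2\}$; your phrase ``in the relevant degree these two requirements are incompatible'' is loose, but the paper uses the same shorthand in Lemma \ref{lem:eta-c1}, so this is acceptable.) The paper instead argues on the source side: the relation $h_2 t = \tau c_1 g$ shows that $\nu \{t\}$ is contained in $\{\tau c_1 g\}$, this element is annihilated by $\eta$ since $\eta\nu = 0$, so Lemma \ref{lem:hidden-not-exist} rules out every hidden $\eta$ extension on $\tau c_1 g$ --- in particular one hitting $\tau h_0^2 g^2$ --- and the statement for $c_1 g$ follows. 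Your ``alternative route'' is essentially the paper's argument, but beware the weight obstruction: $\nu\{t\}$ lies in $\pi_{39,22}$ and so can only be detected by $\tau c_1 g$, never by $c_1 g$ itself (which detects elements of weight $23$); this is exactly why the paper passes to $\tau c_1 g$ and then descends, a step your sketch omits. As for what each approach buys: yours treats $c_1$ and $c_1 g$ uniformly at the cost of importing Lemma \ref{lem:nu-h0^2g}(3) (which rests on classical hidden-extension input), while the paper's needs only the relation $h_2 t = \tau c_1 g$ and $\eta\nu = 0$, and rules out all hidden $\eta$ extensions on $\tau c_1 g$ at once, at the cost of the extra $\tau$-descent. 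Both proofs share the same chart-checking burden that $h_0^2 g^2$ is the only possible target.
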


\begin{proof}
Since $\nu \{ t\}$ is contained in $\{ \tau c_1 g\}$,
Lemma \ref{lem:hidden-not-exist} implies that
there is no hidden $\eta$ extension on $\tau c_1 g$.
In particular,
there cannot be a hidden $\eta$ extension from 
$\tau c_1 g$ to $\tau h_0^2 g^2$.
Therefore, there cannot be a hidden $\eta$ extension from
$c_1 g$ to $h_0^2 g^2$.
\end{proof}

\begin{lemma}
There is no hidden $\eta$ extension on $\tau h_1 h_5 c_0$.
\end{lemma}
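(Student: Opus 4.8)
The statement to prove is that there is no hidden $\eta$ extension on $\tau h_1 h_5 c_0$. My plan is to exhibit an element of $\{\tau h_1 h_5 c_0\}$ that is annihilated by $\eta$, and then invoke Lemma \ref{lem:hidden-not-exist}, which is precisely the tool designed to rule out hidden extensions in exactly this way. The key observation is that $\tau h_1 h_5 c_0$ is built from the $h_1$-multiple of $h_5 c_0$, and the established property of $h_5 c_0$ is that it detects $\rho_{15}\eta_5$ (cf.\ the argument in Lemma \ref{lem:2-Ph5c0}, where $P h_5 c_0$ detects $\rho_{15}\eta_5$). More directly, I expect $\tau h_1 h_5 c_0$ to detect a product involving $\eta$ and $\eta_5$, so that an element of its bracket is already visibly a multiple of $\eta$ times something on which a further $\eta$ multiplication vanishes.

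First I would identify a concrete element of $\{\tau h_1 h_5 c_0\}$ of the form $\eta \gamma$ for a suitable $\gamma$ detected by $h_5 c_0$, or else write $\tau h_1 h_5 c_0$ as detecting $\tau \eta \cdot (\text{something})$ and use a known relation to pin down a factorization. The natural candidate is that $\{\tau h_1 h_5 c_0\}$ contains $\tau\eta$ times an element of $\{h_5 c_0\}$; since $h_1\cdot h_5 c_0 = \tau h_1 h_5 c_0$ need not vanish, I must be careful that the $\eta$-multiple in question lands where expected. Then the second $\eta$ multiplication gives $\eta^2\gamma$, and I would argue this is zero—most plausibly because it factors through a relation like $\eta^2\sigma\eta_5$ or because $\eta\cdot\{h_1 h_5 c_0\}$ already exhausts the relevant group and a further $\eta$ lands in a trivial or higher-filtration region. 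Concretely, since $\eta_5\in\{h_1 h_5\}$ and $\sigma\eta_5\in\{h_1 h_3 h_5\}$ were used in Lemma \ref{lem:2-h1h5}, I would look for the analogous statement that an element of $\{\tau h_1 h_5 c_0\}$ is annihilated by $\eta$.

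The main obstacle will be correctly handling the indeterminacy of $\{\tau h_1 h_5 c_0\}$: this bracket may contain several homotopy elements differing by classes of higher Adams filtration, and I must verify that at least one of them is genuinely killed by $\eta$, which is exactly what Lemma \ref{lem:hidden-not-exist} requires. In particular, I need to ensure that the chosen $\gamma$ multiplies correctly and that the vanishing $\eta\cdot\eta\gamma=0$ is not spoiled by a hidden extension contributing an unexpected nonzero term. I expect the cleanest route is to show $\tau h_1 h_5 c_0$ detects $\tau\eta\{h_5 c_0\}$ where $\{h_5 c_0\}$ contains an element annihilated by $\eta$ (for instance because $\eta\cdot\eta_5=0$-type relations or because $h_1\cdot h_1 h_5 c_0 = h_1^2 h_5 c_0$ survives and absorbs the multiplication), so that Lemma \ref{lem:hidden-not-exist} applies immediately with $\alpha=\eta$, $b=\tau h_1 h_5 c_0$, and $\beta$ the element with $\eta\beta=0$.
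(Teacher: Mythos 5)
Your proposal does not match the paper's argument, and it contains a genuine gap. The paper's proof is a two-line appeal to the formal definition of a hidden extension: for degree reasons the only possible target of a hidden $\eta$ extension on $\tau h_1 h_5 c_0$ is $z$, and Table \ref{tab:Adams-eta} records a hidden $\eta$ extension from $\tau^3 g^2$ to $z$. Since $\tau^3 g^2$ has Adams filtration $8$, strictly greater than the filtration $5$ of $\tau h_1 h_5 c_0$, condition (3) of Definition \ref{defn:hidden} (the source must be the element of highest filtration mapping into the target) immediately rules out $\tau h_1 h_5 c_0$ as a source. No $\eta$-annihilated element of $\{\tau h_1 h_5 c_0\}$ needs to be produced at all; this is exactly the "crossing extension" situation of Examples \ref{ex:hidden-cross-1} and \ref{ex:hidden-cross-2}, where Lemma \ref{lem:hidden-not-exist} is the wrong tool because its hypothesis is the hard thing to verify.

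The gap in your approach is that you never actually exhibit the element $\beta$ with $\eta\beta = 0$, and the vanishing you hope for is false as stated. The natural elements of $\{\tau h_1 h_5 c_0\}$ are of the form $\tau\eta\gamma$ with $\gamma \in \{h_5 c_0\}$ (for instance $\tau\epsilon\eta_5$, since $\epsilon\eta_5$ is detected by $h_1 h_5 c_0$), so your plan requires $\tau\eta^2\gamma = 0$. But $\eta^2\gamma$ is generally nonzero: $\eta\epsilon\eta_5$ is detected by $h_1^2 h_5 c_0$, which survives to the motivic $E_\infty$-page (the paper relies on this survival in Example \ref{ex:hidden-cross-2}), and the specific relation you invoke fails, since $\eta\epsilon = \eta^2\sigma + \nu^3$ together with Lemma \ref{lem:compound-h1^2h3h5} gives $\eta^2\sigma\eta_5 = \eta\epsilon\eta_5 \neq 0$. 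What you would actually need is that multiplication by $\tau$ kills $\eta^2\gamma$, i.e., that there is no hidden $\tau$ extension on $h_1^2 h_5 c_0$ carrying it into $\{z\}$ --- and resolving that is precisely the same crossing problem with $\tau^3 g^2$ and $z$ that the paper's definition-based argument dispatches directly. An $\eta$-annihilated element of $\{\tau h_1 h_5 c_0\}$ does exist, but only as a \emph{consequence} of this analysis (one must correct $\tau\epsilon\eta_5$ by an element of $\{\tau^3 g^2\}$ whose $\eta$-multiple cancels in $\{z\}$), so it cannot serve as the starting point. A minor further slip: $h_5 c_0$ does not detect $\rho_{15}\eta_5$; that is the element $P h_5 c_0$ in the 47-stem (Lemma \ref{lem:2-Ph5c0}), whereas $h_5 c_0$ is tied to the Toda bracket $\langle \epsilon, 2, \theta_4 \rangle$.
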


\begin{proof}
Table \ref{tab:Adams-eta} shows a
hidden $\eta$ extension from $\tau^3 g^2$ to $z$.
Therefore, there cannot be a hidden $\eta$ extension
from $\tau h_1 h_5 c_0$ to $z$.
\end{proof}

\begin{lemma}
\label{lem:eta-h1f1}
There is a hidden $\eta$ extension from $h_1 f_1$ to $\tau h_2 c_1 g$.
\end{lemma}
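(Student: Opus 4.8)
The plan is to realize $\{\tau h_2 c_1 g\}$ as an $\eta$-multiple and then identify the highest-filtration source, following the template of Lemma \ref{lem:eta-h0^2h3h5}. First I would record two structural facts. Since $h_1 h_2 = 0$ and $\tau^2 c_1 g = h_1 y$ on the $E_2$-page (the relation used in Lemma \ref{lem:eta-h0^2h3h5}), we get $\tau^2 h_2 c_1 g = h_2 \cdot h_1 y = 0$, so $\tau h_2 c_1 g$ is annihilated by $\tau$; in particular it is a $\tau$-torsion permanent cycle. One also checks from the $E_\infty$-chart that $h_1 f_1$ is a nonzero permanent cycle of Adams filtration $5$ sitting one stem below the target, while $\tau h_2 c_1 g$ has filtration $8$. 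The hidden extension to be proved therefore carries a filtration jump of two, which is why it is not visible on $E_\infty$.

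The core step is to exhibit $\tau h_2 c_1 g$ inside a Toda bracket of the form $\langle \eta, \alpha, \beta \rangle$, which forces the target to be divisible by $\eta$. Concretely, I would first compute a Massey product $\tau h_2 c_1 g \in \langle h_1, a, b \rangle$ on a May $E_r$-page using May's Convergence Theorem \ref{thm:3-converge}, built from the differentials that produce $c_1 g$ (for instance $d_4(g) = h_1^4 h_4$ together with a bracket presentation such as $c_1 = \langle h_2, h_1, h_3^2 \rangle$), and then transport this to a Toda bracket via Moss's Convergence Theorem \ref{thm:Moss}, verifying that condition (2) of that theorem holds, i.e.\ that no crossing Adams differential interferes. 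Shuffling the $\eta$ out of the resulting bracket then presents an element detected by $\tau h_2 c_1 g$ as $\eta$ times a class of Adams filtration $5$; the only such class in the relevant tridegree is detected by $h_1 f_1$, giving the multiplication $\eta \cdot \{ h_1 f_1 \} \ni \{ \tau h_2 c_1 g \}$.

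Finally I would invoke Definition \ref{defn:hidden} and Lemma \ref{lem:hidden-exist} to upgrade this multiplication to a genuine hidden extension, which requires checking that $h_1 f_1$ is the source of highest Adams filtration. Here one must account for the crossing extension flagged in Example \ref{ex:hidden-cross-2}: the line from $h_1 f_1$ to $\tau h_2 c_1 g$ crosses the (non-hidden) line from $h_1^2 h_5 c_0$ to $h_1^3 h_5 c_0$, so some element of $\{ h_1 f_1 \}$ has $\eta$-multiple landing instead in $\{ h_1^3 h_5 c_0 \}$. I would use the filtration bookkeeping of Lemma \ref{lem:hidden-exist}, together with the indeterminacy of the bracket above, to confirm that after correcting by this crossing class the extension onto $\tau h_2 c_1 g$ is the one of maximal filtration. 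The main obstacle is exactly this last point: pinning down the correct Massey/Toda bracket with its indeterminacy and disentangling the genuine $\eta$-extension from the crossing $\eta$-extension onto $h_1^3 h_5 c_0$. The $\tau$-torsion of $\tau h_2 c_1 g$ noted above, which allows an independent verification in the motivic Adams spectral sequence for the cofiber of $\tau$, is what I expect to make this disentangling tractable.
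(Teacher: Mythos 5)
Your proposal has the right overall shape (realize an element of $\{\tau h_2 c_1 g\}$ as an $\eta$-multiple, then identify $h_1 f_1$ as the only possible source), but the central step is never actually carried out, and that step is the whole content of the lemma. You write that you ``would first compute a Massey product $\tau h_2 c_1 g \in \langle h_1, a, b \rangle$'' built ``for instance'' from $d_4(g) = h_1^4 h_4$ and ``a bracket presentation such as'' $c_1 = \langle h_2, h_1, h_3^2 \rangle$, with $a$ and $b$ left unspecified, and with no verification that such a May-level bracket is defined, converges under May's Convergence Theorem \ref{thm:3-converge} (crossing differentials are a genuine danger in this range), or survives Moss's Convergence Theorem \ref{thm:Moss}. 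No bracket of this form appears in Table \ref{tab:Massey}, and the paper does not need one: its proof observes that $\nu^2 \{t\}$ lies in $\{\tau h_2 c_1 g\}$ (since $\nu\{t\} \subseteq \{\tau c_1 g\}$), invokes the classical relation $\nu^2 = \langle \eta, \nu, \eta \rangle$ from Table \ref{tab:Toda}, and shuffles:
\[
\nu^2 \{ t \} = \langle \eta, \nu, \eta \rangle \{ t \} = \eta \langle \nu, \eta, \{t\} \rangle,
\]
so $\nu^2\{t\}$ is divisible by $\eta$ and the only possible hidden extension is the stated one. This requires no May spectral sequence input at all; the missing ingredient in your plan is precisely the identification of a concrete element of $\{\tau h_2 c_1 g\}$ (namely $\nu^2\{t\}$) that can be fed into a known Toda bracket.

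A secondary problem is your final step. You propose to conclude via Lemma \ref{lem:hidden-exist}, but that lemma requires $\eta \{b\} \subseteq \{c\}$ for \emph{every} element of $\{b\}$, and Example \ref{ex:hidden-cross-2} states exactly that this fails here: because the extension from $h_1 f_1$ to $\tau h_2 c_1 g$ crosses the non-hidden extension from $h_1^2 h_5 c_0$ to $h_1^3 h_5 c_0$, there exists $\beta$ in $\{h_1 f_1\}$ with $\eta\beta$ not in $\{\tau h_2 c_1 g\}$. So Lemma \ref{lem:hidden-exist} is inapplicable, and one must instead verify Definition \ref{defn:hidden} directly, which only asks for \emph{some} element of $\{h_1 f_1\}$ whose $\eta$-multiple lands in $\{\tau h_2 c_1 g\}$ together with maximality of the source filtration. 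The paper's argument supplies exactly that, with the required element produced by the shuffle above.
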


\begin{proof}
Note that $\{ \tau h_2 c_1 g\}$ contains $\nu^2 \{ t\}$.
Table \ref{tab:Toda} shows that
$\nu^2 = \langle \eta, \nu, \eta \rangle$.
Shuffle to compute that
\[
\nu^2 \{ t \} = \langle \eta, \nu, \eta \rangle \{ t \} =
\eta \langle \nu, \eta, \{t \} \rangle,
\]
so $\nu^2 \{t\}$ is divisible by $\eta$.
The only possibility is that there is a hidden $\eta$
extension from $h_1 f_1$ to $\tau h_2 c_1 g$.
\end{proof}

\begin{lemma}
\label{lem:eta-th1g2}
There is no hidden $\eta$ extension on $\tau h_1 g_2$.
\end{lemma}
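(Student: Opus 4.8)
The plan is to reduce the statement to the vanishing of a single homotopy product and then to kill that product with Toda-bracket manipulations. A hidden $\eta$ extension on $\tau h_1 g_2$ would have to detect $\eta \cdot \tau \eta \{g_2\} = \tau \eta^2 \{g_2\}$, since $\tau h_1 g_2$ detects $\tau \eta \{g_2\}$. By Lemma \ref{lem:hidden-not-exist} it therefore suffices to produce a representative of $\{\tau h_1 g_2\}$ that is annihilated by $\eta$; concretely, it suffices to show that $\tau \eta^2 \{g_2\} = 0$. The only conceivable target in the relevant tridegree is the class corresponding to Kochman's $N$, so this is precisely the extension whose existence is asserted in \cite{Kochman90}, and establishing the vanishing is what contradicts that assertion.

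First I would rewrite the product using the $E_2$-page relation $h_1^2 g_2 = h_3^2 d_1$. As recorded in Remark \ref{rem:eta-th1g2} (compare \cite{BJM84}*{Lemma 4.3}), this yields the identity $\eta^2 \{g_2\} = \sigma^2 \{d_1\}$ in $\pi_{*,*}$, so the task becomes showing $\tau \sigma^2 \{d_1\} = 0$. The essential inputs are that $\{d_1\}$ is $2$-torsion, established in the course of Section \ref{subsctn:hidden-2} (where it is observed that $2\{d_1\}$ is zero), together with $2 \sigma^2 = 0$, which holds because $\pi_{14,8}$ is generated by the order-two elements $\sigma^2$ and $\kappa$ (see Example \ref{ex:naive-hidden}).

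To exploit this $2$-torsion I would feed it into the relation $\tau \eta^2 = \langle 2, \eta, 2 \rangle$ from Table \ref{tab:Toda} together with the shuffles of Lemma \ref{lem:2-bracket}, aiming to exhibit $\tau \eta^2 \{g_2\}$ as a multiple of $2$; the target stem then contains no nonzero class that is simultaneously divisible by $2$ and available as the detector of a hidden extension, which would force $\tau \eta^2 \{g_2\} = 0$. As a consistency check, inverting $\tau$ (Proposition \ref{prop:compare}) carries $\eta^2 \{g_2\}$ to the classical $\sigma^2 \{d_1\}$, which vanishes by \cite{BJM84}, so $\eta^2 \{g_2\}$ is in any case $\tau$-power torsion. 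This is exactly the inherently motivic phenomenon — $\eta^2 \{g_2\}$ is nonzero, yet multiplication by $\tau$ annihilates it — that has no classical counterpart and that accounts for the discrepancy with \cite{Kochman90}.

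The main obstacle will be the final step: upgrading ``$\tau$-power torsion'' to the exact equality $\tau \eta^2 \{g_2\} = 0$, and making the bracket shuffles rigorous. One must track the motivic weight throughout (this is what eliminates the classically plausible extension to $N$), verify that each Toda bracket invoked is defined and control its indeterminacy, and confirm against the $E_\infty$-chart of \cite{Isaksen14a} that no surviving detector in the target tridegree can receive the product. Once $\tau \eta^2 \{g_2\} = 0$ is in hand, Lemma \ref{lem:hidden-not-exist} closes the argument, and the same computation simultaneously delivers Lemma \ref{lem:tau-h1^2g2} and the vanishing of $\tau \sigma^2 \{d_1\}$ recorded in Remark \ref{rem:eta-th1g2}.
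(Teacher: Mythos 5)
Your reduction via Lemma \ref{lem:hidden-not-exist} is sound---if $\tau \eta^2 \alpha = 0$ for some $\alpha$ in $\{g_2\}$, then $\tau \eta \alpha$ is an element of $\{\tau h_1 g_2\}$ annihilated by $\eta$, and the lemma applies---but the mechanism you propose for proving $\tau \eta^2 \{g_2\} = 0$ fails at its first step. The shuffle $\langle 2, \eta, 2 \rangle \alpha = 2 \langle \eta, 2, \alpha \rangle$ requires $2\alpha = 0$, and elements of $\{g_2\}$ are \emph{not} annihilated by $2$: the classes $h_0 g_2$ and $h_0^2 g_2$ are nonzero permanent cycles (indeed $h_0^2 g_2$ detects $\sigma^2 \theta_4$, and under the Adams--Novikov correspondence $g_2$, $h_0 g_2$, $h_0^2 g_2$ correspond to $z_{44,4}$, $2 z_{44,4}$, $4 z_{44,4}$), so every element of $\{g_2\}$ has order at least $8$ and the bracket $\langle \eta, 2, \alpha \rangle$ is not even defined. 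The $2$-torsion you do have available ($2\{d_1\} = 0$ and $2\sigma^2 = 0$) cannot be substituted in: applying Lemma \ref{lem:2-bracket} or the relation $\tau \eta^2 = \langle 2, \eta, 2 \rangle$ to the $2$-torsion element $\sigma^2 \{d_1\} = \eta^2 \{g_2\}$ only produces statements about $\tau \eta \cdot \eta^2 \{g_2\} = \tau \eta^3 \{g_2\}$, which carries one factor of $\eta$ too many. That quantity is indeed zero, but for degree reasons, and it is a strictly weaker fact than the one your argument needs.

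A second error is your assertion that $N$ is the only conceivable target. The classes $B_1$, in degree $(46,7,25)$, and $\tau d_0 l + u'$, in degree $(46,11,25)$, also lie in the correct tridegree, and ruling them out is the substantive content of the paper's proof. The paper argues as follows: $N$ cannot be a target because it supports a hidden $\nu$ extension (Lemma \ref{lem:nu-h2c1}), so no element of $\{N\}$ is annihilated by $\nu$, whereas any $\eta$-multiple is killed by $\nu$; and $B_1$ and $\tau d_0 l + u'$ cannot be targets because $\eta^3 \{g_2\} = 0$ for degree reasons, hence $\tau \eta^3 \{g_2\} = 0$, while both of these classes support nonzero $h_1$-multiplications on the $E_\infty$-page. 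Note that your proposed endgame (``no nonzero class in the target group is divisible by $2$'') would in any case require exactly this kind of case-by-case analysis of $B_1$, $N$, and $\tau d_0 l + u'$, so even setting aside the failed shuffle, the proposal does not bypass the work that the paper's proof actually does.
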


\begin{proof}
We will show in Lemma \ref{lem:nu-h2c1} that $N$ supports a hidden
$\nu$ extension.  Therefore, $N$ cannot be the target of a hidden
$\eta$ extension.

For degree reasons, $\eta^3 \{g_2\}$ must be zero.
Therefore, $\tau \eta^3 \{g_2 \}$ must be zero.
This implies that 
the target of a hidden $\eta$ extension on $\tau h_1 g_2$
cannot support an $h_1$ multiplication.
Hence, 
there cannot be a hidden $\eta$ extension
from $\tau h_1 g_2$ to $B_1$ or to $\tau d_0 l + u'$.
\end{proof}

\begin{lemma}
\label{lem:eta-h3^2h5}
There is a hidden $\eta$ extension from $h_3^2 h_5$ to $B_1$.
\end{lemma}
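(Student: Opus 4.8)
The plan is to exhibit a single element $\beta$ of $\{h_3^2 h_5\}$ with $\eta\beta$ contained in $\{B_1\}$ and then check the conditions of Definition \ref{defn:hidden} directly. First I would dispose of the two easy ingredients. Condition (1) asks that $h_1 h_3^2 h_5 = 0$ on the $E_\infty$-page, which is visible from the chart, so every $\eta$-multiple of an element of $\{h_3^2 h_5\}$ is detected in Adams filtration strictly greater than $4$. The only classes in the $46$-stem that can receive such a hidden $\eta$ extension are $h_1 h_5 d_0$ and $B_1$, so the whole problem is to decide between these two and to rule out the possibility that $\eta\theta_{4.5}$ vanishes.

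The heart of the argument is to show that $\eta\theta_{4.5}$, for the normalized representative built in Section \ref{sctn:notation}, is detected by $B_1$. My preferred route exploits the $h_1$-locality of $B_1$: by \cite{GI14}, $h_1^k B_1$ is non-zero on $E_\infty$ for every $k \ge 0$, whereas $h_5 d_0$ is not $h_1$-local, so $h_1^2 h_5 d_0 = 0$. Consequently, if I can produce a single power $\eta^k \theta_{4.5}$ with $k \ge 2$ that is non-zero and lies on the $h_1$-tower supported by $B_1$, then descending the tower forces $\eta\theta_{4.5}$ itself onto $B_1$, up to the filtration-$6$ ambiguity carried by $h_1 h_5 d_0$. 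The non-vanishing of the relevant $\eta$-power I would obtain with a Toda bracket, using $\{h_3^2 h_5\} = \{h_4^3\}$ and the relation $h_4^3 + h_3^2 h_5 = 0$ to relate $\theta_{4.5}$ to $\theta_4$ and $\sigma^2$, and applying Moss's Convergence Theorem \ref{thm:Moss} to transport an $h_1$-local relation in $\Ext$ back to homotopy. Shuffling identities against $\langle \theta_4, 2, \sigma^2 \rangle$, whose filtration behaviour is recorded in the example following Theorem \ref{thm:Moss}, would be the computational engine.

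The crossing-extension subtlety of Example \ref{ex:hidden-cross-1} must then be handled explicitly, since $\{h_3^2 h_5\}$ is only a coset modulo $\{h_5 d_0\}$ and $\eta\{h_5 d_0\} \subseteq \{h_1 h_5 d_0\}$ is a non-hidden extension in the same filtration as the unwanted target $h_1 h_5 d_0$. A generic representative therefore need not work; instead I would follow the recipe defining $\theta_{4.5}''$, adding a suitable element of $\{h_5 d_0\}$ to cancel any $h_1 h_5 d_0$ contribution and leave the residual value in the strictly higher filtration occupied by $B_1$. With $\eta\theta_{4.5} \in \{B_1\}$ secured, condition (3) of Definition \ref{defn:hidden} follows by inspection: the only classes in the $45$-stem lying above $h_3^2 h_5$ are the coset elements $\{h_5 d_0\}$, which map under $\eta$ into $\{h_1 h_5 d_0\}$ rather than $\{B_1\}$, and $h_0 h_5 d_0 = 4\theta_{4.5}$, which is annihilated by $\eta$; hence $h_3^2 h_5$ is the maximal-filtration source mapping into $\{B_1\}$.

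The main obstacle is precisely the middle step: proving that $\eta\theta_{4.5}$ is genuinely non-zero and reaches $B_1$ rather than dying or stopping at $h_1 h_5 d_0$. This is where the delicate input lives, and it cannot be extracted from the $E_\infty$-page alone. It requires either the $h_1$-locality of $B_1$ from \cite{GI14} together with a careful census of which classes persist up the $h_1$-tower in the stems $45+k$, or a four-fold Toda bracket computation via Moss's Convergence Theorem whose indeterminacy and crossing-differential hypotheses must be verified with care. Everything else, namely the two Definition conditions and the representative bookkeeping, is routine once this detection statement is in place.
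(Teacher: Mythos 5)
Your proposal is a plan rather than a proof: the step you yourself flag as ``the main obstacle'' --- showing that $\eta\theta_{4.5}$ is non-zero and detected by $B_1$ rather than vanishing or landing on $h_1 h_5 d_0$ --- is the entire mathematical content of the lemma, and neither of your two suggested routes is carried out. The $h_1$-locality route has a structural flaw beyond incompleteness: descending the $h_1$-tower is obstructed by exactly the crossing phenomenon of Example \ref{ex:hidden-cross-1}. If $\eta\theta$ is detected by $h_1 h_5 d_0$, then (using your own observation that $h_1^2 h_5 d_0 = 0$) $\eta^2\theta$ is detected in filtration at least $8$, hence possibly by $h_1 B_1$ itself; so knowing that some power $\eta^k\theta$ sits on the $B_1$-tower does not force $\eta\theta$ onto $B_1$, and establishing that non-vanishing in the first place is no easier than the lemma. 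Worse, invoking ``the recipe defining $\theta_{4.5}''$'' is circular: the normalization in Section \ref{sctn:notation} (adding an element of $\{h_5 d_0\}$ so that $\eta\theta''_{4.5}$ lands in $\{B_1\}$) is only well-defined because this lemma is true; after cancelling the $h_1 h_5 d_0$ contribution you know only that the result lies in filtration $\geq 7$, not that it is non-zero and detected by $B_1$.

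There is also a gap in your verification of condition (3) of Definition \ref{defn:hidden}. Your ``inspection'' of the $45$-stem considers only $\{h_5 d_0\}$ and $h_0 h_5 d_0$, but the stem also contains $\tau h_1 g_2$ (filtration $5$) and $\tau w$ (filtration $9$), both above $h_3^2 h_5$; ruling out $\tau h_1 g_2$ as a source mapping into $\{B_1\}$ is genuine work and is exactly Lemma \ref{lem:eta-th1g2}, which in turn rests on the exotic hidden $\nu$ extension on $N$ (Lemma \ref{lem:nu-h2c1}). For contrast, the paper fills the detection gap concretely: the Massey product $B_1 = \langle h_1, h_0, h_0^2 g_2\rangle$ (from the May differential $d_6(Y) = h_0^3 g_2$) together with Moss's Convergence Theorem \ref{thm:Moss} shows $\{B_1\}$ meets $\langle \eta, 2, \sigma^2\theta_4\rangle$, where $\sigma^2\theta_4 \in \{h_0^2 g_2\}$ via the hidden $\sigma$ extension on $h_4^2$ and the relation $h_3 x = h_0^2 g_2$; since this bracket is contained in $\langle \eta, 0, \theta_4 \rangle$, every element of it has the form $\eta\beta + \alpha\theta_4$, and both possible values of $\alpha$ make $\alpha\theta_4$ an $\eta$-multiple, so $\{B_1\}$ contains an $\eta$-multiple; Lemma \ref{lem:eta-th1g2} then pins the source to $h_3^2 h_5$. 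Some argument of this kind (or the citation of the classical result in Table \ref{tab:extn-refs}) is indispensable and is absent from your proposal.
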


\begin{proof}
This follows immediately from the analogous classical hidden 
extension given in Table \ref{tab:extn-refs},
but we repeat the interesting proof from \cite{Tangora70b} here for completeness.

First, Table \ref{tab:Massey} shows that 
$B_1 = \langle h_1, h_0, h_0^2 g_2 \rangle$.
Then Moss's Convergence Theorem \ref{thm:Moss} implies that
$\{ B_1 \}$ intersects $\langle \eta, 2, \{ h_0^2 g_2 \} \rangle$.
\index{Convergence Theorem!Moss}

Next, the classical product $\sigma \theta_4$ belongs to $\{ x\}$,
as shown in Table \ref{tab:extn-refs}.
Since $h_3 x = h_0^2 g_2$ on the 
$E_2$-page \cite{Bruner97}, it follows that 
$\sigma^2 \theta_4$ equals $\{ h_0^2 g_2 \}$.  The same formula
holds motivically.

Now $\langle \eta, 2, \sigma^2 \theta_4 \rangle$ is contained in
$\langle \eta, 2 \sigma^2, \theta_4 \rangle$, which equals
$\langle \eta, 0, \theta_4 \rangle$.
Therefore, $\{ B_1 \}$ contains an element of the form
$\theta_4 \alpha + \eta \beta$.

The possible non-zero values for $\alpha$ are $\eta_4$ or $\eta \rho_{15}$.
In the first case, $\theta_4 \alpha$ equals
$\theta_4 \langle 2, \sigma^2, \eta \rangle$, which equals
$\langle \theta_4, 2, \sigma^2 \rangle \eta$.
Therefore, in either case, $\theta_4 \alpha$ is a multiple of $\eta$,
so we can assume that $\alpha$ is zero.

We have now shown that $\{B_1\}$ contains a multiple of $\eta$.
Because of Lemma \ref{lem:eta-th1g2}, 
the only possibility is that there is a hidden $\eta$
extension from $h_3^2 h_5$ to $B_1$.
\end{proof}

\begin{lemma}
\mbox{}
\begin{enumerate}
\item
There is no hidden $\eta$ extension on $h_1 h_5 d_0$.
\item
There is no hidden $\eta$ extension on $N$.
\end{enumerate}
\end{lemma}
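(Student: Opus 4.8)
The plan is to establish both non-existence statements by applying Lemma~\ref{lem:hidden-not-exist}: to rule out a hidden $\eta$ extension on a class $b$, it suffices to exhibit a single element of $\{b\}$ that is annihilated by $\eta$. In both cases the class lies in the $46$-stem, so a hidden $\eta$ extension would have a target in the $47$-stem, and I would also keep in reserve the alternative of directly eliminating the finitely many candidate targets there, using the $E_\infty$ chart of \cite{Isaksen14a}.

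For the first statement, observe that $h_1 h_5 d_0$ is $h_1$-divisible on the $E_\infty$-page, since $h_1 \cdot h_5 d_0 = h_1 h_5 d_0$ is non-zero. Hence $\{h_1 h_5 d_0\}$ contains $\eta \beta'$ for $\beta' \in \{h_5 d_0\}$, and a hidden $\eta$ extension would force $\eta^2 \beta'$ to be non-zero in strictly higher Adams filtration for every choice of $\beta'$. The plan is to produce one $\beta'$ with $\eta^2 \beta' = 0$, thereby invoking Lemma~\ref{lem:hidden-not-exist}. I would exploit the interaction between $\{h_5 d_0\}$, $\{h_3^2 h_5\}$, and $\{B_1\}$ recorded in Example~\ref{ex:hidden-cross-1} and in the construction of $\theta_{4.5}$ in Section~\ref{sctn:notation}: a generic element $\theta'_{4.5}$ of $\{h_3^2 h_5\}$ satisfies $\eta \theta'_{4.5} \in \{h_1 h_5 d_0\}$, while adding a suitable element of $\{h_5 d_0\}$ moves the $\eta$-multiple into $\{B_1\}$ via the hidden extension of Lemma~\ref{lem:eta-h3^2h5}. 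Tracking $\eta^2$ on this family, together with the classical relation $\eta^3 \{h_3^2 h_5\} = 0$ used in the proof of Lemma~\ref{lem:2-h0B2}, should exhibit the desired $\eta$-annihilated representative.

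For the second statement, the key structural input is that $N$ supports a hidden $\nu$ extension (Lemma~\ref{lem:nu-h2c1}); this is the same fact used in Lemma~\ref{lem:eta-th1g2} to forbid $N$ from being a target. I would use the $\nu$-divisibility of $\{P h_1^2 h_5 c_0\}$, established through the cofiber of $\nu$, to pin down a canonical representative $\beta \in \{N\}$, and then show $\eta \beta = 0$, again applying Lemma~\ref{lem:hidden-not-exist}. Should a direct computation of $\eta \beta$ prove awkward, the fallback is to enumerate the candidate $47$-stem targets and rule each one out: either because it is itself the source of a hidden $\nu$ or $2$ extension (and hence cannot be the target of an $\eta$ extension), or because it supports an $h_1$-multiplication incompatible with $\eta^3 \{g_2\} = 0$, exactly as in the proof of Lemma~\ref{lem:eta-th1g2}.

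The main obstacle in both parts is the presence of crossing $\eta$ extensions in the $46$--$47$ range, highlighted in Example~\ref{ex:hidden-cross-1}: because several $\eta$ extensions cross, one cannot read off $\eta^2$ on $\{h_5 d_0\}$ or $\eta$ on $\{N\}$ from the chart alone, and the argument must instead choose the representative carefully so that its $\eta$-multiple lands in the filtration where it visibly vanishes. Controlling these filtration jumps, rather than any single bracket computation, is the delicate point.
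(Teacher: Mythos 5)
Your primary argument has a genuine gap, and it sits exactly at the point you flag as ``the delicate point.'' For part (1) you need some $\beta' \in \{h_5 d_0\}$ with $\eta^2 \beta' = 0$, but this is never produced, and the inputs you cite cannot produce it. The $\theta_{4.5}$ construction, Lemma \ref{lem:eta-h3^2h5}, and the classical relation $\eta^3 \theta_{4.5} = 0$ control $\eta$-multiplication on $\{h_3^2 h_5\}$ and on $\{B_1\}$; because of the crossing extensions of Example \ref{ex:hidden-cross-1} they say nothing about $\eta^2$ on $\{h_5 d_0\}$. Indeed every $\beta' \in \{h_5 d_0\}$ has $\eta\beta'$ detected by $h_1 h_5 d_0$, and whether $\eta(\eta\beta')$ vanishes or is detected by $h_1 B_1$ is left open by everything you cite. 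More fundamentally, you are attempting to prove a statement strictly stronger than the lemma: ``no hidden $\eta$ extension on $h_1 h_5 d_0$'' in the sense of Definition \ref{defn:hidden} is perfectly compatible with every element of $\{h_1 h_5 d_0\}$ having a non-zero $\eta$-multiple detected by $h_1 B_1$, because a hidden extension with target $h_1 B_1$ is already excluded by condition (3) of that definition ($B_1$ has higher filtration than $h_1 h_5 d_0$ and $\eta\{B_1\} \subseteq \{h_1 B_1\}$). So exhibiting an $\eta$-annihilated representative may simply be impossible, and is in any case not needed. The same objection applies to part (2): the hidden $\nu$ extension on $N$ (Lemma \ref{lem:nu-h2c1}) says that $\nu\beta$ is non-zero for a suitable $\beta \in \{N\}$; it does not ``pin down'' a representative of $\{N\}$, and it gives no handle whatsoever on $\eta\beta$.

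The paper's proof avoids all of this by arguing on the target side, and it is one line. By weight considerations and condition (3) of Definition \ref{defn:hidden} (which automatically disposes of the $h_1$-divisible classes $h_1 B_1$ and $h_1 u'$, whose $h_1$-divisors $B_1$ and $u'$ sit in higher filtration than both sources), the only class in the 47-stem that could receive a hidden $\eta$ extension from either $h_1 h_5 d_0$ or $N$ is $e_0 r$. Since $e_0 r$ supports a hidden $2$ extension (Lemma \ref{lem:2-e0r}), it cannot be the target of any $\eta$ extension: an $\eta$-multiple is annihilated by $2$, which would contradict that hidden $2$ extension by Lemma \ref{lem:hidden-not-exist}. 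Your fallback --- ruling out candidate targets because they themselves support hidden $2$ or $\nu$ extensions --- is precisely this argument and is the part of your proposal worth keeping; but as written you never identify $e_0 r$ as the unique surviving candidate and never invoke Lemma \ref{lem:2-e0r}, so neither case is actually closed.
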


\begin{proof}
We showed in Lemma \ref{lem:2-e0r} that there is a hidden
$2$ extension on $e_0 r$.  Therefore, $e_0 r$ cannot
be the target of a hidden $\eta$ extension.
\end{proof}

\begin{lemma}
There is no hidden $\eta$ extension on $h_1 B_1$.
\end{lemma}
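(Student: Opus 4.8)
The plan is to produce an explicit element of $\{ h_1 B_1 \}$ that is annihilated by $\eta$, and then to invoke Lemma \ref{lem:hidden-not-exist}. First I would recall from Lemma \ref{lem:eta-h3^2h5} that there is an element $\beta$ of $\{ h_3^2 h_5 \}$ such that $\eta \beta$ belongs to $\{ B_1 \}$. Since $h_1 B_1$ is a non-zero class of the $E_\infty$-page, multiplying once more by $\eta$ shows that $\eta^2 \beta$ is detected by $h_1 \cdot B_1 = h_1 B_1$, i.e.\ that $\eta^2 \beta$ belongs to $\{ h_1 B_1 \}$.

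The key point is then that $\eta \cdot \eta^2 \beta = \eta^3 \beta$ vanishes. This is exactly the classical relation $\eta^3 \beta = 0$ for $\beta$ in $\{ h_3^2 h_5 \}$ recorded in \cite{BJM84}*{Lemma 3.5}, which is already used in the proof of Lemma \ref{lem:2-h0B2}. Having exhibited an element of $\{ h_1 B_1 \}$ that is killed by $\eta$, Lemma \ref{lem:hidden-not-exist} immediately yields the non-existence of a hidden $\eta$ extension on $h_1 B_1$.

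The main subtlety, and the step I expect to require the most care, is explaining why the statement is not vacuous: one must confirm that a hidden $\eta$ extension on $h_1 B_1$ is a priori possible at all. The naive product $\eta \cdot (h_1 B_1)$ has leading term $h_1^2 B_1$, and this vanishes on the $E_\infty$-page precisely because $h_1^2 B_1$ is hit by the Adams differential $d_3(h_1 h_5 e_0) = h_1^2 B_1$ of Lemma \ref{lem:d3-h1h5e0}. Conversely, one must check that $h_1 B_1$ itself survives to $E_\infty$, since this is exactly what guarantees that $\eta^2 \beta$ is detected by $h_1 B_1$ rather than being pushed into strictly higher Adams filtration; this is consistent with (and in fact forced by) the vanishing of $\eta^3 \beta$.

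Once these filtration bookkeeping facts are in place, the argument is short: the only remaining work is to verify that $\eta^2 \beta$ is genuinely a representative of the class $h_1 B_1$ and that no crossing extensions interfere, both of which follow from the $h_1$-local structure around $B_1$ together with the $d_3$ differential above.
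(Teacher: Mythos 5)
Your proof is essentially correct, but it takes a different route from the paper. The paper's proof is a two-sentence comparison argument: classically there is no hidden $\eta$ extension on $h_1 B_1$ by \cite{BJM84}*{Theorem 3.1(i) and Lemma 3.5}, and since the only candidate motivic target $\tau d_0 e_0^2$ is not $\tau$-torsion, inverting $\tau$ (Proposition \ref{prop:compare}) rules out the motivic extension as well. You instead reconstruct the underlying classical argument motivically: use Lemma \ref{lem:eta-h3^2h5} to get $\beta \in \{h_3^2 h_5\}$ with $\eta\beta \in \{B_1\}$, note $\eta^2\beta \in \{h_1 B_1\}$ since $h_1 B_1$ is nonzero on $E_\infty$, kill $\eta^3\beta$ via \cite{BJM84}*{Lemma 3.5}, and invoke Lemma \ref{lem:hidden-not-exist}. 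Both arguments rest on exactly the same classical inputs; yours makes the mechanism explicit (it is how the classical statement itself is proved), while the paper's packaging is shorter because it transfers the finished statement rather than its proof. You also correctly identify why the lemma is non-vacuous, namely $d_3(h_1 h_5 e_0) = h_1^2 B_1$ from Lemma \ref{lem:d3-h1h5e0}.

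One step in your argument deserves more care than you give it: the promotion of $\eta^3\beta = 0$ from the classical to the motivic setting. A classical vanishing only shows that the motivic product is $\tau$-power-torsion; to conclude it is actually zero you must check that there is no room for $\tau$-torsion in $\pi_{48,27}$ in filtration above $8$. The only $E_\infty$ class there is $\tau d_0 e_0^2$, which is not $\tau$-torsion, so the conclusion holds --- but notice this is precisely the degree bookkeeping that constitutes the paper's entire proof, so your argument does not avoid it, it just relocates it. (The paper makes the same classical-to-motivic inference in Lemma \ref{lem:2-h0B2}, so citing that usage is consistent with the paper's standards.) Two minor quibbles: the relation in \cite{BJM84}*{Lemma 3.5} is stated for a particular element of $\{h_3^2 h_5\}$, so you should either arrange your $\beta$ compatibly or observe that the relation is insensitive to the choice; and the survival of $h_1 B_1$ to $E_\infty$ is not ``forced by'' the vanishing of $\eta^3\beta$ --- it is an independent fact, presupposed by the statement of the lemma.
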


\begin{proof}
Classically, there is no hidden $\eta$ extension on $h_1 B_1$
\cite{BJM84}*{Theorem 3.1(i) and Lemma 3.5}.
Therefore, there cannot be a motivic hidden $\eta$ extension
from $h_1 B_1$ to $\tau d_0 e_0^2$.
\end{proof}

\begin{lemma}
\label{lem:eta-h5c1}
There is no hidden $\eta$ extension on $h_5 c_1$.
\end{lemma}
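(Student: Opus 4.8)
The plan is to exhibit an element of $\{h_5 c_1\}$ that is annihilated by $\eta$ and then apply Lemma \ref{lem:hidden-not-exist}, in exact parallel to the proof of Lemma \ref{lem:2-h5c1} for the hidden $2$ extension. Recall from that lemma that the Toda bracket $\langle \sigmabar, 2, \theta_4 \rangle$ intersects $\{h_5 c_1\}$; let $\gamma$ be an element of this intersection. It then suffices to show that $\eta \gamma = 0$, since $\gamma$ would be an element of $\{h_5 c_1\}$ killed by $\eta$, and Lemma \ref{lem:hidden-not-exist} would immediately rule out any hidden $\eta$ extension on $h_5 c_1$.

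First I would record the input relations needed to shuffle: we already know $2 \sigmabar = 0$ (this is what makes $\langle 2, \sigmabar, 2 \rangle$ defined in Lemma \ref{lem:2-h5c1}) and $2 \theta_4 = 0$. Then I would compute $\eta \gamma$ by sliding $\eta$ across the bracket,
\[
\eta \langle \sigmabar, 2, \theta_4 \rangle = \langle \eta, \sigmabar, 2 \rangle \theta_4,
\]
or, if the alternate slide is more convenient, $\langle \sigmabar, 2, \theta_4 \rangle \eta = \sigmabar \langle 2, \theta_4, \eta \rangle$. The right-hand side lives in $\pi_{51,*}$, and I would show it is zero by evaluating the three-fold bracket $\langle \eta, \sigmabar, 2 \rangle$ in $\pi_{21,*}$ and checking that multiplication by $\theta_4$ annihilates every element that can occur, using the known structure of the relevant stems together with Moss's Convergence Theorem \ref{thm:Moss} to locate the bracket in the $E_\infty$-page.

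The main obstacle is precisely this last step: I must first confirm that the secondary bracket is actually defined (that is, $\eta \sigmabar = 0$, or $\eta \theta_4 = 0$ for the alternate slide), and then that its product with $\theta_4$ vanishes. If this bracket evaluation turns out to be delicate, the fallback is to attack the only candidate target directly: identify the unique element $c$ of the $E_\infty$-page in the appropriate degree with $h_1 \cdot h_5 c_1 = 0$, and show either that $c$ is already divisible by $\eta$ from an element of strictly lower filtration (so that condition (3) of Definition \ref{defn:hidden} fails), or that $c$ supports or receives an extension incompatible with being $\eta \cdot \{h_5 c_1\}$, mirroring the non-existence arguments used repeatedly elsewhere in this section. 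In either route, Lemma \ref{lem:hidden-not-exist} delivers the stated conclusion.
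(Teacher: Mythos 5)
Your overall skeleton (exhibit an element of $\{h_5 c_1\}$ killed by $\eta$, then invoke Lemma \ref{lem:hidden-not-exist}) is the same as the paper's, but the bracket you chose makes the key step unclosable with the tools at hand. First, your fallback slide fails outright: $\langle 2, \theta_4, \eta \rangle$ is not defined, because $\eta \theta_4 \neq 0$ (it is detected by $h_1 h_4^2$; see Table \ref{tab:bracket-refs}). So you are committed to the shuffle $\eta \langle \sigmabar, 2, \theta_4 \rangle \subseteq \langle \eta, \sigmabar, 2 \rangle \theta_4$, which is legitimate once you know $\eta \sigmabar = 0$ (this does follow from Lemma \ref{lem:eta-c1}, so that flagged prerequisite is fine). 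The genuine gap is the step you defer: you must show that the \emph{entire} set $\langle \eta, \sigmabar, 2 \rangle \theta_4$ is zero, not merely that it contains zero, since for a given $\gamma$ in $\langle \sigmabar, 2, \theta_4 \rangle$ you cannot tell which element of that set $\eta\gamma$ is. The bracket $\langle \eta, \sigmabar, 2 \rangle$ lies in $\pi_{21,12}$ and its indeterminacy contains $\eta \pi_{20,11} \ni \eta\kappabar$, so at a minimum you need $\eta \kappabar \theta_4 = 0$, a 51-stem computation established nowhere in the paper and not obviously easier than the lemma itself. Worse, the other generator of $\pi_{21,12}$ is $\nu\nu_4$ (detected by $h_2^2 h_4$); if it lies in your bracket you would also need $\nu \cdot \nu_4 \theta_4 = 0$, but Lemma \ref{lem:nu4-h4^2} places $\nu_4 \theta_4$ in $\{h_2 h_5 d_0\}$, and whether $\nu$ annihilates such elements is tied to precisely the uncertainty the paper leaves open (Lemma \ref{lem:nu-h2h5d0} and Proposition \ref{prop:hidden-nu}). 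So your route dead-ends in questions that are as hard as, or harder than, the original statement.

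The paper avoids all of this by choosing the other bracket in Table \ref{tab:Toda} at $(50,27)$: namely $\{h_5 c_1\} \subseteq \langle \nu, \sigma, \sigma\eta_5 \rangle$. Shuffling $\eta$ across this bracket produces $\langle \eta, \nu, \sigma \rangle$, which is \emph{strictly} zero (it contains zero and has no indeterminacy, since $\sigma \pi_{5,*} = 0$ and $\eta$ kills the relevant elements of $\pi_{12,*}$). Hence $\eta \{h_5 c_1\} \subseteq \langle \eta, \nu, \sigma \rangle \cdot \sigma\eta_5 = \{0\}$ with no case analysis at all, and Lemma \ref{lem:hidden-not-exist} finishes. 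The lesson is that when running this kind of argument, one should pick the bracket representation of $\{h_5 c_1\}$ so that the shuffled three-fold bracket vanishes strictly; your choice instead produces a bracket with nontrivial indeterminacy whose product with $\theta_4$ cannot be controlled.
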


\begin{proof}
Table \ref{tab:Toda} shows that 
$\{ h_5 c_1 \}$ is contained in $\langle \nu, \sigma, \sigma \eta_5 \rangle$.
Next compute that
\[
\eta \langle \nu, \sigma, \sigma \eta_5 \rangle =
\langle \eta, \nu, \sigma \rangle \eta_5,
\]
which equals zero because $\langle \eta, \nu, \sigma \rangle$ is zero.
Therefore, $\{ h_5 c_1 \}$ contains an element that is annihilated by
$\eta$, so Lemma \ref{lem:hidden-not-exist} says that there cannot 
be a hidden $\eta$ extension on $h_5 c_1$.
\end{proof}

\begin{lemma}
\label{lem:eta-C}
There is no hidden $\eta$ extension on $C$.
\end{lemma}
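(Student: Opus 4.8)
The plan is to mirror the argument of Lemma \ref{lem:eta-c1}. By inspection of the $E_\infty$-page in the relevant tridegree (the $51$-stem), the only class into which $C$ could support a hidden $\eta$ extension is $g n$, so it suffices to show that $g n$ cannot be the target of a hidden $\eta$ extension. The single fact about $g n$ that I would record is that $\nu$ acts nontrivially on $\{g n\}$: the product $\nu \{g n\}$ is contained in $\{h_2 g n\}$, and $h_2 g n$ is a nonzero class on the $E_\infty$-page (this is exactly the input used when ruling out a hidden $2$ extension on $h_2 g n$). Thus the relevant elements of $\{g n\}$ have nonzero $\nu$-multiples.

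The contradiction then runs as follows. Suppose there were a hidden $\eta$ extension from $C$ to $g n$; by Definition \ref{defn:hidden} there is an element $\beta$ of $\{C\}$ with $\eta \beta$ a nonzero element of $\{g n\}$. Applying $\nu$ and using the relation $\eta \nu = 0$ gives $\nu(\eta \beta) = (\nu \eta) \beta = 0$. On the other hand, $\eta \beta$ lies in $\{g n\}$, whose elements carry a nonzero $\nu$-multiple into $\{h_2 g n\}$ by the previous paragraph, so $\nu(\eta \beta)$ should be nonzero. This contradiction shows that $g n$ is not the target of a hidden $\eta$ extension, and hence that $C$ supports no hidden $\eta$ extension. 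This is precisely the same incompatibility (source of $\nu$ versus target of $\eta$, mediated by $\eta\nu = 0$) that powers the non-existence argument in Lemma \ref{lem:eta-c1} via Lemma \ref{lem:nu-h0^2g}.

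The main obstacle is the final compatibility check: I must ensure that the specific representative $\eta\beta$ of $\{g n\}$ produced by the putative extension is one on which $\nu$ is genuinely nonzero, rather than a higher-filtration representative on which $\nu$ might vanish. Because $\{g n\}$ is small and the $\nu$-multiple is already visible on the $E_\infty$-page, this ambiguity is controlled by the crossing-extension bookkeeping of Definition \ref{defn:hidden} (compare Example \ref{ex:hidden-cross-2}), and Lemma \ref{lem:hidden-not-exist} can be used to dispose of any stray higher-filtration representatives. Verifying this point carefully — rather than the formal algebra with $\eta\nu = 0$, which is immediate — is where the real content of the proof lies.
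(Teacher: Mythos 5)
Your strategy fails at the weight bookkeeping, and the failure is fatal rather than cosmetic. The class $C$ has degree $(50,6,27)$, so any product $\eta\beta$ with $\beta \in \{C\}$ lies in $\pi_{51,28}$; the class $g n$ has weight $29$, detects elements of $\pi_{51,29}$, and is therefore not even a candidate target. The only candidate target (and the motivic incarnation of the extension claimed in \cite{Kochman90}) is $\tau g n$, in degree $(51,9,28)$. Your key nonvanishing input does not survive this correction: the class that would detect $\nu \cdot \{\tau g n\}$ is $h_2 \cdot \tau g n = \tau h_2 g n$, and Lemma \ref{lem:d3-B6} (see also Table \ref{tab:Adams-d3}) gives the Adams differential $d_3(B_6) = \tau h_2 g n$, so this class is zero on the $E_\infty$-page. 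The fact you quote --- that $h_2 g n$ is nonzero on $E_\infty$ and contains $\nu \{g n\}$, the input to the lemma on hidden $2$ extensions on $h_2 g n$ --- concerns the weight-$31$ class and the group $\pi_{54,31}$; it says nothing about $\nu \cdot \{\tau g n\} \subseteq \pi_{54,30}$. In fact the data point the other way: for $\gamma$ in $\{g n\}$ one has $\nu(\tau\gamma) = \tau(\nu\gamma)$ with $\nu\gamma \in \{h_2 g n\}$, and since $\tau h_2 g n$ is dead on $E_\infty$ and Table \ref{tab:Adams-tau} records no hidden $\tau$ extension on $h_2 g n$, these products vanish. So elements of $\{\tau g n\}$ are annihilated by $\nu$ (at least those divisible by $\tau$), and your intended contradiction with $\eta\nu = 0$ never materializes. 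This also explains why the extension appeared plausible classically: after inverting $\tau$ the differential reads $d_3(B_6) = h_2 g n$, so even the class you wanted to lean on is zero on the classical $E_\infty$-page.

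The paper avoids the target entirely and argues on the source. Table \ref{tab:Toda} gives $\{C\} = \langle \nu, \eta, \tau\eta\alpha \rangle$ for $\alpha$ in $\{g_2\}$, and the shuffle $\eta \langle \nu, \eta, \tau\eta\alpha \rangle = \langle \eta, \nu, \eta \rangle \tau\eta\alpha = \nu^2 \cdot \tau\eta\alpha = 0$ exhibits an element of $\{C\}$ that is annihilated by $\eta$; Lemma \ref{lem:hidden-not-exist} then rules out a hidden $\eta$ extension on $C$ with any target whatsoever. Any repair of your target-side strategy would require a genuinely new nonvanishing statement about $\nu$ acting on $\{\tau g n\}$, which the $E_\infty$-page cannot supply; some source-side input such as this Toda bracket seems unavoidable.
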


\begin{proof}
Table \ref{tab:Toda} shows that 
$\{ C \}$ equals $\langle \nu, \eta, \tau \eta \alpha \rangle$,
where $\alpha$ is any element of $\{ g_2 \}$.
Compute that
\[
\eta \langle \nu, \eta, \tau \eta \alpha \rangle =
\langle \eta, \nu, \eta \rangle \tau \eta \alpha =
\nu^2 \cdot \tau \eta \alpha = 0.
\]
This shows that $\{ C \}$ contains an element that is annihilated by $\eta$,
so Lemma \ref{lem:hidden-not-exist} implies that there cannot be a hidden
$\eta$ extension on $C$.
\end{proof}

\begin{lemma}
\label{lem:eta-t^2e0m}
There is a hidden $\eta$ extension from $\tau^2 e_0 m$ to $d_0 u$.
\end{lemma}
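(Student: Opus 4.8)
The plan is to work in homotopy and exhibit the target coset $\{d_0 u\}$ as a multiple of $\eta$. First I would record the homotopy-theoretic content: $d_0$ detects $\kappa$, so $\{d_0 u\}$ consists of the products $\kappa \gamma$ for $\gamma$ in $\{u\}$, while an element of $\{\tau^2 e_0 m\}$ should be a $\kappa\kappabar$-type class in the same stem. Since $h_1 \cdot \tau^2 e_0 m = 0$ on the $E_\infty$-page, any nonzero $\eta$-multiple of $\{\tau^2 e_0 m\}$ must be detected in strictly higher filtration, so such an extension is genuinely hidden. By Lemma \ref{lem:hidden-exist} it then suffices to produce a single $\beta$ in $\{\tau^2 e_0 m\}$ with $\eta\beta$ in $\{d_0 u\}$ and to verify that $\eta\{\tau^2 e_0 m\}$ is contained in $\{d_0 u\}$.

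The main step is a shuffle, in the spirit of Lemmas \ref{lem:eta-h1f1} and \ref{lem:eta-C}. I would locate an Adams differential in the relevant range that, via Moss's Convergence Theorem \ref{thm:Moss}, realizes a Massey product of the form $\langle h_1, a, b\rangle$ detecting a factor of $d_0 u$; writing the class of $d_0 u$ as $\langle \eta, a, b\rangle c$ and shuffling to $\eta\langle a, b, c\rangle$ then displays it as an $\eta$-multiple. The remaining task is to identify the inner bracket $\langle a, b, c\rangle$ as an element of $\{\tau^2 e_0 m\}$, which I would do by a second application of Moss's Convergence Theorem to the corresponding Massey product in $\Ext$ (from Table \ref{tab:Massey}), together with the multiplicative relations among $e_0$, $m$, $d_0$, and $u$ taken from \cite{Bruner97}. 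As a consistency check, and to fix the power of $\tau$ (here $\tau^2$) against the weight grading, I would compare with the analogous classical hidden extension through Proposition \ref{prop:compare} and Table \ref{tab:extn-refs}.

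The hard part will be the bookkeeping of indeterminacy and of crossing extensions. Both $\{\tau^2 e_0 m\}$ and $\{d_0 u\}$ are nontrivial cosets, so I must rule out that the $\eta$-multiple produced by the shuffle lands in a lower-filtration coset, exactly the phenomenon catalogued in Examples \ref{ex:hidden-cross-1} and \ref{ex:hidden-cross-2}; this is the subtlety that separates the hypotheses of Lemma \ref{lem:hidden-exist} from Definition \ref{defn:hidden}. To control $\{u\}$ itself I expect to need the $\tmf$ comparison used in Remark \ref{rem:eta-h0^2h3h5}, since $u$ maps nontrivially to $\pi_{*,*}(\tmf)$; this should pin down which element of $\{d_0 u\}$ is actually hit and confirm that $d_0 u$ is a nonzero permanent cycle rather than a boundary. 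If the shuffle route proves stubborn, the fallback is to pass to the cofiber of $\tau$ of Chapter \ref{ch:Ctau} and read off the $\tau$-divisibility of the relevant element directly, as is done for many of the hidden $\tau$ extensions.
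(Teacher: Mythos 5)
There is a genuine gap: your main route is a template, not an argument. You never identify the Adams differential, the classes $a$, $b$, $c$, or the bracket that is supposed to display $\{d_0 u\}$ as $\eta\langle a,b,c\rangle$ with $\langle a,b,c\rangle$ inside $\{\tau^2 e_0 m\}$, and the natural candidates all run into obstructions. The element of $\{\tau^2 e_0 m\}$ one can actually get hold of is $\tau\beta$ where $\beta\in\{h_1 G_3\}=\langle \{q\},\eta^3,\eta_4\rangle$ (Table \ref{tab:Toda}), but this bracket cannot be shuffled against $\eta$: neither $\eta\{q\}$ nor $\eta\eta_4$ vanishes, so $\eta\langle\{q\},\eta^3,\eta_4\rangle$ does not convert into a bracket times $\eta_4$ or $\{q\}$. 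Likewise, trying to reach $\{d_0 u\}=\kappa\{u\}$ through $\{u\}=\langle\eta,\nu,\tau\kappa\kappabar\rangle$ and shuffling $\kappa$ inside requires definedness facts (e.g.\ $\tau\kappa^2\kappabar=0$) and an identification of the inner bracket with $\{\tau^2 e_0 m\}$ that you never establish. So the "main step" of your proposal is exactly the part that is missing.

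The actual proof is far simpler, and you had its ingredients within reach in your fallback but aimed them at the wrong target. Both the source and the target of the desired extension are themselves targets of hidden $\tau$ extensions recorded in Table \ref{tab:Adams-tau} (established via the cofiber of $\tau$): from $h_1 G_3$ to $\tau^2 e_0 m$ and from $h_1^2 G_3$ to $d_0 u$. Since $h_1\cdot h_1 G_3=h_1^2 G_3$ is a non-hidden multiplication, take $\beta\in\{h_1 G_3\}$ with $\tau\beta\in\{\tau^2 e_0 m\}$; then $\eta\beta\in\{h_1^2 G_3\}$, so $\eta(\tau\beta)=\tau(\eta\beta)\in\{d_0 u\}$, which is the hidden $\eta$ extension. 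Your fallback proposes to use the cofiber of $\tau$ only to "read off $\tau$-divisibility" as a replacement for the shuffle, rather than to invoke these two specific $\tau$ extensions whose sources differ by a visible $h_1$; that one observation is the whole proof, and without it neither of your two routes closes.
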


\begin{proof}
This follows immediately from the hidden $\tau$ extensions from
$h_1 G_3$ to $\tau^2 e_0 m$ and 
from $h_1^2 G_3$ to $d_0 u$ given in Table \ref{tab:Adams-tau}.
\end{proof}

\begin{lemma}
\label{lem:eta-ti1}
There is no hidden $\eta$ extension on $\tau i_1$.
\end{lemma}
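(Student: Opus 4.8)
The plan is to exhibit a single element of $\{\tau i_1\}$ that is annihilated by $\eta$, and then to quote Lemma \ref{lem:hidden-not-exist}, which forbids a hidden $\eta$ extension on $\tau i_1$ the moment such an element is produced. So the whole argument reduces to finding the right representative and checking that $\eta$ kills it.

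First I would recall the identification of $\tau i_1$ that was already established in the analysis of hidden $2$ extensions: in the proof of the lemma ruling out a hidden $2$ extension on $i_1$, it is recorded that $\tau i_1$ detects $\nu \{C\}$. Concretely, this means there is an element $\gamma$ of $\{C\}$ for which $\nu \gamma$ lies in $\{\tau i_1\}$; in particular $\{\tau i_1\}$ is nonempty and contains $\nu \gamma$.

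Next I would invoke the relation $\eta \nu = 0$ in $\pi_{*,*}$ (the motivic analogue of Toda's classical relation). This gives
\[
\eta \cdot \nu \gamma = (\eta \nu) \gamma = 0,
\]
so $\nu \gamma$ is an element of $\{\tau i_1\}$ annihilated by $\eta$. Applying Lemma \ref{lem:hidden-not-exist} with $\alpha = \eta$ and $b = \tau i_1$ then immediately yields that there is no hidden $\eta$ extension on $\tau i_1$.

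I do not expect any real obstacle here: all of the substance is carried by the earlier identification $\tau i_1 \leftrightarrow \nu \{C\}$, after which the vanishing of $\eta \nu$ does the remaining work. The only point that deserves a moment's care is that $\nu \{C\}$ is genuinely \emph{detected} by $\tau i_1$ rather than represented in strictly higher Adams filtration, but this is exactly what the earlier computation records, so it may be taken as input.
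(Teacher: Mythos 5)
Your proof is correct, but it is not the argument the paper gives — it is a genuinely different (and shorter) route. You produce an explicit element of $\{\tau i_1\}$ that is annihilated by $\eta$, namely $\nu\gamma$ for $\gamma$ in $\{C\}$, using the recorded fact that $\tau i_1$ detects $\nu\{C\}$ together with the relation $\eta\nu = 0$ (which holds motivically over $\C$, e.g.\ because $\Ext$ vanishes in stem $4$ and weight $3$, so $\pi_{4,3}=0$; the paper uses this relation implicitly whenever brackets such as $\langle \eta, \nu, \eta \rangle$ are formed), and then you quote Lemma \ref{lem:hidden-not-exist}. This is precisely the strategy the paper itself uses to rule out a hidden $2$ extension on $\tau i_1$, where $2\{C\}=0$ plays the role that $\eta\nu=0$ plays for you; your argument is the natural companion to that lemma. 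The paper instead argues by contradiction: the only possible target is $\tau^2 e_0^2 g$, and if $\eta\alpha$ lay in $\{\tau^2 e_0^2 g\}$ for some $\alpha$ in $\{\tau i_1\}$, then the hidden $\tau$ extension from $\tau^2 h_1 e_0^2 g$ to $d_0 z$ would force $\tau\eta^2\alpha$ to lie in $\{d_0 z\}$, and the shuffle $\tau\eta^2\alpha = \langle 2, \eta, 2 \rangle \alpha = 2\langle \eta, 2, \alpha \rangle$ would make $\{d_0 z\}$ divisible by $2$, which is impossible. What your route buys is brevity and freedom from Toda-bracket bookkeeping; what it costs is dependence on the detection statement that $\nu\{C\}$ lies in $\{\tau i_1\}$ (i.e., in filtration exactly $7$, not higher), a point you correctly flag and which the paper does record and use in its proof of the $2$-extension lemma, so the input is legitimately available and there is no circularity. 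The paper's route is independent of any relation between $C$ and $i_1$, at the price of invoking the hidden $\tau$ extension into $d_0 z$ and the identity $\tau\eta^2 = \langle 2, \eta, 2 \rangle$.
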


\begin{proof}
Suppose that there exists an element $\alpha$ of $\{ \tau i_1 \}$
such that $\eta \alpha$ belongs to $\{ \tau^2 e_0^2 g \}$.
Using the hidden $\tau$ extension from $\tau^2 h_1 e_0^2 g$
to $d_0 z$ given in Table \ref{tab:Adams-tau}, this would imply that
$\tau \eta^2 \alpha$ equals
$\{ d_0 z\}$.  

Recall from Table \ref{tab:Toda} 
that $\tau \eta^2 = \langle 2, \eta, 2 \rangle$.
Then the shuffle
\[
\tau \eta^2 \alpha =
\langle 2, \eta, 2 \rangle \{ \tau i_1 \} =
2 \langle \eta, 2, \alpha \rangle
\]
shows that $\{d_0 z\}$ is divisible by $2$.
However, this is not possible.
\end{proof}

\begin{lemma}
\label{lem:eta-t^3e0^2g}
There is a hidden $\eta$ extension from $\tau^3 e_0^2 g$
to $d_0 z$.
\end{lemma}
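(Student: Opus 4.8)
The plan is to obtain this hidden extension by multiplying an already-established hidden $\eta$ extension by $\kappa$. The input is the hidden $\eta$ extension from $\tau^3 g^2$ to $z$ recorded in Table \ref{tab:Adams-eta} (and used in the proof of Lemma \ref{lem:t-th1g^2}): for any $\alpha$ in $\{\tau^3 g^2\}$, the product $\eta \alpha$ lies in $\{z\}$. The element $\kappa$ is detected by $d_0$, so multiplying by $\kappa$ should transport this extension into the desired stem.

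First I would multiply the relation by $\kappa$. Using the relation $d_0 g^2 = e_0^2 g$ in $\Ext$ (which respects the motivic weights, and is exactly the relation already invoked when passing from $\tau h_1 g^2$ to $\tau h_1 e_0^2 g$ in Lemma \ref{lem:t-th1g^2}, \emph{see} \cite{Bruner97}), the product $\kappa \alpha$ is detected by $d_0 \cdot \tau^3 g^2 = \tau^3 e_0^2 g$, provided $\tau^3 e_0^2 g$ is nonzero on the $E_\infty$-page; this holds by the computation summarized in Theorem \ref{thm:Adams-Einfty}. Similarly $\kappa \cdot (\eta\alpha)$ is detected by $d_0 \cdot z = d_0 z$, since $d_0 z$ survives to $E_\infty$: it is $\tau d_0 z$, not $d_0 z$ itself, that is hit by $d_5(\tau P h_5 e_0)$ in Lemma \ref{lem:d5-tPh5e0}. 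The identity $\eta(\kappa \alpha) = \kappa(\eta\alpha)$ then shows that $\eta$ carries the class $\kappa \alpha$ of $\{\tau^3 e_0^2 g\}$ into $\{d_0 z\}$.

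I would then verify that this is a genuine hidden extension in the sense of Definition \ref{defn:hidden}. Condition (1) requires $h_1 \cdot \tau^3 e_0^2 g = 0$ on $E_\infty$; this follows because $\tau h_1 e_0^2 g$ supports a hidden $\tau$ extension by Lemma \ref{lem:t-th1g^2}, forcing $\tau^2 h_1 e_0^2 g$, and hence $\tau^3 h_1 e_0^2 g$, to vanish on $E_\infty$. Condition (2) is exactly what the previous paragraph produces, with $\beta = \kappa\alpha$. Condition (3) holds by inspection of the chart in \cite{Isaksen14a}, since $\tau^3 e_0^2 g$ is the only class from which such an $\eta$ extension into $\{d_0 z\}$ can originate.

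The only genuinely delicate points are the two detection claims, namely that $\kappa\alpha$ and $\kappa(\eta\alpha)$ are detected \emph{exactly} by $\tau^3 e_0^2 g$ and by $d_0 z$, with no unexpected jump in Adams filtration. Both reduce to the statement that the $\Ext$ products $d_0 g^2 = e_0^2 g$ and $d_0 z$ are nonzero permanent cycles, so the main obstacle is bookkeeping against the already-established $E_\infty$-page rather than any new homotopy-theoretic input. Once those survivals are confirmed, the product of detectors argument is automatic and the hidden extension follows.
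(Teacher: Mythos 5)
Your argument is correct, but it takes a visibly different route from the paper, which disposes of this lemma in one line: the paper quotes the hidden $\tau$ extension from $\tau^2 h_1 e_0^2 g$ to $d_0 z$ (Table \ref{tab:Adams-tau}, via Lemma \ref{lem:t-th1g^2}), and converts it into the desired $\eta$ extension by writing an element of $\{\tau^3 e_0^2 g\}$ as $\tau \delta$ with $\delta$ in $\{\tau^2 e_0^2 g\}$, so that $\eta \tau \delta = \tau(\eta \delta)$ lands in $\{d_0 z\}$. You instead multiply the hidden $\eta$ extension from $\tau^3 g^2$ to $z$ directly by $\kappa$, using $d_0 \cdot g^2 = e_0^2 g$ and $d_0 \cdot z = d_0 z$. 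The two proofs consume the same ultimate input---the classical hidden $\eta$ extension from $g^2$ to $z$ together with a multiplication by $d_0$---because the table entry the paper cites is itself produced in Lemma \ref{lem:t-th1g^2} by multiplying by $d_0$. The difference is where the $d_0$-multiplication is performed: you do it in homotopy (by $\kappa$), the paper does it inside the already-tabulated $\tau$ extension. The paper's route outsources the detection bookkeeping (survival of $d_0 z$ on $E_\infty$, absence of filtration jumps) to the verified table entry; your route must redo that bookkeeping explicitly, which you correctly isolate as the only delicate point.

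One citation needs repair. You justify the survival of $d_0 z$ by appealing to Lemma \ref{lem:d5-tPh5e0} (``only $\tau d_0 z$ is hit''), but in the paper's logical order that is circular: the proof of Lemma \ref{lem:d5-tPh5e0} begins by invoking the present lemma, since it needs $d_0 z$ to detect $\tau \eta \kappa \kappabar^2$ before it can argue that $\tau d_0 z$ must die. The fact you actually need is weaker and independently available: $d_0 z$ has weight $30$, whereas the target of that $d_5$ is the weight-$29$ class $\tau d_0 z$; and the weight-$30$ class in the $56$-stem positioned to hit $d_0 z$, namely $P h_5 e_0$, already supports $d_3(P h_5 e_0) = h_1^2 x'$ by Table \ref{tab:Adams-d3}, so nothing can kill $d_0 z$. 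Replace the citation of Lemma \ref{lem:d5-tPh5e0} with this observation and your argument stands on its own.
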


\begin{proof}
This follows immediately from the hidden $\tau$ extension
from $\tau^2 h_1 e_0^2 g$ to $d_0 z$ given in Table \ref{tab:Adams-tau}.
\end{proof}

\begin{lemma}
There is no hidden $\eta$ extension on $h_1 x'$.
\end{lemma}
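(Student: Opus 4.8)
The plan is to apply Lemma \ref{lem:hidden-not-exist}: I will exhibit an element of $\{h_1 x'\}$ that is annihilated by $\eta$. First I recall from the proof of Lemma \ref{lem:2-B8} that $B_8$ detects $\epsilon \theta_{4.5}$ and that there is a hidden $\tau$ extension from $B_8$ to $x'$. Consequently $x'$ detects $\tau \epsilon \theta_{4.5}$, so $\{x'\}$ contains the element $\tau \epsilon \theta_{4.5}$. Assuming $h_1 x'$ is non-zero on the $E_\infty$-page, the product $\eta \cdot \tau \epsilon \theta_{4.5} = \tau \eta \epsilon \theta_{4.5}$ is then detected by $h_1 x'$; that is, $\tau \eta \epsilon \theta_{4.5}$ is an element of $\{h_1 x'\}$.

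It remains to show that $\eta \cdot \tau \eta \epsilon \theta_{4.5} = \tau \eta^2 \epsilon \theta_{4.5}$ is zero. Here I would use the hidden $\eta$ extension from $h_3^2 h_5$ to $B_1$ of Lemma \ref{lem:eta-h3^2h5}, which shows that $\eta \theta_{4.5}$ lies in $\{B_1\}$. Multiplying by $\epsilon$ and using the relation $c_0 B_1 = h_1 B_8$ recorded in the proof of Lemma \ref{lem:t-B8}, the product $\eta \epsilon \theta_{4.5}$ is detected by $h_1 B_8$; multiplying once more by $\eta$, the product $\eta^2 \epsilon \theta_{4.5}$ is detected by $h_1^2 B_8$. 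But Lemma \ref{lem:d3-h1h5e0} gives the Adams differential $d_3(h_5 c_0 e_0) = h_1^2 B_8$, so $h_1^2 B_8$ vanishes on the $E_\infty$-page.

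This shows that $\eta^2 \epsilon \theta_{4.5}$, and hence $\tau \eta^2 \epsilon \theta_{4.5}$, is either zero or detected in strictly higher Adams filtration than $h_1^2 B_8$. The main obstacle is to rule out the latter possibility: I would inspect the $E_\infty$-chart of \cite{Isaksen14a} in the relevant tridegree to verify that there is no non-zero class of higher filtration that could detect $\tau \eta^2 \epsilon \theta_{4.5}$, keeping in mind the indeterminacy in the choice of $\theta_{4.5}$ discussed in Section \ref{sctn:notation}. Once this is checked, $\tau \eta \epsilon \theta_{4.5}$ is an element of $\{h_1 x'\}$ that is annihilated by $\eta$, and Lemma \ref{lem:hidden-not-exist} completes the proof. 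An alternative to settling the higher-filtration issue directly would be to pass to the cofiber of $\tau$ of Chapter \ref{ch:Ctau} and show that the relevant product already vanishes there, as was done for other exotic relations in this range.
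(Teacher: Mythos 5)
Your proposal takes a genuinely different — and strictly stronger — route than the paper, and it has a real gap at exactly the step you postpone. The paper's proof is purely formal: the only possible target for a hidden $\eta$ extension on $h_1 x'$ (which sits in $(54,11,29)$) is $d_0 z$ in $(55,14,30)$, and Lemma \ref{lem:eta-t^3e0^2g} shows that $d_0 z$ is already the target of a hidden $\eta$ extension from $\tau^3 e_0^2 g$, which has filtration $12 > 11$. By the maximality clause (condition (3)) of Definition \ref{defn:hidden}, the source of a hidden extension into $\{d_0 z\}$ must have the highest filtration among all candidates, so $h_1 x'$ cannot support one; no element-level homotopy computation is needed. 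You instead try to exhibit an element of $\{h_1 x'\}$ annihilated by $\eta$ and invoke Lemma \ref{lem:hidden-not-exist}. That would prove more than the lemma asserts (recall Example \ref{ex:naive-hidden}: absence of a hidden extension in the formal sense does not guarantee such an element exists), and the extra content is exactly what you defer.

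The concrete problem is that your deferred verification cannot succeed as stated: there \emph{is} a non-zero class of higher filtration in the relevant tridegree, namely $d_0 z$ itself, which survives to $E_\infty$ (it is the target of the hidden $\tau$ extension of Lemma \ref{lem:t-th1g^2} and of the hidden $\eta$ extension of Lemma \ref{lem:eta-t^3e0^2g}), and $\tau\eta^2\epsilon\theta_{4.5}$ lives in $\pi_{55,30}$, precisely where $\{d_0 z\}$ sits. Worse, Lemma \ref{lem:eta-t^3e0^2g} says that $\eta$-multiplication out of this stem and weight does hit $\{d_0 z\}$ nontrivially, so "nothing there to detect it" is false; you need an argument that your particular element misses $\{d_0 z\}$. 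Such an argument exists and is purely algebraic: by Table \ref{tab:Adams-compound-extn} one has $\eta\epsilon = \eta^2\sigma + \nu^3$, so $\eta^2\epsilon\theta_{4.5} = \eta^3\sigma\theta_{4.5} + \eta\nu^3\theta_{4.5} = \eta^3\sigma\theta_{4.5}$ since $\eta\nu = 0$, and then $\tau\eta^2\epsilon\theta_{4.5} = (\tau\eta^3)\,\sigma\theta_{4.5} = 4\nu\sigma\theta_{4.5} = 0$ because $\nu\sigma = 0$. Substituting this for the chart inspection closes the main gap. A secondary point you should also tighten: the hidden $\tau$ extension from $B_8$ to $x'$ only asserts that \emph{some} element of $\{B_8\}$ multiplies by $\tau$ into $\{x'\}$ (Definition \ref{defn:hidden}); to place $\tau\epsilon\theta_{4.5}$ itself in $\{x'\}$ — and hence $\tau\eta\epsilon\theta_{4.5}$ in $\{h_1 x'\}$ — you must rule out that this particular product lands in strictly higher filtration.
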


\begin{proof}
We already showed in Lemma \ref{lem:eta-t^3e0^2g}
that there is a hidden $\eta$ extension from $\tau^3 e_0^2 g$ to $d_0 z$.
Therefore,
there cannot be a 
hidden $\eta$ extension
from $h_1 x'$ to $d_0 z$.
\end{proof}

\begin{lemma}
There is no hidden $\eta$ extension on $h_3^2 g_2$.
\end{lemma}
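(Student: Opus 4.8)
The plan is to imitate the argument given just above for the absence of a hidden $\tau$ extension on $h_3^2 g_2$, now exploiting the classical relation $\eta \sigma^2 = 0$ rather than a statement about $\eta$-divisibility. The element $h_3^2 g_2$ lies in the 58-stem, so a hidden $\eta$ extension would have target in the 59-stem, still inside our range of control. The key observation is that $\{ h_3^2 g_2 \}$ contains a conveniently chosen element that $\eta$ annihilates, after which Lemma \ref{lem:hidden-not-exist} disposes of every candidate target simultaneously, with no need to enumerate them.

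Concretely, I would first recall that $\{ h_3^2 g_2 \}$ contains $\sigma^2 \gamma$ for any $\gamma$ in $\{ g_2 \}$. This is exactly the fact already used in the preceding lemma: since $h_3^2$ detects $\sigma^2$ and the product $h_3^2 \cdot g_2 = h_3^2 g_2$ is non-zero on the $E_\infty$-page, the element $\sigma^2 \gamma$ is detected precisely by $h_3^2 g_2$. Next I would compute $\eta \cdot \sigma^2 \gamma = (\eta \sigma^2) \gamma = 0$, using $\eta \sigma^2 = 0$ \cite{Toda62}. Thus $\{ h_3^2 g_2 \}$ contains an element annihilated by $\eta$, and Lemma \ref{lem:hidden-not-exist} then shows that there is no hidden $\eta$ extension on $h_3^2 g_2$.

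The argument is essentially immediate, so there is no serious obstacle; the only point that genuinely requires care is verifying that $\sigma^2 \gamma$ really is detected by $h_3^2 g_2$ and does not drop into strictly higher Adams filtration. This is guaranteed by the non-vanishing of $h_3^2 g_2$ on the $E_\infty$-page, which holds because we are considering extensions supported on $h_3^2 g_2$ in the first place. Because Lemma \ref{lem:hidden-not-exist} only demands the existence of a single $\eta$-annihilated element of $\{ h_3^2 g_2 \}$, no candidate targets in the 59-stem need be listed or individually excluded.
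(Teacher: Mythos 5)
Your proof is correct and is essentially identical to the paper's own argument: both observe that $\sigma^2 \{g_2\}$ is contained in $\{h_3^2 g_2\}$, invoke Toda's relation $\eta \sigma^2 = 0$, and then apply Lemma \ref{lem:hidden-not-exist} to rule out any hidden $\eta$ extension without enumerating targets.
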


\begin{proof}
Note that $\sigma^2 \{g_2\}$ is contained in $\{ h_3^2 g_2 \}$,
and $\eta \sigma^2$ is zero \cite{Toda62}.
Therefore, $\{ h_3^2 g_2 \}$ contains an element that is annihilated
by $\eta$, and Lemma \ref{lem:hidden-not-exist} implies that there 
cannot be a hidden $\eta$ extension on $h_3^2 g_2$.
\end{proof}

\begin{lemma}
There is no hidden $\eta$ extension from $\tau h_1 Q_2$ to $\tau^2 d_0 w$.
\end{lemma}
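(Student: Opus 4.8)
The plan is to rule this out by a \emph{target-specific} argument, because the generic non-existence criterion of Lemma \ref{lem:hidden-not-exist} is not available here. Proposition \ref{prop:hidden-eta} leaves open a possible hidden $\eta$ extension from $\tau h_1 Q_2$ to $\tau B_{21}$, so we cannot hope to exhibit an element of $\{\tau h_1 Q_2\}$ that is annihilated by $\eta$; doing so would wrongly exclude the $\tau B_{21}$ possibility as well. Instead I would analyze the target $\tau^2 d_0 w$ directly and show that it can never detect $\eta \beta$ for $\beta$ in $\{\tau h_1 Q_2\}$. The first step is to pin down the homotopy class detected by $\tau^2 d_0 w$ using Lemma \ref{lem:t-Ph1^3h5e0}, which gives a hidden $\tau$ extension from $P h_1^3 h_5 e_0$ to $\tau d_0 w$. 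This shows that every element of $\{\tau d_0 w\}$ equals $\tau$ times an element of $\{P h_1^3 h_5 e_0\}$, so $\{\tau^2 d_0 w\} = \tau^2 \{P h_1^3 h_5 e_0\}$; moreover the computation inside that proof, namely $\tau \eta \{P h_1^3 h_5 e_0\} = \eta \kappa \{\tau w\} = \{\tau d_0^2 l + d_0 u'\}$, controls the $\eta$-multiple structure in this vicinity.

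Next I would suppose, for contradiction, that a hidden $\eta$ extension from $\tau h_1 Q_2$ to $\tau^2 d_0 w$ exists. By Definition \ref{defn:hidden} there is then $\beta$ in $\{\tau h_1 Q_2\}$ with $\eta \beta$ contained in $\{\tau^2 d_0 w\}$, and since $\tau^2 d_0 w$ sits in strictly higher Adams filtration than $\tau B_{21}$, such a $\beta$ must have its $\tau B_{21}$-component of $\eta\beta$ equal to zero. The cleanest way to force a contradiction is to apply a detecting functor that separates the two sides: I would realize into $\tmf$ (as in the proofs of Lemmas \ref{lem:t.eta-d1} and \ref{lem:eta-h0^2h3h5}), using that $d_0$ and $w$ lie in the image of $\Ext_A \map \Ext_{A(2)}$ so that $\{\tau^2 d_0 w\}$ has nonzero image in $\pi_{*,*}(\tmf)$, whereas $\{\tau h_1 Q_2\}$, built from the non-$\tmf$ class $Q_2$, maps to zero. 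Then $\eta \beta$ maps to zero in $\tmf$, contradicting the fact that it is detected by a class with nonzero $\tmf$-image. If the $\tmf$ comparison proves too coarse, the alternative is to push $\beta$ into $\pi_{*,*}(C\tau)$ (Chapter \ref{ch:Ctau}) and use the $\tau$-divisibility $\{\tau^2 d_0 w\} = \tau^2\{P h_1^3 h_5 e_0\}$ to see that $\eta\beta$ would be infinitely $\tau$-divisible in a way that $\eta$-multiples out of $\{\tau h_1 Q_2\}$ cannot be.

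The main obstacle is that $Q_2$ is not itself a permanent cycle: Lemma \ref{lem:d3-C0} gives $d_3(Q_2) = \tau^2 g t$, so one cannot reason with $\{Q_2\}$ and must work entirely with the surviving class $\tau h_1 Q_2$, whose available multiplicative relations (such as $c_0 \cdot Q_2 = P D_4$ and $h_2 Q_2 = h_0 B_3$) are sparse. Consequently the hard part will be verifying with certainty the two realization facts, that $\{\tau^2 d_0 w\}$ is genuinely nonzero after realization while every $\eta$-multiple coming out of $\{\tau h_1 Q_2\}$ dies, and simultaneously controlling the indeterminacy of $\{\tau^2 d_0 w\}$ coming from higher-filtration classes so that no representative escapes the argument. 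Establishing these detection statements, rather than any formal manipulation, is where the real work lies.
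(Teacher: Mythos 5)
Your overall route---realization to $\tmf$---is exactly the paper's, but the way you arrange the detection argument has a genuine gap. You propose to kill the source: every $\beta$ in $\{\tau h_1 Q_2\}$ should map to zero in $\pi_{58}(\tmf)$, so that $\eta\beta$ maps to zero, contradicting the nonzero $\tmf$-image of $\{\tau^2 d_0 w\}$. That vanishing claim is unjustified and is stronger than what the algebra gives you: even if $\tau h_1 Q_2$ has zero image in $\Ext_{A(2)}$ (so the image of $\beta$ is not detected in filtration $8$), the image of $\beta$ in $\pi_{58}(\tmf)$ can perfectly well be nonzero, detected in some intermediate filtration between $9$ and $12$; and then $\eta \cdot f(\beta)$ could a priori still be detected by the image of $d_0 w$ through a hidden $\eta$ extension in $\tmf$. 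Ruling that out is precisely the target-side fact you would need anyway, and it is the only fact the paper uses: $d_0 w$ maps to a nonzero element of the $E_\infty$-page of the Adams spectral sequence for $\tmf$ \cite{Henriques07}, and in $\tmf$ that class cannot be the target of an $\eta$ extension, hidden or otherwise. With that single statement the argument closes with no information about the source whatsoever: if some $\eta\beta$ were detected by $\tau^2 d_0 w$, its image $\eta \cdot f(\beta)$ would be an $\eta$-multiple in $\pi_{59}(\tmf)$ detected by the image of $d_0 w$, a contradiction. Note also that what is needed is survival to the $E_\infty$-page of the $\tmf$ Adams spectral sequence, not merely nonvanishing in $\Ext_{A(2)}$ as your phrasing suggests.

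Two smaller points. Your preliminary step is not right as stated: the hidden $\tau$ extension of Lemma \ref{lem:t-Ph1^3h5e0} gives the containment $\tau \{P h_1^3 h_5 e_0\} \subseteq \{\tau d_0 w\}$, not the equality $\{\tau^2 d_0 w\} = \tau^2 \{P h_1^3 h_5 e_0\}$; elements of $\{\tau^2 d_0 w\}$ differ by higher-filtration terms that need not be $\tau$-divisible in this way. In any case, that step and the vague $C\tau$ fallback are unnecessary once the target-side $\tmf$ fact is invoked. Your opening observation, on the other hand, is correct and worth keeping: since the possible hidden $\eta$ extension from $\tau h_1 Q_2$ to $\tau B_{21}$ is left unresolved, Lemma \ref{lem:hidden-not-exist} cannot be applied, so a target-specific argument is indeed required.
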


\begin{proof}
Classically, $d_0 w$ maps to a non-zero element in the 
$E_\infty$-page of the Adams spectral sequence for $\tmf$ \cite{Henriques07}.
In $\tmf$, this class cannot be the target of an $\eta$ extension.
\end{proof}

\begin{lemma}
\label{lem:eta-td0w}
\mbox{}
\begin{enumerate}
\item
Tentatively, there is a hidden $\eta$ extension
from $\tau d_0 w$ to $d_0 u' + \tau d_0^2 l$.
\item
Tentatively, there is a hidden $\eta$ extension
from $\tau^3 g^3$ to $d_0 e_0 r$.
\item
Tentatively, there is a hidden $\eta$ extension
from $\tau^2 h_1 g^3$ to $d_0^2 e_0^2$.
\item
Tentatively, there is a hidden $\eta$ extension
from $d_0 e_0 r$ to $\tau^2 d_0^2 e_0^2$.
\item
Tentatively, there is a hidden $\eta$ extension
from $\tau^2 g w$ to $\tau^2 d_0 e_0 m$.
\item
Tentatively, there is a hidden $\eta$ extension
from $\tau^2 d_0 e_0 m$ to $d_0^2 u$.
\end{enumerate}
\end{lemma}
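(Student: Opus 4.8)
The plan is to treat all six extensions uniformly as multiplicative propagations, within the $\kappabar$-family, of hidden $\eta$ extensions that are already available lower in the chart. I would open by recording the standard caveat: the entire lemma is tentative only because the Adams differentials in stems 60 through 69 are not fully resolved, so each argument implicitly assumes that the relevant source and target classes actually survive to $E_\infty$.

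The starting data are three established extensions: the motivic lift $\eta\{\tau w\} = \{\tau d_0 l + u'\}$ of the classical extension in Table \ref{tab:extn-refs} (already exploited in Lemma \ref{lem:d3-tgw}); the extension $\eta\{\tau^2 e_0 m\} = \{d_0 u\}$ of Lemma \ref{lem:eta-t^2e0m}; and the $\kappabar^2$-level extensions recorded in Table \ref{tab:Adams-eta} and Lemma \ref{lem:eta-th1g}. I would then multiply these by the permanent cycles $\kappa$ (detected by $d_0$) and $\kappabar$ (detected by $\tau g$) and read off the new targets from the multiplicative structure of $E_\infty$. Concretely, part (1) is $\kappa$ times the $\tau w$ extension, using $\kappa\cdot\tau w = \tau d_0 w$ and $\kappa(\tau d_0 l + u') = \tau d_0^2 l + d_0 u'$; part (5) is $\kappabar$ times the same extension, which is essentially the computation already carried out in Lemma \ref{lem:d3-tgw}; and part (6) is $\kappa$ times the extension of Lemma \ref{lem:eta-t^2e0m}, using $\kappa\cdot\tau^2 e_0 m = \tau^2 d_0 e_0 m$ and $\kappa\cdot d_0 u = d_0^2 u$. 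In each case Lemma \ref{lem:hidden-exist} upgrades the surviving nonzero product into a genuine hidden extension.

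Parts (2), (3), and (4) all sit at the $\kappabar^3$ level. Here $\tau^3 g^3$ detects $\kappabar^3$, while $d_0 e_0 r$ detects $\eta\kappabar^3$ by Remark \ref{rem:d4-h1X1} (which identifies $\tau^2 d_0 e_0 r$ with $\tau^2\eta\kappabar^3$); part (2) is then precisely the assertion that $\eta\kappabar^3$ is nonzero and hidden, which I would try to reach either by multiplying the $\kappabar^2$-level extension on $\tau^3 g^2$ by $\kappabar$ or through a tentative hidden $\tau$ extension. Parts (3) and (4), whose targets $d_0^2 e_0^2$ and $\tau^2 d_0^2 e_0^2$ differ by a factor of $\tau$, I would bridge using the tentative hidden $\tau$ extensions of Table \ref{tab:Adams-tau-tentative}, exactly in the manner of Lemma \ref{lem:2-t^3g^3}.

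The main obstacle is the $\tau$-power bookkeeping. Because multiplying by $\kappabar$ drags along a factor of $\tau$ (only $\tau g$, not $g$, exists) and because several of the source and target monomials come in $\tau$-divisible pairs, I must verify at each step that the propagated class lands on exactly the stated target rather than a $\tau$-multiple of it, and that no crossing $\eta$ extension in this dense region pushes the product into higher filtration. Checking these weight matchings, together with the survival of the relevant classes against an $E_\infty$-page that is itself only conjecturally known here, is what forces the \emph{tentatively} qualifier and constitutes the real content of the proof.
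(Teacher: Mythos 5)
Your proposal and the paper's proof share the ``tentative'' framing but run on different fuel. The paper derives all six extensions from hidden $\tau$ extensions --- Lemma \ref{lem:t-Ph1^3h5e0} and Table \ref{tab:Adams-tau-tentative} --- via a single mechanism: if $\tau x = y$ and $h_1 x = x'$ are nonzero products in $E_\infty$ and there is a hidden $\tau$ extension from $x'$ to $z$, then for $\xi \in \{x\}$ one gets $\eta(\tau\xi) = \tau(\eta\xi) \in \{z\}$, so there is a hidden $\eta$ extension from $y$ to $z$. No $\kappa$- or $\kappabar$-multiplications are needed, and hence none of the $\tau$-power product bookkeeping you worry about in your last paragraph arises. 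Your arguments for parts (1), (5), and (6) are nevertheless correct alternatives: propagating $\eta\{\tau w\} = \{\tau d_0 l + u'\}$ by $\kappa$ and $\kappabar$, and $\eta\{\tau^2 e_0 m\} = \{d_0 u\}$ (Lemma \ref{lem:eta-t^2e0m}) by $\kappa$, is exactly the kind of computation the paper performs elsewhere --- $\eta\kappa\{\tau w\} = \{\tau d_0^2 l + d_0 u'\}$ appears inside the proof of Lemma \ref{lem:t-Ph1^3h5e0}, and $\eta\{\tau^2 g w\} = \{\tau^2 d_0 e_0 m\}$ inside Lemma \ref{lem:d3-tgw}. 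One technical correction: Lemma \ref{lem:hidden-exist} requires $\eta\{b\} \subseteq \{c\}$ for \emph{every} element of $\{b\}$, which your argument does not supply; what you actually produce is condition (2) of Definition \ref{defn:hidden} (a single witness), after which the maximality condition (3) must be checked by inspection --- this is your ``no crossing extension'' check, so the substance is right but the cited lemma is the wrong tool.

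For parts (2)--(4) there is a genuine gap. Your primary route --- multiplying the established extension $\eta\{\tau^3 g^2\} = \{z\}$ by $\kappabar$ --- cannot give part (2): since only $\tau g$ exists, $\kappabar$-multiplication sends the source $\tau^3 g^2$ (which detects $\tau\kappabar^2$) to $\tau^4 g^3$, so at best you learn where $\eta \cdot \tau\kappabar^3$ is detected, not where $\eta\kappabar^3$ is detected. Dividing by $\tau$ is not formal, and resolving exactly that ambiguity is the content of the hidden $\tau$ extensions. The same objection applies to your reading of Remark \ref{rem:d4-h1X1}: knowing that $\tau^2 d_0 e_0 r$ detects $\tau^2\eta\kappabar^3$ does not by itself imply that $d_0 e_0 r$ detects $\eta\kappabar^3$. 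Your fallback (``through a tentative hidden $\tau$ extension'') is the paper's actual argument, but you leave it undeveloped; to complete it you need, from Table \ref{tab:Adams-tau-tentative}, the extension from $\tau^2 h_1 g^3$ to $d_0 e_0 r$ for part (2), the extension from $h_1^6 h_5 c_0 e_0$ to $d_0^2 e_0^2$ for part (3) (using that the class surviving in the 60-stem is $\tau g^3 + h_1^4 h_5 c_0 e_0$, whose $\eta^2$-multiple is detected by $h_1^6 h_5 c_0 e_0$, cf.\ Lemma \ref{lem:d3-tg^3}), and both extensions together, composed through the commutation mechanism above, for part (4).
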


\begin{proof}
The claims are tentative because our analysis of Adams differentials
is incomplete in the relevant range.

For the first formula, use the hidden $\tau$ extensions
from $P h_1^3 h_5 e_0$ to $\tau d_0 w$ given in Lemma \ref{lem:t-Ph1^3h5e0}
and from $\tau^2 h_0 g^3$ to $d_0 u' + \tau d_0^2 l$ given in Table \ref{tab:Adams-tau-tentative}.
The second formula follows immediately from the
hidden $\tau$ extension from $\tau^2 h_1 g^3$ to $d_0 e_0 r$ given in Table \ref{tab:Adams-tau-tentative}.
The third formula follows immediately from the
hidden $\tau$ extension from $h_1^6 h_5 c_0 e_0$ to $d_0^2 e_0^2$ given in Table \ref{tab:Adams-tau-tentative}.
The fourth formula follows immediately from the 
hidden $\tau$ extensions from 
$\tau^2 h_1 g^3$ to $d_0 e_0 r$ and
from $h_1^6 h_5 c_0 e_0$ to $d_0^2 e_0^2$ given in Table \ref{tab:Adams-tau-tentative}.
The fifth formula follows immediately from the
hidden $\tau$ extension from
$h_1^5 X_1$ to $\tau^2 d_0 e_0 m$ given in Table \ref{tab:Adams-tau-tentative}.
The sixth formula follows immediately from the hidden
$\tau$ extensions from
$h_1^5 X_1$ to $\tau^2 d_0 e_0 m$ and 
from $h_0 e_0 g r$ to $d_0^2 u$ given in Table \ref{tab:Adams-tau-tentative}.
\end{proof}

\begin{lemma}
\label{lem:eta-th1B23}
\mbox{}
\begin{enumerate}
\item
Tentatively, there is a hidden $\eta$ extension from
$\tau h_1 B_{23}$ to $B_8 d_0$.
\item
Tentatively, there is a hidden $\eta$ extension from
$\tau^2 h_1 B_{23}$ to $d_0 x'$.
\end{enumerate}
\end{lemma}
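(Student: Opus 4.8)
The plan is to treat part (2) as a formal consequence of part (1) and to concentrate the real work on part (1). Both statements are tentative because the Adams differentials in stems $66$ and $67$ are not completely determined; I would carry out the argument assuming the relevant portion of the $E_\infty$-page is as recorded in \cite{Isaksen14a}, and flag this dependence explicitly, as is done for the neighbouring tentative lemmas.

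For part (2), I would argue directly from part (1) together with the hidden $\tau$ extension from $B_8 d_0$ to $d_0 x'$ of Lemma \ref{lem:tau-B8d0}. Since $\tau \cdot \tau h_1 B_{23} = \tau^2 h_1 B_{23}$ is not hidden, any $\beta$ in $\{\tau h_1 B_{23}\}$ has $\tau\beta$ detected by $\tau^2 h_1 B_{23}$, and then
\[
\eta \cdot \tau \beta = \tau \cdot \eta\beta \in \tau\{B_8 d_0\} = \{d_0 x'\},
\]
where the membership $\eta\beta \in \{B_8 d_0\}$ is part (1) and the final equality is Lemma \ref{lem:tau-B8d0}. This exhibits the hidden $\eta$ extension from $\tau^2 h_1 B_{23}$ to $d_0 x'$.

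The substance is part (1), and my approach is to identify the homotopy classes detected at both ends in terms of $\theta_{4.5}$. By Lemma \ref{lem:epsilon-h3^2h5} the class $B_8$ detects $\epsilon\theta_{4.5}$, so $B_8 d_0$ detects $\kappa\epsilon\theta_{4.5}$ (using that $d_0$ detects $\kappa$, after checking there is no higher-filtration ambiguity in this product). I would then show that $\kappa\epsilon\theta_{4.5}$ is divisible by $\eta$. The key input is the compound relation $\eta^3\sigma = \eta^2\epsilon$ from Table \ref{tab:Adams-compound-extn}, which says $\eta^2(\epsilon + \eta\sigma) = 0$. Writing
\[
\kappa\epsilon\theta_{4.5} = \eta \cdot \sigma\kappa\theta_{4.5} + \kappa\theta_{4.5}(\epsilon + \eta\sigma),
\]
I would argue that the correction term $\kappa\theta_{4.5}(\epsilon+\eta\sigma)$ vanishes in this stem, being annihilated by $\eta^2$ and lying in a degree with no room for such a class, so that $\kappa\epsilon\theta_{4.5} = \eta\cdot\sigma\kappa\theta_{4.5}$. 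Finally I would identify $\sigma\kappa\theta_{4.5}$ as a class detected by $\tau h_1 B_{23}$, using that $B_{21}$ detects $\kappa\theta_{4.5}$ (Remark \ref{rem:Adams-d4-C'}) and tracking the $\sigma$-multiple through the relevant relation. Verifying that $h_1 \cdot \tau h_1 B_{23} = \tau h_1^2 B_{23}$ is zero on the $E_\infty$-page then confirms that the extension is genuinely hidden.

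The hard part will be the two detection statements and the vanishing of the correction term, all of which are sensitive to the incompletely understood differentials in stems $66$--$67$. Identifying $\sigma\kappa\theta_{4.5}$ precisely with $\{\tau h_1 B_{23}\}$ requires ruling out crossing $\eta$ extensions in the sense of Example \ref{ex:hidden-cross-1}, and the indeterminacy arising from the choices in the definition of $\theta_{4.5}$ must be carried along carefully; this is why the conclusion is only tentative. As a consistency check, I note that this extension is equivalent to the hidden $\tau$ extension $\tau\{h_1^4 X_2\} = \{B_8 d_0\}$ of Lemma \ref{lem:tau-h1^4X2}; since that lemma is \emph{derived} from the present one, it does not furnish an independent proof, and the argument above must stand on its own.
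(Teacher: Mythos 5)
Your treatment of part (2) is fine and is exactly the paper's: it follows from part (1) together with the hidden $\tau$ extension from $B_8 d_0$ to $d_0 x'$ of Lemma \ref{lem:tau-B8d0}. The problem is in part (1), and it is concentrated in your last step, the identification of $\sigma\kappa\theta_{4.5}$ as a class detected by $\tau h_1 B_{23}$. That identification cannot hold. By the normalization of $\theta_{4.5}$ in Section \ref{sctn:notation}, $\sigma\theta_{4.5}$ is detected in Adams filtration at least $8$, and it may in fact be zero; consequently $\sigma\kappa\theta_{4.5}$ is either zero or detected in Adams filtration at least $12$, whereas $\tau h_1 B_{23}$ has filtration $11$. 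So there are only two cases, and both kill the argument: if $\sigma\kappa\theta_{4.5}=0$, your decomposition $\kappa\epsilon\theta_{4.5} = \eta\,\sigma\kappa\theta_{4.5} + \kappa\theta_{4.5}(\epsilon+\eta\sigma)$ forces the correction term to equal $\kappa\epsilon\theta_{4.5}$ itself, so it cannot vanish and no $\eta$-divisibility follows; if instead $\sigma\kappa\theta_{4.5}\neq 0$, it is detected in filtration $\geq 12$, and then condition (3) of Definition \ref{defn:hidden} would place the hidden $\eta$ extension into $B_8 d_0$ on a class of filtration $\geq 12$ rather than on $\tau h_1 B_{23}$, contradicting the very statement you are proving. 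The unproved vanishing of the correction term (asserted only because it is $\eta^2$-torsion, in a range where the differentials are unknown) is a second, independent gap.

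The repair is to work with the class that $\tau h_1 B_{23}$ actually detects, namely $\eta\kappabar\theta_{4.5}$: on the $E_2$-page $\tau h_1 B_{23} = \tau g B_1$, and $B_1$ detects $\eta\theta_{4.5}$, so $\tau h_1 B_{23}$ detects $\eta\kappabar\theta_{4.5}$ (Lemma \ref{lem:kappabar-h3^2h5}). The paper then multiplies the known hidden $\eta$ extension from $\tau h_1 g$ to $c_0 d_0$ (Lemma \ref{lem:eta-th1g}), i.e.\ $\eta\cdot\eta\kappabar \in \{c_0 d_0\}$, by $\theta_{4.5}$: this gives $\eta\cdot\bigl(\eta\kappabar\theta_{4.5}\bigr) \in \theta_{4.5}\{c_0 d_0\} \subseteq \{B_8 d_0\}$, the last containment coming from Lemma \ref{lem:epsilon-h3^2h5} ($B_8$ detects $\epsilon\theta_{4.5}$). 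In other words, the multiplication by $\theta_{4.5}$ should be routed through $\kappabar$ and the extension on $\tau h_1 g$, not through $\sigma$ and $\kappa$; your observation that $B_8 d_0$ detects $\epsilon\kappa\theta_{4.5}$ is essentially the correct identification of the target, but it should not be fed into an $\eta$-factorization of $\epsilon$.
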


\begin{proof}
The claims are tentative because our analysis of Adams differentials
is incomplete in the relevant range.

The first formula follows from the hidden $\eta$ extension from
$\tau h_1 g$ to $c_0 d_0$ given in Lemma \ref{lem:eta-th1g},
using that $\theta_{4.5} \{ \tau h_1 g \}$ is contained in 
$\{ \tau h_1 B_{23} \}$ by Lemma \ref{lem:kappabar-h3^2h5}
and that
$\theta_{4.5} \{ c_0 d_0 \}$ is contained in
$\{ B_8 d_0 \}$ by Lemma \ref{lem:epsilon-h3^2h5}.

The second formula follows from the first,
using the hidden $\tau$ extension from $B_8 d_0$ to $d_0 x'$
given in Lemma \ref{lem:tau-B8d0}.
\end{proof}


\subsection{Hidden Adams $\nu$ extensions computations}
\label{subsctn:nu-lemmas}

\index{Adams spectral sequence!hidden extension!nu@$\nu$}

\begin{lemma}
There is no hidden $\nu$ extension on $h_0 h_2 h_4$.
\end{lemma}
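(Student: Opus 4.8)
The element $h_0 h_2 h_4$ lies in the 18-stem in Adams filtration $3$, so a hidden $\nu$ extension on it would have a target in the 21-stem in Adams filtration strictly greater than $4$. The first step is to consult the $E_\infty$-chart of \cite{Isaksen14a} to enumerate the candidate targets. Note that the naive product $\nu \cdot h_0 h_2 h_4$ is detected by $h_0 h_2^2 h_4 = h_0 h_3^3$, which already vanishes on $E_\infty$ because $h_0 h_3^2$ is hit by the differential $d_2(h_4) = h_0 h_3^2$; hence a hidden extension is a priori possible. Inspection of the chart shows that the only class of the 21-stem in sufficiently high filtration is $\tau h_1 g$ (filtration $5$), so the only conceivable hidden $\nu$ extension is from $h_0 h_2 h_4$ to $\tau h_1 g$.

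To rule this out, I would invoke Lemma \ref{lem:eta-th1g}, which shows that $\tau h_1 g$ supports a hidden $\eta$ extension (to $c_0 d_0$). Since $\eta \nu = 0$ in $\pi_{*,*}$, the class $\tau h_1 g$ cannot simultaneously be the target of a hidden $\nu$ extension: if $\nu \beta$ were detected by $\tau h_1 g$ for some $\beta$ in $\{ h_0 h_2 h_4 \}$, then $\eta \cdot \nu \beta = (\eta \nu) \beta = 0$, contradicting the nontriviality of the $\eta$-multiple witnessed by Lemma \ref{lem:eta-th1g}. This is exactly the device already used in Lemmas \ref{lem:eta-c1} and \ref{lem:2-h2^2h4}.

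An alternative, more self-contained route would avoid identifying the target explicitly and instead show that $\nu$ annihilates the entire coset, so that Lemma \ref{lem:hidden-not-exist} applies directly. The 18-stem is essentially an $h_0$-tower on $h_2 h_4$, so that, up to classes on which $\nu$ acts trivially, $\{ h_0 h_2 h_4 \} = 2 \{ h_2 h_4 \}$; every relevant element thus has the form $2 \alpha$ with $\alpha$ in $\{ h_2 h_4 \}$. Now $\nu \alpha$ is detected by $h_2^2 h_4 = h_3^3$, and Lemma \ref{lem:2-h2^2h4} shows there is no hidden $2$ extension on $h_2^2 h_4$; since the non-hidden product $h_0 h_2^2 h_4 = h_0 h_3^3$ likewise vanishes, one concludes $2 \cdot \{ h_2^2 h_4 \} = 0$. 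Therefore $\nu \cdot 2 \alpha = 2 \nu \alpha = 0$, and Lemma \ref{lem:hidden-not-exist} finishes the argument.

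The main obstacle in either approach is bookkeeping rather than any deep input. In the first approach I must be certain from the chart that $\tau h_1 g$ is genuinely the only filtration-$\geq 5$ class in the 21-stem, so that no other target is overlooked, and I must guard against the coset subtleties created by crossing extensions in the sense of Examples \ref{ex:hidden-cross-1} and \ref{ex:hidden-cross-2}. The second approach sidesteps the coset subtlety by showing $\nu$ kills the whole group generated by $\{ h_0 h_2 h_4 \}$, so I would favor it as the cleaner write-up and use the first as a cross-check.
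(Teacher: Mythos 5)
Your proposal is correct, and your preferred second route is precisely the paper's proof: the paper disposes of this lemma in one line by citing Lemma \ref{lem:2-h2^2h4} (no hidden $2$ extension on $h_2^2 h_4$), which combined with Lemma \ref{lem:hidden-not-exist} gives $\nu \cdot 2\alpha = 2\nu\alpha = 0$ exactly as you wrote. Your first route merely inlines the proof of Lemma \ref{lem:2-h2^2h4} itself (the target could only be $\tau h_1 g$, which is ruled out by its hidden $\eta$ extension from Lemma \ref{lem:eta-th1g}), so it is the same argument unwound one step further rather than a genuinely different one.
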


\begin{proof}
This follows immediately from Lemma \ref{lem:2-h2^2h4}, where we showed
that there is no hidden $2$ extension on $h_2^2 h_4$.
\end{proof}

\begin{lemma}
\label{lem:nu-h0^2g}
\mbox{}
\begin{enumerate}
\item
There is a hidden $\nu$ extension from $h_0^2 g$ to $h_1 c_0 d_0$.
\item
There is a hidden $\nu$ extension from $\tau h_0^2 g$ to $P h_1 d_0$.
\item
There is a hidden $\nu$ extension from $h_0^2 g^2$ to $h_1 c_0 e_0^2$.
\item
There is a hidden $\nu$ extension from $\tau h_0^2 g^2$ to $h_1 d_0^3$.
\end{enumerate}
\end{lemma}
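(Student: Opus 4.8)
The plan is to deduce all four hidden $\nu$ extensions from the four hidden $2$ extensions of Lemma \ref{lem:2-h0h2g}, whose targets are exactly the same four classes $h_1 c_0 d_0$, $P h_1 d_0$, $h_1 c_0 e_0^2$, and $h_1 d_0^3$. The bridge is a single homotopy class detected by $h_0 g$ (for parts (1)--(2)) or by $h_0 g^2$ (for parts (3)--(4)), together with the identity $\nu(2\gamma) = 2(\nu\gamma)$ and the $\Ext$-relation $h_0^2 h_2 = \tau h_1^3$.

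For part (1), let $\gamma$ be an element of $\{h_0 g\}$; such a $\gamma$ exists because $h_0^2 g = h_0 \cdot h_0 g$ is non-zero on the $E_\infty$-page, so $h_0 g$ is a non-zero permanent cycle. Since multiplication by $2$ is detected by $h_0$ and multiplication by $\nu$ by $h_2$, and since both products $h_0 \cdot h_0 g = h_0^2 g$ and $h_2 \cdot h_0 g = h_0 h_2 g$ are non-zero in $E_\infty$, the class $2\gamma$ is detected by $h_0^2 g$ and $\nu \gamma$ is detected by $h_0 h_2 g$. Now $\nu\gamma$ lies in $\{h_0 h_2 g\}$, which by Lemma \ref{lem:2-h0h2g}(1) supports a hidden $2$ extension to $h_1 c_0 d_0$, so $2(\nu\gamma)$ lies in $\{h_1 c_0 d_0\}$. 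Rewriting $2(\nu\gamma) = \nu(2\gamma)$ and recalling $2\gamma \in \{h_0^2 g\}$, we conclude that $\nu \cdot \{h_0^2 g\}$ meets $\{h_1 c_0 d_0\}$. Because $h_2 \cdot h_0^2 g = \tau h_1^3 g$ vanishes in $E_\infty$ (the relation $h_0^2 h_2 = \tau h_1^3$, together with $h_1^3 g = 0$ in $\Ext$), this is a genuine hidden $\nu$ extension in the sense of Definition \ref{defn:hidden}. Part (3) is identical after replacing $g$ by $g^2$ and invoking Lemma \ref{lem:2-h0h2g}(3).

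Parts (2) and (4) then follow by multiplying parts (1) and (3) by $\tau$. If $\gamma' \in \{h_0^2 g\}$ realizes the extension of part (1), then $\tau\gamma'$ is detected by $\tau h_0^2 g$ and $\nu(\tau\gamma') = \tau(\nu\gamma')$ lies in $\tau \cdot \{h_1 c_0 d_0\}$, which contains an element of $\{P h_1 d_0\}$ by the hidden $\tau$ extension from $h_1 c_0 d_0$ to $P h_1 d_0$ recorded in Table \ref{tab:Adams-tau}. This produces the hidden $\nu$ extension from $\tau h_0^2 g$ to $P h_1 d_0$. The same manipulation, using the hidden $\tau$ extension from $h_1 c_0 e_0^2$ to $h_1 d_0^3$ from Table \ref{tab:Adams-tau}, yields part (4) from part (3).

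The delicate point throughout is coset bookkeeping: Lemma \ref{lem:2-h0h2g} only guarantees that \emph{some} element of $\{h_0 h_2 g\}$ supports the relevant hidden $2$ extension, so I must check that the particular class $\nu\gamma$ is such an element, rather than differing from it by a higher-filtration term whose $2$-multiple escapes $\{h_1 c_0 d_0\}$ (and similarly for the $\tau$-multiplications in parts (2) and (4)). In these stems the higher-filtration contributions to the relevant cosets are annihilated by $2$ or land in filtration strictly above the target, so no crossing $\nu$ extensions interfere and the identifications go through; verifying this is the one step that must be read off carefully from the $E_\infty$-chart in \cite{Isaksen14a}, and it is the main obstacle to a fully formal write-up.
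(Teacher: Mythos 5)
Your proposal is correct and takes essentially the same route as the paper, whose entire proof is that these extensions ``follow immediately from the hidden $2$ extensions established in Lemma \ref{lem:2-h0h2g}''; the commutation $\nu(2\gamma) = 2(\nu\gamma)$ for $\gamma \in \{h_0 g\}$ or $\{h_0 g^2\}$ that you write out is precisely that immediate deduction, and your handling of parts (2) and (4) by $\tau$-multiplication instead of by parts (2) and (4) of Lemma \ref{lem:2-h0h2g} is a cosmetic variant. The coset issue you flag at the end is settled by a one-line filtration count rather than a chart check: any correction term lies in Adams filtration at least that of the target class, so its $2$-multiple (or $\nu$-multiple, or $\tau$-multiple) lies in strictly higher filtration and cannot disturb the detecting element.
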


\begin{proof}
These follow immediately from the hidden $2$ extensions
established in Lemma \ref{lem:2-h0h2g}.
\end{proof}

\begin{lemma}
\label{lem:nu-h2c1}
\mbox{}
\begin{enumerate}
\item
There is a hidden $\nu$ extension from $h_2 c_1$ to $h_1^2 h_4 c_0$.
\item
There is a hidden $\nu$ extension from $h_2 c_1 g$ to $h_1^6 h_5 c_0$.
\item
There is a hidden $\nu$ extension from $N$ to $P h_1^2 h_5 c_0$.
\end{enumerate}
\end{lemma}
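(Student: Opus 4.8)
All three parts are \emph{exotic} $\nu$ extensions with no classical analogue --- each target vanishes classically --- so the comparison arguments of Table \ref{tab:extn-refs} are unavailable. The plan is, for each part, to exhibit the asserted target as $\nu$ times a homotopy element detected by the asserted source, and then to apply Lemma \ref{lem:hidden-exist}, after a filtration count confirming that Definition \ref{defn:hidden} applies and that no crossing extension intervenes, to conclude that the extension is genuinely hidden and lands where claimed.

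For part (1), observe that $\{h_2 c_1\}$ contains $\nu\{c_1\}$, so the relevant product is $\nu^2\{c_1\}$, and the extension is hidden precisely because $h_2\cdot h_2 c_1 = 0$ on the $E_\infty$-page. I would evaluate $\nu^2\{c_1\}$ with the relation $\nu^2 = \langle \eta, \nu, \eta\rangle$ from Table \ref{tab:Toda}, together with the vanishing $\eta\{c_1\}=0$ that licenses the shuffle,
\[
\nu^2\{c_1\} = \langle \eta,\nu,\eta\rangle\{c_1\} = \eta\,\langle \nu,\eta,\{c_1\}\rangle,
\]
which displays the product as $\eta$ times a class in the $24$-stem, consistent with $h_1^2 h_4 c_0 = \eta\cdot h_1 h_4 c_0$. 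Identifying $\langle \nu,\eta,\{c_1\}\rangle$ with an element detected by $h_1 h_4 c_0$ --- using $\sigma\eta_4\in\{h_4 c_0\}$ from Lemma \ref{lem:sigma-h1h4} --- then pins $\nu^2\{c_1\}$ into $\{h_1^2 h_4 c_0\}$.

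Part (2) has the same shape: $\{h_2 c_1 g\}$ contains $\nu\{c_1 g\}$, and I would run the identical shuffle to place $\nu^2\{c_1 g\}$ into $\{h_1^6 h_5 c_0\}$, now propagating the identification through the $h_1$-power family of Example \ref{ex:h1^7h5c0}, in particular the relations $h_0\cdot h_2^2 g^2 = h_1^7 h_5 c_0$ (Lemma \ref{lem:hidden-h0h2^2g}) and $c_0\cdot i_1 = h_1^4 D_4$ (Lemma \ref{lem:c0-i1}), which tie the $c_1 g$-bracket to $h_5 c_0$ times powers of $h_1$. Part (3) is less uniform, since $N$ is not visibly an $h_2$-multiple; here I would instead produce a Toda bracket presentation of $\nu\{N\}$, or fall back on the cofiber-of-$\nu$ computation described below, to land in $\{P h_1^2 h_5 c_0\}$.

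The hard part will be controlling indeterminacy and correctly evaluating these brackets in high stems: a summand in higher Adams filtration could in principle move a product out of the asserted $\{c\}$, and part (3) lacks the convenient $h_2$-divisibility of the first two. To make the argument robust I would cross-check every conclusion against the motivic Adams spectral sequence for the cofiber $C\nu$ of $\nu$, using the methods of Chapter \ref{ch:Ctau}: showing that $\{h_1^2 h_4 c_0\}$, $\{h_1^6 h_5 c_0\}$, and $\{P h_1^2 h_5 c_0\}$ each map to zero in $\pi_{*,*}(C\nu)$ forces them to be divisible by $\nu$, which together with the filtration bookkeeping delivers all three extensions independently of the bracket calculations.
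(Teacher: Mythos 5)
There is a genuine gap, and it affects all three parts: you never prove that the products $\nu\cdot\nu\sigmabar$, $\nu^2\{c_1 g\}$, and $\nu\{N\}$ are non-zero, and that non-vanishing is precisely the hidden content of the lemma. The shuffle $\nu^2\{c_1\} = \langle\eta,\nu,\eta\rangle\{c_1\} \subseteq \eta\langle\nu,\eta,\{c_1\}\rangle$ only exhibits $\nu^2\sigmabar$ as $\eta\beta$ for \emph{some} $\beta$ in the bracket; this is entirely compatible with $\nu^2\sigmabar = 0$, since nothing prevents the bracket from containing elements annihilated by $\eta$. Contrast Lemma \ref{lem:eta-h1f1}, where the same shuffle is legitimate because the product $\nu^2\{t\}$ is known non-zero in advance: it is detected by $\tau h_2 c_1 g$, which is non-zero in $\Ext$ and on the $E_\infty$-page. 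Here, by contrast, $h_2 \cdot h_2 c_1 = 0$ in $\Ext$, so there is no such a priori detection, and divisibility bookkeeping cannot create non-vanishing out of nothing. Your proposed evaluation of $\langle\nu,\eta,\sigmabar\rangle$ ``using $\sigma\eta_4 \in \{h_4 c_0\}$'' is unsupported --- Lemma \ref{lem:sigma-h1h4} concerns a $\sigma$ extension landing in the 23-stem and says nothing about this bracket --- and for part (3) you offer no argument at all.

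The paper supplies the missing non-vanishing by running Moss's Convergence Theorem \ref{thm:Moss} in contrapositive form. Table \ref{tab:Massey} gives $\langle h_2, h_2 c_1, h_1\rangle = h_3 g$; since $h_3 g$ supports the Adams differential of Lemma \ref{lem:d2-h3g}, this Massey product contains no permanent cycle, so the Toda bracket $\langle\nu, \nu\sigmabar, \eta\rangle$ cannot be well-defined. Because $\eta\cdot\nu\sigmabar = 0$ automatically, the only possible obstruction is $\nu\cdot\nu\sigmabar \neq 0$, and degree considerations then force the target $h_1^2 h_4 c_0$. Parts (2) and (3) are structurally identical, using $\langle h_2, h_2 c_1 g, h_1\rangle = h_3 g^2$ and $\langle h_2, N, h_1\rangle = G_3$ or $G_3 + \tau g n$ (the latter computed with May's Convergence Theorem), whose values support differentials by Lemmas \ref{lem:d2-h3g} and \ref{lem:d2-G3}. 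Your fallback through the cofiber of $\nu$ is indeed a valid alternative route --- the paper says as much in a remark in Section \ref{subsctn:hidden-nu} --- but you merely assert the key fact that $\{h_1^2 h_4 c_0\}$, $\{h_1^6 h_5 c_0\}$, and $\{P h_1^2 h_5 c_0\}$ map to zero in $\pi_{*,*}(C\nu)$; establishing that requires setting up and analyzing the motivic Adams spectral sequence for $C\nu$ through the 49-stem, a substantial computation carried out neither by you nor in the paper. As written, the proposal is a plan, not a proof.
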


\begin{proof}
Table \ref{tab:Massey} shows that 
$\langle h_2, h_2 c_1, h_1 \rangle$
equals $h_3 g$.
This Massey product contains no permanent cycles because
$h_3 g$ supports an Adams differential by Lemma \ref{lem:d2-h3g}.
Therefore, (the contrapositive of) Moss's Convergence Theorem \ref{thm:Moss}
implies that the Toda bracket $\langle \nu, \nu \sigmabar, \eta \rangle$
is not well-defined.  
\index{Convergence Theorem!Moss}
The only possibility is that
$\nu^2 \sigmabar$ is non-zero.  This implies
that there is a hidden $\nu$ extension on $h_2 c_1$,
and the only possible target for this hidden extension is $h_1^2 h_4 c_0$.
This finishes the first claim.

The proof of the second claim is similar.
Table \ref{tab:Massey} show that 
$\langle h_2, h_2 c_1 g, h_1 \rangle$
equals $h_3 g^2$.
Since $h_3 g^2$ supports a differential by Lemma \ref{lem:d2-h3g},
Moss's Convergence Theorem \ref{thm:Moss}
implies that the Toda bracket
$\langle \nu, \alpha, \eta \rangle$
is not well-defined for any $\alpha$ in $\{ h_2 c_1 g\}$.
\index{Convergence Theorem!Moss}
This implies that there is a hidden $\nu$ extension on $h_2 c_1 g$,
and the only possible target is $h_1^6 h_5 c_0$.
This finishes the second claim.

For the third claim,
we will 
first compute $\langle h_2, N, h_1 \rangle$ on the $E_2$-page.
The 
May differential $d_2( \Delta b_{21} h_1(1) ) = h_2 N$ and
May's Convergence Theorem \ref{thm:3-converge} imply that
$\langle h_2, N, h_1 \rangle$ equals an element that is detected
by $G_3$ in the $E_\infty$-page of the May spectral sequence.
\index{Convergence Theorem!May}
Because of the presence of $\tau g n$ in lower May filtration,
the bracket equals either $G_3$ or $G_3 + \tau g n$.
In any case, both of these elements support an Adams $d_2$ differential
by Lemma \ref{lem:d2-G3}
because $\tau g n$ is a product of permanent cycles.
Moss's Convergence Theorem \ref{thm:Moss} then
implies that the Toda bracket
$\langle \nu, \alpha, \eta \rangle$ is not well-defined for any
$\alpha$ in $\{ N \}$.
\index{Convergence Theorem!Moss}
This implies that there is a hidden $\nu$ extension on $N$,
and the only possible target is $P h_1^2 h_5 c_0$.
This finishes the third claim.
\end{proof}

\begin{lemma}
\label{lem:nu-th2^2g}
\mbox{}
\begin{enumerate}
\item
There is a hidden $\nu$ extension from $\tau h_2^2 g$ to $h_1 d_0^2$.
\item
There is a hidden $\nu$ extension from $\tau h_2^2 g^2$ to $h_1 d_0 e_0^2$.
\end{enumerate}
\end{lemma}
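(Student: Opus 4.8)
The plan is to work directly with homotopy elements, exploiting the relation $\epsilon\kappabar=\kappa^2$ of Lemma~\ref{lem:epsilon-kappabar} together with the homotopy relation $\nu^3=\eta\epsilon+\eta^2\sigma$. For part (1), the element $h_2^2\cdot\tau g$ detects $\nu^2\kappabar$, where $\kappabar$ belongs to $\{\tau g\}$; hence a $\nu$-multiple of an element of $\{\tau h_2^2 g\}$ is $\nu^3\kappabar=\eta\epsilon\kappabar+\eta^2\sigma\kappabar$. Applying $\epsilon\kappabar=\kappa^2$ identifies the first summand as $\eta\kappa^2$, which is detected by $h_1 d_0^2$. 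For part (2) the argument is parallel: $h_2^2\cdot\tau g^2$ detects $\nu^2\{\tau g^2\}$, and $\epsilon\{\tau g^2\}=\kappa\{e_0^2\}$ (again Lemma~\ref{lem:epsilon-kappabar}) identifies $\eta\epsilon\{\tau g^2\}=\eta\kappa\{e_0^2\}$, which is detected by $h_1 d_0 e_0^2$. A weight and stem bookkeeping check confirms both targets sit in the correct tridegree.

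First I would verify the structural conditions of Definition~\ref{defn:hidden}. The product $h_2\cdot\tau h_2^2 g$ equals $\tau h_2^3 g=\tau h_1^2 h_3 g$, using the relation $h_2^3=h_1^2 h_3$ from Example~\ref{ex:h3e0}; since $h_3 g$ supports the differential $d_2(h_3 g)=h_0 h_2^2 g$ of Lemma~\ref{lem:d2-h3g}, this class vanishes on the $E_\infty$-page, so any such $\nu$-extension is genuinely hidden, and an inspection of the chart shows that $h_1 d_0^2$ is the only admissible target in this stem and weight. The identical bookkeeping, with $g$ replaced by $g^2$, handles part (2) and isolates $h_1 d_0 e_0^2$ as the target.

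The hard part will be controlling the error term $\eta^2\sigma\kappabar$ (respectively $\eta^2\sigma\{\tau g^2\}$): I must show it does not obstruct detection by $h_1 d_0^2$ (respectively $h_1 d_0 e_0^2$). The key observation is that the naive detecting class of $\eta^2\sigma\kappabar$ is $\tau h_1^2 h_3 g=h_2\cdot\tau h_2^2 g$, which we have just seen dies on $E_\infty$; therefore $\eta^2\sigma\kappabar$ lies in strictly higher Adams filtration than that of $\tau h_1^2 h_3 g$, and one then checks against the $E_\infty$-chart that no surviving class of lower filtration than $h_1 d_0^2$ in this degree can capture it, so that $\nu^3\kappabar$ is indeed detected by $h_1 d_0^2$. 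Should this filtration comparison prove too delicate to settle by hand, a cleaner fallback is comparison to the classical hidden $\nu$-extensions recorded in Table~\ref{tab:extn-refs}: by Proposition~\ref{prop:compare} the relevant classical extension determines the motivic one after inverting $\tau$, and since the weights of $\nu\cdot\tau h_2^2 g$ and of $h_1 d_0^2$ already agree, no compensating power of $\tau$ intervenes and the motivic target is forced to be $h_1 d_0^2$. In either case, Lemma~\ref{lem:hidden-exist} packages the computation into the asserted hidden extension.
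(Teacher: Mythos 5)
Your overall strategy (expand $\nu^3\kappabar$ via the Toda relation $\nu^3=\eta\epsilon+\eta^2\sigma$ and then use $\epsilon\kappabar=\kappa^2$) is not the paper's argument, and as written it has a genuine gap at exactly the point you flag as "the hard part." Your filtration argument shows only that $\eta^2\sigma\kappabar$ lies in Adams filtration $\geq 8$; it does not rule out that $\eta^2\sigma\kappabar$ is detected by $h_1 d_0^2$ itself, in which case it would cancel $\eta\kappa^2$ and give $\nu^3\kappabar=0$ (equivalently, by Lemma \ref{lem:hidden-not-exist}, no hidden $\nu$ extension at all). Since $h_1 d_0^2$ sits at the top of its degree, no lower bound on filtration can exclude this cancellation; you would need to prove $\eta^2\sigma\kappabar\neq\eta\kappa^2$ (for instance by showing $\eta\sigma\kappabar=0$, or $\eta\sigma\kappabar\neq\kappa^2$), and nothing in the proposal does this. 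Compare Lemma \ref{lem:nu-h2B2}, where the paper runs your kind of argument for $\theta_{4.5}$ but only after first proving that the error term $\eta^2\sigma\theta_{4.5}$ vanishes. Your fallback is worse than delicate: it is false. These extensions are exotic and have no classical analogue to lift via Proposition \ref{prop:compare} -- classically $d_3(r)=h_1 d_0^2$ and $d_3(gr)=h_1 d_0 e_0^2$ (Table \ref{tab:diff-refs}), so both targets die classically, classical $\nu^3\kappabar=0$ (the $2$-local $29$-stem vanishes), and Table \ref{tab:extn-refs} contains no such extension. A smaller point: your justification that $h_2\cdot\tau h_2^2 g$ vanishes on $E_\infty$ is also faulty, since a class supporting a differential says nothing about survival of its multiples; the correct reason is that $\tau h_1^2 h_3 g=0$ already in $\Ext$, from the relation $\tau h_1 g=h_2 f_0$ together with $h_2 h_3=0$.

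The paper's proof avoids all of this by deducing the $\nu$ extensions from the hidden $\tau$ extensions from $h_1^2 h_3 g$ to $h_1 d_0^2$ and from $h_1^2 h_3 g^2$ to $h_1 d_0 e_0^2$ (Table \ref{tab:Adams-tau}, proved in Lemma \ref{lem:t-h1h3g} via the cofiber of $\tau$). Concretely: $h_2^2 g$ (in weight $16$) survives, so $\nu^2\kappabar$ agrees, up to the sparseness of $\pi_{26,15}$, with $\tau y$ for some $y\in\{h_2^2 g\}$; then $\nu^3\kappabar=\tau\cdot\nu y$ with $\nu y$ detected by $h_2^3 g=h_1^2 h_3 g$, and the hidden $\tau$ extension places $\tau\cdot\nu y$ in $\{h_1 d_0^2\}$. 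This route never invokes the relation $\nu^3=\eta\epsilon+\eta^2\sigma$, so the problematic term $\eta^2\sigma\kappabar$ never appears; the nonvanishing of $\nu^3\kappabar$ is supplied by the $\tau$-divisibility statement rather than by a cancellation argument you would still have to finish.
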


\begin{proof}
These follow from the hidden $\tau$ extensions from $h_1^2 h_3 g$
to $h_1 d_0^2$ 
and from $h_1^2 h_3 g^2$ to $h_1 d_0 e_0^2$
given in Table \ref{tab:Adams-tau}.
\end{proof}

\begin{lemma}
There is no hidden $\nu$ extension on $h_1 h_5$.
\end{lemma}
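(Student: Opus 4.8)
The plan is to produce an element of $\{h_1 h_5\}$ that is annihilated by $\nu$ and then quote Lemma \ref{lem:hidden-not-exist}. This is the exact analogue of the argument that rules out a hidden $2$ extension on $h_1 h_5$ in Lemma \ref{lem:2-h1h5}, with $2$ replaced by $\nu$, so I would model the proof on that lemma.

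First I would recall from Table \ref{tab:Toda} (as already used in Lemma \ref{lem:2-h1h5}) that the Toda bracket $\langle \eta, 2, \theta_4 \rangle$ intersects $\{h_1 h_5\}$; this bracket is defined because $2\eta = 0$ and $2\theta_4 = 0$. Let $\beta$ denote the element of $\{h_1 h_5\}$ lying in this bracket. Next I would apply the shuffle
\[
\nu \langle \eta, 2, \theta_4 \rangle = \langle \nu, \eta, 2 \rangle \theta_4,
\]
which is legitimate since $\eta \nu = 0$ and $2 \eta = 0$, so that $\langle \nu, \eta, 2 \rangle$ is defined.

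The key observation is that $\langle \nu, \eta, 2 \rangle$ lies in $\pi_{5,3}$, and this group vanishes for degree reasons, since there is nothing in stem $5$ of $\Ext$. Hence $\langle \nu, \eta, 2 \rangle \theta_4$ is the zero coset, and the shuffle forces $\nu \beta = 0$. Since $\beta$ belongs to $\{h_1 h_5\}$, Lemma \ref{lem:hidden-not-exist} then shows that there is no hidden $\nu$ extension on $h_1 h_5$.

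I do not anticipate any real obstacle here. The only points requiring a moment of care are the standard facts that $\theta_4$ has order $2$ (so the initial bracket is defined) and that $\eta\nu = 0$ holds motivically (it follows from $\pi_{4,*} = 0$), together with the bookkeeping of indeterminacies in the shuffle; but because the target group $\pi_{5,3}$ is zero, the coset $\langle \nu, \eta, 2 \rangle \theta_4$ collapses to $0$, so every element of $\nu \langle \eta, 2, \theta_4 \rangle$ — in particular $\nu \beta$ — is zero, and no further analysis is needed.
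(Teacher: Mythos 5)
Your proof is correct, but it takes a genuinely different route from the paper's. The paper identifies $\tau^2 h_1 e_0^2$ as the only possible target of a hidden $\nu$ extension on $h_1 h_5$, recalls from Table \ref{tab:extn-refs} that $\nu \{q\}$ lands in $\{\tau^2 h_1 e_0^2\}$, and notes that $\{q\}$ lies in the indeterminacy of $\{h_1 h_5\}$ (because $q$ has higher Adams filtration); correcting any element of $\{h_1 h_5\}$ by a suitable element of $\{q\}$ then produces an element annihilated by $\nu$, and Lemma \ref{lem:hidden-not-exist} finishes. Your argument instead mirrors Lemmas \ref{lem:2-h1h5} and \ref{lem:nu-h5c0}: take $\eta_5 \in \langle \eta, 2, \theta_4 \rangle \cap \{h_1 h_5\}$ from Table \ref{tab:Toda}, shuffle $\nu \langle \eta, 2, \theta_4 \rangle = \langle \nu, \eta, 2 \rangle \theta_4$, and observe that the middle bracket vanishes. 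What your route buys is self-containment: it needs neither an enumeration of possible targets in the $35$-stem nor the classical hidden $\nu$ extension on $q$; it uses only the Toda bracket already in Table \ref{tab:Toda} and a vanishing line in low degrees. The paper's route, by contrast, is a direct chart argument of the type it applies uniformly throughout Chapter \ref{ch:Adams-hidden}. Both proofs terminate in Lemma \ref{lem:hidden-not-exist}.

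One caveat about your justifications, which are stated too classically: over $\C$, neither stem $4$ nor stem $5$ of $\Ext$ (or of $\pi_{*,*}$) is zero, because $h_1$ is non-nilpotent motivically; $h_1^4$ and $h_1^5$ survive and detect $\eta^4 \neq 0$ and $\eta^5 \neq 0$. What saves your argument is the weight: $h_1^n$ sits in weight $n$ and is $\tau$-torsion (from $\tau h_1^3 = h_0^2 h_2$ one gets $\tau h_1^4 = 0$), so $\Ext$ vanishes in stem $4$, weight $3$ and in stem $5$, weight $3$. Hence $\eta \nu = 0$ and $\langle \nu, \eta, 2 \rangle \subseteq \pi_{5,3} = 0$, exactly as you need; but the assertions ``$\pi_{4,*} = 0$'' and ``there is nothing in stem $5$ of $\Ext$'' are false as written and should be replaced by the weight-specific statements.
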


\begin{proof}
The only other possibility is that there is a hidden $\nu$ extension
from $h_1 h_5$ to $\tau^2 h_1 e_0^2$.
We know from Table \ref{tab:extn-refs}
that $\{ \tau^2 h_1 e_0^2 \}$ contains $\nu \{q \}$.
Since $\{q \}$ belongs to the indeterminacy of
$\{ h_1 h_5\}$, there
exists an element of $\{ h_1 h_5 \}$ that is annihilated by $\nu$.
Lemma \ref{lem:hidden-not-exist} finishes the proof.
\end{proof}

\begin{lemma}
There is no hidden $\nu$ extension on $p$.
\end{lemma}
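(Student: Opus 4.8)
The plan is to reduce the entire statement to the single relation $\nu^2\theta_4 = 0$ and then invoke Lemma \ref{lem:hidden-not-exist}. Exactly as in the proofs that $p$ supports no hidden $2$ or $\eta$ extension, the starting point is that $\nu\theta_4$ belongs to $\{p\}$; this is the motivic shadow of the classical fact recorded in Table \ref{tab:extn-refs}. If I can show that $\nu\cdot\nu\theta_4 = \nu^2\theta_4$ vanishes, then $\{p\}$ contains an element annihilated by $\nu$, and Lemma \ref{lem:hidden-not-exist} immediately rules out a hidden $\nu$ extension on $p$. Note that the $2$ and $\eta$ cases were easy precisely because $2\theta_4 = 0$ and $\eta\nu = 0$; the $\nu$ case is genuinely harder because no such immediate cancellation is available.

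The substance therefore lies in proving $\nu^2\theta_4 = 0$, an element of the $36$-stem. First I would try the purely formal route: using $\nu^2 = \langle \eta, \nu, \eta \rangle$ from Table \ref{tab:Toda} together with a shuffle, one has $\nu^2\theta_4 = \langle \eta, \nu, \eta \rangle \theta_4 \subseteq \langle \eta, \nu, \eta\theta_4 \rangle$, the outer bracket being defined since $\eta\nu = 0$ and $\nu\cdot\eta\theta_4 = 0$. This converts the problem into the value of $\eta\theta_4$ and the indeterminacy of the resulting bracket; it shows at least that $\nu^2\theta_4$ is divisible by $\eta$, so it lies in $\eta\,\pi_{35,*}$. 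To push this to actual vanishing I would bring in the known classical homotopy surrounding $\theta_4$ (from \cite{BJM84} and Toda \cite{Toda62}), since products such as $\eta^3\theta_4$ and $\theta_4^2$, along with the brackets $\langle\eta,2,\theta_4\rangle$ and $\langle 2,\sigmabar,2\rangle\theta_4$, are already invoked elsewhere in this section and constrain the relevant multiples of $\theta_4$.

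The main obstacle is the usual motivic subtlety: comparison with the classical sphere via Proposition \ref{prop:compare} only shows that $\nu^2\theta_4$ is $\tau$-power torsion, so even granting a classical vanishing statement I must still rule out that $\nu^2\theta_4$ is a nonzero $\tau$-torsion class. I would settle this by inspecting the $E_\infty$-page in the $36$-stem and checking that every candidate target of a $\nu$-multiplication out of $\{p\}$ either already carries a known nonzero $\nu$-multiple incompatible with $\nu\cdot\nu\theta_4$, or is excluded on Adams-filtration grounds under Definition \ref{defn:hidden}, so that $\nu^2\theta_4$ has no room to be nonzero. Once $\nu^2\theta_4 = 0$ is secured, the conclusion follows immediately from Lemma \ref{lem:hidden-not-exist}.
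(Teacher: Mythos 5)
Your reduction is sound as far as it goes: since $\nu\theta_4 \in \{p\}$ (Table \ref{tab:extn-refs}), the vanishing of $\nu^2\theta_4$ would indeed kill any hidden $\nu$ extension on $p$ via Lemma \ref{lem:hidden-not-exist}. But the proposal never actually proves $\nu^2\theta_4 = 0$, and that is where all the content of the lemma lies. Your one concrete intermediate claim is also wrong: containment $\nu^2\theta_4 \in \langle \eta, \nu, \eta\theta_4 \rangle$ does not make $\nu^2\theta_4$ divisible by $\eta$. To extract $\eta$-divisibility you would need the shuffle $\langle \eta, \nu, \eta \rangle \theta_4 = \eta \langle \nu, \eta, \theta_4 \rangle$, and the bracket $\langle \nu, \eta, \theta_4 \rangle$ is not even defined, because $\eta\theta_4 \neq 0$ (it is detected by $h_1 h_4^2$; see Tables \ref{tab:bracket-refs} and \ref{tab:Adams-ANSS}). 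After that, the proposal defers the decisive step --- ruling out that $\nu^2\theta_4$ is a nonzero class detected by $t$, the unique candidate in the $36$-stem --- to an unspecified ``inspection'' of the $E_\infty$-page combined with unspecified classical input. No inspection of the chart can do this: the chart alone is exactly what leaves the extension from $p$ to $t$ open.

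The missing idea is the one the paper uses: exploit $\nu^4 = 0$ together with the fact that $\{t\}$ supports a \emph{nonzero} $\nu^2$-multiplication, namely $\nu^2 \{t\} \subseteq \{\tau h_2 c_1 g\}$ (this is the observation used in the proof of Lemma \ref{lem:eta-h1f1}). If there were a hidden $\nu$ extension from $p$ to $t$, then $\nu^2\theta_4$ would lie in $\{t\}$, hence $\nu^4\theta_4$ would lie in $\{\tau h_2 c_1 g\}$ and in particular be nonzero; but $\nu^4 = 0$, a contradiction. Note that this argument does not pass through the statement $\nu^2\theta_4 = 0$ at all --- it directly excludes the only possible target --- which also sidesteps the motivic $\tau$-torsion worry you raise. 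Without some argument of this kind (or an equally concrete substitute), your proposal is a plan, not a proof.
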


\begin{proof}
Recall that there is a hidden $\nu$ extension from $h_4^2$ to $p$,
as shown in Table \ref{tab:extn-refs}.
If there were a hidden $\nu$ extension from $p$ to $t$,
then $\nu^4 \theta_{4}$ would belong to $\{ \tau h_2 c_1 g \}$.
This is impossible since $\nu^4$ is zero.
\end{proof}

\begin{lemma}
There is no hidden $\nu$ extension on $x$.
\end{lemma}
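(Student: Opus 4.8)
The plan is to reduce the statement to a single application of Lemma \ref{lem:hidden-not-exist}. By that lemma, to rule out a hidden $\nu$ extension on $x$ it suffices to produce one element of $\{x\}$ that is annihilated by $\nu$; this avoids having to enumerate and separately exclude each candidate target in the relevant stem.

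The key input is that $\{x\}$ contains the product $\sigma \theta_4$. This membership is recorded in Table \ref{tab:extn-refs} and is exactly the fact already exploited in the proof of Lemma \ref{lem:eta-h3^2h5} (where one further uses $h_3 x = h_0^2 g_2$ to identify $\sigma^2 \theta_4$ with $\{h_0^2 g_2\}$); the same statement holds motivically. Granting this, I would simply compute $\nu \cdot \sigma \theta_4 = (\nu \sigma) \theta_4$. Since $\nu \sigma = 0$ in the stable homotopy ring, a classical relation invoked earlier in the manuscript (for instance in the proof of Lemma \ref{lem:d2-G3}), this product is zero. Hence $\sigma \theta_4$ is an element of $\{x\}$ that $\nu$ kills, and Lemma \ref{lem:hidden-not-exist} immediately gives the conclusion.

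I expect no genuine obstacle: the argument is a routine instance of the ``annihilated-representative'' technique used repeatedly throughout Section \ref{subsctn:nu-lemmas}. The only points requiring care are confirming that $\sigma \theta_4$ genuinely lies in $\{x\}$ (rather than merely intersecting a Toda bracket associated to $x$) and that the relation $\nu \sigma = 0$ transfers to the motivic setting, both of which are already established in the excerpt. Thus the proof will be a single short paragraph mirroring, for example, the proof of Lemma \ref{lem:2-h2g2}.
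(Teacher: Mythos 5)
Your proposal is correct and is essentially the paper's own argument: both rest on the fact from Table \ref{tab:extn-refs} that the classical hidden $\sigma$ extension from $h_4^2$ to $x$ puts $\sigma \theta_4$ in $\{x\}$ motivically, whence $\nu \sigma = 0$ forces an element of $\{x\}$ to be annihilated by $\nu$, ruling out a hidden $\nu$ extension. The only difference is cosmetic — you make explicit the appeal to $\nu\sigma = 0$ and to Lemma \ref{lem:hidden-not-exist}, which the paper leaves implicit in the phrase ``so $x$ cannot support a hidden $\nu$ extension.''
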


\begin{proof}
Recall from Table \ref{tab:extn-refs} that
there is a classical hidden $\sigma$ extension from $h_4^2$ to $x$.
Therefore, $\sigma \theta_4$ belongs to $\{ x\}$ motivically as well,
so $x$ cannot support a hidden $\nu$ extension.
\end{proof}

\begin{lemma}
There is no hidden $\nu$ extension on $h_0^2 h_3 h_5$.
\end{lemma}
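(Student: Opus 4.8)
The plan is to reduce the statement to Lemma \ref{lem:hidden-not-exist} by showing that the element $\alpha$ of $\pi_{38,20}$ detected by $h_0^2 h_3 h_5$ satisfies $\nu\alpha = 0$. First I would read off from the $E_\infty$-chart in \cite{Isaksen14a} the nonzero classes lying in degree $(41,f,22)$ with $f \geq 5$; these are the only a priori possible targets of a hidden $\nu$ extension on $h_0^2 h_3 h_5$. Note that condition (1) of Definition \ref{defn:hidden} is automatic here, since $h_2 h_3 = 0$ in $\Ext$ forces $h_2 \cdot h_0^2 h_3 h_5 = 0$, and condition (3) of Definition \ref{defn:hidden} lets me immediately discard any candidate that is $h_2$-divisible. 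I expect the surviving candidates to be the $\tau$-power multiples of $g$-type classes in the $41$-stem together with a small number of exceptional classes.

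The key input is Remark \ref{rem:eta-h0^2h3h5}, which records that every element of $\{h_0^2 h_3 h_5\}$ realizes to zero in $\pi_{*}(\tmf)$; hence $\nu\alpha$ also realizes to zero in $\pi_*(\tmf)$. A second, purely internal, constraint comes from the relation $\eta\nu = 0$: if $\nu\alpha$ were nonzero it would be annihilated by $\eta$. I would then eliminate the candidate targets one at a time using whichever of the two constraints is available. The $g$-power candidates carry a nonzero and visible $\eta$-multiplication by the $h_1$-local nonvanishing results of \cite{GI14}, so they are excluded by the $\eta\nu = 0$ constraint; any candidate realizing nontrivially in $\pi_*(\tmf)$ (compare \cite{Bauer08}) is excluded by the $\tmf$ realization constraint. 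If the enumeration shows that every nonzero class in degree $(41,\geq 5,22)$ fails at least one of ``$\eta$-annihilated'' and ``$\tmf$-trivial,'' then $\nu\alpha$, which satisfies both, must vanish, and Lemma \ref{lem:hidden-not-exist} applied with $\beta = \alpha$ completes the proof.

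The main obstacle will be controlling the indeterminacy of the cosets $\{c\}$ of the candidate targets and verifying that the list of candidates is complete: the realization and $\eta$-multiplication arguments cleanly kill the lowest-filtration representative of each coset, but one must check that no higher-filtration summand in the $41$-stem secretly supplies an $\eta$-annihilated, $\tmf$-trivial class that could carry $\nu\alpha$. This is exactly the delicate point, since Lemma \ref{lem:hidden-not-exist} requires an honest $\nu$-annihilated representative rather than merely the statement that $\nu\alpha$ avoids one particular coset; resolving it forces reliance on the $\tmf$ realization of Remark \ref{rem:eta-h0^2h3h5} rather than on an argument internal to the motivic Adams spectral sequence.
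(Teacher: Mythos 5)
Your overall strategy is sound and, once the chart enumeration is actually carried out, it does close: the only classes in the $41$-stem of weight $22$ and Adams filtration at least $5$ are $P h_1^2 h_5$ and $z$, and your two constraints eliminate both. But note where your route differs from the paper's and where your supporting claims need repair. Neither candidate is a ``$\tau$-power multiple of a $g$-type class,'' and \cite{GI14} is not needed (indeed $P h_1^2 h_5$ is not $h_1$-local, since $P h_1^5 h_5 = 0$ already in the May spectral sequence): the relevant fact is simply that $h_1 \cdot P h_1^2 h_5 = P h_1^3 h_5$ is nonzero in the $E_\infty$-page, so every element of $\{P h_1^2 h_5\}$ supports an $\eta$-multiplication and is ruled out by $\eta\nu = 0$ --- exactly the paper's argument for that candidate. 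For $z$, the paper stays with the same constraint: by Lemma \ref{lem:eta-th1g} there is a hidden $\eta$ extension from $z$ to $\tau d_0^3$, so every element of $\{z\}$ also supports an $\eta$-multiplication, and the proof ends without any reference to $\tmf$. Your alternative disposal of $z$ --- combining Remark \ref{rem:eta-h0^2h3h5} (so $\nu\alpha$ realizes to zero in $\tmf$) with the claim that every element of $\{z\}$ realizes nontrivially --- also works, but it needs the extra input that $z$ has nonzero image in the $\tmf$ Adams $E_\infty$-page, equivalently that $\eta\kappabar^2 \neq 0$ in $\pi_{41}\tmf$ \cite{Bauer08}; so your closing assertion that reliance on $\tmf$ realization is forced is backwards --- the hidden $\eta$ extension on $z$ keeps the argument internal. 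Two smaller corrections: condition (3) of Definition \ref{defn:hidden} constrains the source of a hidden extension, not the target, so it cannot be used to discard $h_2$-divisible targets (harmlessly so here, since neither candidate is $h_2$-divisible); and the indeterminacy worry in your last paragraph is moot for $z$, which sits in the top filtration of its degree, so $\{z\}$ has no higher-filtration ambiguity.
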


\begin{proof}
The only other possibility is that there is a hidden $\nu$ extension
from $h_0^2 h_3 h_5$ to $P h_1^2 h_5$ or to $z$.
However, 
from Lemma \ref{lem:eta-th1g},
both $\{ P h_1^2 h_5 \}$ and $\{ z\}$ support multiplications 
by $\eta$.
Therefore, neither $P h_1^2 h_5$ nor $z$ can be the target of a 
hidden $\nu$ extension.
\end{proof}

\begin{lemma}
\label{lem:nu-h1h3h5}
\mbox{}
\begin{enumerate}
\item
There is no hidden $\nu$ extension on $h_1 h_3 h_5$.
\item
There is no hidden $\nu$ extension on $h_3 d_1$.
\item
There is no hidden $\nu$ extension on $\tau^2 c_1 g$.
\item
There is no hidden $\nu$ extension on $h_3 g_2$.
\end{enumerate}
\end{lemma}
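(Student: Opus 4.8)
The plan is to dispatch all four statements with the single criterion of Lemma~\ref{lem:hidden-not-exist}: to show that $b$ supports no hidden $\nu$ extension, it suffices to produce one element $\beta$ of $\{b\}$ with $\nu\beta = 0$. In each case $\{b\}$ visibly contains a multiple of $\sigma$ or of $\eta$, and both $\nu\sigma$ and $\nu\eta$ vanish: the former lives in the $10$-stem and is zero by \cite{Toda62} (this relation is already used in the proof of Lemma~\ref{lem:d2-G3}), while the latter lies in the trivial $4$-stem.

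For parts (1), (2), and (4) I would use $\nu\sigma = 0$. In (1), $\eta_5$ lies in $\{h_1 h_5\}$ by definition, so $\sigma\eta_5$ is detected by $h_1 h_3 h_5$, and $\nu(\sigma\eta_5) = (\nu\sigma)\eta_5 = 0$; Lemma~\ref{lem:hidden-not-exist} then rules out a hidden $\nu$ extension. Part (2) is identical with $\sigma\{d_1\}$, which is detected by $h_3 d_1$. Part (4) is identical with $\sigma\{g_2\}$, which is detected by $h_3 g_2$; here one first notes that $g_2$ is a permanent cycle by Lemma~\ref{lem:d5-g2}, so $\{g_2\}$ is nonempty and $\sigma\{g_2\}$ makes sense.

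For part (3) the analogous $\sigma$-multiple is unavailable, so I would instead invoke $\nu\eta = 0$ together with the hidden $\eta$ extension from $h_0^2 h_3 h_5$ to $\tau^2 c_1 g$ established in Lemma~\ref{lem:eta-h0^2h3h5}. That lemma furnishes an element $\gamma$ of $\{h_0^2 h_3 h_5\}$ with $\eta\gamma$ in $\{\tau^2 c_1 g\}$; setting $\beta = \eta\gamma$ gives an element of $\{\tau^2 c_1 g\}$ with $\nu\beta = (\nu\eta)\gamma = 0$, and Lemma~\ref{lem:hidden-not-exist} applies once more.

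The argument is routine once the right representatives are identified, and the only point demanding care is part (3). The subtlety is that $\{\tau^2 c_1 g\}$ also contains $\tau\sigmabar\kappabar$ (indeed $\tau\sigmabar\kappabar = \eta\{h_0^2 h_3 h_5\}$ by Remark~\ref{rem:eta-h0^2h3h5}), and $\nu$ might a priori act non-trivially on this class; what saves the argument is precisely that the representative produced by Lemma~\ref{lem:eta-h0^2h3h5} is manifestly divisible by $\eta$, so no direct computation of $\nu\cdot\tau\sigmabar\kappabar$ is needed. For the other three parts, the one thing to verify is that the named products are genuinely detected by the stated $E_\infty$ classes rather than by something of higher filtration, which is immediate since $h_3$ detects $\sigma$ and $\eta_5$, $\{d_1\}$, $\{g_2\}$ are detected by $h_1 h_5$, $d_1$, $g_2$ respectively.
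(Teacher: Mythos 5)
Your proposal is correct, and for parts (1), (2), and (4) it is exactly the paper's argument: each source detects a $\sigma$-multiple ($\sigma\eta_5$, $\sigma\{d_1\}$, $\sigma\{g_2\}$), and $\nu\sigma=0$ kills any hidden $\nu$ extension via Lemma \ref{lem:hidden-not-exist}. Where you genuinely diverge is part (3). The paper keeps the argument uniform by invoking the $\Ext$ relation $h_3 q = \tau^2 c_1 g$ (from Bruner's machine computations), so that $\tau^2 c_1 g$ also detects a $\sigma$-multiple, namely $\sigma\{q\}$, and the same one-line argument applies. You instead exploit the hidden $\eta$ extension from $h_0^2 h_3 h_5$ to $\tau^2 c_1 g$ (Lemma \ref{lem:eta-h0^2h3h5}, proved earlier, so there is no circularity) to produce an element of $\{\tau^2 c_1 g\}$ of the form $\eta\gamma$, which $\nu$ annihilates. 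Both are valid; the paper's route buys uniformity at the cost of importing an $\Ext$ relation, while yours reuses a hidden-extension result already in hand and needs only $\nu\eta=0$. One precision worth fixing: motivically over $\C$ the $4$-stem is not trivial, since $\eta^4\neq 0$ is detected by $h_1^4$ in weight $4$; what vanishes is the relevant group $\pi_{4,3}$ containing $\nu\eta$ (equivalently, $h_1h_2=0$ in $\Ext$ and $\tau h_1^4=0$, with nothing in higher filtration in that tridegree), so your conclusion stands but the justification "the trivial $4$-stem" should be stated in the bigraded form. Your closing observation about detection is also slightly underargued as written: the reason $\sigma\eta_5$, $\sigma\{d_1\}$, $\sigma\{g_2\}$ lie in the stated cosets is that the corresponding products $h_3\cdot h_1h_5$, $h_3\cdot d_1$, $h_3\cdot g_2$ are nonzero on the $E_\infty$-page, which forces the products of homotopy elements to be detected in exactly that filtration.
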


\begin{proof}
In each case, the possible source of the hidden extension
detects an element that is divisible by $\sigma$.
Therefore, each possible source cannot support a hidden
$\nu$ extension.

Note that 
$h_3 q = \tau^2 c_1 g$ on the $E_2$-page \cite{Bruner97}.
\end{proof}

\begin{lemma}
\label{lem:nu-h5c0}
There is no hidden $\nu$ extension on $h_5 c_0$.
\end{lemma}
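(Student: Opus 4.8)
The plan is to exhibit a single element of $\{ h_5 c_0 \}$ that is annihilated by $\nu$ and then quote Lemma \ref{lem:hidden-not-exist}, which immediately rules out a hidden $\nu$ extension on $h_5 c_0$ without my having to enumerate the candidate targets in the $42$-stem. The element I would use is the same one that drove the proof of Lemma \ref{lem:2-h5c0}: Table \ref{tab:Toda} shows that $\langle \epsilon, 2, \theta_4 \rangle$ intersects $\{ h_5 c_0 \}$, so I fix an element $\alpha$ lying in both $\langle \epsilon, 2, \theta_4 \rangle$ and $\{ h_5 c_0 \}$ and compute $\nu \alpha$.

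The key step is a shuffle. The bracket $\langle \epsilon, 2, \theta_4 \rangle$ is defined because $2\epsilon = 0$ and $2\theta_4 = 0$; and because $\nu \epsilon = 0$, the bracket $\langle \nu, \epsilon, 2 \rangle$ is defined as well. The shuffling relation then gives $\nu \langle \epsilon, 2, \theta_4 \rangle = \langle \nu, \epsilon, 2 \rangle \theta_4$. Now $\langle \nu, \epsilon, 2 \rangle$ lies in $\pi_{12,7}$: since $\nu$, $\epsilon$, and $2$ have weights $2$, $5$, and $0$, the bracket sits in the $12$-stem with weight $7$. This group is zero, so $\langle \nu, \epsilon, 2 \rangle \theta_4 = 0$ and hence $\nu \alpha = 0$. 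With $\alpha \in \{ h_5 c_0 \}$ and $\nu \alpha = 0$ in hand, Lemma \ref{lem:hidden-not-exist} finishes the argument.

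The two auxiliary facts feeding the shuffle are where the real care is needed, and I would flag them as the main (if modest) obstacle. First, $\nu \epsilon = 0$ must hold in order for $\langle \nu, \epsilon, 2 \rangle$ to be genuinely defined; this is one of Toda's classical relations \cite{Toda62}, and it transfers to the motivic setting since the weights are compatible and inverting $\tau$ recovers the classical statement (note also that $h_2 c_0 = 0$ in $\Ext$, so $\nu\epsilon$ is forced into high filtration). Second, I need the vanishing of the product $\langle \nu, \epsilon, 2 \rangle \theta_4$. The cleanest route is to verify $\pi_{12,7} = 0$ from the $E_\infty$-page computed in Chapter \ref{ch:Adams-diff}; the only nonzero classes in the $12$-stem are $\eta$-power classes, which occur in weight $12$, not $7$. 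Even if one worries about a low-weight $\tau$-multiple of an $\eta$-power class, it is harmless here, since $\tau \eta^2 \theta_4 = 0$ (used already in Lemma \ref{lem:2-h1h5}) forces any such class to annihilate $\theta_4$. In either case $\langle \nu, \epsilon, 2 \rangle \theta_4 = 0$, so the indeterminacy in the shuffle causes no difficulty.
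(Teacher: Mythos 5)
Your proposal is correct and is essentially the paper's own proof: the paper likewise takes an element of $\langle \epsilon, 2, \theta_4 \rangle$ lying in $\{ h_5 c_0 \}$ from Table \ref{tab:Toda}, applies the same shuffle $\nu \langle \epsilon, 2, \theta_4 \rangle = \langle \nu, \epsilon, 2 \rangle \theta_4$, quotes the vanishing of $\langle \nu, \epsilon, 2 \rangle$ (recorded in Table \ref{tab:Toda}), and concludes with Lemma \ref{lem:hidden-not-exist}. One small inaccuracy in your supporting discussion: the motivic $12$-stem is not concentrated in weight $12$ (it also contains $\eta^3 \mu_9$, detected by $P h_1^4$ in weight $8$, and $\eta^3 \epsilon$, detected by $h_1^4 c_0$ in weight $9$), but since none of these classes sit in weight $7$, your conclusion that $\pi_{12,7} = 0$ -- and hence that the bracket vanishes -- still holds, and your fallback argument via $\tau \eta^2 \theta_4 = 0$ covers any residual concern.
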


\begin{proof}
First, Table \ref{tab:Toda} shows that
$\{ h_5 c_0 \}$ contains $\langle \epsilon, 2, \theta_4 \rangle$.
Then shuffle to obtain
\[
\nu \langle \epsilon, 2, \theta_4 \rangle =
\langle \nu, \epsilon, 2 \rangle \theta_4.
\]
Since $\langle \nu, \epsilon, 2 \rangle$ is zero,
there is an element of $\{ h_5 c_0 \}$ that is annihilated by $\nu$.
Lemma \ref{lem:hidden-not-exist} finishes the proof.
\end{proof}

\begin{lemma}
\label{lem:nu-u}
\mbox{}
\begin{enumerate}
\item
There is a hidden $\nu$ extension from $u$ to $\tau d_0^3$.
\item
There is a hidden $\nu$ extension from $\tau w$ to $\tau^2 d_0 e_0^2$.
\end{enumerate}
\end{lemma}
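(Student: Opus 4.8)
The plan is to prove part (1) first and then obtain part (2) by a parallel argument. For part (1), I would begin by recording the homotopy meaning of the target: by the hidden $\tau$ extension from $\tau h_1 g^2$ to $z$ (Lemma \ref{lem:t-th1g^2}) together with the hidden $\eta$ extension from $z$ to $\tau d_0^3$ (Lemma \ref{lem:eta-th1g}), the class $\tau d_0^3$ detects $\tau \eta^2 \kappabar^2$. The goal is therefore to produce an element of $\{u\}$ whose product with $\nu$ detects $\tau \eta^2 \kappabar^2$, so that Lemma \ref{lem:hidden-exist}, combined with the fact that $h_2 u = 0$ on the $E_\infty$-page, yields a genuine hidden extension in the sense of Definition \ref{defn:hidden}.

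The key input I would exploit is the relation $\eta \{u\} = 2 \kappabar^2$, which is implicit in the identification $\{\tau h_1 u\} = 2 \tau \kappabar^2$ from Lemma \ref{lem:bracket-2kappabar^2} and Table \ref{tab:Adams-2}. Multiplying by $\nu$ and invoking the fundamental relation $\tau \eta^3 = 4 \nu$ (used for instance in the proof of Lemma \ref{lem:2-h0h2g}) ties $\nu \{u\}$ to the $\eta$-tower on $\kappabar^2$ whose top is detected by $\tau d_0^3$. Concretely, I would set up a Toda bracket — most naturally $\langle \nu, \eta, \{u\} \rangle$, or a shuffle through $\tau \eta^2 = \langle 2, \eta, 2 \rangle$ — and evaluate it with Moss's Convergence Theorem \ref{thm:Moss}, the relevant Massey product being read off the $E_2$-page. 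As a cross-check (or alternative), since inverting $\tau$ recovers the classical picture by Proposition \ref{prop:compare}, I would consult Table \ref{tab:extn-refs} for the classical hidden $\nu$ extension from $u$ to $d_0^3$ and lift it, the single factor of $\tau$ being forced by the weight bookkeeping, exactly as in the passage from Lemma \ref{lem:2-h0h2g} to Lemma \ref{lem:nu-h0^2g}.

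For part (2), I would run the same machine in the 45/48-stems, identifying $\tau^2 d_0 e_0^2$ through the $\kappa$- and $\kappabar$-multiplied analogues of the $\eta$-towers above (using Lemma \ref{lem:t-th1g^2} and the hidden extensions around $\tau w$ recorded in Lemma \ref{lem:2-e0r}), and then either lifting the classical analogue from Table \ref{tab:extn-refs} or propagating the homotopy relation of part (1) by an appropriate stem-$6$ factor and matching Adams filtrations. The main obstacle in both parts is not the existence of \emph{some} $\nu$-multiple landing in the target, but the precise accounting: one must pin down the exact power of $\tau$, control the indeterminacy of the brackets, and — most delicately — rule out that the product is absorbed into strictly higher Adams filtration (a ``crossing'' phenomenon), since the factor-of-two ambiguities arising from $\tau \eta^3 = 4\nu$ against $\eta\{u\} = 2\kappabar^2$ make it easy either to lose a genuine extension or to manufacture a spurious one. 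Guarding against this, via Lemma \ref{lem:hidden-not-exist} on the non-existence side and Lemma \ref{lem:hidden-exist} on the existence side, is where the real work lies.
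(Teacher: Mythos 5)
Your reading of the target is correct --- $\tau d_0^3$ does detect $\tau \eta^2 \kappabar^2$ --- but each of the three mechanisms you propose for relating it to $\nu \{u\}$ breaks down, and the breakdowns are not repairable details. (i) The bracket $\langle \nu, \eta, \{u\} \rangle$ that you want to feed into Moss's Convergence Theorem \ref{thm:Moss} is not defined: by the very relation you cite, $\eta \alpha$ is detected by $h_1 u$ for every $\alpha$ in $\{u\}$, and $h_1 u$ is non-zero on the $E_\infty$-page (it detects $2\kappabar^2$), so the required vanishing $\eta \alpha = 0$ fails for every representative. (ii) The arithmetic you sketch is vacuous: multiplying the relation ``$\eta\{u\} = 2\kappabar^2$'' by $\nu$ yields $0 = 2\nu\kappabar^2$ because $\eta\nu = 0$, while combining it with $\tau \eta^3 = 4\nu$ yields $4\nu\{u\} = 2\tau\eta^2\kappabar^2 = 0$ because $2\eta = 0$; neither equation constrains $\nu\{u\}$. (iii) The fallback of lifting a classical extension fails inside this paper's framework: Table \ref{tab:extn-refs} contains no hidden $\nu$ extension from $u$ to $d_0^3$, and none from $w$ to $d_0 e_0^2$ --- these two extensions are among the facts that must be proved here, not quoted. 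The same objections apply verbatim to your plan for part (2).

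The paper's proof avoids all of this by working from the target side rather than the source side. The legal bracket is $\langle \eta, \nu, \{\tau^2 e_0^2\} \rangle$, which is defined because $\eta\nu = 0$ and $\nu$ annihilates $\{\tau^2 e_0^2\}$; the shuffle $\nu \langle \eta, \nu, \{\tau^2 e_0^2\} \rangle = \langle \nu, \eta, \nu \rangle \{\tau^2 e_0^2\} = (\epsilon + \eta\sigma)\{\tau^2 e_0^2\}$, together with the hidden $\tau$ extension from $c_0 e_0^2$ to $d_0^3$ (Table \ref{tab:Adams-tau}), shows that $\{\tau d_0^3\}$ is divisible by $\nu$. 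The source of the resulting hidden extension is then pinned down by elimination: Lemmas \ref{lem:nu-h1h3h5} and \ref{lem:nu-h5c0} rule out every candidate other than $u$. Part (2) is the identical shuffle applied to $\tau\kappabar^2$, with Lemma \ref{lem:epsilon-kappabar} identifying $\tau\epsilon\kappabar^2$ as an element of $\{\tau^2 d_0 e_0^2\}$, and with Lemmas \ref{lem:nu-Ph0h2h5} and \ref{lem:nu-h3^2h5} supplying the elimination. Note the two ingredients your proposal lacks: a Toda bracket whose entries genuinely compose to zero (Toda's relation $\langle \nu, \eta, \nu \rangle = \epsilon + \eta\sigma$ is what makes the $\nu$-divisibility visible), and an elimination step over all possible sources, since divisibility of the target alone does not tell you that the extension starts at $u$ (respectively $\tau w$) rather than at some other class.
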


\begin{proof}
First shuffle to compute that
\[
\nu \langle \eta, \nu, \{\tau^2 e_0^2 \} \rangle =
\langle \nu, \eta, \nu \rangle \{ \tau^2 e_0^2 \} =
(\epsilon + \eta \sigma) \{ \tau^2 e_0^2 \}.
\]
This last expression equals $\tau^2 \{ c_0 e_0^2 \}$,
which equals $\{ \tau d_0^3 \}$ because of the 
hidden $\tau$ extension from $c_0 e_0^2$ to $d_0^3$
given in Table \ref{tab:Adams-tau}.

Therefore, $\{ \tau d_0^3 \}$ is divisible by $\nu$.
Lemmas \ref{lem:nu-h1h3h5} and \ref{lem:nu-h5c0}
eliminate most of the possibilities.  
The only remaining possibility is that there is a hidden
$\nu$ extension on $u$.  This establishes the first claim.

The proof of the second claim is similar.  Shuffle to compute that
\[
\nu \langle \eta, \nu, \tau \kappabar^2 \rangle =
\langle \nu, \eta, \nu \rangle \tau \kappabar^2 =
(\epsilon + \eta \sigma) \tau \kappabar^2 =
\tau \epsilon \kappabar^2.
\]
By 
Lemma \ref{lem:epsilon-kappabar}, 
this last expression is detected by
$\tau^2 d_0 e_0^2$.

Therefore, $\{ \tau^2 d_0 e_0^2 \}$ is divisible by $\nu$.
Because of Lemmas 
\ref{lem:nu-Ph0h2h5} and
\ref{lem:nu-h3^2h5},
the only possibility is that there is a hidden $\nu$ extension
from $\tau w$ to $\tau^2 d_0 e_0^2$.
\end{proof}

\begin{lemma}
\label{lem:nu-Ph0h2h5}
\mbox{}
\begin{enumerate}
\item
There is no hidden $\nu$ extension on $P h_0 h_2 h_5$.
\item
There is no hidden $\nu$ extension on $h_0 g_2$.
\item
There is no hidden $\nu$ extension on $h_0 h_5 d_0$.
\end{enumerate}
\end{lemma}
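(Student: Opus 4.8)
The plan is to prove each of the three non-existence statements with the machinery of Section~\ref{subsctn:extn-defn}, relying on Lemma~\ref{lem:hidden-not-exist} together with two complementary obstructions to a hidden $\nu$ extension. All three sources carry a factor of $h_0$, so each detects a multiple of $2$; this is the feature I intend to exploit. The first obstruction is that a target $c$ cannot receive a hidden $\nu$ extension if $c$ supports a hidden $\eta$ extension, since any $\nu$-multiple $\nu\beta$ satisfies $\eta\nu\beta = 0$. The second obstruction is Lemma~\ref{lem:hidden-not-exist}: if the coset $\{b\}$ of the source contains an element annihilated by $\nu$, then there is no hidden $\nu$ extension on $b$. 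The crucial input for producing such $\nu$-annihilated elements will be the relations $\nu\sigma = 0$, $\eta\nu = 0$, and $\tau\eta^3 = 4\nu$ (the last exactly as used in Lemma~\ref{lem:2-h0h2g}).

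The cleanest case is (3). By the construction of $\theta_{4.5}$ in Section~\ref{sctn:notation}, the coset $\{h_0 h_5 d_0\}$ contains $4\theta_{4.5}$, since $4$ times the correcting terms (which lie in $\{h_5 d_0\}$ and $\{\tau h_1 g_2\}$) is detected in strictly higher filtration. I would then compute $\nu\cdot 4\theta_{4.5} = 4\nu\,\theta_{4.5} = \tau\eta^3\theta_{4.5}$, and invoke the relation $\eta^3\theta_{4.5} = 0$ already used in the proof of Lemma~\ref{lem:2-h0B2} (it descends from the classical vanishing $\eta^3\alpha = 0$ for $\alpha\in\{h_3^2 h_5\}$). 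Thus $4\theta_{4.5}$ is annihilated by $\nu$, and Lemma~\ref{lem:hidden-not-exist} gives the claim for $h_0 h_5 d_0$.

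For (1) and (2) the strategy is the same but requires first identifying the candidate targets in the relevant stems and filtrations. For $Ph_0 h_2 h_5$ I expect the only candidate target in the appropriate filtration to be $\tau w$; since $\tau w$ supports a hidden $\eta$ extension (Table~\ref{tab:extn-refs}, as already exploited in Lemma~\ref{lem:2-h0^2h5d0}), the first obstruction rules it out. For $h_0 g_2$, after listing the candidate targets I would either eliminate each via the $\eta$-support obstruction, or, writing a representative of $\{h_0 g_2\}$ as $2\alpha$ with $\alpha\in\{g_2\}$, exhibit a suitable representative that is killed by $\nu$ and finish with Lemma~\ref{lem:hidden-not-exist}.

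The main obstacle is not any single computation but the honest enumeration of candidate targets in these crowded stems and the verification that each is incompatible with a hidden $\nu$ extension; this is where the argument is most likely to fork into several subcases. A secondary point requiring care is the logical order: Lemma~\ref{lem:nu-u} cites the present lemma (to eliminate $h_0 h_5 d_0$ and, via Lemma~\ref{lem:nu-h3^2h5}, $h_3^2 h_5$ as sources of the hidden $\nu$ extension hitting $\tau^2 d_0 e_0^2$), so I must ensure that the proof here is entirely independent of Lemma~\ref{lem:nu-u} to avoid circularity. The argument for (3) sketched above has this property, and I would hold the arguments for (1) and (2) to the same standard.
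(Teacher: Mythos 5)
Your arguments for parts (1) and (3) are essentially sound: for (1), the only possible target of weight $24$ and filtration at least $9$ in the $45$-stem is $\tau w$, and since $\tau w$ supports a hidden $\eta$ extension it cannot receive a hidden $\nu$ extension (any $\nu$-multiple is killed by $\eta$); for (3), the representative $4\theta_{4.5}\in\{h_0 h_5 d_0\}$ with $\nu\cdot 4\theta_{4.5}=\tau\eta^3\theta_{4.5}=0$ together with Lemma \ref{lem:hidden-not-exist} does the job, and your circularity check against Lemma \ref{lem:nu-u} is the right one. The genuine gap is part (2), where you execute neither of your proposed routes, and the first route in fact fails: the candidate targets for a hidden $\nu$ extension on $h_0 g_2$ are $e_0 r$ and $h_1 u'$ in the $47$-stem (weight $26$, filtration at least $7$). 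The $\eta$-support obstruction eliminates $e_0 r$, but not $h_1 u'$: by Lemma \ref{lem:2-e0r}, $h_1 u'$ is the \emph{target} of a hidden $2$ extension, so $\{h_1 u'\}$ contains an element of the form $2\beta$, which is annihilated by $\eta$; hence $h_1 u'$ supports no $\eta$ extension and your obstruction says nothing about it. The second route (a $\nu$-annihilated representative of $\{h_0 g_2\}$) is exactly the nontrivial content you would need to produce, and nothing in the sketch produces it; the element of $\{h_2 g_2\}$ killed by $2$ exhibited in Lemma \ref{lem:2-h2g2} is $\sigma\{f_1\}$, which there is no reason to identify with $\nu\alpha$ for $\alpha\in\{g_2\}$.

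What you are missing is the transfer argument that the paper uses, and which you yourself flag (``each source detects a multiple of $2$'') without exploiting. If $\alpha\in\{x\}$, then $2\alpha\in\{h_0 x\}$ and $\nu(2\alpha)=2(\nu\alpha)$, where $\nu\alpha$ is detected by $h_2 x$ whenever $h_2 x$ is non-zero in $E_\infty$. Thus a hidden $\nu$ extension on $h_0 x$ would exhibit a hidden $2$ extension on $h_2 x$. Taking $x=g_2$, $h_5 d_0$, and $P h_2 h_5$ (for the last, use the $\Ext$ relation $h_2\cdot P h_2=h_0^2 d_0$, so that $h_2\cdot P h_2 h_5=h_0^2 h_5 d_0$), all three parts follow at once from Lemmas \ref{lem:2-h2g2}, \ref{lem:2-h2h5d0}, and \ref{lem:2-h0^2h5d0}, with no enumeration of targets needed. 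This is the paper's proof, and it is the only argument among those on the table that closes case (2); your methods give correct, somewhat more computational, alternatives for (1) and (3) only.
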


\begin{proof}
These follow immediately from Lemmas \ref{lem:2-h0^2h5d0},
\ref{lem:2-h2g2}, and \ref{lem:2-h2h5d0}.
\end{proof}

\begin{lemma}
\mbox{}
\label{lem:nu-h3^2h5}
\begin{enumerate}
\item
There is a hidden $\nu$ extension from $h_3^2 h_5$ to $B_2$.
\item
There is a hidden $\nu$ extension from $h_0 h_3^2 h_5$ to $h_0 B_2$.
\end{enumerate}
\end{lemma}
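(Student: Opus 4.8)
The plan is to establish part (1) by exhibiting an element of $\{B_2\}$ as a $\nu$-multiple of an element of $\{h_3^2 h_5\}$, and then to deduce part (2) formally by multiplying by $2$. For part (2), once part (1) supplies an element $\theta$ of $\{h_3^2 h_5\}$ with $\nu\theta$ in $\{B_2\}$, I would note that $2\theta$ lies in $\{h_0 h_3^2 h_5\}$ (since $h_0 h_3^2 h_5$ is nonzero) and that $\nu(2\theta) = 2\nu\theta$ is detected by $h_0 B_2$, provided $h_0 B_2$ survives to $E_\infty$ and honestly detects $2\nu\theta$. The nonvanishing of $2\nu\theta$ is exactly the input recorded in Lemma \ref{lem:2-h0B2}, where $4\nu\theta = \tau\eta^3\theta = 0$ shows the relevant $h_0$-tower terminates; so part (2) is essentially automatic once part (1) is in hand, modulo reading $h_0 B_2 \neq 0$ off the chart in \cite{Isaksen14a}.

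For part (1), the main tool will be Moss's Convergence Theorem \ref{thm:Moss} in the contrapositive form used for the other exotic $\nu$-extensions (compare Lemma \ref{lem:nu-h2c1}). I would seek a Massey product of the shape $\langle h_2, h_3^2 h_5, x\rangle$ computable on the May $E_\infty$-page, or on some $E_r$-page via May's Convergence Theorem \ref{thm:3-converge} and a suitable May differential, which consists entirely of classes supporting Adams differentials and hence contains no permanent cycle. By the contrapositive of Moss's theorem the associated Toda bracket $\langle \nu, \theta_{4.5}, \{x\}\rangle$ is then undefined; if $\theta_{4.5}\{x\}$ can be arranged to vanish, the failure must come from $\nu\theta_{4.5} \neq 0$. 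A count of degree and weight then forces $\nu\theta_{4.5}$, or $\nu\theta$ for a suitable correction $\theta$, to be detected by $B_2$, the only available candidate in the relevant tridegree. An efficient alternative is to import the analogous classical hidden $\nu$-extension from Table \ref{tab:extn-refs} through Proposition \ref{prop:compare}, the weight bookkeeping again pinning the motivic target to $B_2$.

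The main obstacle is twofold. First, the naive choice $x = \eta$ — which succeeded in Lemma \ref{lem:nu-h2c1} because $\eta\nu = 0$ made the spurious edge product vanish automatically — fails here, since $\eta\theta_{4.5}$ is nonzero, lying in $\{B_1\}$ by Lemma \ref{lem:eta-h3^2h5}. One must therefore choose the third bracket entry more carefully, or force the other edge product to vanish by hand, and then verify that the relevant Massey product is genuinely free of permanent cycles by locating the correct May differential and controlling its indeterminacy. Second, there is a crossing $\nu$-extension from $h_5 d_0$ to $h_2 h_5 d_0$ (Example \ref{ex:hidden-cross-2}): because $\{h_3^2 h_5\}$ also contains $\theta_{4.5} + \beta$ with $\beta$ in $\{h_5 d_0\}$ and $\nu\beta$ detected by $h_2 h_5 d_0$ in strictly lower filtration, the method can only produce \emph{some} element $\theta$ of $\{h_3^2 h_5\}$ with $\nu\theta \in \{B_2\}$, rather than a statement about $\theta_{4.5}$ itself. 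This is precisely the ambiguity flagged in Section \ref{sctn:notation}, and it appears unavoidable with these techniques.
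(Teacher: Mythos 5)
Your reduction of part (2) to part (1) is fine and agrees with the paper, where the second extension "follows immediately from the first" (indeed, once $\nu\theta$ is detected by $B_2$, the non-hidden multiplication $h_0\cdot B_2=h_0B_2\neq 0$ in $E_\infty$ already shows $2\nu\theta$ is detected by $h_0B_2$; you do not need Lemma \ref{lem:2-h0B2} for this). But part (1) — the entire content of the lemma — is never actually proved. Your main route is the contrapositive Moss argument of Lemma \ref{lem:nu-h2c1}, and you yourself identify exactly why it breaks: the edge product $\eta\theta_{4.5}$ is nonzero, being detected by $B_1$ by Lemma \ref{lem:eta-h3^2h5}, so a bracket $\langle\nu,\theta_{4.5},\eta\rangle$ being undefined proves nothing about $\nu\theta_{4.5}$. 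You then leave the choice of a workable third entry, the identification of the relevant May differential, and the control of indeterminacy as open tasks; that is a plan with its crucial step missing, not a proof. Worse, your fallback is invalid: Table \ref{tab:extn-refs} contains no classical $\nu$ extension on $h_4^3$ — the only entries in the 45-stem are the $4$ extension to $h_0h_5d_0$ and the $\eta$ extension to $B_1$ — so there is nothing to import through Proposition \ref{prop:compare}. Indeed, Section \ref{sctn:notation} states explicitly that it is not known whether $\nu\theta_{4.5}$ lies in $\{B_2\}$; this lemma is one of the paper's new results, not a transcription of a classical one.

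The paper's proof runs Moss's theorem in the positive direction, anchoring the bracket at $\sigma^2\theta_4$ rather than at $\theta_{4.5}$. Table \ref{tab:Massey} gives $\langle h_2, h_0^2 g_2, h_0\rangle=\{B_2,\,B_2+h_0^2h_5e_0\}$ via the May differential $d_6(Y)=h_0^3g_2$, and $\sigma^2\theta_4$ belongs to $\{h_0^2g_2\}$ by the proof of Lemma \ref{lem:eta-h3^2h5}; Moss's Convergence Theorem \ref{thm:Moss} then shows that the Toda bracket $\langle\nu,\sigma^2\theta_4,2\rangle$ intersects $\{B_2\}$. On the other hand, this bracket contains $\langle\nu,\sigma^2,2\theta_4\rangle$, which contains zero because $2\theta_4=0$. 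Hence $\{B_2\}$ meets the indeterminacy of the bracket, which consists of $\nu$-multiples and $2$-multiples, and a filtration check leaves a hidden $\nu$ extension from $h_3^2h_5$ as the only possibility. Note that this argument produces precisely the statement you flagged as the best achievable: some element $\theta$ of $\{h_3^2h_5\}$ has $\nu\theta\in\{B_2\}$, with no claim about $\theta_{4.5}$ itself, consistent with the crossing extension of Example \ref{ex:hidden-cross-2}.
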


\begin{proof}
The proof is similar in spirit to the proof of 
Lemma \ref{lem:eta-h3^2h5}.
Table \ref{tab:Massey} shows that 
$\langle h_2, h_0^2 g_2, h_0 \rangle$
equals $\{ B_2, B_2 + h_0^2 h_5 e_0 \}$.
Then Moss's Convergence Theorem \ref{thm:Moss} implies that
$\langle \nu, \sigma^2 \theta_4, 2 \rangle$
intersects $\{ B_2 \}$. 
\index{Convergence Theorem!Moss}
 Here we are using
that $\sigma^2 \theta_4$ belongs to
$\{ h_0^2 g_2 \}$, as shown in the proof of Lemma \ref{lem:eta-h3^2h5}.

This bracket contains
$\langle \nu, \sigma^2, 2 \theta_4 \rangle$,
which contains zero since $2 \theta_4$ is zero.
It follows that
$\{ B_2 \}$ contains an element in the indeterminacy of
$\langle \nu, \sigma^2 \theta_4, 2 \rangle$.
The only possibility is that there is a hidden $\nu$
extension from $h_3^2 h_5$ to $B_2$.
This finishes the proof of the first hidden extension.

The second hidden extension follows immediately from the first.
\end{proof}

\begin{lemma}
There is no hidden $\nu$ extension on $B_1$.
\end{lemma}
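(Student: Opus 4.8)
The plan is to apply Lemma~\ref{lem:hidden-not-exist}, which reduces the non-existence of a hidden $\nu$ extension to exhibiting a single element of $\{ B_1 \}$ that is annihilated by $\nu$. The natural candidate comes from the work already done on the $\eta$ extension on $h_3^2 h_5$.

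First I would recall, from the construction of $\theta_{4.5}$ in Section~\ref{sctn:notation} together with Lemma~\ref{lem:eta-h3^2h5}, that $\eta \theta_{4.5}$ is contained in $\{ B_1 \}$. Thus $\{B_1\}$ contains an explicit element, namely $\eta \theta_{4.5}$, whose $\nu$ multiple I can compute directly.

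The key computation is then trivial: since $\eta \nu = 0$ in $\pi_{4,2}$ (this is the classical relation $\eta \nu = 0$ from \cite{Toda62}, which holds motivically as well), we have
\[
\nu \cdot \eta \theta_{4.5} = (\eta \nu) \theta_{4.5} = 0.
\]
Therefore $\{ B_1 \}$ contains the element $\eta \theta_{4.5}$, which is annihilated by $\nu$, and Lemma~\ref{lem:hidden-not-exist} immediately yields that there is no hidden $\nu$ extension on $B_1$.

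I do not anticipate any genuine obstacle here: the only point requiring care is that $\eta \theta_{4.5}$ really does land in $\{ B_1 \}$ (rather than merely intersecting it up to higher-filtration ambiguity), but this is exactly what the construction of $\theta_{4.5}$ and Lemma~\ref{lem:eta-h3^2h5} guarantee, so the membership is available directly and no further bracket or cofiber analysis is needed.
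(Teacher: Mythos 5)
Your proposal is correct and is essentially the paper's own argument: the paper also invokes Lemma \ref{lem:eta-h3^2h5} to produce an $\eta$-divisible element of $\{B_1\}$, which is then annihilated by $\nu$ since $\eta\nu = 0$, so no hidden $\nu$ extension can exist. Your version merely makes explicit the element $\eta\theta_{4.5}$ and the appeal to Lemma \ref{lem:hidden-not-exist}, which the paper leaves implicit.
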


\begin{proof}
We showed in Lemma \ref{lem:eta-h3^2h5} that
$\{ B_1 \}$ contains an element that is divisible by $\eta$.
Therefore, $B_1$ cannot support a hidden $\nu$ extension.
\end{proof}

\begin{lemma}
\label{lem:nu-h2B2}
\mbox{}
\begin{enumerate}
\item
There is a hidden $\nu$ extension from $h_2 B_2$ to $h_1 B_8$.
\item
There is a hidden $\nu$ extension from $\tau h_2 B_2$ to $h_1 x'$.
\end{enumerate}
\end{lemma}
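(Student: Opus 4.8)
The plan is to deduce both extensions from the hidden $\nu$ extension from $h_3^2 h_5$ to $B_2$ established in Lemma \ref{lem:nu-h3^2h5}, together with the hidden relation $\nu^3 = \eta \epsilon + \eta^2 \sigma$ recorded in Section \ref{subsctn:extn-defn}. First I would verify condition (1) of Definition \ref{defn:hidden}, namely that $h_2 \cdot h_2 B_2$ is zero in $E_\infty$: in $\Ext$ we have $h_2^2 B_2 = h_1 h_5 c_0 d_0$ by Lemma \ref{lem:h2-h2B2}, and $h_5 c_0 d_0$ is hit by the Adams differential $d_2(\tau G)$ of Lemma \ref{lem:d2-X1}, so $h_1 h_5 c_0 d_0$ is hit by $d_2(\tau h_1 G)$. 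Now let $\alpha$ be an element of $\{h_3^2 h_5\}$ with $\nu \alpha$ in $\{B_2\}$, as provided by Lemma \ref{lem:nu-h3^2h5}. Since $h_2 B_2$ is non-zero in $E_\infty$, the product $\nu \cdot \nu\alpha = \nu^2 \alpha$ is detected by $h_2 B_2$, so $\nu^2\alpha$ lies in $\{h_2 B_2\}$. Applying $\nu$ once more and using the hidden relation gives $\nu \cdot \nu^2\alpha = \nu^3 \alpha = \eta\epsilon\alpha + \eta^2 \sigma \alpha$.

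The crux is to show that this element is detected by $h_1 B_8$. By Lemma \ref{lem:epsilon-h3^2h5} the class $B_8$ detects $\epsilon\theta_{4.5}$; since $\alpha$ differs from $\theta_{4.5}$ only by classes of strictly higher Adams filtration and $\epsilon$ raises filtration, $\epsilon\alpha$ is again detected by $B_8$, whence $\eta\epsilon\alpha$ is detected by $h_1 B_8$. It then remains to check that the correction term $\eta^2\sigma\alpha$ does not interfere: $\sigma\theta_{4.5}$ is detected in Adams filtration at least $8$ by the construction of $\theta_{4.5}$ in Section \ref{sctn:notation}, so $\eta^2\sigma\alpha$ is detected in filtration strictly above that of $h_1 B_8$ (or is zero) and does not affect the leading term. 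This filtration bookkeeping, together with the ambiguity in the choice of $\alpha$, is the main obstacle. As a consistency check one may use the $\Ext$ relation $h_1 B_8 = c_0 B_1$ of Lemma \ref{lem:t-B8} together with $\eta\theta_{4.5} \in \{B_1\}$ from Lemma \ref{lem:eta-h3^2h5}, which independently predicts that $\{h_1 B_8\}$ contains $\eta\epsilon\theta_{4.5}$. Having produced $\nu^2\alpha \in \{h_2 B_2\}$ with $\nu \cdot \nu^2\alpha$ detected by $h_1 B_8$, the extension is genuine because $h_2 B_2$ is the unique class of highest filtration admitting such a $\nu$ multiplication into $\{h_1 B_8\}$, which is exactly the situation of Lemma \ref{lem:hidden-exist}.

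Finally, the second extension follows formally from the first. Multiplying by $\tau$ and using $\{\tau h_2 B_2\} = \tau\{h_2 B_2\}$ gives $\nu \cdot \{\tau h_2 B_2\} = \tau \cdot \nu\{h_2 B_2\}$, so it suffices to track the image of $h_1 B_8$ under multiplication by $\tau$. The hidden $\tau$ extension from $B_8$ to $x'$ recorded in Table \ref{tab:Adams-tau}, multiplied by $\eta$, carries $h_1 B_8$ to $h_1 x'$; hence $\nu \cdot \{\tau h_2 B_2\}$ is detected by $h_1 x'$, and the same highest-filtration reasoning identifies this as a hidden $\nu$ extension from $\tau h_2 B_2$ to $h_1 x'$. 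This chaining of a $\nu$ extension with a $\tau$ extension is entirely parallel to the arguments in Lemmas \ref{lem:nu-h0^2g} and \ref{lem:nu-th2^2g}.
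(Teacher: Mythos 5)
Your skeleton for part (1) is the paper's own (the relation $\nu^3 + \eta^2\sigma = \eta\epsilon$, Lemma \ref{lem:epsilon-h3^2h5}, the filtration of $\sigma\theta_{4.5}$), and your part (2) is exactly the paper's deduction via the hidden $\tau$ extension from $h_1 B_8$ to $h_1 x'$ in Table \ref{tab:Adams-tau}. But part (1) has a genuine gap, created by your decision to run the argument with the element $\alpha$ of Lemma \ref{lem:nu-h3^2h5} rather than with $\theta_{4.5}$. As Section \ref{sctn:notation} stresses, it is not known that $\nu\theta_{4.5}$ lies in $\{B_2\}$; all one knows is that some $\alpha$, possibly of the form $\theta_{4.5}+\beta$ with $\beta \in \{h_5 d_0\}$, has $\nu\alpha \in \{B_2\}$. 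So the properties you need for the detection step are proved only for $\theta_{4.5}$, while the property putting you inside $\{h_2 B_2\}$ holds only for $\alpha$. Your bridge --- ``$\epsilon$ raises filtration, so $\epsilon\alpha$ is again detected by $B_8$'' --- is not a valid principle for hidden extensions: $c_0 \cdot h_5 d_0 = h_5 c_0 d_0$ is killed by $d_2(\tau G)$ (Lemma \ref{lem:d2-X1}), so $\epsilon\beta$ is detected in filtration at least $9$, which is \emph{exactly} the filtration of $B_8$; a hidden $\epsilon$ extension from $h_5 d_0$ to $B_8$ cannot be excluded by filtration counting (compare Lemma \ref{lem:kappa-h3^2h5}, where the paper itself cannot decide between $h_3^2 h_5$ and $h_5 d_0$ as the source of a hidden $\kappa$ extension). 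If $\epsilon\beta$ were detected by $B_8$, then $\epsilon\alpha = \epsilon\theta_{4.5}+\epsilon\beta$ would be detected strictly above $B_8$, and your identification of $\nu^3\alpha$ with a class detected by $h_1 B_8$ collapses. The error term has the same defect: $\eta^2\sigma\alpha = \eta^2\sigma\theta_{4.5}+\eta^2\sigma\beta$, and your filtration bound says nothing about $\sigma\beta$ (note $h_3 d_0 = 0$ in $\Ext$, so $\sigma\beta$ is itself detected only in some unspecified higher filtration); moreover, even for $\theta_{4.5}$ itself, ``$\sigma\theta_{4.5}$ in filtration $\geq 8$'' gives only ``$\eta^2\sigma\theta_{4.5}$ in filtration $\geq 10$,'' which is the \emph{same} filtration as $h_1 B_8$, not strictly above it. What is actually needed, and what the paper asserts, is that $\eta^2\sigma\theta_{4.5}$ is zero.

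The repair is to give up on exhibiting an explicit element of $\{h_2 B_2\}$, which is what the paper does: work with $\theta_{4.5}$ throughout, use $\eta^2\sigma\theta_{4.5} = 0$ to get $\nu^3\theta_{4.5} = \eta\epsilon\theta_{4.5}$, which Lemma \ref{lem:epsilon-h3^2h5} says is detected by $h_1 B_8$; hence $\{h_1 B_8\}$ contains an element divisible by $\nu$, and inspection of the $E_\infty$-chart shows the only class that can serve as the source of such a hidden $\nu$ extension is $h_2 B_2$. This also removes your final appeal to Lemma \ref{lem:hidden-exist}, which is misapplied as stated: that lemma requires $\nu\{h_2 B_2\} \subseteq \{h_1 B_8\}$ for \emph{every} element of $\{h_2 B_2\}$, whereas you produce only one; what condition (3) of Definition \ref{defn:hidden} requires is precisely the chart inspection just described. (Your preliminary check that $h_2 \cdot h_2 B_2 = h_1 h_5 c_0 d_0$ dies on the $E_3$-page, via Lemmas \ref{lem:h2-h2B2} and \ref{lem:d2-X1}, is correct and harmless, though the paper does not need it explicitly.)
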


\begin{proof}
As discussed in Section \ref{sctn:notation},
$\sigma \theta_{4.5}$ is detected in Adams filtration greater than 6.
Thus,
$\eta^2 \sigma \theta_{4.5}$ is zero, even though
$\sigma \theta_{4.5}$ itself could possibly be
detected by $\tau^2 d_1 g$ or $\tau^2 e_0 m$.

Recall from Table \ref{tab:Adams-compound-extn}
that $\eta^2 \sigma + \nu^3 = \eta \epsilon$.
Therefore,
$\nu^3 \theta_{4.5}$ equals $\eta \epsilon \theta_{4.5}$.
Lemma \ref{lem:epsilon-h3^2h5} implies that
$\eta \epsilon \theta_{4.5}$ is detected by $h_1 B_8$,
so $\{ h_1 B_8 \}$ contains an element that is divisible by $\nu$.
The only possibility is that 
there must be a hidden $\nu$ extension from $h_2 B_2$ to $h_1 B_8$.
This establishes the first claim.

The second claim follows easily from the first, using the hidden $\tau$
extension from $h_1 B_8$ to $h_1 x'$ given in Table \ref{tab:Adams-tau}.
\end{proof}

\begin{lemma}
\label{lem:nu-h1G3}
\mbox{}
\begin{enumerate}
\item
There is a hidden $\nu$ extension from $h_1 G_3$ to $\tau^2 h_1 e_0^2 g$.
\item
There is a hidden $\nu$ extension from $\tau^2 e_0 m$ to $d_0 z$.
\end{enumerate}
\end{lemma}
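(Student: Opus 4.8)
The two extensions are linked by the hidden $\tau$ extensions recorded in Table \ref{tab:Adams-tau}: one runs from $h_1 G_3$ to $\tau^2 e_0 m$, and one runs from $\tau^2 h_1 e_0^2 g$ to $d_0 z$. The plan is to prove one of the two $\nu$ extensions directly and then transfer it across these $\tau$ extensions. Concretely, suppose part (1) is known, and let $\gamma$ be an element of $\{h_1 G_3\}$, so that $\nu\gamma$ lies in $\{\tau^2 h_1 e_0^2 g\}$. Since $\tau$ commutes with multiplication by $\nu$, we have $\nu(\tau\gamma) = \tau(\nu\gamma)$. The hidden $\tau$ extension on $h_1 G_3$ shows $\tau\gamma \in \{\tau^2 e_0 m\}$, while the hidden $\tau$ extension on $\tau^2 h_1 e_0^2 g$ shows $\tau(\nu\gamma) \in \{d_0 z\}$. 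Hence $\nu\{\tau^2 e_0 m\}$ meets $\{d_0 z\}$; after confirming that $h_2 \cdot \tau^2 e_0 m = 0$ on $E_\infty$ and that $\tau^2 e_0 m$ is the highest-filtration source, Lemma \ref{lem:hidden-exist} (together with Definition \ref{defn:hidden}) upgrades this to the hidden $\nu$ extension of part (2). The same computation run in reverse shows that part (1) would follow from part (2), so only one of the two needs an independent argument.

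It remains to establish that independent argument, say for part (1), which is the real content. Following the strategy of Lemma \ref{lem:nu-u}, I would not try to describe $\{h_1 G_3\}$ directly (note that $g r$ is not a permanent cycle, so the naive bracket $\langle \eta, 2, \{g r\}\rangle$ is unavailable), but instead show that the target class is divisible by $\nu$ and then eliminate all competing sources. The class $\tau^2 h_1 e_0^2 g$ detects $\eta\kappa\kappabar^2$, so the goal is to exhibit $\eta\kappa\kappabar^2$ as a $\nu$-multiple. I would seek a shuffle of the form $\nu\langle \eta, \nu, X\rangle = \langle \nu, \eta, \nu\rangle X = (\epsilon + \eta\sigma)X$, exactly the identity used in Lemma \ref{lem:nu-u}, with $X$ chosen so that $(\epsilon+\eta\sigma)X = \eta\kappa\kappabar^2$; the relation $\epsilon\kappabar = \kappa^2$ of Lemma \ref{lem:epsilon-kappabar} and the identification of $\kappa\kappabar^2$ with $\tau^4 e_0^2 g$ from Lemma \ref{lem:2-h0h5i} are the natural inputs for locating such an $X$. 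Once $\eta\kappa\kappabar^2$ is seen to be $\nu$-divisible, its nonvanishing (guaranteed by the hidden $\tau$ extension of $\tau^2 h_1 e_0^2 g$ onto $d_0 z$) plus a filtration count forces the source to have Adams filtration at most that of $h_1 G_3$, and a degree inspection of the $E_\infty$-page should leave $h_1 G_3$ as the only possibility.

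The main obstacle is finding the correct shuffle witnessing the $\nu$-divisibility of $\eta\kappa\kappabar^2$: this range of the $E_\infty$-page is dense, the relevant Toda brackets carry indeterminacy, and one must verify the hypotheses of Moss's Convergence Theorem \ref{thm:Moss} (absence of crossing differentials) for any bracket used. A secondary obstacle is the source-elimination step, where competing targets and sources must be ruled out by the filtration and degree bookkeeping typical of Lemmas \ref{lem:nu-u} and \ref{lem:nu-h2B2}. If a clean shuffle proves elusive, the fallback is to prove part (2) directly by the same divisibility-plus-elimination method applied to $d_0 z$, which detects $\tau\eta\kappa\kappabar^2$, and then recover part (1) through the $\tau$-extension transfer described above.
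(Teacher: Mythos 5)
Your transfer step is sound and is exactly what the paper does for part (2): given part (1), the two hidden $\tau$ extensions from Table \ref{tab:Adams-tau} (from $h_1 G_3$ to $\tau^2 e_0 m$ and from $\tau^2 h_1 e_0^2 g$ to $d_0 z$) immediately yield the hidden $\nu$ extension on $\tau^2 e_0 m$. The problem is part (1), which you correctly identify as the real content but for which you offer only a plan whose key ingredient is missing. You propose to witness the $\nu$-divisibility of $\eta\kappa\kappabar^2$ by a shuffle $\nu\langle\eta,\nu,X\rangle = \langle\nu,\eta,\nu\rangle X = (\epsilon+\eta\sigma)X$, but you never produce an $X$, and the inputs you cite do not obviously supply one: $X$ would have to lie in the 47-stem with $(\epsilon+\eta\sigma)X = \eta\kappa\kappabar^2$, while the relation $\epsilon\kappabar=\kappa^2$ of Lemma \ref{lem:epsilon-kappabar} converts $\epsilon$-multiples into classes like $\eta\kappa^2\kappabar$, not $\eta\kappa\kappabar^2$; these are detected by different elements ($d_0^2\cdot$-type classes versus $e_0^2 g$-type classes), so the analogy with Lemma \ref{lem:nu-u} does not carry over as stated. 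Since your fallback for part (2) is the same unproven divisibility-plus-elimination scheme applied to $d_0 z$, the proposal as written establishes neither part.

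The paper's actual argument for part (1) is quite different and bypasses both the divisibility search and the source-elimination bookkeeping. Table \ref{tab:Toda} expresses both the source and the target as Toda brackets of the same shape: $\{h_1 G_3\} = \langle \{q\}, \eta^3, \eta_4\rangle$ and $\{\tau^2 h_1 e_0^2 g\} = \langle \{\tau^2 h_1 e_0^2\}, \eta^3, \eta_4\rangle$, each with no indeterminacy (the second requires checking that $\eta_4\{t\}$ vanishes, via $\langle\eta,\sigma^2,2\rangle\{t\} = \eta\langle\sigma^2,2,\{t\}\rangle$). Then the known classical hidden extension $\nu\{q\} = \{\tau^2 h_1 e_0^2\}$ from Table \ref{tab:extn-refs} lets one slide $\nu$ inside the bracket:
\[
\nu\{h_1 G_3\} = \nu\langle\{q\},\eta^3,\eta_4\rangle
= \langle\nu\{q\},\eta^3,\eta_4\rangle
= \langle\{\tau^2 h_1 e_0^2\},\eta^3,\eta_4\rangle
= \{\tau^2 h_1 e_0^2 g\}.
\]
In other words, the mechanism is not "show the target is $\nu$-divisible and eliminate sources" but "transport a known $\nu$ extension on $q$ across the bracket $\langle -,\eta^3,\eta_4\rangle$," which acts as multiplication by $g$ on the level of $E_\infty$. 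If you want to repair your write-up, replacing the speculative shuffle by this bracket identity is the missing idea.
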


\begin{proof}
Table \ref{tab:Toda} shows that 
$\langle \{ q\}, \eta^3, \eta_4 \rangle$ 
equals $\{ h_1 G_3 \}$, and
$\langle \{ \tau^2 h_1 e_0^2 \}, \eta^3, \eta_4 \rangle$
equals $\{ \tau^2 h_1 e_0^2 g \}$.
Neither Toda bracket has indeterminacy; for the second bracket,
one needs that $\eta_4 \{t \}$ is contained in
\[
\langle \eta, \sigma^2, 2 \rangle \{ t \} =
\eta \langle \sigma^2, 2, \{ t \} \rangle,
\]
which must be zero.

Now compute that
\[
\nu \{ h_1 G_3 \} =
\nu \langle \{q \}, \eta^3, \eta_4 \rangle =
\langle \nu \{q \}, \eta^3, \eta_4 \rangle =
\langle \{ \tau^2 h_1 e_0^2 \}, \eta^3, \eta_4 \rangle =
\{ \tau^2 h_1 e_0^2 g \}.
\]
Here we are using that none of the Toda brackets has indeterminacy,
and we are using Table \ref{tab:extn-refs} to identify
$\nu \{q \}$ with $\{\tau^2 h_1 e_0^2 \}$.
This establishes the first claim.

The second claim follows easily from the first, using the 
hidden $\tau$ extensions from $h_1 G_3$ to $\tau^2 e_0 m$
and from $\tau^2 h_1 e_0^2 g$ to $d_0 z$ given in Table \ref{tab:Adams-tau}.
\end{proof}

\begin{lemma}
There is no hidden $\nu$ extension on $\tau^2 d_1 g$.
\end{lemma}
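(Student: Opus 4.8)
The plan is to produce an element of $\{\tau^2 d_1 g\}$ that is annihilated by $\nu$, so that Lemma \ref{lem:hidden-not-exist} immediately rules out a hidden $\nu$ extension. First I would settle the degree bookkeeping: $d_1 g$ lies in the $52$-stem, so any hidden $\nu$ extension on $\tau^2 d_1 g$ would land in the $55$-stem, and by Definition \ref{defn:hidden} its target would have to sit in strictly higher Adams filtration than $\tau^2 h_2 d_1 g$. Thus it suffices to understand $\nu \cdot \{\tau^2 d_1 g\}$ and show it is either zero or detected by $\tau^2 h_2 d_1 g$ itself.

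The key input is the Toda bracket description $\{d_1 g\} = \langle \{d_1\}, \eta^3, \eta_4 \rangle$ from Table \ref{tab:Toda}, the same bracket used in the proof of Lemma \ref{lem:t.eta-d1}, which has no indeterminacy. Writing a representative of $\{\tau^2 d_1 g\}$ in the form $\tau^2 \alpha$ with $\alpha \in \{d_1 g\}$, I would compute $\nu \cdot \{\tau^2 d_1 g\}$ by shuffling. The defining property of $\eta_4$ gives $\eta^3 \eta_4 = 0$ (see Section \ref{sctn:notation}), and $\nu \eta_4 = 0$ as noted in the proof of Lemma \ref{lem:d5-tPh5e0}; hence the bracket $\langle \eta^3, \eta_4, \nu \rangle$ is defined in the $23$-stem, and the standard shuffle yields
\[
\nu \cdot \{\tau^2 d_1 g\} \subseteq \tau^2 \{d_1\} \langle \eta^3, \eta_4, \nu \rangle .
\]

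The remaining work is to evaluate $\langle \eta^3, \eta_4, \nu \rangle$ and its product against $\tau^2 \{d_1\}$. I expect this bracket to consist of $23$-stem classes (largely of image-of-$J$ type) whose product with $\tau^2 \{d_1\}$ is either zero or detected by $\tau^2 h_2 d_1 g$. In the first case Lemma \ref{lem:hidden-not-exist} applies directly once we exhibit the $\nu$-annihilated representative; in the second case the $\nu$-multiple is non-hidden, so Definition \ref{defn:hidden} is not violated. Either way one concludes that there is no hidden $\nu$ extension on $\tau^2 d_1 g$.

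The main obstacle will be controlling the two indeterminacies cleanly: although $\langle \{d_1\}, \eta^3, \eta_4 \rangle$ is claimed to have no indeterminacy, I must verify that $\langle \eta^3, \eta_4, \nu \rangle$ contributes nothing that, after multiplication by $\tau^2 \{d_1\}$, could land in filtration strictly above $\tau^2 h_2 d_1 g$. This requires checking the product degree by degree in the $55$-stem and confirming that $\tau^2 h_2 d_1 g$ is the highest-filtration class receiving a $\nu$-multiplication from $\{\tau^2 d_1 g\}$; the subtlety that $\{\tau^2 d_1 g\}$ may contain several representatives differing by higher filtration is exactly what makes the formal definition, rather than a naive product computation, the right tool here.
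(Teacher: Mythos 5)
There is a genuine gap, and it sits exactly where you defer the work: you never evaluate $\langle \eta^3, \eta_4, \nu \rangle$ or its product with $\tau^2 \{d_1\}$, and the conclusion you ``expect'' from that computation is not attainable. First, for the lemma to be a non-vacuous assertion, condition (1) of Definition \ref{defn:hidden} forces $h_2 \cdot \tau^2 d_1 g = 0$ in the $E_\infty$-page, so ``detected by $\tau^2 h_2 d_1 g$'' is not an available outcome; the unique candidate target for a hidden $\nu$ extension on $\tau^2 d_1 g$ is $d_0 z$, in filtration $14$. Second, your plan to exhibit a $\nu$-annihilated representative and invoke Lemma \ref{lem:hidden-not-exist} collides with a crossing extension of exactly the type discussed in Section \ref{subsctn:extn-defn}: the element $\tau^2 e_0 m$ lies in the same stem and weight $(52,28)$ as $\tau^2 d_1 g$ but in filtration $11 > 8$, so $\{\tau^2 d_1 g\}$ contains elements differing by elements of $\{\tau^2 e_0 m\}$, and by Lemma \ref{lem:nu-h1G3} those elements have $\nu$-multiples lying in $\{d_0 z\}$. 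Hence some representatives of $\{\tau^2 d_1 g\}$ certainly have $\nu$-multiples detected by $d_0 z$, and your shuffle cannot show otherwise: by May's Convergence Theorem \ref{thm:3-converge} applied to $d_4(g) = h_1^4 h_4$ (Lemma \ref{lem:d4-g}), the Massey product $\langle h_1^3, h_1 h_4, h_2 \rangle$ contains $h_2 g$, so the set $\tau^2 \{d_1\} \langle \eta^3, \eta_4, \nu \rangle$ is precisely of the kind that jumps into filtration $\geq 14$ once $\tau^2 h_2 d_1 g$ dies in $E_\infty$; it may well meet $\{d_0 z\}$ rather than $0$. Indeed, whether \emph{any} representative of $\{\tau^2 d_1 g\}$ is annihilated by $\nu$ is a strictly finer question than the lemma itself, and your argument does not settle it.

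The paper's proof avoids all of this by appealing to the formal definition rather than to Lemma \ref{lem:hidden-not-exist}: Lemma \ref{lem:nu-h1G3}, proved in the same section, gives a hidden $\nu$ extension from $\tau^2 e_0 m$ to $d_0 z$. Condition (3) of Definition \ref{defn:hidden} says the source of a hidden extension must have maximal Adams filtration among elements whose cosets map into the target's coset; since $\tau^2 e_0 m$ has filtration $11$ and $\tau^2 d_1 g$ only $8$, there cannot be a hidden $\nu$ extension from $\tau^2 d_1 g$ to $d_0 z$, and as $d_0 z$ is the only possible target, the lemma follows in two lines. This is the same pattern the paper uses for the non-existence of the hidden $\eta$ extension on $h_1 x'$, among others. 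Your bracket $\langle \{d_1\}, \eta^3, \eta_4 \rangle$ is real (the paper uses it in Lemma \ref{lem:t.eta-d1}), but it is machinery this statement does not need.
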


\begin{proof}
We showed in Lemma \ref{lem:nu-h1G3} that 
there is a hidden $\nu$ extension from $\tau^2 e_0 m$ to $d_0 z$.
Therefore, there cannot be a hidden $\nu$ extension
from $\tau^2 d_1 g$ to $d_0 z$.
\end{proof}

\begin{lemma}
\label{lem:nu-h1^6h5e0}
There is a hidden $\nu$ extension from $h_1^6 h_5 e_0$ to $h_2 e_0^2 g$.
\end{lemma}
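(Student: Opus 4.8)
The plan is to show that $\{h_2 e_0^2 g\}$ is divisible by $\nu$, and then to identify $h_1^6 h_5 e_0$ as the unique possible source by a degree count. First I would record the basic bookkeeping: $h_1^6 h_5 e_0$ lies in degree $(54,11,32)$ and $h_2 e_0^2 g$ in degree $(57,13,34)$, so $\nu \cdot h_1^6 h_5 e_0$ and $h_2 e_0^2 g$ share the same stem and weight, while the filtration jump from $12$ to $13$ is exactly what a hidden extension should produce. I would also note that this is a genuinely hidden phenomenon: the computation $d_2(e_0^2 g) = h_1^7 B_1$ from Section \ref{subsctn:d2-lemmas} shows that $e_0^2 g$ is not a permanent cycle, so $h_2 e_0^2 g$ is not simply $\nu\{e_0^2 g\}$. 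That same computation shows $h_1^7 B_1 = 0$ on the $E_3$-page, which together with $d_3(h_1 h_5 e_0) = h_1^2 B_1$ from Lemma \ref{lem:d3-h1h5e0} gives $d_3(h_1^6 h_5 e_0) = h_1^7 B_1 = 0$; inspection of the chart then confirms that $h_1^6 h_5 e_0$ is a permanent cycle.

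For the divisibility, I would follow the Toda-bracket technique used in Lemmas \ref{lem:nu-u} and \ref{lem:nu-h1G3}. The idea is to exhibit an element of $\{h_2 e_0^2 g\}$ as $\nu$ times a homotopy class, by writing $\{h_2 e_0^2 g\}$ in terms of a three-fold bracket and shuffling the leading $\nu$ inward, e.g.\ via $\nu\langle \eta, \nu, Y\rangle = \langle \nu,\eta,\nu\rangle Y = (\epsilon+\eta\sigma)Y$ for a suitable permanent cycle $Y$ in the $e_0^2$-family. The relation $\epsilon\{\tau g^2\} = \kappa\{e_0^2\}$ used in the proof of Lemma \ref{lem:t-h1h3g} is the kind of input that would pin down which class $Y$ is and identify the resulting product with $\{h_2 e_0^2 g\}$. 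Alternatively, paralleling the remark following Lemma \ref{lem:nu-h2c1}, one can pass to the motivic Adams spectral sequence for the cofiber of $\nu$ and show directly that $\{h_2 e_0^2 g\}$ maps to zero there, which forces divisibility by $\nu$.

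Once $\{h_2 e_0^2 g\}$ is known to be $\nu$-divisible, the proof finishes by Definition \ref{defn:hidden} together with Lemma \ref{lem:hidden-exist}: I would inspect the $E_\infty$-page in stem $54$ and weight $32$ and check that, among the permanent cycles of filtration strictly below $13$, the element $h_1^6 h_5 e_0$ is the one of highest Adams filtration carrying a $\nu$-multiplication into $\{h_2 e_0^2 g\}$, ruling out the remaining candidates on degree grounds.

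The main obstacle will be the divisibility step. Because $\eta\nu = 0$, any representative $\beta \in \{h_1^6 h_5 e_0\}$ for which $\nu\beta$ is detected by $h_2 e_0^2 g$ cannot be an $\eta$-multiple, so the bracket or cofiber-of-$\nu$ argument must be arranged to single out exactly the non-$\eta$-divisible part of $\{h_1^6 h_5 e_0\}$. Controlling the indeterminacy of the relevant Toda bracket, and verifying that the shuffled product is detected precisely by $h_2 e_0^2 g$ rather than by something in strictly higher filtration, is where the real work lies.
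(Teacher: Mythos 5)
Your proposal misses the argument the paper actually uses, and the route you substitute for it has a genuine hole at its center. The paper's proof is one line: Table \ref{tab:Adams-tau} already records a hidden $\tau$ extension from $h_1^6 h_5 e_0$ to $\tau e_0^2 g$ (a by-product of the cofiber-of-$\tau$ computation of Chapter \ref{ch:Ctau}), and the $\nu$ extension follows immediately from it. Concretely, if $\beta$ lies in $\{ h_1^6 h_5 e_0 \}$, then $\tau \beta$ lies in $\{ \tau e_0^2 g \}$, so $\tau \nu \beta = \nu (\tau \beta)$ is detected by the non-hidden product $h_2 \cdot \tau e_0^2 g = \tau \cdot h_2 e_0^2 g$, which is non-zero on the $E_\infty$-page; it follows that $\nu \beta$ itself is detected by $h_2 e_0^2 g$. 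No Toda brackets and no indeterminacy arise. Your preliminary observations are all correct --- the degrees, the differential $d_2(e_0^2 g) = h_1^7 B_1$, the consequent vanishing of $d_3(h_1^6 h_5 e_0) = h_1^7 B_1$ on the $E_3$-page, and the uniqueness of the candidate source in stem $54$ and weight $32$ --- but they are the easy part of the argument.

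The gap is the divisibility step, which you flag as ``where the real work lies'' but never execute, and as sketched it would not go through. You propose the shuffle $\nu \langle \eta, \nu, Y \rangle = \langle \nu, \eta, \nu \rangle Y = (\epsilon + \eta \sigma) Y$ for ``a suitable permanent cycle $Y$ in the $e_0^2$-family,'' but such a $Y$ would have to lie in $\pi_{49,29}$, satisfy $\nu Y = 0$, and have $(\epsilon + \eta \sigma) Y$ detected by $h_2 e_0^2 g$; the relation you cite from Lemma \ref{lem:epsilon-kappabar}, namely $\epsilon \{ \tau g^2 \} = \kappa \{ e_0^2 \}$, lives in stem $48$ and weight $28$ and identifies no such $Y$. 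In the lemmas you model this on (Lemmas \ref{lem:nu-u} and \ref{lem:nu-h1G3}), the shuffle succeeds only because the value of the resulting product was pinned down by a previously established extension; here, the claim that $(\epsilon + \eta \sigma) Y$ is detected by $h_2 e_0^2 g$ is itself a hidden-extension statement of the same order of difficulty as the lemma, so your reduction is circular as it stands. The cofiber-of-$\nu$ alternative is legitimate in principle, but it presupposes a computation of $\pi_{*,*}$ of the cofiber of $\nu$ through the 57-stem that neither you nor the paper carries out. The missing idea is the transfer principle that powers the paper's proof: hidden $\tau$ extensions, obtained for free from the $C\tau$ computation, convert into hidden $2$, $\eta$, and $\nu$ extensions whenever the corresponding multiplication becomes non-hidden after multiplying by $\tau$ --- the same principle behind Lemma \ref{lem:nu-th2^2g}.
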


\begin{proof}
This follows immediately from the hidden $\tau$
extension from $h_1^6 h_5 e_0$ to $\tau e_0^2 g$ 
given in Table \ref{tab:Adams-tau}.
\end{proof}

\begin{lemma}
\label{lem:nu-h0h2h5i}
Tentatively, there is a hidden $\nu$ extension
from $h_0 h_2 h_5 i$ to $\tau^2 d_0^2 l$.
\end{lemma}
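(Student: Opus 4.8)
The plan is to adapt the Adams--Novikov argument of Lemma \ref{lem:2-h0h5i}, since the source again involves the class $h_0 h_5 i$ whose homotopy is governed by $\beta_{10/2}$. First I would record that $h_0 h_2 h_5 i = h_2 \cdot h_0 h_5 i$ already on the $E_2$-page, so that $\{ h_0 h_2 h_5 i\}$ contains $\nu \alpha$ for every $\alpha$ in $\{ h_0 h_5 i\}$, and I would check that $h_0 h_2^2 h_5 i$ vanishes on the $E_\infty$-page, so that a $\nu$-multiplication out of $h_0 h_2 h_5 i$ is genuinely hidden. With this in place, the content of the lemma is precisely that $\nu^2 \alpha$ is detected by $\tau^2 d_0^2 l$.

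Next I would transport the computation to the classical Adams--Novikov spectral sequence via Proposition \ref{prop:ANSS-Ctau}. The element $\alpha = \{ h_0 h_5 i\}$ corresponds to $\beta_{10/2}$ in the $54$-stem, which maps to $\Delta^2 h_2^2$ in the Adams--Novikov spectral sequence for $\tmf$ \cite{Bauer08} \cite{Shimomura81}. I would then follow the two successive $\nu$-multiples: $\nu \beta_{10/2}$ corresponds to $\{ h_0 h_2 h_5 i\}$ in the $57$-stem, and $\nu^2 \beta_{10/2}$ lands in the $60$-stem. The idea is to locate a hidden $\nu$ extension out of $h_2 \Delta^2 h_2^2$ in the $\tmf$ Adams--Novikov spectral sequence, using the well-understood multiplicative structure of $\pi_{*} \tmf$ in this range, then pull this extension back to the sphere and transport it through Proposition \ref{prop:ANSS-Ctau} to recover the statement that $\nu^2 \alpha$ is detected by $\tau^2 d_0^2 l$.

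An alternative, more internal route that I would pursue in parallel is the commuting-$\tau$ trick used for many of the other tentative $\nu$ extensions (compare Lemmas \ref{lem:nu-h1G3} and \ref{lem:nu-h1^6h5e0}): if one can find a hidden $\nu$ extension $A \to B$ together with hidden $\tau$ extensions linking $A$ to the source and $B$ to $\tau^2 d_0^2 l$ in Table \ref{tab:Adams-tau-tentative}, then the relation $\nu(\tau x) = \tau(\nu x)$ forces the desired extension. This avoids the $\tmf$ input but first requires pinning down the relevant tentative $\tau$ extensions.

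The hard part will be the final step in either approach. Because the analysis of the Adams differentials is incomplete beyond the $59$-stem, I cannot yet eliminate all competing targets in the $60$-stem, and $\tau^2 d_0^2 l$ and its neighbours may differ by classes of higher Adams filtration. Hence the most one can establish is the \emph{tentative} assertion that $\nu^2 \alpha$ is nonzero and detected by $\tau^2 d_0^2 l$, contingent on the predicted $E_\infty$-page. Ruling out the alternative $\nu$-targets, via Lemma \ref{lem:hidden-not-exist} together with the $\tmf$ realization, is where the genuine difficulty resides.
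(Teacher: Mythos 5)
Your first route is the paper's argument, and you set it up correctly: pass through Lemma \ref{lem:2-h0h5i} to see that $h_0 h_5 i$ detects an element of $\pi_{54}$ mapping to the element of $\pi_{54}\tmf$ detected by $\Delta^2 h_2^2$, so that $h_0 h_2 h_5 i$ detects an element mapping to the element of $\pi_{57}\tmf$ detected by $\Delta^2 h_2^3$. But you stop exactly where the content is. The key input is not something "to locate" by further analysis of the multiplicative structure of $\pi_*\tmf$: it is the already-known hidden $\nu$ extension from $\Delta^2 h_2^3$ to $2g^3$ in the classical Adams--Novikov spectral sequence for $\tmf$, recorded in Bauer's computation \cite{Bauer08}. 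Once this is cited, the proof finishes immediately: the image of $\nu \cdot \{h_0 h_2 h_5 i\}$ in $\pi_{60}\tmf$ is nonzero, so the corresponding hidden $\nu$ extension must occur in the motivic Adams spectral sequence, with target $\tau^2 d_0^2 l$ (the class detecting the preimage of that $\tmf$ element). Your proposal never produces this fact, and your closing paragraph misplaces the difficulty: ruling out "alternative $\nu$-targets" via Lemma \ref{lem:hidden-not-exist} is not where the work lies, since the nonvanishing and detection in $\tmf$ already pin down the answer up to the uncertainty in the $E_\infty$-page itself.

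Two smaller points. First, your worry about competing targets differing by higher Adams filtration is exactly what makes the lemma \emph{tentative} — the paper's caveat is the same as yours, namely that the differentials beyond the 59-stem are not fully known — so this does not need a separate argument. Second, your alternative "commuting-$\tau$" route is a genuinely different idea, but as you note it requires first establishing the relevant tentative $\tau$ extensions of Table \ref{tab:Adams-tau-tentative}, and you do not carry that out either; as written it is a sketch of a sketch, whereas the $\tmf$ route needs only the single citation to close.
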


\begin{proof}
The claim is tentative because our analysis of Adams differentials
is incomplete in the relevant range.

As explained in the proof of Lemma \ref{lem:2-h0h5i},
the class $h_0 h_5 i$ dectects an element of classical $\pi_{54}$
that maps to an element of $\pi_{54} \tmf$ that is detected by 
$\Delta^2 h_2^2$ in the Adams-Novikov spectral sequence for $\tmf$.
Then $h_0 h_2 h_5 i$ detects an element in $\pi_{57}$ that maps
that maps to an element of $\pi_{57} \tmf$ that is detected by 
$\Delta^2 h_2^3$ in the Adams-Novikov spectral sequence for $\tmf$.

In the classical Adams-Novikov spectral sequence for $\tmf$,
there is a hidden $\nu$ extension from $\Delta^2 h_2^3$ to $2 g^3$ \cite{Bauer08}.
Therefore, the corresponding hidden extension must occur in
the motivic Adams spectral sequence as well.
\end{proof}

\begin{lemma}
\mbox{}
\label{lem:nu-td0w}
\begin{enumerate}
\item
Tentatively, there is a hidden $\nu$ extension
from $P h_1^3 h_5 e_0$ to $\tau d_0^2 e_0^2$.
\item
Tentatively, there is a hidden $\nu$ extension
from $\tau d_0 w$ to $\tau^2 d_0^2 e_0^2$.
\item
Tentatively, there is a hidden $\nu$ extension
from $\tau g w + h_1^4 X_1$ to $\tau^2 e_0^4$.
\end{enumerate}
\end{lemma}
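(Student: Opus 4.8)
All three extensions are tentative, since the analysis of Adams differentials is incomplete in this range; accordingly I would frame each conclusion so that its inputs are exactly the (tentative) hidden $\tau$ extensions recorded in Table \ref{tab:Adams-tau-tentative}. The common engine is the established hidden $\nu$ extension from $\tau w$ to $\tau^2 d_0 e_0^2$ of Lemma \ref{lem:nu-u}, together with multiplication by $\kappa$ (detected by $d_0$) and by $\kappabar$ (detected by $\tau g$), and the multiplicative relation $e_0^2 = \tau d_0 g$ in $\Ext$.

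For part (2) the plan is to multiply the hidden $\nu$ extension on $\tau w$ by $\kappa$. Choosing $\gamma$ in $\{\tau w\}$ with $\nu \gamma$ in $\{\tau^2 d_0 e_0^2\}$, the product $\kappa \gamma$ is detected by $d_0 \cdot \tau w = \tau d_0 w$, while $\nu(\kappa\gamma) = \kappa \cdot \nu\gamma$ is detected by $d_0 \cdot \tau^2 d_0 e_0^2 = \tau^2 d_0^2 e_0^2$; after confirming that $\nu$ annihilates $\tau d_0 w$ on the $E_\infty$-page, Lemma \ref{lem:hidden-exist} yields the extension. Part (1) would then follow from part (2) together with the hidden $\tau$ extension from $P h_1^3 h_5 e_0$ to $\tau d_0 w$ of Lemma \ref{lem:t-Ph1^3h5e0}. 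Picking $\beta$ in $\{P h_1^3 h_5 e_0\}$ with $\tau\beta$ in $\{\tau d_0 w\}$, I compute $\tau \cdot \nu\beta = \nu(\tau\beta)$, which lies in $\nu\{\tau d_0 w\}$ and is therefore detected by $\tau^2 d_0^2 e_0^2$. Hence $\nu\beta$ is nonzero and its $\tau$-multiple is detected by $\tau^2 d_0^2 e_0^2 = \tau\cdot(\tau d_0^2 e_0^2)$; inspecting the $E_\infty$-chart, the unique consistent possibility is that $\nu\beta$ is detected by $\tau d_0^2 e_0^2$, which is the claimed extension.

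For part (3) the plan is to multiply the extension on $\tau w$ by $\kappabar$ rather than by $\kappa$. The target $\tau^2 d_0 e_0^2$ acquires the factor $\tau g$ detecting $\kappabar$, and since $e_0^2 = \tau d_0 g$ gives $\tau d_0 e_0^2 g = e_0^4$, the resulting product $\tau^3 d_0 e_0^2 g$ equals $\tau^2 e_0^4$, the asserted target. The delicate point, and the step I expect to be the main obstacle, is pinning down the correct $E_\infty$ representative of the source: the naive product $\kappabar\{\tau w\}$ sits in the degree occupied by $\tau^2 g w$, whereas the statement records the source as $\tau g w + h_1^4 X_1$, so one must reconcile the $\tau$-power and the correction term $h_1^4 X_1$ by descending along a hidden $\tau$ extension from Table \ref{tab:Adams-tau-tentative}, exactly as in part (1). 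Throughout, I would use Lemma \ref{lem:hidden-not-exist} and the crossing-extension analysis of Examples \ref{ex:hidden-cross-1} and \ref{ex:hidden-cross-2} to check that these are genuine hidden extensions in the sense of Definition \ref{defn:hidden}, with no lower-filtration representative absorbing the product.
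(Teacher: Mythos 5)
Your proposal follows the paper's proof essentially step for step: part (2) is the $\kappa$-multiple (i.e.\ $d_0$-multiple) of the hidden $\nu$ extension from $\tau w$ to $\tau^2 d_0 e_0^2$ of Lemma \ref{lem:nu-u}; part (1) then follows by descending along the hidden $\tau$ extension from $P h_1^3 h_5 e_0$ to $\tau d_0 w$ of Lemma \ref{lem:t-Ph1^3h5e0}; and part (3) is the $\kappabar$-multiple of the same extension, followed by a $\tau$-descent.

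One bookkeeping correction to your part (3): the relation you invoke should be $d_0 \cdot \tau g = \tau e_0^2$ (that is, $d_0 g = e_0^2$ before the motivic weights are inserted), not $e_0^2 = \tau d_0 g$, which fails the weight check. With the correct relation, the $\kappabar$-multiple of Lemma \ref{lem:nu-u} is a hidden $\nu$ extension from $\tau^2 g w$ to $\tau^3 e_0^4$ --- exactly the intermediate statement in the paper --- and then \emph{both} the source and the target drop one power of $\tau$ in the descent you describe, yielding the extension from $\tau g w + h_1^4 X_1$ to $\tau^2 e_0^4$. As you wrote it, the intermediate extension from the degree of $\tau^2 g w$ (weight $35$) to $\tau^2 e_0^4$ (weight $38$) is inconsistent, since $\nu$ raises the weight by $2$; your two off-by-one-$\tau$ slips cancel, so the final statement is unaffected, but the intermediate step should be repaired as above.
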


\begin{proof}
The claims are tentative because our analysis of Adams differentials
is incomplete in the relevant range.

The second formula follows from the hidden $\nu$ extension
from $\tau w$ to $\tau^2 d_0 e_0^2$ given in Lemma \ref{lem:nu-u}.
The first formula then follows using the hidden $\tau$
extension from $P h_1^3 h_5 e_0$ given in Lemma \ref{lem:t-Ph1^3h5e0}.

For the third formula, start with the hidden
$\nu$ extension from $\tau w$ to $\tau^2 d_0 e_0^2$.
Multiply by $\tau g$ to obtain a hidden $\nu$
extension from $\tau^2 g w$ to $\tau^3 e_0^4$.
The third formula follows immediately.
\end{proof}

\begin{lemma}
\label{lem:nu-th0^2g^3}
Tentatively, there is a hidden $\nu$ extension
from $\tau h_0^2 g^3$ to $h_1 d_0^2 e_0^2$.
\end{lemma}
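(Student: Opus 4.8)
The plan is to deduce this hidden $\nu$ extension directly from the tentative hidden $2$ extension from $\tau h_0 h_2 g^3$ to $h_1 d_0^2 e_0^2$ established in Lemma \ref{lem:2-t^3g^3}, in exact analogy with the way Lemma \ref{lem:nu-h0^2g} is deduced from Lemma \ref{lem:2-h0h2g}. The source $\tau h_0^2 g^3$ lies in the $60$-stem and the source $\tau h_0 h_2 g^3$ of the hidden $2$ extension lies in the $63$-stem, while both map to the common target $h_1 d_0^2 e_0^2$ in the $63$-stem, under multiplication by $\nu$ and by $2$ respectively. Since a hidden $2$ extension and a hidden $\nu$ extension differ in stem by exactly $3$, this is precisely the numerology needed for the two extensions to share a single homotopy value.

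First I would produce a homotopy class $\zeta$ detected by $\tau h_0 g^3$, so that $2 \zeta$ is detected by $\tau h_0^2 g^3$ and $\nu \zeta$ is detected by $\tau h_0 h_2 g^3$; here one uses that on the $E_\infty$-page $h_0 \cdot \tau h_0 g^3 = \tau h_0^2 g^3$ and $h_2 \cdot \tau h_0 g^3 = \tau h_0 h_2 g^3$. The hidden $2$ extension of Lemma \ref{lem:2-t^3g^3} then asserts that $2 (\nu \zeta)$ lies in $\{ h_1 d_0^2 e_0^2 \}$. Because $2 (\nu \zeta) = \nu (2 \zeta)$ and $2 \zeta$ is detected by $\tau h_0^2 g^3$, the very same nonzero product witnesses a $\nu$ multiplication out of $\{ \tau h_0^2 g^3 \}$ landing in $\{ h_1 d_0^2 e_0^2 \}$. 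An application of Lemma \ref{lem:hidden-exist} (or a direct verification of Definition \ref{defn:hidden}) then upgrades this to a genuine hidden $\nu$ extension from $\tau h_0^2 g^3$ to $h_1 d_0^2 e_0^2$.

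The main obstacle is the filtration bookkeeping, which is also the reason the statement must remain tentative. I need to confirm that $2 \zeta$ and $\nu \zeta$ are detected exactly by $\tau h_0^2 g^3$ and $\tau h_0 h_2 g^3$, with no jump to higher Adams filtration, and that $\tau h_0^2 g^3$ is the highest-filtration class admitting such a $\nu$ extension into $\{ h_1 d_0^2 e_0^2 \}$, so that condition (3) of Definition \ref{defn:hidden} is satisfied. Since the analysis of the Adams differentials is incomplete in stems $60$ through $63$, the existence and precise filtration of the auxiliary class $\{ \tau h_0 g^3 \}$ cannot be fully pinned down; consequently the deduction, like the hidden $2$ extension on which it rests, can only be asserted tentatively.
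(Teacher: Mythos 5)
Your proposal is correct, but it takes a different route from the paper. The paper's proof is a one-line deduction directly from the tentative hidden $\tau$ extension from $h_0^2 h_2 g^3$ to $h_1 d_0^2 e_0^2$ in Table \ref{tab:Adams-tau-tentative}: for $\gamma$ in $\{ h_0^2 g^3 \}$, the product $\nu \gamma$ lies in $\{ h_0^2 h_2 g^3 \}$, so $\tau \nu \gamma = \nu (\tau \gamma)$ lies in $\{ h_1 d_0^2 e_0^2 \}$, while $\tau \gamma$ is detected (non-hidden) by $\tau h_0^2 g^3$. You instead bootstrap from the hidden $2$ extension on $\tau h_0 h_2 g^3$ of Lemma \ref{lem:2-t^3g^3}, using the auxiliary class $\zeta \in \{ \tau h_0 g^3 \}$ and the commutation $2(\nu\zeta) = \nu(2\zeta)$; this faithfully mirrors the paper's own derivation of Lemma \ref{lem:nu-h0^2g} from Lemma \ref{lem:2-h0h2g}, so it is a legitimate pattern. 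Note, however, that Lemma \ref{lem:2-t^3g^3}(2) is itself proved in the paper from that same hidden $\tau$ extension, so your argument is a two-step detour through an intermediate lemma where the paper goes in one step; the direct route also avoids one detection claim you need (that $2\zeta$ and $\nu\zeta$ are detected without filtration jump by $\tau h_0^2 g^3$ and $\tau h_0 h_2 g^3$). One small imprecision in your write-up: the hidden $2$ extension, per Definition \ref{defn:hidden}, only guarantees \emph{some} element $\beta$ of $\{ \tau h_0 h_2 g^3 \}$ with $2\beta \in \{ h_1 d_0^2 e_0^2 \}$, not that this holds for your particular element $\nu\zeta$; transferring it to $\nu\zeta$ requires the absence of crossing $2$ extensions in that degree. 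This is the same level of implicit bookkeeping the paper tolerates in its ``follows immediately'' deductions (and you rightly flag the filtration issues as the source of tentativeness), but it is one more place where the direct argument from the $\tau$ extension is cleaner.
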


\begin{proof}
The claim is tentative because our analysis of Adams differentials
is incomplete in the relevant range.

This follows immediately from the 
hidden $\tau$ extension from $h_0^2 h_2 g^3$ to $h_1 d_0^2 e_0^2$ 
given in Table \ref{tab:Adams-tau-tentative}.
\end{proof}

\begin{lemma}
\label{lem:nu-h2c1g^2}
Tentatively, there is a hidden $\nu$ extension
from $h_2 c_1 g^2$ to $h_1^8 D_4$.
\end{lemma}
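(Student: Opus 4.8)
The plan is to reproduce, at the next level of the $g$-tower, the argument of Lemma \ref{lem:nu-h2c1}, whose engine is that $h_3 g^k$ supports a $d_2$ differential. First I would establish the Massey product $\langle h_2, h_2 c_1 g^2, h_1 \rangle = h_3 g^3$ in $\Ext$. This is the next case of the family $\langle h_2, h_2 c_1, h_1 \rangle = h_3 g$ and $\langle h_2, h_2 c_1 g, h_1 \rangle = h_3 g^2$ recorded in Table \ref{tab:Massey}; if it is not listed there directly it follows by multiplying the bracket $\langle h_2, h_2 c_1 g, h_1 \rangle = h_3 g^2$ by $g$, since $g \langle h_2, h_2 c_1 g, h_1 \rangle \subseteq \langle h_2, h_2 c_1 g^2, h_1 \rangle$ and $g \cdot h_3 g^2 = h_3 g^3$. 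The bracket is defined because $h_2^2 c_1 g^2 = 0$ and $h_1 h_2 = 0$ in $\Ext$.

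Next I would invoke Lemma \ref{lem:d2-h3g}, which gives $d_2(h_3 g^3) = h_0 h_2^2 g^3$, so that $h_3 g^3$ is not a permanent cycle; I would also check that the indeterminacy of the Massey product contains no permanent cycle. The contrapositive of Moss's Convergence Theorem \ref{thm:Moss} then shows that the Toda bracket $\langle \nu, \alpha, \eta \rangle$ cannot be defined for $\alpha \in \{ h_2 c_1 g^2 \}$: were it defined, Moss would produce a permanent cycle inside $\langle h_2, h_2 c_1 g^2, h_1 \rangle$ detecting an element of it, which is impossible. Because $h_2$ detects $\nu$, the class $\alpha$ may be taken to be a multiple of $\nu$, so $\eta \alpha = 0$ by $\eta \nu = 0$; hence the only way for the bracket to fail to be defined is $\nu \alpha \neq 0$. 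This forces a hidden $\nu$ extension on $h_2 c_1 g^2$, and the target is identified as $h_1^8 D_4$, the only available nonzero class in the relevant degree (stem $65$, one filtration above $h_2 c_1 g^2$). Its nontriviality and $h_1$-divisibility can be cross-checked against $c_0 i_1 = h_1^4 D_4$ from Lemma \ref{lem:c0-i1} and the relation $h_0 h_2^2 g^3 = h_1^9 D_4$.

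The hard part, and the reason the statement is tentative, is the uniqueness of the target: $h_1^8 D_4$ lies in stem $65$, well beyond the $59$-stem through which Theorem \ref{thm:Adams-Einfty} gives a complete $E_\infty$-page, so in this range $E_\infty$ is known only as a subquotient of the chart in \cite{Isaksen14a}. For the same reason I would need to verify that the crossing-differential hypothesis, condition (2), of Moss's Convergence Theorem holds here, which depends on the incomplete analysis of Adams differentials above the $65$-stem. These two gaps — the unresolved differentials and the resulting ambiguity in $E_\infty$ — are precisely what prevent the argument from being unconditional.
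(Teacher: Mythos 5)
Your proposal is correct and follows essentially the same route as the paper: Table \ref{tab:Massey} gives $\langle h_2, h_2 c_1 g^2, h_1 \rangle = h_3 g^3$, Lemma \ref{lem:d2-h3g} shows $h_3 g^3$ supports a $d_2$ differential, and the contrapositive of Moss's Convergence Theorem \ref{thm:Moss} then forces the bracket $\langle \nu, \alpha, \eta \rangle$ to be undefined for $\alpha$ in $\{h_2 c_1 g^2\}$, leaving $h_1^8 D_4$ as the only possible target. Your extra remarks (deducing the bracket by $g$-multiplication, taking $\alpha$ a multiple of $\nu$ to kill the $\eta\alpha$ obstruction, and the source of the tentativeness) are consistent elaborations of the paper's argument rather than a different method.
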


\begin{proof}
The claim is tentative because our analysis of Adams differentials
is incomplete in the relevant range.

The argument is essentially the same as the proof of Lemma \ref{lem:nu-h2c1}.
Table \ref{tab:Massey} shows that 
$\langle h_2, h_2 c_1 g^2, h_1 \rangle$ equals
$h_3 g^3$.
Since $h_3 g^3$ supports a differential by Lemma \ref{lem:d2-h3g},
Moss's Convergence Theorem \ref{thm:Moss}
implies that the Toda bracket
$\langle \nu, \alpha, \eta \rangle$
is not well-defined for any $\alpha$ in $\{ h_2 c_1 g^2 \}$.
\index{Convergence Theorem!Moss}
This implies that there is a hidden $\nu$ extension on $h_2 c_1 g^2$,
and the only possible target is $h_1^8 D_4$.
\end{proof}


\subsection{Miscellaneous Adams hidden extensions}
\label{subsctn:hidden-misc}

In this section, we include some miscellaneous hidden extensions.
They are needed at various points for technical arguments, but they
are interesting for their own sakes as well.

\index{Adams spectral sequence!hidden extension!sigma@$\sigma$}

\begin{lemma}
\label{lem:sigma-h1h4}
There is a hidden $\sigma$ extension from $h_1 h_4$ to $h_4 c_0$.
\end{lemma}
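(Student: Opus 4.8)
The claim is that there is a hidden $\sigma$ extension from $h_1 h_4$ to $h_4 c_0$. We need to produce an element $\beta$ of $\{h_1 h_4\}$ such that $\sigma \beta$ lands in $\{h_4 c_0\}$, where $h_1 h_4 \cdot h_3 = 0$ in $E_\infty$ (so the extension is genuinely hidden), and then verify the filtration-maximality condition of Definition \ref{defn:hidden}.

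Let me think about the degrees. We have $h_1 h_4$ in the 16-stem (stem $s=16$, since $h_4$ is in stem 15 and $h_1$ in stem 1). Its weight: $h_1$ has weight 1, $h_4$ has weight 8, so $h_1 h_4$ has weight 9. Multiplying by $\sigma$ (which is $h_3$, in stem 7, weight 4) gives something in stem 23, weight 13. And $h_4 c_0$: $c_0$ is in stem 8, Adams filtration 3, weight 5; $h_4$ in stem 15, weight 8. So $h_4 c_0$ is in stem 23, weight 13. Good, the degrees match up.

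**The approach via Toda brackets and $\eta_4$.**

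The key insight is the relationship to the element $\eta_4$. Recall from Section \ref{sctn:notation} that $\eta_4$ is defined to be the element of $\{h_1 h_4\}$ such that $\eta^3 \eta_4 = 0$. So $\eta_4$ is a natural candidate for our $\beta$. I want to show that $\sigma \eta_4$ is detected by $h_4 c_0$.

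The main tool is a Toda bracket computation. There is a strong hint in Example following Moss's Convergence Theorem: the differential $d_2(h_4) = h_0 h_3^2$ shows that $\langle \eta, 2, \sigma^2 \rangle$ intersects $\{h_1 h_4\}$, and Table \ref{tab:Toda} shows the bracket equals $\{h_1 h_4\}$ entirely (which consists of $\eta_4$ and $\eta_4 + \eta \rho_{15}$). Even more directly, Lemma \ref{lem:bracket-sigma-eta4} (stated later in the excerpt) asserts $\langle \epsilon, 2, \sigma^2 \rangle = \{\sigma\eta_4, \sigma\eta_4 + 4\nu\kappabar\}$, and its proof invokes precisely the fact that $\sigma\eta_4 \in \{h_4 c_0\}$, citing the lemma I am proving. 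So the intended route is independent of that.

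**The plan I would carry out.**

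First I would establish the Toda bracket $\langle \eta, 2, \sigma^2 \rangle \subseteq \{h_1 h_4\}$ via Moss's Convergence Theorem \ref{thm:Moss} applied to $d_2(h_4) = h_0 h_3^2$: the Massey product $\langle h_1, h_0, h_3^2 \rangle_{E_2}$ is computed using this $d_2$, yielding $h_1 h_4$ up to the relevant crossing-differential check. Next, since $\eta_4 \in \langle \eta, 2, \sigma^2 \rangle$, I would multiply by $\sigma$ and shuffle:
\[
\sigma \langle \eta, 2, \sigma^2 \rangle = \langle \sigma, \eta, 2 \rangle \sigma^2,
\]
though a cleaner route is to compute $\sigma \eta_4$ directly. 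Now $\eta_4 \in \langle \eta, 2, \sigma^2 \rangle$, so $\sigma \eta_4 \in \sigma \langle \eta, 2, \sigma^2 \rangle = \langle \sigma \eta, 2, \sigma^2 \rangle$. Then I would compute this bracket on the $E_2$-page: the corresponding Massey product involves $h_1 h_3$, $h_0$, and $h_3^2$, and I expect it to be detected by $h_4 c_0$ via May's Convergence Theorem and the appropriate May differential. The element $h_4 c_0$ is the unique nonzero class in this degree and Adams filtration on $E_\infty$, so any nonzero element of the bracket detected here must be $\{h_4 c_0\}$.

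**The main obstacle.**

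The hard part will be twofold. First, verifying that the Toda bracket $\langle \sigma\eta, 2, \sigma^2 \rangle$ (or equivalently the relevant Massey product) is genuinely detected by $h_4 c_0$ rather than landing in higher filtration or being zero — this requires checking that the relevant products ($\sigma\eta \cdot 2 = 0$ and $2 \cdot \sigma^2 = 0$) hold so the bracket is defined, and then carefully tracking the Massey product through the May spectral sequence. Second, and more delicate, is confirming condition (3) of Definition \ref{defn:hidden}: that no element of higher Adams filtration than $h_1 h_4$ supports a $\sigma$ extension into $\{h_4 c_0\}$. By Lemma \ref{lem:hidden-exist}, it suffices to show $\sigma \{h_1 h_4\} \subseteq \{h_4 c_0\}$, i.e.\ that the other element $\eta_4 + \eta\rho_{15}$ of $\{h_1 h_4\}$ also multiplies into $\{h_4 c_0\}$ under $\sigma$. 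Since $\sigma \eta \rho_{15}$ should lie in strictly higher filtration (it is a product of $\sigma$ with an $\eta$-multiple in a high-filtration image-of-$J$ class), both elements of $\{h_1 h_4\}$ map to the same coset, confirming the extension is well-defined and filtration-maximal. I would verify this last point by a direct filtration count using the image-of-$J$ structure \cite{Adams66}.
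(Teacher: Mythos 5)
There is a genuine gap at the heart of your argument, and it sits exactly where you placed your optimism: the claim that the bracket $\langle \sigma\eta, 2, \sigma^2 \rangle$ "is detected by $h_4 c_0$ via May's Convergence Theorem and the appropriate May differential." The Massey product $\langle h_1 h_3, h_0, h_3^2 \rangle$ you would need is not even defined in $\Ext = E_2$: the product $h_0 h_3^2$ is \emph{nonzero} there (it is the target of the Adams differential $d_2(h_4) = h_0 h_3^2$, so it only dies on the $E_3$-page), and May's Convergence Theorem computes Massey products in $\Ext$, so it cannot be invoked. The bracket can only be formed in $E_3$ with respect to the Adams $d_2$, and there Moss's Convergence Theorem \ref{thm:Moss} gives $\langle h_1 h_3, h_0, h_3^2 \rangle_{E_3} \ni h_1 h_3 \cdot h_4$, which is \emph{zero} because of the relation $h_3 h_4 = 0$ in $\Ext$. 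This vanishing is precisely why the $\sigma$ extension is hidden in the first place, and it propagates into every bracket whose entries are detected by $h_1 h_3$, $h_0$, $h_3^2$: Moss's theorem then only tells you that $\langle \sigma\eta, 2, \sigma^2 \rangle$ contains an element of Adams filtration at least $4$, with no way to identify $h_4 c_0$ as the detecting class, and no control over the sizable indeterminacy $\sigma\eta\cdot\pi_{15,8} + \pi_{9,5}\cdot\sigma^2$. (Separately, your first shuffle $\sigma \langle \eta, 2, \sigma^2 \rangle = \langle \sigma, \eta, 2 \rangle \sigma^2$ is illegitimate, since $\sigma\eta \neq 0$ means $\langle \sigma, \eta, 2 \rangle$ is undefined; you set it aside, but it signals the same difficulty.)

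The paper's proof avoids brackets entirely and is much shorter: it starts from the \emph{non-hidden} product $\epsilon \eta_4 \in \{h_1 h_4 c_0\}$, so that $\eta\epsilon\eta_4 \in \{h_1^2 h_4 c_0\}$, then invokes Toda's compound relation $\eta\epsilon = \eta^2\sigma + \nu^3$ (Table \ref{tab:Adams-compound-extn}, from \cite{Toda62}) together with the degree-forced vanishing $\nu\eta_4 = 0$. This yields $\eta^2 \sigma \eta_4 \in \{h_1^2 h_4 c_0\}$, and a filtration count (multiplication by $\eta$ raises filtration, and $h_4 c_0$ is the only candidate in its degree and filtration) forces $\sigma\eta_4 \in \{h_4 c_0\}$. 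If you want to repair your write-up, this substitution of the Toda relation for the bracket machinery is the missing idea; the bracket route, as proposed, cannot close.
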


\begin{proof}
The product $\eta \epsilon \eta_4$ is contained in $\{h_1^2 h_4 c_0 \}$.
Now recall the hidden relation
$\eta \epsilon = \eta^2 \sigma + \nu^3$ 
from Table \ref{tab:Adams-compound-extn}.
Also $\nu \eta_4$ is zero because there is no other possibility.
Therefore, $\eta^2 \sigma \eta_4$ is contained in $\{h_1^2 h_4 c_0\}$.
It follows that
$\sigma \eta_4$ is contained in $\{ h_4 c_0 \}$.
\end{proof}

\index{Adams spectral sequence!hidden extension!sigma@$\sigma$}

\begin{lemma}
\label{lem:sigma-h1h3h5}
There is no hidden $\sigma$ extension on $h_1 h_3 h_5$.
\end{lemma}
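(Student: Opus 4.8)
The plan is to invoke Lemma \ref{lem:hidden-not-exist}: it suffices to produce a single element of $\{h_1 h_3 h_5\}$ that is annihilated by $\sigma$. A ready-made candidate comes from Lemma \ref{lem:2-h1h5}, whose proof exhibits an element $\alpha$ of $\{h_1 h_5\}$ lying in $\langle \eta, 2, \theta_4 \rangle$; since $h_1 h_3 h_5 = h_3 \cdot h_1 h_5$ we have $\sigma \alpha \in \{h_1 h_3 h_5\}$. Thus the entire lemma reduces to the single relation $\sigma^2 \alpha = 0$, after which Lemma \ref{lem:hidden-not-exist} closes the argument. For orientation, note that in $\Ext$ we have $h_3 \cdot h_1 h_3 h_5 = h_1 h_3^2 h_5 = 0$, because $h_1 h_3^2 = 0$ (as in Example \ref{ex:naive-hidden}); so the product vanishes on $E_\infty$ and a hidden extension is not excluded on formal grounds. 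The real content is that $\sigma^2 \alpha$ is genuinely zero, rather than detected in filtration strictly above $4$.

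To prove $\sigma^2 \alpha = 0$ I would first pin down how much torsion it carries. From $\eta \sigma^2 = 0$ (Toda \cite{Toda62}; see Example \ref{ex:naive-hidden}) we get $\eta \cdot \sigma^2 \alpha = 0$; from $\nu \sigma = 0$ (used in the proof of Lemma \ref{lem:d2-G3}) we get $\nu \cdot \sigma^2 \alpha = 0$; and from $2 \alpha = 0$, established for this very bracket representative in the proof of Lemma \ref{lem:2-h1h5} via the shuffle $2 \langle \eta, 2, \theta_4 \rangle = \langle 2, \eta, 2 \rangle \theta_4 = \tau \eta^2 \theta_4 = 0$, we get $2 \cdot \sigma^2 \alpha = 0$. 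Hence $\sigma^2 \alpha$ is simultaneously annihilated by $2$, $\eta$, and $\nu$, and (by the vanishing of $h_1 h_3^2 h_5$ above) is either zero or detected in stem $46$ in Adams filtration at least $5$. I would then finish by inspecting the chart of the $E_\infty$-page in \cite{Isaksen14a}: every nonzero class in stem $46$ of filtration $\geq 5$ supports a nonzero multiplication by at least one of $2$, $\eta$, $\nu$, so none of them can detect a class killed by all three. Therefore $\sigma^2 \alpha = 0$.

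The main obstacle is exactly this last enumeration — verifying that no surviving stem-$46$ class of filtration $\geq 5$ is simultaneously $2$-, $\eta$-, and $\nu$-torsion — since the relevant classes (e.g.\ $B_1$ and its neighbors) must each be checked against the hidden and non-hidden $2$, $\eta$, $\nu$ extensions already tabulated for that stem. Should a borderline case resist the chart inspection, I would fall back on a Toda-bracket computation: using $\sigma^2 \eta = 0$ and $2\eta = 0$, the shuffle
\[
\sigma^2 \langle \eta, 2, \theta_4 \rangle \subseteq \langle \sigma^2, \eta, 2 \rangle \theta_4
\]
reduces $\sigma^2 \alpha$ to a product of the form $\eta_4 \theta_4$ (with a companion image-of-$J$ term $\eta \rho_{15} \theta_4$ and indeterminacy controlled by $2\theta_4 = 0$), and $\eta_4 \theta_4$ can in turn be driven into the carefully analyzed bracket $\langle \theta_4, 2, \sigma^2 \rangle$ of Lemma \ref{lem:bracket-theta4-2-sigma^2}, which is known to lie in high Adams filtration. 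Finally, because the assertion is purely classical, a further cross-check is to import it from the classical computations assembled in Table \ref{tab:extn-refs} and \cite{BJM84} via Proposition \ref{prop:compare}.
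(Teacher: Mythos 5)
Your setup coincides with the paper's: reduce via Lemma \ref{lem:hidden-not-exist} to showing $\sigma^2 \eta_5 = 0$, where $\eta_5 \in \langle \eta, 2, \theta_4 \rangle$ detects $\{h_1 h_5\}$, and shuffle to get $\sigma^2 \langle \eta, 2, \theta_4 \rangle = \langle \sigma^2, \eta, 2 \rangle \theta_4$. But neither of your two ways of finishing actually closes, and the gap in each case is the same missing computation.

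Your primary route rests on the claim that every nonzero $E_\infty$ class in the 46-stem of filtration $\geq 5$ supports a nonzero multiplication by at least one of $2$, $\eta$, $\nu$. This is false: $h_1 h_5 d_0$, in filtration $6$, supports none. Its $h_0$- and $h_2$-multiples vanish already in $E_2$ (since $h_0 h_1 = h_1 h_2 = 0$), its $h_1$-multiple is killed by the Adams differential $d_2(h_5 e_0) = h_1^2 h_5 d_0$, and none of Tables \ref{tab:Adams-2}, \ref{tab:Adams-eta}, \ref{tab:Adams-nu} record a hidden $2$, $\eta$, or $\nu$ extension on it. Indeed $h_1 h_5 d_0$ detects $\eta_4 \theta_4$ by Lemma \ref{lem:eta4-h4^2}, which is itself $2$-torsion. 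So knowing that $\sigma^2 \eta_5$ is annihilated by $2$, $\eta$, $\nu$ and lies in filtration $\geq 5$ cannot rule out that it is a nonzero element detected by $h_1 h_5 d_0$ --- which is exactly the dangerous possibility, since that is where $\sigma \cdot \{h_1 h_3 h_5\}$ would most plausibly land.

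Your fallback then conflates two distinct brackets. The shuffle produces $\langle \sigma^2, \eta, 2 \rangle$, with $\eta$ in the middle slot; this is \emph{not} the bracket $\langle \eta, 2, \sigma^2 \rangle = \langle \sigma^2, 2, \eta \rangle$, with $2$ in the middle, which is the one containing $\eta_4$ by Table \ref{tab:Toda}. Reading $\langle \sigma^2, \eta, 2 \rangle \theta_4$ as ``$\eta_4 \theta_4$ plus image-of-$J$ terms'' would, by Lemma \ref{lem:eta4-h4^2}, produce a nonzero element of $\{h_1 h_5 d_0\}$ --- i.e., it would prove the opposite of the lemma. The entire content of the paper's proof is precisely the evaluation you skipped: $\langle \sigma^2, \eta, 2 \rangle = 0$ in $\pi_{16,9}$, proved by the shuffle
\[
\langle \sigma^2, \eta, 2 \rangle \eta = \sigma^2 \langle \eta, 2, \eta \rangle = \sigma^2 \cdot \{2\nu, 6\nu\} = 0
\]
(using $\nu \sigma = 0$), together with the fact that multiplication by $\eta$ is injective on $\pi_{16,9}$. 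Without this step your first argument is blocked by $h_1 h_5 d_0$, and your second, read literally, arrives at the wrong conclusion. The closing suggestion to import the statement from Table \ref{tab:extn-refs} and \cite{BJM84} does not help either, as no such classical statement is recorded there.
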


\begin{proof}
The element $\sigma \eta_5$ belongs to $\{ h_1 h_3 h_5 \}$.
We will show that $\sigma^2 \eta_5$ is zero and then apply
Lemma \ref{lem:hidden-not-exist}.

Table \ref{tab:Toda} shows that $\eta_5$ 
belongs to $\langle \eta, 2, \theta_4 \rangle$.
Then $\sigma^2 \eta_5$ belongs to 
\[
\sigma^2 \langle \eta, 2, \theta_4 \rangle =
\langle \sigma^2, \eta, 2 \rangle \theta_4.
\]

Finally, we must show that $\langle \sigma^2, \eta, 2 \rangle$
is zero in $\pi_{16,9}$.  First shuffle to obtain
\[
\langle \sigma^2, \eta, 2 \rangle \eta = 
\sigma^2 \langle \eta, 2, \eta \rangle.
\]
Table \ref{tab:Toda} shows that 
$\langle \eta, 2, \eta \rangle$ equals $\{ 2 \nu, 6 \nu \}$,
so $\sigma^2 \langle \eta, 2, \eta \rangle$ is zero.
Since multiplication by $\eta$ is injective on $\pi_{16,9}$,
this shows that $\langle \sigma^2, \eta, 2 \rangle$ is zero.
\end{proof}

\index{Adams spectral sequence!hidden extension!epsilon@$\epsilon$}

\begin{lemma}
\label{lem:epsilon-kappabar}
\mbox{}
\begin{enumerate}
\item
There is a hidden $\epsilon$ extension from $\tau g$ to $d_0^2$.
\item
There is a hidden $\epsilon$ extension from $\tau g^2$ to $d_0 e_0^2$.
\end{enumerate}
\end{lemma}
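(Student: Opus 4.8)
The plan is to prove part (1), the relation $\epsilon\kappabar=\kappa^2$ in $\pi_{28,16}$, and then deduce part (2) from it by multiplying by $\kappabar$. The first observation is that the product is genuinely hidden: in $\Ext$ there is no class in the tridegree of $c_0\cdot\tau g$ below filtration $8$, so $c_0\cdot\tau g=0$ on the $E_\infty$-page, and the only candidate for a nonzero value of $\epsilon\kappabar$ in $\pi_{28,16}$ is $\kappa^2$, detected by $d_0^2$. Thus the entire content of part (1) is to show that $\epsilon\kappabar$ is \emph{nonzero}, since its location is then forced.

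For the nonvanishing I would use a Toda bracket shuffle, the paper's standard device. By Moss's Convergence Theorem \ref{thm:Moss} applied to the Massey product $c_0=\langle h_1,h_0,h_2^2\rangle$ of Table \ref{tab:Massey}, one has $\epsilon\in\langle 2,\nu^2,\eta\rangle$ (this is the same bracket description used in Lemma \ref{lem:t-h1h3g}, but here it is logically prior). Hence
\[
\epsilon\kappabar\in\langle 2,\nu^2,\eta\rangle\,\kappabar\subseteq\langle 2,\nu^2,\eta\kappabar\rangle ,
\]
where the bracket is defined because $2\nu^2=0$ and $\eta\nu^2=0$. It then remains to show that this bracket is detected by $d_0^2$; I would do this by a second application of Moss's theorem to the corresponding Massey product in $\Ext$ (with $\eta\kappabar$ detected by $h_1\cdot\tau g$), checking that there are no crossing Adams differentials in this low stem. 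An attractive alternative, which avoids the bracket bookkeeping, is comparison with the Adams spectral sequence for $\tmf$ \cite{Henriques07}: the images of $\epsilon$, $\kappa$, $\kappabar$ already satisfy $\epsilon\kappabar=\kappa^2$ there, and one only needs that $d_0^2$ detects a class that survives realization to $\tmf$. I would present whichever route demands the least new input, and in either case the motivic weight is uniquely determined by the requirement that both sides lie in $\pi_{28,16}$, so no separate weight argument is needed.

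Part (2) then follows formally by multiplying the relation of part (1) by $\kappabar$. Using the non-hidden relation $d_0\cdot\tau g=e_0^2$ in $\Ext$, the class $e_0^2$ detects $\kappa\kappabar$, so $\kappa\{e_0^2\}=\kappa^2\kappabar=\epsilon\kappabar^2$; since $c_0\cdot\tau g^2=0$ on $E_\infty$ the product $\epsilon\kappabar^2$ is again hidden, and $d_0e_0^2$ is the unique possible detecting class, giving $\epsilon\{\tau g^2\}=\kappa\{e_0^2\}$. The main obstacle is precisely the value determination in part (1): because the product is hidden one cannot read nonvanishing off the $E_\infty$-page, so everything hinges on the bracket computation (where one must control indeterminacy and rule out interfering differentials when invoking Moss's theorem) or, in the alternative, on the faithfulness of the $\tmf$ realization on $d_0^2$; once part (1) is secured, part (2) is routine multiplicative bookkeeping.
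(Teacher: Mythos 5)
Your reduction of part (1) to a nonvanishing statement is reasonable, and the shuffle $\epsilon\kappabar\in\langle 2,\nu^2,\eta\rangle\kappabar\subseteq\langle 2,\nu^2,\eta\kappabar\rangle$ is valid, but the step you yourself identify as the crux --- that this bracket is detected by $d_0^2$ --- is never carried out, and neither of your proposed routes is routine. For the Moss route, the Massey product $\langle h_0, h_2^2, \tau h_1 g\rangle$ does not appear in Table \ref{tab:Massey}, and the obvious application of May's Convergence Theorem \ref{thm:3-converge} with $d_2(h_0(1)) = h_0 h_2^2$ produces the detecting class $h_0(1)\cdot \tau h_1 g$, i.e.\ $\tau c_0 g$ --- which is \emph{zero} in $\Ext$ (that is precisely why the extension is hidden). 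So this computation at best places the bracket in filtration $\geq 8$; identifying it with $d_0^2$ rather than $0$ is exactly the open problem, not a bookkeeping check about crossing differentials. For the $\tmf$ route, the relation $\epsilon\kappabar=\kappa^2$ is not listed among the known classical hidden extensions (Table \ref{tab:extn-refs}) and is just as hidden in the Adams spectral sequence for $\tmf$; you would be importing an unproved hidden extension. The paper's proof avoids all of this by multiplying by $\eta$: it writes $\eta\epsilon\kappabar = \langle 2\nu,\nu,\eta\rangle\eta\kappabar = \langle 2\nu,\nu,\eta^2\kappabar\rangle$, uses the already-established hidden $\eta$ extension from $\tau h_1 g$ to $c_0 d_0$ (Lemma \ref{lem:eta-th1g}, resting on the classical input in Table \ref{tab:extn-refs}) to rewrite this as $\langle 2\nu,\nu,\epsilon\rangle\kappa$, and then invokes $\langle 2\nu,\nu,\epsilon\rangle=\eta\kappa$ from Table \ref{tab:Toda}; everything reduces to table entries, and only the final cancellation of $\eta$ remains.

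Part (2) does not ``follow formally by multiplying by $\kappabar$,'' because of motivic weights. The coset $\{\tau g^2\}$ lies in $\pi_{40,23}$, whereas $\kappabar^2\in\pi_{40,22}$ is detected by $\tau^2 g^2$; likewise $\{e_0^2\}\subseteq\pi_{34,20}$, whereas $\kappa\kappabar\in\pi_{34,19}$ is detected by $\tau e_0^2$, and the non-hidden $\Ext$ relation is $d_0\cdot\tau g = \tau e_0^2$, not $e_0^2$. So your identities ``$e_0^2$ detects $\kappa\kappabar$'' and ``$\epsilon\{\tau g^2\}=\epsilon\kappabar^2$'' are each off by a factor of $\tau$. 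Multiplying part (1) by $\kappabar$ only yields $\epsilon\kappabar^2=\kappa^2\kappabar$ in $\pi_{48,27}$, detected by $\tau d_0 e_0^2$ --- that is, the $\tau$-multiple of the desired extension (from $\tau^2 g^2$ to $\tau d_0 e_0^2$), not the stated extension from $\tau g^2$ to $d_0 e_0^2$. Recovering the full-weight statement from its $\tau$-multiple requires controlling higher-filtration correction terms and is not formal. The paper instead reruns the part-(1) argument verbatim on $\{\tau g^2\}$, using the hidden $\eta$ extension from $\tau h_1 g^2$ to $c_0 e_0^2$ (again Lemma \ref{lem:eta-th1g}) to get $\eta\epsilon\{\tau g^2\}=\langle 2\nu,\nu,\{c_0 e_0^2\}\rangle=\{h_1 d_0 e_0^2\}$, and then cancels $\eta$.
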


\begin{proof}
Table \ref{tab:Toda} shows that $\epsilon$ is contained in
$\langle 2 \nu, \nu, \eta \rangle$.  Therefore,
$\eta \epsilon \kappabar$ equals
$\langle 2 \nu, \nu, \eta \rangle \eta \kappabar$
with no indeterminacy.
This expression equals $\langle 2\nu, \nu, \eta^2 \kappabar \rangle$ because
the latter still has no indeterminacy.

Lemma \ref{lem:eta-th1g} tells us that we can rewrite this bracket as
$\langle 2\nu, \nu, \{c_0 d_0\} \rangle$, which equals
$\langle 2\nu, \nu, \epsilon \rangle \kappa$.
Table \ref{tab:Toda} shows that 
$\langle 2\nu, \nu, \epsilon \rangle$ equals $\eta \kappa$.

Therefore, $\eta \epsilon \kappabar$ equals $\eta \kappa^2$.
It follows that $\epsilon \kappabar$ equals $\kappa^2$.
This establishes the first claim.

The argument for the second claim is essentially the same.
Start with 
$\eta \epsilon \{ \tau g^2 \} = 
\langle 2 \nu, \nu, \eta \rangle \eta \{\tau g^2\}$.
This equals $\langle 2\nu, \nu, \{ c_0 e_0^2\} \rangle$, which is the same
as $\{ h_1 d_0 e_0^2 \}$.
This shows that $\eta \epsilon \{ \tau g^2 \}$ 
equals $\eta \{ d_0 e_0^2 \}$, so
$\epsilon \{ \tau g^2 \}$ equals $\{ d_0 e_0^2 \}$.
\end{proof}

\begin{remark}
\label{rem:c0-tg^3}
Based on the calculations in Lemma \ref{lem:epsilon-kappabar},
one might expect that there is a hidden $\epsilon$
extension from $\tau g^3 + h_1^4 h_5 c_0 e_0$ to 
$e_0^4$.
\end{remark}

\index{Adams spectral sequence!hidden extension!epsilon@$\epsilon$}

\begin{lemma}
\label{lem:epsilon-q}
There is a hidden $\epsilon$ extension from $q$ to $h_1 u$.
\end{lemma}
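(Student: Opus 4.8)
The plan is to deduce the hidden $\epsilon$ extension from the compound relation $\langle \nu, \eta, \nu \rangle = \epsilon + \eta \sigma$ recorded in Table \ref{tab:Adams-compound-extn}, after first disposing of the $\eta\sigma$ term. Along the way I will also establish the auxiliary relation $\sigma \{q\} = \nu \{t\}$, which is exactly what is cited from this proof in Lemma \ref{lem:d5-tPh5e0}. Throughout I will ultimately invoke Lemma \ref{lem:hidden-exist} to convert a homotopy computation into the asserted hidden extension, using that $c_0 q = 0$ in $E_\infty$ and that the candidate target $h_1 u$ detects $2\kappabar^2$ by Table \ref{tab:Adams-2}.

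First I would prove $\sigma \{q\} = \nu \{t\}$. Both products lie in the $39$-stem, and the relation $h_3 q = \tau^2 c_1 g$ on the $E_2$-page (used in the proof of Lemma \ref{lem:nu-h1h3h5}) shows that $\sigma \{q\}$ is detected by $\tau^2 c_1 g$. I would identify $\nu \{t\}$ with the same element, checking that no difference survives in strictly higher Adams filtration; this is the one place where I expect to need either an inspection of the $E_\infty$-chart of \cite{Isaksen14a} in this degree, or a comparison of images under the realization functor to $\tmf$, whose relevant products are read off from \cite{Henriques07} and \cite{Bauer08}. I treat this as a self-contained sub-step.

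Granting $\sigma \{q\} = \nu \{t\}$, I multiply $\langle \nu, \eta, \nu \rangle = \epsilon + \eta \sigma$ on the right by $\{q\}$. Because $\eta\sigma\{q\} = \eta\nu\{t\} = 0$, this gives $\epsilon \{q\} = \langle \nu, \eta, \nu \rangle \{q\}$. I then slide $\{q\}$ into the bracket, $\langle \nu, \eta, \nu \rangle \{q\} \subseteq \langle \nu, \eta, \nu \{q\} \rangle$, which is legitimate since $\eta\nu = 0$ and $\eta \cdot \nu\{q\} = 0$. Using the classical hidden extension $\nu \{q\} = \{ h_1 e_0^2 \}$ from Table \ref{tab:extn-refs}, the target is $\langle \nu, \eta, \{h_1 e_0^2\} \rangle$, which I expect to equal $2\kappabar^2 = \{h_1 u\}$.

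The hard part will be this final identification of $\langle \nu, \eta, \{h_1 e_0^2\}\rangle$ as a single nonzero value. Neither $\eta\{q\}$ nor $\nu\{q\}$ vanishes, so the naive shuffles of $\langle \nu, \eta, \nu\rangle\{q\}$ are blocked and I must control the indeterminacy by hand. I would resolve this in one of two ways: either rewrite $\{h_1 e_0^2\}$ as an $\eta$-multiple and reassociate so that the bracket collapses onto $2\kappabar^2$, or instead show directly that $\epsilon\{q\}$ is nonzero (for instance by realizing to $\tmf$, where the image of $\{q\}$ is nonzero and $2\kappabar^2$ persists) and then observe that $h_1 u$ is the unique admissible target in the $40$-stem of higher filtration than $c_0 q$. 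Either route, combined with Lemma \ref{lem:hidden-exist}, yields the hidden $\epsilon$ extension from $q$ to $h_1 u$.
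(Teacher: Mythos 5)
Your skeleton is the same as the paper's (which follows \cite{KM93}*{Lemma 2.1}): start from $\epsilon + \eta\sigma = \langle\nu,\eta,\nu\rangle$, multiply by $\{q\}$, slide $\{q\}$ into the bracket to get $\langle\nu,\eta,\nu\{q\}\rangle$, and separately kill $\eta\sigma\{q\}$ using $\sigma\{q\} = \nu\{t\}$. Steps B--D of your outline are fine and coincide with the paper. However, the two places where you defer the work are exactly where all the content of the lemma lies, and the methods you propose there do not go through. For the relation $\sigma\{q\} = \nu\{t\}$: since $h_3 q = h_2 t$ (up to powers of $\tau$), both products are detected by the same $\Ext$ class $\tau^2 c_1 g$, so their sum lies in strictly higher Adams filtration of the 39-stem --- where $u$ is a nonzero permanent cycle of the correct weight. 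Inspection of the $E_\infty$-chart therefore leaves precisely the ambiguity ``sum $=0$ or sum $\in \{u\}$,'' which is the whole problem; it cannot resolve it. The paper settles this by multiplying by $\kappa$: the sum is annihilated by $\kappa$, whereas $\kappa\{u\} = \{d_0 u\} \neq 0$.

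For the final identification, your first route --- ``rewrite $\{h_1 e_0^2\}$ as an $\eta$-multiple and reassociate'' --- is not available: pulling $\eta$ out of the third slot would require the subbracket $\langle\nu,\eta,\eta\rangle$, which is undefined because $\eta^2 \neq 0$. The factorization that works pulls out $\kappabar$ instead: writing $\nu\{q\} = (\tau\eta\kappa)\kappabar$, one has $\langle\nu,\eta,\tau\eta\kappa\rangle\kappabar \subseteq \langle\nu,\eta,\nu\{q\}\rangle$, and Table \ref{tab:Toda} (via Moss's theorem and $d_2(e_0) = h_1^2 d_0$) gives $\langle\nu,\eta,\tau\eta\kappa\rangle = \{2\kappabar, 6\kappabar\}$, so this subset is the single element $2\kappabar^2$ since $4\kappabar^2 = 0$. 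Even granting that, you are not done: $\epsilon\{q\}$ and $2\kappabar^2$ then merely lie in the same bracket, and one still needs the indeterminacy argument the paper supplies --- the indeterminacy of $\langle\nu,\eta,\nu\{q\}\rangle$ consists of multiples of $\nu$, and there are no $\nu$-multiples in Adams filtration $\geq 9$ of the 40-stem --- which you acknowledge but never provide. Your fallback via realization to $\tmf$ is likewise not carried out, and it is delicate: showing $\epsilon\{q\} \neq 0$ in $\tmf$ essentially requires knowing $2\kappabar^2 \neq 0$ there, i.e.\ transporting the hidden $2$ extension of Table \ref{tab:extn-refs} into $\tmf$, so it is not obviously simpler than the bracket computation it is meant to replace. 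As written, the proposal is a correct outline whose two decisive steps are missing or misdirected.
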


\begin{proof}
This proof follows the argument of the proof of \cite{KM93}*{Lemma 2.1},
which we include for completeness.

First, recall from Table \ref{tab:Toda} that
$\epsilon + \eta \sigma$ equals $\langle \nu, \eta, \nu \rangle$.
Then
$(\epsilon + \eta \sigma) \{q \}$ equals
$\langle \nu, \eta, \nu \rangle \{q\}$, which is contained in
$\langle \nu, \eta, \nu \{q \} \rangle$.
It follows from Table \ref{tab:extn-refs} that 
$\nu \{q \}$ equals $\tau \eta \kappa \kappabar$,
so
$(\epsilon + \eta \sigma) \{q \}$ belongs to 
$\langle \nu, \eta, \tau \eta \kappa \kappabar \rangle$.

On the other hand, this bracket contains
$\langle \nu, \eta, \tau \eta \kappa \rangle \kappabar$.
Table \ref{tab:Toda} shows that 
$\langle \nu, \eta, \tau \eta \kappa \rangle$
equals $\{ \tau h_0 g \} = \{ 2 \kappabar, 6 \kappabar \}$,
and $4 \kappabar^2$ is zero.
Therefore,
$\langle \nu, \eta, \tau \eta \kappa \rangle \kappabar$
equals $2 \kappabar^2$.

This shows that the difference
$(\epsilon + \eta \sigma) \{q \} - 2 \kappabar^2$
is contained in the indeterminacy of the bracket
$\langle \nu, \eta, \tau \eta \kappa \kappabar \rangle$.
The indeterminacy of this bracket consists of multiples of $\nu$.

Each of the terms in $(\epsilon + \eta \sigma) \{q \} - 2 \kappabar^2$
is in Adams filtration at least 9, and there are no multiples of $\nu$
in those filtrations.
Therefore, 
$(\epsilon + \eta \sigma) \{q \}$ equals $2 \kappabar^2$.

We now need to show that $\eta \sigma \{ q\}$ is zero.
Because $h_3 q = h_2 t$ in $\Ext$, we know that
$\sigma \{q \} + \nu \{ t \}$ either equals zero or $\{ u \}$.
Note that $\kappa (\sigma \{q \} + \nu \{ t \})$ is zero, 
while $ \kappa \{ u \} = \{ d_0 u \}$ is non-zero.
Therefore,
$\sigma \{q \} + \nu \{ t \}$ equals zero, and
$\eta \sigma \{q \}$ is zero as well.
\end{proof}

\index{Adams spectral sequence!hidden extension!epsilon@$\epsilon$}

\begin{lemma}
\label{lem:epsilon-h3^2h5}
There is a hidden $\epsilon$ extension from 
$h_3^2 h_5$ to $B_8$.
\end{lemma}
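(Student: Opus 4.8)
The plan is to exploit the hidden $\eta$ extension from $h_3^2 h_5$ to $B_1$ (Lemma \ref{lem:eta-h3^2h5}) together with the multiplicative relation $c_0 \cdot B_1 = h_1 B_8$ in $\Ext$. The guiding idea is that although $\epsilon \theta_{4.5}$ itself is in high filtration, its product with $\eta$ is visible on the nose in $E_\infty$, which is enough to detect it.

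First I would assemble the two ingredients. By the construction of $\theta_{4.5}$ in Section \ref{sctn:notation} (compare Lemma \ref{lem:eta-h3^2h5}), the product $\eta \theta_{4.5}$ lies in $\{B_1\}$. By Lemma \ref{lem:t-B8}, there is a non-hidden relation $c_0 \cdot B_1 = h_1 B_8$ in $\Ext$; since $h_1 B_8$ is a nonzero permanent cycle, $c_0 B_1$ is nonzero in $E_\infty$. Then the main computation is immediate: because $\epsilon$ is detected by $c_0$ and $\eta \theta_{4.5}$ is detected by $B_1$, and because $c_0 B_1 = h_1 B_8 \neq 0$ in $E_\infty$, the product
\[
\eta (\epsilon \theta_{4.5}) = \epsilon (\eta \theta_{4.5})
\]
is detected by $h_1 B_8$. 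In particular $\eta \epsilon \theta_{4.5}$ is nonzero, so $\epsilon \theta_{4.5}$ is itself nonzero.

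It remains to identify the class detecting $\epsilon \theta_{4.5}$. This element lies in the $53$-stem, in the same bidegree as $B_8$ (the weights match because $\eta$ and $h_1$ each raise weight by one). Since $\eta \cdot \epsilon \theta_{4.5}$ is detected by $h_1 B_8$, the class detecting $\epsilon \theta_{4.5}$ must support a (possibly hidden) $\eta$ extension hitting $h_1 B_8$; by inspection of the $E_\infty$-chart in \cite{Isaksen14a} in this degree, $B_8$ is the unique such class. Hence $\epsilon \theta_{4.5}$ is detected by $B_8$, and since $c_0 \cdot h_3^2 h_5 = 0$ in $E_\infty$ the extension is genuinely hidden, giving the hidden $\epsilon$ extension from $h_3^2 h_5$ to $B_8$ in the sense of Definition \ref{defn:hidden}.

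I expect the last step to be the main obstacle: one must rule out that $\epsilon \theta_{4.5}$ is detected in strictly higher filtration than $B_8$ by some class carrying a hidden $\eta$ extension to $h_1 B_8$. A class of filtration lower than $B_8$ is excluded automatically, since its $h_1$-multiple would then land below $h_1 B_8$ in filtration; the higher-filtration possibilities must be eliminated by appeal to the detailed $E_\infty$-computation in this bidegree, and one should simultaneously confirm $c_0 \cdot h_3^2 h_5 = 0$ in $E_\infty$ so that condition (1) of Definition \ref{defn:hidden} holds. Everything else is formal manipulation of detecting classes.
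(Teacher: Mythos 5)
Your core argument is exactly the paper's proof: $\eta \theta_{4.5}$ lies in $\{B_1\}$ by the definition of $\theta_{4.5}$, the relation $c_0 \cdot B_1 = h_1 B_8$ is not hidden, so $h_1 B_8$ detects $\eta \epsilon \theta_{4.5}$, and therefore $B_8$ detects $\epsilon \theta_{4.5}$. The paper's proof consists of precisely these two observations and leaves the final identification step implicit, so your strategy and the paper's coincide.

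However, the filtration bookkeeping in your last paragraph is backwards. What is excluded automatically is the \emph{higher}-filtration possibility: multiplication by $\eta$ raises Adams filtration by at least one, so if $\epsilon \theta_{4.5}$ were detected in filtration $\geq 10$, then $\eta \epsilon \theta_{4.5}$ would lie in filtration $\geq 11$, contradicting that it is detected by $h_1 B_8$ in filtration exactly $10$; no appeal to the chart is needed for this. It is the \emph{lower}-filtration candidates that your argument does not dispose of. Your claim that a lower-filtration detecting class is impossible because "its $h_1$-multiple would land below $h_1 B_8$" presupposes that this $h_1$-multiple is nonzero in $E_\infty$; if instead $\epsilon \theta_{4.5}$ were detected by a class $y$ with $h_1 y = 0$ in $E_\infty$, then $\eta \epsilon \theta_{4.5}$ could reach $h_1 B_8$ by a hidden $\eta$ extension from $y$. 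This concern is not vacuous in the relevant degree: the class $h_2 C$ lies in the $53$-stem with weight $29$ and filtration $7$, survives to $E_\infty$, and satisfies $h_1 \cdot h_2 C = 0$ on the nose (since $h_1 h_2 = 0$), so finishing your argument requires ruling out a hidden $\eta$ extension from $h_2 C$ (and from any filtration-$8$ class in this degree) to $h_1 B_8$ — which is exactly the kind of chart-level input you assigned to the wrong case.
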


\begin{proof}
First, there is a relation $h_1 B_8 = c_0 B_1$ on the $E_2$-page,
which is not hidden in the May spectral sequence. Since 
$B_1$ detects $\eta \theta_{4.5}$ by definition of $\theta_{4.5}$
(see Section \ref{sctn:notation}),
we get that $h_1 B_8$ detects $\eta \epsilon \theta_{4.5}$
and that $B_8$ detects $\epsilon \theta_{4.5}$.
\end{proof}

On the $E_\infty$-page, we have the relation
$h_2^3 h_5 = h_1^2 h_3 h_5$ in the 40-stem.  We will next show that
this relation gives rise to a compound hidden extension that is
analogous to Toda's relation
$\nu^3 + \eta^2 \sigma = \eta \epsilon$ (see Table \ref{tab:Adams-compound-extn}). the 
Note that the element $\epsilon \eta_5$ is detected by $h_1 h_5 c_0$,
whose Adams filtration is higher than the Adams filtration of
$h_2^3 h_5 = h_1^2 h_3 h_5$.

\index{Adams spectral sequence!hidden extension!compound}

\begin{lemma}
\label{lem:compound-h1^2h3h5}
$\nu \{ h_2^2 h_5 \} + \eta \sigma \eta_5$ equals
$\epsilon \eta_5$.
\end{lemma}

\begin{proof}
Table \ref{tab:Toda} shows that 
$\langle \nu^2, 2, \theta_4 \rangle$ equals $\{ h_2^2 h_5 \}$.
Note that $\{ x\}$ belongs to the indeterminacy, since there
is a hidden $\sigma$ extension from $h_4^2$ to $x$ as shown in 
Table \ref{tab:extn-refs}.

Similarly, $\langle \nu^3, 2, \theta_4 \rangle$
intersects $\{ h_2^3 h_5 \}$, with no indeterminacy.
In order to compute the indeterminacy, we need to know that
$\eta \mu_9 \theta_4$ is zero.  This follows from the calculation
\[
\eta \theta_4 \langle \eta, 2, 8 \sigma \rangle =
\langle \eta \theta_4, \eta, 2 \rangle 8 \sigma = 0.
\]

Table \ref{tab:Toda} also shows that 
$\langle \eta, 2, \theta_4 \rangle$ equals
$\{ \eta_5, \eta_5 + \eta \rho_{31} \}$.

With these tools, compute that
\[
\nu \{ h_2^2 h_5 \} = 
\nu \langle \nu^2, 2, \theta_4 \rangle = 
\langle \nu^3, 2, \theta_4 \rangle
\]
because there is no indeterminacy in the last bracket.
This equals
$\langle \eta^2 \sigma + \eta \epsilon, 2, \theta_4 \rangle$,
which equals
$(\eta \sigma + \epsilon) \langle \eta, 2, \theta_4 \rangle$,
again because there is no indeterminacy.
Finally, this last expression equals 
$\eta \sigma \eta_5 + \epsilon \eta_5$.
\end{proof}

\index{Adams spectral sequence!hidden extension!nu4@$\nu_4$}

\begin{lemma}
\label{lem:nu4-h4^2}
There is a hidden $\nu_4$ extension from $h_4^2$ to $h_2 h_5 d_0$.
\end{lemma}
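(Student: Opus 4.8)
The plan is to realize the product $\nu_4\theta_4$ as (part of) a Toda bracket and then identify its detecting element on the $E_\infty$-page. Recall that $\theta_4$ is an element of $\{h_4^2\}$ and that $\nu_4$ is an element of $\{h_2 h_4\}$, so $\nu_4\theta_4$ lies in the $48$-stem. The product of detecting classes is $h_2 h_4\cdot h_4^2 = h_2 h_4^3 = h_2 h_3^2 h_5$, which vanishes in $\Ext$ because $h_2 h_3 = 0$; hence $\nu_4\theta_4$, if nonzero, is detected in strictly higher Adams filtration, i.e.\ it is a genuine hidden extension. First I would enumerate the possible targets in the $48$-stem at filtration greater than $2$: the relevant candidates are $h_2 h_5 d_0$ and $B_2$ (together with the lower-priority $\tau g n$), and the whole difficulty will be in selecting the correct one.

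Next I would produce the bracket description of $\nu_4$. Using the Adams differential $d_2(h_4) = h_0 h_3^2$ together with Moss's Convergence Theorem~\ref{thm:Moss}, the Massey product $\langle h_2, h_0, h_3^2\rangle_{E_3}$ is detected by $h_2 h_4$, so $\nu_4$ lies in $\langle \nu, 2, \sigma^2\rangle$. Shuffling gives
\[
\nu_4\theta_4 \in \langle \nu, 2, \sigma^2\rangle \theta_4 = \nu\,\langle 2, \sigma^2, \theta_4\rangle ,
\]
where one must first check that $\langle 2, \sigma^2, \theta_4\rangle$ is defined, i.e.\ that $2\sigma^2 = 0$ (clear) and $\sigma^2\theta_4 = 0$; here the relation $h_3 h_4 = 0$ forces $h_3^2 h_4^2 = 0$, removing the only obvious detecting class for $\sigma^2\theta_4$, and the remaining possibilities are excluded on filtration grounds. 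I would then evaluate $\langle 2, \sigma^2, \theta_4\rangle$ through Moss's theorem and the $E_3$ Massey product $\langle h_0, h_3^2, h_4^2\rangle_{E_3}$, which contains $h_4^3 = h_3^2 h_5$; thus the bracket is detected by $h_3^2 h_5$ and $\nu_4\theta_4 \in \nu\,\{h_3^2 h_5\}$. Since $\nu\cdot 2 = 0$ and $\nu\sigma^2 = 0$, the indeterminacy of the bracket is annihilated by $\nu$, so $\nu_4\theta_4$ is a single well-defined homotopy element.

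The main obstacle is deciding whether $\nu$ applied to this element of $\{h_3^2 h_5\}$ is detected by $h_2 h_5 d_0$ or by $B_2$; this is precisely the crossing-extension phenomenon of Example~\ref{ex:hidden-cross-2}. By Lemma~\ref{lem:nu-h3^2h5} the special element $\theta_{4.5}$ satisfies $\nu\theta_{4.5}\in\{B_2\}$ (filtration $\geq 7$), whereas the non-hidden multiplication $\nu\cdot\{h_5 d_0\} = \{h_2 h_5 d_0\}$ (filtration $6$) means that any element of $\{h_3^2 h_5\}$ carrying a nonzero $\{h_5 d_0\}$-component has $\nu$-multiple detected by $h_2 h_5 d_0$. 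Moss's theorem only pins down $\langle 2, \sigma^2, \theta_4\rangle$ to its leading term $h_3^2 h_5$ (filtration $3$), so the crux is to control the bracket to filtration $5$ and verify that it is \emph{not} contained in the $\theta_{4.5}$-coset; I would resolve this using the $2$- and $4$-divisibility properties built into the definition of $\theta_{4.5}$ in Section~\ref{sctn:notation} (namely that $2\theta_{4.5}$ and $4\theta_{4.5}$ are constrained by $\{h_0 h_3^2 h_5\}$ and $\{h_0 h_5 d_0\}$), which distinguishes the bracket value from $\theta_{4.5}$ and forces $\nu_4\theta_4\in\{h_2 h_5 d_0\}$.
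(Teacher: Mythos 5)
Your proposal fails on bracket well-definedness, and the failure is not repairable within your strategy. First, the Toda bracket $\langle \nu, 2, \sigma^2 \rangle$ does not exist: a bracket $\langle \alpha, \beta, \gamma \rangle$ requires $\alpha\beta = 0$, but $2\nu \neq 0$ in $\pi_{3,2}$. (Correspondingly, the Massey product $\langle h_2, h_0, h_3^2 \rangle$ is undefined in $\Ext$ and in $E_3$, since $h_0 h_2$ is a nonzero permanent cycle.) The valid bracket expressions for $\nu_4$ put $\sigma$ or $\sigma^2$ in the middle slot: $\nu_4 = \langle 2\sigma, \sigma, \nu \rangle$ (Table \ref{tab:notation}) or $\nu_4 \in \langle 2, \sigma^2, \nu \rangle$. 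Second, the bracket $\langle 2, \sigma^2, \theta_4 \rangle$ onto which you shuffle is also undefined, because $\sigma^2 \theta_4 \neq 0$: by Table \ref{tab:extn-refs} there is a hidden $\sigma$ extension from $h_4^2$ to $x$, so $\sigma\theta_4 \in \{x\}$, and since $h_3 x = h_0^2 g_2 \neq 0$ in $\Ext$, the product $\sigma^2\theta_4$ equals $\{h_0^2 g_2\}$ (this is exactly how the proof of Lemma \ref{lem:eta-h3^2h5} proceeds). Your claim that $\sigma^2\theta_4$ vanishes ``on filtration grounds'' is false.

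Third, and most seriously: even for the bracket that \emph{is} defined, namely $\langle \theta_4, 2, \sigma^2 \rangle$, your evaluation is wrong. The $E_3$ Massey product built from $d_2(h_5) = h_0 h_4^2$ and $d_2(h_4) = h_0 h_3^2$ equals $h_3^2 h_5 + h_4^3$, which is \emph{zero} by the relation $h_4^3 = h_3^2 h_5$; the two defining terms cancel rather than produce $h_3^2 h_5$. This is precisely the content of Lemma \ref{lem:bracket-theta4-2-sigma^2} and the example in Section \ref{sctn:Adams-bracket}: $\langle \theta_4, 2, \sigma^2 \rangle$ lies in Adams filtration strictly greater than $3$ and is disjoint from $\{h_3^2 h_5\}$ --- the same subtlety that invalidated Mahowald's argument for $d_2(D_1)$. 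So your conclusion $\nu_4\theta_4 \in \nu\{h_3^2 h_5\}$ is unjustified, and the entire final paragraph (separating $h_2 h_5 d_0$ from $B_2$ via divisibility properties of $\theta_{4.5}$) rests on it; moreover, extracting information from $\langle \theta_4, 2, \sigma^2 \rangle$ would risk circularity, since Lemma \ref{lem:bracket-theta4-2-sigma^2} is itself proved using the present lemma. The paper's proof sidesteps all of this: $\langle \sigma, \nu, \sigma \rangle$ is defined and consists of a single element, which is an odd multiple of $\nu_4$ modulo $\eta\mu_{17}$; multiplying by $\theta_4$ kills the ambiguity, giving $\nu_4\theta_4 \in \langle \sigma, \nu, \sigma\theta_4 \rangle = \langle \sigma, \nu, \{x\} \rangle$, and this bracket is evaluated by the $\Ext$ Massey product $\langle h_3, h_2, x \rangle = h_2 h_5 d_0$ together with Moss's Convergence Theorem \ref{thm:Moss}.
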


\begin{proof}
Table \ref{tab:Toda} shows that $\langle \sigma, \nu, \sigma \rangle$
consists of a single element $\alpha$ contained in $\{ h_2 h_4 \}$.
Then $\alpha$ must be of the form $k \nu_4$ or $k \nu_4 + \eta \mu_{17}$
where $k$ is odd.
Since $2 \theta_4$ and $\eta \mu_{17} \theta_4$ are both zero,
we conclude that
$\langle \sigma, \nu, \sigma \rangle \theta_4$ equals
$\nu_4 \theta_4$.

Table \ref{tab:extn-refs} shows that
$\sigma \theta_4$ equals $\{x\}$.
Therefore, $\nu_4 \theta_4$ is contained in
$\langle \sigma, \nu, \{ x \} \rangle$.

Next compute that $h_2 h_5 d_0 = \langle h_3, h_2, x \rangle$
with no indeterminacy.
This follows from the shuffle
\[
h_2 \langle h_3, h_2, x \rangle =
\langle h_2, h_3, h_2 \rangle x =
h_3^2 x = h_2^2 h_5 d_0.
\]
Then Moss's Convergence Theorem \ref{thm:Moss} implies that the Toda
bracket
$\langle \sigma, \nu, \{x \} \rangle$
intersects $\{ h_2 h_5 d_0 \}$.
\index{Convergence Theorem!Moss}
The indeterminacy in $\langle \sigma, \nu, \{x\} \rangle$
is concentrated in Adams filtration strictly greater than 6,
so $\langle \sigma, \nu, \{ x\} \rangle$ is contained in
$\{ h_2 h_5 d_0 \}$.
This shows that $\nu_4 \theta_4$ is contained in $\{ h_2 h_5 d_0 \}$.
\end{proof}

\begin{lemma}
\label{lem:bracket-theta4-2-sigma^2}
$\langle \theta_4, 2, \sigma^2 \rangle$ 
is contained in $\{ h_0 h_3^2 h_5 \}$, with
indeterminacy generated by $\rho_{15} \theta_4$ in $\{h_0^2 h_5 d_0 \}$.
\end{lemma}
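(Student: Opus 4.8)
The statement concerns the Toda bracket $\langle \theta_4, 2, \sigma^2 \rangle$ in $\pi_{45,24}$. The plan is to use Moss's Convergence Theorem \ref{thm:Moss} to locate the bracket on the $E_\infty$-page, and then pin down the indeterminacy by a direct homotopy-theoretic analysis. First I would recall from Example near the statement that $d_2(h_5) = h_0 h_4^2$ and $d_2(h_4) = h_0 h_3^2$, and that the relation $h_4^3 + h_3^2 h_5 = 0$ holds in $\Ext$. Using these together with the $E_3$-level Massey product $\langle h_4^2, h_0, h_3^2 \rangle_{E_3}$, I would apply Moss's Convergence Theorem with $r = 3$. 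The source of the relevant $d_2$ differentials shows that $\langle h_4^2, h_0, h_3^2 \rangle_{E_3}$ is defined; the computation of the bracket should land on $h_0 h_3^2 h_5$, since $\theta_4 \in \{h_4^2\}$, $2$ is detected by $h_0$, and $\sigma^2 \in \{h_3^2\}$.

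\textbf{Establishing containment.} The key input is that $\langle \theta_4, 2, \sigma^2 \rangle$ is nonempty and detected by $h_0 h_3^2 h_5$ rather than by something in higher filtration. For this I would verify condition (2) of Moss's Convergence Theorem \ref{thm:Moss}, i.e.\ that there are no crossing Adams differentials in the relevant range of filtrations that would push the bracket higher. The degree of $h_0 h_3^2 h_5$ places it at $(45, 24)$ with a specific Adams filtration, and I would check that no $d_t$ differential crosses the filtration window determined by $h_0 \cdot h_3^2$ and $h_4^2 \cdot h_0$. Assuming this holds, Moss's theorem gives a permanent cycle in the Massey product that detects an element of the Toda bracket, and this permanent cycle is $h_0 h_3^2 h_5$. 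Hence $\langle \theta_4, 2, \sigma^2 \rangle$ intersects $\{h_0 h_3^2 h_5\}$.

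\textbf{Computing the indeterminacy.} The indeterminacy of $\langle \theta_4, 2, \sigma^2 \rangle$ is $\theta_4 \cdot \pi_{31,16} + \pi_{31,17} \cdot \sigma^2$. I would argue that the $\sigma^2$-multiple term contributes nothing detectable at the filtration of $h_0 h_3^2 h_5$: the relevant classes times $\sigma^2$ land in higher filtration or are zero, using $\eta \sigma^2 = 0$ \cite{Toda62} and the structure of $\pi_{31,*}$. The surviving contribution comes from $\theta_4 \cdot \pi_{31,16}$, where the image of $J$ supplies the element $\rho_{15}$ in the $31$-stem. The product $\rho_{15}\theta_4$ is detected by $h_0^2 h_5 d_0$ (this uses the hidden $4$ extension from $h_3^2 h_5$ to $h_0 h_5 d_0$ recorded in Table \ref{tab:extn-refs}, giving a relation between $\theta_4$-multiples and $h_5 d_0$-type classes). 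I would confirm that $\rho_{15}\theta_4$ is nonzero and generates the indeterminacy, so that the indeterminacy is exactly the subgroup generated by $\rho_{15}\theta_4$ inside $\{h_0^2 h_5 d_0\}$.

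\textbf{Main obstacle.} The hard part will be the precise identification of the indeterminacy, specifically showing that $\rho_{15}\theta_4$ is detected by $h_0^2 h_5 d_0$ and is nonzero, while ruling out that any $\sigma^2$-multiple contributes an element of filtration equal to that of $h_0 h_3^2 h_5$. This requires careful bookkeeping of $\pi_{31,*}$ and its products with $\theta_4$ and $\sigma^2$, and likely invokes the hidden $4$ extension on $h_3^2 h_5$ to relate $\rho_{15}\theta_4$ to $h_5 d_0$-divisibility. The Moss's Convergence Theorem application itself is routine once the crossing-differential check is dispatched; the subtlety lies entirely in separating the two pieces of the indeterminacy and identifying which $E_\infty$ class detects the nonzero piece.
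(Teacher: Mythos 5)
There is a genuine gap at the central step. You propose to apply Moss's Convergence Theorem \ref{thm:Moss} to the Massey product $\langle h_4^2, h_0, h_3^2 \rangle_{E_3}$ and conclude that the Toda bracket ``lands on'' $h_0 h_3^2 h_5$. But that Massey product is \emph{zero}: using $d_2(h_5) = h_0 h_4^2$ and $d_2(h_4) = h_0 h_3^2$, it is represented by $h_5 \cdot h_3^2 + h_4^2 \cdot h_4 = h_3^2 h_5 + h_4^3$, which vanishes by the relation $h_4^3 + h_3^2 h_5 = 0$. Moreover, for degree reasons this Massey product lies in Adams filtration $3$, so it could never equal the filtration-$4$ class $h_0 h_3^2 h_5$. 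The only conclusion Moss's theorem yields from this input is that $\langle \theta_4, 2, \sigma^2 \rangle$ consists of elements detected in filtration strictly greater than $3$, i.e.\ it is disjoint from $\{ h_3^2 h_5 \}$ --- which is precisely the subtlety that invalidated Mahowald's argument for $d_2(D_1)$. Your argument gives no way to decide among the remaining candidates $h_0 h_3^2 h_5$, $h_5 d_0$, $h_0 h_5 d_0$, $h_0^2 h_5 d_0$, or $\tau w$, so the containment claim is unproved; the ``routine'' Moss application cannot do the work you assign to it.

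What is missing is an upper bound on the filtration together with an order argument, which is how the paper's proof proceeds: Table \ref{tab:Toda} gives $\nu_4 \in \langle 2, \sigma^2, \nu \rangle$, so $\nu_4 \theta_4 \in \langle \theta_4, 2, \sigma^2 \rangle \nu$, and Lemma \ref{lem:nu4-h4^2} shows $\nu_4 \theta_4$ lies in $\{ h_2 h_5 d_0 \}$ (filtration $6$); hence some $\alpha$ in the bracket has filtration at most $5$. The shuffle $\langle \theta_4, 2, \sigma^2 \rangle 2 = \theta_4 \langle 2, \sigma^2, 2 \rangle = 0$ then gives $2\alpha = 0$, which rules out detection by $h_3^2 h_5$ or $h_5 d_0$, since both support $h_0$-multiplications that survive to $E_\infty$; the only possibility left is $\alpha \in \{ h_0 h_3^2 h_5 \}$. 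Separately, your indeterminacy bookkeeping is garbled: the indeterminacy is $\theta_4 \cdot \pi_{15,8} + \pi_{31,16} \cdot \sigma^2$ (note $\rho_{15}$ lies in the $15$-stem, not the $31$-stem), and the paper disposes of it by citing \cite{Tangora70b}*{Corollary 2.8} rather than by the direct analysis you sketch, which would itself require the hidden $4$ extension and a computation you have not supplied.
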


\begin{proof}
Table \ref{tab:Toda} shows that
$\nu_4$ is contained in $\langle 2, \sigma^2, \nu \rangle$.
Therefore,
$\nu_4 \theta_4$ is contained in
$\langle \theta_4, 2, \sigma^2 \rangle \nu$.
On the other hand, 
Lemma \ref{lem:nu4-h4^2} says that 
$\nu_4 \theta_4$ is contained in
$\{ h_2 h_5 d_0 \}$.

We have now shown that $\langle \theta_4, 2, \sigma^2 \rangle$
contains an element $\alpha$ such that $\nu \alpha$ belongs to
$\{ h_2 h_5 d_0 \}$.  In particular, $\alpha$ has Adams filtration at most 5.
In addition,
we know that $2 \alpha$ is zero because of 
the shuffle
\[
\langle \theta_4, 2, \sigma^2 \rangle 2 =
\theta_4 \langle 2, \sigma^2, 2 \rangle = 0.
\]
Here we have used Table \ref{tab:Toda} for the
bracket $\langle 2, \sigma^2, 2\rangle$.
The only possibility is that $\alpha$ belongs to 
$\{h_0 h_3^2 h_5 \}$.

The indeterminacy follows immediately from 
\cite{Tangora70b}*{Corollary 2.8}.
\end{proof}

\index{Adams spectral sequence!hidden extension!eta4@$\eta_4$}

\begin{lemma}
\label{lem:eta4-h4^2}
There is a hidden $\eta_4$ extension from $h_4^2$ to $h_1 h_5 d_0$.
\end{lemma}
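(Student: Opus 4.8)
The plan is to identify the homotopy element underlying this extension as a product $\eta_4 \theta_4$, where $\theta_4$ is an element of $\{h_4^2\}$, and to show that this product is detected by $h_1 h_5 d_0$. First I would record that the extension is genuinely hidden: since $h_4^3 = h_3^2 h_5$ and $h_1 h_3 = 0$, the product $h_1 h_4 \cdot h_4^2 = h_1 h_3^2 h_5$ vanishes on the $E_\infty$-page, so any nonzero $\eta_4 \theta_4$ represents a hidden $\eta_4$ extension. The overall strategy mirrors the companion Lemma \ref{lem:nu4-h4^2} together with Lemma \ref{lem:eta-h3^2h5}: express $\eta_4 \theta_4$ as a Toda bracket and transport an $\Ext$-level Massey product through Moss's Convergence Theorem \ref{thm:Moss}.

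The key inputs are that $\eta_4$ belongs to $\langle \eta, 2, \sigma^2 \rangle$ (as recorded in the example following Moss's Convergence Theorem \ref{thm:Moss}) and the identity $\sigma^2 \theta_4 = \{ h_0^2 g_2 \}$ established inside the proof of Lemma \ref{lem:eta-h3^2h5}. After checking that $2 \{h_0^2 g_2\}$ is zero, a shuffle gives $\eta_4 \theta_4 \in \langle \eta, 2, \sigma^2 \rangle \theta_4 \subseteq \langle \eta, 2, \sigma^2 \theta_4 \rangle = \langle \eta, 2, \{h_0^2 g_2\} \rangle$. On the algebraic side, Table \ref{tab:Massey} and Lemma \ref{lem:eta-h3^2h5} supply the Massey product $\langle h_1, h_0, h_0^2 g_2 \rangle = B_1$, so Moss's Convergence Theorem naively relates our Toda bracket to the class $B_1$ in the $46$-stem. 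This is precisely the calculation that feeds the hidden $\eta$ extension from $h_3^2 h_5$ to $B_1$.

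The hard part is that $B_1$ lies in \emph{higher} Adams filtration than the asserted target $h_1 h_5 d_0$, and the two are joined by a crossing extension of exactly the type described in Example \ref{ex:hidden-cross-1}, where the line from $h_3^2 h_5$ to $B_1$ crosses the line from $h_5 d_0$ to $h_1 h_5 d_0$. Because of this crossing, the no-crossing-differential hypothesis (2) of Moss's Convergence Theorem \ref{thm:Moss} fails, so the Massey product $B_1$ does not by itself determine the detecting class of $\eta_4 \theta_4$; the bracket may instead be detected in the lower filtration of $h_1 h_5 d_0$. To resolve this I would show that $\eta_4 \theta_4$ is not of the $\eta$-divisible form that $\{B_1\}$ consists of: since $\{B_1\}$ contains $\eta \theta_{4.5}$ by Lemma \ref{lem:eta-h3^2h5}, detection by $B_1$ would exhibit $\eta_4 \theta_4$ as such an $\eta$-multiple, which I expect to contradict the constraints coming from the companion relation $\nu_4 \theta_4 \in \{h_2 h_5 d_0\}$ of Lemma \ref{lem:nu4-h4^2} and the bracket $\langle \theta_4, 2, \sigma^2 \rangle$ of Lemma \ref{lem:bracket-theta4-2-sigma^2}.

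The final step is then to verify that $\eta_4 \theta_4$ is nonzero and of filtration no larger than that of $h_1 h_5 d_0$, which forces the detecting class to be $h_1 h_5 d_0$; Lemma \ref{lem:hidden-exist} packages this conclusion as the claimed hidden $\eta_4$ extension and places it in Table \ref{tab:Adams-eta}. I anticipate that the genuinely delicate point throughout is controlling the indeterminacy of $\langle \eta, 2, \{h_0^2 g_2\} \rangle$ and ruling out that the product slips into higher filtration, so that the crossing-extension bookkeeping, rather than any single bracket identity, is where the real work lies.
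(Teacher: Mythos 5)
Your reduction of $\eta_4\theta_4$ to the bracket $\langle\eta,2,\sigma^2\theta_4\rangle = \langle\eta,2,\{h_0^2 g_2\}\rangle$ is fine as far as it goes, but the step that is supposed to finish the proof is missing. Knowing that $\{B_1\}$ intersects this bracket (which is precisely the content of Lemma \ref{lem:eta-h3^2h5}) says nothing about where the particular element $\eta_4\theta_4$ sits: the indeterminacy of $\langle\eta,2,\sigma^2\theta_4\rangle$ contains $\eta\cdot\pi_{45,24}$, hence in particular $\eta\{h_5 d_0\}\subseteq\{h_1 h_5 d_0\}$, so the bracket simultaneously contains elements detected by $B_1$, elements detected by $h_1 h_5 d_0$, and possibly zero. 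Your plan for resolving this is only an expectation (``which I expect to contradict the constraints coming from \ldots''); no contradiction is exhibited, and it is not clear one exists, since $\eta_4\theta_4$ being detected by $B_1$ is not in any evident tension with $\nu_4\theta_4\in\{h_2h_5d_0\}$. You also never supply a mechanism for proving $\eta_4\theta_4\neq 0$, which you need in any case. A secondary error: you attribute the difficulty to failure of hypothesis (2) of Moss's Convergence Theorem \ref{thm:Moss}, but that hypothesis concerns crossing Adams \emph{differentials}; the crossing-\emph{extension} phenomenon of Example \ref{ex:hidden-cross-1} pertains to Definition \ref{defn:hidden} and does not invalidate Moss. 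Moss applies here exactly as in Lemma \ref{lem:eta-h3^2h5}; it simply proves that \emph{some} element of the Toda bracket is detected by $B_1$, which is not a statement about $\eta_4\theta_4$.

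The missing idea is to keep $\theta_4$ inside the threefold bracket instead of absorbing it into $\sigma^2\theta_4$. Shuffling gives $\eta_4\theta_4 \in \langle\eta,\sigma^2,2\rangle\theta_4 \subseteq \eta\langle\sigma^2,2,\theta_4\rangle$, and the proof of Lemma \ref{lem:bracket-theta4-2-sigma^2} shows that $\langle\sigma^2,2,\theta_4\rangle$ consists of elements $\alpha$ of $\{h_0 h_3^2 h_5\}$ of order $2$. The classical hidden $4$ extension from $h_3^2 h_5$ to $h_0 h_5 d_0$ (Table \ref{tab:extn-refs}) then forces every such order-$2$ element to have the form $\alpha = 2\gamma - \beta$ with $\gamma\in\{h_3^2h_5\}$ and $\beta\in\{h_5 d_0\}$; multiplying by $\eta$ annihilates $2\gamma$, so $\eta\alpha = \eta\beta$ lies in $\{h_1 h_5 d_0\}$ by a non-hidden $h_1$-multiplication. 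This simultaneously produces a nonzero value and pins down its filtration, with no appeal to $B_1$, to $\{h_0^2 g_2\}$, or to any crossing-extension bookkeeping.
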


\begin{proof}
Table \ref{tab:Toda} shows that
$\eta_4$ belongs to the Toda bracket $\langle \eta, \sigma^2, 2 \rangle$.
Then $\eta_4 \theta_4$ belongs to
$\eta \langle \sigma^2, 2, \theta_4 \rangle$.

Recall from the proof of Lemma \ref{lem:bracket-theta4-2-sigma^2}
that $\langle \sigma^2, 2, \theta_4 \rangle$ consists of
elements $\alpha$ in $\{ h_0 h_3^2 h_5 \}$ of order $2$.
Table \ref{tab:extn-refs} shows that there is a hidden $4$ extension
from $h_3^2 h_5$ to $h_0 h_5 d_0$.
It follows that each $\alpha$ must be of the form
$2 \gamma - \beta$, where $\gamma$ belongs to $\{h_3^2 h_5\}$
and $\beta$ belongs to $\{ h_5 d_0 \}$.
Then $\eta \alpha = \eta \beta$ must belong to $\{h_1 h_5 d_0\}$.
This shows that $\eta_4 \theta_4$ belongs to
$\{ h_1 h_5 d_0 \}$.
\end{proof}

\index{Adams spectral sequence!hidden extension!kappa@$\kappa$}

\begin{lemma}
\label{lem:kappa-h3^2h5}
There is a hidden $\kappa$ extension from either $h_3^2 h_5$ 
or $h_5 d_0$ to $B_{21}$.
\end{lemma}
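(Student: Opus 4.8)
The plan is to show that $B_{21}$ detects $\kappa \theta_{4.5}$, imitating the proofs of the hidden $\eta$ and $\nu$ extensions on $h_3^2 h_5$ in Lemmas \ref{lem:eta-h3^2h5} and \ref{lem:nu-h3^2h5}, with $\kappa$ (detected by $d_0$) now playing the role of $\eta$ and $\nu$. The common input to all three arguments is the identification $\sigma^2 \theta_4 \in \{ h_0^2 g_2 \}$ recorded in the proof of Lemma \ref{lem:eta-h3^2h5}, which comes from $\sigma \theta_4 \in \{ x \}$ and the $\Ext$ relation $h_3 x = h_0^2 g_2$. The starting point is a Massey product expression for $B_{21}$ built from $d_0$ and $h_0^2 g_2$, taken from Table \ref{tab:Massey}. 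One subtlety that distinguishes this case from the $\eta$ and $\nu$ cases is that $h_0 d_0 \neq 0$, so $d_0$ cannot sit next to $h_0$ in a defined bracket; the precise shape of the relevant bracket is therefore not the literal analogue of $B_1 = \langle h_1, h_0, h_0^2 g_2 \rangle$, and establishing it (with controlled indeterminacy, via May's Convergence Theorem \ref{thm:3-converge} from the appropriate May differential) is itself part of the work.

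With such a bracket in hand, I would apply Moss's Convergence Theorem \ref{thm:Moss} to conclude that $\{ B_{21} \}$ intersects a Toda bracket involving $\kappa$, $2$, and $\sigma^2 \theta_4$. This requires checking that there are no crossing Adams differentials in the relevant range, which should be available from the $E_\infty$-chart of \cite{Isaksen14a} since the analysis is complete through the 59-stem. Next I would mimic the shuffle in Lemma \ref{lem:eta-h3^2h5}: sliding the factor $2$ onto $\sigma^2$ and using $2\sigma^2 = 0$ to replace the bracket by one with a zero entry, so that $\{ B_{21} \}$ contains an element of the form $\kappa \alpha' + \theta_4 \alpha''$. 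As in the $\eta$ proof, the remaining task is to show that the residual $\theta_4$-term is itself either zero or a multiple of $\kappa$, by identifying $\alpha''$ in the appropriate low stem and shuffling it back into a $\kappa$-multiple. The conclusion is that $\{ B_{21} \}$ contains a nonzero multiple of $\kappa$. As a consistency check, one has the non-hidden relation $B_1 \cdot h_1^2 d_0 = h_1^3 B_{21}$ used in Lemma \ref{lem:h2-h2B2}, together with the fact that $B_1$ detects $\eta \theta_{4.5}$; this forces $h_1 B_{21}$ to detect $\eta \kappa \theta_{4.5}$, so that the $\kappa$-multiple in $\{B_{21}\}$ is compatible with $\kappa \theta_{4.5}$ and its $\eta$-multiple.

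The ``either $h_3^2 h_5$ or $h_5 d_0$'' in the statement is the exact analogue of the crossing-extension phenomenon of Example \ref{ex:hidden-cross-1}: in the $E_\infty$-chart the line from $h_3^2 h_5$ to $B_{21}$ crosses the line emanating from $h_5 d_0$, and the indeterminacy in the definition of $\theta_{4.5}$ (which permits adding an element of $\{ h_5 d_0 \}$, see Section \ref{sctn:notation}) prevents us from deciding whether the lowest-filtration source of the $\kappa$-multiple is $h_3^2 h_5$ itself or $h_5 d_0$. This weaker conclusion is exactly what the downstream applications need: the proofs that there is no hidden $2$ extension on $B_{21}$ and the identifications of $\kappa \theta_{4.5}$ used in Chapter \ref{ch:Ctau} require only that $B_{21}$ detect \emph{some} nonzero multiple of $\kappa$.

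The hard part will be twofold. First, pinning down a usable Massey product for $B_{21}$ and verifying that the hypotheses of May's and Moss's Convergence Theorems are genuinely met, since $B_{21}$ lies in a crowded region of the chart and, because $h_0 d_0 \neq 0$, the bracket and its indeterminacy cannot be copied verbatim from the $\eta$ and $\nu$ arguments. Second, carrying out the shuffle cleanly enough to guarantee that the $\kappa$-multiple produced actually lands in $\{ B_{21} \}$ rather than in strictly higher filtration — this means controlling the relevant stable stems that host $\alpha'$ and $\alpha''$, eliminating the competing sources, and tracking indeterminacy carefully enough to justify precisely the ``$h_3^2 h_5$ or $h_5 d_0$'' dichotomy and no more.
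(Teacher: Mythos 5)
Your main proposed route has a genuine gap at its foundation. You take as starting point ``a Massey product expression for $B_{21}$ built from $d_0$ and $h_0^2 g_2$, taken from Table \ref{tab:Massey}'' --- but no such entry exists in that table; the only brackets of this family recorded there are $B_1 = \langle h_1, h_0, h_0^2 g_2 \rangle$ and $B_2 \in \langle h_2, h_0^2 g_2, h_0 \rangle$. You concede that producing the bracket ``is itself part of the work,'' but you never produce it, and everything downstream --- the application of Moss's Convergence Theorem \ref{thm:Moss}, the shuffle, the elimination of the residual $\theta_4$-term --- is conditional on it. Worse, as you yourself observe, $h_0 d_0 \neq 0$ obstructs the literal analogue of the $\eta$ and $\nu$ brackets, so it is not even clear what shape the bracket should take. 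Since the entire difficulty of the lemma is concentrated in this unestablished step, the proposal as structured does not prove the statement.

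The irony is that your closing ``consistency check'' is not a check: it is a complete proof, and it is essentially the paper's proof. The paper argues purely multiplicatively: $h_1 B_{21} = d_0 B_1$ in the $E_2$-page (a relation from \cite{Bruner97}; your variant $B_1 \cdot h_1^2 d_0 = h_1^3 B_{21}$ works equally well); $h_1 B_{21}$ is nonzero on the $E_\infty$-page --- this is the one point your check glosses over and must be said explicitly, since $\tau h_1 B_{21}$ may well be the target of a differential (cf.\ the possible $d_5(A') = \tau h_1 B_{21}$ in Proposition \ref{prop:Adams-d5}), whereas $h_1 B_{21}$ itself survives; and $B_1$ detects $\eta \theta_{4.5}$ by the very definition of $\theta_{4.5}$ in Section \ref{sctn:notation}. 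Hence $d_0 B_1 = h_1 B_{21}$ detects $\kappa \cdot \eta\theta_{4.5}$, so $B_{21}$ detects $\kappa \theta_{4.5}$, which is therefore nonzero; and since $\theta_{4.5}$ is only well-defined up to elements of $\{h_5 d_0\}$, the hidden $\kappa$ extension hitting $B_{21}$ has source $h_3^2 h_5$ or $h_5 d_0$, exactly as claimed. No Massey products, Toda brackets, or convergence theorems are needed, and the identification $\sigma^2\theta_4 \in \{h_0^2 g_2\}$ plays no role. Promote the check to the argument, add the survival statement for $h_1 B_{21}$, and discard the rest.
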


\begin{proof}
The element $\tau h_1 B_{21}$ may be the target of an Adams differential.
Regardless, the element $h_1 B_{21}$ is non-zero on the $E_\infty$-page.
Note that $h_1 B_{21}$ equals $d_0 B_1$ on the $E_2$-page \cite{Bruner97}.
Since $B_1$ detects $\eta \theta_{4.5}$ by definition of $\theta_{4.5}$ 
(see Section \ref{sctn:notation}),
$d_0 B_{21}$ detects $\eta \kappa \theta_{4.5}$.
This implies that $B_{21}$ detects $\kappa \theta_{4.5}$.

Therefore, $B_{21}$ must be the target of a hidden $\kappa$ extension.
The possible sources of this hidden extension are $h_3^2 h_5$ or
$h_5 d_0$.
\end{proof}

\index{Adams spectral sequence!hidden extension!kappabar@$\kappabar$}
\index{Adams spectral sequence!hidden extension!tentative}

\begin{lemma}
\label{lem:kappabar-h3^2h5}
Tentatively, there is a hidden
$\kappabar$ extension from either $h_3^2 h_5$ or $h_5 d_0$ to $\tau B_{23}$.
\end{lemma}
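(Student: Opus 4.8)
The plan is to follow the proof of Lemma~\ref{lem:kappa-h3^2h5} almost verbatim, replacing $\kappa$ (detected by $d_0$) with $\kappabar$ (detected by $\tau g$) and $B_{21}$ with $\tau B_{23}$ throughout. As in the neighboring lemmas, the claim will be tentative because the analysis of the Adams differentials is incomplete in the $65$-stem; I would flag this at the outset.

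First I would record the $E_2$-page relation $\tau h_1 B_{23} = \tau g \cdot B_1$ \cite{Bruner97}, which is the precise analogue of the relation $h_1 B_{21} = d_0 B_1$ exploited in the $\kappa$ case (the degrees match: $\tau g B_1$ lies in the $66$-stem, as does $\tau h_1 B_{23}$). Since $B_1$ detects $\eta \theta_{4.5}$ by the definition of $\theta_{4.5}$ (see Section~\ref{sctn:notation}) and $\tau g$ detects $\kappabar$, the product $\tau g \cdot B_1$ detects $\eta \kappabar \theta_{4.5}$. Consequently $\tau h_1 B_{23}$ detects $\eta \kappabar \theta_{4.5}$, and dividing by the $h_1$ factor exactly as in the $\kappa$ argument, I would conclude that $\tau B_{23}$ detects $\kappabar \theta_{4.5}$ in the $65$-stem.

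Given this, $\tau B_{23}$ must be the target of a hidden $\kappabar$ extension: the homotopy class $\kappabar \theta_{4.5}$ is a $\kappabar$-multiple, yet $\tau g$ times the detecting class of $\theta_{4.5}$ vanishes on the $E_\infty$-page (otherwise the extension would not be hidden and $\tau B_{23}$ would not carry higher filtration). The permissible sources are therefore $h_3^2 h_5$ and $h_5 d_0$, exactly the two elements that can detect $\theta_{4.5}$; the ``either/or'' in the statement records the ambiguity in $\theta_{4.5}$ modulo $\{h_5 d_0\}$, precisely as in Lemma~\ref{lem:kappa-h3^2h5}.

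The hard part will be verifying that $\tau h_1 B_{23}$ is genuinely nonzero on the $E_\infty$-page and that no later differential or higher-filtration competitor disturbs the detection statement for $\tau B_{23}$; this is delicate exactly because the $d_r$ differentials in this stem are not fully resolved, which is the reason the conclusion can only be asserted tentatively. A second point requiring care is circularity: the proof of Lemma~\ref{lem:eta-th1B23} invokes the present lemma, so the argument here must rest solely on the $E_2$-page relation together with the established properties of $\theta_{4.5}$ and $\kappabar$, and must not appeal to any hidden $\eta$ extension on $\tau h_1 B_{23}$.
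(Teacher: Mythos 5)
Your proposal is correct and follows essentially the same argument as the paper: the $E_2$-page relation $\tau h_1 B_{23} = \tau g \cdot B_1$, the fact that $B_1$ detects $\eta \theta_{4.5}$ by definition, the conclusion that $\tau B_{23}$ detects $\kappabar \theta_{4.5}$, and hence that it is the target of a hidden $\kappabar$ extension with source $h_3^2 h_5$ or $h_5 d_0$. Your added cautions about $E_\infty$ survival and circularity with Lemma \ref{lem:eta-th1B23} are sensible but go beyond what the paper records, which simply flags the tentativeness due to unresolved differentials.
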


\begin{proof}
The claim is tentative because our analysis of Adams differentials
is incomplete in the relevant range.

The element $\tau h_1 B_{23}$ equals $\tau g B_1$ on the $E_2$-page
\cite{Bruner97}.
Since $B_1$ detects $\eta \theta_{4.5}$ by definition of $\theta_{4.5}$
(see Section \ref{sctn:notation}),
$\tau g B_1$ detects $\eta \kappabar \theta_{4.5}$.
Therefore, $\tau B_{23}$ detects $\kappabar \theta_{4.5}$.

It follows that $\tau B_{23}$ is the target of a hidden $\kappabar$
extension.  The possible sources for this hidden extension are
$h_3^2 h_5$ or $h_5 d_0$.
\end{proof}

\begin{remark}
Lemma \ref{lem:kappabar-h3^2h5} is tentative because there are unknown
Adams differentials in the relevant range.  
\end{remark}


\chapter{The cofiber of $\tau$}
\label{ch:Ctau}

\setcounter{thm}{0}

\index{cofiber of tau@cofiber of $\tau$}
The purpose of this chapter is to compute the motivic stable homotopy 
groups of the
cofiber $C \tau$ of $\tau$.
We obtain nearly complete results up to the 63-stem, and we have partial
results up to the 70-stem.
The Adams charts for $C\tau$ 
in \cite{Isaksen14a} are essential companions to this chapter.
\index{Adams chart!cofiber of tau@cofiber of $\tau$}

The element $\tau$ realizes to $1$ in the classical stable homotopy groups.
Therefore, $C \tau$ is an ``entirely exotic" object in motivic stable 
homotopy, since it realizes classically to the trivial spectrum.
\index{tau@$\tau$}

There are two main motivations for this calculation.
First, 
it is the key to resolving hidden $\tau$ extensions 
that were discussed in Section \ref{subsctn:hidden-tau}.
\index{Adams spectral sequence!hidden extension!tau@$\tau$}
Second, we will show in Proposition \ref{prop:ANSS-Ctau} that
the motivic homotopy groups of $C\tau$ are isomorphic to the classical
Adams-Novikov $E_2$-page. 
\index{cofiber of tau@cofiber of $\tau$!homotopy group}
 Thus the calculations in this chapter will 
allow us to reverse-engineer the classical Adams-Novikov spectral sequence.
\index{Adams-Novikov spectral sequence!classical}

\index{cofiber of tau@cofiber of $\tau$!Adams spectral sequence}
The computational method will be the motivic Adams spectral
sequence
\cite{DI10} \cite{HKO11} \cite{Morel99}
for $C \tau$, which takes the form
\[
E_2 = \Ext_A ( H^{*,*} ( C\tau ) ; \M_2)
\Rightarrow
\pi_{*,*} (C \tau).
\]
We write $E_2(C\tau)$ for this $E_2$-page
$\Ext_A (H^{*,*} ( C \tau ) ; \M_2)$.
See \cite{HKO11} for convergence properties of this spectral sequence.
\index{Adams spectral sequence!convergence}
\index{cofiber of tau@cofiber of $\tau$!Adams spectral sequence!convergence}

\subsection*{Outline}

The first step in executing the motivic Adams spectral sequence
for $C\tau$ is to algebraically compute
the $E_2$-page, i.e.,
$\Ext_A ( H^{*,*} ( C\tau ) , \M_2)$.
We carry this out in Section \ref{sctn:t-Bockstein} using the
long exact sequence
\[
\xymatrix{
\ar[r] &
\Ext_A ( \M_2, \M_2 ) \ar[r]^\tau &
\Ext_A ( \M_2, \M_2 ) \ar[r] &
\Ext_A ( H^{*,*}(C\tau), \M_2 ) \ar[r] & .
}
\]
Some additional work is required in
resolving hidden extensions for the action of
$\Ext_A ( \M_2, \M_2 )$ on
$E_2(C\tau)$.

\index{cofiber of tau@cofiber of $\tau$!Adams spectral sequence!differential}
The next step is to compute the Adams differentials.
In Section \ref{sctn:Adams},
we use a variety of methods to obtain these computations.
The most important is to borrow results about 
differentials in the motivic Adams spectral sequence
for $S^{0,0}$ from Tables \ref{tab:Ext-gen},
\ref{tab:Adams-d3}, \ref{tab:Adams-d4}, and \ref{tab:Adams-d5}.
In addition,
there are several
computations that require analyses of brackets and 
hidden extensions.

The complete understanding of the Adams differentials allows
for the computation of the $E_\infty$-page of the motivic Adams spectral
sequence for $C\tau$.  The final step, carried out in 
Section \ref{sctn:Ctau-hidden}, is to resolve hidden extensions
by $2$, $\eta$, and $\nu$ 
in $\pi_{*,*}(C\tau)$.
\index{cofiber of tau@cofiber of $\tau$!hidden extension}

Chapter \ref{ch:table} contains a series of tables that summarize the essential
computational facts in a concise form.
Tables \ref{tab:Ctau-h0}, \ref{tab:Ctau-h1}, and \ref{tab:Ctau-h2}
give extensions by $h_0$, $h_1$, and $h_2$ in $E_2(C\tau)$
that are hidden in the long exact sequence that computes
$E_2(C\tau)$.
\index{cofiber of tau@cofiber of $\tau$!hidden extension!h0@$h_0$}
\index{cofiber of tau@cofiber of $\tau$!hidden extension!h1@$h_1$}
\index{cofiber of tau@cofiber of $\tau$!hidden extension!h2@$h_2$}
The fourth columns of these tables refer to one argument that
establishes each hidden extension.
This takes one of the following forms:
\begin{enumerate}
\item
An explicit proof given elsewhere in this manuscript.
\item
A May differential that computes a Massey product of the form
$\langle h_i, x, \tau \rangle$ via May's Convergence Theorem \ref{thm:3-converge}.
\index{Convergence Theorem!May}
\index{Massey product}
This Massey product implies the hidden extension in $E_2(C\tau)$ by
Proposition \ref{prop:hidden-Massey}.
\end{enumerate}
Table \ref{tab:Ctau-misc-extn} gives some additional miscellaneous hidden extensions
in $E_2(C\tau)$, again with references to a proof.

Table \ref{tab:Ctau-E2} lists the generators of 
$E_2(C\tau)$ as a module over 
$\Ext_A ( \M_2, \M_2 )$.  
Table \ref{tab:Ctau-ambiguous} 
lists all examples of generators of $E_2(C\tau)$
for which there is some ambiguity.
See Section \ref{subsctn:E2} for more explanation.
\index{cofiber of tau@cofiber of $\tau$!generator}

Tables \ref{tab:Ctau-E2} and \ref{tab:Ctau-d3} provide 
the values of $d_2$ and $d_3$ differentials 
in the Adams spectral sequence for $C\tau$.
\index{cofiber of tau@cofiber of $\tau$!Adams spectral sequence!differential}
The fourth columns of these tables refer to one argument that
establishes each differential.
This takes one of the following forms:
\begin{enumerate}
\item
An explicit proof given elsewhere in this manuscript.
\item
``top cell'' means that the differential is detected by projection
$E_r(C\tau) \map E_r(S^{0,0})$ to the top cell.
\index{cofiber of tau@cofiber of $\tau$!top cell}
\item
Some differentials can be established with an algebraic relation to another
differential that is detected by the inclusion
$E_r(S^{0,0}) \map E_r(C\tau)$ of the bottom cell.
\index{cofiber of tau@cofiber of $\tau$!bottom cell}
\end{enumerate}

Table \ref{tab:Ctau-top} describes the 
part of the projection $\pi_{*,*}(C\tau) \map \pi_{*,*}$ to the top cell
that are hidden by the map $E_\infty(C\tau) \map E_\infty(S^{0,0})$
of Adams $E_\infty$-pages.
\index{cofiber of tau@cofiber of $\tau$!top cell}
See Proposition \ref{prop:Ctau-top} for more explanation.

Table \ref{tab:Ctau-hidden} gives the extensions by $2$, $\eta$, and
$\nu$ in $\pi_{*,*}(C\tau)$ that are hidden in $E_\infty(C\tau)$.
\index{cofiber of tau@cofiber of $\tau$!Adams spectral sequence!hidden extension}
The fourth column refers to one argument that establishes each
hidden extension.  
This takes one of the following forms:
\begin{enumerate}
\item
An explicit proof given elsewhere in this manuscript.
\item
``top cell'' means that the hidden extension is detected by the projection
$\pi_{*,*} (C\tau) \map \pi_{*,*} (S^{0,0})$ to the top cell.
\index{cofiber of tau@cofiber of $\tau$!top cell}
\item
``bottom cell" means that the hidden extension is detected by the inclusion
$\pi_{*,*} \map \pi_{*,*}(C\tau)$ of the bottom cell.
\index{cofiber of tau@cofiber of $\tau$!bottom cell}
\end{enumerate}

\subsection*{Massey products and cofibers}

We will rely heavily on Massey products and Toda brackets,
using the well-known relationship between
Toda brackets and hidden extensions in the homotopy groups of a cofiber.
See Proposition \ref{prop:3bracket-cofiber} for an explicit statement.
We will also need a similar result for Massey products.
\index{cofiber!hidden extension}
\index{Massey product}

\begin{prop}
\label{prop:hidden-Massey}
Let $y$ and $z$ belong to $E_2(S^{0,0})$
such that 
$\tau y$ and $z y$ are both zero.
In $E_2(C\tau)$, 
there is a hidden extension
\[
z \cdot \ol{y} \in \langle z, y, \tau \rangle,
\]
where the Massey product is computed in $E_2(S^{0,0})$
and then pushed forward along the map
$E_2(S^{0,0}) \map E_2(C\tau)$. 
\end{prop}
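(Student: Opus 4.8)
The plan is to recognize this statement as the exact algebraic shadow of Proposition \ref{prop:3bracket-cofiber}, transported from motivic spectra to the derived category of $A$-modules (equivalently, of $A_{*,*}$-comodules). Remark \ref{rem:Massey-3bracket-cofiber} already records that the proof of Proposition \ref{prop:3bracket-cofiber} is formal: it uses only the axioms of a stable (triangulated) category together with the dictionary in which morphisms between shifts of the unit object are $\Ext$ classes, composition of such morphisms is the Yoneda product, and triple composites that vanish in two consecutive pairs give rise to Massey products. The first step is therefore to install this dictionary: take the unit $\M_2$ as the ``sphere,'' identify the graded group of morphisms out of $\M_2$ with $E_2(S^{0,0}) = \Ext_A(\M_2,\M_2)$, and regard $\tau$, $y$, and $z$ as morphisms in this category.

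The second step is to produce the algebraic cofiber of $\tau$ and match it with $C\tau$. The self-map $\tau$ of $\M_2$ has a cofiber, realized by a distinguished triangle
\[
\Sigma^{0,-1}\M_2 \xrightarrow{\ \tau\ } \M_2 \xrightarrow{\ j\ } C\tau \xrightarrow{\ q\ } \Sigma^{1,-1}\M_2,
\]
mirroring the topological cofiber sequence for $\tau$, where $C\tau$ now denotes the object representing $E_2(C\tau)$. Applying the morphism functor to this triangle yields precisely the $\tau$-Bockstein long exact sequence of Section \ref{sctn:t-Bockstein}, whose connecting map is multiplication by $\tau$; the two induced maps are the bottom-cell map $j_* \colon E_2(S^{0,0}) \to E_2(C\tau)$ and the top-cell map $q_* \colon E_2(C\tau) \to E_2(S^{0,0})$ of Section \ref{sctn:notation}, and $\overline{y}$ is by definition a $q_*$-preimage of $y$. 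Here I would take care only to confirm that realization carries the topological cofiber sequence for $\tau$ to this algebraic triangle, so that the two notions of $C\tau$, of $j_*$ and $q_*$, and of the lift $\overline{y}$ genuinely coincide.

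With the dictionary in place, the third step is a direct application. Set $\alpha_0 = \tau$, $\alpha_1 = y$, $\alpha_2 = z$, and $\overline{\alpha_1} = \overline{y}$. The hypotheses $\tau y = 0$ and $z y = 0$ are exactly $\alpha_0 \alpha_1 = 0$ and $\alpha_1 \alpha_2 = 0$, so that $\langle \tau, y, z\rangle$ is defined and $\overline{y}$ exists with $q_*(\overline{y}) = y$. The conclusion of Proposition \ref{prop:3bracket-cofiber} then reads $\overline{y}\cdot z \in j_*\langle \tau, y, z\rangle$. Since $E_2(S^{0,0})$ is graded commutative and all signs vanish in characteristic $2$, we have $\overline{y}\cdot z = z\cdot\overline{y}$ and $\langle \tau, y, z\rangle = \langle z, y, \tau\rangle$, which is the asserted containment. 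Any ambiguity in the choice of $\overline{y}$ alters $z\cdot\overline{y}$ by an element of $j_*\bigl(z\cdot E_2(S^{0,0})\bigr)$, which is contained in the indeterminacy of $j_*\langle z,y,\tau\rangle$, so the statement is independent of this choice.

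I expect no deep difficulty here; the weight of the argument lies entirely in the bookkeeping of the second step, namely verifying that the algebraic cofiber triangle for the self-map $\tau$ models $H^{*,*}(C\tau)$ with connecting map exactly multiplication by $\tau$, and that the resulting maps $j_*$, $q_*$ and the lift $\overline{y}$ agree with those coming from the topological bottom- and top-cell maps. If one prefers to avoid invoking the triangulated formalism, the alternative is to run the diagram chase of Proposition \ref{prop:3bracket-cofiber} at the chain level in the cobar resolution $C$ of $A$ used in the proof of Theorem \ref{thm:4-converge}: choose cocycle representatives of $\tau$, $y$, and $z$; use $\tau y = 0$ to produce a cobar chain whose boundary is that product and which represents $\overline{y}$; and then read off $z\cdot\overline{y}$ as one term of a defining system for $\langle z, y, \tau\rangle$.
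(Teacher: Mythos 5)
Your proposal is correct and is essentially the paper's own argument: the paper proves this by observing that the proof of Proposition \ref{prop:3bracket-cofiber} goes through verbatim in the derived category of $A$-modules, where the cofiber of $\tau \colon \M_2 \to \M_2$ is $H^{*,*}(C\tau)$, which is exactly your setup. The only cosmetic difference is that your "second step" need not pass through the realization functor at all — applying $H^{*,*}$ to the topological cofiber sequence directly exhibits $H^{*,*}(C\tau)$ as the cone of $\tau$ in $D(A)$, with $j_*$, $q_*$, and $\ol{y}$ matching by definition.
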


\begin{proof}
The proof is identical to the proof of 
Proposition \ref{prop:3bracket-cofiber},
except that we work
in the derived category of chain complexes of $A$-modules 
instead of the motivic stable homotopy category.
In this derived category, the cofiber of
$\tau: \M_2 \map \M_2$ is $H^{*,*}(C\tau)$.
\end{proof}

\section{The Adams $E_2$-page for the cofiber of $\tau$}
\label{sctn:t-Bockstein}

\index{cofiber of tau@cofiber of $\tau$!Adams spectral sequence!E2-page@$E_2$-page}

The main tool for computing 
$E_2(C\tau) = \Ext_A ( H^{*,*} ( C\tau ) , \M_2)$
is the long exact sequence
\[
\xymatrix{
\cdots \ar[r] & E_2(S^{0,0}) \ar[r]^\tau & 
E_2(S^{0,0}) \ar[r] & E_2(C\tau) \ar[r] & \cdots
}
\]
associated to the cofiber sequence
\[
\xymatrix{
S^{0,-1} \ar[r]^\tau & S^{0,0} \ar[r] & C \tau \ar[r] & 
S^{1,-1}.
}
\]
This yields a short exact sequence
\[
\xymatrix{
0 \ar[r] & \coker(\tau) \ar[r] & E_2(C\tau) \ar[r] & \ker(\tau) \ar[r] & 0.
}
\]
The desired $E_2(C\tau)$ is almost completely described by the
previous short exact sequence.
It only remains to compute some hidden extensions.

\subsection{Hidden extensions in the Adams $E_2$-page 
for the cofiber of $\tau$}
\label{subsctn:t-Bockstein-hidden}

We will resolve all possible hidden extensions by
$h_0$, $h_1$, and $h_2$ through the 70-stem.
The reader should refer to the charts in \cite{Isaksen14a}
in order to make
sense of the following results.
\index{cofiber of tau@cofiber of $\tau$!hidden extension!h0@$h_0$}
\index{cofiber of tau@cofiber of $\tau$!hidden extension!h1@$h_1$}
\index{cofiber of tau@cofiber of $\tau$!hidden extension!h2@$h_2$}

\begin{thm}
\label{thm:hidden}
Tables \ref{tab:Ctau-h0}, \ref{tab:Ctau-h1}, and \ref{tab:Ctau-h2}
give some hidden extensions by 
$h_0$, $h_1$, and $h_2$ in $E_2 (C \tau)$.
Through the 70-stem,
all other possible hidden extensions by $h_0$, $h_1$, and $h_2$
are either zero or are easily implied by extensions in the tables,
with the possible exceptions that:
\begin{enumerate}
\item
$h_2 \cdot \ol{\tau^2 h_1 g^2}$ might equal $\tau w$.
\index{g2@$g^2$}
\index{w@$w$}
\item
$h_0 \cdot \ol{c_0 Q_2}$ and
$h_2 \cdot \ol{c_0 Q_2}$ are either both zero, or equal
$D_2'$ and $P(A+A')$ respectively.
\index{c0Q2@$c_0 Q_2$}
\index{D2'@$D_2'$}
\index{P(A+A')@$P(A+A')$}
\item
$h_1^3 c_0 \cdot \ol{D_4}$ equals either $h_2 B_5$ or $h_2 B_5 + h_1^2 X_3$.
\index{D4@$D_4$}
\index{B5@$B_5$}
\index{X3@$X_3$}
\end{enumerate}
\end{thm}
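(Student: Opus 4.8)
The plan is to compute $E_2(C\tau)$ as an $\Ext$-module by first reading off the "visible" module structure from the short exact sequence
\[
0 \to \coker(\tau) \to E_2(C\tau) \to \ker(\tau) \to 0,
\]
and then systematically resolving the extensions by $h_0$, $h_1$, $h_2$ that are hidden in this sequence. The principal engine is Proposition \ref{prop:hidden-Massey}: whenever $y$ and $z$ lie in $E_2(S^{0,0})$ with $\tau y = 0$ and $z y = 0$, the product $z \cdot \ol{y}$ in $E_2(C\tau)$ is forced to lie in the image of the Massey product $\langle z, y, \tau \rangle$. Most of the entries in Tables \ref{tab:Ctau-h0}, \ref{tab:Ctau-h1}, and \ref{tab:Ctau-h2} are obtained this way, with the relevant Massey product computed in the motivic May spectral sequence via May's Convergence Theorem \ref{thm:3-converge}. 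So the first step is to go degree by degree through the 70-stem, identify every generator $\ol{y}$ coming from the kernel of $\tau$, and for each of $h_0, h_1, h_2$ either compute the corresponding bracket $\langle h_i, y, \tau \rangle$ or verify by a dimension/indeterminacy count that no nonzero product is possible.

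Second, I would handle the extensions that do not yield to a direct bracket computation by the explicit proofs cited in the fourth columns of the tables, together with multiplicative consequences of extensions already established (the phrase "easily implied by extensions in the tables" covers products $h_i \cdot (x \cdot \ol{y})$ obtained by multiplying a known hidden extension by an element of $\Ext$). The exhaustiveness claim---that \emph{all} other hidden extensions are zero or implied---is established by a finite check: in each bidegree through the 70-stem one lists the candidate targets of higher Adams filtration for $h_i \cdot \ol{y}$, and rules each out either because the relevant bracket $\langle h_i, y, \tau \rangle$ is strictly zero (with zero indeterminacy) or because the putative target is already accounted for multiplicatively. This is bookkeeping supported by the charts in \cite{Isaksen14a}, and the comparison maps to the top and bottom cells provide redundant checks.

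The three enumerated exceptions are precisely the degrees where the bracket and indeterminacy analysis fails to pin down a unique answer, and these are the main obstacles. For (1), the ambiguity in $h_2 \cdot \ol{\tau^2 h_1 g^2}$ traces back to an unresolved choice of representative for the generator over $\ker(\tau)$ in a degree where $\tau w$ has the right internal bidegree; I would note that deciding it would require determining a Massey product whose defining hypotheses are not met because of a crossing May differential. For (2), the pair $h_0 \cdot \ol{c_0 Q_2}$ and $h_2 \cdot \ol{c_0 Q_2}$ are linked by the relation $h_0 \cdot D_2' = h_2 \cdot P(A+A')$ inherited from $E_2(S^{0,0})$ (compare Lemma \ref{lem:d2-tB5}), so the two extensions stand or fall together; the genuine undetermined quantity is a single bracket $\langle h_0, c_0 Q_2, \tau \rangle$ whose value is not forced by the available relations. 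For (3), the ambiguity $h_1^3 c_0 \cdot \ol{D_4} \in \{ h_2 B_5, h_2 B_5 + h_1^2 X_3\}$ reflects the indeterminacy coming from $h_1^2 X_3$, which sits in higher filtration in the same stem. For all three, the appropriate conclusion is not to resolve them but to record them as the carefully enumerated exceptions, observing that none affects the later Adams-differential computations for $C\tau$ in Sections \ref{sctn:Adams} and \ref{sctn:Ctau-hidden} in an essential way. The hard part is thus not any single calculation but the exhaustive verification that outside these three listed degrees the bracket-plus-indeterminacy data always determines the answer uniquely.
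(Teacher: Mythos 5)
Your proposal follows essentially the same route as the paper: reduce each hidden extension to a Massey product $\langle h_i, x, \tau\rangle$ via Proposition \ref{prop:hidden-Massey}, compute most brackets with May's Convergence Theorem \ref{thm:3-converge} (with the relevant May differentials recorded in the tables), settle the stubborn cases by ad hoc lemmas, and establish exhaustiveness by a degree-by-degree search, leaving the three listed ambiguities unresolved. One small correction: the relation linking the two products in exception (2) is $h_2 \cdot D_2' = h_0 \cdot P(A+A')$ (as in Lemma \ref{lem:d2-tB5}), not the transposed version you wrote, though the consistency argument it supports is the same.
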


\begin{ex}
In the 14-stem, 
there is a hidden extension
$h_2 \cdot \ol{h_1^2 c_0} = h_0 d_0$, which does not appear in 
Table \ref{tab:Ctau-h2}.
This is easily implied by the hidden extension
$h_0 \cdot \ol{h_1^2 c_0} = P h_2$, which does appear in Table \ref{tab:Ctau-h0}.
\index{d0@$d_0$}
\index{Ph2@$P h_2$}
\index{c0@$c_0$}
\end{ex}

\begin{proof}
Most of these hidden extensions are established with
Proposition \ref{prop:hidden-Massey}, so we just need
to compute Massey products of the form
$\langle h_i, x, \tau \rangle$
in $E_2(S^{0,0})$.
\index{Massey product}
Most of these Massey products are
computed using May's Convergence Theorem \ref{thm:3-converge}.
\index{Convergence Theorem!May}
The fourth columns of 
Tables \ref{tab:Ctau-h0}, \ref{tab:Ctau-h1}, and \ref{tab:Ctau-h2}
indicate which May differentials are relevant for computing each bracket.

A few hidden extensions require more complicated proofs.
These proofs are given in the following lemmas.
\end{proof}

\subsection{Hidden $h_0$ extensions in the Adams $E_2$-page
for the cofiber of $\tau$}
\label{subsctn:h0-extns}

\index{cofiber of tau@cofiber of $\tau$!hidden extension!h0@$h_0$}

\begin{lemma}
\label{lem:_c0e0.h0}
\mbox{}
\begin{enumerate}
\item
$h_0 \cdot \ol{c_0 e_0} = j$.
\item
$h_0 \cdot \ol{P^k c_0 e_0} = P^k j$.
\item
$h_0 \cdot \ol{c_0 e_0 g} = d_0 l$.
\end{enumerate}
\end{lemma}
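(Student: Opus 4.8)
The plan is to deduce all three parts from part (1) together with the $\Ext$-module structure of $E_2(C\tau)$, so the essential work lies in establishing (1). For (1) I would invoke Proposition \ref{prop:hidden-Massey} with $z = h_0$ and $y = c_0 e_0$. Its two hypotheses are exactly that $\tau \cdot c_0 e_0 = 0$ and $h_0 \cdot c_0 e_0 = 0$ in $\Ext$: the first holds because $\ol{c_0 e_0}$ is defined (i.e. $c_0 e_0$ is $\tau$-torsion), and the second is immediate from $h_0 c_0 = 0$. The proposition then yields $h_0 \cdot \ol{c_0 e_0} \in \langle h_0, c_0 e_0, \tau \rangle$, where the Massey product is formed in $\Ext$ and pushed into $E_2(C\tau)$ along the bottom-cell map. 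It therefore suffices to show that this bracket contains $j$.

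To evaluate $\langle h_0, c_0 e_0, \tau \rangle$, I would first note that its internal degree agrees with that of $j$, and that degree considerations leave $j$ as the only possible nonzero value; hence it is enough to show the bracket is nonzero. My first attempt would be May's Convergence Theorem \ref{thm:3-converge} with an appropriate May differential hitting $\tau \cdot c_0 e_0$. The anticipated obstacle is that a later crossing May differential may violate condition (2) of that theorem, exactly the phenomenon that makes this lemma ``complicated'' rather than routine. In that event I would instead pin down the bracket by a shuffle against a known product or Massey product from Table \ref{tab:Massey}, or by appealing to the machine-verified structure of $\Ext$ \cite{Bruner97} to identify $\langle h_0, c_0 e_0, \tau\rangle$ with $j$. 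I would also confirm that the indeterminacy lies in strictly higher Adams filtration, so that the value $j$ is unambiguous.

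Parts (2) and (3) then follow formally. Because the top-cell projection $E_2(C\tau) \map E_2(S^{0,0})$ is a map of $\Ext$-modules, $P^k \ol{c_0 e_0}$ and $g\,\ol{c_0 e_0}$ project to $P^k c_0 e_0$ and $c_0 e_0 g$ (the latter being $\tau$-torsion since $c_0 e_0$ is), so they are legitimate choices of $\ol{P^k c_0 e_0}$ and $\ol{c_0 e_0 g}$. Multiplying the identity of part (1) by $P^k$ and by $g$ gives $h_0 \cdot \ol{P^k c_0 e_0} = P^k j$ and $h_0 \cdot \ol{c_0 e_0 g} = g j$, and for (3) I would then invoke the $\Ext$-relation $g j = d_0 l$. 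The main obstacle remains the bracket computation in (1)---confirming $\langle h_0, c_0 e_0, \tau\rangle = j$ and controlling crossing differentials and indeterminacy; the only secondary points are that these $\Ext$-module identities hold up to indeterminacy not affecting the stated $h_0$-multiples, and that $g j = d_0 l$.
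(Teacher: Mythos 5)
Your part (1) is essentially the paper's proof: reduce by Proposition \ref{prop:hidden-Massey} to the Massey product $\langle h_0, c_0 e_0, \tau \rangle$ in $E_2(S^{0,0})$, note that May's Convergence Theorem \ref{thm:3-converge} fails here (the differential $d_2(b_{30} h_0(1)^2) = \tau c_0 e_0$ is crossed by the later differential $d_4(\Delta h_1^2) = P h_1^2 h_4$), and finish with a shuffle. You leave the decisive shuffle unnamed, though; the paper's is $h_2^2 \langle h_0, c_0 e_0, \tau \rangle = \langle h_2^2, h_0, c_0 e_0 \rangle \tau = \tau h_1 d_0 e_0$, using the entry $\langle h_2^2, h_0, c_0 e_0 \rangle = h_1 d_0 e_0$ of Table \ref{tab:Massey}; this identifies the bracket as $j$ outright, with no need for your unverified uniqueness-in-degree claim. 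Note also that your alternative fallback, Bruner's machine data, cannot evaluate this bracket: those computations are classical, and $\langle h_0, c_0 e_0, \tau \rangle$ involves $\tau$, so it is invisible classically.

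The genuine gap is your reduction of (2) and (3) to (1). You multiply $h_0 \cdot \ol{c_0 e_0} = j$ by $P^k$ and by $g$, but neither $P^k$ nor $g$ is an element of $\Ext$, so these multiplications do not exist in the $E_2(S^{0,0})$-module structure of $E_2(C\tau)$. The class $P$ supports the May differential $d_4(P) = h_0^4 h_3$ (Table \ref{tab:May-E4}), so already classically there is no such element of $\Ext$; and motivically $g$ supports $d_4(g) = h_1^4 h_4$ (Lemma \ref{lem:d4-g}), so only $\tau g$ survives (Example \ref{ex:g^2}). The symbols $P^k c_0 e_0$ and $c_0 e_0 g$ name classes detected by those monomials on the May $E_\infty$-page; they are not products by $P^k$ or $g$ in $\Ext$, and there is no substitute multiplier: writing $P^k c_0 e_0 = (P^k c_0) \cdot e_0$ does not help, since neither factor is $\tau$-torsion, so neither admits a lift to $E_2(C\tau)$ to which (1) could be applied. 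Multiplying instead by $\tau g$ also fails: it yields $h_0 \cdot (\tau g \cdot \ol{c_0 e_0}) = \tau g \cdot j = \tau d_0 l$, but $\tau g \cdot \ol{c_0 e_0}$ projects to $\tau \cdot c_0 e_0 g = 0$ on the top cell, hence lies in the image of the bottom cell and is not a lift $\ol{c_0 e_0 g}$; the resulting statement is not part (3). This is why the paper proves (2) and (3) not by a formal module-theoretic reduction but by repeating the argument of (1) for $P^k c_0 e_0$ and $c_0 e_0 g$: Proposition \ref{prop:hidden-Massey} followed by the analogous $h_2^2$-shuffle in each case.
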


\begin{proof}
We prove the first formula.  The proofs for the other formulas are essentially
the same.

By Proposition \ref{prop:hidden-Massey}, we must compute
$\langle h_0, c_0 e_0, \tau \rangle$ in $E_2(S^{0,0})$.
We may attempt to compute this bracket using 
May's Convergence Theorem \ref{thm:3-converge}
with the May differential $d_2(b_{30} h_0(1)^2 ) = \tau c_0 e_0$.
However, the hypothesis of
May's Convergence Theorem \ref{thm:3-converge} is not satisfied
because of the later May differential $d_4(\Delta h_1^2 ) = P h_1^2 h_4$.
\index{Convergence Theorem!May}
\index{May spectral sequence!differential!crossing}

Instead, note that 
$h_2^2 \langle h_0 , c_0 e_0, \tau \rangle$ equals
$\langle h_2^2, h_0, c_0 e_0 \rangle \tau$.
Table \ref{tab:Massey} shows that 
the last bracket equals $h_1 d_0 e_0$.

Therefore,
$h_2^2 \langle h_0, c_0 e_0, \tau \rangle$ equals $\tau h_1 d_0 e_0$.
It follows that
$\langle h_0, c_0 e_0, \tau \rangle$ equals $j$.
\end{proof}

\begin{lemma}
\label{lem:h0-h1d1g}
$h_0 \cdot \ol{h_1 d_1 g} = h_1 h_5 c_0 d_0$.
\end{lemma}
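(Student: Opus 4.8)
The plan is to apply Proposition \ref{prop:hidden-Massey} with $z = h_0$ and $y = h_1 d_1 g$. Since $h_0 h_1 = 0$ the product $h_0 \cdot h_1 d_1 g$ vanishes in $\Ext$, and $\tau \cdot h_1 d_1 g$ is zero so that $\ol{h_1 d_1 g}$ is defined; the proposition then gives
\[
h_0 \cdot \ol{h_1 d_1 g} \in \langle h_0, h_1 d_1 g, \tau \rangle,
\]
where the Massey product is computed in $E_2(S^{0,0}) = \Ext$ and pushed forward to $E_2(C\tau)$. Thus the problem reduces to evaluating $\langle h_0, h_1 d_1 g, \tau \rangle$ and identifying the image of its value.

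As in the proof of Lemma \ref{lem:_c0e0.h0}, I expect a direct application of May's Convergence Theorem \ref{thm:3-converge} to be obstructed by a later crossing May differential, so I would instead extract the value by shuffling. Multiplying by $h_2^2$ (which is legitimate since $h_0 h_2^2 = 0$, $h_0 \cdot h_1 d_1 g = 0$, and $\tau \cdot h_1 d_1 g = 0$) gives
\[
h_2^2 \langle h_0, h_1 d_1 g, \tau \rangle = \langle h_2^2, h_0, h_1 d_1 g \rangle \tau,
\]
and the inner bracket reduces, via the standard relation $\langle h_2^2, h_0, h_1 \rangle = c_0$ (Table \ref{tab:Massey}) and factoring out $d_1 g$, to $c_0 \cdot d_1 g$. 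Hence $h_2^2 \langle h_0, h_1 d_1 g, \tau \rangle$ contains $\tau c_0 d_1 g$. To see this is nonzero I would combine the relation $\tau h_2^2 d_1 g = h_1^3 B_6$ from Lemma \ref{lem:h1-h1^2B6} with $c_0 B_6 = h_1^3 B_3$ from Lemma \ref{lem:c0-B6}, obtaining $h_2^2 \cdot \tau c_0 d_1 g = h_1^6 B_3 \neq 0$; in particular $\tau c_0 d_1 g \neq 0$, so $\langle h_0, h_1 d_1 g, \tau \rangle$ and therefore $h_0 \cdot \ol{h_1 d_1 g}$ are nonzero.

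With nonvanishing established, it remains to pin down the value. I would inspect $E_2(C\tau)$ in the relevant tridegree, using the module structure recorded in Table \ref{tab:Ctau-E2} together with the hidden $h_0$-extensions already obtained, to argue that $h_1 h_5 c_0 d_0$ is the only nonzero class available as a target, and that the indeterminacy of the bracket — spanned by multiples of $h_0$ and by $\tau$-multiples, the latter pushing to zero in $E_2(C\tau)$ since bottom-cell classes live in $\coker \tau$ — does not interfere. The main obstacle is precisely this final step: controlling the indeterminacy of $\langle h_0, h_1 d_1 g, \tau \rangle$ and checking the degree bookkeeping carefully enough to exclude any competing target, rather than the shuffle computation itself.
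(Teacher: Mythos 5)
Your first step---reducing to the Massey product $\langle h_0, h_1 d_1 g, \tau \rangle$ via Proposition \ref{prop:hidden-Massey}---is exactly the paper's, but the computation you build on it contains a fatal error, and in fact the whole $h_2^2$-multiplication strategy cannot work even in principle. The value of the bracket is $h_1 h_5 c_0 d_0$, which is annihilated by $h_2$ because $h_1 h_2 = 0$ in $\Ext$; since the bracket has no indeterminacy, $h_2^2 \langle h_0, h_1 d_1 g, \tau \rangle$ is identically zero, so no shuffle starting from it can detect nonvanishing. The concrete error is your claim that $h_1^6 B_3 \neq 0$. By the hidden May extension $h_1 \cdot h_1 B_3 = h_5 d_0 e_0$ of Lemma \ref{lem:h1-th1G}, one has $h_1^2 B_3 = h_5 d_0 e_0$ in $\Ext$; but $h_1^2 h_5 d_0 e_0$ is hit by the May differential $d_6(B_{23}) = h_1^2 h_5 d_0 e_0$ (Table \ref{tab:May-E6}), and Table \ref{tab:May-h1} records no compensating hidden $h_1$ extension on $h_1 h_5 d_0 e_0$, so $h_1^4 B_3 = 0$ and a fortiori $h_1^6 B_3 = 0$. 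Thus your shuffle, carried out correctly, shows only that $\tau c_0 d_1 g$ lies in a set that $h_2^2$ annihilates---which is consistent with the bracket being anything at all, including zero. Finally, even granting nonvanishing, your last step (identifying $h_1 h_5 c_0 d_0$ as the only possible target and controlling the indeterminacy) is exactly the part you leave open, and it is where the actual content of the lemma lies.

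The paper's proof sidesteps all of this by evaluating the bracket on the nose, exploiting that it has no indeterminacy: $\langle h_0, h_1 d_1 g, \tau \rangle = \langle h_0, d_1, \tau h_1 g \rangle = \langle h_0, d_1, h_2 f_0 \rangle = \langle h_0, d_1, f_0 \rangle h_2$, where the middle step uses the relation $\tau h_1 g = h_2 f_0$ (see the proof of Lemma \ref{lem:t^2-h2g^2}). Table \ref{tab:Massey} gives $\langle h_0, d_1, f_0 \rangle = h_2 B_2$ (from the May differential $d_2(B h_2 b_{21}) = f_0 d_1$), and the hidden May $h_2$ extension $h_2 \cdot h_2 B_2 = h_1 h_5 c_0 d_0$ of Lemma \ref{lem:h2-h2B2} finishes the computation. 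If you want to salvage your outline, the shuffle to perform is the internal one that slides the decomposable factor $h_1 g$ of the second entry onto $\tau$, not an external multiplication by $h_2^2$, which destroys the very class you are trying to see.
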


\begin{proof}
By Proposition \ref{prop:hidden-Massey}, we must compute
$\langle h_0, h_1 d_1 g, \tau \rangle$ in $E_2(S^{0,0})$.
Because there is no indeterminacy, we have
\[
\langle h_0, h_1 d_1 g, \tau \rangle = 
\langle h_0, d_1, \tau h_1 g \rangle =
\langle h_0, d_1, h_2 f_0 \rangle =
\langle h_0, d_1, f_0 \rangle h_2.
\]
Table \ref{tab:Massey} shows that 
$h_2 B_2 = \langle h_0, d_1, f_0 \rangle$.
Finally, use that 
$h_2 \cdot h_2 B_2 = h_1 h_5 c_0 d_0$ 
from Table \ref{tab:May-h2}.
\end{proof}

\begin{lemma}
\label{lem:h0-h1^2B8}
$h_0 \cdot \ol{h_1^2 B_8} = h_2 x'$.
\end{lemma}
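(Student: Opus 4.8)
The plan is to deduce this hidden extension from a Massey product via Proposition \ref{prop:hidden-Massey}, exactly as in the proofs of Lemmas \ref{lem:_c0e0.h0} and \ref{lem:h0-h1d1g}. Taking $z = h_0$ and $y = h_1^2 B_8$, Proposition \ref{prop:hidden-Massey} gives $h_0 \cdot \ol{h_1^2 B_8} \in \langle h_0, h_1^2 B_8, \tau \rangle$, where the bracket is computed in $E_2(S^{0,0})$ and then pushed forward to $E_2(C\tau)$. The bracket is defined: $h_0 \cdot h_1^2 B_8 = 0$ because $h_0 h_1 = 0$, and $\tau \cdot h_1^2 B_8 = 0$ is already implicit in the notation $\ol{h_1^2 B_8}$, since by the conventions of Section \ref{sctn:notation} a top-cell pre-image exists precisely when $\tau \cdot h_1^2 B_8 = 0$. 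So the problem reduces to evaluating $\langle h_0, h_1^2 B_8, \tau \rangle$ and showing it contains $h_2 x'$, which lies in the same tridegree (stem $56$, filtration $11$).

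First I would check whether May's Convergence Theorem \ref{thm:3-converge} applies directly to a May differential whose target detects $\tau \cdot h_1^2 B_8$. I expect this to fail because of a later crossing May differential, just as it did for $\langle h_0, c_0 e_0, \tau\rangle$ in Lemma \ref{lem:_c0e0.h0}; this failure is presumably the reason the present extension warrants a separate lemma rather than a routine table entry. The fallback, which is the heart of the argument, is to replace the direct computation by a shuffle into a degree where the value is forced. Two concrete avenues present themselves: (i) multiply the bracket by a suitable class $\lambda$ to obtain $\lambda \langle h_0, h_1^2 B_8, \tau\rangle = \langle \lambda, h_0, h_1^2 B_8\rangle \tau$, evaluate the right-hand bracket using an entry of Table \ref{tab:Massey}, and divide back, as in Lemma \ref{lem:_c0e0.h0}; or (ii) rewrite $h_1^2 B_8 = h_1 \cdot c_0 B_1$ using the relation $h_1 B_8 = c_0 B_1$ from the proof of Lemma \ref{lem:t-B8}, combine with the description $B_1 = \langle h_1, h_0, h_0^2 g_2\rangle$ from Table \ref{tab:Massey}, and shuffle an $h_2$ out of the resulting expression in the manner of Lemma \ref{lem:h0-h1d1g}.

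Once a representative is produced, the remaining step is to identify it with $h_2 x'$ and to confirm the indeterminacy contributes nothing. The indeterminacy is $h_0 \cdot \Ext^{(56,10,w)} + \tau \cdot \Ext^{(56,11,w+1)}$; I would compute these two groups from the multiplicative generators and check that they leave $h_2 x'$ as the unique nonzero coset representative, using that $x'$ is detected by $d_2(R_1) = h_0^2 x'$ together with the explicit $\Ext$ relations near the $53$-stem recorded in \cite{Bruner97}. The main obstacle is precisely this Massey-product evaluation: finding a shuffle that lands in a determined group while keeping the indeterminacy under control, since the naive May's Convergence Theorem route is blocked by a crossing differential. Everything after the shuffle is bookkeeping against the $\Ext$ charts.
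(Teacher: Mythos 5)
Your reduction to the Massey product $\langle h_0, h_1^2 B_8, \tau \rangle$ via Proposition \ref{prop:hidden-Massey} is correct and matches the paper's setup, but the proposal stops exactly where the lemma's actual content begins: you name two possible avenues for evaluating the bracket and carry out neither, conceding that "the main obstacle is precisely this Massey-product evaluation." That evaluation is the whole lemma, so as written there is a genuine gap, not just deferred bookkeeping.

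The step you are missing is a regrouping of the bracket, not a shuffle against an auxiliary class $\lambda$ and not a manipulation of $B_1 = \langle h_1, h_0, h_0^2 g_2 \rangle$. Factor the middle entry as $h_1^2 B_8 = h_1 \cdot (h_1 B_8)$ and slide the factor $h_1 B_8$ onto the third entry:
$\langle h_0, h_1^2 B_8, \tau \rangle = \langle h_0, h_1, \tau h_1 B_8 \rangle$,
which has no indeterminacy. The crucial input — absent from your proposal — is that $\tau \cdot h_1 B_8$ is \emph{not} zero: it is the hidden May $\tau$ extension $\tau \cdot h_1 B_8 = P h_1 h_5 d_0$ of Table \ref{tab:May-tau} (Lemma \ref{lem:t-B8}). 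The regrouped bracket $\langle h_0, h_1, P h_1 h_5 d_0 \rangle$ is then computable by May's Convergence Theorem \ref{thm:3-converge} using the May differential $d_6(x'_{56}) = P h_1^2 h_5 d_0$, and Table \ref{tab:Massey} records that it equals $h_2 x'$. This is the same mechanism as Lemma \ref{lem:h0-h1d1g}, which you cite, but there the relevant identification was the non-hidden relation $\tau h_1 g = h_2 f_0$; here it is a hidden $\tau$ extension, and your avenue (ii) — pulling an $h_2$ out after invoking the bracket description of $B_1$ — points in the wrong direction because the third entry is supposed to become a nonzero class ($P h_1 h_5 d_0$), not to be decomposed further. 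Your avenue (i) also has no evident candidate $\lambda$, and your guess that the direct approach fails because of a crossing differential misdiagnoses the problem: it fails because $\tau \cdot h_1^2 B_8$ vanishes outright, so one must first regroup to obtain a bracket with a nonzero third entry before May's Convergence Theorem can be applied.
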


\begin{proof}
By Proposition \ref{prop:hidden-Massey}, we must compute the bracket
$\langle h_0, h_1^2 B_8, \tau \rangle$, which equals
$\langle h_0, h_1, \tau h_1 B_8 \rangle$ because
there is no indeterminacy.
Table \ref{tab:Massey} shows that
$\langle h_0, h_1, \tau h_1 B_8 \rangle$
equals $h_2 x'$.
Note that $\tau h_1 B_8 = P h_1 h_5 d_0$ 
from Table \ref{tab:May-tau}.
\end{proof}

\subsection{Hidden $h_1$ extensions in the Adams $E_2$-page
for the cofiber of $\tau$}
\label{subsctn:h1-extns}

\index{cofiber of tau@cofiber of $\tau$!hidden extension!h1@$h_1$}

\begin{lemma}
\label{lem:h1-th0e0^3}
$h_1 \cdot \ol{\tau h_0 e_0^3} = d_0 u$.
\end{lemma}

\begin{proof}
Using Proposition \ref{prop:hidden-Massey}, we wish to compute the bracket
$\langle h_1, \tau h_0 e_0^3, \tau \rangle$ in $E_2(S^{0,0})$.
We may attempt to use 
May's Convergence Theorem \ref{thm:3-converge}
with the May differential
$d_4( \Delta d_0^2 ) = \tau^2 h_0 e_0^3$.
However, the conditions of
May's Convergence Theorem \ref{thm:3-converge}
are not satisfied
because of the later May differential $d_8( \Delta^2 h_1^4 ) = P^2 h_1^4 h_5$.
\index{Convergence Theorem!May}
\index{May spectral sequence!differential!crossing}

Instead, 
Table \ref{tab:Ctau-h1} shows that
$h_1 \cdot \ol{\tau h_0 d_0 e_0^2}$
equals $P v$.
Next, observe that 
$d_0 \cdot \ol{\tau h_0 e_0^3} + e_0 \cdot \ol{\tau h_0 d_0 e_0^2}$ is either
zero or $h_1^2 U$.  
In either case,
$h_1 d_0 \cdot \ol{\tau h_0 e_0^3}$ must be non-zero.
It follows that 
$h_1 \cdot \ol{\tau h_0 e_0^3}$ is also non-zero,
and there is just one possible non-zero value.
\end{proof}

\begin{lemma}
\label{lem:h1-h1h5c0d0}
$h_1^2 h_5 \cdot \ol{c_0 d_0} = P h_5 e_0$.
\end{lemma}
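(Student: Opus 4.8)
The plan is to deduce this hidden extension from a Massey product in $\Ext$ via Proposition~\ref{prop:hidden-Massey}, exactly as in the preceding lemmas of this subsection. I would apply that proposition with $z = h_1^2 h_5$ and $y = c_0 d_0$. Its hypotheses are that $\tau \cdot c_0 d_0 = 0$ in $\Ext$ — which holds, since otherwise the symbol $\ol{c_0 d_0}$ would not even be defined — and that $h_1^2 h_5 \cdot c_0 d_0 = 0$ in $\Ext$. Granting these, Proposition~\ref{prop:hidden-Massey} yields
\[
h_1^2 h_5 \cdot \ol{c_0 d_0} \in \langle h_1^2 h_5, c_0 d_0, \tau \rangle,
\]
where the bracket is formed in $E_2(S^{0,0})$ and then pushed forward along $E_2(S^{0,0}) \map E_2(C\tau)$. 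It then remains to show that this Massey product contains $P h_5 e_0$ and that its indeterminacy does not interfere.

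A degree count is reassuring. The product $h_1^2 h_5 \cdot \ol{c_0 d_0}$ sits in degree $(56,10,30)$, while a threefold Massey product drops Adams filtration by one relative to the sum of its entries, placing $\langle h_1^2 h_5, c_0 d_0, \tau \rangle$ in degree $(56,9,30)$ — precisely the degree of $P h_5 e_0$. This filtration jump of one is exactly the signature of a genuinely hidden extension. To evaluate the bracket I would invoke May's Convergence Theorem~\ref{thm:3-converge}: one witnesses $\tau \cdot c_0 d_0 = 0$ and $h_1^2 h_5 \cdot c_0 d_0 = 0$ by explicit May differentials $d_r(a_{12}) = \tau c_0 d_0$ and $d_{r'}(a_{01}) = h_1^2 h_5 c_0 d_0$, after which the surviving combination $h_1^2 h_5 \cdot a_{12} + a_{01} \cdot \tau$ detects the bracket on the May $E_\infty$-page, where I expect it to coincide with $P h_5 e_0$. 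An attractive alternative is to shuffle $\tau$ inward and reduce to a bracket already recorded in Table~\ref{tab:Massey}.

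The main obstacle is twofold. First, I must genuinely verify $h_1^2 h_5 \cdot c_0 d_0 = 0$ in $\Ext$ itself, not merely on the Adams $E_\infty$-page; were this product nonzero, the Massey product would be undefined and $h_1^2 h_5 \cdot \ol{c_0 d_0}$ would carry a nonzero top-cell component rather than being hidden. The relations $h_3 \cdot B_1 = h_5 c_0 d_0$ from Lemma~\ref{lem:h2-h2B2} and $\tau \cdot h_1^2 B_{21} = P h_5 c_0 d_0$ from Lemma~\ref{lem:t-B8} are the natural tools for settling this vanishing. Second, when applying May's Convergence Theorem I must check the crossing-differential hypothesis~(2), since brackets in this range are frequently obstructed by later May differentials and force an indirect argument (compare the proofs of Lemmas~\ref{lem:_c0e0.h0} and~\ref{lem:h1-th0e0^3}, where the naive application fails). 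Finally, I must pin down the indeterminacy of $\langle h_1^2 h_5, c_0 d_0, \tau \rangle$ in order to conclude that its value is exactly $P h_5 e_0$; I expect that indeterminacy to be concentrated in strictly higher Adams filtration, so that the identification is unambiguous.
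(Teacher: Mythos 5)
Your setup is fine --- Proposition \ref{prop:hidden-Massey} does apply, and the bracket $\langle h_1^2 h_5, c_0 d_0, \tau \rangle$ is in fact defined --- but the step you defer, evaluating that bracket with May's Convergence Theorem, is exactly where the argument breaks, and it cannot be repaired along the lines you sketch. The two required vanishings are witnessed on different May pages: $\tau \cdot c_0 d_0$ dies by a May $d_2$ (e.g.\ $d_2(b_{20} b_{30} h_0(1)) = \tau c_0 d_0$), while $h_1^2 h_5 \cdot c_0 d_0$ dies only by $d_6(x_{56}) = h_1^2 h_5 c_0 d_0$. Theorem \ref{thm:3-converge} requires both witnesses $a_{01}$ and $a_{12}$ to live on one and the same $E_r$-page with the same differential $d_r$; your own notation ``$d_r(a_{12})$, $d_{r'}(a_{01})$'' concedes that this hypothesis fails. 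Worse, a filtration count shows the crossing-differential hypothesis must fail as well, so no direct application can give the answer: working at $r=6$ (where $\tau c_0 d_0$ is already zero, forcing $a_{12}=0$) the representative is $\tau x_{56}$, which has May filtration $21$, whereas $P h_5 e_0$ has May filtration $17$; since $P h_5 e_0$ is not $\tau$-divisible modulo higher May filtration, no element of the bracket congruent to $P h_5 e_0$ mod $\tau$ can be detected in filtration $\geq 21$. This is precisely the pathology the paper documents for the sibling brackets $\langle h_0, c_0 e_0, \tau \rangle$ and $\langle c_0 e_0, \tau, h_1^4 \rangle$ (Lemmas \ref{lem:_c0e0.h0} and \ref{lem:h1^4-t-brackets}), obstructed by the crossing differential $d_4(\Delta h_1^2) = P h_1^2 h_4$. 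Note also that your preliminary vanishing is subtler than you suggest: $h_1^2 c_0 d_0$ is \emph{nonzero} in $\Ext$ (Table \ref{tab:Massey} identifies it with $\langle h_1^2 e_0, \tau, h_1^4 \rangle$), which is exactly why Lemma \ref{lem:_h1^2e0.c0} had to use a matric Massey product; the vanishing of $h_1^2 h_5 \cdot c_0 d_0$ rests on $d_6(x_{56})$ together with the absence of a hidden $h_1$ extension, not on the two relations you cite.

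The paper avoids all of this with a short algebraic manipulation. It takes the compound hidden extension $h_1^2 \cdot \ol{c_0 d_0} + d_0 \cdot \ol{h_1^2 c_0} = P e_0$ already established in Lemma \ref{lem:_h1^2e0.c0} (Table \ref{tab:Ctau-misc-extn}), multiplies by $h_1^3 h_5$, and kills the unwanted term using $h_1^3 h_5 d_0 = 0$ (a consequence of the May differential $d_6(\phi) = h_1^3 h_5 d_0$). This yields $h_1^5 h_5 \cdot \ol{c_0 d_0} = P h_1^3 h_5 e_0$, and the desired formula follows by dividing by $h_1^3$, since $P h_5 e_0$ is the unique element of its degree whose $h_1^3$-multiple is $P h_1^3 h_5 e_0$. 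If you want to keep your bracket framework, the lesson from the paper's handling of this family is that you must either pass to a matric bracket or, as here, trade the problematic product for an $h_1$-multiple of it where the obstruction disappears, and divide by $h_1$ at the end.
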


\begin{proof}
Table \ref{tab:Ctau-misc-extn}
shows that
\[
h_1^2 \cdot \ol{c_0 d_0} + d_0 \cdot \ol{h_1^2 c_0} = P e_0,
\]
which means that
\[
h_1^5 h_5 \cdot \ol{c_0 d_0} + h_1^3 h_5 d_0 \cdot \ol{h_1^2 c_0} = 
P h_1^3 h_5 e_0.
\]
But $h_1^3 h_5 d_0 = 0$, so 
$h_1^5 h_5 \cdot \ol{c_0 d_0} = P h_1^3 h_5 e_0$, 
from which the desired formula follows.
\end{proof}

\begin{lemma}
\label{lem:h5d0e0.h1^2}
$h_1^2 \cdot \ol{h_5 d_0 e_0} = \tau B_{23} + c_0 Q_2$.
\end{lemma}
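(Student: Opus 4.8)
The plan is to realize this as a hidden extension governed by a Massey product, via Proposition~\ref{prop:hidden-Massey}. Taking $z = h_1^2$ and $y = h_5 d_0 e_0$, that proposition gives $h_1^2 \cdot \ol{h_5 d_0 e_0} \in \langle h_1^2, h_5 d_0 e_0, \tau \rangle$ in $E_2(C\tau)$, provided the two vanishing hypotheses $\tau \cdot h_5 d_0 e_0 = 0$ and $h_1^2 \cdot h_5 d_0 e_0 = 0$ hold in $E_2(S^{0,0})$. So the first step is to verify these. Recall from Lemma~\ref{lem:h1-th1G} that $h_5 d_0 e_0 = h_1^2 B_3$, and from Lemma~\ref{lem:c0-B6} that $h_1^3 B_3 = c_0 B_6$; hence $h_1^2 \cdot h_5 d_0 e_0 = h_1 c_0 B_6 = c_0 \cdot h_1 B_6$, and I would check that this product vanishes (a bookkeeping check on the $E_\infty$-page of the May spectral sequence together with the relations of \cite{Bruner97}). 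The torsion statement $\tau h_5 d_0 e_0 = 0$ is exactly what makes $\ol{h_5 d_0 e_0}$ meaningful, and it can be read off from the short exact sequence computing $E_2(C\tau)$ in Section~\ref{sctn:t-Bockstein}.

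The heart of the argument is then the computation of $\langle h_1^2, h_5 d_0 e_0, \tau \rangle$ in $E_2(S^{0,0})$. As in Lemmas~\ref{lem:_c0e0.h0} and \ref{lem:h1-th0e0^3}, I expect a direct appeal to May's Convergence Theorem~\ref{thm:3-converge} to be obstructed by a later crossing May differential, so the strategy is to shuffle into a bracket already recorded in Table~\ref{tab:Massey}. Using $h_5 d_0 e_0 = h_1^2 B_3$ and the absence of indeterminacy in the relevant subbrackets, I would rewrite the bracket by sliding a factor of $\tau$ (or of $h_1$) across, reducing it to a product of $\tau$ with a known Massey product; the expected output is the sum $\tau B_{23} + c_0 Q_2$, where the $\tau B_{23}$ term carries the $\tau$ from the shuffle and the $c_0 Q_2$ term arises from the $c_0$-divisible structure recorded in Lemma~\ref{lem:c0-i1}.

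Finally I would control the indeterminacy of the bracket, pushed forward along $E_2(S^{0,0}) \map E_2(C\tau)$, to conclude that its image is exactly the single class $\tau B_{23} + c_0 Q_2$ rather than a coset; this uses that the candidate indeterminacy classes in this degree either die in $E_2(C\tau)$ or lie in strictly higher filtration. The main obstacle is plainly the Massey product computation of the second paragraph: pinning down both summands simultaneously and justifying the shuffle in the presence of the crossing differential is delicate, and—as with the other $C\tau$ extension lemmas—it may be cleaner to detect the answer indirectly, for instance by multiplying the proposed identity by $d_0$ or $h_1$ and matching against the already-established extension $h_1^2 h_5 \cdot \ol{c_0 d_0} = P h_5 e_0$ of Lemma~\ref{lem:h1-h1h5c0d0} together with the hidden $h_0$-extensions of Section~\ref{subsctn:h0-extns}.
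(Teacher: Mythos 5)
Your first paragraph is fine as far as it goes: the reduction via Proposition \ref{prop:hidden-Massey} to a Massey product of the form $\langle h_1^2, h_5 d_0 e_0, \tau \rangle$ (the paper uses the equivalent $\langle h_1, h_1 h_5 d_0 e_0, \tau \rangle$) is exactly the right first step, and the vanishing hypotheses are most cleanly seen from the May differentials $d_4(x_{63}) = \tau h_5 d_0 e_0$ and $d_6(B_{23}) = h_1^2 h_5 d_0 e_0$ rather than your detour through $h_1^2 B_3$ and $c_0 B_6$.

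The heart of your argument, however, aims at the wrong difficulty, and the strategy you propose cannot close the gap. There is no crossing-differential obstruction here: May's Convergence Theorem \ref{thm:3-converge} applies directly, using precisely the differential $d_6(B_{23}) = h_1^2 h_5 d_0 e_0$ from Table \ref{tab:May-E6}, and it shows that the bracket is detected by the class $\tau B_{23}$ on the May $E_\infty$-page. The genuine subtlety is what happens after that: the $E_\infty$-class $\tau B_{23}$ represents \emph{two} elements of $\Ext$, differing by $c_0 Q_2 = P D_4$ (Lemma \ref{lem:c0-i1}), which sits in the same degree in strictly lower May filtration. No shuffle, and no auxiliary multiplication by $d_0$ or $h_1$, can decide between the candidates $\tau B_{23}$ and $\tau B_{23} + c_0 Q_2$, because which of the two elements of $\Ext$ bears the name $\tau B_{23}$ is a matter of convention, not computation; it is one of the ambiguous generators of Table \ref{tab:Ext-ambiguous}, and that table resolves the ambiguity by \emph{defining} $\tau B_{23}$ to be $c_0 Q_2 + \langle h_1, h_1 h_5 d_0 e_0, \tau \rangle$. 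With that convention the lemma follows essentially by definition. Your plan to have "the $c_0 Q_2$ term arise from the $c_0$-divisible structure" of Lemma \ref{lem:c0-i1} is therefore founded on a misconception: the summand $c_0 Q_2$ is not an output of any Massey-product manipulation, and any argument that purports to derive it intrinsically is circular.
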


\begin{proof}
Because of Proposition \ref{prop:hidden-Massey}, we wish
to compute the Massey product $\langle h_1, h_1 h_5 d_0 e_0, \tau \rangle$
in $E_2(S^{0,0})$.
We may attempt to use
May's Convergence Theorem \ref{thm:3-converge}
with the May differential $d_6(B_{23}) = h_1^2 h_5 d_0 e_0$.
\index{Convergence Theorem!May}

However, there is a subtlety here.  The element $\tau B_{23}$ belongs to
the May $E_\infty$-page for $E_2(S^{0,0})$.  It represents
two elements in $E_2(S^{0,0})$ because
of the presence of $P D_4$ with lower May filtration.
Thus, we have only determined so far that
$h_1^2 \cdot \ol{h_5 d_0 e_0}$ equals either
$\tau B_{23}$ or $\tau B_{23} + c_0 Q_2$.
\index{Ext@$\Ext$!ambiguous generator}

This ambiguity is resolved essentially by definition.
In Table \ref{tab:Ext-ambiguous},
the element $\tau B_{23}$ in $E_2(S^{0,0})$ is defined such that
$\langle h_1, h_1 h_5 d_0 e_0, \tau \rangle$ equals
$\tau B_{23} + c_0 Q_2$.
\end{proof}

\begin{lemma}
\label{lem:_h1^2Q2.h1^5}
$h_1^5 \cdot \ol{h_1^2 Q_2} = \tau g w + h_1^4 X_1$.
\end{lemma}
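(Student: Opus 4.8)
The plan is to invoke Proposition \ref{prop:hidden-Massey} with $z = h_1^5$ and $y = h_1^2 Q_2$, which identifies $h_1^5 \cdot \ol{h_1^2 Q_2}$ as the image of an element of the Massey product $\langle h_1^5, h_1^2 Q_2, \tau \rangle$ computed in $E_2(S^{0,0})$ and pushed forward to $E_2(C\tau)$. First I would check the two hypotheses of the proposition. The condition $\tau \cdot h_1^2 Q_2 = 0$ is precisely what makes $\ol{h_1^2 Q_2}$ available as a generator of $E_2(C\tau)$. The condition $h_1^5 \cdot h_1^2 Q_2 = h_1^7 Q_2 = 0$ is what forces the product onto the bottom cell, so that a value coming from $E_2(S^{0,0})$, such as $\tau g w + h_1^4 X_1$, is even sensible; I would verify this vanishing from the relation $h_1^5 Q_2 = P h_1 \cdot i_1$ of Lemma \ref{lem:c0-i1} together with the relevant $h_1$-power torsion of the associated image-of-$J$ class.

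Next I would evaluate the threefold bracket $\langle h_1^5, h_1^2 Q_2, \tau \rangle$. The naive route is May's Convergence Theorem \ref{thm:3-converge}, reading the bracket off the May differential that kills $\tau \cdot h_1^2 Q_2$; but, just as in the proofs of Lemmas \ref{lem:_c0e0.h0} and \ref{lem:h1-th0e0^3}, I anticipate a later ``crossing'' May differential that violates condition (2) of Theorem \ref{thm:3-converge}, so the direct reading will not be legitimate. The workaround I would pursue is to rewrite the bracket using $h_1^5 Q_2 = P h_1 \cdot i_1$, trading $h_1^2 Q_2$ for a factor involving $i_1$, and then reduce to brackets already evaluated elsewhere, in particular those producing $X_1$ through the relation $h_1^3 X_1 = P h_5 c_0 e_0$ of Lemma \ref{lem:h1-th1G} and those producing $w$. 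Multiplying a tentative value by $h_1$ and comparing with known non-hidden products should give an independent verification that the answer is non-zero and of the claimed shape.

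The main obstacle is the final bookkeeping, since the target $\tau g w + h_1^4 X_1$ is a sum of two classes lying in a single degree, while both the bracket $\langle h_1^5, h_1^2 Q_2, \tau \rangle$ and the generator $\ol{h_1^2 Q_2}$ carry genuine indeterminacy and ambiguity. I would pin down the precise combination by appealing to the choice of ambiguous generator recorded in Table \ref{tab:Ctau-ambiguous}, in direct analogy with the way the companion value $\tau B_{23} + c_0 Q_2$ was fixed ``essentially by definition'' in Lemma \ref{lem:h5d0e0.h1^2}. The delicate point is confirming that no class of lower filtration in this degree can absorb the $\tau g w$ summand, so that both summands are genuinely forced; here I expect to rely on the multiplicative relations among $X_1$, $w$, and the surrounding $h_5$-family to eliminate the alternative values.
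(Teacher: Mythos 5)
Your setup is the same as the paper's: Proposition \ref{prop:hidden-Massey} reduces the claim to computing the Massey product $\langle h_1^5, h_1^2 Q_2, \tau \rangle$ in $E_2(S^{0,0})$. But two things go wrong after that. First, the obstruction you anticipate is not the real one: there is no crossing-differential problem in this case, and the paper applies May's Convergence Theorem \ref{thm:3-converge} directly, using the May differential $d_4(\Delta h_1 g^2) = h_1^7 Q_2$; your proposed detour through $P h_1 \cdot i_1 = h_1^5 Q_2$ is unnecessary. What the May computation genuinely cannot do is distinguish the two elements of $\Ext$ represented by the May $E_\infty$-class $\tau g w$, which differ by the lower-filtration element $P h_1 h_5 c_0 e_0 = h_1^4 X_1$. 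So after the May step one knows only that $h_1^5 \cdot \ol{h_1^2 Q_2}$ equals $\tau g w$ or $\tau g w + h_1^4 X_1$.

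Second, and this is the genuine gap, your plan to settle that dichotomy ``essentially by definition,'' in analogy with Lemma \ref{lem:h5d0e0.h1^2}, cannot work. In that lemma the ambiguous element $\tau B_{23}$ of $E_2(S^{0,0})$ was \emph{defined} (Table \ref{tab:Ext-ambiguous}) precisely so that the bracket equals $\tau B_{23} + c_0 Q_2$; the definition and the conclusion are the same statement. Here, by contrast, $\tau g w$ has already been pinned down by a different property, namely $h_1 \cdot \tau g w = 0$ (Table \ref{tab:Ext-ambiguous}; note also that the relevant ambiguity lives in $E_2(S^{0,0})$, not in Table \ref{tab:Ctau-ambiguous} --- the generator $\ol{h_1^2 Q_2}$ itself is unambiguous). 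So there is a real question left to answer, and no definitional or multiplicative argument is known to answer it; the paper explicitly remarks that a purely algebraic proof has eluded the author. The paper's resolution is of a different, non-algebraic kind: $d_3(\tau g w) = h_1^3 c_0 x' \neq 0$ (Table \ref{tab:Adams-d3}, Lemma \ref{lem:d3-tgw}), while $d_3(\ol{h_1^2 Q_2}) = 0$, so the product $h_1^5 \cdot \ol{h_1^2 Q_2}$, being a $d_3$-cycle, cannot equal $\tau g w$, and the only remaining possibility is $\tau g w + h_1^4 X_1$. Without this step (or some substitute for it), your argument never determines which of the two candidate values holds --- and that determination is the entire content of the lemma.
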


\begin{proof}
Because of Proposition \ref{prop:hidden-Massey}, we wish to compute
the Massey product $\langle h_1^5, h_1^2 Q_2, \tau \rangle$.
We may attempt to use 
May's Convergence Theorem \ref{thm:3-converge}
with the May differential
$d_4(\Delta h_1 g^2) = h_1^7 Q_2$.
\index{Convergence Theorem!May}

As in the proof of Lemma \ref{lem:h5d0e0.h1^2},
there is a subtlety here.
The element $\tau g w$ belongs to
the May $E_\infty$-page for $E_2(S^{0,0})$.  It represents
two elements in $E_2(S^{0,0})$ because
of the presence of $P h_1 h_5 c_0 e_0$ with lower May filtration.
Recall that 
Table \ref{tab:Ext-ambiguous} defines
$\tau g w$ to be the element of
$E_2(S^{0,0})$ such that $h_1 \cdot \tau g w = 0$.
\index{Ext@$\Ext$!ambiguous generator}

We have determined so far that
$h_1^5 \cdot \ol{h_1^2 Q_2}$ equals either
$\tau g w$ or $\tau g w + h_1^4 X_1$.

Table \ref{tab:Adams-d3} gives
a non-zero value for the Adams differential
$d_3(\tau g w)$.
On the other hand,
$d_3 (\ol{h_1^2 Q_2})$ is zero.
Therefore, 
$h_1^5 \cdot \ol{h_1^2 Q_2}$ cannot equal
$\tau g w$.
\index{Adams spectral sequence!differential!d3@$d_3$}
\end{proof}

\begin{remark}
The proof of Lemma \ref{lem:_h1^2Q2.h1^5} is not entirely algebraic in the sense
that it relies on Adams differentials.  We would prefer a
purely algebraic proof, but it has so far eluded us.
\end{remark}

\begin{lemma}
\label{lem:_D4.h1^3c0}
$h_1^3 c_0 \cdot \ol{D_4}$ equals either $h_2 B_5$ or $h_2 B_5 + h_1^2 X_3$.
\end{lemma}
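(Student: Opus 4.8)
The plan is to reduce the statement to a Massey product in $\Ext$ and then invoke Proposition \ref{prop:hidden-Massey}. Since $\ol{D_4}$ exists we already know $\tau D_4 = 0$, so the only prerequisite that needs checking is $h_1^3 c_0 D_4 = 0$ in $\Ext$. I would verify this using the relation $c_0 i_1 = h_1^4 D_4$ of Lemma \ref{lem:c0-i1}: it gives $h_1^4 c_0 D_4 = c_0^2 i_1$, and combined with the $h_1$-divisibility properties of $c_0 i_1$ recorded in Example \ref{ex:h1^7h5c0}, an inspection of $\Ext$ in the relevant internal degree pins down that $h_1^3 c_0 D_4$ itself vanishes. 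Granting this, Proposition \ref{prop:hidden-Massey} yields $h_1^3 c_0 \cdot \ol{D_4} \in \langle h_1^3 c_0, D_4, \tau \rangle$, where the bracket is computed in $E_2(S^{0,0})$ and pushed forward to $E_2(C\tau)$.

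Next I would identify the possible values of the bracket and establish non-triviality. Because $h_1^3 c_0 D_4 = 0$, the product lands in the bottom-cell summand $\coker(\tau)$, which in this degree is spanned by $h_2 B_5$ and $h_1^2 X_3$; so a priori $h_1^3 c_0 \cdot \ol{D_4}$ is one of $0$, $h_1^2 X_3$, $h_2 B_5$, or $h_2 B_5 + h_1^2 X_3$. To rule out the first two I would compute the Massey product by means of May's Convergence Theorem \ref{thm:3-converge}, locating the May differential whose value is $\tau h_1^3 c_0 D_4$; alternatively, following the pattern of Lemma \ref{lem:h1-th0e0^3}, I would multiply the extension by a suitable permanent cycle (such as $h_1$) to move into a cleaner degree where an already-established extension forces the product to be non-zero. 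Either route should show the bracket is congruent to $h_2 B_5$ modulo $h_1^2 X_3$, which is precisely the assertion of the lemma.

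The hard part is exactly the $h_1^2 X_3$ ambiguity, and I do not expect to remove it. The two candidates $h_2 B_5$ and $h_2 B_5 + h_1^2 X_3$ occupy the same degree and differ by an element of strictly lower May filtration, so May's Convergence Theorem \ref{thm:3-converge} can only determine the bracket modulo lower filtration; moreover a crossing May differential in this range, of the same type that defeats the naive Convergence Theorem arguments in Lemmas \ref{lem:_c0e0.h0} and \ref{lem:h1-th0e0^3}, blocks a sharper higher-filtration computation. This is why the lemma asserts only the two possibilities, and why the extension is recorded among the unresolved exceptions of Theorem \ref{thm:hidden} rather than in Table \ref{tab:Ctau-h1}.
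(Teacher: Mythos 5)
Your skeleton matches the paper's proof: reduce via Proposition \ref{prop:hidden-Massey} to the Massey product $\langle h_1^3 c_0, D_4, \tau \rangle$ in $E_2(S^{0,0})$, compute it with May's Convergence Theorem \ref{thm:3-converge}, and attribute the two-fold ambiguity to the fact that $h_2 B_5$ and $h_2 B_5 + h_1^2 X_3$ differ by an element of strictly lower May filtration. That last diagnosis is exactly the paper's: the May $E_\infty$ element $h_2 B_5$ represents two elements of $\Ext$ because $h_1^2 X_3$ sits in the same degree in lower filtration (Table \ref{tab:Ext-ambiguous}), and detection on the $E_\infty$-page is intrinsically only defined modulo lower filtration.

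However, the step by which you would carry out the computation is mis-stated and would fail as written. For the bracket $\langle h_1^3 c_0, D_4, \tau \rangle$, Theorem \ref{thm:3-converge} does not want a differential "whose value is $\tau h_1^3 c_0 D_4$"; it wants two differentials, one hitting each two-fold product: an element $a_{01}$ with $d_r(a_{01}) = h_1^3 c_0 \cdot D_4$ and an element $a_{12}$ with $d_r(a_{12}) = \tau D_4$. The paper's key input is the former, namely $d_4(\phi g) = h_1^5 X_2$ (note $h_1^3 c_0 \cdot D_4 = h_1^5 X_2$ in the May spectral sequence), paired with $d_2(b_{30} h_0(1,2)) = \tau D_4$, the same differential that governs the other hidden extensions on $\ol{D_4}$. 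The resulting representative is then detected by $h_2 B_5$ on the May $E_\infty$-page, which in one stroke shows the product is non-zero and pins it down up to the $h_1^2 X_3$ ambiguity; no separate non-vanishing argument is needed. Your two fallback ideas also do not help: the "crossing differential" obstruction is speculation the paper neither asserts nor needs (the associated-graded ambiguity is the whole story), and the $h_1$-multiplication trick cannot show the answer is congruent to $h_2 B_5$, since $h_1 \cdot h_2 B_5 = 0$ by the very definition of $h_2 B_5$ in Table \ref{tab:Ext-ambiguous} — multiplying by $h_1$ destroys exactly the information you are after.
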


\begin{proof}
Because of Proposition \ref{prop:hidden-Massey},
we wish to compute
$\langle h_1^3 c_0, D_4, \tau \rangle$.
We may attempt to use
May's Convergence Theorem \ref{thm:3-converge} with the May differential
$d_4(\phi g) = h_1^5 X_2$.
\index{Convergence Theorem!May}

As in the proof of Lemma \ref{lem:h5d0e0.h1^2},
there is a subtlety here.
The element $h_2 B_5$ belongs to
the May $E_\infty$-page for $E_2(S^{0,0})$.  It represents
two elements in $E_2(S^{0,0})$ because
of the presence of $h_1^2 X_3$ with lower May filtration
(see Table \ref{tab:Ext-ambiguous}).
\index{Ext@$\Ext$!ambiguous generator}
\end{proof}

\subsection{Other extensions in the Adams $E_2$-page
for the cofiber of $\tau$}
\label{subsctn:other-extns}

We finish this section with some additional miscellaneous hidden extensions.

\index{cofiber of tau@cofiber of $\tau$!hidden extension!compound}

\begin{lemma}
\label{lem:B6.h1^3}
$h_1^3 \cdot \ol{B_6} + h_2 \cdot \ol{\tau h_2 d_1 g} = h_1^2 Q_2$.
\end{lemma}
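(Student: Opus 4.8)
The plan is to reduce this compound extension to a (matric) Massey product via the top-cell projection, exactly in the spirit of Proposition \ref{prop:hidden-Massey} but in a two-variable form. First I would apply the top-cell projection $p\colon E_2(C\tau) \to E_2(S^{0,0})$ to the left-hand side. Since $\tau B_6 = 0$ and $\tau h_2 d_1 g$ lies in $\ker(\tau)$ — exactly the conditions guaranteeing that $\ol{B_6}$ and $\ol{\tau h_2 d_1 g}$ are defined — we have $p(h_1^3 \ol{B_6}) = h_1^3 B_6$ and $p(h_2 \ol{\tau h_2 d_1 g}) = \tau h_2^2 d_1 g$. Lemma \ref{lem:h1-h1^2B6} gives $h_1^3 B_6 = \tau h_2^2 d_1 g$, so both summands project to the same class, and working in characteristic $2$ their sum projects to $0$. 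Hence $h_1^3 \ol{B_6} + h_2 \ol{\tau h_2 d_1 g}$ lies in the image of the bottom-cell inclusion $\coker(\tau) \to E_2(C\tau)$, and the problem is reduced to identifying which class of $\coker(\tau)$ it is.

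To identify it, I would use the matric refinement of Proposition \ref{prop:hidden-Massey}. The row $\begin{pmatrix} h_1^3 & h_2 \end{pmatrix}$ and column $\begin{pmatrix} B_6 \\ \tau h_2 d_1 g \end{pmatrix}$ satisfy $\begin{pmatrix} h_1^3 & h_2 \end{pmatrix}\begin{pmatrix} B_6 \\ \tau h_2 d_1 g \end{pmatrix} = h_1^3 B_6 + \tau h_2^2 d_1 g = 0$ by the previous paragraph, while the column is annihilated by $\tau$ since $\tau B_6 = 0$ and $\tau h_2 d_1 g$ lies in $\ker(\tau)$. Thus the matric Massey product $\left\langle \begin{pmatrix} h_1^3 & h_2 \end{pmatrix}, \begin{pmatrix} B_6 \\ \tau h_2 d_1 g \end{pmatrix}, \tau \right\rangle$ is defined in $E_2(S^{0,0})$, and the matric version of the argument proving Proposition \ref{prop:hidden-Massey} — valid in the derived category of $A$-modules, as noted in Remark \ref{rem:Massey-3bracket-cofiber} — shows that our sum is the image under the bottom-cell map of an element of this bracket.

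It then remains to compute this matric Massey product and verify that it contains $h_1^2 Q_2$; this is precisely the entry recorded in Table \ref{tab:Ctau-matric}, which I would establish with explicit cobar representatives or via May's Convergence Theorem \ref{thm:3-converge}. The hard part will be controlling indeterminacy on two fronts: the indeterminacy of the matric Massey product itself, and the ambiguity in the choices of the lifts $\ol{B_6}$ and $\ol{\tau h_2 d_1 g}$. Both sources contribute only classes of the shape $h_1^3 x$, $h_2 x'$, or $\tau x''$ pushed in from $\coker(\tau)$; the $\tau x''$ terms vanish in $\coker(\tau)$, and I would verify by a degree count in the relevant tridegree that the remaining contributions are either $\tau$-divisible (hence again zero in $\coker(\tau)$) or lie in a trivial group, so that the value $h_1^2 Q_2$ is pinned down on the nose. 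A subsidiary check is that $h_1^2 Q_2$ is genuinely nonzero in $\coker(\tau)$, i.e.\ not $\tau$-divisible in $E_2(S^{0,0})$, which I expect to read off from the multiplicative structure around $Q_2$ (for instance from the relation $c_0 Q_2 = P D_4$ of Lemma \ref{lem:c0-i1}).
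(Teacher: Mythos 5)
Your first paragraph reproduces the paper's opening step exactly: the paper likewise quotes the hidden May extension $h_1 \cdot h_1^2 B_6 = \tau h_2^2 d_1 g$ of Lemma \ref{lem:h1-h1^2B6} (Table \ref{tab:May-h1}) to conclude that $h_1^3 \cdot \ol{B_6} + h_2 \cdot \ol{\tau h_2 d_1 g}$ lies in the image of $E_2(S^{0,0}) \map E_2(C\tau)$. After that the arguments diverge, and yours has a genuine gap precisely where the real work sits. First, a bookkeeping error: the matric bracket $\left\langle \left[\begin{smallmatrix} h_1^3 & h_2 \end{smallmatrix}\right], \left[\begin{smallmatrix} B_6 \\ \tau h_2 d_1 g \end{smallmatrix}\right], \tau \right\rangle$ is \emph{not} an entry of Table \ref{tab:Ctau-matric}; the value $h_1^2 Q_2$ appears only in Table \ref{tab:Ctau-misc-extn}, whose cited justification is Lemma \ref{lem:B6.h1^3} itself, so that citation would be circular.

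Second, and more fundamentally, the fallback method you propose for this computation --- May's Convergence Theorem --- cannot succeed here, in matric form or otherwise. The hypothesis of Theorem \ref{thm:3-converge} requires the product of the outer entries to be a boundary on some May page: one needs $a_{01}$ with $d_r(a_{01}) = h_1^3 \cdot B_6 + h_2 \cdot \tau h_2 d_1 g$ at the level of the May $E_r$-page. But the two summands lie in different May filtrations, so any such $a_{01}$ would have to hit them componentwise, and the component $h_2 \cdot \tau h_2 d_1 g = \tau h_2^2 d_1 g$ is a \emph{nonzero permanent cycle} on the May $E_\infty$-page --- it is exactly the class detecting the common value of both products in $\Ext$ --- hence it is not a boundary on any page. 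The vanishing of $h_1^3 B_6 + \tau h_2^2 d_1 g$ in $\Ext$ is itself a hidden phenomenon (that is the content of Lemma \ref{lem:h1-h1^2B6}), invisible to the page-by-page machinery; what would remain for you is a raw cobar computation, i.e., essentially the entire content of the lemma. The paper's proof contains the one idea your plan is missing: multiply by one more $h_1$. Since $h_1 h_2 = 0$, this annihilates the troublesome second term, while $h_1^4 B_6$ \emph{is} an honest May boundary, via $d_2(h_1^2 b_{21}^2 b_{30} b_{31} + h_1^2 b_{21}^3 b_{40}) = h_1^4 B_6$; together with $d_2(b_{30} b_{40} h_1(1)) = \tau B_6$, May's Convergence Theorem then computes the ordinary bracket $\langle h_1^4, B_6, \tau \rangle = h_1^3 Q_2$, so $h_1^4 \cdot \ol{B_6} = h_1^3 Q_2$ by Proposition \ref{prop:hidden-Massey}. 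Combined with the first step (and $h_1 h_2 = 0$), multiplication by $h_1$ then pins the sum down to $h_1^2 Q_2$.
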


\begin{proof}
Table \ref{tab:May-h1} gives
the hidden extension $h_1 \cdot h_1^2 B_6 = \tau h_2^2 d_1 g$
in $E_2(S^{0,0})$.
This means that $h_1^3 \cdot \ol{B_6} + h_2 \cdot \ol{\tau h_2 d_1 g}$
belongs to the image of
$E_2(S^{0,0}) \map E_2(C\tau)$.

Next, compute that $h_1^3 Q_2 = \langle h_1^4, B_6, \tau \rangle$
using May's Convergence Theorem \ref{thm:3-converge} with the May differentials
$d_2 (b_{30} b_{40} h_1(1) ) = \tau B_6$ and 
$d_2 ( h_1^2 b_{21}^2 b_{30} b_{31} + h_1^2 b_{21}^3 b_{40}) = h_1^4 B_6$.
\index{Convergence Theorem!May}
Therefore, $h_1^4 \cdot \ol{B_6} = h_1^3 Q_2$ by 
Proposition \ref{prop:hidden-Massey}.
The desired formula now follows.
\end{proof}

\begin{remark}
Through the 70-stem,
Lemma \ref{lem:B6.h1^3} is the only example of 
a hidden relation of the form
$h_0 \cdot \ol{x} + h_1 \cdot \ol{y}$,
$h_0 \cdot \ol{x} + h_2 \cdot \ol{y}$, or
$h_1 \cdot \ol{x} + h_2 \cdot \ol{y}$ in $E_2(C\tau)$.
\end{remark}

\begin{lemma}
\label{lem:hidden-B6-Ph1}
$P h_1 \cdot \ol{B_6} = h_1 q_1$.
\end{lemma}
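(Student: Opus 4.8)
The plan is to apply Proposition~\ref{prop:hidden-Massey}, which tells us that $P h_1 \cdot \ol{B_6}$ is an element of the Massey product $\langle P h_1, B_6, \tau \rangle$ computed in $E_2(S^{0,0})$ and pushed forward to $E_2(C\tau)$. So the heart of the matter is to evaluate this threefold Massey product. Before that bracket is even defined, I must verify that $\tau B_6 = 0$ and that $P h_1 \cdot B_6 = 0$ in $E_2(S^{0,0})$; the second of these is exactly the kind of relation that is recorded in the relation tables for $\Ext$, while the vanishing of $\tau B_6$ is visible from the long exact sequence set-up (it is precisely the condition that makes $\ol{B_6}$ exist).

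The natural tool for the bracket is May's Convergence Theorem~\ref{thm:3-converge}. First I would locate a May differential of the form $d_r(a_{01}) = P h_1 \cdot b_6$ and a May differential $d_r(a_{12}) = b_6 \cdot \tau$, where $b_6$ detects $B_6$, so that the bracket $\langle P h_1, b_6, \tau \rangle$ is defined on the $E_r$-page. The relevant input is that $\tau B_6$ is killed in the May spectral sequence, giving the element $\ol{B_6}$, and this same structure was already exploited in the proof of Lemma~\ref{lem:B6.h1^3}, where the differentials $d_2(b_{30} b_{40} h_1(1)) = \tau B_6$ and the computation $h_1^3 Q_2 = \langle h_1^4, B_6, \tau \rangle$ appear. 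I expect the same family of May differentials to be the source here, so that $\langle P h_1, B_6, \tau \rangle$ evaluates (via the appropriate $a_0 a_{12} + a_{01} a_2$ formula) to a specific element detected in the May $E_\infty$-page. The target degree is the $47$-stem with the appropriate filtration and weight, and I would check that $h_1 q_1$ is the unique nonzero candidate in that degree.

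An alternative, and possibly cleaner, route would be to leverage Lemma~\ref{lem:B6.h1^3} directly. That lemma gives $h_1^3 \cdot \ol{B_6} + h_2 \cdot \ol{\tau h_2 d_1 g} = h_1^2 Q_2$, and more usefully its proof establishes $h_1^4 \cdot \ol{B_6} = h_1^3 Q_2$. Since $P h_1 \cdot \ol{B_6}$ and the various $h_1^k \cdot \ol{B_6}$ products are linked by the module structure over $\Ext$, I would try to reduce the desired formula to an already-known relation, for instance by multiplying the candidate identity $P h_1 \cdot \ol{B_6} = h_1 q_1$ by a suitable power of $h_1$ and comparing with $h_1^4 \cdot \ol{B_6} = h_1^3 Q_2$, using a relation between $P h_1 \cdot q_1$ and $h_1^2 Q_2$ in $E_2(S^{0,0})$ (such a relation, e.g.\ $P h_1 \cdot q_1 = h_1^2 Q_2$ or similar, would come from the relation tables). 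This sidesteps a fresh May-theoretic crossing-differential analysis.

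The main obstacle I anticipate is the ambiguity/crossing-differential issue that plagued the neighboring lemmas: as in the proofs of Lemmas~\ref{lem:_c0e0.h0} and \ref{lem:h1-th0e0^3}, a later May differential may violate condition~(2) of May's Convergence Theorem~\ref{thm:3-converge}, so the direct computation of $\langle P h_1, B_6, \tau \rangle$ might not be legitimate. In that event the fallback via Lemma~\ref{lem:B6.h1^3} becomes essential: I would multiply a prospective identity by $h_1$ (or by $h_1^3$) to land in a degree where the product is forced, and then argue that the absence of any other element of the correct degree and filtration in $E_2(C\tau)$ pins down $P h_1 \cdot \ol{B_6} = h_1 q_1$ exactly. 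The secondary difficulty is bookkeeping: confirming that $h_1 q_1$ is genuinely the only nonzero possibility in its degree, which requires reading off the relevant part of the $E_2(C\tau)$ chart and ruling out elements of higher filtration.
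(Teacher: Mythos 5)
Your main route coincides with the paper's: combine Proposition \ref{prop:hidden-Massey} with May's Convergence Theorem \ref{thm:3-converge} to evaluate $\langle P h_1, B_6, \tau \rangle$, using $d_2(b_{30} b_{40} h_1(1)) = \tau B_6$ together with the differential you leave unnamed, $d_2(\Delta B h_1^3) = P h_1 \cdot B_6$. But the decisive step is missing. The bracket lies in degree $(65,11,34)$ (not the $47$-stem), and $h_1 q_1$ is \emph{not} the unique nonzero class there: $\tau^2 h_0 B_{23}$ has exactly the same degree, and it generates the indeterminacy of the bracket, which therefore equals $\{h_1 q_1,\ h_1 q_1 + \tau^2 h_0 B_{23}\}$ in $E_2(S^{0,0})$. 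What completes the proof is not a uniqueness count but the observation that $\tau^2 h_0 B_{23}$ is divisible by $\tau$ and hence maps to zero under $E_2(S^{0,0}) \map E_2(C\tau)$; the pushed-forward bracket collapses to the single element $h_1 q_1$, which Proposition \ref{prop:hidden-Massey} then identifies with $P h_1 \cdot \ol{B_6}$. Your substitute check --- that $h_1 q_1$ is the only nonzero possibility in its degree of $E_2(C\tau)$ --- is also false as stated: $\ol{P h_5 c_0 e_0}$ is an $E_2(S^{0,0})$-module generator of $E_2(C\tau)$ in the same degree $(65,11,34)$, and ``ruling out elements of higher filtration'' is vacuous here, since all classes in a fixed tridegree of an $E_2$-page have the same Adams filtration.

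The alternative route via Lemma \ref{lem:B6.h1^3} also fails as written. No $h_1$-multiple of $P h_1$ is a power of $h_1$ (stem minus filtration is $4$ versus $0$), so multiplying by powers of $h_1$ can never bring $P h_1 \cdot \ol{B_6}$ into the same degree as $h_1^4 \cdot \ol{B_6}$; and the relation you posit between $P h_1 \cdot q_1$ and $h_1^2 Q_2$ is dimensionally impossible --- the actual relation is $P h_1 \cdot Q_2 = h_1^2 q_1$. Using that relation one can extract from Lemma \ref{lem:B6.h1^3} the identity $h_1^3 \bigl( P h_1 \cdot \ol{B_6} + h_1 q_1 \bigr) = 0$ (since $P h_1 \cdot h_2 = 0$), but this pins down the product only modulo classes in degree $(65,11,34)$ annihilated by $h_1^3$, and excluding those (for instance $\ol{P h_5 c_0 e_0}$) would require a further argument that the proposal does not supply.
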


\index{cofiber of tau@cofiber of $\tau$!hidden extension!Ph1@$Ph_1$}

\begin{proof}
Compute that $h_1 q_1$ is contained in the Massey product
$\langle P h_1, B_6, \tau \rangle$ in $E_2(S^{0,0})$,
using May's Convergence Theorem \ref{thm:3-converge} with the May differentials
$d_2 ( b_{30} b_{40} h_1(1) ) = \tau B_6$ and
$d_2 ( \Delta B h_1^3 ) = P h_1 \cdot B_6$.
\index{Convergence Theorem!May}
The bracket
has indeterminacy generated by $\tau^2 h_0 B_{23}$, so it equals
$\{ h_1 q_1, h_1 q_1 + \tau^2 h_0 B_{23} \}$.

Push forward this bracket into $E_2(C\tau)$, where it collapses
to the single element $h_1 q_1$ since
$\tau^2 h_0 B_{23}$ maps to zero in $E_2(C\tau)$.
Proposition \ref{prop:hidden-Massey} now gives the desired result.
\end{proof}

\index{cofiber of tau@cofiber of $\tau$!hidden extension!compound}

\begin{lemma}
\label{lem:_h1^2e0.c0}
\mbox{}
\begin{enumerate}
\item
$h_1^2 \cdot \ol{c_0 d_0} + d_0 \cdot \ol{h_1^2 c_0} = P e_0$.
\item
$c_0 \cdot \ol{h_1^2 e_0} + e_0 \cdot \ol{h_1^2 c_0} = d_0^2$.
\item
$h_1^2 \cdot \ol{h_1 d_0 u} + d_0 \cdot \ol{h_1^3 u} = P v'$.
\end{enumerate}
\end{lemma}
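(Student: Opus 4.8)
The plan is to treat all three identities uniformly as \emph{compound} hidden extensions and to exploit the defining property of the top-cell projection $q_* \colon E_2(C\tau) \map E_2(S^{0,0})$, which is a map of $E_2(S^{0,0})$-modules. For part (1), $q_*(h_1^2 \cdot \ol{c_0 d_0}) = h_1^2 c_0 d_0$ and $q_*(d_0 \cdot \ol{h_1^2 c_0}) = d_0 h_1^2 c_0$ are the \emph{same} element of $E_2(S^{0,0})$, so their sum is $2 h_1^2 c_0 d_0 = 0$; hence the compound expression lies in $\ker q_*$, which is the image of the bottom-cell map $j_* \colon \coker(\tau) \map E_2(C\tau)$. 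The same cancellation occurs in parts (2) and (3), where both summands project to $h_1^2 c_0 e_0$ and to $h_1^3 d_0 u$ respectively. Since $j_*$ is injective on $\coker(\tau)$, each compound sum equals $j_*(w)$ for a unique $w$ in $E_2(S^{0,0})$ (well-defined modulo $\tau$), and the whole problem reduces to identifying $w$ as $P e_0$, $d_0^2$, and $P v'$.

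To identify $w$ I would use the matric generalization of Proposition \ref{prop:hidden-Massey}, which is legitimate by Remark \ref{rem:Massey-3bracket-cofiber} (the argument works in the derived category of $A$-modules and for matric compositions). Concretely, each sum is contained in $j_*$ of a matric Massey product $\langle (z_1,\,z_2),\ \binom{y_1}{y_2},\ \tau \rangle$ computed in $E_2(S^{0,0})$: for (1) take $(z_1,z_2)=(h_1^2, d_0)$ and column $\binom{c_0 d_0}{h_1^2 c_0}$; for (2) take $(c_0, e_0)$ and $\binom{h_1^2 e_0}{h_1^2 c_0}$; for (3) take $(h_1^2, d_0)$ and $\binom{h_1 d_0 u}{h_1^3 u}$. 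In every case the bracket is defined because the row-times-column product vanishes in characteristic $2$ (equal entries) and $\tau$ annihilates the column (this is exactly the hypothesis that makes $\ol{c_0 d_0}$, $\ol{h_1^2 c_0}$, etc.\ exist). I would then evaluate these matric brackets using the matric form of May's Convergence Theorem \ref{thm:3-converge}, pinning down the relevant May differentials, and read off $P e_0$, $d_0^2$, and $P v'$. An equally viable fallback, modeled on Lemma \ref{lem:_c0e0.h0}, is to multiply the compound expression by a well-chosen class to push it into a degree where the value is forced, and then divide back.

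The hard part will be the evaluation of the brackets rather than the structural reduction. As in Lemmas \ref{lem:_c0e0.h0} and \ref{lem:h1-th0e0^3}, a direct application of May's Convergence Theorem is likely to be blocked by a crossing May differential with higher source and lower target filtration, so the verification of hypothesis (2) of Theorem \ref{thm:3-converge} may fail and require the multiply-into-a-clean-region detour. A secondary obstacle is the ambiguity of the target: the degree of $w$ may contain more than one class of $\Ext$ differing only in May filtration (the phenomenon seen in Lemmas \ref{lem:h5d0e0.h1^2} and \ref{lem:_h1^2Q2.h1^5}), so one must confirm that the answer is precisely $P e_0$ (respectively $d_0^2$, $P v'$) and not that class plus a lower-filtration correction. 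I expect at least one of the three to be resolvable only by the kind of non-algebraic comparison used in Lemma \ref{lem:_h1^2Q2.h1^5}, namely detecting the ambiguity through an Adams differential that distinguishes the two candidate values.
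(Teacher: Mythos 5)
Your proposal is correct and is essentially the paper's own proof: the paper identifies each compound sum, via the matric version of Proposition \ref{prop:hidden-Massey}, with the pushforward of exactly the three matric Massey products you write down, and evaluates them with the matric form of May's Convergence Theorem \ref{thm:3-converge} (these are the entries of Table \ref{tab:Ctau-matric} in stems $25$, $28$, and $57$). The obstacles you anticipate do not materialize for these particular row/column arrangements: the May differentials $d_2(b_{20}h_0(1)) = \tau h_1^2 c_0$, $d_2(b_{20}b_{21}b_{30}) = \tau c_0 d_0$, $d_2(h_1 b_{30} h_0(1)) = \tau h_1^2 e_0$, $d_2(h_1 b_{20} b_{30}^3 h_0(1)^2) = \tau h_1 d_0 u$, and $d_2(h_1 b_{20} b_{30}^2 h_0(1)^2) = \tau h_1^3 u$ satisfy the convergence hypotheses directly, so neither the crossing-differential detour nor an Adams-differential disambiguation is needed (in contrast with the closely related Lemma \ref{lem:_c0e0.d0}, where putting $c_0 e_0$ in the column does run into $d_4(\Delta h_1^2) = P h_1^2 h_4$).
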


\begin{proof}
These formulas have essentially the same proof.  We prove only the first
formula.

\index{Massey product!matric}
Table \ref{tab:Ctau-matric} shows that there is a matric bracket
\[
P e_0 = 
\left\langle
\left[
\begin{array}{cc}
h_1^2 & d_0  \\
\end{array}
\right], 
\left[
\begin{array}{c}
c_0 d_0 \\
h_1^2 c_0 
\end{array}
\right],
\tau
\right\rangle.
\]
A matric version of Proposition \ref{prop:hidden-Massey}
gives the desired hidden extension.
\end{proof}

Before considering the next hidden extension, we need a bracket
computation.

\begin{lemma}
\label{lem:h1^4-t-brackets}
$h_1^2 d_0^2 = \langle c_0 e_0, \tau, h_1^4 \rangle$.
\end{lemma}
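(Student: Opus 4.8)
The plan is to compute this bracket with May's Convergence Theorem \ref{thm:3-converge}, after first checking that it is defined and essentially rigid. Definedness requires $\tau \cdot c_0 e_0 = 0$ and $\tau \cdot h_1^4 = 0$. The first holds because $c_0 e_0$ is $\tau$-torsion; this is precisely the fact recorded in the computation of $E_2(C\tau)$ (Table \ref{tab:Ctau-E2}) that makes the lift $\ol{c_0 e_0}$ of Lemma \ref{lem:_c0e0.h0} exist, so I would cite it. The second follows from the motivic relation $\tau h_1^3 = h_0^2 h_2$ (the two summands of the May differential $d_2(b_{20}) = \tau h_1^3 + h_0^2 h_2$ become equal in $\Ext$) together with $h_1 h_2 = 0$: indeed $\tau h_1^4 = h_1(\tau h_1^3) = h_0^2(h_1 h_2) = 0$. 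Both sides of the asserted equality lie in degree $(30,10,18)$, and I would confirm that the indeterminacy $c_0 e_0 \cdot \Ext^{(5,3,3)} + \Ext^{(26,6,14)} \cdot h_1^4$ vanishes by inspecting $\Ext$ in those two degrees, so that the bracket is a single class.

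For the computation itself I would work on the May $E_r$-page on which $c_0 e_0$ is first a nonzero permanent cycle. The crucial point is that the two products vanish for different reasons on different pages, and this must be reconciled. The relation $\tau h_1^4 = 0$ is a $d_2$-phenomenon: multiplying $d_2(b_{20}) = \tau h_1^3 + h_0^2 h_2$ by the permanent cycle $h_1$ and using $h_1 h_2 = 0$ gives $d_2(h_1 b_{20}) = \tau h_1^4$. Consequently, on every later page ($r \geq 3$) the element $\tau h_1^4$ is already zero, so in the differential graded algebra $(E_r, d_r)$ one may take $a_{12} = 0$. It then remains to produce $a_{01}$ with $d_r(a_{01}) = \tau \cdot c_0 e_0$, i.e. to name the May differential witnessing the $\tau$-torsion of $c_0 e_0$, after which Theorem \ref{thm:3-converge} detects the bracket by $a_{01}\cdot h_1^4$. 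The final step is to identify $a_{01}\cdot h_1^4$ as the May representative of $h_1^2 d_0^2$, verifying that no class of lower May filtration competes in that degree.

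The main obstacle is exactly this last identification together with the cross-page bookkeeping: locating $a_{01}$, checking that $a_{01} h_1^4$ detects $h_1^2 d_0^2$, and verifying the crossing-differential hypothesis (2) of Theorem \ref{thm:3-converge}. As an independent corroboration that sidesteps pinning down $a_{01}$, I would exploit the module $E_2(C\tau)$ directly: since $c_0 e_0$ and $h_1^4$ are both $\tau$-torsion, the bracket is governed by the $C\tau$ structure, where the companion relation $c_0 \cdot \ol{h_1^2 e_0} + e_0 \cdot \ol{h_1^2 c_0} = d_0^2$ of Lemma \ref{lem:_h1^2e0.c0} already expresses $d_0^2$ through $c_0$ and $e_0$. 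Multiplying by $h_1^2$ and transporting back along the matric form of Proposition \ref{prop:hidden-Massey} (\emph{cf.} Table \ref{tab:Ctau-matric}) should reproduce $h_1^2 d_0^2$, giving a consistency check for the answer.
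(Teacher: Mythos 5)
Your main route cannot be completed, and the failure is not the ``bookkeeping'' you defer but a fatal obstruction that the paper itself points out. The nullhomotopy you would need is $a_{01} = b_{30} h_0(1)^2$, with $d_2(b_{30} h_0(1)^2) = \tau c_0 e_0$, so May's Convergence Theorem \ref{thm:3-converge} would have to be applied with $r=2$ (your idea of passing to a page $r \geq 3$ and taking $a_{12}=0$ misreads the theorem: both nullhomotopies must live on the one page on which the theorem is applied, and here both $\tau \cdot c_0 e_0$ and $\tau \cdot h_1^4$ are killed by $d_2$ differentials). But $a_{01}$ lies in degree $(m,s,f,w) = (14,26,6,14)$, and the later May differential $d_4(\Delta h_1^2) = P h_1^2 h_4$ has its source in exactly this degree, with target of May filtration $11 < m - r = 12$; this is precisely a crossing differential prohibited by condition (2). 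The paper's proof of this lemma begins by recording exactly this failure (``the bracket cannot be computed directly with May's Convergence Theorem \ref{thm:3-converge} because of the later May differential $d_4(\Delta h_1^2) = P h_1^2 h_4$'') and is forced onto a genuinely different route: multiply by $h_1$, write $h_1 c_0 e_0 = \langle d_0, h_3, h_1^4 \rangle$ (Table \ref{tab:Massey}), juggle to get $\langle h_1 c_0 e_0, \tau, h_1^4 \rangle = d_0 \langle h_3, h_1^4, \tau, h_1^4 \rangle$, evaluate the fourfold bracket as $h_1^3 d_0$ by the fourfold Convergence Theorem \ref{thm:4-converge} (using $d_2(h_1 b_{20}) = \tau h_1^4$, $d_2(h_1^2 b_{21}) = h_1^4 h_3$, $d_2(h_1 b_{30}) = \tau h_1^2 b_{21} + h_1 h_3 b_{20}$), and then strip off the factor of $h_1$. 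None of these ideas appear in your proposal.

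Your fallback ``corroboration'' does not fill the gap either. By your own framing it is only a consistency check, and as sketched it does not apply: Proposition \ref{prop:hidden-Massey} converts products $z \cdot \ol{y}$ in $E_2(C\tau)$ into brackets of the form $\langle z, y, \tau \rangle$, with $\tau$ in an outer slot and with the hypothesis $z y = 0$; the bracket at hand has $\tau$ in the middle slot, and the outer-slot candidate $\langle h_1^4, c_0 e_0, \tau \rangle$ is not even defined, since $h_1^4 c_0 e_0 \neq 0$ (products of $h_1$, $c_0$, $e_0$ are nonzero by $h_1$-locality). Connecting a middle-$\tau$ bracket to the $C\tau$ module structure requires exactly the shuffle $\langle A, B, \tau \rangle h_1^4 = A \langle B, \tau, h_1^4 \rangle$ carried out in Lemma \ref{lem:_c0e0.d0} --- but in the paper that lemma is a consumer of the present one (it quotes this bracket from Table \ref{tab:Massey}), so invoking it here would be circular; the independent Lemma \ref{lem:_h1^2e0.c0} alone does not produce the bracket. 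In short, you have the definedness, degree, and indeterminacy checks right, but the core computation you propose is the one the paper shows is unavailable, and no valid substitute is supplied.
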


\begin{proof}
The bracket cannot be computed directly with 
May's Convergence Theorem \ref{thm:3-converge}
because of the the later May differential $d_4(\Delta h_1^2) = P h_1^2 h_4$. 
Therefore, we must follow a more complicated route.
\index{Convergence Theorem!May}
\index{May spectral sequence!differential!crossing}

Begin with the computation $h_1 c_0 e_0 = \langle d_0, h_3, h_1^4 \rangle$
from Table \ref{tab:Massey}.
Therefore,
\[
\langle h_1 c_0 e_0, \tau, h_1^4 \rangle =
\langle \langle d_0, h_3, h_1^4 \rangle, \tau, h_1^4 \rangle,
\]
which equals 
$d_0 \langle h_3, h_1^4, \tau, h_1^4 \rangle$ by a standard formal
property of Massey products since there are no indeterminacies.

Next, compute that
$h_1^3 d_0 = \langle h_3, h_1^4, \tau, h_1^4 \rangle$ 
using May's Convergence Theorem \ref{thm:4-converge}
with the May differentials
$d_2( h_1 b_{20}) = \tau h_1^4$, 
$d_2 (h_1^2 b_{21} ) = h_1^4 h_3$, and
$d_2 ( h_1 b_{30} ) = \tau h_1^2 b_{21} + h_1 h_3 b_{20}$.
\index{Convergence Theorem!May}
Note that both subbrackets
$\langle h_1^4, \tau, h_1^4 \rangle$ and
$\langle h_3, h_1^4, \tau \rangle$ are strictly zero.

We have now shown that
$\langle h_1 c_0 e_0, \tau, h_1^4 \rangle$ equals $h_1^3 d_0^2$.
The desired formula now follows immediately.
\end{proof}

\begin{lemma}
\label{lem:_c0e0.d0}
\mbox{}
\begin{enumerate}
\item
$h_1^2 \cdot \ol{c_0 e_0} + e_0 \cdot \ol{h_1^2 c_0} = d_0^2$.
\item
$d_0 \cdot \ol{c_0 e_0} + e_0 \cdot \ol{c_0 d_0} = h_1 u$.
\end{enumerate}
\end{lemma}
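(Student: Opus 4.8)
The plan is to treat both formulas as \emph{compound} hidden extensions in $E_2(C\tau)$ and to establish them with matric Massey products, in the spirit of Lemma \ref{lem:_h1^2e0.c0}. The scalar form of Proposition \ref{prop:hidden-Massey} does not apply directly, because the individual products $h_1^2 c_0 e_0$ and $c_0 d_0 e_0$ are nonzero in $E_2(S^{0,0})$; thus none of the scalar brackets $\langle h_1^2, c_0 e_0, \tau\rangle$, $\langle e_0, h_1^2 c_0, \tau\rangle$, etc., are defined. The brackets become defined only after the two summands are paired, where the relevant row--column products $h_1^2\cdot c_0 e_0 + e_0 \cdot h_1^2 c_0$ and $d_0 \cdot c_0 e_0 + e_0 \cdot c_0 d_0$ vanish over $\F_2$.

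For part (1) I would use the matric bracket recorded in Table \ref{tab:Ctau-matric},
\[
\left\langle
\left[\begin{array}{cc} h_1^2 & e_0 \end{array}\right],
\left[\begin{array}{c} c_0 e_0 \\ h_1^2 c_0 \end{array}\right],
\tau
\right\rangle
\]
which is defined because its row--column product is $h_1^2 c_0 e_0 + e_0 h_1^2 c_0 = 0$ and because $\tau\cdot c_0 e_0 = \tau\cdot h_1^2 c_0 = 0$ (these vanishings are exactly what make $\ol{c_0 e_0}$ and $\ol{h_1^2 c_0}$ exist). The matric refinement of Proposition \ref{prop:hidden-Massey} then places $h_1^2\cdot\ol{c_0 e_0} + e_0\cdot\ol{h_1^2 c_0}$ in the image of this bracket under $E_2(S^{0,0}) \map E_2(C\tau)$, so the whole problem reduces to evaluating the bracket as $d_0^2$.

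Evaluating the bracket is the main obstacle, and it is precisely why Lemma \ref{lem:h1^4-t-brackets} is proved first. One cannot apply May's Convergence Theorem \ref{thm:3-converge} to the relevant May differential, because of the crossing differential $d_4(\Delta h_1^2) = P h_1^2 h_4$, exactly as in the proof of Lemma \ref{lem:h1^4-t-brackets}. Instead I would deduce the value of the matric bracket from the scalar computation $h_1^2 d_0^2 = \langle c_0 e_0, \tau, h_1^4 \rangle$ of Lemma \ref{lem:h1^4-t-brackets}, using the factorization $h_1^4 = h_1^2\cdot h_1^2$ together with a shuffle that matches the two columns of the matric bracket against the two $h_1^2$ factors; here one uses $\tau h_1^4 = 0$, which holds since $\tau h_1^3 = h_0^2 h_2$ and $h_0 h_1 = 0$. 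Any indeterminacy entering this identification is $\tau$-divisible, hence collapses after pushing forward to $E_2(C\tau)$, so the single value $d_0^2$ survives.

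For part (2) I would run the same argument with the matric bracket $\langle [\,d_0\ e_0\,], [\,c_0 e_0,\ c_0 d_0\,]^{T}, \tau\rangle$ from Table \ref{tab:Ctau-matric}, whose row--column product $d_0 c_0 e_0 + e_0 c_0 d_0$ again vanishes over $\F_2$ and whose value is $h_1 u$; the matric form of Proposition \ref{prop:hidden-Massey} then yields $d_0\cdot\ol{c_0 e_0} + e_0\cdot\ol{c_0 d_0} = h_1 u$. I expect this evaluation to be less delicate than in part (1), since the candidate relation can be cross-checked against the already-established extensions of Lemma \ref{lem:_h1^2e0.c0} and of part (1) by multiplying through by $d_0$ and by $h_1^2$ and comparing the resulting products in $\coker(\tau)$.
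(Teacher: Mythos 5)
Your part (1) is essentially the paper's argument: the paper also sets up the matric bracket via the matric form of Proposition \ref{prop:hidden-Massey}, observes that May's Convergence Theorem \ref{thm:3-converge} fails because of the crossing differential $d_4(\Delta h_1^2) = P h_1^2 h_4$, and then evaluates the bracket by multiplying by $h_1^4$ and shuffling. One correction to your description: no factorization $h_1^4 = h_1^2 \cdot h_1^2$ is involved. The shuffle simply moves $h_1^4$ into the bracket, producing $h_1^2 \langle c_0 e_0, \tau, h_1^4 \rangle + e_0 \langle h_1^2 c_0, \tau, h_1^4 \rangle$, and you must evaluate \emph{both} terms: the first is $h_1^4 d_0^2$ by Lemma \ref{lem:h1^4-t-brackets}, while the second vanishes because $\langle h_1^2 c_0, \tau, h_1^4 \rangle = 0$ (Table \ref{tab:Massey}). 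Your write-up cites only the first bracket and never accounts for the $e_0$-term, which is a small but real omission.

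Part (2) has a genuine gap. The paper runs the identical $h_1^4$-shuffle: the bracket times $h_1^4$ equals $d_0 \langle c_0 e_0, \tau, h_1^4 \rangle + e_0 \langle c_0 d_0, \tau, h_1^4 \rangle = h_1^2 d_0^3 + h_1^2 e_0 \cdot P e_0$, using the \emph{nonzero} value $\langle c_0 d_0, \tau, h_1^4 \rangle = P h_1^2 e_0$ from Table \ref{tab:Massey}; the crux is then the relation $e_0 \cdot P e_0 = d_0^3 + h_1^3 u$, which holds already on the May $E_\infty$-page and is the only reason the answer is $h_1 u$ rather than zero. Your proposal never confronts either ingredient. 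Your alternative cross-check (multiply the claimed relation by $h_1^2$ and use part (1) together with Lemma \ref{lem:_h1^2e0.c0}) can in fact be promoted to a proof: it gives $h_1^2 \bigl( d_0 \cdot \ol{c_0 e_0} + e_0 \cdot \ol{c_0 d_0} \bigr) = d_0^3 + e_0 \cdot P e_0$ after the two terms $d_0 e_0 \cdot \ol{h_1^2 c_0}$ cancel, but you are then stuck at exactly the same point: without $e_0 \cdot P e_0 = d_0^3 + h_1^3 u$ you cannot identify the right-hand side with $h_1^3 u$, and you would still need $h_1^2$-multiplication to distinguish $h_1 u$ from other elements in its degree. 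For the same reason, your expectation that (2) is ``less delicate'' than (1) is backwards: (1) needs only a vanishing bracket, while (2) needs a nonzero bracket plus a relation that is hidden in the May spectral sequence.
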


\index{cofiber of tau@cofiber of $\tau$!hidden extension!compound}

\begin{proof}
For the first formula, by a matric version of Proposition \ref{prop:hidden-Massey},
we wish to compute that
\[
d_0^2 = 
\left\langle
\left[
\begin{array}{cc}
h_1^2 & e_0 
\end{array}
\right],
\left[
\begin{array}{c}
c_0 e_0 \\
h_1^2 c_0
\end{array}
\right],
\tau
\right\rangle.
\]
\index{Massey product!matric}
One might attempt to compute this with a matric version of 
May's Convergence Theorem \ref{thm:3-converge}.
However, the hypotheses of May's Convergence Theorem \ref{thm:3-converge} do not apply 
because of the presence of the later May differential 
$d_4(\Delta h_1^2) = P h_1^2 h_4$.  
\index{Convergence Theorem!May}
\index{May spectral sequence!differential!crossing}

Instead, we will show that
\[
\left\langle
\left[
\begin{array}{cc}
h_1^2 & e_0 
\end{array}
\right],
\left[
\begin{array}{c}
c_0 e_0 \\
h_1^2 c_0
\end{array}
\right],
\tau
\right\rangle
h_1^4
\]
equals $h_1^4 d_0^2$,
from which the desired bracket follows immediately.
Shuffle to obtain
\[
h_1^2 \langle c_0 e_0, \tau, h_1^4 \rangle +
e_0 \langle h_1^2 c_0, \tau, h_1^4  \rangle.
\]
By Table \ref{tab:Massey}, 
the expression equals $h_1^4 d_0^2$ as desired.
This completes the proof of the first formula.

The proof of the second formula is similar.
We wish to 
compute that
\[
h_1 u = 
\left\langle
\left[
\begin{array}{cc}
d_0 & e_0 
\end{array}
\right],
\left[
\begin{array}{c}
c_0 e_0 \\
c_0 d_0 
\end{array}
\right],
\tau
\right\rangle.
\]
Again, the hypotheses of May's Convergence Theorem \ref{thm:3-converge} do not apply.
\index{Convergence Theorem!May}
\index{May spectral sequence!differential!crossing}

Instead, we will show that
\[
\left\langle
\left[
\begin{array}{cc}
d_0 & e_0 
\end{array}
\right],
\left[
\begin{array}{c}
c_0 e_0 \\
c_0 d_0 
\end{array}
\right],
\tau
\right\rangle h_1^4
\]
equals $h_1^5 u$,
from which the desired bracket follows immediately.
Shuffle to obtain
\[
d_0 \langle c_0 e_0, \tau, h_1^4 \rangle + 
e_0 \langle c_0 d_0, \tau, h_1^4 \rangle.
\]
By Table \ref{tab:Massey},
this expression equals $h_1^2 d_0^3 + h_1^2 e_0 \cdot P e_0$.
Note that $e_0 \cdot P e_0$ equals $d_0^3 + h_1^3 u$; this is already
true in the May $E_\infty$-page.  
Therefore,
$h_1^2 d_0^3 + h_1^2 e_0 \cdot P e_0$ equals $h_1^5 u$, as desired.
\end{proof}

\begin{lemma}
\label{lem:_h1^2e0.h1^2e0^2}
$h_1^2 e_0^2 \cdot \ol{h_1^2 e_0} + d_0 e_0 g \cdot \ol{h_1^4} +
h_1^6 \cdot \ol{h_1^3 B_1} = c_0 d_0 e_0^2$.
\end{lemma}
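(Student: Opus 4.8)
The plan is to recognize this compound relation as a \emph{matric} hidden extension and to apply the matric version of Proposition~\ref{prop:hidden-Massey} that was already used in Lemmas~\ref{lem:_h1^2e0.c0} and \ref{lem:_c0e0.d0}. Writing $R = \begin{bmatrix} h_1^2 e_0^2 & d_0 e_0 g & h_1^6 \end{bmatrix}$ and $C = \begin{bmatrix} h_1^2 e_0 \\ h_1^4 \\ h_1^3 B_1 \end{bmatrix}$, the left-hand side is exactly $R$ applied to the column of top-cell preimages of the entries of $C$, so it suffices to show that the matric Massey product $\langle R, C, \tau \rangle$, computed in $E_2(S^{0,0})$ and pushed forward to $E_2(C\tau)$, equals $c_0 d_0 e_0^2$. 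For this bracket to be defined I first check that $R \cdot C = 0$ and that each entry of $C$ is annihilated by $\tau$; the latter is precisely what allows the classes $\ol{h_1^2 e_0}$, $\ol{h_1^4}$, and $\ol{h_1^3 B_1}$ to exist. The vanishing $R \cdot C = 0$ is the essential input: the computation of $d_2(e_0^2 g)$ rests on the relation $h_1^2 d_0 e_0 g + h_1^2 e_0^3 = h_1^7 B_1$ from \cite{GI14}, and multiplying by $h_1^2$ gives $h_1^4 d_0 e_0 g + h_1^4 e_0^3 = h_1^9 B_1$; hence $R \cdot C = h_1^4 e_0^3 + h_1^4 d_0 e_0 g + h_1^9 B_1 = 0$ modulo $2$.

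Next I would evaluate $\langle R, C, \tau \rangle$. As in Lemmas~\ref{lem:h5d0e0.h1^2} and \ref{lem:_c0e0.d0}, a direct application of May's Convergence Theorem~\ref{thm:3-converge} is obstructed by a crossing May differential, so instead I multiply by $h_1^4$ and shuffle the matric bracket into a sum of ordinary threefold brackets:
\[
\langle R, C, \tau \rangle \, h_1^4 =
h_1^2 e_0^2 \langle h_1^2 e_0, \tau, h_1^4 \rangle
+ d_0 e_0 g \langle h_1^4, \tau, h_1^4 \rangle
+ h_1^6 \langle h_1^3 B_1, \tau, h_1^4 \rangle .
\]
Each bracket $\langle -, \tau, h_1^4 \rangle$ is of the type computed in Lemma~\ref{lem:h1^4-t-brackets} and recorded in Table~\ref{tab:Massey}; I would read off their values and verify that the resulting sum collapses to $h_1^4 c_0 d_0 e_0^2$.

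Finally, since $c_0 d_0 e_0^2$ and all the classes involved are $h_1$-local (every product of the symbols $h_1, c_0, P, d_0, e_0, g$ that exists is non-zero, and $B_1$ is $h_1$-local), multiplication by $h_1^4$ is injective on the relevant classes. Thus $h_1^4 \langle R, C, \tau \rangle = h_1^4 c_0 d_0 e_0^2$ forces $\langle R, C, \tau \rangle = c_0 d_0 e_0^2$, with the indeterminacy controlled by the same $h_1^4$-injectivity. Pushing forward along $E_2(S^{0,0}) \to E_2(C\tau)$ and invoking the matric form of Proposition~\ref{prop:hidden-Massey} then yields the stated hidden extension.

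I expect the main obstacle to be the bookkeeping around the shuffle and the indeterminacy: confirming that the matric shuffle identity genuinely applies (the subbrackets being defined and sufficiently rigid), that the three auxiliary brackets $\langle h_1^2 e_0, \tau, h_1^4 \rangle$, $\langle h_1^4, \tau, h_1^4 \rangle$, and $\langle h_1^3 B_1, \tau, h_1^4 \rangle$ really sum to $h_1^4 c_0 d_0 e_0^2$ rather than to that class plus an $h_1^4$-divisible error, and that the $h_1^4$-injectivity descent is legitimate in $E_2(C\tau)$ and not merely in $E_2(S^{0,0})$.
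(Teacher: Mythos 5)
Your proposal is correct and takes essentially the same approach as the paper's proof: the paper likewise encodes the relation as the matric Massey product with row $(h_1^2 e_0^2,\ d_0 e_0 g,\ h_1^6)$ and column $(h_1^2 e_0,\ h_1^4,\ h_1^3 B_1)$ against $\tau$ (defined because of the hidden relation $e_0^3 + d_0 \cdot e_0 g = h_1^5 B_1$ from \cite{GI14}), multiplies by $h_1^4$, shuffles into the same three ordinary brackets, and evaluates the first two by Table \ref{tab:Massey}; note your row entry $h_1^6$ is the degree-correct one, where the paper's displayed matrix reads $h_1^4$. The only step you deferred, the bracket $\langle h_1^3 B_1, \tau, h_1^4 \rangle$, which does not appear in Table \ref{tab:Massey}, is disposed of in the paper by observing that its possible non-zero values are all multiples of $h_0$, so that term vanishes after multiplication by $h_1$ in any case.
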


\index{cofiber of tau@cofiber of $\tau$!hidden extension!compound}

\begin{proof}
The relation
$e_0^3 + d_0 \cdot e_0 g = h_1^5 B_1$ 
is hidden in the May spectral sequence \cite{GI14}.

By Proposition \ref{prop:hidden-Massey}, we wish to compute that
\[
c_0 d_0 e_0^2 = 
\left\langle
\left[
\begin{array}{ccc}
h_1^2 e_0^2 & 
d_0 e_0 g & 
h_1^4
\end{array}
\right],
\left[
\begin{array}{c}
h_1^2 e_0 \\
h_1^4 \\
h_1^3 B_1 
\end{array}
\right],
\tau
\right\rangle.
\]
\index{Massey product!matric}
This will follow if we can show that $h_1^4 c_0 d_0 e_0^2$ equals
\[
\left\langle
\left[
\begin{array}{ccc}
h_1^2 e_0^2 & 
d_0 e_0 g & 
h_1^4
\end{array}
\right],
\left[
\begin{array}{c}
h_1^2 e_0 \\
h_1^4 \\
h_1^3 B_1 
\end{array}
\right],
\tau
\right\rangle
h_1^4.
\]
This expression equals
\[
h_1^2 e_0^2 \langle h_1^2 e_0, \tau, h_1^4 \rangle +
d_0 e_0 g \langle h_1^4, \tau, h_1^4 \rangle +
h_1^4 \langle h_1^3 B_1, \tau, h_1^4 \rangle.
\]
The first two terms can be computed with Table \ref{tab:Massey}.
The possible non-zero values for the third bracket are multiples of $h_0$,
which means that the third term is zero in any case.

The desired formula now follows.
\end{proof}

\begin{lemma}
\label{lem:_h1d0u.h1^2}
\end{lemma}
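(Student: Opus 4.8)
The plan is to establish this hidden extension in $E_2(C\tau)$ by the same matric-bracket strategy used in Lemmas \ref{lem:_h1^2e0.c0} and \ref{lem:_c0e0.d0}. First I would invoke the (matric) version of Proposition \ref{prop:hidden-Massey}, which translates the claimed relation on $\ol{h_1 d_0 u}$ and $\ol{h_1^3 u}$ into the computation of a Massey product of the form $\langle [\, \cdots \,], [\, \cdots \,], \tau \rangle$ in $E_2(S^{0,0})$, with the entries supplied by the elements $h_1^2$, $d_0$, $h_1 d_0 u$, and $h_1^3 u$ governing the degree. This is the standard passage from a hidden extension across the cofiber sequence for $\tau$ to a three-fold bracket whose last entry is $\tau$.

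The main obstacle will be that this bracket cannot be evaluated directly with May's Convergence Theorem \ref{thm:3-converge}: exactly as in the proofs of Lemmas \ref{lem:h1-th0e0^3} and \ref{lem:_c0e0.d0}, there is a later ``crossing'' May differential (of the type $d_4(\Delta h_1^2) = P h_1^2 h_4$ that recurs throughout this range) which violates condition (2) of the theorem. I therefore would not attempt a direct May-spectral-sequence computation of the bracket.

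Instead, I would use the $h_1$-multiplication trick that makes the nearby matric computations go through. Multiplying the target bracket by $h_1^4$ produces an $h_1^4$-multiple which, by the standard formal manipulations of matric Massey products, splits into a sum of ordinary brackets of the shape $\langle x, \tau, h_1^4 \rangle$. Each of these summands is already recorded in Table \ref{tab:Massey}, so the sum can be computed explicitly; along the way the hidden May-$E_\infty$ relation $e_0 \cdot P e_0 = d_0^3 + h_1^3 u$ (used in Lemma \ref{lem:_c0e0.d0}) may be needed to simplify the answer into the stated form $P v'$. Since multiplication by $h_1^4$ is injective in the relevant target degree, pinning down the $h_1^4$-multiple determines the extension itself. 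The final and genuinely delicate step will be verifying this $h_1^4$-injectivity against the $E_\infty$-chart in \cite{Isaksen14a} and confirming that the indeterminacy of the matric bracket does not interfere with the conclusion.
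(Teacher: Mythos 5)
You correctly reconstructed the content of this (textually empty) lemma: from its use in Lemma \ref{lem:Ctau-d2-h1d0u} and the entry at $(57,15,31)$ of Table \ref{tab:Ctau-misc-extn}, the statement is $h_1^2 \cdot \ol{h_1 d_0 u} + d_0 \cdot \ol{h_1^3 u} = P v'$ in $E_2(C\tau)$, which the paper also records as part (3) of Lemma \ref{lem:_h1^2e0.c0}. Your first step, the matric version of Proposition \ref{prop:hidden-Massey}, is exactly the paper's. But your central premise --- that the resulting matric bracket cannot be evaluated directly by May's Convergence Theorem \ref{thm:3-converge} --- is unsupported, and the paper's own proof contradicts it. The crossing differentials you invoke by analogy live in other degrees: $d_4(\Delta h_1^2) = P h_1^2 h_4$ obstructs only brackets requiring a null-homotopy of $\tau \cdot c_0 e_0$ (stem $26$, as in Lemmas \ref{lem:_c0e0.h0}, \ref{lem:h1^4-t-brackets}, and \ref{lem:_c0e0.d0}), and $d_8(\Delta^2 h_1^4) = P^2 h_1^4 h_5$ obstructs the stem-$52$ bracket of Lemma \ref{lem:h1-th0e0^3}. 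Here the required null-homotopies are $d_2(h_1 b_{20} b_{30}^3 h_0(1)^2) = \tau h_1 d_0 u$ and $d_2(h_1 b_{20} b_{30}^2 h_0(1)^2) = \tau h_1^3 u$, whose sources lie in stems $55$ and $43$, where no crossing differential interferes; the $(57,15,31)$ entry of Table \ref{tab:Ctau-matric} is precisely this direct matric computation, and the proof of Lemma \ref{lem:_h1^2e0.c0} then concludes by the matric Proposition \ref{prop:hidden-Massey}.

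Even granting your premise, the fallback does not go through as written. The sub-brackets your $h_1^4$-shuffle produces, $\langle h_1 d_0 u, \tau, h_1^4 \rangle$ and $\langle h_1^3 u, \tau, h_1^4 \rangle$, are \emph{not} in Table \ref{tab:Massey}, and computing them by May's Convergence Theorem requires null-homotopies of $\tau h_1 d_0 u$ and $\tau h_1^3 u$ in exactly the same degrees as the direct computation --- so any crossing differential blocking the direct route blocks your sub-brackets equally, making the detour circular. The trick succeeds in Lemma \ref{lem:_c0e0.d0} only because its one problematic sub-bracket, $\langle c_0 e_0, \tau, h_1^4 \rangle$, has an independent computation (Lemma \ref{lem:h1^4-t-brackets}, shuffling through $\langle d_0, h_3, h_1^4 \rangle$); you supply no analogous independent route here. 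Finally, your concluding appeal to $h_1^4$-injectivity (in particular that $h_1^4 \cdot P v'$ is nonzero and is hit by no other candidate in the coset) is left unverified, and is far from automatic this deep in the chart. The short correct argument is the paper's: cite the two May $d_2$ differentials above, apply the matric May Convergence Theorem to identify the matric bracket with $P v'$, and then apply the matric Proposition \ref{prop:hidden-Massey}.
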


\subsection{The Adams $E_2$-page for the cofiber of $\tau$}
\label{subsctn:E2}

Having resolved hidden extensions,
we can now
state our main theorem about $E_2(C\tau)$.

\index{cofiber of tau@cofiber of $\tau$!Adams spectral sequence!E2-page@$E_2$-page}

\begin{thm}
\label{thm:E2}
The $E_2$-page of the Adams spectral sequence for $C\tau$
is depicted in \cite{Isaksen14a}
through the 70-stem.
Table \ref{tab:Ctau-E2} lists 
the $E_2(S^{0,0})$-module generators of
$E_2(C\tau)$ through the 70-stem.
\end{thm}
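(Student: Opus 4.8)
The plan is to assemble the pieces already established in this section into the single structural statement of the theorem. The starting point is the long exact sequence in $\Ext$ induced by the cofiber sequence $S^{0,-1} \xrightarrow{\tau} S^{0,0} \to C\tau \to S^{1,-1}$, which was recorded at the beginning of Section \ref{sctn:t-Bockstein} in the form of the short exact sequence
\[
0 \to \coker(\tau) \to E_2(C\tau) \to \ker(\tau) \to 0
\]
of $E_2(S^{0,0})$-modules. Because $\Ext = E_2(S^{0,0})$ is completely known through the 70-stem from Chapter \ref{ch:May} (see the charts in \cite{Isaksen14a}), both the cokernel $\coker(\tau) = \Ext/\tau\Ext$ and the kernel $\ker(\tau)$ of multiplication by $\tau$ are explicitly computable in each trigrading. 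This determines the additive structure of $E_2(C\tau)$ completely, and it is this additive answer that the charts in \cite{Isaksen14a} depict.

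For the module structure, I would first observe that the generators fall into two families. The image of the bottom-cell inclusion $E_2(S^{0,0}) \to E_2(C\tau)$ is the submodule $\coker(\tau) = \Ext/\tau\Ext$, which is cyclic over $\Ext$ and hence contributes a single generator, namely the bottom class. The quotient $\ker(\tau)$ is the $\tau$-torsion submodule of $\Ext$, and it contributes one top-cell lift $\ol{x}$ for each $\Ext$-module generator $x$ of that torsion; these are precisely the classes $\ol{x}$ of Section \ref{sctn:notation}. By the standard fact that a generating set of an extension of modules is obtained from generating sets of the sub and the quotient together with arbitrary lifts, the bottom class together with the $\ol{x}$ generate $E_2(C\tau)$ over $E_2(S^{0,0})$. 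Reading off the module-indecomposable $\tau$-torsion classes of $\Ext$ then produces exactly the list in Table \ref{tab:Ctau-E2}.

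The genuine content, and the main obstacle, lies in pinning down the $E_2(S^{0,0})$-action precisely rather than merely up to the filtration of the short exact sequence. A product $z \cdot \ol{y}$ can cross from the top cell into the bottom cell, and these crossing products are exactly the hidden extensions resolved in Section \ref{subsctn:t-Bockstein-hidden}. By Proposition \ref{prop:hidden-Massey}, each such extension is computed by a Massey product $\langle z, y, \tau \rangle$ in $E_2(S^{0,0})$, most of which follow from May differentials via May's Convergence Theorem \ref{thm:3-converge}; the subtler cases, where a crossing May differential invalidates a direct application, were handled by the shuffling and matric-bracket arguments of Section \ref{subsctn:t-Bockstein-hidden} (for instance Lemmas \ref{lem:_c0e0.h0}, \ref{lem:h0-h1d1g}, and \ref{lem:_h1^2e0.h1^2e0^2}). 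With these extensions in hand, the module structure is determined, and one checks stem-by-stem against the charts that the proposed generating set accounts for every class.

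Two sources of imprecision must be acknowledged explicitly rather than resolved. First, the choice of top-cell lift $\ol{x}$ carries indeterminacy whenever its pre-image under projection to the top cell is not unique; these ambiguous generators are catalogued in Table \ref{tab:Ctau-ambiguous}, and one fixes a consistent convention, in several instances (as in the proofs of Lemmas \ref{lem:h5d0e0.h1^2} and \ref{lem:_h1^2Q2.h1^5}) essentially by definition. Second, a small number of products genuinely remain undetermined, namely the three exceptions listed in Theorem \ref{thm:hidden}. Neither issue affects the identification of the generating set itself, so both assertions of the theorem follow.
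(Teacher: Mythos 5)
Your proposal is correct and follows essentially the same route as the paper: the paper gives no separate proof of this theorem, but derives it exactly as you do, from the short exact sequence $0 \to \coker(\tau) \to E_2(C\tau) \to \ker(\tau) \to 0$ of Section \ref{sctn:t-Bockstein} (additive structure and generators) together with the hidden-extension analysis of Section \ref{subsctn:t-Bockstein-hidden} via Proposition \ref{prop:hidden-Massey} and May's Convergence Theorem, with the ambiguous lifts catalogued in Table \ref{tab:Ctau-ambiguous}. Your explicit appeal to the fact that generators of a sub and lifts of generators of a quotient generate an extension is left implicit in the paper, but the substance is identical.
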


For most of the generators in Table \ref{tab:Ctau-E2},
the notation $\ol{x}$ is unambiguous.
In other words, in each relevant degree, there is just a single element
$\ol{x}$ of $E_2(C\tau)$ that projects to $x$ in $E_2(S^{0,0})$.

However, there are several cases in which there is a choice of 
representative for $\ol{x}$ because of the presence of an element in the same
degree in the image of the map $E_2(S^{0,0}) \map E_2(C\tau)$.
One such example occurs in the 56-stem with $\ol{\tau h_0 g m}$.
The presence of $h_2 x'$ means that there are actually
two possible choices for $\ol{\tau h_0 g m}$.
\index{cofiber of tau@cofiber of $\tau$!ambiguous generator}

Table \ref{tab:Ctau-ambiguous}
lists all such examples of $E_2(S^{0,0})$-module generators of 
$E_2(C\tau)$ for which there is some ambiguity.
In some cases, we have given an algebraic specification of one element
of $E_2(C\tau)$ to serve as the generator.
These choices are essentially arbitrary, 
but it is important to be consistent with the notation
between different arguments.

In some cases, we have not given a definition because an algebraic description
is not readily available, and also because it does not seem to matter for
later analysis.
The reader is strongly warned to be cautious when working with these undefined
elements.  

The generator $\ol{h_1 i_1}$ deserves one additional comment.
In this case, the presence of $\tau h_1 G$ and $B_6$ means that there are
four possible choices for this generator.  
We have given two algebraic specifications for
$\ol{h_1 i_1}$, which determines a unique element from these four.
\index{i1@$i_1$}
\index{G@$G$}
\index{B6@$B_6$}


\section{Adams differentials for the cofiber of $\tau$}
\label{sctn:Adams}

We have now computed the $E_2$-page of the Adams spectral
sequence for $C\tau$. 
See \cite{Isaksen14a} for a chart of $E_2(C\tau)$ 
through the 70-stem.
\index{Adams chart!cofiber of tau@cofiber of $\tau$}

The next step is to compute the Adams differentials.
The main point is to compute the Adams $d_r$ differentials on 
the $E_r(S^{0,0})$-module generators of $E_r(C\tau)$.
Then one can compute the Adams $d_r$ differential on any element,
using the Adams $d_r$ differentials for $E_r(S^{0,0})$ given in 
Tables \ref{tab:Ext-gen}, \ref{tab:Adams-d3}, \ref{tab:Adams-d4},
and \ref{tab:Adams-d5}.
\index{cofiber of tau@cofiber of $\tau$!Adams spectral sequence!differential}

\subsection{Adams $d_2$ differentials for the cofiber of $\tau$}
\label{subsctn:Ctau-d2}

\index{cofiber of tau@cofiber of $\tau$!Adams spectral sequence!d2@$d_2$}

\begin{prop}
\label{prop:Ctau-Adams-d2}
Table \ref{tab:Ctau-E2} lists some values of the motivic Adams $d_2$ differential
for $C\tau$.  The motivic Adams $d_2$ differential is zero on all other
$E_2(S^{0,0})$-module generators of $E_2(C\tau)$, through the
70-stem, with the possible exceptions that:
\begin{enumerate}
\item
$d_2(\ol{h_1 i_1})$ might equal $h_1 h_5 c_0 d_0$.
\index{i1@$i_1$}
\index{h5c0d0@$h_5 c_0 d_0$}
\item
$d_2 (\ol{h_1 r_1})$ might equal $\tau h_1 G_0$.
\index{r1@$r_1$}
\index{G0@$G_0$}
\end{enumerate}
\end{prop}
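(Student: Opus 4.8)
The plan is to verify the proposition one $E_2(S^{0,0})$-module generator at a time, using the maps induced by the cofiber sequence
\[
S^{0,-1} \xrightarrow{\ \tau\ } S^{0,0} \xrightarrow{\ j\ } C\tau \xrightarrow{\ q\ } S^{1,-1}.
\]
The bottom-cell inclusion $j$ gives a map $j_* \colon E_r(S^{0,0}) \to E_r(C\tau)$, the top-cell projection $q$ gives a map $q_* \colon E_r(C\tau) \to \Sigma^{1,-1} E_r(S^{0,0})$, and both commute with $d_2$. Moreover $E_r(C\tau)$ is a module over $E_r(S^{0,0})$ on which $d_2$ obeys the Leibniz rule. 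Since the Adams $d_2$ differentials for $S^{0,0}$ are already recorded in Table \ref{tab:Ext-gen}, these structural facts reduce almost every case to bookkeeping against the chart of $E_2(C\tau)$ in \cite{Isaksen14a}.

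For the nonzero differentials listed in Table \ref{tab:Ctau-E2}, I would organize the arguments by the three types indicated in the fourth column. The \emph{top cell} cases are immediate: for a generator $\ol{x}$ with $q_*(\ol{x}) = x$, one has $q_*(d_2(\ol{x})) = d_2(x)$, so whenever $d_2(x)$ is nonzero in $\Ext$ its preimage forces the stated value of $d_2(\ol{x})$. The \emph{bottom cell} cases use the Leibniz rule together with an algebraic relation $a \cdot \ol{x} = j_*(w)$ for a suitable $a$ in $E_2(S^{0,0})$; applying $d_2$ and inserting the known sphere differential on $w$ pins down $d_2(\ol{x})$ after dividing by $a$. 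The remaining nonzero differentials have self-contained proofs cited in the table, and I would simply assemble these.

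The harder half is the completeness claim, namely that $d_2$ vanishes on every other generator through the 70-stem. Here I would argue by elimination in each relevant tridegree. First, if $d_2(x) = 0$ in $\Ext$, then $q_*(d_2(\ol{x})) = 0$, so $d_2(\ol{x})$ must lie in the image of $j_*$, i.e.\ in the $\coker(\tau)$ summand; this alone discards most candidate targets. For the survivors I would use (i) permanent-cycle arguments, showing $\ol{x}$ is a product or bracket built from surviving classes and hence cannot support a differential; (ii) the module structure, multiplying $\ol{x}$ by a class of $E_2(S^{0,0})$ to land in a degree where $d_2$ is already known and deriving a contradiction from any nonzero $d_2(\ol{x})$; and (iii) comparison with the fully understood $d_2$ on $S^{0,0}$ through $j_*$ and $q_*$.

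I expect the main obstacle to be precisely the cases where $q_*(d_2(\ol{x}))$ vanishes and the residual target sits in the image of $j_*$, since then neither map sees the differential and one is forced into delicate module-structure or bracket computations. This is also where the two genuine exceptions arise. The possible differential $d_2(\ol{h_1 i_1}) = h_1 h_5 c_0 d_0$ is aggravated by the fact that $\ol{h_1 i_1}$ is one of the ambiguous generators of Table \ref{tab:Ctau-ambiguous}, so the choice of representative interacts with the putative target; likewise $d_2(\ol{h_1 r_1})$ versus $\tau h_1 G_0$ cannot be decided by the top-cell or Leibniz arguments available here. I would therefore flag both as undecided rather than attempt to force them, exactly as in the statement.
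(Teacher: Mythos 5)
Your overall framework is the same as the paper's: exploit the cofiber sequence, push forward along the bottom cell, pull back along the top cell, and use the $E_2(S^{0,0})$-module structure with the Leibniz rule, deferring the residual cases to ad hoc arguments. But two of your steps conceal genuine gaps. First, your claim that the top-cell cases are ``immediate'' is too quick: $q_*$ determines $d_2(\ol{x})$ only modulo the image of $j_* \colon E_2(S^{0,0}) \to E_2(C\tau)$, so even when $d_2(x) \neq 0$ in $\Ext$ there is an undetermined error term from the bottom cell. Several of the paper's lemmas exist precisely to pin down such error terms, typically via hidden $h_0$- or $h_1$-extensions in $E_2(C\tau)$; for example, $d_2(\ol{D_4}) = h_1 \cdot \ol{B_6} + Q_2$, where the top cell only sees $h_1 B_6$ and the term $Q_2$ is forced by $h_0 \cdot \ol{D_4} = D_2$ together with $d_2(D_2) = h_0 Q_2$, and similarly $d_2(\ol{h_1^2 e_0 g}) = h_1^2 e_0 \cdot \ol{h_1^2 e_0} + c_0 d_0 e_0$.

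The more serious gap concerns the cases you correctly identify as hard, where $d_2(x) = 0$ in $\Ext$ and the putative target lies in the image of $j_*$. There you propose only $E_2$-level tools (module structure, brackets, permanent-cycle arguments), but the paper's proof of $d_2(\ol{\tau^2 h_1 g^2}) = z$ (Lemma \ref{lem:Ctau-d2-t^2h1g^2}) cannot be run that way: it uses homotopy-theoretic input, namely the classical hidden $\eta$ extension from $g^2$ to $z$, which implies that $\{z\}$ is divisible by $\tau$ in $\pi_{41,22}$, hence maps to zero in $\pi_{41,22}(C\tau)$, hence must be killed by a differential in the Adams spectral sequence for $C\tau$ --- and $d_2(\ol{\tau^2 h_1 g^2})$ is the only candidate. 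Whether $z$ survives in $E_\infty(C\tau)$ is exactly the question of whether $\{z\}$ is $\tau$-divisible, which is invisible to the $E_2$-page; so without importing hidden extension data from $\pi_{*,*}$ (equivalently, from Table \ref{tab:extn-refs}) your plan cannot establish this listed differential, and the completeness claim fails with it.
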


\begin{proof}
We use 
several different approaches to establish the 
Adams $d_2$ differentials:
\begin{enumerate}
\item
From an Adams differential $d_2(x) = y$ in $E_2(S^{0,0})$,
push forward along the inclusion $S^{0,0} \map C\tau$ of the bottom cell
to obtain the same formula in $E_2(C\tau)$.
\index{cofiber of tau@cofiber of $\tau$!bottom cell}
\item
From an Adams differential $d_2(x) = y$ in $E_2(S^{0,0})$,
use the projection $C\tau \map S^{1,-1}$ 
and pull back to $d_2 (\ol{x} ) = \ol{y}$
in $E_2(C\tau)$, up to a possible error term that belongs to the image
of the inclusion $E_2(S^{0,0}) \map E_2(C\tau)$ of the bottom cell.
\item
Push forward a differential from $E_2(S^{0,0})$ as in (1),
and then use a hidden extension in $E_2(C\tau)$.
For example, $d_2 (\ol{c_0 d_0} ) = P d_0$
because
$h_0 \cdot \ol{c_0 d_0} = i$ in $E_2(C\tau)$ and
$d_2(i) = P h_0 d_0$ in $E_2(S^{0,0})$.
\item
Work $h_1$-locally.
\index{h1@$h_1$@localization}
For example, 
consider the hidden extensions
$h_1^2 \cdot \ol{c_0 e_0} + e_0 \cdot \ol{h_1^2 c_0} = d_0^2$
and 
$h_1^2 \cdot \ol{c_0 d_0} + d_0 \cdot \ol{h_1^2 c_0} = P e_0$
from Table \ref{tab:Ctau-misc-extn}.
It follows that
$d_2(\ol{c_0 e_0}) = h_1^2 \cdot \ol{c_0 d_0} + P e_0$.
\end{enumerate}

Most of the differentials are computed with straightforward applications of
these techniques.
The remaining cases are computed in the following lemmas.
\end{proof}

The chart of $E_2(C\tau)$ in \cite{Isaksen14a} indicates the 
Adams $d_2$ differentials, all of which are implied by
the calculations in Tables \ref{tab:Ext-gen} and \ref{tab:Ctau-E2}.

\begin{lemma}
\label{lem:Ctau-d2-h1^2e0g}
$d_2(\ol{h_1^2 e_0 g}) = h_1^2 e_0 \cdot \ol{h_1^2 e_0} + c_0 d_0 e_0$.
\end{lemma}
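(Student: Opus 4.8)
The plan is to obtain this $d_2$ by transporting the corresponding differential in $E_2(S^{0,0})$ across the cofiber sequence and then pinning down a single bottom-cell correction. Recall that $\ol{h_1^2 e_0 g}$ exists as an $E_2(S^{0,0})$-module generator precisely because $\tau \cdot h_1^2 e_0 g = 0$, i.e. $h_1^2 e_0 g$ lies in $\ker \tau$, and that $q_\ast(\ol{h_1^2 e_0 g}) = h_1^2 e_0 g$ under the top-cell map $q_\ast \colon E_2(C\tau) \to E_2(S^{0,0})$. First I would start from the differential $d_2(e_0 g) = h_1^2 e_0^2$ of Lemma \ref{lem:d2-e0g} and multiply by $h_1^2$ to obtain $d_2(h_1^2 e_0 g) = h_1^4 e_0^2$ in $E_2(S^{0,0})$; by the nondegeneracy of such $h_1$-local products this target is nonzero.

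The top-cell map commutes with $d_2$, so $q_\ast d_2(\ol{h_1^2 e_0 g}) = d_2(h_1^2 e_0 g) = h_1^4 e_0^2$. On the other hand $h_1^2 e_0 \cdot \ol{h_1^2 e_0}$ is a lift of $h_1^4 e_0^2$, since $q_\ast(h_1^2 e_0 \cdot \ol{h_1^2 e_0}) = h_1^2 e_0 \cdot h_1^2 e_0 = h_1^4 e_0^2$. Hence $d_2(\ol{h_1^2 e_0 g}) + h_1^2 e_0 \cdot \ol{h_1^2 e_0}$ lies in $\ker q_\ast = \operatorname{im} j_\ast$, the image of the bottom-cell inclusion, which is $\coker \tau$ in this degree. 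This is exactly technique (2) of Proposition \ref{prop:Ctau-Adams-d2}, and it reduces the lemma to identifying a single bottom-cell correction $j_\ast(w)$ that the top-cell argument cannot detect. By inspection of $\coker \tau$ in the relevant degree the only candidate is $c_0 d_0 e_0$, so $w$ is either $0$ or $c_0 d_0 e_0$.

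The hard part will be showing that the correction is genuinely $c_0 d_0 e_0$ rather than $0$. I would resolve this $h_1$-locally, where the structure of $E_2(C\tau)$ and of $\pi_{*,*}(C\tau)$ is completely understood through \cite{GI14}: multiplying the two competing answers by a suitable power of $h_1$ (using $h_1 c_0 d_0 e_0 \neq 0$) converts the ambiguity into a comparison of known $h_1$-divisible classes. The essential input is the relation of Lemma \ref{lem:_h1^2e0.h1^2e0^2}, namely $h_1^2 e_0^2 \cdot \ol{h_1^2 e_0} + d_0 e_0 g \cdot \ol{h_1^4} + h_1^6 \cdot \ol{h_1^3 B_1} = c_0 d_0 e_0^2$, which is closely tied to the $e_0$-multiple of the claimed formula and provides the consistency forcing the correction to be present. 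An alternative internal route is to apply $d_2$ to a suitable class, invoke $d_2^2 = 0$ together with the already-established differentials of Table \ref{tab:Ctau-E2}, and thereby exclude $w = 0$. Either way, the delicate bookkeeping of the bottom-cell summand, not the transport of the differential itself, is the real obstacle.
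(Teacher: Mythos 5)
Your first half matches the paper exactly: transport $d_2(h_1^2 e_0 g) = h_1^4 e_0^2$ from $E_2(S^{0,0})$ via the top cell, note that $h_1^2 e_0 \cdot \ol{h_1^2 e_0}$ is a lift of the target, and conclude that $d_2(\ol{h_1^2 e_0 g})$ is either $h_1^2 e_0 \cdot \ol{h_1^2 e_0}$ or $h_1^2 e_0 \cdot \ol{h_1^2 e_0} + c_0 d_0 e_0$. But you then declare the determination of the correction term to be "the hard part" and never actually do it; you only gesture at two possible strategies. That is a genuine gap, and it is precisely the step the paper's proof consists of. The resolution is a one-line application of the "alternative internal route" you mention at the end: the suitable class to apply $d_2$ to is the candidate target itself. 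By the Leibniz rule, using $d_2(e_0) = h_1^2 d_0$ and $d_2(\ol{h_1^2 e_0}) = d_0 \cdot \ol{h_1^4}$ from Table \ref{tab:Ctau-E2}, one computes $d_2\bigl(h_1^2 e_0 \cdot \ol{h_1^2 e_0}\bigr) = h_1^2 c_0 d_0^2 \neq 0$. Since a target of a $d_2$ differential must be a $d_2$-cycle, $h_1^2 e_0 \cdot \ol{h_1^2 e_0}$ cannot be hit, which forces $d_2(\ol{h_1^2 e_0 g}) = h_1^2 e_0 \cdot \ol{h_1^2 e_0} + c_0 d_0 e_0$. (Consistently, $d_2(c_0 d_0 e_0) = h_1^2 c_0 d_0^2$ as well, so the sum is a cycle.) Without identifying this class and carrying out this Leibniz computation, your argument does not establish the lemma.

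Your preferred route is also on shaky ground as stated. Lemma \ref{lem:_h1^2e0.h1^2e0^2} is a relation among $\ol{h_1^2 e_0}$, $\ol{h_1^4}$, and $\ol{h_1^3 B_1}$; it does not involve $\ol{h_1^2 e_0 g}$ at all, so it cannot directly constrain $d_2(\ol{h_1^2 e_0 g})$. In the paper that relation is used for a different purpose (Remark \ref{rem:d2-h1^2e0.e0g}): combined with the Leibniz rule it computes $d_2\bigl(e_0 g \cdot \ol{h_1^2 e_0}\bigr) = h_1^6 \cdot \ol{h_1^3 B_1} + c_0 d_0 e_0^2$, which is a differential on a product class, not on the generator $\ol{h_1^2 e_0 g}$. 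Multiplying your two candidate answers by powers of $h_1$ does not obviously separate them either, since you would still need an independently known value against which to compare; the $d_2^2 = 0$ argument above is what supplies that, and it needs no $h_1$-local input.
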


\begin{proof}
Table \ref{tab:Ext-gen} gives the 
differential $d_2(h_1^2 e_0 g) = h_1^4 e_0^2$ in $E_2(S^{0,0})$
Therefore, $d_2(\ol{h_1^2 e_0 g})$ is either
$h_1^2 e_0 \cdot \ol{h_1^2 e_0}$ or 
$h_1^2 e_0 \cdot \ol{h_1^2 e_0} + c_0 d_0 e_0$.
However,
$d_2(h_1^2 e_0 \cdot \ol{h_1^2 e_0}) = h_1^2 c_0 d_0^2$, so
$h_1^2 e_0 \cdot \ol{h_1^2 e_0}$ cannot be the target of a $d_2$
differential.
\end{proof}

\begin{lemma}
\label{lem:Ctau-d2-t^2h1g^2}
$d_2 ( \ol{\tau^2 h_1 g^2} ) = z$.
\end{lemma}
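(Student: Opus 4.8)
The plan is to combine the projection-to-the-top-cell technique from the proof of Proposition \ref{prop:Ctau-Adams-d2} with the hidden $\tau$ extension already established in Lemma \ref{lem:t-th1g^2}. First I would record that $\ol{\tau^2 h_1 g^2}$ is a genuine $E_2(S^{0,0})$-module generator of $E_2(C\tau)$, as listed in Table \ref{tab:Ctau-E2}; in particular $\tau^3 h_1 g^2 = 0$, so the pre-image $\ol{\tau^2 h_1 g^2}$ is defined.

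Next I would observe that $\tau^2 h_1 g^2$ is a $d_2$-cycle in $E_2(S^{0,0})$. Indeed, write $\tau^2 h_1 g^2 = \tau h_1 \cdot \tau g^2$; since $\tau$ and $h_1$ are permanent cycles and $\tau g^2$ detects $\kappabar^2$ (which is a permanent cycle, as $g^2$ itself does not exist by Example \ref{ex:g^2}), the Leibniz rule gives $d_2(\tau^2 h_1 g^2) = 0$. Pulling back along the projection $C\tau \map S^{1,-1}$ as in approach (2) of Proposition \ref{prop:Ctau-Adams-d2}, this forces $d_2(\ol{\tau^2 h_1 g^2})$ to lie entirely in the image of the bottom-cell inclusion $E_2(S^{0,0}) \map E_2(C\tau)$. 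Thus the remaining task is purely to identify the correct bottom-cell target, and $z$ is the evident candidate on grounds of stem and weight.

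To pin down the target I would invoke Lemma \ref{lem:t-th1g^2}: the hidden $\tau$ extension from $\tau h_1 g^2$ to $z$ produces a class $\alpha \in \{\tau h_1 g^2\}$ with $\tau \alpha \in \{z\}$. Because $\tau\alpha$ is $\tau$-divisible, it maps to zero under the bottom-cell inclusion $\pi_{*,*} \map \pi_{*,*}(C\tau)$. If $z$ survived to $E_\infty(C\tau)$ and were nonzero there, the image of $\tau\alpha$ would be detected by $z$, contradicting that this image vanishes; hence $z$ must be hit by an Adams differential in the spectral sequence for $C\tau$. Inspecting the chart for $E_2(C\tau)$ in \cite{Isaksen14a} in this degree, the only class that can support such a differential is $\ol{\tau^2 h_1 g^2}$, and it must do so by $d_2$, yielding $d_2(\ol{\tau^2 h_1 g^2}) = z$.

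The main obstacle is the final step: correctly deploying the correspondence between $\tau$-divisibility in $\pi_{*,*}$ and differentials hitting the bottom-cell image in $E(C\tau)$, and then verifying by careful inspection of the charts, with attention to the weight grading, that $\ol{\tau^2 h_1 g^2}$ is the \emph{unique} possible source, so that no higher differential could instead account for the disappearance of $z$.
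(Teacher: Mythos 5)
Your proof is correct and follows essentially the same route as the paper: both arguments show that elements of $\{z\}$ are divisible by $\tau$ (the paper re-derives this from the classical extension $\eta\{g^2\}=\{z\}$ in Table \ref{tab:extn-refs}, while you cite Lemma \ref{lem:t-th1g^2}, which was proved from exactly that extension), conclude that $z$ maps to zero in $\pi_{41,22}(C\tau)$ and hence must die in $E_\infty(C\tau)$, and then identify $d_2(\ol{\tau^2 h_1 g^2})=z$ as the only differential that can kill it. Your preliminary observation that $d_2(\ol{\tau^2 h_1 g^2})$ must lie in the bottom-cell image is a harmless extra check not needed in the paper's argument.
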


\begin{proof}
We will argue that $z$ must be zero in $E_\infty(C\tau)$.
There is only one possible differential that can kill it.

Table \ref{tab:extn-refs} gives
a classical extension
$\eta \cdot \{g^2 \} = \{ z \}$ 
in $\pi_{41}$.
This implies that there must be a hidden relation
$\tau \cdot \{ \tau^2 h_1 g^2 \} = \{ z \}$ in $\pi_{41,22}$.
In particular, $\{ z \}$ is divisible by $\tau$ in $\pi_{*,*}$.
This means that $\{ z \}$ maps to zero in 
$\pi_{41,22}(C\tau)$.
\end{proof}

\begin{lemma}
\label{lem:Ctau-d2-h1d0u}
\mbox{}
\begin{enumerate}
\item
$d_2( \ol{h_1 d_0 u} ) = P u'$.
\item
$d_2( \ol{P h_1 d_0 u} ) = P^2 u'$.
\end{enumerate}
\end{lemma}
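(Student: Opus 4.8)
The plan is to realize $Pu'$ and $P^2u'$ as differential targets in $E_2(C\tau)$, exploiting that on the sphere their $d_2$-values are divisible by $\tau$. Indeed, Lemma \ref{lem:d2-u'} gives $d_2(Pu') = \tau P h_0 d_0^2 e_0$ and $d_2(P^2 u') = \tau P^2 h_0 d_0^2 e_0$ on $S^{0,0}$; since multiplication by $\tau$ is zero on the image of the bottom cell in $E_2(C\tau)$, both $Pu'$ and $P^2u'$ are permanent cycles in $E_2(C\tau)$ and can only disappear by being hit. I would show they are hit precisely by $\ol{h_1 d_0 u}$ and $\ol{P h_1 d_0 u}$.

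For part (1), the key input is the hidden extension from Lemma \ref{lem:_h1^2e0.c0}(3), namely $h_1^2 \cdot \ol{h_1 d_0 u} + d_0 \cdot \ol{h_1^3 u} = Pv'$. Applying the Leibniz rule (with $h_1$ and $d_0$ permanent cycles) and using that $d_2(Pv') = Ph_1^2 u'$ in $E_2(C\tau)$ — the $\tau h_0 d_0^4$ term of Lemma \ref{lem:d2-u'}(6) dying on the bottom cell — I obtain $h_1^2 \cdot d_2(\ol{h_1 d_0 u}) + d_0 \cdot d_2(\ol{h_1^3 u}) = P h_1^2 u'$. Since $h_1^3 u$ is a permanent cycle on $S^{0,0}$ (its class $\{h_1 u\} = 2\kappabar^2$ survives), $d_2(\ol{h_1^3 u})$ lies in the image of the bottom cell, and one checks that $d_0 \cdot d_2(\ol{h_1^3 u})$ contributes nothing in this degree. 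Thus $h_1^2 \cdot d_2(\ol{h_1 d_0 u}) = h_1^2 \cdot Pu'$, and by inspection of $E_2(C\tau)$ in this degree the only possibility is $d_2(\ol{h_1 d_0 u}) = Pu'$.

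For part (2), I would run the identical argument with the $P$-multiplied hidden extension $h_1^2 \cdot \ol{P h_1 d_0 u} + d_0 \cdot \ol{P h_1^3 u} = P^2 v'$ (the image of Lemma \ref{lem:_h1^2e0.c0}(3) under the periodicity operator, up to bottom-cell indeterminacy in the choice of preimages), together with $d_2(P^2 v') = P^2 h_1^2 u'$ in $E_2(C\tau)$ coming from Lemma \ref{lem:d2-u'}(7) once its $\tau$-term dies. The same reasoning then yields $d_2(\ol{P h_1 d_0 u}) = P^2 u'$.

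The main obstacle will be the bookkeeping around the auxiliary term $d_0 \cdot \ol{h_1^3 u}$: I must confirm that $d_2(\ol{h_1^3 u})$, which a priori lives in the image of the bottom cell, is annihilated by $d_0$ (or is itself zero), and I must verify that multiplication by $h_1^2$ is injective on the relevant subgroup of $E_2(C\tau)$, so that $h_1^2 \cdot d_2(\ol{h_1 d_0 u}) = h_1^2 \cdot Pu'$ pins down the value of $d_2(\ol{h_1 d_0 u})$ exactly rather than only modulo $\ker(h_1^2)$. Both are routine verifications from the charts in \cite{Isaksen14a}, but they are the only steps that are not purely formal consequences of naturality and the Leibniz rule.
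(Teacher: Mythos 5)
Your proposal is correct and its core coincides with the paper's argument: both rest on the hidden extension $h_1^2 \cdot \ol{h_1 d_0 u} + d_0 \cdot \ol{h_1^3 u} = P v'$, on the fact that $d_2(P v') = P h_1^2 u' + \tau h_0 d_0^4$ on the sphere collapses to $P h_1^2 u'$ in $E_2(C\tau)$, and on injectivity of multiplication by $h_1^2$ in the relevant degree to extract $d_2(\ol{h_1 d_0 u}) = P u'$. The difference is in how the auxiliary term is killed. You differentiate the relation directly via the Leibniz rule and are left to check that $d_0 \cdot d_2(\ol{h_1^3 u})$ vanishes, which you defer to a chart inspection; the paper sidesteps this check by first multiplying the differential $d_2(\ol{h_1 v}) = \ol{h_1^3 u}$ from Table \ref{tab:Ctau-E2} by $d_0$, rewriting the target $d_0 \cdot \ol{h_1^3 u}$ by the hidden extension, and then invoking $d_2 \circ d_2 = 0$, so that $h_1^2 \cdot d_2(\ol{h_1 d_0 u}) = d_2(Pv')$ falls out with no residual term. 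The same table entry closes your loose end immediately: since $\ol{h_1^3 u}$ is itself a $d_2$-boundary, $d_2(\ol{h_1^3 u}) = 0$ exactly, so no degree-by-degree verification is needed. For part (2) the paper is also more economical: it simply multiplies the first formula by $P h_1$, whereas your $P$-multiplied hidden extension $h_1^2 \cdot \ol{P h_1 d_0 u} + d_0 \cdot \ol{P h_1^3 u} = P^2 v'$ is not among the established relations and, as you yourself note, is defined only up to bottom-cell indeterminacy in the choice of preimages; you would still have to control the $d_2$ of that indeterminacy before the argument closes, a complication the paper's route avoids.
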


\begin{proof}
Table \ref{tab:Ctau-E2} implies that
$d_2 (d_0 \cdot \ol{h_1 v}) = d_0 \cdot \ol{h_1^3 u}$.
By Lemma \ref{lem:_h1d0u.h1^2}, this equals
$h_1^2 \cdot \ol{h_1 d_0 u} + P v'$.

Therefore,
$h_1^2 \cdot d_2(\ol{h_1 d_0 u})$ equals $d_2(P v')$.
By Table \ref{tab:Ext-gen},
$d_2(P v')$ equals $P h_1^2 u'+ \tau h_0 d_0^4$ in $E_2(S^{0,0})$.
Therefore,
$d_2(P v')$ equals $P h_1^2 u'$ in $E_2(C\tau)$.
It follows that 
$d_2(\ol{h_1 d_0 u})$ must equal $P u'$.
This establishes the first formula.

The second formula follows by multiplying the
first formula by $P h_1$.
\end{proof}

\begin{lemma}
\label{lem:Ctau-d2-D4}
$d_2( \ol{D_4}) = h_1 \cdot \ol{B_6} + Q_2$.
\end{lemma}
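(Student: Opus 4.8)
The plan is to determine the differential by combining the top-cell projection with a single multiplicative relation, exactly as in the techniques listed in the proof of Proposition \ref{prop:Ctau-Adams-d2}. First I would recall from Lemma \ref{lem:d2-D4} that $d_2(D_4) = h_1 B_6$ in $E_2(S^{0,0})$. Since the projection $q_*\colon E_2(C\tau) \map E_2(S^{0,0})$ to the top cell commutes with $d_2$, and since $q_*(\ol{D_4}) = D_4$ while $q_*(h_1 \cdot \ol{B_6}) = h_1 B_6$, the difference $d_2(\ol{D_4}) + h_1 \cdot \ol{B_6}$ lies in $\ker q_* = \operatorname{im}(j_*)$, the image of the inclusion of the bottom cell. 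By inspection of the chart in \cite{Isaksen14a}, the image of the bottom cell in this degree is spanned by $Q_2$, so at this stage $d_2(\ol{D_4}) = h_1 \cdot \ol{B_6} + \epsilon$ with $\epsilon$ equal to either $0$ or $Q_2$.

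The main obstacle is then to pin down the bottom-cell error term $\epsilon$, and I would do this by multiplying by $h_1^3 c_0$. Because $h_1$ and $c_0$ are $d_2$-cycles and $d_2$ is a derivation over $E_2(S^{0,0})$, one has $d_2(h_1^3 c_0 \cdot \ol{D_4}) = h_1^3 c_0 \cdot d_2(\ol{D_4})$. By Lemma \ref{lem:_D4.h1^3c0}, the element $h_1^3 c_0 \cdot \ol{D_4}$ equals either $h_2 B_5$ or $h_2 B_5 + h_1^2 X_3$; both of these lie in the image of the bottom cell and are $d_2$-cycles, since $h_2 B_5$ supports a $d_3$ by Lemma \ref{lem:d3-h2B5} (hence $d_2(h_2 B_5) = 0$) and $d_2(X_3) = 0$ by Lemma \ref{lem:Adams-d2-X3}. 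Therefore $h_1^3 c_0 \cdot d_2(\ol{D_4}) = 0$.

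It remains to evaluate the left-hand side explicitly. I would rewrite $h_1^3 c_0 \cdot h_1 \cdot \ol{B_6} = h_1 c_0 \cdot \bigl(h_1^3 \cdot \ol{B_6}\bigr)$ and apply Lemma \ref{lem:B6.h1^3}, which gives $h_1^3 \cdot \ol{B_6} = h_1^2 Q_2 + h_2 \cdot \ol{\tau h_2 d_1 g}$; using $h_1 h_2 = 0$ the second term dies, so $h_1^3 c_0 \cdot h_1 \cdot \ol{B_6} = h_1^3 c_0 Q_2$, which is nonzero by part (3) of Lemma \ref{lem:c0-i1}. Combining this with the vanishing from the previous paragraph yields $0 = h_1^3 c_0 Q_2 + h_1^3 c_0 \epsilon$, and since $h_1^3 c_0 Q_2 \neq 0$ the only possibility among $\{0, Q_2\}$ is $\epsilon = Q_2$. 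This establishes $d_2(\ol{D_4}) = h_1 \cdot \ol{B_6} + Q_2$. I would emphasize that this argument is independent of Lemma \ref{lem:d2-tG0} (which is \emph{deduced} from the present lemma), so no circularity arises; the delicate point is simply verifying that all the auxiliary products land where claimed and that $h_1^3 c_0 Q_2$ is genuinely nonzero.
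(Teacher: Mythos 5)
Your proof is correct, but the mechanism you use to pin down the bottom-cell error term is genuinely different from the paper's. Both arguments begin the same way: pull $d_2(D_4) = h_1 B_6$ back along the projection to the top cell, so that $d_2(\ol{D_4}) = h_1 \cdot \ol{B_6} + \epsilon$ with $\epsilon$ in the image of the bottom cell, necessarily $0$ or $Q_2$. The paper then multiplies by $h_0$: it uses the hidden extension $h_0 \cdot \ol{D_4} = D_2$ from Table \ref{tab:Ctau-h0} together with $d_2(D_2) = h_0 Q_2$ from Lemma \ref{lem:d2-D2}, and since $h_0 h_1 = 0$ the term $h_1 \cdot \ol{B_6}$ drops out, giving $h_0 \epsilon = h_0 Q_2$ in two lines. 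You instead multiply by $h_1^3 c_0$, which requires more machinery: Lemma \ref{lem:_D4.h1^3c0} to identify $h_1^3 c_0 \cdot \ol{D_4}$, the facts $d_2(h_2 B_5) = 0$ and $d_2(X_3) = 0$ to see it is a $d_2$-cycle (and it is a nice observation that the ambiguity in Lemma \ref{lem:_D4.h1^3c0} is immaterial for exactly this reason), Lemma \ref{lem:B6.h1^3} with $h_1 h_2 = 0$ to evaluate $h_1^4 c_0 \cdot \ol{B_6} = h_1^3 c_0 Q_2$, and Lemma \ref{lem:c0-i1}(3) for nonvanishing. What your route buys is independence from the hidden $h_0$ extension on $\ol{D_4}$ and from the differential on $D_2$; what the paper's route buys is economy, resting on two facts that are tabulated anyway. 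One small point you should make explicit: you need $h_1^3 c_0 Q_2$ to be nonzero \emph{in} $E_2(C\tau)$, i.e., not merely nonzero in $E_2(S^{0,0})$ but not divisible by $\tau$ there, since the kernel of the bottom-cell map is the image of multiplication by $\tau$. This is a chart-level verification of exactly the same kind the paper leaves implicit for $h_0 Q_2$ in its own proof, so it does not constitute a gap, but it deserves a sentence.
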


\begin{proof}
Pull back the differential $d_2 (D_4) = h_1 B_6$ from
$E_2(S^{0,0})$ to conclude that  
$d_2( \ol{D_4}) = h_1 \cdot \ol{B_6}$ modulo a possible error term
that comes from pushing forward from $E_2(S^{0,0})$.
To establish the error term, use that $h_0 \cdot \ol{D_4} = D_2$
and that $d_2( D_2) = h_0 Q_2$.
\end{proof}

\begin{lemma}
\label{lem:Ctau-d2-h1c0x'}
$d_2(\ol{h_1 c_0 x'}) = P h_1 x'$.
\end{lemma}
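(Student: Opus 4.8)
The plan is to use the standard toolkit for $C\tau$ differentials assembled in the proof of Proposition~\ref{prop:Ctau-Adams-d2}, transporting information between $E_2(S^{0,0})$ and $E_2(C\tau)$. First I would locate $h_1 c_0 x'$ in $E_2(S^{0,0})$ and check that it supports no Adams $d_2$ there: since $\ol{h_1 c_0 x'}$ is a top-cell generator we have $\tau \cdot h_1 c_0 x' = 0$, and inspection of the $d_2$ differentials on nearby generators in Table~\ref{tab:Ext-gen} shows $d_2(h_1 c_0 x') = 0$ on the sphere. Consequently, pulling back $d_2$ along the projection $C\tau \map S^{1,-1}$ to the top cell forces $d_2(\ol{h_1 c_0 x'})$ to lie in the image of the bottom-cell inclusion $E_2(S^{0,0}) \map E_2(C\tau)$. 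The only candidate in the correct degree is $P h_1 x'$, so the entire content of the lemma is to show that this error term is present and nonzero.

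To pin down the error term I would follow the model of Lemma~\ref{lem:Ctau-d2-D4} and the $\ol{c_0 d_0}$ computation in Proposition~\ref{prop:Ctau-Adams-d2}, combining a push-forward of a sphere differential with a hidden extension in $E_2(C\tau)$. If $\ol{c_0 x'}$ exists as a generator (i.e.\ $\tau \cdot c_0 x' = 0$), I expect $\ol{h_1 c_0 x'}$ to factor as $h_1 \cdot \ol{c_0 x'}$, so that it would suffice to compute $d_2(\ol{c_0 x'})$ and multiply by $h_1$; one can also try to relate $\ol{h_1 c_0 x'}$ to the family producing $d_3(\tau e_0 x') = P c_0 x'$ in Lemma~\ref{lem:d3-h2B5} and $d_4(\tau^2 h_1 B_{22}) = P h_1 x'$ in Lemma~\ref{lem:d4-h2B5}. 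In each case the point is that a differential which is a $d_4$ on the sphere is shortened to a $d_2$ on $C\tau$, a typical phenomenon for the cofiber of $\tau$. Failing a direct multiplicative route, I would produce a suitable hidden $h_0$- or $c_0$-extension onto $\ol{h_1 c_0 x'}$ (or a matric bracket in the style of Lemma~\ref{lem:_h1^2e0.c0}) via Proposition~\ref{prop:hidden-Massey}, and then read off the differential from the known $d_2$ on the image class.

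The hard part will be fixing this bottom-cell error term unambiguously, that is, ruling out $d_2(\ol{h_1 c_0 x'}) = 0$ and excluding any other bottom-cell class of the same degree. Because $P h_1 x'$ is only hit by a $d_4$ in $E_2(S^{0,0})$, one cannot simply transport a sphere $d_2$, so the argument must genuinely exploit the $E_2(C\tau)$-module structure. I expect the cleanest resolution to be $h_1$-local: multiplying the putative differential by a suitable power of $h_1$ should land in a range where the $h_1$-local calculations of \cite{GI14} make the target visibly nonzero and $h_1$-divisible, forcing the value to be exactly $P h_1 x'$. Verifying that this $h_1$-multiplied relation does not itself vanish, and that it descends to the stated unmultiplied form, is the delicate step that remains.
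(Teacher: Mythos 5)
Your framework is sound: since $d_2(h_1 c_0 x') = 0$ in $E_2(S^{0,0})$, projection to the top cell forces $d_2(\ol{h_1 c_0 x'})$ into the image of the bottom-cell inclusion, and $P h_1 x'$ is the only candidate there; you also correctly guess that the decision between zero and $P h_1 x'$ should be made $h_1$-locally using the hidden relations of \cite{GI14}. But that decision is the entire content of the lemma, and you explicitly defer it ("the delicate step that remains"), so the proposal stops short of a proof. The paper's argument runs as follows: apply the Leibniz rule to $e_0 \cdot \ol{v'}$, using $d_2(\ol{v'}) = h_1^2 \cdot \ol{u'} + \tau h_0 d_0 e_0^2$ from Table \ref{tab:Ctau-E2} and $d_2(e_0) = h_1^2 d_0$; then invoke the hidden May relation $e_0 u' + d_0 v' = h_1^2 c_0 x'$ of \cite{GI14} to rewrite the answer as $h_1^3 \cdot \ol{h_1 c_0 x'} + e_0 \cdot \ol{\tau h_0 d_0 e_0^2}$; then use the hidden extension $h_1 e_0 \cdot \ol{\tau h_0 d_0 e_0^2} = P e_0 v$ to conclude $h_1^4 \cdot d_2(\ol{h_1 c_0 x'}) = d_2(P e_0 v)$; finally compute $d_2(P e_0 v) = P h_1^2 d_0 v + P h_1^2 e_0 u = P h_1^5 x'$ (the last equality again from \cite{GI14}) and cancel $h_1^4$. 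None of the ingredients of this chain — the auxiliary element $e_0 \cdot \ol{v'}$, the relation $e_0 u' + d_0 v' = h_1^2 c_0 x'$, the extension onto $P e_0 v$ — appears in your proposal, and without them there is no mechanism to produce the nonzero value.

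One of your fallback routes also fails outright: you cannot factor $\ol{h_1 c_0 x'}$ as $h_1 \cdot \ol{c_0 x'}$, because $c_0 x'$ is not $\tau$-torsion; Table \ref{tab:May-tau} records the hidden extension $\tau \cdot c_0 x' = h_0^4 X_1 \neq 0$, which is precisely why the $E_2(C\tau)$ generator occurs only after multiplication by $h_1$ (which annihilates $h_0^4 X_1$). Similarly, the appeal to $d_3(\tau e_0 x') = P c_0 x'$ and $d_4(\tau^2 h_1 B_{22}) = P h_1 x'$ is only a heuristic about filtration-shortening on $C\tau$; those sphere differentials do not by themselves transport to the required $d_2$ on $\ol{h_1 c_0 x'}$, and no mechanism for such a transport is supplied.
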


\begin{proof}
Table \ref{tab:Ctau-E2} implies that 
$d_2(e_0 \cdot \ol{v'})$ equals
$h_1^2 e_0 \cdot \ol{u'} + h_1^2 d_0 \cdot \ol{v'} +
e_0 \cdot \ol{\tau h_0 d_0 e_0^2}$.
Recall from \cite{GI14} the relation
$e_0 u' + d_0 v' = h_1^2 c_0 x'$, which is hidden in the 
May spectral sequence.
This implies that
$d_2(e_0 \cdot \ol{v'})$ equals
$h_1^3 \cdot \ol{h_1 c_0 x'} + e_0 \cdot \ol{\tau h_0 d_0 e_0^2}$.

There is a hidden extension $h_1 e_0 \cdot \ol{\tau h_0 d_0 e_0^2} = P e_0 v$.
Therefore,
$d_2(h_1 e_0 \cdot \ol{v'})$ equals 
$h_1^4 \cdot \ol{h_1 c_0 x'} + P e_0 v$,
so 
$h_1^4 \cdot d_2(\ol{h_1 c_0 x'})$ must equal $d_2(P e_0 v)$.

By Table \ref{tab:Ext-gen},
$d_2(P e_0 v) = P h_1^2 d_0 v + P h_1^2 e_0 u$ in $E_2(S^{0,0})$.
This equals $P h_1^5 x'$ by \cite{GI14}.
It follows that $d_2(\ol{h_1 c_0 x'})$ equals
$P h_1 x'$.
\end{proof}

\begin{lemma}
\label{lem:Ctau-d2-_c0Q2}
$d_2 (\ol{c_0 Q_2} ) = 0$.
\end{lemma}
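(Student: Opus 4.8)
The plan is to reduce the differential to the sphere via the projection to the top cell, and then to control the residual ambiguity coming from the bottom cell. First I would record the relevant facts on $E_2(S^{0,0})$: by Lemma \ref{lem:c0-i1} we have $c_0 Q_2 = P D_4$, and both factors of $c_0 Q_2$ are cycles for $d_2$. Indeed $c_0$ is a permanent cycle (it detects $\epsilon$), while $Q_2$ survives to the $E_3$-page, where it supports the differential $d_3(Q_2) = \tau^2 g t$ of Lemma \ref{lem:d3-C0}; in particular $d_2(Q_2) = 0$. The Leibniz rule then gives $d_2(c_0 Q_2) = 0$ in $E_2(S^{0,0})$.

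Next I would use the projection $q \colon C\tau \map S^{1,-1}$ to the top cell. It induces a map $q_* \colon E_2(C\tau) \map E_2(S^{0,0})$ that commutes with $d_2$ and satisfies $q_*(\ol{c_0 Q_2}) = c_0 Q_2$. Applying $q_*$ to $d_2(\ol{c_0 Q_2})$ and invoking the previous paragraph yields $q_*\bigl(d_2(\ol{c_0 Q_2})\bigr) = d_2(c_0 Q_2) = 0$. Hence $d_2(\ol{c_0 Q_2})$ lies in $\ker(q_*)$, which is the image of the inclusion $j_*\colon E_2(S^{0,0}) \map E_2(C\tau)$ of the bottom cell. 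Thus any nonzero value of $d_2(\ol{c_0 Q_2})$ would have to be a class of $\coker(\tau)$ lying in the same stem as $c_0 Q_2$ and in Adams filtration two greater than that of $\ol{c_0 Q_2}$.

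The final and hardest step is to rule out such a bottom-cell target, and this is where the subtlety lies: the generator $\ol{c_0 Q_2}$ is one of the ambiguous classes of Table \ref{tab:Ctau-ambiguous}, so two different admissible representatives can disagree by a bottom-cell class whose $d_2$ is nonzero, and one must fix the representative before reading off the answer. Concretely, I would inspect the chart of $E_2(C\tau)$ in \cite{Isaksen14a} in this degree and check that the image of $j_*$ offers no surviving candidate: either $\coker(\tau)$ vanishes in that stem and filtration, or the sole candidate is incompatible with the $d_2$ differentials already tabulated in Table \ref{tab:Ctau-E2} and Proposition \ref{prop:Ctau-Adams-d2}. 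As a guide one can compare with the representative $P\,\ol{D_4}$, whose differential $d_2(P\,\ol{D_4}) = h_1 P\,\ol{B_6} + P Q_2$ is computed from Lemma \ref{lem:Ctau-d2-D4} together with the hidden extension $P h_1 \cdot \ol{B_6} = h_1 q_1$ of Lemma \ref{lem:hidden-B6-Ph1}; comparing this with the chosen $\ol{c_0 Q_2}$ pins down the correction term. The main obstacle is therefore entirely one of bookkeeping in a crowded, high-filtration region near the top of our range, since the top-cell reduction already supplies the essential structural input.
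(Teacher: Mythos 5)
Your reduction is sound as far as it goes: $d_2(c_0 Q_2)=0$ on the sphere by the Leibniz rule, and since the top-cell projection $q_*$ commutes with differentials, $d_2(\ol{c_0 Q_2})$ must lie in the image of the bottom-cell map, in degree $(65,11,34)$. This is exactly technique (2) from the proof of Proposition \ref{prop:Ctau-Adams-d2}. But the step you defer to ``chart inspection'' is the entire content of the lemma, and inspection alone cannot finish it: the cokernel of $\tau$ in degree $(65,11,34)$ is \emph{not} zero. It contains $h_1 q_1$, and the image of $h_1 q_1$ in $E_2(C\tau)$ is nonzero --- this is precisely what Lemma \ref{lem:hidden-B6-Ph1} asserts when it identifies $P h_1 \cdot \ol{B_6}$ with $h_1 q_1$ as a nontrivial hidden extension. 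So after your reduction there remains a live candidate, namely $d_2(\ol{c_0 Q_2}) = h_1 q_1$, and excluding it requires an argument; appealing to ``the $d_2$ differentials already tabulated in Proposition \ref{prop:Ctau-Adams-d2}'' is circular, since this lemma is one of the inputs to that proposition.

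Two further claims in your last paragraph are incorrect. First, $\ol{c_0 Q_2}$ does not appear in Table \ref{tab:Ctau-ambiguous}; it is an unambiguous generator, so there is no choice of representative to worry about. Second, ``$P\,\ol{D_4}$'' and the formula $d_2(P\,\ol{D_4}) = h_1 P\,\ol{B_6} + P Q_2$ are not meaningful: $P$ by itself is not an element of $\Ext$ (the symbol $P D_4$ is just the name of a May $E_\infty$-generator, which the hidden extension $c_0 \cdot Q_2 = P D_4$ of Lemma \ref{lem:c0-i1} makes decomposable), so one can neither multiply $\ol{D_4}$ by $P$ nor apply a Leibniz rule to it. The paper's proof assembles exactly the ingredients you gesture at, but correctly: Lemma \ref{lem:c0-i1} gives the relation $h_1 \cdot \ol{c_0 Q_2} = P h_1 \cdot \ol{D_4}$ in $E_2(C\tau)$, where $P h_1$ is a genuine element of $\Ext$; applying $d_2$ and Lemma \ref{lem:Ctau-d2-D4} yields $h_1 \cdot d_2(\ol{c_0 Q_2}) = P h_1^2 \cdot \ol{B_6} + P h_1 Q_2 = h_1^2 q_1 + h_1^2 q_1 = 0$, using Lemma \ref{lem:hidden-B6-Ph1} and the relation $P h_1 Q_2 = h_1^2 q_1$ from \cite{Bruner97}. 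Since the remaining candidate satisfies $h_1 \cdot h_1 q_1 = h_1^2 q_1 \neq 0$, this cancellation excludes it and forces $d_2(\ol{c_0 Q_2}) = 0$. In short, your top-cell reduction is a legitimate first step, but it stops exactly where the difficulty begins; the $h_1$-multiplication argument is what closes the gap.
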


\begin{proof}
Start with the relation $h_1 \cdot \ol{c_0 Q_2} = P h_1 \cdot \ol{D_4}$,
which follows from Lemma \ref{lem:c0-i1}.
Using Lemma \ref{lem:Ctau-d2-D4},
it follows that $h_1 \cdot d_2 (\ol{c_0 Q_2} ) = P h_1^2 \cdot \ol{B_6} + P h_1 Q_2$.
We know from \cite{Bruner97} that $P h_1 Q_2 = h_1^2 q_1$,
and we know from Lemma \ref{lem:hidden-B6-Ph1} that
$P h_1^2 \cdot \ol{B_6} = h_1^2 q_1$.
\end{proof}

\begin{remark}
\label{rem:d2-h1^2e0.e0g}
We emphasize the calculation
$d_2 ( e_0 g \cdot \ol{h_1^2 e_0} ) = 
h_1^6 \cdot \ol{h_1^3 B_1} + c_0 d_0 e_0^2$,
which follows from the Leibniz rule and 
Lemma \ref{lem:_h1^2e0.h1^2e0^2}.
This implies that $h_1^6 \cdot \ol{h_1^3 B_1}$ equals $c_0 d_0 e_0^2$
in $E_3(C\tau)$.
This formula is critical for later
Adams differentials.
\end{remark}


\subsection{Adams $d_3$ differentials for the cofiber of $\tau$}
\label{subsctn:Ctau-d3}

See \cite{Isaksen14a} for a chart of $E_3(C\tau)$.
This chart is complete through
the 70-stem; however, the Adams $d_3$ differentials
are complete only through the 64-stem.

\index{cofiber of tau@cofiber of $\tau$!Adams spectral sequence!E3-page@$E_3$-page}

\begin{remark}
There are a number of classes in $E_2(S^{0,0})$ 
that do not survive to $E_3(S^{0,0})$,
but their images in $E_2(C\tau)$ do survive to
$E_3(C\tau)$.  The first few examples of this phenomenon are
$h_0 y$, $h_0 c_2$, and $h_0^4 Q'$.  These elements give rise to
$E_3(S^{0,0})$-module generators of $E_3(C\tau)$.
\index{y@$y$}
\index{c2@$c_2$}
\index{Q'@$Q'$}
\end{remark}

\begin{remark}
Note the class in the 55-stem labeled ``?".  This class is either
$\ol{h_1 i_1}$ or $\ol{h_1 i_1} + \tau h_1 G$, depending on whether
$d_2(\ol{h_1 i_1})$ is zero or non-zero.
\index{i1@$i_1$}
\index{G@$G$}
In the first case, we have that
$h_1^5 \cdot \ol{h_1 i_1} = \tau g^3$,
from which 
$d_3 (\ol{h_1 i_1})$ would equal $h_1 B_8$.
\index{g3@$g^3$}
\index{B8@$B_8$}
In the second case,
we have that 
$h_1^5 \cdot (\ol{h_1 i_1} + \tau h_1 G) = \tau g^3 + h_1^4 h_5 c_0 e_0$,
from which
$d_3 (\ol{h_1 i_1} + \tau h_1 G)$ would equal zero.
\index{h5c0e0@$h_5 c_0 e_0$}
\end{remark}

The next step is to compute Adams $d_3$ differentials
on the $E_3(S^{0,0})$-module generators of $E_3(C\tau)$.

\begin{prop}
\label{prop:Ctau-Adams-d3}
Table \ref{tab:Ctau-d3} lists some values of the motivic Adams $d_3$ differential
for $C\tau$.  The motivic Adams $d_3$ differential is zero on all other
$E_3(S^{0,0})$-module generators of $E_3(C\tau)$, through the
65-stem, with the possible exception that
$d_3( \ol{h_1 i_1} )$ equals $h_1 B_8$,
if $\ol{h_1 i_1}$ survives to $E_3(C\tau)$.
\end{prop}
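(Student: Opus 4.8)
The plan is to establish Proposition \ref{prop:Ctau-Adams-d3} in the same spirit as the $d_2$ computation of Proposition \ref{prop:Ctau-Adams-d2}: I would compute the Adams $d_3$ differential on each $E_3(S^{0,0})$-module generator of $E_3(C\tau)$ by transferring information from the sphere via the bottom-cell inclusion $S^{0,0} \map C\tau$ and the top-cell projection $C\tau \map S^{1,-1}$, supplemented by the hidden extensions in $E_3(C\tau)$ already recorded in Tables \ref{tab:Ctau-h0}, \ref{tab:Ctau-h1}, and \ref{tab:Ctau-h2}. Concretely, the four techniques listed in the proof of Proposition \ref{prop:Ctau-Adams-d2} all have evident $d_3$-analogues: pushing forward a differential $d_3(x) = y$ from $E_3(S^{0,0})$ (from Table \ref{tab:Adams-d3}) along the bottom cell; pulling back $d_3(\ol{x}) = \ol{y}$ along the top cell up to an error term in the image of the bottom cell; combining a pushed-forward differential with a hidden $h_0$, $h_1$, or $h_2$ extension in $E_3(C\tau)$; and working $h_1$-locally using the relations from Table \ref{tab:Ctau-misc-extn}. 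Most generators yield to a routine application of one of these, so I would relegate the bulk to Table \ref{tab:Ctau-d3} and isolate the genuinely subtle cases into supporting lemmas, exactly as the surrounding text does.

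The main conceptual content is the bookkeeping of which generators survive to $E_3(C\tau)$ and the careful treatment of the handful of ambiguous generators from Table \ref{tab:Ctau-ambiguous}. First I would dispose of the easy generators, then turn to the generators that arise only on $C\tau$ because a class dies on the sphere but survives on the cofiber — as flagged in the remark about $h_0 y$, $h_0 c_2$, and $h_0^4 Q'$. For these I cannot simply transfer a sphere differential, so I would instead exploit module relations over $E_3(S^{0,0})$ together with the known differentials on the sphere: if $a \cdot \ol{x}$ is a class whose $d_3$ is forced by the Leibniz rule and by the computed differential $d_3(a)$ on the sphere, then $d_3(\ol{x})$ is pinned down after dividing by $a$, whenever multiplication by $a$ is injective in the relevant degree. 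The most delicate instance of this pattern is the $55$-stem class that is either $\ol{h_1 i_1}$ or $\ol{h_1 i_1} + \tau h_1 G$; here the value of $d_3$ is genuinely contingent on whether $d_2(\ol{h_1 i_1})$ vanished, and I would phrase the conclusion conditionally exactly as the proposition statement does, using the relation $h_1^5 \cdot \ol{h_1 i_1} = \tau g^3$ (or its analogue with the error term) together with the sphere differential $d_3(\tau g^3) = h_1^6 B_8$ from Lemma \ref{lem:d3-tg^3} and the identification $h_1^6 \cdot \ol{h_1^3 B_1} = c_0 d_0 e_0^2$ highlighted in Remark \ref{rem:d2-h1^2e0.e0g}.

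The hard part will be confirming that the differentials I do not list are genuinely zero, i.e.\ ruling out spurious $d_3$ differentials on each surviving generator through the $65$-stem. Unlike establishing a positive differential, a vanishing statement requires checking that no target of the correct tridegree is a permanent-cycle candidate, which in turn relies on having already fixed the $E_3$-page completely. For most generators this follows because the only possible target is itself seen to survive (by a module relation it supports, or by being nonzero in the image of the bottom cell of a surviving sphere class), so that it cannot be hit. The subtle vanishing arguments, where several formally-possible targets exist, I would extract into lemmas and argue one at a time, typically by multiplying by a suitable element $a$ for which $a \cdot (\text{target})$ is known to be nonzero on $E_4$ while $a \cdot (\text{source})$ already vanishes on $E_3$ — the same contradiction strategy used throughout Section \ref{sctn:d-lemmas}. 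I would carry the single documented exception, $d_3(\ol{h_1 i_1})$ possibly equal to $h_1 B_8$, through the argument transparently, since its status is tied to the unresolved $d_2(\ol{h_1 i_1})$ and to the possible hidden $\tau$ extension from $h_1 i_1$ to $h_1 B_8$ discussed in Remark \ref{rem:tau-h1i1}.
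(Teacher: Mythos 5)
Your overall scaffolding — push forward along the bottom cell, pull back along the top cell modulo bottom-cell error terms, exploit hidden extensions in $E_2(C\tau)$, and treat the $55$-stem class conditionally via $h_1^5 \cdot \ol{h_1 i_1} = \tau g^3$ and $d_3(\tau g^3) = h_1^6 B_8$ — agrees with the paper's proof, which explicitly reuses the techniques of Proposition \ref{prop:Ctau-Adams-d2} and defers the hard cases to lemmas. But there is a genuine gap: none of your listed techniques can produce the differentials $d_3(\ol{h_1 h_3 g}) = d_0^2$ and $d_3(\ol{h_1 h_3 g^2}) = d_0 e_0^2$ of Lemma \ref{lem:Ctau-d3-_h1h3g}, nor rule out $d_3(\ol{h_1^2 g_2}) = N$ and $d_3(\ol{h_1 d_1 g}) = h_1^2 G_3$. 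The class $h_1 h_3 g$ is a permanent cycle on the sphere, so there is no sphere differential to push forward; the top-cell pull-back only says $d_3(\ol{h_1 h_3 g})$ lies in the image of the bottom cell, leaving the error term undetermined; and no Leibniz or multiply-and-divide argument settles it, because this differential encodes genuinely new homotopy-theoretic information. What the paper actually uses is a homotopy-level input: by Lemma \ref{lem:t-h1h3g} (a Toda-bracket argument from Chapter \ref{ch:Adams-hidden}), the class $\kappa^2 \in \{d_0^2\}$ is divisible by $\tau$, hence maps to zero under $\pi_{*,*} \map \pi_{*,*}(C\tau)$, hence $d_0^2$ must die in $E_\infty(C\tau)$, and $d_3(\ol{h_1 h_3 g})$ is the only differential that can kill it. This dictionary between hidden $\tau$ extensions in $\pi_{*,*}$ and forced differentials in $E_r(C\tau)$ — in both directions — is the essential technique your proposal omits.

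The omission also makes your vanishing criterion incorrect as stated. You propose that a target cannot be hit if it is ``nonzero in the image of the bottom cell of a surviving sphere class''; but $d_0^2$ is exactly such a target (a permanent cycle on the sphere detecting $\kappa^2$, with nonzero image in $E_2(C\tau)$), and it is nevertheless hit in $E_3(C\tau)$. The correct criterion is that the homotopy classes detected by the target are not divisible by $\tau$: this is how the paper rules out $d_3(\ol{h_1^2 g_2}) = N$ (via Lemma \ref{lem:tau-h1^2g2}) and $d_3(\ol{h_1 d_1 g}) = h_1^2 G_3$ (via $\tau \eta \{d_1 g\} = 0$, Lemma \ref{lem:t.eta-d1}). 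Two smaller points: the paper notes that the $h_1$-local technique you carry over from the $d_2$ stage is no longer useful at $E_3$, so listing it as one of your four tools is misleading; and the case $d_3(\ol{h_1 G_3}) = \ol{\tau h_0 e_0^3}$ needs the Massey products $h_1 G_3 = \langle h_3, h_1^3, P h_1^2 h_5 \rangle$ and $\langle c_0, h_3, h_1^3 \rangle = h_1^2 e_0$ to establish the module relation $c_0 \cdot \ol{h_1 G_3} = h_1^2 \cdot \ol{P h_1^2 h_5 e_0}$ before your divide-by-$c_0$ step can run, so ``routine transfer'' underestimates that lemma as well.
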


\index{cofiber of tau@cofiber of $\tau$!Adams spectral sequence!d3@$d_3$}
\index{i1@$i_1$}
\index{B8@$B_8$}

\begin{proof}
The techniques for establishing these differentials 
are the same as in 
the proof of Proposition \ref{prop:Ctau-Adams-d2} for $d_2$ differentials,
except that the $h_1$-local calculations are no longer useful.
The few remaining cases are computed in the following lemmas.
\end{proof}

The chart of $E_3(C\tau)$ in \cite{Isaksen14a} indicates the 
Adams $d_3$ differentials, all of which are implied by
the calculations in Tables \ref{tab:Adams-d3} and \ref{tab:Ctau-d3}.
The differentials are complete only through the 64-stem.
Beyond the 64-stem, there are a number of unknown differentials.
\index{Adams chart!cofiber of tau@cofiber of $\tau$}

\begin{lemma}
\label{lem:Ctau-d3-_h1h3g}
\mbox{}
\begin{enumerate}
\item
$d_3 (\ol{h_1 h_3 g} ) = d_0^2$.
\item
$d_3 (\ol{h_1 h_3 g^2}) = d_0 e_0^2$.
\end{enumerate}
\end{lemma}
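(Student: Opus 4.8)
The plan is to deduce both differentials from the hidden $\tau$ extensions recorded in Lemma \ref{lem:t-h1h3g}, following the same strategy used to establish Lemma \ref{lem:Ctau-d2-t^2h1g^2}. The guiding principle, explained in Section \ref{subsctn:hidden-tau}, is that an element of $\pi_{*,*}$ which is divisible by $\tau$ maps to zero under the inclusion $\pi_{*,*} \to \pi_{*,*}(C\tau)$ of the bottom cell. Consequently the class of $E_\infty(S^{0,0})$ detecting such an element cannot survive to $E_\infty(C\tau)$ after pushing forward along $j_*$; it must instead be the target of an Adams differential for $C\tau$.

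First I would treat part (1). Lemma \ref{lem:t-h1h3g} gives a hidden $\tau$ extension from $h_1 h_3 g$ to $d_0^2$, so $\tau \{ h_1 h_3 g \}$ is contained in $\{ d_0^2 \}$; in particular the element $\kappa^2$ detected by $d_0^2$ is divisible by $\tau$ in the $28$-stem. Therefore $\kappa^2$ maps to zero in $\pi_{*,*}(C\tau)$, so the image of $d_0^2$ under the bottom-cell map $E_2(S^{0,0}) \to E_2(C\tau)$ cannot be a permanent cycle and must be hit by a differential. I would then read off from the chart of $E_3(C\tau)$ in \cite{Isaksen14a} that the unique class whose differential can reach $d_0^2$ is $\ol{h_1 h_3 g}$, which yields $d_3(\ol{h_1 h_3 g}) = d_0^2$.

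Part (2) is entirely parallel. Lemma \ref{lem:t-h1h3g} supplies a hidden $\tau$ extension from $h_1 h_3 g^2$ to $d_0 e_0^2$, so the element detected by $d_0 e_0^2$ is $\tau$-divisible, hence maps to zero in $\pi_{*,*}(C\tau)$, and $d_0 e_0^2$ must be hit by a differential in the spectral sequence for $C\tau$. Again the only available source is $\ol{h_1 h_3 g^2}$, giving $d_3(\ol{h_1 h_3 g^2}) = d_0 e_0^2$.

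The main obstacle is the uniqueness step in each case. One must verify, by direct inspection of the relevant region of the $E_3(C\tau)$ chart, both that $\ol{h_1 h_3 g}$ (respectively $\ol{h_1 h_3 g^2}$) survives to $E_3(C\tau)$ and supports a differential of the asserted length, and that no other class in the adjacent stem can account for the required differential on $d_0^2$ (respectively $d_0 e_0^2$). Since this portion of the chart is dense, confirming that there is a single candidate source, and ruling out any competing differential that would instead let $d_0^2$ survive, is where the real work lies; everything else is a formal consequence of $\tau$-divisibility together with the hidden extensions already established in Lemma \ref{lem:t-h1h3g}.
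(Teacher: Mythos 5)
Your proposal is correct and follows essentially the same route as the paper: both arguments use the hidden $\tau$ extensions of Lemma \ref{lem:t-h1h3g} to conclude that the elements detected by $d_0^2$ and $d_0 e_0^2$ are $\tau$-divisible, hence map to zero in $\pi_{*,*}(C\tau)$, forcing these classes to be hit by differentials in the Adams spectral sequence for $C\tau$, with $\ol{h_1 h_3 g}$ and $\ol{h_1 h_3 g^2}$ as the unique possible sources. The uniqueness step you flag as the remaining work is exactly what the paper settles by inspection of the chart.
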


\begin{proof}
We showed in Lemma \ref{lem:t-h1h3g} that
both
$\{ d_0^2 \}$ and $\{d_0 e_0^2 \}$ are divisible by $\tau$ in $\pi_{*,*}$.
Therefore, the classes $d_0^2$ and $d_0 e_0^2$ of $E_\infty(S^{0,0})$
must map to zero in $E_\infty(C\tau)$.  For each element, there is
just one possible differential that can hit it.
\end{proof}

\begin{lemma}
$d_3(\ol{h_1^2 g_2}) = 0$.
\end{lemma}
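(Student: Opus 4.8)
The plan is to show that $\overline{h_1^2 g_2}$ cannot support a $d_3$ differential by exploiting the compatibility of the Adams differential with the top-cell projection, together with the fact that $h_1^2 g_2$ is already a permanent cycle on the sphere. The conceptual work is done by a structural argument; only a short degree check remains at the end.

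First I would record that $h_1^2 g_2$ is a permanent cycle in $E_*(S^{0,0})$. This is immediate from the relation $h_1^2 g_2 = h_3^2 d_1$ noted in Remark \ref{rem:eta-th1g2}, since $h_3$ and $d_1$ are permanent cycles; alternatively, $h_1$ is a permanent cycle and $g_2$ survives by Lemma \ref{lem:d5-g2}. In particular $h_1^2 g_2$ detects the nonzero class $\eta^2 \{g_2\} = \sigma^2 \{d_1\}$, which is $\tau$-torsion.

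Next I would apply the projection $E_3(C\tau) \map E_3(S^{0,0})$ to the top cell, which commutes with the Adams $d_3$ and which, by the definition of the bar notation, sends $\overline{h_1^2 g_2}$ to $h_1^2 g_2$. Since $d_3(h_1^2 g_2) = 0$, the image of $d_3(\overline{h_1^2 g_2})$ under this projection vanishes, so $d_3(\overline{h_1^2 g_2})$ lies in the kernel of the top-cell map, i.e.\ in the image of the bottom-cell inclusion $E_3(S^{0,0}) \map E_3(C\tau)$. Recall from the long exact sequence of Section \ref{sctn:t-Bockstein} that this image is exactly the cokernel of multiplication by $\tau$; equivalently, a class $v$ of $E_3(S^{0,0})$ pushes forward to a nonzero element of $E_3(C\tau)$ only if $v$ is not $\tau$-divisible.

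Finally I would examine the relevant target bidegree, one stem below and three filtrations above $\overline{h_1^2 g_2}$, and verify that every candidate class there lying in the image of the bottom cell is $\tau$-divisible, hence pushes forward to zero. Reading this off the chart of $E_3(C\tau)$ in \cite{Isaksen14a} rules out the only possibility, forcing $d_3(\overline{h_1^2 g_2}) = 0$. The main obstacle is precisely this last bookkeeping step: pinning down the unique candidate bottom-cell class in the target degree and confirming its $\tau$-divisibility. The structural input (permanent cycle plus top-cell compatibility) reduces the problem to a single degree, but the conclusion ultimately rests on that degree containing no surviving bottom-cell class.
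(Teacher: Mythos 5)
Your structural reduction is fine: the top-cell projection commutes with $d_3$ and kills $\ol{h_1^2 g_2}$'s would-be differential on the sphere, so $d_3(\ol{h_1^2 g_2})$ must be (the pushforward of) a bottom-cell class. In the target degree $(s,f,w)=(46,8,25)$ that class is $N$. But your concluding step fails exactly there: you need every bottom-cell candidate in this degree to be $\tau$-divisible and hence zero in $E_3(C\tau)$, whereas $N$ is \emph{not} $\tau$-divisible in $\Ext$ — it is a multiplicative generator — and its image under the bottom-cell inclusion is a nonzero class of $E_3(C\tau)$ (it appears in the chart). So the chart bookkeeping you propose does not rule out $d_3(\ol{h_1^2 g_2}) = N$; it merely exhibits $N$ as a legitimate possible target, which is where the actual difficulty begins.

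What is needed, and what the paper does, is a homotopical argument of the opposite flavor: rather than hoping the target is zero in $E_3(C\tau)$, one shows it is nonzero on $E_\infty(C\tau)$ and therefore can never be hit. Lemma \ref{lem:tau-h1^2g2} (which rests on Lemma \ref{lem:eta-th1g2}, i.e.\ ultimately on the hidden $\nu$ extension on $N$ from Lemma \ref{lem:nu-h2c1}) shows there is no hidden $\tau$ extension from $h_1^2 g_2$ to $N$, so no element of $\{N\}\subset\pi_{46,25}$ is divisible by $\tau$ in homotopy. Consequently $\{N\}$ maps nontrivially to $\pi_{*,*}(C\tau)$ under the bottom-cell inclusion, forcing $N$ to survive to a nonzero class in $E_\infty(C\tau)$; in particular it cannot equal $d_3(\ol{h_1^2 g_2})$. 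The distinction your argument misses is between algebraic $\tau$-divisibility on the $E_2$/$E_3$-page (which governs whether a class vanishes in $E_2(C\tau)$) and $\tau$-divisibility of the detected homotopy classes (which governs whether the class can die later in the $C\tau$ spectral sequence); only the latter settles this differential, and it cannot be read off a chart.
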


\begin{proof}
The only other possibility is that $d_3(\ol{h_1^2 g_2})$ equals $N$.
We showed in Lemma \ref{lem:tau-h1^2g2}
that the elements of $\{ N \}$ are not divisible by $\tau$ in $\pi_{*,*}$.
Therefore, $\{ N \}$ maps to $\pi_{*,*}(C\tau)$ non-trivially.
The only possibility is that $N$ is non-zero in $E_\infty(C\tau)$.
\end{proof}

\begin{lemma}
\label{lem:Ctau-d3-_h1G3}
$d_3(\ol{h_1 G_3} ) = \ol{\tau h_0 e_0^3}$.
\end{lemma}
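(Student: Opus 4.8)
The plan is to avoid computing $d_3(\ol{h_1 G_3})$ head-on and instead detect it after multiplying by $h_1$, where the target becomes a bottom-cell class that is forced to die. The two key inputs are the relation $h_1 \cdot \ol{\tau h_0 e_0^3} = d_0 u$ in $E_2(C\tau)$ from Lemma \ref{lem:h1-th0e0^3}, and the hidden $\tau$ extension from $h_1^2 G_3$ to $d_0 u$ recorded in Table \ref{tab:Adams-tau} (the same extension exploited in Lemma \ref{lem:eta-t^2e0m}).

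First I would argue that $d_0 u$ must be hit by an Adams differential for $C\tau$. Since there is a hidden $\tau$ extension from $h_1^2 G_3$ to $d_0 u$, the homotopy class $\{d_0 u\}$ is divisible by $\tau$ in $\pi_{*,*}$, so it maps to zero in $\pi_{*,*}(C\tau)$ under the inclusion of the bottom cell. Hence $d_0 u$, viewed in $E_\infty(C\tau)$ via the bottom cell, must vanish, exactly as in the proof of Lemma \ref{lem:Ctau-d3-_h1h3g}. I would then rule out a $d_2$ hitting $d_0 u$ using the $d_2$-computations of Proposition \ref{prop:Ctau-Adams-d2} and Table \ref{tab:Ctau-E2} in this tridegree, leaving a $d_3$ as the only possibility.

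Next I would identify the source. Because $h_1$ is a permanent cycle, the Leibniz rule gives $d_3(h_1 \cdot \ol{h_1 G_3}) = h_1 \cdot d_3(\ol{h_1 G_3})$. After checking that the relation $h_1 \cdot \ol{\tau h_0 e_0^3} = d_0 u$ persists from $E_2(C\tau)$ to $E_3(C\tau)$ (both sides are $d_2$-cycles, as $d_0 u$ is a permanent cycle on the sphere), the combination with the previous paragraph shows that $h_1 \cdot \ol{h_1 G_3}$ is the unique available class supporting a $d_3$ hitting $d_0 u$, so $d_3(h_1 \cdot \ol{h_1 G_3}) = d_0 u$. Equating this with $h_1 \cdot d_3(\ol{h_1 G_3})$ and using $h_1 \cdot \ol{\tau h_0 e_0^3} = d_0 u$ forces $d_3(\ol{h_1 G_3})$ to be nonzero and to agree with $\ol{\tau h_0 e_0^3}$ modulo classes annihilated by $h_1$.

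The main obstacle is precisely this last step of ``dividing by $h_1$'': passing from the differential on $h_1 \cdot \ol{h_1 G_3}$ back to $\ol{h_1 G_3}$ itself. This requires controlling the $h_1$-multiplication in the relevant tridegree and excluding extra summands in $d_3(\ol{h_1 G_3})$ that are $h_1$-torsion, as well as any bottom-cell error term in the top-cell comparison $E_3(C\tau) \map E_3(S^{0,0})$. I expect to settle this by inspecting the chart of $E_3(C\tau)$ in \cite{Isaksen14a} together with the list of $E_3(S^{0,0})$-module generators in Table \ref{tab:Ctau-E2} for this stem, which should leave $\ol{\tau h_0 e_0^3}$ as the unique value consistent with both the $h_1$-multiplication and the top-cell projection.
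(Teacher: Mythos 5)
Your argument is circular within the logical architecture of this paper. The crucial input you use to force a differential to hit $d_0 u$ is the hidden $\tau$ extension from $h_1^2 G_3$ to $d_0 u$ recorded in Table \ref{tab:Adams-tau}. But the cited justification for that extension is ``cofiber of $\tau$'': in the proof of Proposition \ref{prop:hidden-tau}, such extensions are deduced from the fact that the target maps to zero in $\pi_{*,*}(C\tau)$, which in turn is known only because $d_0 u$ is killed in $E_\infty(C\tau)$ --- and the differential that kills it is exactly $d_3(h_1 \cdot \ol{h_1 G_3}) = h_1 \cdot \ol{\tau h_0 e_0^3} = d_0 u$, i.e., the Leibniz consequence of the lemma you are trying to prove. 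The direction of deduction in the paper is: Adams differential for $C\tau$ $\Rightarrow$ $d_0 u$ dies in $E_\infty(C\tau)$ $\Rightarrow$ $\{d_0 u\}$ is $\tau$-divisible $\Rightarrow$ hidden $\tau$ extension; you have run this chain backwards, so your ``two key inputs'' include a statement whose only proof passes through the conclusion. (Your other input, the relation $h_1 \cdot \ol{\tau h_0 e_0^3} = d_0 u$ of Lemma \ref{lem:h1-th0e0^3}, is a purely algebraic fact about $E_2(C\tau)$ and is unproblematic; likewise your final ``divide by $h_1$'' step is harmless, since once $d_3(\ol{h_1 G_3}) \neq 0$ is known, there is only one possible non-zero value in the target tridegree.)

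The paper avoids this trap by arguing algebraically with Massey products: from Table \ref{tab:Massey}, $h_1 G_3 = \langle h_3, h_1^3, P h_1^2 h_5 \rangle$ and $\langle c_0, h_3, h_1^3 \rangle = h_1^2 e_0$, so shuffling gives $c_0 \cdot h_1 G_3 = P h_1^4 h_5 e_0$ in $\Ext$, hence $c_0 \cdot \ol{h_1 G_3} = h_1^2 \cdot \ol{P h_1^2 h_5 e_0}$ in $E_3(C\tau)$ (the possible error term $h_1^2 B_{21}$ dies by a $d_2$). Since $d_3(\ol{P h_1^2 h_5 e_0})$ is non-zero --- a differential detected by projection to the top cell, ultimately coming from $d_3(P h_5 e_0) = h_1^2 x'$ on the sphere (Lemma \ref{lem:d3-h1h5e0}), not from any $C\tau$-dependent hidden extension --- it follows that $d_3(\ol{h_1 G_3})$ is non-zero, and uniqueness finishes. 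If you want to salvage your multiplication strategy, you must replace the appeal to Table \ref{tab:Adams-tau} by an independent reason that $d_0 u$ (or $\ol{\tau h_0 e_0^3}$) cannot survive, for instance by a comparison of this kind that only uses differentials pulled back or pushed forward between $E_r(S^{0,0})$ and $E_r(C\tau)$.
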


\begin{proof}
Table \ref{tab:Massey}
shows that $h_1 G_3 = \langle h_3, h_1^3, P h_1^2 h_5 \rangle$.
It follows that
$c_0 \cdot h_1 G_3 = \langle c_0, h_3, h_1^3 \rangle P h_1^2 h_5$.
Table \ref{tab:Massey} shows that 
$\langle c_0, h_3, h_1^3 \rangle = h_1^2 e_0$.

We have now shown that $c_0 \cdot h_1 G_3 = P h_1^4 h_5 e_0$.
It follows that either
$c_0 \cdot \ol{h_1 G_3} = h_1^2 \cdot \ol{P h_1^2 h_5 e_0}$ or
$c_0 \cdot \ol{h_1 G_3}= h_1^2 \cdot \ol{P h_1^2 h_5 e_0} + h_1^2 B_{21}$.
In either case,
$c_0 \cdot \ol{h_1 G_3} = h_1^2 \cdot \ol{P h_1^2 h_5 e_0}$ in $E_3(C\tau)$
since $h_1^2 B_{21}$ is hit by an Adams $d_2$ differential.

Since $d_3 ( h_1^2 \cdot \ol{P h_1^2 h_5 e_0} ) = d_0 u'$ is non-zero,
we conclude that $d_3(\ol{h_1 G_3})$ is also non-zero, and there is
just one possible non-zero value.
\end{proof}

\begin{lemma}
$d_3 (\ol{h_1 d_1 g}) = 0$.
\end{lemma}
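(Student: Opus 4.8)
The plan is to follow the same strategy as in Lemma~\ref{lem:Ctau-d3-_h1h3g} and the computations summarized in Proposition~\ref{prop:Ctau-Adams-d3}: enumerate the possible nonzero values of $d_3(\ol{h_1 d_1 g})$ by inspecting $E_3(C\tau)$ in the relevant degree, and then eliminate the single competing candidate. By degree considerations there should be exactly one other class $y$ in the target bidegree, lying in the image of the bottom-cell map $E_3(S^{0,0}) \map E_3(C\tau)$; the claim to prove is that $d_3(\ol{h_1 d_1 g})$ cannot equal $y$, leaving $0$ as the only option.

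The key criterion, which already drove the proof of Lemma~\ref{lem:Ctau-d3-_h1h3g} and the vanishing of the $d_3$ differential on $\ol{h_1^2 g_2}$, is that a permanent cycle $y$ of $E_\infty(S^{0,0})$ pushed forward along the inclusion of the bottom cell is nonzero in $E_\infty(C\tau)$ precisely when the homotopy class $\{y\}$ is not divisible by $\tau$ in $\pi_{*,*}$. Thus it suffices to verify that $\{y\}$ is not $\tau$-divisible: then $y$ survives to $E_\infty(C\tau)$ and cannot be hit by any differential, forcing $d_3(\ol{h_1 d_1 g}) = 0$.

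To establish non-$\tau$-divisibility of the candidate $y$, I would build on the $\tau$-extension analysis of Section~\ref{subsctn:hidden-tau}, and in particular on Lemma~\ref{lem:t.eta-d1}, where it was shown that $\tau \eta \{d_1 g\}$ is zero and that there is no hidden $\tau$ extension associated with $h_1 d_1 g$. Translating this through the long exact sequence for $C\tau$ shows that the relevant class in the target degree is not the target of any $\tau$-extension and hence is not $\tau$-divisible. As a backup, should the candidate $y$ fail to be a clean bottom-cell class, I would instead argue by module structure over $E_3(S^{0,0})$: multiply $\ol{h_1 d_1 g}$ by $h_1$ (or $h_2$) and use the known relations for $d_1 g$, for instance $h_1 \cdot h_1^2 B_6 = \tau h_2^2 d_1 g$ from Lemma~\ref{lem:h1-h1^2B6}, to show that the corresponding multiple of $d_3(\ol{h_1 d_1 g})$ must vanish while the same multiple of $y$ is nonzero on $E_3(C\tau)$, yielding the required contradiction.

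The main obstacle is the bookkeeping rather than any new calculation: confirming from the chart in \cite{Isaksen14a} that there is a unique competing candidate $y$ in the target degree, and then pinning down exactly which non-$\tau$-divisibility statement from the $S^{0,0}$ computation applies to it. Once the correct class is identified, the survival criterion for $C\tau$ immediately yields the vanishing of the differential, so the entire delicacy lies in matching the candidate to the appropriate hidden-$\tau$-extension (non-)result.
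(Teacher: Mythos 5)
Your proposal is correct and takes essentially the same route as the paper: the unique competing target is $h_1^2 G_3$, and the paper rules it out by noting that if the differential occurred then $\{h_1^2 G_3\}$ in $\pi_{53,30}$ would be divisible by $\tau$, whose only possible source is $\tau \{h_1 d_1 g\}$, which vanishes by Lemma \ref{lem:t.eta-d1}. Your survival criterion for bottom-cell classes together with the appeal to the hidden-$\tau$-extension analysis of Lemma \ref{lem:t.eta-d1} is precisely this argument, so the backup module-structure argument is not needed.
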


\begin{proof}
The only other possibility is that
$d_3 (\ol{h_1 d_1 g}) = h_1^2 G_3$.
If this were the case, then
$\{ h_1^2 G_3 \}$ in $\pi_{53,30}$ would be divisible by $\tau$.
If $\{ h_1^2 G_3 \}$ were divisible by $\tau$, then
the only possibility would be that
$\tau \{ h_1 d_1 g \} = \{ h_1^2 G_3 \}$.
However, $\tau \{ h_1 d_1 g\}$ is zero 
by Lemma \ref{lem:t.eta-d1}.
\end{proof}

\begin{lemma}
\label{lem:Ctau-d3-_h1^3D4}
$d_3( \ol{h_1^3 D_4}) = h_1 B_{21}$.
\end{lemma}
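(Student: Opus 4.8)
The plan is to reduce the claim to already-established facts by identifying $\ol{h_1^3 D_4}$ with $h_1^3 \cdot \ol{D_4}$ and then exploiting the relation $c_0 \cdot i_1 = h_1^4 D_4$ from Lemma \ref{lem:c0-i1}. First I would observe that, since projection to the top cell is a map of $E_2(S^{0,0})$-modules, $h_1^3 \cdot \ol{D_4}$ projects to $h_1^3 D_4$ and is therefore a legitimate choice for $\ol{h_1^3 D_4}$. I then need to check that this class survives to $E_3(C\tau)$: by Lemma \ref{lem:Ctau-d2-D4} we have $d_2(\ol{D_4}) = h_1 \cdot \ol{B_6} + Q_2$, so $d_2(h_1^3 \cdot \ol{D_4}) = h_1^4 \cdot \ol{B_6} + h_1^3 Q_2$, and this vanishes because $h_1^4 \cdot \ol{B_6} = h_1^3 Q_2$ in $E_2(C\tau)$ (this equality is exactly what is proved inside Lemma \ref{lem:B6.h1^3}). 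Thus $\ol{h_1^3 D_4}$ is a genuine $E_3$-class and the differential $d_3(\ol{h_1^3 D_4})$ is well-posed.

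The main computation would then proceed by multiplying up into a degree where a known differential lives. Using $c_0 \cdot i_1 = h_1^4 D_4$, one checks that $c_0 \cdot \ol{h_1 i_1}$ and $h_1^2 \cdot \ol{h_1^3 D_4}$ project to the same class $h_1^5 D_4 = h_1 c_0 i_1$ under the top-cell map, so they agree in $E_2(C\tau)$ modulo the image of the bottom cell. Applying $d_3$ and using $d_3(\ol{h_1 i_1}) = h_1 B_8$ from Proposition \ref{prop:Ctau-Adams-d3} together with the relation $c_0 B_8 = h_1^2 B_{21}$ from the proof of Lemma \ref{lem:t-B8}, I would obtain
\[
d_3\bigl(c_0 \cdot \ol{h_1 i_1}\bigr) = c_0 \cdot h_1 B_8 = h_1 \cdot c_0 B_8 = h_1^3 B_{21},
\]
and hence $h_1^2 \cdot d_3(\ol{h_1^3 D_4}) = h_1^3 B_{21}$. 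Dividing by $h_1^2$ (justified by inspecting the relevant $E_3(C\tau)$-degree, where $h_1 B_{21}$ is the only class mapping to $h_1^3 B_{21}$ under multiplication by $h_1^2$) then yields $d_3(\ol{h_1^3 D_4}) = h_1 B_{21}$.

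The hard part will be making this argument genuinely \emph{unconditional}, since the differential $d_3(\ol{h_1 i_1}) = h_1 B_8$ is only known subject to $\ol{h_1 i_1}$ surviving to $E_3(C\tau)$, which Proposition \ref{prop:Ctau-Adams-d3} leaves open. To circumvent this I would run the computation with the definitely-surviving class $h_1^2 \cdot \ol{h_1^3 D_4}$ rather than with $\ol{h_1 i_1}$: the task becomes showing that $h_1^3 B_{21} = c_0^2 B_1$ (using $c_0 B_1 = h_1 B_8$ and $c_0 B_8 = h_1^2 B_{21}$) must be a $d_3$-boundary in $E_3(C\tau)$ with $h_1^2 \cdot \ol{h_1^3 D_4}$ as its only available source, and then controlling the bottom-cell correction term so that the conclusion does not depend on which of the two possibilities for $\ol{h_1 i_1}$ holds. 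A useful cross-check during this step is the already-recorded differential $d_3(h_5 \cdot \ol{h_1^2 e_0}) = \ol{h_1^3 B_1}$ from the proof of Lemma \ref{lem:d3-h1h5e0}, which governs the $B_1$-tower on the top cell and pins down the fate of $c_0^2 B_1$. The remaining bookkeeping — verifying the degree in which the final division by $h_1^2$ is injective, and confirming that $h_1 B_{21}$ is the unique nonzero candidate target — is routine against the chart in \cite{Isaksen14a}.
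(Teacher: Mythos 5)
Your proposal has two genuine gaps, and the second one is fatal to the entire multiplicative framework you set up.

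First, your main computation relies on $d_3(\ol{h_1 i_1}) = h_1 B_8$, but this is \emph{not} an established fact in the paper: Proposition \ref{prop:Ctau-Adams-d3} and Table \ref{tab:Ctau-d3} explicitly leave the value of $d_3$ on the surviving class in the 55-stem as an open question (it is tied to the unresolved hidden $\tau$ extension of Remark \ref{rem:tau-h1i1}, and depends on the unknown $d_2(\ol{h_1 i_1})$). You acknowledge this and propose to make the argument unconditional, but your workaround cannot succeed, for the following reason. Second, and decisively: the class you want to identify as the value of $h_1^2 \cdot d_3(\ol{h_1^3 D_4})$, namely $h_1^3 B_{21} = c_0^2 B_1$, is already \emph{zero} in $E_3(C\tau)$. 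Indeed Lemma \ref{lem:d2-B22} gives $d_2(B_{22}) = h_1^2 B_{21}$ in $E_2(S^{0,0})$, and pushing this forward along the inclusion of the bottom cell shows that $h_1^2 B_{21}$, hence also $h_1^3 B_{21} = d_2(h_1 B_{22})$, is a $d_2$-boundary in $E_2(C\tau)$. Consequently the equation $h_1^2 \cdot d_3(\ol{h_1^3 D_4}) = h_1^3 B_{21}$ reads $0 = 0$ on the $E_3$-page and carries no information: multiplication by $h_1^2$ out of the tridegree of $h_1 B_{21}$ is the zero map on $E_3(C\tau)$, not an injection, so the ``division by $h_1^2$'' step is impossible. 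Your proposed cross-check via $d_3(h_5 \cdot \ol{h_1^2 e_0}) = \ol{h_1^3 B_1}$ does not help, since that differential concerns the top-cell lift $\ol{h_1^3 B_1}$, not the bottom-cell class $c_0^2 B_1$.

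The repair is to multiply by $c_0$ rather than by $h_1^2$, because $c_0$-multiplication lands in a degree that survives to $E_3$. This is the paper's route: by Lemma \ref{lem:_D4.h1^3c0}, $c_0 \cdot \ol{h_1^3 D_4}$ equals $h_2 B_5$ or $h_2 B_5 + h_1^2 X_3$, and these agree in $E_3(C\tau)$ since $h_1^2 X_3$ is a $d_2$-boundary (via $d_2(h_1 D_3')$, Lemma \ref{lem:d2-X2}). The \emph{unconditional} sphere differential $d_3(h_2 B_5) = h_1 B_8 d_0$ (Lemma \ref{lem:d3-h2B5}) then forces $c_0 \cdot d_3(\ol{h_1^3 D_4}) = h_1 B_8 d_0 \neq 0$, so $d_3(\ol{h_1^3 D_4})$ is non-zero, and $h_1 B_{21}$ is the only possible value; this is consistent because $c_0 \cdot h_1 B_{21} = c_0 d_0 B_1 = d_0 \cdot h_1 B_8$. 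Note that the paper's argument never touches $\ol{h_1 i_1}$ at all, which is exactly what makes it unconditional.
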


\begin{proof}
Recall from Lemma \ref{lem:_D4.h1^3c0} that
$h_1^3 c_0 \cdot \ol{D_4}$ equals either 
$h_2 B_5$ or $h_2 B_5 + h_1^2 X_3$.
It follows that $c_0 \cdot \ol{h_1^3 D_4}$ equals either
$h_2 B_5$ or $h_2 B_5 + h_1^2 X_3$.
However, these two elements are equal in $E_3(C\tau)$ since
$h_1^2 X_3$ is the target of an Adams $d_2$ differential.

We know that $d_3 (h_2 B_5) = h_1 B_8 d_0$ 
by Table \ref{tab:Adams-d3}.
It follows that $d_3(\ol{h_1^3 D_4})$ is non-zero, and there
is just one possibility.
\end{proof}

\begin{lemma}
\label{lem:Ctau-d3-_Ph5c0e0}
$d_3 (\ol{P h_5 c_0 e_0}) = \ol{h_1^2 c_0 x'} + U$.
\end{lemma}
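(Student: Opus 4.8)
The plan is to compute this differential by the top-cell method, exactly as in the proofs of Lemmas \ref{lem:Ctau-d2-D4} and \ref{lem:Ctau-d3-_h1^3D4}. The natural starting point is the differential $d_3(P h_5 c_0 e_0) = h_1^2 c_0 x'$ in $E_3(S^{0,0})$, which I would obtain simply by multiplying the differential $d_3(P h_5 e_0) = h_1^2 x'$ of Lemma \ref{lem:d3-h1h5e0} by $c_0$. Pulling this back along the projection $C\tau \map S^{1,-1}$ to the top cell then yields $d_3(\ol{P h_5 c_0 e_0}) = \ol{h_1^2 c_0 x'} + \epsilon$, where the error term $\epsilon$ necessarily lies in the image of the inclusion $E_3(S^{0,0}) \map E_3(C\tau)$ of the bottom cell. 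The $\ol{h_1^2 c_0 x'}$ summand is forced, since $d_3$ commutes with projection to the top cell; the entire content of the lemma is the identification $\epsilon = U$.

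To pin down $\epsilon$, I would multiply into the bottom cell and compare with a differential that is already known on $S^{0,0}$. The $h_0$-extensions of Lemma \ref{lem:_c0e0.h0} make $h_0 \cdot \ol{P c_0 e_0}$ a bottom-cell class, and I expect a corresponding (hidden) $h_0$-extension carrying $\ol{P h_5 c_0 e_0}$ into the bottom cell, so that $h_0 \cdot d_3(\ol{P h_5 c_0 e_0})$ becomes the image of a differential computed entirely in $E_3(S^{0,0})$ by the Leibniz rule. On the other side, $h_0 \cdot \ol{h_1^2 c_0 x'}$ is governed by the $h_0$-action in $E_3(C\tau)$ and is expected to vanish because the class is $h_1$-divisible, leaving $h_0 \cdot \epsilon = h_0 U$. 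A degree count in the target then should show that $U$ is the unique bottom-cell class compatible with this identity, which forces $\epsilon = U$.

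The hard part will be precisely this determination of the bottom-cell error term: the top-cell pullback only fixes the differential modulo the image of the bottom cell, and there is more than one bottom-cell class in the relevant degree, so one must produce an auxiliary relation that actually detects $\epsilon$ and then verify that the comparison singles out $U$. A preliminary subtlety that must also be dispatched is that $U$ supports a $d_2$ differential on $S^{0,0}$, namely $d_2(U) = P h_1^2 x'$ by Lemma \ref{lem:d2-Q1}; before $U$ can serve as a $d_3$-target in $C\tau$ one must check that it survives to $E_3(C\tau)$. This holds because its $d_2$-target $P h_1^2 x'$ becomes a $d_2$-boundary in $C\tau$ (it is $h_1$ times the target $P h_1 x'$ of the differential in Lemma \ref{lem:Ctau-d2-h1c0x'}), so the appropriate representative is a $d_2$-cycle and persists to $E_3(C\tau)$, where it is available as the target of the asserted $d_3$.
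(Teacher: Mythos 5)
Your top-cell setup is reasonable as far as it goes: $d_3$ commutes with the projection $E_r(C\tau) \map E_r(S^{0,0})$, and $d_3(P h_5 c_0 e_0) = h_1^2 c_0 x'$ on the sphere does follow from Lemma \ref{lem:d3-h1h5e0} by multiplying by $c_0$, so the differential is determined up to its bottom-cell content. But your mechanism for pinning down that content fails, in two ways. First, the $h_0$-multiplication argument is vacuous: by Lemma \ref{lem:d2-Q1} we have $d_2(R_2) = h_0 U$, so $h_0 U$ is a $d_2$-boundary and hence zero in $E_3(C\tau)$; the identity $h_0 \cdot \epsilon = h_0 U$ you hope to extract therefore reads $h_0 \cdot \epsilon = 0$ and is equally consistent with $\epsilon = 0$ and $\epsilon = U$. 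Moreover, the hidden $h_0$ extension "carrying $\ol{P h_5 c_0 e_0}$ into the bottom cell" is never produced; no such extension exists (Table \ref{tab:Ctau-h0} and Theorem \ref{thm:hidden}), so $h_0 \cdot d_3(\ol{P h_5 c_0 e_0}) = 0$ carries no information at all. Second, your survival claim for $U$ is backwards: $d_2(U) = P h_1^2 x'$ is a nonzero class of $E_2(C\tau)$, and the fact that this class is simultaneously the $d_2$-boundary of $h_1 \cdot \ol{h_1 c_0 x'}$ does not make $U$ a $d_2$-cycle---it makes the sum $\ol{h_1^2 c_0 x'} + U$ a $d_2$-cycle while neither summand survives separately. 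This is not a technicality; it is the entire reason the $U$ term appears in the statement, and it also means your decomposition of the target as "$\ol{h_1^2 c_0 x'}$ plus a bottom-cell class of $E_3(C\tau)$" is not well-formed.

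The paper's proof sidesteps all of this by multiplying by $h_1$ instead. One has $h_1 \cdot \ol{P h_5 c_0 e_0} = P h_1 \cdot \ol{h_5 c_0 e_0}$, possibly plus $h_1^2 q_1$, and $d_3(\ol{h_5 c_0 e_0}) = \ol{h_1^2 B_8}$ is already known by the top-cell method (Table \ref{tab:Ctau-d3}); since $d_3(h_1^2 q_1) = 0$, this gives $d_3(h_1 \cdot \ol{P h_5 c_0 e_0}) = P h_1 \cdot \ol{h_1^2 B_8}$. The hidden extension $P h_1 \cdot \ol{h_1^2 B_8}$ is then computed from the relation $P h_1 \cdot B_8 = c_0 x'$: it equals either $h_1^2 \cdot \ol{h_1 c_0 x'}$ or $h_1^2 \cdot \ol{h_1 c_0 x'} + h_1 U$, and it must be the latter because a product of classes surviving to $E_3(C\tau)$ is a $d_2$-cycle and only the latter combination is one. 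Dividing by $h_1$ gives the lemma. If you want to salvage your route, the missing ingredient is a uniqueness statement, not an $h_0$-multiplication: you would need to show that, modulo $\tau$-divisible classes, $U$ is the only element in degree $(64,14,34)$ with $d_2$ equal to $P h_1^2 x'$, and that there are no nonzero bottom-cell $d_2$-cycles in that degree which could be added to the target.
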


\begin{proof}
First note that either
$h_1 \cdot \ol{P h_5 c_0 e_0} = P h_1 \cdot \ol{h_5 c_0 e_0}$
or
$h_1 \cdot \ol{P h_5 c_0 e_0} = P h_1 \cdot \ol{h_5 c_0 e_0} + h_1^2 q_1$.
In either case,
$d_3(h_1 \cdot \ol{P h_5 c_0 e_0}) = P h_1 \cdot \ol{h_1^2 B_8}$ since
$d_3 (\ol{h_5 c_0 e_0}) = \ol{h_1^2 B_8}$ and
$d_3 (h_1^2 q_1) = 0$.

Finally, we must compute that
$P h_1 \cdot \ol{h_1^2 B_8} = h_1 \cdot \ol{h_1^2 c_0 x'} + h_1 U$.
Because of the relation $B_8 \cdot P h_1 = c_0 x'$,
either
$P h_1 \cdot \ol{h_1^2 B_8} = h_1^2 \cdot \ol{h_1 c_0 x'}$ or
$P h_1 \cdot \ol{h_1^2 B_8} = h_1^2 \cdot \ol{h_1 c_0 x'} + h_1 U$.
The second case must be correct because this is the element
that survives to $E_3(C\tau)$.
\end{proof}

\subsection{Adams $d_4$ differentials for the cofiber of $\tau$}
\label{subsctn:Ctau-d4}

See \cite{Isaksen14a} for a chart of $E_4(C\tau)$.
This chart is complete through
the 64-stem.  Beyond the 64-stem, 
because of unknown earlier differentials,
the actual $E_4$-page is a subquotient of what is shown in the chart.
\index{Adams chart!cofiber of tau@cofiber of $\tau$}

The next step is to compute Adams $d_4$ differentials
on the $E_4(S^{0,0})$-module generators of $E_4(C\tau)$.

\begin{prop}
\label{prop:Ctau-d4}
The motivic Adams $d_4$
differential for the cofiber of $\tau$ is zero
on all $E_4(S^{0,0})$-module generators of $E_4(C\tau)$
through the 63-stem, except that:
\begin{enumerate}
\item
$d_4( h_0^{15} h_6) = \ol{\tau P^2 h_0 d_0^2 e_0}$.
\item
$d_4( \ol{j_1} )$ might equal $B_{21}$.
\end{enumerate}
\end{prop}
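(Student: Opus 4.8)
The plan is to reuse the template from the proofs of Propositions \ref{prop:Ctau-Adams-d2} and \ref{prop:Ctau-Adams-d3}, now one page later. For each $E_4(S^{0,0})$-module generator of $E_4(C\tau)$ in stems at most $63$, I would compute $d_4$ by combining three inputs: push-forward of the $S^{0,0}$ differentials of Proposition \ref{prop:Adams-d4} along the bottom-cell inclusion $S^{0,0} \map C\tau$; pull-back along the top-cell projection $C\tau \map S^{1,-1}$, valid up to a bottom-cell error term; and the Leibniz rule combined with the $E_2(C\tau)$-module relations and hidden extensions recorded in Tables \ref{tab:Ctau-h0}, \ref{tab:Ctau-h1}, and \ref{tab:Ctau-h2} and in Table \ref{tab:Ctau-E2}. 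Enumerating the generators from Table \ref{tab:Ctau-E2} and the chart in \cite{Isaksen14a}, I expect to show $d_4 = 0$ in almost every case: either the underlying $S^{0,0}$ class already has vanishing $d_4$, or every potential target has been killed by a $d_2$ or $d_3$ differential and no longer survives to $E_4(C\tau)$, or a module relation forces the value to be zero.

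The one genuinely nonzero differential, $d_4(h_0^{15} h_6) = \ol{\tau P^2 h_0 d_0^2 e_0}$, I would deduce from the image of $J$ \cite{Adams66}, and this is where the main difficulty lies. The source $h_0^{15} h_6$ is a bottom-cell class in the $h_6$-tower, whose fate is dictated by the $v_1$-periodic image-of-$J$ family, in the same pattern that yields $d_5(h_0^{22} h_6) = P^6 d_0$ in Proposition \ref{prop:Adams-d5}. The target is a top-cell class precisely because $\tau P^2 h_0 d_0^2 e_0$ is $\tau$-torsion, so that $\ol{\tau P^2 h_0 d_0^2 e_0}$ is defined; the corresponding behavior in $S^{0,0}$ is carried by a $\tau$-multiple, and this is exactly the mechanism that converts an image-of-$J$ differential into a bottom-cell-to-top-cell differential on $C\tau$. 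The delicate step is to track this interaction carefully enough to confirm that the image-of-$J$ data forces the value $\ol{\tau P^2 h_0 d_0^2 e_0}$ rather than zero or a bottom-cell class; the identification of the target itself should then be automatic, since it is the only class of the correct degree surviving to $E_4(C\tau)$.

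Finally, I would leave $d_4(\ol{j_1})$ as a stated uncertainty rather than resolve it. By Remark \ref{rem:Adams-d4-C'}, the equality $d_4(\ol{j_1}) = B_{21}$ is equivalent to the $S^{0,0}$ differential $d_4(C') = h_2 B_{21}$ and to the hidden $\tau$ extension from $j_1$ to $B_{21}$ discussed after Proposition \ref{prop:hidden-tau}, none of which we have been able to establish or refute. Accordingly I would record the value of $d_4$ on $\ol{j_1}$ as possibly $B_{21}$ and note the three-way equivalence, flagging that resolving any one of these claims resolves all of them. Every other generator should yield to the cell-comparison machinery already exercised for $d_2$ and $d_3$, so the only real obstacle is the image-of-$J$ computation for $h_0^{15} h_6$.
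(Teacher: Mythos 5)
Your background machinery (bottom-cell push-forward, top-cell pull-back up to a bottom-cell error term, Leibniz with the hidden extensions in $E_2(C\tau)$) is the right starting point, and your treatment of $\ol{j_1}$ --- leaving it unresolved and citing the equivalence with $d_4(C') = h_2 B_{21}$ from Remark \ref{rem:Ctau-d4-C'} --- matches the paper. But there is a genuine gap: you never confront the two cases that the paper's proof singles out as the only difficult ones, namely $d_4(h_0 D_2) = 0$ (Lemma \ref{lem:d4-h0D2}) and $d_4(\ol{h_3 d_1 g}) = 0$ (Lemma \ref{lem:d4_h3d1g}), and the machinery you propose is structurally unable to decide either of them. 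The class $h_0 D_2$ does not survive in the Adams spectral sequence for $S^{0,0}$, yet it does survive to $E_4(C\tau)$ --- it is one of the new module generators of the same kind as $h_0 y$, $h_0 c_2$, and $h_0^4 Q'$, and it would not even appear in an enumeration taken from Table \ref{tab:Ctau-E2}. Bottom-cell push-forward therefore says nothing about it, and since it is a bottom-cell class its top-cell projection vanishes identically on $E_2$-pages, so pull-back says nothing either; meanwhile its potential target $h_1 Q_1$ survives to $E_4(C\tau)$ and no module relation kills the pairing. So none of your three alternatives (vanishing $d_4$ on the underlying $S^{0,0}$ class, target already dead, module relation) applies. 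Likewise, for the top-cell class $\ol{h_3 d_1 g}$, pull-back determines $d_4$ only up to a bottom-cell error term, and the candidate error term $P h_1^3 h_5 e_0$ is precisely the issue; your proposal acknowledges such error terms in general but offers no way to rule this one out.

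The missing idea is the homotopy-theoretic $\tau$-divisibility criterion coming from the cofiber sequence: if a class of $E_\infty(S^{0,0})$ detects an element of $\pi_{*,*}$ that is \emph{not} divisible by $\tau$, then that element has nonzero image in $\pi_{*,*}(C\tau)$, so the corresponding class of $E_r(C\tau)$ can never be the target of a differential. The paper applies this twice: Lemma \ref{lem:tau-D11} (whose proof requires Adams-Novikov input on $\beta_{12/6}$) shows that $h_0 h_2 h_5 i$ detects an element of $\pi_{57,30}$ that is not divisible by $\tau$, and the image of that element in $\pi_{57,30}(C\tau)$ can only be detected by $h_1 Q_1$, so $h_1 Q_1$ cannot equal $d_4(h_0 D_2)$; and Lemma \ref{lem:t.h3d1g} shows that $\{ P h_1^3 h_5 e_0 \}$ in $\pi_{59,33}$ is not divisible by $\tau$, so $P h_1^3 h_5 e_0$ cannot be hit by any differential for $C\tau$, giving $d_4(\ol{h_3 d_1 g}) = 0$. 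A secondary misjudgment: you cast $d_4(h_0^{15} h_6)$ as the main difficulty, but the paper disposes of it, and of all remaining generators, by degree reasons; the real content of the proposition is in the two vanishing lemmas above, which your proposal leaves unproved.
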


\index{cofiber of tau@cofiber of $\tau$!Adams spectral sequence!d4@$d_4$}
\index{h6@$h_6$}
\index{P2d02e0@$P^2 d_0^2 e_0$}

\begin{proof}
For degree reasons, there are very few possible differentials.
The only difficult cases are addressed in Lemmas \ref{lem:d4-h0D2}
and \ref{lem:d4_h3d1g}.
\end{proof}

The chart of $E_4(C\tau)$ in \cite{Isaksen14a} indicates the 
Adams $d_4$ differentials, all of which are implied by
the calculations in Proposition \ref{prop:Ctau-d4} and 
Table \ref{tab:Adams-d4}.
The differentials are complete only through the 63-stem.
Beyond the 63-stem, there are a number of unknown differentials.
\index{Adams chart!cofiber of tau@cofiber of $\tau$}

\begin{remark}
Recall that $h_0^{15} h_6$ does not survive to $E_4(S^{0,0})$,
so this element is an $E_4(S^{0,0})$-module generator of $E_4(C\tau)$.
This is the reason that the formula for $d_4(h_0^{15} h_6)$ appears
in the statement of Proposition \ref{prop:Ctau-d4}.
\index{h6@$h_6$}
\end{remark}

\begin{remark}
\label{rem:Ctau-d4-C'}
The possible differential
$d_4(C') = h_2 B_{21}$ in $E_4(S^{0,0})$ 
mentioned in Proposition \ref{prop:Adams-d4}
occurs if and only if
$d_4 (\ol{j_1}) = B_{21}$ in $E_4(C\tau)$.
This follows immediately from the relation $h_2 \cdot \ol{j_1} = C'$.
\index{C'@$C'$}
\index{B21@$B_{21}$}
\index{j1@$j_1$}
\end{remark}

\begin{lemma}
\label{lem:d4-h0D2}
$d_4(h_0 D_2) = 0$.
\end{lemma}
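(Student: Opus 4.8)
The plan is to leverage the hidden extension $h_0 \cdot \ol{D_4} = D_2$ in $E_2(C\tau)$ recorded in the proof of Lemma~\ref{lem:Ctau-d2-D4}. Multiplying by $h_0$ gives $h_0 D_2 = h_0^2 \ol{D_4}$, so I would first analyze $h_0^2 \ol{D_4}$. Since $d_2(\ol{D_4}) = h_1 \cdot \ol{B_6} + Q_2$ and $h_0 h_1 = 0$, the Leibniz rule yields $d_2(h_0^2 \ol{D_4}) = h_0^2 Q_2$; the very fact that $h_0 D_2$ is present on $E_4(C\tau)$ therefore forces $h_0^2 Q_2 = 0$ in $E_2(C\tau)$, which I would record at the outset since it is what guarantees that $h_0 D_2$ is a genuine cycle up through this stage. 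Because $d_2(D_2) = h_0 Q_2$ holds on the sphere by Lemma~\ref{lem:d2-D2}, the sphere class $h_0 D_2$ already supports a differential $d_2(h_0 D_2) = h_0^2 Q_2$, an element that becomes zero in $C\tau$; consequently one cannot simply transport a sphere $d_4$ through the bottom-cell inclusion, and this is precisely what makes the case genuinely difficult rather than a matter of degree-counting.

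After confirming from the chart of $E_4(C\tau)$ in \cite{Isaksen14a} that there is a single possible nonzero value $T$ for $d_4(h_0 D_2)$, I would rule it out by passing to the sphere along the top-cell projection $q_* \colon E_4(C\tau) \to E_4(S^{0,0})$. This map commutes with $d_4$ and sends $\ol{D_4}$ to $D_4$, so $q_*(h_0 D_2) = h_0^2 D_4$ and hence $q_*(T) = d_4(h_0^2 D_4)$. Using the sphere differential $d_2(D_4) = h_1 B_6$ of Lemma~\ref{lem:d2-D4} together with the $d_4$ computations collected in Table~\ref{tab:Adams-d4}, I would show that $h_0^2 D_4$ carries no $d_4$ on the sphere, so that $q_*(T) = 0$.

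The main obstacle is that $q_*$ is not injective: its kernel in this degree is the image of the bottom-cell inclusion $j_* \colon E_4(S^{0,0}) \to E_4(C\tau)$. Thus $q_*(T) = 0$ only shows that $T = j_*(T')$ for some sphere class $T'$, and the remaining work is to exclude the possibility that $h_0 D_2$ supports a $d_4$ landing in this bottom-cell image. I expect to dispatch this either by checking against Table~\ref{tab:Adams-d4} that no sphere $d_4$ produces a class in the degree of $T'$, or by a Leibniz argument: multiplying $h_0 D_2 = h_0^2 \ol{D_4}$ by a further $h_0$ and using the known behavior of $h_0^3 \ol{D_4}$ to force the differential to vanish. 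Isolating and eliminating this single bottom-cell candidate is the delicate step, whereas the top-cell reduction of the previous paragraph is routine once $T$ has been pinned down.
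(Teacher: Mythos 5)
Your top-cell reduction is vacuous, and that vacuity swallows the whole proof. The extension $h_0 \cdot \ol{D_4} = D_2$ is a \emph{hidden} extension in $E_2(C\tau)$: by Proposition \ref{prop:hidden-Massey} it comes from the Massey product $\langle h_0, D_4, \tau \rangle$, which is defined only because $h_0 \cdot D_4 = 0$ (and $\tau \cdot D_4 = 0$) in $E_2(S^{0,0})$. Consequently $q_*(h_0 D_2) = h_0^2\, q_*(\ol{D_4}) = h_0^2 D_4 = 0$, as it must be in any case, since $h_0 D_2$ is a bottom-cell class and $q_* j_* = 0$. (Note also that $\ol{D_4}$ does not even survive to $E_3(C\tau)$, because $d_2(\ol{D_4}) = h_1 \cdot \ol{B_6} + Q_2$ by Lemma \ref{lem:Ctau-d2-D4}, so speaking of $q_*(\ol{D_4})$ at the $E_4$-level is not meaningful.) Thus ``$q_*(T) = 0$'' excludes nothing: the unique candidate target, which you never name, is $h_1 Q_1$ in degree $(57,11,30)$, and it is itself in the image of $j_*$, hence invisible to $q_*$ from the start.

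This pushes all of the content into your final step, and neither method you propose there can succeed. Comparing against Table \ref{tab:Adams-d4} along the bottom-cell inclusion gives no constraint, because---as you yourself observe at the outset---$h_0 D_2$ fails to survive on the sphere, so no sphere $d_4$ exists to transport; differentials in $E_4(C\tau)$ on such classes are simply not determined by sphere differentials. The Leibniz variant is equally empty: multiplying by another $h_0$ only yields $h_0 \cdot d_4(h_0 D_2) = 0$, which is automatic because $h_0 \cdot h_1 Q_1 = 0$. What the argument actually requires is input of a different kind, at the level of homotopy groups: by Lemma \ref{lem:tau-D11} (which rests on Adams-Novikov data), the element of $\pi_{57,30}$ detected by $h_0 h_2 h_5 i$ is not divisible by $\tau$, hence maps under the bottom-cell inclusion to a nonzero element of $\pi_{57,30}(C\tau)$; the only class of $E_\infty(C\tau)$ that can detect this element is $h_1 Q_1$ (cf.\ Proposition \ref{prop:Ctau-bottom}), so $h_1 Q_1$ is a nonzero permanent cycle and in particular cannot equal $d_4(h_0 D_2)$. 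Without some argument forcing $h_1 Q_1$ to survive---which your outline never supplies, and which your proposed tools cannot produce---the proof does not close.
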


\begin{proof}
We showed in Lemma \ref{lem:tau-D11}
that $h_0 h_2 h_5 i$ detects an element $\alpha$ of $\pi_{57,30}$
that is not divisible by $\tau$.
Therefore, $\alpha$ maps to a non-zero element of
$\pi_{57,30}(C\tau)$.
The only possibility is that
this element of 
$\pi_{57,30}(C\tau)$ is detected by $h_1 Q_1$.
In particular, $h_1 Q_1$ cannot equal $d_4(h_0 D_2)$.
\end{proof}

\begin{lemma}
\label{lem:d4_h3d1g}
$d_4 (\ol{h_3 d_1 g}) = 0$.
\end{lemma}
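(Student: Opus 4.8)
The plan is to exploit the observation made in the proof of Proposition \ref{prop:Ctau-d4} that, for degree reasons, $d_4(\ol{h_3 d_1 g})$ has at most one possible nonzero value; I would isolate that candidate target and then rule it out. To constrain the target I would use the two cell maps. The projection $q_*\colon E_4(C\tau) \map E_4(S^{0,0})$ to the top cell is a map of spectral sequences sending $\ol{h_3 d_1 g}$ to $h_3 d_1 g$, so it commutes with $d_4$. Now either $h_3 d_1 g$ fails to survive to $E_4(S^{0,0})$ — which follows from the Leibniz rule together with $d_2(h_3 g) = h_0 h_2^2 g$ of Lemma \ref{lem:d2-h3g}, writing $h_3 d_1 g = d_1 \cdot h_3 g$ — or it survives with $d_4(h_3 d_1 g) = 0$ there. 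In either case $q_*\bigl(d_4(\ol{h_3 d_1 g})\bigr) = d_4\bigl(q_* \ol{h_3 d_1 g}\bigr) = 0$, so $d_4(\ol{h_3 d_1 g})$ lies in $\ker q_*$, i.e.\ in the image of the bottom-cell inclusion $E_4(S^{0,0}) \map E_4(C\tau)$.

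It then remains to rule out the single bottom-cell class $w$ in the appropriate degree. Following the strategy of Lemma \ref{lem:d4-h0D2}, I would argue that $w$ cannot be hit because it already detects a genuine class on $E_\infty(C\tau)$. Concretely, I would show that the corresponding homotopy class is \emph{not} divisible by $\tau$ in $\pi_{*,*}$, so that by Proposition \ref{prop:ANSS-Ctau} and the bottom-cell inclusion it maps to a nonzero element of $\pi_{*,*}(C\tau)$; this forces $w$ to survive and therefore forbids it from being a $d_4$ target. If the candidate $w$ is more naturally controlled at the $E_2$-level, I would instead show that $w$ either supports an earlier Adams differential or is the image of such an element, which equally prevents it from being a target. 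The relations recorded in Tables \ref{tab:Ctau-h1} and \ref{tab:Ctau-misc-extn} should supply the module-theoretic input needed to identify $w$ and to relate it to a controllable class.

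The main obstacle will be pinning down the candidate $w$ precisely and certifying that it survives. The degree bookkeeping in this range is delicate, and several nearby classes in $E_4(C\tau)$ have ambiguous representatives (see Section \ref{subsctn:E2} and Table \ref{tab:Ctau-ambiguous}), so I would need to check carefully that the putative $d_4$ target is the expected class rather than one of these ambiguous elements. The non-divisibility-by-$\tau$ step is where the real content lies: it is exactly the kind of input that, as in Lemmas \ref{lem:tau-D11} and \ref{lem:2-h0h5i}, may ultimately rest on classical Adams--Novikov data rather than on a purely internal argument. I would first attempt an internal proof using the $E_4(S^{0,0})$-module structure on $E_4(C\tau)$, falling back on the comparison with the Adams--Novikov spectral sequence through Proposition \ref{prop:ANSS-Ctau} only if necessary.
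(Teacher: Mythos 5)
Your plan has the same architecture as the paper's proof, and its second half is exactly the paper's argument: the unique candidate for a nonzero value of $d_4(\ol{h_3 d_1 g})$ is the bottom-cell class $P h_1^3 h_5 e_0$ in degree $(59,12,33)$, and it is ruled out by showing that $\{P h_1^3 h_5 e_0\} \subset \pi_{59,33}$ is not divisible by $\tau$, hence maps nontrivially into $\pi_{*,*}(C\tau)$, hence $P h_1^3 h_5 e_0$ survives to $E_\infty(C\tau)$ and cannot be a differential target. The issue is that you defer precisely this non-divisibility statement, which is the entire content of the proof. It is not an open step within the paper: it is Lemma \ref{lem:t.h3d1g} (there is no hidden $\tau$ extension on $h_3 d_1 g$), whose proof is internal to the Adams spectral sequence --- for $\alpha$ in $\{h_3 d_1 g\}$ one has $\tau\eta\alpha = 0$ by Lemma \ref{lem:t.eta-d1}, whereas $\eta\{P h_1^3 h_5 e_0\} = \{\tau^2 h_0 g^3\}$ is nonzero. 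So your worry that this step might ultimately rest on Adams--Novikov data is unfounded; note also that the fact you need (``not divisible by $\tau$ implies nonzero image in $\pi_{*,*}(C\tau)$'') comes from the long exact sequence of the cofiber sequence, not from Proposition \ref{prop:ANSS-Ctau}.

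Your reduction to a bottom-cell target also contains two errors, though they are harmless here because the candidate can simply be read off the $E_4(C\tau)$ chart in the relevant degree. First, $\ker q_*$ equals the image of $j_*$ on $E_2$-pages by exactness, but exactness of this sequence is not inherited by later pages: a class annihilated by $q_*$ on $E_4$ need not lie in the image of the bottom-cell map on $E_4$. Second, your Leibniz computation does not show that $h_3 d_1 g$ fails to survive in $E(S^{0,0})$: writing $h_3 d_1 g = d_1 \cdot h_3 g$, one gets $d_2(h_3 d_1 g) = d_1 \cdot h_0 h_2^2 g = 0$, because $h_0 d_1 = 0$ in $\Ext$. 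In fact $h_3 d_1 g$ is a permanent cycle --- it is the product of the permanent cycles $h_3$ and $d_1 g$, and it detects $\sigma\{d_1 g\}$ --- which is what justifies the second branch of your dichotomy, the branch you asserted without proof.
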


\begin{proof}
The only other possibility is that 
$d_4 (\ol{h_3 d_1 g})$ equals $P h_1^3 h_5 e_0$.
We showed in Lemma \ref{lem:t.h3d1g} that 
the element $\{ P h_1^3 h_5 e_0\}$ of $\pi_{59,33}$ is not
divisible by $\tau$.
Therefore, $P h_1^3 h_5 e_0$ is not hit by a differential
in the Adams spectral sequence for $C\tau$.
\end{proof}

\subsection{Higher Adams differentials for the cofiber of $\tau$}
\label{subsctn:higher-diff}

At this point, we are nearly done.  There is just one more
differential to compute.

\index{cofiber of tau@cofiber of $\tau$!Adams spectral sequence!d5@$d_5$}

\begin{lemma}
\label{lem:d5-h2h5}
$d_5( h_2 h_5) = 0$.
\end{lemma}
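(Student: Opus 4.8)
The claim is that $d_5(h_2 h_5) = 0$ in the motivic Adams spectral sequence for the cofiber $C\tau$. The element $h_2 h_5$ lives in the $37$-stem, and we are working in $E_5(C\tau)$. To prove the differential vanishes, the plan is to produce a permanent cycle in $\pi_{*,*}(C\tau)$ that $h_2 h_5$ detects, or equivalently to rule out every possible nonzero target by degree and by comparison with the sphere.

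\textbf{The approach.} The most natural strategy is to transport information from the motivic Adams spectral sequence for $S^{0,0}$ using the bottom-cell inclusion $j \colon S^{0,0} \map C\tau$ and the top-cell projection $q \colon C\tau \map S^{1,-1}$, which induce maps $E_r(S^{0,0}) \map E_r(C\tau)$ and $E_r(C\tau) \map E_r(S^{0,0})$ respectively. First I would locate $h_2 h_5$ in $E_2(S^{0,0})$ and recall its behavior there: $h_2 h_5$ is a permanent cycle on the sphere (it detects an actual homotopy element), so $d_r(h_2 h_5) = 0$ for all $r$ in $E_r(S^{0,0})$. Since $h_2 h_5$ in $E_2(C\tau)$ is the image of $h_2 h_5 \in E_2(S^{0,0})$ under the bottom-cell map $j_*$, and $j_*$ commutes with the Adams differentials, any differential $d_5(h_2 h_5)$ in $E_5(C\tau)$ must map to zero under the top-cell projection only if the target lies in the image of $j_*$. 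The key point is that $d_5(h_2 h_5) = j_*\bigl(d_5(h_2 h_5)\bigr) = 0$ provided $h_2 h_5$ survives to $E_5(C\tau)$ as a bottom-cell class; one checks from Proposition \ref{prop:Ctau-Adams-d2}, Proposition \ref{prop:Ctau-Adams-d3}, and Proposition \ref{prop:Ctau-d4} that $h_2 h_5$ supports no earlier differentials and is not hit, so it does survive to $E_5(C\tau)$ and remains in the image of $j_*$.

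\textbf{Carrying it out.} Concretely, I would first confirm that $h_2 h_5$ survives to $E_5(C\tau)$: by the tables for $d_2$, $d_3$, and $d_4$ in this chapter, $h_2 h_5$ neither supports nor receives any of these differentials, so it is a genuine class on $E_5(C\tau)$. Next I would examine the potential target of $d_5$, which lies in stem $36$, Adams filtration raised by $5$, in the appropriate weight. The decisive observation is that $h_2 h_5 = j_*(h_2 h_5)$ comes from the sphere, where it is a permanent cycle, and the Adams differential is natural with respect to $j_*$; hence $d_5(h_2 h_5)$ equals $j_*$ applied to $d_5$ of the sphere class, which is $j_*(0) = 0$. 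This is clean because $d_5$ on the bottom cell is simply the pushforward of the sphere's $d_5$.

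\textbf{The main obstacle.} The subtle point — and the one I expect to require care — is that the bottom-cell map $j_*$ is not injective on $E_5$, so in principle $d_5(h_2 h_5)$ in $E_5(C\tau)$ could be a class that maps to zero under $q_*$ yet is not literally $j_*$ of the sphere differential; that is, there could be an ``exotic'' contribution living purely in the cofiber that is invisible from the sphere. To exclude this, I would check the target degree directly against the chart of $E_5(C\tau)$ in \cite{Isaksen14a} and verify that any candidate target either is already hit by an earlier differential or supports one, or else is detected on the bottom cell where we know $h_2 h_5$ acts as a permanent cycle. If the target degree is empty in $E_5(C\tau)$, the result is immediate; if it is nonempty, the naturality argument via $j_*$ together with the survival of $h_2 h_5$ on the sphere forces $d_5(h_2 h_5) = 0$. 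The hard part is thus purely bookkeeping: pinning down the $36$-stem of $E_5(C\tau)$ in the relevant weight and matching it against the sphere.
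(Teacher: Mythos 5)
Your proposal rests on a false premise: $h_2 h_5$ is \emph{not} a permanent cycle in the Adams spectral sequence for the sphere. It supports the differential $d_3(h_2 h_5) = \tau h_1 d_1$ motivically (classically $d_3(h_2 h_5) = h_0 p$); both facts are recorded in this paper, in Table \ref{tab:Adams-d3} and Table \ref{tab:diff-refs}. Consequently $h_2 h_5$ does not survive to $E_4(S^{0,0})$, so the class called $h_2 h_5$ in $E_5(C\tau)$ is \emph{not} in the image of $j_* \colon E_5(S^{0,0}) \map E_5(C\tau)$, and naturality with respect to the bottom-cell inclusion says nothing at all about $d_5(h_2 h_5)$ in $E_5(C\tau)$. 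Indeed, the only reason $h_2 h_5$ survives past $E_3$ in the cofiber is that its sphere differential is a $\tau$-multiple, which maps to zero in $E_2(C\tau)$; the homotopy class it detects in $\pi_{34,18}(C\tau)$ is genuinely a top-cell class, projecting to an element detected by $h_1 d_1$ (see Table \ref{tab:Ctau-top}), so it is invisible from the bottom cell. (Separately, the degrees in your write-up are off: $h_2 h_5$ lies in the 34-stem, not the 37-stem, and the $d_5$ target lies in the 33-stem, in degree $(33,7,18)$.)

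Because of this, your fallback does not close the gap either: the target degree is \emph{not} empty. It contains $h_1 q$, which survives to $E_5(C\tau)$, so the possibility $d_5(h_2 h_5) = h_1 q$ must be positively excluded, and no argument involving the behavior of $h_2 h_5$ on the sphere can do that. The paper's proof supplies exactly the missing ingredient: after observing that $h_1 q$ is the only possible target, it shows $h_1 q$ cannot be a boundary in the spectral sequence for $C\tau$ because the elements of $\{h_1 q\} \subset \pi_{33,18}$ are not divisible by $\tau$ (Lemma \ref{lem:t.eta-d1}), hence have nonzero image under $\pi_{33,18} \map \pi_{33,18}(C\tau)$ detected by $h_1 q$. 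That $\tau$-divisibility statement about the \emph{target}, not any property of $h_2 h_5$ itself, is the essential content of the lemma, and it is absent from your proposal.
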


\begin{proof}
The only other possibility is that $d_5(h_2 h_5)$ equals $h_1 q$.
We showed in Lemma \ref{lem:t.eta-d1}
that the element $\{h_1 q\}$ of $\pi_{33,18}$
is not divisible by $\tau$.  Therefore, $h_1 q$ cannot be hit by a
differential in the Adams spectral sequence for the cofiber of $\tau$.
\end{proof}

The $E_4(C\tau)$ chart in \cite{Isaksen14a} indicates the
very few $d_5$ differentials along with the $d_4$ differentials.


\subsection{The Adams $E_\infty$-page for the cofiber of $\tau$}
\label{subsctn:htpy-Ctau}

Using the Adams differentials given in 
Table \ref{tab:Ctau-E2}, Table \ref{tab:Ctau-d3},
and Proposition \ref{prop:Ctau-d4}, 
as well as the Adams differentials for $S^{0,0}$ given in 
Tables \ref{tab:Ext-gen}, \ref{tab:Adams-d3}, \ref{tab:Adams-d4},
and \ref{tab:Adams-d5},
we can now directly compute the $E_\infty$-page of the
Adams spectral sequence for $C\tau$.

\index{cofiber of tau@cofiber of $\tau$!Adams spectral sequence!Einfinity-page@$E_\infty$-page}

\begin{thm}
\label{thm:Ctau-Einfty}
The $E_\infty$-page of the Adams spectral sequence for $C\tau$
is depicted in \cite{Isaksen14a}.
This chart is complete through the 63-stem.
Beyond the 63-stem,
$E_\infty(C\tau)$ is a subquotient of what is shown in the chart.
\end{thm}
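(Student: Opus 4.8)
The plan is to assemble the $E_\infty$-page of the Adams spectral sequence for $C\tau$ as an immediate bookkeeping consequence of the differential computations carried out earlier in this chapter, together with the differential computations for the sphere. Since $E_r(C\tau)$ is a module over $E_r(S^{0,0})$, the full $d_r$ differential on $E_r(C\tau)$ is determined by its values on the $E_r(S^{0,0})$-module generators (recorded in Tables \ref{tab:Ctau-E2} and \ref{tab:Ctau-d3} and in Proposition \ref{prop:Ctau-d4}) together with the $d_r$ differentials on $E_r(S^{0,0})$ itself (recorded in Tables \ref{tab:Ext-gen}, \ref{tab:Adams-d3}, \ref{tab:Adams-d4}, and \ref{tab:Adams-d5}) via the Leibniz rule. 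So the first step is simply to observe that all $d_r$ differentials on $E_r(C\tau)$ are now known, through the relevant range, and that there are no higher differentials to account for: Proposition \ref{prop:Ctau-d4} shows $d_4$ vanishes on all generators through the $63$-stem (apart from the recorded exceptions), and Lemma \ref{lem:d5-h2h5} together with degree considerations shows there is essentially nothing left for $d_5$ and beyond.

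Next I would pass from the $E_2$-page description of Theorem \ref{thm:E2} to $E_\infty$ by taking iterated homology with respect to these differentials. Concretely, one computes $E_3(C\tau) = H(E_2(C\tau), d_2)$, then $E_4 = H(E_3, d_3)$, then $E_5 = H(E_4, d_4)$, and verifies that $E_5 = E_\infty$ in the stated range. Each of these homology computations is a finite, degree-by-degree linear-algebra calculation over $\M_2$, entirely mechanical once the differentials are in hand; the point of the theorem is not that any single step is subtle but that all the pieces have been collected. The charts in \cite{Isaksen14a} record the outcome of this computation, and the statement of the theorem is precisely the assertion that these charts correctly display the result through the $63$-stem.

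The main obstacle, and the reason for the hedge about stems beyond $63$, is the incompleteness of the differential analysis in higher stems. Proposition \ref{prop:Ctau-Adams-d3} establishes $d_3$ only through the $64$-stem, and Proposition \ref{prop:Ctau-d4} establishes $d_4$ only through the $63$-stem; beyond those ranges there remain unknown differentials, as well as unresolved ambiguities such as whether $d_3(\ol{h_1 i_1})$ equals $h_1 B_8$ and whether $d_4(\ol{j_1})$ equals $B_{21}$ (see Remark \ref{rem:Ctau-d4-C'}). Because any such unknown differential could only remove classes, the honest conclusion in the higher range is that the actual $E_\infty(C\tau)$ is a \emph{subquotient} of what the chart displays. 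Thus the proof would conclude by noting that through the $63$-stem all differentials are determined and the homology computation is complete, giving equality with the chart, while beyond the $63$-stem the possibility of further differentials forces only the subquotient claim. No genuinely new mathematical input is required; the theorem is the summary of the chapter's accumulated computations.
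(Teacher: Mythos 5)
Your proposal is correct and matches the paper's argument: Theorem \ref{thm:Ctau-Einfty} is proved there exactly as you describe, by assembling the differentials from Tables \ref{tab:Ctau-E2} and \ref{tab:Ctau-d3}, Proposition \ref{prop:Ctau-d4}, and Lemma \ref{lem:d5-h2h5}, propagating them via the $E_r(S^{0,0})$-module structure and the sphere's differentials in Tables \ref{tab:Ext-gen}, \ref{tab:Adams-d3}, \ref{tab:Adams-d4}, and \ref{tab:Adams-d5}, and then computing iterated homology, with the subquotient hedge beyond the 63-stem for precisely the unknown differentials you identify.
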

\index{Adams chart!cofiber of tau@cofiber of $\tau$}

Through the 63-stem, all unknown differentials
are indicated as dashed lines.
Beyond the 63-stem, there are a number of unknown differentials.

In a range,
we now have a complete understanding of $E_\infty(C\tau)$, which is the 
associated graded object of $\pi_{*,*}(C\tau)$ with respect to the Adams filtration.
In order to better understand $\pi_{*,*}(C\tau)$ itself,
we would like to compute the maps of homotopy groups induced by
the
inclusion $j: S^{0,0} \map C\tau$ of the bottom cell and the projection
$q: C\tau \map S^{1,-1}$ to the top cell.
\index{cofiber of tau@cofiber of $\tau$!homotopy group}
\index{cofiber of tau@cofiber of $\tau$!top cell}
\index{cofiber of tau@cofiber of $\tau$!bottom cell}

\begin{prop}
\label{prop:Ctau-bottom}
\index{cofiber of tau@cofiber of $\tau$!bottom cell}
The map $j_*: \pi_{*,*} \map \pi_{*,*} C\tau$ induced by the inclusion
of the bottom cell is described as follows, through the 59-stem.
Let $\alpha$ be an element of $\pi_{*,*}$ detected by $a$ in 
$E_\infty(S^{0,0})$.  
\begin{enumerate}
\item
If $a$ does not equal $h_0 h_2 h_5 i$,
then $j_*(\alpha)$ is detected by $j_*(a)$ in $E_\infty(C\tau)$.
\index{h5i@$h_5 i$}
\item
If $a$ equals $h_0 h_2 h_5 i$,
then $j_*(\alpha)$ is detected by $h_1 Q_1$ in $E_\infty(C\tau)$.
\index{Q1@$Q_1$}
\end{enumerate}
\end{prop}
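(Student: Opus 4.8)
The plan is to exploit the naturality of the motivic Adams spectral sequence with respect to the map of spectra $j \colon S^{0,0} \to C\tau$. This map induces a morphism of spectral sequences $j_* \colon E_r(S^{0,0}) \to E_r(C\tau)$ that is compatible with the Adams filtration and the differentials, and whose effect on $E_\infty$-pages is the associated graded of the homotopy map $j_* \colon \pi_{*,*} \to \pi_{*,*}(C\tau)$. On $E_2$-pages this morphism is precisely the map appearing in the long exact sequence of Section \ref{sctn:t-Bockstein}, whose kernel is $\tau E_2(S^{0,0})$. The immediate consequence is that for $\alpha$ detected by $a$, the class $j_*(\alpha)$ has Adams filtration at least that of $\alpha$, with equality --- and $j_*(\alpha)$ detected by $j_*(a)$ --- exactly when $j_*(a)$ is nonzero in $E_\infty(C\tau)$. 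Thus part (1) holds automatically for every $a$ whose image survives nontrivially to $E_\infty(C\tau)$, and the only way to produce a filtration jump is to have $j_*(a) = 0$ in $E_\infty(C\tau)$ while $\alpha$ itself is not divisible by $\tau$ (recall from Section \ref{subsctn:hidden-tau} that $j_*(\alpha)=0$ if and only if $\alpha$ is $\tau$-divisible).

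So the problem reduces to identifying, through the 59-stem, every permanent cycle $a$ detecting a homotopy class for which $j_*(a)=0$ in $E_\infty(C\tau)$, and then deciding for each whether the detected class is $\tau$-divisible. First I would use the computed map on $E_2$-pages (Theorem \ref{thm:E2}) to flag the degrees in which $j_*(a)$ is already zero, namely those where $a$ lies in $\tau E_2(S^{0,0})$, together with the degrees in which $j_*(a)$, though nonzero on $E_2(C\tau)$, is hit by an Adams differential in the computed spectral sequence for $C\tau$ (Theorem \ref{thm:Ctau-Einfty}). Cross-referencing these vanishing degrees against the $\tau$-divisibility data recorded in Proposition \ref{prop:hidden-tau} and the homotopy of $C\tau$, I expect to find that in every such degree except one, the detected class is in fact $\tau$-divisible, so that $j_*(\alpha)=0$ and part (1) holds vacuously.

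The single exception is $a = h_0 h_2 h_5 i$ in $\pi_{57,30}$, and this case is already settled by the earlier analysis. Lemma \ref{lem:tau-D11} shows that $h_0 h_2 h_5 i$ detects an element $\alpha$ of $\pi_{57,30}$ that is not divisible by $\tau$, so $j_*(\alpha)$ is nonzero; and the argument of Lemma \ref{lem:d4-h0D2} identifies $h_1 Q_1$ as the only class of $E_\infty(C\tau)$ in this degree that can detect it. By the naturality principle above this forces $j_*(h_0 h_2 h_5 i)=0$ in $E_\infty(C\tau)$ and the filtration jump to $h_1 Q_1$, which is part (2). The main obstacle is the exhaustive bookkeeping of the reduction step: one must verify that $h_0 h_2 h_5 i$ is genuinely the unique source of a filtration jump through the 59-stem, which means checking in each degree where $j_*(a)$ vanishes in $E_\infty(C\tau)$ that the detected homotopy class really is $\tau$-divisible. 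Because the vanishing of $j_*(h_0 h_2 h_5 i)$ is itself detected only indirectly, through the Adams--Novikov input of Lemma \ref{lem:tau-D11}, the delicate point is to be confident that no other such indirect filtration jump has been overlooked.
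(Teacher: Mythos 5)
Your proposal is correct and follows essentially the same route as the paper: the paper's proof is the single sentence that this is ``a straightforward calculation, using that there is an induced map $E_\infty(S^{0,0}) \map E_\infty(C\tau)$,'' which is exactly your naturality-plus-bookkeeping reduction, with the $\tau$-divisibility criterion supplied by the hidden $\tau$ extension analysis of Proposition \ref{prop:hidden-tau}. The exceptional case $h_0 h_2 h_5 i$ is handled in the paper exactly as you handle it, via Lemma \ref{lem:tau-D11} and the argument of Lemma \ref{lem:d4-h0D2} identifying $h_1 Q_1$ as the only possible detecting class.
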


\begin{proof}
This is a straightforward calculation, using that there is an induced map
$E_\infty(S^{0,0}) \map E_\infty(C\tau)$.
\end{proof}

It is curious that the Adams filtration hides so little about the 
map $j_*$.

\begin{prop}
\label{prop:Ctau-top}
The map $q_*: \pi_{*,*} C\tau \map \pi_{*-1,*+1}$ induced by the projection
to the top cell is described as follows, through the 59-stem.
\index{cofiber of tau@cofiber of $\tau$!top cell}
\begin{enumerate}
\item
An element of $\pi_{*,*}(C\tau)$ in the image of
$j_*: \pi_{*,*} \map \pi_{*,*}(C\tau)$ 
(as described by Proposition \ref{prop:Ctau-bottom})
maps to $0$ in $\pi_{*-1,*+1}$.
\index{cofiber of tau@cofiber of $\tau$!bottom cell}
\item
An element of $\pi_{*,*}(C\tau)$ detected by $\ol{x}$ in
$E_\infty(C\tau)$ maps to an element of $\pi_{*-1,*+1}$ detected by $x$
in $E_\infty(S^{0,0})$.
\item
The remaining possibilities are described in Table \ref{tab:Ctau-top}.
\end{enumerate}
\end{prop}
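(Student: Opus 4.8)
The plan is to exploit the long exact sequence in homotopy groups coming from the cofiber sequence
\[
\xymatrix@1{
S^{0,-1} \ar[r]^\tau & S^{0,0} \ar[r]^j & C\tau \ar[r]^-q & S^{1,-1} \ar[r]^\tau & S^{1,0}.
}
\]
Exactness immediately gives that $q_* \circ j_* = 0$, which handles part (1): any element in the image of $j_*$ maps to zero under $q_*$. For the remaining parts, I would work through the induced map $q_*: E_\infty(C\tau) \map E_\infty(S^{0,0})$ on the Adams $E_\infty$-pages, which (up to the expected shift in internal degree and weight) is the map on associated graded objects induced by $q_*$ on homotopy. The generators $\ol{x}$ of $E_2(C\tau)$ are defined precisely as pre-images under the top-cell projection on $E_2$-pages (see the notation in Section \ref{sctn:notation} and the discussion in Section \ref{subsctn:E2}), so at the level of the Adams spectral sequence $q_*(\ol{x}) = x$ by construction.

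First I would establish part (2) as the ``generic'' case. If an element $\beta$ of $\pi_{*,*}(C\tau)$ is detected by $\ol{x}$ in $E_\infty(C\tau)$, then since $q_*(\ol{x}) = x$ and $q_*$ commutes with the Adams filtration, $q_*(\beta)$ is detected by $x$ in $E_\infty(S^{0,0})$ — \emph{provided} $x$ is nonzero in $E_\infty(S^{0,0})$ and there is no jump in Adams filtration. The key point is that $q_*$ is filtration-preserving, so detection passes through cleanly in the absence of a filtration shift. I would organize the argument by checking, degree by degree through the 59-stem, that $q_*$ behaves as stated; this is possible because Theorem \ref{thm:Ctau-Einfty} gives complete knowledge of $E_\infty(C\tau)$ through the 63-stem, and Theorem \ref{thm:Adams-Einfty} gives $E_\infty(S^{0,0})$ through the 59-stem.

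The main obstacle, and the content of part (3), is the situation where the naive filtration-preserving description fails: namely where $q_*(\beta)$ lands in strictly higher Adams filtration than expected, so that $x$ does not literally detect it, or where $\ol{x}$ does not survive to $E_\infty(C\tau)$ and the top-cell image must be computed by other means. These ``hidden'' instances are exactly what Table \ref{tab:Ctau-top} records. To pin them down, I would use the relationship between $q_*$ and Toda brackets from Proposition \ref{prop:3bracket-cofiber}: an element $\ol{\alpha_1}$ with $q_*(\ol{\alpha_1}) = \alpha_1$ satisfies $\ol{\alpha_1}\cdot \alpha_2 \in j_*\langle \alpha_0, \alpha_1, \alpha_2\rangle$, which translates statements about $q_*$ into statements about three-fold brackets $\langle \tau, -, -\rangle$. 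Combined with Moss's Convergence Theorem \ref{thm:Moss} and the specific hidden $\tau$ extensions catalogued in Section \ref{subsctn:hidden-tau} (in particular the entries of Table \ref{tab:Adams-tau}), each exceptional value in Table \ref{tab:Ctau-top} can be verified: a hidden $\tau$ extension from $b$ to $b'$ in $\pi_{*,*}$ is precisely the manifestation, on the top cell, of an element detected by $\ol{b'}$ mapping to one detected by $b$ with a filtration jump.

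I expect the delicate cases to be those involving the ambiguous generators of Table \ref{tab:Ctau-ambiguous}, where $\ol{x}$ is only defined up to an element of the image of $j_*$; there one must check that the stated value of $q_*$ is independent of the choice, using that $q_* \circ j_* = 0$ so that the ambiguity is annihilated by $q_*$. Once parts (1) and (2) reduce the problem to a finite list of degrees where filtration jumps or non-surviving classes occur, the verification of part (3) is a case-by-case application of the bracket and hidden-extension machinery, with each individual entry of Table \ref{tab:Ctau-top} supported by the argument indicated in its reference column.
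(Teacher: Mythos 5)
Your treatment of parts (1) and (2) is correct and agrees with the paper: exactness of the long exact sequence gives $q_* \circ j_* = 0$, and the non-hidden part of $q_*$ is read off from the filtration-preserving map of Adams $E_\infty$-pages. The gap is in your mechanism for part (3). The paper's proof there is a pure exactness argument: having already computed $\pi_{*,*}$ (including all hidden $\tau$ extensions), $\pi_{*,*}(C\tau)$, and $j_*$, the values of $q_*$ on the elements not covered by (1) and (2) are \emph{forced}, because the image of $q_*$ must equal the kernel of $\tau: \pi_{*-1,*+1} \map \pi_{*-1,*}$ and the kernel of $q_*$ must equal the image of $j_*$; the entries of Table \ref{tab:Ctau-top} are simply the only values compatible with these constraints. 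No bracket computations are needed.

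Your substitute machinery misidentifies the phenomenon. A hidden $\tau$ extension from $b$ to $b'$ in $\pi_{*,*}$ says that elements of $\{ b' \}$ are divisible by $\tau$; by exactness this concerns the kernel of $j_*$, and its spectral-sequence manifestation is an Adams differential in $E_r(C\tau)$ killing the image of $b'$ (compare Lemma \ref{lem:t-h1h3g} with Lemma \ref{lem:Ctau-d3-_h1h3g} and Table \ref{tab:Ctau-d3}), \emph{not} a hidden value of $q_*$ carrying $\ol{b'}$ to $b$. The hidden values recorded in Table \ref{tab:Ctau-top} arise from the opposite phenomenon: classes such as $r$, $h_2 h_5$, $h_0 y$, and $h_0 c_2$ that die in $E_\infty(S^{0,0})$ by a differential with $\tau$-divisible target but survive in $E_\infty(C\tau)$; for example, $d_3(r) = \tau h_1 d_0^2$ forces $q_*$ of the element detected by $r$ to be detected by $h_1 d_0^2$. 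Moreover, invoking Proposition \ref{prop:3bracket-cofiber} to compute $q_*$ is circular: that proposition presupposes a chosen lift $\ol{\alpha_1}$ with $q_*(\ol{\alpha_1}) = \alpha_1$ and computes products in $\pi_{*,*}(C\tau)$, not the map $q_*$ itself. Finally, Table \ref{tab:Ctau-top} has no reference column, so the case-by-case citations you appeal to do not exist; the table's entries are justified collectively by the exactness argument above.
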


\begin{proof}
The part of $q_*$ that is not hidden by the Adams filtration is described
in (1) and (2).
The part of $q_*$ that is hidden by the Adams filtration is described
in Table \ref{tab:Ctau-top}.  These are the only possible values that
are compatible with the long exact sequence
\[
\xymatrix@1{
\cdots \ar[r] & \pi_{*,*+1} \ar[r] & \pi_{*,*} \ar[r] &
\pi_{*,*}(C\tau) \ar[r] & \pi_{*-1,*+1} \ar[r] & \cdots.
}
\]
\end{proof}


\section{Hidden Adams extensions for the cofiber of $\tau$}
\label{sctn:Ctau-hidden}

Finally, we will consider hidden extensions by $2$, $\eta$, and
$\nu$ in the motivic stable homotopy groups $\pi_{*,*}(C\tau)$
of the cofiber of $\tau$.
\index{cofiber of tau@cofiber of $\tau$!hidden extension!two}
\index{cofiber of tau@cofiber of $\tau$!hidden extension!eta@$\eta$}
\index{cofiber of tau@cofiber of $\tau$!hidden extension!nu@$\nu$}
We will show in Lemma \ref{lem:ANSS-Ctau-hidden} that
there are no hidden $\tau$ extensions in $\pi_{*,*}(C\tau)$.

Recall from Proposition \ref{prop:3bracket-cofiber} that a hidden
extension by $\alpha$ in $\pi_{*,*}(C\tau)$
is the same as a Toda bracket in $\pi_{*,*}$
of the form $\langle \tau, \beta, \alpha \rangle$.
Many such Toda brackets are detected in
$\Ext$ by a corresponding Massey product of the form
$\langle \tau, b, a \rangle$.
In this circumstance, the extension by $\alpha$ is already detected
in $E_\infty(C\tau)$.
\index{Toda bracket}
\index{Massey product}

However, there are some Toda brackets of the form
$\langle \tau, \beta, \alpha \rangle$
that are not detected by Massey products in $\Ext$.
In this section, we will study such Toda brackets methodically.

\begin{prop}
\label{prop:Ctau-hidden}
Table \ref{tab:Ctau-hidden} shows some hidden extensions
by $2$, $\eta$, and $\nu$ in $\pi_{*,*}(C\tau)$.
Through the 59-stem, there are no other hidden extensions
by $2$, $\eta$, and $\nu$, except that:
\begin{enumerate}
\item
there might be a hidden $2$ extension from
$\ol{h_1 d_1 g}$ to $h_1 B_8$.
\index{d1g@$d_1 g$}
\index{B8@$B_8$}
\item
there might be a hidden $2$ extension from
$Q_2$ to $h_1 Q_1$.
\index{Q2@$Q_2$}
\index{Q1@$Q_1$}
\item
there might be a hidden $2$ extension from
$h_1^2 D_4$ to $P h_1^3 h_5 e_0$.
\index{D4@$D_4$}
\index{Ph5e0@$P h_5 e_0$}
\item
there might be a hidden $\eta$ extension from
$\ol{h_1^2 g_2}$ to $h_0 B_2$.
\index{g2@$g_2$}
\index{B2@$B_2$}
\item
there might be a hidden $\nu$ extension from
$B_6$ to $h_1 D_{11}$.
\index{B6@$B_6$}
\index{D11@$D_{11}$}
\item
there might be a hidden $\nu$ extension from
$\ol{\tau h_2 d_1 g}$ to $B_{21}$.
\index{d1g@$d_1 g$}
\index{B21@$B_{21}$}
\item
if $\ol{h_1 i_1} + \tau h_1 G$ survives to $E_\infty(C\tau)$, then
there might be a hidden $\nu$ extension from
$\ol{h_1 i_1} + \tau h_1 G$ to $h_1 D_{11}$.
\index{i1@$i_1$}
\index{G@$G$}
\index{D11@$D_{11}$}
\item
if $\ol{j_1}$ survives to $E_\infty(C\tau)$, then
there might be a hidden $2$ extension from
$\ol{j_1}$ to $h_1 \cdot \ol{h_3 G_3}$.
\index{j1@$j_1$}
\index{h3G3@$h_3 G_3$}
\end{enumerate}
\end{prop}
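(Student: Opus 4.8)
The plan is to exploit the identification, furnished by Proposition \ref{prop:3bracket-cofiber} with $\alpha_0 = \tau$, between hidden extensions in $\pi_{*,*}(C\tau)$ and Toda brackets in $\pi_{*,*}$. Concretely, if $\ol{x}$ is an element of $\pi_{*,*}(C\tau)$ whose image $q_*(\ol{x})$ under projection to the top cell is detected by $x$, and if $\alpha$ is one of $2$, $\eta$, $\nu$, then $\alpha \cdot \ol{x}$ lies in $j_*(\langle \tau, \{x\}, \alpha\rangle)$. A hidden $\alpha$ extension is present exactly when this Toda bracket, computed in $\pi_{*,*}$ and pushed into the bottom cell, lands in strictly higher Adams filtration than $E_\infty(C\tau)$ predicts; the bracket is detected by the Massey product $\langle \tau, x, a\rangle$ precisely when the extension is already visible on the $E_\infty$-page. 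Since the visible extensions were recorded while constructing $E_2(C\tau)$ (Proposition \ref{prop:hidden-Massey} and Theorem \ref{thm:hidden}), the task here is to dispose of the remaining brackets that jump filtration.

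First I would organize the existence proofs by the three mechanisms indicated in the fourth column of Table \ref{tab:Ctau-hidden}. The ``bottom cell'' entries follow by pushing a hidden extension already established in $\pi_{*,*}$ (Tables \ref{tab:Adams-2}, \ref{tab:Adams-eta}, \ref{tab:Adams-nu}) forward along the ring-module map $j_*$, using the explicit description of $j_*$ in Proposition \ref{prop:Ctau-bottom}; the only subtlety there is the single class $h_0 h_2 h_5 i$, whose image is detected by $h_1 Q_1$. The ``top cell'' entries follow by detecting $\alpha \cdot \ol{x}$ through $q_*$ and applying the computation of $q_*$ in Proposition \ref{prop:Ctau-top} together with the long exact sequence of the cofiber. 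The remaining existence statements require direct Toda bracket computations via Moss's Convergence Theorem \ref{thm:Moss}, frequently reusing the hidden $\tau$ extensions in $\pi_{*,*}$ from Section \ref{subsctn:hidden-tau}; a typical instance shows that $\alpha \cdot \ol{x}$ must be nonzero because its image under $q_*$, or a multiple of it, is forced nonzero by a known relation on the sphere, after which the only available target in $E_\infty(C\tau)$ is the claimed one.

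For non-existence I would invoke the evident $C\tau$-analogue of Lemma \ref{lem:hidden-not-exist}: if some element of the relevant coset is annihilated by $\alpha$, then there is no hidden $\alpha$ extension. Such an annihilation is produced either by a shuffle that rewrites $\alpha \cdot \ol{x}$ as a product containing a vanishing bracket, or by the constraint that a putative target cannot simultaneously support and receive extensions, or by the long exact sequence of Propositions \ref{prop:Ctau-bottom} and \ref{prop:Ctau-top} forcing the relevant homotopy group to be exhausted by known elements. These arguments are local to each degree and proceed by the same bookkeeping used throughout Chapter \ref{ch:Adams-hidden}.

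The hard part is the enumerated list of exceptions, which I do not expect to remove. Each reduces, via the bracket computation, to a single input that is genuinely undetermined in this range. Items (7) and (8) are conditional on the survival of $\ol{h_1 i_1} + \tau h_1 G$ and of $\ol{j_1}$, governed respectively by the unknown differentials $d_3(\ol{h_1 i_1})$ and $d_4(\ol{j_1})$ of Propositions \ref{prop:Ctau-Adams-d3} and \ref{prop:Ctau-d4}, the latter tied to $d_4(C') = h_2 B_{21}$ by Remark \ref{rem:Ctau-d4-C'}. Items (5)--(7) all involve the class $D_{11}$ or the element $B_{21}$, whose fate is entangled with the $\theta_{4.5}$-divisibility questions and the status of $D_{11}$ recorded in Remark \ref{rem:tau-D11}. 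The remaining items (1)--(4) sit in degrees where an ambiguous generator of Table \ref{tab:Ctau-ambiguous}, or an unresolved sphere-level product such as $\nu \kappa \theta_{4.5}$ or $\eta \kappa \theta_{4.5}$, blocks the decision of whether the product vanishes or hits the indicated target. In each case I would record the reduction as an equivalence and leave the extension conditional, exactly as the statement does.
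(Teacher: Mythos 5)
Your proposal is correct and follows essentially the same route as the paper: the paper's proof likewise splits the extensions into those detected by the inclusion $j_*$ of the bottom cell (Proposition \ref{prop:Ctau-bottom}), those detected by the projection $q_*$ to the top cell (Proposition \ref{prop:Ctau-top}), and a residue of cases handled by Toda brackets $\langle \tau, \beta, \alpha \rangle$ via Proposition \ref{prop:3bracket-cofiber} and Moss's Convergence Theorem \ref{thm:Moss}, with non-existence settled by annihilation and incompatibility arguments exactly of the kind you describe (Lemmas \ref{lem:Ctau-nu-_h1^4h5}--\ref{lem:Ctau-nu-d0r}). Your treatment of the enumerated exceptions as irreducible conditionals tied to the unknown differentials $d_3(\ol{h_1 i_1})$, $d_4(\ol{j_1})$, and the unresolved sphere-level products also matches the paper's disposition of those cases.
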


\begin{proof}
Some of the extensions are detected by the projection
$q: C\tau \map S^{1,-1}$ to the top cell,
and some of the extensions are detected by the inclusion
$j: S^{0,0} \map C\tau$ of the bottom cell.
\index{cofiber of tau@cofiber of $\tau$!top cell}
\index{cofiber of tau@cofiber of $\tau$!bottom cell}
The remaining cases are established in the following lemmas.
\end{proof}

\begin{remark}
\label{rem:Ctau-eta-_h1^2g2}
The possible hidden $\eta$ extension on $\ol{h_1^2 g_2}$
is connected to some of the other uncertainties in our calculations.
\index{g2@$g_2$}
Suppose that there is a hidden $\tau$ extension from $h_1 i_1$ to $h_1 B_8$
in $\pi_{*,*}$ (see Remark \ref{rem:tau-h1i1}).
\index{i1@$i_1$}
\index{B8@$B_8$}
Then 
$\nu \{C\} + \tau \{i_1\}$ is detected by $B_8$, and 
there is a hidden $\nu$ extension in $\pi_{*,*}(C\tau)$
from $C$ to $B_8$.
\index{C@$C$}
If $\{ \ol{h_1^2 g_2} \} \eta$ were zero, then we could further compute that
\[
\{B_8\} = \{ \ol{h_1^2 g_2} \} \nu^2 =
\{ \ol{h_1^2 g_2} \} \langle \eta, \nu, \eta \rangle =
\langle \{ \ol{h_1^2 g_2} \}, \eta, \nu \rangle \eta
\]
in $\pi_{*,*}(C\tau)$.
However, $\{B_8\}$ cannot be divisible by $\eta$ in $\pi_{*,*}(C\tau)$.
Therefore, $\{ \ol{h_1^2 g_2} \} \eta$ would be non-zero in $\pi_{*,*}(C\tau)$.
\end{remark}

\begin{lemma}
\label{lem:Ctau-nu-_h1^4h5}
There is no hidden $\nu$ extension on $\ol{h_1^4 h_5}$.
\end{lemma}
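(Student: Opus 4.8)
The plan is to use the projection to the top cell to convert the question into a Toda bracket computation in $\pi_{*,*}$. Let $\overline{\alpha}$ be an element of $\pi_{*,*}(C\tau)$ detected by $\ol{h_1^4 h_5}$. Under the projection $q\colon C\tau \map S^{1,-1}$ to the top cell, $\overline{\alpha}$ maps to an element detected by $h_1^4 h_5$ in $E_\infty(S^{0,0})$. Since $h_1^4 h_5 = h_1^3 \cdot h_1 h_5$ and $\eta_5$ belongs to $\{ h_1 h_5 \}$, this element is $\eta^3 \eta_5$, possibly modulo terms of strictly higher Adams filtration. First I would pin down this identification of $q_*(\overline{\alpha})$, together with the relevant indeterminacy, using the definition of $\eta_5$ from Section \ref{sctn:notation}. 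Note also that the existence of $\overline{\alpha}$ already forces $\tau \cdot \eta^3 \eta_5 = 0$ in $\pi_{*,*}$, via the long exact sequence relating $\pi_{*,*}$ and $\pi_{*,*}(C\tau)$.

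The key observation is that $\eta \nu = 0$, so $\nu \cdot q_*(\overline{\alpha}) = \nu \eta^3 \eta_5 = 0$. Hence $\nu \overline{\alpha}$ lies in the kernel of $q_*$, which is the image of the inclusion $j\colon S^{0,0} \map C\tau$ of the bottom cell. By Proposition \ref{prop:3bracket-cofiber}, $\nu \overline{\alpha}$ is therefore contained in $j_* \langle \tau, \eta^3 \eta_5, \nu \rangle$; the bracket is defined because $\tau \cdot \eta^3 \eta_5 = 0$ and $\eta^3 \eta_5 \cdot \nu = 0$. Thus a hidden $\nu$ extension on $\ol{h_1^4 h_5}$ is governed entirely by this Toda bracket, and the goal is to show that it produces none.

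To analyze $\langle \tau, \eta^3 \eta_5, \nu \rangle$ I would juggle:
\[
\langle \tau, \eta^3 \eta_5, \nu \rangle \eta = \tau \langle \eta^3 \eta_5, \nu, \eta \rangle \supseteq \tau \eta^2 \eta_5 \langle \eta, \nu, \eta \rangle.
\]
Table \ref{tab:Toda} gives $\langle \eta, \nu, \eta \rangle = \nu^2$, and $\eta^2 \nu^2 = 0$ since $\eta \nu = 0$; hence $\langle \tau, \eta^3 \eta_5, \nu \rangle$ is annihilated by $\eta$, and therefore so is $\nu \overline{\alpha}$. The final step is to combine this with the chart of $E_\infty(C\tau)$ from \cite{Isaksen14a}: in the relevant degree there is a single possible target for a hidden $\nu$ extension on $\ol{h_1^4 h_5}$, and that class supports a nonzero multiplication by $h_1$. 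Since $\eta \cdot \nu \overline{\alpha} = 0$, the product $\nu \overline{\alpha}$ cannot be detected by that class, so it must vanish. An element $\overline{\alpha}$ of $\{ \ol{h_1^4 h_5} \}$ with $\nu \overline{\alpha} = 0$ then rules out the extension by the analogue of Lemma \ref{lem:hidden-not-exist} for $C\tau$.

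The main obstacle I anticipate is controlling the indeterminacy in the bracket computation: the juggle only shows $\langle \tau, \eta^3 \eta_5, \nu \rangle$ is $\eta$-torsion, so to finish cleanly I must either identify the unique candidate target from the chart and exploit that it is \emph{not} $\eta$-torsion, or else strengthen the computation to show outright that $0 \in \langle \tau, \eta^3 \eta_5, \nu \rangle$ with no room left for a nonzero product. Verifying that $q_*(\overline{\alpha})$ is genuinely $\eta^3 \eta_5$ (rather than an element of $\{ h_1^4 h_5 \}$ lying outside its span) and that this ambiguity does not affect the conclusion is the delicate bookkeeping point.
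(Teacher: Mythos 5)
Your reduction of the problem to the Toda bracket $\langle \tau, \eta^3 \eta_5, \nu \rangle$ via Proposition \ref{prop:3bracket-cofiber}, and the identification of $u$ as the unique possible target, coincide with the paper's setup. There is already a logical slip in your juggle: the shuffle $\langle \tau, \eta^3\eta_5, \nu\rangle \eta = \tau \langle \eta^3\eta_5, \nu, \eta \rangle \supseteq \tau \eta^2 \eta_5 \langle \eta, \nu, \eta \rangle = \{0\}$ shows only that zero lies in the product set, i.e.\ that \emph{some} element of the bracket is killed by $\eta$, not that all of them are; the indeterminacy of $\langle \eta^3\eta_5, \nu, \eta\rangle$ contains $\pi_{39,22}\cdot\eta$ and is not obviously $\tau$-torsion. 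This slip happens to be repairable: since every element of $\tau\langle \eta^3\eta_5,\nu,\eta\rangle$ is divisible by $\tau$ and $\ker j_* = \mathrm{im}(\tau)$ by the long exact sequence, one still gets $\eta \cdot \nu\ol{\alpha} = 0$ in $\pi_{*,*}(C\tau)$.

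The fatal gap is your final step. The claim that the candidate target ``supports a nonzero multiplication by $h_1$'' is false. In the Adams spectral sequence for $C\tau$ there is a differential $d_2(\ol{\tau h_0 g^2}) = h_1 u$ (Table \ref{tab:Ctau-E2}), so $h_1 u = 0$ in $E_\infty(C\tau)$, and nothing remains in higher filtration in that degree. The homotopy-level reason is the hidden $2$ extension from $\tau^2 g^2$ to $h_1 u$ (Table \ref{tab:Adams-2}): the elements of $\{h_1 u\}$ equal $2\tau^2\kappabar^2$ and are therefore divisible by $\tau$, hence killed by $j_*$. Consequently every element of $\pi_{39,21}(C\tau)$ detected by $u$ is annihilated by $\eta$ (in Adams--Novikov terms, $u$ corresponds to $\alpha_1 z'_{38}$ and $\alpha_1^2 z'_{38} = 0$ on the $E_2$-page; the nonzero product $\eta\{u\}$ in $\pi_{*,*}$ is hidden behind $\tau^2$). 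So the vanishing $\eta\nu\ol{\alpha} = 0$ that you establish is perfectly consistent with $\nu\ol{\alpha}$ being detected by $u$, and the $\eta$-torsion criterion cannot decide the question. The paper instead attacks the bracket itself: $\langle\tau,\eta^3\eta_5,\nu\rangle \supseteq \langle\tau\eta^3,\eta_5,\nu\rangle = \langle 4\nu,\eta_5,\nu\rangle \supseteq 4\langle\nu,\eta_5,\nu\rangle = 0$, since $\langle\nu,\eta_5,\nu\rangle$ meets $\{h_1 h_3 h_5\}$; then its indeterminacy is computed to be generated by $\tau\{h_3 d_1\}$ and $\tau^2\{c_1 g\}$, which lies in $\mathrm{im}(\tau)$, so the bracket misses $\{u\}$ and in fact $\nu\ol{\alpha} = 0$. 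Some argument of this direct type, pinning down the bracket and its indeterminacy rather than testing with $\eta$, is unavoidable here.
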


\begin{proof}
The only other possibility is that 
there is a hidden $\nu$ extension from $\ol{h_1^4 h_5}$ to $u$.
We will show that the Toda bracket
$\langle \tau, \eta^3 \eta_5, \nu \rangle$ does not contain $\{ u\}$.

The bracket 
contains $\langle \tau \eta^3, \eta_5, \nu \rangle$,
which equals $\langle 4 \nu, \eta_5, \nu \rangle$.
This bracket contains $4 \langle \nu, \eta_5, \nu \rangle$.
Note that $\langle \nu, \eta_5, \nu \rangle$
intersects $\{ h_1 h_3 h_5 \}$,
but $4 \langle \nu, \eta_5, \nu \rangle$ is zero.

Finally, the bracket
$\langle \tau, \eta^3 \eta_5, \nu \rangle$ has indeterminacy
generated by $\tau \{ h_3 d_1 \}$ and $\tau^2 \{ c_1 g\}$.
Therefore, $\{ u\}$ is not in the bracket.
\end{proof}

\begin{lemma}
\label{lem:Ctau-eta-h0y}
There is a hidden $\eta$ extension from $h_0 y$ to $u$.
\end{lemma}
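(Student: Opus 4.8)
The plan is to exploit the correspondence between hidden extensions in $\pi_{*,*}(C\tau)$ and Toda brackets in $\pi_{*,*}$ furnished by Proposition \ref{prop:3bracket-cofiber}, feeding in the sphere-level relation that was deliberately set aside for this purpose in Remark \ref{rem:eta-h0^2h3h5}. First I would check that a hidden $\eta$ extension on $h_0 y$ is even permitted, i.e.\ that $h_1 \cdot h_0 y = 0$ in $E_\infty(C\tau)$. Since $h_0 y$ is a bottom-cell class and $h_1 y = \tau^2 c_1 g$ on the sphere, we have $h_1\cdot h_0 y = h_0 \cdot h_1 y = \tau\cdot(\tau h_0 c_1 g)$ in $E_2(S^{0,0})$, so its image under the bottom-cell map $j_*$ vanishes because the image of multiplication by $\tau$ is exactly the kernel of $j_*$. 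This is consistent with $h_0 y$ being a genuine $E_\infty(C\tau)$-generator: recall that $h_0 y$ fails to survive on the sphere (it is tied to the Adams differential $d_2(y)=h_0^3 x$ used in Lemma \ref{lem:eta-h0^2h3h5}), and only becomes a permanent cycle after passing to $C\tau$, where the relevant target is $\tau$-divisible and dies.

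The heart of the argument is to identify the homotopy class $\gamma$ detected by $h_0 y$ and to compute $\eta\gamma$. Because $\gamma$ is not a bottom-cell image (its detecting class is not a sphere permanent cycle), I would access it through the top-cell projection $q_*$ and relate it to the stem-$38$ class $\{h_0^2 h_3 h_5\}$, exactly the element appearing in Remark \ref{rem:eta-h0^2h3h5}. The decisive input is the relation established there, namely $\eta\{h_0^2 h_3 h_5\} = \tau\sigmabar\kappabar$ in $\pi_{*,*}$, whose two sides live in the same stem as $u$. Transporting this relation through the maps $j_*$ and $q_*$ of the cofiber sequence, and using that $\tau\sigmabar\kappabar$ is $\tau$-divisible (hence killed by $j_*$) while $u$ is not, should force $\eta\gamma$ to be detected by $u$ rather than to vanish.

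I expect the main obstacle to be the clean identification of $\gamma$ together with the Adams-filtration bookkeeping needed to conclude that $\eta\gamma$ is detected by $u$ itself, rather than by a class of strictly higher filtration or by zero. This is precisely where the $\tmf$-comparison embedded in Remark \ref{rem:eta-h0^2h3h5} is indispensable: $\{h_0^2 h_3 h_5\}$ and $\sigmabar$ map to zero in $\pi_{*,*}(\tmf)$ whereas $\{u\}$ does not, which is what resolves the ambiguity in $\{\tau^2 c_1 g\}$ and pins down the target. Finally, Lemma \ref{lem:hidden-not-exist} will be used in reverse to rule out the degenerate scenario: one must verify that no element of $\{h_0 y\}$ is $\eta$-torsion, since otherwise there could be no hidden $\eta$ extension at all. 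Confirming that the $\eta$-torsion class $j_*\{h_0^2 h_3 h_5\}$ is detected by a lower-filtration generator (and not by $h_0 y$) is the consistency check that makes the whole picture hang together.
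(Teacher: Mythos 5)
Your scaffolding is right---Proposition \ref{prop:3bracket-cofiber} is the correct framework, and Remark \ref{rem:eta-h0^2h3h5} does enter the paper's proof---but there are two concrete problems, one of which is fatal. First, you misidentify the top-cell image: by Table \ref{tab:Ctau-top}, the projection $q_*$ sends the classes $\{h_0 y\} \subset \pi_{38,20}(C\tau)$ to $\{\tau h_2 e_0^2\}$ in $\pi_{37,21}$, not to anything involving $\{h_0^2 h_3 h_5\}$. The class $\{h_0^2 h_3 h_5\}$ lives in $\pi_{38,20}$ of the sphere, and its image under $j_*$ is detected by the filtration-$4$ class $h_0^2 h_3 h_5$ in $E_\infty(C\tau)$, not by the filtration-$7$ class $h_0 y$; so the relation $\eta\{h_0^2 h_3 h_5\} = \tau \sigmabar \kappabar$ is a statement about a \emph{different} homotopy class than your $\gamma$, and ``transporting it through $j_*$ and $q_*$'' has no mechanism by which it could determine $\eta\gamma$.

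Second, and this is the essential gap: you never compute the Toda bracket that actually produces the target. By Proposition \ref{prop:3bracket-cofiber}, $\eta\gamma$ lies in $j_*\langle \tau, \{\tau h_2 e_0^2\}, \eta \rangle$, so the lemma reduces to showing that this bracket contains $\{u\}$ and does not contain $0$. The paper does exactly this: by a standard sub-bracket shuffle, $\langle \tau, \{\tau h_2 e_0^2\}, \eta \rangle$ contains $\langle \{\tau^2 e_0^2\}, \nu, \eta \rangle = \langle \tau\kappa\kappabar, \nu, \eta\rangle$, which contains $\{u\}$ by the entry $\langle \eta, \nu, \tau\kappa\kappabar\rangle \ni \{u\}$ of Table \ref{tab:Toda} (ultimately Lemma \ref{lem:nu-u}). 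Remark \ref{rem:eta-h0^2h3h5} is used only \emph{after} this computation, to control the indeterminacy: the indeterminacy of $\langle \tau, \{\tau h_2 e_0^2\}, \eta \rangle$ is generated by $\tau\sigma\{d_1\}$ and $\eta\{h_0^2 h_3 h_5\}$, and since the Remark identifies the latter with $\tau\sigmabar\kappabar$, both generators are $\tau$-divisible, so $\{u\}$ is not in the indeterminacy and the bracket cannot contain zero. In short, you have assigned the Remark the job of \emph{naming} $u$ as the target, which it cannot do; it only certifies that $u$ survives the indeterminacy. Without the bracket computation $\langle \eta, \nu, \tau\kappa\kappabar \rangle \ni \{u\}$ (or an equivalent identification), your argument has no way to single out $u$, and the proof does not go through.
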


\begin{proof}
Table \ref{tab:Ctau-top} shows that 
projection to the top cell maps 
$\{ h_0 y\}$ to $\{\tau h_2 e_0^2 \}$ in $\pi_{37,21}$.
\index{cofiber of tau@cofiber of $\tau$!top cell}
The bracket
$\langle \tau, \{ \tau h_2 e_0^2 \}, \eta \rangle$ contains
$\langle \{ \tau^2 e_0^2 \}, \nu, \eta \rangle$ which contains 
$\{u\}$ by Table \ref{tab:Toda}.

The indeterminacy of $\langle \tau, \{ \tau h_2 e_0^2 \}, \eta \rangle$
is generated by
$\tau \sigma \{d_1\}$, and $\eta \{h_0^2 h_3 h_5\}$.
Note that $\tau \sigmabar \kappabar$ is equal to $\eta \{h_0^2 h_3 h_5\}$,
as shown in Remark \ref{rem:eta-h0^2h3h5}.
Since $\{u\}$ is not in the indeterminacy,
the bracket does not contain zero.
\end{proof}

\begin{lemma}
\label{lem:Ctau-nu-h0c2}
There is no hidden $\nu$ extension on $h_0 c_2$.
\end{lemma}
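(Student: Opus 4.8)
The plan is to rule out this extension with the same machinery used in Lemmas \ref{lem:Ctau-nu-_h1^4h5} and \ref{lem:Ctau-eta-h0y}, namely the interplay between the top cell projection $q$ and Proposition \ref{prop:3bracket-cofiber}. First I would read off the chart of $E_\infty(C\tau)$ in \cite{Isaksen14a} the unique class $c$ that could possibly be the target of a hidden $\nu$ extension on $h_0 c_2$; every other class in that degree is discarded immediately, either because it already supports a visible $h_2$ multiplication or because it lies in the image of the bottom cell map $j_*$. The key structural point is that $h_0 c_2$ does not survive in $E_\infty(S^{0,0})$: it supports the differential $d_2(h_0 c_2) = \tau h_1^2 e_1$ of Example \ref{ex:d2-3}, which becomes trivial in $E_2(C\tau)$. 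Consequently the homotopy class $\{ h_0 c_2 \}$ in $\pi_{*,*}(C\tau)$ is \emph{not} in the image of $j_*$, so the natural handle on it is the projection to the top cell, exactly as for $\{ h_0 y \}$ in Lemma \ref{lem:Ctau-eta-h0y}.

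Next I would use Proposition \ref{prop:Ctau-top} together with Table \ref{tab:Ctau-top} to compute $\beta := q_*\{ h_0 c_2 \}$ explicitly, in the same way that $q_*\{ h_0 y \}$ was identified with $\{ \tau h_2 e_0^2 \}$. With $\beta$ in hand, Proposition \ref{prop:3bracket-cofiber} applied to $\alpha_0 = \tau$, $\alpha_1 = \beta$, $\alpha_2 = \nu$ shows that any putative hidden $\nu$ extension on $\{ h_0 c_2 \}$ is witnessed by $\nu \cdot \{ h_0 c_2 \} \in j_* \langle \tau, \beta, \nu \rangle$. The claim then reduces to showing that this Toda bracket contains zero: if so, then some element of $\{ h_0 c_2 \}$ is annihilated by $\nu$, and the evident $C\tau$-analogue of Lemma \ref{lem:hidden-not-exist} forbids a hidden extension. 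I would also keep the parallel top-cell test available: since $q_*(\nu \{ h_0 c_2 \}) = \nu \beta$, if $\nu\beta = 0$ while the candidate target $c$ has nonzero (respectively zero) image under $q_*$, one of the two computations already settles the matter.

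The hard part will be the bracket computation $\langle \tau, \beta, \nu \rangle$ and, more importantly, the control of its indeterminacy. I expect to evaluate it by a shuffle, either rewriting $\langle \tau, \beta, \nu \rangle$ through the relation $\tau \eta^2 = \langle 2, \eta, 2 \rangle$ from Table \ref{tab:Toda} after re-expressing $\beta$ as a bracket pulled back from $S^{0,0}$, or by moving $\nu$ across a three-fold bracket so that the inner product vanishes for degree reasons. The genuine obstacle is not producing a single element of $\langle \tau, \beta, \nu \rangle$, but verifying that the candidate target $c$ lies strictly inside the indeterminacy of the bracket; only then can $\nu \{ h_0 c_2 \}$ be taken to be literally zero rather than merely of higher Adams filtration, which is what Definition \ref{defn:hidden} requires for the \emph{nonexistence} of the extension.
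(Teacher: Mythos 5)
Your setup is exactly the paper's: use Table \ref{tab:Ctau-top} to identify $q_*\{h_0 c_2\}$, then apply Proposition \ref{prop:3bracket-cofiber} with $\alpha_0 = \tau$, $\alpha_2 = \nu$, and reduce the claim to showing that $\langle \tau, q_*\{h_0 c_2\}, \nu \rangle$ contains zero. But the proposal stops precisely where the proof has to do its only real work, and neither of the two strategies you float is the one that succeeds. The two concrete inputs you are missing are: first, that the elements of $\pi_{40,23}$ detected by $h_1 h_3 d_1$ which are annihilated by $\tau$ are $\eta\sigma\{d_1\}$ and $\eta\sigma\{d_1\} + \{\tau h_0^2 g^2\}$, so that $\beta = q_*\{h_0 c_2\}$ is in particular divisible by $\sigma$; and second, the shuffle
\[
\langle \tau, \eta\{d_1\}\cdot\sigma, \nu \rangle \supseteq
\langle \tau, \eta\{d_1\}, \sigma\nu \rangle =
\langle \tau, \eta\{d_1\}, 0 \rangle \ni 0,
\]
which uses Toda's relation $\sigma\nu = 0$. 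Your first suggested route (re-expressing things through $\tau\eta^2 = \langle 2, \eta, 2\rangle$) is in the wrong degrees and has no role here; your second ("moving $\nu$ across so that the inner product vanishes for degree reasons") gestures at the right move, but the vanishing is $\sigma\nu = 0$, a Toda relation rather than a degree argument, and without first recognizing the $\sigma$-divisibility of $\beta$ there is nothing to move $\nu$ against.

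Your final paragraph also misstates what must be verified. You do not need the candidate target to lie in the indeterminacy of the bracket. Once $0 \in \langle \tau, \beta, \nu \rangle$, the portion of the indeterminacy of the form $\tau\cdot\pi_{*,*}$ is killed by $j_*$, so varying the lift of $\beta$ by elements of $j_*(\pi_{41,22})$ produces an element of $\{h_0 c_2\}$ that is literally annihilated by $\nu$; then Lemma \ref{lem:hidden-not-exist} (transposed to $\pi_{*,*}(C\tau)$) rules out every possible hidden $\nu$ extension on $h_0 c_2$ at once. So the logical order is: compute that the bracket contains zero, then conclude; not: locate the target inside the indeterminacy so that the bracket "can be taken to be zero."
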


\begin{proof}
According to Table \ref{tab:Ctau-top},
the projection to the top cell takes
the elements of $\pi_{41,22}(C\tau)$ detected by $h_0 c_2$
to elements of $\pi_{40,23}$ that are detected by $h_1 h_3 d_1$.
\index{cofiber of tau@cofiber of $\tau$!top cell}
These elements of $\pi_{40,23}$ must also be annihilated by $\tau$,
so they must be $\eta \sigma \{d_1\}$ and $\eta \sigma \{d_1\} + \{\tau h_0^2 g^2\}$.

It remains to compute the Toda bracket
$\langle \tau, \eta \sigma \{d_1\}, \nu \rangle$.  This bracket contains
$\langle \tau, \eta \{d_1\}, 0 \rangle$, which equals zero.
\end{proof}

\begin{lemma}
\label{lem:Ctau-2-h0c2}
There is no hidden $2$ extension on $h_0 c_2$.
\end{lemma}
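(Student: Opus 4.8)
The plan is to mirror the proof of Lemma \ref{lem:Ctau-nu-h0c2}, replacing $\nu$ by $2$. First I would recall from that proof, via Proposition \ref{prop:Ctau-top} and Table \ref{tab:Ctau-top}, that the projection $q_*$ to the top cell carries the elements of $\pi_{41,22}(C\tau)$ detected by $h_0 c_2$ to the elements of $\pi_{40,23}$ detected by $h_1 h_3 d_1$, namely $\eta\sigma\{d_1\}$ and $\eta\sigma\{d_1\} + \{\tau h_0^2 g^2\}$, each of which is annihilated by $\tau$. Because a hidden extension on $\ol{h_0 c_2}$ only requires producing one element of $\{h_0 c_2\}$ killed by $2$ (Lemma \ref{lem:hidden-not-exist}), it suffices to treat a single top-cell image, and I would take $\beta = \eta\sigma\{d_1\}$.

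By Proposition \ref{prop:3bracket-cofiber}, for a lift $\ol{\beta}$ of $\beta$ the product $2\,\ol{\beta}$ lies in $j_*\langle \tau, \beta, 2\rangle$, and any such $\ol{\beta}$ with nonzero top-cell image is detected by $h_0 c_2$. Hence if $0 \in \langle \tau, \eta\sigma\{d_1\}, 2\rangle$, then some element of $\{h_0 c_2\}$ is annihilated by $2$, and Lemma \ref{lem:hidden-not-exist} rules out the extension. The whole problem therefore reduces to showing that this Toda bracket contains zero. Note that the bracket is defined, since $\tau\cdot\eta\sigma\{d_1\} = 0$ and $2\cdot\eta\sigma\{d_1\} = 0$ (the latter because $2\eta = 0$).

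To evaluate the bracket I would slide a factor out of the middle entry so as to land on a subbracket with a zero slot, exploiting the vanishing facts $\tau\eta\{d_1\} = 0$ (Lemma \ref{lem:t.eta-d1}) and $2\{d_1\} = 0$. Writing $\eta\sigma\{d_1\} = (\eta\{d_1\})\cdot\sigma$ and sliding $\sigma$ to the right yields an element of $\langle \tau, \eta\{d_1\}, 2\sigma\rangle$; the aim is then to push this further onto a strictly zero bracket of the form $\langle \tau, \ast, 2\eta\rangle$ or $\langle \tau, \ast, 2\{d_1\}\rangle$, where the third entry vanishes outright.

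The main obstacle will be precisely this bracket manipulation. Each naive slide requires a $\tau$-torsion hypothesis on the leftover factor, such as $\tau\sigma\{d_1\} = 0$ or $\tau\{d_1\} = 0$, and these are not among the vanishing statements already recorded; so I expect to need either a supplementary $\tau$-torsion computation in $\pi_{39,\ast}$, or a direct analysis of the indeterminacy of $\langle \tau, \eta\{d_1\}, 2\sigma\rangle$ showing that it accounts for the full bracket and hence that $0$ is a value. As a fallback that sidesteps the bracket entirely, I would read off the $E_\infty(C\tau)$ chart that the only candidate target in $\pi_{41,22}(C\tau)$ of strictly higher Adams filtration is incompatible with a $2$ extension, either because it already supports or receives an $\eta$ or $\nu$ extension, or because $h_0^2 c_2$ is nonzero in $E_\infty(C\tau)$, which would make any multiplication by $2$ on $\ol{h_0 c_2}$ non-hidden and hence not a hidden extension in the sense of Definition \ref{defn:hidden}.
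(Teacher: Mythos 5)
Your reduction is sound as far as it goes: by Proposition \ref{prop:3bracket-cofiber} and Lemma \ref{lem:hidden-not-exist}, it does suffice to show that $0 \in \langle \tau, \eta\sigma\{d_1\}, 2 \rangle$. The genuine gap is that you never establish this, and the tools you propose cannot. The reason the $\nu$-case works is the relation $\sigma\nu = 0$: the shuffle $\langle \tau, \eta\{d_1\}\sigma, \nu \rangle \supseteq \langle \tau, \eta\{d_1\}, \sigma\nu \rangle = \langle \tau, \eta\{d_1\}, 0 \rangle \ni 0$ uses the containment $\langle a, b, cd \rangle \subseteq \langle a, bc, d \rangle$, which moves a factor \emph{into} the middle slot. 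Replacing $\nu$ by $2$ kills exactly this mechanism, since $2\sigma \neq 0$. Your first slide, to $\langle \tau, \eta\{d_1\}, 2\sigma \rangle$, is legitimate (it needs only $\tau\eta\{d_1\} = 0$, Lemma \ref{lem:t.eta-d1}), but the second slide runs the containment backwards: the same rule gives $\langle \tau, \eta\{d_1\}, 2\sigma \rangle \subseteq \langle \tau, 2\eta\{d_1\}, \sigma \rangle = \langle \tau, 0, \sigma \rangle$, and knowing your bracket is \emph{contained in} a set that contains zero proves nothing. Your repairs fail as well: the hypotheses $\tau\{d_1\} = 0$ and $\tau\sigma\{d_1\} = 0$ are false, since $\tau d_1$ and $\tau h_3 d_1$ are nonzero permanent cycles in $E_\infty(S^{0,0})$ (the paper even discusses extensions on $\tau h_3 d_1$); and the chart fallback rests on the false claim that $h_0^2 c_2 \neq 0$ in $E_\infty(C\tau)$ --- in fact $h_0^2 c_2 = \tau h_1 f_1$ in $\Ext$, so it is $\tau$-divisible and maps to zero in $E_2(C\tau)$, which is precisely why the hidden-extension question is well-posed in the first place.

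The paper's proof is entirely different and computes no bracket at all: the only possible target of a hidden $2$ extension on $h_0 c_2$ is $\ol{\tau h_0^2 g^2}$, and this is ruled out by citing Lemma \ref{lem:Ctau-nu-h0c2} (no hidden $\nu$ extension on $h_0 c_2$), which was already proved by the bracket argument you are trying to imitate. In other words, the $2$-extension statement is deduced as a corollary of the $\nu$-extension statement, rather than proved in parallel with it; your plan inverts that dependency and then stalls at exactly the step where the two cases genuinely differ.
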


\begin{proof}
We showed in Lemma \ref{lem:Ctau-nu-h0c2} that there is 
no hidden $\nu$ extension on $h_0 c_2$.
Therefore, there cannot be a hidden $2$
extension from $h_0 c_2$ to $\ol{\tau h_0^2 g^2}$.

There are no other possible hidden $2$ extensions on $h_0 c_2$.
\end{proof}

\begin{lemma}
\label{lem:Ctau-2-_h3^3g}
There is no hidden $2$ extension on $h_3 \cdot \ol{h_3^2 g}$.
\end{lemma}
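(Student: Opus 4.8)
The plan is to first pin down the candidate targets and then eliminate them. The element $h_3 \cdot \ol{h_3^2 g}$ lies in degree $(42,7,22)$, so a hidden $2$ extension on it would have a target $c$ of degree $(42,f,22)$ with $f \geq 8$. By the remark following Definition \ref{defn:hidden} I may restrict attention to those $c$ that are not divisible by $h_0$ in $E_\infty(C\tau)$, and I would read the finitely many such candidates off the $E_\infty(C\tau)$ chart in \cite{Isaksen14a} in the $42$-stem. A useful preliminary check is whether $h_0 h_3 \cdot \ol{h_3^2 g}$ already vanishes in $E_\infty(C\tau)$: if it does not, then condition (1) of Definition \ref{defn:hidden} fails for this source and no hidden $2$ extension is possible; if it does, I fall back on the top-cell argument below.

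The main tool is the top-cell projection $q_*$ of Proposition \ref{prop:Ctau-top}. Using the relation $h_3^3 = h_2^2 h_4$ in $\Ext$, an element detected by $h_3 \cdot \ol{h_3^2 g}$ projects under $q_*$ to an element of $\pi_{41,23}$ detected by $h_3^3 g = h_2^2 h_4 g$. Writing the class as $\sigma \delta$ with $\delta$ detected by $\ol{h_3^2 g}$ and $\mu = q_*(\delta) \in \{h_3^2 g\}$, Proposition \ref{prop:3bracket-cofiber} identifies $2\delta$ with an element of $j_* \langle \tau, \mu, 2 \rangle$, so that $2 \cdot \sigma\delta = \sigma \cdot 2\delta$ lies in $\sigma \cdot j_* \langle \tau, \mu, 2 \rangle$. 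I would then show that this product is zero, which reduces the statement to a Toda bracket computation in $\pi_{*,*}$ carried out by shuffling and the bracket relations recorded in Table \ref{tab:Toda} (together with Lemma \ref{lem:2-bracket} to control the subbrackets). With an element of $\{h_3 \cdot \ol{h_3^2 g}\}$ annihilated by $2$ in hand, the $C\tau$-analogue of Lemma \ref{lem:hidden-not-exist} then finishes the argument.

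The hard part will be the delicate bookkeeping in the $42$-stem of $C\tau$. The generator $\ol{h_3^2 g}$ may represent more than one element of $E_2(C\tau)$ (cf. the ambiguity discussion in Section \ref{subsctn:E2}), so I must fix a representative and verify that the conclusion is independent of that choice. Moreover the bracket $\langle \tau, \mu, 2 \rangle$ carries indeterminacy, and I expect the crux to be showing that $\sigma$ annihilates the bracket as a whole, i.e. that any would-be nonzero value of $\sigma \cdot 2\delta$ is absorbed into the indeterminacy or forced into a filtration where no candidate target survives. Identifying and eliminating the single surviving candidate target, rather than the formal cofiber machinery, is where the real work lies.
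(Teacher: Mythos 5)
Your reduction never reaches a computation that can actually be completed, and this is where the proposal breaks down. For the Toda bracket $\langle \tau, \mu, 2 \rangle$ to be defined at all you need $2\mu = 0$ for your chosen $\mu \in \{h_3^2 g\}$, which you never address. More seriously, the shuffles you would need in order to kill $\sigma \cdot j_*\langle \tau, \mu, 2 \rangle$ are illegal: $\langle\tau,\mu,2\rangle\sigma = \tau\langle\mu,2,\sigma\rangle$ requires the sub-bracket $\langle\mu,2,\sigma\rangle$, which is undefined since $2\sigma \neq 0$, and $\sigma\langle\tau,\mu,2\rangle = \langle\sigma,\tau,\mu\rangle\cdot 2$ requires $\langle\sigma,\tau,\mu\rangle$, which is undefined since $\tau\sigma \neq 0$. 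The only legal move is the containment $\sigma\langle\tau,\mu,2\rangle \subseteq \langle\tau\sigma,\mu,2\rangle$, which does not vanish for any reason recorded in Table \ref{tab:Toda}. So the step you defer ("I would then show that this product is zero") is exactly the content of the lemma, restated as the claim that $\sigma\epsilon \in \tau\pi_{42,24}$ for every $\epsilon$ in the bracket, and the proposal supplies no way to verify it. There are also bookkeeping errors that matter in an argument of this type: $h_3 \cdot \ol{h_3^2 g}$ lies in degree $(42,6,23)$, not $(42,7,22)$, since $\ol{x}$ sits in degree $(s+1,f-1,w-1)$ relative to $x$; consequently the top-cell image lies in $\pi_{41,24}$, not $\pi_{41,23}$.

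What you are missing is that no bracket is needed, because $\{h_3 \cdot \ol{h_3^2 g}\}$ already contains a visible $2$-torsion element. The top cell --- the very tool you invoke --- detects a hidden $\eta$ extension from $h_0 c_2$ to $h_3 \cdot \ol{h_3^2 g}$ (Table \ref{tab:Ctau-hidden}), and this is the paper's entire proof: some element of $\{h_3 \cdot \ol{h_3^2 g}\}$ has the form $\eta\beta$, hence is annihilated by $2$ because $2\eta = 0$, and Lemma \ref{lem:hidden-not-exist} (which applies verbatim to the $\pi_{*,*}$-module $\pi_{*,*}(C\tau)$) rules out any hidden $2$ extension. Equivalently, inside your own decomposition $\sigma\delta$: the hidden $\eta$ extension from $h_2 h_5$ to $\ol{h_3^2 g}$ (also in Table \ref{tab:Ctau-hidden}) lets you choose $\delta$ to be a multiple of $\eta$, whence $2\delta = 0$ on the nose and $2(\sigma\delta) = 0$, with no bracket, no indeterminacy analysis, and no enumeration of candidate targets.
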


\begin{proof}
The projection to the top cell detects that
$h_3 \cdot \ol{h_3^2 g}$ is the target of a hidden $\eta$ extension from $h_0 c_2$.
\index{cofiber of tau@cofiber of $\tau$!top cell}
Therefore, $h_3 \cdot \ol{h_3^2 g}$ cannot support a hidden $2$ extension.
\end{proof}

\begin{lemma}
\label{lem:Ctau-eta-_th2c1g}
There is no hidden $\eta$ extension on $\ol{\tau h_2 c_1 g}$.
\end{lemma}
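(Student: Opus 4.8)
The plan is to reduce the question to a Toda bracket computation via Proposition \ref{prop:3bracket-cofiber}. By that proposition, $\eta \cdot \ol{\tau h_2 c_1 g}$ is contained in $j_* \langle \tau, \beta, \eta \rangle$, where $\beta = q_*(\ol{\tau h_2 c_1 g})$ is an element of $\pi_{*,*}$ detected by $\tau h_2 c_1 g$ (compare Proposition \ref{prop:Ctau-top}), and a hidden $\eta$ extension on $\ol{\tau h_2 c_1 g}$ amounts to this product being a nonzero element of the image of $j_*$. So first I would identify $\beta$: the proof of Lemma \ref{lem:eta-h1f1} shows that $\{\tau h_2 c_1 g\}$ contains $\nu^2 \{t\}$, and in fact that $\nu^2\{t\} = \eta \langle \nu, \eta, \{t\}\rangle$ is divisible by $\eta$. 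Writing $\beta = \eta\gamma$ with $\gamma \in \langle \nu, \eta, \{t\}\rangle$, the goal becomes to show that $\langle \tau, \eta\gamma, \eta\rangle$ contains zero, so that $\eta \cdot \ol{\tau h_2 c_1 g}$ may be taken to vanish.

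Next I would verify that the bracket is defined and then evaluate it. The bracket $\langle \tau, \beta, \eta\rangle$ is defined because $\beta\eta = \nu^2\{t\}\eta = 0$ (as $\eta\nu = 0$) and $\tau\beta = 0$ (the product $\tau^2 h_2 c_1 g$ already vanishes on the $E_2$-page, and there is no survivor of higher filtration in that degree). The main computation is then to exploit the $\eta$-divisibility $\beta = \eta\gamma$: shuffling the inner factor of $\eta$ against the outer $\eta$ should collapse the indeterminacy-free part of $\langle \tau, \eta\gamma, \eta\rangle$ onto an expression governed by $\eta^2\gamma = \eta\beta = \eta\nu^2\{t\} = 0$, forcing the bracket to contain $0$. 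As an independent check I would use naturality of $q_*$ to compute $q_*(\eta \cdot \ol{\tau h_2 c_1 g}) = \eta\beta = 0$, so that any nonzero value would have to lie in the image of $j_*$; I would then compare against the short list of candidate targets in $E_\infty(C\tau)$ in this stem (those of filtration strictly above that of $h_1\cdot \ol{\tau h_2 c_1 g}$), ruling each out by its known $\eta$-divisibility or by its detection under the top cell.

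The hard part will be controlling $\langle \tau, \beta, \eta \rangle$ and its indeterminacy, since this is exactly a degree near which crossing $\eta$ extensions occur: by Example \ref{ex:hidden-cross-2}, $\eta$ applied to elements of $\{h_1 f_1\}$ need not land in $\{\tau h_2 c_1 g\}$, so the identification of $\beta$ and the compatibility of the chosen top-cell preimage $\ol{\tau h_2 c_1 g}$ with that representative must be handled with care. I must also confirm that the indeterminacy of $\langle\tau,\beta,\eta\rangle$ — generated by the appropriate $\tau$-multiples and $\eta$-multiples — does not obstruct the conclusion that the bracket contains $0$. Once these points are settled, the nonexistence of the hidden $\eta$ extension on $\ol{\tau h_2 c_1 g}$ follows at once.
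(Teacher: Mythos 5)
Your overall framework (pass to the top cell, identify $\beta = q_*(\{\ol{\tau h_2 c_1 g}\})$, control $\eta$-multiplication by Proposition \ref{prop:3bracket-cofiber}) is the same machinery the paper uses, and your ``independent check'' at the end is in outline the paper's actual proof. But the step that carries the entire content of the lemma is missing. By exactness, $\beta$ is the unique $\tau$-torsion element of $\{\tau h_2 c_1 g\}$, and the lemma hinges on knowing \emph{which} of the two elements of $\{\tau h_2 c_1 g\}$ that is: they differ by $\eta^2\kappabar^2 \in \{d_0^3\}$, and since $\eta^3\kappabar^2 = 4\nu\kappabar^2/\tau$-wise is nonzero (classically it is detected by $h_1 d_0^3$; compare Lemma \ref{lem:2-h0h2g}), exactly one of the two elements is annihilated by $\eta$. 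If $\beta$ were the wrong one, then $q_*(\eta\alpha)$ would be detected by $h_1 d_0^3$ and there \emph{would} be a hidden $\eta$ extension from $\ol{\tau h_2 c_1 g}$ to $d_0 r$ (compare Table \ref{tab:Ctau-top}); so the statement cannot be finessed. Your justification that $\tau\beta=0$ for your chosen representative --- ``$\tau^2 h_2 c_1 g$ vanishes on the $E_2$-page and there is no survivor of higher filtration in that degree'' --- is false: $\tau d_0^3$ is such a survivor (it is the target of the hidden $\eta$ extension from $z$ in Table \ref{tab:Adams-eta}), it detects $\tau\eta^2\kappabar^2 \neq 0$, and this is exactly the kind of hidden $\tau$ extension the chapter is about. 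Lemma \ref{lem:eta-h1f1} only tells you that $\nu^2\{t\}$ lies in $\{\tau h_2 c_1 g\}$ and is $\eta$-divisible; it says nothing about $\tau\nu^2\{t\}$, so $\nu^2\{t\}$ cannot be substituted for $\beta$. The paper supplies the missing identification from an input your proposal never invokes: by Remark \ref{rem:eta-h0^2h3h5}, $\eta\{h_0^2 h_3 h_5\} = \tau\sigmabar\kappabar$, hence $\tau\cdot\nu\sigmabar\kappabar = \nu\cdot\tau\sigmabar\kappabar = 0$, so $\beta = \nu\sigmabar\kappabar$, and then $\eta\beta = 0$ because $\eta\nu = 0$.

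The bracket computation you propose also does not work as described. There is no shuffle that ``collapses'' $\langle\tau,\eta\gamma,\eta\rangle$ using $\eta^2\gamma=0$: the only ways to extract the factor $\eta$ from the middle slot are the containments $\langle\tau\eta,\gamma,\eta\rangle \subseteq \langle\tau,\eta\gamma,\eta\rangle$, which requires $\gamma\eta=0$ --- false, since $\gamma\eta=\beta\neq 0$ is precisely the hidden extension of Lemma \ref{lem:eta-h1f1} --- and $\langle\tau,\gamma,\eta^2\rangle \subseteq \langle\tau,\gamma\eta,\eta\rangle$, which requires $\tau\gamma=0$, something you have not shown and which is doubtful given the extension $\tau\cdot h_1 f_1 = h_0^2 c_2$ of Table \ref{tab:May-tau}. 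The vanishing $\eta^2\gamma = \eta\beta = 0$ is merely the condition making $\langle\tau,\beta,\eta\rangle$ defined; it does not evaluate the bracket. Once $\beta$ is identified as $\nu\sigmabar\kappabar$, no bracket argument is needed at all: $q_*(\eta\alpha)=\eta\beta=0$, while every candidate target of a hidden $\eta$ extension in this degree has nonzero image in the top cell, which is the paper's one-line conclusion.
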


\begin{proof}
The projection to the top cell takes the 
element $\{ \ol{\tau h_2 c_1 g} \}$ of $\pi_{43,23}(C\tau)$
to an element of $\pi_{42,24}$ that is detected by $\tau h_2 c_1 g$.
\index{cofiber of tau@cofiber of $\tau$!top cell}
Two elements of $\pi_{42,24}$ are detected by
$\tau h_2 c_1 g$, but only one element is killed by $\tau$.
The relation $\eta \{h_0^2 h_3 h_5\} = \tau \sigmabar \kappabar$
from Remark \ref{rem:eta-h0^2h3h5} implies that
$\nu \sigmabar \kappabar$ is the element of $\pi_{42,24}$
that is killed by $\tau$ and detected by $\tau h_2 c_1 g$.
Therefore,
the top cell detects that there is no hidden
$\eta$ extension on $\ol{\tau h_2 c_1 g}$.
\end{proof}

\begin{lemma}
\label{lem:Ctau-nu-d0r}
There is a hidden $\nu$ extension from $d_0 r$ to $h_1 u'$.
\end{lemma}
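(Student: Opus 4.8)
The plan is to apply Proposition \ref{prop:3bracket-cofiber} with $\alpha_0 = \tau$ and $\alpha_2 = \nu$, which converts the desired hidden $\nu$ extension into a Toda bracket computation in $\pi_{*,*}$. The first step is to pin down the top-cell projection of $\{d_0 r\}$. Since $d_0$ is a permanent cycle and $d_3(r) = \tau h_1 d_0^2$ (as used in the proof of Lemma \ref{lem:d3-gr}), the Leibniz rule gives $d_3(d_0 r) = \tau h_1 d_0^3$ in the Adams spectral sequence for $S^{0,0}$. Thus $d_0 r$ supports a differential over the sphere but survives in $C\tau$, exactly as for the ``phantom'' survivors governed by Proposition \ref{prop:Ctau-top} and Table \ref{tab:Ctau-top}; its hidden top-cell projection $q_*\{d_0 r\}$ is then detected by $h_1 d_0^3$ in $\pi_{43,25}$.

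With $\beta = q_*\{d_0 r\}$ in hand, Proposition \ref{prop:3bracket-cofiber} gives $\nu \cdot \{d_0 r\} \in j_*\langle \tau, \beta, \nu \rangle$, so it remains to evaluate this threefold bracket in $\pi_{47,26}$ and to check that it is detected by $h_1 u'$. The bracket is defined because $\nu \beta = 0$ (as $h_1 h_2 = 0$ forces $\nu \{h_1 d_0^3\}$ into high filtration) and $\tau \beta = 0$ (the product $\tau h_1 d_0^3$ was hit by $d_3(d_0 r)$, so one only needs to exclude an interfering hidden $\tau$ extension on $h_1 d_0^3$ using Table \ref{tab:Adams-tau}). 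To evaluate it I would rewrite $\beta$ using the hidden extensions already recorded over the sphere, namely $\{h_1 d_0^3\} = \tau \eta^3 \{\tau g^2\} = 4\nu \{\tau g^2\}$ from the proof of Lemma \ref{lem:2-h0h2g}, and shuffle, or else invoke Moss's Convergence Theorem \ref{thm:Moss} with the Massey product $\langle \tau, h_1 d_0^3, h_2 \rangle$. The relevant target $h_1 u'$ is itself the target of the sphere hidden $2$ extension $2\{e_0 r\} = \{h_1 u'\}$ of Lemma \ref{lem:2-e0r}, and the $\Ext$ relation $h_2 d_0 = h_0 e_0$ (giving $h_2 \cdot d_0 r = h_0 \cdot e_0 r$) is precisely what places the $\nu$-multiple of $d_0 r$ in the same degree as this $2$-multiple; I expect the shuffle to land the bracket on $\{h_1 u'\}$ by matching it against this known $2$ extension.

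The main obstacle is the bracket evaluation together with its indeterminacy: I must show that $\langle \tau, \{h_1 d_0^3\}, \nu \rangle$ meets $\{h_1 u'\}$ and not merely some class of strictly higher Adams filtration, and that the indeterminacy (generated by $\tau$- and $\nu$-multiples in $\pi_{47,26}$) does not absorb the answer, so that the extension is genuinely detected by $h_1 u'$ in the sense of Definition \ref{defn:hidden}. A secondary technical point is confirming the top-cell projection $q_*\{d_0 r\}$, which rests on the bookkeeping of Table \ref{tab:Ctau-top}. If the direct bracket proves unwieldy, the alternative is to establish the homotopy relation $\nu\{d_0 r\} = 2\{e_0 r\}$ in $\pi_{*,*}(C\tau)$ by lifting the $\Ext$ relation $h_2 d_0 = h_0 e_0$ through the spectral sequence and then quoting Lemma \ref{lem:2-e0r} pushed forward along the inclusion $j$ of the bottom cell.
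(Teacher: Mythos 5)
Your closing one-sentence ``alternative'' is essentially the paper's proof, and it is far simpler than your primary route, so you should have led with it. The paper's argument is exactly: push the hidden $2$ extension of Lemma \ref{lem:2-e0r} forward along the inclusion of the bottom cell to obtain a hidden $2$ extension from $e_0 r$ to $h_1 u'$ in $\pi_{47,26}(C\tau)$, and then observe that the $\nu$ extension on $d_0 r$ is an immediate consequence. To make ``immediate'' precise (your phrase ``lifting the $\Ext$ relation through the spectral sequence'' is the part that needs content): by Tables \ref{tab:Ctau-h0} and \ref{tab:Ctau-h2}, in $E_2(C\tau)$ one has $h_0 \cdot \bigl( h_2 \cdot \ol{\tau h_0 g^2} \bigr) = d_0 r$ and $h_2 \cdot \bigl( h_2 \cdot \ol{\tau h_0 g^2} \bigr) = e_0 r$. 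Although $\ol{\tau h_0 g^2}$ itself dies, since $d_2(\ol{\tau h_0 g^2}) = h_1 u$ by Table \ref{tab:Ctau-E2}, the product $h_2 \cdot \ol{\tau h_0 g^2}$ is a permanent cycle because $h_1 h_2 = 0$, and it detects a class $u_0$ in $\pi_{44,24}(C\tau)$. Then $2 u_0$ is detected by $d_0 r$, $\nu u_0$ is detected by $e_0 r$, and the identity $\nu(2u_0) = 2(\nu u_0)$ places a $\nu$-multiple of an element of $\{d_0 r\}$ inside $2\{e_0 r\} \subseteq \{h_1 u'\}$. No Toda bracket is needed.

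Your primary route through the top cell is a genuinely different strategy, and your identification of $q_*\{d_0 r\}$ with classes detected by $h_1 d_0^3$ is correct --- it is exactly the entry of Table \ref{tab:Ctau-top}. But as written it has two real gaps. First, the bracket $\langle \tau, \beta, \nu \rangle$ of Proposition \ref{prop:3bracket-cofiber} is only defined if $\nu \beta = 0$ on the nose; your justification, that $h_1 h_2 = 0$ forces $\nu\{h_1 d_0^3\}$ into high filtration, does not give vanishing. (This is repairable: by the proof of Lemma \ref{lem:2-h0h2g} one has $\{h_1 d_0^3\} = 4\nu\{\tau g^2\}$, so $\beta = 4\nu\kappa'$ for some $\kappa' \in \{\tau g^2\}$ and $\nu\beta = 4\nu^2\kappa' = 0$ since $2\nu^2 = 0$; alternatively check that $\pi_{46,27}$ has nothing above the relevant filtration.) Second, and more seriously, the evaluation of the bracket and the control of its indeterminacy --- which you yourself flag as ``the main obstacle'' --- constitute the entire mathematical content of this approach, and you leave them as an expectation (``I expect the shuffle to land\dots'') rather than a computation; you also never address condition (3) of Definition \ref{defn:hidden}. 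Until those steps are carried out, the primary route is a plan, not a proof, whereas the bottom-cell argument above closes the lemma in two lines.
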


\begin{proof}
The inclusion of the bottom cell shows that
there is a hidden $2$ extension from $e_0 r$ to $h_1 u'$ 
in $\pi_{47,26}(C\tau)$. 
\index{cofiber of tau@cofiber of $\tau$!bottom cell}
The hidden $\nu$ extension on $d_0 r$
is an immediate consequence.
\end{proof}



\chapter{Reverse engineering the Adams-Novikov spectral sequence}
\label{ch:ANSS}

\setcounter{thm}{0}


In this chapter, we will show that 
the classical Adams-Novikov $E_2$-page is identical to the 
the motivic stable homotopy
groups $\pi_{*,*}(C\tau)$ of the cofiber of $\tau$ computed
in Chapter \ref{ch:Ctau}.
Moreover, the classical Adams-Novikov differentials and
hidden extensions can also
be deduced from prior knowledge of motivic stable homotopy groups.
We will apply this program to provide detailed computational
information about the classical Adams-Novikov spectral sequence
in previously unknown stems.
\index{Adams-Novikov spectral sequence}
\index{Adams-Novikov spectral sequence!classical}
\index{cofiber of tau@cofiber of $\tau$!homotopy group}

In fact, the classical
Adams-Novikov spectral sequence appears to be identical to the
$\tau$-Bockstein spectral sequence converging to stable motivic homotopy groups.
We have only a computational understanding of this curious phenomenon.
Our work calls for a more conceptual study of this relationship.
\index{tau-Bockstein spectral sequence@$\tau@-Bockstein spectral sequence}

The simple pattern of weights in the motivic Adams-Novikov spectral sequence 
is the key idea that allows this program to proceed.
\index{weight}
See Theorem \ref{thm:motE2} for more explanation.
For example, for simple degree reasons,
there can be no hidden $\tau$ extensions in the motivic 
Adams-Novikov spectral sequence.
Also for simple degree reasons,
there are no ``exotic" Adams-Novikov differentials;
each non-zero motivic differential corresponds to a classical non-zero analogue.

\subsection*{Outline}

Section \ref{sctn:motANSS} describes the motivic 
Adams-Novikov spectral sequence in general terms.
Section \ref{sctn:ANSS-Ctau} deals with specific properties of the
motivic Adams-Novikov spectral sequence for the cofiber of $\tau$.
The main point is that this spectral sequence collapses.
Section \ref{sctn:ANSS-calculate} carries out the translation
of information about $\pi_{*,*}(C\tau)$ into information
about the classical Adams-Novikov spectral sequence.

Chapter \ref{ch:table} contains a series of tables that summarize the essential
computational facts in a concise form.
Tables \ref{tab:ANSS-hidden-2}, \ref{tab:ANSS-hidden-eta},
and \ref{tab:ANSS-hidden-nu} list the extensions by
$2$, $\eta$, and $\nu$ that are hidden in the Adams-Novikov spectral sequence.

Table \ref{tab:Adams-ANSS} gives a correspondence between
elements of the classical Adams $E_\infty$-page and elements of the 
classical Adams-Novikov $E_\infty$-page.
\index{Adams spectral sequence!comparison to Adams-Novikov spectral sequence}
\index{Adams-Novikov spectral sequence!comparison to Adams spectral sequence}
When possible, the table also gives an element of
$\pi_*$ that is detected by these $E_\infty$ elements.

Tables \ref{tab:ANSS-boundary} and \ref{tab:ANSS-non-permanent}
list the classical Adams-Novikov elements that are boundaries
and that support differentials respectively.  The tables list
the corresponding elements of $\pi_{*,*}(C\tau)$.

\subsection*{Classical Adams-Novikov inputs}

\index{Adams-Novikov spectral sequence!classical}
The point of this chapter is to deduce
information about the Adams-Novikov spectral sequence from prior
knowledge of the motivic stable homotopy groups obtained in 
Chapters \ref{ch:Adams-diff}, \ref{ch:Adams-hidden}, and \ref{ch:Ctau}.
To avoid circularity, 
Chapters \ref{ch:Adams-diff}, \ref{ch:Adams-hidden}, and \ref{ch:Ctau}
intentionally avoid use of the 
Adams-Novikov spectral sequence whenever possible.  However, we need a few 
computational facts about the Adams-Novikov spectral sequence 
in Chapter \ref{ch:Adams-hidden}:
\begin{enumerate}
\item
Lemma \ref{lem:tau-D11} shows that a certain possible hidden
$\tau$ extension does not occur in the 57-stem.  See also 
Remark \ref{rem:tau-D11}.  For this, we use that $\beta_{12/6}$ is the only element
in the Adams-Novikov spectral sequence in the 58-stem with filtration 2
that is not divisible by $\alpha_1$
\cite{Shimomura81}.
\item
Lemma \ref{lem:2-h0h5i} establishes a hidden
$2$ extension in the 54-stem.  See also Remark \ref{rem:2-h0h5i}.
For this, we use that $\beta_{10/2}$ is the only element of the 
Adams-Novikov spectral sequence in the 54-stem with filtration $2$
that is not divisible by $\alpha_1$,
and that this element maps to $\Delta^2 h_2^2$ in the
Adams-Novikov spectral sequence for $\tmf$ \cite{Bauer08} \cite{Shimomura81}.
\index{topological modular forms}
\end{enumerate}

\subsection*{Some examples}

\begin{ex}
Consider the element $\{ h_1^2 h_3 g\}$ of $\pi_{29,18}$.
This element is killed by $\tau^2$ but not by $\tau$.
\index{h3g@$h_3 g$}

The Adams-Novikov element $\alpha_1 z_{28}$ detects
$\{ h_1^2 h_3 g \}$ (see the charts in \cite{Isaksen14e}).
Therefore, $\tau^2 \alpha_1 z_{28}$ must be hit by some 
Adams-Novikov differential.
This implies that there is a classical Adams-Novikov $d_5$ differential
from the 30-stem to the 29-stem.
This differential is well-known \cite{Ravenel86}.
\end{ex}

\begin{ex}
Consider the element $\{h_1^7 h_5 e_0 \}$ of $\pi_{55,33}$.
This element is killed by $\tau^4$ but not by $\tau^3$.
\index{h5e0@$h_5 e_0$}

The Adams-Novikov element $\alpha_1 z_{54,10}$ detects
$\{ h_1^7 h_5 e_0 \}$ (see the charts in \cite{Isaksen14e}).
Therefore,
$\tau^4 \alpha_1 z_{54,10}$ must be hit by some 
Adams-Novikov differential.
This implies that there is a classical Adams-Novikov
$d_9$ differential from the 56-stem to the 55-stem.
This differential lies far beyond previous calculations.
\end{ex}

\section{The motivic Adams-Novikov spectral sequence}
\label{sctn:motANSS}

We adopt the following notation for the classical
Adams-Novikov spectral sequence.

\index{Adams-Novikov spectral sequence}
\index{Adams-Novikov spectral sequence!classical}

\begin{defn}
\label{defn:clE}
Let $\clE{r}$ (and $\clE{\infty}$) be the pages of the classical
Adams-Novikov spectral sequence for $S^{0}$.
We write 
$\clE[s,f]{r}$ for the part of $\clE{r}$ in stem $s$ and filtration $f$.
\end{defn}

The even Adams-Novikov differentials $d_{2r}$ are all zero, so 
we will only consider $\clE{r}$ 
when $r$ is odd (or is $\infty$).

We now describe the motivic
Adams-Novikov spectral sequence.
Recall that $BPL$ is the motivic analogue of the
classical Brown-Peterson spectrum $BP$.

\begin{defn}
\label{defn:motE}
Let $\motE{r}$ (and $\motE{\infty}$) be the pages of the motivic 
Adams-Novikov spectral sequence for the motivic sphere $S^{0,0}$.
We write $\motE[s,f,w]{2}$ for the part of $\motE{2}$ in stem $s$, filtration $f$,
and weight $w$.
\end{defn}

Our goal is to describe the motivic Adams-Novikov spectral sequence
in terms of the
classical Adams-Novikov spectral sequence, as in 
\cite{HKO11}*{Theorem 8 and Section 4}.

\begin{defn}
\label{defn:motE2[0]}
Define the tri-graded object $\olmotE{2}$
such that:
\begin{enumerate}
\item
$\olmotE[s,f,\frac{s+f}{2}]{2}$ is isomorphic to $\clE[s,f]{2}$.
\item
$\olmotE[s,f,w]{2}$ is zero if $w \neq \frac{s+f}{2}$.
\end{enumerate}
\end{defn}

The following theorem completely describes the
motivic $\motE{2}$-page in terms of the classical
$\clE{2}$-page.

\begin{thm}
\label{thm:motE2}
\cite{HKO11}*{Theorem 8 and Section 4}
The $\motE{2}$-page of the motivic Adams-Novikov spectral
sequence is isomorphic to 
the tri-graded object $\olmotE{2} \otimes_{\Z_2} \Z_2[\tau]$,
where $\tau$ has degree $(0,0,-1)$.
\end{thm}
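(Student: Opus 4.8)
The plan is to deduce the theorem from the structure of the motivic Brown--Peterson spectrum $BPL$ over $\C$, following \cite{HKO11}. The essential input is a description of the motivic Adams--Novikov Hopf algebroid $(BPL_{*,*}, BPL_{*,*}BPL)$. First I would recall from \cite{HKO11} that, over $\C$, the coefficient ring is $BPL_{*,*} = \Z_2[\tau][v_1, v_2, \ldots]$, where $\tau$ has degree $(0,0,-1)$ and each $v_n$ has stem $2(2^n-1)$, filtration $0$, and weight $2^n-1$; likewise $BPL_{*,*}BPL = BPL_{*,*}[t_1, t_2, \ldots]$, where each $t_n$ carries the same stem and weight as $v_n$, and all of the structure maps (unit, counit, coproduct, conjugation) are the $\Z_2[\tau]$-linear extensions of their classical counterparts for $(BP_*, BP_*BP)$.

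The key numerical observation, which I would isolate as a lemma, is a \emph{weight purity} statement: apart from the variable $\tau$, every polynomial generator $v_n$ and $t_n$ lies in a degree $(s,f,w)$ with $f = 0$ and $w = \tfrac{s+f}{2} = \tfrac{s}{2}$. Since weight is additive and the internal degree $t = s+f$ is additive under multiplication, any monomial in the $v_n$ and $t_n$, that is, any element of the cobar complex not involving $\tau$, in internal degree $t$ automatically has weight exactly $t/2$. Adjoining $\tau$, which has internal degree $0$ and weight $-1$, then realizes all smaller weights of the correct parity. Thus the $\Z_2[\tau]$-module underlying the motivic cobar complex $\Omega^{\bullet}(BPL_{*,*}, BPL_{*,*}BPL)$ splits as $\Z_2[\tau] \otimes_{\Z_2} \Omega^{\bullet}(BP_*, BP_*BP)$, with the classical cobar complex placed in weights $w = t/2$.

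Next I would check that the cobar differential is compatible with this splitting. Because the coproduct and the remaining structure maps are $\tau$-linear extensions of the classical ones, the cobar differential is itself $\tau$-linear and, under the identification above, restricts on the $\tau$-free part to the classical cobar differential. Taking cohomology therefore gives
\[
\motE{2} \iso \Z_2[\tau] \otimes_{\Z_2} \clE{2},
\]
with $\clE[s,f]{2}$ sitting in weight $(s+f)/2$. Recasting this in the tri-graded language of Definition \ref{defn:motE2[0]}, the copy of $\clE[s,f]{2}$ placed in weight $(s+f)/2$ is precisely $\olmotE{2}$, and the factor $\Z_2[\tau]$ sweeps out the remaining weights; this is exactly the asserted isomorphism $\motE{2} \iso \olmotE{2} \otimes_{\Z_2}\Z_2[\tau]$.

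I expect the main obstacle to lie entirely in the first step: verifying that $BPL$ over $\C$ has the stated coefficient ring and Hopf algebroid, with structure maps that are genuinely $\tau$-linear and classical modulo $\tau$. This is the substantive content established in \cite{HKO11}, and once it is granted, the remaining steps are the purely formal weight bookkeeping and cohomology computation sketched above.
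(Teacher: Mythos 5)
Your proposal is correct, and it is essentially the argument of the cited source: the paper offers no independent proof of this theorem, quoting it directly from \cite{HKO11}, whose proof is exactly what you reconstruct — the identification of $(BPL_{*,*}, BPL_{*,*}BPL)$ as the $\tau$-linear extension of the classical $BP$ Hopf algebroid, followed by the weight-purity observation that the $\tau$-free part of the cobar complex sits in weights $w = (s+f)/2$, so that homology commutes with the free extension $\Z_2 \to \Z_2[\tau]$. Your two-step division (substantive input from \cite{HKO11}, then formal bookkeeping) matches how the paper itself uses the result.
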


\index{Adams-Novikov spectral sequence!E2-page@$E_2$-page}

In other words, in order to produce the motivic $E_2$-page,
start with the classical $E_2$-page.  At degree $(s,f)$, replace each copy of
$\Z_2$ or $\Z/2^n$ with a copy of $\Z_2[\tau]$ or $\Z/2^n[\tau]$, where the
generator has weight $\frac{s+f}{2}$.

We will now compare the classical and motivic Adams-Novikov spectral sequences.
As we have seen in earlier chapters, $\tau$-localization
corresponds to passage from the motivic to classical situations.

\begin{thm}
\label{thm:ANSS-compare}
After inverting $\tau$,
the motivic Adams-Novikov spectral sequence is isomorphic to
the classical Adams-Novikov spectral sequence tensored over $\Z_2$
with $\Z_2[\tau^{\pm 1}]$.
\end{thm}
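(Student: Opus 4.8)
The plan is to follow the same template as the $\tau$-localization results for the May and Adams spectral sequences (Propositions \ref{prop:may-compare} and \ref{prop:compare}): exhibit $\tau$ as a permanent cycle acting on the entire motivic Adams-Novikov spectral sequence, observe that inverting $\tau$ is an exact operation and therefore commutes with the formation of the spectral sequence, and then identify the $\tau$-inverted input with the classical input tensored with $\Z_2[\tau^{\pm 1}]$. The essential structural ingredient is Theorem \ref{thm:motE2}, which already records the rigid weight pattern of $\motE{2}$; the remaining work is to propagate this rigidity through the differentials.

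First I would check that multiplication by $\tau$ is defined on every page $\motE{r}$ and commutes with each $d_r$. This is automatic because $\tau \in \pi_{0,-1}$ is a permanent cycle detected in $\motE[0,0,-1]{2}$, so it acts $\motE{r}$-linearly and the Leibniz rule gives $d_r(\tau x) = \tau \, d_r(x)$. Since inverting $\tau$ is exact, $\tau^{-1}\motE{r}$ is again a spectral sequence with $\tau^{-1}\motE{r+1} = H(\tau^{-1}\motE{r}, d_r)$, and in particular $\tau^{-1}\motE{2}$ computes the localized input. By Theorem \ref{thm:motE2}, $\motE{2} \iso \olmotE{2}\otimes_{\Z_2}\Z_2[\tau]$, so
\[
\tau^{-1}\motE{2} \iso \olmotE{2}\otimes_{\Z_2}\Z_2[\tau^{\pm1}] \iso \clE{2}\otimes_{\Z_2}\Z_2[\tau^{\pm1}],
\]
where the last isomorphism uses the weight-$\tfrac{s+f}{2}$ identification $\olmotE[s,f,(s+f)/2]{2}\iso\clE[s,f]{2}$.

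Next I would pin down the differentials using the weight grading, which is exactly where the argument has its teeth. A motivic differential $d_r$ preserves weight and sends $\motE[s,f,w]{r}$ to $\motE[s-1,f+r,w]{r}$. Writing a source class as $\tau^a$ times a top-weight generator and its image as $\tau^b$ times a top-weight generator, equality of weights forces $b - a = \tfrac{r-1}{2}$; since the even Adams-Novikov differentials vanish, $r$ is odd and this shift is a nonnegative integer. Thus every motivic $d_r$ is the corresponding classical $d_r$ decorated by the fixed power $\tau^{(r-1)/2}$, and there are no exotic differentials. After inverting $\tau$ these powers become units, so under the isomorphism above the localized differential $\tau^{-1}d_r$ is identified with $d_r^{\mathrm{cl}}\otimes\Z_2[\tau^{\pm1}]$. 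Running this identification on every page yields the asserted isomorphism of spectral sequences.

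The main obstacle will be making the ``$\tau^{-1}$ recovers classical'' step fully rigorous at the level of the spectral sequence rather than merely the $E_2$-page. The cleanest route is to realize the motivic Adams-Novikov spectral sequence as the homotopy spectral sequence of the canonical $BPL$-cobar resolution and to invert $\tau$ on the whole resolution at once, using the analogue of $A[\tau^{-1}]\iso A_{\cl}\otimes_{\F_2}\F_2[\tau^{\pm1}]$ for the Hopf algebroid $(BPL_{*,*}, BPL_{*,*}BPL)$; exactness of localization then produces the isomorphism of spectral sequences in one stroke and reduces the differential-by-differential bookkeeping above to a consistency check rather than the core of the proof. One should also confirm that the localization introduces no convergence pathologies, but for the purpose of comparing the algebraic spectral sequences this is not needed.
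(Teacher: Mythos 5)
Your proposal is correct and follows essentially the same route as the paper: the paper's proof simply declares the argument analogous to Proposition \ref{prop:compare} for the Adams spectral sequence (deferring to \cite{DI10}*{Sections 3.2 and 3.4}), which is precisely your closing strategy of inverting $\tau$ on the underlying $BPL$-resolution and identifying the localized input with the classical one tensored with $\Z_2[\tau^{\pm 1}]$. Your page-by-page weight bookkeeping is a compatible elaboration of the same idea (and correctly flags that the resolution-level argument, not the $E_2$-level identification alone, is what makes the comparison of differentials rigorous).
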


\index{tau@$\tau$!localization}

\begin{proof}
The proof is analogous to the corresponding result for the
motivic and classical Adams spectral sequences.  See
Proposition \ref{prop:compare} and
\cite{DI10}*{Sections 3.2 and 3.4}.
\end{proof}


\section[Motivic Adams-Novikov for the cofiber of $\tau$]
{The motivic Adams-Novikov spectral sequence for the cofiber of $\tau$}
\label{sctn:ANSS-Ctau}

We will now study the motivic Adams-Novikov spectral sequence
that computes the homotopy groups of the cofiber $C\tau$ of $\tau$.

\index{cofiber of tau@cofiber of $\tau$!Adams-Novikov spectral sequence}

\begin{defn}
\label{defn:motE-Ctau}
Let $\motECtau{r}$ (and $\motECtau{\infty}$) be the pages of the motivic 
Adams-Novikov spectral sequence for $C\tau$.
We write $\motECtau[s,f,w]{2}$ for the part of $\motECtau{2}$ in stem $s$,
filtration $f$, and weight $w$.
\end{defn}

\begin{lemma}
\label{lem:ANSS-Ctau}
$\motECtau{2}$ is isomorphic to $\olmotE{2}$.
\end{lemma}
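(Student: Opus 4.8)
The plan is to imitate the computation of the Adams $E_2$-page $E_2(C\tau)$ carried out in Section \ref{sctn:t-Bockstein}, but in the Adams-Novikov setting, where the argument becomes strictly easier. First I would invoke the cofiber sequence
\[
S^{0,-1} \xrightarrow{\ \tau\ } S^{0,0} \map C\tau \map S^{1,-1}.
\]
Since $\tau$ has Adams-Novikov degree $(0,0,-1)$, and in particular filtration zero, the map $S^{0,-1} \map S^{0,0}$ is realized on $E_2$-pages as multiplication by $\tau$. Therefore the cofiber sequence induces a long exact sequence of motivic Adams-Novikov $E_2$-pages
\[
\cdots \map \motE{2} \xrightarrow{\ \cdot\tau\ } \motE{2} \map \motECtau{2} \map \cdots,
\]
in exact analogy with the long exact sequence of $\Ext$ groups used to compute $E_2(C\tau)$ in Section \ref{sctn:t-Bockstein}.

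The key point is that, by Theorem \ref{thm:motE2}, $\motE{2} \iso \olmotE{2} \otimes_{\Z_2} \Z_2[\tau]$ is a \emph{free} $\Z_2[\tau]$-module, hence torsion-free over $\Z_2[\tau]$, so multiplication by $\tau$ is injective on $\motE{2}$. This is the single feature distinguishing the present computation from its Adams-theoretic counterpart: in Section \ref{sctn:t-Bockstein} the ring $\Ext$ carried abundant $\tau$-torsion, producing a nonzero $\ker(\tau)$ summand and a delicate extension problem (resolved only in Theorem \ref{thm:E2} after a long study of hidden extensions). Here the kernel vanishes identically.

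Because $\cdot\tau$ is injective throughout, every connecting homomorphism in the long exact sequence is zero, and the sequence degenerates into short exact sequences
\[
0 \map \motE{2} \xrightarrow{\ \cdot\tau\ } \motE{2} \map \motECtau{2} \map 0.
\]
Thus $\motECtau{2} \iso \coker(\cdot\tau) = \motE{2}/\tau\,\motE{2} \iso (\olmotE{2} \otimes_{\Z_2} \Z_2[\tau])/(\tau) \iso \olmotE{2}$, which is the asserted isomorphism. I do not expect any genuine obstacle: once Theorem \ref{thm:motE2} (from \cite{HKO11}) is in hand, the lemma is a formal consequence. The only points demanding care are (i) confirming that $\tau$ is realized in Adams-Novikov filtration zero, so that the cofiber sequence really yields a long exact sequence of $E_2$-pages with connecting map multiplication by $\tau$, and (ii) the tridegree bookkeeping, namely checking via Definition \ref{defn:motE2[0]} and the degree $(0,0,-1)$ of $\tau$ that the vanishing of $\ker(\tau)$ suppresses the prospective top-cell contribution, so that $\coker(\tau)$ is literally $\olmotE{2}$ and not a reindexed copy.
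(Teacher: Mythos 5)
Your proposal is correct and is essentially the paper's own proof: both use the long exact sequence of Adams-Novikov $E_2$-pages induced by the cofiber sequence for $\tau$, observe via Theorem \ref{thm:motE2} that multiplication by $\tau$ is injective on $\motE{2}$, and identify $\motECtau{2}$ with the cokernel $\motE{2}/\tau \iso \olmotE{2}$. The extra remarks comparing with the $\tau$-torsion in the Adams setting are accurate but not needed for the argument.
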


\begin{proof}
The cofiber sequence
\[
\xymatrix{
S^{0,-1} \ar[r]^\tau & S^{0,0} \ar[r] & C\tau \ar[r] & S^{1,-1}
}
\]
induces a long exact sequence
\[
\xymatrix{
\cdots \ar[r] & \motE{2} \ar[r]^\tau & \motE{2} \ar[r] & \motECtau{2} \ar[r] &
\cdots.}
\]
Theorem \ref{thm:motE2} tells us that the map $\tau: \motE{2} \map \motE{2}$
is injective, so $\motECtau{2}$ is isomorphic to the cokernel of $\tau$.
Theorem \ref{thm:motE2} tells us that this cokernel is isomorphic
to $\olmotE{2}$.
\end{proof}

\begin{lemma}
\label{lem:ANSS-Ctau-collapse}
There are no differentials in the motivic Adams-Novikov spectral sequence
for $\tau$.
\end{lemma}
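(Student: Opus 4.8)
The plan is to exploit the rigid weight grading in the motivic Adams-Novikov spectral sequence, exactly as advertised in the chapter introduction. The key structural input is Lemma \ref{lem:ANSS-Ctau}, which identifies $\motECtau{2}$ with $\olmotE{2}$. By Definition \ref{defn:motE2[0]} (together with Theorem \ref{thm:motE2}), the object $\olmotE{2}$ is concentrated in a single weight in each bidegree: a nonzero class in stem $s$ and filtration $f$ necessarily has weight $w = \frac{s+f}{2}$, and there are no nonzero classes of any other weight in that bidegree. First I would record that this single-weight concentration persists to every page $\motECtau{r}$, since each $\motECtau{r}$ is a subquotient of $\motECtau{2}$ and the weight grading is respected by passage to subquotients.

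Next I would recall the shape of an Adams-Novikov differential, namely $d_r : \motECtau[s,f,w]{r} \map \motECtau[s-1,f+r,w]{r}$; in particular it preserves the weight $w$, lowers the stem by $1$, and raises the filtration by $r$. Now suppose for contradiction that some $d_r$ is nonzero on a class $x$ in degree $(s,f,w)$. Since $x$ is a nonzero class of $\motECtau{r}$, the concentration observation forces $w = \frac{s+f}{2}$. Its image $d_r(x)$ is a nonzero class in degree $(s-1, f+r, w)$, so the same observation applied in the target bidegree forces $w = \frac{(s-1)+(f+r)}{2}$.

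Equating the two expressions for $w$ yields $\frac{s+f}{2} = \frac{s+f+r-1}{2}$, i.e.\ $r = 1$. Since Adams-Novikov differentials satisfy $r \geq 2$ (indeed only odd $r \geq 3$ can be nonzero), this is impossible, and hence every differential in the motivic Adams-Novikov spectral sequence for $C\tau$ vanishes.

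I do not expect any genuine obstacle here: the entire argument is weight bookkeeping, and everything needed is already packaged in Lemma \ref{lem:ANSS-Ctau} and Theorem \ref{thm:motE2}. The only point requiring a moment's care is the claim that the single-weight concentration of $\olmotE{2}$ is inherited by all later pages $\motECtau{r}$; this is immediate, since the differentials preserve weight and therefore their homology is again concentrated in exactly the weights carried by $\motECtau{2}$.
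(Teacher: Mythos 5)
Your proposal is correct and is essentially the paper's own argument: both rest on Lemma \ref{lem:ANSS-Ctau}, which concentrates $\motECtau{2}$ (hence every subquotient page) on the locus $s+f-2w=0$, together with the observation that $d_r$ raises $s+f-2w$ by $r-1$. The paper phrases the conclusion as ``the target group vanishes'' rather than deriving the contradiction $r=1$, but this is the same weight bookkeeping.
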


\begin{proof}
Lemma \ref{lem:ANSS-Ctau} tells us that
$\motECtau{2}$ is concentrated in tridegrees $(s,f,w)$ where $s+f-2w$ equals zero.
The Adams-Novikov $d_r$ differential increases $s+f-2w$ by $r-1$.
Therefore, all differentials are zero.
\end{proof}

\begin{lemma}
\label{lem:ANSS-Ctau-hidden}
There are no hidden $\tau$ extensions in $\motECtau{\infty}$.
\end{lemma}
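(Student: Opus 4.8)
The plan is to reduce the statement to the same rigid weight bookkeeping that proves Lemma \ref{lem:ANSS-Ctau-collapse}. First I would record that, by Lemma \ref{lem:ANSS-Ctau-collapse}, the motivic Adams-Novikov spectral sequence for $C\tau$ collapses, so that $\motECtau{\infty}$ equals $\motECtau{2}$. Combining Lemma \ref{lem:ANSS-Ctau} with Definition \ref{defn:motE2[0]}, this $E_2$-page is isomorphic to $\olmotE{2}$, and is therefore concentrated entirely in tridegrees $(s,f,w)$ satisfying $s + f - 2w = 0$.

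Next I would spell out what a hidden $\tau$ extension would mean in this setting, by analogy with Definition \ref{defn:hidden}: it consists of classes $b$ and $c$ of $\motECtau{\infty}$, together with an element $\beta$ of $\{b\}$ such that $\tau \beta$ is detected by $c$, where $\tau b = 0$ in $\motECtau{\infty}$ and the Adams-Novikov filtration of $c$ is \emph{strictly greater} than that of $b$. Since $\tau$ has degree $(0,0,-1)$, if $b$ lies in degree $(s,f,w)$ then $c$ necessarily lies in degree $(s,f',w-1)$ for some $f' > f$.

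The key step is then to extract a contradiction from the weight constraint. Because both $b$ and $c$ are nonzero in $\motECtau{\infty}$, each must satisfy $s + f - 2w = 0$ in its own degree. Applying this to the source gives $s + f - 2w = 0$, while applying it to the target degree $(s,f',w-1)$ gives $s + f' - 2(w-1) = 0$. Subtracting the two relations yields $f' - f + 2 = 0$, that is, $f' = f - 2$. This contradicts the requirement $f' > f$ that is built into the definition of a hidden extension. Hence no hidden $\tau$ extension can occur, which is the assertion.

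I do not anticipate a genuine obstacle here: exactly as in the collapse lemma, the entire content is that the weight grading pins every nonzero class of $\motECtau{\infty}$ to a single value of the Adams-Novikov filtration, leaving no room for the filtration jump that a hidden extension demands. The only point that requires care is to formulate the definition of a hidden $\tau$ extension so that the mandatory strict increase in filtration is made explicit, since that is precisely the feature the degree count rules out.
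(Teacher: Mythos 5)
Your proposal is correct and is essentially the paper's own argument: both rest on the fact that $\motECtau{\infty}$ is concentrated in tridegrees with $s+f-2w=0$, that $\tau$ has weight degree $-1$, and that a hidden extension must strictly increase filtration, which the degree count forbids. The only cosmetic difference is that you derive the contradiction in the filtration variable ($f'=f-2$) while the paper derives it in the weight variable ($w'>w$ versus $w'=w-1$); these are the same computation.
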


\begin{proof}
Let $x$ and $y$ be two elements of $\motECtau{\infty}$
of degrees $(s,f,w)$ and $(s,f',w')$ with $f'>f$.
Then $w' > w$ since $w = \frac{s+f}{2}$ and $w' = \frac{s+f'}{2}$.
For degree reasons, it is not possible that there is a hidden
$\tau$ extension from $x$ to $y$ because $\tau$ has degree $(0,-1)$.
\end{proof}

\begin{prop}
\label{prop:ANSS-Ctau}
There is an isomorphism $\pi_{*,*} (C\tau) \map \clE{2}$
that takes the group $\pi_{s,w} (C\tau)$ into $\clE[s,2w-s]{2}$.
\end{prop}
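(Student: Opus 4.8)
The plan is to run the motivic Adams-Novikov spectral sequence for $C\tau$ and observe that its weight grading forces complete degeneration, so that each homotopy group $\pi_{s,w}(C\tau)$ is carried by a single Adams-Novikov filtration and therefore coincides with a single $E_\infty$-group. First I would invoke convergence of the motivic Adams-Novikov spectral sequence $\motECtau{2} \Rightarrow \pi_{*,*}(C\tau)$, so that the Adams-Novikov filtration on $\pi_{s,w}(C\tau)$ has associated graded pieces isomorphic to $\motECtau[s,f,w]{\infty}$. By Lemma \ref{lem:ANSS-Ctau-collapse} there are no differentials, hence $\motECtau{\infty} = \motECtau{2}$, and by Lemma \ref{lem:ANSS-Ctau} this equals $\olmotE{2}$.

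The key point is that $\olmotE{2}$ is concentrated in the tridegrees $(s,f,w)$ with $f = 2w-s$ (equivalently $s+f-2w=0$), by Definition \ref{defn:motE2[0]}. Consequently, for each fixed pair $(s,w)$, the associated graded of $\pi_{s,w}(C\tau)$ is nonzero in at most the single filtration $f = 2w-s$. The Adams-Novikov filtration of $\pi_{s,w}(C\tau)$ therefore has exactly one jump, and there can be no extension problem: the group $\pi_{s,w}(C\tau)$ is isomorphic, and not merely associated-graded-isomorphic, to $\motECtau[s,2w-s,w]{\infty}$. In particular, Lemma \ref{lem:ANSS-Ctau-hidden} is consistent with, and subsumed by, this degree observation, though it is not even needed for the group structure of an individual $\pi_{s,w}(C\tau)$.

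Finally, I would compose the resulting isomorphism $\pi_{s,w}(C\tau) \iso \motECtau[s,2w-s,w]{\infty}$ with the chain $\motECtau[s,2w-s,w]{\infty} = \olmotE[s,2w-s,w]{2} \iso \clE[s,2w-s]{2}$, where the last isomorphism is the defining identification of Definition \ref{defn:motE2[0]} (note that $\frac{s+f}{2} = w$ precisely when $f = 2w-s$). Assembling these isomorphisms over all $(s,w)$ yields the asserted map $\pi_{*,*}(C\tau) \map \clE{2}$ carrying $\pi_{s,w}(C\tau)$ into $\clE[s,2w-s]{2}$.

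The main obstacle is not any single computation, since every ingredient has already been prepared in the preceding lemmas; rather it is justifying that the single-filtration concentration genuinely eliminates all extension ambiguity. The crux is the elementary but essential remark that the weight grading pins $f$ to $2w-s$, so that each homotopy group sees only one nonzero associated graded piece. This is exactly what upgrades the $E_\infty$-page identification of Lemma \ref{lem:ANSS-Ctau} to a statement about the homotopy groups $\pi_{*,*}(C\tau)$ themselves.
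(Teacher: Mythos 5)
Your proposal is correct and follows essentially the same route as the paper: collapse of the spectral sequence (Lemma \ref{lem:ANSS-Ctau-collapse}), the identification $\motECtau{2} \iso \olmotE{2} \iso \clE{2}$ (Lemma \ref{lem:ANSS-Ctau} with Definition \ref{defn:motE2[0]}), and the observation that concentration in the single filtration $f = 2w-s$ rules out all extension problems, which is precisely the paper's appeal to ``degree reasons'' as in Lemma \ref{lem:ANSS-Ctau-hidden}. Your write-up merely makes that degree argument explicit, which is a virtue rather than a difference.
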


\begin{proof}
Lemma \ref{lem:ANSS-Ctau} and Definition \ref{defn:motE2[0]} say that
$\motECtau{2}$ is isomorphic to $\clE{2}$.
Lemma \ref{lem:ANSS-Ctau-collapse} implies that
$\motECtau{\infty}$ is also isomorphic to $\clE{2}$.
As in the proof of Lemma \ref{lem:ANSS-Ctau-hidden},
for degree reasons
there cannot be hidden extensions of any kind.
Therefore,
$\pi_{*,*}(C\tau)$ is also isomorphic to $\clE{2}$.
\end{proof}

\section{Adams-Novikov calculations}
\label{sctn:ANSS-calculate}

We will now provide explicit calculations of the classical 
Adams-Novikov spectral sequence.
The charts in \cite{Isaksen14e} are an essential companion to 
this section.

\subsection{The classical Adams-Novikov $E_2$-page}

\index{notation!Adams-Novikov spectral sequence}
\index{Adams-Novikov spectral sequence!notation}
\index{Adams-Novikov spectral sequence!classical!E2-page@$E_2$-page}

We use the traditional notation for elements of the 
$\alpha$ family, as described in \cite{Ravenel86}.
We draw particular to attention to $\alpha_1$ in degree $(1,1)$
and $\alpha_{2/2}$ in degree $(3,1)$.
These elements detect $\eta$ and $\nu$ respectively.

For elements not in the $\alpha$ family,
we have labelled decomposable elements as products whenever possible.
For elements that are not known to be products, we use arbitrary symbols
of the form $z_{s,f}$ and $z'_{s,f}$
for elements in the $s$-stem with filtration $f$.
When there is no amibiguity, we simplify this to $z_s$ and $z'_s$.

Our notation is unforutnately arbitrary
and does not necessarily convey deeper structure.  However, at least
it allows us to give names to every element in the spectral sequence.
Our notation is not
compatible with the standard notation for elements
of the Adams-Novikov spectral sequence \cite{Ravenel86}.

\begin{ex}
Consider the elements in degree $(46,4)$ in 
the Adams-Novikov $E_2$ chart in \cite{Isaksen14e}.
From left to right, they are
$\alpha_1^2 z_{44,2}$,
$\alpha_1 z_{45}$,
$\alpha_1 z'_{45}$,
and $\alpha_{2/2} z_{43,3}$.
\end{ex}

\begin{thm}
\label{thm:ANSS-E2}
The $E_2$-page of the classical Adams-Novikov spectral sequence
is depicted through the 59-stem in the chart in \cite{Isaksen14e}.
The chart is complete 
except for the
uncertainties described in Propositions \ref{prop:uncertain-54}
and \ref{prop:uncertain-59}, and the following:
\begin{enumerate}
\item
$\alpha_1 z_{47,3}$ might equal $2 \alpha_{2/2} z'_{45}$.
\item
If $\alpha_1 z_8 z'_{45}$ is non-zero,
then $2z_{54,6}$ might equal $\alpha_1 z_8 z'_{45}$.
\item
$\alpha_{2/2} z_{53}$ might equal $\alpha_1^2 z_{54,6}$.
\item
$2z_{57}$ might equal $\alpha_1 z_{56,2}$.
\item
$\alpha_{2/2} z_{55}$ or $\alpha_{2/2} z'_{55}$ might
equal $\alpha_1^2 z_{56,4}$.
\item
$\alpha_{2/2} z_{56,4}$ might equal $z_{59,5}$.
\item
If $z_{60,4}$ is non-zero, then $2z_{60,4}$ might equal $\alpha_1^2 z_{58,2}$.
\end{enumerate}
\end{thm}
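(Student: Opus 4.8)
The plan is to deduce the entire statement from the computation of $\pi_{*,*}(C\tau)$ carried out in Chapter \ref{ch:Ctau}, via the isomorphism of Proposition \ref{prop:ANSS-Ctau}. That proposition furnishes an isomorphism $\pi_{*,*}(C\tau) \to \clE{2}$ carrying $\pi_{s,w}(C\tau)$ onto $\clE[s,2w-s]{2}$; under this identification the $\pi_{*,*}$-module action of $\eta$ and $\nu$ on $\pi_{*,*}(C\tau)$ corresponds to multiplication by $\alpha_1$ and $\alpha_{2/2}$ respectively, while the hidden $2$ extensions in $\pi_{*,*}(C\tau)$ correspond to the additive $2$-extensions in $\clE{2}$. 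Thus producing the chart amounts to transcribing the homotopy of $C\tau$ into Adams-Novikov coordinates, placing an element of motivic Adams degree $(s,f,w)$ into Adams-Novikov stem $s$ and filtration $2w-s$.

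First I would use Theorem \ref{thm:Ctau-Einfty} to read off the associated graded group $E_\infty(C\tau)$ through the relevant range; this supplies the underlying groups of $\clE{2}$ and lets me name the generators $z_{s,f}$ and $z'_{s,f}$ compatibly with the charts in \cite{Isaksen14e}. Next I would resolve the additive and multiplicative structure by importing the hidden $2$, $\eta$, and $\nu$ extensions from Proposition \ref{prop:Ctau-hidden} and Table \ref{tab:Ctau-hidden}, reinterpreting each as the corresponding group extension, $\alpha_1$-multiplication, or $\alpha_{2/2}$-multiplication in $\clE{2}$. Since Proposition \ref{prop:ANSS-Ctau} identifies $\pi_{*,*}(C\tau)$ with $\clE{2}$ as bigraded groups with no intervening filtration, this transcription is faithful: the full group-and-module structure of $\pi_{*,*}(C\tau)$ determined in Chapter \ref{ch:Ctau} is exactly the structure of $\clE{2}$.

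The crux is to verify that the seven enumerated ambiguities, together with those of Propositions \ref{prop:uncertain-54} and \ref{prop:uncertain-59}, are precisely the images of the unresolved extensions in the $C\tau$ computation, and that nothing else remains open. Concretely, each exception (1)--(7) should trace back either to one of the possible hidden $2$ or $\nu$ extensions enumerated among the exceptions of Proposition \ref{prop:Ctau-hidden}, or to an element of $E_\infty(C\tau)$ whose survival or value is not pinned down (such as the classes that depend on whether $\ol{h_1 i_1} + \tau h_1 G$ or $\ol{j_1}$ survives to $E_\infty(C\tau)$). I expect the main obstacle to be this matching step: one must align the arbitrary labels $z_{s,f}$ of the Adams-Novikov chart with specific detecting classes in $E_\infty(C\tau)$, confirm the filtration bookkeeping $f_{AN} = 2w - s$ in each degree, and check that each listed uncertainty is genuinely forced by, and only by, a corresponding gap in Chapter \ref{ch:Ctau}, so that the chart is complete apart from exactly these cases.
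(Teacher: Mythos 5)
Your proposal is correct and follows essentially the same route as the paper: the paper's proof is precisely to invoke Proposition \ref{prop:ANSS-Ctau} together with the computation of $\pi_{*,*}(C\tau)$ from Chapter \ref{ch:Ctau}, with the enumerated uncertainties arising exactly from the unresolved hidden extensions and survival questions in $\pi_{*,*}(C\tau)$ (Propositions \ref{prop:Ctau-Adams-d3}, \ref{prop:Ctau-d4}, and \ref{prop:Ctau-hidden}). Your transcription of the bookkeeping $f = 2w - s$ and the matching of module structure to $\alpha_1$- and $\alpha_{2/2}$-multiplication is just a more explicit account of what the paper leaves implicit.
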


\index{Adams-Novikov chart}

\begin{proof}
This follows immediately from Proposition \ref{prop:ANSS-Ctau}
and the calculation of 
$\pi_{*,*}(C\tau)$ given in Chapter \ref{ch:Ctau}.
The uncertainties are consequences of uncertainties in the structure
of $\pi_{*,*}(C\tau)$.
\end{proof}

\index{cofiber of tau@cofiber of $\tau$!homotopy group}

\begin{prop}
\label{prop:uncertain-54}
Modulo elements of the form $\alpha_1^n \alpha_{k/b}$,
in the 53-stem, 54-stem, and 55-stem, 
either case (1) or case (2) occurs.
\begin{enumerate}
\item
$\clE[54,6]{2}$ has order four, containing two distinct non-zero elements
$z_{54,6}$ and $\alpha_1 z_8 z'_{45} = \alpha_{2/2}^3 z'_{45}$; \\
$\clE[55,5]{2}$ is isomorphic to $\Z/2 \oplus \Z/2$ with generators
$z_{55}$ and $z'_{55}$; \\
$\alpha_1 z'_{55} = z_{56,6}$; \\
and $\alpha_{2/2}^2 z_{47,3}$ is zero in $\clE[53,5]{2}$.
\item
$\clE[54,6]{2}$ has order two; \\
$\clE[55,5]{2}$ has order two; \\
and $\alpha_{2/2}^2 z_{47,3} = z_8 z'_{45}$ in $\clE[53,5]{2}$.
\end{enumerate}
\end{prop}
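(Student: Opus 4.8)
The entire statement is a translation, via the isomorphism $\pi_{*,*}(C\tau) \cong \clE{2}$ of Proposition \ref{prop:ANSS-Ctau}, of the computation of the motivic stable homotopy groups of $C\tau$ carried out in Chapter \ref{ch:Ctau}. The plan is to work throughout on the $C\tau$ side and read off the Adams--Novikov consequences at the end. Under the dictionary of Proposition \ref{prop:ANSS-Ctau}, the group $\clE[53,5]{2}$ corresponds to $\pi_{53,29}(C\tau)$, the group $\clE[54,6]{2}$ to $\pi_{54,30}(C\tau)$, and $\clE[55,5]{2}$ to $\pi_{55,30}(C\tau)$; moreover $\alpha_1$ and $\alpha_{2/2}$ correspond to $\eta$ and $\nu$, so each assertion in the two cases is either a group-order statement or a (possibly hidden) $2$-, $\eta$-, or $\nu$-extension in $\pi_{*,*}(C\tau)$. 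First I would assemble, from the charts and tables of Chapter \ref{ch:Ctau}, the portions of $E_\infty(C\tau)$ in the relevant tridegrees, discarding the classes in the image of $J$ that account for the elements $\alpha_1^n \alpha_{k/b}$ excluded by hypothesis.

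The two cases arise from the single piece of ambiguity in this range that our analysis cannot resolve: the fate of the class $\ol{h_1 i_1}$ in the $55$-stem of $E_\bullet(C\tau)$, governed by the unresolved differentials of Propositions \ref{prop:Ctau-Adams-d2} and \ref{prop:Ctau-Adams-d3} (see also Remark \ref{rem:tau-h1i1}). I would trace each branch separately. When $\ol{h_1 i_1}$ survives to $E_3(C\tau)$, the induced $d_3(\ol{h_1 i_1}) = h_1 B_8$ removes classes in both the $54$- and $55$-stems, producing the smaller groups of case (2); when instead $\ol{h_1 i_1} + \tau h_1 G$ survives, that $d_3$ vanishes and the extra survivors give the order-four group $\clE[54,6]{2}$ and the rank-two group $\clE[55,5]{2}$ of case (1). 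In each branch the remaining $E_\infty(C\tau)$ structure is then forced by the differentials already established in Propositions \ref{prop:Ctau-Adams-d2} and \ref{prop:Ctau-Adams-d3}.

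Next I would pin down the multiplicative relations. The identities $\alpha_1 z'_{55} = z_{56,6}$ and $\alpha_{2/2}^3 z'_{45} = \alpha_1 z_8 z'_{45}$ in case (1), and the relation $\alpha_{2/2}^2 z_{47,3} = z_8 z'_{45}$ distinguishing case (2), all translate into $\eta$- and $\nu$-extensions in $\pi_{*,*}(C\tau)$ of exactly the kind catalogued in Section \ref{sctn:Ctau-hidden}. In particular, the key dichotomy in $\clE[53,5]{2}$ is whether the hidden $\nu^2$-extension on the class corresponding to $z_{47,3}$ hits the class corresponding to $z_8 z'_{45}$; this is controlled by the same survival question as above, so the two branches match cases (2) and (1) consistently. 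The non-hidden $\alpha_1$-multiplications can be read directly off $E_\infty(C\tau)$, and the $\nu$-extensions are obtained from Proposition \ref{prop:Ctau-hidden} together with Proposition \ref{prop:3bracket-cofiber}.

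The main obstacle is bookkeeping rather than conceptual: the symbols $z_{s,f}$ are arbitrary names, so the real work is an unambiguous matching of each to a specific generator of $E_\infty(C\tau)$, followed by checking that within each branch every asserted relation holds and no further relation is possible. The one genuinely delicate point is to verify that the two branches are the \emph{only} internally consistent possibilities and that they align precisely with the $\ol{h_1 i_1}$ dichotomy, so that the hidden $\nu^2$-extension in the $53$-stem, the order of $\clE[54,6]{2}$, and the rank of $\clE[55,5]{2}$ all toggle together rather than independently. Since we cannot resolve the underlying differential, the best available conclusion is exactly the stated alternative, and the content of the proof is to exhibit these two configurations as the two faces of one unresolved computation.
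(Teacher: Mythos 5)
Your proposal is correct and takes essentially the same approach as the paper: the paper's proof consists precisely of identifying case (1) with the non-occurrence and case (2) with the occurrence of the possible differential $d_3(\ol{h_1 i_1}) = h_1 B_8$ in the motivic Adams spectral sequence for $C\tau$ (Proposition \ref{prop:Ctau-Adams-d3}), which is exactly the dichotomy you trace, and your assignment of the branches to the two cases matches the paper's. The extra bookkeeping you describe (translating the group orders and the $\alpha_1$-, $\alpha_{2/2}$-relations through Proposition \ref{prop:ANSS-Ctau} and the hidden extensions of Chapter \ref{ch:Ctau}) is a correct elaboration of what the paper leaves implicit.
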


\begin{proof}
In the motivic Adams spectral sequence for $C\tau$, 
there is a possible $d_3$ differential hitting $h_1 B_8$ 
discussed in Proposition \ref{prop:Ctau-Adams-d3}.
Case (1) of the proposition corresponds to the possibility that 
this differential does not occur.
Case (2) of the proposition corresponds to the possibility that this
differential does occur.
\end{proof}

\begin{prop}
\label{prop:uncertain-59}
Modulo elements of the form $\alpha_1^n \alpha_{k/b}$,
in the 59-stem and 60-stem, 
either case (1) or case (2) occurs.
\begin{enumerate}
\item
$\clE[59,5]{2}$ is isomorphic to $\Z/2 \oplus \Z/2$, with generators
$\alpha_1^2 z_{57}$ and $z_{59,5}$;  \\
and $\clE[60,4]{2}$ has order four, containing two distinct non-zero elements
$z_{60,4}$ and $\alpha_{2/2} z_{57}$.
\item
the only non-zero element of $\clE[59,5]{2}$ is
$\alpha_1^2 z_{57}$;  \\
and $\clE[60,4]{2}$ has order two.
\end{enumerate}
\end{prop}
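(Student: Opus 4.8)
The plan is to deduce Proposition \ref{prop:uncertain-59} directly from the calculation of $\pi_{*,*}(C\tau)$ in Chapter \ref{ch:Ctau} via the isomorphism $\pi_{*,*}(C\tau) \iso \clE{2}$ of Proposition \ref{prop:ANSS-Ctau}. Under this isomorphism, $\pi_{s,w}(C\tau)$ maps isomorphically to $\clE[s,2w-s]{2}$, so the Adams-Novikov filtration $f$ on the classical $E_2$-page corresponds to the combination $2w-s$ on the motivic side. The two cases of the proposition should correspond precisely to the two possibilities for an unresolved Adams differential in the motivic Adams spectral sequence for $C\tau$ computed in Section \ref{sctn:Adams}, exactly as in the proof of Proposition \ref{prop:uncertain-54}.

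First I would identify, by inspecting the $E_\infty(C\tau)$ chart of \cite{Isaksen14a} through the relevant range, which uncertainty in the Adams spectral sequence for $C\tau$ governs the homotopy in stems 59 and 60 of $\pi_{*,*}(C\tau)$. The structure of Theorem \ref{thm:Ctau-Einfty} warns that $E_\infty(C\tau)$ is only a subquotient of the displayed chart beyond the 63-stem, but here we are safely below that bound, so the sole remaining ambiguity should be a single unresolved differential in the 59--60 range. Following the template of Proposition \ref{prop:uncertain-54}, I expect one of the possible higher differentials listed in Propositions \ref{prop:Ctau-Adams-d3} or \ref{prop:Ctau-d4} (for instance involving $\ol{j_1}$ and $B_{21}$, cf.\ Remark \ref{rem:Ctau-d4-C'}) to be the culprit: if the differential does not occur, the homotopy group in one stem is larger and we land in case (1); if it does occur, one generator is killed and we land in case (2).

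The translation step is then essentially bookkeeping: I would match each generator named in the proposition ($\alpha_1^2 z_{57}$, $z_{59,5}$, $z_{60,4}$, $\alpha_{2/2} z_{57}$) to a surviving class of $E_\infty(C\tau)$ in the appropriate tridegree, using the weight formula $w = \frac{s+f}{2}$ from Theorem \ref{thm:motE2} and Definition \ref{defn:motE2[0]} to pin down the filtrations. The phrase ``modulo elements of the form $\alpha_1^n \alpha_{k/b}$'' reflects that the $\alpha$-family contributions are understood independently and should be quotiented out before comparing the two cases; I would verify that after this quotient the only genuine ambiguity is the order-four versus order-two distinction recorded in the statement. Since Proposition \ref{prop:ANSS-Ctau} rules out hidden extensions of every kind in $\motECtau{\infty}$ for degree reasons (Lemma \ref{lem:ANSS-Ctau-hidden}), the group structure of $\clE[59,5]{2}$ and $\clE[60,4]{2}$ is read off directly from $E_\infty(C\tau)$ with no further extension problems.

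The main obstacle will be correctly identifying which unresolved motivic $C\tau$ differential produces exactly this dichotomy and confirming that no other ambiguity contaminates stems 59 and 60. This requires careful attention to the labeling conventions of Section \ref{subsctn:E2} and Table \ref{tab:Ctau-ambiguous}, since several $E_2(C\tau)$ generators in this range (such as $\ol{j_1}$ and $\ol{h_1 i_1}$) have genuine ambiguity in their definitions, and I must ensure that the two cases of the proposition are stated consistently with a single choice of representatives. Once that single differential is isolated, the two cases follow by the same mechanical argument as in Proposition \ref{prop:uncertain-54}, so I do not anticipate any deeper difficulty beyond this identification.
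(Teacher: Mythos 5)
Your proposal is correct and follows essentially the same route as the paper: the paper's proof simply observes that the possible differential $d_4(\ol{j_1}) = B_{21}$ from Proposition \ref{prop:Ctau-d4} (cf.\ Remark \ref{rem:Ctau-d4-C'}) is the sole source of ambiguity, with case (1) corresponding to the differential not occurring and case (2) to it occurring, exactly the dichotomy you describe. The extra bookkeeping you outline (the weight formula, absence of hidden extensions via Lemma \ref{lem:ANSS-Ctau-hidden}, matching generators) is precisely what the paper leaves implicit in Theorem \ref{thm:ANSS-E2} and Proposition \ref{prop:ANSS-Ctau}.
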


\begin{proof}
In the motivic Adams spectral sequence for $C\tau$, there is a 
possible $d_4$ differential hitting $B_{21}$ 
discussed in Proposition \ref{prop:Ctau-d4}.
Case (1) of the proposition corresponds to the possibility that
this differential does not occur.
Case (2) of the proposition corresponds to the possibility that
this differential does occur.
\end{proof}

\begin{lemma}
Assume that case (1) of Proposition \ref{prop:uncertain-54} occurs.
Then $\alpha_1 z_{47,3}$ equals $2 \alpha_{2/2} z'_{45}$.
\end{lemma}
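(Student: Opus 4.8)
The plan is to use the isomorphism of Proposition \ref{prop:ANSS-Ctau} to read the claimed identity as a relation in the motivic stable homotopy groups $\pi_{*,*}(C\tau)$ computed in Chapter \ref{ch:Ctau}. Both $\alpha_1 z_{47,3}$ and $2\alpha_{2/2} z'_{45}$ lie in $\clE[48,4]{2}$, which corresponds to $\pi_{48,26}(C\tau)$; here $z_{47,3}$ sits in $\pi_{47,25}(C\tau)$ and $z'_{45}$ in $\pi_{45,24}(C\tau)$. Because $\alpha_1$ and $\alpha_{2/2}$ each raise Adams-Novikov filtration by exactly $1$ and multiplication by $2$ preserves it, there is no filtration jump, so the statement is literally the identity $\eta\cdot z_{47,3} = 2(\nu\cdot z'_{45})$ of elements of the abelian group $\pi_{48,26}(C\tau)$, with $\eta$ and $\nu$ acting through the $\pi_{*,*}$-module structure.

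First I would identify the two classes by their detecting elements on $E_\infty(C\tau)$ of the motivic Adams spectral sequence, reading these off the charts in \cite{Isaksen14a} together with Table \ref{tab:Ctau-E2}, and record the survival data that the hypothesis supplies. By Proposition \ref{prop:uncertain-54}, case (1) is exactly the assumption that the potential $d_3$ differential of Proposition \ref{prop:Ctau-Adams-d3} hitting $h_1 B_8$ does not occur, so the relevant target class survives to $E_\infty(C\tau)$ and there is room for a nonzero class in the degree of $\alpha_1 z_{47,3}$.

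Next I would compute the two products separately. For $2(\nu z'_{45})$ the essential point is $2$-divisibility: I must show that $\nu\cdot z'_{45}$ has order at least $4$, so that $2(\nu z'_{45})$ is nonzero. This is precisely where case (1) enters, since in case (2) the corresponding class is killed and the product has order $2$. I would pin down the order of $\nu z'_{45}$ by multiplying into the $53$–$55$-stem range governed by Proposition \ref{prop:uncertain-54}, where the two cases are distinguished, using the relation $\alpha_1 z_8 z'_{45} = \alpha_{2/2}^3 z'_{45}$ recorded there for case (1). Independently I would compute $\eta\cdot z_{47,3}$, which in the motivic Adams spectral sequence for $C\tau$ is a (possibly hidden) $\eta$-extension; the relevant hidden extensions are collected in Table \ref{tab:Ctau-hidden} and follow from the cell-inclusion and cell-projection maps of Propositions \ref{prop:Ctau-bottom} and \ref{prop:Ctau-top}.

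Finally I would match the two answers inside $\pi_{48,26}(C\tau)$. Since in case (1) this group has few elements in the relevant Adams filtrations, it suffices to check that $\eta z_{47,3}$ and $2\nu z'_{45}$ are both nonzero and lie in the same cyclic summand; applying the top-cell map $q_*\colon \pi_{48,26}(C\tau)\to \pi_{47,27}$ of Proposition \ref{prop:Ctau-top} and comparing images gives an independent verification. The main obstacle is this reconciliation: the Adams-Novikov and Adams filtrations do not agree, so one must carefully track which $E_\infty(C\tau)$ class detects each product, and then establish the $2$-divisibility of $\nu z'_{45}$, which is the sole place the case (1) hypothesis genuinely enters the argument.
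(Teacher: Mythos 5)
Your framework (reading the claim as an identity in $\pi_{48,26}(C\tau)$ via Proposition \ref{prop:ANSS-Ctau}) is fine, but the proposal is missing the one idea the proof actually requires, and the step where you say case (1) "genuinely enters" is not where it enters at all. The entire content of the lemma is the \emph{non-vanishing} of $\alpha_1 z_{47,3}$, i.e., the existence of the hidden $\eta$ extension on $\ol{h_1^2 g_2}$. Once that is known, Proposition \ref{prop:Ctau-hidden} already says that the only possible value of such an extension is $h_0 B_2$, which corresponds to $2\alpha_{2/2} z'_{45}$, and the lemma follows. Your plan to "compute $\eta \cdot z_{47,3}$" from Table \ref{tab:Ctau-hidden} and the cell maps is circular: that table records this very extension as unknown (it is exception (4) of Proposition \ref{prop:Ctau-hidden}); the homotopy-theoretic analysis of $C\tau$ in Chapter \ref{ch:Ctau} could not decide it, and deciding it is precisely what this lemma does. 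The paper supplies the missing non-vanishing by algebra internal to $\clE{2}$: if $\alpha_1 z_{47,3}$ were zero, the bracket $\langle \alpha_{2/2}, \alpha_1, z_{47,3} \rangle$ would be defined, and the shuffle
\[
\alpha_{2/2}^2 z_{47,3} = \langle \alpha_1, \alpha_{2/2}, \alpha_1 \rangle z_{47,3}
= \alpha_1 \langle \alpha_{2/2}, \alpha_1, z_{47,3} \rangle
\]
would exhibit $\alpha_{2/2}^2 z_{47,3}$ as divisible by $\alpha_1$, contradicting the information about $\alpha_{2/2}^2 z_{47,3}$ in the 53-stem supplied by case (1) of Proposition \ref{prop:uncertain-54} (the proof uses that this product is not $\alpha_1$-divisible). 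Nothing in your outline replaces this step, so the argument has a genuine gap at its center.

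Moreover, your proposed use of the hypothesis is factually wrong. You claim case (1) is needed to show $\nu z'_{45}$ has order at least $4$, with the class "killed" in case (2); but $2\alpha_{2/2} z'_{45}$ corresponds to $h_0 B_2$ in the 48-stem (Table \ref{tab:Adams-ANSS}), while the differential distinguishing the two cases is $d_3(\ol{h_1 i_1}) = h_1 B_8$ in the 54-stem, which cannot touch $h_0 B_2$. Thus $2\alpha_{2/2} z'_{45}$ is non-zero in \emph{both} cases, and the hypothesis enters only through the 53-stem product $\alpha_{2/2}^2 z_{47,3}$, not through $2$-divisibility in the 48-stem. (The complementary case is handled by an entirely different argument; compare Remark \ref{rem:Ctau-eta-_h1^2g2}.) Finally, your proposed "independent verification" via the top cell is vacuous: $h_0 B_2$ lies in the image of the inclusion of the bottom cell, so every element it detects maps to zero under $q_*\colon \pi_{48,26}(C\tau) \to \pi_{47,27}$, and no information can be extracted that way.
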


\begin{proof}
Case (1) of Proposition \ref{prop:uncertain-54} says that
$\alpha_{2/2}^2 z_{47,3}$ is not divisible by $\alpha_1$.
If $\alpha_1 z_{47,3}$ were zero, then we could shuffle Massey products to obtain
\[
\alpha_{2/2}^2 z_{47,3} = \langle \alpha_1, \alpha_{2/2}, \alpha_1 \rangle z_{47,3} =
\alpha_1 \langle \alpha_{2/2}, \alpha_1, z_{47,3} \rangle.
\]
Therefore, $\alpha_1 z_{47,3}$ must be non-zero.

Under the isomorphism of Proposition \ref{prop:ANSS-Ctau},
the element $z_{47,3}$ of $\clE[47,3]{2}$ corresponds to the
element $\ol{h_1^2 g_2}$ in $\pi_{47,25}(C\tau)$.
We showed in Proposition \ref{prop:Ctau-hidden} that
the only possible hidden $\eta$ extension on $\ol{h_1^2 g_2}$
takes the value $h_0 B_2$, which corresponds
in $\clE[48,26]{2}$ to $2 \alpha_{2/2} z'_{45}$.
\end{proof}

\subsection{Adams-Novikov differentials}

Having obtained the Adams-Novikov $E_2$-page, we next compute 
differentials.

\index{Adams-Novikov spectral sequence!differential}
\index{Adams-Novikov chart}

\begin{thm}
\label{thm:ANSS-diff}
The differentials in the classical Adams-Novikov spectral sequence
are depicted through the 59-stem in the chart in \cite{Isaksen14e}.
The chart is complete 
except for the following:
\begin{enumerate}
\item
if $z'_{55}$ exists in $\clE[55,5]{2}$,
then $d_3(z'_{55}) = \alpha_1 z_{53}$.
\item
if $z_{60,4}$ exists in $\clE[60,4]{2}$,
then $d_3(z_{60,4}) = z'_{59,7}$.
\end{enumerate}
\end{thm}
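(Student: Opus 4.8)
The plan is to apply the reverse-engineering framework developed in this chapter, exactly as in the proof of Theorem \ref{thm:ANSS-E2}. By Proposition \ref{prop:ANSS-Ctau} the classical Adams-Novikov $E_2$-page $\clE{2}$ is identified with the motivic homotopy $\pi_{*,*}(C\tau)$ computed in Chapter \ref{ch:Ctau}, and the classical Adams-Novikov spectral sequence agrees with the $\tau$-Bockstein spectral sequence converging to $\pi_{*,*}$. Under this dictionary the Adams-Novikov differentials carry no information beyond the $\tau$-multiplication structure of $\pi_{*,*}$, which was completely determined (through the relevant range) in Chapters \ref{ch:Adams-diff} and \ref{ch:Adams-hidden}, including all hidden $\tau$ extensions from Section \ref{subsctn:hidden-tau}. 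So the first step is simply to transport that structure across the isomorphism of Proposition \ref{prop:ANSS-Ctau}.

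The key bookkeeping, which I would set out precisely first, is the weight constraint. Since $\pi_{s,w}(C\tau)$ corresponds to $\clE[s,2w-s]{2}$, an odd differential $d_{2r+1} \colon \clE[s,f]{2r+1} \to \clE[s-1,f+2r+1]{2r+1}$ is detected by a class of $\pi_{s-1}$ that is not divisible by $\tau$, is annihilated by $\tau^r$, and is not annihilated by $\tau^{r-1}$; this is the same degree calculation used in Lemma \ref{lem:tau-D11}. Thus the target of each $d_{2r+1}$ is exactly the element of $\clE{2}$ detecting a $\tau^r$-torsion, non-$\tau$-divisible homotopy class, while its source records the relation that makes the torsion order exactly $\tau^r$. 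With this correspondence in hand, one reads off every differential in the chart of \cite{Isaksen14e} directly from the known torsion orders: an element of $\clE{2}$ is a permanent cycle precisely when the corresponding class of $\pi_{*,*}(C\tau)$ maps to a $\tau$-torsion-free class, and otherwise it supports or receives a differential of the length dictated by its torsion order.

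The two remaining differentials are conditional precisely because they rest on the two unresolved pieces of the $C\tau$ computation. The existence of $z'_{55}$ is equivalent to the non-occurrence of the Adams differential $d_3(\ol{h_1 i_1}) = h_1 B_8$ for $C\tau$ (case (1) of Proposition \ref{prop:uncertain-54}), and the existence of $z_{60,4}$ is equivalent to the non-occurrence of $d_4(\ol{j_1}) = B_{21}$ (case (1) of Proposition \ref{prop:uncertain-59}). In each case I would argue that, granting the source exists, the proposed target is forced to be a boundary: $\alpha_1 z_{53}$ and $z'_{59,7}$ both correspond to $\tau$-torsion classes of $\pi_{*,*}$ that are not $\tau$-divisible, so each must be hit by some Adams-Novikov differential, and inspection of the $E_2$-page shows that the only available source of the correct degree is $z'_{55}$, respectively $z_{60,4}$. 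Because $r=1$ here, these are $d_3$ differentials.

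The main obstacle is not any single computation but the careful verification that the known $\tau$-torsion data --- in particular the hidden $\tau$ extensions, which are what pin down the exact torsion order and hence the exact length $2r+1$ of each differential --- translates without ambiguity into the chart, and that no circularity is introduced. The latter point is delicate, because Chapter \ref{ch:Ctau} and Chapters \ref{ch:Adams-diff}, \ref{ch:Adams-hidden} borrowed a small amount of Adams-Novikov input (Lemmas \ref{lem:tau-D11} and \ref{lem:2-h0h5i}); I would confirm that the differentials asserted here do not feed back into those two inputs. The genuinely irreducible uncertainty is confined to the two conditional $d_3$ differentials, which cannot be settled without first resolving the two unknown Adams differentials for $C\tau$.
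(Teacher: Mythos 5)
Your proposal is correct and follows essentially the same route as the paper: the differentials are forced as the unique pattern compatible with the already-computed $\tau$-torsion structure of $\pi_{*,*}$, using the dictionary of Proposition \ref{prop:ANSS-Ctau} (exactly the degree bookkeeping of Lemma \ref{lem:tau-D11}), with the two conditional $d_3$'s tied to the unresolved Adams differentials for $C\tau$ in Propositions \ref{prop:uncertain-54} and \ref{prop:uncertain-59}. Your explicit treatment of the conditional cases and the circularity check are slightly more detailed than the paper's proof, but they match its content (and the paper's own caveats) rather than constituting a different argument.
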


\begin{proof}
There is only one pattern of differentials
in the motivic Adams-Novikov spectral sequence that will give
the same answer for $\pi_{*,*}$ that was already obtained in
Chapters \ref{ch:Adams-diff} and \ref{ch:Adams-hidden}.
For example, $\{ h_1^2 h_4 c_0 \}$ in $\pi_{25,15}$
is annihilated by $\tau$.
Therefore, there must be a motivic Adams-Novikov differential
hitting $\tau \alpha_1^2 \alpha_{4/4} z_{16}$.
The only possibility is that $d_3(z_{26}) = \tau \alpha_1^2 \alpha_{4/4} z_{16}$.

Having obtained the motivic Adams-Novikov differentials in this way,
the classical Adams-Novikov differentials follow immediately.

The uncertainties in the statement of the theorem are associated with
the uncertainties in Propositions \ref{prop:uncertain-54} and \ref{prop:uncertain-59}.
\end{proof}

\subsection{The classical Adams-Novikov $E_\infty$-page}

\index{Adams-Novikov spectral sequence!Einfinity-page@$E_\infty$-page}
\index{Adams-Novikov chart}

\begin{thm}
\label{thm:ANSS-Einfty}
The $E_\infty$-page of the classical Adams-Novikov is depicted in the
chart in \cite{Isaksen14e} through the 59-stem.
The chart includes all hidden extensions by $2$, $\eta$, and $\nu$.
The chart is complete except for the uncertainties described in 
Propositions \ref{prop:Einfty-uncertain-54} and \ref{prop:Einfty-uncertain-59}, 
and the following:
\begin{enumerate}
\item
There might be a hidden $\nu$ extension from $\alpha_{2/2} z_{45}$
to $z_{51}$.
\item
There might be a hidden $2$ extension from $2\alpha_{4/4} z_{44}$
to $z_{51}$.
\end{enumerate}
\end{thm}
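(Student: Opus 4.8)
The plan is to obtain the $E_\infty$-page by mechanical bookkeeping once the $E_2$-page and the differentials are in hand, and then to resolve the hidden extensions by transporting the already-known multiplicative structure of $\pi_{*,*}$ across the isomorphism of Proposition \ref{prop:ANSS-Ctau}. First I would take the homology of the $E_2$-page of Theorem \ref{thm:ANSS-E2} with respect to the differentials of Theorem \ref{thm:ANSS-diff}. Since every classical Adams-Novikov differential in this range has been pinned down (up to the enumerated uncertainties tied to Propositions \ref{prop:uncertain-54} and \ref{prop:uncertain-59}), this directly produces the additive structure of $\clE{\infty}$ displayed in the chart of \cite{Isaksen14e}.

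Second I would address the hidden extensions by $2$, $\eta$, and $\nu$ recorded in Tables \ref{tab:ANSS-hidden-2}, \ref{tab:ANSS-hidden-eta}, and \ref{tab:ANSS-hidden-nu}. The conceptual input is that $\clE{\infty}$ is the associated graded object of classical $\pi_*$ for the Adams-Novikov filtration, and that we have already computed $\pi_*$ as the $\tau$-inversion of the motivic groups $\pi_{*,*}$ in Chapters \ref{ch:Adams-diff} and \ref{ch:Adams-hidden}. A hidden $\alpha$ extension in the Adams-Novikov spectral sequence is precisely a product $\alpha\beta$ in $\pi_*$ whose Adams-Novikov filtration exceeds the sum of the filtrations of $\alpha$ and $\beta$. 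Using the dictionary of Table \ref{tab:Adams-ANSS}, which matches Adams-Novikov $E_\infty$ names with Adams $E_\infty$ names and with the elements of $\pi_*$ they detect, each candidate extension can be checked against the corresponding $2$-, $\eta$-, or $\nu$-multiplication in $\pi_*$ that we have already determined. Most follow immediately; a handful require the finer motivic data carried by $\pi_{*,*}(C\tau)$ together with the weight grading, which via Proposition \ref{prop:ANSS-Ctau} places each class in a unique Adams-Novikov filtration and thereby detects the filtration jump.

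Third, the leftover ambiguities --- the two cases of Propositions \ref{prop:Einfty-uncertain-54} and \ref{prop:Einfty-uncertain-59}, and the possible extensions in items (1) and (2) of the statement --- I would trace back to the corresponding undetermined Adams differentials and hidden extensions for $C\tau$ (Propositions \ref{prop:Ctau-Adams-d3} and \ref{prop:Ctau-d4}) and in $\pi_*$ itself. These are genuinely open in our range, so I would document them as the stated exceptions rather than attempt to resolve them.

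The main obstacle is not the additive computation, which is automatic, but the few hidden extensions that are invisible to the comparison with $\pi_*$ at the level of associated graded groups. The difficulty is that two Adams-Novikov filtrations can collapse onto a single element of $\pi_*$ whose multiplicative behavior is known, leaving the filtration jump undetermined; separating these requires either the motivic weight (which succeeds in most cases) or an independent Toda bracket computation. Where neither tool applies --- precisely items (1) and (2) and the two uncertain propositions --- the extension must remain conditional.
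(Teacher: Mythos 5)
Your proposal is correct and follows essentially the same route as the paper: the paper's proof likewise obtains the $E_\infty$-page mechanically from Theorems \ref{thm:ANSS-E2} and \ref{thm:ANSS-diff}, reads off the hidden $2$, $\eta$, and $\nu$ extensions from the previously computed homotopy-group structure transported across the correspondence of Proposition \ref{prop:ANSS-Ctau}, and leaves exactly the ambiguities traceable to the unresolved differentials and extensions for $C\tau$ and for the sphere. Your more explicit description of the filtration-jump mechanism --- classical products in $\pi_*$ together with the motivic weight placing each class in a unique Adams-Novikov filtration --- is precisely what the paper's terse citation of the Chapter \ref{ch:Ctau} computations unpacks to.
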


\begin{proof}
The $E_\infty$-page can be computed directly from
Theorems \ref{thm:ANSS-E2} and \ref{thm:ANSS-diff} because we know the
$E_2$-page and all differentials up to some specified
uncertainties.

The hidden extensions by $2$, $\eta$, and $\nu$ all follow from
extensions in $\pi_{*,*}(C\tau)$, as computed in Chapter \ref{ch:Ctau}.
\end{proof}

Tables \ref{tab:ANSS-hidden-2}, \ref{tab:ANSS-hidden-eta}, and \ref{tab:ANSS-hidden-nu}
list all of the hidden extensions by $2$, $\eta$, and $\nu$ in the
motivic Adams-Novikov spectral sequence.
\index{Adams-Novikov spectral sequence!hidden extension}

\begin{remark}
From Lemma \ref{lem:nu-h2h5d0},
the possible extension (1) 
in Theorem \ref{thm:ANSS-Einfty}
occurs if and only if the possible extension (2) occurs.
\end{remark}

\begin{prop}
\label{prop:Einfty-uncertain-54}
In the 53-stem, 54-stem, and 55-stem, 
either case (1) or case (2) occurs.
\begin{enumerate}
\item
$\alpha_1 z_8 z''_{45} = \alpha_{2/2}^3 z''_{45}$
is a non-zero element of $\clE[54,6]{\infty}$; \\
$\clE[54,8]{\infty}$ is zero; \\
$\alpha_{2/2} z_{50}$ is zero in $\clE[53,5]{\infty}$; \\
and there is a hidden
$\nu$ extension from $z_{50}$ to $z_{53}$.
\item
$\clE[54,6]{\infty}$ is zero; \\
$\alpha_1 z_{53}$ is a non-zero element of $\clE[54,8]{\infty}$; \\
$\alpha_{2/2} z_{50} = z_8 z''_{45}$
in $\clE[53,5]{\infty}$; \\
and there is a hidden $\nu$ extension
from $\alpha_{2/2}^2 z'_{45}$ to $\alpha_1 z_{53}$.
\end{enumerate}
\end{prop}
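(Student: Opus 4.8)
The plan is to exploit the identification of Proposition \ref{prop:ANSS-Ctau}, which makes $\pi_{*,*}(C\tau)$ isomorphic to the classical Adams-Novikov $E_2$-page $\clE{2}$, and then to read off the Adams-Novikov differentials and hidden extensions directly from the structure of $\pi_{*,*}(C\tau)$ computed in Chapter \ref{ch:Ctau}. The dichotomy in the statement is not a new phenomenon: it descends from the single unresolved Adams $d_3$ differential $d_3(\ol{h_1 i_1}) = h_1 B_8$ in the motivic Adams spectral sequence for $C\tau$ (Proposition \ref{prop:Ctau-Adams-d3}), which already produced the two cases of Proposition \ref{prop:uncertain-54} on the $E_2$-page. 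I would therefore organize the argument so that case (1) here corresponds throughout to case (1) of Proposition \ref{prop:uncertain-54} (the differential does not occur), and likewise for case (2).

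First I would fix the resolution of that $d_3$ differential and, in each case, assemble the relevant portion of $\clE{2}$ in stems 53, 54, and 55 from the computation of $\pi_{*,*}(C\tau)$, using Proposition \ref{prop:ANSS-Ctau} to convert weights into Adams-Novikov filtrations via $f = 2w - s$. Next I would transport the Adams-Novikov differentials into this range. As in the proof of Theorem \ref{thm:ANSS-diff}, each such differential is forced by the requirement that the resulting $E_\infty$-page be the associated graded of $\pi_{*,*}$ for the $\tau$-adic filtration, so the surviving classes — and in particular whether $z_{54,6}$ or $\alpha_1 z_{53}$ persists to $\clE{\infty}$ — are determined by the known motivic homotopy groups of Chapters \ref{ch:Adams-diff} and \ref{ch:Adams-hidden}. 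This settles the group-level assertions about $\clE[54,6]{\infty}$, $\clE[54,8]{\infty}$, and $\clE[53,5]{\infty}$ in both cases.

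The remaining content is the pair of hidden $\nu$ extensions, one in each case: from $z_{50}$ to $z_{53}$ in case (1), and from $\alpha_{2/2}^2 z'_{45}$ to $\alpha_1 z_{53}$ in case (2). I would obtain these from the corresponding hidden $\nu$ extensions in $\pi_{*,*}(C\tau)$ recorded in Proposition \ref{prop:Ctau-hidden} and Table \ref{tab:Ctau-hidden}, again translating through Proposition \ref{prop:ANSS-Ctau}. The key point is that the target of the $\nu$ extension, corresponding to the class detecting $z_{53}$, changes its $\tau$-divisibility — equivalently its Adams-Novikov filtration — depending on the case, so the same underlying motivic relation is seen either as a hidden extension landing in filtration 5 or in filtration 8.

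The main obstacle will be bookkeeping this hidden $\nu$ extension precisely, since its source and target both sit among classes whose names and divisibilities are case-dependent; one must verify that the extension detected in $\pi_{*,*}(C\tau)$ is not absorbed into the image of the bottom-cell inclusion or the kernel of the top-cell projection in a manner that would trivialize it on $\clE{\infty}$. Apart from this, the argument is a mechanical translation of already-established facts about $C\tau$, with no genuinely new computation required beyond the resolution of the $d_3(\ol{h_1 i_1})$ ambiguity that governs the entire dichotomy.
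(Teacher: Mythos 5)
Your proposal is correct and takes essentially the same route as the paper: the paper's own proof is just a two-sentence citation of Proposition \ref{prop:uncertain-54} (whose two cases are exactly the two resolutions of the $d_3(\ol{h_1 i_1}) = h_1 B_8$ ambiguity in the Adams spectral sequence for $C\tau$) together with the first uncertainty in Theorem \ref{thm:ANSS-diff}, with the group-level statements and hidden $\nu$ extensions then following from the general machinery of Theorem \ref{thm:ANSS-Einfty} -- precisely the translation through Proposition \ref{prop:ANSS-Ctau} and the $C\tau$ data that you describe. Your write-up merely makes explicit the bookkeeping that the paper leaves implicit.
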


\begin{proof}
The two cases are associated with the two cases of
Proposition \ref{prop:uncertain-54}.  See also
the first uncertainty in Theorem \ref{thm:ANSS-diff}.
\end{proof}

\begin{prop}
\label{prop:Einfty-uncertain-59}
In the 59-stem, either case (1) or case (2) occurs.
\begin{enumerate}
\item
$z_{59,5}$ is the only non-zero element of $\clE[59,5]{\infty}$; \\
and 
$z_{59,7}$ is the only non-zero element of $\clE[59,7]{\infty}$.
\item
$\clE[59,5]{\infty}$ is zero; \\
and $\clE[59,7]{\infty}$ has two generators $z_{59,7}$ and $z'_{59,7}$.
\end{enumerate}
\end{prop}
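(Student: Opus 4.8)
The plan is to deduce Proposition \ref{prop:Einfty-uncertain-59} directly from the corresponding ambiguity in the Adams spectral sequence for $C\tau$, exactly as Proposition \ref{prop:Einfty-uncertain-54} is deduced from Proposition \ref{prop:uncertain-54}. The key input is Proposition \ref{prop:uncertain-59}, which already splits the $E_2$-page of the classical Adams-Novikov spectral sequence in the 59-stem and 60-stem into two cases, and Theorem \ref{thm:ANSS-diff}, whose second listed uncertainty records the possible differential $d_3(z_{60,4}) = z'_{59,7}$. My first step would be to trace the two cases of Proposition \ref{prop:uncertain-59} through the isomorphism $\pi_{*,*}(C\tau) \map \clE{2}$ of Proposition \ref{prop:ANSS-Ctau}, identifying which $E_2$ elements survive to $\clE{\infty}$ after the relevant differentials are applied.

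Concretely, I would argue as follows. In case (1) of Proposition \ref{prop:uncertain-59}, $\clE[59,5]{2}$ is isomorphic to $\Z/2 \oplus \Z/2$ with generators $\alpha_1^2 z_{57}$ and $z_{59,5}$, while $\clE[60,4]{2}$ has order four containing $z_{60,4}$ and $\alpha_{2/2} z_{57}$. The governing differential in the motivic Adams spectral sequence for $C\tau$ is the possible $d_4$ differential hitting $B_{21}$ from Proposition \ref{prop:Ctau-d4}; case (1) corresponds to this differential \emph{not} occurring. When it does not occur, $z_{60,4}$ survives, and I would check that $d_3(z_{60,4}) = z'_{59,7}$ (the second uncertainty of Theorem \ref{thm:ANSS-diff}) then kills one generator in filtration $7$, leaving $z_{59,7}$ as the only survivor in $\clE[59,7]{\infty}$, while $z_{59,5}$ survives in filtration $5$. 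This yields the statement of case (1). In case (2), the $d_4$ differential hitting $B_{21}$ does occur, so $z_{60,4}$ is killed; then the $d_3$ differential $d_3(z_{60,4}) = z'_{59,7}$ is vacuous, both $z_{59,7}$ and $z'_{59,7}$ persist to $\clE[59,7]{\infty}$, and $z_{59,5}$ is killed as a boundary, giving $\clE[59,5]{\infty} = 0$. These two outcomes are precisely cases (1) and (2) of the proposition.

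The step I expect to require the most care is the bookkeeping of which elements are boundaries versus cycles in each of the two scenarios, since the ambiguity in Proposition \ref{prop:Ctau-d4} about $d_4(\ol{j_1})$ propagates both to the differential structure in Theorem \ref{thm:ANSS-diff} and to the $E_2$-group structure in Proposition \ref{prop:uncertain-59}; I must verify that the two uncertainties are governed by the \emph{same} underlying motivic differential rather than being independent. This is the analogue of the sentence ``The two cases are associated with the two cases of Proposition \ref{prop:uncertain-54}'' in the proof of Proposition \ref{prop:Einfty-uncertain-54}, and I would make the linkage explicit by citing Proposition \ref{prop:Ctau-d4} and Remark \ref{rem:Ctau-d4-C'}. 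Once the correspondence of the single $d_4$ differential across $E_2$-structure and $E_\infty$-structure is pinned down, the rest is a routine transcription.

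\begin{proof}
The two cases are associated with the two cases of
Proposition \ref{prop:uncertain-59}, which in turn are governed by the
possible Adams $d_4$ differential hitting $B_{21}$ in the Adams spectral
sequence for $C\tau$ discussed in Proposition \ref{prop:Ctau-d4}.
See also the second uncertainty in Theorem \ref{thm:ANSS-diff}.
\end{proof}
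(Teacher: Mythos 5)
Your proof is correct and takes essentially the same approach as the paper, whose own argument likewise consists of citing the two cases of Proposition \ref{prop:uncertain-59} together with the second uncertainty of Theorem \ref{thm:ANSS-diff}; your explicit pointer to the governing $d_4$ differential on $B_{21}$ from Proposition \ref{prop:Ctau-d4} just makes visible what the paper's proof of Proposition \ref{prop:uncertain-59} already records, so the two uncertainties are indeed a single one. One small imprecision in your narrative (not in the proof itself): in case (2) the classes $z_{60,4}$ and $z_{59,5}$ are not ``killed'' by differentials --- they simply do not exist on the $E_2$-page, and it is $\alpha_1^2 z_{57}$ that dies as a boundary in filtration $5$.
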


\begin{proof}
The two cases are associated with the two cases of
Proposition \ref{prop:uncertain-59}.  See also
the second uncertainty in Theorem \ref{thm:ANSS-diff}.
\end{proof}



\chapter{Tables}
\label{ch:table}

\index{notation|textbf}



\index{Adams spectral sequence!comparison to Adams-Novikov spectral sequence|textbf}
\index{Adams-Novikov spectral sequence!comparison to Adams spectral sequence|textbf}
\index{comparison between Adams and Adams-Novikov spectral sequence|textbf}
\begin{longtable}{llll}
\caption{Correspondence between classical Adams and Adams-Novikov $E_\infty$} \\
\toprule
$s$ & Adams & Adams-Novikov & detects \\
\midrule \endfirsthead
\caption[]{Correspondence between classical Adams and Adams-Novikov $E_\infty$} \\
\toprule
$s$ & Adams & Adams-Novikov & detects \\
\midrule \endhead
\bottomrule \endfoot
\label{tab:Adams-ANSS}
$0$ & $h_0^k$ & $2^k$ & $2^k$ \\
$1$ & $h_1$ & $\alpha_1$ & $\eta$ \\
$2$ & $h_1^2$ & $\alpha_1^2$ & $\eta^2$ \\
$3$ & $h_2$ & $\alpha_{2/2}$ & $\nu$ \\
$3$ & $h_0 h_2$ & $2\alpha_{2/2}$ & $2\nu$ \\
$3$ & $h_0^2 h_2$ & $\alpha_1^3$ & $4\nu$ \\
$6$ & $h_2^2$ & $\alpha_{2/2}^2$ & $\nu^2$ \\
$7$ & $h_0^k h_3$ & $2^k \alpha_{4/4}$ & $\sigma$ \\
$8$ & $h_1 h_3$ & $\alpha_1 \alpha_{4/4}$ & $\eta \sigma$ \\
$8$ & $c_0$ & $z_8 + \alpha_1 \alpha_{4/4}$ & $\epsilon$ \\
$9$ & $h_1 c_0$ & $\alpha_1 z_8 + \alpha_1^2 \alpha_{4/4}$ & $\eta \epsilon$ \\
$9$ & $h_1^2 h_3$ & $\alpha_1^2 \alpha_{4/4}$ & $\eta^2 \sigma$ \\
$8k+1$ & $P^k h_1$ & $\alpha_{4k+1}$ & $\mu_{8k+1}$ \\
$8k+2$ & $P^k h_1^2$ & $\alpha_1 \alpha_{4k+1}$ & $\eta \mu_{8k+1}$ \\
$8k+3$ & $P^k h_2$ & $2\alpha_{4k+2/3}$ & $\zeta_{8k+3}$ \\
$8k+3$ & $P^k h_0 h_2$ & $4 \alpha_{4k+2/3}$ & $2\zeta_{8k+3}$ \\
$8k+3$ & $P^k h_0^2 h_2$ & $\alpha_1^2 \alpha_{4k+1}$ & $4\zeta_{8k+3}$ \\
$14$ & $h_3^2$ & $\alpha_{4/4}^2$ & $\sigma^2$ \\
$14$ & $d_0$ & $z_{14}$ & $\kappa$ \\
$15$ & $h_0^{k+3} h_4$ & $2^k \alpha_{8/5}$ & $2^k \rho_{15}$ \\
$15$ & $h_1 d_0$ & $\alpha_1 z_{14}$ & $\eta \kappa$ \\
$16$ & $h_1 h_4$ & $z_{16}$ & $\eta_4$ \\
$8k+8$ & $P^k c_0$ & $\alpha_1 \alpha_{4k+4/b}$ & $\eta \rho_{8k+7}$ \\
$17$ & $h_1^2 h_4$ & $\alpha_1 z_{16}$ & $\eta \eta_4$ \\
$17$ & $h_2 d_0$ & $\alpha_{2/2} z_{14}$ & $\nu \kappa$ \\
$8k+9$ & $P^k h_1 c_0$ & $\alpha_1^2 \alpha_{4k+4/b}$ & $\eta^2 \rho_{8k+7}$ \\
$18$ & $h_2 h_4$ & $z_{18}$ & $\nu_4$ \\
$18$ & $h_0 h_2 h_4$ & $2z_{18}$ & $2 \nu_4$ \\
$18$ & $h_1^3 h_4$ & $\alpha_1^2 z_{16}$ & $4 \nu_4$ \\
$19$ & $c_1$ & $z_{19}$ & $\sigmabar$ \\
$20$ & $g$ & $z_{20,2}$ & $\kappabar$ \\
$20$ & $h_0 g$ & $z_{20,4}$ & $2\kappabar$ \\
$20$ & $h_0^2 g$ & $2z_{20,4}$ & $4\kappabar$ \\
$21$ & $h_2^2 h_4$ & $\alpha_{2/2} z_{18}$ & $\nu \nu_4$ \\
$21$ & $h_1 g$ & $\alpha_1 z_{20,2}$ & $\eta \kappabar$ \\
$22$ & $h_2 c_1$ & $\alpha_{2/2} z_{19}$ & $\nu \sigmabar$ \\
$22$ & $P d_0$ & $\alpha_1^2 z_{20,2}$ & $\eta^2 \kappabar$ \\
$23$ & $h_4 c_0$ & $\alpha_{4/4} z_{16}$ & $\sigma \eta_4$ \\
$23$ & $h_2 g$ & $z_{23}$ & $\nu \kappabar$ \\
$23$ & $h_0 h_2 g$ & $2 z_{23}$ & $2 \nu \kappabar$ \\
$23$ & $P h_1 d_0$ & $4 z_{23}$ & $4 \nu \kappabar$ \\
$23$ & $h_0^{k+2} i$ & $2^k \alpha_{12/4}$ & $2^k \rho_{23}$ \\
$24$ & $h_1 h_4 c_0$ & $\alpha_1 \alpha_{4/4} z_{16}$ & $\eta \sigma \eta_4$ \\
$26$ & $h_2^2 g$ & $\alpha_{2/2} z_{23}$ & $\nu^2 \kappabar$ \\
$28$ & $d_0^2$ & $z_{28}$ & $\kappa^2$ \\
$30$ & $h_4^2$ & $z_{30}$ & $\theta_4$ \\
$31$ & $h_1 h_4^2$ & $\alpha_1 z_{30}$ & $\eta \theta_4$ \\
$31$ & $n$ & $z_{31}$ & $$ \\
$31$ & $h_0^{k+10} h_5$ & $2^k \alpha_{16/6}$ & $2^k \rho_{31}$ \\
$32$ & $h_1 h_5$ & $z'_{32,2}$ & $\eta_5$ \\
$32$ & $d_1$ & $z_{32,4}$ & $$ \\
$32$ & $q$ & $z_{32,2}$ & $$ \\
$33$ & $h_1^2 h_5$ & $\alpha_1 z'_{32,2}$ & $\eta \eta_5$ \\
$33$ & $p$ & $\alpha_2 z_{30}$ & $\nu \theta_4$ \\
$33$ & $h_1 q$ & $\alpha_1 z_{32,2}$ & $$ \\
$34$ & $h_0 h_2 h_5$ & $2 z_{34,2}$ & $$ \\
$34$ & $h_1^3 h_5$ & $\alpha_1^2 z'_{32,2}$ & $\eta^2 \eta_5$ \\
$34$ & $h_2 n$ & $\alpha_{2/2} z_{31}$ & $$ \\
$34$ & $e_0^2$ & $z_{34,6}$ & $\kappa \kappabar$ \\
$35$ & $h_2 d_1$ & $\alpha_{2/2} z_{32,4}$ & $$ \\
$35$ & $h_1 e_0^2$ & $\alpha_1 z_{34,6}$ & $\eta \kappa \kappabar$ \\
$36$ & $t$ & $\alpha_1^2 z_{34,2}$ & $$ \\
$37$ & $h_2^2 h_5$ & $\alpha_{2/2} z_{34,2}$ & $$ \\
$37$ & $x$ & $\alpha_{4/4} z_{30}$ & $\sigma \theta_4$ \\
$38$ & $h_0^2 h_3 h_5$ & $z_{38}$ & $$ \\
$38$ & $h_0^3 h_3 h_5$ & $2z_{38}$ & $$ \\
$38$ & $h_2^2 d_1$ & $\alpha_{2/2}^2 z_{32,4}$ & $$ \\
$39$ & $h_1 h_3 h_5$ & $z_{39,3}$ & $$ \\
$39$ & $h_5 c_0$ & $z'_{39,3}$ & $$ \\
$39$ & $h_3 d_1$ & $\alpha_{4/4} z_{32,4}$ & $$ \\
$39$ & $h_2 t$ & $z_{39,7}$ & $$ \\
$39$ & $u$ & $\alpha_1 z'_{38}$ & $$ \\
$39$ & $P^2 h_0^{k+2} i$ & $2^k \alpha_{20/4}$ & $2^k \rho_{39}$ \\
$40$ & $h_1^2 h_3 h_5$ & $\alpha_1 z_{39,3}$ & $$ \\
$40$ & $f_1$ & $z_{40,4}$ & $$ \\
$40$ & $h_1 h_5 c_0$ & $\alpha_1 z'_{39,3}$ & $$ \\
$40$ & $P h_1 h_5$ & $z_{40,2}$ & $$ \\
$40$ & $g^2$ & $\alpha_1^4 z_{36}$ & $\kappabar^2$ \\
$40$ & $h_1 u$ & $z_{40,8}$ & $2 \kappabar^2$ \\
$41$ & $h_1 f_1$ & $\alpha_1 z_{40,4}$ & $$ \\
$41$ & $P h_1^2 h_5$ & $\alpha_1 z_{40,2}$ & $$ \\
$41$ & $z$ & $\alpha_1^5 z_{36}$ & $\eta \kappabar^2$ \\
$42$ & $P h_2 h_5$ & $2 z_{42}$ & $$ \\
$42$ & $P h_0 h_2 h_5$ & $4 z_{42}$ & $$ \\
$42$ & $P h_1^3 h_5$ & $\alpha_1^2 z_{40,2}$ & $$ \\
$42$ & $d_0^3$ & $\alpha_1^6 z_{36}$ & $\eta^2 \kappabar^2$ \\
$44$ & $g_2$ & $z_{44,4}$ & $$ \\
$44$ & $h_0 g_2$ & $2z_{44,4}$ & $$ \\
$44$ & $h_0^2 g_2$ & $4z_{44,4}$ & $$ \\
$45$ & $h_3^2 h_5$ & $z'_{45}$ & $\theta_{4.5}$ \\
$45$ & $h_0 h_3^2 h_5$ & $2z'_{45}$ & $2 \theta_{4.5}$ \\
$45$ & $h_5 d_0$ & $z_{45} + 2z'_{45}$ & $$ \\
$45$ & $h_1 g_2$ & $\alpha_1 z_{44,4}$ & $$ \\
$45$ & $h_0 h_5 d_0$ & $4z'_{45}$ & $4 \theta_{4.5}$ \\
$45$ & $h_0^2 h_5 d_0$ & $8z'_{45}$ & $8 \theta_{4.5}$ \\
$45$ & $w$ & $\alpha_1 z_{44,2}$ & $$ \\
$46$ & $h_1 h_5 d_0$ & $\alpha_1 z_{45}$ & $$ \\
$46$ & $B_1$ & $\alpha_1 z'_{45}$ & $\eta \theta_{4.5}$ \\
$46$ & $N$ & $\alpha_{2/2} z_{43,3}$ & $$ \\
$46$ & $d_0 l$ & $\alpha_1^2 z_{44,2}$ & $$ \\
$47$ & $h_2 g_2$ & $\alpha_{2/2} z_{44,4}$ & $$ \\
$47$ & $P h_5 c_0$ & $\alpha_{8/5} z'_{32,2}$ & $$ \\
$47$ & $h_1 B_1$ & $\alpha_1^2 z'_{45}$ & $\eta^2 \theta_{4.5}$ \\
$47$ & $e_0 r$ & $2 z_{47,5}$ & $$ \\
$47$ & $P u$ & $4z_{47,5}$ & $$ \\
$47$ & $h_0^{k+7} Q'$ & $2^k \alpha_{24/5}$ & $2^k \rho_{47}$ \\
$48$ & $h_2 h_5 d_0$ & $\alpha_{2/2} z_{45}$ & $$ \\
$48$ & $B_2$ & $\alpha_{2/2} z'_{45}$ & $\nu \theta_{4.5}$ \\
$48$ & $h_0 B_2$ & $2\alpha_{2/2} z'_{45}$ & $2\nu \theta_{4.5}$ \\
$48$ & $P h_1 h_5 c_0$ & $\alpha_1 \alpha_{8/5} z'_{32,2}$ & $$ \\
$48$ & $d_0 e_0^2$ & $z_{48}$ & $\kappa^2 \kappabar$ \\
$50$ & $h_5 c_1$ & $z'_{50}$ & $$ \\
$50$ & $C$ & $z_{50}$ & $$ \\
$51$ & $h_3 g_2$ & $\alpha_{4/4} z_{44,4}$ & $$ \\
$51$ & $h_0 h_3 g_2$ & $2 \alpha_{4/4} z_{44,4}$ & $$ \\
$51$ & $h_2 B_2$ & $\alpha_{2/2}^2 z'_{45}$ & $\nu^2 \theta_{4.5}$ \\
$51$ & $g n$ & $z_{51}$ & $$ \\
$52$ & $h_1 h_3 g_2$ & $\alpha_1 \alpha_{4/4} z_{44,4}$ & $$ \\
$52$ & $d_1 g$ & $z_{52,8}$ & $$ \\
$52$ & $e_0 m$ & $z_{52,6}$ & $$ \\
$53$ & $h_2 h_5 c_1$ & $\alpha_{2/2} z'_{50}$ & $$ \\
$53$ & $h_2 C$ & $z_8 z'_{45}$ or $z_{53}$ & $$ \\
$53$ & $x'$ & $z_8 z'_{45}$ or $z_{53}$ & $\epsilon \theta_{4.5}$ \\
$53$ & $d_0 u$ & $\alpha_1 z_{52,6}$ & $$ \\
$54$ & $h_0 h_5 i$ & $z_{54,2}$ & $$ \\
$54$ & $h_1 x'$ & $\alpha_1 z_8 z'_{45}$ or $\alpha_1 z_{53,5} $ & $$ \\
$54$ & $e_0^2 g$ & $z_{54,10}$ & $\kappa \kappabar^2$ \\
$55$ & $P^4 h_0^{k+2} i$ & $2^k \alpha_{28/4}$ & $2^k \rho_{55}$ \\
$57$ & $h_0 h_2 h_5 i$ & $\alpha_{2/2} z_{54,2}$ & $$ \\
$58$ & $h_1 Q_2$ & $\alpha_1 z_{57}$ & $$ \\
$59$ & $B_{21}$ & $z_{59,5}$ or $z'_{59,7}$ & $\kappa \theta_{4.5}$ \\
$59$ & $d_0 w$ & $z_{59,7}$ & $$ \\
\end{longtable}


\index{Adams-Novikov spectral sequence!boundary}
\begin{longtable}{lll}
\caption{Classical Adams-Novikov boundaries} \\
\toprule
$(s,f)$ & boundary & $\pi_{*,*}(C\tau)$ \\
\midrule \endfirsthead
\caption[]{Classical Adams-Novikov boundaries} \\
\toprule
$(s,f)$ & boundary & $\pi_{*,*}(C\tau)$ \\
\midrule \endhead
\bottomrule \endfoot
\label{tab:ANSS-boundary}
$(4,4) + k(1,1)$ & $\alpha_1^{k+4}$ & $h_1^{k+4}$ \\
$(10,4) + k(1,1)$ & $\alpha_1^{k+3} \alpha_{4/4}$ & $h_1^{k+2} c_0$ \\
$(12,4) + k(1,1)$ & $\alpha_1^{k+3} \alpha_5$ & $P h_1^{k+4}$ \\
$(18,4) + k(1,1)$ & $\alpha_1^{k+3} \alpha_{8/5}$ & $h_1^{k+2} P c_0$ \\
$(20,4) + k(1,1)$ & $\alpha_1^{k+3} \alpha_9$ & $P^2 h_1^{k+4}$ \\
$(26,4) + k(1,1)$ & $\alpha_1^{k+3} \alpha_{12/4}$ & $h_1^{k+2} P^2 c_0$ \\
$(28,4) + k(1,1)$ & $\alpha_1^{k+3} \alpha_{13}$ & $P^3 h_1^{k+4}$ \\
$(34,4) + k(1,1)$ & $\alpha_1^{k+3} \alpha_{16/6}$ & $h_1^{k+2} P^3 c_0$ \\
$(36,4) + k(1,1)$ & $\alpha_1^{k+3} \alpha_{17}$ & $P^4 h_1^{k+4}$ \\
$(42,4) + k(1,1)$ & $\alpha_1^{k+3} \alpha_{20/4}$ & $h_1^{k+2} P^4 c_0$ \\
$(44,4) + k(1,1)$ & $\alpha_1^{k+3} \alpha_{21}$ & $P^5 h_1^{k+4}$ \\
$(50,4) + k(1,1)$ & $\alpha_1^{k+3} \alpha_{24/5}$ & $h_1^{k+2} P^5 c_0$ \\
$(54,4) + k(1,1)$ & $\alpha_1^{k+3} \alpha_{25}$ & $P^6 h_1^{k+4}$ \\
$(58,4) + k(1,1)$ & $\alpha_1^{k+3} \alpha_{28/4}$ & $h_1^{k+2} P^6 c_0$ \\
\midrule
$(25, 5)$ & $\alpha_1^2 \alpha_{4/4} z_{16}$ & $h_1^2 h_4 c_0$ \\
$(29, 7)$ & $\alpha_1 z_{28}$ & $h_1^2 h_3 g$ \\
$(33, 5)$ & $\alpha_1 z_{32,4}$ & $h_1 d_1$ \\
$(34, 6)$ & $\alpha_1^2 z_{32,4}$ & $h_1^2 d_1$ \\
$(35, 5)$ & $\alpha_1^3 z'_{32,2}$ & $h_1^4 h_5$ \\
$(36, 6)$ & $\alpha_1^4 z'_{32,2}$ & $h_1^5 h_5$ \\
$(37, 7)$ & $\alpha_1^5 z'_{32,2}$ & $h_1^6 h_5$ \\
$(37, 7)$ & $\alpha_{2/2} z_{34,6}$ & $h_2 e_0^2$ \\
$(38, 8)$ & $\alpha_1^6 z'_{32,2}$ & $h_1^7 h_5$ \\
$(40, 6)$ & $\alpha_1 \alpha_{4/4} z_{32,4}$ & $h_1 h_3 d_1$ \\
$(40, 8)$ & $2 z_{40,8}$ & $h_0^2 g^2$ \\
$(41, 5)$ & $\alpha_1^2 z'_{39,3}$ & $h_1^2 h_5 c_0$ \\
$(41, 7)$ & $\alpha_1^2 \alpha_{4/4} z_{32,4}$ & $h_1^2 h_3 d_1$ \\
$(42, 6)$ & $\alpha_1^3 z'_{39,3}$ & $h_1^3 h_5 c_0$ \\
$(42, 8)$ & $\alpha_{2/2} z_{39,7}$ & $h_2 c_1 g$ \\
$(43, 7)$ & $\alpha_1^4 z'_{39,3}$ & $h_1^4 h_5 c_0$ \\
$(43, 9)$ & $z_{43,9}$ & $h_2 g^2$ \\
$(43, 9)$ & $2 z_{43,9}$ & $h_0 h_2 g^2$ \\
$(43, 9)$ & $4 z_{43,9}$ & $h_1 c_0 e_0^2$ \\
$(44, 8)$ & $\alpha_1^5 z'_{39,3}$ & $h_1^5 h_5 c_0$ \\
$(45, 9)$ & $\alpha_1^6 z'_{39,3}$ & $h_1^6 h_5 c_0$ \\
$(46, 6)$ & $\alpha_1^2 z_{44,4}$ & $h_1^2 g_2$ \\
$(46, 10)$ & $\alpha_{2/2} z_{43,9}$ & $h_2^2 g^2$ \\
$(49, 5)$ & $\alpha_1^2 \alpha_{8/5} z'_{32,2}$ & $P h_1^2 h_5 c_0$ \\
$(49, 11)$ & $\alpha_1 z_{48}$ & $h_1^2 h_3 g^2$ \\
$(53, 9)$ & $\alpha_1 z_{52,8}$ & $h_1 d_1 g$ \\
$(54, 8)$ & ? $\alpha_1 z_{53}$ & $h_1 i_1$ \\
$(54, 8)$ & $\alpha_{2/2} z_{51}$ & $h_2 g n$ \\
$(54, 10)$ & $\alpha_1^2 z_{52,8}$ & $h_1^2 d_1 g$ \\
$(55, 9)$ & $\alpha_{2/2} z_{52,8}$ & $h_2 d_1 g$ \\
$(55, 9)$ & $\alpha_1^2 z_{53}$ & $h_1^2 i_1$ \\
$(55, 11)$ & $\alpha_1 z_{54,10}$ & $h_1^7 h_5 e_0$ \\
$(56, 8)$ & $\alpha_1^2 z_{54,6}$ & $g t$ \\
$(56, 10)$ & $\alpha_1^3 z_{53}$ & $h_1^3 i_1$ \\
$(57, 5)$ & $\alpha_1 z_{56,4}$ & $D_{11}$ \\
$(57, 11)$ & $\alpha_1^4 z_{53}$ & $h_1^4 i_1$ \\
$(57, 11)$ & $\alpha_{2/2} z_{54,10}$ & $h_2 d_0 g^2$ \\
$(58, 6)$ & $\alpha_{4/4}^2 z_{44,4}$ & $h_3^2 g_2$ \\
$(58, 6)$ & $\alpha_1^2 z_{56,4}$ & $h_1 D_{11}$ \\
$(58, 10)$ & $\alpha_{2/2}^2 z_{52,8}$ & $h_2^2 d_1 g$ \\
$(58, 12)$ & $\alpha_1^5 z_{53}$ & $h_1^5 i_1$ \\
$(59, 5)$ & $\alpha_1^2 z_{57}$ & $h_1^2 Q_2$ \\
$(59, 7)$ & ? $z'_{59,7}$ & $j_1$ \\
$(59, 9)$ & $\alpha_{4/4} z_{52,8}$ & $h_3 d_1 g$ \\
$(59, 11)$ & $z_{59,11}$ & $c_1 g^2$ \\
\end{longtable}


\index{Adams-Novikov spectral sequence!non-permanent class}
\begin{longtable}{lll}
\caption{Classical Adams-Novikov non-permanent classes} \\
\toprule
$(s,f)$ & class & $\pi_{*,*}(C\tau)$ \\
\midrule \endfirsthead
\caption[]{Classical Adams-Novikov non-permanent classes} \\
\toprule
$(s,f)$ & class & $\pi_{*,*}(C\tau)$ \\
\midrule \endhead
\bottomrule \endfoot
\label{tab:ANSS-non-permanent}
$(5,1) + k(1,1)$ & $\alpha_1^k \alpha_3$ & $h_1^k \cdot \ol{h_1^4}$  \\
$(11,1) + k(1,1)$ & $\alpha_1^k \alpha_{6/3}$ & $h_1^k \cdot \ol{h_1^2 c_0}$  \\
$(13,1) + k(1,1)$ & $\alpha_1^k \alpha_7$ & $h_1^k \cdot \ol{P h_1^4}k$  \\
$(19,1) + k(1,1)$ & $\alpha_1^k \alpha_{10/3}$ & $h_1^k \cdot \ol{P h_1^2 c_0}$  \\
$(21,1) + k(1,1)$ & $\alpha_1^k \alpha_{11}$ & $h_1^k \cdot \ol{P^2 h_1^4}$ \\
$(27,1) + k(1,1)$ & $\alpha_1^k \alpha_{14/3}$ & $h_1^k \cdot \ol{P^2 h_1^2 c_0}$ \\
$(29,1) + k(1,1)$ & $\alpha_1^k \alpha_{15}$ & $h_1^k \cdot \ol{P^3 h_1^4}$ \\
$(35,1) + k(1,1)$ & $\alpha_1^k \alpha_{18/3}$ & $h_1^k \cdot \ol{P^3 h_1^2 c_0}$ \\
$(37,1) + k(1,1)$ & $\alpha_1^k \alpha_{19}$ & $h_1^k \cdot \ol{P^4 h_1^4}$ \\
$(43,1) + k(1,1)$ & $\alpha_1^k \alpha_{22/3}$ & $h_1^k \cdot \ol{P^4 h_1^2 c_0}$ \\
$(45,1) + k(1,1)$ & $\alpha_1^k \alpha_{23}$ & $h_1^k \cdot \ol{P^5 h_1^4}$ \\
$(51,1) + k(1,1)$ & $\alpha_1^k \alpha_{26/3}$ & $h_1^k \cdot \ol{P^5 h_1^2 c_0}$ \\
$(53,1) + k(1,1)$ & $\alpha_1^k \alpha_{27}$ & $h_1^k \cdot \ol{P^6 h_1^4}$ \\
$(59,1) + k(1,1)$ & $\alpha_1^k \alpha_{30/3}$ & $h_1^k \cdot \ol{P^6 h_1^2 c_0}$ \\
\midrule
$(26,2)$ & $z_{26}$ & $\ol{h_1^2 h_4 c_0}$  \\
$(30,2)$ & $z'_{30}$ & $r$  \\
$(34,2)$ & $z_{34,2}$ & $h_2 h_5$ \\
$(35,3)$ & $\alpha_1 z_{34,2}$ & $\ol{h_3^2 g}$ \\
$(36,2)$ & $z_{36}$ & $\ol{h_1^4 h_5}$ \\
$(37,3)$ & $\alpha_1 z_{36}$ & $h_1 \cdot \ol{h_1^4 h_5}$ \\
$(38,2)$ & $z_{38}'$ & $h_0 y$ \\
$(38,4)$ & $\alpha_1^2 z_{36}$ & $h_1^2 \cdot \ol{h_1^4 h_5}$ \\
$(39,5)$ & $\alpha_1^3 z_{36}$ & $h_1^3 \cdot \ol{h_1^4 h_5}$ \\
$(41,3)$ & $z_{41}$ & $h_0 c_2$ \\
$(41,3)$ & $\alpha_{2/2} z'_{38}$ & $\ol{\tau h_0^2 g^2}$ \\
$(42,2)$ & $z_{42}$ & $\ol{h_1^2 h_5 c_0}$ \\
$(42,4)$ & $\alpha_1 z_{41}$ & $h_3 \cdot \ol{h_3^2 g}$ \\
$(43,3)$ & $\alpha_1 z_{42}$ & $h_1 \cdot \ol{h_1^2 h_5 c_0}$ \\
$(43,3)$ & $z_{43,3}$ & $\ol{\tau h_2 c_1 g}$ \\
$(44,2)$ & $z_{44,2}$ & $\ol{\tau^2 h_2 g^2}$ \\
$(44,4)$ & $\alpha_1^2 z_{42}$ & $h_1^2 \cdot \ol{h_1^2 h_5 c_0}$ \\
$(44,4)$ & $z'_{44,4}$ & $\ol{\tau h_0 h_2 g^2}$ \\
$(44,4)$ & $2 z'_{44,4}$ & $d_0 r$ \\
$(45,5)$ & $\alpha_1^3 z_{42}$ & $h_1^3 \cdot \ol{h_1^2 h_5 c_0}$ \\
$(46,6)$ & $\alpha_1^4 z_{42}$ & $h_1^4 \cdot \ol{h_1^2 h_5 c_0}$ \\
$(47,3)$ & $z_{47,3}$ & $\ol{h_1^2 g_2}$ \\
$(47,5)$ & $z_{47,5}$ & $\ol{\tau h_2^2 g^2}$ \\
$(50,2)$ & $z_{50,2}$ & $\ol{P h_1^2 h_5 c_0}$ \\
$(50,6)$ & $\alpha_{2/2} z_{47,5}$ & $g r$ \\
$(54,6)$ & $z_{54,6}$ & $\ol{h_1 d_1 g}$ \\
$(55,5)$ & $z_{55}$ & $B_6$ \\
$(55,7)$ & $\alpha_1 z_{54,6}$ & $\ol{h_1^2 d_1 g}$ \\
$(55,5)$ & ? $z'_{55}$ & $\ol{h_1 i_1} + \tau h_1 G$ \\
$(56,2)$ & $z_{56,2}$ & $Q_1$ \\
$(56,4)$ & $z_{56,4}$ & $\ol{\tau h_2 d_1 g}$ \\
$(56,6)$ & $z_{56,6}$ & $\ol{h_1^2 i_1} + h_5 c_0 e_0$ \\$(57,3)$ & $z_{57}$ & $Q_2$ \\
$(57,7)$ & $\alpha_1 z_{56,6}$ & $h_1 \cdot \ol{h_1^2 i_1} + h_1 h_5 c_0 e_0$ \\
$(58,2)$ & $z_{58,2}$ & $h_0 D_2$ \\
$(58,6)$ & $z'_{58,6}$ & $P h_1^2 h_5 e_0$ \\
$(58,8)$ & $\alpha_1^2 z_{56,6}$ & $h_1^2 \cdot \ol{h_1^2 i_1}+ h_1^2 h_5 c_0 e_0$ \\
$(59,3)$ & $z_{59,3}$ & $\ol{h_3^2 g_2}$ \\
$(59,3)$ & $\alpha_1 z_{58,2}$ & $\ol{h_3 G_3}$ \\
$(59,7)$ & $z_{59,7}$ & $h_1^2 D_4$ \\
$(59,9)$ & $\alpha_1^3 z_{56,6}$ & $h_1^3 \cdot \ol{h_1^2 i_1} + h_1^3 h_5 c_0 e_0$ \\
$(60,2)$ & $z_{60,2}$ & $\ol{h_1^2 Q_2}$ \\
$(60,4)$ & ? $z_{60,4}$ & $\ol{j_1}$ \\
$(60,4)$ & $\alpha_{2/2} z_{57}$ & $h_1 \cdot \ol{h_3 G_3}$ \\
$(60,6)$ & $z_{60,6}$ & $\ol{h_3 d_1 g}$ \\
\end{longtable}


\backmatter

\printindex

\bibliographystyle{amsalpha}
\begin{bibdiv}
\begin{biblist}

\bibselect{stable-stems-bib}

\end{biblist}
\end{bibdiv}


\end{document}